\DeclareFontFamily{U}{fsy}{}
\DeclareFontShape{U}{fsy}{m}{n}{<->s*[.9]psyr}{}
\DeclareSymbolFont{der@m}{U}{fsy}{m}{n}
\DeclareMathSymbol{\der}{\mathord}{der@m}{182}
\newcommand{\tshift}{T_\mathrm{shift}}
\newcommand{\unisltwo}{\widetilde{\mathrm{Sl}}_2(\R)}
\newcommand{\sltwo}{\mathrm{Sl}_2(\R)}
\newcommand{\psltwo}{\mathrm{PSl}_2(\R)}
\newcommand{\cyc}{\mathrm{Cyc}}
\newcommand{\simplesquare}{\hspace{.1cm}\square_i\hspace{.1cm}}
\newcommand{\bin}{\mathrm{Bin}}
\newcommand{\V}{\mathbb{V}}
\newcommand{\D}{\mathbb{D}}
\newcommand{\pac}{\mathrm{PAC}}
\newcommand{\prc}{\mathrm{PRC}}
\newcommand{\zerozero}{{00}}
\newcommand{\gqot}{G/G^\zerozero}
\newcommand{\dlo}{\mathrm{DLO}}
\newcommand{\alg}{\mathrm{alg}}
\newcommand{\dloeq}{\dlo_{\mathrm{eq}}}
\newcommand{\dprk}{\operatorname{dp}}
\newcommand{\opd}{\operatorname{opd}}
\newcommand{\lex}{<_{\mathrm{Lex}}}
\newcommand{\hyp}{\mathrm{Hyp}}
\newcommand{\relk}{\mathrm{Rel}}
\newcommand{\cplus}{(2^{\aleph_0})^+}
\DeclareSymbolFont{imag@m}{OT1}{cmr}{m}{ui}
\DeclareMathSymbol{\imag}{\mathord}{imag@m}{105}
\newtheorem{theorem}{Theorem}[section]
\newtheorem*{theorem*}{Theorem}
\newtheorem*{wk}{Weak Conjecture}
\newtheorem*{sk}{Strong Conjecture}
\newtheorem{proposition}[theorem]{Proposition}
\newtheorem{lem}[theorem]{Lemma}
\newtheorem{Claim}{Claim}
\newtheorem*{Claim*}{Claim}
\newtheorem{fact}[theorem]{Fact}
\newtheorem{lemma}[theorem]{Lemma}
\newtheorem{corollary}[theorem]{Corollary}
\newtheorem{conj}{Conjecture}
\theoremstyle{definition}
\theoremstyle{remark}
\newcommand{\monster}{\boldsymbol{\Sa M}}
\newcommand{\ionster}{\boldsymbol{\Sa I}}
\newcommand{\nonster}{\boldsymbol{\Sa N}}
\newcommand{\oonster}{\boldsymbol{\Sa O}}
\newcommand{\monsterset}{\boldsymbol{M}}
\newcommand{\nonsterset}{\boldsymbol{N}}
\newcommand{\oonsterset}{\boldsymbol{O}}
\newcommand{\tp}{\operatorname{tp}}
\newcommand{\qftp}{\operatorname{qftp}}
\newcommand{\mfrak}{\mathfrak{m}}
\newcommand{\valp}{\mathrm{Val}_p}
\newcommand{\rfield}{(\R;+,\times)}
\newcommand{\rcf}{\mathrm{RCF}}
\newcommand{\rcvf}{\mathrm{RCVF}}
\newcommand{\doag}{\mathrm{DOAG}}
\newcommand{\pring}{(\Z_p;+,\times}
\newcommand{\pfield}{\Q_p}
\newcommand{\kfield}{K}
\newcommand{\rad}{\operatorname{rad}}
\newcommand{\az}{\alpha \Z}
\newcommand{\rvec}{\R_{\mathrm{Vec}}}
\newcommand{\qvec}{\Q_{\mathrm{Vec}}}
\newcommand{\Th}{\mathrm{Th}}
\ProvideTextCommandDefault{\cprime}{(U+042C)}
\newcommand{\Fraisse}{Fra\"iss\'e\xspace}
\newcommand{\chara}{\operatorname{Char}}
\newcommand{\st}{\operatorname{st}}
\newenvironment{claimproof}[1][\proofname]
               {
                 \proof[#1]
                 
               }
               {
                 \endproof
               }
\newcommand{\cl}{\operatorname{Cl}}
\newcommand{\corank}{\ensuremath{\textup{cork}}}
\newcommand{\coranke}{\ensuremath{\textup{ecork}}}
\newcommand{\rank}{\ensuremath{\textup{rk}}}
\newcommand{\erank}{\ensuremath{\textup{erk}}}
\newcommand{\Sh}[1]{\ensuremath{\mathscr{#1}^{\mathrm{Sh}}}}
\newcommand{\Sq}[1]{\ensuremath{\mathscr{#1}^{\square}}}
\newcommand{\nip}{\mathrm{NIP}}
\newcommand{\Cal}[1]{\ensuremath{\mathcal{#1}}}
\newcommand{\Sa}[1]{\ensuremath{\mathscr{#1}}}
\newcommand{\age}[1]{\ensuremath{\textup{Age{#1}}}}
\newcommand{\air}[1]{\ensuremath{\textup{Air{#1}}}}
\newcommand{\app}{\approx_\uplambda}
\newcommand{\ru}{\mathrm{RU}}
\newcommand{\ruo}{\mathrm{RU}_{\Sa O}}
\newcommand{\rum}{\mathrm{RU}_{\Sa M}}
\newcommand{\mr}{\mathrm{RM}}
\newcommand{\mrm}{\mathrm{RM}_{\Sa M}}
\newcommand{\mro}{\mathrm{RM}_{\Sa O}}
\newcommand{\B}{\mathbb{B}}
\newcommand{\Z}{\mathbb{Z}}
\newcommand{\N}{\mathbb{N}}
\newcommand{\C}{\mathbb{C}}
\newcommand{\Q}{\mathbb{Q}}
\newcommand{\R}{\mathbb{R}}
\newcommand{\F}{\mathbb{F}}
\newcommand{\K}{\mathbb{K}}
\begin{document}
\title[]{Notes on trace equivalence}

\author{Erik Walsberg}
\address{Department of Mathematics, Statistics, and Computer Science\\
Department of Mathematics\\University of California, Irvine, 340 Rowland Hall (Bldg.\# 400),
Irvine, CA 92697-3875}
\email{ewalsber@uci.edu}
\urladdr{https://www.math.uci.edu/\textasciitilde ewalsber/}

\date{\today}

\maketitle

\begin{abstract}
We introduce and study trace equivalence, a weak notion of equivalence for first order theories.
In particular this gives an interesting notion of equivalence  for $\nip$ theories.
\end{abstract}

\section*{Introduction}
Throughout $\Sa M$ and $\Sa O$  are structures, $L,L^*$ are languages, and $T,T^*$ is a complete $L,L^*$-theory, respectively.
Let $\uptau \colon O \to M^m$ be an injection.
We say that $\Sa M$ trace defines $\Sa O$ via $\uptau$ if for every $\Sa O$-definable subset $X$ of $O^n$ there is an $\Sa M$-definable subset $Y$ of $M^{mn}$ such that: 
$$ (\alpha_1,\ldots,\alpha_n) \in X \quad\Longleftrightarrow\quad (\uptau(\alpha_1),\ldots,\uptau(\alpha_n)) \in Y \quad \text{for all  } \alpha_1,\ldots,\alpha_n \in O,$$
and $\Sa M$ \textbf{trace defines} $\Sa O$ if $\Sa M$ trace defines $\Sa O$ via an injection $\uptau \colon O \to M^m$.
%All structures are first order and ``definable" means ``first order definable, possibly with parameters".
We say that $T$ trace defines $T^*$ if every $T^*$-model is trace definable in a $T$-model, $T$ trace defines $\Sa O$ if some $T$-model trace defines $\Sa O$, $T$ and $T^*$ are \textbf{trace equivalent} if $T$ trace defines $T^*$ and vice versa, and $\Sa M$ and $\Sa O$ are trace equivalent if $\Th(\Sa M)$ and $\Th(\Sa O)$ are trace equivalent.
We will see that $T$ trace defines $T^*$ if and only if some $T^*$-model is trace definable in a $T$-model. 

\medskip
These definitions arose out of specific questions on, and examples of, $\nip$ structures.
That is treated below.
Why do I think trace definibility is interesting?
Well, I think that working up to trace equivalence reveals the underlying nippy structure.
More concretely:
\begin{enumerate}[leftmargin=*, itemsep=.1cm]
\item There are many interesting examples of theories $T,T^*$ such that $T$ trace defines but does not interpret $T^*$.
See the second list below.

\item Suppose $T$ does not trace define $T^*$.
Then $T$ does not interpret $T^*$ and, furthermore, any theory trace definable in $T$ does not interpret $T^*$.
So if we can upgrade a non-interpretation result to a non-trace definition result then we can ``spread out" the non-interpretation result.
We will not explicitly mention this here, but in this way we obtain many new non-interpretation results below.

\item Stability and $\nip$-theoretic properties that are preserved under interpretations are usually preserved under trace definitions, see Theorem~\ref{thm:preservation}.
These preservation results are usually more or less obvious from the definitions, so we expect that yet-to-be discovered $\nip$-theoretic properties will also be preserved.
So if $T$ trace defines $T^*$ then we expect satisfaction of any $\nip$-theoretic property, known or unknown, to transfer from $T$ to $T^*$.

\item Some $\nip$-theoretic properties can be characterized in terms of trace definibility, see Theorem~\ref{thm:characterize}.
Indiscernible collapse can be seen as a special case of trace definibility, so properties that can be characterized in terms of indiscernible collapse can be characterized in terms of trace definibility.
A classification of finitely homogeneous structures up to trace equivalence would give a classification of those model-theoretic properties that can be defined in terms of indiscernible collapse.

\item The partial order of trace equivalence classes admits finite joins, this is given by taking disjoint unions.
Under certain situations we can show that a structure is trace equivalent to a disjoint union of simpler structures, this gives a novel way of decomposing structures.

\item In stability, o-minimality, and a few other places, we have ``Zil'ber dichotomies" of the following form: We have a class $\Cal C$ of tame structures and an abstract ``triviality" (``modularity") notion $\mathbf{P}$, and one shows that any $\Sa M\in\Cal C$ satisfies $\mathbf{P}$ if and only if $\Sa M$ does not interpret an infinite group (field).
We are interested in finding such dichotomies in the $\nip$ setting.
This seems to require replacing ``interpret" with ``trace define".
\item There is a long line of work on classification of finitely homogeneous structures.
These classifications become increasing complex as one passes to broader classes of structures.
For example the classification of homogeneous partial orders~\cite{schmerl} is quite simple, while the classification of homogeneous colored partial orders \cite{sousa} is much more complex.
What we know so far suggests that there is a reasonable classification of finitely homogeneous structures up to trace equivalence.
\end{enumerate}
\medskip
Theorem~\ref{thm:preservation} summarizes our results on preservation of $\nip$-theoretic properties.

\begin{theorem}
\label{thm:preservation}
Suppose that $T$ trace defines $T^*$.
If $T$ satisfies one of the following properties then $T^*$ does as well: stability, $\nip$, $k$-independence, total transendence, superstability, strong dependence, finiteness of $U$-rank, finiteness of Morley rank, finiteness of dp-rank, finiteness of op-rank, linear vc-density bounds, the strong Erd\H{o}s-Hajnal property, and near linear Zarankiewicz bounds.
\end{theorem}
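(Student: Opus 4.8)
The plan is to isolate a single ``uniform trace'' lemma and then feed it into standard combinatorial characterizations of each listed property. First I would record: if $\Sa O$ is trace definable in $\Sa M$ via $\uptau\colon O\to M^m$ and $\varphi(\bar x;\bar y)$ is any $\Sa O$-formula, then there is an $\Sa M$-formula $\psi(\bar x';\bar y')$, with $|\bar x'|=m|\bar x|$ and $|\bar y'|=m|\bar y|$, such that $\Sa O\models\varphi(\bar a;\bar b)$ if and only if $\Sa M\models\psi(\uptau(\bar a);\uptau(\bar b))$ for all tuples $\bar a,\bar b$ from $O$ of the appropriate lengths, where $\uptau$ is applied coordinatewise. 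This is immediate from the definition of trace definability applied to the $\Sa O$-definable set $\{(\bar a,\bar b):\Sa O\models\varphi(\bar a;\bar b)\}$, after a harmless grouping of coordinates. Two consequences do all the work: first, any configuration of tuples from $O$ witnessing a first-order ``pattern'' in $\Sa O$ — an order property, an independence-property pattern, a $k$-independence pattern, an ict- or inp-pattern, a splitting tree, a bipartite graph omitting $K_{t,t}$, a family of $\varphi$-types over finitely many points — maps through $\uptau$ to a configuration of tuples from $M$ witnessing the corresponding pattern for $\psi$ in $\Sa M$; second, if $\{\varphi_i(\bar x;\bar b_i):i\in I\}$ is a finitely satisfiable partial type in $\Sa O$, then $\{\psi_i(\bar x';\uptau(\bar b_i)):i\in I\}$ is consistent with $T$, since each of its finite subtypes is realized in $\Sa O$ and hence, via $\uptau$, in $\Sa M\models T$, whence compactness applies.

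With this in hand the theorem becomes a checklist, argued by contraposition: given that $T^*$ fails the property, fix a sufficiently saturated $\Sa O\models T^*$ exhibiting the failure — such $\Sa O$ is, by hypothesis, trace definable in some $\Sa M\models T$ — and push the witnessing configuration forward. For stability, $\nip$, $k$-independence, linear vc-density bounds, the strong Erd\H{o}s--Hajnal property, and near linear Zarankiewicz bounds this is immediate: each says that no formula admits a certain (possibly arbitrarily large or infinite) configuration of parameters and points, and by the first consequence above such a configuration transports intact from $\Sa O$ to $\Sa M$, the only change being that the partitioned variables $\bar x,\bar y$ are replaced by tuples $m$ times as long — harmless here, and costing at most a factor $m$ in the vc-density bound.

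For total transcendence and finiteness of Morley rank I would instead establish a rank inequality, working directly rather than by contraposition. Identify $\Sa O$, via $\uptau$, with a reduct of the structure $\Sa N$ that $\Sa M$ induces on the subset $N=\uptau(O)\subseteq M^m$. An induction on ordinals, using only the inductive clauses defining Morley rank, shows that $\operatorname{MR}_{\Sa O}(X)\le\operatorname{MR}_{\Sa N}(X)\le\operatorname{MR}_{\Sa M}(\widetilde X)$ whenever $X\subseteq N^n$ and $\widetilde X$ is an $\Sa M$-definable subset of $M^{mn}$ with $\widetilde X\cap N^n=X$; the single point requiring care is that finitely many $\Sa N$-definable subsets of $N^n$ that are pairwise disjoint \emph{in} $N^n$ must be replaced by $\Sa M$-definable sets pairwise disjoint \emph{in} $M^{mn}$, which is arranged by subtracting off the earlier ones. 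Taking $\widetilde X=M^{mn}$ bounds $\operatorname{MR}_{\Sa O}(O^n)$ by $mn\cdot\operatorname{MR}(M)$, which is finite, resp.\ ordinal-valued, as soon as $\operatorname{MR}(M)$ is; since $\Sa O$ is saturated these are the honest Morley ranks for $T^*$.

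For superstability, strong dependence, and finiteness of dp-rank, op-rank, and $U$-rank I would use the characterizations of each by a ``pattern'': an infinitely branching splitting tree, an infinite ict-pattern, a depth-$n$ ict-pattern, a depth-$n$ op-pattern, and a length-$n$ forking chain, respectively, each of which unwinds into the existence of finitely or countably many formulas together with an array (or tree) of parameters subject to the requirement that certain partial types, each built from finitely many of those formulas and parameters, be consistent. Transporting such a pattern through the two consequences above yields the same pattern in $\Sa M$, so $T$ fails the property too. The genuine obstacle — the step I expect to spend the most care on — is that trace definability preserves \emph{consistency} of partial types but not the truth of universal sentences, so I must phrase these pattern characterizations purely positively: the ``$k$-inconsistency,'' ``inhomogeneity,'' and ``dividing'' clauses appearing in the textbook formulations have to be re-encoded as negated formulas sitting \emph{inside} the partial types that are required to be consistent, rather than as ambient inconsistency statements. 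For ict- and inp-patterns and for the non-superstability tree one does this by taking mutually exclusive ``columns''; finiteness of $U$-rank is the most delicate, as forking over a set must be re-expressed through a suitable positive local-rank pattern, but once the right formulation is fixed the transport step is the same two lines as everywhere else.
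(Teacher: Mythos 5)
Your strategy matches the paper closely for most of the list: stability, $\nip$, $k$-independence, vc-density, strong Erd\H{o}s--Hajnal, and near linear Zarankiewicz bounds all follow from transporting a witnessing configuration of $\uptau$-image tuples, which is exactly how the paper dispatches them (it calls them ``clear from the definitions''). Your transfinite-induction rank inequality for Morley rank, with the disjointification step $Y_i \mapsto Y_i \setminus \bigcup_{j\ne i} Y_j$, is verbatim the paper's Proposition~\ref{prop:morely rank fine}. Your ict-pattern transport for dp-rank and strong dependence is the paper's Proposition~\ref{prop:dp-rank-0}; the crucial point that makes it work is the one you implicitly rely on --- in an ict-pattern all the negations ($\neg\varphi_\alpha(a_{\alpha,k},b)$ for $k\ne f(\alpha)$) sit \emph{inside} the existential claim ``there is $b\in X$ with~...'', so the whole thing is a consistency statement, which trace definability preserves. (Two small inaccuracies worth fixing: the ``$mn\cdot\operatorname{MR}(M)$'' bound for Morley rank is false in general --- Morley rank is not subadditive --- though the qualitative conclusion only needs $\operatorname{MR}_{\Sa M}(M^{mn})<\omega$; and for op-dimension the paper actually goes through indiscernible collapse, Fact~\ref{fact:op-collapse} and Proposition~\ref{prop:picture}, rather than a bare pattern transport, though your route would also work.)

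The genuine gap is in superstability and especially finite $U$-rank. You acknowledge that the inconsistency clauses must be ``re-encoded as negated formulas sitting inside the partial types,'' and then assert that ``once the right formulation is fixed the transport step is the same two lines as everywhere else.'' For a ``splitting tree'' this already fails: splitting over $A$ requires $\tp_{\Sa O}(b/A)=\tp_{\Sa O}(b'/A)$, and equality of \emph{all} $\Sa O$-formulas over $A$ is a universal statement that simply does not transfer to $\tp_{\Sa M}(\uptau(b)/\uptau(A))=\tp_{\Sa M}(\uptau(b')/\uptau(A))$ --- $\Sa M$ may well have a formula separating them. For forking chains it fails even more bluntly: dividing requires a family of formulas to be $k$-inconsistent, and inconsistency of $\{\psi(x',\uptau(a_i))\}$ in $\Sa M$ need not hold even when $\{\varphi(x,a_i)\}$ is inconsistent in $\Sa O$, since $\psi$ might be satisfiable by a tuple of $M^{m|x|}$ outside $\uptau(O^{|x|})$, and ``$x'\in\uptau(O^{|x|})$'' is not an $\Sa M$-formula. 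I do not see a positive re-encoding of either condition, and the paper does not find one either: for superstability it proves $S(T^*,\uplambda)\le S(T,\uplambda)$ directly, by sending a type $p\in S_n(\Sa O,A)$ to $\tp_{\Sa M}(a_p|B)$ for a chosen realizer $a_p$, and reads off $\uplambda$-stability transfer (Proposition~\ref{prop:lambda-0}); for finite $U$-rank it proves $\operatorname{RU}_{\Sa O}(X)\le\operatorname{RU}_{\Sa M}(Y)$ by a transfinite induction whose inductive step bounds $|\mathrm{Def}(\Sa O)/\!\app|$ by $|\mathrm{Def}(\Sa M)/\!\app|$ and invokes the observation (Fact~\ref{fact:U rank}, Lemma~\ref{lem:app}) that, in a saturated superstable model, having $U$-rank $>\uplambda$ is detected by the number of global types of $U$-rank $\ge\uplambda$. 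Both of these are counting arguments, not pattern transports, and they are the part of the theorem where a genuinely new idea is needed. (A shortcut for superstability you could have used: a theory is superstable iff it is stable and strongly dependent, both of which you already preserve; but no such shortcut is available for the quantitative $U$-rank bound $\operatorname{RU}(\Sa O)\le m\cdot\operatorname{RU}(\Sa M)$, which is what the paper actually proves.)
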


We also characterize some classification-theoretic properties.

\begin{theorem}
\label{thm:characterize}
\hspace{.2cm}
\begin{enumerate}[leftmargin=*]
\item $T$ is unstable if and only if $T$ trace defines $(\Q;<)$.
\item $T$ is $\mathrm{IP}$ if and only if $T$ trace defines the Erd\H{o}s-Rado graph.
More generally for all $k\ge 1$, $T$ is $k$-independent if and only if $T$ trace defines the generic countable $(k+1)$-hypergraph.
\item $T$ is not totally transendental if and only if $T$ trace defines an infinite set equipped with a family of unary relations forming a complete binary tree under inclusion.
\end{enumerate}
\end{theorem}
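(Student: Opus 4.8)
The plan is to deduce the three \emph{if} directions uniformly from Theorem~\ref{thm:preservation} and to prove the three \emph{only if} directions by a single coding scheme. For the converses, note that $(\Q;<)$ is unstable, the Erd\H{o}s--Rado graph is $\mathrm{IP}$ and, more generally, the generic countable $(k+1)$-hypergraph is $k$-independent, and the structure of (3) is not totally transcendental --- its predicates $P_s$ form a binary tree of consistent $\emptyset$-definable sets with inconsistent siblings and consistent branches. So by the contrapositive of Theorem~\ref{thm:preservation}, if $T$ trace defines one of these structures then $T$ is, respectively, unstable, $\mathrm{IP}$, $k$-independent, or not totally transcendental. For the forward directions the recipe is always the same: from the failure of the relevant tameness property, extract in some model of $T$ a formula $\varphi$ together with a configuration of parameters that is ``universal'' for the pattern in question, and then code each basic relation of the target structure by plugging in, for each tuple of vertices, parameters realizing the appropriate instance of the pattern. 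Throughout it suffices to trace define one model of each target structure, as noted in the introduction.

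For (1), fix $\varphi(x;y)$ with the order property and, by compactness, elements $(a_q,b_q)_{q\in\Q}$ in a model $\Sa M\models T$ with $\Sa M\models\varphi(a_p;b_q)\Leftrightarrow p\leq q$, and set $\uptau(q)=(a_q,b_q)$. This map is injective (already on the $a$-coordinate), and $p<q\Leftrightarrow\varphi(a_p;b_q)\wedge\neg\varphi(a_q;b_p)$, so $<$ --- and hence, by quantifier elimination, every $(\Q;<)$-definable set --- pulls back to an $\Sa M$-definable set, while equality pulls back to equality of $\uptau$-images. For (2), phrase $k$-independence in shattering form: there are a formula $\varphi(x_1,\dots,x_k;y)$ and an array $(a^i_j)_{1\leq i\leq k,\ j\in\omega}$ in $\Sa M\models T$ such that for each $S\subseteq\omega^k$ some $b_S$ satisfies $\varphi(a^1_{j_1},\dots,a^k_{j_k};b_S)\Leftrightarrow(j_1,\dots,j_k)\in S$ for all $j_1,\dots,j_k$. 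Let $V=\omega$ be the vertex set of the generic $(k+1)$-hypergraph, with symmetric relation $R$; for $v\in V$ set $S_v=\{(j_1,\dots,j_k):R(j_1,\dots,j_k,v)\}$ and $\uptau(v)=(a^1_v,\dots,a^k_v,b_{S_v})$. Then $\varphi(a^1_{v_0},\dots,a^k_{v_{k-1}};b_{S_{v_k}})\Leftrightarrow R(v_0,\dots,v_k)$ for every $(k+1)$-tuple of vertices, so $\uptau$ is injective ($a^1_\bullet$ is, as the array is shattered), the atomic formula $R$ pulls back to an $\Sa M$-definable set, and, because this biconditional holds for every ordering of the tuple and $R$ is symmetric, so does each coordinate permutation of $R$; quantifier elimination in the generic hypergraph then finishes the case, and $k=1$ gives the Erd\H{o}s--Rado graph with $S_v$ the neighbourhood of $v$.

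For (3), since $T$ is not totally transcendental there are, by the classical tree criterion, formulas $(\vartheta_s(x))_{s\in2^{<\omega}}$ in one variable, with parameters in a monster model of $T$ and each an instance of one of countably many formulas, with $\vartheta_\emptyset$ equal to $x=x$, each $\vartheta_s$ consistent, $\vartheta_{si}\vdash\vartheta_s$, $\vartheta_{s0}\wedge\vartheta_{s1}$ inconsistent, and $\{\vartheta_{\eta\restriction n}:n<\omega\}$ consistent for every $\eta\in2^\omega$. Realize an element $c_\eta$ of each branch type; inconsistency of siblings makes the $c_\eta$ pairwise distinct. Let $\Sa N\models T$ contain all the parameters and all the $c_\eta$, let $\Cal{T}$ be the structure with universe $X=\{c_\eta:\eta\in2^\omega\}$ and unary relations $P_s=\{c_\eta:s\subseteq\eta\}$ --- an infinite set whose $P_s$ form a complete binary tree under inclusion --- and let $\uptau\colon X\hookrightarrow N$ be the inclusion. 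Then $c_\eta\models\vartheta_s\Leftrightarrow s\subseteq\eta$: ``$\Leftarrow$'' is by the choice of $c_\eta$; for ``$\Rightarrow$'', if $s\not\subseteq\eta$ then $s$ and $\eta$ split at a node $r$ with $ri\subseteq s$ and $r(1-i)\subseteq\eta$, so $c_\eta\models\vartheta_{r(1-i)}$, whereas $\vartheta_s\vdash\vartheta_{ri}$ and $\vartheta_{ri}\wedge\vartheta_{r(1-i)}$ is inconsistent. Hence $P_s$ pulls back to the set defined by $\vartheta_s$ in $\Sa N$, equality pulls back to equality, and since $\Th(\Cal{T})$ eliminates quantifiers --- or, more modestly, since its definable sets are Boolean combinations of the $P_s(x_i)$ and equalities --- $\Sa N$ trace defines $\Cal{T}$.

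I expect (3) to be the main obstacle. One has to use the correct form of the tree criterion --- a binary tree of \emph{possibly distinct} consistent formulas, with consistent branches and inconsistent siblings, rather than a single formula (a single formula does not suffice, as $\Cal{T}$ itself already shows) --- and one has to be sure that the coded structure really is ``an infinite set with unary relations forming a complete binary tree under inclusion'' in the intended sense and that all of its definable sets pull back, which is the point of the quantifier-elimination remark. Parts (1) and (2) are routine once the order property and $k$-independence are written in shattering form; the only place needing a little care there is the symmetry bookkeeping for $R$ in part (2).
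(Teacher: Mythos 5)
Your proof is correct. For parts (1) and (3) you take essentially the paper's route. Part (1) is the argument of Lemma~\ref{lem:stable-0}: extract an array witnessing the order property and pull back $<$ via Proposition~\ref{prop:qe}; your $(a_q,b_q)$ pairs are just the paper's tuples $\alpha_q$ reparsed. Part (3) reproduces Hanson's proof of Proposition~\ref{prop:hanson}, though you state the binary-tree criterion more carefully than the paper does (consistent nodes, inconsistent siblings, consistent branches, possibly distinct formulas at distinct nodes); the paper's phrasing of the tree inclusions is slightly off and leaves the sibling disjointness implicit, which your version makes explicit.

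Part (2) is where you genuinely diverge. The paper obtains the $k$-independent case from indiscernible-collapse machinery: the characterization of $k$-dependence via uncollapsed indiscernibles of the generic ordered $(k+1)$-hypergraph from \cite{cpt}, the Ramsey property of ordered hypergraphs, and Propositions~\ref{prop:random} and~\ref{prop:ramsey}; even in the $\mathrm{IP}$ case it invokes the Laskowski--Shelah embedding lemma rather than coding the graph directly. Your construction --- $\uptau(v)=(a^1_v,\dots,a^k_v,b_{S_v})$ with $S_v$ the $v$-slice of the hypergraph relation and the biconditional read straight off a shattered array --- is shorter, self-contained, and needs no Ramsey input. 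The paper's heavier route pays for itself elsewhere (the same indiscernible picture also yields the arity bound of Proposition~\ref{prop:airity}), but for Theorem~\ref{thm:characterize}(2) alone your direct argument is the cleaner one. One minor remark: the aside about coordinate permutations and symmetry of $R$ is superfluous, since Proposition~\ref{prop:qe} asks only for a single definable $Y$ per relation symbol, after which quantifier elimination handles atomic formulas with repeated or permuted variables automatically. The converse directions via Theorem~\ref{thm:preservation} match the paper.
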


Theorem~\ref{thm:characterize}.3 is due to James Hanson.
Theorem~\ref{thm:characterize} also shows that many classification theoretic properties are not preserved under trace definibility.
For example simplicity is not preserved under trace definibility as an unstable simple structure trace defines $\dlo$.
Most classification-theoretic properties are either on the ``$\nip$-axis" or the ``$\mathrm{NSOP}$-axis", the former are typically preserved under trace definitions and the latter are not.
We will also see that many of the main examples of simple theories trace define \textit{any} theory.
We say that $T$ is \textbf{trace maximal} if $T$ trace defines any theory and a structure is trace maximal if its theory is.
We will show that $T$ is trace maximal if and only if there is $\Sa M\models T$ and infinite $A \subseteq M^m$ such that for every $X\subseteq A^k$ we have $X=A^k\cap Y$ for $\Sa M$-definable $Y\subseteq M^{mk}$.

\begin{theorem}
\label{thm:trace maximal}
The following structures are trace maximal: $(\Z;+,\times)$, any infinite boolean algebra, the expansion of an infinite abelian group by a generic unary relation, any $\pac$ field that is not separably closed, any $\mathrm{PRC}$ field that is not real closed or separably closed.
\end{theorem}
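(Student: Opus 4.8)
The plan is to use the displayed characterisation of trace maximality throughout: for each structure $\Sa M_0$ in the list it suffices to produce some $\Sa M\models\Th(\Sa M_0)$ and an infinite $A\subseteq M^m$ such that for every $k\ge 1$ and every $X\subseteq A^k$ there is an $\Sa M$-definable $Y\subseteq M^{mk}$ with $X=A^k\cap Y$. In every case I would take $\Sa M$ to be $\cplus$-saturated and choose $A=\{a_i:i\in\omega\}$ ``independent enough'' that, for a fixed $X$, the condition on the parameter defining $Y$ reduces to a finitely satisfiable partial type; realising it produces $Y$. The one genuinely structural point is that the \emph{same} $A$ must serve every arity $k$ at once.

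For $(\Z;+,\times)$ I would argue directly. True arithmetic has $\emptyset$-definable bijections $\Z^k\to\Z$ (taking standard tuples to standard integers) and a definable ``divisibility by the $n$-th prime'' relation, so one takes $A$ to be the standard naturals inside a $\cplus$-saturated $\Sa M\models\Th(\Z;+,\times)$; given $X\subseteq A^k$ one pushes $X$ forward along the pairing to $X'\subseteq\N$ and realises a single $c\in M$ with ``$p_n\mid c$'' holding exactly for $n\in X'$. That type is finitely satisfiable because in $\Z$ one can exhibit an integer divisible by any prescribed finite set of primes and by none of a disjoint one. Unwinding the pairing and the divisibility predicate gives $Y$, so $(\Z;+,\times)$ is trace maximal; this also settles, by transitivity, any structure that trace defines it.

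For an arbitrary infinite boolean algebra $B$ I would first reduce to the atomless case. Every infinite boolean algebra contains an infinite pairwise disjoint family of nonzero elements, hence arbitrarily large finite independent families, hence (by compactness) a model of $\Th(B)$ with an infinite independent family; the subalgebra it generates is free, therefore atomless, and since the theory of atomless boolean algebras has quantifier elimination, that subalgebra is trace definable in $\Sa M$ via its inclusion. So it is enough to prove the atomless boolean algebra trace maximal, and for that I would work in its monster model and, for each $k$, build a ``$k$-fold generic grid'' — $k$ pairwise disjoint ``row'' families along $k$ coordinates whose coordinatewise meets $R_{\bar a}$ are all nonzero and pairwise disjoint as $\bar a$ ranges over the relevant tuples — and realise one element $c$ with $R_{\bar a}\le c$ exactly for $\bar a\in X$. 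The step I expect to be the main obstacle is precisely the uniformity in $k$: trace maximality demands a single infinite $A\subseteq M^m$, whereas the naive grids live in ambient spaces whose dimension grows with $k$, and (as the ``finite unions of rectangles'' phenomenon for an independent family shows) no single independent family will do; one must assemble all the grids coherently inside one saturated structure, equivalently produce one coding set good for every arity simultaneously.

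The remaining cases are variations on this. For an infinite abelian group $G_0$ expanded by a generic unary $P$, I would take in a saturated model an infinite ``independent'' set $A$ of group elements together with the definable maps $(x_1,\dots,x_k)\mapsto\sum_l\lambda_l x_l$ arranged to be injective on $A^k$ (in the torsion-free case $\lambda_l=2^l$ works for all $k$ at once); genericity of $P$ says exactly that the sets $\bigcap_i(P-z_i)\cap\bigcap_j(\neg P-w_j)$ are nonempty for distinct $z_i,w_j$, so a saturation argument produces $c$ with $P\bigl(c+\sum_l\lambda_l x_l\bigr)$ cutting out $X$ on $A^k$ — with care needed in the bounded-exponent case to keep one $A$ for all $k$. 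For a $\pac$ field that is not separably closed, and a $\prc$ field that is neither real closed nor separably closed, I would exploit that such fields code arbitrary finite incidence data: for unboundedly many $n$ the nontrivial Galois behaviour together with the (pseudo-)solvability of suitable parametrised systems guaranteed by the $\pac$/$\prc$ axioms yields definable families of unbounded complexity, from which one builds an $A$ with a definable decoding for each $k$ and then selects the parameter by saturation. The hard part throughout is verifying this ``genericity'' of the coding family inside the given algebraic structure, and — as in the boolean case — arranging it uniformly over all arities; once that is done, the choice of the defining parameter is a routine saturation argument.
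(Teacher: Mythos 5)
You correctly reproduce the $(\Z;+,\times)$ and generic-predicate cases, and you correctly flag the obstruction in the boolean case: a single independent family $A$ cannot directly cut out arbitrary subsets of $A^k$, because the natural encoding $a_1 \wedge \cdots \wedge a_k$ is symmetric in the $a_i$. But your proposed repair (building a $k$-dependent grid for each $k$) gives up the requirement of a single $A$, and you concede you cannot patch this. The missing ingredient is Lemma~\ref{lem:max-1}: trace maximality is equivalent to the a priori much weaker condition that one fixed infinite $A$ codes every $k$-hypergraph (i.e., every symmetric relation on distinct $k$-tuples) for all $k$ simultaneously. Its proof passes from $A$ to the sequence of pairs $c_i = (a_i, b_i)$, first extracts a codable linear order from the graph hypothesis, and then by induction on $k$ with a case split over the (finitely many) quantifier-free order types of $k$-tuples recovers arbitrary subsets. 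With that reduction in hand, the boolean case needs only Lemma~\ref{lem:boolean}: for independent $A$ the conjunctions over distinct $k$-subsets are ``almost free'' in the algebra, so by saturation a single element $c$ can be placed above precisely the prescribed hyperedges, and by Lemma~\ref{lem:max-1} that is enough.

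The same split drives the PAC/PRC cases. When one can reach a prime $q \ne \chara(K)$ with the $q$th power map non-surjective, the non-symmetric encoding $f(\alpha_0,\ldots,\alpha_{k-1}) = t^k + \alpha_{k-1}t^{k-1} + \cdots + \alpha_0$ (with $t$ transcendental over $\Q(A)$) together with Duret's lemma lets one apply Lemma~\ref{lem:max} directly. In the Artin--Schreier case, though, the natural encoding $a_1 a_2 \cdots a_k$ over an algebraically independent set is symmetric, and one is again forced through Lemma~\ref{lem:max-1}. So the hypergraph reduction is not an optional convenience but the load-bearing step of the proof, and your proposal --- which lacks it --- is genuinely incomplete in the boolean and positive-characteristic field cases.
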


Working up to trace equivalence erases ``$\mathrm{NSOP}$-axis" classification theoretic structure and highlights ``$\nip$-axis" classification theoretic structure.

\newpage
We now give a list of examples.

\begin{enumerate}[leftmargin=*,itemsep=.1cm]
\item Any $\nip$ structure is trace equivalent to its Shelah completion.
A $\nip$ expansion $\Sa O$ of a linear order $(O;\prec)$ is trace equivalent to any expansion of $\Sa O$ by convex subsets of $O$.
\item The following are trace equivalent: $(\Z;+,<)$, $(\R;+,<,\Z)$, $(\R;+,<,\Z,\Q)$, $(\Z^n;+,\prec)$ for any group order $\prec$ on $\Z^n$, $(\Z^n;+,C_\upchi)$ where $C_\upchi$ is the cyclic group order on $\Z^n$ induced by an injective character $\upchi\colon\Z^n\to\R/\Z$, and the disjoint union of $(\Z;+)$ and $(\R;+,<)$.
\item The following structures are trace equivalent: $(\R;+,<)$, $(\R;+,<,\Q)$, $(\R/\Z;+,C)$, and $(\R/\Z;+,C,\Q/\Z)$, here $C$ is the counterclockwise cyclic order on $\R/\Z$.
\item All infinite finitely generated abelian groups are trace equivalent.
\item The following abelian groups are trace equivalent: $\Q$, $\Q/\Z$, and $\Z(p^\infty)$ for any prime $p$.
\item $\Th(\Z;+)$ trace defines any finite rank torsion free abelian group and $\Th(\Z;+,<)$ trace defines any finite rank ordered abelian group.
\item If $(H;+,\prec)$ is an archimedean ordered abelian group (more generally an ordered abelian group which admits only finitely many definable convex subgroups) then $(H;+,\prec)$ is trace equivalent to the disjoint union of $(H;+)$ and $(\R;+,<)$.
\item Let $\mathbf{s}(x)=x+1$.
The following are trace equivalent: $(\Z;<)$, any infinite discrete linear order, $(\R;<,\mathbf{s})$, $(\R;<,\Z)$, $(\R;<,\Z,\mathbf{s})$, and the disjoint union of $(\Z;\mathbf{s})$ and $(\R;<)$.
\item All finite extensions of $\Q_p$ are trace equivalent.
\item The generic countable $k$-hypergraph is trace equivalent to the generic countable $k$-ary relation.
More generally: the generic countable $k$-hypergraph is trace equivalent to any $(k-1)$-independent structure which admits quantifier elimination in a finite relational language of airity $\le k$.
In particular the Erd\H{o}s-Rado graph is trace equivalent to any $\mathrm{IP}$ structure which admits quantifier elimination in a finite binary language.
\item $(\R;<)$ is trace equivalent to, but does not interpret, $(\R;<,\Q)$.
\item A primitive rank $1$ finitely homogeneous $\nip$ structure is trace definable in $(\Q;<)$.
\item Fix positive $\lambda\in\R$.
The expansion of $(\Q;+,<)$ by all sets of the form $$\{ (\beta_1,\ldots,\beta_n)\in\Q^n: (\lambda^{\beta_1},\ldots,\lambda^{\beta_n})\in X\}\quad \text{for semialgebraic } X\subseteq\R^n$$ is trace equivalent to $\rfield$.
This structure does not interpret an infinite field.
\item Let $\B$ be the collection of balls in $\Q_p$ and $\Sa B$ be the structure induced on $\B$ by $\Q_p$.\\
Then $\Sa B$ is trace equivalent to $\Q_p$ but does not interpret an infinite field.
\end{enumerate}

\medskip
Of course these examples are only interesting for us if they cannot be strengthened to interpretations, at least aside from the obvious interpretations.
We gather many non-interpretation results in Appendix A.
Just a few of these are mentioned on the list above.

\medskip
Trace definibility is also considered by Guingona and Parnes~\cite{Guingona-Parnes}, who refer to a trace definition of $\Sa O$ as an ``$\Sa O$-configuration".
They only consider this in the case when $\Sa O$ is the \Fraisse limit of an algebraically trivial \Fraisse class.
Our main innovation is to consider trace definibility for arbitrary structures.
Guingona and Parnes are motivated by work on indiscernible collapse, this also provides motivation for us.
We discuss indiscernible collapse.

\medskip
Suppose that $\Sa I$ is a homogeneous structure in a finite relational language whose age has the Ramsey property.
We let $\Cal C_{\Sa I}$ be the class of theories $T$ such that the monster model of $T$ does not admit an uncollapsed indiscernible picture of $\Sa I$.

\begin{theorem}
\label{thm}
Let $\Sa I, \Sa J$ be homogeneous structures in finite relational languages whose ages have the Ramsey property.
Let $\monster$ be a monster model of $T$.
Then $\monster$ admits an uncollapsed indiscernible picture of $\Sa I$ if and only if $T$ trace defines $\Sa I$.
Furthermore $\Cal C_{\Sa I}=\Cal C_{\Sa J}$ if and only if $\Sa I$ and $\Sa J$ are trace equivalent.
\end{theorem}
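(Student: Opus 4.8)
The plan is to prove the two biconditionals separately; the first, for a single monster model, carries all the content, and the second is then formal. For the direction ``uncollapsed picture implies $T$ trace defines $\Sa I$'', I would start from an uncollapsed $\Sa I$-indexed indiscernible picture $a\colon I\to\monster^k$ in $\monster$ and exploit that the language of $\Sa I$, being finite and relational, admits only finitely many quantifier-free $n$-types $p_1,\dots,p_N$ for each $n$, each pinned down by which of the finitely many atomic formulas in $x_1,\dots,x_n$ it contains. By $\Sa I$-indiscernibility the complete type $\Phi_j:=\tp^\monster(a(\bar\imath))$ over $\emptyset$ depends only on $p_j=\qftp^{\Sa I}(\bar\imath)$, and uncollapsedness of the picture amounts to the $\Phi_j$ being pairwise distinct; I would then separate these finitely many distinct complete types by $\emptyset$-definable $Y_1,\dots,Y_N\subseteq\monster^{nk}$ with $Y_j\in\Phi_j$ and $Y_j\notin\Phi_l$ for $l\ne j$. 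Since, by homogeneity and hence quantifier elimination, every $\Sa I$-definable $X\subseteq I^n$ has the form $\{\bar\imath:\qftp^{\Sa I}(\bar\imath)\in P\}$ for some $P\subseteq\{p_1,\dots,p_N\}$, the set $Y=\bigcup_{p_j\in P}Y_j$ pulls $X$ back along $a$, distinctness of the $\Phi_j$ supplying the nontrivial containment; and $a$ is injective because for $i\ne j$ the pairs $(i,j)$ and $(i,i)$ have distinct quantifier-free types. Hence $\monster$, and so $T$, trace defines $\Sa I$.

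For the converse, ``$T$ trace defines $\Sa I$ implies $\monster$ admits an uncollapsed picture'', I would fix $\Sa M\models T$ and an injection $\uptau\colon I\to M^k$ witnessing that $\Sa M$ trace defines $\Sa I$, so that each atomic $R(\bar x)$ is matched by an $\Sa M$-definable $Y_R$ with $\Sa I\models R(\bar\imath)\iff\Sa M\models\uptau(\bar\imath)\in Y_R$. The key input is the modeling property for $\Sa I$-indexed indiscernibles, available because the age of $\Sa I$ has the Ramsey property: it yields an $\Sa I$-indexed indiscernible picture $a\colon I\to\monster^k$ inside $\monster$ that is locally based on $(\uptau(i))_{i\in I}$, meaning that whenever $\monster\models\psi(a(\bar\imath))$ there is $\bar\jmath$ with $\qftp^{\Sa I}(\bar\jmath)=\qftp^{\Sa I}(\bar\imath)$ and $\Sa M\models\psi(\uptau(\bar\jmath))$. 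To check $a$ is uncollapsed I would pick two distinct quantifier-free types realised in $\Sa I$, observe via quantifier elimination that they disagree on some atomic $R$, choose realisations $\bar\imath,\bar\imath'$ in $I$ with $\Sa I\models R(\bar\imath)$ and $\Sa I\models\neg R(\bar\imath')$ on the relevant coordinates, and feed the formula ``$\bar y\notin Y_R$'' into local basedness to force $\monster\models a(\bar\imath)\in Y_R$ and, symmetrically, $\monster\models a(\bar\imath')\notin Y_R$; thus $a(\bar\imath)$ and $a(\bar\imath')$ have different $\monster$-types, so no two quantifier-free types collapse and $a$ is the required picture.

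The second biconditional would then be purely formal. By the first, $\Cal C_{\Sa I}$ is exactly the class of theories that do not trace define $\Sa I$, and one checks that $T$ trace defines $\Sa I$ iff $T$ trace defines $\Th(\Sa I)$ — the forward direction because $\Sa I$ is a $\Th(\Sa I)$-model trace definable in a $T$-model, which suffices by the criterion recorded in the introduction, the reverse being immediate. If $\Sa I$ and $\Sa J$ are trace equivalent then $\Th(\Sa I)$ and $\Th(\Sa J)$ trace define each other, so by transitivity of trace definability $T$ trace defines $\Th(\Sa I)$ iff it trace defines $\Th(\Sa J)$, that is $\Cal C_{\Sa I}=\Cal C_{\Sa J}$; conversely, if $\Cal C_{\Sa I}=\Cal C_{\Sa J}$, then taking $T=\Th(\Sa I)$ — which is not in $\Cal C_{\Sa I}$, since $\Sa I$ trace defines itself — shows $\Th(\Sa I)$ trace defines $\Sa J$, hence $\Th(\Sa J)$, and symmetrically $\Th(\Sa J)$ trace defines $\Th(\Sa I)$, so $\Sa I$ and $\Sa J$ are trace equivalent.

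I expect the main obstacle to be the converse half of the first biconditional, and within it the invocation of the modeling property for $\Sa I$-indexed indiscernibles in exactly the right form; this is where the Ramsey hypothesis on the age of $\Sa I$ is consumed, and it is a known result that I would cite rather than reprove. The secondary point requiring care is reconciling the definition of ``uncollapsed indiscernible picture'' with the assertion that the map from quantifier-free types of $\Sa I$ to complete $\monster$-types is injective; here the finiteness and relationality of the language of $\Sa I$ — giving finitely many quantifier-free types of each arity together with quantifier elimination controlled by finitely many atomic formulas — is precisely what lets both directions proceed with genuine definable sets rather than merely type-definable ones.
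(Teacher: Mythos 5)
Your proof takes essentially the same route as the paper's: Proposition~\ref{prop:picture} is established by extracting an $\Sa I$-indexed indiscernible picture locally based on the trace-defining injection (via the modeling property, which is where the Ramsey hypothesis is consumed) and exploiting that a finite relational language with quantifier elimination gives only finitely many quantifier-free $n$-types, separable by definable sets; Proposition~\ref{prop:ramsey} then follows formally, as you say.

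One correction of substance: the paper's notion of an uncollapsed picture is relative to a small parameter set $A$, so in your forward direction you should take the types $\Phi_j := \tp_{\monster}(a(\bar\imath)\mid A)$ and the separating sets $Y_j$ to be $A$-definable, not over $\emptyset$. Uncollapsedness of an $A$-indiscernible picture gives that the $\tp_{\monster}(\cdot\mid A)$ are pairwise distinct, but this need not survive restricting to $\emptyset$; as written, your claim that uncollapsedness amounts to the $\emptyset$-types being pairwise distinct is false whenever the picture is collapsed over $\emptyset$ but uncollapsed only over some nontrivial $A$. Since a trace definition is permitted parameters, carrying $A$ through changes nothing else. Similarly, in the converse direction you should note (or invoke Proposition~\ref{prop:trace-theories} and universality of $\monster$) that the $T$-model $\Sa M$ trace defining $\Sa I$ can be taken to be an elementary submodel of $\monster$, so that the basedness argument takes place in $\monster$ and the sets $Y_R$ are $A$-definable there.
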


This gives us motivation to classify finitely homogeneous structures up to trace equivalence.

\medskip
\textbf{Weak Conjecture:} There are $\aleph_0$ finitely homogeneous structures up to trace equivalence.

\medskip\textbf{Strong Conjecture:} For every $k$ there are only finitely many finitely homogeneous structures in a $k$-ary language up to trace equivalence.

\medskip
Given a finitely homogeneous structure $\Sa O$ we let $\mathcal{C}_{\Sa O}$ be the class of theories which do not trace define $\Sa O$.
The general hope is that each $\Cal C_{\Sa O}$ should be the class of theories satisfying some reasonable $\nip$-theoretic property and that we can find fundamental $\nip$-theoretic properties by classifying finitely homogeneous structures up to trace equivalence.
We prove the binary case of the weak conjecture by combining the recent Onshuus-Simon theorem that there are only $\aleph_0$ homogeneous $\nip$ structures in a binary language up to interdefinibility together with the fact that any homogeneous $\mathrm{IP}$ structure in a finite binary language is trace equivalent to the Erd\H{o}s-Rado graph.
We give more evidence in Section~\ref{section:conj}.

\medskip\noindent
We now describe the original motivation for trace definibility.
We first recall Fact~\ref{fact:ps}, a special case of the Peterzil-Starchenko o-minimal trichotomy~\cite{PS-Tri}.
We let $\rvec$ be the ordered vector space $(\R;+,<,(t \mapsto \lambda t)_{\lambda \in \R})$, recall that $\rvec$ admits quantifier elimination.
A subset of $\R^m$ is \textbf{semilinear} if it is definable in $\rvec$.

\begin{fact}
\label{fact:ps}
The following are equivalent for any o-minimal expansion $\Sa R$ of $(\R;+,<)$.
\begin{enumerate}
    \item $\Sa R$ does not define an isomorphic copy of $\rfield$,
    \item $\Sa R$ does not define an infinite field,
    \item $\Sa R$ is locally modular,
    \item $\Sa R$ is a reduct of $\rvec$.
\end{enumerate}
\end{fact}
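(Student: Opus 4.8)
The plan is to prove the cycle $(4)\Rightarrow(3)\Rightarrow(2)\Rightarrow(1)$ and then close the loop with the deep implication $(1)\Rightarrow(4)$, where the Peterzil--Starchenko trichotomy \cite{PS-Tri} does all the real work. The three easy implications I would handle as follows. $(2)\Rightarrow(1)$ is trivial, since $\rfield$ is an infinite field, so if $\Sa R$ defines no infinite field it defines no copy of $\rfield$. For $(4)\Rightarrow(3)$: the ordered vector space $\rvec$ is linear in the sense of Loveys--Peterzil, hence locally modular; linearity is manifestly inherited by reducts (the defining condition concerns definable families of unary functions); and for o-minimal expansions of an ordered group, linear is equivalent to locally modular. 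For $(3)\Rightarrow(2)$ I argue the contrapositive: if $\Sa R$ defines an infinite field $K$, then by o-minimality $K$ is real closed, and the definable two-parameter family of affine lines $\{\,y=ax+b : a,b\in K\,\}$ is a family of plane curves meeting pairwise in at most one point; this is the standard pseudoplane obstruction to local modularity, so $\Sa R$ is not locally modular.

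The implication $(1)\Rightarrow(4)$ is the heart of the matter and I would split it in two. The first step upgrades $(1)$ to $(2)$: if $\Sa R$ defines \emph{some} infinite field, then by the structure theory of fields definable in o-minimal expansions of $(\R;+,<)$ (Pillay--Steinhorn, Peterzil--Starchenko, Otero--Peterzil--Pillay) that field is real closed and $\Sa R$ in fact defines an isomorphic copy of $\rfield$; so $\lnot(2)$ gives $\lnot(1)$. Granting $(2)$, the second step applies the trichotomy pointwise: at each $a\in\R$ the germ of $\Sa R$ at $a$ is either trivial, linear, or defines a real closed field on an interval around $a$. Since $\Sa R$ expands the ordered group $(\R;+,<)$, no point is trivial, and by $(2)$ no point is field-like; hence $\Sa R$ is linear near every point. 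A globalization argument --- using o-minimality together with the ambient group operation to patch the local scalar actions into one coherent action --- promotes pointwise linearity to global linearity, and the classification of linear o-minimal expansions of ordered groups presents $\Sa R$ as a reduct of an ordered vector space over an ordered division ring $D$ acting on $(\R;+,<)$ by order-preserving endomorphisms. Finally such a $D$ embeds into $\R$ --- an order-preserving additive self-map of $\R$ is necessarily multiplication by a nonnegative real --- so this vector space structure is itself a reduct of $\rvec$, giving $(4)$.

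The main obstacle is, unsurprisingly, $(1)\Rightarrow(4)$: the trichotomy theorem itself, the passage from local to global linearity, and the identification of global linear o-minimal structures with reducts of ordered vector spaces are each substantial pieces of o-minimality. If one is content to invoke \cite{PS-Tri} and the accompanying structure theory of linear o-minimal structures as black boxes, everything else in the Fact is routine bookkeeping; the point of recording it here is simply to have the clean four-way equivalence in the form used below.
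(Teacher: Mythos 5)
The paper does not prove this statement. It is recorded as a \emph{Fact}, cited directly as ``a special case of the Peterzil--Starchenko o-minimal trichotomy~\cite{PS-Tri}'', so there is no in-paper proof to compare against. Your proposal is a reasonable and essentially accurate reconstruction of how the equivalence is assembled from the cited literature; in particular your reduction of $(2)\Rightarrow(4)$ to the linear/trichotomy structure theory matches what the paper itself later records as Fact~\ref{fact:lovey}, whose second half is exactly the dichotomy you invoke (field-on-an-interval versus reduct of an ordered $\D$-vector space, with $\V=\rvec$ when the group is $(\R;+,<)$).

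Two places where you underestimate the weight of ``routine bookkeeping.'' First, in your step $(1)\Rightarrow(2)$ (contrapositive: a definable infinite field yields a definable copy of $\rfield$), the passage from ``$\Sa R$ defines some infinite field $K$'' to ``$\Sa R$ defines an isomorphic copy of $(\R;+,\times)$'' is not an immediate consequence of $K$ being real closed. One must pass from $K\subseteq\R^n$ to a one-dimensional presentation, observe that the field operations are continuous by o-minimality, and then use that $\Sa R$ expands $(\R;+,<)$ to conclude the resulting real closed field on an interval is Dedekind-complete, hence isomorphic to $\R$; this is where one really needs results in the spirit of Marker--Pillay--Peterzil or Laskowski--Steinhorn, not just Otero--Peterzil--Pillay's classification of interpretable fields. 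Second, your ``globalization argument'' for passing from pointwise linearity to global linearity over an ordered group is not a separate patching lemma you supply; it is precisely the Loveys--Peterzil structure theorem, and for expansions of ordered groups the local/global distinction is already collapsed in the standard statement of the trichotomy (again, compare the second clause of Fact~\ref{fact:lovey}). Neither point is an error, only an underselling of what the black boxes contain; the skeleton of your argument is sound.
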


\noindent
Fact~\ref{fact:ps} asserts the equivalence of (a) absence of some algebraic structure, (b) an abstract model-theoretic ``linearity" notion, and (c) semilinearity of definable sets.
% Versions of Fact~\ref{fact:ps} hold for other o-minimal structures.
% We are interested in a different generalization.
It is a theorem of Simon~\cite{Simon-dp} that an expansion of $(\R;+,<)$ is o-minimal if and only it has dp-rank one.
It is natural to ask for an analogue of Fact~\ref{fact:ps} for dp-rank one expansions of $(\Q;+,<)$.
In Section~\ref{section:mann} we give an example which shows that we need different notions of definable algebraic structure and model-theoretic linearity.
We prove Theorem~\ref{thm:tri}.

\begin{theorem}
\label{thm:tri}
Suppose that $Q$ is a divisible subgroup of $(\R;+)$ and $\Sa Q$ is a dp-rank one expansion of $(Q;+,<)$.
Then the following are equivalent:
\begin{enumerate}
\item $\Th(\Sa Q)$ does not trace define $\rcf$,
\item $\Th(\Sa Q)$ does not trace define an infinite field,
\item $\Sa Q$ has near linear Zarankiewicz bounds,
\item any $\Sa Q$-definable $X \subseteq Q^n$ is of the form $Y \cap Q^n$ for a semilinear $Y \subseteq \R^n$.
\item $\Sa Q$ is trace equivalent to an ordered vector space.
\end{enumerate}
If $Q = \Q$ then $Th(\Sa Q)$ does not trace define $\rcf$ if and only if $\Sa Q$ is a reduct of the structure induced on $\Q$ by $\rvec$.
\end{theorem}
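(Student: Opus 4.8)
The plan is to establish the equivalences by running the cycle $(1)\Rightarrow(4)\Rightarrow(2)\Rightarrow(1)$, with $(3)$ and $(5)$ folded in via $(4)\Rightarrow(3)\Rightarrow(1)$ and $(4)\Rightarrow(5)\Rightarrow(3)$; only $(1)\Rightarrow(4)$ is substantial, and the concluding assertion for $Q=\Q$ drops out of $(1)\Leftrightarrow(4)$ together with the definition of the induced structure.

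First the routine implications. $(2)\Rightarrow(1)$ holds because $\rcf$ is a theory of an infinite field. For $(4)\Rightarrow(1)$, $(4)\Rightarrow(2)$ and $(4)\Rightarrow(3)$, observe that $(4)$ says exactly that the inclusion $Q\hookrightarrow\R$ witnesses $\Sa Q$ as trace definable in $\rvec$: every $\Sa Q$-definable relation is the restriction to $Q$ of a semilinear relation. So $(4)\Rightarrow(3)$ follows from the theorem of Basit, Chernikov, Starchenko, Tao and Tran that semilinear relations have near-linear Zarankiewicz bounds, together with the fact that restricting a relation to a subset only decreases its Zarankiewicz numbers; and $(4)\Rightarrow(1)$, $(4)\Rightarrow(2)$ reduce to the statements that $\rvec$ trace defines neither $\rcf$ nor any infinite field, which — since near-linear Zarankiewicz bounds pass to trace-definable theories by Theorem~\ref{thm:preservation} — follow from the classical fact that no infinite field has near-linear Zarankiewicz bounds (point--line incidences in the affine plane over the field already fail it). The same reasoning gives $(3)\Rightarrow(1)$. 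For $(4)\Rightarrow(5)$: $\Sa Q$ defines its ordered-group reduct $(Q;+,<)$, which is interdefinable with an ordered vector space (rational scalar multiplication being definable), so $\Sa Q$ trace defines an ordered vector space; for the reverse direction one uses $(4)$ together with item~(1) of the list of examples to absorb the extra convex and semilinear predicates appearing in the structure induced on $Q$ by $\rvec$, producing an honest ordered vector space that is trace equivalent to $\Sa Q$. Finally $(5)\Rightarrow(3)$, since ordered vector spaces are semilinear, hence have near-linear Zarankiewicz bounds, which then pass to $\Sa Q$.

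The core of the argument is $(1)\Rightarrow(4)$, which I would prove in contrapositive form: if $\Sa Q$ defines some $X\subseteq Q^n$ not of the form $Y\cap Q^n$ for a semilinear $Y\subseteq\R^n$, then $\Sa Q$ trace defines $\rcf$. The idea is to reduce to the Peterzil--Starchenko trichotomy. Using the structure theory of dp-minimal expansions of densely ordered divisible abelian groups — monotonicity and continuity of definable unary functions, and the resulting monotone cell decomposition — one passes from $\Sa Q$ to an o-minimal expansion $\Sa R$ of $(\R;+,<)$ obtained by ``completing'' the definable sets of $\Sa Q$ to $\R$, in such a way that $\Sa Q$ and $\Sa R$ are trace equivalent and the non-semilinearity of $X$ becomes non-semilinearity of definable sets in $\Sa R$, i.e. $\Sa R$ is not a reduct of $\rvec$. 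By Fact~\ref{fact:ps}, $\Sa R$ then defines an isomorphic copy of $\rfield$, hence interprets and so trace defines $\rcf$; by trace equivalence $\Sa Q$ trace defines $\rcf$, which is $\neg(1)$. (Alternatively one could try to run the Peterzil--Starchenko curve-to-field construction directly inside $\Sa Q$: a non-semilinear definable set yields a definable family of plane curves with degenerate incidences, from which a real-closed-field configuration, and hence a trace definition of $\rcf$, can be read off; but routing through the o-minimalization is cleaner and isolates where genuine work is needed.)

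For the last sentence, take $Q=\Q$ and let $\Sa I$ be the structure induced on $\Q$ by $\rvec$, so the basic relations of $\Sa I$ are exactly the sets $Y\cap\Q^n$ with $Y\subseteq\R^n$ semilinear; since $\Sa I$ is trace definable in $\rvec$, the easy implications above show it does not trace define $\rcf$. If $\Sa Q$ is a reduct of $\Sa I$ then $\Sa Q$ is trace definable in $\rvec$, hence does not trace define $\rcf$, i.e. $(1)$ holds; conversely $(1)$ gives $(4)$, which for $Q=\Q$ says precisely that every $\Sa Q$-definable subset of $\Q^n$ is a basic relation of $\Sa I$, i.e. that $\Sa Q$ is a reduct of $\Sa I$. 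The main obstacle throughout is the o-minimalization used in $(1)\Rightarrow(4)$: producing, from a dp-minimal expansion of a densely ordered divisible abelian group, a trace-equivalent o-minimal expansion of $(\R;+,<)$ that still witnesses non-semilinearity — this needs the monotonicity and cell-decomposition theory for such structures, and care that trace equivalence transports semilinearity in both directions. The bookkeeping in $(4)\Rightarrow(5)$, trading the induced semilinear structure for a genuine ordered vector space, is a secondary technical point.
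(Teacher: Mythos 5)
Your outline agrees with the paper's: the routine implications go through trace definability in $\rvec$ and near-linear Zarankiewicz bounds (Fact~\ref{fact:mct} and Proposition~\ref{prop:field-blank}, preserved under trace definability by Lemma~\ref{lem:blank-2}); the crux $(1)\Rightarrow(4)$ goes by constructing an o-minimal completion $\Sq Q$ of $(\R;+,<)$, proving $\Sa Q$ and $\Sq Q$ are trace equivalent, and then invoking the Peterzil--Starchenko trichotomy. You correctly isolate the trace equivalence $\Sa Q\sim\Sq Q$ as the place where genuine work happens; the paper proves the theorem by passing to $\Sh Q$ (Proposition~\ref{prop:she-0}), applying Fact~\ref{fact:completion-induced}, and then Corollary~\ref{cor:import}, which is Lemma~\ref{lem:} plus Proposition~\ref{prop:o-min-rigid}.

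Two things do need repair. First, you attribute the trace equivalence $\Sa Q\sim\Sq Q$ to ``monotonicity and cell-decomposition theory.'' That machinery (encapsulated in Fact~\ref{fact:pw-weak} and Fact~\ref{fact:completion-induced}) only produces $\Sq Q$ and identifies its induced structure on $Q$ with $\Sh Q$; the hard direction, that $\Th(\Sa Q)$ trace defines $\Sq Q$, is Lemma~\ref{lem:}, whose proof passes to an elementary extension $\Sa B$ of the induced structure realizing all definable $1$-types (the property~(D) material in Appendix~B), builds a standard-part map inside $\Sh B$ via externally definable convex subgroups, and verifies that closures of $\Sa R$-definable sets are $\Sh B$-definable. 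That is a Shelah-completion argument, not a cell-decomposition argument. Second, your ``reverse direction'' of $(4)\Rightarrow(5)$ --- absorbing predicates via item~(1) of the example list --- does not work: that item only lets one add \emph{convex} unary predicates to a NIP expansion of a linear order, while the structure $\Sa I$ induced on $Q$ by $\rvec$ has a relation for every semilinear subset of every $\R^n$. Producing a single ordered vector space trace equivalent to $\Sa Q$ requires the full trace equivalence $\Sa Q\sim\Sq Q$ (Lemma~\ref{lem:}) together with Lemma~\ref{lem:ovs}, which identifies $\Sq Q$ as trace equivalent to the ordered vector space over the division ring of scalars $\lambda$ such that $t\mapsto\lambda t$ is definable on some germ at $0$. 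With those two steps fleshed out your proof coincides with the paper's.
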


See Section~\ref{section:almost-lin} for a definition of ``near linear Zarankiewicz bounds".
A theory with this property is $\nip$ and cannot trace define an infinite field.
Ordered vector spaces were recently shown to have near linear Zarankiewicz bounds in \cite{zaran}.
I don't know if this is actually a good model-theoretic linearity notion for general $\nip$ theories, but it's a start.

\medskip
It's natural to ask what happens when we remove the assumption of divisibility from Theorem~\ref{thm:tri}.
See Conjecture~\ref{conj:2} for how to proceed on this.

\medskip
I claim that the right way to establish ``Zil'ber dichotomies" between combinatorial simplicity and interpretable algebraic structure in $\nip$ is to substitute ``trace definable" for ``interpretable".
For this claim to be reasonable I must show that the existing dichotomies are unaffected by this substitution.
Theorem~\ref{thm:rigid} shows that the claim is partially reasonable.

\begin{theorem}
\label{thm:rigid}
\hspace{.1cm}
\begin{enumerate}[leftmargin=*]
\item If $\Sa M$ is o-minimal then $\Sa M$ trace defines an infinite group iff $\Sa M$ defines an infinite group.
\item If $\Sa M$ is an o-minimal expansion of an ordered abelian group then $\Sa M$ trace defines an infinite field if and only if $\Sa M$ defines an infinite field.
\item If $\Sa M$ is $\aleph_0$-stable and $\aleph_0$-categorical then $\Sa M$ trace defines an infinite group if and only if $\Sa M$ interprets an infinite group.
\end{enumerate}
\end{theorem}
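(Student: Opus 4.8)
The three backward implications are immediate and uniform: if $\Sa M$ defines an infinite group or field then it trace defines one via the identity injection, and if $\Sa M$ interprets an infinite group then composing with any section (not necessarily definable) of the interpreting quotient is a trace definition. So in each part the content lies in the forward direction, which I would prove by contraposition: assume $\Sa M$ does not define, respectively interpret, the algebraic object, invoke the known Zil'ber-type dichotomy for the relevant class to pin $\Sa M$ down, and then show such an $\Sa M$ cannot trace define the object either.

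Part (2) should be the cleanest. If $\Sa M$ is an o-minimal expansion of an ordered abelian group with no definable infinite field, then---triviality being excluded by the group operation, and the field alternative being excluded by hypothesis via the o-minimal field-chunk theorem---the Peterzil--Starchenko trichotomy \cite{PS-Tri} forces every point of $\Sa M$ to have linear type, whence (Loveys--Peterzil) $\Sa M$ is a reduct of an ordered vector space over an ordered division ring. Ordered vector spaces have near linear Zarankiewicz bounds by \cite{zaran}, and this property is inherited by reducts, so $\Sa M$ has near linear Zarankiewicz bounds; hence, as noted after Theorem~\ref{thm:tri}, $\Sa M$ does not trace define an infinite field. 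This is essentially the implication $(3)\Rightarrow(1)$ of Theorem~\ref{thm:tri}, now available unconditionally because o-minimality collapses the trichotomy to ``linear vs.\ defines a field''.

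For part (1), assume $\Sa M$ is o-minimal and defines no infinite group; then $\Sa M$ is trivial by \cite{PS-Tri} (in a non-trivial o-minimal structure one builds a definable group-interval). Using the structure of trivial o-minimal structures I would reduce, up to trace equivalence, to expansions of linear orders by unary predicates and rigid definable partial functions, and ultimately to the statement that \emph{a pure linear order does not trace define an infinite group}---this is where I expect the real work. The plan: given a putative trace definition $\uptau\colon G\to M^m$ with $\Sa M\models\dlo$, analyse the definable family of left translations. For each $a\in G$ the graph of $b\mapsto a\cdot b$, pulled back, is cut out inside $\uptau(G)^2$ by a quantifier-free order formula with parameters $\uptau(a)$; since the only elements of a linear order definable over parameters are the parameters themselves, this should pin each coordinate of $\uptau(a\cdot b)$ to a coordinate of $\uptau(a)$ or of $\uptau(b)$, selected by one of finitely many order-patterns. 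Combining this with injectivity of $\uptau$, associativity, and the symmetric statement for right translations should force $G$ to be finite. A cleaner route, closer to the rest of the paper, would be to isolate one combinatorial configuration forced by trace-defining an infinite group and to show a linear order---being distal, with only a linear-order structure induced on any subset---omits it; triviality of $\Sa M$ then transports the obstruction.

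For part (3), assume $\Sa M$ is $\aleph_0$-stable and $\aleph_0$-categorical with no interpretable infinite group. By the Cherlin--Harrington--Lachlan analysis together with Hrushovski's coordinatization theorem, $\Sa M$ is coordinatized entirely by degenerate geometries, so its geometry is trivial. Inducting along the coordinatizing tree, this reduces to the base case that a pure infinite set does not trace define an infinite group: a trace definition $\uptau\colon G\to M^m$ would make the graph of the group operation a subset of $\uptau(G)^3$ cut out in $M^{3m}$ by a boolean combination of equalities among the $3m$ coordinates, and pushing three mutually generic elements of $G$ through this, together with injectivity of $\uptau$, forces $G$ finite. I expect the delicate point here to be the reduction---turning ``coordinatized by degenerate geometries'' into a genuine trace-definability statement, in particular checking that the fibering in the coordinatization is harmless---rather than the group-free base case. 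Finally, note that on the non-degenerate side of each dichotomy the object produced is one-dimensional (an interval group or a real closed field in the o-minimal cases, an $\F_q$-space in the last), so in all three settings trace-defining an infinite group or field is no stronger than defining, respectively interpreting, a one-dimensional one.
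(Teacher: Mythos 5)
Your backward implications and Part (2) are correct and match the paper. For Part (2) your argument is essentially the paper's Proposition~\ref{prop:o-min-rigid}: invoke the Peterzil--Starchenko trichotomy (Fact~\ref{fact:lovey}) to pin $\Sa M$ down as a reduct of an ordered vector space, then use near linear Zarankiewicz bounds (Fact~\ref{fact:mct}, Proposition~\ref{prop:field-blank}, Lemma~\ref{lem:blank-2}) to rule out trace-defining an infinite field.

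Part (1) has a genuine gap. The proposed reduction from a trivial o-minimal structure ``to expansions of linear orders by unary predicates and rigid definable partial functions, and ultimately to \ldots{} a pure linear order'' is not justified and is not what the paper does. A trivial o-minimal structure such as the expansion of $(\R;<)$ by all increasing semialgebraic partial functions is very far from a colored linear order: it carries a huge family of definable functions, and discarding them in a ``reduce to pure linear orders'' step throws away exactly the structure you have to control. The paper's proof of Theorem~\ref{thm:trivial} works directly with trivial o-minimal structures: it first shows every definable function is piecewise essentially one-variable (Fact~\ref{fact:trivial}), derives a decomposition into \emph{simple cells}, cells whose defining functions are essentially one-variable (Proposition~\ref{prop:cell decomp}), and then runs a Ramsey argument on a long sequence of group elements whose $k$-fold products all fall in one simple cell; for $k>2m$ the one-variable defining functions must ignore some block of coordinates, contradicting injectivity of the product map. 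Your remark that ``the only elements of a linear order definable over parameters are the parameters'' resembles the paper's separate Proposition~\ref{prop:lo-group} (which uses Poizat's/Rubin's structure theorem Fact~\ref{fact:lo} on types in linear orders), but that argument only applies once one is actually over a colored linear order, which your reduction does not deliver. Your ``cleaner route'' -- isolating a combinatorial obstruction omitted by linear orders and transporting it via triviality -- is an honest sketch, but it is not carried out, and that is precisely where the work is.

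Part (3) takes a genuinely different route from the paper and is left incomplete. You propose to induct directly along the CHL/Hrushovski coordinatization tree down to the base case of a pure set. The paper instead invokes Lachlan's theorem that a trivial $\aleph_0$-stable $\aleph_0$-categorical structure is interpretable in a dense linear order, and then applies Part (1) to $\dlo$; the paper explicitly remarks on the amusement of proving a stability-theoretic fact via o-minimality. Your base case (a pure set does not trace define an infinite group) is fine, but you flag the fibering step yourself as the delicate point and do not show how to push trace-non-definability along the coordinatizing tree. As written this is not a proof; the paper's detour through Lachlan plus Part (1) is what makes the argument close.
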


Rather amusingly (3) is proven as a corollary to (1) and Lachlan's theorem that a trivial $\aleph_0$-stable $\aleph_0$-categorical structure is interpretable in a dense linear order.
I don't know of another case in which a result on stable structures is proven via o-minimality, possibly because results on stability are usually proven by people who know stability theory.

% This involves some algebra, so we briefly sketch an easier proof an arbitrary $L$-structure $\Sa O$ is trace definable in a simple theory.
% After Morleyizing we may suppose that $L$ is relational and $\Sa O$ admits quantifier elimination.
% Let $\emptyset_L$ be the empty $L$-theory.
% Then $\emptyset_L$ has a model companion $\emptyset^*_L$ and $\emptyset^*_L$ is simple, see~\cite{KRExp}.
% So we may suppose that $\Sa O$ is a substructure of $\Sa M \models \emptyset^*_L$.
% An application of Proposition~\ref{prop:qe-trace} below shows that $\Sa M$ trace defines $\Sa O$ via the identity $O \to M$.

\medskip
\begin{center}
\textbf{The contents by section.}
\end{center}

\vspace{.2cm}\noindent
\textbf{\ref{section:conventions}}\quad Background information and terminology (not all of which is standard).

\vspace{.1cm}\noindent\textbf{\ref{section:few general}}\quad The minimal amount of general results needed to discuss some examples.

\vspace{.1cm}\noindent
\textbf{\ref{section:few}}\quad Some interesting examples.

\vspace{.1cm}\noindent
\textbf{\ref{section: basic trace}}\quad 
General theory, indiscernible collapse, preservation of classification-theoretic properties.

\vspace{.1cm}\noindent
\textbf{\ref{section:examples}}\quad More examples.

\vspace{.1cm}\noindent
\textbf{\ref{section:dicho}}\quad Dichotomies, proofs of Theorems~\ref{thm:tri} and \ref{thm:rigid}.
Near linear Zarankiewicz bounds.

\vspace{.1cm}\noindent
\textbf{\ref{section:fields}}\quad Some results on trace definitions between fields.

\vspace{.1cm}\noindent
\textbf{\ref{section:open core}}\quad Some comments on expansions of $(\R;+,<)$ trace definable in o-minimal structures.

\vspace{.1cm}\noindent
\textbf{\ref{section:conj}}\quad    Trace definibility between finitely homogeneous structures.

\subsection*{Acknowledgements}
Six years ago or so John Goodrick told me that he would like to have an analogue of the Peterzil-Starchenko trichotomy for dp-rank one ordered structures.
Theorem~\ref{thm:tri} came from this.
I was also influenced by conversations with Artem Chernikov and Pierre Simon.
Artem told me how to use linear Zarankiewicz bounds to show that the Shelah expansion of an ordered vector space cannot interpret an infinite field and Pierre told me that there should be more structure in $\nip$, and more properties definable via indiscernible collapse, then was thought.

\section{Conventions and background}
\label{section:conventions}

\subsection{Notation}
Throughout $n,m,k$ are natural numbers.
We use $\monster,\nonster,\oonster,\ldots$ to denote monster models and $\monsterset,\nonsterset,\oonsterset,\ldots$ to denote their domains.
Given an $L$-structure $\Sa M$ and $L^* \subseteq L$ we let $\Sa M \! \upharpoonright \! L^*$ be the $L^*$-reduct of $\Sa M$.
Given a structure $\Sa M$, $a \in M^n$, and $A \subseteq M$ we let $\tp_{\Sa M}(a|A)$ be the type of $a$ over $A$ and let $\tp_{\Sa M}(a) = \tp_{\Sa M}(a|\emptyset)$.
If $\Sa M$ and $\Sa M^*$ are two structures with domain $M$ then we say that $\Sa M$ and $\Sa M^*$ are \textbf{interdefinable} if $\Sa M$ is a reduct of $\Sa M^*$ and vice versa.
We will also say that $\Sa M$ and $\Sa N$ are \textbf{isointerdefinable}\footnote{The only name for this that I have seen in the literature is ``interdefinable", but this conflicts with the usual meaning of ``interdefinable". Unable to coin a good name, I use a suggestive one.} if there is a structure $\Sa N^*$ on $N$ such that $\Sa N$ is interdefinable with $\Sa N^*$ and $\Sa N^*$ is isomorphic to $\Sa M$.

\medskip
We use some standard acronyms for theories.
An \textbf{oag} is a (totally) ordered abelian group, $\rcf$ is the theory of real closed fields, $\mathrm{RCVF}$ is the theory of a real closed field expanded by a non-trivial convex valuation, $\dlo$ is the theory of dense linear orders without endpoints, and $\doag$ is the theory of divisible ordered abelian groups.

\medskip\noindent
Let $\Sa M$ be a structure and $A \subseteq M^m$.
The \textbf{structure induced} on $A$ by $\Sa M$ is the structure $\Sa A$ with an $n$-ary predicate $P_X$ defining $X \cap A^n$ for every $\Sa M$-definable $X \subseteq M^{mn}$.
Note that the induced structure admits quantifier elimination if and only if every $\Sa A$-definable subset of each $A^n$ is of the form $X \cap A^n$ for some $\Sa M$-definable $X \subseteq M^{mn}$.

\medskip\noindent
Suppose that $\Sa M$ expands a linear order $(M;<)$.
Then $\Sa M$ is \textbf{weakly o-minimal} if every definable subset of $M$ is a finite union of convex sets and $\Th(\Sa M)$ is weakly o-minimal if the same holds in every elementary extension of $\Sa M$.
Note that $\Th(\Sa M)$ is weakly o-minimal iff for every formula $\varphi(x,y)$ with $|y| = 1$ there is $n$ such that $\{ a \in M : \Sa M \models \varphi(b,a) \}$ is a union of $\le n$ convex sets for all $b \in M^{|x|}$.
Fact~\ref{fact:wom-induced} follows from this by o-minimal cell decomposition.

\begin{fact}
\label{fact:wom-induced}
Suppose that $\Sa M$ is o-minimal, $A$ is a subset of $M$, and the structure $\Sa A$ induced on $A$ by $\Sa M$ admits quantifier elimination.
Then $\Th(\Sa A)$ is weakly o-minimal.
\end{fact}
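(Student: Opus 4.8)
The plan is to verify the combinatorial criterion for weak o-minimality recorded just above the statement: for every $\Sa A$-formula $\varphi(x,y)$ with $|y|=1$ there is $n\in\N$ such that $\{a\in A:\Sa A\models\varphi(b,a)\}$ is a union of at most $n$ convex subsets of $A$ for every $b\in A^{|x|}$. Since $\Sa A$ admits quantifier elimination, every such $\varphi$ is equivalent in $\Sa A$ to a Boolean combination of atomic formulas of the form $P_X(x_{i_1},\dots,x_{i_j})$; and the property of having a uniformly bounded number of convex components is stable under Boolean combinations — the union of a set with $\le n$ convex components and one with $\le n'$ has $\le n+n'$, and the complement in a linear order of a set with $\le n$ convex components has $\le n+1$ — so it suffices to treat a single atomic formula $P_X(x,y)$ with $|y|=1$, where $X\subseteq M^{|x|+1}$ is $\Sa M$-definable.

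First I would observe that for $b\in A^{|x|}$ one has $\{a\in A:\Sa A\models P_X(b,a)\}=A\cap X_b$, where $X_b=\{a\in M:(b,a)\in X\}$ is an $\Sa M$-definable subset of $M$. By o-minimal cell decomposition applied to $X\subseteq M^{|x|}\times M$, the family $(X_b)_{b\in M^{|x|}}$ has a uniformly bounded number of connected components: there is $n=n_X$ with each $X_b$ a union of at most $n$ points and open intervals, hence of at most $n$ convex subsets of $M$. Intersecting with $A$, and using that the order on $A$ is the restriction of $<$ (which belongs to the $\Sa M$-induced structure on $A$), each $A\cap X_b$ is a union of at most $n$ convex subsets of $(A;<)$, since the intersection of a convex subset of $M$ with $A$ is convex in $A$. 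This yields the bound for atomic formulas, uniformly in $b\in A^{|x|}$, and hence, by the reduction above, for all $\Sa A$-formulas; so $\Th(\Sa A)$ is weakly o-minimal.

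The argument is a direct unwinding of the definitions, so there is no serious obstacle; the two points to keep honest are (i) that quantifier elimination in $\Sa A$ is exactly what permits the reduction from arbitrary formulas to atomic ones — without it the induced structure could be far from weakly o-minimal — and (ii) that the bound $n_X$ produced by cell decomposition is uniform in the parameter $b$, which is what makes the conclusion a property of the theory rather than of the single model; as recorded in the text, ``is a union of $\le n$ convex sets'' is expressible by a first-order scheme uniformly in parameters, so the uniform bound verified in $\Sa A$ indeed gives weak o-minimality of $\Th(\Sa A)$.
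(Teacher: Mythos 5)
Your proof is correct and fills in precisely the steps the paper leaves to the reader: the paper's proof is the remark immediately preceding the statement (the uniform‑bound criterion for weak o‑minimality of the theory) together with the one‑line observation that it follows by o‑minimal cell decomposition, and your writeup spells out the same route — quantifier elimination reduces to atomic $P_X$'s, cell decomposition in $\Sa M$ gives a uniform bound on convex components of fibers, intersection with $A$ and Boolean combinations preserve the bound, and the criterion is a first‑order scheme so it transfers to all models of $\Th(\Sa A)$.
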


We also use the description of one-types in weakly o-minimal theories.

\begin{fact}
\label{fact:wom types}
Suppose that $\Sa M$ is weakly o-minimal.
Given a non-realized one-type $p(x)$ over $M$ we let $C_p : = \{ \alpha \in M : p \models \alpha < x \}$.
Then $p$ is definable if and only if $C_p$ is definable.
Then $p$ is determined by $C_p$ and $p$ is definable iff $C_p$ is definable.
For every downwards closed $C \subseteq M$ without a supremum there is a unique non-realized one-type $p$ such that $C = C_p$.
\end{fact}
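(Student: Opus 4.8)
The plan is to realize $p$ in a monster model $\monster\succ\Sa M$ by an element $\alpha$, so that $C_p=\{a\in M:a<\alpha\}$ is the lower half of the cut $\alpha$ induces on $M$ and, since $p$ is non-realized, $M\setminus C_p=\{a\in M:\alpha<a\}$. The only structural input I would use is the uniform form of weak o-minimality recorded above: for each formula $\varphi(x,y)$ there is $N_\varphi$ such that for every parameter tuple $b$ both $\varphi(\monster,b)$ and its trace $\varphi(\Sa M,b)=\varphi(\monster,b)\cap M$ are unions of at most $N_\varphi$ convex pieces.

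The heart of the argument is the claim that for $b\in M^{|y|}$ the truth value of ``$\varphi(x,b)\in p$'' is already computed from $C_p$ together with the $\Sa M$-definable set $\varphi(\Sa M,b)$. Decompose $\varphi(\Sa M,b)$ into its maximal convex pieces $I_1<\dots<I_r$, each with a lower and an upper boundary that is either a point of $M$ or a cut of $M$; then $\alpha\in\varphi(\monster,b)$ should hold exactly when the cut $(C_p,M\setminus C_p)$ is not separated from $\varphi(\Sa M,b)$ on both sides, i.e. when it is not the case that some point of $M\setminus\varphi(\Sa M,b)$ lies in $C_p$ beyond all of one piece while another lies in $M\setminus C_p$ below the next piece. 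Granting this, ``$p$ is determined by $C_p$'' is immediate; and if $C_p$ is definable then $\{b:\varphi(x,b)\in p\}$ becomes a Boolean combination of $\Sa M$-definable conditions on $b$ together with one application of the definable predicate for $C_p$, so $p$ is definable, while the converse is the instance $\varphi(x,y)=(y<x)$, for which $\{b:(b<x)\in p\}=C_p$.

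This middle step is where I expect the real difficulty. One must control a convex component $J$ of $\varphi(\monster,b)$ with $J\cap M=\emptyset$ sitting inside a gap of $M$, and argue that its position relative to $\alpha$ is nonetheless forced by how the points of $\varphi(\Sa M,b)$ accumulate at the two $M$-cuts bounding that gap; this is the point where the interaction between the cut carried by $p$ and the ``thickness'' of definable convex sets is delicate, and where uniform weak o-minimality has to be pushed. Once it is in place the rest is bookkeeping: for the last clause, given a downward closed $C\subseteq M$ with no supremum, the partial type $\{a<x:a\in C\}\cup\{x<a:a\in M\setminus C\}$ is finitely satisfiable in $\Sa M$ — a finite fragment only asks for a point of $M$ strictly between a finite subset of $C$ and a finite subset of its complement, which exists since $C$ has no maximum and $M\setminus C$ has no minimum — so it extends to a complete one-type $p$ over $M$; this $p$ is non-realized because no element of $M$ lies above all of $C$ and below all of $M\setminus C$, and $C_p=C$ by construction, while uniqueness of such $p$ is exactly the ``determined by $C_p$'' clause.
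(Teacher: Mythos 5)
The paper offers no proof to compare against: the text only remarks that Fact~\ref{fact:wom types} ``is an easy exercise.'' So I can only evaluate your proposal on its own terms. You set things up correctly, and you explicitly flagged the one step you did not close, namely the claim that the truth value of $\varphi(x,b)\in p$ is already computed from $C_p$ together with $\varphi(\Sa M,b)$. Your instinct that this step is delicate is exactly right, but the situation is worse than ``delicate'': the step cannot be closed, because the ``$p$ is determined by $C_p$'' clause of the Fact is false as stated.

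Here is a concrete counterexample. Take $\Sa M = (\Q;<,P)$ with $P := \{q\in\Q : q<\sqrt{2}\}$. Every $\Sa M$-definable subset of $\Q$ is a finite union of convex sets, so $\Sa M$ is weakly o-minimal, and a routine quantifier-elimination argument shows $\Th(\Sa M)$ is weakly o-minimal as well, so the stronger uniform hypothesis you assume does not rule this out. Let $C := P$, a downward-closed subset of $\Q$ with no supremum in $\Q$. Write $\pi_C := \{a<x:a\in C\}\cup\{x<a:a\in\Q\setminus C\}$. Both $\pi_C\cup\{P(x)\}$ and $\pi_C\cup\{\neg P(x)\}$ are finitely satisfiable in $\Sa M$: given finite $A\subseteq C$, $B\subseteq\Q\setminus C$, any rational in $(\max A,\sqrt 2)$ witnesses the first and any rational in $(\sqrt 2,\min B)$ the second. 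Completing each gives two distinct non-realized one-types $p_1,p_2$ over $\Q$ with $C_{p_1}=C_{p_2}=C$, one concentrating (in any elementary extension) inside the interpretation of $P$, the other outside it. The same phenomenon occurs in $\rcvf$ at the cut just above the valuation ring, where one can realize the cut by an element of the valuation ring with residue above the residue field, or by an element of negative infinitesimal value. In the language of the Macpherson--Marker--Steinhorn analysis this is the ``valuational'' behavior, and the cut-to-type map is a bijection only when one restricts to non-valuational cuts. That restricted version is what actually holds, and what the paper actually uses, in the setting of Lemma~\ref{lem:}: there $\Sa A$ is induced on a dense subset of an o-minimal expansion of an ordered group, every definable cut comes from a point of the ambient $R$, and the boundary between two consecutive convex pieces of a definable set is never a two-sided gap in $A$ --- which is precisely what the counterexample above exploits.

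For the record, the parts of your argument outside the flagged step are fine. The existence claim is correct: if $C$ has no supremum then $\pi_C$ is finitely satisfiable, hence extends to a complete non-realized type with cut $C$. The easy direction of definability is correct: if $p$ is definable then $C_p = \{a : (a<x)\in p\}$ is definable by specializing the defining scheme to the formula $y<x$. What is missing is an additional hypothesis ruling out the above phenomenon; under that hypothesis the case analysis ``does the cut fall inside a convex piece of $\varphi(M,b)$, or between two consecutive pieces?'' closes cleanly, with the second case impossible because no consecutive pair of pieces would bound a gap of $M$ from both sides.

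One minor point: you pass to a monster and invoke weak o-minimality of $\Th(\Sa M)$ to control $\varphi(\monster,b)$, but the Fact only hypothesizes $\Sa M$ weakly o-minimal. The argument is better run syntactically via finite satisfiability of $p$ in $\Sa M$, using the decomposition of $\varphi(\Sa M,b)$ only; that also makes the needed hypothesis visible.
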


Fact~\ref{fact:wom types} is an easy exercise.

\subsection{Homogeneous structures}
\label{sectin:homo}
A \textbf{homogeneous} structure is a countable relational structure such that every finite partial automorphism extends to a total automorphism.
A \textbf{finitely homogeneous} structure is a homogeneous structure in a finite language.
Let $\age(\Sa M)$ be the age of an $L$-structure $\Sa M$ for relational $L$, i.e. the class of finite $L$-structures that embed into $\Sa M$.
A finitely homogeneous structure $\Sa M$ is \textbf{Ramsey} if $\age(\Sa M)$ has the Ramsey property.

\medskip
Fact~\ref{fact:homo} is standard, see \cite[Thm~2.1.3, Prop~3.1.6]{macpherson-survey}.

\begin{fact}
\label{fact:homo}
Suppose $\Sa M$ is relational.
\begin{enumerate}[leftmargin=*]
\item $\Sa M$ is homogeneous if and only if $\age(\Sa M)$ is a \Fraisse class with \Fraisse limit $\Sa M$.
\item If $\Sa M$ is finitely homogeneous then $\Sa M$ is $\aleph_0$-categorical and admits quantifier elimination.
\item If $\Sa M$ is $\aleph_0$-categorical and admits quantifer elimination, then $\Sa M$ is homogeneous.
\end{enumerate}
\end{fact}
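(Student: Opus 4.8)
The three parts are classical: part~(1) is essentially the \Fraisse correspondence, while parts~(2) and~(3) form the standard triangle relating (ultra)homogeneity, $\aleph_0$-categoricity, and quantifier elimination. The plan is to treat the \Fraisse theorem as a black box for~(1) and then to derive~(2) and~(3) from it together with one elementary observation: in a relational language the quantifier-free type of a tuple $a$ carries exactly the same information as the isomorphism type of the substructure generated by $a$, so a map $a \mapsto b$ is a finite partial automorphism precisely when $\qftp(a) = \qftp(b)$.

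For~(1), in the forward direction I would verify that $\age(\Sa M)$ is a \Fraisse class: heredity is immediate, there are only countably many isomorphism types of finite substructures since $\Sa M$ is countable, joint embedding is the case of amalgamation over the empty structure (which embeds into everything as $L$ is relational), and for amalgamation one realizes the two finite structures to be amalgamated as substructures of $\Sa M$, uses homogeneity to carry one copy of the common part onto the other by an automorphism of $\Sa M$, and then takes the substructure of $\Sa M$ generated by their union. That $\Sa M$ is the \Fraisse limit of $\age(\Sa M)$ then follows from uniqueness of the limit, since $\Sa M$ is a countable ultrahomogeneous structure with that age; the converse direction is just the defining property of the limit.

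For~(2), assume $\Sa M$ is finitely homogeneous in the finite relational language $L$. Since there are only finitely many atomic $L$-formulas in $x_1,\ldots,x_n$, there are only finitely many quantifier-free $n$-types over $\emptyset$; and by homogeneity each quantifier-free type determines a single complete type, because $\qftp(a) = \qftp(b)$ makes $a \mapsto b$ a finite partial automorphism, which extends to an automorphism, so $\tp_{\Sa M}(a) = \tp_{\Sa M}(b)$. Hence $\Sa M$ realizes only finitely many complete $n$-types over $\emptyset$ for each $n$, so by Ryll--Nardzewski it is $\aleph_0$-categorical with every type isolated, and each isolating formula is $\Th(\Sa M)$-equivalent to the finite conjunction of literals constituting the quantifier-free type it pins down, which gives quantifier elimination. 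The one step I would be careful to state rather than leave implicit is that this ``quantifier-free type determines type'' property must hold in \emph{every} model of $\Th(\Sa M)$ for the theory to eliminate quantifiers; that follows because $\Th(\Sa M)$ is complete and $\aleph_0$-categorical, so the same finite set of types is realized in every model.

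For~(3), assume $\Sa M$ is $\aleph_0$-categorical and admits quantifier elimination; I may take $\Sa M$ to be the countable model, since ``homogeneous'' as defined here is a property of that model. Being the countable model of an $\aleph_0$-categorical theory, $\Sa M$ is $\aleph_0$-saturated, hence $\aleph_0$-homogeneous, so every partial elementary map between finite tuples extends to an automorphism. Given a finite partial automorphism $f\colon A \to B$, enumerate $A$ as $a$ and set $b = f(a)$: relationality gives $\qftp(a) = \qftp(b)$, quantifier elimination upgrades this to $\tp_{\Sa M}(a) = \tp_{\Sa M}(b)$, and then $\aleph_0$-homogeneity extends $f$ to an automorphism of $\Sa M$. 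The only real obstacle in the whole argument is bookkeeping --- tracking the countability clause in the definition of ``homogeneous'' and the model-versus-theory distinction for quantifier elimination --- rather than any substantive difficulty beyond invoking the \Fraisse theorem and Ryll--Nardzewski.
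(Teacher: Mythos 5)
Your proof is correct. The paper does not supply an argument here---it cites \cite[Thm~2.1.3, Prop~3.1.6]{macpherson-survey}---so there is no in-paper proof to compare against, but your route is the standard one: Fra\"iss\'e's theorem for (1), and the Ryll--Nardzewski / back-and-forth triangle for (2) and (3). You also correctly flag the two places that actually need care: promoting ``quantifier-free type determines type'' from the single model $\Sa M$ to $\Th(\Sa M)$ via $\aleph_0$-categoricity (so that the resulting disjunction of quantifier-free formulas is equivalent to $\varphi$ provably in $T$, not just pointwise in $\Sa M$), and using relationality so that a finite partial automorphism is exactly a map preserving quantifier-free types, which is what lets QE plus $\aleph_0$-saturation close the loop in (3).
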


\noindent
Suppose $k \ge 2$.
We say that a structure or theory is \textbf{$k$-ary} if every formula is equivalent to a boolean combination of formulas of airity $\le k$ and is \textbf{finitely $k$-ary} if there are formulas $\varphi_1,\ldots,\varphi_n$ of airity $\le k$ such that any $n$-ary formula $\phi(x_1,\ldots,x_n)$ is a boolean combination of formulas of the form $\varphi_j(x_{\imag_1},\ldots,x_{\imag_\ell})$ for $\imag_1,\ldots,\imag_\ell \in \{1,\ldots,n\}$.
% Equivalently $\Sa M$ is (finitely) $k$-ary if, up to interdefinibility, $\Sa M$ admits quantifier elimination in a (finite) relational language containing relations of airity $\le k$.
A finitely $k$-ary theory is $\aleph_0$-categorical and a finitely homogeneous structure is finitely $k$-ary for some $k$.
We define $\air(T)$ to be the minimal $k$ such that $T$ is $k$-ary if there is such $k$ and $\air(T) = \infty$ otherwise.

\medskip
Fact~\ref{fact:airity} is an exercise.

\begin{fact}
\label{fact:airity}
The following are equivalent:
\begin{enumerate}
\item $T$ is $k$-ary.
\item If $\Sa M \models T$, $A\subseteq M$, $(\alpha_1,\ldots,\alpha_n)$ and $(\beta_1,\ldots,\beta_n)$ are in $M^n$, and 
\[
\tp(\alpha_{i_1},\ldots,\alpha_{i_k}|A) = \tp(\beta_{i_1},\ldots,\beta_{i_k}|A) \quad \text{for all} \quad 1 \le i_1 < \ldots <  i_k \le n,
\]
then $\tp(\alpha|A) = \tp(\beta|A)$.
\end{enumerate}
Furthermore if $\Sa M$ is $\aleph_0$-categorical then $\Sa M$ is $k$-ary if and only if $\Sa M$ is finitely $k$-ary.
\end{fact}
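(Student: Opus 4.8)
The plan is to prove $(1)\Rightarrow(2)$ by hand, $(2)\Rightarrow(1)$ by Stone duality together with a compactness step, and the final clause from Ryll--Nardzewski. Throughout one reads (2) for $n\ge k$ (for $n\le k$ the indices $i_1<\cdots<i_k$ should be allowed to repeat, and then both sides are trivial, since an $n$-ary formula with $n\le k$ already has airity $\le k$). For $(1)\Rightarrow(2)$: assume $T$ is $k$-ary, and let $\Sa M\models T$, $A\subseteq M$, and $\bar\alpha,\bar\beta\in M^n$ have the property that $\tp(\alpha_{i_1},\dots,\alpha_{i_k}\mid A)=\tp(\beta_{i_1},\dots,\beta_{i_k}\mid A)$ for all $i_1<\cdots<i_k$. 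Fix an arbitrary $L(A)$-formula $\phi(\bar x)=\chi(\bar x,\bar a)$ with $\chi$ parameter-free and $\bar a$ a finite tuple from $A$. Applying $k$-ary-ness to the parameter-free formula $\chi(\bar x,\bar y)$ and then substituting $\bar a$ for $\bar y$ expresses $\phi$ as a boolean combination of formulas of the form $\psi(x_{j_1},\dots,x_{j_p};\bar a')$ with $p\le k$, $j_1<\cdots<j_p\le n$, and $\bar a'$ a subtuple of $\bar a$. Since $n\ge k$, each index tuple $(j_1,\dots,j_p)$ extends to a $k$-subtuple, so $\tp(\alpha_{j_1},\dots,\alpha_{j_p}\mid A)=\tp(\beta_{j_1},\dots,\beta_{j_p}\mid A)$; hence every conjunct/disjunct has the same truth value at $\bar\alpha$ and at $\bar\beta$, and therefore so does $\phi$. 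As $\phi$ was arbitrary, $\tp(\bar\alpha\mid A)=\tp(\bar\beta\mid A)$.

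For $(2)\Rightarrow(1)$, fix $n>k$ and let $B$ be the Lindenbaum--Tarski algebra of $L$-formulas in the variables $x_1,\dots,x_n$ modulo $T$, so that the Stone space of $B$ is the type space $S_n(T)$. Let $B_0\le B$ be the subalgebra generated by the classes of the formulas $\psi(x_{i_1},\dots,x_{i_\ell})$ with $\ell\le k$ and $i_1,\dots,i_\ell\in\{1,\dots,n\}$; then $B_0$ consists exactly of the classes of boolean combinations of formulas of airity $\le k$, so $T$ is $k$-ary if and only if $B_0=B$ for every $n$. If $p\ne p'$ in $S_n(T)$ are not separated by any generator of $B_0$, then $p$ and $p'$ restrict to the same type on every $\le k$-subtuple of the variables, so hypothesis (2) (applied with $A=\emptyset$) gives $p=p'$; thus $B_0$ separates the points of $S(B)$. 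Since the restriction map $S(B)\to S(B_0)$ dual to the inclusion $B_0\hookrightarrow B$ is always a continuous surjection between compact Hausdorff spaces, point-separation promotes it to a homeomorphism, and hence the inclusion $B_0\hookrightarrow B$ is onto, i.e. $B_0=B$. (By hand, without Stone duality: $[\phi]$ and its complement are disjoint closed subsets of $S_n(T)$, each pair of points drawn from the two sets is separated by a generator of $B_0$, and compactness collapses this to a single element of $B_0$ equal to $[\phi]$ — this is the step that produces a \emph{finite} boolean combination.)

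For the final clause, ``finitely $k$-ary $\Rightarrow$ $k$-ary'' is immediate, since a witnessing finite list $\varphi_1,\dots,\varphi_m$ of formulas of airity $\le k$ in particular shows every formula is a boolean combination of formulas of airity $\le k$. Conversely, suppose $\Sa M$ is $\aleph_0$-categorical and $k$-ary. By Ryll--Nardzewski there are, for each $j\le k$, only finitely many $j$-ary formulas up to equivalence modulo $\Th(\Sa M)$; let $\varphi_1,\dots,\varphi_m$ enumerate all formulas of airity $\le k$ up to equivalence. Given any $\phi(x_1,\dots,x_n)$, $k$-ary-ness writes it as a boolean combination of formulas $\psi(x_{i_1},\dots,x_{i_\ell})$ with $\ell\le k$, and each such $\psi$ is equivalent, after renaming variables, to some $\varphi_j$, so $\psi(x_{i_1},\dots,x_{i_\ell})\equiv\varphi_j(x_{i_1},\dots,x_{i_\ell})$. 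The same fixed list $\varphi_1,\dots,\varphi_m$ works for every $n$, so $\Sa M$ is finitely $k$-ary.

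I do not expect any serious obstacle here; this is the advertised exercise. The only points needing a moment's care are the bookkeeping in $(1)\Rightarrow(2)$ (that substituting fixed parameters into a formula of airity $\le k$ still leaves a formula involving $\le k$ of the remaining variables) and, in $(2)\Rightarrow(1)$, the passage from ``$B_0$ separates points'' to ``$B_0=B$'', i.e. the compactness/Stone-duality observation that a point-separating subalgebra of a Boolean algebra is the whole algebra.
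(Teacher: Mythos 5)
Your proof is correct, and since the paper explicitly leaves Fact~\ref{fact:airity} as an exercise with no written proof, there is nothing to compare it against; your argument is the natural one. The three steps — direct substitution for $(1)\Rightarrow(2)$, the observation that a point-separating subalgebra of a Boolean algebra is the whole algebra (whether phrased via Stone duality or the hands-on compactness argument you sketch) for $(2)\Rightarrow(1)$, and Ryll--Nardzewski for the last clause — are exactly what one expects. Your preliminary remark about reading (2) for $n\ge k$ is also apt: as literally written, the hypothesis of (2) is vacuous for $n<k$, and the intended reading is that the hypothesis should be phrased in terms of subtuples of length $\min(k,n)$, or equivalently that only $n\ge k$ carries content.
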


Note that the theory $T$ of an infinite structure always satisfies $\air(T) \ge 2$.
This is because we can take $\alpha,\beta$ to be distinct realizations of the same one-type.

\medskip\noindent
A \textbf{$k$-hypergraph} $(V;E)$ is a set $V$ equipped with a symmetric $k$-ary relation $E$ such that $E(\alpha_1,\ldots,\alpha_k)$ implies that the $\alpha_i \ne \alpha_j$ when $i \ne j$.
Finite $k$-hypergraphs form a \Fraisse class, we refer to the \Fraisse limit of this class as the \textbf{generic countable $k$-hypergraph}.
The generic countable $2$-hypergraph is the \textbf{Erd\H{o}s-Rado graph}.
The class of structures $(V;E)$ where $V$ is finite and $E$ is an arbitrary $k$-ary relation is a \Fraisse class, we refer to the \Fraisse limit as the \textbf{generic countable $k$-ary relation}.

\medskip\noindent
A \textbf{bipartite graph} $(V,W;E)$ consists of sets $V,W$ and $E \subseteq V \times W$.
Let $(V,W;E)$ be a bipartite graph.
The \textbf{generic countable bipartite graph} is the Fra\"iss\'e limit of the class of finite bipartite graphs.

\subsection{Shelah completeness}
The Shelah completion is usually referred to as the ``Shelah expansion".
I use ``completion" because it is more suggestive and because we already use ``expansion" enough.
Let $\Sa M \prec \Sa N$ be $|M|^+$-saturated.
A subset $X$ of $M^n$ is \textbf{externally definable} if $X = M^n \cap Y$ for some $\Sa N$-definable subset $Y$ of $N^n$.
An application of saturation shows that the collection of externally definable sets does not depend on choice of $\Sa N$.

\begin{fact}
\label{fact:convex}
Suppose that $X$ is an $\Sa M$-definable set and $<$ is an $\Sa M$-definable linear order on $X$.
Then any $<$-convex subset of $X$ is externally definable.
\end{fact}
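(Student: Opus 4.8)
The plan is to work inside a fixed $|M|^+$-saturated elementary extension $\Sa N \succ \Sa M$; this is harmless since, as noted above, the collection of externally definable subsets of $M^n$ does not depend on the choice of $\Sa N$. Write $X(\Sa N) \subseteq N^n$ for the interpretation in $\Sa N$ of the formula (with parameters) defining $X$, so that $X(\Sa N) \cap M^n = X$, and observe that the interpretation of $<$ in $\Sa N$ is again a linear order on $X(\Sa N)$ extending the given order on $X$.

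First I would reduce to the case of an initial segment. Given a $<$-convex $C \subseteq X$, set
\[
I = \{x \in X : x \le y \text{ for some } y \in C\}, \qquad F = \{x \in X : x \ge y \text{ for some } y \in C\}.
\]
Then $I$ and $F$ are respectively an initial and a final segment of $(X,<)$, and $C = I \cap F$ using convexity of $C$. Since externally definable sets are closed under finite intersection, and since the reverse of $<$ is again an $\Sa M$-definable linear order, under which final segments become initial segments, it suffices to prove that every initial segment of $(X,<)$ is externally definable. So let $C$ be such a segment; we may assume $C \ne X$, the case $C = X$ being trivial.

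The main step is to realize the cut determined by $C$. Let $q(v)$ be the partial type over $M$ in a single variable $v$ of length $n$ consisting of ``$v \in X$'' together with ``$a < v$'' for each $a \in C$ and ``$v \le b$'' for each $b \in X \setminus C$. This type is finitely satisfiable in $\Sa M$: for finitely many $a_1,\dots,a_k \in C$ and $b_1,\dots,b_m \in X \setminus C$, pick any $b_0 \in X \setminus C$ (possible as $C \ne X$) and take $v = \min(b_0,b_1,\dots,b_m) \in X \setminus C$; then $a_i < v$ for every $i$ since $C$ is an initial segment, and $v \le b_j$ for every $j$. As $q$ has at most $|M|$ parameters, saturation produces $\beta \in N^n$ realizing it. Then $Y := \{v \in X(\Sa N) : v < \beta\}$ is $\Sa N$-definable, and a direct check — reading off $a < \beta$ for $a \in C$ and $\beta \le a$ for $a \in X \setminus C$ from $q$ — shows $Y \cap M^n = C$, so $C$ is externally definable.

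I expect no real obstacle: the argument is short, and the only delicate points are arranging $q$ so that its realization cuts $X$ at exactly $C$ rather than at $C$ together with a supremum of $C$, and separating off the trivial case $C = X$. If one prefers to avoid the reduction to one-sided segments, one can instead realize two cuts simultaneously and obtain $C = \{v \in X(\Sa N) : \gamma \le v \le \beta\} \cap M^n$ directly, but going through initial and final segments seems cleaner.
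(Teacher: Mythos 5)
The paper gives no proof of Fact~\ref{fact:convex}, merely noting that it is ``well-known and easy,'' so there is nothing to compare against; your argument is the standard one, and it is correct. The reduction via $C = I \cap F$, the verification of finite satisfiability of $q$ (including the device of adjoining $b_0$ to handle the case where no $b_j$'s appear), the cardinality count for the saturation step, and the final check that $Y \cap M^n = C$ are all in order, and you correctly flag the edge cases $C = X$ and $C = \emptyset$.
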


\noindent
Fact~\ref{fact:convex} is well-known and easy.
Lemma~\ref{lem:lambda} is a saturation exercise.

\begin{lemma}
\label{lem:lambda}
Suppose that $\uplambda$ is a cardinal, $\Sa M$ is $\uplambda$-saturated, $X \subseteq M^m$ is externally definable, and $A \subseteq M^m$ satisfies $|A| < \uplambda$.
Then there is a definable $Y \subseteq M^m$ such that $X \cap A = Y \cap A$.
\end{lemma}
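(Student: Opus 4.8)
The plan is to run the standard ``pull a parameter down into $\Sa M$ by saturation'' argument. Fix an elementary extension $\Sa M \prec \Sa N$ witnessing the external definability of $X$, so that there is an $L$-formula $\varphi(x,y)$ with $|x| = m$ and a tuple $b \in N^{|y|}$ with $X = \{ a \in M^m : \Sa N \models \varphi(a,b) \}$. The target is an element $b^* \in M^{|y|}$ for which $\varphi(M^m,b^*)$ agrees with $X$ on $A$; the set $Y := \{ a \in M^m : \Sa M \models \varphi(a,b^*) \}$ will then do the job.

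To find such a $b^*$, consider the collection of formulas over $A$ in the single tuple of variables $y$,
\[
\Sigma(y) \;=\; \{\, \varphi(a,y) : a \in A \cap X \,\} \,\cup\, \{\, \neg\varphi(a,y) : a \in A \setminus X \,\}.
\]
The first step is to verify that $\Sigma(y)$ is finitely satisfiable in $\Sa M$. Given $a_1,\ldots,a_r \in A \cap X$ and $a'_1,\ldots,a'_s \in A \setminus X$, the element $b$ witnesses $\Sa N \models \exists y\,\bigl( \bigwedge_{i} \varphi(a_i,y) \wedge \bigwedge_{j} \neg\varphi(a'_j,y) \bigr)$, directly from the description of $X$ and the fact that all the $a_i, a'_j$ lie in $M^m$. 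Since this is a sentence with parameters from $M$, it descends to $\Sa M$ by $\Sa M \prec \Sa N$. Hence $\Sigma(y)$ is a consistent partial type over $A$; as $|A| < \uplambda$ and $\Sa M$ is $\uplambda$-saturated, it is realized by some $b^* \in M^{|y|}$.

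It remains to check $X \cap A = Y \cap A$ for $Y$ as above, which is immediate: for $a \in A$, if $a \in X$ then $\varphi(a,y) \in \Sigma(y)$ forces $a \in Y$, and if $a \in A \setminus X$ then $\neg\varphi(a,y) \in \Sigma(y)$ forces $a \notin Y$.

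I do not expect any real obstacle. The only point needing a little care is phrasing the finite-satisfiability witness as an $\exists y$ sentence with parameters from $M$, so that it transfers down from $\Sa N$; everything else is bookkeeping. Note also that no saturation hypothesis on $\Sa N$ is used — only that $\Sa N$ is some elementary extension over which $X$ is definable — and that the degenerate case of finite $\uplambda$ (which forces $A$ finite) is handled by the same argument, since then $\Sigma(y)$ is finite and realized in $\Sa M$ directly by elementarity.
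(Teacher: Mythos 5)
Your proof is correct, and it is the standard saturation argument the paper clearly has in mind—the paper gives no proof, dismissing the lemma as ``a saturation exercise.'' The one point worth being explicit about, which you implicitly handle, is that the parameter set of $\Sigma(y)$ consists of the entries of the $m$-tuples in $A$, hence has cardinality at most $m|A| < \uplambda$, so $\uplambda$-saturation applies.
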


\noindent
Lemma~\ref{lem:chain} is an easy generalization of Fact~\ref{fact:convex}.

\begin{lemma}
\label{lem:chain}
Suppose that $(X_a : a \in M^n)$ is an $\Sa M$-definable family of subsets of $M^m$.
If $A \subseteq M^n$ is such that $(X_a : a \in A )$ is a chain under inclusion then $\bigcup_{a \in A} X_a$ and $\bigcap_{a \in A} X_a$ are both externally definable.
\end{lemma}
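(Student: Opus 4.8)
The plan is to exhibit each of $\bigcup_{a \in A} X_a$ and $\bigcap_{a \in A} X_a$ as the trace on $M^m$ of a single member of the family in a saturated elementary extension, the chain hypothesis being exactly what makes the relevant partial type finitely satisfiable. Fix a formula $\varphi(x;y)$ with $|x| = m$ and parameters from $\Sa M$ defining the family, so that $X_a = \{b \in M^m : \Sa M \models \varphi(b;a)\}$, and fix $\Sa M \prec \Sa N$ with $\Sa N$ being $|M|^+$-saturated; recall that the externally definable subsets of $M^m$ are exactly the sets $M^m \cap Z$ with $Z$ an $\Sa N$-definable subset of $N^m$.

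For the union, write $U = \bigcup_{a\in A} X_a$ and consider the set of formulas over $M$ in the tuple of variables $y$
\[
\Pi(y) = \{\varphi(b;y) : b \in U\} \cup \{\neg\varphi(b;y) : b \in M^m \setminus U\}.
\]
The key step is to verify that $\Pi$ is finitely satisfiable in $\Sa M$. Given a finite fragment, list the $b_1,\dots,b_p \in U$ occurring positively and the $b_1',\dots,b_q' \in M^m\setminus U$ occurring negatively, and pick for each $i$ some $a_i \in A$ with $b_i \in X_{a_i}$; since $\{X_a : a \in A\}$ is a chain, one of $X_{a_1},\dots,X_{a_p}$ contains all the others, say $X_{a_\star}$ (and take $a_\star$ to be any element of $A$ if $p = 0$, using $A \ne \emptyset$). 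Then $b_i \in X_{a_\star}$ for all $i$, while $b_j' \notin X_{a_\star}$ for all $j$ because $b_j' \notin U \supseteq X_{a_\star}$; hence $a_\star$ realizes the fragment. As $|\Pi| \le |M|$, saturation yields $c \in N^n$ realizing $\Pi$, and then by construction $M^m \cap \{b \in N^m : \Sa N \models \varphi(b;c)\} = U$, so $U$ is externally definable.

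For the intersection the argument is dual: with $V = \bigcap_{a\in A} X_a$ and the corresponding $\Pi'(y)$, a finite fragment now has a negative part $\neg\varphi(b_1';y),\dots,\neg\varphi(b_q';y)$ in which each $b_j'$ admits some $a_j \in A$ with $b_j' \notin X_{a_j}$; taking $a_\star$ so that $X_{a_\star}$ is the smallest of $X_{a_1},\dots,X_{a_q}$ (or any element of $A$ if $q = 0$) gives $b_j' \notin X_{a_\star}$ for all $j$ since $X_{a_\star} \subseteq X_{a_j}$, while any $b$ occurring positively lies in $V \subseteq X_{a_\star}$, so again $a_\star$ works, and the rest is as before. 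When $A = \emptyset$ the two sets are $\emptyset$ and $M^m$, which are externally definable trivially.

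I do not expect a real obstacle beyond getting the direction of the chain argument right — the union case uses that a finite subchain has a maximum, the intersection case that it has a minimum — and keeping careful track of which tuples lie in $\Sa M$; the remainder is the standard saturation/compactness machinery already invoked for Fact~\ref{fact:convex}. As a consistency check, applying the union and intersection cases to the down-closure and up-closure of a convex subset of a definably linearly ordered definable set recovers Fact~\ref{fact:convex}, which is why the lemma is an easy generalization of it.
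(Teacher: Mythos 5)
Your proof is correct and is the argument the paper has in mind (the paper itself gives no proof, remarking only that the lemma is ``an easy generalization of Fact~\ref{fact:convex}''): one realizes the partial type $\{\varphi(b;y):b\in U\}\cup\{\neg\varphi(b;y):b\notin U\}$ in an $|M|^+$-saturated extension, and the chain hypothesis is used exactly as you say to supply a finite-satisfiability witness via the maximum (for $\bigcup$) or minimum (for $\bigcap$) of a finite subchain. You have also handled the degenerate cases $p=0$, $q=0$, and $A=\emptyset$ correctly, so there is nothing to add.
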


% \noindent
% Fact~\ref{fact:cs} is arguably the most important result on externally definable sets at present.
% It is a theorem of Chernikov and Simon~\cite{CS-II}.
% The right to left implication is a saturation exercise which does not require $\nip$.

% \begin{fact}
% \label{fact:cs}
% Suppose that $\Sa M$ is $\nip$ and $X$ is a subset of $M^n$.
% Then $X$ is externally definable if and only if there is an $\Sa M$-definable family $(X_a : a \in M^m)$ of subsets of $M^n$ such that for every finite $A \subseteq X$ we have $A \subseteq X_a \subseteq X$ for some $a \in M^m$.
% \end{fact}

\noindent
The \textbf{Shelah completion} $\Sh M$ of $\Sa M$ is the expansion of $\Sa M$ by all externally definable sets, equivalently the structure induced on $M$ by $\Sa N$.
Fact~\ref{fact:shelah} is due to Shelah~\cite{Shelah-external}.
% , it is also an easy consequence of Fact~\ref{fact:cs}.

\begin{fact}
\label{fact:shelah}
Suppose $\Sa M$ is $\nip$.
Then the structure induced on $M$ by $\Sa N$ admits quantifier elimination.
Equivalently: every $\Sh M$-definable set is externally definable.
\end{fact}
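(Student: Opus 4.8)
\medskip\noindent\textit{Proof sketch.}\quad I would prove the equivalent second formulation: every $\Sh M$-definable subset of every $M^n$ is externally definable. Since the externally definable subsets of a fixed $M^n$ form a boolean algebra (and taking a trace commutes with permuting and repeating coordinates), an induction on the structure of $\Sh M$-formulas reduces the statement to a single case: if $X\subseteq M^{m+1}$ is externally definable, then so is its projection $\pi(X):=\{\,a\in M^m:(a,t)\in X\text{ for some }t\in M\,\}$. Fix an $L$-formula $\phi(x,x_{m+1};y)$ and $b\in N^{|y|}$ with $X=\{\,a\in M^{m+1}:\Sa N\models\phi(a;b)\,\}$.

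The one ingredient that is not soft, and the only place the $\nip$ hypothesis is used, is the existence of \emph{honest definitions} (Chernikov--Simon; a version of this is already present in Shelah~\cite{Shelah-external}, and it rests on a combinatorial/VC-theoretic argument): there is an $L$-formula $\psi(x,x_{m+1};w)$ such that for every finite $A\subseteq M^{m+1}$ there is $d_A\in M^{|w|}$ with
$$\{\,a\in A:\Sa N\models\phi(a;b)\,\}\ \subseteq\ \psi(M^{m+1};d_A)\ \subseteq\ X ,$$
where $\psi(M^{m+1};d_A)$ denotes the \emph{internally} $\Sa M$-definable set defined by $\psi(x,x_{m+1};d_A)$; in words, $\psi(x,x_{m+1};d_A)$ defines a subset of $X$ that already agrees with $X$ on the prescribed finite set $A$. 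I would cite this theorem rather than reprove it; reproving it is the bulk of the work here, and the rest of the argument makes no further appeal to $\nip$.

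Granting honest definitions, the remainder is a compactness/saturation argument. Let $\eta(x;w):=\exists x_{m+1}\,\psi(x,x_{m+1};w)$ and consider the set of formulas over $M$ in the single variable $w$,
$$\Gamma(w)\ :=\ \{\,\eta(a;w):a\in\pi(X)\,\}\ \cup\ \{\,\neg\eta(a;w):a\in M^m\setminus\pi(X)\,\}.$$
I claim $\Gamma(w)$ is finitely satisfiable in $M$. Given $a_1,\dots,a_p\in\pi(X)$ and $a'_1,\dots,a'_q\in M^m\setminus\pi(X)$, pick $t_i\in M$ with $(a_i,t_i)\in X$ and put $A:=\{(a_1,t_1),\dots,(a_p,t_p)\}$; by the displayed inclusions $(a_i,t_i)\in\psi(M^{m+1};d_A)$, so $\Sa M\models\eta(a_i;d_A)$, and if we had $\Sa M\models\eta(a'_j;d_A)$ then $(a'_j,t)\in\psi(M^{m+1};d_A)\subseteq X$ for some $t\in M$, forcing $a'_j\in\pi(X)$, a contradiction; hence $d_A$ satisfies this finite fragment. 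As $|\Gamma(w)|\le|M|$ and $\Sa N$ is $|M|^+$-saturated, $\Gamma(w)$ is realised by some $c^*\in N^{|w|}$, and then $\{\,a\in M^m:\Sa N\models\eta(a;c^*)\,\}=\pi(X)$, which exhibits $\pi(X)$ as the trace on $M^m$ of the $\Sa N$-definable set defined by $\exists x_{m+1}\,\psi(x,x_{m+1};c^*)$. This completes the induction, and hence the proof. The main obstacle is entirely concentrated in the honest-definition step; the reduction before it and the saturation argument after it are routine.
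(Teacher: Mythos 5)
Your proof is correct, and it is the now-standard proof of Shelah's theorem via honest definitions. Note that the paper itself offers no proof of this Fact: it is simply attributed to Shelah~\cite{Shelah-external}, so there is no internal argument to compare against. Within your sketch, everything checks out. The reduction to closure of externally definable sets under projection is the right decomposition; the honest-definitions input you cite (with the witness $d_A$ taken in $M$, not merely in a saturated extension) is exactly the version needed, and it is essential: if $d_A$ lay only in $\Sa N$, the formula $\eta(a'_j;d_A)$ would quantify $x_{m+1}$ over $N$ rather than $M$, and the contradiction $a'_j\in\pi(X)$ would not follow. Your concluding step is sound: $\Gamma(w)$ is a partial type over $M$ of size at most $|M|$, finitely satisfiable in $M$ by the honest-definition witnesses, hence realized in the $|M|^+$-saturated $\Sa N$, and any realization witnesses the external definability of $\pi(X)$. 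You are also right that the $\nip$ hypothesis is isolated entirely in the honest-definitions step; everything else is soft. One historical remark worth preserving: Shelah's original argument in~\cite{Shelah-external} is considerably more involved and does not proceed through honest definitions in this packaged form; Chernikov and Simon later extracted honest definitions (via the $(p,q)$-theorem) and gave the clean route you follow. Citing them alongside Shelah, as you do, is the right attribution.
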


% \noindent
% We say that a structure is \textbf{Shelah complete} if every externally definable set is already definable.
% Note that $\Sa M$ is Shelah complete if and only if all types over $\Sa M$ are definable.
% It follows that a theory $T$ is stable if and only if every model of $T$ is Shelah complete.
% The Marker-Steinhorn theorem~\cite{Marker-Steinhorn} shows that if $\Sa O$ is an o-minimal expansion of a dense linear order $(O,<)$ then $\Sa O$ is Shelah complete if and only if $(O,<)$ is a complete linear order.
% Results from dp-minimality show that $(\Z,+,<)$ is Shelah complete, see~\cite{SW-dp}.
% Fact~\ref{fact:delon} is a theorem of Delon~\cite{Delon-def}.

% \begin{fact}
% \label{fact:delon}
% $(\Q_p,+,\times)$ is Shelah complete.
% \end{fact}

% \noindent
% Fact~\ref{fact:completion-is-complete} is an easy corollary to Fact~\ref{fact:shelah}, we leave the proof to the reader.

% \begin{fact}
% \label{fact:completion-is-complete}
% The Shelah completion of a $\nip$ structure is Shelah complete.
% \end{fact}

% \noindent
% The Shelah completion is usually called the ``Shelah expansion".
% We believe that Fact~\ref{fact:completion-is-complete} justifies our nonstandard terminology.

\subsection{Dp-rank}
% We will use a few basic facts about dp-rank.
We first recall the definition of the dp-rank $\dprk_{\Sa M} X$ of an $\Sa M$-definable set $X \subseteq M^m$.
It suffices to define the dp-rank of a definable set in the monster model.
Let $X$ be a definable set and $\uplambda$ be a cardinal.
An $(\monster,X,\uplambda)$-array is a sequence $(\varphi_\alpha(x_\alpha , y) : \alpha < \uplambda )$ of parameter free formulas and an array $(a_{\alpha,i} \in \monsterset^{|x_\alpha|} : \alpha < \uplambda , i < \upomega )$ such that for any function $f \colon \uplambda \to \upomega$ there is a $b \in X$ such that
$$ \monster \models \varphi_\alpha(a_{\alpha,k},b) \quad \text{if and only if} \quad f(\alpha) = k \quad \text{for all   } \alpha<\uplambda,i<\upomega. $$
Then $\dprk_{\monster} X \ge \uplambda$ if there is an $(\monster, X, \uplambda)$-array.
We declare $\dprk_{\monster} X = \infty$ if $\dprk_{\monster} X \ge \uplambda$ for all cardinals $\uplambda$.
We let $\dprk_{\monster} X = \max\{ \uplambda : \dprk_{\monster}  X \ge \uplambda \}$ if this maximum exists and otherwise
$$ \dprk_{\monster} X = \sup\{ \uplambda : \dprk_{\monster} X \ge \uplambda \} - 1. $$
We also define $\dprk \Sa M = \dprk_{\Sa M} M$.
Of course this raises the question of what exactly $\uplambda - 1$ is for an infinite cardinal $\uplambda$.
But that doesn't matter at all.

\medskip\noindent
The first three claims of Fact~\ref{fact:dp-rank} are immediate consequences of the definition of dp-rank.
The fourth is due to Kaplan, Onshuus, and Usvyatsov~\cite{dp-rank-additive}.

\begin{fact}
\label{fact:dp-rank}
Suppose $X,Y$ are $\Sa M$-definable sets.
Then
\begin{enumerate}
\item $\dprk \Sa M < \infty$ if and only if $\Sa M$ is $\nip$,
\item $\dprk_{\Sa M} X = 0$ if and only if $X$ is finite,
\item If $f : X \to Y$ is a definable surjection then $\dprk_{\Sa M} Y \leq \dprk_{\Sa M} X$,
%\item $\dprk_{\Sh M} X = \dprk_{\Sa M} X$ when $\Sa M$ is $\nip$,
\item $\dprk_{\Sa M} X \times Y \le \dprk_{\Sa M} X + \dprk_{\Sa M} Y$.
\end{enumerate}
\end{fact}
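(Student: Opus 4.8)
The plan is to treat the four clauses separately. Clauses (2) and (3) are read straight off the definition of dp-rank, computed --- as the definition allows --- in the monster $\monster$; clause (1) belongs to the basic theory of $\nip$; and clause (4) is the Kaplan--Onshuus--Usvyatsov subadditivity theorem. The only real obstacle is (4): there is no way to cut a tall array on $X \times Y$ into an array on $X$ and an array on $Y$ by bare manipulation of the definition, so I do not propose to reprove it but to quote \cite{dp-rank-additive} --- only its statement is needed afterwards.

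For (2) I would use that a definable set is finite in $\Sa M$ iff it is finite, of the same size, in $\monster$. If $X$ were finite and carried an $(\monster, X, 1)$-array with formula $\varphi_0(x_0, y)$ and row $(a_{0,i})_{i<\upomega}$, then for each $m < \upomega$ the array condition applied to the map $0 \mapsto m$ would produce $b_m \in X$ with $\monster \models \varphi_0(a_{0,k}, b_m) \Leftrightarrow k = m$; the $b_m$ are pairwise distinct, contradicting finiteness, so $\dprk_{\Sa M} X = 0$. If instead $X$ is infinite, take $\varphi_0(x,y)$ to be $x = y$ and let $(a_{0,i})_{i<\upomega}$ enumerate $\upomega$ distinct points of $X(\monster)$; then for each $m$ the element $a_{0,m} \in X$ witnesses the map $0 \mapsto m$, so this is an $(\monster, X, 1)$-array and $\dprk_{\Sa M} X \ge 1$.

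For (3), given a definable surjection $f \colon X \to Y$ and an $(\monster, Y, \uplambda)$-array $\big((\varphi_\alpha(x_\alpha, y))_\alpha, (a_{\alpha,i})_{\alpha,i}\big)$, I would replace each $\varphi_\alpha(x_\alpha, y)$ by $\psi_\alpha(x_\alpha, z) := \exists y\,(y = f(z) \wedge \varphi_\alpha(x_\alpha, y))$, with $z$ ranging over the ambient space of $X$ and the finitely many parameters of $f$ absorbed harmlessly into the array, keeping the array itself. Given $g \colon \uplambda \to \upomega$, pick a witness $b \in Y$ for the old array and, by surjectivity, lift it to $c \in X$ with $f(c) = b$; then $\monster \models \psi_\alpha(a_{\alpha,k}, c) \Leftrightarrow \varphi_\alpha(a_{\alpha,k}, b) \Leftrightarrow g(\alpha) = k$. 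So we obtain an $(\monster, X, \uplambda)$-array, whence $\dprk_{\Sa M} X \ge \dprk_{\Sa M} Y$.

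For (1): if $\Sa M$ has $\mathrm{IP}$ witnessed by $\varphi(x,y)$, then for every cardinal $\uplambda$ compactness in $\monster$ supplies a fully shattered family $(a_{\alpha,i} : \alpha < \uplambda,\ i < \upomega)$ for $\varphi$ and, for each $f \colon \uplambda \to \upomega$, an element $b$ with $\monster \models \varphi(a_{\alpha,k}, b) \Leftrightarrow k = f(\alpha)$; this is an $(\monster, M, \uplambda)$-array, so $\dprk \Sa M = \infty$. Conversely, if $\Sa M$ is $\nip$, then extracting mutually indiscernible rows from a putative array and then applying $\nip$ to each row bounds the height of any array below a cardinal depending only on $|L|$, so $\dprk \Sa M < \infty$; this last bound is standard and I would cite it rather than reprove it. Thus the whole burden of the Fact sits in (4), which I treat purely by citation.
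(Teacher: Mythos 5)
Your proposal matches the paper's treatment exactly: the paper asserts that (1)--(3) are immediate consequences of the definition of dp-rank (which you fill in correctly, including the standard device of absorbing the parameters of $f$ into the array when proving (3)) and simply cites Kaplan--Onshuus--Usvyatsov for the subadditivity claim (4). Nothing to flag.
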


% \noindent
% Fact~\ref{fact:dp-qp} is a theorem of Dolich, Goodrick, and Lippel~\cite{dp-basic}.

% \begin{fact}
% \label{fact:dp-qp}
% The dp-rank of $(\Q_p,+,\times)$ is one.
% \end{fact}

% \noindent
% Fact~\ref{fact:pw-weak} is proven in [ref].
\noindent
Fact~\ref{fact:pw-weak} is proven in \cite{SW-dp}.

\begin{fact}
\label{fact:pw-weak}
Suppose that $Q$ is a divisible subgroup of $(\R,+)$ and $\Sa Q$ expands $(Q,+,<)$.
Then the following are equivalent:
\begin{enumerate}
\item $\Sa Q$ has dp-rank one,
\item $\Sa Q$ is weakly o-minimal,
\item $\Th(\Sa Q)$ is weakly o-minimal.
\item there is an o-minimal expansion $\Sq Q$ of $(\R;+,<)$ such that the structure induced on $Q$ by $\Sq Q$ eliminates quantifiers and is interdefinable with $\Sh Q$.
\end{enumerate}
\end{fact}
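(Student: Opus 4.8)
The plan is to establish the cycle of implications $(3)\Rightarrow(2)\Rightarrow(1)\Rightarrow(4)\Rightarrow(3)$. The implication $(3)\Rightarrow(2)$ is immediate. For $(2)\Rightarrow(1)$ I would first upgrade weak o-minimality of the structure $\Sa Q$ to weak o-minimality of $\Th(\Sa Q)$: given $\varphi(x,y)$ with $|x|=1$ one needs a bound $N$, independent of $b$, on the number of convex pieces of $\varphi(\Sa Q,b)$. This is the point at which the hypothesis that $Q$ is a \emph{divisible subgroup} of $\R$ is used: each endpoint of a convex piece of $\varphi(\Sa Q,b)$ is a cut of $Q$, hence by density a point of $\R\cup\{\pm\infty\}$, and one rules out unboundedly many pieces by a coding argument against the $\nip$ property that a single definable family already forces. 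Once $\Th(\Sa Q)$ is weakly o-minimal, $\dprk \Sa Q = 1$ follows from the standard observation that in a weakly o-minimal theory a depth-two mutually indiscernible pattern in the single variable $x$ is impossible, since each row then defines a union of a bounded number of convex subsets of a linear order.

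For $(1)\Rightarrow(4)$, the substantive part: if $\dprk \Sa Q = 1$ then, by the known tame-topological description of dp-minimal expansions of divisible ordered abelian groups, $\Sa Q$ is weakly o-minimal, and (by the uniformity above, now available) so is $\Th(\Sa Q)$. I would then build $\Sq Q$ on $\R$ as follows. For each $\Sa Q$-definable family $(X_b:b\in Q^n)$ of subsets of $Q$ — each $X_b$ a union of boundedly many convex subsets of $Q$ — the endpoints of the convex pieces are cuts of $Q$, i.e.\ points of $\R\cup\{\pm\infty\}$, so each $X_b$ has a canonical completion $X_b^{\sharp}\subseteq\R$ to a finite union of intervals with $X_b=X_b^{\sharp}\cap Q$ off the finite boundary. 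The work is to do this coherently in $b$: the endpoint cuts are given by $\Sa Q$-definable, hence piecewise monotone, functions of $b$, and weak o-minimality lets one assemble the completed sets into a genuine $\R$-definable family. Let $\Sq Q$ be the expansion of $(\R;+,<)$ by all such families. Weak o-minimality of $\Sa Q$ gives that the \emph{basic} definable subsets of $\R$ in $\Sq Q$ are finite unions of intervals and points; promoting this to full o-minimality — closure under projection — is then a cell-decomposition argument transported from $\Sa Q$, or can be deduced from Simon's theorem~\cite{Simon-dp} that a dp-rank-one expansion of $(\R;+,<)$ is o-minimal once one checks $\dprk \Sq Q = 1$.

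To finish $(1)\Rightarrow(4)$ I would verify that the structure induced on $Q$ by $\Sq Q$ has quantifier elimination and is interdefinable with $\Sh Q$: an externally definable subset of $\Sa Q$ is, by Fact~\ref{fact:convex} and Fact~\ref{fact:wom types}, a union of boundedly many convex sets whose endpoints are cuts of $Q$, hence the trace on $Q$ of a $\Sq Q$-definable interval-union; conversely every trace of a $\Sq Q$-definable set is externally definable by construction; and quantifier elimination of the induced structure is Fact~\ref{fact:shelah} applied to the $\nip$ structure $\Sa Q$. For $(4)\Rightarrow(3)$: by Fact~\ref{fact:wom-induced} the structure induced on $Q$ by the o-minimal $\Sq Q$, having quantifier elimination, has weakly o-minimal theory; since it is interdefinable with $\Sh Q$, $\Th(\Sh Q)$ is weakly o-minimal; and $\Th(\Sa Q)$ is weakly o-minimal if and only if $\Th(\Sh Q)$ is, because in a weakly o-minimal theory the externally definable unary sets are exactly the finite unions of convex sets, so passing to the Shelah completion does not change the unary definable sets up to finite unions of convex sets. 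This closes the cycle.

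I expect the main obstacle to be the construction in $(1)\Rightarrow(4)$: upgrading the fibrewise completion $X_b\mapsto X_b^{\sharp}$ to an honestly $\R$-definable family, and verifying that the resulting structure on $\R$ is o-minimal rather than merely weakly o-minimal with trace on $Q$ exactly $\Sh Q$ — in effect transporting the tame topology of $\Sa Q$ up to $\R$. A secondary difficulty is the uniformity lemma in $(2)\Rightarrow(1)$, which genuinely needs $Q\leq(\R;+)$ divisible and fails for arbitrary ordered abelian groups.
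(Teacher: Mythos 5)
The paper does not give its own proof of this fact: it cites \cite{SW-dp} for the equivalence and only supplies the remark that $(4)\Rightarrow(3)$ follows from Fact~\ref{fact:wom-induced}. Your $(4)\Rightarrow(3)$ step is essentially the paper's added observation, and your $(3)\Rightarrow(2)$ and $(3)\Rightarrow(1)$ are standard. The issue is that the two load-bearing implications are exactly the ones your sketch leaves unresolved, and these are the content of the cited reference.

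First, $(2)\Rightarrow(3)$. To go from ``$\Sa Q$ is weakly o-minimal'' to ``$\Th(\Sa Q)$ is weakly o-minimal'' you need, by compactness, a bound $N(\varphi)$, uniform in $b$, on the number of convex components of $\varphi(\Sa Q,b)$ for each $\varphi(x,y)$ with $|x|=1$. You say this is ``ruled out by a coding argument against the $\nip$ property that a single definable family already forces,'' but this is not an argument, and the premise is wrong as stated: a definable family of finite unions of convex sets with unboundedly many components does \emph{not} automatically have bounded VC dimension, and weak o-minimality of a structure does not by itself imply weak o-minimality of its theory. The archimedean and divisibility hypotheses must do real work in exactly this step (you correctly flag that), but observing that the endpoints are cuts of $Q$, hence reals, is a starting point, not a proof.

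Second, $(1)\Rightarrow(4)$. You acknowledge this is ``the main obstacle,'' and indeed it is: passing from the fibrewise completion $X_b\mapsto X_b^{\sharp}$ to a genuinely $(\R;+,<)$-definable family, and then showing that the resulting expansion $\Sq Q$ of $(\R;+,<)$ is o-minimal (not merely weakly o-minimal), is the core construction in \cite{SW-dp}. Your two proposed escape routes — a transported cell decomposition, or invoking Simon's theorem that a dp-rank-one expansion of $(\R;+,<)$ is o-minimal after checking $\dprk\Sq Q=1$ — both presuppose that $\Sq Q$ has already been built coherently and that its dp-rank (or cell structure) is under control, which is precisely what needs to be established. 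Additionally, $(1)\Rightarrow(4)$ as written already invokes ``the known tame-topological description of dp-minimal expansions of divisible ordered abelian groups'' as a black box for the step $(1)\Rightarrow(2)$; once you are allowed to quote that literature, most of the fact is being assumed rather than proved.

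In short: the decomposition into implications is reasonable and matches the logical structure one would expect, but the proposal leaves the two hard steps — the uniformity lemma in $(2)\Rightarrow(3)$ and the construction and o-minimality of $\Sq Q$ in $(1)\Rightarrow(4)$ — as acknowledged gaps, so it does not constitute a proof.
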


\noindent
The implication $(4) \Rightarrow (3)$ is not stated in \cite{SW-dp} but follows from Fact~\ref{fact:wom-induced}.

\subsection{Abelian groups}
\label{section:abelian}
Fact~\ref{fact:direct sum} follows by Feferman-Vaught.

\begin{fact}
\label{fact:direct sum}
Suppose that $A\equiv A'$ and $B\equiv B'$ are abelian groups.
Then $A\oplus B \equiv A'\oplus B'$.
\end{fact}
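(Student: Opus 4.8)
The plan is to deduce this from the Feferman--Vaught theorem for finite products, or equivalently to reprove the relevant special case directly, since the two-factor case is elementary. Observe first that $A\oplus B$ and $A'\oplus B'$ coincide with the direct \emph{products} $A\times B$ and $A'\times B'$, the index set having two elements; so it suffices to show that $\Th(A\times B)$ is determined by $\Th(A)$ and $\Th(B)$.

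The key claim is a reduction of sentences: for every sentence $\varphi$ in the language of abelian groups there are finitely many pairs of sentences $(\sigma_1,\tau_1),\ldots,(\sigma_N,\tau_N)$ such that, for all abelian groups $A,B$,
$$ A\times B\models\varphi\quad\Longleftrightarrow\quad \bigvee_{i=1}^{N}\big(A\models\sigma_i\ \text{and}\ B\models\tau_i\big). $$
Granting this, the Fact is immediate: if $A\equiv A'$ and $B\equiv B'$ then $A\models\sigma_i\Leftrightarrow A'\models\sigma_i$ and $B\models\tau_i\Leftrightarrow B'\models\tau_i$ for each $i$, hence $A\times B\models\varphi\Leftrightarrow A'\times B'\models\varphi$, and since $\varphi$ was arbitrary we get $A\times B\equiv A'\times B'$.

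To prove the claim one establishes the natural generalization with free variables by induction on formulas: for each $\varphi(x_1,\ldots,x_n)$ there are finitely many pairs $(\alpha_j(\bar x),\beta_j(\bar x))$ of formulas in the language of groups such that, for all $A,B$ and all $\bar a\in(A\times B)^n$, writing $\bar a'$ and $\bar a''$ for the $A$- and $B$-coordinate projections of $\bar a$,
$$ A\times B\models\varphi(\bar a)\quad\Longleftrightarrow\quad\bigvee_j\big(A\models\alpha_j(\bar a')\ \text{and}\ B\models\beta_j(\bar a'')\big). $$
For an atomic formula $t_1(\bar x)=t_2(\bar x)$, with $t_1,t_2$ integer-linear combinations of the variables, the equality holds in $A\times B$ iff it holds in each coordinate, so a single pair with $\alpha_1=\beta_1=(t_1=t_2)$ works. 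The class of formulas admitting such a reduction is closed under $\neg,\wedge,\vee$ by routine propositional manipulation, keeping the outer form a finite disjunction of conjunctions of the component formulas. For the existential step, an element of $A\times B$ is a pair, so over $A\times B$ the formula $\exists y\,\varphi(\bar x,y)$ amounts to $\exists y'\in A\,\exists y''\in B$ applied to a reduced form of $\varphi(\bar x,y)$; putting that reduced form into disjunctive normal form over the component atoms, each disjunct has the shape $(\text{an }A\text{-formula in }\bar a',y')\wedge(\text{a }B\text{-formula in }\bar a'',y'')$, and because $\exists$ commutes with $\vee$ and the two quantifiers bind disjoint variables, pushing them through yields $\bigvee_j\big(\exists y'\,\alpha_j\big)\wedge\big(\exists y''\,\beta_j\big)$, once more of the required form.

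The one point demanding care is this existential step, and specifically the normalization to disjunctive normal form over the component atoms before commuting the quantifiers past the connectives; the atomic and boolean cases are pure bookkeeping. Alternatively, one may simply invoke the Feferman--Vaught theorem, of which the statement above is the two-factor instance, the index structure being trivial in that case.
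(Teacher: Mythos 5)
Your proposal is correct and matches the paper, which disposes of this fact by citing the Feferman--Vaught theorem; you do the same, additionally spelling out the routine induction on formulas that proves the two-factor case. The only minor remark is that your inductive form $\bigvee_j(\alpha_j\wedge\beta_j)$ is already a disjunction of pure conjunctions, so closure under $\wedge,\vee$ is immediate, and for $\neg$ one distributes $\bigwedge_j(\neg\alpha_j\vee\neg\beta_j)$ back into the required shape; the existential step is then exactly as you describe.
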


Let $A$ be an abelian group, written additively.
Given  $k \in \N$, we write $k|\alpha$ when $\alpha = k \beta$ for some $\beta \in A$.
Here we consider each $k|$ to be a unary relation symbol.
We let $L_{\mathrm{div}}$ be the expansion of the language of abelian groups by $(k| : k \in \N)$.
Fact~\ref{fact:abelian qe} is due to Szmielew, see~\cite[Theorem~A.2.2]{Hodges}.

\begin{fact}
\label{fact:abelian qe}
Any abelian group admits quantifier elimination in $L_\mathrm{div}$.
\end{fact}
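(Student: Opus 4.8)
The plan is to prove the statement in the form in which it is usually meant and in which Szmielew proved it: for each abelian group $A$, the \emph{complete} theory $\Th(A)$ eliminates quantifiers in $L_{\mathrm{div}}$. (One does need to pass to a complete theory: modulo the incomplete theory of all abelian groups even $\exists y\,(y\neq 0)$ is not quantifier-free, since every quantifier-free $L_{\mathrm{div}}$-sentence is equivalent there to $\top$ or $\bot$.) I would first record that the atomic $L_{\mathrm{div}}$-formulas are exactly the formulas $\ell(\bar x)=0$ and $k\mid\ell(\bar x)$ for $k\in\N$ and $\ell$ a $\Z$-linear form in $\bar x$; in particular the torsion conditions $ny=0$ are already atomic in the group language, so all the content lies in the existential quantifiers. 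The goal is thus to show that every $L_{\mathrm{div}}$-formula is $\Th(A)$-equivalent to a Boolean combination of divisibility conditions $k\mid\ell(\bar x)$.

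The cleanest route is via positive-primitive elimination. By the Baur--Monk theorem, modulo any complete theory of modules over any ring every formula is equivalent to a Boolean combination of pp-formulas; apply this to $\Th(A)$. It then suffices to check that over $\Z$ every pp-formula is equivalent, already in the theory of abelian groups, to a quantifier-free $L_{\mathrm{div}}$-formula. A pp-formula has the shape $\exists\bar y\,(M\bar y = N\bar x)$ for integer matrices $M,N$. Writing a Smith normal form $M = UDV$ with $U,V$ unimodular and $D$ having diagonal entries $d_1,\dots,d_r$ followed by zeros, and substituting $\bar z = V\bar y$ (which ranges over all of $\Z^{n}$), the formula becomes $\exists\bar z\,(D\bar z = U^{-1}N\bar x)$; this solves coordinate by coordinate and is therefore equivalent to $\bigwedge_{j\le r} d_j\mid c_j(\bar x)\ \wedge\ \bigwedge_{j>r} c_j(\bar x)=0$, where $c_1,\dots$ are the explicit $\Z$-linear forms making up $U^{-1}N\bar x$. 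Each conjunct is quantifier-free in $L_{\mathrm{div}}$, and Boolean combinations and negations of quantifier-free formulas are again quantifier-free, so we are done.

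I expect the genuine mathematical content to be exactly what one is willing to take for granted: the argument above outsources everything to Baur--Monk, whose proof is itself a nontrivial quantifier-elimination induction. A self-contained argument would instead eliminate quantifiers directly: reduce to a primitive formula $\exists y\,\varphi(\bar x,y)$ with $\varphi$ a conjunction of literals, use gcd manipulations (again essentially Smith normal form) to replace all equations and positive divisibilities involving $y$ by a single constraint placing $y$ in a prescribed coset of a subgroup of $\la\bar x\ra$, and then handle the surviving negated divisibilities $k_i\nmid(n_i y - t_i(\bar x))$. That last step is the main obstacle: one must show that such a system has a solution in the designated coset precisely when each condition can be met individually, which comes down to B.~H.~Neumann's lemma that a coset of a subgroup of infinite index cannot be covered by finitely many cosets of proper subgroups (with a counting argument modulo $p^k$ in the bounded primary components). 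This is cleanest after passing to the primary decomposition and treating each prime, together with the divisible and torsion-free parts, separately — which is in effect Szmielew's original analysis.
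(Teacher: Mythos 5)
The paper does not prove this fact; it cites Szmielew via \cite[Theorem~A.2.2]{Hodges}, so you are supplying an argument the paper omits, and your primary route is correct. Baur--Monk pp-elimination reduces every formula, modulo the complete theory $\Th(A)$, to a Boolean combination of pp-formulas, and your Smith normal form computation correctly shows that $\exists\bar y\,(M\bar y=N\bar x)$ is equivalent, already in the theory of all abelian groups, to a conjunction of equations $c_j(\bar x)=0$ and divisibilities $d_j\mid c_j(\bar x)$. Since $L_{\mathrm{div}}$ is a definitional expansion of the group language, this yields quantifier elimination for $\Th(A)$ in $L_{\mathrm{div}}$, and your opening caveat about the incomplete theory of abelian groups not eliminating quantifiers is apt. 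Two small points: the definitional-expansion step is worth making explicit, since it is what lets you eliminate $L_{\mathrm{div}}$-formulas rather than only group-language ones; and in the self-contained sketch, the coset carved out by the positive constraints on $y$ is a coset of a subgroup of $A$ (an intersection of sets $\{a\in A:na=0\}$ and $\{a\in A:k\mid na\}$), not of $\langle\bar x\rangle$. The Hodges proof you are replacing is essentially the direct primitive-formula argument you sketch at the end, with the coset-covering (Neumann) step carrying the weight; the Baur--Monk route simply hides that combinatorics inside a general module-theoretic theorem.
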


Fact~\ref{fact:qe-for-divisible} is a special case of Fact~\ref{fact:abelian qe}.

\begin{fact}
\label{fact:qe-for-divisible}
A divisible abelian group eliminates quantifiers in the language of abelian groups.
\end{fact}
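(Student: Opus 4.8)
The plan is to read this off from Fact~\ref{fact:abelian qe}, since \ref{fact:qe-for-divisible} is literally the special case where the group is divisible. Let $A$ be a divisible abelian group, let $L_0$ denote the language of abelian groups, and recall that $L_{\mathrm{div}}$ is $L_0$ together with the unary predicates $k|$ for $k\in\N$, where $k|\alpha$ holds iff $\alpha = k\beta$ for some $\beta$. By Szmielew's theorem, $A$ admits quantifier elimination in $L_{\mathrm{div}}$. The only point to verify is that, modulo $\Th(A)$, each predicate $k|$ is already equivalent to a quantifier-free $L_0$-formula. Since $A$ is divisible, divisibility by each $k\ge 1$ is part of $\Th(A)$, so every model of $\Th(A)$ satisfies $\forall x\,(k|x)$; hence $\Th(A)\vdash \forall\bar x\,(k|t \leftrightarrow 0=0)$ for every term $t$ and every $k\ge 1$. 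And, if one includes $k=0$, the predicate $0|t$ is by definition the $L_0$-formula $t=0$.

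I would then finish as follows. Given any $L_0$-formula $\varphi$, Szmielew's theorem produces a quantifier-free $L_{\mathrm{div}}$-formula $\psi$ with $A\models\forall\bar x\,(\varphi\leftrightarrow\psi)$. Replacing in $\psi$ each atomic subformula of the form $k|t$ with $k\ge 1$ by $0=0$, and each $0|t$ by $t=0$, yields a quantifier-free $L_0$-formula $\psi'$ with $\Th(A)\vdash\forall\bar x\,(\psi\leftrightarrow\psi')$, and therefore $A\models\forall\bar x\,(\varphi\leftrightarrow\psi')$. Thus $A$ eliminates quantifiers in $L_0$.

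There is no real obstacle here; the statement is a routine corollary of Fact~\ref{fact:abelian qe}, and the whole content of the proof is the observation that the divisibility predicates of $L_{\mathrm{div}}$ become trivial over a divisible group. For the record, a self-contained argument would instead run through the standard substructure test for quantifier elimination: given two models of $\Th(A)$ one extends an embedding of a common substructure one element at a time. The delicate case there is an element $a$ with $na$ in the substructure for some $n\ge 1$ while $a$ itself is not — a configuration forced precisely by divisibility — where one must solve $nx = na$ in the (saturated) target while keeping track of the torsion; routing through Szmielew's theorem avoids all of this bookkeeping, so that is the approach I would take.
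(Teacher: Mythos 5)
Your proof is correct and matches the paper's intended reasoning: the paper simply cites Fact~\ref{fact:abelian qe} and calls this a special case, and your argument supplies exactly the missing detail, namely that over a divisible group the predicates $k|$ for $k\ge 1$ collapse to $0=0$ (and $0|t$ to $t=0$), so a quantifier-free $L_{\mathrm{div}}$-formula reduces to a quantifier-free formula in the language of abelian groups.
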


The \textbf{rank} $\rank(A)$ of $A$ is the $\Q$-vector space dimension of $A\otimes\Q$.
Let $p$ range over primes and $\F_p$ be the field with $p$ elements.
Let $A[p]$ be the $p$-torsion subgroup of $A$, i.e $\{\alpha\in A : p\alpha=0\}$.
The \textbf{$p$-rank} $\rank_p(A)$ is the $\F_p$-vector space dimension of $A[p]$ and the \textbf{$p$-corank} $\corank_p(A)$ of $A$ is the $\F_p$-vector space dimension of $A/pA$.
Hence $|A[p]|=p^{\rank_p(A)}$and $|A/pa|=p^{\corank_p(A)}$.

\begin{fact}
\label{fact:additive}
Suppose that $(A_i : i \in I)$ is a family of abelian groups.
Then for all primes $p$:
\[
\rank_p\left(\bigoplus_{i\in I} A_i\right) = \sum_{i\in I} \rank_p(A_i)\quad\text{and}\quad\corank_p\left(\bigoplus_{i\in I}A_i\right)=\sum_{i\in I} \corank_p(A_i).
\]
\end{fact}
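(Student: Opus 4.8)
The plan is to prove both identities simultaneously by reducing to the finite case and then to the two-summand case. First I would observe that both $p$-rank and $p$-corank are defined as $\F_p$-vector space dimensions: $\rank_p(A) = \dim_{\F_p} A[p]$ and $\corank_p(A) = \dim_{\F_p}(A/pA)$. So the whole statement is really an assertion about commuting the formation of a direct sum with the functors $A \mapsto A[p]$ and $A \mapsto A/pA$, followed by the fact that vector space dimension is additive over arbitrary direct sums. Concretely, the key algebraic observations are that
\[
\left(\bigoplus_{i\in I} A_i\right)[p] \;=\; \bigoplus_{i\in I} A_i[p] \qquad\text{and}\qquad \left(\bigoplus_{i\in I} A_i\right)\Big/\, p\left(\bigoplus_{i\in I} A_i\right) \;\cong\; \bigoplus_{i\in I} A_i/pA_i.
\]
The first holds because an element of a direct sum is killed by $p$ iff each of its (finitely many nonzero) coordinates is. The second holds because $p\big(\bigoplus_i A_i\big) = \bigoplus_i pA_i$ (multiplication by $p$ is computed coordinatewise), and quotients commute with direct sums.

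Next I would invoke the fact that for a direct sum of $\F_p$-vector spaces $\bigoplus_{i\in I} V_i$ one has $\dim_{\F_p}\bigoplus_{i\in I} V_i = \sum_{i\in I}\dim_{\F_p} V_i$, where the sum on the right is a (possibly infinite) cardinal sum. Applying this with $V_i = A_i[p]$ gives the $p$-rank identity, and with $V_i = A_i/pA_i$ gives the $p$-corank identity. That is essentially the whole argument; everything reduces to the two displayed isomorphisms plus additivity of dimension.

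The only place requiring a little care — and the closest thing to an obstacle — is bookkeeping with infinite cardinal sums on the right-hand side, i.e.\ making sure ``$\sum_{i\in I}$'' is interpreted as a cardinal sum and that additivity of $\F_p$-dimension over an arbitrary (not necessarily finite) index set is being used correctly, since a basis of $\bigoplus_i V_i$ is the disjoint union of bases of the $V_i$. No subtlety beyond that arises, as all the maps involved are canonical and the direct sum (as opposed to direct product) is exactly what makes the torsion and quotient functors commute with it. So in writing this up I would state the two isomorphisms, note they are immediate from the coordinatewise description of the group operations and of multiplication by $p$, and conclude by additivity of vector space dimension.
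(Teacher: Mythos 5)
Your proposal is correct and takes essentially the same approach as the paper: both reduce to the two displayed isomorphisms $\bigl(\bigoplus_i A_i\bigr)[p]=\bigoplus_i A_i[p]$ and $\bigoplus_i A_i / p\bigoplus_i A_i = \bigoplus_i A_i/pA_i$, and then conclude by additivity of $\F_p$-dimension. Your write-up just spells out these steps in more detail than the paper does.
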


\begin{proof}
For the first claim note that $\left(\bigoplus_{i\in I} A_i\right)[p] = \bigoplus_{i\in I} A_i[p]$.
For the second claim note
\[
\bigoplus_{i\in I} A_i \bigg/ p\bigoplus_{i\in I} A_i = \bigoplus_{i \in I} A_i/p A_i.
\]
% \hspace{-1cm}
\end{proof}

\medskip
We let $\Z(p^\infty)$ be the Pr\"ufner $p$-group, i.e. the subgroup $\{m/p^k +\Z :k\in\N,m\in\Z\}$ of $\Q/\Z$.

\begin{fact}
\label{fact:divisible classification}
Any divisible abelian group $A$ is isomorphic to $\Q^{\rank(A)}\oplus\bigoplus_{p}\Z(p^\infty)^{\rank_p(A)}$.
In particular $\Q/\Z$ is isomorphic to $\bigoplus_p \Z(p^\infty)$.
\end{fact}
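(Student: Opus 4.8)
The plan is to reduce this to the classical structure theory of divisible abelian groups. First I would record that a divisible abelian group is an injective $\Z$-module: by Baer's criterion one only has to extend a homomorphism $k\Z \to A$ to $\Z \to A$, and that is exactly what divisibility provides. Consequently every divisible subgroup of an abelian group is a direct summand, and this is the splitting tool used twice below.

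Next I would peel off the torsion-free part. The torsion subgroup $T$ of $A$ is itself divisible: if $n\alpha = 0$ and $k\beta = \alpha$ then $nk\beta = 0$, so any $k$-th root of a torsion element is again torsion. Hence $T$ splits off, $A \cong T \oplus V$ with $V \cong A/T$ torsion-free and divisible. A torsion-free divisible group carries a canonical $\Q$-vector space structure (divide by $n$), so $V \cong \Q^{\rank(A)}$: the exponent is $\dim_\Q V = \dim_\Q(A \otimes \Q)$, using $T \otimes \Q = 0$ and $V \otimes \Q \cong V$, and $\dim_\Q(A\otimes\Q)$ is the definition of $\rank(A)$.

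For the torsion part I would use the primary decomposition $T = \bigoplus_p T_p$, note that each $p$-primary component $T_p$ is again divisible by the root argument above, and then prove the key lemma: a divisible $p$-group $D$ is isomorphic to $\Z(p^\infty)^{\lambda}$ with $\lambda = \dim_{\F_p} D[p]$. To see this, choose an $\F_p$-basis of $D[p]$, lift each basis vector through an infinite chain of $p$-th roots (divisibility), and check that the subgroup these chains generate is a direct sum of $\lambda$ copies of $\Z(p^\infty)$; being divisible it is a summand, $D \cong \Z(p^\infty)^{\lambda} \oplus C$, and then $C$ is a divisible $p$-group with $C[p] = 0$, forcing $C = 0$. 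Applying this to $D = T_p$ and observing $T_p[p] = A[p]$ (since $V[p] = 0$ and $T_q[p] = 0$ for $q \ne p$) identifies the exponent as $\rank_p(A)$. Assembling the summands gives $A \cong \Q^{\rank(A)} \oplus \bigoplus_p \Z(p^\infty)^{\rank_p(A)}$, and as a consistency check one can recompute the $p$-ranks of the right-hand side using Fact~\ref{fact:additive}.

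The final assertion is then immediate: $\Q/\Z$ is torsion, so $\rank(\Q/\Z) = 0$, while $(\Q/\Z)[p] = p^{-1}\Z/\Z \cong \F_p$ gives $\rank_p(\Q/\Z) = 1$ for every prime $p$, whence $\Q/\Z \cong \bigoplus_p \Z(p^\infty)$. The only step that needs genuine care is the classification of divisible $p$-groups in the third paragraph; the two splitting steps and the torsion-free case are routine module theory.
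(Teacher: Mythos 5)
Your argument is correct and reconstructs the standard textbook proof of the structure theorem for divisible abelian groups; the paper does not reprove the first claim but simply cites \cite[Theorem~3.1]{fuchs}, and handles the second claim by computing ranks exactly as you do. The one place where your sketch elides real work — showing that the chains of $p$-th roots generate an internal direct sum of copies of $\Z(p^\infty)$, which in general needs a Zorn's lemma argument rather than just pairwise trivial intersection — is the same point you flagged yourself as needing care, so the proposal is sound.
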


The first claim of Fact~\ref{fact:divisible classification} is \cite[Theorem~3.1]{fuchs}.
The second follows by computing ranks.

\medskip
The \textbf{elementary rank} $\erank(A)$ of $A$ is $\min\{\rank_p(A),\aleph_0\}$.
Likewise define the elementary $p$-rank $\erank_p(A)$ and elementary $p$-corank $\coranke_p(A)$ of $A$.
Note that these only depend on $\Th(A)$.
Fact~\ref{fact:divisible elemen} follows from Fact~\ref{fact:divisible classification} and a back-and-forth argument.
It is also a special case of Szmielew's classification of abelian groups up to elementary equivalence.

\begin{fact}
\label{fact:divisible elemen}
If $A$ is a divisible abelian group then $A\equiv\Q^{\erank(A)}\oplus\bigoplus_{p}\Z(p^\infty)^{\erank_p(A)}$.
Two divisible abelian groups are elementarily equivalent if and only if they have the same elementary ranks.
\end{fact}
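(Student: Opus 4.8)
The plan is to combine the structure theorem, Fact~\ref{fact:divisible classification}, with a back-and-forth argument, after reducing everything to groups in the normal form $\Q^{(\kappa)}\oplus\bigoplus_p\Z(p^\infty)^{(\kappa_p)}$.

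First I would dispose of the ``only if'' half of the second assertion: this is exactly the remark recorded just before the statement, that the elementary ranks depend only on $\Th(A)$. Indeed $\rank_p(A)=\dim_{\F_p}A[p]$, and $A[p]$ is $\emptyset$-definable, so ``$\dim_{\F_p}A[p]\ge k$'' is first order for each $k$, and similarly $\erank(A)$ is determined by $\Th(A)$. It then suffices to prove that two divisible abelian groups with the same elementary ranks are elementarily equivalent: the first assertion follows by applying this to $A$ and to the displayed group $D=\Q^{\erank(A)}\oplus\bigoplus_p\Z(p^\infty)^{\erank_p(A)}$, which by Fact~\ref{fact:additive} (for the $p$-ranks) and the computation $D\otimes\Q\cong\Q^{\erank(A)}$ (for the rank) has the same elementary ranks as $A$.

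So let $A,B$ be divisible abelian with $\erank(A)=\erank(B)$ and $\erank_p(A)=\erank_p(B)$ for all $p$. If $A=0$ then, as $A=0$ is equivalent (via Fact~\ref{fact:divisible classification}) to all elementary ranks vanishing, also $B=0$ and $A\cong B$; so assume $A,B\ne 0$, hence infinite, and by Fact~\ref{fact:divisible classification} in normal form. Now pass to $\lambda$-saturated elementary extensions $A\preceq M$ and $B\preceq N$ of a common cardinality $\lambda$, with $\lambda^{<\lambda}=\lambda$ so that these exist. Both $M,N$ are divisible, and the point is that saturation of size $\lambda$ collapses each invariant onto its truncation at $\aleph_0$. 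Concretely, an infinite $\emptyset$-definable subset of a $\lambda$-saturated structure of size $\lambda$ has size $\lambda$, so $\rank_p(M)$ equals $\rank_p(A)$ when the latter is finite and equals $\lambda$ otherwise, and likewise for $N$; since $\min(\rank_p A,\aleph_0)=\min(\rank_p B,\aleph_0)$ this forces $\rank_p(M)=\rank_p(N)$ for every $p$. Similarly, a nonzero divisible group satisfies $\exists x\,\bigwedge_{n\le N}nx\ne 0$ for every $N$ (it has an element of infinite order, or, being torsion, an element of arbitrarily large order), so for each $\mu<\lambda$ saturation produces $\mu$ many $\Z$-independent elements of infinite order in $M$, giving $\rank(M)=\lambda=\rank(N)$. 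By uniqueness of the normal form, Fact~\ref{fact:divisible classification}, $M\cong N$, whence $A\equiv M\equiv N\equiv B$.

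I expect the main obstacle to be the book-keeping in the last paragraph: checking that truncation at $\aleph_0$ is precisely what survives passage to a saturated model, and securing saturated models of a common cardinality — a routine but not entirely cost-free set-theoretic point. An alternative route, avoiding the cardinal arithmetic, is to run a finite back-and-forth directly over the finitely generated subgroups of $A$ and $B$: such a subgroup involves only finitely many of the ``directions'' in $\Q^{(\kappa)}\oplus\bigoplus_p\Z(p^\infty)^{(\kappa_p)}$, so matching the invariants truncated at $\aleph_0$ always leaves room to extend a given partial isomorphism by one further element on either side; one must only keep the game of bounded length so as to conclude $\equiv$ rather than $\equiv_\infty$ (the latter genuinely fails, e.g.\ between $\Q$ and $\Q^2$). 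One could also organise this via Feferman--Vaught, Fact~\ref{fact:direct sum}, to peel off the single-prime factors, but the infinite direct sum over $p$ and the individual factors would still have to be treated by hand, so it is no shorter. Either way the reductions are immediate from Fact~\ref{fact:divisible classification} and the converse direction is the quoted remark.
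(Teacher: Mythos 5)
Your ``if'' direction, and the deduction of the first sentence from it, is sound in outline; the genuine gap is in the ``only if'' half, and the claim it rests on is in fact false. The phrase ``and similarly $\erank(A)$ is determined by $\Th(A)$'' hides a non-similarity: $\rank_p(A)=\dim_{\F_p}A[p]$ is the dimension of a $\emptyset$-definable $\F_p$-vector space, so ``$\rank_p(A)\ge k$'' is a single first-order sentence, but ``$\rank(A)\ge k$'' unwinds to $\exists a_1\cdots\exists a_k\bigwedge_{\bar{m}\ne 0}(m_1a_1+\cdots+m_ka_k\ne 0)$, an infinite conjunction not captured by any definable set. And $\erank$ genuinely fails to be an elementary invariant of divisible groups: $\Q$ and $\Q\oplus\Q$ are both models of the complete $\aleph_1$-categorical theory of nontrivial divisible torsion-free abelian groups, hence $\Q\equiv\Q\oplus\Q$, yet $\erank(\Q)=1$ while $\erank(\Q\oplus\Q)=2$. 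The same thing happens on the torsion side: the proof of Proposition~\ref{prop:Q/Z} in this paper itself invokes $\Q/\Z\equiv(\Q/\Z)\oplus\Q$, which changes $\erank$ from $0$ to $1$. So the ``only if'' direction as you phrase it (and as the fact is literally worded) cannot be proved.

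The statement Szmielew's classification actually yields is: two \emph{nonzero} divisible abelian groups are elementarily equivalent iff $\erank_p(A)=\erank_p(B)$ for every prime $p$; the invariant $\erank$ drops out. Reassuringly, this is exactly what your saturation computation proves, since you show $\rank(M)=\lambda$ for \emph{every} nonzero divisible $\lambda$-saturated $M$ of cardinality $\lambda$ without consulting $\erank(A)$ at all, so the torsion-free rank washes out in the saturated extension regardless of its value below, and any two nonzero divisible groups with matching $\erank_p$ acquire isomorphic saturated extensions; the trivial group is detected by $\forall x(x=0)$. I would recast the fact to say this. The two smaller points you flag are real but routine: the step ``saturation produces $\mu$ many $\Z$-independent elements of infinite order'' needs a word of justification (fix $y$ of infinite order; for finitely many forbidden equations $mx=z$ with $m\ne 0$, each excludes at most one multiple $ky$, so the relevant type is finitely satisfiable by elements of the form $ky$), and securing a common $\lambda$ with $\lambda^{<\lambda}=\lambda$ is the usual cardinal bookkeeping, avoidable by running your finite back-and-forth instead.
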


We now discuss torsion free abelian groups.
Let $\Z_{(p)}$ be the localization of $\Z$ at $p$, i.e. $\Q\cap\Z_p$.

\begin{fact}
\label{fact:szmielew}
If $A$ is a torsion free abelian group then $A$ is elementarily equivalent to  $\bigoplus_{p} \Z^{\coranke_p(A)}_{(p)}$.
In particular $\Z$ is elementarily equivalent to $\bigoplus_p \Z_{(p)}$.
Two torsion free abelian groups are elementarily equivalent if and only if they have the same elementary $p$-coranks.
\end{fact}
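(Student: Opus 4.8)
The plan is to isolate one core statement and deduce the Fact from it: \emph{if $A$ is a torsion-free abelian group, then $\Th(A)$ is determined by the family $(\coranke_p(A))_{p\text{ prime}}$ together with whether $A=\{0\}$.} I sketch the core statement after explaining the reduction.

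Write $B=\bigoplus_{p}\Z_{(p)}^{\coranke_p(A)}$; this is torsion-free since it embeds into $\prod_p\Q$. Because $q$ is a unit of $\Z_{(p)}$ whenever $q\ne p$, while $\Z_{(p)}/p\Z_{(p)}\cong\F_p$, we get $\corank_q(\Z_{(p)})=1$ for $q=p$ and $0$ otherwise; Fact~\ref{fact:additive} then yields $\corank_q(B)=\coranke_q(A)$ for every prime $q$, and $B=\{0\}$ exactly when $A$ is divisible. So if $A$ is not divisible, $A$ and $B$ are both nontrivial torsion-free groups with the same elementary $p$-coranks, hence $A\equiv B$ by the core statement; this is the first assertion. (If $A$ is divisible it is a $\Q$-vector space and $A\equiv\Q$ by Fact~\ref{fact:divisible elemen}, so in that degenerate case the displayed group on the right should be read as a nontrivial $\Q$-vector space.) Taking $A=\Z$, where $\corank_p(\Z)=1$ for all $p$, gives the ``in particular'' clause. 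For the final ``if and only if'': the forward direction holds because $\coranke_p$ depends only on the theory --- ``$\coranke_p(A)\ge k$'' is expressed by a single primitive $L_{\mathrm{div}}$-sentence asserting the existence of $k$ elements that are $\F_p$-independent modulo $pA$ --- and the reverse direction is the core statement applied to both groups (both taken nontrivial).

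For the core statement, which is the torsion-free case of Szmielew's theorem, I would argue through Baur--Monk. By Fact~\ref{fact:abelian qe} every abelian group eliminates quantifiers in $L_{\mathrm{div}}$; in its module-theoretic refinement this says every formula is equivalent to a Boolean combination of positive primitive formulas, so $\Th(A)$ is pinned down by the Baur--Monk invariants, i.e.\ the indices $[\varphi(A):\varphi(A)\cap\psi(A)]\in\N\cup\{\infty\}$ as $\varphi,\psi$ range over positive primitive formulas. Over $\Z$, Smith normal form puts each such invariant into a product of indices of the three basic types $[A:nA]$, $[A[l]:A[k]]$ (for $k\mid l$), and $[A:\{0\}]=|A|$. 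When $A$ is torsion-free, $A[k]=0$ for $k\ge 1$, so the middle type is trivial; moreover, using torsion-freeness --- multiplication by $p^{j}$ is an isomorphism $A\xrightarrow{\sim}p^{j}A$, and $A/nA\cong\bigoplus_{p}A/p^{v_p(n)}A$ by the Chinese Remainder Theorem for $\Z$-modules --- one gets $[A:nA]=\prod_{p}p^{\,v_p(n)\,\corank_p(A)}$. Hence every Baur--Monk invariant of a torsion-free $A$ is a function of $(\coranke_p(A))_p$ and of the truth value of ``$A=\{0\}$'', and two torsion-free groups with matching data are elementarily equivalent.

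The main obstacle is exactly this last step: one must know that torsion-freeness collapses the whole Szmielew invariant system --- Ulm invariants, divisible-part invariants, and the several-variable pp-pair indices, the last of which are genuinely needed for general abelian groups --- down to the $p$-coranks plus a single triviality bit. A cleaner alternative that avoids the module bookkeeping is a direct back-and-forth between $\aleph_1$-saturated elementary extensions $A^*\succ A$ and $B^*\succ B$: by Fact~\ref{fact:abelian qe} any partial $L_{\mathrm{div}}$-isomorphism between finitely generated substructures is elementary, and the whole point is the extension step, namely realizing in $B^*$ the ``divisibility type'' over a finitely generated piece of a new element of $A^*$; this is where equality of the $p$-coranks, together with torsion-freeness of $A^*$ and $B^*$, enters, to see that the relevant existential $L_{\mathrm{div}}$-formulas are consistent.
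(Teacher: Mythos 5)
Your proposal goes considerably further than the paper does here: the paper gives no proof of the first claim, merely citing Szmielew's classification (Hodges, Lemma~A.2.3) and pointing to Zakon for a direct treatment of the torsion-free case, and then remarks that the second and third claims follow by computing $p$-coranks. Your reduction of the second and third claims to the first matches the paper's (computing $\coranke_p(\Z)=1$; using that $\coranke_p$ is expressible by an $L_{\mathrm{div}}$-sentence for the forward direction of the iff), but you also sketch an actual proof of the first claim.

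Of your two routes for the core statement, the Baur--Monk computation is sound: for torsion-free $A$ multiplication by $p$ is injective, so $p^iA/p^{i+1}A\cong A/pA$, every $A[k]$ is trivial, and Smith normal form together with CRT reduces the full pp-pair invariant system to the single family $[A:nA]=\prod_p p^{v_p(n)\corank_p(A)}$; this is precisely the ``collapse'' you flag as the crux, and torsion-freeness is exactly what makes it work. The back-and-forth alternative you propose is essentially what the paper is pointing at with the Zakon reference. Either gives a self-contained argument where the paper relies on a black box.

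One substantive observation you make that the paper elides: as literally stated, the Fact fails when $A$ is a nontrivial divisible torsion-free group such as $\Q$ --- there all $\coranke_p(A)=0$, so $\bigoplus_p\Z_{(p)}^{\coranke_p(A)}$ is the trivial group, which is not elementarily equivalent to $\Q$. The same example breaks the third claim against $\{0\}$. So the statement should carry a nontriviality hypothesis (or the displayed group should be padded with a $\Q$ summand in the divisible case, as you suggest); your parenthetical handles this correctly and is worth keeping.
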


The first claim of Fact~\ref{fact:szmielew} is a special case of Szmielew's classification of abelian groups up to elementary equivalence~\cite[Lemma~A.2.3]{Hodges}, see also Zakon~\cite{zakon} for an easy direct treatment of the torsion free case.
The second claim follows easily from the first claim by computing ranks and the third follows from the first.

\begin{fact}
\label{fact:finite rank}
Suppose that $A$ is a finite rank torsion free abelian group.
Then the $p$-corank of $A$ is less then or equal to the rank of $A$ for all primes $p$.
\end{fact}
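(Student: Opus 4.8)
The key point is that a finite rank torsion free abelian group $A$ of rank $n$ embeds into $A\otimes\Q\cong\Q^n$, so any $n+1$ elements of $A$ are $\Z$-linearly dependent, and one can arrange such a dependence to survive reduction mod $p$. First I would fix a prime $p$, set $n=\rank(A)$, and pick arbitrary $\alpha_1,\ldots,\alpha_{n+1}\in A$; the goal is to show their images in $A/pA$ are $\F_p$-linearly dependent, which gives $\corank_p(A)=\dim_{\F_p}(A/pA)\le n$.

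Next I would argue that since $A$ is torsion free, the natural map $A\to A\otimes\Q$ is injective and $A\otimes\Q$ is an $n$-dimensional $\Q$-vector space, so $\alpha_1,\ldots,\alpha_{n+1}$ are linearly dependent over $\Q$, hence over $\Z$: there exist integers $c_1,\ldots,c_{n+1}$, not all zero, with $\sum_{i=1}^{n+1} c_i\alpha_i=0$ in $A$. Dividing through by $\gcd(c_1,\ldots,c_{n+1})$, I may assume this gcd is $1$, so in particular $p$ does not divide all of the $c_i$.

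Then, reducing the relation $\sum c_i\alpha_i=0$ modulo $pA$, I get $\sum_{i=1}^{n+1}\bar c_i\bar\alpha_i=0$ in the $\F_p$-vector space $A/pA$, where $\bar c_i$ is the class of $c_i$ in $\F_p$ and $\bar\alpha_i$ the class of $\alpha_i$ in $A/pA$. Since $p\nmid c_i$ for at least one $i$, not all $\bar c_i$ vanish, so $\bar\alpha_1,\ldots,\bar\alpha_{n+1}$ are linearly dependent over $\F_p$. As $\alpha_1,\ldots,\alpha_{n+1}$ were arbitrary, every set of $n+1$ vectors in $A/pA$ is dependent, so $\corank_p(A)\le n=\rank(A)$, as desired.

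\textbf{Main obstacle.} There is no serious obstacle here; the only thing to be careful about is the normalization step, i.e.\ dividing the integer relation by the common gcd so that the reduced coefficients are not all $0$ in $\F_p$ — without that, the mod-$p$ relation could be trivial and the argument would collapse.
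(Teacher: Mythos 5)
Your proof is correct, and it takes a genuinely different route from the paper's. The paper argues by induction on $\rank(A)$: it picks a nonzero $\beta$, forms the rank-one pure subgroup $A'=\{\beta^*\in A : \beta^*=q\beta \text{ for some } q\in\Q\}$, observes $A'':=A/A'$ is torsion free of rank one less, and then applies a subadditivity lemma ($\corank_p(A)\le\corank_p(A')+\corank_p(A'')$, proved by right-exactness of $-\otimes\Z/p\Z$) together with the rank-one base case, which is quoted from Fuchs. Your argument is direct and self-contained: you bypass the induction, the exact-sequence lemma, and the external reference by simply taking any $n+1$ elements, producing a primitive $\Z$-relation among them, and reducing mod $p$. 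One small point worth making explicit (you use it twice): torsion-freeness is what guarantees both that $A\hookrightarrow A\otimes\Q$ and that dividing the integer relation $\sum c_i\alpha_i=0$ by $d=\gcd(c_i)$ still yields a valid relation in $A$ (from $d\cdot\sum(c_i/d)\alpha_i=0$ and $d$-torsion-freeness). With that noted, your proof is cleaner and more elementary than the paper's, at the cost of not producing the reusable exact-sequence lemma.
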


I am sure Fact~\ref{fact:finite rank} is known, but I couldn't find it so I will give a proof.

\begin{lemma}
\label{lem:finite rank}
Suppose $A'\to A\to A'' \to 0$ is an exact sequence of abelian groups.
Then for all primes $p$ we have $\corank_p(A)\le \corank_p(A')+\corank_p(A'')$.
\end{lemma}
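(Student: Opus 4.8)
The plan is to prove the statement about right-exact sequences $A' \to A \to A'' \to 0$ by reducing everything to right-exactness of the functor $- \otimes \F_p$, i.e. of $- \,/\, p(-)$. Recall that $\corank_p$ of an abelian group $B$ is by definition $\dim_{\F_p}(B/pB) = \dim_{\F_p}(B \otimes \F_p)$. So what must be shown is that if $A' \to A \to A'' \to 0$ is exact then $\dim_{\F_p}(A/pA) \le \dim_{\F_p}(A'/pA') + \dim_{\F_p}(A''/pA'')$.

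First I would apply the right-exact functor $- \otimes_\Z \F_p$ to the exact sequence $A' \to A \to A'' \to 0$, obtaining an exact sequence of $\F_p$-vector spaces
\[
A' \otimes \F_p \longrightarrow A \otimes \F_p \longrightarrow A'' \otimes \F_p \longrightarrow 0.
\]
From exactness at $A'' \otimes \F_p$ the map $A \otimes \F_p \to A'' \otimes \F_p$ is onto, so $\dim_{\F_p}(A''\otimes\F_p) \le \dim_{\F_p}(A\otimes\F_p)$; and from exactness at $A \otimes \F_p$ the kernel of this surjection is the image of $A'\otimes\F_p$, which has dimension at most $\dim_{\F_p}(A'\otimes\F_p)$. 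Hence by rank–nullity $\dim_{\F_p}(A\otimes\F_p) = \dim_{\F_p}(\mathrm{image\ of\ } A'\otimes\F_p) + \dim_{\F_p}(A''\otimes\F_p) \le \dim_{\F_p}(A'\otimes\F_p) + \dim_{\F_p}(A''\otimes\F_p)$, which is exactly the claimed inequality $\corank_p(A) \le \corank_p(A') + \corank_p(A'')$.

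If one prefers to avoid invoking tensor products abstractly, the same argument can be run by hand: write $B = \mathrm{im}(A' \to A)$, so that $0 \to B \to A \to A'' \to 0$ is exact and $A' \to B$ is onto. Onto-ness of $A' \to B$ gives $\corank_p(B) \le \corank_p(A')$ directly (the composite $A' \to B \to B/pB$ is surjective). For the short exact sequence $0 \to B \to A \to A'' \to 0$, reducing mod $p$ gives $B/pB \to A/pA \to A''/pA'' \to 0$ exact by a quick diagram chase (surjectivity on the right is clear; a class in $A/pA$ dying in $A''/pA''$ lifts to an element of $A$ lying in $B + pA$, hence comes from $B/pB$), and then rank–nullity over $\F_p$ yields $\corank_p(A) \le \corank_p(B) + \corank_p(A'')$. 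Combining, $\corank_p(A) \le \corank_p(A') + \corank_p(A'')$.

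There is essentially no main obstacle here: the only mild subtlety is remembering that $- \otimes \F_p$ is only right exact, not exact, so one should not expect an exact sequence $0 \to A'/pA' \to A/pA \to A''/pA'' \to 0$ — the inequality, rather than equality, is exactly the shadow of the missing left-exactness (the obstruction living in $\mathrm{Tor}_1(A'', \F_p) = A''[p]$). I would state the proof in the tensor-product form as the cleanest, with the by-hand diagram chase as the fallback if a more self-contained argument is wanted.
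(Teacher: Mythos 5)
Your proof is correct and follows essentially the same route as the paper: tensor the exact sequence with $\F_p$, use right-exactness to get an exact sequence of $\F_p$-vector spaces $A'/pA' \to A/pA \to A''/pA'' \to 0$, and conclude by rank–nullity. Your by-hand fallback and the remark about $\mathrm{Tor}_1$ are extra context but the core argument is the same.
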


\begin{proof}
Tensoring is right exact and $B \otimes \Z/p\Z = B/pB$ for any abelian group $B$.
Therefore tensoring by $\Z/p\Z$ gives an exact sequence $A'/pA' \to A/pA \to A''/pA''\to 0$ of $\F_p$-vector spaces.
So $\corank_p(A)=\dim(A/pA)\le\dim(A'/pA')+\dim(A''/pA'')=\corank_p(A')+\corank_p(A'')$.
\end{proof}

We now prove Fact~\ref{fact:finite rank}.

\begin{proof}
We apply induction on $\rank(A)$.
The rank one case follows by \cite[Example~9.10]{fuchs}.
Suppose $\rank(A) > 1$.
Fix non-zero $\beta\in A$ and let $A'$ be the set of $\beta^*\in A$ such that $\beta^*=q\beta$ for some $q\in\Q$.
Then $A'$ is a rank one subgroup of $A$ and it is easy to see that $A'':=A/A'$ is torsion free.
We have $\rank(A'')=\rank(A)-1$ so by applying induction to $A'$, $A''$ and applying Lemma~\ref{lem:finite rank} to the  exact sequence $0\to A'\to A\to A'' \to 0$ we get the following.
\[
\corank_p(A)\le\corank_p(A')+\corank_p(A'')\le 1 + (\rank(A) - 1) = \rank(A).
\]
\end{proof}

\subsection{Ordered abelian groups}
\label{section:l order}
We will use some things from the theory of (totally) ordered abelian groups.
All oag's have at least two elements.
Note that if $(H_i;+,\prec_i)$ is an ordered abelian group for $i \in \{1,\ldots,n\}$ then the lexicographic order on $H_1 \times\cdots\times \times H_n$ is a group order.
The resulting oag is referred to as the lexicographic product of the $(H_i;+,\prec_i)$.

\medskip
An oag $(H;+,\prec)$ is \textbf{archimedean} if it satisfies one of the following equivalent conditions:
\begin{enumerate}
\item For any positive $\alpha,\beta \in H$ there is $n$ such that $n\alpha > \beta$.
\item $(H;+,\prec)$ does not have a non-trivial convex subgroup.
\item There is a (unique up to rescaling) embedding $(H;+,\prec) \to (\R;+,<)$.
\end{enumerate}
(1) $\Longleftrightarrow$ (3) follows by Hahn embedding and (1) $\Longleftrightarrow$ (2) is an easy exercise.
An ordered abelian group $(H;+,\prec)$ is \textbf{regular} if it satisfies one of the following equivalent conditions:
\begin{enumerate}
\item $(H;+,\prec)$ is elementarily equivalent to an archimedean ordered abelian group.
\item $(H;+,\prec)$ does not have a non-trivial definable convex subgroup.
\item If $n \ge 1$ and $I \subseteq H$ is an interval containing at least $n$ elements then $I \cap nH \ne \emptyset$.
\item Either $(H;+,\prec)$ is a model of Presburger arithmetic or $(H;\prec)$ is dense and $nH$ is dense in $H$ for every $n \ge 1$.
\end{enumerate}
(3)$\Longleftrightarrow$(4) follows by considering the usual axiomization of Presburger arithmetic.
It is easy to see that (1) implies (4), Robinson and Zakon showed that (3) implies (1) \cite{robinson-zakon}.
What is above shows that (1) implies (2).
Belegradek showed that (2) implies (3)~\cite[Theorem~3.15]{poly-regular}.

\medskip
We let $L_\mathrm{ordiv}$ be the expansion of the language $L_\mathrm{div}$ defined in the previous section by a binary relation $<$.
Consider ordered abelian groups to be $L_\mathrm{div}$-structures in the natural way.
Fact~\ref{fact:qe for oag} below is due to Weispfenning~\cite{Weispfenning}.
It is also essentially the simplest case of the description of definable sets in arbitrary ordered abelian groups, see \cite{coag,expanded,Sch.habil}.

\begin{fact}
\label{fact:qe for oag}
Any regular ordered abelian group admits quantifier elimination $L_\mathrm{ordiv}$.
\end{fact}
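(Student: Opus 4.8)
The plan is to run the classical quantifier-elimination test. By that test it suffices, given a formula $\exists x\,\varphi(x,\bar y)$ with $\varphi$ a conjunction of $L_{\mathrm{ordiv}}$-literals, to produce a quantifier-free equivalent. Terms of $L_{\mathrm{ordiv}}$ are $\Z$-linear combinations of variables, so after transposing $x$ to one side each literal containing $x$ can be put in one of the shapes $\alpha x < t$, $t < \alpha x$, $\alpha x = t$, $\alpha x \ne t$, $m \mid (\alpha x - t)$, $m \nmid (\alpha x - t)$, with $\alpha, m$ positive integers and $t$ a term not involving $x$. Multiplying each literal through by a suitable positive integer so that the coefficient of $x$ becomes the common value $N = \operatorname{lcm}$ of the $\alpha$'s (legitimate since scaling an inequality by a positive integer is reversible and $m \mid u \iff cm \mid cu$ for $c \ge 1$) and then writing $z$ for $Nx$, the formula becomes $\exists z\,\big(N \mid z \,\wedge\, \psi(z,\bar y)\big)$, where $\psi$ is a conjunction of lower bounds $s_i < z$, upper bounds $z < t_j$, at most one equation $z = t$, disequations $z \ne r_l$, and congruence conditions $z \in d_k + m_k H$ and $z \notin d'_l + m'_l H$.

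Next I would collapse the congruence part to a single coset condition. Distinct cosets of a subgroup are disjoint, so the conjunction of $N \mid z$ and the positive conditions $z \in d_k + m_k H$ is, modulo a quantifier-free condition on $\bar y$ expressing that those cosets are mutually compatible (and recording inconsistency, itself quantifier-free in $\bar y$), equivalent to one condition $z \in C$ with $C$ a coset of $MH$, where $M = \operatorname{lcm}(N, m_1, m_2, \dots)$; and since $MH \subseteq m'_l H$, each negative condition $z \notin d'_l + m'_l H$ either is vacuous on $C$ or deletes $C$ entirely, so it too contributes only a quantifier-free condition on $\bar y$. If the equation $z = t$ is present we simply substitute it and are finished; otherwise we are left to eliminate $z$ from $\exists z\,\big(z \in C \,\wedge\, \bigwedge_i s_i < z \,\wedge\, \bigwedge_j z < t_j \,\wedge\, \bigwedge_l z \ne r_l\big)$.

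This is the one place regularity is used, and it splits along clause (4) of the definition of \emph{regular}. If $(H;+,\prec)$ is dense with every $nH$ dense, then the coset $C$ is dense, so the last existential holds if and only if the interval $\big(\max_i s_i,\ \min_j t_j\big)$ is nonempty (finitely many forbidden points cannot block a dense subset of a nonempty interval), i.e.\ if and only if $\bigwedge_{i,j} s_i < t_j$, which becomes quantifier-free after the routine case split over which lower bound is largest and which upper bound is smallest, plus the degenerate cases of no lower bounds or no upper bounds. If instead $(H;+,\prec)$ is a model of Presburger arithmetic, this is the classical Presburger elimination: $H/MH$ is finite of size $M$, and the existential reduces to a finite disjunction, over which $s_i,t_j$ are extremal and over the finitely many admissible offsets, of quantifier-free conditions — one works here in the expansion of $L_{\mathrm{ordiv}}$ by the (parameter-free definable) least positive element, which is the standard setting for Presburger's theorem.

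The genuine content is the dichotomy ``dense-with-$nH$-dense, or Presburger'', together with the density of cosets of $MH$ in the dense case — precisely clauses (3)--(4) in the definition of regular — since that is what makes the final ``some element of a coset lies in an interval'' quantifier-free expressible; without regularity this step fails, a non-trivial definable convex subgroup producing genuinely new definable sets. Everything else is bookkeeping: carrying the coset-compatibility side conditions as quantifier-free formulas in $\bar y$ and treating the degenerate configurations (no bounds on one side, an isolated equation, coincident bounds). As an alternative one can run the amalgamation form of the quantifier-elimination test: the quantifier-free $L_{\mathrm{ordiv}}$-type of an element over a finitely generated substructure is pinned down by the cut it realises and the system of cosets of the subgroups $nH$ it lies in over that substructure, and regularity is exactly what guarantees that every such consistent system is realised in a monster model of $\Th(H)$.
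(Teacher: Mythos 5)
Two steps do not close as written (the paper offers no proof, merely citing Weispfenning, so your sketch stands on its own). First, the congruence consolidation: you reduce the positive congruences to $z \in C$ with $C$ a coset of $MH$, $M = \operatorname{lcm}(N,m_1,m_2,\ldots)$ over the \emph{positive} moduli only, and then invoke ``$MH \subseteq m'_lH$'' to conclude that each negative congruence is vacuous on $C$ or kills it. That inclusion requires $m'_l \mid M$, which nothing supplies, and when $m'_l \nmid M$ the negative condition carves out a proper nonempty subset of $C$. Nor can you repair this by enlarging $M$ and splitting over cosets of the bigger modulus, because a dense regular group can have $H/kH$ infinite: the free abelian group on $\{\pi^n : n<\omega\}$ inside $\R$ is dense, has $nH$ dense for every $n$, and yet $H/2H \cong (\Z/2\Z)^{(\omega)}$. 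The clean fix is to decouple order from divisibility rather than consolidating cosets by hand: Szmielew (Fact~\ref{fact:abelian qe}) already eliminates $z$ from the pure divisibility fragment $\theta_{\mathrm{div}}(z,\bar y)$ in $L_{\mathrm{div}}$; and if $z_0 \models \theta_{\mathrm{div}}$ then so does every point of the dense set $z_0 + MH$ with $M$ now the lcm of \emph{all} moduli that appear, so in the dense case $\exists z\,(\theta_{\mathrm{div}}\wedge\theta_{\mathrm{ord}})$ reduces to $\exists z\,\theta_{\mathrm{div}}\wedge\bigwedge_{i,j} s_i < t_j$, both conjuncts quantifier-free.

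Second, and more seriously, the discrete half does not prove the statement as written. You pass to ``the expansion of $L_{\mathrm{ordiv}}$ by the (parameter-free definable) least positive element'' as though definability made this cosmetic; but quantifier elimination is a statement about a fixed language, and $\{1\}$ is not quantifier-free $L_{\mathrm{ordiv}}$-definable over $\Z$ --- any nonempty quantifier-free definable subset of $\Z$ in this language is a boolean combination of half-lines and arithmetic progressions, hence $\{0\}$ or infinite. Concretely, a saturated $\Z^* \equiv (\Z;+,<)$ contains $\alpha > 1$ with $\alpha \equiv 1 \pmod{k}$ for all $k \ge 2$; then $n \mapsto n\alpha$ is an $L_{\mathrm{ordiv}}$-embedding of $\Z$ into $\Z^*$, and the quantifier-free formula $0 < y \wedge y < x$ has a witness over the parameter $\alpha$ in $\Z^*$ but none over $1$ in $\Z$, so Presburger arithmetic does not eliminate quantifiers in $L_{\mathrm{ordiv}}$. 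The Fact as literally stated is therefore false in the discrete case; the correct theorem adjoins a constant for the least positive element when one exists, and your proof is a proof of that corrected statement. This is worth flagging explicitly rather than slipping the constant in silently, since the paper later applies this Fact to discrete groups.
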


Note that convex subgroups are linearly ordered under inclusion.
Given a convex subgroup $J$ of $H$, we consider $H/J$ to be an ordered abelian group by declaring $a + J < b + J$ when every element of $a + J$ is strictly less than every element of $b + J$.
Fact~\ref{fact:oag} is an exercise.

\begin{fact}
\label{fact:oag}
Suppose $J$ is a convex subgroup of $H$.
Then $I \mapsto I/J$ gives a one-to-one correspondence between convex subgroups $I$ of $H$ containing $J$ and convex subgroups of $H/J$.
\end{fact}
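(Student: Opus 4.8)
The plan is to reduce the statement to the ordinary correspondence theorem for subgroups and then verify separately that convexity is preserved in each direction. Write $\pi \colon H \to H/J$ for the quotient map. Since $H$ is abelian, the standard correspondence theorem already gives a bijection between subgroups $I$ of $H$ containing $J$ and subgroups of $H/J$, implemented by $I \mapsto I/J = \pi(I)$ with inverse $\bar K \mapsto \pi^{-1}(\bar K)$; the required identities $\pi^{-1}(\pi(I)) = I$ (using $J \subseteq I$) and $\pi(\pi^{-1}(\bar K)) = \bar K$ (surjectivity of $\pi$) are routine. So it suffices to prove: (i) if $I$ is convex then $I/J$ is convex in $H/J$, and (ii) if $\bar K$ is convex in $H/J$ then $\pi^{-1}(\bar K)$ is convex in $H$. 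Once (i) and (ii) are established, both maps restrict to the collections of convex subgroups, and the two restrictions are mutually inverse because the unrestricted maps are.

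The common tool is the behaviour of $\pi$ with respect to the orders. By the definition of the order on $H/J$ recalled just before the Fact, $\pi(a) < \pi(c)$ means precisely that $a' < c'$ for all $a' \in a + J$ and $c' \in c + J$; and if $a \le c$ in $H$ then $\pi(a) \le \pi(c)$, since $\pi(a) > \pi(c)$ would force $a > c$. These two remarks are all we will need.

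For (ii), let $\bar K$ be a convex subgroup of $H/J$ and set $I = \pi^{-1}(\bar K)$, a subgroup containing $J = \pi^{-1}(0)$. If $a, b \in I$ and $c \in H$ with $a \le c \le b$, then $\pi(a) \le \pi(c) \le \pi(b)$ with $\pi(a), \pi(b) \in \bar K$, so $\pi(c) \in \bar K$ by convexity and hence $c \in I$. For (i), let $I$ be a convex subgroup of $H$ containing $J$, and suppose $\bar a, \bar b \in I/J$ and $\bar c \in H/J$ with $\bar a \le \bar c \le \bar b$. Choose representatives $a, b \in I$ of $\bar a, \bar b$ and any representative $c$ of $\bar c$. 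If $\bar a = \bar c$ or $\bar c = \bar b$ then $\bar c \in I/J$ trivially; otherwise $\bar a < \bar c < \bar b$, so by the order description above we get $a < c' < b$ for every $c' \in c + J$, whence $c' \in I$ by convexity of $I$ and therefore $\bar c = \pi(c') \in I/J$.

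There is no substantive obstacle here; the only points requiring care are the definition of the order on $H/J$ (where it is convexity of $J$ that makes the relation a linear order, as already noted in the text preceding the Fact) and the innocuous freedom in choosing coset representatives in step (i).
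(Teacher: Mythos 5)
Your proof is correct, and the paper itself gives no proof — it explicitly leaves Fact~\ref{fact:oag} as an exercise. Reducing to the ordinary correspondence theorem and then checking that both $\pi$ and $\pi^{-1}$ preserve convexity (using the concrete description of the order on $H/J$ given just before the Fact) is exactly the standard argument one would expect, and your treatment of the two directions, including the care with coset representatives in step (i) and the observation that $\pi$ is weakly order-preserving, is sound.
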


Fact~\ref{fact:rank} follows by Fact~\ref{fact:oag} an easy induction on $k$, see \cite[pg~101]{trans}.

\begin{fact}
\label{fact:rank}
If $(H;+)$ is rank $k$ then $(H;+,\prec)$ has at most $k - 1$ non-trivial convex subgroups.
\end{fact}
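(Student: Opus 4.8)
The plan is to prove this by induction on the rank $k$ of $(H;+)$, using Fact~\ref{fact:oag} to pass to quotients. First recall that since $\rank(H) = \dim_\Q(H\otimes\Q)$, a convex subgroup $J$ of $H$ satisfies $\rank(J) + \rank(H/J) = \rank(H)$: this is just exactness of $- \otimes \Q$ applied to $0 \to J \to H \to H/J \to 0$ together with the fact that $H/J$ is again torsion free (an ordered abelian group is always torsion free), so the rank is additive along this sequence with no loss.

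For the base case $k = 1$, suppose $J$ were a non-trivial convex subgroup. Then $J$ and $H/J$ are both non-trivial ordered abelian groups, hence each has rank $\ge 1$, so $\rank(H) \ge 2$, contradiction. Thus a rank one ordered abelian group has no non-trivial convex subgroup, i.e.\ at most $k - 1 = 0$ of them.

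For the inductive step, suppose $\rank(H) = k > 1$ and let $\{e\} \subsetneq J_1 \subsetneq J_2 \subsetneq \cdots \subsetneq J_r$ be the non-trivial convex subgroups of $H$ (they are linearly ordered by inclusion, as noted before the statement, so they really do form a chain). Since $J_r$ is a proper convex subgroup with $J_1 \subseteq J_r$ non-trivial, the quotient $H/J_1$ is an ordered abelian group of rank $k - \rank(J_1) \le k - 1$. By Fact~\ref{fact:oag}, the convex subgroups of $H/J_1$ are exactly the $J_i/J_1$ for $i = 1, \ldots, r$, of which $J_2/J_1, \ldots, J_r/J_1$ are non-trivial — that is $r - 1$ of them. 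The inductive hypothesis applied to $H/J_1$ gives $r - 1 \le (k-1) - 1 = k - 2$, hence $r \le k - 1$, as desired.

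The argument is entirely routine once one has Fact~\ref{fact:oag} and the additivity of rank along short exact sequences; the only point requiring a moment's care is the observation that quotients of ordered abelian groups by convex subgroups are again ordered abelian groups (so that the induction applies to $H/J_1$), and that every non-trivial ordered abelian group has rank at least one — which is where the standing convention that oag's have at least two elements is used. There is no real obstacle here; this is the promised exercise.
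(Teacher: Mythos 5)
Your proof is correct and follows exactly the route the paper indicates, namely induction on $k$ via Fact~\ref{fact:oag} together with additivity of rank along the short exact sequence $0 \to J_1 \to H \to H/J_1 \to 0$. The paper leaves this as a sketch with a citation; your write-up fills in the same argument.
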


The \textbf{regular rank} of $(H;+,\prec)$ is $n$ if there are exactly $n$ non-trivial definable convex subgroups, and $\infty$ if there are infinitely many.
A lexicographic product of $k$ regular abelian groups has regular rank at most $k$. 
Fact~\ref{fact:regular rank} is due to Belegradek~\cite[Thm~3.10]{poly-regular}.

\begin{fact}
\label{fact:regular rank}
Suppose that $(H;+,\prec)$ is an $\aleph_1$-saturated ordered abelian group of regular rank $n$.
Then there are non-trivial definable convex subgroups $\{0\}=J_0 \subseteq J_1 \subseteq \ldots \subseteq J_n=H$ of $H$ such that $J_i/J_{i - 1}$ is a regular ordered abelian group for each $i \in \{1,\ldots,n\}$ and $(H;+,\prec)$ is isomorphic to the lexicographic product $J_n/J_{n-1} \times \cdots\times J_2/J_2 \times J_1/J_0$.
\end{fact}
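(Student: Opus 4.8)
I would prove this by extracting the defining chain of definable convex subgroups and then splitting it off one summand at a time, using that $\aleph_1$-saturated abelian groups are pure-injective. First, since convex subgroups of an ordered abelian group are linearly ordered under inclusion, so are the definable ones, and since $(H;+,\prec)$ has finite regular rank this chain is finite; I write it as $\{0\}=J_0\subsetneq J_1\subsetneq\cdots\subsetneq J_n=H$, with $J_0=\{0\}$ and $J_n=H$ and the displayed length matching the regular rank. Next I would check that each $J_i/J_{i-1}$ is regular, i.e.\ has no proper nonzero definable convex subgroup: by Fact~\ref{fact:oag} such a subgroup has the form $K/J_{i-1}$ for a convex subgroup $K$ of $H$ with $J_{i-1}\subseteq K\subseteq J_i$, and since $J_i/J_{i-1}$ is interpretable in $H$ and $K$ is the preimage of the definable set $K/J_{i-1}$, the set $K$ is definable in $H$; being a definable convex subgroup squeezed between the consecutive terms $J_{i-1}$ and $J_i$, it must equal one of them. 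This part is routine.

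The substance is the splitting. Two observations do the work. A convex subgroup $J$ of an ordered abelian group $G$ is pure in $G$, since $G$ is torsion free and if $a\in J$ with $a=mb$, $b\in G$, $m\ge 1$, then $b$ lies weakly between $0$ and $a$ and hence lies in $J$ by convexity. And an $\aleph_1$-saturated abelian group is pure-injective; this is standard, and it is where the saturation hypothesis enters. Since each $J_i$ is a definable subset of the $\aleph_1$-saturated structure $(H;+,\prec)$ it is $\aleph_1$-saturated as an abelian group, hence pure-injective; a pure pure-injective subgroup is a direct summand, so $H=C\oplus J_{n-1}$ for some subgroup $C\cong H/J_{n-1}$. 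To see the order is lexicographic I would note that for any convex subgroup $J$ of an oag $G$ and any group complement $C$, the map $g\mapsto(g+J,\pi_J(g))$ is an isomorphism of ordered abelian groups from $G$ onto the lexicographic product $(G/J)\times J$ with $G/J$ most significant: writing $g=c+j$ with $0\ne c\in C$, convexity forces $c$ to exceed every element of $J$ when $c>0$ — else $0<c\le j'$ for some $j'\in J$ would put $c\in J\cap C=\{0\}$ — and symmetrically when $c<0$, so the sign of $g$ is that of $c$, which is that of $c+J$, while $g\in J$ when $c=0$. Now $J_{n-1}$ is again $\aleph_1$-saturated and contains $J_{n-2}$ as a convex (hence pure) pure-injective subgroup, so the argument iterates down the chain; using associativity of the lexicographic product this gives \[ (H;+,\prec)\ \cong\ (J_n/J_{n-1})\times(J_{n-1}/J_{n-2})\times\cdots\times(J_1/J_0), \] a lexicographic product of regular ordered abelian groups, which is the claim.

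The main obstacle is exactly this splitting step: the short exact sequences $0\to J_{i-1}\to J_i\to J_i/J_{i-1}\to 0$ do not split for ordered abelian groups in general, and the content of the theorem is that $\aleph_1$-saturation is present precisely to repair this, via pure-injectivity of the convex subgroups. One could instead try to construct a group section $H/J_{n-1}\to H$ directly by a transfinite back-and-forth, but the step of lifting an element of finite order modulo the part already chosen requires a lift that is suitably divisible in $J_{n-1}$, and recognizing that this is supplied by purity together with pure-injectivity is the key point; I would therefore route the whole argument through pure-injectivity rather than carry out the back-and-forth by hand.
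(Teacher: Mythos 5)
Your proof is correct. Worth noting: the paper does not prove this statement itself --- it cites Belegradek for it --- so there is no in-paper argument to compare against; I am assessing your proof on its own terms.

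The structure of the argument is sound. The chain $\{0\}=J_0\subsetneq J_1\subsetneq\cdots\subsetneq J_n=H$ is well-defined because the definable convex subgroups are linearly ordered by inclusion and finite in number by the regular-rank hypothesis, with $J_1,\dots,J_n$ being exactly the non-trivial ones (in the paper's convention, $H$ itself is counted, which is why the chain has the stated length). Your regularity argument for each quotient is right: a convex subgroup of $J_i/J_{i-1}$ definable in its intrinsic oag structure is a fortiori definable in the $H$-interpretable structure, its preimage $K$ is a definable convex subgroup of $H$ with $J_{i-1}\subseteq K\subseteq J_i$, and the chain is the full chain, forcing $K\in\{J_{i-1},J_i\}$. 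The one detail you leave implicit --- that $K$ is convex in $H$, not just in $J_i$ --- is true because $J_i$ is convex in $H$ and $K$ is convex in $J_i$. The splitting step is exactly where $\aleph_1$-saturation enters and you identify it correctly: a definable subset of an $\aleph_1$-saturated structure carries an $\aleph_1$-saturated induced structure, a reduct of an $\aleph_1$-saturated structure is $\aleph_1$-saturated, so $(J_{i};+)$ is $\aleph_1$-saturated hence pure-injective as an abelian group, and a pure pure-injective submodule is a direct summand. (This is the same $\aleph_1$-saturated-plus-pure-implies-direct-summand fact the paper itself records just before Lemma~\ref{lem:oplus}.) Your verification that a group-theoretic complement to a convex subgroup necessarily realizes the lexicographic order is correct, and the downward iteration through $J_{n-1},J_{n-2},\dots$ using associativity of the lexicographic product closes the argument. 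I see no gap.
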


Fact~\ref{fact:regular rank} shows that $(H;+,\prec)$ is of regular rank at most $n$ if and only if $(H;+,\prec)$ is elementarily equivalent to a lexicographically ordered subgroup of $(\R^n;+)$~\cite[Theorem~3.13]{poly-regular}.

\subsection{Cyclically ordered abelian groups}
\label{section:cyclic order}
We now discuss the more obscure but quite natural class of cyclically ordered abelian groups.
A \textbf{cyclic order} on a set $G$ is a ternary relation $\cyc$ such that for all $a,b,c \in G$, the following holds:
\begin{enumerate}
\item If $\cyc(a,b,c)$, then $\cyc(b,c,a)$.
\item If $\cyc(a,b,c)$, then $\cyc(c,b,a)$,
\item If $\cyc(a,b,c)$ and $\cyc(a,c,d)$ then $\cyc(a,b,d)$.
\item If $a,b,c$ are distinct, then either $\cyc(a,b,c)$ or $\cyc(c,b,a)$.
\end{enumerate}

A \textbf{cyclic group order} on an abelian group $(G;+)$ is a cyclic order on $G$ which is preserved under $+$.
In this case, we call  $(G; +, \cyc)$ a \textbf{cyclically ordered abelian group}. 

\medskip \noindent Let $(G; +)$ be an abelian group. Suppose $(H; +,<)$ is a linearly ordered abelian group, $u$ is a positive element of $H$ such that $u\N$ is cofinal in $(H; <)$, and $\uppi\colon H \to G$ induces an isomorphism $(H \slash u\Z; +)\to(G; +)$.
Define the relation $\cyc$ on $G$ by declaring:  
$$ \cyc(\uppi(a), \uppi(b), \uppi(c)) \ \text{ if }\  a<b<c \text{ or } b< c< a \text{ or } c< a < b \quad \text{for all}\quad 0 \le a,b,c < 1.$$
We can easily check that $\cyc$ is an cyclic group order on  $(G;+)$. 
We call $(H; u, +,<)$ as above a {\bf universal cover} of $(G;+,\cyc)$.
We let $(\R/\Z;+,C)$ be the cyclically ordered group with universal cover $(\R;1,+,<)$, so $C$ is the usual counterclockwise cyclic order on $\R/\Z$.
A cyclically ordered abelian group is \textbf{archimedean} if it is isomorphic to a substructure of $(\R/\Z;+,C)$, equivalently if its universal cover is archimedean.
See \cite{Sw} for an intrinsic definition of the class of archimedean cyclically ordered abelian groups.

\medskip
Fact~\ref{fact:cover} is not hard.
See \cite{TW-cyclic} for a proof.

\begin{fact}
\label{fact:cover}
Let $(G; +, \cyc)$ be a cyclically ordered abelian group. Then $(G; +, \cyc)$  has a universal cover  $(H; u, +,<)$   which is unique up to unique isomorphism.  Moreover, $(G; +, \cyc)$ is isomorphic to $([0, u); +^*,\cyc^*)$ where $+^*$ and $\cyc^*$ are definable in $(H; u, +,<)$.
In particular $(G;+,\cyc)$ is interpretable in $(H;+,<)$.
\end{fact}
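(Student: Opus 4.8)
The plan is to build the universal cover by hand and then read everything else off it. For existence, take $H := \Z \times G$ as a set. First use $\cyc$ to linearly order $G$ with least element $0$: declare $a \prec b$ iff $\cyc(0,a,b)$ for nonzero $a,b$, which the cyclic-order axioms make into a total order. Next define a carry map $c\colon G^2 \to \{0,1\}$ by $c(a,b) = 1$ iff $a \ne 0 \ne b$ and $a+b \prec a$, the formal version of ``$a+b$ winds past $0$''. Put $(m,a) + (n,b) := (m + n + c(a,b),\, a+b)$, order $H$ lexicographically with the $\Z$-coordinate significant, and set $u := (1,0)$ and $\uppi(m,a) := a$.

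The work is in verifying that $(H;+,<)$ is an ordered abelian group. Commutativity reduces to $c(a,b) = c(b,a)$; associativity reduces to the cocycle identity $c(a,b) + c(a+b,c) = c(b,c) + c(a,b+c)$; existence of inverses is then routine; and compatibility of $<$ with $+$ reduces to two monotonicity statements, roughly that $a \prec b$ rules out $c(a,c) = 1 \wedge c(b,c) = 0$ and that translation preserves $\prec$ within a fixed carry level. Each of these is a bounded case analysis from the cyclic-order axioms together with translation invariance of $\cyc$, and the cocycle identity --- which records ``how many times $a+b+c$ winds past $0$, counted two ways'' --- is where several instances of the transitivity axiom must be juggled; this is the main obstacle. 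Granting it, $\uppi$ is a surjective homomorphism with kernel $u\Z$, so it induces $H/u\Z \cong G$; $u\N$ is cofinal since $(k,0) > (m,a)$ whenever $k > m$; the fundamental domain $[0,u)$ is exactly $\{0\}\times G$, on which $\uppi$ is a bijection carrying $+$ and, by unwinding the definition of $\prec$, carrying $\cyc$. So $(H;u,+,<)$ is a universal cover.

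For uniqueness, let $(H_i;u_i,+,<)$ with $\uppi_i\colon H_i \to G$ be two covers. Cofinality of $u_i\N$, hence coinitiality of $u_i\Z$, together with convexity of $[0,u_i)$, shows every $h \in H_i$ is uniquely of the form $k u_i + a$ with $k \in \Z$ and $a \in [0,u_i)$, and $\uppi_i$ restricts to a bijection $[0,u_i) \to G$. The relation on $G$ obtained by transporting the restriction of $<$ to $[0,u_i)$ along $\uppi_i$ is ``$0$ is least, and $g \prec h$ iff $\cyc(0,g,h)$'', which does not depend on $i$; hence $\uppi_2^{-1}\circ\uppi_1 \colon [0,u_1) \to [0,u_2)$ is an order isomorphism, which we extend to $\phi\colon H_1 \to H_2$ by $\phi(k u_1 + a) := k u_2 + \phi(a)$. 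Because the carry in each $H_i$ is itself computable from $G$ via $\uppi_i$ (again by cover-independence of that relation), the carries match under $\phi$, so $\phi$ is a homomorphism, hence an ordered-group isomorphism with $\phi(u_1) = u_2$ and $\uppi_2 \circ \phi = \uppi_1$. Any map with these three properties carries $[0,u_1)$ onto $[0,u_2)$ commuting with the $\uppi_i$, so it agrees with $\phi$ on $[0,u_1)$ and, being a homomorphism, everywhere; this is uniqueness up to unique isomorphism.

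Finally, in $(H;u,+,<)$ the set $[0,u)$ is definable from $u$; for $a,b,c \in [0,u)$ the operation $a +^* b$, equal to $a+b$ if $a+b < u$ and to $a+b-u$ otherwise (well defined since $a+b < 2u$), and the relation $\cyc^*(a,b,c)$ given by $(a<b<c)\vee(b<c<a)\vee(c<a<b)$, are definable; and $\uppi$ restricts to a bijection $[0,u) \to G$ carrying $+^*$ to $+$ and $\cyc^*$ to $\cyc$ by construction. Hence $(G;+,\cyc)\cong([0,u);+^*,\cyc^*)$, and this displays $(G;+,\cyc)$ as interpretable, with parameter $u$, in $(H;u,+,<)$, hence in $(H;+,<)$.
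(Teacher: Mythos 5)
The paper does not actually prove this Fact; it cites \cite{TW-cyclic} and only records, after the statement, the explicit formulas for $\cyc^*$ and $+^*$ on the fundamental domain (where the paper's ``$a+^*a'=a+a'-1$'' is evidently a typo for ``$a+a'-u$'', which you have right). So there is no in-paper proof to compare against, but your construction is the standard winding-number one: linearize $G$ by cutting at $0$, set $H=\Z\times G$ with a carry cocycle, order lexicographically, take $u=(1,0)$. Your reading of the fundamental domain, of the kernel of $\uppi$, and of the formulas for $+^*$ and $\cyc^*$ is all correct, and the uniqueness argument via transporting $<\!\restriction_{[0,u_i)}$ through $\uppi_i$ and extending $\Z$-linearly is the right shape.

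That said, you should be aware that what you have is a roadmap with the two load-bearing verifications explicitly deferred, and those are not formalities. First, the cocycle identity $c(a,b)+c(a+b,d)=c(b,d)+c(a,b+d)$ is the entire content of associativity of the lift; the one-line heuristic (``both sides count wraps past $0$'') is an argument \emph{about} $\R/\Z$, and for an abstract cyclic order it has to be extracted from translation-invariance of $\cyc$ together with the ternary transitivity and totality axioms, with several degenerate subcases ($a+b=0$, $a+b+d=0$, repeated arguments). Second, compatibility of $<$ with $+$ is equally real: in the case $m<n$ with $m+1=n$, $c(a,d)=1$, $c(b,d)=0$, you genuinely need $a+d\prec b+d$, and deriving that from ``$a+d\prec a$ and $b\preceq b+d$'' again requires translating through $\cyc$ and handling edge cases. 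Neither step ``would fail'' --- the Fact is true --- but a reader who wanted a self-contained proof would find that the two sentences you devote to them are exactly where the work lives. If you intend this as a proof rather than a proof sketch, those two lemmas need to be written out; otherwise, citing \cite{TW-cyclic} or \cite{Sw} for them, as the paper itself does, would be the cleaner move.
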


We describe how to construct $+^*$ and $\cyc^*$.
We let $\cyc^*(a,a',a'')$ when either $a < a' < a''$ or $a'<a''<a$ or $a''<a<a'$.
We let $a +^* a' = a+a'$ when $a+a'<u$ and $a+^*a'=a+a'-1$ otherwise.
Note that all of this is definable in $(H;+,<)$.

% \medskip
% Given an injective character $\psi \colon G \to \R/\Z$ we let $\cyc_\psi$ be the pullback of $C$ to $G$.
% We say that a cyclic group order on $G$ is \textbf{archimedean} if it is of the form $\cyc_\psi$ for some $\psi$.

% \medskip
% See \cite{TW-cyclic} for a proof of Fact~\ref{fact:cyclic on Z}

% \begin{fact}
% \label{fact:cyclic on Z}
% Any cyclic group order on $(\Z;+)$ is of one of the following forms
% \begin{enumerate}
% \item $\cyc_\psi$ for an injective character $\psi \colon \Z \to \R/\Z$.
% \item The cyclic ordering $C_{+}$ on $\Z$ where 
% $$C_+( j, k, l) \quad\Longleftrightarrow \quad (j < k < l) \vee (l < j < k) \vee (k < l < j).$$
% \item The cyclic order $C_{-}$ on $\Z$ where $$C_{-}(j,k,l) \quad\Longleftrightarrow\quad C_{+}(-j,-k,-l).$$
% \end{enumerate}
% \end{fact}

\newpage

\section{A few general results}
\label{section:few general}
My first goal is to give a few nice examples.
This will require some general results, which are given here.
All proofs could be left as exercises to the reader.
Here $\uptau$  is an injection $O \to M^m$.
We also let $\uptau$ be the function $O^n \to M^{mn}$ given by $(a_1,\ldots,a_n) \mapsto (\uptau(a_1),\ldots,\uptau(a_n))$.

\medskip
Proposition~\ref{prop:trace-basic} is immediate from the definitions.

\begin{proposition}
\label{prop:trace-basic}
Let $\Sa M, \Sa O$ and $\Sa P$ be structures.
\begin{enumerate} %[wide, labelwidth=!,labelindent=0pt]
\item If $\Sa M$ trace defines $\Sa O$ and $\Sa O$ trace defines $\Sa P$ then $\Sa M$ trace defines $\Sa P$.
\item If $T$ trace defines $T^*$ and $T^*$ trace defines $T^{**}$ then $T$ trace defines $T^{**}$.
\item If $\Sa O \prec \Sa M$ then $\Sa O$ is trace definable in $\Sa M$.
\end{enumerate}
\end{proposition}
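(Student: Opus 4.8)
The plan is to handle the three parts in order, deriving (2) from (1) and checking (1) and (3) directly against the definition of trace definability; as the paper already notes, everything here is immediate from the definitions, so the only ``obstacle'' is keeping the arities straight in (1).

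For (1), suppose $\Sa M$ trace defines $\Sa O$ via an injection $\uptau\colon O\to M^m$ and $\Sa O$ trace defines $\Sa P$ via an injection $\upsigma\colon P\to O^{\ell}$. I would take as the witnessing injection for ``$\Sa M$ trace defines $\Sa P$'' the composite $\uptau\circ\upsigma\colon P\to M^{m\ell}$, where $\uptau$ is extended coordinatewise to a map $O^{\ell}\to M^{m\ell}$; this composite is injective since $\upsigma$ and $\uptau$ are. Now fix an $\Sa P$-definable $X\subseteq P^{n}$. Applying the trace definition of $\Sa P$ in $\Sa O$ produces an $\Sa O$-definable $Z\subseteq O^{\ell n}$ with $(\alpha_1,\dots,\alpha_n)\in X\Longleftrightarrow(\upsigma(\alpha_1),\dots,\upsigma(\alpha_n))\in Z$ for all $\alpha_i\in P$; then applying the trace definition of $\Sa O$ in $\Sa M$ to this $\Sa O$-definable subset $Z$ of $O^{\ell n}$ produces an $\Sa M$-definable $Y\subseteq M^{m\ell n}$ with $(\beta_1,\dots,\beta_{\ell n})\in Z\Longleftrightarrow(\uptau(\beta_1),\dots,\uptau(\beta_{\ell n}))\in Y$ for all $\beta_j\in O$. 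Chaining the two equivalences gives the required condition for $X$ and $\uptau\circ\upsigma$, once one observes that applying $\uptau$ coordinatewise to the concatenation $(\upsigma(\alpha_1),\dots,\upsigma(\alpha_n))$ is literally the concatenation of the tuples $(\uptau\circ\upsigma)(\alpha_i)$. The only thing to watch is the bookkeeping of the arities $m,\ell,n$, and there is no real obstacle.

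For (2), given $\Sa P\models T^{**}$, the hypothesis that $T^{*}$ trace defines $T^{**}$ yields some $\Sa O\models T^{*}$ trace defining $\Sa P$, and the hypothesis that $T$ trace defines $T^{*}$ yields some $\Sa M\models T$ trace defining $\Sa O$. By part (1), $\Sa M$ trace defines $\Sa P$, so $\Sa P$ is trace definable in a $T$-model; since $\Sa P$ was an arbitrary model of $T^{**}$, this is exactly ``$T$ trace defines $T^{**}$''. For (3), assume $\Sa O\prec\Sa M$, so $O\subseteq M$ and the two structures share a language $L$; I would use the inclusion $\uptau\colon O\hookrightarrow M$ (with $m=1$). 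Given an $\Sa O$-definable $X\subseteq O^{n}$, write $X=\{\bar\alpha\in O^{n}:\Sa O\models\varphi(\bar\alpha,\bar b)\}$ for an $L$-formula $\varphi$ and a parameter tuple $\bar b$ from $O$, and set $Y=\{\bar\alpha\in M^{n}:\Sa M\models\varphi(\bar\alpha,\bar b)\}$; then elementarity gives $\bar\alpha\in X\Longleftrightarrow\bar\alpha\in Y$ for all $\bar\alpha\in O^{n}$, which is precisely the trace-definition condition for the inclusion map. No step presents a genuine difficulty.
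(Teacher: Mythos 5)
Your proof is correct and is exactly the argument the paper is gesturing at when it declares Proposition~\ref{prop:trace-basic} ``immediate from the definitions'': compose the injections coordinatewise for (1), deduce (2) from (1) by unwinding the ``every $T^{**}$-model is trace definable in a $T^*$-model'' clause, and read off (3) from elementarity using the inclusion map. The arity bookkeeping in (1) is handled correctly.
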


\noindent
Thus trace definibility is a quasi-order and trace equivalence is an equivalence relation.

\begin{proposition}
\label{prop:trace-interpret}
If $\Sa M$ interprets $\Sa O$ then $\Sa M$ trace defines $\Sa O$.
\end{proposition}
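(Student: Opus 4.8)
The plan is to unwind the definition of interpretation and produce the injection $\uptau$ together with the required definable sets directly. Suppose $\Sa M$ interprets $\Sa O$. Then there is an $\Sa M$-definable set $D \subseteq M^m$, an $\Sa M$-definable equivalence relation $E$ on $D$, and a bijection $\pi \colon D/E \to O$ such that for every $\Sa O$-definable $X \subseteq O^n$ the preimage $\{(a_1,\ldots,a_n) \in D^n : (\pi(a_1/E),\ldots,\pi(a_n/E)) \in X\}$ is $\Sa M$-definable. The first step is to choose a section: using the axiom of choice, pick a set $S \subseteq D$ of representatives for the $E$-classes, and define $\uptau \colon O \to M^m$ by letting $\uptau(\alpha)$ be the unique element of $S$ in the $E$-class $\pi^{-1}(\alpha)$. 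Then $\uptau$ is an injection since distinct elements of $O$ correspond to distinct $E$-classes and hence to distinct representatives.

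Next I would verify the trace-definition condition. Fix an $\Sa O$-definable $X \subseteq O^n$. By interpretability there is an $\Sa M$-definable $Z \subseteq D^n \subseteq M^{mn}$ such that $(a_1,\ldots,a_n) \in Z$ iff $(\pi(a_1/E),\ldots,\pi(a_n/E)) \in X$ for all $a_1,\ldots,a_n \in D$. Take $Y = Z$, viewed as a subset of $M^{mn}$. Then for $\alpha_1,\ldots,\alpha_n \in O$ we have $(\uptau(\alpha_1),\ldots,\uptau(\alpha_n)) \in Y$ iff $(\uptau(\alpha_1),\ldots,\uptau(\alpha_n)) \in Z$ iff $(\pi(\uptau(\alpha_1)/E),\ldots,\pi(\uptau(\alpha_n)/E)) \in X$ iff $(\alpha_1,\ldots,\alpha_n) \in X$, where the last step uses that $\pi(\uptau(\alpha_i)/E) = \alpha_i$ by construction of $\uptau$. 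This is exactly the defining equivalence for $\Sa M$ trace defining $\Sa O$ via $\uptau$.

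There is really no serious obstacle here; the proposition is, as the paper says, essentially immediate, and the only mild subtlety is being careful that the definable set witnessing interpretability of $X$ is defined on all of $D^n$ (or can be intersected with $D^n$) so that the section $S$ lands inside it correctly — but since $\uptau(O) \subseteq S \subseteq D$, restricting attention to tuples from $D$ suffices. I would also remark, if desired, that one could take $m$ to be the arity of the interpretation and that no saturation or completeness hypotheses are needed, so the statement holds verbatim for structures (and the corresponding statement for theories then follows formally, though that is not claimed here).
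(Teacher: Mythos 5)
Your proof is correct and takes essentially the same route as the paper: pick a section of the quotient map from the interpretation domain to $\Sa O$, use it as $\uptau$, and observe that the $\Sa M$-definable preimages supplied by the interpretation serve as the required witnesses $Y$. The only cosmetic difference is that you work with the quotient $D/E$ and a bijection $\pi\colon D/E\to O$, whereas the paper works directly with a surjection $\uppi\colon X\to O$ whose fibers are the $E$-classes; the content is identical.
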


\noindent
In the proof below $\uppi$ will denote a certain map $X \to O$ and we will also use $\uppi$ to denote the map $X^n \to O^n$ given by
$ \uppi(a_1,\ldots,a_n) = (\uppi(a_1),\ldots,\uppi(a_n))$.

\begin{proof}
Suppose $\Sa O$ is interpretable in $\Sa M$.
Let $X \subseteq M^m$ be  $\Sa M$-definable, $E$ be an $\Sa M$-definable equivalence relation on $X$, and $\uppi \colon X \to O$ be a surjection such that
\begin{enumerate} %[wide, labelwidth=!,labelindent=0pt]
\item for all $a,b \in E$ we have $E(a,b) \Longleftrightarrow \uppi(a) = \uppi(b)$, and
\item if $X \subseteq O^n$ is $\Sa O$-definable  then $\uppi^{-1}(X)$ is $\Sa M$-definable.
\end{enumerate}
Let $\uptau \colon O \to X$ be a section of $\uppi$ and $A = \uptau(O)$.
%So $A$ contains exactly one element from every $E$-class.
If $Y \subseteq O^n$ is $\Sa O$-definable then $\uppi^{-1}(X)$ is $\Sa M$-definable and $X = \{ a \in O^n : \uptau(a) \in \uppi^{-1}(Y) \}$.
Therefore $\Sa M$ trace defines $\Sa O$ via $\uptau$.
\end{proof}

We now prove some results that will be used to get trace definitions.

\begin{proposition}
\label{prop:qe-trace}
Suppose that $\Sa O$ is an $L$-structure which admits quantifier elimination and $\uptau\colon\Sa O\to\Sa M$ is an embedding of $L$-structures.
Then $\Sa M$ trace defines $\Sa O$ via $\uptau$.
% Suppose that $\Sa O$ is a substructure of $\Sa M$ which admits quantifier elimination.
% Then $\Sa M$ trace defines $\Sa O$ via the identity $O \to M$.
\end{proposition}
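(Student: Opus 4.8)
The plan is to show directly that every $\Sa O$-definable set pulls back, along $\uptau$, to an $\Sa M$-definable set, using quantifier elimination for $\Sa O$. First I would fix an $\Sa O$-definable $X \subseteq O^n$; since $\Sa O$ eliminates quantifiers, $X$ is defined by a quantifier-free $L$-formula $\varphi(x_1,\ldots,x_n)$, possibly with parameters from $O$ (which we absorb into the formula or handle by noting $\uptau$ carries them to parameters in $M$). The key point is that $\varphi$ is a boolean combination of atomic $L$-formulas, so it suffices to handle the atomic case and then observe that pullbacks along $\uptau$ commute with finite boolean operations: if $X_1 = \{\bar\alpha : (\uptau\bar\alpha) \in Y_1\}$ and $X_2 = \{\bar\alpha : (\uptau\bar\alpha) \in Y_2\}$ then $X_1 \cap X_2$, $X_1 \cup X_2$, and $O^n \setminus X_1$ are cut out by $Y_1 \cap Y_2$, $Y_1 \cup Y_2$, and $M^{mn}\setminus Y_1$ respectively.

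For the atomic case, an atomic $L$-formula is either $R(t_1,\ldots,t_j)$ for a relation symbol $R$ or $t_1 = t_2$ for $L$-terms $t_i$ in the variables $x_1,\ldots,x_n$. Since $\uptau \colon \Sa O \to \Sa M$ is an $L$-embedding, for any $L$-term $t$ and any $\bar\alpha \in O^k$ we have $\uptau(t^{\Sa O}(\bar\alpha)) = t^{\Sa M}(\uptau(\alpha_1),\ldots,\uptau(\alpha_k))$, and for any relation symbol $R$, $\Sa O \models R(\bar\alpha)$ iff $\Sa M \models R(\uptau\bar\alpha)$. The only subtlety is that $\uptau(\alpha_i) \in M^m$ is a tuple, not a single element, so I would phrase things in terms of the induced map $\uptau \colon O^n \to M^{mn}$ and note that each coordinate term $t^{\Sa M}$ is evaluated at the appropriate sub-block of the $M^{mn}$-tuple. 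Concretely, the $\Sa M$-definable set $Y \subseteq M^{mn}$ witnessing the atomic formula $\psi(x_1,\ldots,x_n)$ is $\{(\bar{y}_1,\ldots,\bar{y}_n) \in (M^m)^n : \exists \text{ witnesses placing } \bar y_i = \uptau(\alpha_i)\}$ — but more cleanly: since $\uptau(O)$ is a definable-in-parameters or at worst arbitrary subset, we do not need $Y$ to be correct off the image; we only need agreement on $\uptau(O)^n$. So I can take $Y = \{(\bar y_1,\ldots,\bar y_n) : \Sa M \models \psi^{\uptau}(\bar y_1,\ldots,\bar y_n)\}$ where $\psi^{\uptau}$ is obtained from $\psi$ by replacing each variable $x_i$ with the block $\bar y_i$ and each function/relation symbol by its $\Sa M$-interpretation — and the embedding property guarantees this $Y$ satisfies the required biconditional for all $\bar\alpha \in O^n$.

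The main obstacle, such as it is, is purely bookkeeping: keeping track of the "block" structure when translating $L$-terms and atomic formulas in the variables $x_1,\ldots,x_n$ over $O$ into formulas in the $mn$ variables over $M$, and making sure parameters are handled (they are: parameters from $O$ get replaced by their $\uptau$-images in $M^m$, which are legitimate parameters for an $\Sa M$-definable set). Since $\Sa M$ trace defining $\Sa O$ only requires the pullback sets to exist with parameters, and since there is no requirement on the behaviour of $Y$ outside $\uptau(O)^n$, there is no real difficulty — this is why the author remarks these proofs "could be left as exercises." I would write it up by stating the boolean-combination reduction, then dispatching the atomic case via the defining property of an $L$-embedding, and concluding that $\Sa M$ trace defines $\Sa O$ via $\uptau$.
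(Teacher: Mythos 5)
Your argument is correct and matches the paper's proof: both reduce to the quantifier-free case via quantifier elimination and use that an $L$-embedding preserves and reflects quantifier-free formulas, taking $Y$ to be the subset of $M^n$ cut out by the same (parameter-translated) quantifier-free formula, so that $\uptau^{-1}(Y)=X$. One small correction: since $\uptau\colon\Sa O\to\Sa M$ is here an embedding of $L$-structures, $\uptau(\alpha_i)$ is a single element of $M$ (that is, $m=1$ in the trace-definability definition), so the ``block-structure'' bookkeeping you worry about does not actually arise for this proposition.
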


We will often apply Proposition~\ref{prop:qe-trace} in the case when $\uptau$ is the identity.

% Let $L$ be the language of $\Sa M$.

\begin{proof}
Let $X \subseteq O^n$ be $\Sa O$-definable and $\varphi(x)$ be a formula which defines $X$.
Declare $Y$ to be $\{ a \in M^n : \Sa M \models \varphi(a) \}$.
We may suppose that $\varphi$ is quantifier free.
Then $\uptau^{-1}(Y) = X$.
\end{proof}

\begin{proposition}
\label{prop:eq neg 1}
Suppose that $L^*$ is an expansion of $L$ by relations and $\Sa O$ is an $L^*$-structure with quantifier elimination.
Suppose that $\Sa P$ is an $\Sa M$-definable $L$-structure and $\uptau \colon O \to P$ gives an $L$-embedding $\Sa O \!\upharpoonright \! L \to \Sa P$.
Suppose that for every $k$-ary relation $R \in L^* \setminus L$ there is an $\Sa M$-definable $Y \subseteq P^k$ such that
\[
\Sa O \models R(a) \quad \Longleftrightarrow \quad a \in Y \quad \text{for all} \quad a \in O^k.
\]
Then $\Sa M$ trace defines $\Sa O$ via $\uptau$.
\end{proposition}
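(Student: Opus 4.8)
The plan is to reduce to Proposition~\ref{prop:qe-trace} by building an $L^*$-expansion of the $\Sa M$-definable structure $\Sa P$ that embeds $\Sa O$. First I would fix, for each $k$-ary relation symbol $R \in L^* \setminus L$, an $\Sa M$-definable set $Y_R \subseteq P^k$ witnessing the hypothesis, i.e. with $\Sa O \models R(a) \Longleftrightarrow a \in Y_R$ for all $a \in O^k$. Then I expand $\Sa P$ to an $L^*$-structure $\Sa P^*$ by interpreting each such $R$ as $Y_R$ (and keeping the $L$-structure of $\Sa P$ for symbols in $L$). Since $\Sa P$ is $\Sa M$-definable and each $Y_R$ is $\Sa M$-definable, every $\Sa P^*$-definable subset of $P^n$ is $\Sa M$-definable after composing with the definable coordinatization of $\Sa P$ inside $\Sa M$; more precisely, $\Sa M$ trace defines $\Sa P^*$ via the tuple encoding $P \hookrightarrow M^{m'}$ coming from the $\Sa M$-definability of $\Sa P$.

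Next I would check that $\uptau \colon O \to P$ is an $L^*$-embedding $\Sa O \to \Sa P^*$. By hypothesis $\uptau$ is an $L$-embedding $\Sa O \!\upharpoonright\! L \to \Sa P$, so it respects all symbols of $L$; and for $R \in L^* \setminus L$ we have, for $a \in O^k$, that $\Sa O \models R(a)$ iff $a \in Y_R$ iff $\uptau(a) \in Y_R$ (using injectivity only to the extent needed — actually the defining equivalence already gives this directly since $Y_R$ was chosen with the $O^k$-membership in mind, reading $a$ as $\uptau(a)$ under the identification), i.e. iff $\Sa P^* \models R(\uptau(a))$. Here one should be slightly careful about whether the hypothesis is stated with $a \in O^k$ meaning $\uptau(a) \in P^k$; I would phrase it as: under the identification of $O$ with $\uptau(O) \subseteq P$, the equivalence $\Sa O \models R(a) \Leftrightarrow a \in Y_R$ is exactly the statement that $\uptau$ is an $L^*$-embedding on the $R$-predicate. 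So $\uptau$ is an embedding of $L^*$-structures.

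Finally, since $\Sa O$ admits quantifier elimination in $L^*$ and $\uptau$ is an $L^*$-embedding into $\Sa P^*$, Proposition~\ref{prop:qe-trace} gives that $\Sa P^*$ trace defines $\Sa O$ via $\uptau$. Combining with the fact that $\Sa M$ trace defines $\Sa P^*$ (via the definable embedding $P \hookrightarrow M^{m'}$) and the transitivity of trace definibility (Proposition~\ref{prop:trace-basic}(1)), we conclude that $\Sa M$ trace defines $\Sa O$. One should then unwind the composition to see the witnessing injection $O \to M^{m m'}$ (or the relevant power) is $\uptau$ followed by the coordinatization, which matches the claimed $\uptau$ up to the identification of $P$ with its image in $M^{m'}$; if the statement insists the trace definition is literally "via $\uptau$", I would just note that $\uptau \colon O \to P$ composed with $P \hookrightarrow M^{m'}$ is the honest witness and this is what "via $\uptau$" abbreviates in this context.

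\textbf{Main obstacle.} The only real subtlety is bookkeeping: making precise that an $\Sa M$-definable structure $\Sa P$ together with $\Sa M$-definable predicates $Y_R$ yields a structure that $\Sa M$ trace defines, and matching the composed witnessing map against the map $\uptau$ named in the statement. All of this is routine once the definitions are set up, so I expect no genuine difficulty — the lemma is essentially a packaging of Proposition~\ref{prop:qe-trace} with transitivity.
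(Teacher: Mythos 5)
Your argument is correct and is essentially the paper's own proof: define $\Sa P^*$ by expanding $\Sa P$ with the witnessing sets $Y_R$, observe that $\Sa M$ defines $\Sa P^*$, check $\uptau$ is an $L^*$-embedding, and invoke Proposition~\ref{prop:qe-trace}. The only difference is presentational — the paper uses the slightly stronger observation that $\Sa M$ \emph{defines} $\Sa P^*$ (so one need not invoke transitivity of trace definibility or unwind a composition of injections), and it resolves the $a\in O^k$ versus $\uptau(a)\in P^k$ ambiguity you flag by requiring $\uptau^{-1}(Y_R)=\{a\in O^k:\Sa O\models R(a)\}$.
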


\begin{proof}
We expand $\Sa P$ to an $L^*$-structure.
For each $k$-ary $R \in L^* \setminus L$ fix an $\Sa M$-definable subset $Y_R$ of $P^{k}$ such that $\uptau^{-1}(Y_R)$ agrees with $\{ a \in O^k : \Sa O \models R(a) \}$.
Let $\Sa P^*$ be the $L^*$-structure expanding $\Sa P$ such that for every $k$-ary $R \in L^*\setminus L$ and $a \in P^{k}$ we have $\Sa P^* \models R(a) \Longleftrightarrow a \in Y_R$.
Then $\Sa M$ defines $\Sa P^*$ and $\Sa P^*$ trace defines $\Sa O$ via $\uptau$ by Proposition~\ref{prop:qe-trace}.
\end{proof}

\begin{proposition}
\label{prop:qe0}
Suppose that $L^*$ is an expansion of $L$ by relations, $L^{**}$ is an arbitrary expansion of $L$, $\Sa O$ is an $L^*$-structure with quantifier elimination, and $\Sa M$ is an $L^{**}$-structure such that $\Sa M \! \upharpoonright \! L$ extends $\Sa O \! \upharpoonright \! L$.
Suppose that for every $k$-ary relation $R \in L^* \setminus L$ there is an $\Sa M$-definable $Y \subseteq M^k$ such that
\[
\Sa O \models R(a) \quad \Longleftrightarrow \quad a \in Y \quad \text{for all} \quad a \in O^k.
\]
Then $\Sa M$ trace defines $\Sa O$ via the identity $O \to M$.
\end{proposition}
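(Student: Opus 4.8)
The plan is to deduce this immediately from Proposition~\ref{prop:eq neg 1}, taking the auxiliary $\Sa M$-definable $L$-structure appearing there to be the reduct $\Sa M \! \upharpoonright \! L$ itself and taking $\uptau$ to be the identity map $O \to M$.

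First I would check that $\Sa P := \Sa M \! \upharpoonright \! L$ qualifies as an $\Sa M$-definable $L$-structure in the sense of Proposition~\ref{prop:eq neg 1}: its domain $M$ is $\Sa M$-definable, and each symbol of $L$ is interpreted in $\Sa P$ by exactly the set that interprets it in $\Sa M$, which is atomically (hence) $\Sa M$-definable. Next, since $\Sa M \! \upharpoonright \! L$ extends $\Sa O \! \upharpoonright \! L$ by hypothesis, the identity $O \to M$ is an $L$-embedding $\Sa O \! \upharpoonright \! L \to \Sa P$. Finally, the hypothesis of the proposition supplies, for every $k$-ary $R \in L^* \setminus L$, an $\Sa M$-definable $Y \subseteq M^k = P^k$ with $\Sa O \models R(a) \Longleftrightarrow a \in Y$ for all $a \in O^k$ — precisely the hypothesis needed. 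As $\Sa O$ has quantifier elimination as an $L^*$-structure and $L^*$ expands $L$ by relations, Proposition~\ref{prop:eq neg 1} applies and gives that $\Sa M$ trace defines $\Sa O$ via the identity $O \to M$, as desired.

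There is essentially no obstacle: the only thing worth spelling out is that a reduct of $\Sa M$ to a sublanguage counts as an $\Sa M$-definable $L$-structure, which is immediate. If a self-contained argument is preferred, I would instead directly expand $\Sa M \! \upharpoonright \! L$ to an $L^*$-structure $\Sa M^*$ by interpreting each new relation $R$ by a witnessing set $Y_R$ as above; then $\Sa M^*$ is definable in $\Sa M$, and the identity $O \to M$ is an $L^*$-embedding $\Sa O \to \Sa M^*$ (using that $\Sa M \! \upharpoonright \! L$ extends $\Sa O \! \upharpoonright \! L$ for the $L$-part and the choice of the $Y_R$ for the new relations). By Proposition~\ref{prop:qe-trace}, $\Sa M^*$ trace defines $\Sa O$ via the identity, and by Proposition~\ref{prop:trace-interpret} $\Sa M$ trace defines the $\Sa M$-definable structure $\Sa M^*$ via the identity $M \to M$; composing these two trace definitions with Proposition~\ref{prop:trace-basic} keeps the witnessing map equal to the identity $O \to M$ and finishes the proof.
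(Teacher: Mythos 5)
Your proof is correct and takes exactly the same route as the paper: the paper's proof is a one-line application of Proposition~\ref{prop:eq neg 1} with $\Sa P = \Sa M\!\upharpoonright\!L$, which is precisely your main argument. Your careful verification of the hypotheses and the alternative self-contained route are both fine but not needed beyond the first paragraph.
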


\begin{proof}
Apply Proposition~\ref{prop:eq neg 1} with $\Sa P = \Sa M\!\upharpoonright\!L$.
\end{proof}

\begin{proposition}
\label{prop:qe}
Suppose that $L^*$ is relational, $\Sa O$ is an $L^*$-structure with quantifier elimination, and $\uptau \colon O \to M^m$ is an injection.
Suppose that for every $k$-ary relation $R \in L^*$ there is an $\Sa M$-definable $Y \subseteq M^{mk}$ such that
$$ \Sa O \models R(a) \quad \Longleftrightarrow\quad \uptau(a)   \in Y \quad\text{for all}\quad a \in O^k. $$
\end{proposition}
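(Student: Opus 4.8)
The plan is to run essentially the argument of Proposition~\ref{prop:qe-trace}, only now translating a quantifier-free $L^*$-formula into an $\Sa M$-formula on $M^{mn}$ one atomic subformula at a time: the hypothesis takes care of the relations of $L^*$ and injectivity of $\uptau$ takes care of equality. First I would fix an $\Sa O$-definable $X \subseteq O^n$; by quantifier elimination $X$ is defined by a quantifier-free $L^*$-formula $\varphi(x_1,\ldots,x_n)$, possibly with parameters from $O$, which I absorb as fixed coordinates in the bookkeeping below. Since $L^*$ is relational, every atomic subformula of $\varphi$ is either an equality $x_i = x_j$ or of the form $R(x_{i_1},\ldots,x_{i_k})$ for some $k$-ary $R \in L^*$. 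For each such $R$ fix, by hypothesis, an $\Sa M$-definable $Y_R \subseteq M^{mk}$ with $\Sa O \models R(a) \iff \uptau(a) \in Y_R$ for all $a \in O^k$.

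Next I would assemble the $\Sa M$-formula. Working with variables $u_1,\ldots,u_n$, each ranging over $M^m$: replace an atomic subformula $R(x_{i_1},\ldots,x_{i_k})$ by an $\Sa M$-formula asserting $(u_{i_1},\ldots,u_{i_k}) \in Y_R$ — this defines an $\Sa M$-definable set, being the preimage of $Y_R$ under the coordinate-selection map $(M^m)^n \to (M^m)^k$, $(u_1,\ldots,u_n)\mapsto(u_{i_1},\ldots,u_{i_k})$ — and replace an equality $x_i = x_j$ by $u_i = u_j$. Carrying the same Boolean combination through yields an $\Sa M$-formula $\psi(u_1,\ldots,u_n)$; let $Y \subseteq M^{mn}$ be the set it defines. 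A straightforward induction on the structure of $\varphi$ then shows $\Sa O \models \varphi(a_1,\ldots,a_n) \iff \Sa M \models \psi(\uptau(a_1),\ldots,\uptau(a_n))$ for all $a_1,\ldots,a_n \in O$: the relation atoms are exactly the hypothesis, the equality atoms use $a_i = a_j \iff \uptau(a_i) = \uptau(a_j)$, and the Boolean connectives are immediate. Hence $X = \{a \in O^n : \uptau(a) \in Y\}$, and since $X$ was arbitrary, $\Sa M$ trace defines $\Sa O$ via $\uptau$.

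There is no real obstacle here: the content of the proposition is just that (i) the atomic $L^*$-relations pull back to $\Sa M$-definable sets by fiat, (ii) equality pulls back because $\uptau$ is injective, and (iii) $\Sa M$-definability is closed under Boolean combinations and under pullback along coordinate projections, so quantifier-free formulas pull back. The only point needing any care is handling parameters and repeated or permuted variables in the atomic subformulas, which is exactly what forces the passage from $Y_R \subseteq M^{mk}$ to its preimage in $M^{mn}$; this is routine. Note that Propositions~\ref{prop:qe-trace}, \ref{prop:eq neg 1}, and \ref{prop:qe0} are all special cases of, or close parallels to, this argument.
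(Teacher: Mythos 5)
Your proof is correct and is essentially the paper's argument with the reductions unwound: the paper simply applies Proposition~\ref{prop:eq neg 1} with $L$ the empty language (which in turn factors through Proposition~\ref{prop:qe-trace}), and what you have written out directly — replacing relation atoms by the given $\Sa M$-definable sets, equality atoms by equality using injectivity of $\uptau$, and carrying Boolean connectives through — is precisely what those reductions accomplish. The care you flag about parameters and repeated/permuted variables is handled correctly.
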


\begin{proof}
Apply Proposition~\ref{prop:eq neg 1} with $L$ the empty language.
% For each $k$-ary $R \in L^*$ fix an $\Sa M$-definable subset $Y_R$ of $M^{mk}$ with $Y_R \cap O^k = \{ a \in O^k : \Sa O \models R(a) \}$.
% Let $\Sa P$ be the $L^*$-structure with domain $M^m$ such that for every $k$-ary $R \in L^*$ and $a \in M^{mk}$ we have $\Sa P \models R(a) \Longleftrightarrow a \in Y_R$.
% Then $\Sa P$ is definable in $\Sa M$.
% By Proposition~\ref{prop:qe-trace} $\Sa P$ trace defines $\Sa O$.
\end{proof}

\noindent
Corollary~\ref{cor:qe} is a reformulation of Proposition~\ref{prop:qe}.

\begin{corollary}
\label{cor:qe}
Suppose that $L^*$ is relational and $\Sa O$ is an $L^*$-structure with quantifier elimination.
Then the following are equivalent:
\begin{enumerate}
[leftmargin=*]
\item $\Sa M$ trace defines $\Sa O$, and
\item $\Sa O$ is isomorphic to an $L^*$-structure $\Sa P$ such that $P \subseteq M^m$ and for every $k$-ary $R \in L$ there is an $\Sa M$-definable $Y \subseteq M^{mk}$ such that for any $a \in P^k$ we have $\Sa P \models R(a) \Longleftrightarrow a \in Y$.
\end{enumerate}
\end{corollary}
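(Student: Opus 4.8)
The plan is to derive both implications directly from the definitions, using Proposition~\ref{prop:qe} for the nontrivial direction. Quantifier elimination for $\Sa O$ will only be needed for $(2) \Rightarrow (1)$; the converse is pure bookkeeping.

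For $(2) \Rightarrow (1)$: assume $\Sa O$ is isomorphic to an $L^*$-structure $\Sa P$ with $P \subseteq M^m$ such that for every $k$-ary $R \in L^*$ there is an $\Sa M$-definable $Y \subseteq M^{mk}$ with $\Sa P \models R(b) \Longleftrightarrow b \in Y$ for all $b \in P^k$. Composing the isomorphism $\Sa O \to \Sa P$ with the inclusion $P \hookrightarrow M^m$ yields an injection $\uptau \colon O \to M^m$, and the displayed equivalence transports to $\Sa O \models R(a) \Longleftrightarrow \uptau(a) \in Y$ for all $a \in O^k$. This is exactly the hypothesis of Proposition~\ref{prop:qe}, whose conclusion is that $\Sa M$ trace defines $\Sa O$ via $\uptau$; hence $(1)$ holds.

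For $(1) \Rightarrow (2)$: fix an injection $\uptau \colon O \to M^m$ witnessing that $\Sa M$ trace defines $\Sa O$, and set $P := \uptau(O)$. Transport the $L^*$-structure on $O$ along $\uptau$ to get an $L^*$-structure $\Sa P$ on $P$ with $\uptau \colon \Sa O \to \Sa P$ an isomorphism, so $\Sa O$ is isomorphic to $\Sa P$ and $P \subseteq M^m$. For each $k$-ary $R \in L^*$, the set $X := \{ a \in O^k : \Sa O \models R(a) \}$ is $\Sa O$-definable (it is defined by the atomic formula $R(x_1,\ldots,x_k)$), so by the trace definition there is an $\Sa M$-definable $Y \subseteq M^{mk}$ with $a \in X \Longleftrightarrow \uptau(a) \in Y$ for all $a \in O^k$. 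Unwinding the transport, for $b \in P^k$ with $b = \uptau(a)$ we get $\Sa P \models R(b) \Longleftrightarrow \Sa O \models R(a) \Longleftrightarrow \uptau(a) \in Y \Longleftrightarrow b \in Y$, which is clause $(2)$.

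I do not expect a genuine obstacle here, since the corollary is a restatement of Proposition~\ref{prop:qe}; the only points requiring a moment's care are that trace definibility is invariant under replacing $\Sa O$ by an isomorphic copy (immediate, as the notion refers only to definable sets), and that in $(1) \Rightarrow (2)$ one uses nothing about $\Sa O$ beyond the fact that atomic relations are definable. I would write the argument as the two short paragraphs above, without introducing any auxiliary lemma.
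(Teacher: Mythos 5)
Your proof is correct, and it is essentially the argument the paper intends: the paper states only that the corollary is a reformulation of Proposition~\ref{prop:qe}, and you have unpacked exactly what that means, with $(2)\Rightarrow(1)$ being Proposition~\ref{prop:qe} applied to the injection $\uptau$ obtained by composing the isomorphism with the inclusion, and $(1)\Rightarrow(2)$ being the definition of trace definibility applied to the atomic relations together with transport of structure along the witnessing injection. You are also right that quantifier elimination is used only in the $(2)\Rightarrow(1)$ direction.
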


Recall our standing assumption that $T,T^*$ is a complete $L,L^*$-theory, respectively.

\begin{proposition}
\label{prop:trace-theories}
Suppose that $T$ trace defines $T^*$.
Let $\uplambda \ge |L^*|$ be an infinite cardinal, $\Sa O \models T^*$, $|O| < \uplambda$, and $\Sa M \models T$ be $\uplambda$-saturated.
Then $\Sa M$ trace defines $\Sa O$.
\end{proposition}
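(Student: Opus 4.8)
The plan is to reduce the statement about theories to a statement about a single model, via compactness plus saturation. By definition, $T$ trace defines $T^*$ means every $T^*$-model is trace definable in \emph{some} $T$-model; here we need a \emph{particular} highly saturated $T$-model $\Sa M$ to do the job for a \emph{particular} small $T^*$-model $\Sa O$. So the first step is: given $\Sa O \models T^*$ with $|O| < \uplambda$, apply the definition to get some $\Sa N \models T$ and an injection $\uptau \colon O \to N^m$ witnessing that $\Sa N$ trace defines $\Sa O$, i.e. for each $\Sa O$-formula $\varphi(x_1,\dots,x_n)$ there is an $\Sa N$-formula $\psi_\varphi$ (over parameters in $N$) such that $\uptau^{-1}$ of the set defined by $\psi_\varphi$ agrees with the set defined by $\varphi$ on $O$.

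The second step is to transport this configuration into the fixed $\uplambda$-saturated model $\Sa M \models T$. Since $\Sa M \equiv \Sa N$ and $\Sa M$ is $\uplambda$-saturated with $\uplambda > |O| + |L^*|$, I want to find inside $\Sa M$ a copy of the relevant data: an injection $\uptau' \colon O \to M^m$ and, for each $\Sa O$-formula $\varphi$, a definable set $Y'_\varphi \subseteq M^{mn}$ such that $(\alpha_1,\dots,\alpha_n) \in \varphi(\Sa O) \iff (\uptau'(\alpha_1),\dots,\uptau'(\alpha_n)) \in Y'_\varphi$ for all tuples from $O$. The natural mechanism: write down, in the language $L$ expanded by $|O|$ new constants $(c_\alpha : \alpha \in O)$ (one for each element of $O$, meant to name $\uptau(\alpha) \in N^m$, so really $m$ constants per element) together with the parameters appearing in the $\psi_\varphi$, the $L$-type $p$ asserting (i) the $c_\alpha$'s are pairwise distinct in each coordinate so as to encode an injection, and (ii) for every $\Sa O$-formula $\varphi(x_1,\dots,x_n)$ and every tuple $\bar\alpha$ from $O$, the sentence $\psi_\varphi(c_{\alpha_1},\dots,c_{\alpha_n}, \bar d)$ holds iff $\Sa O \models \varphi(\bar\alpha)$. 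This type is consistent because it is realized in $(\Sa N, \uptau(\alpha), \ldots)$, and it is over a parameter set of size $< \uplambda$, so by $\uplambda$-saturation of $\Sa M$ it is realized in $\Sa M$. The realization gives $\uptau' \colon O \to M^m$ and the parameters $\bar d' \in M$, and then $Y'_\varphi \coloneqq \{\bar a \in M^{mn} : \Sa M \models \psi_\varphi(\bar a, \bar d')\}$ is $\Sa M$-definable and witnesses trace definibility of $\Sa O$ in $\Sa M$ via $\uptau'$.

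One bookkeeping point: $\Sa O$ may have more than $\uplambda$ definable sets, but it only has at most $|L^*| + \aleph_0 \le \uplambda$ \emph{formulas}, and a trace definition only needs one $\Sa M$-definable set per formula (a definable set is $\varphi(\cdot, \bar b)$ for a formula $\varphi$ and parameters $\bar b$ from $O$, and the parameters can be absorbed by using the injection $\uptau'$ on them too — i.e. it suffices to trace-define the family $\{\varphi(\Sa O) : \varphi \text{ an } L^*\text{-formula, no parameters}\}$, which is exactly what appears in the definition). So the type $p$ above involves only $\le \uplambda$ sentences, each over $\le \uplambda$ parameters, hence $p$ is a type over a set of size $< \uplambda$ (after possibly enlarging by the finitely many parameters of each $\psi_\varphi$ — total size still $\le |L^*| + |O| < \uplambda$), and saturation applies.

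The main obstacle — really the only non-formal point — is making sure the parameter count stays strictly below $\uplambda$: we need $\uplambda$ strictly larger than the size of the parameter set of the type $p$, namely the $|O|$-many new constants plus the parameters of all the $\psi_\varphi$'s. Since there are $\le |L^*| + \aleph_0 \le \uplambda$ formulas $\varphi$, each $\psi_\varphi$ contributing finitely many parameters, and $|O| < \uplambda$, the total is $\le \uplambda$; but saturation is usually stated as realizing types over sets of size $<\uplambda$. This is handled by the hypothesis $\uplambda \ge |L^*|$ together with $|O| < \uplambda$: one takes $\uplambda' $ to be, say, the successor of $|L^*| + |O|$ if needed, or more simply notes that $\uplambda$-saturation for $\uplambda$ infinite gives realizations of types over $<\uplambda$ parameters and one should state the Proposition (as the author does) with the understanding that the saturation degree dominates $|L^*|+|O|$ strictly; if one wants to be scrupulous, replace ``$\uplambda$-saturated'' in the application by ``$\uplambda^+$-saturated over the relevant small set'', which costs nothing. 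Aside from this routine cardinal arithmetic, the proof is just: unwind the definition, code the trace configuration as a type, realize it by saturation.
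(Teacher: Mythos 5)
Your argument is correct if one takes the hypothesis ``$T$ trace defines $T^*$'' at face value as defined in the introduction (every $T^*$-model is trace definable in some $T$-model), but it takes a genuinely different route from the paper's, and the difference is substantive. You apply that definition directly to $\Sa O$, getting a model $\Sa N\models T$, an injection $\uptau\colon O\to N^m$, and $L(N)$-formulas $\psi_\varphi$ witnessing the trace definition, and then realize in $\Sa M$ the type over $\emptyset$ encoding this configuration, in the variables $(\bar y_\alpha)_{\alpha\in O}$ together with $(\bar z_\varphi)_\varphi$. That transport works, and your cardinal-arithmetic worry dissolves without any detour through $\uplambda^+$: the type has at most $m|O|+|L^*|+\aleph_0\leq\uplambda$ variables and no parameters (the $\bar z_\varphi$ are variables, not parameters, since the $N$-parameters of the $\psi_\varphi$ do not live in $M$), and a $\uplambda$-saturated structure realizes consistent types in $\leq\uplambda$ variables over $<\uplambda$ parameters by the standard transfinite recursion, which is the same strength of saturation the paper uses to elementarily embed a model of size $\leq\uplambda$ into $\Sa M$. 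However, the paper's own proof deliberately assumes only that \emph{some} $\Sa P\models T^*$ is trace definable in \emph{some} $\Sa N\models T$, with $\Sa P$ possibly unrelated to the given $\Sa O$; under that weaker hypothesis there is no injection $O\to N^m$ to encode, so your argument cannot be run as written. The paper instead packages the configuration $\Sa P\hookrightarrow\Sa N$ into an auxiliary theory $T_\cup$, takes a $\uplambda$-saturated $\Sa S\models T_\cup$, elementarily embeds $\Sa O$ into the interpreted $L^*$-part of $\Sa S$ (which is $\uplambda$-saturated), cuts down to $\Sa T\preceq\Sa S$ of size $\leq\uplambda$ containing $O$ and the parameters, and then embeds $\Sa T\!\upharpoonright\!L$ into $\Sa M$ by saturation. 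That extra work is exactly what backs the introductory remark that ``$T$ trace defines $T^*$ if and only if some $T^*$-model is trace definable in a $T$-model'' and the ``Thus'' sentence following the proposition. Your proof presupposes the ``every'' form and therefore does not yield that equivalence: it proves the proposition as literally stated, but would leave the paper's surrounding use of it unsupported. (Minor slip: ``pairwise distinct in each coordinate'' should read ``pairwise distinct as $m$-tuples''; an injection only needs the images to differ somewhere.)
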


Thus if $T$ trace defines $T^*$ then every $T^*$-model is trace definable in a $T$-model.
We make constant use of Proposition~\ref{prop:trace-theories} below, often without reference.

\begin{proof}
Suppose that $\Sa P \models T^*$ is trace definable in $\Sa N \models T$.
We may suppose that $P \subseteq N^m$ and $\Sa N$ trace defines $\Sa P$ via the identity $P \to N^m$.
By Morleyization we may suppose that $L^*$ is relational and $T^*$ admits quantifier elimination.
For each $n$-ary relation $R \in L^*$ we fix an $L(N)$-formula $\delta_R(x)$ such that for all $a \in P^n$ we have $\Sa N \models \delta_R(a) \Longleftrightarrow \Sa P \models R(a)$.
Let $A$ be the set of coefficients of the $\delta_R$ for $R \in L$.
Assume that $L$ and $L^*$ are disjoint.
Let $L_\cup$ be $L(A) \cup L^*  \cup \{P\}$ where $P$ is a new $m$-ary predicate.
Given an $L_\cup$-structure $\Sa S$ we define the associated $L^*$-structure $\Sa S_{L^*}$ by letting $\Sa S_{L^*}$ have domain $\{ a \in S^m : \Sa S \models P(a) \}$, and letting the interpretation of each $n$-ary $R \in L^*$ be 
$$\{ (b_1,\ldots,b_n) \in S^{mn} : \Sa S \models P(b_1) \land \cdots \land P(b_n) \land \delta_R(b_1,\ldots,b_n) \}.$$
Let $T_\cup$ be the $L_\cup$-theory where an $L_\cup$-structure $\Sa S$ satisfies $T_\cup$ if and only if: 
\begin{enumerate}
\item $\Sa S\!\upharpoonright\! L(A)$ satisfies the $L(A)$-theory of $\Sa N$, and
\item $\Sa S_{L^*} \models T^*$.
\end{enumerate}
It is clear that $T_\cup$ is consistent.
Proposition~\ref{prop:qe} shows that if $\Sa S \models T_\cup$ then $\Sa S \!\upharpoonright\!L$ trace defines $\Sa S_{L^*}$.
Let $\Sa S \models T_\cup$ be $\uplambda$-saturated.
Then $\Sa S_L$ is also $\uplambda$-saturated, hence there is an elementary embedding $\Sa O \to \Sa S_L$.
We suppose that $\Sa O$ is an elementary substructure of $\Sa S_L$.
Let $\Sa T$ be an elementary substructure of $\Sa S$ which contains $O \cup A$ and satisfies $|T| \le \uplambda$.
Then $\Sa T\!\upharpoonright\!L$ trace defines $\Sa T_L$, and hence trace defines $\Sa O$.
By saturation there is an elementary embedding $\Sa T_L \to \Sa M$, so we suppose $\Sa T_L$ is an elementary submodel of $\Sa M$.
Then $\Sa M$ trace defines $\Sa O$.
\end{proof}

We need one more tool for our first examples, disjoint unions.
The collection of trace equivalence classes is a partial order under trace definibility.
Lemma~\ref{lem:disjoint union} shows that this partial order admits joins, the join of the trace equivalence classes of $\Sa M_1,\ldots,\Sa M_n$ is the class of the disjoint union $\Sa M_1 \sqcup\cdots \sqcup \Sa M_n$.

\medskip
We first discuss the disjoint union $\bigsqcup_{i = 1}^n \Sa M_i$ of structures $\Sa M_1, \ldots, \Sa M_n$. After possibly Morleyzing we suppose that each $L_i$ is relational and each $\Sa M_i$ admits quantifier elimination\footnote{Technically we only define the disjoint union up to definable equivalence.}.
We may suppose that the $L_i$ are pairwise disjoint.
Let $L_\sqcup$ be  $L_1 \cup\cdots\cup L_n$ together with new unary relations $P_1,\ldots,P_n$.
Let $\Sa M_1 \sqcup\cdots \sqcup \Sa M_n$ be the $L_\sqcup$-structure with domain $M_1 \sqcup\cdots \sqcup M_n$, where each $P_i$ defines $M_i$, and for all $k$-ary $R \in L_i$ and $a_1,\ldots,a_k \in M_1 \sqcup\cdots \sqcup  M_n$ we have
\begin{align*}
\Sa M_1 \sqcup\cdots \sqcup \Sa M_n \models R(a_1,\ldots,a_k) \quad \Longleftrightarrow \quad \text{either} \quad & a_1,\ldots,a_k \in M_j \text{  for some  } j \ne i, \\ \text{or}\quad & a_1,\ldots,a_k \in M_i \text{  and  } \Sa M_i \models R(a_1,\ldots,a_k).
\end{align*}
We also consider each $\Sa M_i$ to be an $L_\sqcup$-structure by considering $\Sa M_i$ to be a substructure of $\Sa M_1 \sqcup\cdots \sqcup \Sa M_n$ in the natural way.
It is then easy to see that the structure induced on $M_1 \times\cdots\times M_n$ by $\Sa M_1 \sqcup\cdots \sqcup \Sa M_n$ is interdefinable with $\Sa M_1 \times\cdots\times \Sa M_n$, where the product is the usual product of $L_\sqcup$-structures.
By Feferman-Vaught any $\Sa M_1 \sqcup\cdots\sqcup \Sa M_n$-definable subset of $M_1 \times\cdots\times M_n$ is a finite union of sets of the form $X_1 \times\cdots\times X_n$ where each $X_i \subseteq M_i$ is $\Sa M_i$-definable, see \cite[Corollary~9.6.4]{Hodges}.
Hence $\Sa M_1 \sqcup\cdots\sqcup \Sa M_n$ admits quantifier elimination.

\begin{lemma}
\label{lem:disjoint union}
If $\Sa M$ trace defines each $\Sa O_1,\ldots,\Sa O_n$ then $\Sa M$ trace defines $\Sa O_1 \sqcup\cdots \sqcup \Sa O_n$.
\end{lemma}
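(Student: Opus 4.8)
The plan is to reduce to the case $n=2$ by induction (since $\Sa O_1 \sqcup \cdots \sqcup \Sa O_n$ is, up to interdefinability, $(\Sa O_1 \sqcup \cdots \sqcup \Sa O_{n-1}) \sqcup \Sa O_n$), and then to apply Corollary~\ref{cor:qe}, using that $\Sa O_1 \sqcup \Sa O_2$ admits quantifier elimination in the relational language $L_\sqcup$ described just above. So assume $\Sa M$ trace defines $\Sa O_1$ via $\uptau_1 \colon O_1 \to M^{m_1}$ and $\Sa O_2$ via $\uptau_2 \colon O_2 \to M^{m_2}$. Pad both so they land in a common power: set $m = m_1 + m_2 + 1$ and define $\uptau \colon O_1 \sqcup O_2 \to M^m$ by sending $\alpha \in O_1$ to $(\uptau_1(\alpha), \bar c, 0)$ and $\beta \in O_2$ to $(\bar c, \uptau_2(\beta), 1)$ for some fixed tuple $\bar c$ of the right length and a fixed pair of distinct elements $0 \ne 1$ of $M$ (any infinite — or even two-element — model has such; if $M$ is a singleton the statement is trivial). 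The last coordinate records which part of the disjoint union a point came from, so $\uptau$ is an injection, and the images of $O_1$ and $O_2$ under $\uptau$ are disjoint and each $\Sa M$-definable.

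Now I verify condition (2) of Corollary~\ref{cor:qe} for $\Sa O_1 \sqcup \Sa O_2$ with the $L_\sqcup$-structure $\Sa P = \uptau(\Sa O_1 \sqcup \Sa O_2)$. The unary predicates $P_1, P_2$ pull back to $\{(x_1,\ldots,x_m) \in M^m : x_m = 0\}$ and $\{x_m = 1\}$ intersected with $\uptau(O_1 \sqcup O_2)$, which are $\Sa M$-definable. For a $k$-ary $R \in L_1$: by definition of the disjoint union, $\Sa O_1 \sqcup \Sa O_2 \models R(a_1,\ldots,a_k)$ holds iff either all $a_j$ lie in $O_2$, or all lie in $O_1$ and $\Sa O_1 \models R(a_1,\ldots,a_k)$. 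Since $\Sa M$ trace defines $\Sa O_1$ via $\uptau_1$, there is an $\Sa M$-definable $Y_1 \subseteq M^{m_1 k}$ with $\Sa O_1 \models R(\bar a) \Leftrightarrow \uptau_1(\bar a) \in Y_1$; using the last coordinates to detect which part each argument lies in, and extracting the relevant $m_1$ coordinates from each block of $m$ coordinates, one writes down an $\Sa M$-definable $Y \subseteq M^{mk}$ that captures exactly the disjunction above when restricted to $\uptau(O_1 \sqcup O_2)^k$. The case $R \in L_2$ is symmetric. By Corollary~\ref{cor:qe}, $\Sa M$ trace defines $\Sa O_1 \sqcup \Sa O_2$, and induction finishes the general case.

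The only mildly delicate point — and the one I would write out carefully — is bookkeeping: making sure the $\Sa M$-definable set $Y$ really equals the graph of the $L_\sqcup$-relation $R$ when restricted to tuples all of whose entries are actual $\uptau$-images, including the "mixed" tuples (some entries from $\uptau(O_1)$, some from $\uptau(O_2)$), on which the disjoint-union convention makes $R$ hold vacuously for $R \in L_1$ unless all entries are in $O_1$. This is routine once the sign coordinate is in place; there is no real obstacle.
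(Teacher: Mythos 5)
Your proof is correct and is essentially the paper's argument: glue the trace-defining injections into a single injection $\uptau$, note that $\Sa O_1\sqcup\cdots\sqcup\Sa O_n$ admits quantifier elimination in $L_\sqcup$, and verify condition~(2) of Corollary~\ref{cor:qe} by pulling back each $L_\sqcup$-relation through the individual trace definitions. The paper handles general $n$ directly rather than inducting on $n=2$ and writes $\uptau$ as landing in $M^{m_1}\sqcup\cdots\sqcup M^{m_n}$ without spelling out the embedding into a single power of $M$, which your explicit sign-coordinate padding tidies up; one small caution is that the images $\uptau(O_i)$ need not themselves be $\Sa M$-definable (the sentence claiming they are overstates), but this is harmless because Corollary~\ref{cor:qe} only requires that the $\Sa M$-definable slices $\{x_m=0\}$ and $\{x_m=1\}$ cut out $P_1$ and $P_2$ when intersected with $\uptau(O_1\sqcup\cdots\sqcup O_n)$, which they do.
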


\begin{proof}
Suppose each $\Sa O_i$ is an $L_i$-structure.
As above we may suppose that each $L_i$ is relational and each $\Sa O_i$ admits quantifier elimination.
We may suppose that we have $O_i \subseteq M^{m_i}$ for each $i$ and that each $\Sa O_i$ is trace definable via the identity.
Let $\uptau \colon O_1 \sqcup\cdots\sqcup O_n \to M^{m_1} \sqcup\cdots \sqcup M^{m_n}$ be the natural injection.
For each $k$-ary $R \in L_i$ we fix $\Sa M$-definable $X_R \subseteq M^{km_i}$ such that $X_R \cap O^k_i = \{a \in O^k_i : \Sa O_i \models R(a)\}$ and let $Y_R$ be the set of $a_1,\ldots,a_k \in (M^{m_1} \sqcup\cdots \sqcup M^{m_n})^k$ such that either $a_1,\ldots,a_k \in M^{m_j}$ for some $j \ne i$ or $a_1,\ldots,a_k \in M^{m_i}$ and $(a_1,\ldots,a_k) \in X_R$.
Let $P$ be the image of $\uptau$.
Then $\uptau$ gives an isomorphism $\Sa O_1 \sqcup\cdots\sqcup \Sa O_n \to (P ; (Y_R)_{R \in L_\sqcup})$
Note that each $X_R$ is $\Sa M$-definable.
Quantifier elimination for $\Sa O_1 \sqcup\cdots\sqcup\Sa O_n$ and Corollary~\ref{cor:qe} together show that $\Sa M$ trace defines $\Sa O_1 \sqcup\cdots\sqcup \Sa O_n$ via $\uptau$.
\end{proof}

Lemma~\ref{lem:disjoint union 1} is immediate from Lemma~\ref{lem:disjoint union}.

\begin{lemma}
\label{lem:disjoint union 1}
Suppose that $\Sa M_i$ is trace equivalent to $\Sa M^*_i$ for each $i \in \{1,\ldots,n\}$.
Then $\Sa M_1 \sqcup\cdots \sqcup \Sa M_n$ is trace equivalent to $\Sa M^*_1 \sqcup\cdots \sqcup \Sa M^*_n$
\end{lemma}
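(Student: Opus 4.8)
The plan is to deduce this immediately from Lemma~\ref{lem:disjoint union} together with transitivity of trace definibility (Proposition~\ref{prop:trace-basic}). By symmetry it suffices to show that $\Sa M_1 \sqcup\cdots \sqcup \Sa M_n$ trace defines $\Sa M^*_1 \sqcup\cdots \sqcup \Sa M^*_n$; the reverse trace definition follows by exchanging the roles of the $\Sa M_i$ and the $\Sa M^*_i$.

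First I would observe that $\Sa M_1 \sqcup\cdots \sqcup \Sa M_n$ trace defines each $\Sa M_i$. Indeed, after the Morleyization used to set up the disjoint union, each $\Sa M_i$ admits quantifier elimination in its relational language $L_i$, and the inclusion $M_i \hookrightarrow M_1 \sqcup\cdots \sqcup M_n$ is an $L_i$-embedding of $\Sa M_i$ into the $L_i$-reduct of $\Sa M_1 \sqcup\cdots \sqcup \Sa M_n$ (on tuples from $M_i$ each $R \in L_i$ is interpreted exactly as in $\Sa M_i$). So Proposition~\ref{prop:qe-trace} applies and $\Sa M_1 \sqcup\cdots \sqcup \Sa M_n$ trace defines $\Sa M_i$ via this inclusion. (Alternatively, $\Sa M_i$ is isointerdefinable with the structure induced on the $\Sa M_1\sqcup\cdots\sqcup\Sa M_n$-definable set $P_i$, hence interpretable in $\Sa M_1\sqcup\cdots\sqcup\Sa M_n$, and one invokes Proposition~\ref{prop:trace-interpret}.) Now trace equivalence of $\Sa M_i$ and $\Sa M^*_i$ gives that $\Sa M_i$ trace defines $\Sa M^*_i$, so by Proposition~\ref{prop:trace-basic} the disjoint union $\Sa M_1 \sqcup\cdots \sqcup \Sa M_n$ trace defines every $\Sa M^*_i$. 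Applying Lemma~\ref{lem:disjoint union} with $\Sa M = \Sa M_1 \sqcup\cdots \sqcup \Sa M_n$ and $\Sa O_i = \Sa M^*_i$ then yields that $\Sa M_1 \sqcup\cdots \sqcup \Sa M_n$ trace defines $\Sa M^*_1 \sqcup\cdots \sqcup \Sa M^*_n$. Exchanging the two families gives the reverse trace definition, so the two disjoint unions are trace equivalent.

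There is no real obstacle here: the only step that requires a sentence of justification is that the disjoint union trace defines each of its factors, and that is routine from Proposition~\ref{prop:qe-trace} (or Proposition~\ref{prop:trace-interpret}). Everything else is a transitivity argument plus a single invocation of Lemma~\ref{lem:disjoint union}, which is why the author records this as immediate.
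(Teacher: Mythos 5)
Your proposal is correct and is exactly what the author has in mind by calling the lemma ``immediate from Lemma~\ref{lem:disjoint union}''; the paper gives no proof of its own, so this is the natural unpacking.

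One small imprecision worth flagging: trace equivalence of $\Sa M_i$ and $\Sa M^*_i$ is by definition a statement about their theories, so it does not literally give that the structure $\Sa M_i$ trace defines the structure $\Sa M^*_i$. What it gives, via Proposition~\ref{prop:trace-theories}, is that a sufficiently saturated elementary extension of $\Sa M_i$ trace defines $\Sa M^*_i$. Consequently, when you write ``$\Sa M_1 \sqcup\cdots \sqcup \Sa M_n$ trace defines every $\Sa M^*_i$'' and feed this into Lemma~\ref{lem:disjoint union}, you should first replace $\Sa M_1 \sqcup\cdots \sqcup \Sa M_n$ by a $\uplambda$-saturated model $\Sa N$ of its theory for large enough $\uplambda$: the theory-level transitivity from Proposition~\ref{prop:trace-basic}(2) gives that $\Th(\Sa M_1\sqcup\cdots\sqcup\Sa M_n)$ trace defines each $\Th(\Sa M^*_i)$, then Proposition~\ref{prop:trace-theories} gives that $\Sa N$ trace defines each $\Sa M^*_i$, and then Lemma~\ref{lem:disjoint union} applied to $\Sa N$ gives that $\Sa N$ trace defines $\Sa M^*_1\sqcup\cdots\sqcup\Sa M^*_n$, hence $\Th(\Sa M_1\sqcup\cdots\sqcup\Sa M_n)$ trace defines $\Th(\Sa M^*_1\sqcup\cdots\sqcup\Sa M^*_n)$. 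This saturation step is the kind of bookkeeping the paper routinely elides, so your proof captures the substance; it just needs that one sentence to be fully rigorous. The observation that the disjoint union trace defines each factor, via the $L_i$-embedding and Proposition~\ref{prop:qe-trace}, is correct and is the only non-formal ingredient.
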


We finally characterize stability in terms of trace definibility.

\begin{lemma}
\label{lem:stable-0}
$T$ is unstable if and only if $T$ trace defines $\dlo$.
\end{lemma}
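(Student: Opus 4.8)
The plan is to prove both directions, with the forward direction being the substantive one. For the easy direction, suppose $T$ trace defines $\dlo$. Since $\dlo$ is unstable and instability is preserved under trace definitions — this follows from Theorem~\ref{thm:preservation}, or more directly from the fact that a trace definition transports a formula with the order property back to a formula with the order property — we conclude $T$ is unstable. (If one wants to avoid citing Theorem~\ref{thm:preservation}, the argument is: if $\uptau\colon Q \to M^m$ witnesses that $\Sa M$ trace defines a model $\Sa Q$ of $\dlo$, pull back the order relation to an $\Sa M$-definable $Y \subseteq M^{2m}$; then $Y$ has the order property witnessed by $(\uptau(q) : q \in Q)$, so $T$ is unstable.)

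For the forward direction, suppose $T$ is unstable. Then there is a formula $\varphi(x,y)$ with $|x| = |y| = k$ for some $k$, a model $\Sa M \models T$ that we may take to be sufficiently saturated, and sequences $(a_i : i < \omega)$, $(b_i : i < \omega)$ from $M^k$ such that $\Sa M \models \varphi(a_i, b_j)$ if and only if $i < j$ (the order property). By Ramsey and compactness in a monster model we may assume $(a_i b_i : i < \omega)$ — or better, a densely ordered index — is indiscernible, and passing to a large saturated model we may arrange an indiscernible sequence $(a_q b_q : q \in \Q)$ (or over an arbitrary dense linear order) with $\Sa M \models \varphi(a_q, b_r) \iff q < r$. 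Now fix a dense linear order $(O;<) \models \dlo$ with $|O|$ small relative to the saturation of $\Sa M$; by saturation we can realize an indiscernible sequence indexed by $O$, obtaining $(a_o : o \in O)$ and $(b_o : o \in O)$ in $M^k$ with $\Sa M \models \varphi(a_o, b_{o'}) \iff o < o'$.

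Define $\uptau \colon O \to M^{2k}$ by $\uptau(o) = (a_o, b_o)$. This is an injection since distinct elements of $O$ give distinct pairs (the $a_o$ are already distinct, as $\varphi$ separates them). To see $\Sa M$ trace defines $\Sa O = (O;<)$ via $\uptau$, note that $(O;<)$ admits quantifier elimination in the relational language $\{<\}$, so by Proposition~\ref{prop:qe} (or Corollary~\ref{cor:qe}) it suffices to produce, for the single binary relation $<$, an $\Sa M$-definable $Y \subseteq M^{2k} \times M^{2k}$ with $o < o' \iff (\uptau(o), \uptau(o')) \in Y$ for all $o, o' \in O$. Take $Y = \{((u_1,u_2),(v_1,v_2)) \in M^{2k}\times M^{2k} : \Sa M \models \varphi(u_1, v_2)\}$; then $(\uptau(o),\uptau(o')) \in Y \iff \Sa M \models \varphi(a_o, b_{o'}) \iff o < o'$, as required. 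Hence $\Sa M$ trace defines $\Sa O$, so $T$ trace defines $\dlo$.

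The main obstacle is the extraction of a densely-indexed (or arbitrarily-long-linearly-indexed) indiscernible sequence with the order property from the mere assumption of instability: one must combine the standard fact that instability yields the order property with Ramsey-style indiscernible extraction and a saturation argument to index by an arbitrary dense linear order. This is routine model theory but is the only place where real work happens; everything else is bookkeeping with the trace-definition machinery already set up in Section~\ref{section:few general}.
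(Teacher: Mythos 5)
Your proof is correct and follows essentially the same route as the paper: extract a densely indexed order-property witness, encode each rational by the corresponding tuple, and invoke quantifier elimination for $(\Q;<)$ together with Proposition~\ref{prop:qe}. The only stylistic difference is that the $\Q$-indexed sequence with $\varphi(a_q,b_r)\Leftrightarrow q<r$ already follows from the order property plus compactness in a saturated model, so the detour through Ramsey and indiscernibles is unnecessary (though harmless).
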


It follows that any infinite linear order trace defines $\dlo$.
By Fact~\ref{fact:zp} a discrete linear order cannot interpret $\dlo$.
 
\begin{proof}
The right to left implication follows from the first claim of Proposition~\ref{prop:stable-0}.
By Proposition~\ref{prop:trace-theories} it is enough to suppose that $\Sa M$ is unstable and $\aleph_1$-saturated and show that $\Sa M$ trace defines $(\Q;<)$.
There is a sequence $(\alpha_q : q \in \Q)$ of elements of $M^m$ and a formula $\phi(x,y)$ such that for all $p,q \in \Q$ we have $\Sa M \models \phi(\alpha_p,\alpha_q)$ if and only if $p < q$.
Let $\uptau \colon \Q \to M^m$ be given by declaring $\uptau(q) = \alpha_q$ for all $q \in \Q$.
As $(\Q;<)$ admits quantifier elimination an application of Proposition~\ref{prop:qe} shows that $\Sa M$ trace defines $(\Q;<)$ via $\uptau$.
\end{proof}

\subsection{The multisorted case}
One can also define trace definibility between multisorted structures in a natural way, and basically all of our results adapt to this setting.
One thing gained by using multisorted structures is that one can take disjoint unions of arbitrary collections of structures and prove a generalization of Lemma~\ref{lem:disjoint union 1}.
If we allow multisorted structures then the partial order of trace equivalence classes admits arbitrary joins.
This is nice but at present I don't think it's worth the pain of writing down.

\section{A few easy examples}
\label{section:few}
We now give some attractive examples.
Many of these will be generalized in later sections.

\medskip
A convex valuation on an ordered field is a non-trivial valuation with convex valuation ring.
$\mathrm{RCVF}$ is the theory of a real closed field equipped with a convex valuation.
More precisely $\rcvf$ is the theory of $(K,\prec)$ where $K$ is a real closed ordered field, $\prec$ is a quasi-order on $K\setminus\{0\}$, and there is a convex valuation $v$ on $K$ such that such that $\beta\prec \beta^* \Longleftrightarrow v(\beta^*)> v(\beta)$ for all non-zero $\beta,\beta^*\in K$.
$\rcvf$ is complete and admits quantifier elimination by work of Cherlin and Dickmann, see \cite{cd} or \cite[Thm~3.6.6]{trans}.
The archimedean valuation on an ordered field is the valuation whose valuation ring is the convex hull of $\Z$ in $K$, in this case we have $\beta\prec\beta^*$ if and only if $n|\beta|<|\beta^*|$ for all $n$.
Note that $\rcvf$ is the theory of the archimedean valuation on a non-archimedean real closed field.

\begin{proposition}
\label{prop:rcvf}
$\rcvf$ is trace equivalent to $\rcf$.
\end{proposition}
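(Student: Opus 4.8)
The plan is to prove the two directions separately, each via an application of the quantifier-elimination tools from Section~\ref{section:few general}. Since $\rcf$ interprets $\rcvf$ is false (the valuation is extra structure), but conversely $\rcvf$ certainly trace defines $\rcf$ because a model of $\rcvf$ has an $\rcf$-reduct; indeed if $(K,\prec)\models\rcvf$ then $(K;+,\times)\models\rcf$ and Proposition~\ref{prop:qe-trace} applied to the identity embedding $(K;+,\times)\to(K,\prec)$ shows $(K,\prec)$ trace defines $(K;+,\times)$. So that direction is immediate.

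For the harder direction — $\rcf$ trace defines $\rcvf$ — I would take a real closed field $R\models\rcf$ that is sufficiently saturated, and realize a model of $\rcvf$ inside it. By the remark at the end of the preamble to this section, $\rcvf$ is exactly the theory of the archimedean valuation on a non-archimedean real closed field; so take $R$ non-archimedean and let $v$ be the archimedean valuation on $R$, with valuation ring $O$ the convex hull of $\Z$ in $R$. The point is that the associated quasi-order $\prec$ on $R\setminus\{0\}$, defined by $\beta\prec\beta^*\iff n|\beta|<|\beta^*|$ for all $n\in\N$, is externally definable and, more importantly, is a $<$-convex condition; in fact $\{\beta : \beta\prec\beta^*\}$ and the valuation ring $O$ itself are countable intersections/unions of definable sets. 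I would use Lemma~\ref{lem:chain} (or directly Fact~\ref{fact:convex}) together with saturation: after passing to a suitably saturated $R$ and a parameter set of size less than the saturation degree, Lemma~\ref{lem:lambda} lets us replace each externally definable relation needed to define $\prec$ on the relevant countable parameter set by an honestly $R$-definable set. Then Proposition~\ref{prop:qe0} (using that $\rcvf$ has quantifier elimination in the Cherlin–Dickmann language, which is the language of ordered fields expanded by the binary relation $\prec$) applies with $L$ the language of ordered rings and $L^*$ its expansion by $\prec$: we have an $L$-embedding of the $\rcf$-reduct of the target $\rcvf$-model into $R$, and the single extra relation $\prec$ is matched by an $R$-definable set on the image. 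This yields that $R$ trace defines the given model of $\rcvf$ via the identity.

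Concretely the steps are: (1) observe $\rcvf\vdash$ trace defines $\rcf$ via the $\rcf$-reduct and Proposition~\ref{prop:qe-trace}; (2) cite the Cherlin–Dickmann quantifier elimination to reduce the other direction to matching the single relation $\prec$; (3) given a countable (or small) model $(K,\prec)\models\rcvf$, embed its ordered-field reduct into a saturated $R\models\rcf$; (4) note the pullback of $\prec$ to $K$ under this embedding is a union of convex subsets of $K\times K$ / is cut out by a type-definable condition, hence externally definable in $R$ restricted to $K$; (5) apply Lemma~\ref{lem:lambda} to replace this by an $R$-definable $Y$ agreeing with $\prec$ on $K$; (6) conclude by Proposition~\ref{prop:qe0}; (7) invoke Proposition~\ref{prop:trace-theories} to pass from the model-level statement to the statement about theories. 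I expect the main obstacle to be step~(4): one must check carefully that the archimedean-valuation quasi-order, when pulled back along an embedding into a saturated real closed field, really is an externally definable relation — i.e. that for each fixed second argument the set of first arguments is a convex subset of the (ordered) model, and that convexity of the pullback relation is uniform enough that Fact~\ref{fact:convex} and Lemma~\ref{lem:lambda} apply on the nose rather than just pointwise. Once that is set up, everything else is a routine invocation of the quantifier-elimination propositions already proved.
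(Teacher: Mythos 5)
Your proposal is correct and takes essentially the same approach as the paper: the paper simply exhibits an explicit witness $\lambda$ lying in the gap between $\N$ and $\{\beta\in K:\beta>\N\}$ inside a $|K|^+$-saturated elementary extension, whereas you invoke the external-definability machinery (Lemma~\ref{lem:chain} together with Lemma~\ref{lem:lambda}) that encapsulates the same saturation argument. Your worry at step~(4) dissolves once you commit to Lemma~\ref{lem:chain} rather than Fact~\ref{fact:convex}: the archimedean quasi-order is $\bigcap_{n\in\N}\{(\alpha,\beta):n|\alpha|<|\beta|\}$, a chain-indexed intersection of a definable family of \emph{binary} relations, so it is externally definable as a binary relation and no pointwise-versus-uniform convexity issue arises.
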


This is generalized in Section~\ref{section:completions}.
$\rcvf$ is not interpretable in an o-minimal structure: o-minimality implies rosiness, rosiness is preserved under interpretations, $\rcvf$ is not rosy.

\begin{proof}
We need to show that $\rcf$ trace defines $\rcvf$.
Suppose that $K$ is a non-archimedean real closed field and $\prec$ is induced by the archimedean valuation on $K$.
We let $F$ be a $|K|^+$-saturated elementary extension of $K$.
We show that $F$ trace defines $(K,\prec)$ via the identity.
We suppose that $X\subseteq K^m$ is $(K,\prec)$-definable and produce $F$-definable $Y\subseteq F^m$ such that $X=K^m \cap X$.
By quantifier elimination for $\rcvf$ we may suppose that $X$ is either definable in the language of ordered fields or $X = \{\alpha\in K^m : f(\alpha)\prec g(\alpha)\}$ for some $f,g\in K[x_1,\ldots,x_m]$.
In the first case we proceed as in the proof of Proposition~\ref{prop:qe-trace}.
Suppose $X = \{\alpha\in K^m : f(\alpha)\prec g(\alpha)\}$.
Then for any $\alpha\in K^n$ we have $\alpha\in X$ if and only if $n|f(\alpha)| < |g(\alpha)|$ for all $n$.
Fix $\lambda\in F$ such that $\lambda > \N$ and $\lambda<\beta$ for all $\beta\in K$ satisfying $\beta > \N$.
Let $Y = \{\alpha\in F^m : \lambda |g(\alpha)| <  |f(\alpha)|\}$.
It is now easy to see that $X = Y\cap K^m$.
\end{proof}

\begin{proposition}
\label{prop:fg-group}
All infinite finitely generated abelian groups are trace equivalent.
\end{proposition}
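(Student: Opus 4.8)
The plan is to prove the stronger statement that $(\Z;+)$ is trace equivalent to every infinite finitely generated abelian group $A$; since trace equivalence is an equivalence relation (Proposition~\ref{prop:trace-basic}), this suffices. By the structure theorem we may write $A \cong \Z^{r}\oplus F$ with $r\ge 1$ and $F=\bigoplus_{i=1}^{k}\Z/n_i\Z$ a finite abelian group (possibly $k=0$).

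First I would check that $(\Z;+)$ trace defines $A$. Each $n_i\Z=\{x:\exists y\ x=n_i y\}$ is a $0$-definable subgroup of $(\Z;+)$, so $H:=\{0\}^{r}\oplus n_1\Z\oplus\cdots\oplus n_k\Z$ is a definable subgroup of the definable Cartesian power $\Z^{r+k}$, and $\Z^{r+k}/H\cong\Z^{r}\oplus F=A$. Hence $A$ is interpretable in $(\Z;+)$, and Proposition~\ref{prop:trace-interpret} gives that $(\Z;+)$ trace defines $A$.

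Next I would check that $A$ trace defines $(\Z;+)$. Let $\iota\colon\Z\to A$ be the inclusion $m\mapsto(m,0,\dots,0)$ of $\Z$ as the first direct summand. This is a group embedding, and it moreover reflects divisibility: for $k\ge 1$ and $m\in\Z$, there is $b\in A$ with $kb=(m,0,\dots,0)$ iff $kb_1=m$ is solvable in $\Z$ (the equations $kc=0$ in the remaining finite factors are always solvable), i.e. iff $k\mid m$ in $\Z$. Thus $\iota$ is an $L_{\mathrm{div}}$-embedding of $(\Z;+)$ into $A$, both regarded as $L_{\mathrm{div}}$-structures. These $L_{\mathrm{div}}$-structures are interdefinable with the corresponding pure groups (each divisibility predicate is definable in a pure abelian group), and $(\Z;+)$ admits quantifier elimination in $L_{\mathrm{div}}$ by Fact~\ref{fact:abelian qe}; so Proposition~\ref{prop:qe-trace}, together with transitivity of trace definibility, shows that $A$ trace defines $(\Z;+)$.

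Combining the two directions, $A$ and $(\Z;+)$ are trace equivalent, which is what was needed. I expect the only point requiring real care to be the divisibility computation in the second step — verifying that $\iota$ is an embedding of $L_{\mathrm{div}}$-structures and not merely of groups — since it is this, rather than any definability of the image, that makes the trace definition go through: $\iota(\Z)$ need not be a definable subset of $A$, so this direction genuinely exploits trace definibility rather than interpretability. Everything else is routine bookkeeping with the general results of Section~\ref{section:few general}.
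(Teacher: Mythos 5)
Your proof is correct and takes essentially the same route as the paper: both directions hinge on interpretability of $A$ in $(\Z;+)$ for one direction, and on realizing $\Z$ as a pure ($L_{\mathrm{div}}$-) subgroup of $A$ and then invoking Szmielew quantifier elimination together with Proposition~\ref{prop:qe-trace} for the other. The paper writes $A = \Z \oplus A'$ rather than invoking the full structure theorem, but the divisibility check is the same, and your emphasis that the embedding must be an $L_{\mathrm{div}}$-embedding rather than a mere group embedding is precisely the point the paper's proof also makes.
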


This example is generalized in Proposition~\ref{prop:torsion free in Z}.
By Corollary~\ref{cor:group in a group} below $(\Z^m;+)$ interprets $(\Z^n;+)$ if and only if $m$ divides $n$.
Any expansion of $(\Z^n;+)$ trace definable in $\Th(\Z;+)$ is a reduct of the structure induced on $\Z^n$ by $(\Z;+)$, in particular $\Th(\Z;+)$ does not trace define a proper expansion of $(\Z;+)$.
This follows from Proposition~\ref{prop:U rank} below and the Palac\'{\i}n-Sklinos theorem that any expansion of $(\Z^n;+)$ of finite $U$-rank is such a reduct~\cite{palacin-sklinos}.

\begin{proof}
Any finitely generated abelian group is definable in $(\Z;+)$.
It is enough to show that an infinite finitely generated abelian group $A$ trace defines $(\Z;+)$.
We have $A = (\Z;+)\oplus A'$ for a finitely generated abelian group $A'$, so we consider $(\Z;+)$ to be a subgroup of $A$ in the natural way.
Note that if $\alpha\in\Z$ and $k\in\N$ then $k$ divides $\alpha$ in $(\Z;+)$ if and only if $k$ divides $\alpha$ in $A$.
By Fact~\ref{fact:abelian qe} and Proposition~\ref{prop:qe-trace} $A$ trace defines $(\Z;+)$ via the identity.
\end{proof}

\begin{proposition}
\label{prop:p adic fields}
Fix a prime $p$.
All finite extensions of $\Q_p$ are trace equivalent.
\end{proposition}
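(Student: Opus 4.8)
The plan is to show that every finite extension $K$ of $\Q_p$ is trace equivalent to $\Q_p$ itself; since trace equivalence is an equivalence relation by Proposition~\ref{prop:trace-basic}, this gives the proposition. So fix a finite extension $K$ of $\Q_p$ with ramification index $e$, and write $v_p$, $v_K$ for the normalized valuations of $\Q_p$, $K$, so that $v_K(x)=e\,v_p(x)$ for $x\in\Q_p$, $\mathcal{O}_K\cap\Q_p=\Z_p$, and $p^N\mathcal{O}_K\cap\Q_p=p^N\Z_p$ for each $N\ge 0$. One direction is easy: fixing a $\Q_p$-basis of $K$ turns the addition and multiplication of $K$ into a $\Q_p$-definable group operation and a $\Q_p$-definable bilinear map on $\Q_p^{[K:\Q_p]}$, so $(K;+,\times)$ is definable, hence interpretable, in $(\Q_p;+,\times)$; thus $\Q_p$ trace defines $K$ by Proposition~\ref{prop:trace-interpret}. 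It remains to show that $K$ trace defines $\Q_p$.

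For this I would invoke Macintyre's quantifier elimination: $\Th(\Q_p)$ eliminates quantifiers in $L_{\mathrm{Mac}}=\{+,-,\cdot,0,1\}\cup\{P_n:n\ge 2\}$, where $P_n$ defines the set of $n$-th powers. Viewing the function symbols via their graphs we may take $L_{\mathrm{Mac}}$ relational, so by Proposition~\ref{prop:qe} it suffices to produce an injection $\uptau\colon\Q_p\to K^2$ together with, for each relation $R$ of $L_{\mathrm{Mac}}$, a $K$-definable set $Y$ with $\Q_p\models R(\bar a)\Leftrightarrow\uptau(\bar a)\in Y$. Take $\uptau(x)=(x,p^{v_p(x)})$ for $x\ne 0$ and $\uptau(0)=(0,0)$; this is injective since the first coordinate recovers $x$. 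The relations coming from the ring symbols only involve first coordinates of $\uptau$-tuples and so are immediate. For $P_n$ I would use the structure of $\Z_p^*$: by Hensel's lemma $(\Z_p^*)^n\supseteq 1+p^N\Z_p$ once $N=N(n,p)$ is large enough, so $(\Z_p^*)^n$ is the preimage in $\Z_p^*$ of $\bigl((\Z/p^N\Z)^*\bigr)^n$, i.e.\ a finite union $\bigcup_{i=1}^{r}(c_i+p^N\Z_p)$ with $c_i\in\Z$ prime to $p$. Hence the $K$-definable set $B_n:=\bigcup_{i=1}^{r}(c_i+p^N\mathcal{O}_K)$ satisfies $B_n\cap\Z_p=(\Z_p^*)^n$, and I would set
\[
Y_n:=\{(x,t)\in K^2: x\ne 0,\ ne\mid v_K(t),\ x/t\in B_n\}\cup\{(0,0)\}.
\]
For $x\in\Q_p^*$ we have $\uptau(x)=(x,p^{v_p(x)})$, so $v_K(t)=e\,v_p(x)$ and $ne\mid v_K(t)$ says exactly that $n\mid v_p(x)$; moreover $x/t=xp^{-v_p(x)}\in\Z_p^*$, so $x/t\in B_n$ says exactly that $x/t\in(\Z_p^*)^n$. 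Thus $\uptau(x)\in Y_n$ iff $n\mid v_p(x)$ and the unit part of $x$ is an $n$-th power in $\Z_p^*$, i.e.\ iff $x$ is an $n$-th power in $\Q_p$. So $\uptau$ pulls $Y_n$ back to $P_n$, Proposition~\ref{prop:qe} yields that $K$ trace defines $\Q_p$, and combining the two directions $K$ and $\Q_p$ are trace equivalent.

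The step that forces the second coordinate — and the only real subtlety — is that neither $\Q_p$ nor the set $\{p^k:k\in\Z\}$ of powers of $p$ is definable in $K$, so one cannot cut $(\Q_p^*)^n$ out of the a priori larger set $(K^*)^n\cap\Q_p$ using the inclusion $\Q_p\hookrightarrow K$ alone (e.g.\ $p$ is an $e$-th power in $\Q_p(\sqrt[e]{p})$ but not in $\Q_p$). Recording $p^{v_p(x)}$ alongside $x$ supplies precisely the scaling needed to reduce membership in $P_n$ to a statement about the unit part of $x$, after which the condition becomes a finite congruence condition that is plainly visible inside $\mathcal{O}_K$. Everything else — injectivity of $\uptau$, definability of $\mathcal{O}_K$ and of $B_n$ in $K$, and the bookkeeping with $e$ — is routine.
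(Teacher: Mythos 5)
Your proof is correct, but it takes a genuinely different route from the paper's. The paper shows that $\K$ trace defines $\Q_p$ via the \emph{identity} injection $\Q_p\to\K$, using Fact~\ref{fact:p adic field}: for each $n$ there is $m$ such that every element of $\Q_p$ that is an $m$th power in $\K$ is already an $n$th power in $\Q_p$. Setting $Y=\{\alpha^m:\alpha\in\K^\times\}$, the group $Y\cap\Q_p^\times$ is a finite-index subgroup of $\Q_p^\times$ contained in $P_n$, so $P_n$ is a finite union of its cosets and hence is the trace on $\Q_p$ of a finite union of $\K$-translates of $Y$. Your stated reason for introducing the second coordinate --- that $(\Q_p^\times)^n$ cannot be traced out because the na\"{\i}ve candidate $(\K^\times)^n\cap\Q_p$ is too large --- is therefore not a valid obstruction: trace definibility lets you use a different $\K$-definable set for each $P_n$, and the coset argument shows the one-dimensional identity injection does suffice. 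That said, your two-coordinate $\uptau(x)=(x,p^{v_p(x)})$ is a legitimate alternative: recording the normalizing power of $p$ reduces membership in $P_n$ to a divisibility condition on $v_K$ together with a congruence condition on the unit part of $x$, and the Hensel's-lemma description of $(\Z_p^\times)^n$ as a finite union of residue classes then replaces the group-theoretic input of Fact~\ref{fact:p adic field}, with the elementary identity $p^N\mathcal{O}_K\cap\Q_p=p^N\Z_p$ doing the transfer. So your argument trades a one-dimensional injection plus a fact about powers in extensions for a two-dimensional injection plus Hensel's lemma; both are fine. One small slip: under the paper's convention that $P_n$ is the set of \emph{nonzero} $n$th powers, $Y_n$ should not include $(0,0)$.
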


Halevi, Hasson, and Peterzil~\cite{halevi-hasson-peterzil} show that an infinite field interpretable in $\Q_p$ is isomorphic to a finite extension of $\Q_p$.
Their work goes through for finite extensions of $\Q_p$.
Thus two finite extensions of $\Q_p$ are mutually interpretable if and only if they are isomorphic.
For example we see that $\Q_p(\sqrt{q})$ trace defines but does not interpret $\Q_p$ when $q \ne p$ is a prime.
% For any $n$ there is a unique unramified finite extension $\K(n)$ of $\Q_p$ such that the residue field of $\K(n)$ has $p^n$ elements.
% It is well-known that $\K(n^*)$ is a finite extension of $\K(n)$ if and only if $n$ divides $n^*$.

% \begin{proposition}
% \label{prop:unramified}
% Suppose that $\K$ is an unramified finite extension of $\Q_p$.
% Then $\K$ trace defines $\Q_p$.
% Hence all unramified finite extensions of $\Q_p$ are trace equivalent.
% \end{proposition}

% I don't know what happens for ramified extensions.
% We use some basic facts.

% \begin{fact}
% \label{fact:unramified 0}
% Suppose that $\K$ is an unramified finite extension of $\pfield$ and $\alpha \in \pfield$.
% Then $\alpha$ is a $p$th power in $\pfield$ if and only if $\alpha$ is a $p$th power in $\K$.
% \end{fact}

% Fact~\ref{fact:unramified 0} is a familiar consequence of Hensel's lemma.
% We let $\valp$ be the $p$-adic valuation and $\resp \colon \Z_p \to \F_p$ be the residue map.
% Fact~\ref{fact:unramified 1} is a familiar consequence of Hensel's lemma.

% \begin{fact}
% \label{fact:unramified 1}
% Suppose that $p$ is prime to $n \ge 1$ and $\alpha \in \Z_p$ is non-zero.
% Then $\alpha$ is an $n$th power in $\pfield$ if and only if $\valp(\alpha)$ is divisible by $n$ and $\resp(\alpha)$ is an $n$th power in $\F_p$.
% \end{fact}

% We now prove Proposition~\ref{prop:p adic fields}.

\begin{proof}
Recall that $\Q_p$ defines any finite extension of $\Q_p$.
It is therefore enough to fix a finite extension $\K$ of $\Q_p$ and show that $\K$ trace defines $\Q_p$ via the identity $\pfield \to \K$.
We consider $\pfield$ as a structure in the Macintyre language.
That is, we let $L'$ be the expansion of the language of rings by unary relations $(P_n : n \ge 2)$ and consider $\Q_p$ to be an $L'$-structure by declaring $P_n$ to be the set of non-zero
$n$th powers.
Recall that $\pfield$ admits quantifier elimination in $L'$~\cite{macintyre-p-adic}.
We apply Proposition~\ref{prop:qe0} with $L = L^{**}$ the language of fields, $L^* = L'$, $\Sa O = \pfield$, and $\Sa N = \K$.
It is enough to fix $n \ge 2$ and produce $\K$-definable $X \subseteq \K$ such that $\alpha \in P_n \Longleftrightarrow \alpha \in X$ for all $\alpha \in \pfield$.
By Fact~\ref{fact:p adic field} there is $m$ such that every $\alpha\in\Q_p$ which is an $m$th power in $\K$ is an $n$th power in $\Q_p$.
Let $Y = \{ \alpha^m : \alpha \in \K^\times\}$.
Then $Y \cap \Q^\times_p$ is a subgroup of $\Q^\times_p$ contained in $P_n$, this subgroup is finite index as $Y$ is a finite index subgroup of $\K^\times$.
Hence there are $\beta_1,\ldots,\beta_k \in P_n$ such that
\begin{align*}
    P_n &= \beta_1(Y \cap \Q^\times_p) \cup \cdots \cup \beta_k(Y \cap \Q^\times_p) \\ &= (\beta_1 Y \cup \cdots \cup \beta_k Y) \cap \Q^\times_p.
\end{align*}
Take $X = \beta_1 Y \cup \cdots \cup \beta_k Y$.
\end{proof}

\begin{proposition}
\label{prop:dense pair}
Let $C$ be the ternary relation on $\R/\Z$ where
\[
C(\alpha+\Z,\alpha'+\Z,\alpha''+\Z)\quad\Longleftrightarrow\quad (\alpha<\alpha'<\alpha'')\vee(\alpha'<\alpha''<\alpha)\vee(\alpha''<\alpha<\alpha')
\]
for all $0\le\alpha,\alpha',\alpha''<1$.
The following structures are trace equivalent:
\begin{enumerate}
\item $(\R;+,<)$,
\item $(\R;+,<,\Q)$,
\item $(\R/\Z;+,C)$,
\item $(\R/\Z;+,C,\Q/\Z)$.
\end{enumerate}
\end{proposition}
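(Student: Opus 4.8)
The plan is to prove the four structures trace-equivalent by establishing a cycle of trace definitions. Since trace definibility is a quasi-order (Proposition~\ref{prop:trace-basic}), it suffices to show, for instance, that (1) trace defines (2), (2) trace defines (4), (4) trace defines (3), and (3) trace defines (1); but in fact most of these will be easy and it is cleanest to show (1) and (2) are mutually trace equivalent, (3) and (4) are mutually trace equivalent, and then (1) is trace equivalent to (3). Several of the directions are essentially free: $(\R;+,<)$ is a reduct of $(\R;+,<,\Q)$, and $(\R/\Z;+,C)$ is a reduct of $(\R/\Z;+,C,\Q/\Z)$, so trace definibility in those directions is immediate from Proposition~\ref{prop:trace-basic}(3) applied to reducts (a reduct is trace definable in the expansion via the identity, by Proposition~\ref{prop:qe-trace} after Morleyization, or simply because any reduct-definable set is expansion-definable).

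For the direction $(\R/\Z;+,C,\Q/\Z)$ trace defines $(\R;+,<,\Q)$: by Fact~\ref{fact:cover}, $(\R/\Z;+,C)$ has universal cover $(\R;1,+,<)$, and the group operation and cyclic order on $\R/\Z$ are the images of $+^*$ and $\cyc^*$ which are definable in $(\R;1,+,<)$; conversely one would like to go the other way. The more useful observation is the reverse: $(\R;+,<)$ \emph{interprets} $(\R/\Z;+,C)$ by Fact~\ref{fact:cover} (take the universal cover $(\R;1,+,<)$, which is definable in $(\R;+,<)$ once we name $1$, or work up to the obvious scaling), so by Proposition~\ref{prop:trace-interpret}, $(\R;+,<)$ trace defines $(\R/\Z;+,C)$, and adding $\Q/\Z$ on the cover side corresponds to adding $\Q$, giving that $(\R;+,<,\Q)$ interprets $(\R/\Z;+,C,\Q/\Z)$ and hence trace defines it. So the genuinely new content is the two directions going \emph{up}: (i) $(\R;+,<,\Q)$ is trace definable in $(\R;+,<)$ — i.e., $(\R;+,<)$ trace defines a dense pair of itself — and (ii) $(\R/\Z;+,C)$ trace defines $(\R;+,<)$, so that the circle-ordered group recovers the line.

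For (i): the structure $(\R;+,<,\Q)$ admits quantifier elimination in a suitable language (by the results on dense pairs of o-minimal structures, e.g.\ van den Dries), and the point is that $\Q$ is a dense, codense subset. The strategy is to use Proposition~\ref{prop:qe} or Proposition~\ref{prop:qe0}: pass to a $|\R|^+$-saturated elementary extension $\Sa N \succ (\R;+,<)$ as an ordered group, and find inside $\Sa N$ a set that cuts out $\Q$ correctly on $\R$. Concretely, $\Q$ is a $\Q$-linear-subspace-like object, and externally-definable-set techniques (Lemma~\ref{lem:lambda}, Fact~\ref{fact:convex}) together with the fact that $(\R;+,<)$ is o-minimal suggest realizing the predicate $x\in\Q$ via a condition involving an infinitesimal or a transcendental-over-$\Q$ element of $\Sa N$; one must check that every set definable in the dense pair, when restricted to $\R$, is captured by an $\Sa N$-definable set. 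For (ii): here one uses that $(\R/\Z;+,C)$, being an ordered-type structure, should trace define $(\R;<)$ by cutting an interval out of the circle and recognizing the induced linear order, and then recover $+$ on that interval from $+^*$ (which is piecewise the circle addition); the universal-cover description in Fact~\ref{fact:cover} makes the interval $[0,u)$ carry exactly the data of $(\R;+,<)$ up to the ``wraparound,'' and trace definibility (unlike interpretation) is forgiving enough to handle the partial operation. I expect step (i) — showing $(\R;+,<)$ trace defines the dense pair $(\R;+,<,\Q)$ — to be the main obstacle, since it requires understanding definable sets in dense pairs and exhibiting the right saturated ordered group together with the parameter set realizing $\Q$; this is presumably where the paper invokes its general machinery on Shelah completions and externally definable sets, and likely reduces to the fact (mentioned in example (1) of the introduction) that a $\nip$ expansion of a linear order is trace equivalent to its expansion by convex subsets, with $\Q$ and its complement being unions of "cuts" that are externally definable.
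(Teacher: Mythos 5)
Your decomposition of the problem is correct, and you correctly identify the easy directions: $(\R;+,<)$ and $(\R/\Z;+,C)$ are reducts of $(\R;+,<,\Q)$ and $(\R/\Z;+,C,\Q/\Z)$ respectively, and $(\R;+,<)$ interprets $(\R/\Z;+,C)$ via the universal cover (likewise $(\R;+,<,\Q)$ interprets $(\R/\Z;+,C,\Q/\Z)$). You also correctly isolate the two genuinely hard steps: (i) $(\R;+,<)$ trace defines the dense pair $(\R;+,<,\Q)$, and (ii) $\Th(\R/\Z;+,C)$ trace defines a divisible ordered abelian group.

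For (i), however, the approach you speculate on would fail. You suggest the paper ``invokes its general machinery on Shelah completions and externally definable sets, and likely reduces to the fact that a $\nip$ expansion of a linear order is trace equivalent to its expansion by convex subsets.'' But $\Q$ is not externally definable in $(\R;+,<)$: the Shelah completion of an o-minimal structure is weakly o-minimal, so every externally definable subset of $\R$ is a finite union of convex sets, and $\Q$ is dense and codense. Corollary~\ref{cor:poizat-b} gives you trace equivalence with expansions by \emph{convex} sets, not with arbitrary dense pairs. The paper's actual construction is entirely different and quite slick: since $(\R/\Q;+)$ and $(\R;+)$ are both $\Q$-vector spaces of dimension $2^{\aleph_0}$, fix a $\Q$-linear isomorphism $\upchi\colon(\R/\Q;+)\to(\R;+)$ and set $\uptau\colon\R\to\R^2$, $\uptau(\alpha)=(\upchi(\alpha+\Q),\alpha)$. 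This is a $\Q$-vector space embedding; membership in $\Q$ is recovered as $\uptau^{-1}(\{0\}\times\R)$, and $<$ is recovered from the second coordinate. One then applies Proposition~\ref{prop:eq neg 1} (the fusion lemma) together with quantifier elimination for the dense pair in a language with rational scalars. No saturation or external definability is needed for this direction.

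For (ii), your intuition (``cut an interval out of the circle and recover $+$'') points in roughly the right direction but is too vague to carry the argument. Working directly with $[0,1)\subseteq\R/\Z$ runs into the problem that $[0,1)$ is bounded while $(\R;+)$ is not, and $+^*$ wraps around; trace definibility alone does not make this go away. The paper instead passes to an $\aleph_1$-saturated elementary extension $(G;+,C)$ of $(\R/\Z;+,C)$, takes $J$ to be the subgroup of infinitesimals (those $\alpha$ such that $C(\beta,\alpha,\beta^*)$ for all real $\beta,\beta^*$ with $C(\beta,0,\beta^*)$), and defines a linear order $\triangleleft$ on $J$ from $C$ so that $(J;+,\triangleleft)\models\doag$. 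Then Proposition~\ref{prop:qe0} and quantifier elimination for $\doag$ finish it. The crucial move, which your sketch omits, is the passage to a saturated model: the infinitesimals form a genuine (externally definable) subgroup on which $C$ restricts to a linear order and where the wraparound issue disappears.
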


By Corollary~\ref{cor:rama-2} $(\R;+,<,\Q)$ is not interpretable in an o-minimal expansion of an oag, a similar argument shows that $(\R/\Z;+,C,\Q/\Z)$ is not interpretable in an o-minimal expansion of an oag.
By Proposition~\ref{prop:cyclic order 1} $(\R/\Z;+,C)$ does not interpret $(\R;+,<)$.
See Section~\ref{section:open core} for more on sharpness of Proposition~\ref{prop:dense pair}.

\begin{proof}
We first show that $(\R;+,<)$ trace defines $(\R;+,<,\Q)$.
By \cite{DMS1} $(\R;+,<,\Q)$ admits quantifier elimination after we add a unary function for scalar multiplication $\R \to \R$ by each rational.
Note that  $(\R/\Q;+)$ is a continuum size $\Q$-vector space, hence there is an isomorphism $\upchi \colon (\R/\Q;+) \to (\R;+)$.
Let $\uptau \colon \R \to \R \times \R$ be given by $\uptau(\alpha) = (\upchi(\alpha + \Q),\alpha)$.
We show that $(\R;+,<)$ trace defines $(\R;+,<,\Q)$ via $\uptau$.
Note that $\uptau$ is a $\Q$-vector space embedding $(\R;+) \to (\R^2;+)$.
We apply Proposition~\ref{prop:eq neg 1} with $L$ the language of $\Q$-vector spaces, $L^*$ the expansion of $L$ by $<$ and a unary relation, $\Sa O = (\R;+,<,\Q)$, and $\Sa P$ the $\Q$-vector space $\R^2$.
It suffices to produce $(\R;+,<)$-definable $X \subseteq \R^2$ and $Y \subseteq \R^4$ such that 
\begin{align*}
\uptau(\alpha) \in X \quad \Longleftrightarrow \quad \alpha \in \Q \quad &\text{for all  } \alpha \in \R\\
(\uptau(\alpha),\uptau(\beta)) \in Y \quad \Longleftrightarrow \quad \alpha < \beta \quad &\text{for all  } \alpha,\beta \in \R
\end{align*}
Let $X$ be the set of $(s,t)\in\R^2$ such that $s = 0$ and $Y=\{((s,t),(s',t')) \in \R^4 : t < t' \}$.

\medskip
Note $(\R;1,+,<)$ is the universal cover of $(\R/\Z;+,C)$, hence $(\R;+,<)$ interprets $(\R/\Z;+,C)$.
A similar construction shows that $(\R;+,<,\Q)$ interprets $(\R/\Z;+,C,\Q/\Z)$.
It is therefore enough to show that $\Th(\R/\Z;+,C)$ trace defines $\doag$.
Let $(G;+,C)$ be an $\aleph_1$-saturated elementary extension of $(\R/\Z;+,C)$.
Let $J$ be the set of $\alpha \in G$ such that $C(\beta,\alpha,\beta^*)$ holds for all $\beta,\beta^* \in \R/\Z$ satisfying $C(\beta,0,\beta^*)$, i.e. $J$ is the subgroup of infinitesimals.
Let $\triangleleft$ be binary relation on $J$ where we have $\beta \triangleleft \beta^*$ when either $C(\beta,0,\beta^*)$, $C(0,\beta,\beta^*)$, or $C(\beta,\beta^*,0)$.
It is easy to see that $(J;+,\triangleleft)\models\doag$.
Apply Proposition~\ref{prop:qe0} and qe for $\doag$.
\end{proof}

Given $\alpha\in\R\setminus\Q$ we let $\upchi_\alpha:\Z\to\R/\Z$ be the character given by $\upchi_\alpha(m) = m\alpha+\Z$ and let $C_\alpha$ be the pullback of the usual counterclockwise cyclic order $C$ on $\R/\Z$ by $\upchi_\alpha$.

\begin{proposition}
\label{prop:final Z}
The following structures are trace equivalent:
\begin{enumerate}
\item\label{item:1} $(\Z;+,<)$,
\item\label{item:2} $(\R;+,<,\Z)$,
\item\label{item:3} $(\R;+,<,\Z,\Q)$,
\item\label{item:4} $(\Z;+)\sqcup(\R;+,<)$, 
\item\label{item:6} $(\Z;+,C_\alpha)$ for any $\alpha\in\R\setminus\Q$,
\item\label{item:5} and $(\Z^n;+,\prec)$ for any archimedean group order $\prec$,
\end{enumerate}
\end{proposition}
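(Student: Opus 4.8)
The plan is to establish trace equivalence among all six structures by exhibiting a cycle of trace definitions, using the tools from Section~\ref{section:few general}—chiefly Proposition~\ref{prop:qe-trace}, Proposition~\ref{prop:eq neg 1}, Proposition~\ref{prop:qe0}, Lemma~\ref{lem:disjoint union}, and Proposition~\ref{prop:trace-theories}—together with quantifier elimination facts for ordered abelian groups from Section~\ref{section:l order} and the construction of universal covers from Section~\ref{section:cyclic order}. The backbone will be showing that each of (2)--(6) is trace equivalent to (1), i.e.\ $(\Z;+,<)$, since that is the most classical of the list and its theory (Presburger arithmetic) is a regular ordered abelian group, hence admits quantifier elimination in $L_\mathrm{ordiv}$ by Fact~\ref{fact:qe for oag}.

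First I would handle (4): by Lemma~\ref{lem:disjoint union} it suffices to note that $(\Z;+,<)$ trace defines both $(\Z;+)$ (it even defines it) and $(\R;+,<)$, the latter because $(\R;+,<)$ is a divisible oag, hence regular, hence admits quantifier elimination in $L_\mathrm{ordiv}$, and $(\Z;+,<)$ in a sufficiently saturated model contains a copy of $(\R;+,<)$ on which $<$, $+$, and all divisibility predicates are correctly induced (every element of the convex hull of $0$ is divisible in the saturated group, so the $n|$ predicates are trivially handled); apply Proposition~\ref{prop:qe-trace}. Conversely, $(\Z;+)\sqcup(\R;+,<)$ trace defines $(\Z;+,<)$: in a saturated elementary extension of $(\R;+,<)$, pick an element $\lambda$ larger than every standard integer but below everything definably larger, and use the pair $(\Z;+)$-component to read off the ordering—concretely one embeds $\Z\hookrightarrow M_{\Z}\times M_{\R}$ diagonally and checks, via Proposition~\ref{prop:eq neg 1}, that $<$ on $\Z$ and the divisibility predicates are all induced. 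For (2) and (3), note $(\R;+,<,\Z)$ interprets $(\Z;+,<)$ (restrict to the predicate $\Z$), so by Proposition~\ref{prop:trace-interpret} it trace defines (1); conversely, since $(\R;+,<,\Z)$ is interdefinable with a structure trace equivalent to $(\Z;+)\sqcup(\R;+,<)$ by an argument parallel to Proposition~\ref{prop:dense pair}—adding the predicate $\Q$ is absorbed because $(\R;+,<,\Z,\Q)$ has quantifier elimination after adding scalar multiplications by rationals (cf.\ \cite{DMS1}), and the $\Q$-coset structure is read off a vector-space complement as in the proof of Proposition~\ref{prop:dense pair}—we get (1) trace defines (2) and (3). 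For (5), an archimedean group order $\prec$ on $\Z^n$ gives, by Hahn embedding, an embedding $(\Z^n;+,\prec)\hookrightarrow(\R;+,<)$; combined with the fact that $(\Z^n;+)$ is definable in $(\Z;+)$ and the divisibility predicates match, Proposition~\ref{prop:qe0} applied to the $L_\mathrm{ordiv}$-quantifier elimination for the regular oag $(\Z^n;+,\prec)$ shows $(\Z;+,<)$—equivalently $(\Z;+)\sqcup(\R;+,<)$—trace defines it; conversely $(\Z^n;+,\prec)$ interprets $(\Z;+,<)$ by restricting to a suitable coordinate, giving the reverse. Finally (6): by Fact~\ref{fact:cover}, $(\Z;+,C_\alpha)$ has universal cover an ordered abelian group—specifically a dense subgroup of $(\R;+,<)$ together with a distinguished positive cofinal element, which is interpretable over $(\R;+,<,\Z)$—so (6) is trace definable in (2); conversely, the infinitesimal-subgroup trick from the proof of Proposition~\ref{prop:dense pair} extracts from an $\aleph_1$-saturated elementary extension of $(\Z;+,C_\alpha)$ a copy of $(\Z;+,<)$ (the standard part of the ``cut'' determined by $\alpha$ recovers the discrete linear order), so (6) trace defines (1).

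The main obstacle I anticipate is the direction from (6) back to (1)—verifying that $(\Z;+,C_\alpha)$, a cyclically ordered group with no visible linear order, genuinely trace defines $(\Z;+,<)$. The subtlety is that $C_\alpha$ is the pullback of the cyclic order on $\R/\Z$ along an \emph{injective but not closed} character, so the ``unwrapping'' is not directly definable; one must pass to a saturated elementary extension $(G;+,C_\alpha)$, identify the appropriate convex-like subgroup (the preimage under the extended character of the infinitesimals of $\R/\Z$), and argue that the induced structure there, after quotienting or restricting, correctly encodes $<$ via $L_\mathrm{ordiv}$-quantifier elimination. Getting the quantifier-elimination bookkeeping right—ensuring every $C_\alpha$-definable set on $\Z$ is cut out by an $(G;+,C_\alpha)$-definable set, not merely that the order is recoverable—is where the care is needed; the other five equivalences are routine applications of the propositions from Section~\ref{section:few general} once the quantifier-elimination inputs are assembled.
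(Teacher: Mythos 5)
Your overall strategy — build a cycle of trace definitions among the six structures, with $(\Z;+)\sqcup(\R;+,<)$ playing the role of hub — is the same one the paper uses, and several of your legs (the diagonal embedding $\Z\hookrightarrow\Z\times\R$ for $(4)\Rightarrow(1)$, the Hahn embedding plus Fact~\ref{fact:qe for oag} for $(4)\Rightarrow(5)$, the universal-cover interpretation for $(2)\Rightarrow(6)$) are correct in outline, though some of the prose around them is off (e.g.\ ``every element of the convex hull of $0$ is divisible'' is false as stated — the point is to fix a single element $\beta$ divisible by every $k$ and embed $\Q$ as $q\mapsto q\beta$; and the saturation/$\lambda$ business in your $(4)\Rightarrow(1)$ direction is unnecessary, since disjoint union carries no cross-sort structure).

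However, both of your return legs have genuine gaps. For $(5)\Rightarrow(1)$ you claim ``$(\Z^n;+,\prec)$ interprets $(\Z;+,<)$ by restricting to a suitable coordinate.'' The coordinate subgroup $\Z\times\{0\}^{n-1}$ is not definable in $(\Z^n;+,\prec)$: by Fact~\ref{fact:qe for oag} the only definable subgroups are of the form $k\Z^n$, and there is no term in the language that singles out a coordinate. (For an irrational archimedean order I do not see any interpretation of $(\Z;+,<)$ at all, let alone by a coordinate restriction.) For $(6)\Rightarrow(1)$ you claim the infinitesimal-subgroup trick ``recovers the discrete linear order'' $(\Z;+,<)$. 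It does not: in an $\aleph_1$-saturated extension $(G;+,C)$ of $(\Z;+,C_\alpha)$ the infinitesimal subgroup is densely linearly ordered by the induced relation $\triangleleft$, and what you can embed there is $(\Q;+,<)$ (take $\beta$ in the infinitesimals divisible by every $k$, map $q\mapsto q\beta$), giving a model of $\doag$, not a discrete order. There is no discrete suborder there, and no ``standard part of the cut determined by $\alpha$'' that produces one — $C_\alpha$ has no canonical starting point or direction from which to unwind a linear $<$ on $\Z$.

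Both gaps are repaired by the same observation, and this is how the paper in fact closes the loop: rather than trying to land directly on $(\Z;+,<)$, trace-define $(\Z;+)$ and $\doag$ separately and invoke Lemma~\ref{lem:disjoint union} to hit $(\Z;+)\sqcup(\R;+,<) = (4)$, which you have already shown is trace equivalent to $(1)$. For $(6)$: $(\Z;+,C_\alpha)$ defines $(\Z;+)$ outright, and trace defines $\doag$ via the infinitesimal subgroup as above (the existence of the fully divisible $\beta$ is supplied by Kronecker density plus saturation). For $(5)$: $(\Z^n;+,\prec)$ defines the group $(\Z^n;+)$, which is trace equivalent to $(\Z;+)$ by Proposition~\ref{prop:fg-group}, and $\Th(\Z^n;+,\prec)$ trace defines $\doag$ by the same $\beta$ argument applied to a saturated model of its (regular-oag) theory. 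In short: the target of the converse legs should be $(4)$, not $(1)$, and the mechanism should be ``split into a pure-group piece and a $\doag$ piece and take a disjoint union,'' not a direct interpretation or standard-part extraction.
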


This example is generalized in Section~\ref{section:presburger}, e.g. we remove the archimedean assumption.

\medskip
By Corollary~\ref{cor:group in a group} $(\Z;+)\sqcup(\R;+,<)$ does not interpret a non-divisible ordered abelian group.
Zapryagaev and Pakhomov have shown that any linear order interpretable in $(\Z;+,<)$ is scattered of finite Hausdorff rank, see Fact~\ref{fact:Zapryagaev and Pakhomov}.
Hence $(\Z;+,<)$ does not interpret $(\R;<)$.
By Proposition~\ref{prop:general cyclic} $(\Z^n;+,\prec)$ does not interpret $(\Z;<)$ when $\prec$ is archimedean and $n \ge 2$, by Corollary~\ref{cor:cyclic order} $(\Z;+,C_\alpha)$ doesn't interpret $(\Z;<)$, and by Fact~\ref{fact:zp} $(\Z;+,<)$ does not interpret $(\Z;+,C_\alpha)$.
I would also conjecture that if $\alpha,\beta\in\R\setminus\Q$ are not rational multiples of each other then $(\Z;+,C_\alpha)$ does not interpret $(\Z;+,C_\beta)$ and vice versa, but I do not have a proof of this.
Something similar should hold for archimedean group orders on $(\Z^n;+)$.

\medskip
Presburger arithmetic does not trace define a proper expansion of $(\Z;+,<)$.
This follows by Proposition~\ref{prop:dp-rank} below, strong dependence of Presburger arithmetic, and  Dolich-Goodrick's theorem that there are no non-trivial strongly dependent expansions of $(\Z;+,<)$~\cite{DG}.

\medskip
Prop~\ref{prop:final Z} raises the question of whether $\doag$ can trace define $(\Z;+)$.
If it does then $(\Z;+,<)$ and $(\R;+,<)$ are trace equivalent.
I would conjecture more generally that an o-minimal structure cannot trace define $(\Z;+)$, but I don't have a clue as to how to get it.

\begin{proof}
We first show that Presburger arthimetic trace defines $(\Z;+)\sqcup(\R;+,<)$.
By Lemma~\ref{lem:disjoint union} it is enough to show that Presburger arithmetic trace defines $\doag$.
Let $(Z;+,<)$ be an $\aleph_1$-saturated model of Presburger arithmetic and fix positive $\beta\in Z$ such that $\beta$ is divisible by every $k\in\N,k\ge 1$.
Let $\uptau\colon\Q\to Z$ be given by $\uptau(q)=q\beta$.
Then $\uptau$ gives an embedding $(\Q;+,<)\to(Z;+,<)$.
Apply Proposition~\ref{prop:qe-trace} and quantifier elimination for $\doag$.

\medskip
We now suppose that $\prec$ is an archimedean group order on $(\Z^n;+)$.
We show that $(\Z^n;+,\prec)$ is trace definable in $(\Z;+)\sqcup(\R;+,<)$.
In particular this shows that $(\Z;+)\sqcup(\R;+,<)$ trace defines $(\Z;+,<)$.
Let $\upchi$ be the unique-up-to-rescaling embedding $(\Z^n;+,\prec)\to(\R;+,<)$.
We let $\Sa P$ be the structure $(\Z^n\times\R;+,\triangleleft,(P_k)_{k\in\N})$ where $+$ is the usual addition, $\triangleleft$ is the binary relation given by $(m,t)\triangleleft(m^*,t^*)\Longleftrightarrow t < t^*$, and each $P_k$ is a unary relation with $P_k(m_1,\ldots,m_n,t)\Longleftrightarrow k|m_1,\ldots,k|m_n$.
Note that $\Sa P$ is definable in $(\Z;+)\sqcup(\R;+,<)$.
Let $\uptau\colon\Z^n\to\Z^n\times\R$ be given by $\uptau(\alpha)=(\alpha,\upchi(\alpha))$.
Identify $\Z^n$ with its image under $\uptau$ and hence identify $(\Z^n;+,\prec)$ with a substructure of $\Sa P$.
Fact~\ref{fact:qe for oag} and Proposition~\ref{prop:qe-trace} together show that $(\Z;+)\sqcup(\R;+,<)$ trace defines $(\Z^n;+,\prec)$ via $\uptau$.

\medskip
We show that $(\Z;+,C_\alpha)$ is interpretable in $(\Z^2;+,\prec)$ for an archimedean group order $\prec$.
Let $H=\{m\alpha+m^*:m,m^*\in\Z\}$.
Then $(H;1,+,<)$ is the universal cover of $(\Z;+,C_\alpha)$ as seen in Section~\ref{section:cyclic order}, so $(H;+,<)$ interprets $(\Z;+,C_\alpha)$.
We let $\upchi$ be the map $\Z^2\to H$ be given by $\upchi(m,m^*)=m\alpha+m^*$ and let $\prec$ be the pullback of $<$ by $\upchi$.
Then $\upchi$ gives an isomorphism $(\Z^2;+,\prec)\to(H;+,<)$.
Hence $(\Z;+,C_\alpha)$ is interpretable in $(\Z^2;+,\prec)$.

\medskip
We show that $(\Z;+)\sqcup(\R;+,<)$ trace defines $(\R;+,<,\Z,\Q)$.
By Proposition~\ref{prop:dense pair} and Lemma~\ref{lem:disjoint union} it's enough to show $(\Z;+,<)\sqcup(\R;+,<,\Q)$ interprets $(\R;+,<,\Z,\Q)$.
Let $\Sa O$ be the structure $(\Z\times[0,1);\Tilde{+},\prec,\Z\times\{0\}, \Z\times[\Q\cap[0,1)])$ where $(m,t)\Tilde{+}(m^*,t^*) = (m+m^*,t+t^*)$ when $t + t^*<1$ and $(m,t)\Tilde{+}(m^*,t^*) = (m+m^*+1,t+t^*-1)$ otherwise, and $\prec$ is the lexicographic order on $\Z\times[0,1)$.
Note that $\Sa O$ is definable in $(\Z;+,<)\sqcup(\R;+,<,\Q)$ and the map $\Z\times[0,1)\to\R$, $(m,t)\mapsto m+t$ gives an isomorphism $\Sa O\to(\R;+,<,\Z,\Q)$.

\medskip
Fix $\alpha\in\R\setminus\Q$.
We show that $\Th(\Z;+,C_\alpha)$ trace defines $(\Z;+)\sqcup(\R;+,<)$.
By Lemma~\ref{lem:disjoint union} and Proposition~\ref{prop:trace-theories} it is enough to show that $\Th(\Z;+,C_\alpha)$ trace defines $\doag$.
Let $(Z;+,C)$ be an $\aleph_1$-saturated elementary extension of $(\Z;+,C_\alpha)$.
An application of Kronecker density shows that for any non-zero $a\in\Z$ and $k\ge 1$ there is $a'\in\Z$ such that $k$ divides $a'$ and $C_\alpha(0,a,a')$.
Hence there is $\beta\in Z$ such that every $k\ge 1$ divides $\beta$ and $C(0,\beta,a)$ holds for every $a\in\Z$.
Let $\uptau\colon\Q\to Z$ be given by $\uptau(q)=q\beta$.
Let $\triangleleft$ be the binary relation on $Z$ defined by declaring $\alpha\triangleleft\alpha^*$ when either $C(0,\alpha,\alpha^*)$, $C(\alpha,0,\alpha^*)$, or $C(\alpha,\alpha^*,0)$, so $\triangleleft$ is $(Z;+,C)$-definable.
It is easy to see that $\uptau$ gives an embedding $(\Q;+,<)\to(Z;+,\triangleleft)$.
Apply Proposition~\ref{prop:qe-trace} and quantifier elimination for $\doag$.
\end{proof}

\begin{proposition}
\label{prop:Q/Z}
$(\Q;+)$ and $(\Q/\Z;+)$ are trace equivalent.
\end{proposition}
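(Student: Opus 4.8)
The plan is to prove the two trace definitions separately; throughout we use that every divisible abelian group, in particular each of $(\Q;+)$ and $(\Q/\Z;+)$, eliminates quantifiers in the language of abelian groups by Fact~\ref{fact:qe-for-divisible}, together with the criteria of Section~\ref{section:few general}.

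\emph{First: $(\Q/\Z;+)$ trace defines $(\Q;+)$.} By the equivalences recorded in the introduction it suffices to embed $(\Q;+)$, as a group, into some model of $\Th(\Q/\Z)$ and apply Proposition~\ref{prop:qe-trace}. Let $\Sa N$ be an $\aleph_1$-saturated model of $\Th(\Q/\Z)$. The partial type $\{\, nx \neq 0 : n \geq 1\,\}$ is finitely satisfiable in $\Q/\Z$ (witnessed by elements of prime order exceeding the relevant bound), hence is realized in $\Sa N$ by a non-torsion $a$. Since divisibility is first order, $\Sa N$ is divisible, so the homomorphism $\Z \to \Sa N$, $1 \mapsto a$, extends to a homomorphism $\Q \to \Sa N$; this extension is injective because $a$ is non-torsion (a nonzero subgroup of $\Q$ contains a nonzero integer). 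Thus $(\Q;+)$ embeds into $\Sa N$, and Proposition~\ref{prop:qe-trace} gives that $\Sa N$ trace defines $(\Q;+)$.

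\emph{Second: $(\Q;+)$ trace defines $(\Q/\Z;+)$.} Here I would give an explicit trace definition along the injection $\uptau \colon \Q/\Z \to \Q$ sending each $\alpha$ to its representative in $[0,1)$. Morleyizing, regard $(\Q/\Z;+)$ as a relational structure with quantifier elimination; by quantifier elimination for $(\Q/\Z;+)$ in the language of abelian groups, every relation of this structure is a Boolean combination of the relations $R_{\bar k}(x_1,\dots,x_n) \equiv \sum_i k_i x_i = 0$ for $\bar k \in \Z^n$. So by Proposition~\ref{prop:qe} it is enough, for each $\bar k$, to produce a $(\Q;+)$-definable $Y_{\bar k} \subseteq \Q^n$ with $R_{\bar k}(\bar\alpha) \Leftrightarrow \uptau(\bar\alpha) \in Y_{\bar k}$. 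The key observation is that $\sum_i k_i \alpha_i = 0$ in $\Q/\Z$ iff $\sum_i k_i \uptau(\alpha_i) \in \Z$, and that $\sum_i k_i \uptau(\alpha_i)$ lies in the bounded interval $[\sum_{k_i<0} k_i,\ \sum_{k_i>0} k_i]$ since each $\uptau(\alpha_i)\in[0,1)$; hence it is an integer iff it lies in the finite set $I_{\bar k}$ of integers in that interval. Thus $Y_{\bar k} := \bigcup_{j \in I_{\bar k}} \{\, \bar x \in \Q^n : \sum_i k_i x_i = j \,\}$, a finite union of affine hyperplanes, is definable in $(\Q;+)$ using the single parameter $1$ (each $j$ being $j\cdot 1$), and $\uptau^{-1}(Y_{\bar k})$ is exactly $\{\,\bar\alpha : R_{\bar k}(\bar\alpha)\,\}$; Proposition~\ref{prop:qe} then finishes the job.

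The two arguments are of different flavours and neither is technically hard. The one point needing an idea is the second construction: although $(\Q/\Z;+)$ is not interpretable in $(\Q;+)$ --- it has finite subgroups, hence finite definable subgroups, which $(\Q;+)$ lacks --- it can still be captured trace-theoretically, because lifting to bounded rational representatives turns each relation ``$\sum_i k_i x_i \in \Z$'' into a finite disjunction of affine equations. I do not anticipate a genuine obstacle.
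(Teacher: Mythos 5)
Your proof is correct and follows essentially the same route as the paper. The substantive direction, that $(\Q;+)$ trace defines $(\Q/\Z;+)$ via representatives in $[0,1)$ and the observation that boundedness turns ``$\sum k_i\uptau(\alpha_i)\in\Z$'' into membership in a finite set of integers, is the same argument the paper gives. The only difference is in the easy direction: the paper exhibits an embedding of $\Q$ into a model of $\Th(\Q/\Z)$ by citing the elementary equivalence $(\Q/\Z;+)\equiv(\Q/\Z;+)\oplus(\Q;+)$ (an instance of \cite[Lemma~A.2.4]{Hodges} for groups of unbounded exponent), whereas you realize the non-torsion type in a saturated model and extend $1\mapsto a$ using injectivity of divisible abelian groups; both are routine and conclude with the same appeal to Proposition~\ref{prop:qe-trace} and Fact~\ref{fact:qe-for-divisible}.
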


This example is generalized in Proposition~\ref{prop:ab}, Lemma~\ref{lem:cover}, and Proposition~\ref{prop:divisible abelian}.
By Fact~\ref{fact:group in a group} neither of these structures can interpret the other.

\begin{proof}
As $(\Q/\Z;+)$ is an abelian group of unbounded exponent $(\Q/\Z;+)$ is elementarily equivalent to $(\Q/\Z;+)\oplus(\Q;+)$ \cite[Lemma~A.2.4]{Hodges}.
% Hence there is an elementary extension $(H;+)$ of $(\Q/\Z;+)$ and an embedding $\uptau\colon(\Q;+)\to(\Q/\Z;+)$.
Let $\uptau\colon (\Q;+)\to(\Q/\Z;+)\oplus(\Q;+)$ be the natural embedding.
By Proposition~\ref{prop:qe-trace} and quantifier elimination for divisible abelian groups $(\Q/\Z;+)\oplus(\Q;+)$ trace defines $(\Q;+)$ via $\uptau$.

\medskip
We show that $(\Q;+)$ trace defines $(\Q/\Z;+)$.
Let $J = [0,1)\cap \Q$ and let $\uptau\colon\Q/\Z\to J$ be the bijection defined by letting $\uptau(\gamma+\Z)$ be the unique element of $[\gamma+\Z]\cap[0,1)$ for all $\gamma\in\Q$.
Hence $\uptau(\gamma+\Z)=\gamma$ for all $\gamma\in J$.
Note that $\uptau$ is a section of the quotient map $\uppi\colon\Q\to\Q/\Z$, we will use this.
Suppose that $X$ is a $(\Q/\Z;+)$-definable subset of $(\Q/\Z)^n$.
We construct $(\Q;+)$-definable $Y\subseteq\Q^n$ such that $X=\uptau^{-1}(Y)$, equivalently $X=\uppi(Y\cap J^n)$.
By quantifier elimination for divisible abelian groups we suppose that $X = \{\alpha\in(\Q/\Z^n) : T(\alpha)+\beta=0\}$ for a term $T(x_1,\ldots,x_n)=m_1x_1+\cdots+m_nx_n$, $m_1,\ldots,m_n\in\Z$ and $\beta\in(\Q/\Z)^n$.
For any $\alpha\in\Q/\Z$ we have $\alpha\in X$ if and only if $T(\uptau(\alpha))+\uptau(\beta)\in\Z$.
By construction $|\uptau(\alpha)|<1$ for all $\alpha\in\Q/\Z$.
Let $m=\max\{|m_1|,\ldots,|m_n|\}$.
For any $\alpha=(\alpha_1,\ldots,\alpha_n)\in(\Q/\Z)^n$ we have
\begin{align*}
|T(\uptau(\alpha))+\uptau(\beta)| &= |m_1\uptau(\alpha_1)+\cdots+m_n\uptau(\alpha_n)+\uptau(\beta)|\\&\le|m_1||\uptau(\alpha_1)|+\cdots+|m_n||\uptau(\alpha_n)|+|\uptau(\beta)|<mn+1.
\end{align*}
Let $I$ be the set of $a\in\Z$ such that $|a|\le mn$.
Then for any $\alpha\in(\Q/\Z)^n$ we have $\alpha\in X$ if and only if $T(\uptau(\alpha))+\uptau(\beta)\in I$.
Let $Y$ be the set of $\alpha\in\Q^n$ such that $T(\alpha)+\uptau(\beta)\in I$.
Note that $Y$ is $(\Q;+)$-definable as $I$ is finite and we have $X=\uptau^{-1}(Y)$. 
\end{proof}

We now give some ``relational" examples.

\begin{proposition}
\label{prop:random}
Fix $k \ge 2$.
The following structures are trace equivalent:
\begin{enumerate}
\item the generic countable $k$-hypergraph,
\item the generic countable $k$-ary relation,
\item the generic countable ordered $k$-hypergraph,
\item the generic countable ordered $k$-ary relation.
\end{enumerate}
\end{proposition}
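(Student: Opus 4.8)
I will show that each of (1)--(4) is trace equivalent to $\Sa H$, the generic countable $k$-hypergraph; the result then follows since trace definibility is a quasi-order (Proposition~\ref{prop:trace-basic}). Throughout I use that each of (1)--(4) is a \Fraisse limit, hence by Fact~\ref{fact:homo} is $\aleph_0$-categorical and eliminates quantifiers in its finite relational language, which has arity $\le k$; all the trace definitions will be produced by hand via Proposition~\ref{prop:qe}.

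\textbf{Each of (2), (3), (4) trace defines $\Sa H$.} By Theorem~\ref{thm:characterize}.2 it suffices to see that each is $(k-1)$-independent. In each case the ambient $k$-ary relation ($R$ for (2) and (4), $E$ for (3)) witnesses $\mathrm{IP}_{k-1}$: fix an array $(a^l_j : 1 \le l \le k-1,\ j < \upomega)$ of pairwise distinct elements; for a finite $S_0 \subseteq \upomega^{k-1}$, the requirement on a new element $b$ that $R(a^1_{j_1},\dots,a^{k-1}_{j_{k-1}},b)$ (resp.\ $E(a^1_{j_1},\dots,a^{k-1}_{j_{k-1}},b)$) hold exactly for $(j_1,\dots,j_{k-1}) \in S_0$ is a finite consistent system of conditions on a fresh point -- distinctness of the $a^l_j$ makes the relevant $(k-1)$-element sets pairwise distinct -- so it is satisfiable in the relevant generic structure, and compactness plus saturation finishes. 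Hence (2), (3), (4) each trace define $\Sa H$, and trivially $\Sa H$ trace defines itself.

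\textbf{$\Sa H$ trace defines the generic $k$-ary relation $\Sa R = (V;R)$.} I build $\uptau \colon V \to H^k$. Using the one-point extension property of $\Sa H$, I realize inside $\Sa H$ a configuration consisting of $k$ pairwise disjoint ``layers'' $(\{v_l : v \in V\})_{l=1}^k$ of pairwise distinct points, set $\uptau(v) := (v_1,\dots,v_k)$, and arrange that the ``transversal'' hyperedges encode $R$: $E(u^1_1, u^2_2, \dots, u^k_k)$ holds iff $R(u^1,\dots,u^k)$ for all $u^1,\dots,u^k \in V$. This is possible because when a new point is added (say in layer $j$) one may prescribe freely, for each $(k-1)$-subset of the previously placed points, whether it is completed to a hyperedge -- and this is a legitimate one-point extension in the class of $k$-hypergraphs -- so one completes a transversal $(k-1)$-subset exactly as $R$ dictates and forms no other new hyperedge. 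Then $Y_R := \{(x^1,\dots,x^k) \in (H^k)^k : E(x^1_1, x^2_2, \dots, x^k_k)\}$ is $\Sa H$-definable, $\uptau$ is injective, and $R(u^1,\dots,u^k) \Longleftrightarrow \uptau(u^1,\dots,u^k) \in Y_R$, so Proposition~\ref{prop:qe} together with quantifier elimination for $\Sa R$ shows $\Sa H$ trace defines $\Sa R$. This layered construction (and the analogous two-sort one below) is the main obstacle -- the point being the observation just used, that in a $k$-hypergraph any assignment of hyperedge membership on the $(k-1)$-subsets of a finite vertex set, plus a fresh vertex, is a valid one-point extension.

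\textbf{$\Sa H$ trace defines (3) and (4).} The structure $\Sa H$ is $\mathrm{IP}$ (by the computation above, $\mathrm{IP}_{k-1}$ implies $\mathrm{IP}$), hence unstable, so $\Th(\Sa H)$ trace defines $\dlo$, hence $(\Q;<)$, by Lemma~\ref{lem:stable-0}. Pass to a sufficiently saturated $\Sa H' \equiv \Sa H$; by Proposition~\ref{prop:trace-basic}, $\Sa H'$ trace defines $\Sa H$, $\Sa R$, and $(\Q;<)$, so by Lemma~\ref{lem:disjoint union} it trace defines $\Sa H \sqcup (\Q;<)$ and $\Sa R \sqcup (\Q;<)$. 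Now I trace-define (3) in $\Sa H \sqcup (\Q;<)$ and (4) in $\Sa R \sqcup (\Q;<)$. In either case write the target as $(V;<,\cdot)$: its order reduct is a countable dense linear order without endpoints (by the target's one-point extension property), hence embeds into $(\Q;<)$, and its relational reduct is a countable $k$-hypergraph (resp.\ $k$-ary relational structure), hence embeds into $\Sa H$ (resp.\ $\Sa R$) by the extension property. Putting the two embeddings into the two disjoint sorts gives an injection $\uptau$ of $V$ into the square of the disjoint union along which $<$ pulls back from the ``order in the $(\Q;<)$-sort coordinate'' definable set and the $k$-ary relation pulls back from the ``$E$ (resp.\ $R$) in the other-sort coordinates'' definable set; Proposition~\ref{prop:qe} applies. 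Hence $\Sa H$ trace defines (3) and (4). Combining all of the above, (1), (2), (3), (4) are each trace equivalent to $\Sa H$, hence to one another, by Proposition~\ref{prop:trace-basic}.
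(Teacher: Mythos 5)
Your proof has one genuine circularity, one step that matches the paper's key construction, and one step that takes a different (arguably cleaner) route than the paper.

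\textbf{The circularity.} In your first step you justify ``(2), (3), (4) each trace define $\Sa H$'' by invoking Theorem~\ref{thm:characterize}.2, after checking that each is $(k-1)$-independent. But Theorem~\ref{thm:characterize}.2 is Proposition~\ref{prop:k-dep-0}, whose stated proof \emph{uses} Proposition~\ref{prop:random}: the cited fact from the $k$-dependence literature is phrased in terms of the generic countable \emph{ordered} $(k+1)$-hypergraph (since that is the one with the Ramsey property), and Proposition~\ref{prop:random} is exactly what is used to pass between the ordered and unordered versions. So you cannot appeal to it here. The fix is trivial and you should have taken it directly: (3) defines (1) and (4) defines (2) by forgetting the order, and (2) defines (1) by the symmetrization $E(a_1,\ldots,a_k) \Longleftrightarrow (a_i \ne a_j) \wedge \bigvee_{\sigma} R(a_{\sigma(1)},\ldots,a_{\sigma(k)})$ — the resulting $E$ satisfies the extension axioms for the generic $k$-hypergraph. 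No appeal to indiscernible collapse or $(k-1)$-independence is needed.

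\textbf{The layered construction.} Your second step is correct and is essentially the paper's own argument, dressed differently. The paper formalizes the layers as $Y = X \times \{1,\ldots,k\}$, puts a hyperedge $F$ precisely on the transversal tuples encoding $R$, and embeds $X$ via $\mathbf{e}(\alpha) = ((\alpha,1),\ldots,(\alpha,k))$ so that $R_F(\mathbf{e}(\alpha_1),\ldots,\mathbf{e}(\alpha_k))$, reading off coordinate $i$ of the $i$-th block, recovers $R$. Your ``place one vertex per layer, prescribe the transversal hyperedges as $R$ dictates'' is the same thing said without naming the intermediate hypergraph; both versions rest on the same observation that disjointness of the layers guarantees distinct transversal $k$-tuples give distinct $k$-element sets, so the hyperedge values can be prescribed freely.

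\textbf{The ordered cases.} Here your route genuinely differs from the paper's. The paper handles (4) by fixing a tuple $\beta \in W^{k-2}$ to cut the generic $k$-ary relation $R$ down to a generic binary relation $R^*$, embedding $(W;\prec)$ into $(W;R^*)$, and pairing; it does not separately spell out (3). You instead use Lemma~\ref{lem:disjoint union} to pass to $\Sa H \sqcup (\Q;<)$ and $\Sa R \sqcup (\Q;<)$ and then apply the fusion argument (Proposition~\ref{prop:fusion}, in effect) to the language split $L = \{<\} \cup L_{\mathrm{rel}}$. This treats (3) and (4) symmetrically, is conceptually cleaner, and is really the special case of the paper's later Proposition~\ref{prop:new order} / Lemma~\ref{lem:free homo} (an unstable free homogeneous structure is trace equivalent to its generic ordered expansion). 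As long as you verify — as you do — that the order reduct is a countable DLO and the relational reduct embeds in the respective generic structure, this is a valid and somewhat more uniform argument than the paper's.

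In sum: fix step 1 by replacing the appeal to Theorem~\ref{thm:characterize}.2 with the elementary reduct and symmetrization observations, and the proof is correct.
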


This is generalized in Lemma~\ref{lem:free homo} and Corollary~\ref{cor:henson}.

\medskip
By Theorem~\ref{thm:west} the generic countable $k$-hypergraph does not interpret the generic countable $k$-ary relation.
(Theorem~\ref{thm:west} is essentially due to Harry West.)
Hence the Erd\H{o}s-Rado graph trace defines but doesn't interpret the generic countable directed graph.
The generic countable $k$-ary relation cannot interpret an infinite linear order as it is simple.

\begin{proof}
Let $(W;R)$ be the generic countable $k$-ary relation.
Define a $k$-ary relation $E$ on $W$ by declaring $E(a_1,\ldots,a_k)$ when $a_i \ne a_j$ for all $i \ne j$ and $R(a_{\upsigma(1)},\ldots,a_{\upsigma(k)})$ holds for some permutation $\upsigma$ of $\{1,\ldots,k\}$.
It is easy to see that $E$ satisfies the extension axioms for the generic countable $k$-hypergraph, so $(W;E)$ is a copy of the generic countable $k$-hypergraph.
The same construction shows that the generic countable ordered $k$-ary relation defines the generic countable $k$-hypergraph.

\medskip\noindent
Let $(V;E)$ be the generic countable $k$-hypergraph, we show that $(V;E)$ trace defines $(W;R)$.
We will produce a $(V;E)$-definable $k$-ary relation $R_E$ with domain $V^k$ such that the generic countable $k$-ary relation embedds into $(V^k;R_E)$.
This suffices by Proposition~\ref{prop:qe} and quantifier elimination for $\relk_k$.

\medskip\noindent
We first describe $R_E$, which is defined more generally for any $k$-hypergraph.
Let $(X;F)$ be a $k$-hypergraph.
Given elements $\alpha^1,\ldots,\alpha^k$ of $X^k$ with $\alpha^i = (\alpha^i_1,\ldots,\alpha^i_k)$ we declare $$R_{F}(\alpha^1,\ldots,\alpha^k)\quad \Longleftrightarrow\quad F(\alpha^1_1,\alpha^2_2,\ldots,\alpha^k_k).$$

\begin{Claim*}
Let $X$ be a set and $S$ be a $k$-ary relation on $X$.
Then there is a $k$-hypergraph $(Y;F)$ such that $(X;S)$ embedds into $(Y^k;R_F)$.
If $Y$ is infinite then we may take $|Y| = |X|$.
\end{Claim*}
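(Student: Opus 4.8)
The plan is to take $Y = X \times \{1,\ldots,k\}$, embed $X$ into $Y^k$ by a ``tagged diagonal'' map, and define $F$ so that the ``staircase'' positions $\alpha^1_1,\alpha^2_2,\ldots,\alpha^k_k$ appearing in the definition of $R_F$ read off $S$ exactly. Concretely, I would set $e_j \colon X \to Y$ to be $e_j(x) = (x,j)$ and let $e \colon X \to Y^k$ be given by $e(x) = (e_1(x),\ldots,e_k(x)) = ((x,1),\ldots,(x,k))$. This map is injective, since the first coordinate of $e_1(x)$ recovers $x$, and $|Y| = k|X|$, which equals $|X|$ when $X$ is infinite, as required.

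Next I would define the $k$-ary relation $F$ on $Y$ by declaring $F(\beta_1,\ldots,\beta_k)$ to hold precisely when the $\beta_i$ are pairwise distinct, the set $\{\beta_1,\ldots,\beta_k\}$ contains exactly one element of the form $(\cdot,j)$ for each $j \in \{1,\ldots,k\}$ --- writing it as $\{(x_1,1),(x_2,2),\ldots,(x_k,k)\}$, which then determines $x_1,\ldots,x_k \in X$ uniquely --- and $S(x_1,\ldots,x_k)$ holds. Since these conditions refer only to the underlying set $\{\beta_1,\ldots,\beta_k\}$, the relation $F$ is symmetric, and it forbids repeated entries by construction, so $(Y;F)$ is a $k$-hypergraph.

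Then I would verify the embedding condition. For $x_1,\ldots,x_k \in X$, the definition of $R_F$ gives
\[
R_F(e(x_1),\ldots,e(x_k)) \quad\Longleftrightarrow\quad F(e_1(x_1),e_2(x_2),\ldots,e_k(x_k)) = F((x_1,1),(x_2,2),\ldots,(x_k,k)).
\]
The $k$ elements $(x_1,1),\ldots,(x_k,k)$ are automatically pairwise distinct (their second coordinates differ) and their set automatically has exactly one element with each second coordinate $j$, with associated tuple $(x_1,\ldots,x_k)$; so the right-hand side holds if and only if $S(x_1,\ldots,x_k)$ does. Hence $e$ is an embedding $(X;S) \to (Y^k;R_F)$, which proves the claim.

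I do not expect a substantial obstacle here. The only care needed is the bookkeeping that keeps $F$ a legitimate $k$-hypergraph relation (symmetry and the no-repeats requirement) together with the observation that feeding $e$ into $R_F$ lands precisely on the diagonal positions $(1,1),(2,2),\ldots,(k,k)$ with no collisions --- which is exactly what the second-coordinate ``tag'' $j$ provides. A coordinatewise map $e$ without such tags could identify distinct inputs on those staircase positions, and that is the pitfall to avoid.
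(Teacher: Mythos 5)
Your construction — taking $Y = X \times \{1,\ldots,k\}$, tagging via $\mathbf{e}(x) = ((x,1),\ldots,(x,k))$, and defining $F$ on a tuple with pairwise distinct second coordinates by reading $S$ off the first coordinates in tag order — is exactly the paper's proof, just phrased in terms of the underlying set rather than an explicit permutation $\upsigma$. The cardinality remark and the symmetry/irreflexivity checks for $F$ are also handled the same way, so this is correct and essentially identical to the paper's argument.
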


\noindent
We first explain why the claim suffices.
Suppose that the claim holds.
By the claim we may suppose that $(W;R)$ is a substructure of $(Y^k;R_{F})$ for some countable $k$-hypergraph $(Y;F)$.
Every countable $k$-hypergraph embedds into $(V;E)$, so we may suppose that $(Y;F)$ is a substructure of $(V;E)$.
Finally, it is easy to see that $(Y^k; R_F)$ is a substructure of $(V^k; R_E)$.

\medskip\noindent
We now prove the claim.
Fix a $k$-ary relation $S$ on a set $X$.
Let $Y = X \times \{1,\ldots,k\}$.
We define a $k$-hypergraph $F$ on $Y$.
Let $(\alpha_1,\imag_1),\ldots,(\alpha_k,\imag_k) \in Y$.
We declare $F((\alpha_1,\imag_1),\ldots,(\alpha_k,\imag_k))$ when $\{\imag_1,\ldots,\imag_k\} = \{1,\ldots,k\}$ and $R(\alpha_{\upsigma(1)},\ldots,\alpha_{\upsigma(k)})$ where $\upsigma$ is the unique permutation of $\{1,\ldots,k\}$ with $\imag_{\upsigma(1)} = 1,\ldots,\imag_{\upsigma(k)} = k$.
We now give an embedding $\mathbf{e} \colon (X;R) \to (Y^k;R_F)$.
Let $\mathbf{e} \colon X \to Y^k$ be given by $\mathbf{e}(\alpha) = ((\alpha,1),\ldots,(\alpha,k))$.
Suppose that $\alpha_1,\ldots,\alpha_k \in X$.
Then
\begin{align*}
R(\alpha_1,\ldots,\alpha_k) &\Longleftrightarrow  F((\alpha_1,1),(\alpha_2,2),\ldots,(\alpha_k,k)) \\
&\Longleftrightarrow R_{F}(\mathbf{e}(\alpha_1),\ldots,\mathbf{e}(\alpha_k))
\end{align*}
Hence $\mathbf{e}$ is an embedding.

\medskip
We finally show that the $(W;R)$ trace defines the generic countable ordered $k$-ary relation.
Fix an order $\prec$ on $W$ such that $(W;R,\prec)$ is the generic countable ordered $k$-ary relation.
Fix $\beta\in W^{k - 2}$ and let $R^*$ be the binary relation on $W$ where $R^*(\alpha,\alpha')\Longleftrightarrow R(\alpha,\alpha',\beta)$.
It is easy to see that $(W;R^*)$ is a copy of the generic countable binary relation.
Hence there is an embedding $\upiota\colon (W;\prec)\to(W;R^*)$.
Let $\uptau\colon W\to W\times W$ be given by $\uptau(\alpha)= (\upiota(\alpha),\alpha)$.
An easy application of Proposition~\ref{prop:qe-trace} shows that $(W;R)$ trace defines $(W;R,\prec)$ via $\uptau$.

% We let $X$ be the collection of $k$-element subsets $A \subseteq W$ such that $\uppi$ is injective on $A$ and if $a^i,b^j \in A$ then $i \ne j$.

% % We show that each $A \in X$ determines a unique element of $W^k$, i.e. construct a bijection $\rho : X \to W^k$.
% For each $i \in \{1,\ldots,k\}$ let $a_i$ be the element of $W$ such that $(a_i)^i \in A$.
% Then $A = \{a^1_1,a^2_2,\ldots,a^k_k \}$, 
% Fix for each $a \in W$ elements $a^1,\ldots,a^k$ such that if $a,b \in W$ are distinct then $a^i \ne b^j$ for any $i,j$ and $a^i \ne a^j$ for any $i \ne j$.
% Let $V = \{ a^i : a \in W, i \in \{1,\ldots,k\} \}$ and let $\uppi : V \to W$ be given by $\uppi(a^i) = a$.
% Suppose that $a_1,\ldots,a_k \in V$ satisfy $\uppi(a_i) \ne \uppi(a_j)$ when $i \ne j$.
% Then $\rho(a) = (a_1,\ldots,a_k) \in W^k$.
% We define $E$ by declaring that $(b_1,\ldots,b_k) \in E$ if $\{b_1,\ldots,b_k \} \in X$ and $\rho(\{b_1,\ldots,b_k\}) \in R$.
\end{proof}

We now consider discrete linear orders.
Let $\mathbf{s}\colon\R\to\R$ be given by $\mathbf{s}(x) = x + 1$ and let $\mathbf{s}^{(n)}$ be the $n$-fold compositional iterate of $\mathbf{s}$.
We declare $\tshift$ to be the theory of $(M;<,f)$ where $(M;<)\models\dlo$ and $f\colon M\to M$ is  continuous  with $\lim_{t\to\infty}f(t)=-\infty$, $\lim_{t\to\infty}f(t)=\infty$, and  $f(t)>t$ for all $t\in M$.
A standard back-and-forth argument shows that $\tshift$ is complete, hence $\tshift$ is $\Th(\R;<,\mathbf{s})$.
Once can also use a back-and-forth argument to show that all models of $\tshift$ expanding $(\R;<)$ are isomorphic~\cite[Theorem~12]{stewart-baldwin}.
It follows that if $(\R;<,f)\models\tshift$ then $(\R;<,f)$ is isomorphic to $(\R;<,\mathbf{s})$.
% We let $\Cal F$ be the set of continuous strictly increasing functions $f \colon \R \to \R$ such that $f(t) \to \infty$ as $t \to \infty$, $f(t) \to -\infty$ as $t \to -\infty$, and $f(t) > t$ for all $t$.
% Then $\mathbf{s}\in\Cal F$.

\begin{proposition}
\label{prop:example}
The following structures are trace equivalent:
\begin{enumerate}
\item $(\Z;<)$,
\item $(\R;<,\mathbf{s},\Z)$,
\item $(\Z;\mathbf{s})\sqcup(\R;<)$,
\item any infinite discrete linear order,
\item $(\R;<,\mathbf{s})$, or more generally any model of $\tshift$.
\end{enumerate}
\end{proposition}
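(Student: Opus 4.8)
The plan is to use that trace definibility is a quasi-order (Proposition~\ref{prop:trace-basic}), so it suffices to verify a cycle of trace definitions among (1)--(5) together with two extra arrows for item~(4). Several arrows are immediate: $(\R;<,\mathbf{s},\Z)$ defines $(\Z;<)$ (on the named set $\Z$, with the restricted order) and has $(\R;<,\mathbf{s})$ as a reduct, so (2) trace defines (1) and (5); and since $\tshift$ is complete, all its models are elementarily equivalent, hence trace equivalent, so (5) is one trace class containing $(\R;<,\mathbf{s})$. For the arrows out of (1): $(\Z;<)$ is an infinite linear order, hence unstable, hence trace defines $\dlo$ and so $(\R;<)$ (Lemma~\ref{lem:stable-0} and Proposition~\ref{prop:trace-theories}); it also defines $(\Z;\mathbf{s})$; so by Lemma~\ref{lem:disjoint union} it trace defines both $(\Z;<)\sqcup(\R;<)$ and $(\Z;\mathbf{s})\sqcup(\R;<)$. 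Finally $(\Z;<)\sqcup(\R;<)$ interprets $(\R;<,\mathbf{s},\Z)$ via a lexicographic product: with parameters $a<b$ from the $\R$-sort, take domain $\Z\times[a,b)$ with the lexicographic order, successor $(m,t)\mapsto(\mathbf{s}_\Z(m),t)$ for $\mathbf{s}_\Z$ the immediate-successor function of the $\Z$-sort, and distinguished set $\Z\times\{a\}$; the map $x\mapsto(\lfloor x\rfloor,\,x-\lfloor x\rfloor)$ composed with the affine identification $[0,1)\cong[a,b)$ is an isomorphism onto $(\R;<,\mathbf{s},\Z)$. By Proposition~\ref{prop:trace-interpret} and transitivity this gives (1)$\to$(2), and the above also gives (1)$\to$(3).

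The two substantive trace definitions are as follows. First, to close the cycle I would prove (3)$\to$(1), i.e.\ that $(\Z;\mathbf{s})\sqcup(\R;<)$ trace defines $(\Z;<)$; this is the one genuinely new idea. Work inside the definable structure on $\Z\times\R$ in $(\Z;\mathbf{s})\sqcup(\R;<)$ and let $\uptau\colon\Z\to\Z\times\R$ send $n\mapsto(n,n)$, using the inclusion $\Z\hookrightarrow\R$ in the second coordinate; $\uptau$ is injective. Since $(\Z;<)$ is interdefinable with its expansion by all iterated-successor relations, which eliminates quantifiers in a relational language, it is enough by Proposition~\ref{prop:qe} to pull back the basic relations: $n<n'$ is recovered from the real order on the second coordinates, and $\mathbf{s}^{(k)}(n)=n'$ from the successor relation on the first coordinates, both definable in the disjoint union. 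Second, (5)$\to$(1): since $\tshift$ is complete and equals $\Th(\R;<,\mathbf{s})$, in any model $(N;<,f)\models\tshift$ the map $f$ is a bijection with $f(t)>t$ for all $t$, so for any $c\in N$ the orbit $(f^{(n)}(c))_{n\in\Z}$ is strictly increasing and order-isomorphic to $\Z$; trace defining $(\Z;<)$ via $\uptau(n)=f^{(n)}(c)$ then works exactly as in the previous step, pulling $<$ back from $<$ and $\mathbf{s}^{(k)}$ from $f^{(k)}$. This already shows (1), (2), (3), (5) are trace equivalent.

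It remains to fit in (4), an arbitrary infinite discrete linear order $(D;<)$. For (4)$\to$(1): by compactness $\Th(D;<)$ has a model containing a bi-infinite chain of immediate successors --- the only point to check is that for every $k$ there is an element of $D$ with at least $k$ immediate predecessors and at least $k$ immediate successors, which follows from a short case analysis (if $D$ has no minimum, resp.\ no maximum, every element has infinitely many predecessors, resp.\ successors; if $D$ has both endpoints, all but finitely many elements have many of both since $D$ is infinite) --- and such a chain yields a trace definition of $(\Z;<)$ just as in the $\tshift$ case. For (1)$\to$(4): by compactness $\Th(\Z;<)$ has a model into which $(D;<)$ embeds preserving the order and all the iterated-successor relations (a finite fragment is realized in $\Z$ by placing representatives at the prescribed finite distances and far apart otherwise); since $(D;<)$ eliminates quantifiers in a relational expansion by the iterated-successor relations and the (at most two) endpoints, pulling the basic relations back through this embedding --- using a parameter for each endpoint --- gives the trace definition via Proposition~\ref{prop:qe}. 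With the quasi-order property this proves all of (1)--(5) trace equivalent.

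I expect the main obstacle to be (3)$\to$(1): extracting an honest linear order on $\Z$ from the bare successor structure $(\Z;\mathbf{s})$ together with an external copy of $(\R;<)$ is exactly what trace definibility is meant to capture, and the pairing $n\mapsto(n,n)$ is the crux; verifying it just needs the quantifier-elimination hypotheses of Proposition~\ref{prop:qe} and Corollary~\ref{cor:qe} to be arranged (adding iterated successors, naming endpoints). The rest is bookkeeping: the passage between trace definibility of theories and of structures (handled uniformly by Proposition~\ref{prop:trace-theories}), the completeness and bijectivity facts about $\tshift$, and the endpoint case analysis in~(4).
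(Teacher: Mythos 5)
Your proof takes essentially the same route as the paper's: exhibit an $L_\mathrm{dis}$-copy of $(\Z;<)$ inside a saturated discrete linear order (giving (4)$\to$(1)) and inside $(\R;<,\mathbf{s})$ (giving (5)$\to$(1)), trace define $(\Z;<)$ in $(\Z;\mathbf{s})\sqcup(\R;<)$ via the pairing $m\mapsto(m,m)$ with $<$ read off the second coordinate and the $D_n$ off the first, and recover $(\R;<,\mathbf{s},\Z)$ by interpreting the lexicographic decomposition $\R\cong\Z\times[0,1)$ in a disjoint union of $(\Z;<)$ with a dense order; all of these match the paper, with only cosmetic differences (for instance you take $[a,b)$ inside the $\R$-sort where the paper uses a separate copy of $[0,1)$).

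The one place you genuinely diverge is in proving (1)$\to$(4), i.e.\ that $\Th(\Z;<)$ trace defines an arbitrary infinite discrete linear order $(D;<)$. The paper's printed proof only establishes the converse direction (4)$\to$(1), and then the cycle (1)$\Leftrightarrow$(2)$\Leftrightarrow$(3)$\Leftrightarrow$(5); strictly speaking this leaves the implication (1)$\to$(4) unaddressed, so you are filling a real hole. Your compactness argument for embedding $(D;<)$ as an $L_\mathrm{dis}$-substructure of a suitable model of $\Th(\Z;<)$ is sound. One detail is off, however: it is not true that $(D;<)$ eliminates quantifiers in the language $\{<\}\cup\{D_n\}_n$ together with unary predicates for the endpoints. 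If $D$ has a minimum $c_0$, the set $\{x : D_k(c_0,x)\}$ is not a boolean combination of $<$, $D_n$, and $P_0(x)$ (the only unary atom in $x$ is $P_0$). You must either add unary relations $Q_n^0,Q_n^1$ for ``at distance $n$ from the min/max,'' or keep $c_0,c_1$ as constants and invoke Proposition~\ref{prop:zero-def} rather than Proposition~\ref{prop:qe}. Either fix is harmless, since $Q_n^0$ pulls back to $D_n(\uptau(c_0),-)$ in the target model, definable with the parameter $\uptau(c_0)$; but as written the quantifier-elimination claim is false.
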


% A back-and-forth argument shows that all structures of the form $(\R;<,f)$ for $f \in \Cal F$ are isomorphic \cite[Theorem~12]{stewart-baldwin}.
% Hence $(\R;<,f)$ is isomorphic to $(\R;<,\mathbf{s})$ for all $f \in \Cal F$.

% \medskip
O-minimal expansions of oag's cannot interpret $(\Z;<)$ as they eliminate imaginaries and $\exists^\infty$.
So $(\R;+,<)$ does not interpret $(\Z;<)$, hence $(\R;<,\mathbf{s})$ does not interpret $(\Z;<)$.
By Fact~\ref{fact:zp} $(\Z;+,<)$ does not interpret $\dlo$.
If $(D;\prec)$ is an infinite discrete linear order with a minimum and a maximum then $(D;\prec)$ is pseudofinite and so cannot interpret $(\Z;<)$.

\medskip 
Let $L_\mathrm{dis}$ be the language containing a binary relation $<$ and a binary relation $D_n$ for each $n$.
We consider $(\Z;<)$ to be an $L_\mathrm{dis}$-structure by letting $D_n(m,m')\Longleftrightarrow \mathbf{s}^{(n)}(m) = m'$ for all $n$.
It is well-known that $(\Z;<)$ admits quantifier elimination in $L_\mathrm{dis}$ \cite[7.2]{Poizat}.

\begin{proof}
It is easy to see that if $\Sa D$ is an $\aleph_1$-saturated infinite discrete linear order then there is an $L_\mathrm{dis}$-embedding $(\Z;<)\to\Sa D$, so by Proposition~\ref{prop:qe-trace} $\Sa D$ trace defines $(\Z;<)$.

\medskip
We show that $(\R;<,\mathbf{s})$ trace defines $(\Z;<)$.
We consider $(\R;<,\mathbf{s})$ to be an $L_\mathrm{dis}$-structure by declaring $D_n(t,t^*)$ if and only if $t^* = \mathbf{s}^{(n)}(t)$.
Then $(\Z;<,\mathbf{s})$ is an $L_\mathrm{dis}$-substructure of $(\R;<,\mathbf{s})$.
By Proposition~\ref{prop:qe} $(\R;<,\mathbf{s})$ trace defines $(\Z,<)$ via the identity.

\medskip
We show that $(\R;<,\mathbf{s},\Z)$ is trace definable in $\Th(\Z;<)$.
Let $I = \{ t \in \R : 0 \le t < 1  \}$.
By Lemma~\ref{lem:stable-0} and Proposition~\ref{prop:trace-theories} $\Th(\Z;<)$ trace defines $(I;<)$.
By Lemma~\ref{lem:disjoint union} it is enough to show that $(\R;<,\mathbf{s},\Z)$ is interpretable in the disjoint union of $(\Z;<)$ and $(I;<)$.
Identify $\Z \times I$ with $\R$ via the bijection given by $(m,\alpha) \mapsto m + \alpha$.
This identifies $<$ with the lexicographic order on $\Z \times I$ and identifies $\mathbf{s}$ with the map $(m,\alpha) \mapsto (\mathbf{s}(m),\alpha)$, note that both are definable in $(\Z,<) \sqcup (I;<)$.

\medskip
We have seen that $\Th(\Z;<)$ trace defines $(\R;<)$, so by Lemma~\ref{lem:disjoint union} $\Th(\Z;<)$ trace defines $(\Z;\mathbf{s})\sqcup (\R;<)$.
Let $\uptau\colon\Z\to\Z\times\R$ be given by $\uptau(m)=(m,m)$.
We show that $(\Z;\mathbf{s})\sqcup(\R;<)$ trace defines $(\Z;<)$ via $\uptau$.
By Proposition~\ref{prop:qe} it is enough to produce $(\Z;\mathbf{s})\sqcup(\R;<)$-definable $X\subseteq (\Z\times\R)^2$ such that $(\uptau(m),\uptau(m^*))\in X\Longleftrightarrow m < m^*$ for all $m,m^*\in\Z$ and a sequence $(Y_n:n\in\N,n\ge 1)$ of $(\Z;\mathbf{s})\sqcup(\R;<)$-definable subsets of $(\Z\times\R)^2$ such that for all $m,m^*\in\Z$ we have $(\uptau(m),\uptau(m^*))\in Y_n \Longleftrightarrow D_n(m,m^*)$.
Let $X$ be the set of $((m,t),(m^*,t^*))$ such that $t<t^*$ and $Y_n$ be the set of $((m,t),(m^*,t^*))$ such that $m^* = \mathbf{s}^{(n)}(m)$.
\end{proof}

% By applying either Theorem~\ref{thm:trivial} to $(\R;<,\mathbf{s})$ or Proposition~\ref{prop:lo-group} to $(\Z;<)$ we see that the examples above cannot trace define an infinite group.

By Lemma~\ref{lem:stable-0} $T$ trace defines a dense linear order if and only if $T$ is unstable.
What happens if we replace ``dense" with ``discrete"?
Well, I don't actually know whether or not $\dlo$ trace defines $(\Z;<)$, so it's possible that infinite discrete linear orders are trace equivalent to dense linear orders.
I suspect this is not the case.
By Proposition~\ref{prop:example} $T$ trace defines a discrete linear order if and only if $T$ is unstable and trace defines $(\Z;\mathbf{s})$, so we can just consider $(\Z;\mathbf{s})$.
Of course, $\Th(\Z;\mathbf{s})$ is the theory of a cycle-free bijection, so an unstable theory which defines a cycle-free bijection trace defines $(\Z;<)$.

\medskip
Given a function $f\colon X\to X$ we let $f^{(n)}$ be the $n$-fold compositional iterate of $f$ and define the \textbf{orbit} of $\alpha\in X$ under $f$ to be $\{f^{(n)}(\alpha):n\in\N\}$.

\begin{lemma}
\label{lem:shift}
Suppose that $\Th(\Sa M)$ does not trace define $(\Z;\mathbf{s})$, $X$ is an $\Sa M$-definable set, and $f\colon X\to X$ is definable.
Then there is $n$ such that the orbit of any $\alpha\in X$ has cardinality at most $n$.
Hence any definable injection $X\to X$ is surjective
\end{lemma}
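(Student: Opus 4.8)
The plan is to prove the first assertion by contraposition: assuming the $f$-orbits in $X$ have no common finite bound on their cardinality, I will produce a model of $\Th(\Sa M)$ that trace defines the structure $(\Z;\mathbf{s})$; since $T$ trace defines $T^*$ as soon as some model of $T^*$ is trace definable in a model of $T$, this suffices.

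First I would let $\Sa N$ be an $\aleph_1$-saturated model of $\Th(\Sa M)$, writing $X,f$ for the interpretations of the defining formulas in $\Sa N$. By hypothesis, for every $n$ the structure $\Sa M$ (hence $\Sa N$) contains an element $\beta$ for which $\beta,f(\beta),\ldots,f^{(2n)}(\beta)$ are pairwise distinct; reindexing by $\delta_j := f^{(j+n)}(\beta)$ for $-n\le j\le n$, one sees that the partial type $p\big((x_j)_{j\in\Z}\big)$ asserting ``$x_j\in X$'', ``$f(x_j)=x_{j+1}$'', and ``$x_i\ne x_j$'' for $i\ne j$ is finitely satisfiable in $\Sa M$, hence consistent with $\Th(\Sa M)$, hence realized in $\Sa N$ by some $(\delta_j:j\in\Z)$. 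Then $O:=\{\delta_j:j\in\Z\}$ is a single two-sided $f$-orbit: $f$ restricts to a fixpoint-free bijection of $O$ onto itself (the $\delta_j$ are distinct), so $\delta_j\mapsto j$ is an isomorphism $(O;f\!\restriction\!O)\to(\Z;\mathbf{s})$. Centering the finite witnesses before taking the compactness limit is what makes the orbit two-sided, so that $f\!\restriction\!O$ is a genuine bijection rather than merely a copy of $(\N,\mathrm{succ})$.

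Next I would exhibit the trace definition through the injection $\uptau\colon\Z\to N$, $\uptau(j)=\delta_j$. Work with $(\Z;\mathbf{s})$ via its interdefinable relational presentation $\Sa O=(\Z;(E_k)_{k\in\Z})$, where $E_k(x,y)$ holds iff $y-x=k$; a routine back-and-forth (one-sided extension of embeddings of substructures) gives that $\Sa O$ has quantifier elimination. By Proposition~\ref{prop:qe} it then suffices to produce, for each $k\in\Z$, an $\Sa N$-definable $Y_k\subseteq N^2$ with $E_k(i,j)\Longleftrightarrow(\delta_i,\delta_j)\in Y_k$ for all $i,j\in\Z$. Take $Y_k=\{(x,y):y=f^{(k)}(x)\}$ for $k\ge 0$ and $Y_k=\{(x,y):x=f^{(-k)}(y)\}$ for $k<0$; since $f^{(m)}(\delta_i)=\delta_{i+m}$ and the $\delta_j$ are distinct, one checks immediately that $(\delta_i,\delta_j)\in Y_k$ iff $j=i+k$. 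Thus $\Sa N$ trace defines $\Sa O$, hence $(\Z;\mathbf{s})$, contradicting the hypothesis; this gives the required bound $n$.

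For the last sentence: if $f\colon X\to X$ is a definable injection, then by the part just proved some $n$ bounds all orbit sizes, so every $f$-orbit is finite. A finite forward orbit of an injective map must be a cycle through its base point (injectivity forbids a pre-periodic tail), so every $\alpha\in X$ lies on a cycle and hence equals $f$ of its predecessor on that cycle; therefore $f$ is surjective. The main obstacle is the saturation/compactness argument of the second paragraph, and specifically arranging a \emph{two-sided} infinite orbit; everything after that is bookkeeping with Proposition~\ref{prop:qe}.
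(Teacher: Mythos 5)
Your proof is correct, and it takes a genuinely different route to the trace definition than the paper does. Both arguments open the same way: the surjectivity claim is reduced to the uniform orbit bound by the observation that an injective-but-not-surjective $f$ produces an element with infinite forward orbit, and then one argues by contraposition that unbounded orbit sizes force a trace definition of $(\Z;\mathbf{s})$. The divergence is in how that trace definition is built. The paper only needs a one-sided infinite orbit: pass to any elementary extension realizing the one-variable type $\{\,f^{(n)}(x)\ne f^{(m)}(x): n\ne m\,\}$, so $\uptau(n)=f^{(n)}(\alpha)$ gives an embedding $(\N;\mathbf{s},0)\to(X;f,\alpha)$, apply quantifier elimination for $(\N;\mathbf{s},0)$ together with Proposition~\ref{prop:qe-trace}, and then invoke the fact that $(\N;\mathbf{s})$ interprets $(\Z;\mathbf{s})$ to finish. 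You instead aim directly at $(\Z;\mathbf{s})$: you realize a $\Z$-indexed (hence countable-variable) partial type in an $\aleph_1$-saturated elementary extension by the centering trick, producing a genuine two-sided orbit, and then trace define via Proposition~\ref{prop:qe} applied to the relational presentation $(\Z;(E_k)_{k\in\Z})$. What your approach buys is that it is self-contained — you never need to observe that $(\N;\mathbf{s})$ interprets $(\Z;\mathbf{s})$, a small but not entirely obvious coding trick — at the cost of realizing a type in infinitely many variables (fine under $\aleph_1$-saturation over a countable parameter set, but worth being explicit that $\Sa N$ should be an elementary \emph{extension} of $\Sa M$ so the parameters of $X$ and $f$ persist) and of verifying quantifier elimination for the relational presentation of $(\Z;\mathbf{s})$, which you assert but do not carry out; the back-and-forth is routine as you say, but the one-step extension into a fresh $\Z$-chain does require passing to an elementary extension of the target model, so a reader should be told this.
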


An unstable structure which defines a group with unbounded exponent trace defines $(\Z;<)$.

\begin{proof}
If $f\colon X\to X$ is injective and not surjective then the orbit of any $\alpha\in X\setminus f(X)$ is infinite, so the second claim follows from the first.
% We prove the first claim via contrapositive.
It is easy to see that $(\N;\mathbf{s})$ interprets $(\Z;\mathbf{s})$, hence it is enough so show that $\Th(\Sa M)$ trace defines $(\N;\mathbf{s})$. 
After passing to an elementary extension we fix $\alpha\in X$ with infinite orbit.
Let $\uptau\colon\N\to X$ be  $\uptau(n)=f^{(n)}(\alpha)$.
Then $\uptau$ gives an injective homomorphism $(\N;\mathbf{s})\to (X;f)$.
Apply quantifier elimination for $(\N;\mathbf{s},0)$ \cite[Section~7.1]{Poizat} and Proposition~\ref{prop:qe-trace}.
\end{proof}

Hence $(\Z;\mathbf{s})$ and $(\N;\mathbf{s})$ are trace equivalent.
By Prop~\ref{prop:euler} $(\Z;\mathbf{s})$ doesn't interpret $(\N;\mathbf{s})$.

\section{General Theory}
\label{section: basic trace}
\noindent
This section contains the rest of the general results.
% In this section $\uptau$  is an injection $O \to M^m$.
% We also let $\uptau$ be the function $O^n \to M^{mn}$ given by $(a_1,\ldots,a_n) \mapsto (\uptau(a_1),\ldots,\uptau(a_n))$.

% \subsection{Basic results}
% \label{section:general results}

\begin{lemma}
\label{lem:she--1}
Suppose that $A \subseteq M^m$, $\Sa A$ is the structure induced on $A$ by $\Sa M$, and $\Sa A$ admits quantifier elimination.
Then $\Sa M$ trace defines $\Sa A$.
\end{lemma}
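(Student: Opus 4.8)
The plan is to deduce this directly from Proposition~\ref{prop:qe-trace}. The key observation is that the structure $\Sa A$ induced on $A$ by $\Sa M$ has domain $A \subseteq M^m$, and by construction it has an $n$-ary predicate $P_X$ for every $\Sa M$-definable $X \subseteq M^{mn}$, interpreted as $X \cap A^n$. So the identity map $\uptau \colon A \to M^m$ is an embedding of $\Sa A$ into an appropriate structure, and each basic predicate of $\Sa A$ pulls back, along $\uptau$, from an $\Sa M$-definable set.

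First I would let $L$ be the (relational) language of $\Sa A$ — one predicate symbol $P_X$ of arity $n$ for each $\Sa M$-definable $X \subseteq M^{mn}$ — so that $\Sa A$ is an $L$-structure admitting quantifier elimination by hypothesis. Then I would apply Proposition~\ref{prop:qe} (or equivalently Corollary~\ref{cor:qe}) with $\Sa O = \Sa A$ and $\uptau$ the identity inclusion $A \hookrightarrow M^m$: for each $n$-ary $R = P_X \in L$, the set $Y := X$ is an $\Sa M$-definable subset of $M^{mn}$, and for any $a = (a_1,\ldots,a_n) \in A^n$ we have $\Sa A \models P_X(a)$ iff $a \in X \cap A^n$ iff $\uptau(a) = (a_1,\ldots,a_n) \in X = Y$. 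This is exactly the hypothesis needed in Proposition~\ref{prop:qe}, so $\Sa M$ trace defines $\Sa A$ via $\uptau$.

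There is essentially no obstacle here; the only mild subtlety is bookkeeping about arities: a predicate $P_X$ of $\Sa A$ with $X \subseteq M^{mn}$ is $n$-ary as a relation on $A$, but when pulled back via $\uptau \colon A \to M^m$ it becomes a subset of $(M^m)^n = M^{mn}$, which is precisely where $X$ lives, so the arities match up without any reindexing. Alternatively, one can observe that this is a special case of the reasoning already used (e.g.\ in the proof of Lemma~\ref{lem:disjoint union}) and simply invoke Corollary~\ref{cor:qe} directly, taking $\Sa P = \Sa A$ itself sitting inside $M^m$ with $Y_R = X$ for $R = P_X$.

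\begin{proof}
Let $L$ be the language of $\Sa A$: for each $\Sa M$-definable $X \subseteq M^{mn}$ there is an $n$-ary relation symbol $P_X$ of $L$, interpreted in $\Sa A$ as $X \cap A^n$. By hypothesis $\Sa A$ is an $L$-structure admitting quantifier elimination, and $L$ is relational. Let $\uptau \colon A \to M^m$ be the inclusion. For each $n$-ary $R = P_X \in L$ put $Y_R = X$, an $\Sa M$-definable subset of $M^{mn}$. Then for any $a = (a_1,\ldots,a_n) \in A^n$ we have
\[
\Sa A \models P_X(a) \quad\Longleftrightarrow\quad a \in X \cap A^n \quad\Longleftrightarrow\quad \uptau(a) \in Y_R.
\]
By Proposition~\ref{prop:qe}, $\Sa M$ trace defines $\Sa A$ via $\uptau$, hence $\Sa M$ trace defines $\Sa A$.
\end{proof}
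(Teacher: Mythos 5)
Your proof is correct and essentially matches the paper's. The paper observes that the claim is immediate from the definitions (the QE hypothesis for the induced structure is literally the statement that every $\Sa A$-definable subset of $A^n$ is $X \cap A^n$ for $\Sa M$-definable $X$, which is exactly the trace-definition condition via the inclusion $A \hookrightarrow M^m$), and alternatively suggests viewing $\Sa A$ as a substructure of the structure induced on $M^m$ and invoking Proposition~\ref{prop:qe-trace}; your route through Proposition~\ref{prop:qe} is just a third equivalent way of packaging the same observation.
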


\begin{proof}
Immediate from the definitions.
One can also note that $\Sa A$ is a substructure of the structure induced on $M^m$ by $\Sa M$ and apply Proposition~\ref{prop:qe-trace} 
\end{proof}

\begin{proposition}
\label{prop:zero-def}
Suppose $\uptau \colon O \to M^m$ is an injection and for every subset $X$ of $O^n$ which is definable in $\Sa O$ without parameters there is $\Sa M$-definable $Y \subseteq
M^{mn}$ such that for all $a \in O^n$ we have $ a \in X \Longleftrightarrow \uptau(a) \in Y$.
Then $\Sa M$ trace defines $\Sa O$ via $\uptau$.
\end{proposition}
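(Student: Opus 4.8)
The plan is to reduce the general (parameter-allowing) definition of trace definability to the hypothesis, which only concerns $\emptyset$-definable sets, by the standard trick of absorbing parameters into extra coordinates.

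First I would fix an arbitrary $\Sa O$-definable set $X \subseteq O^n$, say $X = \{ a \in O^n : \Sa O \models \varphi(a,b) \}$ for some $L$-formula $\varphi(x,y)$ with $|x| = n$, $|y| = k$, and some parameter tuple $b \in O^k$. Consider the $\emptyset$-definable set
\[
X' = \{ (a,c) \in O^{n+k} : \Sa O \models \varphi(a,c) \}.
\]
By the hypothesis applied to $X'$ (with $n+k$ in place of $n$) there is an $\Sa M$-definable $Y' \subseteq M^{m(n+k)}$ such that for all $(a,c) \in O^{n+k}$ we have $(a,c) \in X' \Longleftrightarrow (\uptau(a),\uptau(c)) \in Y'$. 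Now set
\[
Y = \{ u \in M^{mn} : (u, \uptau(b)) \in Y' \},
\]
which is $\Sa M$-definable (over the parameters $\uptau(b)$, together with whatever parameters define $Y'$). Then for all $a \in O^n$,
\[
a \in X \iff \Sa O \models \varphi(a,b) \iff (a,b) \in X' \iff (\uptau(a),\uptau(b)) \in Y' \iff \uptau(a) \in Y,
\]
which is exactly what is needed; since $X$ was arbitrary, $\Sa M$ trace defines $\Sa O$ via $\uptau$.

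There is essentially no serious obstacle here: the only thing to be careful about is the bookkeeping of arities, namely that the hypothesis is quantified over all $n$ so that it may be invoked with $n+k$ coordinates, and that slicing an $\Sa M$-definable set of $M^{m(n+k)}$ at a fixed tuple $\uptau(b) \in M^{mk}$ again yields an $\Sa M$-definable subset of $M^{mn}$. Both are immediate, so the proof is short.
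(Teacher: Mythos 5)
Your proof is correct. It differs from the paper's route in a clean way: the paper Morleyizes $\Sa O$ with respect to its parameter-free definable sets, obtaining a relational structure $\Sa O^*$ with quantifier elimination, and then invokes Proposition~\ref{prop:qe}; you instead do the parameter-absorption directly, observing that an arbitrary $\Sa O$-definable set is a fiber of a parameter-free one, applying the hypothesis in higher arity, and slicing at the tuple $\uptau(b)$ in $\Sa M$. Both arguments hinge on the same observation (every definable set is a fiber of an $\emptyset$-definable one), and the paper's reliance on Proposition~\ref{prop:qe} unwinds, through Propositions~\ref{prop:eq neg 1} and~\ref{prop:qe-trace}, into essentially this same bookkeeping, so the content is the same; but your version is self-contained and makes the reduction transparent without the detour through an auxiliary Morleyized structure, which is arguably the cleaner way to present it in isolation. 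The only thing to be slightly careful about, which you already flag, is that $Y$ is definable only with parameters (namely $\uptau(b)$ and whatever defines $Y'$), but that is all the definition of trace definability requires.
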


\begin{proof}
Let $L^*$ be the language with a $k$-ary relation $R_X$ for each parameter-free definable $X \subseteq O^k$ and $\Sa O^*$ be the $L^*$-structure on $O$ where $\Sa O^* \models R_X(a) \Longleftrightarrow a \in X$ for all $R \in L^*$, $a \in O^k$.
Then $\Sa O^*$ has quantifier elimination.
Apply Proposition~\ref{prop:qe} to $\Sa M$ and $\Sa O^*$.
\end{proof}

\begin{proposition}
\label{prop:omega-cat-0}
Suppose that $\Sa O$ is $\aleph_0$-categorical and $\uptau \colon O \to M^m$ is an injection.
Then $\Sa M$ trace defines $\Sa O$ via $\uptau$ if and only if for any $p \in S_k(\Sa O)$ there is an $\Sa M$-definable $X \subseteq M^{mk}$ such that if $a \in O^n$ then $\tp_{\Sa O}(a) = p \Longleftrightarrow \uptau(a) \in X$.
\end{proposition}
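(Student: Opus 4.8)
The plan is to prove the two implications separately, with the ``only if'' direction essentially immediate and the ``if'' direction reduced to the parameter-free case by invoking Proposition~\ref{prop:zero-def}.

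For the ``only if'' direction: assume $\Sa M$ trace defines $\Sa O$ via $\uptau$, and fix $k$ and $p \in S_k(\Sa O)$. Since $\Sa O$ is $\aleph_0$-categorical, $p$ is isolated, say by a formula $\varphi_p(x)$, so that $\varphi_p(O^k) = \{a \in O^k : \tp_{\Sa O}(a) = p\}$ is $\Sa O$-definable. Applying the trace definition to this set produces an $\Sa M$-definable $X \subseteq M^{mk}$ with $\tp_{\Sa O}(a) = p \Longleftrightarrow \uptau(a) \in X$ for all $a \in O^k$, which is what is wanted.

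For the ``if'' direction: by Proposition~\ref{prop:zero-def} it suffices to show that for every parameter-free $\Sa O$-definable $X \subseteq O^k$ there is an $\Sa M$-definable $Y \subseteq M^{mk}$ with $a \in X \Longleftrightarrow \uptau(a) \in Y$ for all $a \in O^k$. Given such an $X$, I would use $\aleph_0$-categoricity again, this time in the Ryll--Nardzewski form ``$S_k(\Sa O)$ is finite and every $k$-type over $\emptyset$ is isolated'': this lets one write $X = \bigcup_{p \in P}\{a : \tp_{\Sa O}(a) = p\}$ for some finite $P \subseteq S_k(\Sa O)$. Then set $Y := \bigcup_{p \in P} X_p$, where $X_p \subseteq M^{mk}$ is the $\Sa M$-definable set furnished by the hypothesis for $p$; this $Y$ is $\Sa M$-definable, being a finite union. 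For $a \in O^k$ one has $a \in X \Longleftrightarrow \tp_{\Sa O}(a) \in P$, and, using the biconditional $\tp_{\Sa O}(a) = p \Leftrightarrow \uptau(a) \in X_p$ (which in particular gives $\uptau(a) \in X_p \Rightarrow \tp_{\Sa O}(a) = p$), one checks that $\uptau(a) \in Y$ holds iff $\uptau(a) \in X_p$ for some $p \in P$ iff $\tp_{\Sa O}(a) \in P$; hence $a \in X \Longleftrightarrow \uptau(a) \in Y$, as required.

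I do not anticipate a genuine obstacle here; the argument is routine. The only two points needing care are (i) not forgetting that the trace definition quantifies over all $\Sa O$-definable sets, including parametrically definable ones — which is exactly why one routes through Proposition~\ref{prop:zero-def} rather than trying to argue about parameters by hand — and (ii) applying the $\aleph_0$-categoricity hypothesis in precisely the form that makes $X$ a \emph{finite} union of complete type-sets, so that the resulting $Y$ is again $\Sa M$-definable.
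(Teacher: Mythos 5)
Your proof is correct and follows essentially the same route as the paper: apply Ryll--Nardzewski to write a parameter-free definable set as a finite union of type-sets and then invoke Proposition~\ref{prop:zero-def}. The paper's proof is simply terser, omitting the (immediate) "only if" direction and the explicit verification that the biconditional hypothesis makes the finite union work.
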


\begin{proof}
By Ryll-Nardzewski a subset of $O^k$ is definable without parameters iff it is a finite union of sets of the form $\{ a \in O^n : \tp_{\Sa O}(a) = p \}$ for $p \in S_k(\Sa O)$.
Apply Proposition~\ref{prop:zero-def}.
\end{proof}

\begin{proposition}
\label{prop:ec}
Suppose that $\Sa O$ is a substructure of $\Sa M$, $\Sa O$ is existentially closed in $\Sa M$, and $\Sa O$ is model complete.
Then $\Sa M$ trace defines $\Sa O$ via the identity.
\end{proposition}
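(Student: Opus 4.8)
The plan is to unwind the definitions and lean on the \emph{existential} form of model completeness. Since $\Sa O$ is a substructure of $\Sa M$ we may regard both as structures in a common language $L$; model completeness of $\Sa O$ means that every $L$-formula is equivalent modulo $\Th(\Sa O)$ to an existential $L$-formula, while existential closedness means that every existential $L$-formula with parameters from $O$ which holds in $\Sa M$ also holds in $\Sa O$ (the reverse implication being automatic for a substructure). Since $\uptau$ is the identity we have $m=1$, so the task is: given an $\Sa O$-definable $X\subseteq O^n$, produce an $\Sa M$-definable $Y\subseteq M^n$ with $X=Y\cap O^n$.

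First I would fix such an $X$, defined by $\varphi(x,b)$ for some $L$-formula $\varphi$ and $b\in O^k$. Using model completeness, choose an existential $L$-formula $\psi(x,y)$ with $\Th(\Sa O)\vdash \forall x\,\forall y\,(\varphi(x,y)\leftrightarrow\psi(x,y))$, and set $Y:=\{a\in M^n : \Sa M\models\psi(a,b)\}$, which is $\Sa M$-definable with parameters from $O\subseteq M$. It then remains to verify $X=Y\cap O^n$. For $X\subseteq Y$: if $a\in X$ then $\Sa O\models\varphi(a,b)$, hence $\Sa O\models\psi(a,b)$; since $\psi$ is existential and $\Sa O$ is a substructure of $\Sa M$, this yields $\Sa M\models\psi(a,b)$, i.e.\ $a\in Y$. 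For $Y\cap O^n\subseteq X$: if $a\in O^n\cap Y$ then $\Sa M\models\psi(a,b)$, and since $\Sa O$ is existentially closed in $\Sa M$ and $\psi$ is existential with parameters $a,b\in O$, we get $\Sa O\models\psi(a,b)$, hence $\Sa O\models\varphi(a,b)$, i.e.\ $a\in X$. Thus $\Sa M$ trace defines $\Sa O$ via the identity.

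There is not really a substantial obstacle here; the only point requiring care — the ``hard part'', such as it is — is to invoke the existential rather than the universal normal form supplied by model completeness, so that the two elementary preservation facts (existential formulas pass upward from a substructure, and come back down precisely by existential closedness) deliver the two halves of $X=Y\cap O^n$. Alternatively, one can package the argument through Proposition~\ref{prop:qe0}: expand $L$ to $L^*$ by a relation symbol $R_\psi$ for each existential $L$-formula $\psi$, observe that $\Sa O$ so expanded has quantifier elimination (by model completeness), interpret each $R_\psi$ in $\Sa M$ by the $\Sa M$-definable set that $\psi$ defines, and note that agreement on $O^k$ is exactly existential closedness; but the direct argument above is cleaner and also slightly more elementary than appealing to Proposition~\ref{prop:qe-trace} via a Morleyization.
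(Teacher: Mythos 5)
Your proof is correct and is exactly the argument the paper has in mind: the paper's proof of Proposition~\ref{prop:ec} is the one-line instruction to follow the proof of Proposition~\ref{prop:qe-trace} with ``quantifier free'' replaced by ``existential,'' and you have spelled out precisely that argument, correctly identifying that the existential normal form is what makes both inclusions work (upward persistence for $X\subseteq Y\cap O^n$, existential closedness for the converse). No gaps.
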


\begin{proof}
Follow the proof of Proposition~\ref{prop:qe-trace} replacing ``quantifier free" with ``existential".
\end{proof}

% Suppose that $\Sa O_i$ is an $L_i$ structure for $i \in \{1,\ldots,n\}$.
% We define the disjoint union $\Sa O_\sqcup$ of $\Sa O_1,\ldots,\Sa O_n$.
% After rearranging things we suppose that the $L_i$ are pairwise disjoint and after possibly Morelyizing we suppose that they are relational.
% We let $O_\sqcup$ be the disjoint union of $O_1,\ldots,O_n$, let $L_\sqcup = L_1 \cup \ldots \cup L_n$, and let $\Sa O_\sqcup$ be the $L_\sqcup$-structure on $O_\sqcup$ where we have
% \[
% \Sa O_\sqcup \models R(a_1,\ldots,a_n) \quad \Longleftrightarrow \quad a_1,\ldots,a_n \in O_i \text{  and  } \Sa O_i \models R(a_1,\ldots,a_n)
% \]
% for all $n$-ary $R \in L_i$.
% It is easy to see that if $X$ is an $\Sa O_\sqcup$-definable subset of $O^{k_i}_i \times O^{k_i}_j$, $i \ne j$, then $X$ is a finite union of sets of the form $X_i \times X_j$ where $X_\ell$ is an $\Sa O_\ell$-definable subset of $O^{k_\ell}_\ell$ for $\ell \in \{1,j\}$.
% Proposition~\ref{proposition:disjoint union} is an easy consequence of this observation, we leave the details to the reader.

% \begin{proposition}
% \label{proposition:disjoint union}
% Suppose that $\Sa O_1,\ldots, \Sa O_n$ are trace definable in $\Sa M$.
% Then the disjoint union of $\Sa O_1,\ldots,\Sa O_n$ is trace definable in $\Sa M$.
% \end{proposition}

\noindent
Corollary~\ref{cor:trace-theories} follows from Proposition~\ref{prop:trace-theories} and $\aleph_1$-saturation of nonprinciple ultrapowers.

\begin{corollary}
\label{cor:trace-theories}
Suppose $\Sa M \models T$ and $\Sa O \models T^*$ are countable.
The following are equivalent:
\begin{enumerate}
\item $T$ trace defines $T^*$,
\item every nonprinciple ultrapower of $\Sa M$ trace defines $\Sa O$,
\item some nonprinciple ultrapower of $\Sa M$ trace defines $\Sa O$.
\end{enumerate}
\end{corollary}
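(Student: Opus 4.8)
The plan is to prove the cycle $(1)\Rightarrow(2)\Rightarrow(3)\Rightarrow(1)$. The implication $(2)\Rightarrow(3)$ is immediate, since $\Sa M$ certainly has \emph{some} nonprincipal ultrapower, e.g.\ one taken along a nonprincipal ultrafilter on $\N$. So the content is in $(1)\Rightarrow(2)$ and $(3)\Rightarrow(1)$, and in both the only external input needed is the standard fact that a nonprincipal ultrapower $\Sa M^*$ of $\Sa M$ is $\aleph_1$-saturated (being countably incomplete) and satisfies $\Sa M^*\equiv\Sa M$, hence $\Sa M^*\models T$.

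For $(1)\Rightarrow(2)$: assume $T$ trace defines $T^*$ and let $\Sa M^*$ be an arbitrary nonprincipal ultrapower of $\Sa M$. I would apply Proposition~\ref{prop:trace-theories} with $\uplambda=\aleph_1$. The hypotheses are met because $\uplambda\ge|L^*|$ and $|O|=\aleph_0<\aleph_1$ (the languages here being countable), and because $\Sa M^*$ is an $\aleph_1$-saturated model of $T$. The conclusion is that $\Sa M^*$ trace defines $\Sa O$; since $\Sa M^*$ was arbitrary, this gives $(2)$. Note that it is exactly $\aleph_1$-saturation that is being used, which is precisely what the countability of $\Sa O$ provides.

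For $(3)\Rightarrow(1)$: suppose a nonprincipal ultrapower $\Sa M^*$ of $\Sa M$ trace defines $\Sa O$. Then $\Sa M^*\models T$, so we have exhibited a model of $T$ trace defining a model of $T^*$; equivalently, \emph{some} $T^*$-model is trace definable in a $T$-model. By the basic equivalence ``$T$ trace defines $T^*$ if and only if some $T^*$-model is trace definable in a $T$-model'' --- whose nontrivial direction (upgrading ``some'' to ``every'') is exactly what the proof of Proposition~\ref{prop:trace-theories} establishes --- we conclude that $T$ trace defines $T^*$. I do not expect a genuine obstacle in this corollary; the one point to watch is that $\aleph_1$-saturation really does suffice as input to Proposition~\ref{prop:trace-theories}, and it does, precisely because $\Sa O$ and $L^*$ are countable.
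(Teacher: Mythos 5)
Your proof is correct and fills in exactly the argument the paper sketches: the paper remarks that the corollary follows from Proposition~\ref{prop:trace-theories} together with $\aleph_1$-saturation of nonprincipal ultrapowers, and your $(1)\Rightarrow(2)\Rightarrow(3)\Rightarrow(1)$ cycle is a direct elaboration of that remark. Your use, for $(3)\Rightarrow(1)$, of the fact that the proof of Proposition~\ref{prop:trace-theories} really only needs one $T^*$-model trace definable in one $T$-model as its input is precisely the observation the paper relies on (and records as ``$T$ trace defines $T^*$ iff some $T^*$-model is trace definable in a $T$-model'').
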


Corollary~\ref{cor:ls} follows from the proof of Proposition~\ref{prop:trace-theories} and downward L\"owenheim-Skolem.

\begin{corollary}
\label{cor:ls}
Suppose that $T$ trace defines $\Sa O$.
Then there is $\Sa M\models T$ such that $\Sa O$ is trace definable in $\Sa M$ and $|M|=\max\{|T|,|O|\}$.
\end{corollary}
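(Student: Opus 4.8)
The plan is to run the construction from the proof of Proposition~\ref{prop:trace-theories}, stopping at the small elementary submodel rather than passing to a saturated model. First I would unwind the hypothesis: since $T$ trace defines $\Sa O$, there is $\Sa N\models T$ and an injection $\uptau\colon O\to N^m$ such that $\Sa N$ trace defines $\Sa O$ via $\uptau$. Then, for each $n$ and each $\Sa O$-definable $X\subseteq O^n$, I would fix a witnessing $\Sa N$-definable set $Y_X\subseteq N^{mn}$, written $Y_X=\varphi_X(N^{mn};b_X)$ for an $L$-formula $\varphi_X$ and a finite parameter tuple $b_X$ from $N$, so that $a\in X\iff\uptau(a)\in Y_X$ for all $a\in O^n$.

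Next I would set $B\subseteq N$ to be the union of $\uptau(O)$ with all the tuples $b_X$. Since each $b_X$ is finite and the number of $\Sa O$-definable subsets of the various $O^n$ is at most $\max\{|T|,|O|\}$ --- this is the only point where the standing conventions on language sizes enter, and the bound is the same one implicit in Proposition~\ref{prop:trace-theories} (where $\uplambda\ge|L^*|$) --- we get $|B|\le\max\{|T|,|O|\}$. By downward L\"owenheim-Skolem I would then choose $\Sa M\prec\Sa N$ with $B\subseteq M$ and $|M|=\max\{|B|,|L|,\aleph_0\}=\max\{|T|,|O|\}$, and of course $\Sa M\models T$.

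Finally I would check that $\Sa M$ trace defines $\Sa O$ via $\uptau$: given an $\Sa O$-definable $X\subseteq O^n$, the set $Y_X\cap M^{mn}=\varphi_X(M^{mn};b_X)$ is $\Sa M$-definable because $b_X\in M$, and $\uptau(a)\in M^{mn}$ for every $a\in O^n$ because $\uptau(O)\subseteq M$, so $a\in X\iff\uptau(a)\in Y_X\iff\uptau(a)\in Y_X\cap M^{mn}$. The only step that needs any care is the cardinality bookkeeping for $B$ (equivalently, bounding the number of $\Sa O$-definable sets by $\max\{|T|,|O|\}$); everything else is immediate from the definitions and downward L\"owenheim-Skolem, which is the sense in which the corollary follows from the proof of Proposition~\ref{prop:trace-theories}.
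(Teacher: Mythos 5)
Your argument is correct and is the approach the paper has in mind: take a witnessing model, collect $\uptau(O)$ together with finitely many parameters for each witnessing definable set, and cut down by downward L\"owenheim--Skolem, which is precisely what the paper means when it says the corollary follows from the proof of Proposition~\ref{prop:trace-theories} and downward L\"owenheim--Skolem (and your version is cleaner, since you bypass the $T_\cup$ construction). You also correctly isolate the only delicate point: in full generality the count of $\Sa O$-definable sets gives $|B|\le\max\{|L^*|,|O|,\aleph_0\}$, so the stated bound $\max\{|T|,|O|\}$ tacitly assumes $|L^*|\le\max\{|T|,|O|\}$ --- the same implicit hypothesis behind the condition $\uplambda\ge|L^*|$ in Proposition~\ref{prop:trace-theories} --- and the paper's own statement of the corollary glosses over this in exactly the same way.
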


Proposition~\ref{prop:omega cat} follows from Corollary~\ref{cor:ls}.

\begin{proposition}
\label{prop:omega cat}
Suppose that $\uplambda$ is an infinite cardinal and $\Sa M$ is a structure in a language of cardinality $\le\uplambda$.
If $\Sa M$ is $\uplambda$-categorical and $|O|\le\uplambda$ then $\Th(\Sa M)$ trace defines $\Sa O$ if and only if $\Sa M$ trace defines $\Sa O$.
In particular two $\uplambda$-categorical structures in languages of cardinality $\le\uplambda$ are trace equivalent if and only if each trace defines the other.
% If $\Sa M$ and $\Sa O$ are both $\aleph_0$-categorical then $\Sa M$ and $\Sa O$ are trace equivalent if and only if $\Sa M$ trace defines $\Sa O$ and vice versa.
\end{proposition}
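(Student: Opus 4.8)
The plan is to read both directions off results already established, the only content being a cardinality bookkeeping. Write $T=\Th(\Sa M)$ and note $|T|\le\uplambda$ since $|L|\le\uplambda$. We may assume $\Sa M$ is infinite, for otherwise $\Th(\Sa M)$ has $\Sa M$ as its only model up to isomorphism and the two sides of the stated equivalence are literally the same assertion; and we take $|M|=\uplambda$, which is in any case forced if the equivalence is to hold, since $\Sa M$ trace defining $\Sa O$ requires an injection $O\to M^m$ and hence $|O|\le|M|$. The implication ``$\Sa M$ trace defines $\Sa O$ implies $T$ trace defines $\Sa O$'' is immediate from the definition of a theory trace defining a structure, so the proposition reduces to its converse.

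So suppose $T$ trace defines $\Sa O$, with $|O|\le\uplambda$. First I would invoke Corollary~\ref{cor:ls} to get $\Sa N\models T$ with $|N|=\max\{|T|,|O|\}\le\uplambda$ in which $\Sa O$ is trace definable. Since $\Sa N$ is infinite, upward L\"owenheim--Skolem yields an elementary extension $\Sa N\prec\Sa N^*$ with $|N^*|=\uplambda$. By Proposition~\ref{prop:trace-basic}(3) $\Sa N$ is trace definable in $\Sa N^*$, and then by transitivity (Proposition~\ref{prop:trace-basic}(1)) $\Sa N^*$ trace defines $\Sa O$. As $\Sa N^*\models T$ has cardinality exactly $\uplambda$, $\uplambda$-categoricity gives $\Sa N^*\cong\Sa M$, and since trace definability is plainly isomorphism-invariant we conclude that $\Sa M$ trace defines $\Sa O$.

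For the final sentence, suppose $\Sa M$ and $\Sa O$ are both $\uplambda$-categorical of cardinality $\uplambda$ in languages of size $\le\uplambda$. By definition $\Sa M$ and $\Sa O$ are trace equivalent iff $\Th(\Sa M)$ trace defines $\Th(\Sa O)$ and conversely, and by the equivalence promised in the introduction ($T$ trace defines $T^*$ iff some $T^*$-model is trace definable in a $T$-model; cf.\ the discussion after Proposition~\ref{prop:trace-theories}) this is the same as ``$\Th(\Sa M)$ trace defines $\Sa O$ and $\Th(\Sa O)$ trace defines $\Sa M$''. Applying the first part of the proposition to each conjunct in turn rewrites this as ``$\Sa M$ trace defines $\Sa O$ and $\Sa O$ trace defines $\Sa M$'', which is the assertion.

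I do not expect any real obstacle. The one thing to mind is that $\uplambda$-categoricity constrains only models of cardinality exactly $\uplambda$, so the model of size at most $\uplambda$ handed back by Corollary~\ref{cor:ls} must first be stretched to cardinality exactly $\uplambda$ by an elementary extension before categoricity can be applied; and the hypothesis $|L|\le\uplambda$ is used precisely so that Corollary~\ref{cor:ls} returns a model of size at most $\uplambda$. Everything else is the transitivity of trace definibility and its behaviour under elementary extension, both from Proposition~\ref{prop:trace-basic}.
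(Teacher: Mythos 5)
Your proof is correct and takes the same route the paper intends: the paper gives no argument beyond ``Proposition~\ref{prop:omega cat} follows from Corollary~\ref{cor:ls},'' and your chain --- Corollary~\ref{cor:ls} to get a model $\Sa N$ of size $\le\uplambda$, upward L\"owenheim--Skolem to inflate it to $\Sa N^*$ of size exactly $\uplambda$, Proposition~\ref{prop:trace-basic}(3) plus transitivity to transfer trace definability to $\Sa N^*$, and $\uplambda$-categoricity to identify $\Sa N^*$ with $\Sa M$ --- is exactly the argument being pointed to. You were also right to flag that the statement implicitly assumes $|M|=\uplambda$: the categoricity step identifies $\Sa N^*$ with $\Sa M$ only under that hypothesis, and without it one can cook up $\Sa O$ with $|M|<|O|\le\uplambda$ trace definable in $\Th(\Sa M)$ but not in $\Sa M$ for cardinality reasons. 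Your ``forced'' justification is a little loose (the inequality $|O|\le|M|$ by itself only gives a lower bound, and nothing in the hypotheses caps $|M|$ above), but the intended point --- that the statement must be read with $|M|=\uplambda$ --- is right, and it is the reading the paper uses in practice (always $\uplambda=\aleph_0$ with $\Sa M$ countable $\aleph_0$-categorical).
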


We mainly apply Proposition~\ref{prop:omega cat} in the case $\uplambda=\aleph_0$.

% \subsection{Combining structures}
% We discuss two ways of building up a structure $\Sa O$ from structures $\Sa O_1,\ldots,\Sa O_n$ in so that if $\Sa M$ trace defines each $\Sa O_i$ then $\Sa M$ trace defines $\Sa O$.

% % \subsection{Fusions}
% % \label{section:Fusions}
\medskip
We now discuss a way of building up a structure from simpler structures.

\begin{proposition}
\label{prop:fusion}
Suppose that $L_1,\ldots,L_n$ are pairwise disjoint relational languages and let $L = L_1 \cup \cdots \cup L_n$.
Suppose that $\Sa O$ is an $L$-structure with quantifier elimination, $\Sa O_i = \Sa O \! \upharpoonright \! L_i$ for each $i \in \{1,\ldots,n\}$, and each 
$\Sa O_i$ is a substructure of an $L_i$-structure $\Sa P_i$.
If each $\Sa P_i$ is trace definable in $\Sa M$ then $\Sa O$ is trace definable in $\Sa M$.
In particular if each $\Sa O_i$ is trace definable in $\Sa M$ then $\Sa O$ is trace definable in $\Sa M$.
\end{proposition}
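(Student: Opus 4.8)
The plan is to reduce the statement to a single application of Corollary~\ref{cor:qe} by cooking up an explicit embedding of $\Sa O$ into a product of trace copies of the $\Sa P_i$. Since each $\Sa P_i$ is trace definable in $\Sa M$, we may assume (after Morleyizing the $L_i$, which preserves all hypotheses since $\Sa O$ already has quantifier elimination in $L = L_1 \cup \cdots \cup L_n$) that $P_i \subseteq M^{m_i}$ and that $\Sa M$ trace defines $\Sa P_i$ via the identity, so each $k$-ary $R \in L_i$ has an associated $\Sa M$-definable $Y_R \subseteq M^{m_i k}$ with $\Sa P_i \models R(b) \Leftrightarrow b \in Y_R$ for all $b \in P_i^k$.

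Next I would build the embedding. For each $i$, the inclusion $\Sa O_i = \Sa O\!\upharpoonright\!L_i \hookrightarrow \Sa P_i$ gives an injection $O \to P_i \subseteq M^{m_i}$; bundling these together over $i = 1, \ldots, n$ gives an injection $\uptau \colon O \to M^{m_1 + \cdots + m_n}$ (it is injective because each coordinate block already is). Identify $\Sa O$ with its image $\uptau(O)$. For a $k$-ary relation symbol $R \in L_i$, define the $\Sa M$-definable set $Z_R \subseteq (M^{m_1+\cdots+m_n})^k$ to be the pullback of $Y_R$ along the projection onto the $i$-th block of each of the $k$ coordinate tuples. Then for $a = (a_1, \ldots, a_k) \in O^k$ we have
\[
\Sa O \models R(a) \quad\Longleftrightarrow\quad \Sa O_i \models R(a) \quad\Longleftrightarrow\quad \Sa P_i \models R(\pi_i(a)) \quad\Longleftrightarrow\quad \uptau(a) \in Z_R,
\]
where $\pi_i$ denotes the $i$-th block projection and the middle equivalence is because $\Sa O_i$ is a substructure of $\Sa P_i$ and $R$ is relational. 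This is exactly the hypothesis of Corollary~\ref{cor:qe} (condition (2)), using that $L$ is relational and $\Sa O$ has quantifier elimination, so $\Sa M$ trace defines $\Sa O$ via $\uptau$. The final sentence of the proposition is the special case $\Sa P_i = \Sa O_i$.

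I do not anticipate a genuine obstacle here — the argument is bookkeeping — but the one point requiring a little care is the reduction at the start: one must check that replacing each $L_i$ by its Morleyization (to get quantifier elimination in each $L_i$, which is what lets us invoke "trace defines via the identity" cleanly) is harmless, i.e.\ that $\Sa O$ still has quantifier elimination in the enlarged relational language $L_1' \cup \cdots \cup L_n'$. This holds because a Morleyization only adds definitions of $\Sa O$-definable (indeed $\Sa O_i$-definable) sets, so no new definable sets appear and the quantifier-elimination hypothesis transfers. Once that is in place, everything is a matter of tracking which coordinate block carries which language, and the proof is essentially the same as the proof of Lemma~\ref{lem:disjoint union} with "disjoint union" replaced by "amalgam over a common domain."
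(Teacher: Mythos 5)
Your argument is correct and is essentially the paper's argument, just with the bookkeeping inlined: the paper first shows $\Sa M$ trace defines the disjoint union $\Sa P_1 \sqcup\cdots\sqcup\Sa P_n$ (quoting Lemma~\ref{lem:disjoint union}) and then shows the disjoint union trace defines $\Sa O$ via the diagonal map $a\mapsto(a,\ldots,a)$, whereas you skip the intermediate structure and bundle the injections $O\to P_i\hookrightarrow M^{m_i}$ directly into a single $\uptau\colon O\to M^{m_1+\cdots+m_n}$, then invoke Corollary~\ref{cor:qe}. The net effect is the same; your route is a shade more direct since it doesn't reuse Lemma~\ref{lem:disjoint union}, but it also redoes the block-coordinate bookkeeping that that lemma already packages.

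The one thing you should cut is the Morleyization reduction, which is both unnecessary and misargued. It is unnecessary because you never need $\Sa P_i$ to admit quantifier elimination: for a $k$-ary relation symbol $R\in L_i$, the set $R^{\Sa P_i}\subseteq P_i^k$ is $\Sa P_i$-definable outright, so trace definability of $\Sa P_i$ in $\Sa M$ already hands you the required $\Sa M$-definable $Y_R$ with $\iota_i^{-1}(Y_R)=R^{\Sa P_i}$; no expansion of the language is involved. And the justification offered is wrong: Morleyizing $\Sa P_i$ adds relation symbols naming $\Sa P_i$-definable sets, not $\Sa O_i$-definable ones, and since $\Sa O_i$ is only a substructure (not an elementary substructure) of $\Sa P_i$, the restriction of a $\Sa P_i$-definable set to $O$ need not be $\Sa O_i$-definable, so it is not clear that the expanded $\Sa O$ would retain quantifier elimination. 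Deleting the parenthetical and the closing paragraph about Morleyization leaves a clean, correct proof.
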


\begin{proof}
The second claim follows from the first.
Suppose that each $\Sa P_i$ is trace definable in $\Sa M$.
By Lemma~\ref{lem:disjoint union} $\Sa M$ trace defines $\Sa P_1 \sqcup\cdots \sqcup \Sa P_n$.
Hence it is enough to show that $\Sa P_1 \sqcup\cdots \sqcup \Sa P_n$ trace defines $\Sa O$.
Let $\uptau \colon O  \to P_1 \times\cdots\times P_n$ be given by $\uptau(a) = (a, \ldots,a)$.
We show that $\Sa P_1 \sqcup\cdots \sqcup \Sa P_n$ trace defines $\Sa O$ via $\uptau$.
For each $i \in \{1,\ldots,n\}$ let $\uppi_i : P_1 \times\cdots\times P_n \to P_i$ be the projection.
For each $i \in \{1,\ldots,n\}$ and $k$-ary $R \in L_i$ fix an $\Sa M$-definable $X_R \subseteq P^k_i$ such that $X_R \cap O^k = \{ a \in O^k : \Sa O_i \models R(a)\}$ and $Q_R$ be the subset of $(P_1 \times\cdots\times P_n)^k$ such that $(a_1,\ldots,a_k) \in Q_R \Longleftrightarrow (\uppi_i(a_1),\ldots,\uppi_i(a_k)) \in X_R$.
We show that $\uptau$ gives an isomorphism $\Sa O \to (\uptau(O); (Q_R)_{ R \in L })$.
For any $k$-ary $R \in L_i$ and $a_1,\ldots,a_k \in O$ we have
\begin{align*}
\Sa O \models R(a_1,\ldots,a_k) &\Longleftrightarrow \Sa O_i \models R(a_1,\ldots,a_k) \\ &\Longleftrightarrow \Sa P_i \models R(a_1,\ldots,a_k) \\ &\Longleftrightarrow (a_1,\ldots,a_k) \in X_R \\ &\Longleftrightarrow ( \uppi_i(\uptau(a_1)),\ldots,\uppi_i(\uptau(a_k))) \in X_R \\ &\Longleftrightarrow (\uptau(a_1),\ldots,\uptau(a_k)) \in Q_R.
\end{align*}
By Corollary~\ref{cor:qe} $\Sa M$ trace defines $\Sa O$ via $\uptau$.
\end{proof}

\begin{proposition}
\label{prop:new order}
Suppose that $\Sa M$ is relational, $\prec$ is a linear order on $M$, and $(\Sa M,\prec)$ admits quantifier elimination.
Then $(\Sa M,\prec)$ is trace equivalent to $\Sa M\sqcup(\Q;<)$.
If $\Sa M$ is unstable then $(\Sa M,\prec)$ is trace equivalent to $\Sa M$.
\end{proposition}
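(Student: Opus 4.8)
The plan is to prove both directions of the first claim using the tools already assembled, and then derive the second claim from Lemma~\ref{lem:stable-0}. For the easy direction, $\Sa M \sqcup (\Q;<)$ trace defines $(\Sa M,\prec)$: since $(\Sa M,\prec)$ has quantifier elimination and $\prec$ is a linear order on $M$, it suffices by Proposition~\ref{prop:fusion} (applied with the two relational languages being that of $\Sa M$ and $\{\prec\}$) to check that $\Sa M$ is trace definable in $\Sa M \sqcup (\Q;<)$ — which is immediate, since $\Sa M$ is a substructure of the disjoint union — and that $(M;\prec)$, being an infinite linear order, is trace definable in $(\Q;<)$, hence in the disjoint union. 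The latter follows from Proposition~\ref{prop:qe-trace} after passing to an $\aleph_1$-saturated elementary extension of $\Sa M \sqcup (\Q;<)$ (using Proposition~\ref{prop:trace-theories}), inside which one finds a copy of $(\Q;<)$ as an $L_\sqcup$-substructure on the second sort; alternatively one just invokes Proposition~\ref{prop:fusion} with $\Sa P_1 = \Sa M$ and $\Sa P_2$ a saturated model of $\dlo$ containing $(M;\prec)$ as a substructure. Either way this direction is routine.

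For the other direction, I would show that $(\Sa M,\prec)$ trace defines both $\Sa M$ and $(\Q;<)$ and then apply Lemma~\ref{lem:disjoint union}. That $(\Sa M,\prec)$ trace defines $\Sa M$ is clear: $\Sa M$ is a reduct of $(\Sa M,\prec)$, so it is (even definable, hence) trace definable by Proposition~\ref{prop:qe-trace} or directly from the definitions. That $(\Sa M,\prec)$ trace defines $(\Q;<)$ follows from Lemma~\ref{lem:stable-0}, since any structure expanding an infinite linear order is unstable (the order itself witnesses the order property), and an unstable theory trace defines $\dlo$, i.e.\ trace defines $(\Q;<)$. Combining, $(\Sa M,\prec)$ trace defines $\Sa M \sqcup (\Q;<)$ by Lemma~\ref{lem:disjoint union}. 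Together with the first direction this gives trace equivalence.

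Finally, for the second claim, suppose $\Sa M$ is unstable. Then $\Sa M$ already trace defines $(\Q;<)$ by Lemma~\ref{lem:stable-0}, so by Lemma~\ref{lem:disjoint union} $\Sa M$ trace defines $\Sa M \sqcup (\Q;<)$; conversely $\Sa M \sqcup (\Q;<)$ trace defines $\Sa M$ trivially. Hence $\Sa M$ and $\Sa M \sqcup (\Q;<)$ are trace equivalent, and by the first part of the proposition $(\Sa M,\prec)$ is trace equivalent to $\Sa M \sqcup (\Q;<)$, so by transitivity of trace equivalence (Proposition~\ref{prop:trace-basic}) we get $(\Sa M,\prec) \equiv \Sa M$. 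The only place requiring a little care is the hypothesis bookkeeping in the application of Proposition~\ref{prop:fusion}: one must ensure the languages are genuinely disjoint and relational and that quantifier elimination for $(\Sa M,\prec)$ is exactly the hypothesis needed there; since the proposition is stated with $\Sa M$ relational and $(\Sa M,\prec)$ admitting quantifier elimination, this matches directly. I do not expect a serious obstacle here — the proposition is essentially an assembly of Lemma~\ref{lem:disjoint union}, Lemma~\ref{lem:stable-0}, and Proposition~\ref{prop:fusion} — the only subtlety being to state cleanly why an expansion of an infinite linear order is unstable, which is the standard observation that $x \prec y$ has the order property.
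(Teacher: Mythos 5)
Your proof is correct and follows essentially the same route as the paper's: both directions of the first claim are established via Proposition~\ref{prop:fusion} (for the definition of $(\Sa M,\prec)$ in the disjoint union) together with Lemma~\ref{lem:stable-0} and Lemma~\ref{lem:disjoint union} (for the reverse direction), and the second claim is derived from the first in the same way. One small slip: in your first sketch of the easy direction you pass to an $\aleph_1$-saturated elementary extension, which need not suffice when $|M|>\aleph_0$; your ``alternatively'' phrasing (take any model of $\dlo$ containing $(M;\prec)$ as a substructure, exactly as in the paper) is the clean version and should be the one kept.
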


\begin{proof}
Lemmas~\ref{lem:stable-0} and \ref{lem:disjoint union} together show that if $\Sa M$ is unstable then $\Sa M$ is trace equivalent to $\Sa M\sqcup(\Q;<)$.
Hence it is enough to prove the first claim.
Again, Lemmas~\ref{lem:stable-0} and \ref{lem:disjoint union} show that $\Th(\Sa M;\prec)$ trace defines $\Sa M\sqcup(\Q;<)$.
It is enough to show that $\Th(\Sa M\sqcup(\Q;<))$ trace defines $(\Sa M,\prec)$.
Suppose that $(M;\prec)$ is a substructure of $(I;\triangleleft)\models\dlo$.
We apply Proposition~\ref{prop:fusion} with $\Sa O=(\Sa M,\prec)$, $n=2$, $\Sa O_1=\Sa M$, $\Sa O_2=(M;\prec)$, $\Sa P_1=\Sa M$, and $\Sa P_2=(I;\triangleleft)$.
This shows that $\Sa M\sqcup(I;\triangleleft)$ trace defines $(\Sa M,\prec)$.
\end{proof}

% \subsection{Equivalent definitions in the relational case}

We now give some equivalent definitions in the relational case.

\begin{lemma}
\label{lem:substructure}
Suppose that $L$ is relational and $\Sa O$ is an $L$-structure with quantifier elimination.
Then the following are equivalent:
\begin{enumerate}
\item $\Th(\Sa M)$ trace defines $\Sa O$,
\item There is $m$ and a collection $(Y_R \subseteq M^{mk} : R \in L, R \text{ is $k$-ary})$ of $\Sa M$-definable sets such that for every $\Sa P \in \age(\Sa O)$ there is an injection $\uptau \colon P \to M^m$ satisfying
\[ 
\Sa P \models R(a) \quad \Longleftrightarrow \quad \uptau(a) \in Y_R \quad \text{for all} \quad \text{$k$-ary } R \in L, a \in P^k.
\] 
\end{enumerate}
\end{lemma}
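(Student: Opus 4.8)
The plan is to use Corollary~\ref{cor:qe} as a bridge between "a model of $\Th(\Sa M)$ trace defines $\Sa O$" and "a model of $\Th(\Sa M)$ carries a system of definable sets coding the relations of $\Sa O$", and to prove the two implications separately. For $(2)\Rightarrow(1)$ I would argue by compactness. Let $d$ list the parameters occurring in the $Y_R$ and form the partial type in variables $(z_\alpha)_{\alpha\in O}$ over $d$ saying that the $z_\alpha$ are pairwise distinct and, for each $k$-ary $R\in L$ and each $a\in O^k$, containing $(z_{a_1},\dots,z_{a_k})\in Y_R$ if $\Sa O\models R(a)$ and its negation otherwise. A finite fragment mentions only the $z_\alpha$ for $\alpha$ in some finite substructure $\Sa P\in\age(\Sa O)$, and the injection $\uptau\colon P\to M^m$ provided by $(2)$ realizes that fragment in $\Sa M$; so the type is consistent and is realized by some $(\beta_\alpha)_{\alpha\in O}$ in an elementary extension $\Sa M'\succeq\Sa M$. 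The map $a\mapsto\beta_a$ is then an injection $O\to(M')^m$ with $\Sa O\models R(a)\Leftrightarrow\beta_a\in Y_R$ for all $a$, so by quantifier elimination for $\Sa O$ and Corollary~\ref{cor:qe} the model $\Sa M'$ trace defines $\Sa O$; hence $\Th(\Sa M)$ trace defines $\Sa O$.

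For $(1)\Rightarrow(2)$ I would fix a model $\Sa N\models\Th(\Sa M)$ that trace defines $\Sa O$ and apply Corollary~\ref{cor:qe} to obtain $m$, an injection $\uptau_0\colon O\to N^m$, and $\Sa N$-definable $Y^0_R\subseteq N^{mk}$ with $\Sa O\models R(a)\Leftrightarrow\uptau_0(a)\in Y^0_R$ for all $a\in O^k$. The key step is to absorb the parameters: writing $Y^0_R=\psi_R(N;c_R)$, letting $c$ be the concatenation of the $c_R$ and $m'=m+|c|$, I replace $\uptau_0$ by $\uptau\colon a\mapsto(\uptau_0(a),c)\in N^{m'}$ and replace each $Y^0_R$ by the \emph{parameter-free} set $\widehat Y_R\subseteq N^{m'k}$ defined by: the last $|c|$ coordinates of all $k$ arguments coincide, and $\psi_R$ holds of the first-$m$-coordinate blocks of the arguments together with the portion of those common last coordinates that indexes $c_R$. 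Then $\uptau$ realizes $\Sa O$ up to isomorphism as a substructure of $(N^{m'};(\widehat Y_R)_R)$. Fixing $\Sa P\in\age(\Sa O)$ and composing an embedding $\Sa P\hookrightarrow\Sa O$ with $\uptau$ yields an injection $P\to N^{m'}$ witnessing the first-order sentence $\sigma_{\Sa P}$ stating that $P$ injects into $(\,\cdot\,)^{m'}$ so that $\widehat\psi_R$ picks out exactly the relation $R^{\Sa P}$ on the image. Since $\sigma_{\Sa P}$ holds in $\Sa N$ and $\Sa M\equiv\Sa N$, it holds in $\Sa M$; so $Y_R:=\widehat\psi_R(M)\subseteq M^{m'k}$ witness $(2)$.

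The step I expect to be the crux is the parameter-absorption. Condition $(2)$ demands one family $(Y_R)$ that works simultaneously for every $\Sa P\in\age(\Sa O)$, so one cannot merely transfer, for each $\Sa P$ separately, the weaker statement that suitable parameters exist for $\Sa P$; folding the parameters of the $Y^0_R$ into extra coordinates of the trace map makes the coding formulas parameter-free and uniform, after which a single first-order transfer per $\Sa P$ suffices. One caveat: $\sigma_{\Sa P}$ is genuinely first-order when $L$ is finite, the case relevant to the applications; for infinite $L$ the conjunction over $R\in L$ is infinitary, and one would instead realize the corresponding partial type over $\emptyset$ in $\Sa M$, which is where a little saturation of $\Sa M$ is used.
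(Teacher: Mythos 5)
Your proof is correct. For $(2)\Rightarrow(1)$ you use exactly the route the paper indicates: build the type in variables $(z_\alpha)_{\alpha\in O}$ over the parameters of the $Y_R$, observe finite satisfiability via the injections $\uptau$ that $(2)$ supplies for each $\Sa P\in\age(\Sa O)$, realize in an elementary extension, and conclude via Corollary~\ref{cor:qe} (equivalently Proposition~\ref{prop:qe}); this is what the paper means by ``compactness and Proposition~\ref{prop:qe}.''

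For $(1)\Rightarrow(2)$ the paper says only ``the definitions directly show,'' and your write-up is more careful in a way that matters. Condition~$(2)$ pins the $Y_R$ to the \emph{specific} model $\Sa M$, whereas $(1)$ only guarantees a trace definition of $\Sa O$ in \emph{some} $\Sa N\models\Th(\Sa M)$. If the $Y^0_R$ in $\Sa N$ carry parameters, then ``there exists a single parameter tuple making the coding work for all $\Sa P\in\age(\Sa O)$'' is a $\Sigma_1$ statement over an infinite conjunction and does not transfer from $\Sa N$ to $\Sa M$ by elementary equivalence alone. Your parameter-absorption move --- appending the parameter tuple $c$ to $\uptau_0$ so that the coding formulas become parameter-free --- converts $(2)$ into a conjunction of first-order sentences $\sigma_{\Sa P}$ and lets each one transfer separately, which is exactly what is needed. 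Your caveat about infinite $L$ is also well-placed: after absorption the tuple $c=(c_R)_{R\in L}$ and the conjunction defining $\sigma_{\Sa P}$ are only finitary when $L$ is finite, and the lemma is only applied in the paper to finite relational languages (finitely homogeneous structures); for infinite $L$ one would indeed need some saturation of $\Sa M$, a hypothesis the lemma does not state but which its companion Lemma~\ref{lem:substructure1} does impose.
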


It is instructive to think about the case $\Sa O = (\Q;<)$ and recall some definitions of stability.

\begin{proof}
The definitions directly show that (1) implies (2).
The other implication follows by applying compactness and Proposition~\ref{prop:qe}.
\end{proof}

\begin{lemma}
\label{lem:substructure1}
Suppose that $L$ is relational, $\Sa O$ is an $L$-structure with quantifier elimination, and $\Sa O^*$ is an $L$-structure such that $\age(\Sa O) = \age(\Sa O^*)$.
Suppose that $\Sa M$ is $\max\{|L|,|O|\}^+$-saturated.
The following are equivalent:
\begin{enumerate}
\item $\Sa M$ trace defines $\Sa O$.
\item There is an injection $\uptau^* \colon O^* \to M^m$ such that for every $k$-ary $R \in L$ there is an $\Sa M$-definable $Y \subseteq M^{mk}$ such that for all $a \in (O^*)^k$ we have
$$ \Sa O^* \models R(a) \quad \Longleftrightarrow\quad \uptau^*(a)   \in Y. $$
\end{enumerate}
\end{lemma}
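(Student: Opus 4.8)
The plan is to show that both (1) and (2) are equivalent to condition~(2) of Lemma~\ref{lem:substructure}, which mentions only $\age(\Sa O)$ and so cannot distinguish $\Sa O$ from $\Sa O^{*}$. First I would record that (1) is equivalent to ``$\Th(\Sa M)$ trace defines $\Sa O$'': the forward implication is immediate because $\Sa M \models \Th(\Sa M)$, and the converse is Proposition~\ref{prop:trace-theories} (recalling that $\Th(\Sa M)$ trace defining $\Sa O$ is the same as $\Th(\Sa M)$ trace defining $\Th(\Sa O)$), applied with $\uplambda = \max\{|L|,|O|\}^{+}$, which is infinite, satisfies $\uplambda \ge |L|$ and $|O| < \uplambda$, and with respect to which $\Sa M$ is saturated by hypothesis. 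By Lemma~\ref{lem:substructure}, ``$\Th(\Sa M)$ trace defines $\Sa O$'' is equivalent to the statement $(\star)$: there are $m$ and $\Sa M$-definable sets $(Y_{R} \subseteq M^{mk} : R \in L \text{ is } k\text{-ary})$ such that every $\Sa P \in \age(\Sa O)$ admits an injection $\uptau_{\Sa P}\colon P \to M^{m}$ with $\Sa P \models R(a) \Leftrightarrow \uptau_{\Sa P}(a) \in Y_{R}$ for all $k$-ary $R$ and $a \in P^{k}$. It then remains to prove $(\star) \Leftrightarrow (2)$, and throughout I would keep $m$ and the family $(Y_{R})$ fixed.

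For $(2) \Rightarrow (\star)$: given $\uptau^{*}$ as in (2) and $\Sa P \in \age(\Sa O) = \age(\Sa O^{*})$, fix an embedding $\iota\colon \Sa P \hookrightarrow \Sa O^{*}$ and put $\uptau_{\Sa P} = \uptau^{*}\circ\iota$; this is injective, and for $k$-ary $R$ and $a \in P^{k}$ one has $\Sa P \models R(a) \Leftrightarrow \Sa O^{*} \models R(\iota(a)) \Leftrightarrow \uptau^{*}(\iota(a)) \in Y_{R} \Leftrightarrow \uptau_{\Sa P}(a) \in Y_{R}$. The substantive direction is $(\star) \Rightarrow (2)$: here the task is to glue the local maps $\uptau_{\Sa P}$ into a single injection $\uptau^{*}\colon O^{*}\to M^{m}$, which I would do by saturation. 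Let $\Phi$ be the partial type in variables $(x_{\alpha})_{\alpha \in O^{*}}$, each $x_{\alpha}$ an $m$-tuple, over the finitely many parameters defining the $Y_{R}$, asserting $x_{\alpha}\ne x_{\beta}$ for distinct $\alpha,\beta$ and, for each $k$-ary $R \in L$ and $(\alpha_{1},\dots,\alpha_{k}) \in (O^{*})^{k}$, the formula expressing $(x_{\alpha_{1}},\dots,x_{\alpha_{k}}) \in Y_{R}$ if $\Sa O^{*}\models R(\alpha_{1},\dots,\alpha_{k})$ and its negation otherwise. A finite fragment of $\Phi$ involves only variables indexed by some finite $F \subseteq O^{*}$; letting $\Sa P$ be the substructure of $\Sa O^{*}$ with universe $F$ --- which lies in $\age(\Sa O^{*}) = \age(\Sa O)$ --- the assignment $x_{\alpha}\mapsto\uptau_{\Sa P}(\alpha)$ satisfies the fragment, since $\uptau_{\Sa P}$ is injective and $\Sa O^{*}\models R(\bar\alpha) \Leftrightarrow \Sa P \models R(\bar\alpha) \Leftrightarrow \uptau_{\Sa P}(\bar\alpha) \in Y_{R}$ for $\bar\alpha\in F^{k}$. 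Hence $\Phi$ is finitely satisfiable, so realized in $\Sa M$, and any realization gives an injection $\uptau^{*}$ witnessing (2).

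The one point that needs care, and the main obstacle, is cardinality bookkeeping in the last step: realizing $\Phi$ in $\Sa M$ requires $|\Phi| \le \max\{|L|,|O|\}$, i.e.\ $|O^{*}| \le \max\{|L|,|O|\}$. This is automatic when $\Sa O$ is finite, since then $\age(\Sa O) = \age(\Sa O^{*})$ contains no structure with more than $|O|$ elements and hence $|O^{*}| \le |O|$; in the infinite case it is the intended setting (and is genuinely needed: if $|O^{*}|>|M|$ there is no injection $O^{*}\to M^{m}$ at all, so it should be read as part of the hypotheses). Granting this, the rest is routine, using only Lemma~\ref{lem:substructure}, Proposition~\ref{prop:trace-theories}, and elementary manipulation of embeddings of relational structures.
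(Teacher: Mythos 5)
Your proof is correct and follows the same route the paper intends: reduce~(1) to ``$\Th(\Sa M)$ trace defines $\Sa O$,'' invoke Lemma~\ref{lem:substructure} to turn this into the age-only condition~$(\star)$, and then use saturation to pass between the per-$\Sa P$ embeddings of~$(\star)$ and the global injection of~(2). Two small remarks. First, your cardinality observation is apt and genuinely necessary: as written the lemma only bounds $|O|$, but both the existence of an injection $O^*\to M^m$ and the realization of the type $\Phi$ require a bound on $|O^*|$ (e.g.\ $|O^*|\le\max\{|L|,|O|\}$), so the hypothesis should indeed be read as, or amended to, a saturation bound in terms of $|O^*|$ as well. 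Second, a minor slip: you say $\Phi$ is ``over the finitely many parameters defining the $Y_R$,'' but $L$ need not be finite, so the parameter set has size at most $|L|$ rather than being finite; this does not affect the argument since $|L|$ is still below the saturation threshold.
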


Again, think about the case $\Sa O = (\Q;<)$ and $\Sa O^* = (\N;<)$, recall some definitions of stability.

\begin{proof}
Apply Lemma~\ref{lem:substructure} and saturation.
\end{proof}

\subsection{Trace maximality}
We say that $T$ is \textbf{trace maximal} if $T$ trace defines every theory and that $\Sa M$ is trace maximal if $\Th(\Sa M)$ is trace maximal.

\begin{lemma}
\label{lem:max}
Suppose that $\Sa M$ is $\aleph_1$-saturated.
Then the following are equivalent:
\begin{enumerate}
\item $\Sa M$ is trace maximal,
\item There is an infinite $A \subseteq M^m$ such that for any $X \subseteq A^k$ there is an  $\Sa M$-definable $Y \subseteq M^{mk}$ such that $X = A^k \cap Y$,
\item there is an infinite $A \subseteq M^m$, a sequence $(\varphi_k(x_k,y_k) : k < \upomega)$ of formulas with $|x_k| = mk$, and elements $( \alpha_{k,X} \in M^{|y_k|} : k < \upomega, X \subseteq A^k)$ such that for every $\beta \in A^k$ we have $\Sa M \models \varphi_k(\beta,\alpha_{k,X})\Longleftrightarrow \beta \in X$,
\item there is an infinite $A \subseteq M^m$, a sequence $(\varphi_k(x_k,y_k) : k < \upomega )$ with $|x_k| = mk$, and $(\alpha_{k,X,Y} \in M^{|y_k|} : k < \upomega, X,Y \subseteq A^k, |X \cup Y| < \aleph_0, X \cap Y = \emptyset)$ such that for any $k < \upomega$, disjoint finite $X,Y \subseteq A^k$, and $\beta \in A^k$ we have 
\begin{align*}
\beta \in X &\quad\Longrightarrow\quad \Sa M \models \hspace{8pt} \varphi_k(\beta,\alpha_{k,X,Y})  \\
\beta \in Y &\quad\Longrightarrow\quad \Sa M \models \neg \varphi_k(\beta,\alpha_{k,X,Y}). 
\end{align*}

% \item there is $m$ and a sequence $(\varphi_k(x_k,y_k): k < \upomega)$ of formulas with $|x_k| = mk$, such that for any $\ell \in \N$ and $\Sa M \models T$ there is a finite $A \subseteq M^{m}$ such that for any $k \le \ell$ and $X \subseteq A^{\ell}$ there is $a_{k,X} \in M^{y_k}$ where for all $b \in A^k$ we have $\Sa M \models \varphi_k(b,a_{k,X})$ if and only if $b \in X$.
\end{enumerate}
\end{lemma}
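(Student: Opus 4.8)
The plan is to prove the cycle of implications $(1) \Rightarrow (2) \Rightarrow (3) \Rightarrow (4) \Rightarrow (1)$, treating $(2)$ and $(3)$ as essentially notational variants (a single $\Sa M$-definable $Y$ is just a formula $\varphi_k$ together with a parameter $\alpha_{k,X}$, and ``$X = A^k \cap Y$'' unwinds to ``$\beta \in X \Leftrightarrow \Sa M \models \varphi_k(\beta, \alpha_{k,X})$''), and likewise for the passage from $(3)$ to $(4)$, which only weakens the conclusion by restricting to finite disjoint $X,Y$ and asking the formula to behave correctly on $X \cup Y$ rather than on all of $A^k$. So the only substantive implications are $(1)\Rightarrow(2)$ and $(4)\Rightarrow(1)$.

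For $(1) \Rightarrow (2)$: since $\Sa M$ is trace maximal, $\Sa M$ trace defines the structure $\Sa O$ on a countable set $O$ equipped with a $k$-ary relation $R_{k,n}$ for \emph{every} $n$ and every subset of $O^k$ — more efficiently, take $\Sa O$ to be a countable set carrying, for each $k$, a family of $k$-ary relations rich enough that every subset of $O^k$ is $\emptyset$-definable; concretely one can let $\Sa O$ be $(\omega; (R_S)_{k<\omega, S\subseteq \omega^k})$, which admits quantifier elimination. By Proposition~\ref{prop:trace-theories} (or directly, since $\Sa O$ is countable and $\Sa M$ is $\aleph_1$-saturated, via Corollary~\ref{cor:trace-theories} if need be) $\Sa M$ trace defines $\Sa O$ via some injection $\uptau \colon O \to M^m$; set $A = \uptau(O)$. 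For each $k$ and each subset $X$ of $A^k$, pull back along $\uptau$ to a subset of $O^k$, which is $\Sa O$-definable, hence trace-defined by some $\Sa M$-definable $Y \subseteq M^{mk}$ with $\uptau(a) \in Y \Leftrightarrow a$ in the pullback; reading this on $A^k$ gives $X = A^k \cap Y$. The one point to check is that $\aleph_1$-saturation of $\Sa M$ lets us work with a fixed countable $\Sa O$ rather than passing to a larger saturated model — this is exactly Proposition~\ref{prop:trace-theories} with $\uplambda = \aleph_1$.

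For $(4) \Rightarrow (1)$: given any theory $T^*$ we must show $T$ trace defines $T^*$. By Morleyization assume $T^*$ is relational with quantifier elimination; by Proposition~\ref{prop:trace-theories} it suffices to trace define each countable $\Sa O^* \models T^*$ in a sufficiently saturated model of $T$, and by Corollary~\ref{cor:trace-theories} it suffices to work in an $\aleph_1$-saturated $\Sa M \models T$, which we may assume satisfies $(4)$. Enumerate $O^* = \{\beta_i : i < \omega\}$; for each $k$-ary relation symbol $R$ of the (countable) language, the set $R^{\Sa O^*} \subseteq (O^*)^k$ is the union of an increasing chain of finite sets $X^R_\ell$, with complements-inside-$(O^*)^k$ likewise exhausted by finite sets $Y^R_\ell$. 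Apply $(4)$: one wants an injection $\uptau^* \colon O^* \to M^m$ and, for each $R$, an $\Sa M$-definable $Y$ with $\Sa O^* \models R(a) \Leftrightarrow \uptau^*(a) \in Y$ for all $a \in (O^*)^k$ — then Lemma~\ref{lem:substructure1} (applied with $\Sa O^* = \Sa O$, which is legitimate since $\age(\Sa O^*) = \age(\Sa O^*)$) yields that $\Sa M$ trace defines $\Sa O^*$, hence $T$ trace defines $T^*$. To build $\uptau^*$ and the $Y$'s, I would use compactness together with $\aleph_1$-saturation: the family of parameters $\alpha_{k,X,Y}$ provided by $(4)$ for all pairs of disjoint \emph{finite} $X,Y$ gives, for each finite portion of $\Sa O^*$, formulas that correctly separate; writing down the (countable) type asserting the existence of a single parameter $\gamma_R$ with $\varphi_k(\uptau^*(a),\gamma_R)$ holding for $a \in X^R_\ell$ and failing for $a \in Y^R_\ell$ for all $\ell$, one checks finite satisfiability from $(4)$ and invokes $\aleph_1$-saturation (only countably many formulas and parameters are involved once we fix a countable language) to realize it. The injection $\uptau^*$ can be chosen first, sending $\beta_i$ to any $\omega$ distinct elements of $A$.

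\textbf{Main obstacle.} The genuine content is the coordination in $(4) \Rightarrow (1)$: $(4)$ only promises parameters working on \emph{finite} disjoint pairs, so one cannot simply read off a single definable set cutting out an infinite relation $R^{\Sa O^*}$. The argument must pass from these finitary approximations to an honest externally-definable (in a saturated extension), equivalently definable-with-parameters, set, and this is exactly where $\aleph_1$-saturation of $\Sa M$ and a compactness/type-realization argument must do the work — one realizes, for each of the countably many relation symbols simultaneously, a type over $\uptau^*(O^*)$ asserting the existence of the desired parameter. Keeping the bookkeeping uniform across all $k$ and all relation symbols at once, while only $\aleph_1$-saturation (not more) is available, is the step that requires care; everything else is unwinding definitions.
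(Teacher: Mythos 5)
Your chain of implications $(1)\Rightarrow(2)\Rightarrow(3)\Rightarrow(4)\Rightarrow(1)$ breaks at the step $(2)\Rightarrow(3)$, which you dismiss as a notational variant. It is not. In $(2)$ you are given, for each $X \subseteq A^k$, \emph{some} definable $Y$ with $X = A^k \cap Y$; the formula underlying $Y$ is allowed to depend on $X$. In $(3)$ you must exhibit a \emph{single} formula $\varphi_k$ that works for every $X \subseteq A^k$ with only the parameter $\alpha_{k,X}$ varying. There are $2^{|A|}$ subsets of $A^k$ and (in a countable language) only countably many formulas, so a pigeonhole argument gives one formula that handles continuum-many $X$, but not obviously all of them; and if the language is uncountable even that fails. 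This uniformity is precisely the substantive content you are eliding. The implication $(3)\Rightarrow(4)$ really is just restriction to finite disjoint pairs, but you have the location of the easy step reversed.

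The paper handles this by proving $(2)\Rightarrow(4)$ directly via a coding trick. The collection of pairs of disjoint finite subsets of $A^k$ has cardinality $|A|$, so one can index them by elements $b_{X,Y} \in A$ and build a single set $Z \subseteq A^{k+1}$ such that $(a,b_{X,Y})\in Z$ for $a \in X$ and $(a,b_{X,Y}) \notin Z$ for $a \in Y$. Applying $(2)$ with $k$ replaced by $k+1$ to this single set $Z$ yields one definable $Y^* \subseteq M^{m(k+1)}$ with $Z = A^{k+1} \cap Y^*$, and the formula defining $Y^*$ is the uniform $\varphi_k$ required by $(4)$ (take the last $m$-block of variables as the parameter variables $y_k$). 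Then $(4)\Rightarrow(3)$ follows by $\aleph_1$-saturation after shrinking $A$ to a countable infinite subset, since for a fixed $X \subseteq A^k$ the type $\{\varphi_k(\beta,y) : \beta \in X\} \cup \{\neg\varphi_k(\beta,y) : \beta \in A^k \setminus X\}$ is countable and finitely satisfiable by $(4)$.

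Your $(1)\Rightarrow(2)$ and your $(4)\Rightarrow(1)$ (resp.\ the paper's $(3)\Rightarrow(1)$) are essentially the same as the paper's, and your location of ``the main obstacle'' in $(4)\Rightarrow(1)$ is off: that step is a routine compactness-and-saturation argument, while the coding trick in $(2)\Rightarrow(4)$ is where the work lives. A secondary point: your $(4)\Rightarrow(1)$ argument as written assumes a countable language $L^*$, but trace maximality quantifies over all theories, including those in uncountable languages with no countable models. This is repairable (stretch $A$ to size $\uplambda$ by compactness, then pass to a $\uplambda^+$-saturated extension before realizing the types for each relation symbol), but you should say so explicitly rather than invoking $\aleph_1$-saturation of the original $\Sa M$.
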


\noindent
It's reasonable to view trace maximality as ``$\infty$-independence".
There are structures which are $k$-independent for all $k$ but not trace maximal, see the remarks at the end of this section.

\begin{proof}
We first show that $(1)$ implies $(2)$.
Suppose that $T$ is trace maximal.
Let $A$ be a countable set and $\Sa A$ be a structure on $A$ which defines every subset of every $A^k$.
Then $\Th(\Sa A)$ is trace definable in $T$, by Proposition~\ref{prop:trace-theories} $\Sa M$ trace defines $\Sa A$.
It is easy to see that $(3)$ and $(4)$ are equivalent.

\medskip\noindent
We show that $(3)$ implies $(1)$.
It is easy to see that for every infinite cardinal $\uplambda$ there is $\Sa M \prec \Sa N$ and $A \subseteq N^m$ such that $|A| = \uplambda$ and every subset of every $A^k$ is of the form $Y \cap A^k$ for some $\Sa N$-definable $Y \subseteq N^{mk}$.
This yields trace maximality.
We finish the proof by showing that $(2)$ implies $(4)$.
Trace maximality follows directly.
Suppose that $A \subseteq M^m$ satisfies the condition of $(2)$.
The collection of disjoint pairs of nonempty subsets of $A$ has cardinality $|A|$.
Hence we can fix a subset $Z$ of $A^{k+1}$ such that for any disjoint finite $X,Y \subseteq A^k$ there is $b_{X,Y} \in A$ such that for all $a \in A^k$ we have
\begin{align*}
a \in X &\quad\Longrightarrow\quad
(a,b_{X,Y}) \in Z \\
a \in Y &\quad\Longrightarrow\quad (a,b_{X,Y}) \notin Z    
\end{align*}
Let $\varphi(x,y)$ be a formula, possibly with parameters, such that for any $(a,b) \in A^{k+1}$ we have $\Sa M \models \varphi(a,b) \Longleftrightarrow (a,b) \in Z$.
$(4)$ follows.
\end{proof}

Proposition~\ref{prop:cat max} follows from Proposition~\ref{prop:omega cat}.

\begin{proposition}
\label{prop:cat max}
If $\Sa M$ is countable, $\aleph_0$-categorical, and trace maximal, then any countable structure in countable language is trace definable in $\Sa M$.
\end{proposition}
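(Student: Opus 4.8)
The plan is to read this off directly from Proposition~\ref{prop:omega cat} together with the definition of trace maximality, keeping careful track of the distinction between a \emph{theory} trace defining a structure and a \emph{structure} trace defining a structure. Fix a countable structure $\Sa O$ in a countable language $L^*$. Since $\Sa M$ is trace maximal, $\Th(\Sa M)$ trace defines the complete $L^*$-theory $\Th(\Sa O)$; by definition this means that every model of $\Th(\Sa O)$ is trace definable in some model of $\Th(\Sa M)$. Applying this to the particular model $\Sa O$ of $\Th(\Sa O)$, we conclude that $\Sa O$ is trace definable in some model of $\Th(\Sa M)$, i.e.\ $\Th(\Sa M)$ trace defines $\Sa O$.

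Now I would invoke Proposition~\ref{prop:omega cat} with $\uplambda = \aleph_0$: the language of $\Sa M$ is countable, $\Sa M$ is $\aleph_0$-categorical, and $|O| \le \aleph_0$, so that proposition gives $\Th(\Sa M)$ trace defines $\Sa O$ if and only if $\Sa M$ itself trace defines $\Sa O$. Combining this with the previous paragraph yields that $\Sa M$ trace defines $\Sa O$, which is exactly the claim.

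There is no real obstacle here; the one point to handle with care is the passage from ``some model of $\Th(\Sa M)$ trace defines $\Sa O$'' to ``$\Sa M$ trace defines $\Sa O$'', which is precisely what $\aleph_0$-categoricity of $\Sa M$ (via Proposition~\ref{prop:omega cat}, itself a consequence of Corollary~\ref{cor:ls}) buys us: the Löwenheim–Skolem argument behind Corollary~\ref{cor:ls} produces a countable $T$-model trace defining $\Sa O$, and up to isomorphism that model is $\Sa M$.
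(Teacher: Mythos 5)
Your proof is correct and is exactly what the paper intends: the paper simply states that Proposition~\ref{prop:cat max} ``follows from Proposition~\ref{prop:omega cat},'' and you have spelled out the two steps (trace maximality gives $\Th(\Sa M)$ trace defines $\Sa O$ via the definition; Proposition~\ref{prop:omega cat} with $\uplambda = \aleph_0$ upgrades this to $\Sa M$ trace defines $\Sa O$). The one convention you are silently relying on, that ``$\aleph_0$-categorical'' entails a countable language, is standard and clearly what the paper assumes.
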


Lemma~\ref{lem:max-1} shows that trace maximality is equivalent to a symmetrized version of trace maximality, this will be useful below.

\begin{lemma}
\label{lem:max-1}
Suppose that $\Sa M$ is an $\aleph_1$-saturated $L$-structure.
The following are equivalent:
\begin{enumerate}
\item $\Sa M$ is trace maximal, 
\item there is an infinite $A \subseteq M^m$ such that for every $k$-hypergraph $E$ on $A$ there is an $\Sa M$-definable $X \subseteq M^{mk}$ with $E(a_1,\ldots, a_k) \Longleftrightarrow (a_1,\ldots,a_k) \in X$ for all $a_1,\ldots,a_k \in A$,
\item there is a sequence $(a_i : i < \upomega )$ of elements of some $M^m$ such that for any $k$-hypergraph $E$ on $\upomega$ there is an $\Sa M$-definable $Y \subseteq M^{mk}$ with $E(i_1,\ldots,i_k) \Longleftrightarrow (a_{i_1},\ldots,a_{i_k}) \in Y$ for all $i_1,\ldots,i_k < \upomega$.
\end{enumerate}
\end{lemma}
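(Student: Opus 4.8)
The plan is to prove the cycle of implications $(1)\Rightarrow(3)\Rightarrow(2)\Rightarrow(1)$, leaning on the already-established characterizations of trace maximality in Lemma~\ref{lem:max}. The directions $(3)\Rightarrow(2)$ and $(2)\Rightarrow(1)$ should be quick; the content is in $(1)\Rightarrow(3)$, where one must turn an arbitrary subset-coding array into one that codes only \emph{symmetric} relations (hypergraphs) in a uniform way. First I would record the easy steps. For $(2)\Rightarrow(1)$: given $A\subseteq M^m$ as in $(2)$, I claim $A$ witnesses condition $(2)$ of Lemma~\ref{lem:max}. Indeed, an arbitrary $X\subseteq A^k$ need not be symmetric, but by replacing $A$ with $A\times\{1,\dots,k\}\subseteq M^{m}\times\{1,\dots,k\}$ — coded inside some $M^{m'}$ using $k$ fixed distinct parameters — any $k$-ary relation on $A$ can be recovered from a $k$-hypergraph on the larger set, exactly as in the ``Claim'' inside the proof of Proposition~\ref{prop:random}. (Alternatively, and more cheaply: note $(2)$ of the present lemma already gives coding of arbitrary $k$-hypergraphs, and the trick of tagging coordinates promotes this to arbitrary $k$-ary relations; then invoke Lemma~\ref{lem:max}$(2)$.) For $(3)\Rightarrow(2)$: the sequence $(a_i:i<\upomega)$ need not be injective, but an $\aleph_1$-saturated model has an infinite set, and after passing to an elementary extension (permissible since trace maximality is a property of $\Th(\Sa M)$ and, by $\aleph_1$-saturation plus Proposition~\ref{prop:trace-theories}, reflected back) one extracts from the $a_i$ an infinite $A$ on which every $k$-hypergraph is externally coded; saturation (Lemma~\ref{lem:lambda}) then replaces externally definable sets by definable ones on the countable set $A$.

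For the main implication $(1)\Rightarrow(3)$: assume $\Sa M$ is trace maximal. By Lemma~\ref{lem:max}$(3)$ (applied to the larger power $M^m$ with $m$ replaced by $mk$ as needed), there is an infinite $A\subseteq M^{m}$, formulas $\varphi_k(x_k,y_k)$ with $|x_k|=mk$, and parameters $\alpha_{k,X}$ coding \emph{every} subset $X\subseteq A^k$. Fix a countable sequence of distinct elements $(b_i:i<\upomega)$ of $A$. Given a $k$-hypergraph $E$ on $\upomega$, let $X_E\subseteq A^k$ be $\{(b_{i_1},\dots,b_{i_k}): E(i_1,\dots,i_k)\}$ — this is a genuine subset of $A^k$ (using that $E$ is defined on indices, and the $b_i$ are distinct, so the map $\upomega\to A$ is injective). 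Then $Y:=\{\beta\in M^{mk}:\Sa M\models\varphi_k(\beta,\alpha_{k,X_E})\}$ satisfies $E(i_1,\dots,i_k)\Leftrightarrow (b_{i_1},\dots,b_{i_k})\in Y$ for all $i_1,\dots,i_k<\upomega$. So $(a_i:i<\upomega):=(b_i:i<\upomega)$ works, giving $(3)$.

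The one point requiring care — and the step I expect to be the main obstacle — is making sure the \emph{symmetrization} does not break uniformity: in $(2)$ and $(3)$ we need a single infinite set (resp.\ sequence) that works for \emph{all} $k$ simultaneously and for all $k$-hypergraphs, whereas a naive application of Lemma~\ref{lem:max} might produce a set depending on $k$. This is handled exactly as in Lemma~\ref{lem:max}$(2)$, whose statement already quantifies ``for any $X\subseteq A^k$'' over all $k$ with one fixed $A$ and one fixed $m$; so the uniformity is inherited. I would therefore phrase the proof as: ``$(1)\Rightarrow(3)$'' via Lemma~\ref{lem:max}$(2)$ and the observation that hypergraphs are a special case of subsets of $A^k$ after fixing an injective enumeration; ``$(3)\Rightarrow(2)$'' via saturation and Lemma~\ref{lem:lambda}; ``$(2)\Rightarrow(1)$'' via the coordinate-tagging trick of Proposition~\ref{prop:random} followed by Lemma~\ref{lem:max}$(2)$. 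No new combinatorics beyond what appears in the proofs of Lemma~\ref{lem:max} and Proposition~\ref{prop:random} is needed.
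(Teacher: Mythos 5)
The implications $(1)\Rightarrow(3)$ and $(3)\Leftrightarrow(2)$ are fine (if anything, you overcomplicate $(3)\Rightarrow(2)$: the $a_i$ are automatically distinct because a $2$-hypergraph separating $i$ from $j$ would yield a contradiction if $a_i=a_j$, so $A=\{a_i\}$ works directly, with no need to pass to an elementary extension or invoke Lemma~\ref{lem:lambda}). The problem is $(2)\Rightarrow(1)$, which is where all of the content of this lemma actually lives.

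Your argument for $(2)\Rightarrow(1)$ does not go through. You claim that $A$ itself witnesses Lemma~\ref{lem:max}(2), and then immediately retreat to ``replacing $A$ with $A\times\{1,\dots,k\}$.'' But the Claim in Proposition~\ref{prop:random} needs a $k$-hypergraph on the \emph{tagged} set $A\times\{1,\dots,k\}$, whereas condition $(2)$ of the present lemma only supplies codings of $k$-hypergraphs on $A$ itself; there is no reduction from hypergraphs on the larger set to hypergraphs on $A$. Even if you fix this by instead partitioning $A$ into $k$ infinite pieces and running the tagging argument internally, the set you end up with — the image of the embedding $\mathbf e$ — lives inside $A^k\subseteq M^{mk}$, so it varies with $k$. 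Lemma~\ref{lem:max}(2) requires one infinite $B\subseteq M^{m'}$ with a fixed $m'$ coding every subset of \emph{every} power $B^k$ simultaneously, and that single-set uniformity is precisely what the tagging trick fails to deliver. (Your uniformity remark addresses the opposite direction, where passing from arbitrary subsets to hypergraphs is free; it does not help here.) Having, for each $k$, some set $B_k$ coding $k$-ary relations only shows $\Sa M$ trace defines theories in languages of bounded arity, which is strictly weaker than trace maximality.

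The paper's proof of $(2)\Rightarrow(1)$ has to work considerably harder, and the extra work is the heart of the lemma. One first uses a single graph on $A$ to produce two sequences $(a_i)$, $(b_j)$ with a definable bipartite ordering $\delta(a_i,b_j)\Leftrightarrow i<j$, then diagonalizes to $c_i=(a_i,b_i)\in A^2$ so that the linear order on indices becomes definable along $(c_i)$. With a definable order in hand, one proves by induction on $k$ that every $X\subseteq\omega^k$ is coded on $(c_i)$: the quantifier-free $<$-type of a tuple of indices is detected definably, types with repeated indices are handled by the inductive hypothesis, and for each injective order type one codes the trace of $X$ by a single $k$-hypergraph on $\{a_i\}$ composed with the appropriate fixed permutation. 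It is the combination of a definable linear order plus hypergraph codings of every arity on the same set that closes the asymmetry gap; the tagging trick alone cannot, because it trades symmetry for a change of base set.
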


\begin{proof}
It is clear that $(2)$ and $(3)$ are equivalent.
Lemma~\ref{lem:max} shows that $(1)$ implies $(2)$.
We show that $(2)$ implies $(1)$.
Suppose $(2)$.
Let $E$ be a graph on $A$ and $(a_i : i < \upomega)$, $(b_j : j < \upomega)$ be sequences of distinct elements of $A$ such that for all $i,j$ we have $E(a_i,b_j) \Longleftrightarrow i < j$.
Let $\delta(x,y)$ be an $L(M)$-formula such that for all $a,a^* \in A$ we have $E(a,a^*) \Longleftrightarrow \Sa M \models \delta(a,a^*)$.
For each $i$ let $c_i = (a_i,b_i)$ and let $\phi(x_1,y_1,x_2,y_2)$ be $\delta(x_1,y_2)$.
Then for any $i,j$ we have
\begin{align*}
\Sa M \models \phi(c_i,c_j) &\Longleftrightarrow \Sa M \models \phi(a_i,b_i,a_j,b_j)\\
&\Longleftrightarrow \Sa M \models \delta(a_i,b_j) \\
&\Longleftrightarrow i < j.
\end{align*}
%Let $C = \{c_i : i < \upomega \}$.
We now show that for any $X \subseteq \upomega^k$ there is an $\Sa M$-definable $Y \subseteq M^{2k}$ which satisfies $(i_1,\ldots,i_k) \in X \Longleftrightarrow (c_{i_1},\ldots,c_{i_k}) \in Y$ for all $(i_1,\ldots,i_k) \in \upomega^k$.
Trace maximality of $\Sa M$ follows by Lemma~\ref{lem:max}.
We apply induction on $k \ge 1$.

\medskip\noindent
Suppose $k = 1$ and $X \subseteq \upomega$.
Let $F$ be a graph on $A$ and $d \in A$ be such that $E(a_i,d) \Longleftrightarrow i \in X$ for all $i < \upomega$.
Let $\theta(x,y)$ be an $L(M)$-formula such that $F(a,a^*) \Longleftrightarrow \Sa M \models \theta(a,a^*)$ for all $a,a^* \in A$.
Let $Y$ be the set of $(a,b) \in M^2$ such that $\Sa M \models \theta(a,d)$.
Then for any $i < \upomega$ we have $i \in X \Longleftrightarrow c_i \in Y$.

\medskip\noindent
We now suppose that $k \ge 2$ and $X \subseteq \upomega^k$.
Abusing notation we let $\upomega$ denote the structure $(\upomega; <)$.
We let $\qftp_\upomega(\imag_1,\ldots,\imag_k)$ be the quantifer free type (equivalently: order type) of $(\imag_1,\ldots,\imag_k) \in \upomega^k$ and let $S_k(\upomega)$ be the set of quantifier free $k$-types.
Note that $S_k(\upomega)$ is finite.
For each $p \in S_k(\upomega)$ we fix an $\Sa M$-definable $Y_p \subseteq M^{2k}$ such that $$\qftp_{\upomega}(\imag_1,\ldots,\imag_k) = p\quad \Longleftrightarrow\quad (c_{\imag_1},\ldots,c_{\imag_k}) \in Y_p\quad \text{for all}\quad  (\imag_1,\ldots,\imag_k) \in \upomega^k.$$
We show that for every $p \in S_k(\upomega)$ there is an $\Sa M$-definable subset $X_p$ of $M^{2k}$ such that $(c_{\imag_1},\ldots,c_{\imag_k}) \in X \Longleftrightarrow (c_{\imag_1},\ldots,c_{\imag_k}) \in X_p$ for any $(\imag_1,\ldots,\imag_k) \in \upomega^k$ with $\qftp_\upomega(\imag_1,\ldots,\imag_k) = p$.
For any $(\imag_1,\ldots,\imag_k) \in \upomega^k$ we have 
$$(c_{\imag_1},\ldots,c_{\imag_k}) \in X \quad\Longleftrightarrow\quad (c_{\imag_1},\ldots,c_{\imag_k}) \in \! \bigcup_{p \in S_k(\upomega)} (Y_p \cap X_p).$$

\medskip\noindent
We fix $p(x_1,\ldots,x_k) \in S_k(\upomega)$ and produce $X_p$.
We first treat the case when $p \models (x_i = x_j)$ for some $i \ne j$.
To simplify notation we suppose that $p \models (x_1 = x_2)$.
Let $X^*$ be the set of $(\imag_1,\ldots,\imag_{k - 1}) \in \upomega^{k - 1}$ such that $(\imag_1,\imag_1,\imag_2,\ldots,\imag_{k - 1}) \in X$.
By induction there is an $\Sa M$-definable $Y^* \subseteq M^{2(k - 1)}$ such that we have $(\imag_1,\ldots,\imag_{k - 1}) \in X^* \Longleftrightarrow (c_{\imag_1},\ldots,c_{\imag_{k- 1}}) \in Y^*$ for all $(\imag_1,\ldots,\imag_{k - 1}) \in \upomega^{k - 1}$ .
Let $X_p$ be the set of $(d_1,\ldots,d_k) \in M^{2k}$ such that $d_1 = d_2$ and $(d_2,\ldots,d_k) \in Y^*$, note that $X_p$ is definable in $\Sa M$.

\medskip\noindent
Now suppose that  $p \models (x_i \ne x_j)$ when $i \ne j$.
Then if $(\imag_1,\ldots,\imag_k) \in \upomega^k$ and $|\{ \imag_1,\ldots,\imag_k \}| = k$ then there is a unique permutation $\sigma_p$ of $\{1,\ldots,k\}$ with $\qftp_{\upomega}(\imag_{\sigma_p(1)},\ldots,\imag_{\sigma_p(k)}) = p$.
Let $H^*$ be the $k$-hypergraph on $\{c_\imag : \imag < \upomega \}$ where $H^*(c_{\imag_1},\ldots,c_{\imag_k})$ if and only if $|\{ \imag_1,\ldots,\imag_k \}| = k$ and $(\imag_{\sigma_p(1)},\ldots,\imag_{\sigma_p(k)}) \in X$.
Let $H$ be the $k$-hypergraph on $\{a_\imag : \imag < \upomega\}$ where we have $H(a_{\imag_1},\ldots,a_{\imag_k}) \Longleftrightarrow H^*(c_{\imag_1},\ldots,c_{\imag_k})$ for all $(\imag_1,\ldots,\imag_k) \in \upomega^k$.
By assumption there is an $L(M)$-formula $\varphi(x_1,\ldots,x_k)$ such that $H(a_{\imag_1},\ldots,a_{\imag_k}) \Longleftrightarrow \Sa M \models \varphi(a_{\imag_1},\ldots,a_{\imag_k})$ for any $(\imag_1,\ldots,\imag_k)$ in $\upomega^k$.
Finally, let $X_p$ be the set of $((d_1,e_1),\ldots,(d_k,e_k)) \in M^{2k}$ with $\Sa M \models \varphi(d_1,\ldots,d_k)$.
\end{proof}

There are structures which are $k$-independent for all $k$ but not trace maximal.
If we allow ourselves to work with multisorted structures we can let $\Sa H_k$ be the generic countable $k$-hypergraph for $k\ge 2$ and take $\Sa H_\upomega=\bigsqcup_{k\ge 2} \Sa H_k$.
Then $\Sa H_\upomega$ is $k$-independent for all $k$ but not trace maximal.
There are more-or-less obvious ways of making a one-sorted structure along the same lines.
Note also that a theory $T$ trace defines $\Sa H_\upomega$ if and only if $T$ is $k$-independent for all $k$.
(It is not hard to see that there is no one-sorted structure with this property.)

% \begin{proposition}
% \label{prop:trace max examp}
% Let $L$ be the language containing unary relations $(P_k : 2 \le i < \upomega)$ and relations $(E_k : 2 \le k < \upomega)$ with each $E_k$ $k$-ary, and $\Sa M$ be an $L$-structure such that the $P_k$ are infinite and pairwise disjoint and each $E_k$ is a copy of the generic countable $k$-hypergraph on $P_k$.
% Then $\Sa M$ is $k$-independent for all $k$ and not trace maximal.
% \end{proposition}

% \begin{proof}
% The generic countable $k$-hypergraph is $k$-independent, so $\Sa M$ is $k$-independent for all $k$.
% Suppose that $\Sa M$ is trace maximal.
% We may suppose that $\Sa M$ is $\aleph_1$-saturated and $A\subseteq M$ is as in Lemma~\ref{lem:max}.2.
% Suppose $P_k\cap A$ is infinite for some $k$.
% Then the structure induced on $P_k$ by $\Sa M$ is trace maximal.
% But the structure induced on $P_k$ by $\Sa M$ is easily seen to be interdefinable with $(P_k;E_k)$ and the generic countable $k$-hypergraph is $(k+1)$-dependent, contradiction.
% Suppose $|P_k\cap A|<\aleph_0$ for all $k$.
% After possibly passing to an elementary expansion we suppose that $A \setminus\bigcup_{k} P_k$ is infinite, so the structure induced on $M\setminus\bigcup_k P_k$ is trace maximal.
% Any permutation of $M\setminus\bigcup_{k} P_k$ induces an isomorphism of $\Sa M$, hence the structure induced on  $M\setminus\bigcup_k P_k$ is trivial, contradiction.
% \end{proof}

\subsection{Indiscernible collapse}
\label{section:collapse}
\noindent
% Stability and $\nip$ can both be defined in terms of indiscernible collapse: $\monster$ is unstable if and only if $\monster$ admits an uncollapsed indiscernible picture of $(\Q;<)$ and $\monster$ has $\mathrm{IP}$ if and only if $\monster$ admits an uncollapsed indiscernible picture of the generic countable ordered graph.
We discuss trace definibility and indiscernible collapse.
We first recall some background definitions on indiscernible collapse.
Some of our conventions are a bit different then other authors, so some care is warranted.
We suppose that $\monster$ is a monster model of an $L$-theory $T$ and $A$ is a small set of parameters from $\monsterset$.
Let $L_{\mathrm{indis}}$ be a finite relational language, $\Sa I$ be a homogeneous $L_{\mathrm{indis}}$-structure, and suppose that $\Sa I$ is Ramsey.
%We also let $\Sa J$ be an arbitrary $L_{\mathrm{indis}}$-structure.
A \textbf{picture} of $\Sa I$ in $\monster$ is an injection $\upgamma \colon I \to \monsterset^n$ for some $n$.
Given an injection $\upgamma \colon I \to \monsterset^n$ and $a = (a_1,\ldots,a_k) \in I^k$ we let $\upgamma(a) = (\upgamma(a_1),\ldots,\upgamma(a_k))$.
A picture $\upgamma$ of $\Sa I$ in $\monsterset$ is \textit{$A$-indiscernible} if
$$ \tp_{\Sa I}(a) = \tp_{\Sa I}(b) \quad \Longrightarrow \quad \tp_{\monster}(\upgamma(a)|A) = \tp_{\monster}(\upgamma(b)|A)$$
for any $k$ and $a,b \in I^k$.
An $A$-indiscernible picture $\upgamma$ of $\Sa I$ in $\monster$ is \textit{uncollapsed} if
$$ \tp_{\Sa I}(a) = \tp_{\Sa I}(b) \quad \Longleftrightarrow \quad \tp_{\monster}(\upgamma(a)|A) = \tp_{\monster}(\upgamma(b)|A)$$
for any $k \in \N$ and $a,b \in I^k$.
Recall that $\Sa I$ has quantifier elimination so we could use $\qftp_{\Sa I}(a)$ in place of $\tp_{\Sa I}(a)$.
Let $\upgamma, \upgamma^*$ be pictures of $I$ in $\monster$ and suppose that $\upgamma^*$ is indiscernible.
Then $\upgamma^*$ is \textit{based on} $\upgamma$ if for any finite $L_0 \subseteq L$ and $a \in I^k$ there is $b \in I^k$ such that $\qftp_{\Sa I}(a) = \qftp_{\Sa I}(b)$ and $\tp_{\monster}(\upgamma^*(a)|A)\!\upharpoonright\! L_0 = \tp_{\monster}(\upgamma(b)|A)\!\upharpoonright\! L_0$.
It is shown in \cite[Theorem~2.13]{gcs} that if $\upgamma$ is a picture of of $\Sa I$ in $\monster$ then there is an $A$-indiscernible picture $\upgamma^*$ of $\Sa I$ in $\monster$ such that $\upgamma^*$ is based on $\upgamma$.
(This requires that $\Sa I$ is Ramsey.)

\begin{proposition}
\label{prop:picture}
Suppose as above that $\Sa I$ is a Ramsey finitely homogeneous structure.
Then the following are equivalent:
\begin{enumerate}
\item $\monster$ trace defines $\Sa I$ via an injection $I \to \monsterset^m$,
\item there is a small set $A$ of parameters and a non-collapsed $A$-indiscerniable picture $I \to \monsterset^m$ of $\Sa I$ in $\monster$.
\end{enumerate}
Therefore the following are equivalent:
\begin{enumerate}
\setcounter{enumi}{3}
\item $\monster$ trace defines $\Sa I$,
\item any $\aleph_1$-saturated elementary substructure of $\monster$ trace defines $\Sa I$,
\item there is a small set $A$ of parameters and a non-collapsed $A$-indiscernible picture of $\Sa I$ in $\monster$.
\end{enumerate}
\end{proposition}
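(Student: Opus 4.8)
The content is the equivalence of (1) and (2); once that is in hand, the second block of equivalences is bookkeeping.

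\emph{(2) $\Rightarrow$ (1).} Fix a small set $A$ and a non-collapsed $A$-indiscernible picture $\upgamma \colon I \to \monsterset^m$. As $\Sa I$ is finitely homogeneous it is $\aleph_0$-categorical, so $S_k(\Sa I)$ is finite and $\tp_{\Sa I}(a)=\qftp_{\Sa I}(a)$ for $a\in I^k$. By indiscernibility $\tp_{\monster}(\upgamma(a)\mid A)$ depends only on $p:=\qftp_{\Sa I}(a)$; call it $q_p$. Non-collapsedness says $p\mapsto q_p$ is injective, so $\{q_p:p\in S_k(\Sa I)\}$ is a finite family of pairwise distinct complete types over $A$; choose $L(A)$-formulas $\theta_p(x_1,\dots,x_k)$ with $\theta_p\in q_p$ and $\neg\theta_p\in q_{p'}$ for all $p'\ne p$, so that $\monster\models\theta_p(\upgamma(a))$ iff $\qftp_{\Sa I}(a)=p$. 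For each $k$-ary $R\in L_{\mathrm{indis}}$ set
\[
Y_R \ := \ \bigcup\{\, \theta_p(\monsterset^{mk}) \ : \ p\in S_k(\Sa I),\ R(x_1,\dots,x_k)\in p \,\},
\]
a finite union of $\monster$-definable sets, and one checks $\Sa I\models R(a)\Longleftrightarrow\upgamma(a)\in Y_R$ for all $a\in I^k$ (equality patterns are already recorded by $q_p$ and $\qftp_{\Sa I}$, consistently since $\upgamma$ is injective). Since $\Sa I$ has quantifier elimination and $L_{\mathrm{indis}}$ is relational, Proposition~\ref{prop:qe} shows $\monster$ trace defines $\Sa I$ via $\upgamma$.

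\emph{(1) $\Rightarrow$ (2).} Let $\uptau\colon I\to\monsterset^m$ witness trace definability. For each $k$-ary $R\in L_{\mathrm{indis}}$ fix an $L$-formula $\varphi_R$ and a parameter tuple $\bar c_R$ with $\Sa I\models R(a)\Longleftrightarrow\monster\models\varphi_R(\uptau(a),\bar c_R)$; let $A$ be the set of all entries of all the $\bar c_R$, which is small because $L_{\mathrm{indis}}$ is finite. Since $\Sa I$ is Ramsey, \cite[Theorem~2.13]{gcs} yields an $A$-indiscernible picture $\upgamma$ of $\Sa I$ based on $\uptau$. First, $\upgamma$ is injective: for distinct $a_1,a_2\in I$, applying ``based on'' to $(a_1,a_2)$ with any finite $L_0$ (equality is always present) gives $b_1\ne b_2$ with $\tp_\monster(\upgamma(a_1,a_2)\mid A)\!\upharpoonright\! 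L_0=\tp_\monster(\uptau(b_1,b_2)\mid A)\!\upharpoonright\! L_0$, and $\uptau(b_1)\ne\uptau(b_2)$ forces $\upgamma(a_1)\ne\upgamma(a_2)$, so $\upgamma$ is a genuine picture. It is $A$-indiscernible by construction, so non-collapsedness in one direction is automatic. Conversely, suppose $\qftp_{\Sa I}(a)\ne\qftp_{\Sa I}(b)$ for $a,b\in I^k$; by quantifier elimination some atomic formula separates them. If it is $x_i=x_j$, injectivity of $\upgamma$ separates $\tp_\monster(\upgamma(a)\mid A)$ from $\tp_\monster(\upgamma(b)\mid A)$. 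If it is $R(x_{i_1},\dots,x_{i_l})$ with $R\in L_{\mathrm{indis}}$, apply ``based on'' to the subtuples $(a_{i_1},\dots,a_{i_l})$ and $(b_{i_1},\dots,b_{i_l})$ with $L_0$ containing the symbols of $\varphi_R$: the transferred $\uptau$-tuples have the same $\qftp_{\Sa I}$, hence respectively satisfy and fail $R$, hence lie respectively inside and outside $\varphi_R(\monsterset^{ml},\bar c_R)$; since $\varphi_R$'s symbols are in $L_0$ and $\bar c_R$ is in $A$, the same holds of $\upgamma(a_{i_1},\dots,a_{i_l})$ and $\upgamma(b_{i_1},\dots,b_{i_l})$, so $\tp_\monster(\upgamma(a)\mid A)\ne\tp_\monster(\upgamma(b)\mid A)$. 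Thus $\upgamma$ is non-collapsed.

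\emph{The second block.} (1) $\Leftrightarrow$ (4) and (2) $\Leftrightarrow$ (6) hold by the definitions of ``trace defines'' and ``picture'', which leave the ambient exponent existentially quantified. For (4) $\Rightarrow$ (5): $\monster$ trace defining $\Sa I$ in particular exhibits a model of $\Th(\Sa I)$ trace definable in a model of $T$, so $T$ trace defines $\Th(\Sa I)$; since $|L_{\mathrm{indis}}|<\aleph_1$ and $|I|=\aleph_0<\aleph_1$, Proposition~\ref{prop:trace-theories} gives that every $\aleph_1$-saturated model of $T$ — in particular every $\aleph_1$-saturated elementary substructure of $\monster$ — trace defines $\Sa I$. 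For (5) $\Rightarrow$ (4): if $\Sa M\prec\monster$ is $\aleph_1$-saturated and trace defines $\Sa I$ via $\uptau\colon I\to M^m$, then for $\Sa I$-definable $Z\subseteq I^n$ and a witnessing $\Sa M$-definable $Y=\psi(M^{mn},\bar d)$ with $\bar d$ from $M$, the set $\psi(\monsterset^{mn},\bar d)$ witnesses the same over $\monster$ (legitimate since $\bar d\in\monsterset$ and $\uptau(I)\subseteq M^m$), so $\monster$ trace defines $\Sa I$ via the same $\uptau$. Hence (4) $\Leftrightarrow$ (5), completing the chain.

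\emph{The main obstacle} is the implication (1) $\Rightarrow$ (2): extracting from an arbitrary picture an indiscernible picture that is still non-collapsed. The delicate point is that ``based on'' is only a finite-fragment approximation, so the transfer of ``lies in $Y_R$'' is valid only after folding the defining formula $\varphi_R$ into the finite fragment $L_0$ and its parameters into the base set $A$; moreover, because equalities are not among the relations of $L_{\mathrm{indis}}$, injectivity of the new picture must be argued separately (again from ``based on''). Everything else is routine.
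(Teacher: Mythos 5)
Your proposal is correct and follows the same strategy as the paper's proof: use $\aleph_0$-categoricity to realize the finite type space with definable sets, invoke the Ramsey extraction of an indiscernible picture based on $\uptau$, and transfer definability through a finite sublanguage $L_0$ together with the finitely many parameters. The paper streamlines the (1)~$\Rightarrow$~(2) step by passing to pairwise disjoint sets $Y_p$ indexed by complete $k$-types and transferring membership in a single $Y_p$, whereas you separate on atomic formulas relation by relation (and verify injectivity explicitly rather than relying on the cited theorem to output an injection), but these are cosmetic variations on the same argument.
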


\noindent
As $\Sa I$ is $\aleph_0$-categorical we can and will apply Proposition~\ref{prop:qe}.

\begin{proof}
Note that the equivalence of $(4) - (6)$ follows from the equivalence of $(1) - (2)$ and Proposition~\ref{prop:trace-theories}.
Suppose that $\monster$ trace defines $\Sa I$ via $\uptau \colon I \to \monsterset^m$.
Then for every $k \in \N$ and $p \in S_k(\Sa I)$ there is an $\monster$-definable $Y_p \subseteq \monsterset^{mk}$ such that $\qftp_{\Sa I}(a) = p \Longleftrightarrow \uptau(a) \in Y_p$ for any $a \in I^k$.
We reduce to the case when the $Y_p$ are pairwise disjoint.
Note that the sets $Y_p \cap \uptau(I)^k$ are pairwise disjoint.
For each $p \in S_k(\Sa I)$ we let $Y^*_p = Y_p \setminus \bigcup_{q \in S_k(\Sa I) \setminus \{p\}} Y_q$.
Then the $Y^*_p$ are pairwise disjoint and $Y^*_p \cap \uptau(I)^k = Y_p \cap \uptau(I)^k$ for all $p \in S_k(\Sa I)$.
Therefore after replacing each $Y_p$ with $Y^*_p$ we may suppose that the $Y_p$ are pairwise disjoint.

\medskip\noindent
Let $A$ be a small set of parameters such that each $Y_p$ is $A$-definable and $\upgamma$ be an $A$-indiscernible picture of $\Sa I$ in $\monster$ which is based off of $\uptau$.
We show that $\upgamma$ is uncollapsed.
Suppose that $a_0,a_1 \in I^k$ and $p_0 := \qftp_{\Sa I}(a_0) \ne \qftp
_{\Sa I}(a_1) =: p_1$.
Let $L_0 \subseteq L$ be finite such that $Y_p$ is $L_0(A)$-definable for all $p \in S_k(\Sa I)$.
As $\upgamma$ is based on $\uptau$ there are $b_0,b_1 \in I^k$ such that for each $i \in \{0,1\}$ we have
\begin{enumerate}
\item $\qftp_{\Sa I}(b_i) = p_i$, and
\item $\tp_{\monster}(\upgamma(b_i)|A)\! \upharpoonright\! L_0 = \tp_{\monster}(\uptau(a_i)|A) \!\upharpoonright\! L_0$.
\end{enumerate}
We have $\uptau(a_0) \in Y_{p_0}$ and $\uptau(a_1) \in Y_{p_1}$.
Therefore $\upgamma(b_0) \in Y_{p_0}$ and $\upgamma(b_1) \in Y_{p_1}$.
As $Y_{p_0} \cap Y_{p_1} = \emptyset$ we have  $\tp_{\monster}(\upgamma(b_0)|A) \ne \tp_{\monster}(\upgamma(b_1)|A)$.

\medskip\noindent
Now suppose that $\upgamma \colon I \to \monsterset^m$ is a non-collapsed $A$-indiscernible picture of $\Sa I$ in $\monster$.
We show that $\monster$ trace defines $\Sa I$ via $\upgamma$.
We fix $k$ and let $S_k(\Sa I) = \{p_1,\ldots,p_n\}$ where the $p_i$ are distinct.
We produce $A$-definable $Y_1,\ldots,Y_n$ such that for any $a \in I^k$ we have $\qftp_{\Sa I}(\upgamma(a)) = p_i$ if and only if $\uptau(a) \in Y_i$.
Fix $a_1,\ldots,a_n \in I^k$ such that $\qftp_{\Sa I}(a_i) = p_i$ for each $i$.
As $\upgamma$ is non-collapsed $\tp_{\monster}(\upgamma(a_i)|A) \ne \tp_{\monster}(\upgamma(a_j)|A)$ when $i \ne j$.
For each $i,j$ there is an $A$-definable $Y_{ij} \subseteq \monsterset^{mk}$ such that $\upgamma(a_i) \in Y_{ij}$ and $\upgamma(a_j) \notin Y_{ij}$.
Let $Y_i = \bigcap_{j = 1}^{n} Y_{ij}$.
We have $\upgamma(a_i) \in Y_i$ and $\upgamma(a_j) \notin Y_i$ for all $i \ne j$.
Fix $b \in I^k$.
We show that $\tp_{\Sa I}(b) = p_i \Longleftrightarrow \upgamma(b) \in Y_i$.
Suppose that $\tp_{\Sa I}(b) = p_i$.
As $\upgamma$ is indiscernible we have $\tp_{\monster}(\upgamma(b)|A) = \tp_{\monster}(\upgamma(a_i)|A)$, so $b \in Y_i$.
Now suppose that $\upgamma(b) \in Y_i$.
Then $\upgamma(b) \notin Y_j$ when $j \ne i$, so $\tp_{\Sa I}(b) \ne p_j$ for all $j \ne i$.
Hence $\tp_{\Sa I}(b) = p_i$.
\end{proof}

\noindent
We continue to suppose that $\Sa I$ is finitely homogeneous and Ramsey.
We let $\Cal C_{\Sa I}$ be the class of complete first order theories $T$ such that any monster model of $T$ does not admit an uncollapsed indiscernible picture of $\Sa I$.
Corollary~\ref{cor:picture} is immediate from Proposition~\ref{prop:picture}.

\begin{corollary}
\label{cor:picture}
Let $\Sa I$ be a finitely homogeneous structure and suppose that $\Sa I$ is Ramsey.
If $T \in \Cal C_{\Sa I}$ and $T$ trace defines $T^*$ then $T^* \in \Cal C_{\Sa I}$.
\end{corollary}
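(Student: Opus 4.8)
The plan is to translate membership in $\Cal C_{\Sa I}$ into a statement purely about trace definibility and then close under transitivity. First I would record the following reformulation: for any complete theory $T'$ with monster model $\monster'$, one has $T' \in \Cal C_{\Sa I}$ if and only if $T'$ does not trace define $\Sa I$. Indeed, the equivalence of conditions (4) and (6) in Proposition~\ref{prop:picture} says that $\monster'$ admits a non-collapsed $A$-indiscernible picture of $\Sa I$ for some small $A$ if and only if $\monster'$ trace defines $\Sa I$, so it remains to see that $\monster'$ trace defines $\Sa I$ exactly when $T'$ does. The right-to-left direction is immediate, since $\monster' \models T'$. For the converse, if some $\Sa M' \models T'$ trace defines $\Sa I$, choose an elementary copy $\Sa M'' \prec \monster'$ of $\Sa M'$; then $\monster'$ trace defines $\Sa M''$ by Proposition~\ref{prop:trace-basic}(3), and $\Sa M''$ trace defines $\Sa I$, so transitivity (Proposition~\ref{prop:trace-basic}(1)) gives that $\monster'$ trace defines $\Sa I$.

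With this reformulation in hand, the corollary becomes the assertion that if $T$ does not trace define $\Sa I$ and $T$ trace defines $T^*$, then $T^*$ does not trace define $\Sa I$. I would argue the contrapositive: assume $T^*$ trace defines $\Sa I$, so that some $\Sa N \models T^*$ trace defines $\Sa I$. Because $T$ trace defines $T^*$, every model of $T^*$ — in particular $\Sa N$ — is trace definable in some model $\Sa M \models T$. Transitivity of trace definibility then yields that $\Sa M$ trace defines $\Sa I$, hence $T$ trace defines $\Sa I$, contradicting $T \in \Cal C_{\Sa I}$.

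I do not expect any genuine obstacle here; this is exactly the ``immediate from Proposition~\ref{prop:picture}'' that the text promises. The only step deserving a sentence of justification is the equivalence ``$T'$ trace defines $\Sa I$ iff $\monster' \models T'$ trace defines $\Sa I$'', and even that is just an application of Proposition~\ref{prop:trace-basic} (alternatively one could cite Proposition~\ref{prop:trace-theories}). Everything else is unwinding the definitions of $\Cal C_{\Sa I}$ and of ``$T$ trace defines $T^*$''.
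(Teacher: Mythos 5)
Your proof is correct and is essentially the paper's intended argument: the paper marks this as "immediate from Proposition~\ref{prop:picture}," and your write-up simply unpacks that by reformulating $T' \in \Cal C_{\Sa I}$ as "$T'$ does not trace define $\Sa I$" (via Proposition~\ref{prop:picture}) and then closing under transitivity of trace definibility.
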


\begin{proposition}
\label{prop:ramsey}
Suppose that $\Sa I$ and $\Sa J$ are finitely homogeneous structures and both $\Sa I$ and $\Sa J$ are Ramsey.
Then $\Cal C_{\Sa J} \subseteq \Cal C_{\Sa I}$ if and only if $\Th(\Sa I)$ trace defines $\Th(\Sa J)$.
Hence $\Cal C_{\Sa I} = \Cal C_{\Sa J}$ if and only if $\Sa I$ and $\Sa J$ are trace equivalent.
\end{proposition}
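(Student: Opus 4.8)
The plan is to reduce the whole statement to the reformulation of $\Cal C_{\Sa I}$ supplied by Proposition~\ref{prop:picture}. Since $\Sa I$ is finitely homogeneous it is $\aleph_0$-categorical and has quantifier elimination, so by Proposition~\ref{prop:picture} (equivalence of (4) and (6)) a monster model $\monster$ of $T$ admits an uncollapsed indiscernible picture of $\Sa I$ if and only if $\monster$ trace defines $\Sa I$. Because a monster model is highly saturated, Proposition~\ref{prop:trace-theories}, together with the fact (recorded in the introduction) that $T$ trace defines $T^*$ iff some $T^*$-model is trace definable in a $T$-model, shows that $\monster$ trace defines $\Sa I$ iff $T$ trace defines $\Th(\Sa I)$; in particular this does not depend on the choice of monster model. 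Hence
\[
\Cal C_{\Sa I}=\{\,T : T \text{ does not trace define } \Th(\Sa I)\,\},
\]
and likewise for $\Sa J$.

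Granting this, I would first do the forward direction. Assume $\Cal C_{\Sa J}\subseteq\Cal C_{\Sa I}$. Passing to complements, every complete theory that trace defines $\Th(\Sa I)$ also trace defines $\Th(\Sa J)$. Apply this to the theory $T=\Th(\Sa I)$ itself, which trace defines $\Th(\Sa I)$ via the identity by Proposition~\ref{prop:qe-trace} (using quantifier elimination for $\Sa I$). We conclude that $\Th(\Sa I)$ trace defines $\Th(\Sa J)$. For the converse, assume $\Th(\Sa I)$ trace defines $\Th(\Sa J)$, and let $T$ be any complete theory that trace defines $\Th(\Sa I)$; by transitivity of trace definibility for theories (Proposition~\ref{prop:trace-basic}(2)) $T$ trace defines $\Th(\Sa J)$, i.e. $T\notin\Cal C_{\Sa J}$. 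Thus $\Cal C_{\Sa J}\subseteq\Cal C_{\Sa I}$, which establishes the first equivalence.

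For the ``Hence'' clause, note that $\Cal C_{\Sa I}=\Cal C_{\Sa J}$ iff both $\Cal C_{\Sa J}\subseteq\Cal C_{\Sa I}$ and $\Cal C_{\Sa I}\subseteq\Cal C_{\Sa J}$; applying the first equivalence in each direction, this holds iff $\Th(\Sa I)$ trace defines $\Th(\Sa J)$ and $\Th(\Sa J)$ trace defines $\Th(\Sa I)$, i.e. iff $\Th(\Sa I)$ and $\Th(\Sa J)$ are trace equivalent, which by definition is exactly the assertion that $\Sa I$ and $\Sa J$ are trace equivalent.

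The only real subtlety — the ``main obstacle'' here, such as it is — is the initial bookkeeping step: checking that membership in $\Cal C_{\Sa I}$, which is phrased in terms of monster models admitting pictures, is genuinely a property of the theory $T$ alone and coincides with ``$T$ does not trace define $\Th(\Sa I)$'' (as opposed to ``$T$ does not trace define the particular structure $\Sa I$'', or a condition depending on which monster model one picks). Once Proposition~\ref{prop:picture} and the $\aleph_0$-categoricity of $\Sa I$ are invoked, everything else is a formal manipulation with transitivity and complements.
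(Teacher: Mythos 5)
Your proof is correct and follows essentially the same route as the paper: both hinge on Proposition~\ref{prop:picture} and Proposition~\ref{prop:trace-theories} to identify $\Cal C_{\Sa I}$ with the class of theories that do not trace define $\Th(\Sa I)$, and then reduce the claim to reflexivity and transitivity of trace definibility. The only cosmetic difference is that you extract this identification as a standalone bookkeeping step before doing the set-theoretic manipulation, whereas the paper runs the same chain of implications inline; also, your appeal to Proposition~\ref{prop:qe-trace} for $\Th(\Sa I)$ trace defining itself is more than is needed (reflexivity is immediate from the definitions, or from Proposition~\ref{prop:trace-basic}(3)), but it is not incorrect.
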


\begin{proof}
It is enough to prove the first claim.
Suppose that $\Th(\Sa I)$ trace defines $\Th(\Sa J)$.
By Proposition~\ref{prop:trace-theories} any $\aleph_1$-saturated elementary extension of $\Sa I$ trace defines $\Sa J$.
Suppose that $T \notin \Cal C_{\Sa I}$ and $\monster \models T$.
By Proposition~\ref{prop:picture} $\monster$ admits an uncollapsed indiscernible picture of $\Sa I$, so $\monster$ trace defines $\Sa I$, so by Proposition~\ref{prop:trace-theories} $\monster$ trace defines an $\aleph_1$-saturated elementary extension of $\Sa I$, so $\monster$ trace defines $\Sa J$, so $\monster$ admits an uncollapsed indiscernible picture of $\Sa J$.
Then $T \notin \Cal C_{\Sa J}$.
Now suppose that $\Cal C_{\Sa J} \subseteq \Cal C_{\Sa I}$.
We have $\Th(\Sa I) \notin \Cal C_{\Sa I}$, hence $\Th(\Sa I) \notin \Cal C_{\Sa J}$, so $\ionster$ admits an uncollapsed indiscernible picture of $\Sa J$, so $\Th(\Sa I)$ trace defines $\Th(\Sa J)$. 
\end{proof}

\noindent
Of course, many of the main examples of finitely homogeneous structures are not Ramsey.
Many finitely homogeneous structures become Ramsey after they are expanded by a suitable linear order, we show that this does not change trace equivalence in the unstable case.

% We use Lemma~\ref{lem:stable-0} below.

% \begin{proof}
% Let $\Sa N$ be an $\aleph_1$-saturated elementary extension of $\Sa O$.
% It suffices to show that $\Sa N$ trace defines $(\Sa O,\prec)$.
% Note that $(\Sa O,\prec)$ is a fusion of $\Sa O$ which admits quantifier elimination.
% Note that $\Sa N$ trace defines $\Sa O$.
% By Lemma~\ref{lem:stable-0} below $\Sa O$ trace defines $(\Q;<)$.
% Apply the second claim of Proposition~\ref{prop:fusion} in the case when $n = 2$, $\Sa O_\cup = (\Sa O,\prec)$, $\Sa O_1 = \Sa O$, and $\Sa O_2 = (\Q;<)$.
% \end{proof}

\medskip
Corollary~\ref{cor:add order} follows from Propositions~\ref{prop:new order} and \ref{prop:ramsey}.

\begin{corollary}
\label{cor:add order}
Suppose that $\Sa O$ is finitely homogeneous and unstable and let $\prec$ be a linear order on $O$ such that $(\Sa O,\prec)$ is finitely homogenous and Ramsey.
Then $\Cal C_{(\Sa O,\prec)}$ is the class of theories that do not trace define $\Sa O$.
\end{corollary}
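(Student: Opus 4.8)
The plan is to chain together two facts that are already available. First I would use Proposition~\ref{prop:picture} to compute $\Cal C_{(\Sa O,\prec)}$ explicitly. By hypothesis $(\Sa O,\prec)$ is finitely homogeneous and Ramsey, so Proposition~\ref{prop:picture} applies with $\Sa I=(\Sa O,\prec)$: for a monster model $\monster\models T$, $\monster$ admits an uncollapsed indiscernible picture of $(\Sa O,\prec)$ if and only if $\monster$ trace defines $(\Sa O,\prec)$. Since $(\Sa O,\prec)$ is countable and finitely homogeneous, hence $\aleph_0$-categorical in a finite language, Proposition~\ref{prop:trace-theories} lets one pass between "some $T$-model trace defines $(\Sa O,\prec)$" and "$\monster$ trace defines $(\Sa O,\prec)$". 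Unwinding the definition of $\Cal C_{(\Sa O,\prec)}$, this shows $T\in\Cal C_{(\Sa O,\prec)}$ if and only if $T$ does not trace define $(\Sa O,\prec)$.

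Second, I would remove the order using the instability hypothesis. The structure $\Sa O$ is homogeneous, hence relational, and $(\Sa O,\prec)$ is finitely homogeneous, hence admits quantifier elimination by Fact~\ref{fact:homo}. Since $\Sa O$ is unstable, the second clause of Proposition~\ref{prop:new order} applies with $\Sa M=\Sa O$ and yields that $(\Sa O,\prec)$ is trace equivalent to $\Sa O$. Consequently $T$ trace defines $(\Sa O,\prec)$ if and only if $T$ trace defines $\Sa O$, and combining this with the previous paragraph gives $T\in\Cal C_{(\Sa O,\prec)}$ if and only if $T$ does not trace define $\Sa O$, which is exactly the claim.

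The argument is essentially bookkeeping once Propositions~\ref{prop:picture} and \ref{prop:new order} are in place; the only point needing a little care is the passage between the monster-model formulation built into the definition of $\Cal C_{(\Sa O,\prec)}$ and the theory-level notion of trace definibility, which is precisely what the equivalences of items (1)--(2) and (4)--(6) in Proposition~\ref{prop:picture}, together with Proposition~\ref{prop:trace-theories}, supply. Alternatively one may skip the explicit invocation of Proposition~\ref{prop:picture} and instead quote the identity $\Cal C_{\Sa I}=\{T:T\text{ does not trace define }\Sa I\}$ for Ramsey finitely homogeneous $\Sa I$, which is established inside the proof of Proposition~\ref{prop:ramsey}, and then apply Proposition~\ref{prop:new order} as above; this is presumably what the attribution "follows from Propositions~\ref{prop:new order} and \ref{prop:ramsey}" refers to. I do not foresee a genuine obstacle here.
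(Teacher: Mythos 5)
Your proof is correct and matches the paper's intended argument, which simply cites Propositions~\ref{prop:new order} and \ref{prop:ramsey}. Your observation that Proposition~\ref{prop:picture} (rather than the literal statement of Proposition~\ref{prop:ramsey}) is what directly yields the identity $\Cal C_{\Sa I}=\{T:T\text{ does not trace define }\Sa I\}$ for Ramsey finitely homogeneous $\Sa I$ is accurate and arguably a cleaner citation, but both routes lead through the same two steps: pin down $\Cal C_{(\Sa O,\prec)}$ via the indiscernible-collapse characterization, then erase the generic order using the instability clause of Proposition~\ref{prop:new order}.
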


We describe a class of finitely homogeneous structures that admit Ramsey expansions by a dense linear order.
Let $\Cal E$ be a \Fraisse class of structures in a finite relational language.
Then $\Cal E$ admits \textbf{free amalgamation} if for any embeddings $e_i\colon \Sa O \to \Sa O_i$, $i \in \{1,2\}$ between elements of $\Cal E$, there is $\Sa P \in \Cal E$ and embeddings $f_i\colon \Sa O_i \to \Sa P$, $i \in \{1,2\}$ such that the associated square commutes and $\Sa P \models \neg R(a_1,\ldots,a_k)$ for any $k$-ary relation $R$ whenever $\{a_1,\ldots,a_k\}$ intersects $f_i(O_i) \setminus f_i(e_i(O))$ for $i \in \{1,2\}$.
We say that a finitely homogeneous structure $\Sa O$ is \textbf{free homogeneous} if $\age(\Sa O)$ admits free amalgamation.
The generic countable $k$-hypergraph is free homogeneous.
(Consider disjoint unions.)

\medskip\noindent
We may suppose that $\triangleleft$ is not in the language of $\Sa E$.
We let $(\Cal E,\triangleleft)$ be the class of structures in $\Cal E$ expanded by an arbitrary linear order $\triangleleft$.
See \cite[6.5.3]{macpherson-survey} for Fact~\ref{fact:lo-exp}.

\begin{fact}
\label{fact:lo-exp}
If $\Cal E$ has free amalgamation then $(\Cal E,\triangleleft)$ is a \Fraisse class with the Ramsey property.
\end{fact}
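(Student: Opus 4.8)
```latex
\textbf{Proof proposal for Fact~\ref{fact:lo-exp}.}

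The plan is to verify that $(\Cal E,\triangleleft)$ is a \Fraisse class and then invoke the Ne\v{s}et\v{r}il--R\"odl style argument for free amalgamation classes. First I would check the hereditary property, the joint embedding property, and the amalgamation property for $(\Cal E,\triangleleft)$. Hereditary is clear: a substructure of an $\Cal E$-structure with a linear order inherits both. Joint embedding follows from amalgamation over the empty structure (note $\emptyset \in \Cal E$ since $\Cal E$ has free amalgamation, hence is nonempty and closed under substructure). For amalgamation, suppose we are given $(\Sa O,\triangleleft) \hookrightarrow (\Sa O_i,\triangleleft_i)$ for $i \in \{1,2\}$. Forgetting the orders, free amalgamation in $\Cal E$ produces $\Sa P \in \Cal E$ with embeddings $f_i \colon \Sa O_i \to \Sa P$ agreeing on $\Sa O$; since we may take $\Sa P$ to have underlying set $f_1(O_1) \cup f_2(O_2)$ with $f_1(O_1) \cap f_2(O_2) = f_i(e_i(O))$, the only thing missing is a linear order $\triangleleft_{\Sa P}$ extending both $\triangleleft_1$ and $\triangleleft_2$. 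This is a standard fact about linear orders: given a linear order on a set $A$, a linear order on a set $B$, and agreement on $A \cap B$, there is a common linear extension to $A \cup B$ (the orders on $A\setminus B$ relative to $B$ and on $B \setminus A$ relative to $A$ are unconstrained, so fill them in arbitrarily subject to transitivity — e.g. place every element of $A \setminus B$ immediately after the largest element of $A \cap B$ below it, breaking ties by $\triangleleft_1$). So $(\Cal E,\triangleleft)$ has amalgamation and is a \Fraisse class.

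Next I would establish the Ramsey property for $(\Cal E,\triangleleft)$. The key point is that free amalgamation classes expanded by a linear order are Ramsey; this is exactly the content cited from \cite[6.5.3]{macpherson-survey}, which in turn rests on the Ne\v{s}et\v{r}il--R\"odl theorem. So for the write-up it suffices to note that $(\Cal E,\triangleleft)$ has the strong amalgamation property (which follows from free amalgamation, since free amalgams identify only the common part) and is a class of finite structures in a finite relational language equipped with a linear order, hence the Ne\v{s}et\v{r}il--R\"odl theorem applies and gives the Ramsey property. Alternatively, one cites \cite{macpherson-survey} directly as the statement already indicates.

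The main obstacle is not really a mathematical one here — the Ramsey property is a deep external input (Ne\v{s}et\v{r}il--R\"odl) that we are entitled to quote. The only genuine verification is the amalgamation property for $(\Cal E,\triangleleft)$, and within that the one subtle point is producing the common linear extension of $\triangleleft_1$ and $\triangleleft_2$ on the free amalgam; this works precisely because free amalgamation gives \emph{strong} amalgamation (no extra identifications beyond $\Sa O$), so the overlap of the two ordered structures is exactly $(\Sa O,\triangleleft)$ and there is no conflict to resolve. Given the citation, I would keep the proof to a sentence or two: ``Free amalgamation gives strong amalgamation in $\Cal E$; a common linear extension of the two orders on the free amalgam exists since the orders agree on the shared part, so $(\Cal E,\triangleleft)$ is a \Fraisse class, and it has the Ramsey property by \cite[6.5.3]{macpherson-survey}.''
```
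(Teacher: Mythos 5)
Your argument is correct and matches the content of the reference the paper cites; the paper's own ``proof'' is just the citation to \cite[6.5.3]{macpherson-survey}, and your sketch fills in precisely what that reference establishes: free amalgamation yields strong amalgamation, the two orders on the free amalgam have no conflict to resolve since the overlap is exactly $(\Sa O,\triangleleft)$, and the Ramsey property is the Ne\v{s}et\v{r}il--R\"odl theorem for ordered free amalgamation classes. Nothing further is needed.
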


Given a free homogeneous $\Sa O$ we let $(\Sa O,\triangleleft)$ be the \Fraisse limit of $(\mathrm{Age}(\Sa O),\triangleleft)$.
This is the expansion of $\Sa O$ by a generic linear order.
By Fact~\ref{fact:lo-exp} $(\Sa O,\triangleleft)$ is Ramsey.
Lemma~\ref{lem:free homo} is a special case of Proposition~\ref{prop:new order}.

\begin{lemma}
\label{lem:free homo}
An unstable free homogeneous structure $\Sa O$ is trace equivalent to $(\Sa O,\triangleleft)$.
\end{lemma}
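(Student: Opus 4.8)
The plan is to derive the statement directly from Proposition~\ref{prop:new order}, applied with $\Sa M=\Sa O$ and $\prec$ taken to be the generic linear order $\triangleleft$; almost all of the work is checking that the hypotheses of that proposition are met.

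First I would note that $\Sa O$ is relational, since it is (finitely) homogeneous. Next I would verify that $(\Sa O,\triangleleft)$ admits quantifier elimination: by hypothesis $\age(\Sa O)$ admits free amalgamation, so by Fact~\ref{fact:lo-exp} the class $(\age(\Sa O),\triangleleft)$ is a \Fraisse class, and by definition $(\Sa O,\triangleleft)$ is its \Fraisse limit. Since this class lives in the finite relational language obtained from the language of $\Sa O$ by adjoining $\triangleleft$, the limit $(\Sa O,\triangleleft)$ is finitely homogeneous, hence admits quantifier elimination by Fact~\ref{fact:homo}.

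The remaining point is to identify $(\Sa O,\triangleleft)$ with an honest expansion of $\Sa O$ so that Proposition~\ref{prop:new order} literally applies. For this I would observe that the reduct of $(\Sa O,\triangleleft)$ to the language of $\Sa O$ is a homogeneous structure whose age is $\age(\Sa O)$, so by uniqueness of \Fraisse limits (Fact~\ref{fact:homo}) it is isomorphic to $\Sa O$; thus, up to isomorphism, $(\Sa O,\triangleleft)$ is the expansion of $\Sa O$ by a linear order on its domain, and this expansion has quantifier elimination. With this in hand, Proposition~\ref{prop:new order} applies with $\Sa M=\Sa O$ and $\prec=\triangleleft$, and since $\Sa O$ is unstable its second clause gives that $(\Sa O,\triangleleft)$ is trace equivalent to $\Sa O$, which is exactly the claim.

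I do not expect a serious obstacle here: the content of the lemma is entirely contained in Proposition~\ref{prop:new order} together with Facts~\ref{fact:homo} and~\ref{fact:lo-exp}. The only step needing a little care is the bookkeeping in the previous paragraph, namely confirming that the abstractly defined \Fraisse limit $(\Sa O,\triangleleft)$ really has the shape ``$\Sa O$ together with a linear order'' required by the hypothesis of Proposition~\ref{prop:new order}, rather than merely being morally of that form.
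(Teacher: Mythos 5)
Your proof is correct and takes essentially the same approach as the paper, which simply states that the lemma is a special case of Proposition~\ref{prop:new order} (having already defined $(\Sa O,\triangleleft)$ as ``the expansion of $\Sa O$ by a generic linear order''). You have merely spelled out the bookkeeping that the paper leaves implicit, including the identification of the abstract \Fraisse limit with an honest expansion of $\Sa O$, which is a worthwhile point to make explicit.
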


Proposition~\ref{prop:ramse1} is a special case of Corollary~\ref{cor:add order}.

\begin{proposition}
\label{prop:ramse1}
Suppose that $\Sa O$ is an unstable free homogeneous structure.
Then $\Cal C_{(\Sa O,\triangleleft) }$ is the class of theories which do not trace define $\Sa O$.
\end{proposition}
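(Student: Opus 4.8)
The plan is to obtain Proposition~\ref{prop:ramse1} as an immediate instance of Corollary~\ref{cor:add order}, applied with the linear order $\prec := \triangleleft$. Corollary~\ref{cor:add order} needs three things: that $\Sa O$ is finitely homogeneous, that $\Sa O$ is unstable, and that $(\Sa O,\prec)$ is finitely homogeneous and Ramsey. The first two are part of the hypothesis here (a free homogeneous structure is by definition finitely homogeneous, and instability is assumed), so the only real task is to verify that $(\Sa O,\triangleleft)$ is finitely homogeneous and Ramsey.

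For that I would argue as follows. Since $\Sa O$ is free homogeneous, $\age(\Sa O)$ is a \Fraisse class in a finite relational language which admits free amalgamation. By Fact~\ref{fact:lo-exp}, the expanded class $(\age(\Sa O),\triangleleft)$ is then a \Fraisse class with the Ramsey property, in a finite relational language (that of $\Sa O$ together with the single binary symbol $\triangleleft$). By Fact~\ref{fact:homo}(1) its \Fraisse limit is homogeneous, hence finitely homogeneous since the language is finite; by definition of ``Ramsey'' for finitely homogeneous structures (its age has the Ramsey property) this limit is Ramsey. Finally, this \Fraisse limit is exactly $(\Sa O,\triangleleft)$: forgetting $\triangleleft$ turns an amalgamation diagram in $(\age(\Sa O),\triangleleft)$ into one in $\age(\Sa O)$, so the reduct of the limit to the language of $\Sa O$ is the \Fraisse limit of $\age(\Sa O)$, namely $\Sa O$. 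So $(\Sa O,\triangleleft)$ is a finitely homogeneous Ramsey structure.

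With all the hypotheses of Corollary~\ref{cor:add order} in place, we conclude that $\Cal C_{(\Sa O,\triangleleft)}$ is the class of theories that do not trace define $\Sa O$, as claimed. There is no genuine obstacle: the argument is purely a matter of chaining the definition of ``free homogeneous'', Fact~\ref{fact:lo-exp}, and Corollary~\ref{cor:add order}. The only point deserving a word of care is the identification of the \Fraisse limit of $(\age(\Sa O),\triangleleft)$ with $(\Sa O,\triangleleft)$, and this is handled by the reduct remark above (and is in any case already implicit in the discussion preceding Lemma~\ref{lem:free homo}).
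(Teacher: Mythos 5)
Your proof is correct and takes essentially the same approach as the paper: the paper also derives Proposition~\ref{prop:ramse1} directly from Corollary~\ref{cor:add order}, and you have simply spelled out the verification of the hypotheses (finite homogeneity and Ramseyness of $(\Sa O,\triangleleft)$ via Fact~\ref{fact:lo-exp}) that the paper leaves implicit.
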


% It is a conjecture \cite[Conjecture 1.1]{ramsey-survey} that any finitely homogeneous structure has a Ramsey expansion.
% The following question is natural.
% I have no idea if it should be true or not.

% \begin{qst}
% Is every finitely homogeneous structure trace equivalent to a finitely homogenous Ramsey structure?
% \end{qst}

We will use indiscernible collapse below in two ways.
First, we will use it to connect trace definibility to classification-theoretic notions defined in terms of indiscernible collapse.
Secondly we will apply Proposition~\ref{prop:airity}.

\begin{proposition}
\label{prop:airity}
Suppose that $\Sa I$ is either a finitely homogeneous Ramsey structure or an unstable free homogeneous structure and $T$ trace defines $\Sa I$.
Then $\air(\Sa I) \le \air(T)$.
\end{proposition}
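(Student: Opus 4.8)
The plan is to exploit the characterization of $\air(T)$ from Fact~\ref{fact:airity}, which says that $T$ is $k$-ary if and only if for every $\Sa N \models T$, every $A \subseteq N$, and every pair of tuples $(\alpha_1,\ldots,\alpha_n)$, $(\beta_1,\ldots,\beta_n)$ from $N^n$, equality of all $k$-subtuple types over $A$ forces equality of the full type over $A$. So suppose $\air(T) = k$ and that $T$ trace defines $\Sa I$; I want to show $\Sa I$ is $k$-ary, i.e.\ $\air(\Sa I) \le k$. Since $\Sa I$ is $\aleph_0$-categorical (it is finitely homogeneous, or in the free homogeneous case it is again finitely homogeneous), by Fact~\ref{fact:airity} it is enough to check the tuple-type condition for $\Sa I$ itself, and in fact since $\Sa I$ has quantifier elimination it suffices to work with quantifier-free types and with $A = \emptyset$.

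The key step is to transport a potential failure of $k$-arity in $\Sa I$ into a failure of $k$-arity in a $T$-model via the trace definition. By Proposition~\ref{prop:trace-theories} (in the free homogeneous case, via Lemma~\ref{lem:free homo} first replace $\Sa I$ by $(\Sa I,\triangleleft)$, which is Ramsey, trace equivalent to $\Sa I$, and has the same airity since adding a linear order to an unstable structure changes neither—wait, one must be slightly careful here; more cleanly, run the argument directly with the Ramsey hypothesis and note $\air(\Sa I,\triangleleft) = \air(\Sa I)$ for unstable $\Sa I$ because any binary order formula is already available), let $\monster \models T$ be a monster model and let $\uptau \colon I \to \monsterset^m$ be a trace-definition injection; by Proposition~\ref{prop:picture} we may take $\uptau = \upgamma$ to be a non-collapsed $A_0$-indiscernible picture of $\Sa I$ for some small parameter set $A_0$. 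Now suppose for contradiction that $\Sa I$ is not $k$-ary: there exist $n$ and tuples $a = (a_1,\ldots,a_n)$, $b = (b_1,\ldots,b_n)$ in $I^n$ with $\qftp_{\Sa I}(a_{i_1},\ldots,a_{i_k}) = \qftp_{\Sa I}(b_{i_1},\ldots,b_{i_k})$ for all $i_1 < \cdots < i_k$, yet $\qftp_{\Sa I}(a) \ne \qftp_{\Sa I}(b)$. Apply $\upgamma$: because $\upgamma$ is an $A_0$-indiscernible picture, equality of $k$-subtuple types of $a,b$ gives $\tp_\monster(\upgamma(a_{i_1}),\ldots,\upgamma(a_{i_k})\mid A_0) = \tp_\monster(\upgamma(b_{i_1}),\ldots,\upgamma(b_{i_k})\mid A_0)$ for every $k$-subset of indices. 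But $\upgamma$ is non-collapsed, so $\qftp_{\Sa I}(a)\ne\qftp_{\Sa I}(b)$ yields $\tp_\monster(\upgamma(a)\mid A_0)\ne\tp_\monster(\upgamma(b)\mid A_0)$, hence the $mn$-tuples $(\upgamma(a_1),\ldots,\upgamma(a_n))$ and $(\upgamma(b_1),\ldots,\upgamma(b_n))$ in $\monsterset^{mn}$ have different types over $A_0$.

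This is almost a contradiction with $\air(T) = k$, but there is an index-bookkeeping subtlety: the $mn$-tuple is grouped into $n$ blocks of length $m$, and matching $k$ of the $\Sa I$-points corresponds to matching $km$ of the $\monster$-coordinates, not an arbitrary $km$ of them. So the main obstacle — and the step I expect to require the most care — is to package the indiscernibility input so that it actually matches the hypothesis ``all $k$-element coordinate subtuples have equal type.'' The fix is to argue that it suffices to control $k$-subtuples of the \emph{blocks}: concretely, first observe that $\air(T) = k$ implies, by a routine induction on block length using Fact~\ref{fact:airity}, that if two $mn$-tuples arranged in $n$ blocks of length $m$ agree on the type (over $A_0$) of every $k$-element sub-collection-of-blocks then they agree on the full type — because agreement on all $k$-coordinate subtuples is \emph{implied} by agreement on all $k$-block subtuples (a $k$-coordinate subtuple lives inside a $k$-block subtuple, as $k \le km$; more precisely we need every $\le k$-coordinate subtuple, and any $\le k$ coordinates touch $\le k$ blocks, so they sit inside a $\le k$-block subtuple whose type we control). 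Thus agreement on all $k$-block subtuple types over $A_0$ forces equality of the full type over $A_0$, contradicting what we derived above. Therefore no such $a,b$ exist, $\Sa I$ is $k$-ary, and $\air(\Sa I)\le k = \air(T)$. If $\air(T) = \infty$ the statement is vacuous. This completes the plan; the only real content beyond citing Propositions~\ref{prop:picture}, \ref{prop:trace-theories}, Fact~\ref{fact:airity}, and (in the free case) Lemma~\ref{lem:free homo}, is the elementary combinatorial reduction from ``$k$-coordinate subtuples'' to ``$k$-block subtuples.''
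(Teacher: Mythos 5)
Correct, and essentially the paper's proof: you produce an uncollapsed indiscernible picture via Proposition~\ref{prop:picture}, push equality of $k$-subtuple types from $\Sa I$ into $\monster$ by indiscernibility, apply $\air(T)=k$ through Fact~\ref{fact:airity}, and pull back through non-collapse, with the free homogeneous case reduced to the Ramsey one via Lemma~\ref{lem:free homo}. Your block-versus-coordinate reduction (any $\le k$ coordinates of the flattened $mn$-tuple lie inside some $k$-block subtuple, whose type is controlled) is a correct and worthwhile filling-in of a step the paper applies silently; by contrast, your gloss that $\air(\Sa I)=\air(\Sa I,\triangleleft)$ holds ``because any binary order formula is already available'' is the wrong reason --- what actually does the work is genericity of $\triangleleft$: by homogeneity of the Fra\"iss\'e limit of $(\age(\Sa I),\triangleleft)$, any $\Sa I$-type of a finite tuple can be realized with any prescribed linear order on its entries, so given $\alpha,\beta$ with matching $k$-subtuple $\Sa I$-types one may re-realize $\beta$ with $\triangleleft$-order matching $\alpha$, upgrading to matching $(\Sa I,\triangleleft)$-types and letting $k$-arity of $(\Sa I,\triangleleft)$ transfer to $\Sa I$.
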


Proposition~\ref{prop:airity} fails in general.
We describe a ternary structure interpretable in the trivial theory of a set with equality, see \cite[Example 3.3.2]{macpherson-survey} or \cite[Example 1.2]{thomas-hyper}.
Let $X$ be an infinite set, $O$ be $\{ \{a,a'\} : a,a' \in X, a \ne a' \}$, and $E$ be the binary relation on $O$ where we have $E(\beta,\beta') \Longleftrightarrow |\beta\cap\beta'|=1$.
Note that if $\beta_1,\beta_2,\beta_3 \in O$ and $E(\beta_i,\beta_j)$ for distinct $i,j \in \{1,2,3\}$ then $|\beta_1 \cap \beta_2 \cap \beta_3|$ is either $0$ or $1$.
Let $R$ be the ternary relation on $O$ where
\[
R(\beta_1,\beta_2,\beta_3) \quad \Longleftrightarrow \quad |\beta_1 \cap \beta_2 \cap \beta_3| = 1 \hspace{.2cm}\land\hspace{.2cm} \bigwedge_{i \ne j} E(\beta_i,\beta_i).
\]
Let $\Sa O = (O;E,R)$.
Then $\Sa O$ is homogeneous and interpretable in $X$ (considered as a set with equality).
Any permutation of $X$ induces an automorphism on $\Sa O$ in an obvious way and it is easy to see that this induces a transitive action on $\{ (\beta,\beta^*) \in O^2 : E(\beta,\beta^*) \}$.
Hence $\tp_{\Sa O}(\beta,\beta^*) = \tp_{\Sa O}(\gamma,\gamma^*)$ when $E(\beta,\beta^*),E(\gamma,\gamma^*)$.
Fix distinct $a,b,b',b'',x,y,z,w$ from $X$.
Let
\[
\beta_1 = \{a,b\}, \beta_2 = \{a,b'\}, \beta_3 = \{a,b''\} \quad\text{and}\quad \gamma_1 = \{x,y\}, \gamma_2 = \{y,z\}, \gamma_3 = \{z,w\}.
\]
Then $E(\beta_i,\beta_j),E(\gamma_i,\gamma_j)$ hence $\tp_{\Sa O}(\beta_i,\beta_j) = \tp_{\Sa O}(\gamma_i,\gamma_j)$ when $i,j \in \{1,2,3\}$ are distinct.
But we have $R(\beta_1,\beta_2,\beta_3)$, $\neg R(\gamma_1,\gamma_2,\gamma_3)$ so $\tp_{\Sa O}(\beta_1,\beta_2,\beta_3) \ne \tp_{\Sa O}(\gamma_1,\gamma_2,\gamma_3)$.
Thus $\Sa O$ is ternary.

\medskip
Given $\alpha = (\alpha_1,\ldots,\alpha_n)$, $1 \le i_1 < \ldots < i_k \le n$, $I = \{i_1,\ldots,i_k\}$ we let $\alpha_I = (\alpha_{i_1},\ldots,\alpha_{i_k})$.

\begin{proof}[Proof of Proposition~\ref{prop:airity}]
We first address the case when $\Sa I$ is finitely homogeneous Ramsey.
Applying Proposition~\ref{prop:picture} we let $\monster \models T$, $A$ be a small set of parameters from $\monster$, and $\upgamma \colon I \to \monsterset^m$ be an uncollapsed $A$-indiscernible picture of $\Sa I$ in $\monster$.
The case $\air(T) = \infty$ is trivial, so we suppose $\air(T) = k $.
We apply Fact~\ref{fact:airity}.
We fix $\alpha = (\alpha_1,\ldots,\alpha_n)$ and $\beta = (\beta_1,\ldots,\beta_n)$ in $I^n$.
Suppose that $\tp_{\Sa I}( \alpha_I) = \tp_{\Sa I}(\beta_I)$ for all $I \subseteq \{1,\ldots,n\}$, $|I| = k$.
By $A$-indiscernibility we have $\tp_{\monster}(\upgamma(\alpha_I)|A) = \tp_{\monster}(\upgamma(\beta_I)|A)$ for all $I \subseteq \{1,\ldots,n\}$, $|I| = k$.
As $\air(T) = k$ we have $\tp_{\monster}(\upgamma(\alpha)|A) = \tp_{\monster}(\upgamma(\beta)|A)$.
As $\upgamma$ is uncollapsed, $\tp_{\Sa I}(\alpha) = \tp_{\Sa I}(\beta)$.

\medskip
We now suppose that $\Sa I$ is free homogeneous and unstable.
Let $(\Sa I,\triangleleft)$ be as defined above.
By Lemma~\ref{lem:free homo} $\Sa I$ is trace equivalent to $(\Sa I,\triangleleft)$.
An easy application of homogenity of $(\Sa I,\triangleleft)$ shows that $\air(\Sa I) = \air(\Sa I,\triangleleft)$.
Apply the Ramsey case to $(\Sa I,\triangleleft)$.
\end{proof}

\subsection{Stability}
\label{section:stability}
We show that many stability theoretic properties are preserved under trace definibility.
Recall that by Lemma~\ref{lem:stable-0} $T$ is unstable if and only if $T$ trace defines $\dlo$.

\begin{proposition}
\label{prop:stable-0}
If $T$ is stable and $T^*$ is trace definable in $T$ then $T^*$ is stable.
Furthermore $T$ is stable if and only if $T$ trace defines $\mathrm{DLO}$.
\end{proposition}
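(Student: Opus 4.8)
The plan is to deduce the proposition from Lemma~\ref{lem:stable-0} together with transitivity of trace definability (Proposition~\ref{prop:trace-basic}). I would prove the first assertion by contraposition. Suppose $T^*$ is unstable. Then by Lemma~\ref{lem:stable-0}, $T^*$ trace defines $\dlo$. Since by hypothesis $T$ trace defines $T^*$, Proposition~\ref{prop:trace-basic}(2) gives that $T$ trace defines $\dlo$, so Lemma~\ref{lem:stable-0} again yields that $T$ is unstable. Contrapositively, if $T$ is stable then so is $T^*$. The second assertion is then immediate from Lemma~\ref{lem:stable-0}.

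For expository purposes I would also record the underlying direct argument, since it is the template for the $\nip$, $k$-independence, and similar preservation statements in Theorem~\ref{thm:preservation}. If $T^*$ is unstable, fix $\Sa O \models T^*$, a formula $\psi(x,y)$ (with parameters permitted), and sequences $(a_i)_{i<\upomega}$, $(b_i)_{i<\upomega}$ from $O$ with $\Sa O \models \psi(a_i,b_j) \Longleftrightarrow i<j$. By Proposition~\ref{prop:trace-theories} there is $\Sa M \models T$ trace defining $\Sa O$, say via $\uptau \colon O \to M^m$. Applying the defining property of trace definability to the $\Sa O$-definable set $X = \{(\bar\alpha,\bar\beta) : \Sa O \models \psi(\bar\alpha,\bar\beta)\} \subseteq O^{|x|+|y|}$ produces an $\Sa M$-definable $Y \subseteq M^{m(|x|+|y|)}$ with $(\uptau(\bar\alpha),\uptau(\bar\beta)) \in Y \Longleftrightarrow \Sa O \models \psi(\bar\alpha,\bar\beta)$ for all $\bar\alpha,\bar\beta$; then a formula defining $Y$, together with the sequences $(\uptau(a_i))_{i<\upomega}$ and $(\uptau(b_i))_{i<\upomega}$ in $M^m$, witnesses the order property in $\Sa M$, so $T$ is unstable.

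I do not expect a real obstacle here. The only points requiring care are that the unstable configuration may be taken inside a single model of $T^*$ and then transported into a model of $T$ — which is exactly what Proposition~\ref{prop:trace-theories} provides — and that trace definability is allowed to use parameters, so the bookkeeping of parameters and of the arities $|x|,|y|$ is harmless.
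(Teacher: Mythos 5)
Your proposal is correct and matches the paper's argument in substance. The paper's proof is terse (``the first claim is clear, the second follows from Lemma~\ref{lem:stable-0}''); your first paragraph makes the ``clear'' part explicit by routing it through Lemma~\ref{lem:stable-0} and transitivity of trace definability, and your second paragraph records the direct transport of the order property, which is presumably what ``clear'' alludes to — both are standard and equivalent. One minor remark, not a defect in your argument: the proposition as stated in the paper says ``$T$ is stable if and only if $T$ trace defines $\mathrm{DLO}$,'' which is a typo for ``unstable'' (it should match Lemma~\ref{lem:stable-0}); you implicitly and correctly read it the intended way.
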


\begin{proof}
The first claim of Prop~\ref{prop:stable-0} is clear.
The second claim follows from Lemma~\ref{lem:stable-0}.
\end{proof}

\noindent
Given a structure $\Sa M$, a subset $A$ of $M$, and a cardinal $\uplambda$, we let $S_n(\Sa M,A)$ be the set of $n$-types over $A$ and $S_n(\Sa M,\uplambda)$ be the supremum of $\{ |S_n(\Sa M,A)| : A \subseteq M, |A| \le \uplambda \}$.
Given a theory $T$ we let $S(T,\uplambda)$ be the supremum of $\{ S_n(\Sa M,\uplambda) : n \in \N, \Sa M \models T \}$.

\begin{proposition}
\label{prop:lambda-0}
Suppose that $\uplambda \ge |L^*|$ is an infinite cardinal and $T^*$ is trace definable in $T$.
Then $S(T^*,\uplambda) \le S(T,\uplambda)$.
Hence if $T$ is $\uplambda$-stable then $T^*$ is $\uplambda$-stable.
\end{proposition}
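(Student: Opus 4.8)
The plan is to reduce everything to a cardinality bound on Stone spaces. It suffices to show that $|S_n(\Sa O,A)| \le S(T,\uplambda)$ whenever $\Sa O \models T^*$, $n \in \N$, and $A \subseteq O$ with $|A|\le\uplambda$; the stated inequality $S(T^*,\uplambda)\le S(T,\uplambda)$ then follows by taking suprema over $n$, over $A$, and over $\Sa O \models T^*$, and the ``hence'' clause is immediate since $\uplambda$-stability of a theory $U$ is exactly the statement $S(U,\uplambda)\le\uplambda$. So fix $\Sa O \models T^*$, $n$, and $A$. Since $T$ trace defines $T^*$ there is $\Sa M \models T$ trace defining $\Sa O$, say via an injection $\uptau \colon O \to M^m$.

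The key step is to push $n$-types over $A$ through $\uptau$ at the level of definable sets. Let $\Cal X$ be the Boolean algebra of $A$-definable subsets of $O^n$; since $\uplambda \ge |L^*|$ and $|A|\le\uplambda$ we have $|\Cal X| \le |L^*(A)| \le \uplambda$. For each $X\in\Cal X$ fix, using the trace definition, an $\Sa M$-definable $Y_X \subseteq M^{mn}$ with $a \in X \Leftrightarrow \uptau(a)\in Y_X$ for all $a \in O^n$, and let $C$ be the union of the (finite) parameter sets of the $Y_X$, so $|C| \le \uplambda\cdot\aleph_0 = \uplambda$. Identify $S_n(\Sa O,A)$ with the set of ultrafilters on $\Cal X$, and $S_{mn}(\Sa M,C)$ with the set of ultrafilters on the Boolean algebra of $C$-definable subsets of $M^{mn}$. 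I would then send $p\in S_n(\Sa O,A)$ to any complete type $q_p\in S_{mn}(\Sa M,C)$ containing $\{Y_X : X\in p\} \cup \{M^{mn}\setminus Y_X : X\in\Cal X\setminus p\}$.

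There are two points to check. First, consistency of that family: a finite subfamily is indexed by $X_1,\dots,X_j\in p$ and $X_1',\dots,X_l'\notin p$, so $X_1\cap\dots\cap X_j\cap(O^n\setminus X_1')\cap\dots\cap(O^n\setminus X_l')$ again lies in the ultrafilter $p$ and hence is non-empty; picking $a$ in it, $\uptau(a)$ realizes the corresponding finite fragment in $\Sa M$, so the family is finitely satisfiable and $q_p$ exists. Second, injectivity: if $p\neq p'$ and $X\in p\setminus p'$, then $Y_X\in q_p$ while $M^{mn}\setminus Y_X\in q_{p'}$, so $q_p\neq q_{p'}$. Hence $|S_n(\Sa O,A)| \le |S_{mn}(\Sa M,C)| \le S_{mn}(\Sa M,\uplambda) \le S(T,\uplambda)$, as wanted.

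The one genuine subtlety — the ``main obstacle'' — is that $\uptau$ is defined only on $O$, not on elementary extensions of $\Sa O$, so one cannot simply transport realized types; the argument must stay at the level of $A$-definable subsets of $O^n$, and the consistency step relies on the observation that every finite fragment of an $n$-type over $A$ is already realized inside $\Sa O$ itself (because $\exists \bar x\,\theta(\bar x)$ is a sentence of $\Th(\Sa O,A)$ for any such $\theta$ in the type). The remaining ingredients — that $\Cal X$ is a Boolean algebra whose Stone space is $S_n(\Sa O,A)$, and the cardinal arithmetic using $\uplambda\ge|L^*|$ — are routine.
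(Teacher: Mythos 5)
Your proof is correct, and the core idea is the same as the paper's: use the trace definition to associate to each $A$-definable $X \subseteq O^n$ an $\Sa M$-definable $Y_X \subseteq M^{mn}$, collect a parameter set $C$ of size $\le \uplambda$, and inject $S_n(\Sa O,A)$ into $S_{mn}(\Sa M,C)$ using the fact that distinct $p,p'$ are separated by some $X$, hence their images are separated by $Y_X$. The paper executes this slightly differently: it first passes (via Proposition~\ref{prop:trace-theories}) to $\uplambda^+$-saturated $\Sa O$ and $\Sa N$, realizes each $p \in S_n(\Sa O,A)$ as a tuple $a_p \in O^n$, and sets $\upiota(p) = \tp_{\Sa N}(a_p|B)$; you instead avoid saturation altogether by constructing the image type syntactically as any completion of $\{Y_X : X \in p\} \cup \{\neg Y_X : X \notin p\}$, checking finite satisfiability inside $O^n$ itself. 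Both routes give the same injection, and your consistency check is exactly right: the main thing to notice is that every finite Boolean combination of the $Y_X$'s that should be consistent is already witnessed by $\uptau(a)$ for a suitable $a \in O^n$, precisely because $p$ is an ultrafilter on $\Cal X$. This is a legitimate and slightly more elementary variant, trading an appeal to saturation for a direct compactness argument.
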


\noindent
Before proving Proposition~\ref{prop:lambda-0} we give an immediate corollary.

\begin{corollary}
\label{cor:superstable}
Suppose that $T^*$ is trace definable in $T$.
If $T$ is superstable then $T^*$ is superstable.
If $L^*$ is countable and $T$ is $\aleph_0$-stable then $T^*$ is $\aleph_0$-stable.
\end{corollary}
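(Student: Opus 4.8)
The plan is to read both statements off Proposition~\ref{prop:lambda-0}, using only the standard shape of the stability spectrum; this is why the result is recorded as an immediate corollary. The one fact I would import is: \emph{a complete theory is superstable if and only if it is $\kappa$-stable for some infinite cardinal $\kappa$ with $\operatorname{cf}(\kappa)=\aleph_0$}. The forward direction is part of the definition of superstability, a superstable theory being $\kappa$-stable for all $\kappa\ge 2^{|T|}$, among which there are cardinals of countable cofinality (e.g.\ $(2^{|T|})^{+\omega}=\sup_n(2^{|T|})^{+n}$). For the reverse direction, any theory admitting a stability cardinal is stable, and a stable theory that is not superstable has at least $\kappa^{\aleph_0}$ types over some set of size $\kappa$ for every infinite $\kappa$; when $\operatorname{cf}(\kappa)=\aleph_0$ this exceeds $\kappa$ (since $\kappa^{\aleph_0}\ge\kappa^{\operatorname{cf}(\kappa)}>\kappa$), so the theory cannot be $\kappa$-stable.

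For the first assertion, assume $T$ is superstable. First I would fix a cardinal $\kappa$ with $\operatorname{cf}(\kappa)=\aleph_0$, $\kappa\ge 2^{|T|}$ and $\kappa\ge|L^*|$ (for instance $\kappa=\mu^{+\omega}=\sup_n\mu^{+n}$ with $\mu=2^{|T|}+|L^*|$). Since $T$ is superstable and $\kappa\ge 2^{|T|}$ it is $\kappa$-stable, i.e.\ $S(T,\kappa)=\kappa$. Since $\kappa\ge|L^*|$, Proposition~\ref{prop:lambda-0} gives $S(T^*,\kappa)\le S(T,\kappa)=\kappa$, so $T^*$ is $\kappa$-stable. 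As $\operatorname{cf}(\kappa)=\aleph_0$, the imported equivalence then yields that $T^*$ is superstable (in particular $T^*$ is stable, as also follows from Proposition~\ref{prop:stable-0}).

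The second assertion is immediate from Proposition~\ref{prop:lambda-0}: if $L^*$ is countable then $|L^*|\le\aleph_0$, so the hypothesis $\uplambda\ge|L^*|$ of that proposition holds with $\uplambda=\aleph_0$, giving $S(T^*,\aleph_0)\le S(T,\aleph_0)$; if in addition $T$ is $\aleph_0$-stable this bound equals $\aleph_0$, and so $T^*$ is $\aleph_0$-stable.

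I do not expect a genuine obstacle — all the content sits in Proposition~\ref{prop:lambda-0}. The only point requiring a little care is that Proposition~\ref{prop:lambda-0} transfers $\uplambda$-stability only when $\uplambda\ge|L^*|$, whereas the stability cardinals of $T$ begin at $2^{|T|}$, and these ranges need not be comparable; routing superstability of $T^*$ through a single, suitably large stability cardinal of countable cofinality (rather than trying to match the stability spectra of $T$ and $T^*$ range-by-range) avoids this bookkeeping.
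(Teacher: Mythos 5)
Correct. The paper records this corollary as ``immediate'' from Proposition~\ref{prop:lambda-0} and gives no proof of its own, so there is no proof to compare against; what you have written is exactly the right fleshing-out. The only piece of outside stability theory you need is the stability-spectrum characterization you cite (superstable iff $\kappa$-stable for some $\kappa$ of countable cofinality), and your observation that routing through a \emph{single} large $\kappa$ with $\operatorname{cf}(\kappa)=\aleph_0$, $\kappa\ge 2^{|T|}$, $\kappa\ge|L^*|$ avoids the incomparability between $2^{|T|}$ and $2^{|T^*|}$ is precisely the point that makes the corollary go through cleanly. (An equivalent variant: show $T^*$ is $\kappa$-stable for \emph{all} $\kappa\ge\max(2^{|T|},|L^*|)$, and use that a theory stable in all sufficiently large cardinals is superstable; same content.) One tiny caveat: the intermediate assertion that a stable non-superstable theory realizes $\ge\kappa^{\aleph_0}$ types over a set of size $\kappa$ for \emph{every} infinite $\kappa$ is a bit more than you need; the argument only requires that such a theory fails $\kappa$-stability at every $\kappa$ of countable cofinality, which is immediate from $\kappa^{<\kappa(T)}\ge\kappa^{\aleph_0}>\kappa$ once $\kappa(T)>\aleph_0$. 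The $\aleph_0$-stable case is, as you say, a direct instance of Proposition~\ref{prop:lambda-0} with $\uplambda=\aleph_0$.
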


% \noindent
% We now prove Proposition~\ref{prop:lambda-0}.

\begin{proof}[Proof of Proposition~\ref{prop:lambda-0}]
By Proposition~\ref{prop:trace-theories} it is enough to suppose that $\Sa O$ is $\uplambda^+$-saturated, $\Sa N$ is $\uplambda^+$-saturated, and $\Sa N$ trace defines $\Sa O$ via an injection $O \to N^{m}$, and show that for any $n$, $S_n(\Sa O,\uplambda) \le S_{nm}(\Sa N,\uplambda)$.
Fix $n$ and $A \subseteq O$ such that $|A| \le \uplambda$.
Let $\Cal B$ be the collection of all subsets of $O^n$ that are $\Sa O$-definable with parameters from $A$.
As $|L^*|,|A| \le \uplambda$ we have $|\Cal B| \le \uplambda$.
For each $X \in \Cal B$ we let $Y_X$ be an $\Sa N$-definable subset of $N^{nm}$ such that $Y_X \cap O^n = X$, and $\Cal C = \{ Y_X : X \in \Cal B\}$.
We have $|\Cal C| \le \uplambda$.
Fix $B \subseteq N$ such that $|B| \le \uplambda$ and every $Y_X$ is definable with parameters from $B$.
It suffices to show that $|S_n(\Sa O,A)| \le |S_{nm}(\Sa N,B)|$.
We produce an injection $\upiota \colon S_n(\Sa O,A) \to S_{nm}(\Sa N,B)$.
Applying $\uplambda^+$-saturation we fix for each $p \in S_n(\Sa O,A)$ an $a_p \in O^n$ such that $\tp_{\Sa O}(a_p|A) = p$.
We declare $\upiota(p) = \tp_{\Sa N}(a_p|B)$.
We show that $\upiota$ is injective.
Fix distinct $p,q \in S_n(\Sa O,A)$.
Then there is $X \in \Cal B$ such that $p$ concentrates on $X$ and $q$ does not, hence $a_p \in X$ and $a_q \notin X$.
Then $a_p \in Y_X$ and $a_q \notin Y_X$, so $\tp_{\Sa N}(a_p|B) \ne \tp_{\Sa N}(a_q|B)$.
\end{proof}

\begin{corollary}
\label{cor:tot-trans}
Suppose that $T$ is totally transendental and $T^*$ is trace definable in $T$.
Then $T^*$ is totally transendental.
\end{corollary}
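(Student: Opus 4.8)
The plan is to prove the contrapositive directly at the level of Morley rank, rather than trying to go through the type-counting estimate of Proposition~\ref{prop:lambda-0}. I will use the standard characterization: $T$ is totally transcendental if and only if no definable set has infinite Morley rank, equivalently there is no $k$ and no family $(X_s : s \in 2^{<\upomega})$ of non-empty definable subsets of $\monsterset^k$ with $X_{s0}\cup X_{s1}\subseteq X_s$ and $X_{s0}\cap X_{s1}=\emptyset$ for all finite binary strings $s$. The reason one cannot simply quote Proposition~\ref{prop:lambda-0} here is that total transcendence is not a property of the stability spectrum $S(-,\uplambda)$ in the way superstability is (for instance, assuming the continuum hypothesis there are non-totally-transcendental theories that are $\uplambda$-stable for every $\uplambda\ge|L|$); so, unlike in Corollary~\ref{cor:superstable}, one must transport a Morley-rank witness through the trace definition rather than a type-counting bound. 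That transportation step is the main obstacle, and the cleanup needed to make the transported family into a genuine tree is the one place where a little care is required.

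Concretely: suppose $T^*$ is not totally transcendental. Then some $L^*$-formula has infinite Morley rank, so by monotonicity of Morley rank there are $k\ge 1$ and, in a monster model of $T^*$, formulas $(\psi_s(x_1,\ldots,x_k):s\in 2^{<\upomega})$ with parameters such that each $\psi_s$ defines a non-empty set, and $\psi_{s0},\psi_{s1}$ define disjoint subsets of the set defined by $\psi_s$. Taking $\Sa O\models T^*$ of cardinality $\le|L^*|+\aleph_0$ containing all parameters occurring in the $\psi_s$ together with one point of each $\psi_s$, all of these relations persist in $\Sa O$ (they are about consistency and inclusion of definable sets), so $(\psi_s(\Sa O):s\in 2^{<\upomega})$ is a binary tree of non-empty $\Sa O$-definable subsets of $O^k$, ordered by reverse inclusion with incompatible children disjoint.

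Now since $T$ trace defines $T^*$, fix $\Sa M\models T$ trace defining $\Sa O$ via an injection $\uptau\colon O\to M^m$, and for each $s$ pick an $\Sa M$-definable $Z_s\subseteq M^{mk}$ with $\uptau^{-1}(Z_s)=\psi_s(\Sa O)$, using the induced map $\uptau\colon O^k\to M^{mk}$. The $Z_s$ need not form a tree away from the image $\uptau(O^k)$, so — exactly as in the reduction to pairwise disjoint sets in the proof of Proposition~\ref{prop:picture} — I would recursively set $Z'_\emptyset=Z_\emptyset$, $Z'_{s0}=(Z'_s\cap Z_{s0})\setminus Z_{s1}$ and $Z'_{s1}=(Z'_s\cap Z_{s1})\setminus Z_{s0}$. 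A routine induction using $\psi_{si}(\Sa O)\subseteq\psi_s(\Sa O)$ and $\psi_{s0}(\Sa O)\cap\psi_{s1}(\Sa O)=\emptyset$ gives $\uptau^{-1}(Z'_s)=\psi_s(\Sa O)$ for all $s$; in particular each $Z'_s$ is non-empty, and by construction $Z'_{s0}\cup Z'_{s1}\subseteq Z'_s$ and $Z'_{s0}\cap Z'_{s1}=\emptyset$. Passing to a monster model of $T$ (these inclusions and disjointness statements are elementary over $\Sa M$), $(Z'_s:s\in 2^{<\upomega})$ is a binary tree witnessing that some definable set has infinite Morley rank in $T$, contradicting total transcendence of $T$. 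Hence $T^*$ is totally transcendental. Apart from this cleanup bookkeeping, the only input is the binary-tree characterization of infinite Morley rank; the conceptual point to get right is that total transcendence, unlike $\uplambda$-stability, has to be witnessed at the level of ranks.
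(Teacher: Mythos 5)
Your proof is correct, and it takes a genuinely different route from the paper's. The paper's official argument uses the characterization that an $L^*$-theory is totally transcendental if and only if every reduct to a countable sublanguage is $\aleph_0$-stable; it fixes a countable $L^{**}\subseteq L^*$, notes that the trace definition restricted to $L^{**}$-formulas is witnessed by formulas from a countable sublanguage of $L$, invokes Proposition~\ref{prop:zero-def} to get a trace definition between the countable reducts, and then applies Corollary~\ref{cor:superstable}. In other words, the paper does ultimately go through the type-counting bound of Proposition~\ref{prop:lambda-0}, just after first reducing to countable sublanguages where total transcendence and $\aleph_0$-stability coincide. You instead transport a Morley-rank witness directly: a binary tree $(\psi_s(\Sa O):s\in 2^{<\upomega})$ of nonempty definable sets with incompatible children disjoint, pushed through $\uptau$ to $\Sa M$-definable sets $Z_s$, with the recursive trimming $Z'_{s0}=(Z'_s\cap Z_{s0})\setminus Z_{s1}$, $Z'_{s1}=(Z'_s\cap Z_{s1})\setminus Z_{s0}$ to restore the tree structure off the image of $\uptau$. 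The induction showing $\uptau^{-1}(Z'_s)=\psi_s(\Sa O)$, hence nonemptiness of each $Z'_s$, is correct, and nonemptiness, inclusion, and disjointness all transfer to a monster model of $T$, contradicting total transcendence. This is essentially the alternative the author alludes to after Proposition~\ref{prop:morely rank fine} ("can be used to give another proof"), though your version is even lighter: you only need the crude binary-tree criterion for failure of total transcendence, not the full inequality $\mr_{\Sa O}(X)\le\mr_{\Sa M}(Y)$.

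One caveat about your motivating paragraph: the assertion that one \emph{must} transport a rank witness rather than a type-counting bound is too strong. You are right that Proposition~\ref{prop:lambda-0} cannot be quoted verbatim for total transcendence when $L^*$ is uncountable, but the paper's proof shows type-counting still suffices after the reduction to countable sublanguages. Both routes work; yours buys self-containment (no appeal to the $\aleph_0$-stability transfer), while the paper's reuses already-proved machinery.
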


\begin{proof}
We apply the fact that an $L^*$-structure $\Sa O$ is totally transcendental if and only if $\Sa O\!\upharpoonright\! L^{**}$ is $\aleph_0$-stable for every countable $L^{**} \subseteq L^*$.
Suppose that $\Sa M \models T$ trace defines $\Sa O \models T^*$.
We may suppose that $O \subseteq M^m$ and $\Sa M$ trace defines $\Sa O$ via the identity $O \to M^m$.
Fix countable $L^* \subseteq L^*$.
For each $n$-ary parameter free $L^*$-formula $\phi(x)$ fix an $mn$-ary $L(M)$-formula $\varphi_{\phi}(x)$ such that for every $a \in O^n$ we have $\Sa O \models \phi(a) \Longleftrightarrow \Sa M \models \varphi_{\phi}(a)$.
Let $L^{**}$ be a countable sublanguage of $L$ such that each $\varphi_\phi$ is an $L^{**}(M)$-formula.
Proposition~\ref{prop:zero-def} shows that $\Sa M\!\upharpoonright\! L^{**}$ trace defines $\Sa O \!\upharpoonright\! L^*$.
By Corollary~\ref{cor:superstable} $\Sa O \!\upharpoonright\! L^*$ is $\aleph_0$-stable.
\end{proof}

We give a characterization of totally transendental theories due to Hanson.
We use Fact~\ref{fact:unary}.
%Lemma~\ref{lem:hanson} is due to Hanson.

%\begin{lemma}
% \label{lem:hanson}
% Suppose that $\Sa M$ is an $\aleph_1$-saturated $L$-structure.
% The following are equivalent:
% \begin{enumerate}
% \item $\Sa M$ is not totally transendental,
% \item there is an injection $\uptau : 2^\upomega \to M^n$ and a family $(X_\sigma : \sigma \in 2^{< \upomega})$ of definable subsets of $M^n$ such that for all $a \in 2^\upomega$ and $\sigma \in 2^{< \upomega}$ we have $\uptau(a) \in X_{\sigma} \Longleftrightarrow a \text{  extends  } \sigma$.
% \end{enumerate}
% \end{lemma}

\begin{fact}
\label{fact:unary}
Any structure in a unary relational language has quantifier elimination.
\end{fact}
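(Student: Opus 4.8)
The statement to prove is Fact~\ref{fact:unary}: any structure in a unary relational language has quantifier elimination. Here is my plan.

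\medskip

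The plan is to show that in a structure $\Sa M$ whose language $L$ consists only of unary relation symbols (and no function symbols, and of course equality), every formula is equivalent to a quantifier-free formula. The key point is that the truth value of a unary atomic formula $P(x_i)$ depends only on the single variable $x_i$, so the ``atomic data'' attached to a tuple $(a_1,\ldots,a_n)$ is just, for each coordinate, which unary predicates it satisfies, together with the equality pattern among the coordinates. First I would reduce, by the usual induction on formula complexity, to eliminating a single existential quantifier from a formula of the form $\exists y\, \psi(x_1,\ldots,x_n,y)$ where $\psi$ is quantifier-free; since quantifier-free formulas are boolean combinations of atomics, and atomics are of the form $P(z)$ or $z = z'$ for variables $z,z'$ among $x_1,\ldots,x_n,y$, I can put $\psi$ in disjunctive normal form and distribute $\exists y$ over the disjunction, so it suffices to handle $\exists y\, \theta$ where $\theta$ is a conjunction of atomic and negated-atomic formulas.

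\medskip

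The main step is then a case analysis on the literals in $\theta$ involving $y$. If $\theta$ contains a literal $y = x_i$, then $\exists y\,\theta$ is equivalent to $\theta[x_i/y]$, which is quantifier-free in $x_1,\ldots,x_n$. Otherwise every literal in $\theta$ mentioning $y$ is of the form $P(y)$, $\neg P(y)$, or $\neg(y = x_i)$. Separate $\theta$ as $\theta_0 \wedge \theta_1$ where $\theta_0$ has no occurrence of $y$ and $\theta_1$ is a conjunction of such literals; then $\exists y\,\theta \equiv \theta_0 \wedge \exists y\,\theta_1$, so it remains to quantifier-free-define $\exists y\,\theta_1$. Now $\theta_1$ asserts that $y$ lies in a certain boolean combination $S$ of the unary predicates (determined by the $P(y)$ and $\neg P(y)$ literals) and that $y$ differs from $x_{i_1},\ldots,x_{i_k}$ (the coordinates appearing in the inequality literals). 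Such a $y$ exists if and only if the definable set $S$ is not contained in $\{x_{i_1},\ldots,x_{i_k}\}$. I would express this as a quantifier-free formula using sentences: for each $j \le k$, the sentence $\sigma_j$ ``there exist at least $j$ distinct elements satisfying $S$'' is a \emph{sentence} of $L$, and $\exists y\,\theta_1$ is equivalent to the disjunction over subsets of the equality-pattern possibilities — concretely, writing $d$ for the number of distinct values among $x_{i_1},\ldots,x_{i_k}$ that actually lie in $S$ (itself expressible by a quantifier-free formula in the $x$'s since membership in $S$ is atomic data of each $x_{i_\ell}$), the formula $\exists y\,\theta_1$ holds iff $\sigma_{d+1}$ holds. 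Since a sentence is a legitimate quantifier-free formula, and $d$ ranges over finitely many values each carved out by a quantifier-free condition on $x_1,\ldots,x_n$, we obtain the desired quantifier-free equivalent: $\bigvee_{d} \big(\chi_d(x_1,\ldots,x_n) \wedge \sigma_{d+1}\big)$, where $\chi_d$ says exactly $d$ of the $x_{i_\ell}$'s satisfy $S$.

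\medskip

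The only mild subtlety — the part to be careful about — is the bookkeeping with the equality literals: one must count the number of \emph{distinct} elements among $x_{i_1},\ldots,x_{i_k}$ lying in $S$ correctly, and note that the ``there exist $\ge j$ elements of $S$'' sentences are uniformly quantifier-free (they are $L$-sentences, hence trivially quantifier-free). There is no obstacle from function symbols or higher-arity relations because $L$ has none; this is exactly why the argument is clean. Hence every formula is $\Sa M$-equivalent to a quantifier-free one, and in fact the argument is uniform in $\Sa M$, so the corresponding theory has quantifier elimination as well.
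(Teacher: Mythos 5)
The paper does not prove Fact~\ref{fact:unary}; it calls it ``presumably classical'' and leaves it as an exercise, so there is no paper proof to compare against. Your strategy---reduce to a single existential in front of a conjunction of literals, split on whether some literal is $y = x_i$, and reduce the remaining case to a counting statement---is the standard and correct one, and the combinatorial bookkeeping ($\chi_d$ is a boolean combination of the atoms $S(x_{i_\ell})$ and equalities $x_{i_\ell}=x_{i_{\ell'}}$; $\exists y\,\theta_1$ holds iff $|S|\ge d+1$) is right.

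The one genuine error is the claim that ``a sentence is a legitimate quantifier-free formula.'' It isn't: $\sigma_j$ is $\exists y_1\cdots\exists y_j\big(\bigwedge_{i<i'} y_i\ne y_{i'}\wedge\bigwedge_i S(y_i)\big)$, which has quantifiers, so the formula $\bigvee_d(\chi_d\wedge\sigma_{d+1})$ you end with is not quantifier-free and, read literally, the proof has not eliminated anything. What saves the argument is precisely that the target is a \emph{fixed structure} $\Sa M$ (equivalently a complete theory): each $\sigma_{d+1}$ has a determined truth value in $\Sa M$, so replace it by $\top$ or $\bot$ and take $\bigvee_{d:\,\Sa M\models\sigma_{d+1}}\chi_d$ as the quantifier-free equivalent. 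For the same reason your closing remark that the argument is ``uniform in $\Sa M$'' should be dropped: the output formula depends on $\Sa M$ through the truth values of the $\sigma_j$, and the (incomplete) theory of all $L$-structures does \emph{not} have quantifier elimination, since no quantifier-free $L$-sentence is equivalent across all models to $\exists x\exists y\,(x\ne y)$.
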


Fact~\ref{fact:unary} is presumably classical.
We leave it as an exercise to the reader.

\medskip
Let $\Sa U$ be the structure with domain $\{0,1\}^{\upomega}$ and unary relations $(U_\upsigma : \upsigma \in \{0,1\}^{< \upomega})$ where 
\[
 U_\upsigma(\alpha) \quad\Longleftrightarrow\quad \upsigma \text{  is an initial segment of  } \alpha \text{  for all  } \upsigma \in \{0,1\}^{<\upomega}, \alpha \in \{0,1\}^\upomega.
 \]
Proposition~\ref{prop:hanson} is due to Hanson~\cite{james-hanson}.

\begin{proposition}
\label{prop:hanson}
The~theory~$T$~is~not~totally~transendental~if~and~only~if~$T$~trace~defines~$\Sa U$.
\end{proposition}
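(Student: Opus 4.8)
The plan is to prove both directions using the dichotomy between total transcendence (equivalently, $\aleph_0$-stability of every countable reduct, by the fact used in Corollary~\ref{cor:tot-trans}) and the existence of a binary splitting tree of definable sets. For the easy direction, suppose $T$ trace defines $\Sa U$. Since $\Sa U$ is not totally transcendental — it literally carries a binary tree of unary definable sets whose leaves are distinct complete types, so $|S_1(\Sa U,A)|=2^{\aleph_0}$ for a countable $A\subseteq U$ — it suffices to show that trace definibility preserves ``not totally transcendental'', i.e. that total transcendence transfers \emph{from} $T$ \emph{to} any $T^*$ trace definable in $T$; but that is exactly Corollary~\ref{cor:tot-trans}. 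Hence if $T$ were totally transcendental it could not trace define $\Sa U$.

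For the converse, suppose $T$ is not totally transcendental. Then there is a countable sublanguage $L^{**}\subseteq L$ such that $T\!\upharpoonright\!L^{**}$ is not $\aleph_0$-stable, so working in a suitable $\aleph_1$-saturated $\Sa M\models T$ and invoking the standard characterization of $\aleph_0$-instability, we obtain a formula $\varphi(x,y)$ and a binary tree $(a_\upsigma : \upsigma\in\{0,1\}^{<\upomega})$ of parameters in $\Sa M$ such that the sets $D_\upsigma := \{b : \Sa M\models \varphi(b,a_\upsigma)\}$ form a strictly splitting tree: $D_{\upsigma 0}$ and $D_{\upsigma 1}$ are disjoint subsets of $D_\upsigma$, each nonempty along every branch. (This is the usual ``$\aleph_0$-stable iff no such tree'' statement; one can extract it from having $2^{\aleph_0}$ types over a countable set via König-type compactness, or cite the relevant item — it is classical.) For each branch $\alpha\in\{0,1\}^\upomega$, $\aleph_1$-saturation gives $b_\alpha\in\bigcap_{n}D_{\alpha\!\upharpoonright\!n}$; distinct branches give $b_\alpha$ lying in disjoint $D_\upsigma$'s at the first point of disagreement, so the map $\alpha\mapsto b_\alpha$ is injective. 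Define $\uptau\colon\{0,1\}^\upomega\to M$ by $\uptau(\alpha)=b_\alpha$.

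Now I would verify that $\Sa M$ trace defines $\Sa U$ via $\uptau$. By Fact~\ref{fact:unary} $\Sa U$ has quantifier elimination, and its language is unary relational, so by Proposition~\ref{prop:qe0} (with $L$ the empty language, $L^*$ the language of $\Sa U$) it suffices to produce, for each $\upsigma\in\{0,1\}^{<\upomega}$, an $\Sa M$-definable $Y_\upsigma\subseteq M$ with $\Sa U\models U_\upsigma(\alpha)\iff \uptau(\alpha)\in Y_\upsigma$ for all $\alpha$. Take $Y_\upsigma = D_\upsigma$: if $\upsigma$ is an initial segment of $\alpha$ then $b_\alpha\in D_{\alpha\!\upharpoonright\!|\upsigma|}=D_\upsigma$ by construction, and conversely if $\upsigma$ is not an initial segment of $\alpha$, let $n$ be least with $\upsigma\!\upharpoonright\!n\neq\alpha\!\upharpoonright\!n$; then $\upsigma$ extends $\upsigma\!\upharpoonright\!n$ while $b_\alpha\in D_{\alpha\!\upharpoonright\!n}$, and $D_{\upsigma\!\upharpoonright\!n}$, $D_{\alpha\!\upharpoonright\!n}$ are disjoint (they are the two children of a common node, or $\upsigma\!\upharpoonright\!(n-1)=\alpha\!\upharpoonright\!(n-1)$ forces them to be $D_{\rho 0},D_{\rho 1}$), so $D_\upsigma\subseteq D_{\upsigma\!\upharpoonright\!n}$ is disjoint from the set containing $b_\alpha$, giving $\uptau(\alpha)\notin Y_\upsigma$. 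Finally, by Proposition~\ref{prop:trace-theories} (noting $|U|=2^{\aleph_0}$, so one passes to a sufficiently saturated model of $T$) this upgrades to ``$T$ trace defines $\Sa U$''. The only mild subtlety — and the step I expect to require the most care — is extracting the genuinely \emph{strictly} splitting binary tree of definable sets from $\aleph_0$-instability in the one-variable form needed here; this is a classical fact about $\aleph_0$-stable theories, but one should state precisely which formula and which variable carry the tree so that the sets $D_\upsigma$ live in $M^1$ (allowing $\uptau\colon U\to M^1$), or else allow $\uptau\colon U\to M^m$ and adjust the $Y_\upsigma\subseteq M^m$ accordingly, which changes nothing in the argument.
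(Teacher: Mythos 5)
Your argument is essentially the paper's: the easy direction invokes Corollary~\ref{cor:tot-trans} with $\Sa U$ as the non-totally-transcendental witness, and the converse extracts a strictly splitting binary tree of definable sets in an $\aleph_1$-saturated model, picks a realization of each branch by saturation, and verifies trace definibility via Fact~\ref{fact:unary}. The only adjustment is that the citation in the verification step should be Proposition~\ref{prop:qe} rather than Proposition~\ref{prop:qe0}, since $\uptau\colon\{0,1\}^\upomega\to M^m$ is an arbitrary injection and not the identity on a substructure --- a point you already flag at the end of the proposal.
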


\begin{proof}
The right to left implication follows from Corollary~\ref{cor:tot-trans} as $\Sa U$ is not totally transendental.
Suppose that $\Sa M$ is $\aleph_1$-saturated and not totally transendental.
Then there is a family $(X_\upsigma : \upsigma \in \{0,1\}^{< \upomega})$ of nonempty definable subsets of $M$ such that $X_\upsigma \subseteq X_{\upeta}$ when $\upeta$ extends $\upsigma$.
By saturation there is an injection $\uptau \colon \{0,1\}^\upomega \to M$ such that we have
\[
\uptau(\alpha) \in X_\upsigma \quad \Longleftrightarrow \quad \alpha \text{  extends  } \upsigma \quad\text{for all  } \alpha \in \{0,1\}^\upomega, \upsigma \in \{0,1\}^{< \upomega}  .
\]
Fact~\ref{fact:unary} and Proposition~\ref{prop:qe} together show that $\Sa M$ trace defines $\Sa U$ via $\uptau$.
\end{proof}

\begin{proposition}
\label{prop:morley rank}
If $T$ has finite Morley rank and $T^*$ is trace definable in $T$ then $T^*$ has finite Morley rank.
\end{proposition}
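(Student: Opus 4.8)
The plan is to reduce, via Proposition~\ref{prop:trace-theories}, to the case where $\Sa N \models T$ is $\uplambda$-saturated for large $\uplambda$, $\Sa O \models T^*$ is $\uplambda$-saturated, and $\Sa N$ trace defines $\Sa O$ via an injection $O \to N^m$; it suffices to bound the Morley rank of the home sort $O$. The key point is that Morley rank is computed from the Cantor--Bendixson analysis of the spaces $S_n(\Sa O, A)$, and the injection $\uptau$ lets us transport definable subsets of $O^n$ to definable subsets of $N^{mn}$ in a way that is compatible with inclusion, Boolean operations, and the partial order of specialization on types. So the heart of the argument is a monotonicity statement: if $X \subseteq O^n$ is $\Sa O$-definable and $Y \subseteq N^{mn}$ is an $\Sa N$-definable set with $Y \cap O^n = X$ (more precisely $Y \cap \uptau(O)^n = \uptau(X)$), then $\mathrm{RM}_{\Sa O}(X) \le \mathrm{RM}_{\Sa N}(Y)$.

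First I would set up the bookkeeping as in the proof of Proposition~\ref{prop:lambda-0}: given $\Sa O$-definable $X \subseteq O^n$, fix an $\Sa N$-definable $Y_X \subseteq N^{mn}$ with $Y_X \cap \uptau(O)^n = \uptau(X)$, and arrange (by intersecting/removing as in the proof of Proposition~\ref{prop:picture}) that $X \mapsto Y_X$ respects finite Boolean combinations up to the trace, so that $X \subseteq X'$ implies we may take $Y_X \subseteq Y_{X'}$, $Y_{X \cap X'} = Y_X \cap Y_{X'}$, etc. Then I would prove by induction on the ordinal $\alpha$ that $\mathrm{RM}_{\Sa O}(X) \ge \alpha + 1$ implies $\mathrm{RM}_{\Sa N}(Y_X) \ge \alpha + 1$: if $X$ contains infinitely many pairwise disjoint $\Sa O$-definable subsets $X_i$ each of rank $\ge \alpha$, then the $Y_{X_i}$ are (after the cleanup) infinitely many pairwise disjoint $\Sa N$-definable subsets of $Y_X$, each of rank $\ge \alpha$ by induction hypothesis, whence $\mathrm{RM}_{\Sa N}(Y_X) \ge \alpha + 1$. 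The limit case is immediate. Taking $X = O^n$, we get $\mathrm{RM}_{\Sa O}(O^n) \le \mathrm{RM}_{\Sa N}(N^{mn}) \le n \cdot m \cdot \mathrm{RM}(\Sa N) < \infty$ using additivity/subadditivity of Morley rank on products for the last inequality (or simply that $\Sa N$, having finite Morley rank, has every definable set of finite rank). Hence every $\Sa O$-definable set has finite Morley rank, and since this holds in a sufficiently saturated model it holds for $T^*$.

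The main obstacle is the cleanup step ensuring that the assignment $X \mapsto Y_X$ can be made simultaneously monotone and Boolean-compatible across all the definable sets that appear in an entire Cantor--Bendixson tree — one must be slightly careful that the $Y_{X_i}$ really can be taken pairwise disjoint while still tracing the $X_i$, and that the induction is phrased in a model saturated enough to realize all the relevant types (so that rank witnesses on the $\Sa O$ side genuinely produce witnesses on the $\Sa N$ side). This is handled exactly as in Proposition~\ref{prop:picture}: replace $Y_{X_i}$ by $Y_{X_i} \setminus \bigcup_{j \ne i, j < i} Y_{X_j}$, which does not change the trace on $\uptau(O)^n$ and only shrinks the sets, so rank is not increased. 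Everything else is routine transfer of the definition of Morley rank along $\uptau$.
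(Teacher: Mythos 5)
Your proposal is correct and follows essentially the same route as the paper: the paper states the same monotonicity lemma ($\mr_{\Sa O}(X)\le\mr_{\Sa M}(Y)$ whenever $X=Y\cap O^k$, its Proposition~\ref{prop:morely rank fine}), proves it by the same transfinite induction on rank, and handles the successor step with exactly the disjointification $Y_i\mapsto Y_i\setminus\bigcup_{j\ne i}Y_j$ (which preserves the trace since the $X_i$ are already pairwise disjoint). The only cosmetic difference is that you invoke subadditivity of Morley rank on products at the end, which is unnecessary — as you note parenthetically, finite Morley rank of $T$ already gives $\mr_{\Sa N}(N^{mn})<\infty$ directly — while the paper simply derives the proposition from the fine lemma without comment.
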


Given a structure $\Sa M$ and definable $X \subseteq M^m$ we let $\mr_{\Sa M}(X)$ be the Morley rank of $X$.
Proposition~\ref{prop:morley rank} follows from Proposition~\ref{prop:morely rank fine}.

\begin{proposition}
\label{prop:morely rank fine}
Suppose that $O \subseteq M^m$ and $\Sa M$ trace defines $\Sa O$ via the identity $O \to M^m$.
Suppose that $X$ is an $\Sa O$-definable subset of $O^k$ and $Y$ is an $\Sa M$-definable subset of $M^{mk}$ such that $X = Y \cap O^k$.
Then $\mr_{\Sa O}(X) \le \mr_{\Sa M}(Y)$. 
\end{proposition}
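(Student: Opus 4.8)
The plan is to prove, by transfinite induction on an ordinal $\alpha$, the following uniform statement: for every $k$, every $\Sa O$-definable $X \subseteq O^k$, and every $\Sa M$-definable $Y \subseteq M^{mk}$ with $X = Y \cap O^k$, if $\mr_{\Sa O}(X) \ge \alpha$ then $\mr_{\Sa M}(Y) \ge \alpha$. Taking the supremum over all such $\alpha$ then gives $\mr_{\Sa O}(X) \le \mr_{\Sa M}(Y)$. I will use the standard inductive description of Morley rank: $\mr(Z) \ge 0$ iff $Z$ is nonempty; $\mr(Z) \ge \alpha + 1$ iff there is a sequence $(Z_i : i < \omega)$ of pairwise disjoint definable subsets of $Z$ with $\mr(Z_i) \ge \alpha$ for all $i$; and for limit $\lambda$, $\mr(Z) \ge \lambda$ iff $\mr(Z) \ge \alpha$ for all $\alpha < \lambda$.

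The base case $\alpha = 0$ is immediate: if $X \ne \emptyset$ then, since $X \subseteq Y$, also $Y \ne \emptyset$. The limit case is immediate from the inductive description together with the induction hypothesis. For the successor case, suppose $\mr_{\Sa O}(X) \ge \alpha + 1$, witnessed by pairwise disjoint $\Sa O$-definable sets $X_i \subseteq X$ with $\mr_{\Sa O}(X_i) \ge \alpha$ for $i < \omega$. Since $\Sa M$ trace defines $\Sa O$ via the identity $O \to M^m$, for each $i$ there is an $\Sa M$-definable $Z_i \subseteq M^{mk}$ with $X_i = Z_i \cap O^k$; after replacing $Z_i$ by $Z_i \cap Y$ we may assume $Z_i \subseteq Y$, which does not affect $Z_i \cap O^k$ because $X_i \subseteq X = Y \cap O^k$.

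The sets $Z_i$ need not be pairwise disjoint, so I disjointify them exactly as in the proof of Proposition~\ref{prop:picture}: put $Y_i = Z_i \setminus \bigcup_{j < i} Z_j$. Each $Y_i$ is $\Sa M$-definable, the $Y_i$ are pairwise disjoint subsets of $Y$, and since the $X_j$ are pairwise disjoint one checks $Y_i \cap O^k = (Z_i \cap O^k) \setminus \bigcup_{j < i}(Z_j \cap O^k) = X_i \setminus \bigcup_{j < i} X_j = X_i$. Applying the induction hypothesis to the pair $(X_i, Y_i)$ gives $\mr_{\Sa M}(Y_i) \ge \alpha$ for every $i$, so the family $(Y_i : i < \omega)$ witnesses $\mr_{\Sa M}(Y) \ge \alpha + 1$. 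This closes the induction.

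The only step requiring any care is the disjointification and the subsequent identity $Y_i \cap O^k = X_i$; everything else is a routine unwinding of the definition of Morley rank, so I do not anticipate a genuine obstacle. Proposition~\ref{prop:morley rank} then follows: given $\Sa O \models T^*$, use Proposition~\ref{prop:trace-theories} to realize $\Sa O$ as trace definable via the identity in some $\Sa M \models T$, and apply the above with $Y = M^{mk}$, using $O^k = M^{mk} \cap O^k$ and the fact that $\mr_{\Sa M}(M^{mk})$ is finite when $T$ has finite Morley rank.
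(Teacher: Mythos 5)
Your proposal is correct and follows essentially the same route as the paper's own proof: a transfinite induction on Morley rank in which witnesses of rank $\ge \alpha$ in $\Sa O$ are lifted via trace-definability to $\Sa M$-definable sets and then disjointified. The only cosmetic differences are that the paper works with $m=k=1$ and, at the successor step, produces $n$ pairwise disjoint sets for each $n$ (disjointifying via $Y_i \setminus \bigcup_{j\ne i} Y_j$), whereas you take a single countable family and disjointify by the prefix trick; both are standard and equivalent.
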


Proposition~\ref{prop:morely rank fine} can be used to give another proof that total transdentality is preserved.

\begin{proof}
To simplify notation we only treat the case when $m = k = 1$, the general case follows in the same way.
Let $\uplambda$ be an ordinal and suppose that $\mro(X) \ge \uplambda$.
We show that $\mrm(Y) \ge \uplambda$.
We apply induction on $\uplambda$.
If $\mro(X) \ge 0$ then $X$ is nonempty, hence $Y$ is nonempty, hence $\mrm(Y) \ge 0$.
The case when $\uplambda$ is a limit ordinal is clear, we suppose that $\uplambda$ is a successor ordinal.
Fix $n$.
It is enough to produce pairwise disjoint $\Sa M$-definable $Y_1,\ldots,Y_n \subseteq Y$ such that $\mro(Y_i) \ge \uplambda - 1$ for each $i$.
As $\mro(X) > \uplambda - 1$ there are pairwise disjoint $\Sa O$-definable $X_1,\ldots,X_n \subseteq X$ such that $\mro(X_i) \ge \uplambda - 1$ for each $i$.
For each $i$ fix $\Sa M$-definable $Y_i \subseteq M$ such that $Y_i \cap O = X_i$.
After replacing each $Y_i$ with $Y \cap Y_i$ we suppose that each $Y_i$ is contained in $Y$.
After replacing each $Y_i$ with $Y_i \setminus \bigcup_{j \ne i} Y_j$ we suppose that the $Y_i$ are pairwise disjoint.
By induction we have $\mrm(Y_i) \ge \mro(X_i) \ge \uplambda - 1$.
\end{proof}

We now prove the analogous result for $U$-rank.
We let $\ru$ be the $U$-rank.
Given an ordinal $\uplambda$ and $n$ we let $n\cdot\uplambda$ be the ordinal produced by multiplying every coefficient in the Cantor normal form of $\uplambda$ by $n$.
If $\uplambda<\upomega$ then this is the usual product.

\begin{proposition}
\label{prop:U rank}
If $\Sa O$ is trace definable in $\Sa M$ via an injection $O \to M^m$ then we have $\ru(\Sa O)\le m\cdot\ru(\Sa M)$.
In particular if $T$ has finite $U$-rank and $T^*$ is trace definable in $T$ then $T^*$ has finite $U$-rank.
\end{proposition}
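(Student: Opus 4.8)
The plan is to prove the more refined statement that if $\Sa O$ is trace definable in $\Sa M$ via an injection $\uptau\colon O\to M^m$, then for every $\Sa O$-definable $X\subseteq O^k$ and every $\Sa M$-definable $Y\subseteq M^{mk}$ with $\uptau^{-1}(Y)=X$ we have $\ru_{\Sa O}(X)\le m\cdot\ru_{\Sa M}(Y)$; the proposition then follows by taking $X=O$, $k=1$, and noting that $m\cdot\ru_{\Sa M}(Y)\le m\cdot\ru(\Sa M)$ and that the Cantor-normal-form multiplication preserves $<\upomega$. As with Proposition~\ref{prop:morely rank fine}, I would first pass to monster models: replace $\Sa M,\Sa O$ by monster models and use $\aleph_1$-saturation (and Proposition~\ref{prop:trace-theories}) to arrange that $\Sa M$ trace defines $\Sa O$ via $\uptau$ with both highly saturated. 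Then it suffices to show $\ru_{\Sa O}(p)\le m\cdot\ru_{\Sa M}(\uptau(a)/B)$ whenever $a\models p$, $p$ a type extending ``$x\in X$'', and $B$ a suitable parameter set over which $Y$ and all the relevant witnessing $\Sa M$-formulas are defined; unwinding definitions, trace definability gives that for each $\Sa O$-formula over a set $A\subseteq O$ there is an $\Sa M$-formula over a corresponding set $B\subseteq M$ that pulls back correctly, so $\tp_{\Sa O}(a/A)$ is determined by $\tp_{\Sa M}(\uptau(a)/B)$.

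The heart is an induction on the ordinal $\uplambda$ showing: if $\ru_{\Sa O}(a/A)\ge\uplambda$ then $\ru_{\Sa M}(\uptau(a)/B)\ge\lceil\uplambda/m\rceil$ in the appropriate Cantor-normal-form sense — equivalently, $m\cdot\ru_{\Sa M}(\uptau(a)/B)\ge\uplambda$. The successor step is the crucial one: from $\ru_{\Sa O}(a/A)\ge\uplambda+1$ we get a forking extension, i.e. $A'\supseteq A$ with $\ru_{\Sa O}(a/A')\ge\uplambda$ and $a\notind_A A'$. I would translate the $\Sa O$-forking into $\Sa M$-side data by choosing $B'\supseteq B$ large enough to witness all $\Sa O$-formulas over $A'$, so that $\tp_{\Sa M}(\uptau(a)/B')$ still determines $\tp_{\Sa O}(a/A')$, hence $\ru_{\Sa M}(\uptau(a)/B')\ge\lceil\uplambda/m\rceil$ by the induction hypothesis. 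It remains to see that $\tp_{\Sa M}(\uptau(a)/B')$ forks over $B$; if not, then $\tp_{\Sa M}(\uptau(a)/B')$ is a nonforking extension of $\tp_{\Sa M}(\uptau(a)/B)$, and by finite satisfiability / an automorphism argument on the $\Sa M$ side one can move $B'$ over $B$ to realize the nonforking, and pulling this back through $\uptau$ would contradict $a\notind_A A'$. So $\tp_{\Sa M}(\uptau(a)/B')$ forks over $B$, which drops $U$-rank by at least one on the $\Sa M$ side; since one $\Sa M$-rank unit corresponds to at most $m$ $\Sa O$-rank units (because $\uptau(a)\in M^m$ and a single $\Sa O$-point sits inside an $m$-tuple), the arithmetic $m\cdot\ru_{\Sa M}(\uptau(a)/B)\ge\uplambda+1$ follows from $m\cdot\ru_{\Sa M}(\uptau(a)/B')\ge\uplambda$ together with $\ru_{\Sa M}(\uptau(a)/B)>\ru_{\Sa M}(\uptau(a)/B')$.

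I expect the main obstacle to be the bookkeeping around the factor $m$ in the ordinal arithmetic — specifically, justifying that a drop of $1$ in $\Sa M$-rank can absorb a drop of up to $m$ in $\Sa O$-rank, uniformly across the Cantor normal form, and setting up the right ``$m\cdot(-)$'' statement so that the successor and limit steps both go through cleanly (the limit step should be immediate once the statement is phrased as $m\cdot\ru_{\Sa M}(\cdot)\ge\uplambda$). A secondary technical point is the forking-transfer argument: one must be careful that nonforking of $\tp_{\Sa M}(\uptau(a)/B')$ over $B$ really can be realized by a $B$-automorphism that lies in (the relevant reduct seen by) $\uptau$, so that it descends to an $\Sa O$-automorphism contradicting $\Sa O$-forking; this is where the ``type over $B$ determines type over $A$'' translation, and the freedom to enlarge $B$, has to be used with care. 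Once that transfer is in place, the rest is the routine induction modeled on Proposition~\ref{prop:morely rank fine}.
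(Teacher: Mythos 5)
Your proposal has a genuine gap at its core, and it also misplaces the factor $m$.

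First, the factor $m$. The paper's Proposition~\ref{prop:U rank fine} states the refined inequality with \emph{no} factor of $m$: $\ru_{\Sa O}(X)\le\ru_{\Sa M}(Y)$. The factor $m$ in Proposition~\ref{prop:U rank} enters only afterward, via the Lascar inequalities: $\ru(\Sa O)\le\ru_{\Sa M}(M^m)\le m\cdot\ru(\Sa M)$. Your refined statement builds $m$ in from the start ($\ru_{\Sa O}(X)\le m\cdot\ru_{\Sa M}(Y)$), which is weaker; and then your deduction ``$m\cdot\ru_{\Sa M}(Y)\le m\cdot\ru(\Sa M)$'' presupposes $\ru_{\Sa M}(Y)\le\ru(\Sa M)$ for $Y\subseteq M^m$, which fails as soon as $m>1$. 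Tracked correctly, your route would give $m^2\cdot\ru(\Sa M)$. (Your own inductive step, if it worked, would in fact yield the sharper $\ru_{\Sa O}(X)\le\ru_{\Sa M}(Y)$ with no $m$ at all: a single forking drop on the $\Sa O$-side would produce a single forking drop on the $\Sa M$-side. The $m$ you insert ``because $\uptau(a)\in M^m$'' never does real work in your argument, which is itself a sign that something is off.)

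Second, and more seriously, the forking-transfer step does not go through. You want: if $\tp_{\Sa O}(a/A')$ forks over $A$, then $\tp_{\Sa M}(\uptau(a)/B')$ forks over $B$. Suppose $\varphi(x,a')$ $k$-divides over $A$ in $\Sa O$, witnessed by an $A$-indiscernible-in-$\Sa O$ sequence $(a'_i)$ with $\{\varphi(x,a'_i)\}$ $k$-inconsistent \emph{in} $\Sa O$. Passing to the $\Sa M$-side via the trace formula $\psi(y,\cdot)$, both ingredients break: $(\uptau(a'_i))$ need not be $B$-indiscernible in $\Sa M$; and $k$-inconsistency of $\{\psi(y,\uptau(a'_i))\}$ is only known over $\uptau(O)$, which is not an $\Sa M$-definable set, so nothing rules out a realization in $M^m\setminus\uptau(O)$. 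Your ``automorphism / finite satisfiability'' sketch in the contrapositive direction has the same problem: an $\Sa M$-automorphism over $B$ moving $B'$ to realize nonforking has no reason to preserve $\uptau(O)$ setwise, so there is nothing to ``pull back through $\uptau$.'' The $\Sa O$-independence theory and the $\Sa M$-independence theory simply do not line up along $\uptau$ in the naive way, and this is precisely the obstacle the paper is navigating around.

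The paper's actual argument avoids forking transfer entirely. It first proves Fact~\ref{fact:U rank} (counting global one-types of rank $\ge\uplambda$ in a saturated superstable model) and Lemma~\ref{lem:app}, which repackage ``$\ru(T)\le\uplambda$'' as a cardinality bound on $\mathrm{Def}(\Sa M)/\!\app$, the quotient of definable subsets of $M$ by the equivalence $X\app X'\Longleftrightarrow\ru_{\Sa M}(X\triangle X')<\uplambda$. The heart of Proposition~\ref{prop:U rank fine} is then the trivial set-theoretic fact that $X\triangle X'=(Y\triangle Y')\cap O\subseteq Y\triangle Y'$: combined with the inductive hypothesis at rank $<\uplambda$ applied to the induced structures on $Y\triangle Y'$ and $X\triangle X'$, this shows $Y\app Y'\Rightarrow X\app X'$, hence $|\mathrm{Def}(\Sa O)/\!\app|\le|\mathrm{Def}(\Sa M)/\!\app|$, and the cardinality characterization finishes the induction. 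This works precisely because it never needs the forking calculus on $\Sa M$ and $\Sa O$ to match up pointwise --- only the inclusion of symmetric differences. To salvage your approach you would need to establish the forking-transfer claim itself, which is a substantive lemma you have not proved and which I do not believe follows from trace definability alone.
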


Proposition~\ref{prop:U rank} follows from Proposition~\ref{prop:U rank fine} and the Lascar inequalities \cite[19.2]{Poizat}.

\begin{proposition}
\label{prop:U rank fine}
Suppose that $O \subseteq M^m$ and $\Sa M$ trace defines $\Sa O$ via the identity $O\to M^m$.
Suppose that $X$ is an $\Sa O$-definable subset of $O^k$ and $Y$ is an $\Sa M$-definable subset of $M^{mk}$ such that $X = Y\cap O^k$.
Then $\ruo(X)\le\rum(Y)$.
\end{proposition}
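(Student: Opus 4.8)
The plan is to mimic the proof of Proposition~\ref{prop:morely rank fine}, but as $U$-rank lacks a recursion in terms of disjoint definable sets I would replace that ingredient by the forking characterisation of Lascar rank, and for that it is convenient to first pass to monster models. For the reduction, after Morleyizing I may assume $\Sa O$ is relational with $\Th(\Sa O)$ eliminating quantifiers, and trace definability provides $L$-formulas $\delta_R$ with $\Sa M\models\delta_R(\bar a)\Leftrightarrow\Sa O\models R(\bar a)$ for $\bar a$ from $O$; let $\varphi,\vartheta$ define $X,Y$. Expanding $\Sa M$ by a predicate $P$ for $O$ and by the relations of $\Sa O$ on $O$ gives a structure $\Sa S_0$ satisfying $\forall\bar x\,[\bigwedge_iP(x_i)\to(R(\bar x)\leftrightarrow\delta_R(\bar x))]$ for each $R$ and $\forall\bar x\,[\bigwedge_iP(x_i)\to(\varphi(\bar x,\bar c)\leftrightarrow\vartheta(\bar x,\bar d))]$. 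I would take a monster $\Sa S^{*}\succeq\Sa S_0$, set $\nonster=\Sa S^{*}\!\res\! L$ and $O^{*}=P(\Sa S^{*})$, and let $\oonster$ be the $L_O$-structure on $O^{*}$. Relativising to $P$ gives $\Sa O\preceq\oonster$, so $\oonster$ is a monster model of $\Th(\Sa O)$; by Proposition~\ref{prop:qe} (applied with the inclusion $O^{*}\hookrightarrow\nonster^{m}$ and the sets $\delta_R(\nonster)$) $\nonster$ trace defines $\oonster$; the displayed sentences persist, so $\varphi(\oonster,\bar c)=\vartheta(\nonster,\bar d)\cap(O^{*})^{k}$; and, as $\Sa M\preceq\nonster$ and $\Sa O\preceq\oonster$, both sides of the claimed inequality are unchanged. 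So it suffices to prove the statement with $\nonster,\oonster$ in place of $\Sa M,\Sa O$.

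With this in hand I would prove, by induction on an ordinal $\alpha$, the uniform claim: for every \emph{matched pair} --- an $L_O$-formula $\varphi(x,\bar z)$ with $|x|=k$, an $L$-formula $\vartheta(x,\bar w)$ with $|x|=mk$, and tuples $\bar c$ from $O^{*}$, $\bar d$ from $\nonster$ with $\varphi(\oonster,\bar c)=\vartheta(\nonster,\bar d)\cap(O^{*})^{k}$ --- one has $\ru_{\oonster}(\varphi(x,\bar c))\ge\alpha\ \Rightarrow\ \ru_{\nonster}(\vartheta(x,\bar d))\ge\alpha$. The cases $\alpha=0$ (as $\varphi(\oonster,\bar c)\subseteq\vartheta(\nonster,\bar d)$) and $\alpha$ a limit are trivial. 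For $\alpha=\beta+1$: since $\ru_{\oonster}(\varphi(x,\bar c))\ge\beta+1$ there is $\bar a\in(O^{*})^{k}$ realising a type over $\bar c$ that contains $\varphi(x,\bar c)$ and has $U$-rank $\ge\beta+1$, so this type has a forking extension $\tp_{\oonster}(\bar a/B)$ of $U$-rank $\ge\beta$, witnessed by a formula $\theta(x,\bar b)\in\tp_{\oonster}(\bar a/B)$ --- which I may take to imply $\varphi(x,\bar c)$ --- together with a $\bar c$-indiscernible sequence $(\bar b_i)$ from $O^{*}$, $\bar b_0=\bar b$, with $\{\theta(x,\bar b_i)\}$ $m_0$-inconsistent inside $(O^{*})^{k}$. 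Quantifier elimination in $\oonster$ together with the $\delta_R$ yields a single $L$-formula $\hat\theta$ with $\theta(\oonster,\bar b')=\hat\theta(\nonster,\bar b')\cap(O^{*})^{k}$ for all $\bar b'$ from $O^{*}$; put $\rho(x,\bar b\bar d)=\hat\theta(x,\bar b)\wedge\vartheta(x,\bar d)$. From $\theta\models\varphi(x,\bar c)$ one gets $\rho(\nonster,\bar b\bar d)\cap(O^{*})^{k}=\theta(\oonster,\bar b)$, so $(\theta(x,\bar b),\rho(x,\bar b\bar d))$ is a matched pair, and $\ru_{\oonster}(\theta(x,\bar b))\ge\ru_{\oonster}(\tp_{\oonster}(\bar a/B))\ge\beta$, so by induction $\ru_{\nonster}(\rho(x,\bar b\bar d))\ge\beta$. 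Since $\rho\models\vartheta(x,\bar d)$, it now suffices to see that $\rho(x,\bar b\bar d)$ forks over $\bar d$ in $\nonster$: then $\vartheta(x,\bar d)$ acquires a forking extension of $U$-rank $\ge\beta$ and $\ru_{\nonster}(\vartheta(x,\bar d))\ge\beta+1$.

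The step I expect to be the main obstacle is precisely showing that $\rho(x,\bar b\bar d)$ forks over $\bar d$. Extracting (via Ramsey) an $\nonster$-indiscernible sequence $(\bar b_i')$ over $\bar c\bar d$ based on $(\bar b_i)$, one checks that $(\bar b_i')$ stays $\oonster$-indiscernible over $\bar c$ with the same Ehrenfeucht--Mostowski type over $\bar c$, so $\{\hat\theta(x,\bar b_i')\cap(O^{*})^{k}\}$ is again $m_0$-inconsistent; but dividing of $\hat\theta(x,\bar b_0')\wedge\vartheta(x,\bar d)$ over $\bar d$ in $\nonster$ requires $m_0$-inconsistency of $\{\hat\theta(x,\bar b_i')\wedge\vartheta(x,\bar d)\}$ throughout $\nonster^{mk}$, and a priori the sets $\hat\theta(\nonster,\bar b_i')$ overlap off $O^{*}$, where $\hat\theta$ is only pinned down by the arbitrary choice of the $\delta_R$. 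To fix this I would choose, at the outset, $\vartheta$ of least $U$-rank among $L$-formulas over $\bar d$ whose solution set meets $(O^{*})^{k}$ in $X$, and carry the analogous minimality through the matched pairs appearing in the induction: any nonempty $m_0$-fold intersection of the sets $\hat\theta(\nonster,\bar b_i')\wedge\vartheta(x,\bar d)$ is disjoint from $(O^{*})^{k}$, so removing all such intersections from $\vartheta(\nonster,\bar d)$ would give a strictly smaller admissible formula unless they were empty to begin with --- which delivers the genuine $m_0$-inconsistency, hence the forking. Making this minimality argument cohere with the simultaneous induction over matched pairs is, I expect, the technical heart. Finally, Proposition~\ref{prop:U rank} follows by taking $X=O$, $Y=M^{m}$ and applying the Lascar inequalities.
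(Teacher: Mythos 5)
Your proposal takes a genuinely different route from the paper's, and it contains a real gap that you identify but do not close. The paper does not do a direct forking induction. Instead it reduces to the case $X=O$, $Y=M$, and then proves, by ordinal induction on $\uplambda$, that $\ru(\Sa M)\le\uplambda$ implies $\ru(\Sa O)\le\uplambda$ using Lemma~\ref{lem:app}: the bound $\ru(T)\le\uplambda$ is characterized by a cardinality bound on $\mathrm{Def}/\!\app$, where $X\app X'\Longleftrightarrow\ru(X\triangle X')<\uplambda$. The transfer from $\Sa M$ to $\Sa O$ is then essentially free, because $X=Y\cap O$, $X'=Y'\cap O$ gives $X\triangle X'\subseteq Y\triangle Y'$, so the induction hypothesis immediately says $Y\app Y'\Rightarrow X\app X'$. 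The whole point of this reformulation (via Fact~\ref{fact:U rank}) is to avoid chasing dividing witnesses from $\Sa O$ into $\Sa M$ — exactly the step your approach cannot complete.

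The gap you flag is genuine, and your proposed repair does not work. You want to choose $\vartheta$ of least $U$-rank among $L$-formulas whose trace on $(O^{*})^{k}$ is $X$, and then argue that if some $m_0$-fold intersection $Z$ of the sets $\hat\theta(\nonster,\bar b_i')\wedge\vartheta(\nonster,\bar d)$ were nonempty, removing it would yield a strictly smaller admissible formula, contradicting minimality. But $U$-rank is not strictly monotone under proper inclusion of definable sets: $\vartheta\setminus Z$ can be a proper subset of $\vartheta$ with the same $U$-rank (e.g.\ a cofinite subset of a strongly minimal set, or any removal of a piece of strictly smaller rank). So minimality of $\ru(\vartheta)$ gives no contradiction, and you still cannot conclude that $\{\hat\theta(x,\bar b_i')\wedge\vartheta(x,\bar d)\}_i$ is $m_0$-inconsistent in $\nonster^{mk}$. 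Without that, $\rho$ need not fork over $\bar d$, and the successor step fails. There is also a secondary issue you gloss over: extracting an $\nonster$-indiscernible sequence over $\bar c\bar d$ and claiming it stays $\oonster$-indiscernible over $\bar c$ with the same EM-type requires indiscernibility over the parameters occurring in the $\delta_R$ and in the predicate $P$; this can be arranged, but as written it is not justified. The underlying obstruction is structural: trace definability controls the $\Sa M$-definable extensions of $\Sa O$-definable sets only on $O^{k}$, so combinatorial witnesses of forking in $\Sa O$ (inconsistency patterns) do not lift to $\Sa M$ — which is precisely why the paper replaces forking by a counting invariant that transfers via set containment.

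Your reduction to monster models and the framing in terms of matched pairs is fine, and your final remark that Proposition~\ref{prop:U rank} follows from Proposition~\ref{prop:U rank fine} via the Lascar inequalities agrees with the paper.
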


We first make some remarks on $U$-rank that will be used below to prove Proposition~\ref{prop:U rank fine}.\\
I am sure all of this is trivial to people who actually know stability.

\begin{fact}
\label{fact:U rank}
Suppose that $T$ is superstable and $\Sa M\models T$ is $\kappa$-saturated for a cardinal $\kappa$.
For each ordinal $\uplambda$ we let $S_\uplambda$ be the set of global one-types in $\Sa M$ with $U$-rank $\ge \uplambda$.
\begin{enumerate}
\item If $\ru(\Sa M)\le\uplambda$ then $|S_\uplambda|\le 2^{|T|}$.
\item If $\ru(\Sa M)>\uplambda$ then $|S_\uplambda|\ge\kappa$.
\end{enumerate}
\end{fact}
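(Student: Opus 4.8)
The plan is to split into the two parts, writing $\ru(a/A)$ for the $U$-rank of $\tp(a/A)$ and using standard facts: superstability makes $\ru$ ordinal-valued, for $A'\subseteq A$ one has $\ru(a/A)=\ru(a/A')$ if and only if $\tp(a/A)$ does not fork over $A'$, and every type does not fork over some finite subset of its domain. For (1), first I would note $\ru(\Sa M)=\ru(\emptyset)$, since $\ru(p)\le\ru(p\upharpoonright\emptyset)$ for every $p\in S_1(M)$ while every $q\in S_1(\emptyset)$ has a nonforking, hence $\ru$-preserving, extension to $M$. If $\ru(\Sa M)<\uplambda$ then $S_\uplambda=\emptyset$; if $\ru(\Sa M)=\uplambda$ then every $p\in S_\uplambda$ satisfies $\ru(p)=\uplambda=\ru(p\upharpoonright\emptyset)$, so $p$ does not fork over $\emptyset$, equivalently over $\mathrm{acl}(\emptyset)$. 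Since a type over an algebraically closed set is stationary, $p$ is the unique nonforking extension of $p\upharpoonright\mathrm{acl}(\emptyset)$, so $p\mapsto p\upharpoonright\mathrm{acl}(\emptyset)$ is injective on $S_\uplambda$; as $|\mathrm{acl}(\emptyset)|\le|T|$ and $T$ is stable, $|S_1(\mathrm{acl}(\emptyset))|\le 2^{|T|}$, which gives the bound.

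For (2) I would first reduce to $\kappa$ infinite and regular: a singular $\kappa$ is a limit cardinal, $\Sa M$ is $\kappa'$-saturated for every regular $\kappa'<\kappa$, and $S_\uplambda$ is unchanged, so the regular case forces $|S_\uplambda|\ge\kappa'$ for all such $\kappa'$, hence $|S_\uplambda|\ge\kappa$. Now fix $p_0\in S_1(M)$ with $\ru(p_0)\ge\uplambda+1$ and pass to a monster model $\monster$ of $T$ with $\Sa M\prec\monster$. By the definition of $U$-rank together with finiteness of the parameters witnessing forking, choose a finite tuple $\bar b$ and $a\models p_0$ with $\tp(a/M\bar b)$ forking over $M$ and $\ru(a/M\bar b)\ge\uplambda$; by local character fix a finite $\bar e\subseteq M$ with $\tp(a/M\bar b)$ not forking over $\bar e\bar b$. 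Then transitivity of nonforking forces $\tp(a/\bar e\bar b)$ to fork over $\bar e$ (otherwise $\tp(a/M\bar b)$ would be nonforking over $\bar e\subseteq M$), and monotonicity keeps $\ru(a/\bar e\bar b)\ge\uplambda$. Since forking equals dividing in a stable theory, fix a formula $\chi(x,\bar b)\in\tp(a/\bar e\bar b)$, with $\chi$ over $\bar e$, that $k$-divides over $\bar e$ for some $k$.

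The heart of (2) is then to build a Morley sequence $(\bar b_i:i<\kappa)$ in $\tp(\bar b/\bar e)$ over $\bar e$ with every $\bar b_i\in M$; this is possible because at stage $i$ one only has to realize a type over a set of cardinality $<\kappa$ in the $\kappa$-saturated $\Sa M$. Choosing $\sigma_i\in\Aut(\monster/\bar e)$ with $\sigma_i(\bar b)=\bar b_i$ and letting $r_i\in S_1(M)$ be a nonforking extension of $\sigma_i(\tp(a/\bar e\bar b))\in S_1(\bar e\bar b_i)$, one gets $r_i\in S_\uplambda$ (its restriction to $\bar e\bar b_i$ already has $U$-rank $\ge\uplambda$ and $r_i$ does not fork over $\bar e\bar b_i$) and $\chi(x,\bar b_i)=\sigma_i(\chi(x,\bar b))\in r_i$. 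If the $r_i$ took fewer than $\kappa$ values, regularity of $\kappa$ would give indices $i_1<\cdots<i_k$ with $r_{i_1}=\cdots=r_{i_k}$; calling this common type $r$, the formulas $\chi(x,\bar b_{i_1}),\dots,\chi(x,\bar b_{i_k})$ would all lie in $r$ and hence be jointly consistent, contradicting Kim's Lemma, by which the Morley sequence $(\bar b_i:i<\kappa)$ witnesses the $k$-dividing of $\chi(x,\bar b)$ over $\bar e$. Therefore $|\{r_i:i<\kappa\}|\ge\kappa$ and $|S_\uplambda|\ge\kappa$.

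The main obstacle I expect is this last step of (2): producing $\kappa$ candidate types of $U$-rank $\ge\uplambda$ over $M$ is routine once the forking configuration is transported along a Morley sequence realized inside $\Sa M$, but seeing that these types are pairwise distinct — so that one genuinely obtains $\kappa$ of them rather than a bounded repeated family — is exactly where the $k$-inconsistency of a Morley sequence (Kim's Lemma) together with the pigeonhole on the regular cardinal $\kappa$ are needed. A secondary point to get right is the reduction of singular $\kappa$ to the regular case and the transitivity/monotonicity bookkeeping that makes the finite base $\bar e$ do its job.
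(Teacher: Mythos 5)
Your proof is correct and follows essentially the same route as the paper. For (1) the paper cites the bound on nonforking extensions over $\emptyset$ as a black box while you unpack it via stationarity over the algebraic closure of the empty set (note you should work over $\mathrm{acl}^{\mathrm{eq}}(\emptyset)$ rather than $\mathrm{acl}(\emptyset)$ to guarantee stationarity, though the cardinality bound is unaffected); for (2) your Morley-sequence plus $k$-inconsistency argument, realized inside $\Sa M$ via $\kappa$-saturation, is exactly the intended content of the paper's terse invocation of ``saturation and the definition of $U$-rank.''
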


Here a \textit{type} is a one-type.

\begin{proof}
Suppose $\ru(\Sa M)=\uplambda$.
If $p_0$ is the restriction of $p\in S_\uplambda$ to the empty set then we have $\ru(p)\le\ru(p_0)\le\ru(\Sa M)$, hence $\ru(p_0)=\uplambda$, hence $p$ is a non-forking extension of $p_0$.
Thus any $p\in S_\uplambda$ is a non-forking extension of the restriction of $p$ to $\emptyset$.
There are $\le 2^{|T|}$-types over $\emptyset$ and every type over $\emptyset$ has at most $2^{|T|}$ nonforking extensions~\cite[Proposition~2.20(iv)]{pillay-book}.
If $\ru(\Sa M)<\uplambda$ then $S_\uplambda$ is empty.
So we see that if $\ru(\Sa M)\le\uplambda$ then $|S_\uplambda|\le 2^{|T|}$.
Now suppose that $\ru(\Sa M)>\uplambda$.
Then there is a type $p$ over $\emptyset$ such that $\ru(p)>\uplambda$.
By saturation and the definition of $U$-rank there is a set $A$ of parameters from $\Sa M$ such that $p$ has $\ge\kappa$ extensions $q$ over $A$ of $U$-rank $\ge\uplambda$, and each extension $q$ has a global non-forking extension $p'$, and $p'$ is necessarily in $S_\uplambda$.
Hence $|S_\uplambda|\ge\kappa$.
\end{proof}

\medskip
Let $\mathrm{Def}(\Sa M)$ be the collection of definable subsets of $M$.
Given a ordinal $\uplambda$ we let $\app$ be the equivalence relation on $\mathrm{Def}(\Sa M)$ given by $X\app X'\Longleftrightarrow \rum(X\triangle X')<\uplambda$.

\begin{lemma}
\label{lem:app}
Suppose that $T$ is superstable, $\uplambda$ is an ordinal.
The following are equivalent:
\begin{enumerate}
\item $\ru(T)\le\uplambda$.
\item $|\mathrm{Def}(\Sa M)/\!\app\!|\le 2^{2^{|T|}}$ for all $\Sa M\models T$.
\end{enumerate}
\end{lemma}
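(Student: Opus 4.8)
The plan is to prove the two implications separately, using the fact (from Fact~\ref{fact:U rank}) that in a superstable theory the size of the space of global one-types of $U$-rank $\ge\uplambda$ detects whether $\ru(T)\le\uplambda$. First I would fix $\Sa M\models T$ and relate $\mathrm{Def}(\Sa M)/\!\app$ to a space of types. The key observation is that for $\uplambda$-many-saturated $\Sa M$, a definable set $X$ is determined up to $\app$ by which global types of $U$-rank $\ge\uplambda$ ``concentrate on'' $X$, in the sense that $\rum(X\setminus p\text{-part})<\uplambda$; more precisely, for a global type $p$ with $\ru(p)\ge\uplambda$ and a definable $X$, exactly one of $X,M\setminus X$ lies in $p$ (since $p$ is a complete type), and $X\app X'$ iff $X$ and $X'$ lie in exactly the same types of $U$-rank $\ge\uplambda$ — this last equivalence is where the superstability and the relation between Morley-type ranks and $U$-rank enters, and it should be extracted from the standard theory of $U$-rank and forking (e.g.\ \cite[19.2]{Poizat}, \cite[Proposition~2.20]{pillay-book}). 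Granting this, the map $X/\!\app \mapsto \{p\in S_\uplambda : X\in p\}$ is an injection $\mathrm{Def}(\Sa M)/\!\app \hookrightarrow \mathcal P(S_\uplambda)$.

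For the implication $(1)\Rightarrow(2)$: assume $\ru(T)\le\uplambda$. Then for every $\Sa M\models T$ we have $\ru(\Sa M)\le\uplambda$, so by Fact~\ref{fact:U rank}(1), $|S_\uplambda|\le 2^{|T|}$. Combined with the injection above, $|\mathrm{Def}(\Sa M)/\!\app|\le 2^{2^{|T|}}$, which is exactly $(2)$. (One should take $\Sa M$ arbitrary here; the injection argument needs no saturation hypothesis on $\Sa M$ beyond what is needed to identify the $\app$-class with its trace on $S_\uplambda$, and the subtlety is only that an unsaturated $\Sa M$ might realize fewer types — but that only makes the quotient smaller, so the bound still holds. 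I would phrase the injective-map step carefully to avoid a saturation assumption on the $\Sa M$ appearing in $(2)$.)

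For the contrapositive of $(2)\Rightarrow(1)$: assume $\ru(T)>\uplambda$. Then I would pass to a sufficiently saturated $\Sa M\models T$, say $\kappa$-saturated with $\kappa>2^{2^{|T|}}$. By Fact~\ref{fact:U rank}(2), $|S_\uplambda|\ge\kappa$. Now the task is to manufacture $\ge\kappa$ pairwise $\app$-inequivalent definable sets, i.e.\ to show the injection above can be made a surjection onto enough of $\mathcal P(S_\uplambda)$, or at least that $|\mathrm{Def}(\Sa M)/\!\app|>2^{2^{|T|}}$. For this I would use saturation to separate types: given $\kappa$-many distinct global types $p_i\in S_\uplambda$, by $\kappa$-saturation there is a definable set $X_{ij}$ separating $p_i$ from $p_j$, and one builds, for each of ``enough'' subsets, a definable set realizing the corresponding pattern — here I would in fact argue more economically: since $|S_\uplambda|\ge\kappa>2^{2^{|T|}}$, and since definable sets already distinguish all pairs of types (types being, by definition, determined by the definable sets they contain), the map $X/\!\app\mapsto\{p\in S_\uplambda: X\in p\}$ together with the pigeonhole principle forces $|\mathrm{Def}(\Sa M)/\!\app|\ge|S_\uplambda|\ge\kappa>2^{2^{|T|}}$, contradicting $(2)$. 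This direction is essentially a counting argument once the separation of $U$-rank-$\uplambda$ types by definable sets is in hand.

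The main obstacle I expect is the precise equivalence ``$X\app X'$ iff $X,X'$ lie in the same global types of $U$-rank $\ge\uplambda$'', i.e.\ that $\rum(X\triangle X')<\uplambda$ is controlled exactly by the $U$-rank-$\ge\uplambda$ part of the type space. The forward direction ($\rum(X\triangle X')<\uplambda$ implies no $p$ with $\ru(p)\ge\uplambda$ contains $X\triangle X'$) is immediate from the definition of $U$-rank of a definable set as a supremum over types concentrating on it. The reverse — if $\rum(X\triangle X')\ge\uplambda$ then some global type of $U$-rank $\ge\uplambda$ contains $X\triangle X'$ — is again essentially the definition, using that $U$-rank is witnessed by types; but one must be slightly careful that ``$U$-rank of a definable set'' is indeed the sup over (global, finitely satisfiable) types, and cite the appropriate fact. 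Once this dictionary between $\mathrm{Def}(\Sa M)/\!\app$ and subsets of $S_\uplambda$ is nailed down, both implications follow from Fact~\ref{fact:U rank} by cardinal arithmetic.
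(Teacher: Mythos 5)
Your dictionary — that $X\app X'$ if and only if $X$ and $X'$ lie in exactly the same global types of $U$-rank $\ge\uplambda$ — is the same as the paper's, and your treatment of $(1)\Rightarrow(2)$ via the injection $\mathrm{Def}(\Sa M)/\!\app\hookrightarrow\mathcal P(S_\uplambda)$ together with Fact~\ref{fact:U rank}(1) matches the paper's argument exactly.

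The problem is in $(2)\Rightarrow(1)$. You assert that ``the map $X/\!\app\mapsto\{p\in S_\uplambda: X\in p\}$ together with the pigeonhole principle forces $|\mathrm{Def}(\Sa M)/\!\app|\ge|S_\uplambda|$.'' This is false: that map is an injection from $\mathrm{Def}(\Sa M)/\!\app$ into $\mathcal P(S_\uplambda)$, which bounds $|\mathrm{Def}(\Sa M)/\!\app|$ from \emph{above} by $2^{|S_\uplambda|}$ and gives no lower bound at all. The observation that definable sets (hence $\app$-classes) separate all pairs of types in $S_\uplambda$ gives an injection in the \emph{other} direction, $p\mapsto\{[X]_\app : X\in p\}$, from $S_\uplambda$ into $\mathcal P(\mathrm{Def}(\Sa M)/\!\app)$, and the resulting bound is $|S_\uplambda|\le 2^{|\mathrm{Def}(\Sa M)/\!\app|}$, not $|S_\uplambda|\le|\mathrm{Def}(\Sa M)/\!\app|$. (This is the classical point that $n$ predicates can distinguish $2^n$ objects, not $n$; nothing forces each type of rank $\ge\uplambda$ to have its ``own'' $\app$-class.) This is precisely the second inequality the paper records.

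The consequence of the error is that your choice $\kappa>2^{2^{|T|}}$ is one exponential too small. With the correct inequality $|S_\uplambda|\le 2^{|\mathrm{Def}(\Sa M)/\!\app|}$, the hypothesis $|\mathrm{Def}(\Sa M)/\!\app|\le 2^{2^{|T|}}$ only yields $|S_\uplambda|\le 2^{2^{2^{|T|}}}$. To contradict this via Fact~\ref{fact:U rank}(2) you must take $\Sa M$ $\kappa$-saturated with $\kappa>2^{2^{2^{|T|}}}$, so that $|S_\uplambda|\ge\kappa>2^{2^{2^{|T|}}}$ forces $|\mathrm{Def}(\Sa M)/\!\app|>2^{2^{|T|}}$. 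Once both injections are in place and $\kappa$ is taken large enough, the argument goes through and is the paper's.
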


\begin{proof}
Suppose $\Sa M\models T$ and let $S_\uplambda$ be as above. 
If $X,X'\subseteq M$ are definable then $X\app X'$ if and only if $X$ and $X'$ contain the same types of rank $\ge\uplambda$.
Hence $|\mathrm{Def}(\Sa M)/\!\app\!|\le 2^{|S_\uplambda|}$.
If $p\in S_\uplambda$ and $X\app X'$ are definable then $p$ is not in $X\triangle X'$, so $p$ is in $X$ if and only if $p$ is in $X'$.
Hence we also have $|S_\uplambda|\le2^{|\mathrm{Def}(\Sa M)/\!\app\!|}$.
Apply Fact~\ref{fact:U rank}.
\end{proof}

We now prove Proposition~\ref{prop:U rank fine}.

\begin{proof}
To simplify notation we only treat the case when $X=O$ and $Y=M$, the general case follows in the same way by replacing $\Sa O$, $\Sa M$ with the structure induced on $X$, $Y$ by $\Sa O$, $\Sa M$, respectively.
Suppose as above that $\Sa M$, $\Sa O$ is a $T$, $T^*$-model, respectively.
After possibly passing to elementary extensions and adding constant symbols we may suppose that $|T|=|T^*|$.
If $\Sa M$ is not superstable then $\ru(\Sa M)=\infty$ and the inequality trivially holds.
We suppose that $\Sa M$ is superstable.
Then $\Sa O$ is superstable by Corollary~\ref{cor:superstable}.

\medskip
Let $\uplambda$ be an ordinal.
We apply induction on $\uplambda$ to show that if $\ru(\Sa M)\le\uplambda$ then $\ru(\Sa O)\le\uplambda$.
Suppose $\uplambda=0$.
Then $Y$ is finite, hence $X$ is finite, hence $\ruo(X)=0$.

\medskip
Suppose $\uplambda > 0$.
By Lemma~\ref{lem:app} and Prop~\ref{prop:trace-theories} it is enough to show that $|\mathrm{Def}(\Sa O)/\!\app\!|\le 2^{2^{|T^*|}}$.
By Lemma~\ref{lem:app} it is enough to show that $|\mathrm{Def}(\Sa O)/\!\app\!|\le|\mathrm{Def}(\Sa M)/\!\app\!|$.
It is enough to fix $\Sa O$-definable $X,X' \subseteq O$ and $\Sa M$-definable $Y,Y'\subseteq M$, suppose that $X=Y\cap O$, $X'=Y'\cap O$ and $Y\app Y'$, and show that $X \app X'$.
We have $\ru_{\Sa M}(Y\triangle Y')<\uplambda$ as $Y\app Y'$.
Note that $X\triangle X' \subseteq Y\triangle Y'$, so $\ru_{\Sa O}(X\triangle X')<\uplambda$ by induction, hence $X\app X'$.
\end{proof}

 \subsection{NIP}
\label{section:trace nip}
We now discuss $\nip$-theoretic properties away from stability.

\begin{proposition}
\label{prop:trace}
If $T$ is $\nip$ and $T^*$ is trace definable in $T$ then $T^*$ is $\nip$.
\end{proposition}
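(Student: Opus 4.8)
The plan is a one-formula argument by contraposition: I will show that if some formula has the independence property in a model of $T^*$, then some formula has the independence property in a model of $T$, contradicting that $T$ is $\nip$.

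Suppose $T^*$ is $\mathrm{IP}$. By compactness there is a model $\Sa O \models T^*$, a formula $\phi(x;y)$ with $|x| = n$ and $|y| = k$, a sequence $(a_i : i < \upomega)$ in $O^n$, and tuples $(b_w \in O^k : w \subseteq \upomega)$ such that $\Sa O \models \phi(a_i ; b_w)$ if and only if $i \in w$. Since $T$ trace defines $T^*$, the structure $\Sa O$ is trace definable in some $\Sa M \models T$, say via an injection $\uptau \colon O \to M^m$; extend $\uptau$ to tuples coordinatewise as in Section~\ref{section:few general}. As $T$ is $\nip$, the structure $\Sa M$ is $\nip$.

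Let $X \subseteq O^{n+k}$ be the set defined by $\phi$, where $\phi$ is read as a formula in the single concatenated tuple of length $n+k$. Applying the trace definition to $X$ gives an $\Sa M$-definable $Y \subseteq M^{m(n+k)}$ with $\bar\alpha \in X \Longleftrightarrow \uptau(\bar\alpha) \in Y$ for all $\bar\alpha \in O^{n+k}$. Regrouping the $m(n+k)$ coordinates as $mn$ followed by $mk$, let $\psi(x';y')$ with $|x'| = mn$, $|y'| = mk$ be a formula defining $Y$, and set $\hat a_i = \uptau(a_i) \in M^{mn}$ and $\hat b_w = \uptau(b_w) \in M^{mk}$. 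Then for all $i < \upomega$ and $w \subseteq \upomega$,
\[
\Sa M \models \psi(\hat a_i ; \hat b_w) \ \Longleftrightarrow\ (\uptau(a_i), \uptau(b_w)) \in Y \ \Longleftrightarrow\ (a_i, b_w) \in X \ \Longleftrightarrow\ \Sa O \models \phi(a_i ; b_w) \ \Longleftrightarrow\ i \in w .
\]
Thus $\psi$ has the independence property in $\Sa M$, contradicting that $\Sa M$ is $\nip$. Hence $T^*$ is $\nip$.

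There is no real obstacle here; the statement falls out of the definitions once one is careful about the coordinate bookkeeping when pushing $\phi$ on $O^{n+k}$ forward to $\psi$ on $M^{mn} \times M^{mk}$ through the tuple extension of $\uptau$. Alternatively the same conclusion follows from Fact~\ref{fact:dp-rank}(1) together with the bound $\dprk \Sa O \le m \cdot \dprk \Sa M$ for a trace definition via $\uptau \colon O \to M^m$, but the argument above is self-contained and does not require developing that dp-rank inequality.
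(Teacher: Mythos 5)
Your proof is correct; the paper simply states that this proposition ``is easy and left to the reader,'' and the argument it has in mind is exactly what you wrote: pull an IP witness for $T^*$ back through the trace-defining injection, being careful that $\uptau$ applied to a concatenated tuple $(a_i,b_w)\in O^{n+k}$ is the concatenation of $\uptau(a_i)$ and $\uptau(b_w)$ in $M^{m(n+k)}$, so that a formula defining the pulled-back set $Y$ witnesses IP in $\Sa M$.
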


Proposition~\ref{prop:trace} is easy and left to the reader.

\begin{proposition}
\label{prop:trace-0}
The following are equivalent:
\begin{enumerate}
\setcounter{enumi}{3}
\item $T$ is $\mathrm{IP}$,
\item $T$ trace defines the Erd\H{o}s-Rado graph,
\item $T$ trace defines the generic countable bipartite graph.
\end{enumerate}
\end{proposition}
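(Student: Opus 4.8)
The plan is to prove the two equivalences (4)$\Leftrightarrow$(5) and (4)$\Leftrightarrow$(6) in parallel. The implications (5)$\Rightarrow$(4) and (6)$\Rightarrow$(4) are soft: the theories of the Erd\H{o}s--Rado graph and of the generic countable bipartite graph are both $\mathrm{IP}$ --- in each case the edge relation itself has the independence property, directly from the extension axioms --- so, translating ``trace defines the structure'' into ``trace defines the theory'' via Proposition~\ref{prop:trace-theories} and applying the contrapositive of Proposition~\ref{prop:trace} (trace definability preserves $\nip$), any theory trace defining one of them is $\mathrm{IP}$. All the content is therefore in (4)$\Rightarrow$(5) and (4)$\Rightarrow$(6), which I would handle uniformly.

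So assume $T$ is $\mathrm{IP}$, witnessed by a formula $\varphi(x,y)$, and by Proposition~\ref{prop:trace-theories} reduce to showing that a fixed $\aleph_1$-saturated $\Sa M\models T$ trace defines the relevant structure. Compactness and saturation yield a countable set $A=\{a_i:i\in\N\}$ of distinct elements of $M^{|x|}$ shattered by the family $\{\varphi(\Sa M,b):b\in M^{|y|}\}$; that is, for every $S\subseteq\N$ there is $b_S\in M^{|y|}$ with $\Sa M\models\varphi(a_i,b_S)\Leftrightarrow i\in S$ for all $i$. Fix a concrete presentation of the target on vertex set $\N$: the Erd\H{o}s--Rado graph $(\N;E_0)$ for (5), and the generic bipartite graph on two disjoint copies of $\N$ with edge set $E_0\subseteq\N\times\N$ for (6). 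Put $S_j=\{i:E_0(i,j)\}$ and $b_j:=b_{S_j}$.

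For the Erd\H{o}s--Rado graph set $\uptau(k)=(a_k,b_k)\in M^{|x|+|y|}$ and let $Y$ be defined by $\varphi(u',v)$ in the variables $((u,v),(u',v'))$; then $\uptau$ is injective since the $a_k$ are distinct, and $(\uptau(k),\uptau(j))\in Y$ iff $\varphi(a_j,b_k)$ iff $j\in S_k$ iff $E_0(k,j)$, a relation that is automatically symmetric and loop-free, so by quantifier elimination for the Erd\H{o}s--Rado graph and Proposition~\ref{prop:qe}, $\Sa M$ trace defines it via $\uptau$. For the bipartite graph, fix distinct $0^*,1^*\in M$ and a padding tuple $c$, and embed the disjoint union of the two sides into $M^{|x|+|y|+1}$ by $\uptau(\text{left }i)=(a_i,c,0^*)$ and $\uptau(\text{right }j)=(c,b_j,1^*)$; the two unary predicates are picked out by the last coordinate, and the edge relation by $\{((u,v,s),(u',v',s')):s=0^*\wedge s'=1^*\wedge\varphi(u,v')\}$, whose $\uptau$-pullback is exactly $\{(i,j):\varphi(a_i,b_j)\}=\{(i,j):i\in S_j\}=E_0$ and which vanishes off left$\times$right because of the flag guard. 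Injectivity on the right side follows from genericity, as distinct same-side vertices have distinct neighbourhoods, hence distinct $S_j$, hence distinct $b_j$. Proposition~\ref{prop:qe} and quantifier elimination for the generic bipartite graph finish the argument.

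The only subtleties are bookkeeping: arranging distinctness of witnessing tuples so that $\uptau$ is an injection, and encoding the two-sorted bipartite structure inside a single power $M^m$ via flag coordinates. The mathematical heart --- that a single binary $\mathrm{IP}$ formula already carries a copy of ``all'' bipartite graphs and of the random graph --- is precisely the shattering statement, immediate from saturation, so I do not expect a genuine obstacle. One could alternatively prove (5)$\Leftrightarrow$(6) directly, since each of these structures trace defines the other by passing to a suitable induced substructure, but routing everything through (4) keeps the proof uniform and avoids checking extension axioms.
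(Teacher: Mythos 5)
Your proof is correct, and the overall architecture matches the paper's: reduce to an $\aleph_1$-saturated model via Proposition~\ref{prop:trace-theories}, get (5)$\Rightarrow$(4) and (6)$\Rightarrow$(4) from Proposition~\ref{prop:trace} since both target structures are $\mathrm{IP}$, and derive the forward directions from a shattering witness via Proposition~\ref{prop:qe}. The genuinely different part is (4)$\Rightarrow$(5): the paper cites Laskowski--Shelah's lemma that the Erd\H{o}s--Rado graph embeds into a definable graph, whereas you build the picture by hand, sending vertex $k$ to $(a_k,b_{S_k})$ where $S_k$ is the $E_0$-neighbourhood of $k$, so that the $\uptau$-pullback of $\{\varphi(u',v)\}$ is $E_0$ by symmetry of $E_0$. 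This is more self-contained and replaces the external citation with a one-line computation. For (4)$\Rightarrow$(6) your flag-coordinate encoding of the two-sorted bipartite structure inside a single power $M^{|x|+|y|+1}$ is also a bit more careful than the paper's treatment, which places $V$ in $M^{|x|}$ and $W$ in $M^{|y|}$ and applies Proposition~\ref{prop:qe} without explaining how both sides land in a common $M^m$ as that proposition requires; your version spells out the needed padding and injectivity. Both proofs are correct, but yours is a little more explicit and avoids the reference.
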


One can also prove the equivalence of (4) and (5) using Proposition~\ref{prop:picture} and the characterization of $\nip$ in terms of uncollapsed indiscernibles, see \cite{scow} and Proposition~\ref{prop:k-dep-0} below.

\begin{proof}
By Proposition~\ref{prop:trace-theories} it is enough to suppose that $\Sa M$ is $\aleph_1$-saturated and show that the following are equivalent:
\begin{enumerate}
\item $\Sa M$ is $\mathrm{IP}$,
\item $\Sa M$ trace defines the Erd\H{o}s-Rado graph,
\item $\Sa M$ trace defines the generic countable bipartite graph.
\end{enumerate}
Proposition~\ref{prop:trace} shows that $(2)$ and $(3)$ both imply $(1)$.
We show that $(1)$ implies both $(2)$ and $(3)$.
Suppose that $\Sa M$ is $\mathrm{IP}$.
We first show that $\Sa M$ trace defines the generic countable bipartite graph.
Fix a formula $\varphi(x,y)$, a sequence $(a_i : i \in \N)$ of elements of $M^{|x|}$, and a family $(b_I : I \subseteq \N)$ of elements of $M^{|y|}$ such that $\Sa M \models \varphi(a_i,b_I) \Longleftrightarrow i \in I$ for any $I \subseteq \N$ and $i \in \N$.
Let $V = \{ a_i : i \in \N \}$, $W = \{ b_I : I \subseteq \N \}$, and $E = \{ (a,b) \in V \times W : \Sa M \models \varphi(a,b) \}$.
Any countable bipartite graph embeds into $(V,W;E)$, so in particular the generic countable bipartite graph embeds into $(V,W;E)$.
Apply Proposition~\ref{prop:qe} and quantifier elimination for the generic countable bipartite graph.
We now show that $\Sa M$ trace defines the Erd\H{o}s-Rado graph.
Laskowski and Shelah~\cite[Lemma~2.2]{Laskowski-shelah-karp} show that the Erd\H{o}s-Rado graph embeds into a definable graph.
Apply Proposition~\ref{prop:qe}.
\end{proof}

% \subsection{Dp and Op}
Strong dependence and dp-finiteness are also preserved.

\begin{proposition}
\label{thm:dp-rank}
Suppose that $T^*$ is trace definable in $T$.
If $T$ has finite dp-rank then $T^*$ has finite dp-rank, and if $T$ has infinite dp-rank then $\dprk T^* \le \dprk T$.
In particular if $T$ is strongly dependent then $T^*$ is strongly dependent.
\end{proposition}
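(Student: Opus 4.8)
The plan is to imitate Propositions~\ref{prop:morely rank fine} and~\ref{prop:U rank fine}: pull a dp-rank array back along the injection $\uptau$ witnessing trace definability. Precisely, I would first establish the ``fine'' statement that if $\Sa M$ trace defines $\Sa O$ via $\uptau\colon O\to M^m$, then from an $(\cdot,\cdot,\uplambda)$-array witnessing $\dprk_{\Sa O}(O)\ge\uplambda$ one produces an $(\cdot,\cdot,\uplambda)$-array witnessing $\dprk_{\Sa M}(M^m)\ge\uplambda$. Granting this (together with Proposition~\ref{prop:trace-theories} to handle saturation), one gets $\dprk T^*\le\dprk_{\monster}(\monsterset^m)$ for $\monster\models T$, and by Fact~\ref{fact:dp-rank}(4) and induction on $m$, $\dprk_{\monster}(\monsterset^m)\le m\cdot\dprk \monster=m\cdot\dprk T$; hence $\dprk T^*\le m\cdot\dprk T$. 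This already yields all three assertions: if $\dprk T<\aleph_0$ then $m\cdot\dprk T<\aleph_0$, so $\dprk T^*<\aleph_0$; if $\dprk T$ is infinite then $m\cdot\dprk T=\dprk T$, so $\dprk T^*\le\dprk T$; and since strong dependence of $T$ is equivalent to $\dprk T<\aleph_0$ (no $(\cdot,\cdot,\aleph_0)$-array in one variable), strong dependence transfers by the first case.

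For the pullback I would argue as follows. Suppose $\dprk T^*\ge\uplambda$, so some $\monster^*\models T^*$ carries parameter-free $L^*$-formulas $(\psi_\alpha(x_\alpha,y):\alpha<\uplambda)$ with $|y|=1$, an array $(a_{\alpha,i})$, and witnesses $(b_f:f\in\upomega^{\uplambda})$ as in the definition. Choose $\Sa O\prec\monster^*$ containing every $a_{\alpha,i}$ and every $b_f$, so the same data is an array inside $\Sa O$, and $|O|\le|L^*|+\aleph_0+\upomega^{|\uplambda|}$. By Proposition~\ref{prop:trace-theories} pick a sufficiently saturated $\Sa M\models T$ trace defining $\Sa O$, say via $\uptau\colon O\to M^m$. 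For each $\alpha$, trace definability applied to $D_\alpha=\{(\bar a,b):\Sa O\models\psi_\alpha(\bar a,b)\}\subseteq O^{|x_\alpha|+1}$ produces an $\Sa M$-definable $Y_\alpha\subseteq M^{m(|x_\alpha|+1)}$ with $(\bar a,b)\in D_\alpha\iff(\uptau(\bar a),\uptau(b))\in Y_\alpha$. Let $\chi_\alpha(x'_\alpha,y')$ define $Y_\alpha$, with $|x'_\alpha|=m|x_\alpha|$ and $|y'|=m$, set $a'_{\alpha,i}=\uptau(a_{\alpha,i})$, and absorb the parameters of $\chi_\alpha$ into the array elements of row $\alpha$ in the standard way. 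For each $f$ the element $\uptau(b_f)\in M^m$ satisfies $\chi_\alpha(a'_{\alpha,k},\uptau(b_f))\iff(a_{\alpha,k},b_f)\in D_\alpha\iff\Sa O\models\psi_\alpha(a_{\alpha,k},b_f)\iff f(\alpha)=k$, so these data form an array over a monster $\monster^M\succ\Sa M$ witnessing $\dprk_{\monster^M}((\monsterset^M)^m)\ge\uplambda$. As in the paper I ignore the $\sup$-versus-maximum and the ``$\uplambda-1$'' conventions; the inequalities above hold verbatim for each witnessing $\uplambda$.

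The array pullback is formally identical to the Morley-rank and $U$-rank cases, so I expect no genuine obstacle. The one point demanding care is the bookkeeping in the second paragraph: $\Sa O$ must be taken large enough to contain \emph{all} the dp-rank witnesses $b_f$ before Proposition~\ref{prop:trace-theories} can be invoked to trace-define it inside a correspondingly saturated model of $T$.
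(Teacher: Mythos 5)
Your argument follows exactly the paper's route: the ``fine'' pullback you describe is the paper's Proposition~\ref{prop:dp-rank-0} (pull an array back along $\uptau$, absorbing parameters into the row entries), and the reduction via subadditivity $\dprk_{\Sa M} M^m \le m\dprk_{\Sa M} M$ (Fact~\ref{fact:dp-rank}.4) is Proposition~\ref{prop:dp-rank}, from which all three claims drop out as you say. The only divergence is cosmetic: you take $\Sa O$ large enough to literally contain the witnesses $b_f$ before invoking Proposition~\ref{prop:trace-theories}, whereas the paper simply passes to $\aleph_1$-saturated models and works with the array somewhat more informally; both readings come to the same thing.
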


The first two claims of Proposition~\ref{thm:dp-rank} are immediate from Proposition~\ref{prop:dp-rank}.
The last claim follows from the previous as $T$ is strongly dependent $\Longleftrightarrow$ $\dprk T \le \aleph_0-1$.

\begin{proposition}
\label{prop:dp-rank}
Suppose that $\Sa M$ trace defines $\Sa O$ via an injection $O \to M^m$.
If $\Sa M$ has finite dp-rank then $\dprk \Sa O \le m \dprk \Sa M$ and if $\Sa M$ has infinite dp-rank then $\dprk \Sa O \le \dprk \Sa M$.
\end{proposition}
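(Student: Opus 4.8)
The statement is the definable-set analogue of the dp-rank inequality, and it should follow directly from the definition of dp-rank by transporting an array witnessing high dp-rank on $\Sa O$ (or on a product of copies of $\Sa O$) up through $\uptau$ to an array witnessing high dp-rank on $\Sa M$. First I would pass to monster models: by Proposition~\ref{prop:trace-theories} we may assume $\monster \models T$ trace defines $\nonster \models T^*$ via an injection $\uptau \colon N \to M^m$, with both models sufficiently saturated, and it suffices to bound $\dprk_{\nonster} N$ in terms of $\dprk_{\monster} M$. Write $\kappa = \dprk \monster$. Suppose toward the bound that $\dprk_{\nonster} N \ge \uplambda$ for some cardinal $\uplambda$ (in the finite case, some integer); I want to conclude $\uplambda \le m\kappa$ when $\kappa$ is finite, and $\uplambda \le \kappa$ when $\kappa$ is infinite.

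\textbf{Key steps.} Fix an $(\nonster, N, \uplambda)$-array: parameter-free formulas $\psi_\alpha(u_\alpha, v)$ and parameters $c_{\alpha,i} \in \nonsterset^{|u_\alpha|}$ for $\alpha < \uplambda$, $i < \upomega$, such that for every $f \colon \uplambda \to \upomega$ there is $d \in N$ with $\nonster \models \psi_\alpha(c_{\alpha,k}, d) \Leftrightarrow f(\alpha) = k$. The idea is to replace $N$-tuples by their $\uptau$-images and each $\nonster$-definable relation by the $\monster$-definable $Y$ that trace-defines it. Concretely, for each $\alpha$ the formula $\psi_\alpha(u_\alpha, v)$, viewed as defining a subset of $N^{|u_\alpha| + 1}$, is of the form $Y_\alpha \cap N^{|u_\alpha|+1}$ for an $\monster$-definable $Y_\alpha \subseteq M^{m(|u_\alpha|+1)}$, say defined by $\varphi_\alpha(\bar u_\alpha, \bar v, e_\alpha)$ with $|\bar v| = m$ and parameters $e_\alpha$ from $\monsterset$. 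Collect all these parameters into a small set $A \subseteq \monsterset$. Now set $a_{\alpha,i} = \uptau(c_{\alpha,i}) \in \monsterset^{m|u_\alpha|}$ (reading $\uptau$ componentwise on tuples) and consider the formulas $\varphi_\alpha(\bar u_\alpha, \bar v; e_\alpha)$ with $\bar v$ of length $m$. Given $f \colon \uplambda \to \upomega$, pick $d \in N$ as above and let $b = \uptau(d) \in \monsterset^m$. Then $\monster \models \varphi_\alpha(a_{\alpha,k}, b; e_\alpha) \Leftrightarrow \nonster \models \psi_\alpha(c_{\alpha,k}, d) \Leftrightarrow f(\alpha) = k$, so this is an $(\monster, M^m, \uplambda)$-array over the parameter set $A$. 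Two things now need care. First, dp-rank is defined with parameter-free formulas, but the parameters $A$ are absorbed in the usual way — name them by constants, which does not change dp-rank, or simply note that the definition of dp-rank is insensitive to naming a small set of parameters. Second, the array lands in $M^m$, not $M$: here one invokes the subadditivity of dp-rank, Fact~\ref{fact:dp-rank}(4), which gives $\dprk_{\monster} M^m \le m \cdot \dprk_{\monster} M = m\kappa$; an $(\monster, M^m, \uplambda)$-array witnesses $\dprk_{\monster} M^m \ge \uplambda$, hence $\uplambda \le m\kappa$, giving the finite-rank bound. When $\kappa$ is infinite, $m\kappa = \kappa$, so the same computation yields $\uplambda \le \kappa$ directly (and the ``finite dp-rank'' hypothesis in the statement is exactly what makes $m\kappa$ the meaningful bound in the first case). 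Taking the supremum over $\uplambda$ gives $\dprk \Sa O \le m\dprk \Sa M$ in the finite case and $\dprk \Sa O \le \dprk \Sa M$ in the infinite case.

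\textbf{Main obstacle.} Nothing here is deep; the one genuine point requiring attention is that the array must be assembled uniformly so that a single choice of $b = \uptau(d)$ simultaneously handles the $\bar v$-slot of every $\varphi_\alpha$ — this works because each $\psi_\alpha$ had a single variable $v$ ranging over $N$, whose $\uptau$-image has fixed length $m$, so the ``column'' variable $\bar v$ has the same length $m$ across all rows, as the definition of an $(\monster, M^m, \uplambda)$-array requires. The other mild subtlety is the bookkeeping that the trace-defining sets $Y_\alpha$ can be chosen with parameters in one small set $A$, which is automatic since there are only $|\uplambda| + \aleph_0$ of them and each contributes finitely many parameters. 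Given those observations, the proof is the routine transport-of-array argument sketched above, so I would present it tersely, citing Fact~\ref{fact:dp-rank}(4) for the $M^m$-to-$M$ reduction and Proposition~\ref{prop:trace-theories} for the reduction to saturated monsters.
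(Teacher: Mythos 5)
Your proof is correct and follows essentially the same route as the paper's: transport an $(\nonster,N,\uplambda)$-array to an $(\monster,M^m,\uplambda)$-array by replacing each $\psi_\alpha$ with an $\Sa M$-formula that trace-defines it (absorbing parameters into the $x_\alpha$-tuples), then invoke subadditivity of dp-rank to pass from $M^m$ to $M$. The paper factors the transport step into a separate statement, Proposition~\ref{prop:dp-rank-0}, which records the finer inequality $\dprk_{\Sa O}(X)\le\dprk_{\Sa M}(Y)$ for corresponding definable sets, but the content is identical; your observations about the uniform length $m$ of the $\bar v$-slot and about collecting parameters are exactly the points the paper leaves implicit.
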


Proposition~\ref{prop:dp-rank} follows from Proposition~\ref{prop:dp-rank-0} and subadditivity of dp-rank (Fact~\ref{fact:dp-rank}).

\begin{proposition}
\label{prop:dp-rank-0}
Suppose that $O \subseteq M^m$ and $\Sa M$ trace defines $\Sa O$ via the identity $O \to M^m$.
Suppose that $X$ is an $\Sa O$-definable subset of $O^k$ and $Y$ is an $\Sa M$-definable subset of $M^{mk}$ such that $X = Y \cap O^k$.
Then $\dprk_{\Sa O}(X) \le \dprk_{\Sa M}(Y)$. 
\end{proposition}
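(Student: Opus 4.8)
The plan is to unwind the definition of dp-rank directly. Suppose $\dprk_{\Sa M}(Y) = \uplambda$ (possibly $\infty$); we want to show that any $(\monster_{\Sa O}, X, \upkappa)$-array can be transported into an $(\monster_{\Sa M}, Y, \upkappa)$-array, which gives $\dprk_{\Sa O}(X) \ge \upkappa \Rightarrow \dprk_{\Sa M}(Y) \ge \upkappa$ and hence the inequality. First I would pass to monster models: since dp-rank is computed in the monster, and trace definability is preserved along elementary extensions (Proposition~\ref{prop:trace-theories}, or more simply Proposition~\ref{prop:trace-basic}(3) together with the fact that $X = Y \cap O^k$ persists in elementary extensions once we name the relevant parameters), we may assume $\Sa M = \monster$ is a monster model of $T$, that $\Sa O$ is an elementary substructure large enough to be a monster model of $T^*$ sitting inside, with $O \subseteq M^m$ and $\Sa M$ trace defining $\Sa O$ via the identity $O^k \to M^{mk}$, and $X = Y \cap O^k$.

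Now suppose $\dprk_{\monster_{\Sa O}}(X) \ge \upkappa$, witnessed by formulas $(\varphi_\alpha(x_\alpha, y) : \alpha < \upkappa)$ over $\emptyset$ and an array $(a_{\alpha,i} : \alpha < \upkappa, i < \upomega)$ with all the $a_{\alpha,i}$ in appropriate Cartesian powers of $O$, such that for every $f \colon \upkappa \to \upomega$ there is $b \in X$ with $\monster_{\Sa O} \models \varphi_\alpha(a_{\alpha,k}, b) \Leftrightarrow f(\alpha) = k$. The key step is to replace each $\Sa O$-formula $\varphi_\alpha(x_\alpha, y)$ — where $y$ ranges over $O^k$ and $x_\alpha$ over some $O^{\ell_\alpha}$ — by an $\Sa M$-definable set. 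Since $\Sa M$ trace defines $\Sa O$ via the identity, there is for each $\alpha$ an $\Sa M$-definable $Z_\alpha \subseteq M^{m\ell_\alpha} \times M^{mk}$ with $\monster_{\Sa O} \models \varphi_\alpha(a,b) \Leftrightarrow (a,b) \in Z_\alpha$ for all $a \in O^{\ell_\alpha}, b \in O^k$; let $\psi_\alpha(x_\alpha, y)$ be a formula (with parameters) defining $Z_\alpha$ in $\monster_{\Sa M}$. Here the variables $x_\alpha, y$ now range over powers of $M$, and the same array entries $a_{\alpha,i}$ (viewed inside $M^{m\ell_\alpha}$) serve as the new array. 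I would then check that $(\psi_\alpha(x_\alpha, y) : \alpha < \upkappa)$ with the array $(a_{\alpha,i})$ is an $(\monster_{\Sa M}, Y, \upkappa)$-array: given $f \colon \upkappa \to \upomega$, the $\Sa O$-array produces $b \in X \subseteq Y$ with $\monster_{\Sa O} \models \varphi_\alpha(a_{\alpha,k}, b) \Leftrightarrow f(\alpha) = k$; since $b \in O^k$ and all array entries lie in the relevant powers of $O$, the equivalence $\varphi_\alpha(a,b) \Leftrightarrow (a,b) \in Z_\alpha \Leftrightarrow \monster_{\Sa M} \models \psi_\alpha(a,b)$ transfers this to $\monster_{\Sa M} \models \psi_\alpha(a_{\alpha,k}, b) \Leftrightarrow f(\alpha) = k$. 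Thus $\dprk_{\monster_{\Sa M}}(Y) \ge \upkappa$.

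One technical point to handle carefully: the formulas $\varphi_\alpha$ in a dp-array are required to be parameter-free, whereas the $\psi_\alpha$ produced from trace definability carry parameters. This is harmless — the definition of dp-rank is unchanged if one allows parameters in the array formulas (one absorbs them into the array entries $a_{\alpha,i}$, or equivalently adds them as constants), and I would either invoke this standard fact or simply re-bundle: set $x'_\alpha = (x_\alpha, w_\alpha)$ where $w_\alpha$ names the parameters of $\psi_\alpha$, and replace $a_{\alpha,i}$ by $(a_{\alpha,i}, c_\alpha)$ with $c_\alpha$ the fixed parameter tuple. The main obstacle is really just bookkeeping — keeping straight that the "coordinates" $b$ live in $O^k$ while the enveloping formulas live over $M$, and that trace definability only controls membership for tuples drawn from $O$, which is exactly why we need $b \in X = Y \cap O^k$ at the crucial step. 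Taking $\upkappa$ to be an arbitrary cardinal and then passing to the supremum (with the $\uplambda - 1$ convention, which, as the paper notes, does not matter) yields $\dprk_{\Sa O}(X) \le \dprk_{\Sa M}(Y)$.
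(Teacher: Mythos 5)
Your proof is correct and takes essentially the same route as the paper's: transport the ict-pattern (array) from $\Sa O$ to $\Sa M$ by replacing each $L^*$-formula $\varphi_\alpha$ with an $\Sa M$-formula whose trace on the relevant powers of $O$ agrees, keeping the same array entries, and observing that a witness $b \in X$ lies in $Y$ because $X = Y \cap O^k$. One point where you are slightly more careful than the paper: the paper's definition of an array insists on parameter-free formulas, and its proof then silently swaps in $L(M)$-formulas $\theta_\alpha$ that carry parameters; you explicitly rebundle the parameters into the array entries (or extend $x_\alpha$ by a tuple naming them), which is the right fix and worth saying.
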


% \noindent
% We prove Proposition~\ref{prop:dp-rank-0}.

\begin{proof}[Proof of Proposition~\ref{prop:dp-rank-0}]
We only treat the case when $m = k = 1$, the general case follows in the same way.
By Proposition~\ref{prop:trace-theories} we may suppose that $\Sa M \models T$ and $\Sa O \models T^*$ are both $\aleph_1$-saturated, $O \subseteq M^m$, and $\Sa M$ trace defines $\Sa O$ via the identity $O \to M^m$.
Suppose that $\uplambda$ is a cardinal and $\dprk_{\Sa O} X \ge \uplambda$.
Fix an $(\Sa O,X,\uplambda)$-array consisting of parameter free $L^*$-formulas $(\varphi_\alpha(x_\alpha , y) : \alpha < \uplambda )$ and tuples $(a_{\alpha,i} \in O^{|x_\alpha|} : \alpha < \uplambda , i < \upomega )$ .
For each $\alpha < \uplambda$ we fix an $L(M)$-formula $\theta_\alpha(z_\alpha,w)$, $|z_\alpha| = |x_\alpha|$, $|w| = 1$, such that for any $a \in O^{|x_\alpha|}$, $b \in O$ we have $\Sa O \models \varphi_\alpha(a,b) \Longleftrightarrow \Sa M \models \theta_\alpha(a,b)$.
It is now easy to see that $(\theta_\alpha(z_\alpha,w) : \alpha < \uplambda)$ and $( a_{\alpha,i} : \alpha < \uplambda, i < \upomega )$ forms an $(\Sa M,Y,\uplambda)$ array.
Thus $\dprk_{\Sa M} Y \ge \uplambda$.
Hence $\dprk_{\Sa M} Y \ge \dprk \Sa O$.
\end{proof}

We now prove Proposition~\ref{prop:dp-rank}.

\begin{proof}
By Proposition~\ref{prop:dp-rank-0} $\dprk \Sa O \le \dprk_{\Sa M} M^m$.
Suppose $\dprk \Sa M < \aleph_0$.
By Fact~\ref{fact:dp-rank} $\dprk_{\Sa M} M^m$ is at most $m \dprk_{\Sa M} M$.
Suppose $\dprk \Sa M\ge \aleph_0$.
By Fact~\ref{fact:dp-rank} $\dprk_{\Sa M} M^m = \dprk_{\Sa M} M$.
So $\dprk \Sa O \le \dprk \Sa M$.
\end{proof}

As a corollary we can show that there are class-many trace equivalence classes.
For each cardinal $\uplambda$ let $L_\uplambda$ consist of binary relations $(E_\kappa : \kappa < \uplambda)$, $T_\uplambda$ be the theory of $\uplambda$ equivalence relations, and $T^*_\uplambda$ be the model companion of $T_\uplambda$.
Then $T_\uplambda$ has dp-rank $\uplambda$ (each equivalence relation gives one row).
It is easy to see that $T_\upeta$ is the $L_\upeta$-reduct of $T_\uplambda$ for all $\upeta < \uplambda$.
Proposition~\ref{thm:dp-rank} shows that if $\uplambda, \upeta$ are infinite cardinals then $T_\uplambda$ trace defines $T_\upeta$ iff $\upeta \le \uplambda$.

\medskip
We now show that finiteness of op-dimension is preserved.
Op-dimension was introduced by Guingona and Hill~\cite{gh-op}.
Let $\opd_{\Sa M}(X)$ be the op-dimension of an $\Sa M$-definable set $X$ and $\opd(T)$ be the op-dimension of $T$.
Recall that $\opd(T) = \opd(M)$ for any $\Sa M \models T$.
A structure with finite op-dimension has $\nip$, see \cite[Section 3.1]{gcs}.
We will also apply subadditivity of op-dimension, see \cite[Theorem~2.2]{gh-op}.
A $k$-order is a structure $(P;<_1,\ldots,<_k)$ where each $<_k$ is a linear order on $P$.
Finite $k$-orders form a \Fraisse class, we denote the  \Fraisse limit by $\Sa P_k$.
Fact~\ref{fact:op-collapse} is \cite[Theorem~3.4]{gcs}.

\begin{fact}
\label{fact:op-collapse}
Suppose that $\monster$ is $\nip$ and let $X$ be an $\monster$-definable set.
Then the following are equivalent for any $k \ge 1$:
\begin{enumerate}[leftmargin=*]
\item $\opd_{\monster}(X) \ge k$
\item there is a small set $A$ of parameters and an uncollapsed $A$-indiscernible picture $P \to X$ of $\Sa P_k$ in $\monster$.
\end{enumerate}
\end{fact}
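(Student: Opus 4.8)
The statement to prove is Fact~\ref{fact:op-collapse}, which is quoted from \cite[Theorem~3.4]{gcs}. Since the excerpt presents it as an external citation, the ``proof'' here is really a matter of recalling the argument underlying op-dimension and its relationship to indiscernible collapse of $k$-orders. The plan is to establish the two implications separately, using the definition of $\opd_{\monster}(X)$ via mutually indiscernible sequences and the Ramsey-theoretic machinery recorded in Section~\ref{section:collapse} (the existence of indiscernible pictures based on a given picture, valid because $\Sa P_k$ is Ramsey).

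First, for $(2)\Rightarrow(1)$: suppose $\upgamma\colon P\to X$ is an uncollapsed $A$-indiscernible picture of $\Sa P_k$ in $\monster$. The \Fraisse limit $\Sa P_k=(P;<_1,\dots,<_k)$ carries $k$ linear orders that can be chosen ``generically independent'', so one extracts from $\upgamma$ a $k$-dimensional array of elements of $X$ that is mutually indiscernible along each coordinate but witnesses an order in each of the $k$ directions; uncollapsedness guarantees that none of these orders is an artifact, i.e.\ the picture genuinely records alternation in $k$ independent directions. This is exactly the configuration that witnesses $\opd_{\monster}(X)\ge k$ by the definition of op-dimension (a definable set has op-dimension $\ge k$ iff there is a formula and $k$ mutually indiscernible sequences producing an ``order property in $k$ directions'' over a point of $X$). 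One should be careful to translate between the ``picture of $\Sa P_k$'' formulation and the array-of-mutually-indiscernible-sequences formulation of op-dimension; this is where the homogeneity and quantifier elimination of $\Sa P_k$ do the work.

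For $(1)\Rightarrow(2)$: suppose $\opd_{\monster}(X)\ge k$. Then there is a formula $\varphi(x,y_1,\dots,y_k)$, mutually indiscernible sequences $(\bar a^j_i : i<\upomega)$ for $j=1,\dots,k$, and for each choice of one point from each sequence a corresponding element of $X$ realizing the relevant alternation pattern. Arrange the parameters as a picture $\upgamma_0$ of $\Sa P_k$ (the generic $k$-order embeds into the index structure of such an array precisely because the orders are independent), then apply the existence theorem from \cite[Theorem~2.13]{gcs}, valid since $\Sa P_k$ is Ramsey, to obtain an $A$-indiscernible picture $\upgamma$ of $\Sa P_k$ based on $\upgamma_0$. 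One then argues that $\upgamma$ is uncollapsed: if it collapsed, two tuples of distinct order-types in $\Sa P_k$ would have the same type over $A$, and ``basing'' transfers this back to $\upgamma_0$, contradicting that $\varphi$ witnesses alternation in all $k$ directions (using $\nip$ of $\monster$ to control the alternation ranks so that the collapse cannot be absorbed). This is essentially the same bookkeeping as in the proof of Proposition~\ref{prop:picture}.

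\textbf{Main obstacle.} The technical heart is the bidirectional translation between op-dimension, which is classically defined through an inductive rank on definable sets or through mutually indiscernible arrays, and the clean ``uncollapsed indiscernible picture of $\Sa P_k$'' statement. Getting the indices to line up — so that a witness to $\opd\ge k$ really yields an embedding of the \emph{generic} $k$-order and not merely of some specific $k$-order, and conversely that an uncollapsed picture of $\Sa P_k$ supplies genuinely independent alternation rather than $k$ correlated orders — is the delicate point. In the end, though, this is precisely the content of \cite[Theorem~3.4]{gcs}, so for the purposes of this paper the proof is simply a citation of that result together with the remark that $\Sa P_k$ is a finitely homogeneous Ramsey structure, so that the indiscernible-collapse framework of Section~\ref{section:collapse} applies verbatim.
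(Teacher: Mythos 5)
The paper does not prove Fact~\ref{fact:op-collapse} at all: it is stated immediately after the sentence ``Fact~\ref{fact:op-collapse} is \cite[Theorem~3.4]{gcs}'', so the paper's ``proof'' is just that citation. Your write-up correctly identifies this, and your concluding remark that the argument reduces to citing \cite[Theorem~3.4]{gcs} together with the observation that $\Sa P_k$ is finitely homogeneous Ramsey matches the paper's treatment exactly; the preceding sketch of the two implications is a reasonable gloss on the cited theorem but is not part of what this paper supplies.
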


\begin{proposition}
\label{prop:op}
If $\Sa M$ trace defines $\Sa O$ via an injection $O \to M^m$ then $\opd(\Sa O) \le m \opd(\Sa M)$.
If $T$ has finite op-dimension and $T^*$ is trace definable in $T$ then $T^*$ has finite op-dimension.
\end{proposition}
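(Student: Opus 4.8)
The plan is to reduce Proposition~\ref{prop:op} to a ``fine'' statement about a single definable set, in exact parallel with the treatment of Morley rank (Proposition~\ref{prop:morely rank fine}), $U$-rank (Proposition~\ref{prop:U rank fine}), and dp-rank (Proposition~\ref{prop:dp-rank-0}). First I would isolate the key lemma: if $O \subseteq M^m$ and $\Sa M$ trace defines $\Sa O$ via the identity $O \to M^m$, and $X = Y \cap O^k$ for an $\Sa O$-definable $X \subseteq O^k$ and an $\Sa M$-definable $Y \subseteq M^{mk}$, then $\opd_{\Sa O}(X) \le \opd_{\Sa M}(Y)$. Granting this, Proposition~\ref{prop:op} follows: by Proposition~\ref{prop:trace-theories} we may assume $\Sa M$ trace defines $\Sa O$ via an injection $O \to M^m$, apply the lemma with $X = O$, $Y = M^m$ to get $\opd(\Sa O) = \opd_{\Sa O}(O) \le \opd_{\Sa M}(M^m)$, and then invoke subadditivity of op-dimension (\cite[Theorem~2.2]{gh-op}) to bound $\opd_{\Sa M}(M^m) \le m\,\opd_{\Sa M}(M) = m\,\opd(\Sa M)$; finiteness of op-dimension transfers immediately, and along the way we use that a structure of finite op-dimension is $\nip$ (so Fact~\ref{fact:op-collapse} applies to $\Sa O$ via Proposition~\ref{prop:trace}).

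For the key lemma itself there are two natural routes. The cleaner one uses Fact~\ref{fact:op-collapse}. After passing to monster models (via Proposition~\ref{prop:trace-theories}) and noting $\Sa O$ is $\nip$ by Proposition~\ref{prop:trace}, suppose $\opd_{\Sa O}(X) \ge k$. By Fact~\ref{fact:op-collapse} there is a small parameter set $A$ and an uncollapsed $A$-indiscernible picture $\upgamma \colon P \to X$ of $\Sa P_k$ in $\Sa O$, where $\Sa P_k$ is the generic countable $k$-order. Composing with the inclusion $O^k \hookrightarrow M^{mk}$ (more precisely, pushing the picture forward through the trace-defining identity) gives a picture of $\Sa P_k$ in $\Sa M$ landing in $Y$; the point is that this picture is still $A$-indiscernible and still uncollapsed, because the relations distinguishing types of tuples from $P$ are the orders $<_1,\dots,<_k$, each of which is captured on $\upgamma(P)^2 \subseteq O^{mk} \times O^{mk}$ by an $\Sa M$-definable set (trace definability applied to the $\Sa O$-definable orders). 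Hence no collapse occurs when we view the picture inside $\Sa M$, and Fact~\ref{fact:op-collapse} in the other direction gives $\opd_{\Sa M}(Y) \ge k$. Since $\Sa P_k$ is Ramsey (it is a generic $k$-order, and such structures have the Ramsey property), the machinery of Section~\ref{section:collapse} — in particular the correspondence between trace definitions and uncollapsed indiscernible pictures in Proposition~\ref{prop:picture} — applies.

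Alternatively, and perhaps more robustly, one can argue directly from the definition of op-dimension along the lines of the proof of Proposition~\ref{prop:morely rank fine}, without invoking the collapse characterization: one shows that an ``op-array'' of width $k$ witnessing $\opd_{\Sa O}(X) \ge k$ — a family of $\Sa O$-definable linear orders on coordinates, together with the relevant tuples — pulls back to an op-array of width $k$ for $Y$ in $\Sa M$, by replacing each $\Sa O$-formula $\varphi_i(x_i,y)$ appearing in the array with an $\Sa M$-formula $\theta_i$ agreeing with it on tuples from $O$ (exactly as in the dp-rank proof, Proposition~\ref{prop:dp-rank-0}). The only subtlety is that op-dimension is defined in terms of linear quasi-orders rather than arbitrary formulas, so one must check that the pulled-back witnesses still define the requisite orders on the relevant tuples — but this is automatic since they agree with the originals on all tuples that matter.

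The main obstacle I anticipate is bookkeeping around the definition of op-dimension and making sure the ``collapse'' argument is watertight: op-dimension, unlike dp-rank, is phrased via indiscernible collapse of multi-orders and via linear orders on definable sets, so one must be careful that (i) $\Sa P_k$ is genuinely Ramsey so that Fact~\ref{fact:op-collapse} and the results of Section~\ref{section:collapse} apply, and (ii) the linear orders witnessing high op-dimension in $\Sa O$ really do pull back to $\Sa M$-definable orders on the image tuples — this uses trace definability applied to each order relation, which is fine, but the indices and tuple-lengths need to be tracked carefully. Everything else is routine and parallels the stability-side arguments already in the paper.
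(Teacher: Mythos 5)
There is a genuine gap in the first (``cleaner'') route, at the step you call ``the point'': the pushed-forward picture is \emph{not} automatically $A$-indiscernible in $\monster$. Indiscernibility of $\upgamma$ in $\oonster$ says that $\tp_{\Sa P_k}(a) = \tp_{\Sa P_k}(b)$ forces $\tp_{\oonster}(\upgamma(a)|A) = \tp_{\oonster}(\upgamma(b)|A)$, but $\monster$ is an $L$-structure with many formulas that are not of the form ``trace of an $L^*$-formula,'' and nothing prevents such an $L$-formula from separating $\upgamma(a)$ from $\upgamma(b)$ even when they have the same $\Sa P_k$-type. Your argument that the orders $<_1,\dots,<_k$ are captured by $\Sa M$-definable relations establishes that the composed picture, \emph{if} indiscernible, would be uncollapsed; it does not give indiscernibility. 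This is exactly the content that Proposition~\ref{prop:picture} supplies and that you cannot get for free: one first observes that the uncollapsed picture in $\oonster$ yields a trace definition of $\Sa P_k$ by $\oonster$ (one direction of Proposition~\ref{prop:picture}), composes trace definitions to get $\monster$ trace defining $\Sa P_k$ via an injection $P \to \monsterset^m$, and then invokes the \emph{other} direction of Proposition~\ref{prop:picture} to extract a genuinely new, Ramsey-extracted uncollapsed indiscernible picture in $\monster$. You mention Proposition~\ref{prop:picture} at the end as a check that ``the machinery applies,'' but the actual argument you give bypasses the one place where it is indispensable.

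Your ``alternative'' direct route has a parallel problem: the Guingona--Hill combinatorial witness for op-dimension involves sequences that are mutually indiscernible in the ambient structure (this is what makes op-dimension different from dp-rank), and mutual indiscernibility in $\oonster$ does not transfer to $\monster$ for the same reason. So the dismissal ``this is automatic since they agree with the originals on all tuples that matter'' glosses over precisely the obstruction. Finally, you frame the proof as a reduction to a local lemma on definable $X \subseteq O^k$, $Y \subseteq M^{mk}$. The paper does not take this route --- it only proves the global inequality $\opd(\Sa O) \le m\,\opd(\Sa M)$ --- and explicitly remarks (right after the statement of Proposition~\ref{prop:op}) that a local analogue of Propositions~\ref{prop:morely rank fine} and \ref{prop:dp-rank-0} would require extra work in Section~\ref{section:collapse}: one would need a version of Proposition~\ref{prop:picture} that tracks which definable set the extracted picture lands in.
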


One could also prove an analogue of Propositions~\ref{prop:morely rank fine} and \ref{prop:dp-rank-0} here.
This would require making Section~\ref{section:collapse} a bit more techinical, so I didn't do it.

% Suppose that $\Sa I$ is the \Fraisse limit of a \Fraisse class with the Ramsey property.
% Suppose that $\gamma$ is an indiscernible picture of $\gamma$ in $\monster$ and $X$ is a $\monster
% $-definable set.
% Then we say that $\gamma$ is an indiscernible picture of $\Sa I$ in $X$ if $\gamma$ takes values in $X$.

\begin{proof}
We prove the first claim, the second claim follows.
By Proposition~\ref{prop:trace-theories} we may suppose that $\Sa M$ and $\Sa O$ are both $\aleph_1$-saturated, $O \subseteq M^m$, and $\Sa M$ trace defines $\Sa O$ via the identity $O \to M^m$.
If $\opd(\Sa M) = \infty$ then the inequality trivially holds, so we suppose $\Sa M$ has finite op-dimension.
Hence $\Sa M$ is $\nip$.
By Proposition~\ref{prop:trace} $\Sa O$ is $\nip$.
Hence by Fact~\ref{fact:op-collapse} there is an uncollapsed indiscernible picture $P \to \oonsterset$ of $\Sa P_k$ in $\oonster$.
By Proposition~\ref{prop:trace-theories} $\Sa O$ trace defines $\Sa P_k$ via an injection $\uptau \colon P \to O$.
Hence $\Sa M$ trace defines $\Sa P_k$ via $\uptau \colon P \to M^m$.
By Proposition~\ref{prop:picture} there is an uncollapsed indiscernible picture $P \to \monsterset^m$ of $\Sa P_k$ in $\monster$. 
Thus $\opd_{\monster}(\monsterset^m) \ge k$.
By subadditivity of op-dimension $m\opd(\Sa M) \ge k$.
\end{proof}

We now make some comments concerning VC-density.
We follow \cite[3.2]{toomanyI}.
Given a theory $T$ and $n$ we let $\mathrm{vc}(T,n)\in\R\cup\{\infty\}$ be the supremum of all positive $r\in\R$ such that for every $\Sa M\models T$ and formula $\varphi(x,y)$ with $|y|=n$ there is $\lambda\in\R$ such that if $X\subseteq M^{|x|}$ is finite then there are at most $\lambda|X|^r$ subsets of $X$ of the form $\{ a \in X : \Sa M\models\varphi(a,b)\}$, $b$ ranging over $M^n$.
For many $\nip$ theories of interest $\mathrm{vc}(T,n)$ is bounded above by a linear function of $n$.

\begin{proposition}
\label{prop:vc}
Suppose that $\Sa M\models T$, $\Sa O\models T^*$, and $\Sa M$ trace defines $\Sa O$ via $O \to M^m$.
Then $\mathrm{vc}(T^*,n)\le\mathrm{vc}(T,mn)$ for all $n$.
If $T$ trace defines $T^*$ and $\mathrm{vc}(T,n)$ is bounded above by a linear function of $n$ then $\mathrm{vc}(T^*,n)$ is bounded above by a linear function of $n$.
\end{proposition}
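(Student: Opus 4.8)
The second claim is immediate from the first: if $\mathrm{vc}(T,n)\le an+b$ for all $n$, then $\mathrm{vc}(T^*,n)\le\mathrm{vc}(T,mn)\le(am)n+b$, which is again linear in $n$. So the plan is to establish $\mathrm{vc}(T^*,n)\le\mathrm{vc}(T,mn)$, and the underlying idea is that trace definibility, applied to the \emph{graph} of a formula, turns an $L^*$-formula $\varphi(x,y)$ with $|y|=n$ into an $L$-formula whose parameter block has length exactly $mn$ and whose dual shatter function dominates that of $\varphi$.

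First I would note that the hypothesis gives that $T$ trace defines $T^*$, so by Proposition~\ref{prop:trace-theories} every $\Sa O'\models T^*$ is trace definable in a sufficiently saturated $\Sa M'\models T$ via an injection $\uptau\colon O'\to(M')^m$, with the same constant $m$ as in the hypothesis. Fixing such $\Sa O'$, $\Sa M'$, $\uptau$, and a formula $\varphi(x,y)$ with $|x|=k$, $|y|=n$, I would apply trace definibility to the $\Sa O'$-definable set $\{(a,b)\in(O')^{k+n}:\Sa O'\models\varphi(a,b)\}$. This produces an $\Sa M'$-definable $\Psi\subseteq(M')^{m(k+n)}$; writing $\Psi=\{(w,v):\Sa M'\models\psi(w,v)\}$ for an $L$-formula $\psi(w,v)$ (possibly with parameters from $M'$) with $|w|=mk$ and $|v|=mn$, we get
$$\Sa O'\models\varphi(a,b)\quad\Longleftrightarrow\quad\Sa M'\models\psi(\uptau(a),\uptau(b))\qquad\text{for all } a\in(O')^k,\ b\in(O')^n.$$

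I would then run the counting step. Given a finite $X\subseteq(O')^k$, the injection $\uptau$ restricts to a bijection $X\to\uptau(X)$, so $|\uptau(X)|=|X|$ and $S\mapsto\uptau(S)$ is injective on subsets of $X$; moreover $\uptau(\{a\in X:\Sa O'\models\varphi(a,b)\})=\{w\in\uptau(X):\Sa M'\models\psi(w,\uptau(b))\}$ for each $b\in(O')^n$. Hence the number of subsets of $X$ of the form $\{a\in X:\Sa O'\models\varphi(a,b)\}$, $b\in(O')^n$, is at most the number of subsets of $\uptau(X)$ of the form $\{w\in\uptau(X):\Sa M'\models\psi(w,v)\}$, $v\in(M')^{mn}$. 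Feeding this into the definition of $\mathrm{vc}(T,mn)$ applied to $\Sa M'$ and $\psi$ (whose parameter block has length $mn$): whenever the defining bound of $\mathrm{vc}(T,mn)$ holds at an exponent $r$, there is $\lambda$, depending only on $\psi$, with at most $\lambda|\uptau(X)|^r=\lambda|X|^r$ such subsets of $\uptau(X)$, hence at most $\lambda|X|^r$ subsets of $X$ cut out by $\varphi$, uniformly in the finite set $X$. Since $\Sa O'$ and $\varphi$ were arbitrary, the same $r$ witnesses the defining bound for $T^*$ at arity $n$, and so $\mathrm{vc}(T^*,n)\le\mathrm{vc}(T,mn)$. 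I do not expect a serious obstacle: the content is entirely in the first observation. The thing that most needs care is the uniformity bookkeeping — checking that the constant $m$ does not grow in Proposition~\ref{prop:trace-theories}, and handling the parameters appearing in $\psi$, for which I would invoke the insensitivity of $\mathrm{vc}(T,mn)$ to naming finitely many elements (as in \cite{toomanyI}) or simply absorb them into constants.
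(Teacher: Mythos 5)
The paper leaves this proposition to the reader ("should be basically obvious by now"), and your proof supplies the expected argument: trace-define the graph of $\varphi(x,y)$ through $\uptau$ to get an $L$-formula $\psi(w,v)$ with $|v|=mn$, use the coordinatewise injectivity of $\uptau$ to see that the $\varphi$-traces on a finite $X\subseteq(O')^k$ inject into the $\psi$-traces on $\uptau(X)$ with $|\uptau(X)|=|X|$, and carry over the polynomial bound. The two bookkeeping points you flag are exactly the ones that need checking and both go through: in the proof of Proposition~\ref{prop:trace-theories} the $T^*$-sort is carved out of $S^m$ with the same $m$, so the tuple length is preserved when passing to an arbitrary $\Sa O'\models T^*$ inside a saturated $\Sa M'\models T$; and the parameters of $\psi$ can be absorbed into constants without changing $\mathrm{vc}(T,mn)$. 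One remark on the paper rather than on your proof: the definition of $\mathrm{vc}(T,n)$ as stated takes a \emph{supremum} over the exponents $r$ for which the uniform polynomial bound holds, but that set of $r$ is upward-closed, so its supremum would be $+\infty$ whenever it is nonempty; "supremum" there is evidently a slip for "infimum," as in the cited source. Your chain of reasoning — showing the set of good exponents for $(T,mn)$ is contained in the set of good exponents for $(T^*,n)$ and concluding $\mathrm{vc}(T^*,n)\le\mathrm{vc}(T,mn)$ — is correct precisely under that intended (infimum) reading, so the inequality goes the right way.
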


Proposition~\ref{prop:vc} should be basically obvious by now.

\subsection{Higher airity dependence}
\label{section:k-dep}
\noindent
Suppose $k \ge 2$.
We refer to \cite{cpt} for the definition of $k$-dependence.
Proposition~\ref{prop:k-dep} is clear from the usual definition of $k$-dependence.

\begin{proposition}
\label{prop:k-dep}
If $T$ is $k$-dependent and $T^*$ is trace definable in $T$ then $T^*$ is $k$-dependent.
\end{proposition}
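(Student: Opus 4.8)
The plan is to unwind the definition of $k$-dependence (equivalently, the non-existence of a certain "$k$-ary random pattern" realized by a formula) and transport it across a trace definition, in exactly the same spirit as the proofs of Propositions~\ref{prop:trace} and \ref{prop:trace-0}. Recall that $T^*$ is $k$-\emph{independent} (i.e.\ not $k$-dependent) precisely when there is a model $\Sa O \models T^*$, a formula $\psi(x^0,\ldots,x^{k-1})$ (with the variable blocks playing symmetric roles in the relevant sense), sequences $(b^j_i : i < \upomega)$ of tuples from $\Sa O$ for each $j < k$, and a tuple-indexed family witnessing that every subset of $\upomega^k$ is cut out by some instance: for every $A \subseteq \upomega^k$ there is $c_A$ with $\Sa O \models \psi(b^0_{i_0},\ldots,b^{k-1}_{i_{k-1}}, c_A) \Longleftrightarrow (i_0,\ldots,i_{k-1}) \in A$. (I am being slightly informal about whether the random pattern sits on the "object" variables or the "parameter" variable; the standard formulation of $k$-IP puts it on the parameters, but the argument below is insensitive to this choice.)

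First I would reduce, via Proposition~\ref{prop:trace-theories}, to the situation of a single $\aleph_1$-saturated $\Sa M \models T$ trace defining an $\aleph_1$-saturated $\Sa O \models T^*$ via an injection $\uptau \colon O \to M^m$; it suffices to show that if $\Sa O$ is $k$-independent then $\Sa M$ is $k$-independent. So suppose $\Sa O$ witnesses $k$-IP via $\psi$, the arrays $(b^j_i)$, and the parameters $(c_A)$. Consider the $\Sa O$-definable set
\[
Z = \{ (z^0,\ldots,z^{k-1}, w) : \Sa O \models \psi(z^0,\ldots,z^{k-1},w) \}.
\]
By trace definibility there is an $\Sa M$-definable set $Y \subseteq M^{m \cdot |z^0|} \times \cdots \times M^{m \cdot |z^{k-1}|} \times M^{m \cdot |w|}$ such that for all tuples $a^0,\ldots,a^{k-1},d$ from $O$ of the appropriate arities, $(a^0,\ldots,a^{k-1},d) \in Z$ iff $(\uptau(a^0),\ldots,\uptau(a^{k-1}),\uptau(d)) \in Y$. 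Let $\varphi(x^0,\ldots,x^{k-1},y)$ be an $L(M)$-formula defining $Y$. Now push the arrays forward: set $a^j_i = \uptau(b^j_i) \in M^{m\cdot|z^j|}$ for $j < k$, $i < \upomega$, and $d_A = \uptau(c_A)$. Then for every $A \subseteq \upomega^k$ and every $(i_0,\ldots,i_{k-1}) \in \upomega^k$,
\[
\Sa M \models \varphi(a^0_{i_0},\ldots,a^{k-1}_{i_{k-1}}, d_A) \iff \Sa O \models \psi(b^0_{i_0},\ldots,b^{k-1}_{i_{k-1}}, c_A) \iff (i_0,\ldots,i_{k-1}) \in A.
\]
Thus $\varphi$, the arrays $(a^j_i)$, and the parameters $(d_A)$ witness $k$-IP for $\Sa M$, so $T$ is $k$-independent. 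Contrapositively, if $T$ is $k$-dependent then so is $T^*$.

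The only genuinely delicate point — and the main obstacle, such as it is — is matching conventions: one must make sure the formulation of $k$-dependence one uses (from \cite{cpt}) is the one where the random pattern is indexed by a product of $k$ countable index sets with a single-block parameter variable, and check that the symmetry/"partition" requirements on the variable blocks are preserved under the substitution $x^j \mapsto$ ($m\cdot|z^j|$)-tuple $\uptau(z^j)$ — which they are, since $\uptau$ acts coordinatewise and the block structure is just being refined, not mixed. Everything else is the routine transport argument already used several times above (cf.\ Propositions~\ref{prop:trace}, \ref{prop:trace-0}, \ref{prop:dp-rank-0}), which is why the paper can reasonably say this is ``clear from the usual definition.''
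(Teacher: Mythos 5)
Your proof is correct, and it is exactly the routine transport argument the paper has in mind: the paper itself offers no proof, saying only that the claim is ``clear from the usual definition of $k$-dependence,'' and your unwinding of that remark — push the witnessing formula, arrays, and parameters across $\uptau$, using the trace-definable set $Y$ to replace $\psi$, and note that $\uptau$ refines rather than mixes the variable blocks — is the intended one. The only small caution, which you already implicitly handle by working with the finite-approximation form of $k$-IP (or equivalently by compactness), is that one should not literally require a single model to realize all $2^{\aleph_0}$ parameters $c_A$; but this is a standard bookkeeping point, not a gap.
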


Proposition~\ref{prop:k-dep-0} follows from Proposition~\ref{prop:random}, the Ramsey property for ordered hypergraphs, Proposition~\ref{prop:ramsey}, and the fact that $\monster$ is $k$-dependent if and only if  $\monster$ admits an uncollapsed generic countable ordered $(k+1)$-hypergraph indiscernible \cite[Theorem~5.4]{cpt}.

\begin{proposition}
\label{prop:k-dep-0}
The theory $T$ is $k$-dependent if and only if $T$ does not trace define the generic countable $(k+1)$-hypergraph.
\end{proposition}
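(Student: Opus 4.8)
The plan is to combine the indiscernible‑collapse characterization of $k$-dependence from \cite{cpt} with the dictionary between uncollapsed indiscernible pictures and trace definability developed in Section~\ref{section:collapse}.

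First I would set up the relevant auxiliary structure. Let $\Sa H$ denote the generic countable $(k+1)$-hypergraph; as noted after Fact~\ref{fact:lo-exp} it is free homogeneous, and (having IP) it is unstable, so by Fact~\ref{fact:lo-exp} (the Ramsey property for ordered hypergraphs) the expansion $(\Sa H,\triangleleft)$ of $\Sa H$ by a generic linear order is a finitely homogeneous Ramsey structure. Proposition~\ref{prop:ramse1} then gives that $\Cal C_{(\Sa H,\triangleleft)}$ --- the class of theories no monster model of which admits an uncollapsed indiscernible picture of $(\Sa H,\triangleleft)$ --- is exactly the class of theories that do not trace define $\Sa H$. (Equivalently one reaches this by applying Proposition~\ref{prop:picture} to the Ramsey structure $(\Sa H,\triangleleft)$, together with Proposition~\ref{prop:trace-theories}, and then using Proposition~\ref{prop:random} --- applied with parameter $k+1\ge 2$ --- to pass between $(\Sa H,\triangleleft)$ and $\Sa H$, which are trace equivalent.)

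Next I would invoke \cite[Theorem~5.4]{cpt}: $T$ is $k$-dependent if and only if no monster model of $T$ admits an uncollapsed indiscernible picture of the generic countable ordered $(k+1)$-hypergraph, that is, if and only if $T\in\Cal C_{(\Sa H,\triangleleft)}$. Combining this with the previous paragraph yields exactly the assertion: $T$ is $k$-dependent if and only if $T$ does not trace define $\Sa H$.

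I do not anticipate a real obstacle, as every ingredient is already in place; the only point needing care is bookkeeping of conventions. One must check that ``$k$-dependent'' as used in \cite{cpt} matches the present indexing, in which a $(k+1)$-hypergraph carries a $(k+1)$-ary edge relation, and correspondingly that the direction of \cite[Theorem~5.4]{cpt} being used is the one saying that the existence of an uncollapsed ordered-$(k+1)$-hypergraph indiscernible is precisely what obstructs $k$-dependence. As a sanity check for the forward implication, $(\Sa H,\triangleleft)$ and $\Sa H$ are $k$-independent --- immediate from genericity of the edge relation --- so if $T$ trace defines $\Sa H$ then $T$ is not $k$-dependent by Proposition~\ref{prop:k-dep}; this matches the statement and does not even require \cite{cpt}.
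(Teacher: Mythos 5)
Your proposal is correct and follows essentially the same route as the paper: the paper's (one-sentence) proof cites Proposition~\ref{prop:random}, the Ramsey property for ordered hypergraphs, Proposition~\ref{prop:ramsey}, and \cite[Theorem~5.4]{cpt}, and your argument combines exactly these ingredients, packaged via Proposition~\ref{prop:ramse1} and Proposition~\ref{prop:picture}. The sanity check via Proposition~\ref{prop:k-dep} for the easy direction is a nice additional observation but not a different method.
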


We discuss two further interesting $\nip$-theoretic properties in Sections~\ref{section:almost-lin} and \ref{section:eh}.

% \subsection{Multi-sorted structures}
% Suppose that $\Sa M$ is a multi-sorted structure with sorts $(M_i)_{i \in I}$.
% If $x = (x_j)_{j\in J}$ is a tuple of variables we let $M^x = \prod_{j\in J} M_{s(x_j)} $ where $s(x_j)$ is the sort of the variable $x_j$.
% Suppose that $\Sa O$ is another multisorted structure with sorts $(O_i)_{i \in I^*}$.
% We let $\overline{\uptau}$ denote a collection of maps $\uptau_i \colon O_i \to M^{x_i}$, for $i \in I^*$.
% Abusing notation we let $\overline{\uptau} \colon O^{x} \to $
% A trace definition of $\Sa O$ in $\Sa M$ consists of an injection $\uptau_i \colon O_i \to M_{i_m} \times\cdots\times M_{i_m}$ for each $i \in I$ such that 

\section{Examples}
\label{section:examples}

\subsection{Shelah Completions}
We first discuss Shelah completions and some associated examples.
\label{section:completions}
\noindent
One has the feeling that a $\nip$ structure and its Shelah completion should be equivalent in some sense.
Trace equivalence seems to be the right sense.

\begin{lemma}
\label{lem:she-last}
Let $\uplambda$ be a cardinal.
Suppose that $\Sa M$ is $\nip$ and $\uplambda$-saturated and $\Sa O$ is an elementary submodel of $\Sa M$ of cardinality $< \uplambda$.
Then $\Sa M$ trace defines $\Sh O$.
\end{lemma}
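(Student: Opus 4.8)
The plan is to exhibit a concrete trace definition of $\Sh O$ inside $\Sa M$ via the identity map $O \to O \subseteq M$. By Fact~\ref{fact:shelah}, since $\Sa O$ is $\nip$, every $\Sh O$-definable set is externally definable; and since $\Sa M$ is $|O|^+$-saturated (indeed $\uplambda$-saturated with $|O| < \uplambda$) and $\Sa M \succ \Sa O$, the notion of "externally definable subset of $O^n$" computed with the pair $\Sa O \prec \Sa M$ agrees with the standard one (this is the saturation-independence remark just after the definition of externally definable sets). So it suffices to show: for every externally definable $X \subseteq O^n$ there is an $\Sa M$-definable $Y \subseteq M^n$ with $X = Y \cap O^n$, which is immediate from the definition of externally definable — $X = O^n \cap Y$ for some $\Sa M$-definable $Y$.

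More carefully, first I would set up notation: let $L^{\mathrm{Sh}}$ be the language of $\Sh O$, which has a predicate for every externally definable subset of every $O^n$. By Fact~\ref{fact:shelah}, $\Sh O$ admits quantifier elimination (every definable set is externally definable, hence already named by a basic predicate up to boolean combination — more precisely the basic predicates are closed under the needed operations). Then I would invoke Corollary~\ref{cor:qe} (or directly Proposition~\ref{prop:qe}) with $\Sa O$ replaced by $\Sh O$, $m = 1$, and $\uptau$ the identity inclusion $O \hookrightarrow M$: for each basic $k$-ary predicate $P_X$ of $L^{\mathrm{Sh}}$, corresponding to an externally definable $X \subseteq O^k$, I need an $\Sa M$-definable $Y_X \subseteq M^k$ with $\Sh O \models P_X(a) \Longleftrightarrow a \in Y_X$ for all $a \in O^k$. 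But $X$ externally definable means exactly that such a $Y_X$ exists (witnessing external definability using the elementary pair $\Sa O \prec \Sa M$, which is legitimate precisely because $\Sa M$ is sufficiently saturated — here $\uplambda$-saturated with $|O| < \uplambda$, so $|O|^+$-saturated, which is the hypothesis in the definition of externally definable). This gives that $\Sa M$ trace defines $\Sh O$ via the identity.

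The only point requiring a moment's care — and the closest thing to an obstacle — is the saturation bookkeeping: the definition of externally definable in the paper fixes $\Sa M \prec \Sa N$ with $\Sa N$ being $|M|^+$-saturated and defines externally definable subsets of $M^n$, and remarks the collection is independent of $\Sa N$. Here the roles are shifted: we want externally definable subsets of $O^n$, witnessed by the larger model $\Sa M$, and we must know $\Sa M$ is saturated enough, i.e. $|O|^+$-saturated, which follows from $\uplambda$-saturation since $|O| < \uplambda$ forces $|O|^+ \leq \uplambda$. With that observed, the externally definable subsets of $O^n$ computed via $\Sa O \prec \Sa M$ are exactly the externally definable subsets in the standard sense, hence exactly the $\Sh O$-definable subsets by Fact~\ref{fact:shelah}, and each is of the form $Y \cap O^n$ for $\Sa M$-definable $Y$ by definition. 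Then Proposition~\ref{prop:qe} closes the argument. I would also remark, as the paper does elsewhere, that this is the key input for the examples on Shelah completions, e.g. that a $\nip$ structure is trace equivalent to its Shelah completion (combining this lemma with Lemma~\ref{lem:she--1} or Proposition~\ref{prop:trace-basic}(3) for the reverse direction).
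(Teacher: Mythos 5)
Your proposal is correct and follows essentially the same route as the paper: apply Fact~\ref{fact:shelah} to see that $\Sh O$-definable sets are externally definable, then use $\uplambda$-saturation of $\Sa M$ to pass from external definability over $\Sa O$ to $\Sa M$-definable witnesses on $O^n$. The only cosmetic difference is that the paper packages the saturation step as an appeal to Lemma~\ref{lem:lambda} (applied with $A = O^n$, $|O^n| < \uplambda$) whereas you use the remark that the collection of externally definable subsets of $O^n$ is independent of the choice of $|O|^+$-saturated elementary extension, taking $\Sa M$ itself as that extension; both are the same saturation argument in slightly different clothing.
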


\begin{proof}
Suppose that $X \subseteq O^n$ is $\Sh O$-definable.
By Fact~\ref{fact:shelah} $X$ is externally definable in $\Sa O$.
By Lemma~\ref{lem:lambda} there is an $\Sa M$-definable $Y \subseteq M^n$ such that $X = Y \cap O^n$.
\end{proof}

\noindent
It is clear that the Shelah completion of $\Sa M$ trace defines $\Sa M$, the other direction of Proposition~\ref{prop:she-0} follows from Lemma~\ref{lem:she-last}.

\begin{proposition}
\label{prop:she-0}
Every $\nip$ structure is trace equivalent to its Shelah completion.
\end{proposition}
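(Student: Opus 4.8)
The statement to prove is Proposition~\ref{prop:she-0}: every $\nip$ structure is trace equivalent to its Shelah completion. Trace equivalence is a two-way condition, so the plan is to establish each direction separately.

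First I would handle the easy direction: the Shelah completion $\Sh M$ trace defines $\Sa M$. This is essentially immediate, since $\Sh M$ is an expansion of $\Sa M$ with the same underlying set. More precisely, by Fact~\ref{fact:homo}-style reasoning or just directly, $\Sa M$ is a reduct of $\Sh M$, and any structure trace defines its reducts via the identity map --- this follows from Proposition~\ref{prop:qe-trace} applied to the identity once one Morleyizes, or simply from unwinding the definition, since every $\Sa M$-definable subset of $M^n$ is already $\Sh M$-definable. So $\Sh M$ trace defines $\Sa M$.

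For the other direction, I want to show $\Sa M$ trace defines $\Sh M$. Here the point is that trace definibility is a property of theories, so it suffices to exhibit \emph{some} model of $\Th(\Sa M)$ that trace defines $\Sh M$. Take $\uplambda = |M|^+$ and let $\Sa N \succ \Sa M$ be $\uplambda$-saturated. Then $\Sa M$ is an elementary submodel of $\Sa N$ of cardinality $< \uplambda$, and $\Sa N$ is $\nip$ since $\nip$-ness is a property of the theory. By Lemma~\ref{lem:she-last}, $\Sa N$ trace defines $\Sh M$. Hence $\Th(\Sa M)$ trace defines $\Th(\Sh M)$, which is exactly what we need. (Strictly, one should note that $\Sh M$ and its theory: the claim ``$\Sa N$ trace defines $\Sh M$'' gives trace definibility of the structure $\Sh M$ by the theory $\Th(\Sa M)$, and combined with the first direction we get trace equivalence of $\Sa M$ and $\Sh M$ as structures, i.e. of their theories.)

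There is essentially no obstacle here; the work has already been done in Lemma~\ref{lem:she-last} (which in turn rests on Shelah's theorem, Fact~\ref{fact:shelah}, that externally definable sets in a $\nip$ structure are exactly the $\Sh M$-definable ones, and on Lemma~\ref{lem:lambda}, the saturation trick for pulling back externally definable sets to genuinely definable ones on a small set). The only thing to be careful about is bookkeeping with saturation cardinals --- we need $\Sa N$ saturated enough that $\Sa M$ counts as ``small'', which $|M|^+$-saturation delivers. So the proof is just the two short paragraphs: one sentence for the trivial direction, and an invocation of Lemma~\ref{lem:she-last} with an appropriately saturated elementary extension for the substantial direction, exactly as the text preceding the statement already indicates.
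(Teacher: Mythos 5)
Your proof is correct and matches the paper's own argument exactly: the easy direction is immediate because $\Sh M$ expands $\Sa M$, and the substantive direction is precisely an application of Lemma~\ref{lem:she-last} to a $|M|^+$-saturated elementary extension of $\Sa M$.
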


% \begin{proposition}
% \label{prop:trace-new}
% Let $\uplambda$ be an uncountable cardinal.
% Suppose that $\Sa M$ is $\uplambda$-saturated and $\nip$.
% Suppose $\Sh M$ trace defines $\Sa O$ and $|O| < \uplambda$.
% Then $\Sa M$ trace defines $\Sa O$.
% If $\Sh M$ interprets $\Sa P$ and $|P| < \uplambda$ then $\Sa M$ trace defines $\Sa P$.
% \end{proposition}

% \begin{proof}
% The second claim follows from the first claim and Proposition~\ref{prop:trace-interpret}.
% We may suppose that $O \subseteq M^m$ and that $\Sa M$ trace defines $\Sa O$ via the identity $O \to M^m$.
% Fix an $\Sa O$-definable subset $X$ of $O^n$.
% Let $X$ be an $\Sh M$-definable subset of $M^{mn}$ such that $O^n \cap Y = X$.
% By Fact~\ref{fact:shelah} $X$ is externally definable so by Lemma~\ref{lem:lambda} there is an $\Sa M$-definable $Y \subseteq M^{mn}$ such that $X \cap O^n = Y \cap O^n$.
% \end{proof}

\noindent
Corollary~\ref{cor:poizat-b} follows from Proposition~\ref{prop:she-0} and Fact~\ref{fact:convex}.

\begin{corollary}
\label{cor:poizat-b}
Suppose that $\Sa M$ is a $\nip$ expansion of a linear order $(M;<)$ and $\Cal C$ is a collection of convex subsets of $M$.
Then $(\Sa M,\Cal C)$ is trace equivalent to $\Sa M$.
\end{corollary}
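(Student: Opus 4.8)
The plan is to deduce Corollary~\ref{cor:poizat-b} from Proposition~\ref{prop:she-0} together with Fact~\ref{fact:convex}, exactly as the excerpt announces. First I would observe that $\Sa M$ is a reduct of $(\Sa M,\Cal C)$, so $\Sa M$ is trace definable in $(\Sa M,\Cal C)$ (indeed interdefinability of a structure with a reduct gives trace definibility via the identity, e.g.\ by Proposition~\ref{prop:qe-trace} after Morleyization, or simply because any $\Sa M$-definable set is $(\Sa M,\Cal C)$-definable). So the content is the reverse direction: $\Sa M$ trace defines $(\Sa M,\Cal C)$.

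For that direction, the key point is that every set in $\Cal C$ is externally definable in $\Sa M$. Indeed, each $C \in \Cal C$ is a convex subset of the $\Sa M$-definable linearly ordered set $(M;<)$, so by Fact~\ref{fact:convex} $C$ is externally definable in $\Sa M$; more generally every $(\Sa M,\Cal C)$-definable set is $\Sh M$-definable, since the expansion of $\Sa M$ by all the members of $\Cal C$ is a reduct of the Shelah completion $\Sh M$ (which by definition adds \emph{all} externally definable sets). Hence $(\Sa M,\Cal C)$ is a reduct of $\Sh M$, and therefore $\Sh M$ trace defines $(\Sa M,\Cal C)$ (again via the identity). Now by Proposition~\ref{prop:she-0}, $\Sa M$ is trace equivalent to $\Sh M$, so in particular $\Sa M$ trace defines $\Sh M$; composing trace definitions via Proposition~\ref{prop:trace-basic}(1) (together with Proposition~\ref{prop:trace-theories} to move between structures and theories if one wishes to phrase things at the level of theories), $\Sa M$ trace defines $(\Sa M,\Cal C)$.

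Combining the two directions gives that $\Sa M$ and $(\Sa M,\Cal C)$ trace define each other, i.e.\ they are trace equivalent, which is the assertion of the corollary. I do not anticipate any genuine obstacle here: the argument is a short chain of citations. The only mild care needed is the bookkeeping about parameters and ambient models — one should note that $\Sh M$ is only literally available once one fixes a sufficiently saturated elementary extension of $\Sa M$, but this is harmless since trace definibility passes through elementary extensions (Proposition~\ref{prop:trace-basic}(3)) and the whole statement is really about theories; alternatively one invokes Proposition~\ref{prop:trace-theories} to pass to the level of $\Th(\Sa M)$ and $\Th(\Sa M,\Cal C)$. So the proof is essentially: reduct in one direction, Fact~\ref{fact:convex} plus the definition of the Shelah completion plus Proposition~\ref{prop:she-0} in the other, and transitivity of trace definibility to close the loop.
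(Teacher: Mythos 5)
Your proof is correct and follows exactly the route the paper intends: external definability of each $C\in\Cal C$ via Fact~\ref{fact:convex} makes $(\Sa M,\Cal C)$ a reduct of $\Sh M$, and then Proposition~\ref{prop:she-0} together with transitivity closes the loop. The paper's own proof is precisely this one-line citation of Proposition~\ref{prop:she-0} and Fact~\ref{fact:convex}, which you have filled in accurately.
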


\noindent
Fact~\ref{fact:rigid} shows that if $\Sa M$ is an o-minimal expansion of a field and $C$ is a convex subset of $M$ which is not an interval then $\Sa M$ does not interpret $(\Sa M;C)$.

\begin{corollary}
\label{cor:hensel}
Suppose that $K$ is a $\nip$ field and $v$ is a Henselian valuation on $K$ such that the residue field of $v$ is not separably closed.
Then $K$ and $(K,v)$ are trace equivalent.
\end{corollary}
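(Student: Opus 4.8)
The plan is to reduce the statement, via the Shelah completion, to the fact that the valuation ring $\mathcal{O}_v$ is externally definable in the pure field $K$.

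One direction is trivial: $(K,v)$ interprets $K$ (indeed $K$ is the home sort of $(K,v)$), so $(K,v)$ trace defines $K$ by Proposition~\ref{prop:trace-interpret}. For the converse, that $K$ trace defines $(K,v)$: since $K$ is $\nip$, Proposition~\ref{prop:she-0} gives that $K$ and its Shelah completion $\Sh K$ are trace equivalent, so by transitivity (Proposition~\ref{prop:trace-basic}) it suffices to show that $\Sh K$ trace defines $(K,v)$. Now if $\mathcal{O}_v$ is externally definable in $K$, then the valuation-divisibility relation of $(K,v)$ is $\Sh K$-definable, hence $(K,v)$ is a reduct of a structure interdefinable with $\Sh K$; in particular $\Sh K$ interprets $(K,v)$, and so trace defines it by Proposition~\ref{prop:trace-interpret}. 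Thus everything comes down to the assertion that \emph{if $v$ is a henselian valuation on a field $K$ whose residue field $Kv$ is not separably closed, then $\mathcal{O}_v$ is externally definable in $K$}. Observe that the hypothesis "$K$ is $\nip$" enters only through Proposition~\ref{prop:she-0}; this last assertion concerns arbitrary henselian valued fields.

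To establish that assertion I would use standard facts about henselian valued fields. One picks a monic separable polynomial over $Kv$ with no root in $Kv$, lifts its coefficients to $\mathcal{O}_v$, and — using a parameter of positive value together with Hensel's lemma — writes down a single ring formula $\varphi(x;\bar y)$ with the property that, in a sufficiently saturated elementary extension $K^*\succ K$ carrying a henselian valuation extending $v$ whose value group has positive elements below every positive element of $v(K^{\times})$ and whose residue field is still not separably closed, a suitable parameter tuple $\bar b$ from $K^*$ gives $\mathcal{O}_v=\{a\in K: K^*\models\varphi(a;\bar b)\}$; equivalently one checks the finite-approximation criterion for external definability directly. When $K$ is real closed and $v$ is a convex valuation this is precisely Fact~\ref{fact:convex}, so the corollary generalizes Proposition~\ref{prop:rcvf}.

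The main obstacle is exactly this external-definability statement once the value group of $v$ is densely ordered: then $\mathcal{O}_v$ is genuinely not $K$-definable (already in $\rcvf$ with the archimedean valuation, $\mathcal{O}_v$ is the non-definable convex hull of $\Z$), and one must pass to a saturated extension and arrange there a henselian valuation with the right value group and a residue field that remains not separably closed. The cleanest route is to invoke a known result to this effect; the remainder of the argument — the two trace definitions and the passage through $\Sh K$ — is routine given the machinery already developed.
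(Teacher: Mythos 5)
Your reduction is exactly the paper's: pass to the Shelah completion via Proposition~\ref{prop:she-0}, note that $\mathcal{O}_v$ is externally definable in $K$ under the hypotheses, and conclude that $\Sh K$ interprets $(K,v)$. The paper simply cites Jahnke~\cite{jahnke-when} for the external-definability fact rather than sketching it, so you need not attempt the Hensel-lifting argument yourself; invoking that reference is the intended route.
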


\noindent
Corollary~\ref{cor:hensel} follows from Proposition~\ref{prop:she-0} and the fact that if $K$ and $v$ satisfy the conditions above then $v$ is externally definable in $K$, see \cite{jahnke-when}.

\medskip\noindent
We now assume that the reader is somewhat familiar with the theory of definable groups in $\nip$ structures.
Suppose that $\monster$ is $\nip$ and $G$ is a definable group.
Let $\uppi$ be the quotient map $G \to \gqot$.
The structure induced on $\gqot$ by $\monster$ is the expansion of the pure group $\gqot$ by all sets $\{ (\uppi(\alpha_1),\ldots,\uppi(\alpha_n)) : \alpha_1,\ldots,\alpha_n \in X \}$ for definable $X \subseteq G^n$.

\begin{proposition}
\label{prop:gqot}
Suppose that $\monster$ is $\nip$, $G$ is a definably amenable definable group, and $\gqot$ is a  Lie group.
Then the structure induced on $\gqot$ by $\monster$ is trace definable in $\monster$.
If $\monster$ is o-minimal and $G$ is a definably compact definable group then the structure induced on $\gqot$ by $\monster$ is trace definable in $\monster$.
\end{proposition}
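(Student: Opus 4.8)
The plan is to show directly that the monster model $\monster$ — which we may take as saturated as convenient, since $K:=\gqot$ together with its induced structure $\Sa G$ is, up to the natural identifications, unchanged on passing to a larger saturated model — trace-defines $\Sa G$; by Proposition~\ref{prop:trace-theories} this is equivalent to the theory-level statement. Write $\uppi\colon G\to K$ for the quotient map and fix a set-theoretic section $\uptau\colon K\to G$; since $G$ is definable this is an injection $K\to\monsterset^m$ for a suitable $m$. By Proposition~\ref{prop:zero-def} it is enough to produce, for each $\emptyset$-definable $S\subseteq K^n$ of $\Sa G$, an $\monster$-definable $Y\subseteq\monsterset^{mn}$ with $\uptau^n(\bar a)\in Y\iff\bar a\in S$ for all $\bar a\in K^n$. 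Because $\uppi$ is a surjective homomorphism the basic relations $\uppi^n(X)$ (for $X\subseteq G^n$ definable) are closed under finite unions and coordinate projections, and the class of $S$ admitting such a $Y$ is closed under Boolean operations; granting appropriate care with existential quantifiers, it is enough to treat the basic relations. So the real task is: given a definable $X\subseteq G^n$, writing $A:=\uptau^n(K^n)$ (a subset of $\monsterset^{mn}$ of bounded cardinality) and $P:=X\cdot(G^\zerozero)^n=\uppi^{-n}(\uppi^n(X))$ (a type-definable set), find a definable $Y$ with $Y\cap A=P\cap A$.

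The main obstacle is exactly this last step, and it is where the Lie hypothesis and definable amenability enter. Write $P=\bigcap_j Z_j$ with $(Z_j)$ a decreasing chain of definable sets forming a neighbourhood basis of $(G^\zerozero)^n$; a single stage $Z_{j_0}$ would suffice provided the decreasing sequence $Z_j\cap A$ stabilises, after which a harmless definable thickening recovers $P\cap A$ on the nose. To force stabilisation one exploits that $K$, being a compact Lie group, is second countable and metrizable, and that definable amenability furnishes a Haar probability measure $h$ on $K$ for which definable sets are measure-theoretically tame — in particular each closed set $\uppi^n(X)$ has $h$-null topological boundary. Choosing $\uptau$ generically for $h$, so that $A$ meets no such boundary, makes $Z_j\cap A$ eventually constant and completes the argument. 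Making the boundary and genericity statements precise is the part that genuinely requires the interaction of the type-definable subgroup $G^\zerozero$, the compact Lie topology on $K$, and the invariant measure coming from definable amenability; everything else is soft.

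In the o-minimal, definably compact case one can bypass most of this via known structure theory. By Pillay's conjecture (Hrushovski--Peterzil--Pillay, together with Berarducci--Otero--Peterzil--Pillay), $K=\gqot$ is a compact Lie group of dimension $\dim G$ and, after the identifications supplied by that theory, $\uppi$ is a standard-part map relative to a small elementary submodel over which $G$ is defined. Taking $\uptau$ to be the corresponding standard section, for a definable $X$ defined over parameters $\bar c$ the set $P\cap A$ is simply the trace on $A$ of the definable set obtained from $X$ by replacing $\bar c$ with $\st(\bar c)$ (equivalently, of the topological closure of $X$), and one takes that set as $Y$. This is the same mechanism — capturing an infinitesimal quotient by a standard-part, hence externally definable, set — that underlies Proposition~\ref{prop:she-0} and the other examples of this section, so the o-minimal case can equally be presented as an instance of trace equivalence with the Shelah completion.
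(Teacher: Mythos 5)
There is a genuine gap, and in fact the heart of your argument fails. The paper's own proof is short and indirect: Hrushovski and Pillay showed that $G^{\zerozero}$ is externally definable in $\monster$ (in the NIP + definably amenable + Lie quotient case, and in the o-minimal definably compact case), so $\Sh{\monster}$ interprets the structure induced on $\gqot$; combining this with Proposition~\ref{prop:she-0} and Proposition~\ref{prop:trace-interpret} gives the result. Your argument tries to bypass this external definability and supply a direct trace definition, but the key claim — that for a generically chosen section $\uptau$ the decreasing chain $Z_j\cap A$ eventually stabilizes — is false. Take $\monster$ a saturated real closed field, $G=[0,1)$ with addition mod $1$, so $\gqot\cong\R/\Z$ and $G^{\zerozero}$ is the group of infinitesimals. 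With $X=\{0\}$, the set $P=\uppi^{-1}(\uppi(X))=G^{\zerozero}$ is $\bigcap_j(-1/j,1/j)$, while $P\cap A=\{\uptau(\bar 0)\}$ is a singleton and each $Z_j\cap A$ contains $\uptau(a)$ for all $a$ with $|a|<1/j$, a set of cardinality continuum for every $j$. No amount of Haar-genericity for $\uptau$ changes this; the chain strictly decreases forever. The definable $Y$ that actually works in this example is the singleton $\{\uptau(\bar 0)\}$, which is not any $Z_{j_0}$; in general the correct $Y$ has to be manufactured by an honest-definition/external-definability mechanism, which is exactly what the Hrushovski--Pillay result supplies.

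There is a second issue you wave away with ``granting appropriate care with existential quantifiers.'' Trace definability requires handling every $\Sa K$-definable set, not just the predicates $\uppi^n(X)$, and the class of sets admitting a suitable $Y$ is closed under Boolean operations but not obviously under projection: if $Y\cap A^{n+1}$ captures $S$, the set $\{x:\exists y\,(x,y)\in Y\}$ captures $\exists y\,(x,y)\in Y$ with $y$ ranging over all of $M^m$, not just over $A$. Unless the induced structure on $\gqot$ has quantifier elimination (which you neither prove nor cite), this is not ``the same'' reduction and does not come for free. The paper avoids the problem entirely because interpretation, unlike a raw trace definition by hand, handles quantifiers automatically. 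Your final paragraph about the o-minimal case gestures toward the correct mechanism (standard-part maps and externally definable sets), but again stops short: $\st(\bar c)$ lives in $\gqot$, not in $\monsterset$, so ``replacing $\bar c$ with $\st(\bar c)$'' does not directly produce an $\monster$-definable $Y$; what you really need is precisely the statement that $G^{\zerozero}$, equivalently the relation ``same standard part,'' is externally definable.
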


In particular if $\monster$ is o-minimal and $G$ is a definably compact definable group then the induced structure on $\gqot$ is trace definable in $\monster$, so $\monster$ trace defines $\gqot$ as a group.

\begin{proof}
By Proposition~\ref{prop:trace-theories} and Proposition~\ref{prop:she-0} it is enough to show that $\monster^{\mathrm{Sh}}$ interprets the induced structure on $\gqot$.
This follows from work of Pillay and Hrushovski who showed that $G^\zerozero$ is externally definable in $\monster$.
They proved this in the case when $\monster$ is o-minimal and $G$ is definably compact \cite[Lemma~8.2]{HP-invariant} and pointed out that the proof goes through when $\monster$ is $\nip$, $G$ is definably amenable, and $\gqot$ is a Lie group~\cite[Remark 8.3]{HP-invariant}.
\end{proof}

\subsection{Abelian groups}
\label{section:trace torsion free}
We now discuss trace definibility between abelian groups.
\textbf{In this section we change some notational conventions!}
All of our structures here are abelian groups so in this section we write $\Z$ where in other sections we would write $(\Z;+)$ and so on.
Yes, this does conflict with the notation elsewhere.
(I tried to write it the other way but it looked too bad.)
Throughout $p$ ranges over primes.

% \medskip
% We will see that pure embeddings give trace definitions hence in particular any direct summand of an abelian group $A$ is trace definable in $A$.
% We use this together with the general Szmielew theory of abelian groups.
% Throughout $p$ ranges over prime numbers.

\medskip
An embedding $f\colon A \to B$ of abelian groups is \textbf{pure} if it is an $L_\mathrm{div}$-embedding, i.e. for all $k$ and $\alpha \in A$, $k$ divides $f(\alpha)$ in $B$ if and only if $k$ divides $\alpha$ in $A$.
(See Section~\ref{section:abelian} for $L_\mathrm{
div}$.)

\begin{lemma}
\label{lem:pure}
Suppose that $A$ and $A^*$ are abelian groups and $\uptau \colon A \to A^*$ is a pure embedding.
Then $A^*$ trace defines $A$ via $\uptau$.
If $A$ is a direct summand of $A^*$ then $A^*$ trace defines $A$.
\end{lemma}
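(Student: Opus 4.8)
The plan is to deduce this from Szmielew's quantifier elimination (Fact~\ref{fact:abelian qe}) and Proposition~\ref{prop:qe-trace}, exactly as in the proof of Proposition~\ref{prop:fg-group}. First I would record that on any abelian group the predicates $k|$ are definable in the pure group language, since $k \mid \alpha \Longleftrightarrow \exists\beta\,(\alpha = k\beta)$; hence the $L_\mathrm{div}$-structure and the pure group structure on an abelian group have the same definable sets, and whether we regard $A$ and $A^*$ as abelian groups or as $L_\mathrm{div}$-structures is immaterial for trace definibility.

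Next, a pure embedding $\uptau\colon A \to A^*$ is by definition an embedding of $L_\mathrm{div}$-structures, and by Fact~\ref{fact:abelian qe} the $L_\mathrm{div}$-structure $A$ admits quantifier elimination. So Proposition~\ref{prop:qe-trace}, applied with $\Sa O = A$ and $\Sa M = A^*$ viewed as $L_\mathrm{div}$-structures and with the $L_\mathrm{div}$-embedding $\uptau$, shows that $A^*$ trace defines $A$ via $\uptau$; by the previous paragraph this is the same as the assertion for abelian groups, which is the first claim.

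For the second claim I would check that a direct-summand inclusion is pure: if $A^* = A \oplus B$ and $\iota\colon A \to A^*$ is the inclusion, then for $k \in \N$ and $\alpha \in A$, if $k$ divides $\iota(\alpha) = (\alpha,0)$ in $A^*$, say $(\alpha,0) = k(\alpha',\beta')$, then $\alpha = k\alpha'$, so $k$ divides $\alpha$ in $A$; the converse implication is trivial. Applying the first claim to $\iota$ then gives that $A^*$ trace defines $A$. There is essentially no obstacle here; the only point needing a word of care is the interdefinability of the group language with $L_\mathrm{div}$, which is what lets Fact~\ref{fact:abelian qe} feed into Proposition~\ref{prop:qe-trace}.
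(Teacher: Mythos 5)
Your proposal is correct and follows the same route as the paper: Fact~\ref{fact:abelian qe} plus Proposition~\ref{prop:qe-trace} for the first claim, then the observation that direct-summand inclusions are pure for the second. The only difference is that you spell out the (correct but routine) point that the $L_{\mathrm{div}}$-expansion is definitional, which the paper leaves implicit.
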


\begin{proof}
The first claim follows from Fact~\ref{fact:abelian qe} and Proposition~\ref{prop:qe-trace}.
The second follows from the first claim and the fact that the natural embedding $A \to A\oplus B$ is pure.
\end{proof}

If $A'$ is a pure subgroup of $A$ and $A'$ is $\aleph_1$-saturated then $A'$ is direct summand of $A$, hence $A$ is isomorphic to $A\oplus (A/A')$, see for example \cite[10.7.1, 10.7.3]{Hodges} or \cite[Corollary~3.3.38]{trans}.
It follows by Fact~\ref{fact:direct sum} that if $A'$ is a pure subgroup of $A$ then $A\equiv A'\oplus (A/A')$.

\begin{lemma}
\label{lem:oplus}
Suppose that $A,A_1,\ldots,A_n$ are abelian groups.
Then $A_1\oplus\cdots\oplus A_n$ is trace equivalent to $A_1 \sqcup\cdots\sqcup A_n$.
Furthermore the following structures are trace equivalent:
\begin{enumerate}[leftmargin=*]
\item $A$,
\item $A^n$ for any $n$, and
\item the disjoint union of the torsion subgroup $\mathrm{Tor}(A)$ of $A$ and the torsion free group $A/\mathrm{Tor}(A)$.
\end{enumerate}
\end{lemma}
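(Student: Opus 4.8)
The plan is to deduce Lemma~\ref{lem:oplus} from the tools already assembled, principally Lemma~\ref{lem:pure}, Lemma~\ref{lem:disjoint union}, Fact~\ref{fact:direct sum}, and Fact~\ref{fact:abelian qe}. First I would prove the opening assertion that $A_1 \oplus \cdots \oplus A_n$ is trace equivalent to $A_1 \sqcup \cdots \sqcup A_n$. For one direction, each $A_i$ is a direct summand of $A_1 \oplus \cdots \oplus A_n$, so by Lemma~\ref{lem:pure} the sum trace defines each $A_i$, and then Lemma~\ref{lem:disjoint union} shows it trace defines the disjoint union. For the converse I would observe that $A_1 \oplus \cdots \oplus A_n$ is interpretable (indeed definable, as a quotient of a power) in $A_1 \sqcup \cdots \sqcup A_n$: the underlying set $A_1 \times \cdots \times A_n$ carries the product group structure, which is definable in the disjoint union since it is the induced structure on the product of sorts and Feferman--Vaught (Fact~\ref{fact:direct sum}, or rather the syntactic version quoted before Lemma~\ref{lem:disjoint union}) controls definable subsets; then apply Proposition~\ref{prop:trace-interpret}. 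So the two are trace equivalent.

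Next, the chain (1)$\Leftrightarrow$(2): $A$ is a direct summand of $A^n$ and conversely $A^n$ is interpretable in $A$, so trace equivalence of $A$ and $A^n$ follows from Lemma~\ref{lem:pure} and Proposition~\ref{prop:trace-interpret}; alternatively combine the displayed equivalence $A \oplus \cdots \oplus A \equiv A^n$ with the first part. Then (1)$\Leftrightarrow$(3): write $T = \mathrm{Tor}(A)$, so we have the exact sequence $0 \to T \to A \to A/T \to 0$. The point is that this sequence is pure, i.e. $T$ is a pure subgroup of $A$: if $\alpha \in T$ and $\alpha = k\beta$ for some $\beta \in A$, then $\beta$ is itself torsion (a nonzero multiple of $\beta$ is torsion, hence so is $\beta$), so $\beta \in T$. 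By the remark following Lemma~\ref{lem:pure}, a pure subgroup gives $A \equiv T \oplus (A/T)$. Hence $A$ is trace equivalent to $T \oplus (A/T)$ by Proposition~\ref{prop:trace-theories} (trace equivalence depends only on the theory), and $T \oplus (A/T)$ is trace equivalent to $T \sqcup (A/T)$ by the first part of the lemma. Since $A/T$ is torsion free, this is exactly statement (3).

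The only genuinely delicate point is the purity of the torsion subgroup, and that is a one-line argument as indicated above; everything else is bookkeeping with results already in hand. I would also note in passing, to justify invoking Fact~\ref{fact:direct sum} and the elementary-equivalence reductions, that all the trace-definibility statements here are statements about theories (via Proposition~\ref{prop:trace-theories} and Proposition~\ref{prop:omega cat}-style reasoning), so replacing a group by an elementarily equivalent one is harmless. The main obstacle, such as it is, is simply being careful that ``$A$ is interpretable in $A_1 \sqcup \cdots \sqcup A_n$'' and ``$A^n$ is interpretable in $A$'' are stated cleanly enough to feed into Proposition~\ref{prop:trace-interpret}; both are routine since the disjoint union by construction has the product group as a definable structure on the product of its sorts, and $A^n$ is literally a definable (product) structure inside $A$.
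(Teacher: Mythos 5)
Your proof is correct and follows essentially the same route as the paper's: one direction of the first claim via Lemma~\ref{lem:pure} (direct summands) plus Lemma~\ref{lem:disjoint union}, the other via definability of $A_1\oplus\cdots\oplus A_n$ in $A_1\sqcup\cdots\sqcup A_n$; the equivalence with $A^n$ via mutual interpretability; and the torsion decomposition via purity of $\mathrm{Tor}(A)$ and the remark preceding the lemma that a pure subgroup $A'$ gives $A\equiv A'\oplus(A/A')$. The only cosmetic difference is that the paper routes $A^n$ through the disjoint union of $n$ copies of $A$ and you go directly, but you also note that alternative.
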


Thus if we want to understand abelian groups trace definable in a theory $T$ we may consider the torsion and torsion-free cases separately.

\begin{proof}
It is easy to see that $A_1\sqcup\cdots\sqcup A_n$ defines $A_1\oplus\cdots\oplus A_n$.
By Lemma~\ref{lem:pure} $A_1\oplus\cdots\oplus A_n$ trace defines each $A_i$.
Hence $A_1\oplus\cdots\oplus A_n$ trace defines $A_1\sqcup\cdots\sqcup A_n$ by Lemma~\ref{lem:disjoint union}.
Furthermore $A^n$ is trace equivalent to the disjoint union of $n$ copies of $A$ and this disjoint union is mutually interpretable with $A$.
Finally, $A\equiv\mathrm{Tor}(A)\oplus(A/\mathrm{Tor}(A))$ by the remark before Lemma~\ref{lem:oplus}.
(Note that $\mathrm{Tor}(A)$ is a pure subgroup of $A$).
\end{proof}

\begin{proposition}
\label{prop:ab}
Any group $G$ of infinite exponent trace defines $\Th(\Q)$.
\end{proposition}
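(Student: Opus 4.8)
The plan is to show that a group $G$ of infinite exponent trace defines $(\Q;+)$, the additive group of rationals, which is the same as $\Th(\Q)$ by Proposition~\ref{prop:omega cat}-type considerations (actually by Proposition~\ref{prop:trace-theories}, since $(\Q;+)$ is a divisible group and $\Th(\Q)$ is its theory). First I would pass to an $\aleph_1$-saturated elementary extension $G^* \succ G$; this preserves infinite exponent, and by Proposition~\ref{prop:trace-theories} it suffices to show $G^*$ trace defines $(\Q;+)$. Infinite exponent means that for every $k \ge 1$ there is an element of order divisible by $k$ (more precisely, arbitrarily large finite orders occur, or there is an element of infinite order); by saturation we can find $\beta \in G^*$ that is divisible by every $k \in \N$, $k \ge 1$. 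Indeed the partial type ``$k \mid x$ for all $k$'' together with ``$x \ne 0$'' is finitely satisfiable in $G$ (if exponents are unbounded, pick an element whose order is divisible by $\mathrm{lcm}(1,\ldots,n)$ and take the appropriate multiple; one must be slightly careful, but this is routine), so by $\aleph_1$-saturation it is realized by some $\beta$.

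Next I would let $\uptau \colon \Q \to G^*$ be given by $\uptau(q) = q\beta$, where $q\beta$ makes sense because $\beta$ is infinitely divisible: given $q = a/b$, choose $\gamma$ with $b\gamma = \beta$ and set $q\beta = a\gamma$; one checks this is well-defined and gives a group homomorphism $(\Q;+) \to (G^*;+)$. The key point is that $\uptau$ is a \emph{pure} embedding of abelian groups: injectivity follows since $\beta$ has infinite order (it is nonzero and divisible by all $k$, so $k\beta \ne 0$ for all $k \ge 1$ by saturation — again a small saturation argument), and purity holds because $(\Q;+)$ is divisible, so $k$ divides $\uptau(q)$ in $G^*$ for every $k$, matching the fact that $k$ divides $q$ in $\Q$ for every $k$. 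Then by Lemma~\ref{lem:pure} (or directly by Fact~\ref{fact:qe-for-divisible} giving quantifier elimination for $(\Q;+)$ in the language of abelian groups, combined with Proposition~\ref{prop:qe-trace}), $G^*$ trace defines $(\Q;+)$ via $\uptau$.

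The main obstacle is the saturation argument producing $\beta$: one must verify that ``infinite exponent'' really does make the type $\{k \mid x : k \ge 1\} \cup \{x \ne 0\}$ finitely satisfiable. If $G$ has elements of unboundedly large finite order, then for the finite subset $\{1 \mid x, \ldots, n \mid x, x \ne 0\}$ we pick $g \in G$ of order $m$ with $n! \mid m$, and then $m' := m/n!$ satisfies that $n! \cdot g'$ has order $n!$ where... actually one takes $g' = (m/\gcd)\cdot g$ appropriately so that $g'$ is divisible by each of $1,\ldots,n$ and nonzero; concretely $h = (m/n!)g$ has order $n!$ and then $h$ is itself divisible by $1,\ldots,n$ in the cyclic group of order $n!$ iff... — here I would instead just note that in $\Z/n!\Z$ the element $1$ is divisible by each $k \le n$, pull this back. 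Alternatively, if $G$ has an element of infinite order the claim is immediate. Either way the verification is elementary but is the one place requiring care; everything after that is a direct application of Lemma~\ref{lem:pure} and the quantifier elimination for divisible abelian groups.
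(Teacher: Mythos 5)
Your argument breaks at the step ``one checks this is well-defined.'' Having $\beta \ne 0$ divisible by every $k$, and even of infinite order, does not make $q\beta$ a well-defined element of $G^*$: if $b\gamma = b\gamma' = \beta$, the difference $\gamma - \gamma'$ is $b$-torsion, and $a\gamma$, $a\gamma'$ need not agree. Concretely, in $\Q \oplus \Z/2\Z$ the element $\beta = (1,0)$ is divisible by every $k$ and has infinite order, yet $(1/2,0)$ and $(1/2,1)$ are both halves of $\beta$; choosing between them changes $\uptau(1/2)$. Moreover, incoherent choices destroy the homomorphism property (you need, say, $\gamma_2 + \gamma_3 = 5\gamma_6$, which fails modulo $6$-torsion). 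What you must realize by saturation is therefore not a single $\beta$ but a coherent tower of roots, e.g.\ the type $\{(n+1)y_{n+1} = y_n : n \ge 1\} \cup \{n y_1 \ne 0 : n \ge 1\}$, after which $a/n! \mapsto a y_n$ is a genuine embedding. That type \emph{is} finitely satisfiable in a group of infinite exponent, but your write-up only constructs $\beta$ and never addresses coherence.

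The paper's proof circumvents this entirely by a different decomposition. It first passes to an elementary extension to get a non-torsion $\beta$, sets $Z = \langle\beta\rangle \cong \Z$, and takes an $\aleph_1$-saturated elementary extension $(G^*, Z^*)$ of the \emph{pair} $(G, Z)$. Then $Z^*$ is an $\aleph_1$-saturated model of $\Th(\Z;+)$, hence torsion-free, so division by $k$ in $Z^*$ is unique whenever it exists. Saturation produces $\delta \in Z^*$, $\delta \ne 0$, divisible by every $k$, and $q \mapsto q\delta$ is then automatically a well-defined embedding $(\Q;+) \to Z^* \subseteq G^*$; Fact~\ref{fact:qe-for-divisible} and Proposition~\ref{prop:qe-trace} finish. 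Working inside $Z^*$ is precisely what replaces your unproved well-definedness claim, and it also insulates the argument from whether $G$ itself is abelian (your additive notation and invocation of Lemma~\ref{lem:pure} quietly assume this, whereas the statement does not).
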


\noindent
So $\Th(\Z)$ trace defines $\Th(\Q)$.
Presburger arithmetic does not interpret $\Th(\Q)$ by Fact~\ref{fact:pres group}.

\begin{proof}
By Proposition~\ref{prop:qe-trace} it is enough to construct an elementary extension $G^*$ of $G$ and an embedding $(\Q;+)\to G^*$.
After possibly passing to an elementary extension fix a non-torsion $\beta\in G$, let $Z$ be the subgroup generated by $\beta$, and $(G^*,Z^*)$ be an $\aleph_1$-saturated elementary extension of $(G,Z)$.
By the proof of Proposition~\ref{prop:Q/Z} there is an embedding $(\Q;+)\to Z^*$.
\end{proof}

We consider abelian groups that are trace definable in $\Th(\Q)$.
By Corollary~\ref{cor:superstable} any structure trace definable in $\Th(\Q)$ is $\aleph_0$-stable.
Macintyre showed that an abelian group $A$ is $\aleph_0$-stable iff $\mathrm{Tor}(A)$ has finite exponent and $A/\mathrm{Tor}(A)$ is divisible \cite[Thm~A.2.11]{Hodges}.
I don't know if $\Th(\Q)$ can trace define an infinite abelian group of finite exponent or vice versa.
% We discuss the divisible abelian groups trace definable in $\Th(\Q)$.

\begin{proposition}
\label{prop:divisible abelian}
Suppose $A$ is a non-trivial divisible abelian group and suppose that there is $m\in\N$ such that $\rank_p(A)\le m$ for all primes $p$.
Then $A$ is trace equivalent to $\Q$.
In particular $\Z(p^\infty)$ is trace equivalent to $\Q$ for any prime $p$.
\end{proposition}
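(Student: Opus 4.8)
The plan is to split the statement into the two directions already hinted at by the machinery in Section~\ref{section:abelian} and the theorems on preservation of $\aleph_0$-stability. First I would observe that since $\Q$ is a pure subgroup of $\Q^{\rank(A)}\oplus\bigoplus_p\Z(p^\infty)^{\rank_p(A)}$, and by Fact~\ref{fact:divisible classification} this latter group is isomorphic to $A$, Lemma~\ref{lem:pure} gives immediately that $A$ trace defines $\Q$. (Here I am using that $A$ is non-trivial and divisible so $\rank(A)\ge 1$ or some $\rank_p(A)\ge 1$; if $\rank(A)=0$ one still gets a pure copy of $\Q$ sitting inside $\Z(p^\infty)$, or one notes $\Z(p^\infty)$ is a pure subgroup of $A$ when $\rank_p(A)\ge 1$ — in any case $\Q$ or $\Z(p^\infty)$ embeds purely, and $\Z(p^\infty)$ itself trace defines $\Q$ via Proposition~\ref{prop:Q/Z} and Lemma~\ref{lem:pure}.) So the content is entirely in the reverse direction: $\Th(\Q)$ trace defines $A$.

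For the reverse direction, the key point is the uniform bound $\rank_p(A)\le m$. By Fact~\ref{fact:qe-for-divisible}, $A$ eliminates quantifiers in the language of abelian groups, so I plan to apply Proposition~\ref{prop:qe-trace}: it suffices to construct an elementary extension $\Sa Q^*$ of $\Q$ (equivalently, by Proposition~\ref{prop:trace-theories}, a sufficiently saturated model of $\Th(\Q)$) together with an embedding of abelian groups $A\to \Sa Q^*$. Now $\Q$ is divisible and torsion-free, and by Fact~\ref{fact:divisible elemen} a divisible abelian group $D$ with $\erank_p(D)\le m$ for all $p$ and $\erank(D)=\aleph_0$ has the property that $\Q\equiv \Q\oplus D$; more to the point I want the ambient group into which $A$ embeds to be of the form (divisible, torsion-free of big rank) $\oplus$ (a divisible torsion part with each $p$-rank exactly $m$, repeated $\aleph_0$ times). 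Since $\rank_p(A)\le m$ for every $p$, the group $A$ is a subgroup of $\Q^{(\kappa)}\oplus\bigoplus_p \Z(p^\infty)^{(m)}$ for $\kappa=\max(\rank(A),\aleph_0)$. The cleanest route: take $\Sa Q^*$ to be an $\aleph_1$-saturated (or $|A|^+$-saturated) elementary extension of $\Q$; I claim $\Sa Q^*$, as an abelian group, contains an isomorphic copy of $\bigoplus_p\Z(p^\infty)^{(m)}$ together with a $\Q$-vector space of large enough dimension, hence contains a copy of the overgroup of $A$. This is where Fact~\ref{fact:divisible elemen} and Fact~\ref{fact:additive} do the real work: a saturated model of $\Th(\Q)$ has elementary $p$-rank $\aleph_0$ (since $\Q$ itself does — $\Q$ being torsion-free means $\Q[p]=0$, so wait, I must be careful).

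Here is the actual subtlety, and what I expect to be the main obstacle: $\Q$ is torsion-free, so $\Q[p]=0$ and $\erank_p(\Q)=0$; a torsion-free group cannot literally contain $\Z(p^\infty)$. The resolution is that $A$ need not embed into a model of $\Th(\Q)$ — and indeed it should not, since $A$ may have torsion. So Proposition~\ref{prop:qe-trace} with the identity-type embedding is the wrong tool for the torsion part. Instead I would use Lemma~\ref{lem:oplus} to reduce to the torsion and torsion-free parts separately: $A\equiv \mathrm{Tor}(A)\oplus(A/\mathrm{Tor}(A))$, $\mathrm{Tor}(A)$ is divisible with all $p$-ranks $\le m$ (a bounded-rank divisible torsion group, i.e. $\bigoplus_p \Z(p^\infty)^{r_p}$ with $r_p\le m$), and $A/\mathrm{Tor}(A)$ is divisible torsion-free, i.e. a $\Q$-vector space. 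It then suffices by Lemma~\ref{lem:disjoint union} to show $\Th(\Q)$ trace defines each of these. For the torsion-free part $\Q^{(\kappa)}$: a saturated model of $\Th(\Q)$ is a $\Q$-vector space of arbitrarily large dimension, so $\Q^{(\kappa)}$ embeds purely and Lemma~\ref{lem:pure} applies. For the torsion part $\Z(p^\infty)^{r_p}$ (finitely many $p$ with $r_p\ne 0$? no — all $p$): by Proposition~\ref{prop:divisible abelian} applied to... no, that is circular. Instead I would handle $\bigoplus_p\Z(p^\infty)^{r_p}$ with $r_p\le m$ directly: this group is a pure subgroup of $(\Q/\Z)^m$ (since $\Q/\Z\cong\bigoplus_p\Z(p^\infty)$ by Fact~\ref{fact:divisible classification}), and by Proposition~\ref{prop:Q/Z} and Lemma~\ref{lem:oplus}, $\Th(\Q)$ trace defines $(\Q/\Z)^m$. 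So the bounded-rank hypothesis is exactly what lets me fit the torsion part inside finitely many copies of $\Q/\Z$. Assembling: $A$ is trace equivalent to $\mathrm{Tor}(A)\sqcup(A/\mathrm{Tor}(A))$ by Lemma~\ref{lem:oplus}, each summand is trace definable in $\Th(\Q)$ by the above, so $\Th(\Q)$ trace defines $A$ by Lemma~\ref{lem:disjoint union}; combined with the forward direction this gives trace equivalence, and the final sentence about $\Z(p^\infty)$ is the special case $\rank=0$, $r_p=1$, $r_q=0$ for $q\ne p$.
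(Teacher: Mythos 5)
Your reverse direction --- decomposing $A$ into its torsion-free and torsion parts via Lemma~\ref{lem:oplus}, observing that the bounded $p$-rank hypothesis makes the torsion part a summand of $(\Q/\Z)^m$, and assembling via Proposition~\ref{prop:Q/Z} and Lemma~\ref{lem:disjoint union} --- is essentially the paper's argument, and it is the substantive half of the proof. The forward direction, however, contains a genuine error in the edge case you flag: you assert that if $\rank(A)=0$ "one still gets a pure copy of $\Q$ sitting inside $\Z(p^\infty)$," but no such copy exists --- $\Q$ is torsion-free while $\Z(p^\infty)$ is torsion, so there is no injective group homomorphism $\Q\to\Z(p^\infty)$ at all, let alone a pure one. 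Your fallback, that $\Z(p^\infty)$ trace defines $\Q$ "via Proposition~\ref{prop:Q/Z} and Lemma~\ref{lem:pure}," doesn't close the loop either: Lemma~\ref{lem:pure} applied to the pure embedding $\Z(p^\infty)\hookrightarrow\Q/\Z$ shows that $\Q/\Z$ trace defines $\Z(p^\infty)$, which is the wrong direction, and Proposition~\ref{prop:Q/Z} does not reverse it. The tool you want is Proposition~\ref{prop:ab}: any group of infinite exponent trace defines $\Th(\Q)$, and a non-trivial divisible group always has infinite exponent. That handles every case of the forward direction at once, with no case split, and it is exactly what the paper cites there.
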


By Fact~\ref{fact:group in a group} any group interpretable in a torsion free divisible abelian group has a torsion-free subgroup of finite index, hence a torsion free divisible abelian group cannot interpret a divisible abelian group with torsion.
By Proposition~\ref{prop:morley rank} any structure trace definable in $\Th(\Q)$ has finite Morley rank, and by work of Macintyre a divisible abelian group $A$ has finite Morley rank if and only if $\rank_p(A)<\aleph_0$ for all $p$, see \cite[Theorem~6.7]{borovik-nesin}.

\begin{proof}
By Proposition~\ref{prop:ab} $\Th(A)$ trace defines $\Th(\Q)$.
We show that $A$ is trace definable in $\Th(\Q)$.
By Proposition~\ref{prop:Q/Z} and Lemma~\ref{lem:oplus} $\Q$ trace defines $(\Q/\Z)^m$.
Recall that by Fact~\ref{fact:divisible classification} $\Q/\Z$ is isomorphic to $\bigoplus_p \Z(p^\infty)$ and $A$ is isomorphic to $\Q^{\rank(A)}\oplus\bigoplus_p \Z(p^\infty)^{\rank_p(A)}$.
By Lemma~\ref{lem:oplus} and Proposition~\ref{prop:ab} we may suppose that $\rank(A) = 0$.
Therefore $A$ is a summand of $(\Q/\Z)^m$ and hence $A$ is trace definable in $(\Q/\Z)^m$.
\end{proof}

% \medskip
% We now show that $\Q$ trace defines $\Q/\Z$.
% We define $\uptau\colon\Q/\Z\to\Q\cap[0,1)$ by letting $\uptau(\gamma+\Z)$ be the unique element of $(\gamma+\Z)\cap[0,1)$, so $\uptau(\gamma+\Z)=\gamma$ for all $\gamma \in [0,1)\cap \Q$.
% In particular $\uptau$ gives a bijection $\Q/\Z\to[0,1)\cap\Q$ and $\uptau$ is a section of the quotient map $\Q\to\Q/\Z$.
% We show that $\Q$ trace defines $\Q/\Z$ via $\uptau$.
% By quantifier elimination for divisible abelian groups it is enough to suppose that \[X = \{ (\alpha_1,\ldots,\alpha_n) \in (\Q/\Z)^n : m_1 \alpha_1+\cdots+m_n\alpha_n+\beta = 0 \} \quad\text{for}\quad m_1,\ldots,m_n\in\Z,\beta\in(\Q/\Z)^n\]
% and produce $\Q$-definable $Y\subseteq\Q^n$ such that $X=\uptau^{-1}(Y)$.
% Given $\alpha_1,\ldots,\alpha_n\in\Q/\Z$ we have $m_1\alpha_1+\cdots+m_n\alpha_n+\beta=0$ if and only if $m_1\uptau(\alpha_1)+\cdots+m_n\uptau(\alpha_n)+\uptau(\beta)\in\Z$.
% Let $m = \max\{|m_1|,\ldots,|m_n|\}$.
% By construction $|\uptau(\alpha)|< 1$ for all $\alpha\in\Q/\Z$.
% Hence
% \[
% |m_1\uptau(\alpha_1)+\cdots+m_n\uptau(\alpha_n)+\uptau(\beta)|\le|m_1||\uptau(\alpha_1)|+\cdots+|m_n||\uptau(\alpha_n)|+|\uptau(\beta)|< mn+1.
% \]
% Let $k = mn$ and $I=\{-k,-(k-1),\ldots,k-1,k\}$.
% Then for all $\alpha_1,\ldots,\alpha_n\in(\Q/\Z)^n$ we have
% \[m_1\uptau(\alpha_1)+\cdots+m_n\uptau(\alpha_n)+\uptau(\beta) \in\Z\quad\Longleftrightarrow\quad m_1\uptau(\alpha_1)+\cdots+m_n\uptau(\alpha_n)+\uptau(\beta)\in I.\]
% So we let $Y$ be the set of $(a_1,\ldots,a_n) \in \Q^n$ such that $m_1a_1+\cdots+m_na_n+\uptau(\beta) \in I$.
% \end{proof}

We now consider abelian groups that are trace definable in $\Th(\Z)$, starting with the torsion free case.
Recall that $\corank_p(A)$ is the dimension of the $\F_p$-vector space $A/pA$.

\begin{proposition}
\label{prop:torsion free in Z}
Suppose that $A$ is a torsion free abelian group and suppose that there is $m\in\N$ such that $\corank_p(A)\le m$ for all $p$.
Then $A$ is trace definable in $\Th(\Z)$.
If $\corank_p(A)\ge 1$ for all $p$ then $A$ is trace equivalent to $\Z$.
\end{proposition}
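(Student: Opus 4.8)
The plan is to reduce both claims to the tools already developed for abelian groups, primarily Lemma~\ref{lem:pure} (pure embeddings give trace definitions), Fact~\ref{fact:szmielew} (Szmielew's elementary classification of torsion free abelian groups via $p$-coranks), Lemma~\ref{lem:oplus} (trace equivalence of $A^n$ with $A$ and with disjoint unions of summands), and Lemma~\ref{lem:disjoint union}. First I would handle the ``trace definable in $\Th(\Z)$'' claim. The localization $\Z_{(p)}$ embeds purely in $\Q$, hence (viewing $\Q$ as a pure subgroup, or after passing to a saturated elementary extension as in Lemma~\ref{lem:pure}) is trace definable in $\Th(\Q)$, which in turn is trace definable in $\Th(\Z)$ by Proposition~\ref{prop:ab}. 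More directly, each $\Z_{(p)}$ is a pure subgroup of $\Z$ after passing to a suitable elementary extension — since $\Z \equiv \bigoplus_q \Z_{(q)}$ by Fact~\ref{fact:szmielew}, and each $\Z_{(q)}$ is a summand of that, Lemma~\ref{lem:pure} gives that $\Th(\Z)$ trace defines $\Z_{(p)}$ for every $p$. By Fact~\ref{fact:szmielew} we have $A \equiv \bigoplus_p \Z_{(p)}^{\coranke_p(A)}$, and since $\coranke_p(A) = \min\{\corank_p(A),\aleph_0\} \le m$ for all $p$, the group $\bigoplus_p \Z_{(p)}^{\coranke_p(A)}$ is a summand of $\bigoplus_p \Z_{(p)}^{m} \cong \left(\bigoplus_p \Z_{(p)}\right)^m$. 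Now $\bigoplus_p \Z_{(p)} \equiv \Z$, so by Lemma~\ref{lem:oplus} $\left(\bigoplus_p \Z_{(p)}\right)^m$ is trace equivalent to $\Z$ (more precisely to a structure elementarily equivalent to $\Z$, so $\Th(\Z)$ trace defines it), and by Lemma~\ref{lem:pure} $\Th(\Z)$ trace defines the summand $A$. Since trace definibility of a group only depends on its theory (Proposition~\ref{prop:trace-theories}), and $A \equiv \bigoplus_p \Z_{(p)}^{\coranke_p(A)}$, we are done with the first claim.

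For the second claim I must show that if $\corank_p(A) \ge 1$ for all $p$ then $A$ trace defines $\Z$ (the reverse direction being the first claim). The idea is that the hypothesis $\corank_p(A)\ge 1$ for all $p$ forces $\coranke_p(A)\ge 1$ for all $p$, so by Fact~\ref{fact:szmielew} we have $A \equiv \bigoplus_p \Z_{(p)}^{\coranke_p(A)}$ with every exponent at least $1$; this group has $\bigoplus_p \Z_{(p)} \equiv \Z$ as a pure subgroup (indeed as a direct summand after a saturation argument). By Lemma~\ref{lem:pure}, $A$ (equivalently, any elementarily equivalent group, via Proposition~\ref{prop:trace-theories}) trace defines $\bigoplus_p \Z_{(p)}$, hence trace defines $\Z$. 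Combined with the first claim this gives trace equivalence.

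The main obstacle I anticipate is the bookkeeping around ``pure subgroup versus direct summand versus elementary equivalence.'' Lemma~\ref{lem:pure} is stated for pure embeddings $\uptau\colon A\to A^*$ into a fixed group, but here the natural statements are about $A$ being elementarily equivalent to a group that contains $\bigoplus_p\Z_{(p)}$ purely; one must invoke the remark after Lemma~\ref{lem:pure} (a pure subgroup $A'$ of $A$ satisfies $A \equiv A' \oplus (A/A')$ after passing to a saturated model where $A'$ becomes a summand) together with Proposition~\ref{prop:trace-theories} and Fact~\ref{fact:direct sum} to transfer everything to the level of theories. A second, more routine point is verifying that $\bigoplus_p\Z_{(p)}^{\coranke_p(A)}$ really is (isomorphic to) a direct summand of $\left(\bigoplus_p\Z_{(p)}\right)^m$ when all $\coranke_p(A)\le m$, which is immediate termwise since $\Z_{(p)}^{\coranke_p(A)}$ is a summand of $\Z_{(p)}^m$. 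None of this is deep, but the statement ``trace definibility depends only on the theory'' needs to be applied carefully at each step since the groups in play are only elementarily equivalent to the concrete ones, not equal to them.
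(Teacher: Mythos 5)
Your proof is correct and follows essentially the same route as the paper: replace $A$ up to elementary equivalence by $\bigoplus_p \Z_{(p)}^{\mu_p}$ with $\mu_p\le m$ (Szmielew), observe this is a direct summand of $\bigoplus_p\Z_{(p)}^m\equiv\Z^m$ and apply Lemma~\ref{lem:pure}, and for the converse note that when all $\mu_p\ge 1$ the group $\bigoplus_p\Z_{(p)}\equiv\Z$ is a direct summand. The paper's proof is terser and silently treats trace definability as a property of the theory when replacing $A$ by an elementarily equivalent group; your explicit invocations of Proposition~\ref{prop:trace-theories} and Lemma~\ref{lem:oplus} just make that bookkeeping visible rather than changing the argument.
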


The first claim is close to sharp.
Recall that $\Z$ is superstable, so by Corollary~\ref{cor:superstable} any structure trace definable in $\Th(\Z)$ is superstable.
By a result of Rogers \cite[Thm~A.2.13]{Hodges} a torsion free abelian group $A$ is superstable if and only if $\corank_p(A)<\aleph_0$ for all $p$.

\begin{proof}
By Fact~\ref{fact:szmielew} we may suppose that $A$ is of the form $\bigoplus_{p} \Z^{\mu_p}_{(p)}$ for natural numbers  $\mu_p \le m$.
Then $A$ is a direct summand of $\bigoplus_p \Z^m_{(p)}$, so $A$ is trace definable in $\bigoplus_p \Z^m_{(p)}$.
By Fact~\ref{fact:szmielew} we have $\Z^m\equiv\bigoplus_p\Z^m_{(p)}$, so $A$ is trace definable in $\Th(\Z)$.
If $\mu_p\ge 1$ for all primes $p$ then $\bigoplus_p \Z_{(p)}$ is a direct summand of, and is hence trace definable in, $A$.
\end{proof}

A similar argument shows that any torsion free abelian group is trace definable in $\Th(\Z^\upomega)$.

\medskip
Proposition~\ref{prop:finite rank} follows from Proposition~\ref{prop:torsion free in Z} and Fact~\ref{fact:finite rank}.

\begin{proposition}
\label{prop:finite rank}
Any finite rank torsion free abelian group is trace definable in $\Th(\Z)$.
\end{proposition}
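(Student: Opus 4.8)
The plan is to simply combine the two results cited just before the statement: Proposition~\ref{prop:torsion free in Z}, which says that a torsion free abelian group $A$ with a uniform bound $\corank_p(A)\le m$ on all its $p$-coranks is trace definable in $\Th(\Z)$, and Fact~\ref{fact:finite rank}, which provides exactly such a uniform bound in the finite rank case, namely $\corank_p(A)\le\rank(A)$ for every prime $p$.

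Concretely, I would argue as follows. Let $A$ be a finite rank torsion free abelian group and set $m=\rank(A)$. By Fact~\ref{fact:finite rank} we have $\corank_p(A)\le m$ for all primes $p$. Thus $A$ satisfies the hypothesis of Proposition~\ref{prop:torsion free in Z} with this $m$, and that proposition yields that $A$ is trace definable in $\Th(\Z)$, which is the desired conclusion.

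There is essentially no obstacle at this level: all the real work has already been done. The substance lives in Proposition~\ref{prop:torsion free in Z} (which unwinds via Szmielew's classification, Fact~\ref{fact:szmielew}, to reduce $A$ to a direct summand of $\bigoplus_p \Z^m_{(p)}$, then uses $\Z^m\equiv\bigoplus_p\Z^m_{(p)}$ together with Lemma~\ref{lem:pure} and Proposition~\ref{prop:qe-trace}) and in Fact~\ref{fact:finite rank} (proved earlier by induction on $\rank(A)$ using Lemma~\ref{lem:finite rank} on the exact sequence $0\to A'\to A\to A''\to 0$ for a rank one subgroup $A'$). So the "proof" is a two-line deduction, and the only thing to be careful about is that Fact~\ref{fact:finite rank} really gives the bound for \emph{all} primes $p$ simultaneously with the \emph{same} constant $m=\rank(A)$, which it does, so that Proposition~\ref{prop:torsion free in Z} applies verbatim.

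\begin{proof}
Let $A$ be a finite rank torsion free abelian group and put $m = \rank(A)$. By Fact~\ref{fact:finite rank} we have $\corank_p(A) \le m$ for every prime $p$. Hence $A$ satisfies the hypotheses of Proposition~\ref{prop:torsion free in Z}, which gives that $A$ is trace definable in $\Th(\Z)$.
\end{proof}
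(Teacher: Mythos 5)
Your proof is correct and is exactly the paper's argument: the paper itself states, just before the proposition, that it "follows from Proposition~\ref{prop:torsion free in Z} and Fact~\ref{fact:finite rank}," and your two-line deduction is the intended one. Nothing to add.
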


% \begin{proof}
% Suppose that $B$ is an abelian group of total rank $m\in\N$.
% Then $\rank(B) \le m$ and $\sum_p \rank_p(B) \le m$ for all primes $p$.
% By Fact~\ref{fact:finite rank} $\corank_p(B) \le m$ for all $m$.
% Let $A$ be an $\aleph_1$-saturated elementary extension of $B$.
% Then $\sum_p \rank_p(A) \le m$ and $\corank_p(A)\le m$ for all primes $p$ as the elementary $p$-rank and $p$-corank are elementary invariants of $B$.
% By [??] we have $A = A' \oplus A''$ where $A'$, $A''$ is a torsion, torsion free group, respectively.
% By Lemma~\ref{lem:oplus} it is enough to show that $A'$ and $A''$ are trace definable in $\Th(\Z;+)$.
% By additivity [??] of the coranks we have $\corank_p(A'')\le m$ for all primes $p$, apply Proposition~\ref{prop:torsion free in Z}.
% By additivity of ranks we have $\sum_p\rank_p(A')\le m$.
% As $A'$ is torsion $\rank(A) = 0$, so the total rank of $A'$ is at most $m$.
% Corollary~\ref{cor:finite total rank} shows that $A'$ is trace definable in $\Th(\Q;+)$, apply Proposition~\ref{prop:ab}.
% \end{proof}

We now discuss the torsion case.
I don't know if $\Th(\Z)$ can trace define an infinite vector space over a finite field.

\begin{lemma}
\label{lem:cover}
Let $\uppi$ be the quotient map $\Q^n\to(\Q/\Z)^n$.
Suppose that $A$ is a subgroup of $(\Q/\Z)^n$ and that $H$ is the subgroup of $\Q^n$ given by $H=\uppi^{-1}(A)$.
Then $H$ trace defines $A$.
\end{lemma}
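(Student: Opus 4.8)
The goal is to show that $H = \uppi^{-1}(A) \subseteq \Q^n$ trace defines $A \subseteq (\Q/\Z)^n$. The model for this is the proof of Proposition~\ref{prop:Q/Z}, which handles the case $n=1$, $A = \Q/\Z$. The plan is to adapt that argument verbatim, keeping track of the extra coordinates and the fact that $A$ is now an arbitrary subgroup. First I would set up the section map: let $J = [0,1) \cap \Q$, let $\sigma\colon \Q/\Z \to J$ be the bijection picking out the representative in $[0,1)$, and let $\uptau\colon A \to H$ be the map sending $\alpha = (\alpha_1,\dots,\alpha_n) \in A \subseteq (\Q/\Z)^n$ to $(\sigma(\alpha_1),\dots,\sigma(\alpha_n)) \in J^n \subseteq \Q^n$ — note this lands in $H$ precisely because $H = \uppi^{-1}(A)$, so any tuple of representatives of a point of $A$ lies in $H$. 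This $\uptau$ is an injection (a section of the restriction of $\uppi$ to $H$).

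Next, since $H$ is an abelian group, by Fact~\ref{fact:abelian qe} it admits quantifier elimination in $L_\mathrm{div}$. By Proposition~\ref{prop:qe0} (applied with $\Sa O = A$, an $L_\mathrm{div}$-structure with quantifier elimination by Fact~\ref{fact:abelian qe}, and $\Sa M = H$) it suffices to trace-define the atomic $L_\mathrm{div}$-formulas of $A$ via $\uptau$. Actually, to be safe about the use of the group order on coordinates, I would instead just directly verify the hypothesis of Proposition~\ref{prop:qe} (or Proposition~\ref{prop:zero-def}): for every $A$-definable $X \subseteq A^k$, produce an $H$-definable $Y \subseteq \Q^{nk}$ with $\alpha \in X \Longleftrightarrow \uptau(\alpha) \in Y$. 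By quantifier elimination in $L_\mathrm{div}$, $X$ is a boolean combination of sets of the form $\{\alpha \in A^k : T(\alpha) + \beta = 0\}$ and $\{\alpha \in A^k : \ell \mid (T(\alpha) + \beta)\}$ for a $\Z$-linear term $T$ in the $nk$ coordinates, a constant $\beta \in (\Q/\Z)^n$, and $\ell \in \N$. Boolean combinations are harmless, so I reduce to these two basic types.

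For the equation $T(\alpha) + \beta = 0$ in $\Q/\Z$: exactly as in the proof of Proposition~\ref{prop:Q/Z}, I bound $|T(\uptau(\alpha)) + \uptau(\beta)|$ coordinatewise by $mnk + 1$ (or whatever constant, using $|\sigma(\cdot)| < 1$ and the coefficients of $T$), conclude that $\alpha \in X$ iff $T(\uptau(\alpha)) + \uptau(\beta)$ lies in a fixed finite set $I \subseteq \Z^n$, and take $Y$ to be the preimage of $I$ under the $H$-definable (indeed $\Q^n$-definable, hence $H$-definable) map $\gamma \mapsto T(\gamma) + \uptau(\beta)$; this is definable because $I$ is finite and the constant $\uptau(\beta) \in H$. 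For the divisibility atom $\ell \mid (T(\alpha) + \beta)$ in $\Q/\Z$: here I need the translation from divisibility in $\Q/\Z$ to a statement about $H$. Observe $\ell \mid x$ in $\Q/\Z$ iff $x \in \frac{1}{\ell}(\Z/\ell'\Z)$ for the relevant $\ell'$ — more usefully, lift: $\ell \mid (T(\alpha)+\beta)$ in $(\Q/\Z)^n$ iff some (equivalently any) lift $\tilde x$ of $T(\alpha)+\beta$ to $\Q^n$ satisfies $\tilde x \in \frac{1}{\ell}\Z^n + \text{(something)}$; concretely $T(\uptau(\alpha)) + \uptau(\beta) - c$ is divisible by $\ell$ in $H$ for some element $c$ of the finite set $I$ (the same bounding trick controls which lift we land in). So $Y$ becomes a finite union of $H$-definable sets of the form $\{\gamma \in \Q^{nk}: \ell \mid_H (T(\gamma) + \uptau(\beta) - c)\}$, $c \in I$. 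The main obstacle, and the only place requiring care, is checking that divisibility by $\ell$ \emph{within} $\Q^n$ of an integer tuple matches divisibility \emph{within} $H$ — but $H \supseteq \Z^n$ and $H$ is a subgroup of the divisible group $\Q^n$, and the relevant elements are the image tuples $T(\uptau(\alpha))+\uptau(\beta)-c$, which lie in $H$ by construction; since $H$ contains $\frac1\ell\Z$-multiples of its generators as needed (or rather, one reduces to the identity that for $h \in H$, $\ell \mid_H h$ iff $\ell \mid_{\Q^n} h$ and $h/\ell \in H$, and $H$'s definition as $\uppi^{-1}(A)$ makes this a condition one can absorb into the finite case analysis), this goes through. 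Having verified the hypothesis of Proposition~\ref{prop:qe}, we conclude $H$ trace defines $A$ via $\uptau$. $\square$
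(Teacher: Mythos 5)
Your plan is essentially the same as the paper's: lift via the section $\uptau$ that picks representatives in $[0,1)^n$, reduce to atomic $L_\mathrm{div}$-formulas via Fact~\ref{fact:abelian qe}, handle $T(\alpha)+\beta=0$ by the bounding argument from Proposition~\ref{prop:Q/Z}, and handle $\ell\mid(T(\alpha)+\beta)$ by lifting. The formula you ultimately write down for the divisibility case, $Y$ being a finite union of sets $\{\gamma : \ell\mid_H (T(\gamma)+\uptau(\beta)-c)\}$ over $c$ in a finite set of integer tuples, is correct, and the bounding trick does control which finite set of $c$'s suffices: if $T(\alpha)+\beta=\ell a$ with $a\in A$, then $z := T(\uptau(\alpha))+\uptau(\beta)-\ell\uptau(a)\in\Z^n$ has $\|z\|_\infty$ bounded by $\|T(\uptau(\alpha))+\uptau(\beta)\|_\infty + \ell$, which is uniformly bounded since $\uptau$ lands in $[0,1)^n$.

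However, the justification you give after stating that formula is muddled and contains a false claim. The parenthetical that ``$H$ contains $\frac1\ell\Z$-multiples of its generators as needed'' is not true in general --- e.g.\ if $A=\Z(p^\infty)\subseteq\Q/\Z$ then $H=\{m/p^k:m\in\Z,k\in\N\}$ contains no $1/\ell$ for $\ell$ coprime to $p$ --- and the identified ``main obstacle'' (comparing $\ell\mid$ in $\Q^n$ vs.\ in $H$) is not the real crux. The real translation is simply that $\ell\mid_A\uppi(h)$ iff $h\in\ell H+\Z^n$, using $\ker(\uppi\!\upharpoonright\!H)=\Z^n\subseteq H$; the point is then to show $\ell H+\Z^n$ is $H$-definable. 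Your bounding argument achieves this, but the paper does it more cleanly via the remainder identity $\ell H+\Z^n=\ell H+(\ell\Z^n+\{0,\dots,\ell-1\}^n)=\ell H+\{0,\dots,\ell-1\}^n$ (using $\ell\Z^n\subseteq\ell H$), which exhibits $\ell H+\Z^n$ directly as a finite union of cosets of the $H$-definable subgroup $\ell H$, with no bounding needed at all. I would replace the confused remarks with this one-line identity; the rest of your proof is fine.
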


However, $A$ may not trace define $H$.
Suppose that $A=\Z(p^\infty)$, where we take $\Z(p^\infty)$ to be the subgroup of $\Q/\Z$ consisting of elements of the form $(k/p^n)+\Z$ where $0\le k\le p^n$.
Then $H=\{k/p^n : n\in\N, k \in\Z\}$.
Then $H$ is torsion free and not divisible by any prime other than $p$, so $H$ is not trace definable in an $\aleph_0$-stable structure such as $\Z(p^\infty)$.

\begin{proof} 
Note that $H$ contains $\Z^n = \uppi^{-1}(0)$.
Note that $\uppi(\gamma)=\gamma+\Z^n$ for all $\gamma\in H$.
Let $J = [0,1)^n\cap H$ and $\uptau\colon A\to J$ be the bijection given by declaring $\uptau(\gamma+\Z^n)$ to be the unique element of $[\gamma+\Z^n]\cap J$, so $\uptau(\gamma+\Z^n)=\gamma$ for all $\gamma\in J$.
Then $\uptau$ is a section of $\uppi$, we will use this.
We show that $H$ trace defines $A$ via $\uptau$.
Let $T$ be the term given by $T(x_1,\ldots,x_k)=m_1x_1+\cdots+m_kx_k$ and let
\[
X = \{ \alpha\in A^n : j \text{  divides  }T(\alpha)+\beta \} \quad\text{for some}\quad j\in\N, \beta\in A^k.
\]
By Fact~\ref{fact:abelian qe} it is enough to produce $H$-definable $Y\subseteq H^n$ such that $X=\uptau^{-1}(Y)$.
Note that $j$ divides $T(\alpha)+\beta$ if and only if $T(\uptau(\alpha))+\uptau(\beta)$ is in $jH+\Z^n$.
We first suppose that $j \ge 1$.
Then we have \[jH+\Z^n=jH +(j\Z^n+\{0,\ldots,j-1\}^n)=jH+\{0,1,\ldots,j-1\}^n.\]
The first equality holds by the remainder theorem and the second equality holds as $j\Z^n\subseteq jH$.
Hence $jH+\Z^n$ is $H$-definable.
Let $Y=\{a\in H^n:T(a)+\uptau(\beta)\in jH+\Z^n\}$, then $X=\uptau^{-1}(Y)$.

\medskip
We now treat the case when $j = 0$, so $X$ is the set of $\alpha\in A^n$ such that $T(\alpha)+\beta=0$.
So for any $\alpha\in A^n$ we have $\alpha\in X$ if and only if $T(\uptau(\alpha))+\uptau(\beta)\in\Z^n$.
Proceed as in the second part of the proof of Proposition~\ref{prop:Q/Z} with $\Q$, $\Z$ replaced by $\Q^n$, $\Z^n$, respectively, and the absolute value on $\Q$ replaced with the $\ell_\infty$-norm on $\Q^n$.
% Let $m = \max\{|m_1|,\ldots,|m_n|\}$.
% For any $a\in J$ we have $\|a\|<1$.
% Hence for any $\alpha=(\alpha_1,\ldots,\alpha_n)\in A^n$ we have
% \begin{align*}
% \|T(\uptau(\alpha))+\uptau(\beta)\| &= \|m_1\uptau(\alpha_1)+\cdots+m_n\uptau(\alpha_n)+\uptau(\beta)\|\\
% &\le|m_1|\|\uptau(\alpha_1)\|+\cdots+|m_n|\|\uptau(\alpha_n)\|+\|\uptau(\beta)\| < mn+1.
% \end{align*}
% Let $I$ be the set of $a\in\Z^n$ such that $\|a\|\le mn$.
% Note that for all $\alpha\in A^n$ we have $T(\uptau(\alpha))+\uptau(\beta)\in\Z^n$ if and only if $T(\uptau(\alpha))+\uptau(\beta)\in I$.
% Let $Y$ be the set of $a \in H^n$ such that $T(a)+\uptau(\beta)\in I$.
% Then $Y$ is $H$-definable as $I$ is finite and $\uptau^{-1}(Y)=X$.
\end{proof}

\begin{proposition}
\label{prop:finite total rank}
Any subgroup $A$ of $(\Q/\Z)^m$ is trace definable in $\Th(\Z)$.
\end{proposition}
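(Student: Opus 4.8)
The plan is to reduce Proposition~\ref{prop:finite total rank} to Lemma~\ref{lem:cover} together with Proposition~\ref{prop:torsion free in Z}. Fix a subgroup $A$ of $(\Q/\Z)^m$, let $\uppi\colon\Q^m\to(\Q/\Z)^m$ be the quotient map, and set $H=\uppi^{-1}(A)$, a subgroup of $\Q^m$. By Lemma~\ref{lem:cover}, $H$ trace defines $A$, so by transitivity of trace definibility (Proposition~\ref{prop:trace-basic}) it suffices to show that $H$ is trace definable in $\Th(\Z)$.

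First I would observe that $H$ is a torsion free abelian group of rank $\le m$: indeed $H\subseteq\Q^m$, so $H\otimes\Q$ embeds into $\Q^m$, giving $\rank(H)\le m$. By Proposition~\ref{prop:finite rank} any finite rank torsion free abelian group is trace definable in $\Th(\Z)$, so we are done. (Alternatively, one can invoke Fact~\ref{fact:finite rank} to see $\corank_p(H)\le\rank(H)\le m$ for all $p$ and then apply the first clause of Proposition~\ref{prop:torsion free in Z} directly; this is exactly how Proposition~\ref{prop:finite rank} itself is proved.)

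Since the two ingredients — Lemma~\ref{lem:cover} and Proposition~\ref{prop:finite rank} — are already available, the only real content is the bookkeeping: checking that $H=\uppi^{-1}(A)$ is genuinely a subgroup (clear, as it is the preimage of a subgroup under a homomorphism) and that it has finite rank. I expect the main obstacle, such as it is, to be purely a matter of citing the right earlier results in the right order rather than any genuine difficulty; the substantive work has been done in Lemma~\ref{lem:cover}, whose proof handles the delicate point that the section $\uptau$ of $\uppi$ lands in a bounded box $[0,1)^m$, which lets one replace the torsion divisibility conditions defining $A$ by honest $H$-definable conditions involving $jH+\Z^m$ and finite translation sets.

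In summary, the proof is three lines: set $H=\uppi^{-1}(A)$; note $\rank(H)\le m$ so $H$ is trace definable in $\Th(\Z)$ by Proposition~\ref{prop:finite rank}; conclude by Lemma~\ref{lem:cover} and transitivity that $A$ is trace definable in $\Th(\Z)$.
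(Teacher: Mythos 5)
Your proof is correct and is exactly the paper's argument: take $H=\uppi^{-1}(A)\subseteq\Q^m$, note it is a finite rank torsion free abelian group so that Proposition~\ref{prop:finite rank} makes it trace definable in $\Th(\Z)$, and then apply Lemma~\ref{lem:cover} together with transitivity. Nothing to add.
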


\begin{proof}
Let $H$ be the pre-image of $A$ under the quotient map $\Q^m\to(\Q/\Z)^m$, so $H$ is a subgroup of $\Q^m$.
Hence $H$ is a finite rank torsion free group, so $H$ is trace definable in $\Th(\Z)$ by Proposition~\ref{prop:finite rank}.
Apply Lemma~\ref{lem:cover}.
\end{proof}

\begin{corollary}
\label{cor:torus}
Any finite rank subgroup $A$ of $(\R/\Z)^m$ is trace definable in $\Th(\Z)$.
\end{corollary}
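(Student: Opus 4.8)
The plan is to reduce to the torsion and torsion-free cases that were already settled, via Lemma~\ref{lem:oplus}. Write $A' = \mathrm{Tor}(A)$ for the torsion subgroup of $A$. Since $A$ is a subgroup of $(\R/\Z)^m$ and the torsion subgroup of $(\R/\Z)^m$ is exactly $(\Q/\Z)^m$, we have $A' = A \cap (\Q/\Z)^m$, so $A'$ is a subgroup of $(\Q/\Z)^m$. Proposition~\ref{prop:finite total rank} then gives that $\Th(\Z)$ trace defines $A'$. For the quotient, note that $A/A'$ is torsion-free and $\rank(A/A') = \rank(A) < \aleph_0$, since tensoring with $\Q$ annihilates torsion; hence $A/A'$ is a finite rank torsion-free abelian group, so by Proposition~\ref{prop:finite rank} (which in turn rests on Fact~\ref{fact:finite rank} and Proposition~\ref{prop:torsion free in Z}), $\Th(\Z)$ trace defines $A/A'$.

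Next I would combine the two pieces. Fix an $\aleph_1$-saturated $\Sa M \models \Th(\Z)$; by Proposition~\ref{prop:trace-theories} $\Sa M$ trace defines both $A'$ and $A/A'$, so by Lemma~\ref{lem:disjoint union} $\Sa M$ trace defines the disjoint union $A' \sqcup (A/A')$. By Lemma~\ref{lem:oplus}, $A$ is trace equivalent to $A' \sqcup (A/A')$, so in particular $A' \sqcup (A/A')$ trace defines $A$. Chaining these through transitivity of trace definibility (Proposition~\ref{prop:trace-basic}) yields that $\Th(\Z)$ trace defines $A$, which is the claim. (Passing to $\Sa M$ and back is only needed because Lemma~\ref{lem:disjoint union} is phrased for structures rather than theories; alternatively one invokes the routine theory-level analogue noted after that lemma.)

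I do not expect a genuine obstacle here: the corollary is a short consequence of the machinery already in place. The only points requiring (minor) care are the identification $\mathrm{Tor}(A) = A \cap (\Q/\Z)^m$, which uses that $A$ sits inside $(\R/\Z)^m$, and the equality $\rank(A/\mathrm{Tor}(A)) = \rank(A)$; both are immediate, and the rest is bookkeeping with Lemmas~\ref{lem:oplus} and~\ref{lem:disjoint union} and Propositions~\ref{prop:finite rank} and~\ref{prop:finite total rank}.
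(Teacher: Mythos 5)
Your proof is correct and follows essentially the same route as the paper: reduce via Lemma~\ref{lem:oplus} to the torsion part (handled by Proposition~\ref{prop:finite total rank} since $\mathrm{Tor}(A)\subseteq(\Q/\Z)^m$) and the torsion-free quotient (handled by Proposition~\ref{prop:finite rank}). You spell out a couple of bookkeeping points the paper leaves implicit, but there is no substantive difference.
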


\begin{proof}
By Lemma~\ref{lem:oplus} it is enough to show that $\mathrm{Tor}(A)$ and $A/\mathrm{Tor}(A)$ are both trace definable in $\Th(\Z)$.
Note that $\mathrm{Tor}(A)$ is a subgroup of $(\Q/\Z)^m$, so we can apply Proposition~\ref{prop:finite total rank}.
Furthermore $A/\mathrm{Tor}(A)$ is finite rank torsion free, apply Proposition~\ref{prop:torsion free in Z}.
\end{proof}

We now give a rephrasing of Proposition~\ref{prop:finite total rank}.
Let $\Z(p^n)=\Z/p^n\Z$ for all $n$.
Recall that each $\Z(p^n)$ is subgroup of $\Z(p^\infty)$ and that by Fact~\ref{fact:divisible classification} $\Q/\Z$ is isomorphic to $\bigoplus_p \Z(p^\infty)$.
Hence the subgroups of $(\Q/\Z)^m$ are exactly the groups described in Corollary~\ref{cor:prufner}.

\begin{corollary}
\label{cor:prufner}
For each prime $p$ fix $\kappa_p \in \N\cup\{\infty\}$ and $\mu_p\in\N$.
Suppose there is $m$ such that $\mu_p\le m$ for all $p$.
Then $\bigoplus_p \Z(p^{\kappa_p})^{\mu_p}$ is trace definable in $\Th(\Z)$.
\end{corollary}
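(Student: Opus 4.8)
The plan is to realize $\bigoplus_p \Z(p^{\kappa_p})^{\mu_p}$ as an isomorphic copy of a subgroup of $(\Q/\Z)^m$ and then invoke Proposition~\ref{prop:finite total rank} directly. Note first that trace definibility is preserved under isomorphism of structures, so it suffices to produce a group embedding of $\bigoplus_p \Z(p^{\kappa_p})^{\mu_p}$ into $(\Q/\Z)^m$.

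First I would recall, as noted just before the statement, that each $\Z(p^{\kappa_p})$ is a subgroup of $\Z(p^\infty)$ (when $\kappa_p = \infty$ this is an equality). Fixing for each $p$ such an inclusion $\iota_p \colon \Z(p^{\kappa_p}) \hookrightarrow \Z(p^\infty)$ and using the hypothesis $\mu_p \le m$, we get an embedding $\Z(p^{\kappa_p})^{\mu_p} \hookrightarrow \Z(p^\infty)^{\mu_p} \subseteq \Z(p^\infty)^{m}$, by applying $\iota_p$ coordinatewise and padding the remaining $m - \mu_p$ coordinates with $0$. Taking the direct sum over all primes, these assemble into an embedding
\[
\bigoplus_p \Z(p^{\kappa_p})^{\mu_p} \hookrightarrow \bigoplus_p \Z(p^\infty)^{m}.
\]
The only slightly fiddly point is the identification $\bigoplus_p \Z(p^\infty)^{m} \cong \left( \bigoplus_p \Z(p^\infty) \right)^{m}$: an element of the right side is an $m$-tuple of finitely supported sequences, hence has finite joint support in $p$, so it corresponds canonically to a finitely supported sequence of elements of $\Z(p^\infty)^m$, and this correspondence is a group isomorphism. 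Combining with Fact~\ref{fact:divisible classification}, which gives $\Q/\Z \cong \bigoplus_p \Z(p^\infty)$, we conclude
\[
\bigoplus_p \Z(p^{\kappa_p})^{\mu_p} \hookrightarrow \left( \bigoplus_p \Z(p^\infty) \right)^{m} \cong (\Q/\Z)^m .
\]
Proposition~\ref{prop:finite total rank} then shows that the image, hence $\bigoplus_p \Z(p^{\kappa_p})^{\mu_p}$ itself, is trace definable in $\Th(\Z)$.

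There is no real obstacle here: the argument is pure bookkeeping, and the content was already established in Proposition~\ref{prop:finite total rank}. If anything warrants care, it is just checking that the coordinatewise maps genuinely assemble into a single group embedding into a direct sum (finiteness of supports) and that trace definibility transfers across the concluding isomorphism with $(\Q/\Z)^m$, both of which are routine.
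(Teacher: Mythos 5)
Your proof is correct and takes essentially the same approach as the paper: the paper treats this corollary as an immediate rephrasing of Proposition~\ref{prop:finite total rank}, noting just before the statement that each $\Z(p^{\kappa_p})$ sits inside $\Z(p^\infty)$ and that $\Q/\Z \cong \bigoplus_p \Z(p^\infty)$, exactly as you do. Your explicit bookkeeping of the embedding (padding to $m$ coordinates, commuting the direct sum over $p$ with the finite $m$th power) fills in the routine detail the paper leaves to the reader.
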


In particular $\bigoplus_p \Z(p)$ is trace definable in $\Th(\Z)$.
Let $A=\bigoplus_p \Z(p^{\kappa_p})^{\mu_p}$ where $\kappa_p \in\N\cup\{\infty\}$ and $\mu_p\in\N\cup\{\aleph_0\}$ for each prime $p$.
If $\mu_p=\aleph_0$ for some $p$ then $A$ trace defines an infinite vector space over a finite field, so in this case I do not know if $A$ is trace definable in $\Th(\Z)$.
If $\mu_p=\aleph_0$ for infinitely many $p$ then $A$ has infinite dp-rank, see \cite{halevi-palacin} or \cite[Lemma~5.2]{toomanyII}, hence $A$ is not trace definable in $\Th(\Z)$ by Proposition~\ref{thm:dp-rank}.

\medskip
The arguments above show that if $A$ is a non-divisible torsion free abelian group then $\Th(A)$ trace defines $\Z_{(p)}$ for some $p$.
For this reason I would like to know if o-minimal structures can trace define $\Z_{(p)}$.
If they can't then one could prove many corollaries.

\subsection{Some ordered examples}
\label{section:ordered examples}
We first prove an analogue of Proposition~\ref{prop:ab}.

\begin{proposition}
\label{prop:re-oag}
The theory of any ordered abelian group or infinite cyclically ordered abelian group trace defines $\doag$.
\end{proposition}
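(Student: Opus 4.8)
The plan is to mimic the proof of Proposition~\ref{prop:ab}: produce, in some elementary extension of the given structure, an element $\beta$ that is positive, infinitesimal relative to the standard part, and divisible by every $k \ge 1$, and then map $\Q$ into the structure via $q \mapsto q\beta$ to get an embedding of an ordered $\Q$-vector space, whence $\doag$ by Proposition~\ref{prop:qe-trace} and quantifier elimination for $\doag$. There are two cases, corresponding to the two kinds of structure in the statement, and the ordered-abelian-group case itself splits according to whether the group has torsion under the order (it doesn't, an oag is torsion free) — so really we are handed an ordered abelian group $(H;+,\prec)$ or a cyclically ordered abelian group $(G;+,\cyc)$.

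For an ordered abelian group $(H;+,\prec)$: pass to an $\aleph_1$-saturated elementary extension $(H^*;+,\prec)$. Fix any positive $\delta \in H$. By saturation the partial type asserting ``$x \succ 0$, $x \prec \delta$, and $k \mid x$ for every $k \ge 1$'' is finitely satisfiable — a witness for a finite fragment mentioning divisibility by $k_1,\dots,k_n$ is $(k_1\cdots k_n)\varepsilon$ for a sufficiently small positive $\varepsilon$, which exists because $H^*$ has no least positive element or, if it is discrete, we simply drop the upper-bound clause since Presburger arithmetic already contains a divisible element after passing to a saturated model; either way consistency is clear. Let $\beta$ realize this type and let $\uptau\colon\Q\to H^*$ be $\uptau(q)=q\beta$; this is well-defined (each $q\beta$ makes sense by divisibility), injective, and an embedding of $(\Q;+,<)$ into $(H^*;+,\prec)$ since $\prec$ is a group order. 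Now $(\Q;+,<)\models\doag$ admits quantifier elimination, so Proposition~\ref{prop:qe-trace} gives that $(H^*;+,\prec)$ trace defines $(\Q;+,<)\models\doag$, and by Proposition~\ref{prop:trace-theories} the theory of $(H;+,\prec)$ trace defines $\doag$.

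For a cyclically ordered abelian group $(G;+,\cyc)$: I would follow exactly the move used at the end of the proof of Proposition~\ref{prop:dense pair} and in Proposition~\ref{prop:final Z}. Pass to an $\aleph_1$-saturated elementary extension $(G^*;+,\cyc)$; inside it consider the convex subgroup $J$ of infinitesimals, i.e.\ the $\alpha$ with $\cyc(\beta,\alpha,\beta^*)$ for all standard $\beta,\beta^*$ with $\cyc(\beta,0,\beta^*)$ — equivalently, using the universal cover from Fact~\ref{fact:cover}, the preimage of the infinitesimal subgroup of the cover. On $J$ define $\triangleleft$ by $\alpha\triangleleft\alpha^*$ iff $\cyc(0,\alpha,\alpha^*)$ or $\cyc(\alpha,0,\alpha^*)$ or $\cyc(\alpha,\alpha^*,0)$; one checks $(J;+,\triangleleft)$ is an ordered abelian group (definable in $(G^*;+,\cyc)$). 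Since $G$ has infinite exponent as a cyclically ordered group its cover is an ordered abelian group with no least positive element, so $J$ is nontrivial and in fact, by the same finite-satisfiability argument as above, contains a positive element $\beta$ divisible by every $k$; then $q\mapsto q\beta$ embeds $(\Q;+,<)$ into $(J;+,\triangleleft)$ and we conclude by Proposition~\ref{prop:qe-trace}, Proposition~\ref{prop:qe0}, and quantifier elimination for $\doag$, exactly as in Proposition~\ref{prop:dense pair}.

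The only mild obstacle is bookkeeping in the cyclic case: one must verify that $\triangleleft$ really is a total group order on the infinitesimal subgroup $J$ (associativity/translation-invariance of the order follow from the cyclic-order axioms together with the fact that on $J$ there is ``no wraparound'', which is precisely why we restrict to infinitesimals), and that $J$ is genuinely $(G^*;+,\cyc)$-definable and nontrivial. Both are routine given Fact~\ref{fact:cover}: one can alternatively just invoke that $(G;+,\cyc)$ is interpretable in its universal cover $(H;+,<)$, reduce to the first case, and note that an infinite-exponent cyclically ordered group has a cover that is an ordered abelian group of infinite exponent. With that, the ``divisible infinitesimal element'' construction and Proposition~\ref{prop:qe-trace} finish it. I expect no serious difficulty; this is an easy example in the spirit of Propositions~\ref{prop:ab} and \ref{prop:dense pair}.
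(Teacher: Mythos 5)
Your ordered abelian group case is essentially the paper's: pass to an $\aleph_1$-saturated elementary extension, fix a positive $\beta$ divisible by every $k\ge 1$, and embed $(\Q;+,<)$ via $q\mapsto q\beta$, then apply quantifier elimination for $\doag$. (Your upper-bound clause $x\prec\delta$ in the type is unnecessary and even a little awkward: for any positive $\delta$, the element $(k_1\cdots k_n)\delta$ already witnesses the finite fragment, with no upper bound needed, discrete or not.)

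The cyclic case has a genuine gap. The key assertion, ``since $G$ has infinite exponent as a cyclically ordered group its cover is an ordered abelian group with no least positive element, so $J$ is nontrivial,'' is false. Take $\Z$ with the cyclic group order $\cyc_<$ induced by the usual linear order; $\Z$ has infinite exponent, but its universal cover is $(\Z\times\Z;(1,0),+,\lex)$, which has a least positive element $(0,1)$. That element is definable, so it remains the least positive element in every elementary extension, and hence the infinitesimal subgroup $J$ is $\{0\}$ in any $\aleph_1$-saturated extension. Your finite-satisfiability argument for a positive divisible infinitesimal therefore produces nothing in this case. The paper routes around exactly this by going through the Shelah completion: $(G;+,C)$ is $\nip$ by Fact~\ref{fact:cover}, so by Proposition~\ref{prop:she-0} it suffices for $(G;+,C)^{\mathrm{Sh}}$ to define \emph{some} ordered abelian group, and one then appeals to the already-proved oag case. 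That reduction only needs an externally definable oag, not a divisible element of $J$, and gives room to choose a different externally definable oag when $J$ degenerates. Your proposed ``alternative'' also has the arrows reversed: interpretability of $(G;+,\cyc)$ in its universal cover $(H;+,<)$ shows $(H;+,<)$ trace defines $(G;+,\cyc)$, which is the wrong direction and tells you nothing about whether $\Th(G;+,\cyc)$ trace defines $\doag$. A smaller inaccuracy: $J$ is not definable in $(G^*;+,\cyc)$, only externally definable (this is precisely why the paper works in the Shelah completion), though this particular slip is harmless for your embedding argument since Proposition~\ref{prop:qe0} only requires the relations on $J$, not $J$ itself, to be traces of definable sets.
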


By Fact~\ref{fact:zp} there are ordered abelian groups that do not interpret $\mathrm{DOAG}$.

\begin{proof}
The first claim follows by the proof that Presburger arithmetic trace defines $\doag$, see Proposition~\ref{prop:final Z}.
Suppose that $(H;+,C)$ is a cyclically ordered abelian group and let $(G;+,C)$ be an $\aleph_1$-saturated elementary extension of $(H;+,C)$.
By Fact~\ref{fact:cover} $(H;+,C)$ is interpretable in an ordered abelian group, hence $(H;+,C)$ is $\nip$.
Hence by Proposition~\ref{prop:she-0} it is enough to show that $(G;+,C)^\mathrm{Sh}$ defines an ordered abelian group.
This follows by the proof of Proposition~\ref{prop:dense pair} as the set $J$ defined in that proof is externally definable.
\end{proof}

\begin{proposition}
\label{prop:ordered-fields}
If $\Sa R$ is an ordered field then $\Th(\Sa R)$ trace defines $\rcf$.
\end{proposition}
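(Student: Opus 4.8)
The plan is to...The plan is to exhibit, inside a suitably saturated model of $\Th(\Sa R)$, a copy of the ordered field $(\R;+,\times,<)$, realized as the residue field of a convex valuation, and to trace define it there. Since $(\R;+,\times,<)\models\rcf$ and, as noted in the introduction, $T$ trace defines $T^*$ as soon as some model of $T^*$ is trace definable in a model of $T$, this suffices.

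First I would fix an $\aleph_1$-saturated elementary extension $\Sa R^*\succ\Sa R$ and work inside it. Let $V=\bigcup_{n\in\N}\{x : -n<x<n\}$ be the convex hull of $\Z$ and let $\mathfrak m=\bigcap_{n\ge 1}\{x : -1/n<x<1/n\}$ be the set of infinitesimals. Standard facts about convex hulls give that $V$ is a convex subring of $\Sa R^*$, that $\mathfrak m$ is its maximal ideal, and that $k:=V/\mathfrak m$ is an ordered field. Since every element of $k$ is bounded by an integer, $k$ is archimedean, so $\Q$ is order-dense in $k$; using that $\Q$ is countable and $\Sa R^*$ is $\aleph_1$-saturated, one checks that every Dedekind cut of $k$ is filled, so $k$ is a complete archimedean ordered field and hence $k\cong\R$. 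In particular $k\models\rcf$.

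Next I would observe that $V$ and $\mathfrak m$ are externally definable in $\Sa R^*$: being a union, respectively an intersection, of a chain of definable sets, this follows from Lemma~\ref{lem:chain} (or from Fact~\ref{fact:convex}, since both are convex). Fix an $|R^*|^+$-saturated $\Sa R^{**}\succ\Sa R^*$, so that $\mathfrak m=\mathfrak m'\cap R^*$ for some $\Sa R^{**}$-definable set $\mathfrak m'$, and fix any set-theoretic section $s\colon k\to V$. I claim $\Sa R^{**}$ trace defines $k$ via $s$ (note $m=1$ here). By Proposition~\ref{prop:zero-def} it is enough to produce, for each subset $X\subseteq k^n$ definable in $k$ without parameters, an $\Sa R^{**}$-definable $Y\subseteq (R^{**})^n$ with $x\in X\Longleftrightarrow s(x)\in Y$ for all $x\in k^n$. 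Since $k\models\rcf$ eliminates quantifiers in the language of ordered fields, and the existence of such a witness is preserved under Boolean combinations, it suffices to treat the atomic sets $X=\{x : p(x)>0\}$ and $X=\{x : p(x)=0\}$ for $p\in\Z[x_1,\dots,x_n]$. For these one takes, respectively, $Y=\{y : p(y)>0 \ \wedge\ p(y)\notin\mathfrak m'\}$ and $Y=\{y : p(y)\in\mathfrak m'\}$, with $p$ evaluated in $\Sa R^{**}$; since $s(x_i)\in V\subseteq R^*$ we have $p(s(x))\in R^*$, and $\mathfrak m'\cap R^*=\mathfrak m$, so these conditions compute the sign of the corresponding coset in $V/\mathfrak m=k$. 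As $\Sa R^{**}\models\Th(\Sa R)$ and $k\models\rcf$, we conclude that $\Th(\Sa R)$ trace defines $\rcf$.

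The main obstacle is the verification in the second step that $k=V/\mathfrak m$ is genuinely a model of $\rcf$, i.e.\ that $\aleph_1$-saturation forces the residue field of the convex hull of $\Z$ to be Dedekind complete (equivalently, isomorphic to $\R$); the remaining manipulations with externally definable sets and quantifier elimination for $\rcf$ are routine. An alternative route would be to trace define $\rcvf$ and invoke Proposition~\ref{prop:rcvf}, but identifying the archimedean residue field directly seems shortest.
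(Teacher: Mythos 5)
Your proof is correct, and it takes a genuinely different route from the paper's. Both arguments start the same way: pass to a saturated elementary extension, form the convex hull $V$ of $\Z$ and its maximal ideal $\mfrak$, and observe that the residue field is (isomorphic to) $\R$, which is a model of $\rcf$; and both trace define $\R$ via a set-theoretic section of the residue map. Where you part ways is in producing the witness set $Y$ for a given atomic condition on $\R^n$. The paper works in a single $(2^{\aleph_0})^+$-saturated model of $\Th(\Sa R)$ and shows the witness can be taken $\Sa R$-definable directly: for a polynomial $f$, it finds one infinitesimal $c \in \mfrak$ such that $\{y : f(y) \ge c\}$ works uniformly over all $\alpha \in \R^n$, the existence of $c$ following from the fact that the downward cofinality of $\mfrak$ exceeds $2^{\aleph_0}$. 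You instead note that $V$ and $\mfrak$ are externally definable (via Lemma~\ref{lem:chain} or Fact~\ref{fact:convex}, no $\nip$ hypothesis needed) and pass to a further $|R^*|^+$-saturated extension $\Sa R^{**}$ where an externally-defining set $\mfrak'$ becomes genuinely definable; then the witnesses $\{y : p(y) > 0 \wedge p(y) \notin \mfrak'\}$ and $\{y : p(y) \in \mfrak'\}$ are available outright. Your version lets the residue-field step run with mere $\aleph_1$-saturation and substitutes a second elementary extension for the paper's cofinality calculation; the paper's version stays inside one model and avoids invoking external definability. Both trade-offs are reasonable, and both yield the same conclusion via Proposition~\ref{prop:trace-theories}.

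One small polish: as stated, quantifier elimination for $\rcf$ reduces a parameter-free definable set to a \emph{Boolean combination} of atomic sets $\{p > 0\}$, $\{p = 0\}$, not to a single such set; you do say explicitly that the witness property is closed under Boolean combinations, which closes the gap (the paper's own write-up is slightly terser about this point).
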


There are ordered fields $\Sa S$ such that $\Th(\Sa S)$ does not interpret $\rcf$.
Let $K$ be a bounded pseudo real closed field that is not $\mathrm{PAC}$ and is not real closed.
As $K$ is not $\mathrm{PAC}$ $K$ admits a field order $<$.
Furthermore $<$ is $K$-definable, see \cite{M-prc}.
As $K$ is not real closed every finite extension of $K$ is not real closed by the Artin-Schreier theorem.
Montenegro~\cite{samaria-imaginary} has shown that $K$ admits elimination of imaginaries.
Finally, $K$ is \'ez by \cite{ez-fields}, hence any infinite field definable in $K$ is isomorphic to a finite extension of $K$, see Fact~\ref{fact:ez}.

\begin{proof}
Suppose that $\Sa R$ is an $(2^{\aleph_0})^+$-saturated ordered field.
We show that $\Sa R$ trace defines $\rfield$.
Let $V$ be the convex hull of $\Z$ in $R$ and $\mfrak$ be the set of $\alpha \in V$ such that $|\alpha| < 1/n$ for all $n \ge 1$.
Let $\st \colon V \to \R$ be given by $\st(\alpha) = \sup\{ q \in \Q : q <\alpha \}$.
Then $\st$ is surjective by saturation, so we identify $V/\mfrak$ with $\R$.
Then $V$ is a valuation ring with maximal ideal $\mfrak$ and $\st$ is the residue map.
Let $\uptau \colon \R \to V$ be a section of $\st$.
We show that $\Sa R$ trace defines $\rfield$ via $\uptau$.
Suppose that $X$ is an $\rfield$-definable subset of $\R^n$.
By Proposition~\ref{prop:zero-def} we may suppose that $X$ is definable without parameters.
By quantifier elimination for real closed fields we may suppose that $X = \{ \alpha \in \R^n : f(\alpha) \ge 0\}$ for some $f \in \Z[x_1,\ldots,x_n]$.
Hence for any $\alpha \in \R^n$ we have $\alpha \in X$ if and only if $f(\alpha) = f(\st(\uptau(\alpha)) \ge 0$.
We have $f(\st(\beta)) = \st(f(\beta))$ for all $\beta \in V^n$, so
\begin{align*}
\alpha \in X &\Longleftrightarrow \st(f(\uptau(\alpha))) \ge 0 \\
&\Longleftrightarrow f(\uptau(\alpha)) \ge c \text{  for some  } c \in \mfrak.
\end{align*}
By saturation the downwards cofinality of $\mfrak$ is at least $\cplus$, so there is $c \in \mfrak$ such that for any $\alpha \in \R^n$ we have $\st(f(\uptau(\alpha))) \ge 0$ if and only if $f(\uptau(\alpha)) \ge c$.
Let $Y$ be the set of $\alpha \in R^n$ such that $f(\alpha) \ge c$.
Then $Y$ is $\Sa R$-definable and $X = \uptau^{-1}(Y)$.
\end{proof}

\subsection{Presburger arithmetic}
\label{section:presburger}
We give some examples of structures that are trace definable in $\Th(\Z;+,<)$.
See Section~\ref{section:l order} for background on oag's.
We first prove a more general result.

\begin{theorem}
\label{thm:regular rank}
Suppose that $(H;+,\prec)$ is an ordered abelian group which admits only finitely many definable convex subgroups.
Then $(H;+,\prec)$ is trace equivalent to $(H;+)\sqcup(\R;+,<)$.
In particular if $(H;+,\prec)$ is archimedean then $(H;+,\prec)$ is trace equivalent to $(H;+)\sqcup(\R;+,<)$.
\end{theorem}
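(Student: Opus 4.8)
The plan is to reduce to the regular-rank decomposition given by Fact~\ref{fact:regular rank} and then handle each piece with the tools already developed. First I would pass to an $\aleph_1$-saturated elementary extension, which by Proposition~\ref{prop:trace-theories} and Lemma~\ref{lem:disjoint union 1} does not change the trace-equivalence class of either side (note $(H;+)\sqcup(\R;+,<)$ only depends on $\Th(H;+)$ and $\Th(\R;+,<)$, and both survive elementary extension). So assume $(H;+,\prec)$ is $\aleph_1$-saturated of regular rank $n$, and invoke Fact~\ref{fact:regular rank} to get definable convex subgroups $\{0\}=J_0\subseteq J_1\subseteq\cdots\subseteq J_n=H$ with each quotient $J_i/J_{i-1}$ regular, and $(H;+,\prec)$ isomorphic to the lexicographic product of these quotients.

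For the easier direction I would show $(H;+,\prec)$ trace defines $(H;+)\sqcup(\R;+,<)$: it clearly trace defines $(H;+)$ (the group reduct), and by Proposition~\ref{prop:re-oag} it trace defines $\doag$, hence trace defines $(\R;+,<)\models\doag$ via Proposition~\ref{prop:trace-theories}; then Lemma~\ref{lem:disjoint union} gives the disjoint union. For the converse I want $(H;+)\sqcup(\R;+,<)$ to trace define $(H;+,\prec)$. The idea is that each regular quotient $J_i/J_{i-1}$ is, up to elementary equivalence, a subgroup of $(\R;+)$ (by regularity it's elementarily equivalent to an archimedean oag, which embeds in $\R$ by Hahn embedding); the ordered structure is recovered by pulling back the order from $\R$, while the divisibility/Presburger data ($k\mid x$ predicates, and in the Presburger case the $D_n$'s) comes from the group reduct. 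Concretely I would build an $\Sa M$-definable structure inside $(\Z;+)\sqcup(\R;+,<)$ — or rather inside $(H;+)\sqcup(\R;+,<)$ — whose domain carries the lexicographic order via $n$ coordinates in $\R$, and use Fact~\ref{fact:qe for oag} (quantifier elimination for regular oag's in $L_\mathrm{ordiv}$), applied levelwise, together with Proposition~\ref{prop:fusion} to assemble the lexicographic product. This mirrors exactly the archimedean argument already carried out in the proof of Proposition~\ref{prop:final Z} for $(\Z^n;+,\prec)$, where the embedding $\upchi\colon(\Z^n;+,\prec)\to(\R;+,<)$ plus the $P_k$ predicates on the group side did the job; here we iterate it $n$ times.

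The main obstacle I expect is the bookkeeping in passing from the lexicographic decomposition to a quantifier-elimination statement one can feed into Proposition~\ref{prop:fusion}: the language $L_\mathrm{ordiv}$ has quantifier elimination for each regular quotient, but the lexicographic product $(H;+,\prec)$ itself does not eliminate quantifiers in any simple finite relational language, so one cannot directly apply Proposition~\ref{prop:qe}. The right move is to Morleyize $(H;+,\prec)$ into a relational language $L$ with QE, split $L=L_1\cup\cdots\cup L_n$ where $L_i$ is the (Morleyization of the) language of the $i$-th regular quotient realized on the appropriate convex-subgroup sort, check that each $L_i$-reduct embeds into a structure trace definable in $(H;+)\sqcup(\R;+,<)$ — using that $(H;+)$ trace defines each $J_i/J_{i-1}$ as a group (a definable section of a definable quotient, via Proposition~\ref{prop:trace-interpret}) and $(\R;+,<)$ supplies the order — and then quote Proposition~\ref{prop:fusion}. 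Verifying that $L$-QE is compatible with this particular splitting, i.e.\ that every $L$-formula is a boolean combination of formulas each living in a single $L_i$, is precisely where the structure theory of ordered abelian groups of finite regular rank (Weispfenning's QE, Fact~\ref{fact:qe for oag}, and Fact~\ref{fact:oag} on convex subgroups) must be used carefully. The archimedean special case stated at the end follows immediately since then $n\le 1$ and the argument collapses to the proof of Proposition~\ref{prop:final Z}.
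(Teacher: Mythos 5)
Your decomposition of the problem — pass to a saturated model, invoke Fact~\ref{fact:regular rank}, handle each regular quotient via an embedding into $\R$ as in Proposition~\ref{prop:final Z} — matches the paper's strategy, and the easy direction is fine. But the gluing step has a genuine gap, and the fusion route you propose is a detour the paper avoids.

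The gap: you claim $(H;+)$ trace defines each quotient $J_i/J_{i-1}$ ``as a group (a definable section of a definable quotient, via Proposition~\ref{prop:trace-interpret}).'' But the $J_i$ are \emph{convex} subgroups, definable from $\prec$; they are in general not definable in the pure group reduct $(H;+)$. (Take $(\R^2;+,\lex)$: the convex subgroup $\{0\}\times\R$ is not definable in the divisible group $(\R^2;+)$, whose definable subgroups are only $\{0\}$ and $\R^2$.) So Proposition~\ref{prop:trace-interpret} does not apply to $(H;+)$ here. What the paper uses instead is that Fact~\ref{fact:regular rank} (under $\aleph_1$-saturation) gives an actual \emph{isomorphism} $(H;+,\prec)\cong(H_1;+,\prec_1)\times\cdots\times(H_n;+,\prec_n)$ (lexicographic), hence $(H;+)\cong(H_1;+)\oplus\cdots\oplus(H_n;+)$, and then Lemma~\ref{lem:oplus} — via pure embeddings and $L_\mathrm{div}$-QE — shows $(H;+)$ is trace equivalent to $(H_1;+)\sqcup\cdots\sqcup(H_n;+)$. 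That is the step your ``definable section'' heuristic is trying to replace, and it cannot be replaced by an interpretation in the group reduct alone.

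Separately, your route through Proposition~\ref{prop:fusion} requires a level-split quantifier elimination for the full lexicographic product $(H;+,\prec)$, which you flag yourself as the sticky point; the paper never needs such a statement (Fact~\ref{fact:qe for oag} is applied only to the regular pieces). It instead finishes by noting that the lexicographic product $(H_1;+,\prec_1)\times\cdots\times(H_n;+,\prec_n)$ is \emph{definable} in the disjoint union $(H_1;+,\prec_1)\sqcup\cdots\sqcup(H_n;+,\prec_n)$ — this is elementary, since the lexicographic order is a boolean combination of coordinatewise equalities and comparisons — so Lemma~\ref{lem:disjoint union}/Lemma~\ref{lem:disjoint union 1} plus the regular case for each factor close the argument. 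That replaces the entire QE-bookkeeping you were worried about with one trivial definability observation, and it is the part of the proof your plan is missing.
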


Again we do not know if $(\R;+,<)$ can trace define a non-divisible torsion free abelian group.

\begin{proof}
It is clear that $(H;+,\prec)$ defines $(H;+)$ and $\Th(H;+,\prec)$ trace defines $(\R;+,<)$ by Proposition~\ref{prop:re-oag}.
By Lemma~\ref{lem:disjoint union} $\Th(H;+,\prec)$ trace defines $(H;+) \sqcup (\R;+,<)$.

\medskip
We show that $\Th((H;+)\sqcup(\R;+,<))$ trace defines $(H;+,\prec)$.
We first treat the case when $(H;+,\prec)$ does not admit any non-trivial definable convex subgroups (i.e. is regular).
We may suppose that $(H;+,\prec)$ is archimedean and hence let $\upchi$ be the unique-up-to-rescaling embedding $(H;+,\prec)\to(\R;+,<)$.
The proof of Proposition~\ref{prop:final Z} shows that $(H;+)\sqcup(\R;+,<)$ trace defines $(H;+,\prec)$ via the map $\uptau\colon H\to H\times\R$ given by $\uptau(\alpha)=(\alpha,\upchi(\alpha))$.
% Let $(G;+,<)$ be the divisible hull of $(H;+,\prec)$, so $(G;+,<)$ is a divisible ordered abelian group and there is a canonical embedding $\upiota \colon (H;+,\prec) \to (G;+,\prec)$.
% (In the archimedean case we could take $\upiota$ to be the unique-up-to-rescaling embedding into $(\R;+,<)$.) Lemma~\ref{lem:disjoint union 1} shows that $(H;+) \sqcup (\R;+,<)$ and $(H;+) \sqcup (G;+,<)$ are trace equivalent.
% Let $\uptau \colon H \to H \times G$ be given by $\uptau(\alpha) = (\alpha,\upiota(\alpha))$.
% Note that this gives a group embedding.
% We show that $(H;+) \sqcup (G;+,<)$ trace defines $(H;+,\prec)$ via $\uptau$.
% We apply Proposition~\ref{prop:eq neg 1} and Fact~\ref{fact:qe for oag} with $L$ the language of abelian groups, $L^{*} = L_\mathrm{ordiv}$, $\Sa O = (H;+,\prec,(k|)_{k \in \N})$, $\Sa N$ is $(H;+) \sqcup (G;+,<)$, and $\Sa P$ is $(H;+) \oplus (G;+)$.
% It therefore suffices to fix $k$ and produce $(H;+) \sqcup (G;+,<)$-definable subsets $X \subseteq (H \times G)^2,Y \subseteq H \times G$ such that for all $\alpha,\beta \in H$ we have
% \begin{align*}
% (\uptau(\alpha),\uptau(\beta)) \in X \quad &\Longleftrightarrow \quad \alpha \prec \beta \\ \uptau(\alpha) \in Y \quad &\Longleftrightarrow \quad k|\alpha.
% \end{align*}
% Let $X$ be the set of $((a,t),(a,t')) \in (H \times G)^2$ such that $t < t'$ and let $Y$ be the set of $(a,t) \in H \times G$ such that $k|a$.

\medskip
We now address the general case.
We may suppose that $(H;+,\prec)$ is $\aleph_1$-saturated.
Let $J_0 \subseteq J_1 \subseteq \ldots \subseteq J_n$ be as provided by Fact~\ref{fact:regular rank}.
Declare $H_i = J_i/J_{i - 1}$ and let $\prec_i$ be the order on $H_i$ for all $i \in \{1,\ldots,n\}$.
Then each $(H_i;+,\prec_i)$ is regular and $(H;+,\prec)$ is isomorphic to the lexicographic product $(H_1;+,\prec_1)\times\cdots\times(H_n;+,\prec_n)$.
In particular $(H;+)$ is isomorphic to $(H_1;+)\oplus\cdots\oplus(H_n;+)$, so $(H;+)$ is trace equivalent to $(H_1;+)\sqcup\cdots\sqcup(H_n;+)$ by Lemma~\ref{lem:oplus}.
Furthermore $(H;+)\sqcup(\R;+,<)$ and $(H_1;+)\sqcup\cdots\sqcup(H_n;+)\sqcup(\R;+,<)$ are trace equivalent by Lemma~\ref{lem:disjoint union 1}.
Each $(H_i;+,\prec_i)$ is trace equivalent to $(H_i;+)\sqcup(\R;+,<)$ by the regular case.
By Lemma~\ref{lem:disjoint union} $(H_1;+)\sqcup\cdots\sqcup(H_n;+)\sqcup(\R;+,<)$ is trace equivalent to $(H_1;+,\prec_1)\sqcup\cdots\sqcup(H_n;+,\prec_n)$.
Finally, it is easy to see that the lexicographic product $(H_1;+,\prec_1)\times\cdots\times(H_n;+,\prec_n)$ is definable in $(H_1;+,\prec_1)\sqcup\cdots\sqcup(H_n;+,\prec_n)$.
\end{proof}

\begin{proposition}
\label{prop:presbur1.5}
Suppose that $(H;+,\prec)$ is an ordered abelian group and $(H;+)$ is finite rank.
Then $(H;+,\prec)$ is trace definable in Presburger arithmetic.
\end{proposition}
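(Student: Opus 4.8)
The plan is to deduce this as a corollary of Theorem~\ref{thm:regular rank} together with the earlier results on abelian groups trace definable in $\Th(\Z)$. First I would observe that an ordered abelian group is torsion free, so $(H;+)$ is a finite rank torsion free abelian group, and by Fact~\ref{fact:rank} a finite rank ordered abelian group has at most $\rank(H)-1$ non-trivial convex subgroups; in particular it has only finitely many definable convex subgroups, so Theorem~\ref{thm:regular rank} applies and $(H;+,\prec)$ is trace equivalent to $(H;+)\sqcup(\R;+,<)$. Hence it suffices to show that Presburger arithmetic trace defines $(H;+)\sqcup(\R;+,<)$, and by Lemma~\ref{lem:disjoint union} (applied after passing to a sufficiently saturated model via Proposition~\ref{prop:trace-theories}) it is enough to show that Presburger arithmetic trace defines each of $(H;+)$ and $(\R;+,<)$.

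For $(\R;+,<)$: the proof of Proposition~\ref{prop:final Z} already establishes that Presburger arithmetic trace defines $\doag$, and since $(\R;+,<)\models\doag$, Proposition~\ref{prop:trace-theories} gives that Presburger arithmetic trace defines $(\R;+,<)$. For $(H;+)$: since $(H;+)$ is finite rank torsion free, Proposition~\ref{prop:finite rank} shows that $\Th(\Z;+)$ trace defines $(H;+)$; as $(\Z;+)$ is a reduct of $(\Z;+,<)$, Proposition~\ref{prop:trace-interpret} gives that $(\Z;+,<)$ trace defines $(\Z;+)$, so Presburger arithmetic trace defines $\Th(\Z;+)$, and transitivity of trace definibility (Proposition~\ref{prop:trace-basic}) yields that Presburger arithmetic trace defines $(H;+)$. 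Combining the two, Presburger arithmetic trace defines $(H;+)\sqcup(\R;+,<)$, which trace defines $(H;+,\prec)$, and by transitivity we are done.

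There is essentially no obstacle here; this is a bookkeeping corollary of Theorem~\ref{thm:regular rank}, Proposition~\ref{prop:finite rank}, and transitivity. The only point needing a moment's care is the invocation of Fact~\ref{fact:rank} to see that finiteness of $\rank(H)$ forces $(H;+,\prec)$ to have only finitely many definable convex subgroups, which is exactly the hypothesis under which Theorem~\ref{thm:regular rank} is stated.
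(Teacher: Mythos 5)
Your proposal is correct and follows essentially the same route as the paper's proof: reduce via Theorem~\ref{thm:regular rank} to the disjoint union $(H;+)\sqcup(\R;+,<)$, handle $(H;+)$ via the finite-rank torsion-free result (Proposition~\ref{prop:finite rank}, equivalently Fact~\ref{fact:finite rank} plus Proposition~\ref{prop:torsion free in Z}), handle $(\R;+,<)$ via Proposition~\ref{prop:final Z}, and recombine using the disjoint-union lemmas. The only difference is cosmetic bookkeeping: the paper observes that $\Th((\Z;+)\sqcup(\R;+,<))$ trace defines $(H;+)\sqcup(\R;+,<)$ by Lemma~\ref{lem:disjoint union 1} and then composes with the fact that Presburger arithmetic trace defines $(\Z;+)\sqcup(\R;+,<)$, whereas you show Presburger arithmetic trace defines each factor directly and apply Lemma~\ref{lem:disjoint union}; these are equivalent. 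One small improvement you made: the paper cites Fact~\ref{fact:oag} where Fact~\ref{fact:rank} is clearly intended (to get finitely many convex subgroups from finite rank), and your citation is the correct one.
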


\begin{proof}
By Fact~\ref{fact:oag} and Theorem~\ref{thm:regular rank} $(H;+,\prec)$ is trace equivalent to $(H;+)\sqcup(\R;+,<)$.
By Proposition~\ref{prop:torsion free in Z} $\Th(\Z;+)$ trace defines $(H;+)$, so $\Th((\Z;+)\sqcup(\R;+,<))$ trace defines $(H;+)\sqcup(\R;+,<)$.
Finally, Presburger arithmetic trace defines $(\Z;+)\sqcup(\R;+,<)$ by Prop~\ref{prop:final Z}.
\end{proof}

\begin{proposition}
\label{prop:cyclic oag}
Suppose that $(G;+)$ is a finite rank ordered abelian group and $C$ is a cyclic group order on $G$.
Then $(G;+,C)$ is trace definable in Presburger arithmetic.
If $G$ is infinite and finitely generated then $(G;+,C)$ is trace equivalent to $(\Z;+,<)$.
\end{proposition}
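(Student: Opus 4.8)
The plan is to assemble earlier results; no new computation is required. For the first claim I would use the universal cover. By Fact~\ref{fact:cover}, $(G;+,C)$ has a universal cover $(H;u,+,<)$ and $(G;+,C)$ is interpretable in $(H;+,<)$. The short exact sequence $0\to u\Z\to H\to G\to 0$ (with $u\Z\cong\Z$), together with exactness of $-\otimes_\Z\Q$, gives $\rank(H)=\rank(G)+1<\infty$, so $(H;+,<)$ is a finite rank ordered abelian group. By Proposition~\ref{prop:presbur1.5}, $(H;+,<)$ is trace definable in Presburger arithmetic; by Proposition~\ref{prop:trace-interpret}, $(H;+,<)$ trace defines $(G;+,C)$; and transitivity (Proposition~\ref{prop:trace-basic}) then yields that $(G;+,C)$ is trace definable in Presburger arithmetic.

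For the second claim, assume $G$ is infinite and finitely generated. The first claim already gives that $\Th(\Z;+,<)$ trace defines $(G;+,C)$, so I only need $\Th(G;+,C)$ to trace define $(\Z;+,<)$. Here I would make two observations. First, $(G;+)$ is a reduct of $(G;+,C)$, so $\Th(G;+,C)$ trace defines $\Th(G;+)$; since $G$ is infinite and finitely generated, $\Th(G;+)$ is trace equivalent to $\Th(\Z;+)$ by Proposition~\ref{prop:fg-group}, hence $\Th(G;+,C)$ trace defines $(\Z;+)$. Second, $(G;+,C)$ is an infinite cyclically ordered abelian group, so $\Th(G;+,C)$ trace defines $\doag$ by Proposition~\ref{prop:re-oag}. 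I would then fix $\Sa A\equiv(\Z;+)$ and $\Sa B\models\doag$ with $\Sa A\sqcup\Sa B$ trace defining $(\Z;+,<)$ --- such $\Sa A,\Sa B$ exist because $(\Z;+)\sqcup(\R;+,<)$ is trace equivalent to $(\Z;+,<)$ by Proposition~\ref{prop:final Z} and every model of the theory of a disjoint union decomposes as a disjoint union of models of the factor theories (Feferman--Vaught). By Proposition~\ref{prop:trace-theories}, a sufficiently saturated model of $\Th(G;+,C)$ trace defines both $\Sa A$ and $\Sa B$, hence trace defines $\Sa A\sqcup\Sa B$ by Lemma~\ref{lem:disjoint union}, hence trace defines $(\Z;+,<)$. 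Thus $\Th(G;+,C)$ and $\Th(\Z;+,<)$ are trace equivalent.

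I do not expect a substantial obstacle: the proof is essentially bookkeeping in the machinery developed so far. The one point that needs a little care is the constant interplay between trace definibility of structures and of theories, which forces several steps to route through a sufficiently saturated model of $\Th(G;+,C)$ and to invoke Proposition~\ref{prop:trace-theories} together with transitivity and Proposition~\ref{prop:trace-basic}(3); the real inputs are Fact~\ref{fact:cover} and Proposition~\ref{prop:presbur1.5} for the first claim, and Propositions~\ref{prop:fg-group}, \ref{prop:re-oag}, and \ref{prop:final Z} for the second.
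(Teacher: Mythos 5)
Your proof is correct and follows essentially the paper's own argument. The first claim is handled identically: pass to the universal cover $(H;u,+,\prec)$ via Fact~\ref{fact:cover}, note the exact sequence $0\to\Z\to H\to G\to 0$ gives $\rank(H)<\infty$, apply Proposition~\ref{prop:presbur1.5}, then transitivity. For the second claim, the paper reduces, via Theorem~\ref{thm:regular rank} (whose relevant special case is Proposition~\ref{prop:final Z}, which you cite) and Lemma~\ref{lem:disjoint union}, to showing $\Th(G;+,C)$ trace defines both $(\Z;+)$ and $(\R;+,<)$, which follow from Propositions~\ref{prop:fg-group} and \ref{prop:re-oag} exactly as you say. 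The one place where you make things harder than necessary is the Feferman--Vaught digression: you do not need to decompose an arbitrary model of $\Th((\Z;+)\sqcup(\R;+,<))$. Since Proposition~\ref{prop:trace-theories} lets a sufficiently saturated model of $\Th(G;+,C)$ trace define any small model of $\Th(\Z;+)$ and of $\doag$, you can simply take $\Sa A=(\Z;+)$ and $\Sa B=(\R;+,<)$ directly, apply Lemma~\ref{lem:disjoint union} to get $(\Z;+)\sqcup(\R;+,<)$, and then Proposition~\ref{prop:final Z} together with transitivity finishes. The appeal to Feferman--Vaught is harmless but superfluous.
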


By Fact~\ref{fact:zp} $(\Z;+,<)$ cannot interpret $(G;+,C)$ if $C$ is dense. Corollary~\ref{cor:cyclic order} shows that if $(G;+)$ is a finite rank group and $C$ is an archimedean cyclic group order on $G$ then $(G;+,C)$ does not interpret $(\Z;+,<)$.

\begin{proof}
We first show that $(G;+,C)$ is trace definable in Presburger arithmetic.
We let $(H;u,+,\prec)$ be the universal cover of $(G;+,C)$, so $(G;+,C)$ is interpretable in $(H;+,\prec)$.
It is enough to show that $(H;+,\prec)$ is trace definable in Presburger arthimetic.
We have an exact sequence $0 \to \Z \to H \to G \to 0$, hence $(H;+)$ is finite rank.
Apply Proposition~\ref{prop:presbur1.5}.

\medskip
We now suppose that $G$ is infinite and finitely generated and show that $\Th(G;+,C)$ trace defines $(\Z;+,<)$.
By Theorem~\ref{thm:regular rank} and Lemma~\ref{lem:disjoint union} it is enough to show that $\Th(G;+,C)$ trace defines both $(\Z;+)$ and $(\R;+,<)$.
Note that $(G;+)$ trace defines $(\Z;+)$ by Proposition~\ref{prop:fg-group} and 
$\Th(G;+,C)$ trace defines $(\R;+,<)$ by Proposition~\ref{prop:re-oag}.
\end{proof}

\subsection{$p$-adic examples}

We first give an example of a natural structure interpretable in $\pfield$ which is trace equivalent to $\pfield$ but does not interpret an infinite field.
For our purposes a \textbf{$p$-adically closed field}\footnote{Some authors use ``$p$-adically closed field" for fields that are elementarily equivalent to finite extensions of $\Q_p$. Our work generalizes to this setting.} is a field elementarily equivalent to $\Q_p$.

\medskip
We discuss the induced structure on the set of balls in $\Q_p$.
We let $\valp \colon \Q^\times_p \to \Z$ be the $p$-adic valuation on $\Q_p$.
Given $a \in \Q_p$ and $r \in \Z$ we let $B(a,r)$ be the ball with center $a$ and radius $r$, i.e. the set of $b \in \Q_p$ such that $\valp(a - b) > r$.
Let $\B$ be the set of balls in $\Z_p$.
We consider $\B$ to be a $\pfield$-definable set of imaginaries.
Let $\approx$ be the equivalence relation on $\Z_p \times \N_{\ge 1}$ where $(a,r) \approx (a^*,r^*)$ if and only if $(a,r) = B(a,r^*)$.
Identify $\B$ with $(\Z \times \N_{\ge 1})/\!\approx$ and let $\Sa B$ be the structure induced on $\B$ by $\pfield$.

\begin{proposition}
\label{prop:padic}
$\Sa B$ is trace equivalent to $\pfield$.
\end{proposition}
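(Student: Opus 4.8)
The plan is to prove trace equivalence by establishing the two directions separately. Since $\Sa B$ is interpretable in $\pfield$, Proposition~\ref{prop:trace-interpret} immediately gives that $\pfield$ trace defines $\Sa B$. The work is therefore entirely in showing that $\Sa B$ trace defines $\pfield$. First I would pass to a sufficiently saturated model: let $\Sa K \succ \pfield$ be $(2^{\aleph_0})^+$-saturated and let $\Sa B^*$ be the structure induced on the balls of $\Sa K$; by Proposition~\ref{prop:trace-theories} it suffices to trace define $\pfield$ (as a countable structure) inside $\Sa B^*$, or equivalently to work directly inside $\Sa B^*$ and trace define a model of $\Th(\Q_p)$.

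The key idea is that $\Q_p$ should embed into $\B^*$ (or a small power thereof) in a way that makes all its definable structure a trace. A natural candidate: fix an element $\epsilon \in \Sa K$ with $\valk(\epsilon)$ infinite (bigger than every standard integer), and map $a \in \Q_p$ to the ball $B(a,\valk(\epsilon))$, i.e. to the $\approx$-class of $(a,\epsilon)$ in $\B^*$. This $\uptau \colon \Q_p \to \B^*$ is injective since distinct standard $p$-adics have finite valuation difference. The point is that a ball of ``radius $\epsilon$'' behaves, from the point of view of finitely many standard operations, exactly like its center: the field operations $+,\times$ on centers, divisibility/$n$-th power predicates, and the valuation comparison $\valk(a)\le\valk(b)$ all pull back to $\Sa B^*$-definable relations on balls, because these are all ``coarse'' relative to an infinitesimal ball. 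Concretely, $\Sa B^*$ definably knows, for balls $B,B',B''$ of equal infinitesimal radius with representatives $a,a',a''$, whether $a+a'$ and $a\cdot a''$ land in a prescribed ball; and one recovers the value of $a+a'$ up to the radius, which is all that is needed to decide equations among standard elements. I would invoke Macintyre's quantifier elimination (as in the proof of Proposition~\ref{prop:p adic fields}): it suffices to handle the atomic formulas $f(x)=0$ and $P_n(f(x))$ for polynomials $f$, and for each such formula produce an $\Sa B^*$-definable set $Y$ with $\uptau^{-1}(Y)$ equal to the solution set in $\Q_p$. This follows the template of Proposition~\ref{prop:ordered-fields}: the standard part / residue-type map on balls (``which ball of standard radius contains this infinitesimal ball'') plays the role of $\st$, and one uses saturation to choose the infinitesimal radius coarse enough that ``$f(\uptau(\alpha))$ vanishes in $\Q_p$'' is witnessed by ``$f$ maps $\uptau(\alpha)$ into a ball of radius just below $\valk(\epsilon)$'', which is a single $\Sa B^*$-formula uniformly in the finitely many coefficients involved.

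Let me be a little more careful about how multiplication is handled, since that is where the main obstacle lies. Addition of balls of equal radius is clean: $B(a,r)+B(a',r) = B(a+a',r)$, so $+$ is literally an operation on $\B^*$ restricted to fixed-radius balls, hence $\Sa B^*$-definable. Multiplication is the delicate point: $B(a,r)\cdot B(a',r)$ is not in general a ball, and its ``radius'' depends on $\valk(a)$ and $\valk(a')$. However, $\Sa B^*$ does have access (as an induced structure) to the relation $\{(B,B',B'') : a a' \in B''\}$ for all balls, since $\{(a,a',c) : aa' = c\}$ is $\pfield$-definable and ball-membership is encoded; and it has access to the valuation ordering on balls. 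Using these, and restricting attention to the balls in $\uptau(\Q_p)$ — all of which have the \emph{same} infinitesimal radius $\valk(\epsilon)$ — one can decide, for any standard polynomial condition, whether it holds. The main obstacle, then, is verifying that $\Sa B^*$ (the \emph{induced} structure, which a priori only gives us relations coming from $\pfield$-definable sets of balls, not arbitrary formulas about centers) is rich enough to express everything needed: one must check that for each polynomial $f$ over $\Z$ the set of tuples of balls $(B_1,\dots,B_n)$ such that some/every choice of centers $a_i\in B_i$ has $f(a_1,\dots,a_n)$ lying in a prescribed ball is $\pfield$-definable as a set of ball-tuples, which it is (existentially quantify the centers inside $\Q_p$, which is definable since $\B$ is interpreted over $\Q_p$). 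Once this bookkeeping is in place, the argument is a routine application of Proposition~\ref{prop:zero-def} together with Macintyre QE, exactly parallel to Propositions~\ref{prop:p adic fields} and \ref{prop:ordered-fields}. The non-interpretation claim (that $\Sa B$ does not interpret an infinite field) is asserted in the surrounding text and is not part of what must be proved here.
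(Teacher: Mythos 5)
Your proposal is correct in spirit and takes a genuinely different route from the paper. The paper produces an $\aleph_1$-saturated elementary extension $\Sa D \succ \Sa B$ (the $L$-reduct of the structure induced on $\B_K$ by a saturated $K \succ \Q_p$) and then shows that the Shelah expansion $\Sh D$ \emph{interprets} $(\Z_p;+,\times)$ as a quotient of the set $E$ of balls of a fixed infinite radius $\gamma$, the crucial point being that the kernel of the standard-part map $E \to \Z_p$ is externally definable (via the externally definable convex subgroup $\Z$ of the value group); it then invokes Proposition~\ref{prop:she-0}. You bypass the Shelah completion entirely and build a trace definition directly: choose a section $\uptau(a)=B(a,\gamma)$ of the standard-part map and verify, atom by atom through Macintyre QE, that every $\pfield$-definable set is the $\uptau$-pullback of an $\Sa D$-definable set. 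This works because, for $x_i \in B(a_i,\gamma)$ in $K$, $f(x)-f(a)$ has valuation greater than $\gamma$ minus a finite constant, which for infinite $\gamma$ forces $f(a)=0$ (resp.\ $f(a)\in P_n$) from the ball-level data. The upshot is a more hands-on argument that doesn't need the Shelah machinery; the paper's route reduces the $p$-adic bookkeeping to a single claim about closed sets but requires Proposition~\ref{prop:she-0} and external definability.

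Three imprecisions should be fixed if you write this out. First, $\B$ is by definition the set of balls contained in $\Z_p$, so $\uptau$ is not defined on $\Q_p \setminus \Z_p$; you should take $\uptau \colon \Z_p \to \B_K$, trace define $(\Z_p;+,\times)$, and then use that $\Z_p$ interprets $\Q_p$ (Proposition~\ref{prop:trace-basic} gives transitivity). Second, the quantifier choice in the witness sets $Y$ is not uniform across atoms: for $f(x)=0$ the existential witness $Y=\{(B_i): \exists x_i\in B_i,\ f(x)\in B(0,\gamma)\}$ pulls back correctly, but for $P_n(f(x))$ you must use the universal form $Y=\{(B_i): \forall x_i\in B_i,\ P_n^K(f(x))\}$, since when $f(a)=0$ there will exist perturbations $x$ with $f(x)$ a nonzero $n$th power. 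You flag this with ``some/every'' but it deserves to be made explicit. Third, you refer throughout to ``the structure induced on the balls of $\Sa K$,'' which is a priori richer than a model of $\Th(\Sa B)$; what you need, and in fact use (since all your witness sets are defined by $\pfield$-formulas about balls), is its reduct to the language of $\Sa B$, which is the genuine elementary extension $\Sa D \succ \Sa B$.
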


\begin{proof}
We need to show that $\Th(\Sa B)$ trace defines $\Q_p$.
By Proposition~\ref{prop:she-0} it suffices to produce $\Sa B\prec \Sa D$ such that $\Sh D$ interprets $\pfield$.
Let $\kfield$ be an $\aleph_1$-saturated elementary extension of $\pfield$ of cardinality $2^{\aleph_0}$.
Let $\Sa D'$ be the structure induced on $\B_K$ by $\kfield$.
We let $L$ be the language of $\Sa B$, recall that $L$ contains an $n$-ary relation $R_X$ for each $\pfield$-definable $X \subseteq \B^n$.
We naturally consider $L$ to be a sublanguage of the language of $\Sa D'$.
We define $\Sa D = \Sa D'\!\upharpoonright\! L$.
Note that $\Sa D$ is an $\aleph_1$-saturated elementary extension of $\Sa B$.

\medskip\noindent
We first realize $\Q_p$ as an $\kfield^{\mathrm{Sh}}$-definable set of imaginaries.
We let $\valp \colon K^\times \to \Gamma$ be the $p$-adic valuation on $K$.
Note that $\Z$ is the minimal non-trivial convex subgroup of $\Gamma$.
By Fact~\ref{fact:convex} $\Z$ is externally definable.
Let $v \colon K^\times \to \Gamma/\Z$ be the composition of $\valp$ with the quotient $\Gamma \to \Gamma/\Z$.
We equip $\Gamma/\Z$ with a group order by declaring $a + \Z \leq b + \Z$ when $a \leq b$, so $v$ is an externally definable valuation on $K$.
Let $W$ be the valuation ring of $v$ and $\mfrak_W$ be the  maximal ideal of $W$.
Then $W$ is the set of $\alpha \in K$ such that $\valp(\alpha) \geq m$ for some $m \in \Z$ and $\mfrak_W$ is the set of $\alpha \in K$ such that $\valp(\alpha) > m$ for all $m \in \Z$.
It is easy to see that for every $\alpha \in W$ there is a unique $\alpha^* \in \Q_p$ such that $\alpha - \alpha^* \in \mfrak_W$.
We identify $W/\mfrak_W$ with $\Q_p$ so that the residue map $\st \colon W \to \Q_p$ is the usual standard part map.
Then $\Q_p$ is a $K^{\mathrm{Sh}}$-definable set of imaginaries.

\medskip\noindent
The claim is a saturation exercise.

\begin{Claim*}
\label{lem:st}
Suppose that $X$ is a closed definable subset of $\Z^n_p$ and $X^*$ is the subset of $W^n$ defined by any formula defining $X$.
Then $\st(X^*)$ agrees with $X$.
\end{Claim*}

\noindent
Given $B \in \B_K$ such that $B = B(a,t)$ we let $\rad(B) = t$.
As $\rad \colon \B \to \Gamma_\geq$ is surjective and $\kfield$-definable we consider $\Gamma_\geq$ to be an imaginary sort of $\Sa B$ and $\rad$ to be a $\Sa B$-definable function.
Fix $\gamma \in \Gamma$ such that $\gamma > \N$.
Let $E=\{B \in \B_K:\rad(B) = \gamma\}$, so $E$ is $\Sa D$-definable.

\medskip\noindent
We finally show that $\Sh D$ interprets $\pring)$.
Note that for any $\alpha \in V$ and $\beta \in B(\alpha,\gamma)$ we have $\st(\alpha) = \st(\beta)$.
We define a surjection $f \colon E \to \Z_p$ by declaring $f( B(\alpha,\gamma) ) = \st(\alpha)$ for all $\alpha \in V$.
%So $f: X \to \Z_p$ is a $\ksh$-definable surjection.
Let $\approx$ be the equivalence relation on $E$ where $B_0 \approx B_1$ if and only if $f(B_0) = f(B_1)$.
Note that for any $B_0,B_1 \in \B_K$ we have $B_0 \approx B_1$ if and only if 
$$ \{ B^* \in \B_K : \rad(B^*) \in \N , B_0 \subseteq B^* \} = \{ B^* \in \B : \rad(B^*) \in \N , B_1 \subseteq B^* \}. $$
Hence $\approx$ is $\Sh D$-definable.
Let $f \colon E^n \to \Z^n_p$ be given by 
$$ f(B_1,\ldots,B_n) = (f(B_1),\ldots,f(B_n)) \quad \text{for all} \quad B_1,\ldots,B_n \in E. $$
Suppose that $X$ is a $\pfield$-definable subset of $\Z^n_p$.
We show that $f^{-1}(X)$ is $\Sh D$-definable.
As $X$ is $\pfield$-definable it is a boolean combination of closed $\pfield$-definable subsets of $\Z^n_p$, so we may suppose that $X$ is closed.
Let $X^*$ be the subset of $V^n$ defined by any formula defining $X$.
Let $Y_0$ be the set of $B \in E$ such that $B \cap X^* \neq \emptyset$ and $Y$ be the set of $B \in E$ such that $B \approx B_0$ for some $B_0 \in Y_0$.
Observe that $Y_0$ is $\Sa D$-definable and $Y$ is $\Sh D$-definable.
Note that $Y_0$ is the set of balls of the form $B(\alpha,\gamma)$ for $\alpha \in X^*$.

\medskip\noindent
We show that $Y = f^{-1}(X)$.
Suppose $B(\alpha,\gamma) \in f^{-1}(X)$.
Then $\st(\alpha) \in X$.
We have $B(\st(\alpha),\gamma) \in Y_0$ and $B(\st(\alpha),\gamma) \approx B(\alpha,\gamma)$, so $B(\alpha,\gamma) \in Y$.
Now suppose that $B(\alpha,\gamma) \in Y$ and fix $B(\beta,\gamma) \in Y_0$ such that $B \approx B_0$.
We may suppose that $b \in X^*$.
As $X$ is closed an application of the claim shows $\st(\beta) \in X$.
Then $\st(\alpha) = \st(\beta) \in X$, so $B(\alpha,\gamma) \in f^{-1}(X)$.
\end{proof}

\begin{proposition}
\label{prop:p-adic-0}
$\Sa B$ does not interpret an infinite field.
\end{proposition}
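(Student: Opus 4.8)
The plan is to isolate a model-theoretic property that $\Sa B$ possesses, that is inherited by every structure interpretable in $\Sa B$, and that is incompatible with an infinite field — namely a modularity (linearity) condition reflecting that $\Sa B$ retains only the ``tree plus abelian group plus value group'' fragment of $\pfield$ and none of its genuinely field-theoretic geometry.

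First I would record the cheap facts. Since $\B$ is a $\pfield$-definable sort, $\Sa B$ is interpretable in $\pfield$, so $\Sa B$ is $\nip$ and $\dprk \Sa B \le 2$ (indeed $\B$ is a definable quotient of a definable subset of $\pfield \times \Gamma$, where $\Gamma$ is the value-group sort, a model of Presburger arithmetic, and both $\pfield$ and $\Gamma$ are dp-minimal). Consequently an infinite field $F$ interpretable in $\Sa B$ would be $\nip$ of finite dp-rank and interpretable in $\pfield$, so by the Halevi--Hasson--Peterzil theorem $F$ would be isomorphic to a finite extension of $\pfield$; but this reduction is not actually needed for the route below, which rules out \emph{all} infinite fields at once.

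The heart of the matter is a structural analysis of $\Sa B$. A ball $B$ is determined by its radius $\rad(B) \in \Gamma$ together with a ``centre modulo radius'', and the $\pfield$-induced structure on $\B$ is generated by the inclusion tree on $\B$, the $\emptyset$-definable maps $\B \to \Gamma$, and the fibrewise quaternary relation $\{(B_1,B_2,B_3,B_4): \rad(B_1)=\dots=\rad(B_4)\text{ and }B_1-B_2=B_3-B_4\}$, which equips each level of the tree with an abelian group structure. From this I would realise $\Sa B$ as interpretable in a structure $\Sa N$ assembled from a pure dense meet-tree (a $C$-relation), an ordered abelian group, and the fibre groups — a structure that is manifestly modular, each ingredient being linear and carrying no definable ``plane curve'' data. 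Modularity of this kind passes to interpretable structures, so any infinite field interpretable in $\Sa B$ would be modular, hence — by the group-configuration argument adapted to the rosy/$\nip$ setting — finite, a contradiction. An alternative phrasing avoids naming $\Sa N$ and instead shows directly that $\operatorname{acl}$ in $\Sa B$ is (locally) modular.

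The main obstacle is exactly this structural step: producing a usable description of the $\pfield$-induced definable sets on $\B$ — in practice a quantifier elimination for $\Sa B$ in the natural language of tree, value group, and fibre groups — and verifying that it carries no field-configuration, i.e. that $\Sa B$ is modular or interpretable in a modular structure. The remaining ingredients (the $\nip$ and dp-rank bookkeeping, the Halevi--Hasson--Peterzil reduction, and the implication ``modular $\Rightarrow$ no infinite field'') are routine given what has already been recalled.
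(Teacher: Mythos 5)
Your proposal takes a genuinely different route from the paper, but it contains a gap that is not merely a matter of filling in details — it is the entire argument, and there is good reason to believe the key claim is false.

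The proof you sketch hinges on the assertion that $\Sa B$ is interpretable in a structure assembled from a meet-tree, the value group, and the fibre groups, and hence modular. You acknowledge this structural decomposition is ``the main obstacle,'' but the issue is not simply that it is unproven: it is in tension with Proposition~\ref{prop:padic}, proved immediately before, which shows that $\Sa B$ \emph{trace defines} $\Q_p$. The whole thrust of the paper (see item (6) of the introduction) is that the correct modularity notion in $\nip$ should be exactly ``does not trace define an infinite field,'' and $\Sa B$ visibly fails that. Concretely, your claimed generating set for the induced structure on $\B$ — inclusion tree, radius map, fibrewise quaternary additive relation — omits the multiplicative structure of $\Q_p$, which does act definably on balls (e.g.\ the scaling $B\mapsto pB$ and, more generally, relations like $B_1\cdot B_2\subseteq B_3$). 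It is precisely this surviving multiplicative geometry that the proof of Proposition~\ref{prop:padic} exploits to recover $\Q_p$ inside $\Sh B$. So the ``cheap'' quantifier elimination you want is not cheap, and the structure $\Sa N$ you want to interpret $\Sa B$ into would, if it existed, have to carry enough data to trace define a field — at which point calling it ``manifestly modular'' begs the question.

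The paper's actual argument is short and orthogonal, and notably it \emph{does} need the Halevi--Hasson--Peterzil reduction that you set aside. By Fact~\ref{fact:obv} any infinite field interpretable in a $p$-adically closed $\kfield$ is definably isomorphic to a finite extension of $\kfield$, hence has underlying set $K^n$. An interpretation of such a field in $\Sa B_K$ would therefore yield a $\kfield$-definable surjection $\B^m_K \to K^n$. Lemma~\ref{lem:map} kills this with a cardinality argument that reduces (by first-order transfer) to the standard model: $\B$ over $\Q_p$ is countable, while any infinite definable subset of $\Q_p$ has interior and is thus uncountable, so every definable map $\B^m \to \Q_p$ has finite image. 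No modularity theory is needed, and the pinning-down of the target field as $K^n$ is exactly what makes the counting argument bite. If you want to salvage a modularity-style proof, you would first need a precise description of the induced structure on $\B$ that accounts for the multiplicative part, and then a ``modular $\Rightarrow$ no interpretable field'' theorem for a class of structures that is still allowed to trace define fields — a much more delicate combination than your sketch allows for.
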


We apply Fact~\ref{fact:obv}, due to Halevi, Hasson, and Peterzil~\cite{halevi-hasson-peterzil}.

\begin{fact}
\label{fact:obv}
Suppose that $\kfield$ is a $p$-adically closed field.
Then any infinite field interpretable in $\kfield$ is definably isomorphic to a finite extension of $\kfield$.
\end{fact}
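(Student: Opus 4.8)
This is the Halevi--Hasson--Peterzil theorem, the $p$-adic analogue of the Peterzil--Starchenko fact that an infinite field interpretable in a real closed field is definably isomorphic to that field or its algebraic closure. The plan is to first reduce to \emph{definable} fields and then use $p$-adic cell decomposition together with Pillay's theory of definable groups in $p$-adically closed fields. For the reduction: a $p$-adically closed field eliminates imaginaries in the geometric language whose sorts are the lattice spaces $S_n = \mathrm{GL}_n(\kfield)/\mathrm{GL}_n(\mathcal{O})$ (Hrushovski--Martin--Rideau), so the interpretable field $F$ is in definable bijection with a definable set carrying a definable field structure in these sorts. One then argues that an infinite field cannot genuinely live on the lattice part: after projecting to the home sort and using that each $S_n$ has finite $\dprk$ while the residue field and value group carry no infinite field (being respectively pseudofinite and a pure ordered abelian group), one forces $F$ to be definably isomorphic to a field definable in some $\kfield^n$. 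I expect this reduction to be the main obstacle, since it requires a fairly delicate analysis of how a field can be coded on the geometric sorts.

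For the definable case, let $F \subseteq \kfield^n$ be infinite with definable field operations and let $d = \dim F$ be its local ($p$-adic) dimension. By $p$-adic $C^1$-cell decomposition there is a definable subset of $F$ definably homeomorphic to an open subset of $\kfield^d$ on which the operations are continuously differentiable; by Pillay's theorem $(F,+)$ is then a definable Lie group over $\kfield$ of dimension $d$, with tangent space $\mathfrak{f}$ at the identity a $\kfield$-vector space of dimension $d$. Each $a \in F$ acts on $(F,+)$ by the definable group automorphism $\lambda_a \colon x \mapsto ax$, inducing a $\kfield$-linear map $d\lambda_a$ on $\mathfrak{f}$; since $\lambda_{ab} = \lambda_a \circ \lambda_b$ and $\lambda_{a+b} = \lambda_a + \lambda_b$, the assignment $a \mapsto d\lambda_a$ is a definable ring embedding $F \hookrightarrow \mathrm{End}_{\kfield}(\mathfrak{f}) \cong \mathrm{M}_d(\kfield)$. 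The key remaining point — the technical heart of the argument — is to promote this to a $\kfield$-algebra structure, i.e.\ to produce a definable field embedding $\kfield \hookrightarrow F$; this uses the rigidity of definable additive maps in $p$-adically closed fields (a definable additive endomorphism of $\kfield^d$ is $\kfield$-linear, obtained by differentiating and invoking additivity) together with the fact that scalar multiplications on $\mathfrak f$ by elements of $\kfield$ are central in $\mathrm{M}_d(\kfield)$ and so interact with the $F$-action, allowing one to recover the $\kfield$-structure inside $F$.

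With $\kfield \hookrightarrow F$ in hand, $F$ is a $\kfield$-algebra which is finite-dimensional as a $\kfield$-vector space (its additive group has local dimension $d$ over $\kfield$), and a finite-dimensional $\kfield$-algebra that happens to be a field is precisely a finite field extension of $\kfield$. All the maps involved — the coding bijections, the embedding $\kfield \hookrightarrow F$, and a $\kfield$-basis identification — are definable, so $F$ is definably isomorphic to a finite extension of $\kfield$, as required. (An alternative, more topological route for the definable case: $F$ with its induced topology is a locally compact, totally disconnected topological field of characteristic $0$, hence by the classification of locally compact fields a finite extension of some $\Q_\ell$; one then matches $\ell = p$ and upgrades the abstract isomorphism to a definable one. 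Either way, isolating a definable $\kfield$-algebra structure on $F$ is what the argument really turns on.)
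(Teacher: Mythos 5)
The paper does not prove this statement; it is quoted as a Fact and attributed to Halevi--Hasson--Peterzil via the citation \cite{halevi-hasson-peterzil}, so there is no in-paper argument to compare against. Your sketch is a reasonable reconstruction of the general strategy used for such results (reduce to the definable case, put a definable Lie-group-over-$\K$ structure on $(F,+)$, and embed $F$ into $\mathrm{M}_d(\K)$ via the differentials of left multiplication), and you are honest about where the real work lies, but a few things are off or left genuinely open.

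First, a factual slip in the reduction step: the residue field of a $p$-adically closed field is \emph{finite}, not pseudofinite. The sentence ``$\K \equiv \Q_p$'' pins down the residue field to have exactly $p$ elements, and this persists in every model. This actually makes the reduction easier than you suggest, since the residue sort trivially carries no infinite structure at all, but the sorts $S_n$ are not simply assembled from home sort, residue field and value group, so ruling out an infinite field coded there still takes real work. Second, the ``alternative, more topological route'' does not close: the statement must hold in every model of $\Th(\Q_p)$, and a non-standard $p$-adically closed field is not locally compact, so the classification of locally compact fields is not available there; transferring from the standard model is itself nontrivial because ``definably isomorphic to a finite extension of $\K$'' is not a single first-order sentence. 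Third, and most importantly, the mechanism you propose for producing a definable embedding $\K \hookrightarrow F$ is not really an argument yet. The sentence about ``rigidity of definable additive maps'' and ``centrality of scalars'' does not by itself put the scalar matrices inside the image $\lambda(F)$. The way this is usually handled is different: one considers the commutative $\K$-subalgebra $A$ of $\mathrm{M}_d(\K)$ generated by $\lambda(F)$ together with $\K\cdot\mathrm{Id}$; since $\lambda(F)$ is a field, every unital ring map from $\lambda(F)$ to a factor of $A/\mathrm{nil}(A)$ is injective, so $\lambda(F)$ embeds in a finite extension $L$ of $\K$. One then argues that an infinite definable subfield of a $p$-adically closed field must be the whole field (infinite definable additive subgroups are open, and an open subfield absorbs everything under multiplication), forcing $\lambda(F)=L$. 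That last dichotomy is the lemma doing the heavy lifting, and it is the step your sketch leaves implicit rather than derives.
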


Similar results have been proven for algebraically and real closed valued fields~\cite{metastable,hasson-peterzil}.
% Fact~\ref{fact:obv} strengthens Pillay's theorem~\cite{pillay-p-adic} that if $\kfield$ is a $p$-adically closed field then any infinite field definable in $\kfield$ is definably isomorphic to a finite extension of $\kfield$.

\medskip\noindent
We first prove a lemma.
Given a $p$-adically closed field $K$ we let $\B_K$ be the set of balls in $K$ and $\Sa B_K$ be the structure induced on $\B_K$ by $K$.

\begin{lem}
\label{lem:map}
If $\kfield$ is $p$-adically closed then a definable function $\B^m_K \to K^n$ has finite image.
\end{lem}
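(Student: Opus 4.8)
The plan is to reduce the statement, in two elementary steps, to the standard orthogonality properties of $p$-adically closed fields (the value group and the residue rings are ``orthogonal to $K$''), which carry the irreducible core; everything else is bookkeeping.

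\textbf{Reduction to $n=1$ and $m=1$.} If every coordinate $\mathrm{pr}_i\circ f\colon\B^m_K\to K$ has finite image then so does $f$, since $f(\B^m_K)\subseteq\mathrm{pr}_1(f(\B^m_K))\times\cdots\times\mathrm{pr}_n(f(\B^m_K))$; so it suffices to treat the case $n=1$ for every $m$, which I would do by induction on $m$. For the inductive step let $f\colon\B^m_K\to K$ be definable. For each fixed $\bar B=(B_2,\dots,B_m)$ the image of $B_1\mapsto f(B_1,\bar B)$ is finite by the base case, and by elimination of $\exists^\infty$ (uniform finiteness, which holds in $p$-adically closed fields) these images have cardinality bounded by some $N$ independent of $\bar B$. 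Coding an $\le N$-element subset $\{x_1,\dots,x_j\}$ of $K$ by the coefficient tuple of $\prod_{i\le j}(T-x_i)$ turns $\bar B\mapsto\{f(B_1,\bar B):B_1\in\B_K\}$ into a definable function from $\B^{m-1}_K$ into $\coprod_{j\le N}K^{j}$; by the induction hypothesis (together with the first reduction) its image is finite, whence $f(\B^m_K)$ is a finite union of finite sets.

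\textbf{Base case $m=1$.} Write $\Gamma$ for the value group and $v$ for the valuation, and for a radius $r$ let $\B^{=r}_K$ be the (imaginary) set of balls of radius $r$; then $B(a,r)\mapsto a+B(0,r)$ identifies $\B^{=r}_K$ with a coset space of the subgroup $B(0,r)=\{x:v(x)>r\}$, i.e.\ with (a piece of) one of the residue-ring imaginary sorts. Restricting a definable $f\colon\B_K\to K$ to $\B^{=r}_K$ gives a definable function from a residue-ring sort to $K$; by orthogonality of the residue rings to $K$ (closely related to $p$-adic cell decomposition: lifting $f|_{\B^{=r}_K}$ to a function $\mathcal O_K\to K$ that is constant on balls of radius $r$, the positive-dimensional cells of a decomposition adapted to it must be constant pieces, since every fibre contains a whole ball of radius $r$) this image is finite, and by uniform finiteness its size is bounded by some $N$ independent of $r$.

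\textbf{Conclusion.} The assignment $r\mapsto f(\B^{=r}_K)$, coded as in the first step, is then a definable function from the value-group sort $\Gamma$ (more precisely, the set of radii) into $\coprod_{j\le N}K^{j}$. By orthogonality of $\Gamma$ to $K$ — every definable function from a Cartesian power of the value group into $K$ has finite image — the set $\{f(\B^{=r}_K):r\}$ is finite, so $f(\B_K)=\bigcup_r f(\B^{=r}_K)$ is a finite union of finite sets, hence finite. The only non-routine ingredients are the two orthogonality statements (residue rings to $K$, value group to $K$), both classical; I expect that the main point to get right is that one may invoke them in exactly this form — especially for nonstandard radii $r$, where $\B^{=r}_K$ is genuinely infinite — and passing first to a sufficiently saturated elementary extension of $K$ (finiteness of the image transfers in both directions) should simplify the bookkeeping.
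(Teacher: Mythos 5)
Your reduction to $n = 1$ matches the paper's first step, but from there the routes diverge, and the key step of yours has a gap that you yourself flag at the end. The claim that a definable function $\B^{=r}_K \to K$ has finite image (uniformly in $r$) is not established by the cell-decomposition sketch: after lifting $f|_{\B^{=r}_K}$ to $h\colon\mathcal{O}_K\to K$ constant on balls of radius $r$, a one-dimensional cell $C$ in a decomposition adapted to $h$ is itself a disjoint union of balls of radius $r$, and when $r$ is an infinite element of the value group that union is infinite, so ``$h$ constant on each such ball'' in no way forces $h$ to be constant on $C$; a priori $h$ could take on $C$ as many values as there are balls of radius $r$ inside $C$. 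Nor does passing to a saturated elementary extension of $K$ address this, since the difficulty is in \emph{establishing} finiteness, not transporting it.

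The claim is nonetheless true, and the cleanest route to it is exactly what the paper does for the whole lemma, without any of your preliminary reductions: a definable subset of $K$ in one variable is finite or has nonempty interior, so it suffices to show the image of $f\colon\B^m_K\to K$ has empty interior; ``has empty interior'' is a first-order condition on the parameters defining $f$, so one may check it over $\Q_p$; and over $\Q_p$ the set $\B$ is countable, while every nonempty open subset of $\Q_p$ is uncountable. This handles arbitrary $m$ at one stroke, with no induction on $m$, no polynomial-coefficient coding of finite sets, no stratification by radius, and no separate appeal to orthogonality of $\Gamma$ with $K$. Once your gap is patched this way your argument does go through, but it ends up running the paper's short countability-plus-transfer observation as a subroutine inside a far heavier inductive skeleton; the ``irreducible core'' is that one observation, not the two orthogonality statements.
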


\begin{proof}
If $f \colon \B^m_K \to K^n$ has infinite image then there is a coordinate projection $e \colon K^m \to K$ such that $e \circ f$ has infinite image.
We suppose $m = 1$.
Recall that if $X$ is a definable subset of $K$ then $X$ is either finite or has interior.
It therefore suffices to show that the image of any definable function $\B^m_K \to K$ has empty interior.
It is enough to show that the image of any function $\B^m \to \Q_p$ has empty interior.
This holds as $\B$ is countable and every nonempty open subset of $\Q_p$ is uncountable.
\end{proof}

We now prove Proposition~\ref{prop:p-adic-0}.

\begin{proof}
Suppose that $\kfield$ is $p$-adically closed and $\Sa B_K$ interprets an infinite field $L$.
By Fact~\ref{fact:obv} we may suppose that $L$ has underlying set $K^n$, so for some $m$ there is a $\kfield$-definable surjection $\B^m_K \to K^n$, this contradicts Lemma~\ref{lem:map}.
\end{proof}

We now give an example of a structure that is trace equivalent to $\Q_p$.
I believe, but cannot prove, that this structure cannot interpret an infinite field.
Fact~\ref{fact:mari}  is due to Mariaule~\cite{Ma-adic}.

\begin{fact}
\label{fact:mari}
Suppose that $A$ is a dense finitely generated subgroup of $\Z^\times_p$ and $\Sa A$ is the structure induced on $A$ by $\Q_p$.
Then any $\Sa A$-definable subset of $A^k$ is a finite union of sets of the  form $X \cap Y$ where $X\subseteq A^k$ is $(A;\times)$-definable and $Y\subseteq\Z^k_p$ is $\Q_p$-definable.
\end{fact}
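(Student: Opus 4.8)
The plan is to show that the class $\Cal S$ consisting of all finite unions of sets $X \cap Y$, where $X \subseteq A^k$ is $(A;\times)$-definable and $Y \subseteq \Z^k_p$ is $\pfield$-definable, is closed under Boolean combinations and coordinate projections $A^k \to A^{k - 1}$ and contains every set of the form $A^k \cap T$ with $T \subseteq \pfield^k$ $\pfield$-definable. The last point is immediate (take $X = A^k$ and $Y = T \cap \Z^k_p$), and closure under Boolean combinations is routine bookkeeping: intersections distribute, since $(X_1 \cap Y_1) \cap (X_2 \cap Y_2) = (X_1 \cap X_2) \cap (Y_1 \cap Y_2)$ and each of the two ambient families is closed under finite intersection, while $A^k \setminus (X \cap Y) = (A^k \setminus X) \cup (X \cap (\Z^k_p \setminus Y))$ lies in $\Cal S$, so finite unions together with de Morgan give the rest. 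Hence the real content is closure under projection.

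First I would reduce to the torsion-free case. A finitely generated $A \le \Z^\times_p$ is $A_0 \oplus F$ with $F$ finite and $A_0$ free abelian of some rank $r$; splitting $A^k$ into the finitely many cosets of $A_0^k$ — each of which is $\pfield$-definable, hence harmless — reduces us to $A = A_0$. Fixing a basis $g_1, \ldots, g_r$ of $A_0$, the map $\mathbf n \mapsto \prod_i g_i^{n_i}$ identifies $(A;\times)$ with the pure group $(\Z^r;+)$. By Fact~\ref{fact:abelian qe} the $(A;\times)$-definable subsets of $A^k$ correspond exactly to the $L_\mathrm{div}$-definable subsets of $\Z^{rk}$, i.e.\ to Boolean combinations of cosets of subgroups — congruence conditions on exponent vectors.

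For closure under projection, take $S = X \cap Y \subseteq A^k$ and project out the last coordinate. I would apply $p$-adic cell decomposition (Denef, in the refined form of Cluckers) to the $\pfield$-definable set $Y \subseteq \Z^{k - 1}_p \times \Z_p$ in the last variable, uniformly over the first $k - 1$: $Y$ becomes a finite disjoint union of cells
$$ Y_i = \{ (x', x_k) : x' \in D_i,\ \valp(a_i(x')) \,\square\, \valp(x_k - c_i(x')) \,\square\, \valp(b_i(x')),\ x_k - c_i(x') \in \lambda_i P_{n_i} \}, $$
with $D_i \subseteq \Z^{k - 1}_p$ and $a_i, b_i, c_i$ $\pfield$-definable. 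Intersecting with $X$ and eliminating $\exists x_k \in A$ is where density of $A$ in $\Z^\times_p$ enters: for $x'$ in a fixed cell base the valuations $\valp(x_k - c_i(x'))$ attained as $x_k$ runs over $A$ form a $\pfield$-definable set depending only on $c_i(x')$, and once this valuation is large the class of $x_k - c_i(x')$ modulo $n_i$-th powers is forced; meanwhile, under the identification above, both $(x', x_k) \in X$ and $x_k - c_i(x') \in \lambda_i P_{n_i}$ restricted to $x_k \in A$ become congruence conditions on the exponent vector of $x_k$ (the latter because $A /\, (A \cap (\Z^\times_p)^{n_i})$ is finite). Tracking, cell by cell, which combinations of exponent-congruences and valuation conditions on $x'$ actually admit a witness $x_k \in A$ rewrites $\pi(S)$ as a finite union of intersections of an $(A;\times)$-definable subset of $A^{k - 1}$ with a $\pfield$-definable subset of $\Z^{k - 1}_p$, i.e.\ as a member of $\Cal S$.

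The step I expect to be the main obstacle is this last elimination: the cell centers $c_i(x')$ carry an arbitrary $p$-adic parameter, so the valuation conditions genuinely mix $x_k \in A$ with $c_i(x')$, and one must check that density of $A$ really does make solvability in $x_k$ factor through $\pfield$-definable data and exponent-congruences with no leftover entanglement. Arranging the cell decomposition — or a preliminary case split, e.g.\ on $\valp(x_k - c_i(x'))$ versus $\valp(c_i(x'))$ — so that it is compatible with the multiplicative coordinatization of $A$ is the delicate part; the rest is formal.
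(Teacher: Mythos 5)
The paper does not actually prove this statement: it is cited to Mariaule~\cite{Ma-adic} (and the real analogue, Fact~\ref{fact:gh}, is likewise cited to van den Dries and G\"unaydin). So there is no proof in the paper to compare against, and the question is whether your sketch could be completed.

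The reduction to closure of $\Cal S$ under Boolean combinations and projections is correct, and the Boolean-combination step is fine. But the projection step has a genuine gap that density alone cannot fill. $p$-adic cell decomposition produces not only the ``annulus'' cells you write down but also cells of type~$0$, namely graphs $\{(x',c_i(x')) : x' \in D_i\}$ where $x_k$ is \emph{exactly} equal to a $\pfield$-definable function of $x'$. For these cells, eliminating $\exists x_k \in A$ amounts to describing the set $\{x' \in A^{k-1} : c_i(x') \in A\}$. Density of $A$ in $\Z^\times_p$ tells you nothing here: density controls intersections of $A$ with nonempty open sets, not with single points, so it cannot decide whether a given $c_i(x')$ lies in $A$. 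This is exactly where the van den Dries--G\"unaydin and Mariaule arguments invoke the Mann property (equivalently, finiteness results for unit equations \`a la Evertse--Schlickewei--Schmidt); that input is nontrivial number theory, not a consequence of cell decomposition and density, and your argument does not engage with it.

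Two further claims in the sketch do not hold as stated. First, ``once this valuation is large the class of $x_k - c_i(x')$ modulo $n_i$-th powers is forced'' is false: writing $x_k - c_i(x') = p^v u$ with $u$ a unit, the residue of $x_k - c_i(x')$ in $\Q^\times_p/(\Q^\times_p)^{n_i}$ depends on $v \bmod n_i$ \emph{and} on the angular component $u \bmod (\Z^\times_p)^{n_i}$, which is not determined by $v$. Second, ``$x_k - c_i(x') \in \lambda_i P_{n_i}$ restricted to $x_k \in A$ becomes a congruence condition on the exponent vector of $x_k$'' is only true when $c_i(x') = 0$; for a cell center carrying a genuine $p$-adic parameter, $x_k - c_i(x')$ has no exponent vector with respect to the chosen basis of $A$, so the power-residue condition does not factor through the multiplicative coordinatization. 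You flag the entanglement between $x_k \in A$ and $c_i(x')$ as the delicate part, which is the right instinct, but the resolution requires the Mann-property machinery rather than a sharper cell decomposition.
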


\begin{proposition}
\label{prop:mari}
Let $A$ and $\Sa A$ be as in Fact~\ref{fact:mari}.
Then $\Sa A$ is trace equivalent to $\Q_p$.
\end{proposition}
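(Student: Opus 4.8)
The plan is to prove the two halves of the trace equivalence separately. Showing that $\Q_p$ trace defines $\Sa A$ is routine given Fact~\ref{fact:mari}; the work is in the reverse direction, where density of $A$ must be exploited to rebuild the field of $p$-adics inside $\Th(\Sa A)$.

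For "$\Q_p$ trace defines $\Sa A$": Fact~\ref{fact:mari} identifies the Boolean algebra of $\Sa A$-definable sets — every $\Sa A$-definable subset of $A^k$ is a finite union of sets $X\cap Y$ with $X$ definable in the pure group $(A;\times)$ and $Y$ a trace of a $\Q_p$-definable set. Since the $\Q_p$-traces are already closed under Boolean operations and are exactly the atomic relations of the induced structure, this says that $\Sa A$ is interdefinable with a structure having quantifier elimination in a disjoint union $L_1\sqcup L_2$, where $L_1$ is a relational Morleyization of $(A;\times)$ and $L_2$ names the $\Q_p$-traces. I would then invoke Proposition~\ref{prop:fusion}: the $L_2$-reduct embeds into $\Q_p$ with all definable sets named, which is trace definable in $\Q_p$; the $L_1$-reduct is the finitely generated abelian group $(A;\times)$, which is interpretable in $(\Z;+)$ and hence, since $(\Z;+,<)$ is interpretable in $\Q_p$ as its value group, in $\Q_p$, hence trace definable in $\Q_p$ by Proposition~\ref{prop:trace-interpret}. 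Proposition~\ref{prop:fusion} then yields that $\Q_p$ trace defines $\Sa A$; in particular $\Sa A$ is $\nip$ by Theorem~\ref{thm:preservation}.

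For "$\Th(\Sa A)$ trace defines $\Q_p$": this is modelled on the proof of Proposition~\ref{prop:padic}. Since $\Sa A$ is $\nip$, by Proposition~\ref{prop:she-0} it suffices to produce an elementary extension $\Sa D\succ\Sa A$ whose Shelah completion $\Sh D$ interprets a model of $\Th(\Q_p)$. I would take $\Sa D$ to be the structure induced on $A'\subseteq K$, where $K\succ\Q_p$ is a sufficiently saturated ultrapower and $A'$ is the corresponding ultrapower of $A$. Let $J$ be the subgroup of those $a\in A'$ with $\valp(a-1)$ larger than every standard integer (the elements infinitesimally close to $1$ for the coarsening of $\valp$ by its convex subgroup $\Z$); then $J=\bigcap_{n}\bigl(A'\cap(1+p^n\Z_p)\bigr)$ is a countable intersection of a chain of $\Sa D$-definable sets, hence externally definable by Lemma~\ref{lem:chain}, so $A'/J$ is interpretable in $\Sh D$. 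Because $A$ is dense in $\Z_p^\times$, the reduction $A\to\Z_p^\times/(1+p^n\Z_p)$ is onto for every $n$, and saturation then forces $A'/J\cong\Z_p^\times$. Inside this copy of $\Z_p^\times$ the clopen subgroup $1+p\Z_p$ is a $\Q_p$-trace, hence $\Sh D$-definable; transporting down to $A'/J$ the (basic, being $K$-traces) ternary relations $\{(x,y,z)\in(1+p\Z_p)^3: x+y=z+1\}$ and $\{(x,y,z)\in(1+p\Z_p)^3:(x-1)(y-1)=p(z-1)\}$ — which one checks descend to well-defined operations modulo $J$ — equips $1+p\Z_p$ with operations $\oplus,\otimes$ for which $a\mapsto 1+pa$ is an isomorphism $(\Z_p;+,\times)\to(1+p\Z_p;\oplus,\otimes)$. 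Thus $\Sh D$ interprets $(\Z_p;+,\times)$, hence its fraction field, a model of $\Th(\Q_p)$; so $\Sa D$, and therefore $\Th(\Sa A)$, trace defines $\Q_p$.

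The main obstacle is this reverse direction: $\Sa A$ a priori only records the group $A$ together with traces of $\Q_p$-definable relations, and one must reconstruct the entire field structure from that. The two ingredients that make it possible are density of $A$ — precisely what forces $A'/J$ to be all of $\Z_p^\times$, and in particular to contain a full copy of $(\Z_p;+)$ inside $1+p\Z_p$ — and Fact~\ref{fact:mari}, which through the induced structure makes the required restricted addition and multiplication relations available. I expect the real labor to be the bookkeeping around the coarsened valuation and the verification that the chosen relations descend to well-defined operations on $A'/J$, rather than any conceptual difficulty.
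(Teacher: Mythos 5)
Your forward direction ($\Q_p$ trace defines $\Sa A$) is correct; you route through Proposition~\ref{prop:fusion} where the paper notes $\Q_p$ trace defines $(A;\times)\sqcup\Q_p$ and applies Proposition~\ref{prop:eq neg 1}, but both rest on the same Fact~\ref{fact:mari}-driven decomposition of $\Sa A$ into a group part and a $\Q_p$-trace part, each separately trace definable in $\Q_p$.

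The reverse direction has a genuine gap at the descent step. The $K$-trace $\{(x,y,z)\in(A')^3:x+y=z+1\}$ does \emph{not} descend modulo $J$: replacing $x$ by $x'\in xJ$ with $x'\neq x$ shifts $x+y-z-1$ by the nonzero infinitesimal $x(x'/x-1)$, so the exact equation is destroyed. Nor is the forward image of this trace under $q^3\colon(A')^3\to(A'/J)^3$ the graph of addition on $1+p\Z_p$: it asks for representatives with $x'+y'-1\in A'$, a Mann-type $S$-unit equation in the multiplicative group $A'$ with only finitely many solutions, so the image is a proper subset. The relation you actually need on $(A'/J)^3\cong(\Z_p^\times)^3$ is the pullback under $\st\colon A'\to\Z_p^\times$ of the closed semialgebraic relation $\alpha+\beta=\gamma+1$; as a subset of $(A')^3$ this is $\{(x,y,z):\valp(x+y-z-1)>n\text{ for every standard }n\}$, a countable intersection of a decreasing chain of $\Sa D$-definable sets, hence externally definable by Lemma~\ref{lem:chain} (the same device you already used to get $J$), but \emph{not} a single $K$-trace. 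The same correction is needed for your multiplicative relation. With this substitution your argument becomes a streamlined instance of what the paper gestures at — running the proof of Lemma~\ref{lem::} over $\Q_p$ using that $p$-adic definable sets are boolean combinations of closed ones — but as written the claim that the exact-equation traces "descend to well-defined operations modulo $J$" is false and would not survive verification.
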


We write ``semialgebraic" for ``$\Q_p$-definable".

\begin{proof}
A straightforward variation of the proof  of Lemma~\ref{lem::} together with an application of the fact that every $\Q_p$-definable set is a boolean combination of closed $\Q_p$-definable sets shows that $\Th(\Sa A)$ trace defines $\Q_p$.
We show that $\Q_p$ trace defines $\Sa A$.
Note that $\Q_p$ interprets $(\Z;+)$, hence $\Q_p$ trace defines $(\Z;+)\sqcup\Q_p$ by Lemma~\ref{lem:disjoint union}.
By Proposition~\ref{prop:fg-group} $(\Z;+)\sqcup\Q_p$ is trace equivalent to $(A;\times)\sqcup\Q_p$, so it is enough to show that $(A;\times)\sqcup\Q_p$ trace defines $\Sa A$.
Let $\uptau$ be the injection $A \to A \times \Q_p$ given by $\uptau(\alpha) = (\alpha,\alpha)$.
We show that $(A;\times) \sqcup \Q_p$ trace defines $\Sa A$ via $\uptau$.
We let $L'$ be the expansion of $L_\mathrm{div}$ by a $k$-ary relation symbol $R_X$ defining $X \cap A^k$ for every semialgebraic $X\subseteq\Q^k_p$.
By Fact~\ref{fact:gh} $\Sa A$ admits quantifier elimination in $L'$.
We apply Proposition~\ref{prop:eq neg 1} with $L$ the language of abelian groups, $L^{*}=L'$, $\Sa O=\Sa A$, $\Sa M = (A;\times)\sqcup\Q_p$, and $\Sa P$ the abelian group $(A;\times) \oplus \Z^\times_p$.
It is enough to fix $k$ and semialgebraic $X \subseteq \Q^n_p$ and produce $(A;+) \sqcup \Q_p$-definable $Y \subseteq A \times \Q_p$, $Z \subseteq (A \times \Q_p)^k$ such that for all $a \in A, b \in A^k$:
\begin{align*}
\uptau(a) \in Y \quad &\Longleftrightarrow \quad k|a \\
\uptau(b) \in Z \quad &\Longleftrightarrow \quad b \in X.
\end{align*}
Let $Y$ be the set of $(a,b) \in A \times \R$ such that $k$ divides $a$ and $Z = A^k \times X$.
\end{proof}

We finally give a $p$-adic analogue of the results of Section~\ref{section:presburger}.
I don't know if the structures below are trace definable in Presburger arithmetic, I suspect they are not.
Given a prime $p$ let $\prec_p$ be the partial order on $\Z$ where $\alpha \prec_p \beta$ when the $p$-adic valuation of $\alpha$ is strictly less than the $p$-adic valuation of $\beta$.
Fact~\ref{fact:alouf-elbee} is due to Alouf and d'Elb\'{e}e~\cite{AldE}.

\begin{fact}
\label{fact:alouf-elbee}
Let $P$ be a set of primes and $L_\mathrm{div-val}$ be the expansion of $L_\mathrm{div}$ by $(\prec_p : p \in P)$.
Then $(\Z;+,(\prec_p)_{p \in P})$ admits quantifier elimination in $L_\mathrm{div-val}$ and is $\nip$.
\end{fact}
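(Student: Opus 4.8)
This is due to Alouf and d'Elb\'ee~\cite{AldE}; here is how I would approach it. The plan is to first establish quantifier elimination in $L_\mathrm{div-val}$ by the usual back-and-forth criterion, and then deduce $\nip$ by reducing to formulas that mention only finitely many of the primes in $P$.

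For quantifier elimination I would fix $\aleph_1$-saturated $\Sa M,\Sa N \models \Th(\Z;+,(\prec_p)_{p\in P})$, a common $L_\mathrm{div-val}$-substructure $\Sa A$ (so $A$ is a subgroup, with the relations $k\mid$ and $\prec_p$ merely inherited), and $b\in M$, and aim to produce $b^\ast\in N$ with $\qftp_{\Sa N}(b^\ast/A)=\qftp_{\Sa M}(b/A)$. The key observation is that $\qftp_{\Sa M}(b/A)$ decomposes into two essentially independent pieces. First, the Presburger part: the divisibility conditions $k\mid (nb-a)$ for $a\in A$, $n,k\in\Z$, which is controlled exactly as in quantifier elimination for abelian groups in $L_\mathrm{div}$ (Fact~\ref{fact:abelian qe}). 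Second, for each $p\in P$, the valuation part: the position of the $p$-adic valuation of $nb-a$ relative to the value set $\{v_p(a'):a'\in A\}$, together with the residue of $nb-a$ modulo the relevant power of $p$; this behaves like a cut in a discretely ordered value set. Since the $p$-adic valuations for distinct primes, and the congruence data, can be adjusted independently via the Chinese Remainder Theorem, one may prescribe finitely many of these local pieces simultaneously. The real content is to isolate exactly which combinations of local data are consistent — that is, to write down the correct ``valued Presburger'' axioms — and to verify that a saturated model realizes every consistent combination.

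Granting quantifier elimination, $\nip$ would follow routinely. After eliminating quantifiers, any formula $\varphi(x;\bar y)$ is a Boolean combination of atoms $k\mid t(x,\bar y)$ and $\prec_p\bigl(t_1(x,\bar y),t_2(x,\bar y)\bigr)$ with $t,t_1,t_2$ integer-linear, and only finitely many primes $p_1,\dots,p_r$ and moduli occur. So it suffices to bound, uniformly over models, the VC dimension of such Boolean combinations for a fixed finite set of primes. The atoms $k\mid t$ form uniformly definable families of finite unions of cosets, of bounded VC dimension as in Presburger arithmetic. For a fixed $p_i$, the condition $\prec_{p_i}(t_1,t_2)$ amounts to comparing the $p_i$-adic sizes of two linear forms in $x$, which (after clearing a power of $p_i$) is built from finitely many $p_i$-adic ball-membership conditions in $x$ and hence again has bounded VC dimension; this is essentially the restriction to $\Z$ of the analogous $\Q_{p_i}$-definable family, and $\Q_{p_i}$ is $\nip$. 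Since $\nip$ is witnessed one formula at a time, $(\Z;+,(\prec_p)_{p\in P})$ is $\nip$.

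I expect the main obstacle to be the quantifier-elimination step, and within it the axiomatization: making precise how the several $p$-adic valuation cuts and the congruence data may coexist, and checking that $\aleph_1$-saturated models realize every consistent configuration. Once the right ``valued Presburger'' theory has been written down the back-and-forth is mechanical, and $\nip$ drops out as above.
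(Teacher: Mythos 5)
The paper gives no proof of this statement: Fact~\ref{fact:alouf-elbee} is cited from Alouf and d'Elb\'ee~\cite{AldE} without argument, so there is no paper-internal proof to compare against. Your sketch is a sensible outline of what a proof would look like, and it matches the strategy in that reference at a high level — for quantifier elimination, analyzing the quantifier-free type of $b$ over a substructure as a Presburger divisibility part plus, for each relevant prime $p$, a $p$-adic valuation cut and residue datum, and using the Chinese Remainder Theorem to realize the data for distinct primes independently in a saturated model; and for NIP, reducing after QE to formulas that mention only finitely many primes and bounding VC dimension atom by atom.

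Two points to be aware of. First, as you say yourself, you have deferred exactly the load-bearing part: writing down the ``valued Presburger'' axioms that describe which combinations of $k\mid(nb-a)$, $v_p$-cut, and residue data are jointly consistent, and checking that $\aleph_1$-saturated models realize every such combination. The interaction between the Presburger divisibility $p^k\mid(nb-a)$ and the $v_p$-cut for the \emph{same} prime $p$ is precisely where the bookkeeping is delicate (they are not independent — one is a truncation of the other), and your remark about adjusting ``the congruence data'' independently via CRT only applies across distinct primes; the $p$-local data must be prescribed as a single package. You do gesture at this (``the residue of $nb-a$ modulo the relevant power of $p$'') but it is the crux, not a side remark. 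Second, in the NIP step, the claim that $\prec_p(t_1,t_2)$ ``is essentially the restriction to $\Z$ of the analogous $\Q_p$-definable family'' is correct once QE is granted, because then the $\Sa M$-definable family really is the trace of a fixed $\Q_p$-definable family; without QE one would only know it is contained in the induced structure, which need not be NIP a priori. You phrase it as ``granting quantifier elimination,'' so this is fine, but worth keeping in mind as the logical dependency.
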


We prove Proposition~\ref{prop:p adic integer}.

\begin{proposition}
\label{prop:p adic integer}
Let $p_1,\ldots,p_n$ be primes and let $\prec_i$ be $\prec_{p_i}$ for all $i \in \{1,\ldots,n\}$.
Then $(\Z;+,\prec_1,\ldots,\prec_n)$ is trace equivalent to $(\Z;+) \sqcup (\Z_{p_1};+,\prec_1)\sqcup\cdots\sqcup(\Z_{p_n};+\prec_n)$.
\end{proposition}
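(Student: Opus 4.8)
The statement asserts two trace definitions, and I would prove each in turn. \emph{First, that $(\Z;+)\sqcup(\Z_{p_1};+,\prec_1)\sqcup\cdots\sqcup(\Z_{p_n};+,\prec_n)$ trace defines $(\Z;+,\prec_1,\ldots,\prec_n)$.} Mimicking the proof of Proposition~\ref{prop:final Z}, let $\Sa P$ be the structure with underlying set $\Z\times\Z_{p_1}\times\cdots\times\Z_{p_n}$, with coordinatewise addition, a unary relation $P_k$ (one for each $k\in\N$) holding of the tuples whose $\Z$-coordinate is divisible by $k$, and for each $i$ a binary relation $\triangleleft_i$ holding of $(a,b)$ exactly when the $\Z_{p_i}$-coordinates $a_i,b_i$ satisfy $a_i\prec_i b_i$. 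Each of these is definable in the disjoint union, since the structure induced on a product of the sorts of a disjoint union is interdefinable with the product structure; so this disjoint union defines, hence trace defines, $\Sa P$. View $\Sa P$ as an $L_\mathrm{div-val}$-structure by interpreting $+$ coordinatewise, the relations $k|$ by the $P_k$, and $\prec_i$ by $\triangleleft_i$, and let $\uptau\colon\Z\to\Z\times\Z_{p_1}\times\cdots\times\Z_{p_n}$ be the diagonal map $m\mapsto(m,\ldots,m)$. Then $\uptau$ is an $L_\mathrm{div-val}$-embedding of $(\Z;+,\prec_1,\ldots,\prec_n)$ into $\Sa P$: it is a group embedding, divisibility of an integer $m$ by $k$ is detected by $P_k$ on the $\Z$-coordinate, and the $p_i$-adic valuation of an integer is the same computed in $\Z$ or in $\Z_{p_i}$, so $\prec_i$ is detected by $\triangleleft_i$. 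Since $(\Z;+,\prec_1,\ldots,\prec_n)$ has quantifier elimination in $L_\mathrm{div-val}$ by Fact~\ref{fact:alouf-elbee}, Proposition~\ref{prop:qe-trace} gives that $\Sa P$, and hence the disjoint union, trace defines $(\Z;+,\prec_1,\ldots,\prec_n)$.

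\emph{Second, that $(\Z;+,\prec_1,\ldots,\prec_n)$ trace defines the disjoint union.} By Lemma~\ref{lem:disjoint union} it suffices to fix an $\aleph_1$-saturated $(Z;+,\prec_1,\ldots,\prec_n)\succ(\Z;+,\prec_1,\ldots,\prec_n)$ and show it trace defines $(\Z;+)$ and each $(\Z_{p_i};+,\prec_i)$. The first is immediate, $(\Z;+)$ being a reduct. For the others, the base structure is $\nip$ by Fact~\ref{fact:alouf-elbee}, so by Proposition~\ref{prop:she-0} it is trace equivalent to its Shelah completion, and it is enough to show that $(Z;+,\prec_1,\ldots,\prec_n)^{\mathrm{Sh}}$ interprets $(\Z_{p_i};+,\prec_i)$. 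Put $\mathfrak{m}_i=\bigcap_{k\in\N}p_i^kZ$. This is a subgroup of $Z$, and as the intersection of the definable chain $(p_i^kZ)_{k\in\N}$ it is externally definable by Lemma~\ref{lem:chain}; likewise the relation $\{(x,y)\in Z^2:x\notin\mathfrak{m}_i\text{ and }x\prec_i y\}$ is externally definable, is invariant under translating either argument by an element of $\mathfrak{m}_i$ (adding such an element does not change a standard $p_i$-adic valuation), and so descends to a relation $\bar\prec_i$ on $Z/\mathfrak{m}_i$. Both $\mathfrak{m}_i$ and $\bar\prec_i$ are thus available in $(Z;+,\prec_1,\ldots,\prec_n)^{\mathrm{Sh}}$.

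It remains to identify $(Z/\mathfrak{m}_i;+,\bar\prec_i)$ with $(\Z_{p_i};+,\prec_i)$. Since $(Z;+)$ is an elementary extension of $(\Z;+)$, each $Z/p_i^kZ$ is cyclic of order $p_i^k$, and the natural maps make $(Z/p_i^kZ)_{k\in\N}$ an inverse system isomorphic (via the images of $1$) to $(\Z/p_i^k\Z)_{k\in\N}$. The canonical map $Z/\mathfrak{m}_i\to\varprojlim_k Z/p_i^kZ$ is injective with dense image, and surjective by $\aleph_1$-saturation (a coherent system of congruences over a countable parameter set is finitely satisfiable), hence an isomorphism; thus $Z/\mathfrak{m}_i\cong\varprojlim_k\Z/p_i^k\Z=\Z_{p_i}$. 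Under this isomorphism the $p_i$-adic valuation of the image of $\bar x$ equals the standard $p_i$-adic valuation of any representative $x$ (interpreted as $\infty$ when $x\in\mathfrak{m}_i$), so $\bar\prec_i$ corresponds to $\prec_i$. Hence $(Z;+,\prec_1,\ldots,\prec_n)^{\mathrm{Sh}}$ interprets $(\Z_{p_i};+,\prec_i)$, which finishes the second direction; combining the two, the two theories trace define one another, i.e. are trace equivalent.

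The only genuinely delicate step is the identification in the previous paragraph: checking that the externally definable quotient $Z/\mathfrak{m}_i$, with the descended order, really does reconstruct $\Z_{p_i}$ together with its valuation order. Everything else is the same bookkeeping with diagonal embeddings and quantifier elimination already carried out in the proofs of Propositions~\ref{prop:final Z} and~\ref{prop:re-oag}.
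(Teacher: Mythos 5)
Your proof is correct and follows the same route as the paper: one direction via a diagonal embedding into a product structure together with Fact~\ref{fact:alouf-elbee} (quantifier elimination in $L_\mathrm{div-val}$) and Proposition~\ref{prop:qe-trace}, and the other via the Shelah completion of an $\aleph_1$-saturated model, externally defining the kernel $\mathfrak{m}_i=\bigcap_k p_i^k Z$ and showing the quotient recovers $(\Z_{p_i};+,\prec_i)$. The only cosmetic difference is that the paper packages the identification $Z/\mathfrak{m}_i\cong\Z_{p_i}$ via the $p_i$-adic standard part map directly, whereas you reconstruct it through the inverse limit $\varprojlim_k Z/p_i^kZ$; the two give the same isomorphism, and your version makes the saturation argument for surjectivity slightly more explicit.
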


In particular $(\Z;+,\prec_p)$ is trace equivalent to $(\Z;+)\sqcup(\Z_p;+,\prec_p)$.

\begin{proof}
We  show that $(\Z;+,\prec_1,\ldots,\prec_n)$ trace defines $(\Z;+) \sqcup (\Z_{p_1};+,\prec_1)\sqcup\cdots\sqcup(\Z_{p_n};+,\prec_n)$.
By Lemma~\ref{lem:disjoint union} it is enough to fix a prime $p$ and show that $\Th(\Z;+,\prec_p)$ trace defines $(\Z_p;+,\prec_p)$.
Let $\Sa Z$ be an $\aleph_1$-saturated elementary extension of $(\Z;+,\prec_p)$.
By Fact~\ref{fact:alouf-elbee} and Proposition~\ref{prop:she-0} it is enough to show that $\Sh Z$ interprets $(\Z_p;+,\prec_p)$.
Let $\st \colon Z \to \Z_p$ be the usual $p$-adic standard part map.
Note that $\st$ is surjective by $\aleph_1$-saturation and definibility of the $p$-adic topology on $Z$.
Let $\mfrak$ be the set of $\alpha \in Z$ such that $m \prec \alpha$ for all $m \in \Z$.
Then $\mfrak$ is the kernal of $\st$, so $\st$ induces an isomorphism $Z/\mfrak \to \Z_p$.
We identify $Z/\mfrak$ with $\Z_p$.
Note that $\mfrak$ is $\Sh Z$-definable, so we regard $\Z_p$ as an imaginary sort of $\Sh Z$.
It is easy to that $\Sh Z$ defines the addition and partial order on $\Z_p$.

\medskip
We show that $(\Z;+) \sqcup (\Z_{p_1};+,\prec_1)\sqcup\cdots\sqcup(\Z_{p_n};+,\prec_n)$ trace defines $(\Z;+,\prec_1,\ldots,\prec_n)$.
Let 
\[
\uptau\colon\Z \to \Z \times \Z_{p_1}\times\cdots\times\Z_{p_n} \quad\text{be given by}\quad \uptau(m) = (m,m,\ldots,m) \quad \text{for all  } m\in\Z.
\]
An adaptation of the proof of Proposition~\ref{prop:final Z} and an application of Fact~\ref{fact:alouf-elbee} shows that $(\Z;+) \sqcup (\Z_{p_1};+,\prec_1)\sqcup\cdots\sqcup(\Z_{p_n};+,\prec_n)$ trace defines $(\Z;+,\prec_1,\ldots,\prec_p)$ via $\uptau$.
\end{proof}

Proposition~\ref{prop:p adic integer} shows that the field $\Q_p$ trace defines $(\Z;+,\prec_p)$.
This follows from Lemma~\ref{lem:disjoint union} as $\Q_p$ defines $(\Z_p;+,\prec_p)$ and interprets $(\Z;+)$ (as the value group).
I don't think that $\Q_p$ can interpret $(\Z;+,\prec_p)$, but I don't think that this follows from known results\footnote{We would need to know more about interpretable groups in $\Q_p$.}.

\subsection{The integers and the reals}
\label{section:Z and R}
In this section we give three examples of structures that are trace equivalent to $(\Z;+) \sqcup \rfield$.
I do not know if $\rcf$ trace defines $\Th(\Z;+)$.

\medskip
Fact~\ref{fact:gh} is due to van den Dries and G\"unaydin, \cite[Theorem~7.2]{vddG}.

\begin{fact}
\label{fact:gh}
Suppose that $A$ is a dense finite rank subgroup of $(\R_{>0};\times)$ and let $\Sa A$ be the structure induced on $A$ by $\rfield$.
Then every $\Sa A$-definable subset of $A^k$ is a finite union of sets of the form $X \cap Y$ where $X \subseteq A^k$ is $(A;\times)$-definable and $Y \subseteq \R^k$ is semialgebraic.
\end{fact}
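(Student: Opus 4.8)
The plan is to obtain this from the Mann property of $A$ together with a relative quantifier elimination for $\rfield$ expanded by a predicate for $A$, which is the content of van den Dries--G\"unaydin. The first ingredient is the Mann property: since $A$ has finite rank, the $\Q$-span of $A$ inside $\R$ is finite dimensional, and the Evertse--Schlickewei--Schmidt subspace theorem gives that for each $n$ and all $c_1,\dots,c_n$ in any fixed countable subfield $F \subseteq \R$ there are only finitely many $(g_1,\dots,g_n) \in A^n$ solving $c_1 g_1 + \cdots + c_n g_n = 1$ with no vanishing proper subsum. One needs $F$ to contain the relevant parameters, so in the argument below $F$ will be the subfield generated by $\Q$ together with a fixed finite tuple from $A$.

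Second, the statement is exactly a relative quantifier elimination: it says every $\Sa A$-definable subset of $A^k$ lies in the Boolean algebra $\Cal D_k$ generated by the $(A;\times)$-definable sets and the traces $S \cap A^k$ of semialgebraic $S \subseteq \R^k$. By the usual test it suffices to check that $\Cal D := \bigcup_k \Cal D_k$ is closed under the coordinate projections $A^{k+1} \to A^k$, Boolean operations being handled on the multiplicative and semialgebraic factors separately (using Fact~\ref{fact:abelian qe} for $(A;\times)$). Given a set $X \cap (S \cap A^{k+1})$, write $a$ for the first $k$ coordinates and $t$ for the last, so one must describe $\{a \in A^k : \exists\, t \in A,\ (a,t) \in X \wedge (a,t) \in S\}$. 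Quantifier elimination for $(A;\times)$ puts the condition $X$ into a form where $t$ is either forced to equal a fixed multiplicative word in $a$ (substitute, and one is reduced to a semialgebraic trace) or confined to a coset $c\cdot B$ with $B \le A$ of finite index independent of $a$. In the coset case, o-minimal cell decomposition of $S$ over the $a$-variables reduces membership to sign conditions on semialgebraic functions of $a$ and to boundary equations $t = \theta_i(a)$; by density of $A$ the existential over $t \in c\cdot B$ then becomes a semialgebraic nonemptiness condition on cells, together with, at the boundaries, conditions $\theta_i(a) \in c\cdot B$. Clearing denominators, such a condition is a polynomial equation in the monomials $a^\alpha \in A$, and here the Mann property applies: modulo finitely many vanishing-subsum patterns (each a multiplicative, hence $(A;\times)$-definable, relation among the $a^\alpha$) and finitely many sporadic solutions, it admits a uniformly semialgebraic description. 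Reassembling the finitely many cases puts the projection into $\Cal D$.

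The main obstacle is uniformity: o-minimal cell decomposition tames the semialgebraic side uniformly in $a$, and the Mann property tames the arithmetic of $A$, but dovetailing them --- verifying that the Mann input can be applied with a single finite parameter field $F$ across all of the continuum-many tuples $a$, and that the case bookkeeping genuinely stays inside the two sublanguages --- is the technical heart of the matter. This is exactly what van den Dries and G\"unaydin carry out, so in practice I would simply invoke \cite[Theorem~7.2]{vddG}; the above is the shape of their argument.
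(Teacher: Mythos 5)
Your proposal ends exactly where the paper does: Fact~\ref{fact:gh} is cited there without proof as a special case of van den Dries--G\"unaydin~\cite[Theorem~7.2]{vddG}, and you likewise invoke that theorem. Your preceding sketch of the Mann-property and relative-quantifier-elimination machinery is a fair outline of their argument, but since the paper offers no proof of its own, the comparison reduces to the shared citation.
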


\begin{proposition}
\label{prop:mordell lang}
Suppose that $A$ is a dense finite rank subgroup of $(\R_{>0};\times)$ and $\Sa A$ is the structure induced on $A$ by $\rfield$.
Then $\Sa A$ is trace equivalent to $(A;\times) \sqcup \rfield$.
Hence $\Sa A$ is trace definable in $(\Z;+) \sqcup \rfield$ and if $A$ is finitely generated then $\Sa A$ is trace equivalent to $(\Z;+)\sqcup\rfield$.
\end{proposition}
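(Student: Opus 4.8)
The plan is to prove the two directions of trace equivalence between $\Sa A$ and $(A;\times)\sqcup\rfield$ separately, and then deduce the remaining assertions from the general machinery already developed. This is clearly modeled on the proof of Proposition~\ref{prop:mari}, with Fact~\ref{fact:gh} playing the role that Fact~\ref{fact:mari} played there; I expect the argument to be almost a word-for-word adaptation.

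First I would show that $\Th(\Sa A)$ trace defines $\rfield$. Since $A$ is a dense finite rank subgroup of $(\R_{>0};\times)$, we have $\Sa A \prec \Sa A'$ for suitably saturated $\Sa A'$, and one produces inside a monster model of $\Th(\rfield)$ a copy of $\Sa A'$ whose Shelah completion recovers enough structure; more directly, as in Proposition~\ref{prop:mari}, a straightforward variation of the proof of Proposition~\ref{prop:ordered-fields} (realizing $\R$ as a residue quotient of a saturated real closed field, using that every semialgebraic set is a boolean combination of closed semialgebraic sets) shows $\Th(\Sa A)$ trace defines $\rfield$. Alternatively one can invoke Fact~\ref{fact:gh} and Proposition~\ref{prop:she-0} to pass to $\Sh A$. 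Then since $(A;\times)$ is interpretable in $\Sa A$ and trace definibility composes (Proposition~\ref{prop:trace-basic}), Lemma~\ref{lem:disjoint union} gives that $\Th(\Sa A)$ trace defines $(A;\times)\sqcup\rfield$.

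For the converse I would follow the proof of Proposition~\ref{prop:mari} verbatim with $\Q_p$ replaced by $\rfield$ and Fact~\ref{fact:mari} replaced by Fact~\ref{fact:gh}. Let $\uptau\colon A\to A\times\R$ be $\uptau(\alpha)=(\alpha,\alpha)$ and let $\Sa P$ be the abelian group $(A;\times)\oplus(\R_{>0};\times)$, which is definable in $(A;\times)\sqcup\rfield$. Let $L'$ be the expansion of $L_{\mathrm{div}}$ (in multiplicative notation on $A$) by a $k$-ary relation $R_X$ for each semialgebraic $X\subseteq\R^k$; by Fact~\ref{fact:gh} $\Sa A$ admits quantifier elimination in $L'$. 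Applying Proposition~\ref{prop:eq neg 1} with $L$ the language of abelian groups, $L^*=L'$, $\Sa O=\Sa A$, $\Sa M=(A;\times)\sqcup\rfield$, and $\Sa P$ as above, it suffices to produce, for each $k$ and each semialgebraic $X\subseteq\R^k$, an $(A;\times)\sqcup\rfield$-definable $Y\subseteq A\times\R$ witnessing the $k\mid$ predicates and $Z\subseteq(A\times\R)^k$ with $\uptau(b)\in Z\Longleftrightarrow b\in X$; one takes $Y$ the set of $(a,t)$ with $k\mid a$ and $Z=A^k\times X$. This establishes that $(A;\times)\sqcup\rfield$ trace defines $\Sa A$ via $\uptau$.

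Finally, the last two clauses follow formally. Since $(A;\times)$ is a finite rank torsion free abelian group, Proposition~\ref{prop:finite rank} (via Proposition~\ref{prop:torsion free in Z}) gives that $\Th(\Z;+)$ trace defines $(A;\times)$, so $\Th((\Z;+)\sqcup\rfield)$ trace defines $(A;\times)\sqcup\rfield$ by Lemma~\ref{lem:disjoint union}, hence trace defines $\Sa A$; combined with the fact that $\Sa A$ trace defines $(A;\times)\sqcup\rfield$ which in turn trace defines $(\Z;+)$ when $A$ is finitely generated (Proposition~\ref{prop:fg-group}) and $\rfield$, trace equivalence with $(\Z;+)\sqcup\rfield$ follows in the finitely generated case by Lemma~\ref{lem:disjoint union} and Proposition~\ref{prop:trace-basic}. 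The only genuinely delicate point is the first direction --- making sure the residue-field extraction argument of Proposition~\ref{prop:ordered-fields} adapts to the induced structure $\Sa A$ rather than a full ordered field --- but this is exactly the variation already claimed without detail in the proof of Proposition~\ref{prop:mari}, so I would simply reference that.
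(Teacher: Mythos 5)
Your proof is correct and follows essentially the same route as the paper: apply Fact~\ref{fact:gh} for quantifier elimination, get $\Th(\Sa A)$ trace defines $\rfield$, and run the Proposition~\ref{prop:mari} argument in reverse. The converse direction and the final formal deductions are fine.

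The one place worth cleaning up is the forward direction. You gesture at ``a variation of the proof of Proposition~\ref{prop:ordered-fields},'' but that result concerns full ordered fields, where one has the field operations globally; the tool actually used is Lemma~\ref{lem::}, which handles exactly the setting of an induced structure on a dense subset of $\R$. Moreover no variation is needed here: once $\Sa A$ is known to be $\nip$ (either by citing~\cite{GH-Dependent}, as the paper does, or by noting that Fact~\ref{fact:gh} plus Fact~\ref{fact:wom-induced} make $\Th(\Sa A)$ weakly o-minimal), Lemma~\ref{lem::} applies as stated to give $\Th(\Sa A)$ trace defines $\rfield$. The ``straightforward variation'' caveat in Proposition~\ref{prop:mari} is there only because the ambient structure in that proposition is $\Q_p$ rather than an o-minimal expansion of $(\R;+,<)$. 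Finally, your opening sentence about producing a copy of $\Sa A'$ inside a monster of $\Th(\rfield)$ describes the \emph{opposite} implication and should be removed; you do not need it because you recover that direction from the converse paragraph.
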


By Proposition~\ref{prop:two sorts} and Corollary~\ref{cor:rama-2} any expansion of an archimedean oag interpretable in $(\Z;+)\sqcup\rfield$ is o-minimal, hence $(\Z;+)\sqcup\rfield$ does not interpret $\Sa A$.

\begin{proof}
The other claims follows from the first by Proposition~\ref{prop:fg-group}, Proposition~\ref{prop:finite rank}, and Lemma~\ref{lem:disjoint union 1}.
We prove the first claim.
Clearly $\Sa A$ defines $(A;\times)$.
By \cite{GH-Dependent} $\Sa A$ is $\nip$, so $\Th(\Sa A)$ trace defines $\rfield$ by Lemma~\ref{lem::}.
By Lemma~\ref{lem:disjoint union} $\Th(\Sa A)$ trace defines $(A;\times) \sqcup \rfield$.
Finally, the proof of Proposition~\ref{prop:mari} shows that $\Sa A$ is trace definable in $(A;\times) \sqcup \rfield$.

\end{proof}

Fact~\ref{fact:bndd Z} is proven in \cite{big-nip}.

\begin{fact}
\label{fact:bndd Z}
Suppose that $\Cal B$ is a collection of bounded subsets of Euclidean space such that $(\R;+,<,\Cal B)$ is o-minimal.
Then a subset $X$ of $\R^n$ is definable in $(\R;+,<,\Cal B,\Z)$ if and only if it is a finite union of sets of the form $\bigcup_{a \in X} a + Y$ for $(\Z;+,<)$-definable $X \subseteq \R^n$ and $(\R;+,<,\Cal B)$-definable $Y \subseteq [0,1)^n$.
\end{fact}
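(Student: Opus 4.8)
The backward implication is routine: every Presburger $X\subseteq\Z^n$ is definable in $(\R;+,<,\Z)$, every $(\R;+,<,\Cal B)$-definable $Y\subseteq[0,1)^n$ is definable in $(\R;+,<,\Cal B,\Z)$, and $\bigcup_{a\in X}(a+Y)$ is the image under a coordinate projection of the $(\R;+,<,\Z)$-definable set $\{(a,y,z):a\in X,\, y\in Y,\, z=a+y\}$. So the content is the forward direction.

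The plan is to write $\Cal R:=(\R;+,<,\Cal B)$ (taking $\Cal R$ to contain the constant $1$, as is standard here), to write $X+Y:=\{a+y:a\in X,\, y\in Y\}$, and to let $\Cal D_n$ be the family of finite unions of sets $X+Y$ with $X\subseteq\Z^n$ Presburger and $Y\subseteq[0,1)^n$ $\Cal R$-definable. I want to show by induction on formulas that $\Cal D:=(\Cal D_n)_n$ contains every $(\R;+,<,\Cal B,\Z)$-definable set. The key elementary point, which I will call the \emph{decoupling lemma}, is that $\R^n=\bigsqcup_{a\in\Z^n}(a+[0,1)^n)$, so each $\alpha\in\R^n$ has a unique integer part $\lfloor\alpha\rfloor\in\Z^n$ and fractional part $\{\alpha\}\in[0,1)^n$, and $\alpha\in X+Y$ if and only if $\lfloor\alpha\rfloor\in X$ and $\{\alpha\}\in Y$; thus inside a set of this shape the ``Presburger coordinate'' $\lfloor\alpha\rfloor$ and the ``bounded coordinate'' $\{\alpha\}$ vary independently.

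With this the induction is mostly bookkeeping. For the atomic relations: $\Z=\Z+\{0\}\in\Cal D_1$; a real parameter $c$ decomposes as $\lfloor c\rfloor$ (an integer parameter for the Presburger part) plus $\{c\}$ (a real parameter for the $\Cal R$ part); each $B\in\Cal B$ is bounded, say $B\subseteq[-N,N]^n$, whence $B=\bigcup_{a\in\Z^n\cap[-N,N]^n}(a+((B-a)\cap[0,1)^n))\in\Cal D_n$; and the graphs of $<$ and of $+$ land in $\Cal D$ by splitting on integer parts, e.g. $\{(\alpha,\beta):\alpha<\beta\}=(\{(m,k)\in\Z^2:m<k\}+[0,1)^2)\cup(\{(m,m):m\in\Z\}+\{(y,t)\in[0,1)^2:y<t\})$, and $\{\alpha+\beta=\gamma\}$ splits likewise into a no-carry piece and a carry piece. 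Each $\Cal D_n$ is obviously closed under finite unions; it is closed under intersection since $(X_1+Y_1)\cap(X_2+Y_2)=(X_1\cap X_2)+(Y_1\cap Y_2)$, and closed under complement since $\R^n\sm(X+Y)=((\Z^n\sm X)+[0,1)^n)\cup(X+([0,1)^n\sm Y))$, both read straight off the decoupling lemma. Closure under coordinate permutations is immediate, and closure under a coordinate projection $\pi$ follows from $\pi(X+Y)=\pi(X)+\pi(Y)$, with $\pi(X)$ again Presburger and $\pi(Y)\subseteq[0,1)^{n}$ again $\Cal R$-definable, because the existential quantifier over the discarded coordinate splits into an $\exists$ over its integer part and an independent $\exists$ over its fractional part, the decoupling lemma once more. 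Induction on the complexity of formulas then gives the forward direction.

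The step that needs real care is closure under projection: everything rides on the claim that the Presburger skeleton and the bounded fibre of a set in $\Cal D$ separate cleanly, so one should verify the decoupling lemma survives projection of an arbitrary coordinate (reduce to the last coordinate by a permutation) and then that the projection identity passes termwise through a finite union. I would also note where o-minimality of $\Cal R$ enters: for the statement as given, over the standard $\R$, the closure argument above is enough, but o-minimality is what keeps the bounded side tame and is genuinely needed if one wants the analogous normal form in arbitrary models of $\Th(\R;+,<,\Cal B,\Z)$, where ``$\Z$'' is a nonstandard model of Presburger and ``$[0,1)$'' an interval; there one must invoke cell decomposition together with the eventual affineness of $\Cal R$-definable sets at infinity to bound how many distinct integer translates of a definable set can occur. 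This is essentially the treatment in \cite{big-nip}.
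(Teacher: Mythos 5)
The paper does not supply its own proof: the fact is stated with the one-line citation to \cite{big-nip}, so there is no in-paper argument to compare against. That said, your proof is correct. The decoupling observation, namely that the bijection $\alpha\mapsto(\lfloor\alpha\rfloor,\{\alpha\})$ between $\R^n$ and $\Z^n\times[0,1)^n$ turns $\alpha\in X+Y$ into the independent conjunction of $\lfloor\alpha\rfloor\in X$ and $\{\alpha\}\in Y$, is exactly the right pivot: the intersection, complement, and projection identities fall out of it, the atomic cases (the carry/no-carry split of the graph of $+$, the finite integer-part decomposition of each bounded $B\in\Cal B$, the separation of a real parameter into its floor and fractional part) are handled correctly, and closure under cylindrification and permutation of coordinates, which you need to assemble arbitrary atomic formulas, follows from the same identity. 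Your remark that over the standard reals the forward direction never invokes o-minimality of $(\R;+,<,\Cal B)$ but only boundedness of the sets in $\Cal B$ is also accurate: the induction produces $Y$'s that are simply $(\R;+,<,\Cal B)$-definable subsets of $[0,1)^n$, tame or not, and what o-minimality buys is the analogous normal form in nonstandard models, where the floor map and the finitely-many-translates arguments you use are no longer available.

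One small slip in the reverse direction: the witness set $\{(a,y,z):a\in X,\,y\in Y,\,z=a+y\}$ is $(\R;+,<,\Cal B,\Z)$-definable rather than merely $(\R;+,<,\Z)$-definable, since $y\in Y$ may require predicates from $\Cal B$. This does not affect the conclusion, as you are projecting inside $(\R;+,<,\Cal B,\Z)$ anyway.
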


If $\Sa R$ is a strongly dependent expansion of $(\R;+,<)$ by closed sets then $\Sa R$ is either o-minimal or of the form $(\R;+,<,\Cal B,\alpha\Z)$ for $\alpha \in \R$ and a collection $\Cal B$ of bounded subsets of Euclidean space such that $(\R;+,<,\Cal B)$ is o-minimal~\cite{big-nip}.

\begin{proposition}
\label{prop:expansion by Z}
Suppose that $\Cal B$ is a collection of bounded subsets of Euclidean space with $(\R;+,<,\Cal B)$ o-minimal.
Then $(\R;+,<,\Cal B,\Z)$ is trace equivalent to $(\Z;+) \sqcup (\R;+,<,\Cal B)$.
\end{proposition}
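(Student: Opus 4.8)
The plan is to prove trace equivalence by showing each direction separately, with the main work being that $(\R;+,<,\Cal B,\Z)$ is trace definable in the disjoint union. The easy direction is that $\Th(\R;+,<,\Cal B,\Z)$ trace defines $(\Z;+)\sqcup(\R;+,<,\Cal B)$: indeed $(\R;+,<,\Cal B,\Z)$ outright defines $(\R;+,<,\Cal B)$, and it interprets (or defines) the subgroup $(\Z;+)$; so by Lemma~\ref{lem:disjoint union} the disjoint union is trace definable in $(\R;+,<,\Cal B,\Z)$, hence in $\Th(\R;+,<,\Cal B,\Z)$.

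For the harder direction I would mimic the last paragraph of the proof of Proposition~\ref{prop:example} and the relevant paragraph of Proposition~\ref{prop:final Z}, using Fact~\ref{fact:bndd Z} as the quantifier-elimination-style input. First I would fix the interval $I = [0,1)$ and recall that $(\Z;+,<)$ is trace definable in $(\Z;+)\sqcup(\R;+,<)$, hence in $(\Z;+)\sqcup(\R;+,<,\Cal B)$ (via Lemma~\ref{lem:stable-0}, Lemma~\ref{lem:disjoint union}, and the fact that $(\R;+,<,\Cal B)$ is unstable, so trace defines $\dlo$ and hence $(I;<)$ — actually more carefully I want $(\R;+,<,\Cal B)$ restricted to $I$, which is definable). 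Then I would form the $\Sa M$-definable structure $\Sa P$ on $\Z\times I$ carrying: the twisted addition $(m,t)\tilde{+}(m^*,t^*)$ that is $(m+m^*,t+t^*)$ when $t+t^*<1$ and $(m+m^*+1,t+t^*-1)$ otherwise; the lexicographic order $\prec$ on $\Z\times I$; a unary predicate for $\Z\times\{0\}$; and for each $Y\subseteq I^n$ definable in $(\R;+,<,\Cal B)$ a predicate defining $\Z^n\times Y$ (more precisely the relevant subsets of $(\Z\times I)^n$). This $\Sa P$ is definable in $(\Z;+,<)\sqcup(\R;+,<,\Cal B)$, and the map $\Z\times I\to\R$, $(m,t)\mapsto m+t$, is an isomorphism from $\Sa P$ onto the structure induced on $\R$ by $(\R;+,<,\Cal B,\Z)$ in the appropriate language. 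By Fact~\ref{fact:bndd Z} every $(\R;+,<,\Cal B,\Z)$-definable set is a finite union of sets $\bigcup_{a\in X}a+Y$ with $X$ definable in $(\Z;+,<)$ and $Y$ definable in $(\R;+,<,\Cal B)$, so this induced structure (after Morleyization / passing to a quantifier-eliminable expansion) is captured by $\Sa P$; hence $(\Z;+,<)\sqcup(\R;+,<,\Cal B)$ defines $(\R;+,<,\Cal B,\Z)$, and since $(\Z;+,<)$ is trace definable in $(\Z;+)\sqcup(\R;+,<,\Cal B)$, Lemma~\ref{lem:disjoint union} and Proposition~\ref{prop:trace-basic} finish the argument.

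I expect the main obstacle to be bookkeeping around Fact~\ref{fact:bndd Z}: the fact describes definable \emph{sets} $X\subseteq\R^n$, but to conclude that the induced structure on $\R$ (as an $L$-structure for a suitable language $L$) is interpretable in the disjoint union I need to check that the finite-union-of-cylinders description is uniform enough — i.e. that membership of a tuple $(a_1+t_1,\ldots,a_n+t_n)$ in such a set translates into a Boolean combination of a $(\Z;+,<)$-condition on $(a_1,\ldots,a_n)$ and a $(\R;+,<,\Cal B)$-condition on $(t_1,\ldots,t_n)$, correctly handling the carries in the twisted addition. This is exactly the kind of routine-but-fiddly verification done in Propositions~\ref{prop:final Z} and \ref{prop:example}, and I would phrase it by saying ``an adaptation of the proof of Proposition~\ref{prop:final Z}'' shows the desired interpretation, rather than spelling out every carry. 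A secondary small point is ensuring the $Y$'s can be taken inside $I^n=[0,1)^n$ and that restricting $(\R;+,<,\Cal B)$-definable sets to $[0,1)^n$ stays within the induced structure on $I$, which is immediate since $I$ is $(\R;+,<,\Cal B)$-definable.

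\begin{proof}
It is clear that $(\R;+,<,\Cal B,\Z)$ defines $(\R;+,<,\Cal B)$ and interprets $(\Z;+)$, so by Lemma~\ref{lem:disjoint union} $(\R;+,<,\Cal B,\Z)$ trace defines $(\Z;+)\sqcup(\R;+,<,\Cal B)$.

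Conversely, we show that $(\Z;+)\sqcup(\R;+,<,\Cal B)$ trace defines $(\R;+,<,\Cal B,\Z)$. Since $(\R;+,<,\Cal B)$ is unstable, Lemma~\ref{lem:stable-0} and Lemma~\ref{lem:disjoint union} show that $(\Z;+)\sqcup(\R;+,<,\Cal B)$ trace defines $(\Z;+,<)$, hence it suffices to show that $(\R;+,<,\Cal B,\Z)$ is interpretable in $(\Z;+,<)\sqcup(\R;+,<,\Cal B)$. Let $I=[0,1)$ and identify $\Z\times I$ with $\R$ via $(m,t)\mapsto m+t$. Under this identification, $<$ becomes the lexicographic order on $\Z\times I$, $\Z$ becomes $\Z\times\{0\}$, addition becomes the twisted addition $(m,t)\tilde{+}(m^*,t^*)=(m+m^*,t+t^*)$ if $t+t^*<1$ and $(m+m^*+1,t+t^*-1)$ otherwise, and, by Fact~\ref{fact:bndd Z}, each definable subset of $\R^n$ becomes a finite union of sets of the form $\{((a_1,t_1),\ldots,(a_n,t_n)):(a_1,\ldots,a_n)\in X,\ (t_1,\ldots,t_n)\in Y\}$ with $X\subseteq\Z^n$ definable in $(\Z;+,<)$ and $Y\subseteq I^n$ definable in $(\R;+,<,\Cal B)$ (restricting the ambient $(\R;+,<,\Cal B)$-definable set to $I^n$, which is legitimate since $I$ is $(\R;+,<,\Cal B)$-definable). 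All of these data are definable in $(\Z;+,<)\sqcup(\R;+,<,\Cal B)$, so $(\R;+,<,\Cal B,\Z)$ is interpretable in $(\Z;+,<)\sqcup(\R;+,<,\Cal B)$. Since $(\Z;+,<)$ is trace definable in $(\Z;+)\sqcup(\R;+,<,\Cal B)$, Lemma~\ref{lem:disjoint union} and Proposition~\ref{prop:trace-basic} give that $(\Z;+)\sqcup(\R;+,<,\Cal B)$ trace defines $(\R;+,<,\Cal B,\Z)$.
\end{proof}
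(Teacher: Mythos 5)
Your proof takes essentially the same route as the paper: it reduces the hard direction to showing $(\R;+,<,\Cal B,\Z)$ is definable in $(\Z;+,<)\sqcup(\R;+,<,\Cal B)$ via Fact~\ref{fact:bndd Z}, and then trades $(\Z;+,<)$ for $(\Z;+)$ using the result of Proposition~\ref{prop:final Z}. The paper's own proof is exactly this three-line argument.

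One citation in your formal proof is wrong, though, and it papers over a real point. You write that ``Since $(\R;+,<,\Cal B)$ is unstable, Lemma~\ref{lem:stable-0} and Lemma~\ref{lem:disjoint union} show that $(\Z;+)\sqcup(\R;+,<,\Cal B)$ trace defines $(\Z;+,<)$.'' Lemma~\ref{lem:stable-0} only yields $\dlo$, and $(\Z;+)\sqcup\dlo$ is not enough: the construction that produces a Presburger model from a disjoint union (the structure $\Sa P$ in the proof of Proposition~\ref{prop:final Z}, together with Fact~\ref{fact:qe for oag} and Proposition~\ref{prop:qe-trace}) genuinely uses the additive group structure of $(\R;+,<)$, not merely its order. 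The correct justification — which your plan paragraph in fact hints at — is that $(\R;+,<,\Cal B)$ defines $(\R;+,<)$, so $(\Z;+)\sqcup(\R;+,<,\Cal B)$ defines $(\Z;+)\sqcup(\R;+,<)$, which trace defines $(\Z;+,<)$ by Proposition~\ref{prop:final Z}. Replace the Lemma~\ref{lem:stable-0} citation with a citation of Proposition~\ref{prop:final Z} and the argument is clean.
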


By the remarks after Proposition~\ref{prop:mordell lang} $(\Z;+)\sqcup(\R;+,<,\Cal B)$ does not interpret $(\R;+,<,\Cal B,\Z)$.

\begin{proof}
By Lemma~\ref{lem:disjoint union 1} $(\R;+,<,\Cal B,\Z)$ trace defines $(\Z;+) \sqcup (\R;+,<,\Cal B)$.
Follow the proof of Proposition~\ref{prop:final Z} and apply Fact~\ref{fact:bndd Z} to show that $(\R;+,<,\Cal B,\Z)$ is definable in the disjoint union $(\R;+,<,\Cal B) \sqcup (\Z;+,<)$.
By Proposition~\ref{prop:final Z} and Lemma~\ref{lem:disjoint union 1} $(\Z;+,<) \sqcup (\R;+,<,\Cal B)$ is trace equivalent to $(\Z;+) \sqcup (\R;+,<,\Cal B)$.
\end{proof}

Let $\Sa R_\mathrm{bnd}$ be the expansion of $(\R;+,<)$ by all bounded semialgebraic sets.
If $X$ is a bounded subset of $\R^n$ which is semialgebraic but not semilinear then $(\R;+,<,X)$ is interdefinable with $\Sa R_\mathrm{bnd}$ by a theorem of of Marker, Pillay, and Peterzil~\cite{MPP}.
By definition $\Sa R_\mathrm{bnd}$ is a reduct  of $\rfield$ and it is easy to see that $\Sa R_\mathrm{bnd}$ defines an isomorphic copy of $\rfield$.
Hence $\Sa R_\mathrm{bnd}$ is trace equivalent to $\rfield$.
Proposition~\ref{prop:expansion by Z} shows that $(\Sa R_\mathrm{bnd},\Z)$ is trace equivalent to $(\Z;+) \sqcup \Sa R_\mathrm{bnd}$, hence $(\Sa R_\mathrm{bnd},\Z)$ is trace equivalent to $(\Z;+)\sqcup\rfield$.

\medskip
We mention another example of Proposition~\ref{prop:example}.
It is easy to see that $(\R;+,<,\sin)$ is interdefinable with $(\R;+,<,\sin\!|_{[0,2\pi]},\pi\Z)$ and $(\R;+,<,\sin\!|_{[0,2\pi]})$ is o-minimal as $\sin\!|_{[0,2\pi]}$ is definable in $\R_\mathrm{an}$.
Hence $(\R;+,<,\sin)$ is trace equivalent to $(\Z;+) \sqcup (\R;+,<,\sin\!|_{[0,2\pi]})$.

\medskip
We now let $\unisltwo$ be the universal cover of $\sltwo$, considered as a group.

\begin{proposition}
\label{prop:sl2}
$\unisltwo$ is mutually interpretable with $(\Z;+) \sqcup \rfield$.
\end{proposition}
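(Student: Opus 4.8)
The plan is to prove $\unisltwo$ and $(\Z;+) \sqcup \rfield$ are mutually interpretable by exhibiting interpretations in both directions, and then invoke Proposition~\ref{prop:trace-interpret} together with the disjoint-union machinery only as a sanity check that this is consistent with the section's theme. First I would recall the standard structure theory of $\unisltwo$: as a topological group it is a central extension $0 \to \Z \to \unisltwo \to \psltwo \to 1$, and concretely one can realize $\unisltwo$ on the set $\sltwo \times \Z$ (or $\psltwo \times \Z$) with a multiplication twisted by a $2$-cocycle $c \colon \psltwo \times \psltwo \to \Z$ built from the Rademacher/Maslov function. The key point is that this cocycle is \emph{semialgebraic}: it is locally constant, takes finitely many values, and the regions on which it is constant are semialgebraic subsets of $\psltwo \times \psltwo$. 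Hence the group operation on $\sltwo \times \Z$ is definable in the disjoint union $\rfield \sqcup (\Z;+)$: the $\sltwo$-coordinate multiplies inside $\rfield$, the $\Z$-coordinates add inside $(\Z;+)$, and the correction term $c$ is read off by a case split that is semialgebraic in the $\sltwo$-coordinates and then added in the $\Z$-sort. This gives an interpretation of $\unisltwo$ in $(\Z;+) \sqcup \rfield$.

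For the reverse direction I would interpret both $(\Z;+)$ and $\rfield$ inside $\unisltwo$ and then assemble their disjoint union (a disjoint union of two interpretable structures is interpretable). The copy of $(\Z;+)$ is immediate: it is the center of $\unisltwo$, which is a $\emptyset$-definable subgroup isomorphic to $(\Z;+)$. Recovering $\rfield$ is the substantive part. Since $\psltwo$ is the quotient of $\unisltwo$ by its center, $\psltwo$ is interpretable in $\unisltwo$; and $\psltwo = \mathrm{PSl}_2(\R)$ is a non-commutative connected semisimple group, so by the standard recovery of the field from $\sltwo$-type groups (Weil/field-configuration arguments, or simply using the Borel subgroup: the action of a maximal split torus on the unipotent radical of a Borel recovers $(\R;+,\times)$) one interprets a field isomorphic to $(\R;+,\times)$, i.e.\ $\rfield$, inside $\psltwo$ and hence inside $\unisltwo$. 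Putting the center and this interpreted copy of $\rfield$ together yields an interpretation of $(\Z;+) \sqcup \rfield$ in $\unisltwo$.

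The two interpretations together give mutual interpretability. I expect the main obstacle to be the careful bookkeeping in the first direction: one must verify that the chosen cocycle presentation of $\unisltwo$ really is \emph{semialgebraic} and not merely continuous — i.e.\ that the Rademacher/translation-number cocycle on $\psltwo$ has semialgebraic level sets — and that the resulting multiplication table, inversion, and identity are all definable in $\rfield \sqcup (\Z;+)$ using only the two sorts as stated (in particular that no genuine interaction between the sorts beyond ``look up a bounded integer determined semialgebraically'' is needed). A clean way to handle this is to work with an explicit fundamental domain: present elements of $\unisltwo$ as pairs $(g,n)$ with $g$ in a semialgebraic section of $\sltwo \to \sltwo$ over a semialgebraic cell decomposition and $n \in \Z$, so that the cocycle becomes literally a finite-valued semialgebraic function, after which definability in the disjoint union is routine. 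The reverse direction is standard and I would simply cite the field-recovery results for $\psltwo$ (noting that $\psltwo$ eliminates imaginaries over $\rfield$, or just that $\rfield$ is interpretable in the pure group $\psltwo$), so it contributes no real difficulty.
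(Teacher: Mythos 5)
Your argument follows the same route as the paper's: the center recovers $(\Z;+)$, the quotient $\psltwo$ yields $\rfield$ via the bi-interpretability of simple Lie groups definable in o-minimal structures with real closed fields (the paper cites \cite{o-minimal-simple}), and the reverse direction uses a semialgebraic $2$-cocycle presentation of $\unisltwo$ over $\psltwo \times \Z$. The only difference is that the paper simply cites \cite[Example 1.1]{two-disjoint-sorts} for the cocycle construction you sketch, rather than verifying semialgebraicity of the Maslov-type cocycle by hand.
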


\begin{proof}
Let $K$ be the center of $\unisltwo$.
Then $K$ is isomorphic to $(\Z;+)$ and $\unisltwo/K$ is $\psltwo$.
By \cite{o-minimal-simple} $\psltwo$ is bi-interpretable with $\rfield$.
Note that $\unisltwo$ defines $K$ and hence interprets $\psltwo$.
Hence $\unisltwo$ interprets $(\Z;+)\sqcup\rfield$.
By \cite[Example 1.1]{two-disjoint-sorts} $(\Z;+)\sqcup\rfield$ interprets $\unisltwo$. 
\end{proof}

\subsection{Examples of trace maximal structures}
We first discuss some standard examples of structures that do not satisfy any positive classification-theoretic properties.
Standard coding arguments show that $(\Z;+,\times)$ satisfies Lemma~\ref{lem:max}.4 with $A = \Z$.
Thus $(\Z;+,\times)$ is trace maximal.
A common example of a tame structure that does not satisfy any positive classification theoretic property is an infinite boolean algebra.
We show infinite boolean algebras are trace maximal, this gives an example of an $\aleph_0$-categorical trace maximal structure.

\begin{proposition}
\label{prop:bool}
Every infinite boolean algebra is trace maximal.
\end{proposition}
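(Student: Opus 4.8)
The plan is to reduce to the countable atomless boolean algebra and then verify the symmetrized trace-maximality criterion of Lemma~\ref{lem:max-1}, say condition (2). First I would recall that every infinite boolean algebra $B$ contains, in an $\aleph_1$-saturated elementary extension, a copy of the countable atomless boolean algebra $\Sa B_0$ as an elementary substructure; since $\Sa B_0$ has quantifier elimination (being $\aleph_0$-categorical and homogeneous in the language of boolean algebras), Proposition~\ref{prop:qe-trace} shows any $\aleph_1$-saturated $B$ trace defines $\Sa B_0$, and by Proposition~\ref{prop:omega cat} (with $\uplambda = \aleph_0$) and Proposition~\ref{prop:trace-basic} it suffices to show $\Sa B_0$ is trace maximal. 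Actually, since trace definibility is transitive and any infinite boolean algebra trace defines $\Sa B_0$, and trace maximality passes up the trace quasi-order (if $T$ trace defines $T^*$ and $T^*$ is trace maximal then $T$ is trace maximal, by transitivity), it is enough to show $\Sa B_0$ is trace maximal.

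Next I would exhibit, inside an $\aleph_1$-saturated model $\Sa B$ of the theory of atomless boolean algebras, an infinite antichain $A = \{a_i : i < \upomega\}$ of pairwise disjoint nonzero elements, and use it to code arbitrary hypergraphs. Working with condition (3) of Lemma~\ref{lem:max-1}: given a $k$-hypergraph $E$ on $\upomega$, I want an $\Sa B$-definable $Y \subseteq B^k$ with $E(i_1,\dots,i_k) \Longleftrightarrow (a_{i_1},\dots,a_{i_k}) \in Y$. The idea is the standard coding: because the $a_i$ are pairwise disjoint, a $k$-tuple $(a_{i_1},\dots,a_{i_k})$ of distinct elements from $A$ "is" essentially the set $\{i_1,\dots,i_k\}$, and I can record an arbitrary symmetric relation on $k$-subsets by choosing a single parameter $c \in B$ whose "trace" on the relevant finite joins encodes $E$. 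Concretely, refine: for each finite $k$-subset $S = \{i_1 < \dots < i_k\}$ of $\upomega$ pick below $a_{i_1}$ (or below $a_{i_1} \wedge \dots$, using atomlessness to have room) a nonzero element $b_S$, with the $b_S$ pairwise disjoint, and by saturation find $c \in B$ with $b_S \le c$ iff $E(S)$ holds. Then the formula "$x_1,\dots,x_k$ pairwise disjoint and the canonically-chosen element below their meet-pattern lies under $c$" is the desired $Y$; one must check this element is first-order definable from $(x_1,\dots,x_k)$, which it is since "the largest element below $x$ disjoint from $y$" etc. are definable boolean operations, and atomlessness lets us make a definable canonical choice. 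By Lemma~\ref{lem:max-1} this gives trace maximality of $\Sa B_0$, hence of every infinite boolean algebra.

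The main obstacle I anticipate is making the coding genuinely first-order and uniform: I need a single formula $\varphi_k(x_1,\dots,x_k,y)$ and a single parameter per hypergraph, not a separate formula for each tuple, and the "canonical element below the meet-pattern of $x_1,\dots,x_k$" must be extracted definably and injectively as $S$ ranges over $k$-subsets. The cleanest route is probably to not code on $A$ directly but to first produce a definable copy of the "generic" configuration: use atomlessness to build, inside $\Sa B$, a family of pairwise disjoint elements indexed by $A^k$ (distinct tuples $\mapsto$ disjoint nonzero elements) via a definable pairing, then put the single parameter $c$ above exactly those indexed by tuples in the target relation. This is where saturation is used — to realize the type asserting "$c$ dominates $b_\sigma$ for $\sigma$ in the relation and is disjoint from $b_\sigma$ otherwise" — and where one must be careful that all the auxiliary elements $b_\sigma$ can be chosen pairwise disjoint, which follows by a straightforward back-and-forth / compactness argument in the atomless boolean algebra. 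Once the configuration is in place the verification of Lemma~\ref{lem:max-1}(2) is routine.
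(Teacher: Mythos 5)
Your reduction to the countable atomless boolean algebra is fine in spirit (though ``elementary substructure'' should be ``substructure'': the subalgebra generated by an infinite independent set is free, hence isomorphic to the countable atomless boolean algebra, and an embedding suffices for Proposition~\ref{prop:qe-trace} since that structure has quantifier elimination). The genuine gap is in the coding step, and it comes from working with an \emph{antichain} $A$ of pairwise disjoint elements rather than with an \emph{independent} set. With a pairwise disjoint family, any definable function of a tuple $(a_{i_1},\dots,a_{i_k})$ that you could hope to use is, up to quantifier elimination, a boolean term in the $a_{i_j}$'s, and the only such term that is injective on $k$-subsets is essentially the join $a_{i_1}\vee\dots\vee a_{i_k}$. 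Joins do not satisfy the needed finite-satisfiability condition: for $k=2$, if $\{1,2\}$ and $\{3,4\}$ are edges and $\{1,3\}$ is not, then $a_1\vee a_3\leqslant a_1\vee a_2\vee a_3\vee a_4$, so no $c$ can lie above the joins of the edges while missing the join of this non-edge. Your proposed repair — auxiliary elements $b_S$ below $a_{\min S}$ — does not patch this, because the $b_S$ are chosen by compactness and are not in the definable closure of the tuple $(a_{i_1},\dots,a_{i_k})$; ``atomlessness lets us make a definable canonical choice'' is exactly backwards, as the high homogeneity of an atomless boolean algebra is what prevents such a choice.

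The paper sidesteps this by fixing a countably infinite \emph{independent} subset $A$ (one whose generated subalgebra is free) and coding with the \emph{meet} $f(a_1,\dots,a_k)=a_1\wedge\dots\wedge a_k$, which is nonzero and injective on increasing tuples from $A$. The crucial finite-satisfiability fact is Lemma~\ref{lem:boolean}: by independence, if $\{b_1,\dots,b_k\}$ is not among the edge sets $\{a^1,\dots,a^n\}$ then $b_1\wedge\dots\wedge b_k\nleqslant\bigvee_{i=1}^n(a^i_1\wedge\dots\wedge a^i_k)$, because one can intersect the left side with the complements of suitable witnesses $\alpha^i_{j(i)}\notin\{b_1,\dots,b_k\}$ and still get something nonzero. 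Saturation then produces the separating parameter $c$, and $Y=\{a\in D: f(a)\leqslant c\}$ is the required definable set. If you want to salvage your outline, replace ``antichain'' by ``independent set'' and ``join'' by ``meet'' and prove the separation lemma; the auxiliary $b_S$ machinery can then be dropped entirely.
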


% \noindent
% Proposition~\ref{prop:bool} can be used to get other examples of $\aleph_0$-categorical trace maximal theories.
% Archer and Macpherson~\cite{archer-macpherson} shows that an $\aleph_0$-categorical group which is solvable but not virtually nilpotent interprets an infinite boolean algeba, so such a structure is trace maximal.
% Macintyre and Rosenstein~\cite{macintyre-rosenstein} show that an infinite $\aleph_0$-categorical without nilpotent elements interprets an infinite boolean algebra, so again such a structure is trace maximal.

\noindent
We say that a subset $A$ of a boolean algebra is \textbf{independent} if the subalgebra generated by $A$ is free, equivalently for any $\alpha_1,\ldots,\alpha_k,\beta_1,\ldots,\beta_\ell \in A$ with $\alpha_i \ne \beta_j$ for all $i,j$ we have 
\[(\alpha_1 \land \cdots \land \alpha_k) \land ( \neg \beta_1 \land \cdots \land \neg \beta_\ell) \ne 0\]

\begin{lemma}
\label{lem:boolean}
Suppose that $\Sa B$ is a boolean algebra, $A$ is an independent subset of $B$, and $(\beta_i : i \le k)$, $(\alpha^i_j : i \in \{1,\ldots,n\}, 1 \le j \le k)$ are elements of $A$ such that for all $i \in \{1,\ldots,n\}$:
\begin{enumerate}
\item $|\{ \beta_1,\ldots,\beta_k\}| = k = |\{\alpha^i_1,\ldots,\alpha^i_k\}|$, and
\item $\{ \beta_1,\ldots,\beta_k \} \ne \{\alpha^i_1,\ldots,\alpha^i_k\}$. 
\end{enumerate}
Then $(\beta_1 \land \cdots \land \beta_k) \nleqslant \bigvee_{i = 1}^{n} (\alpha^i_1 \land \cdots \land \alpha^i_k)$.
\end{lemma}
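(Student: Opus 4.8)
\textbf{Proof plan for Lemma~\ref{lem:boolean}.}

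The plan is to work in the free Boolean algebra on the set $A$ of independent generators, since everything in sight happens inside the subalgebra generated by the $\beta_i$ and the $\alpha^i_j$, and by independence this subalgebra is free on the (finite) set of generators actually appearing. So I would fix a finite set $S \subseteq A$ containing all the $\beta_\ell$ and all the $\alpha^i_j$, and identify the free Boolean algebra on $S$ with the algebra of subsets of $\{0,1\}^S$, where each generator $\gamma \in S$ corresponds to the clopen set $\{f : f(\gamma) = 1\}$. A meet $\gamma_1 \wedge \cdots \wedge \gamma_k$ of $k$ \emph{distinct} generators then corresponds to the set of $f$ taking value $1$ on all of $\gamma_1,\dots,\gamma_k$, i.e. a ``cylinder'' of codimension $k$. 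The inequality $(\beta_1 \wedge \cdots \wedge \beta_k) \nleqslant \bigvee_i (\alpha^i_1 \wedge \cdots \wedge \alpha^i_k)$ then becomes the combinatorial claim that there is an $f \in \{0,1\}^S$ with $f \equiv 1$ on $\{\beta_1,\dots,\beta_k\}$ but, for every $i$, $f$ is \emph{not} identically $1$ on $\{\alpha^i_1,\dots,\alpha^i_k\}$.

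The key step is to construct such an $f$. For each $i$, hypothesis (2) together with (1) (both sides have exactly $k$ elements) gives an element $\delta_i \in \{\alpha^i_1,\dots,\alpha^i_k\} \setminus \{\beta_1,\dots,\beta_k\}$: if $\{\alpha^i_1,\dots,\alpha^i_k\}$ were contained in $\{\beta_1,\dots,\beta_k\}$ then by equal cardinality the two sets would be equal, contradicting (2). Now define $f \colon S \to \{0,1\}$ by $f(\beta_\ell) = 1$ for $\ell = 1,\dots,k$, $f(\delta_i) = 0$ for $i = 1,\dots,n$, and $f$ arbitrary (say $0$) on the remaining generators. This is well-defined: the $\delta_i$ all lie outside $\{\beta_1,\dots,\beta_k\}$, so the prescriptions never conflict. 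Under the identification above, $f$ witnesses $f \in (\beta_1 \wedge \cdots \wedge \beta_k)$, while for each $i$ we have $f(\delta_i) = 0$ with $\delta_i \in \{\alpha^i_1,\dots,\alpha^i_k\}$, so $f \notin (\alpha^i_1 \wedge \cdots \wedge \alpha^i_k)$; hence $f$ is not in the join, and the join does not dominate $\beta_1 \wedge \cdots \wedge \beta_k$.

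The only thing requiring a word of care is the reduction to the free algebra: independence of $A$ says precisely that the subalgebra generated by any finite subset is free on that subset, and order relations ($\leqslant$, i.e. $x \wedge \neg y = 0$) and the operations $\wedge, \vee, \neg$ are absolute between a Boolean algebra and any subalgebra, so no information is lost by passing to the subalgebra generated by $S$. I do not expect a genuine obstacle here; the content is the elementary observation that distinct subsets of the same finite size are incomparable, repackaged via Stone duality for free Boolean algebras. One could also phrase the whole argument purely algebraically using the independence inequality $(\alpha_1 \wedge \cdots \wedge \alpha_k) \wedge (\neg\beta_1 \wedge \cdots \wedge \neg\beta_\ell) \neq 0$ directly: take $\beta_1 \wedge \cdots \wedge \beta_k \wedge \neg\delta_1 \wedge \cdots \wedge \neg\delta_n$, which is nonzero by independence (the $\beta$'s and $\delta$'s being pairwise distinct), and check it meets each $\alpha^i_1 \wedge \cdots \wedge \alpha^i_k$ in $0$ because $\delta_i$ appears negated. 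I would present this algebraic version as the main line and mention the Stone-space picture only as motivation.
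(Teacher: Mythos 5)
Your proposal is correct, and the ``algebraic version'' you sketch in the final paragraph is exactly the paper's proof: pick for each $i$ some $\delta_i \in \{\alpha^i_1,\ldots,\alpha^i_k\}\setminus\{\beta_1,\ldots,\beta_k\}$ (forced by $(1)$ and $(2)$), note that $\gamma := \beta_1\wedge\cdots\wedge\beta_k\wedge\neg\delta_1\wedge\cdots\wedge\neg\delta_n$ is nonzero by independence, and observe $\gamma\leqslant\beta_1\wedge\cdots\wedge\beta_k$ while $\gamma\wedge(\alpha^i_1\wedge\cdots\wedge\alpha^i_k)=0$ for each $i$. The Stone-space computation you give as the ``main line'' is the same argument via duality, so there is no substantive difference. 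One small wording point: what independence requires here is that each $\beta_\ell$ differ from each $\delta_i$, not that the $\delta_i$ be pairwise distinct among themselves (they need not be, since different $\alpha$-sets may share elements outside $\{\beta_1,\ldots,\beta_k\}$); the paper's formulation of independence only asks for disjointness across the positive and negative sides, so no issue arises, but the phrase ``the $\beta$'s and $\delta$'s being pairwise distinct'' should be read as cross-distinctness only.
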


\begin{proof}
Note that for each $i \in \{1,\ldots,n\}$ there is $j(i)$ such that $\alpha^i_{j(i)} \notin \{\beta_1,\ldots,\beta_k \}$.
By independence we have
$$ \gamma := (\beta_1\land \cdots \land \beta_k) \land (\neg \alpha^1_{j(1)} \land \cdots \land \neg \alpha^n_{j(n)}) \ne 0. $$
Then $\gamma \le (\beta_1\land \cdots \land \beta_k)$ and $\gamma \land \bigvee_{i = 1}^{n} (\alpha^i_1 \land \cdots \land \alpha^i_k) = 0$.
\end{proof}

\noindent
We now prove Proposition~\ref{prop:bool}.

\begin{proof}
We first make some basic remarks on boolean algebras which can probably be found in any standard reference\footnote{Or easily proven via Stone duality.}.
Any finite boolean algebra is isomorphic to the boolean algebra of subsets of $\{1,\ldots,n\}$ for some $n$.
It follows that if $\Sa B,\Sa B^{*}$ are finite boolean algebras with $|B| \le |B^{*}|$ then there is an embedding $\Sa B \to \Sa B^{*}$.
The free boolean algebra on $n$ generators has cardinality $2^{2^n}$ and clearly has an independent subset of cardinality $n$.
It follows that any finite boolean algebra with $\ge 2^{2^n}$ elements contains an independent subset of cardinality $n$.
Recall also that a finitely generated boolean algebra is finite.
It follows that any infinite boolean algebra contains an independent subset of cardinality $n$ for all $n$.

\medskip\noindent
Let $\Sa B = (B;\land, \vee, < ,0,1)$ be an infinite boolean algebra.
We may suppose $\Sa B$ is $\aleph_1$-saturated.
Applying the previous paragraph and saturation we fix a countably infinite independent subset $A$ of $B$.
By Lemma~\ref{lem:max-1} it suffices to suppose $E$ is a $k$-hypergraph on $A$ and produce  definable $Y \subseteq B^k$ such that $ E(a_1,\ldots,a_k) \Longleftrightarrow (a_1,\ldots,a_k) \in Y$ for all $a_1,\ldots,a_k \in A$.
Let $f \colon A^k \to B$ be given by $f(a_1,\ldots,a_k) = a_1 \land \ldots \land a_k$.
Let $D$ be the set of $(a_1,\ldots,a_k) \in A^k$ such that $|\{a_1,\ldots,a_k\}| = k$.
Let $(a^i : i < \upomega)$ be an enumeration of $E$.
Lemma~\ref{lem:boolean} shows that if $b \in D$ and $\neg E(b)$ then $f(b) \nleqslant \bigvee_{i = 1}^{n} f(a^i)$ for all $n$.
Hence for any $n$ there is a $c \in B$ such that $f(a^i) \le c$ for all $i \leqslant n$ and $f(b) \nleqslant c$ for all $b \in D$ such that $\neg E(b)$.
By saturation there is $c \in B$ such that $f(a^i) \leqslant c$ for all $i < \upomega$ and $f(b) \nleqslant c$ for all $b \in D$ with $\neg E(b)$.
Therefore we let $Y$ be the set of $a \in B^k$ such that $a \in D$ and $f(a) \leqslant c$.
\end{proof}

Corollary~\ref{cor:bool} follows from Proposition~\ref{prop:cat max}.

\begin{corollary}
\label{cor:bool}
The countable atomless boolean algebra trace defines any countable structure in a countable language.
\end{corollary}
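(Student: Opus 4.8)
The plan is to simply combine Proposition~\ref{prop:bool} with Proposition~\ref{prop:cat max}, after recording the two standard facts that the countable atomless boolean algebra is countable and $\aleph_0$-categorical. Write $\Sa B$ for the countable atomless boolean algebra. By construction $\Sa B$ is countable, and it is a classical theorem (going back to Tarski) that the theory of atomless boolean algebras is complete and $\aleph_0$-categorical; equivalently, $\Sa B$ is the \Fraisse limit of the class of finite boolean algebras, so Fact~\ref{fact:homo} already gives $\aleph_0$-categoricity and quantifier elimination.

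Next I would invoke Proposition~\ref{prop:bool}, which says that \emph{every} infinite boolean algebra is trace maximal; in particular $\Sa B$ is trace maximal, so $\Th(\Sa B)$ trace defines every theory. At this point all three hypotheses of Proposition~\ref{prop:cat max} are verified for $\Sa M = \Sa B$: it is countable, $\aleph_0$-categorical, and trace maximal. Applying that proposition yields that any countable structure in a countable language is trace definable in $\Sa B$, which is exactly the assertion of Corollary~\ref{cor:bool}.

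There is essentially no obstacle here: the content is entirely front-loaded into Propositions~\ref{prop:bool} and \ref{prop:cat max}, and the only things to check for the specific structure $\Sa B$ are the two elementary facts about atomless boolean algebras. The proof to be written is therefore a two- or three-sentence deduction, and the only care needed is to cite the $\aleph_0$-categoricity of the atomless boolean algebra (or derive it from Fact~\ref{fact:homo} via the \Fraisse-limit description) rather than reprove it.
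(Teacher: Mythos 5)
Your proposal is correct and matches the paper's own proof exactly: the paper states that Corollary~\ref{cor:bool} follows from Proposition~\ref{prop:cat max}, with trace maximality of the countable atomless boolean algebra supplied by Proposition~\ref{prop:bool}. The supporting observations about $\aleph_0$-categoricity are standard and correctly invoked.
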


\subsubsection{Trace maximal simple theories}

We first describe two simple simple trace maximal theories.
First, suppose $L$ is a relational language and $L$ contains an $n$-ary relation for all $n \ge 2$.
Let $\O_L$ be the empty $L$-theory and $\O^*_L$ be the model companion of $\O_L$, which exists by a theorem of Winkler~\cite{Winkler}.
By \cite{KRExp} $\O^*_L$ is simple.
We show that $\O^*_L$ is trace maximal.

\medskip
Suppose that $\Sa M \models \O^*_L$ is $\aleph_1$-saturated and let $A \subseteq M$ be countably infinite.
It is easy to see that for each $X \subseteq A^k$ and $(k + 1)$-ary $R \in L$ there is $c \in M$ such that for all $a_1,\ldots,a_k \in A$ we have $(a_1,\ldots,a_k) \in X \Longleftrightarrow \Sa M \models R_{k + 1}(a_1,\ldots,a_k,c)$.
Apply Lemma~\ref{lem:max}.

\medskip\noindent
We now describe a symmetric analogue of $\O^*_L$.
For each $k \ge 2$ let $E_k$ be a $k$-ary relation and let $L = \{ E_k : k < \upomega \}$.
Let $T$ be the $L$-theory such that $(M;(E_k)_{k \ge 2})$ if and only if each $E_k$ is a $k$-hypergraph on $M$.
Then $T$ has a model companion $T^*$ and $T^*$ is simple.
Suppose that $\Sa M \models T$ is $\aleph_1$-saturated and $A \subseteq M$ is countably infinite.
We show that $\Sa M$ is trace maximal.
It is easy to see that for any $k$-hypergraph $E$ on $M$ there is $c \in M$ such that we have $E(a_1,\ldots,a_k) \Longleftrightarrow \Sa M \models E_{k + 1}(a_1,\ldots,a_k,c)$ for all $a_1,\ldots,a_k \in A$.
Apply Lemma~\ref{lem:max-1}.

\medskip\noindent
We now discuss expansions by generic predicates.
Suppose that $P$ is a unary relation not in $L$ and let $T_P$ be $T$ considered as an $L \cup \{P\}$ theory.
After possibly Morleyizing we suppose that $T$ is model complete.
If $T$ eliminates $\exists^\infty$ then $T_P$ has a model companion $T^*_P$ and $T^*_P$ is simple when $T$ is stable~\cite{Cha-Pi,DMS-generic}.

\begin{proposition}
\label{prop:generic-predicate}
Suppose that $A$ is an infinite abelian group, $T$ expands $\Th(A)$, and $T$ eliminates $\exists^\infty$.
Then $T^*_P$ is trace maximal.
\end{proposition}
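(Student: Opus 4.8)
The plan is to verify condition (2) of Lemma~\ref{lem:max-1}. Recall that $T$ is model complete, so $T_P$ is an inductive $L\cup\{P\}$-theory and hence any model of the model companion $T_P^*$ is existentially closed in the class of models of $T_P$ --- that is, in the class of pairs $(\Sa N, Q)$ with $\Sa N\models T$ and $Q\subseteq N$ an arbitrary subset. Fix an $\aleph_1$-saturated $(\Sa M,P)\models T_P^*$; the reduct of $\Sa M$ to the language of abelian groups is an infinite abelian group elementarily equivalent to $A$. The first step is to record the following genericity fact: for pairwise distinct $s_1,\dots,s_n\in M$ and any $\varepsilon\in\{0,1\}^n$ there is $c\in M$ with $s_j+c\in P\iff\varepsilon_j=1$ for all $j$. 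To see this, take a proper elementary extension $\Sa M\prec\Sa M'$ and $c^*\in M'\setminus M$. Since the $s_j$ lie in the subgroup $M$, the elements $s_j+c^*$ are pairwise distinct and lie in $M'\setminus M$, so we may choose $P'\subseteq M'$ with $P'\cap M=P$ and $s_j+c^*\in P'\iff\varepsilon_j=1$. Then $(\Sa M',P')\models T_P$ extends $(\Sa M,P)$ and satisfies the existential $L\cup\{P\}$-statement over the parameters $s_1,\dots,s_n$ asserting the existence of such a $c$; by existential closedness this statement already holds in $(\Sa M,P)$.

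Next I would fix the parameter set $B$. It is standard, and checked by an easy case analysis according to whether $A$ has an element of infinite order, contains a Pr\"ufer group $\Z(p^\infty)$, or contains an infinite independent subset, that every infinite abelian group contains a sequence $(b_i:i<\omega)$ of distinct elements such that distinct finite subsets $S\subseteq\omega$ have distinct sums $\sum_{i\in S}b_i$ (take $b_i=2^i a$ for $a$ of infinite order; $b_i=p^{-i}+\Z\in\Z(p^\infty)$; or $b_i=e_i$ for an independent sequence $(e_i)$). Applying this to the group reduct of $\Sa M$, fix such a sequence and put $B=\{b_i:i<\omega\}\subseteq M$; I will verify Lemma~\ref{lem:max-1}(2) with $m=1$ and this $B$.

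Now let $E$ be a $k$-hypergraph on $B$ and let $\mathcal{S}$ be the collection of $k$-element subsets $\{i_1,\dots,i_k\}$ of $\omega$ with $E(b_{i_1},\dots,b_{i_k})$. For a $k$-element $T=\{i_1,\dots,i_k\}\subseteq\omega$ write $s_T=\sum_{i\in T}b_i$; by the choice of $(b_i)$ the $s_T$ are pairwise distinct. By the genericity fact every finite subset of the set of conditions $\{\,s_T+c\in P:T\in\mathcal{S}\,\}\cup\{\,s_T+c\notin P:T\notin\mathcal{S}\,\}$ is satisfiable in $(\Sa M,P)$; as the $s_T$ form a countable subset of $M$, this is a consistent set of formulas over a countable parameter set, hence is realized by some $c\in M$ by $\aleph_1$-saturation. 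Let $X\subseteq M^k$ be the $(\Sa M,P)$-definable set of $(x_1,\dots,x_k)$ with $x_i\neq x_j$ for $i\neq j$ and $P(x_1+\dots+x_k+c)$. If $(a_1,\dots,a_k)\in B^k$ has a repeated entry then neither $E(a_1,\dots,a_k)$ nor $(a_1,\dots,a_k)\in X$ holds; if $a_j=b_{i_j}$ with the $i_j$ pairwise distinct then $(a_1,\dots,a_k)\in X\iff s_{\{i_1,\dots,i_k\}}+c\in P\iff\{i_1,\dots,i_k\}\in\mathcal{S}\iff E(a_1,\dots,a_k)$. Thus $X$ has the required property, and Lemma~\ref{lem:max-1} gives that $(\Sa M,P)$, hence $T_P^*$, is trace maximal.

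I expect the genericity fact of the first paragraph to be the only point requiring care: it is exactly the statement that a generic predicate can be prescribed freely on an arbitrary finite set of distinct group-translates, and it rests on the description of models of $T_P^*$ as existentially closed models of $T_P$. Everything else --- the combinatorial fact about abelian groups and the saturation argument producing $c$ --- is routine. Note that the hypothesis that $T$ eliminates $\exists^\infty$ enters only through the existence of the model companion $T_P^*$; the argument uses nothing about $T$ beyond the fact that its reduct to the language of abelian groups is the theory of an infinite abelian group.
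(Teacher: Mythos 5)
Your proof is correct, and it takes a genuinely different route from the paper's. The paper starts with a sufficiently saturated model $\Sa A\models T$, constructs the predicate $P$ by hand so that it encodes \emph{every} $k$-hypergraph on a fixed infinite set $X$ with the distinct-finite-subset-sums property simultaneously (translating $X[k]$ by continuum-many shifts $\beta_E$ chosen to be pairwise disjoint), and only then extends $(\Sa A,P)$ to a model of $T^*_P$; you instead start directly inside an $\aleph_1$-saturated model of $T^*_P$, realize each hypergraph one at a time by a genericity argument, and use existential closedness of models of the model companion in models of $T_P$ to produce a single shift parameter $c$ per hypergraph by saturation. Both proofs close by verifying Lemma~\ref{lem:max-1}(2). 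Your version has two small advantages: it only needs $\aleph_1$-saturation, whereas the paper's all-at-once encoding tacitly needs enough room in $A$ to accommodate $2^{\aleph_0}$ pairwise disjoint countable translates; and your explicit case analysis (element of infinite order, Pr\"ufer subgroup, infinite independent set) justifying the existence of a dissociated sequence is more careful than the paper's bare invocation of ``an infinite $\Z$-linearly independent subset,'' a phrase which, read literally, fails for groups of bounded exponent even though the underlying distinct-subset-sums property is still available there. The paper's construction is slightly more ``hands-on'' in that it exhibits $P$ explicitly before passing to the companion, while yours leans on the abstract characterization of $T^*_P$-models as existentially closed $T_P$-models; both are standard and correct, and your appeal to that characterization is justified since $T$ is model complete (hence $\forall\exists$), so $T_P$ is inductive.
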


In particular the expansion of an algebraically closed field or $(\Z;+)$ by a generic unary predicate is simple and trace maximal.
The latter structure is natural.
Suppose that $\upmu$ is the Bernoulli coin-tossing probability measure on the power set $\mathcal{P}(\Z)$.
One can show that 
\[
\upmu \left(\{ X \subseteq \Z : (\Z;+,X) \models \Th(\Z;+)^*_P\} \right) = 1.
\]
Informally, the expansion of $(\Z;+)$ by a random subset of $\Z$ is trace maximal.

\begin{proof}
We may suppose that $\Sa A\models T$ is $\aleph_1$-saturated.
Let $X$ be an infinite $\Z$-linearly independent subset of $V$.
So if $\alpha_1,\ldots,\alpha_k \in A$ and $\beta_1,\ldots,\beta_k \in A$ are distinct then we have
\[
\alpha_1 + \cdots + \alpha_k = \beta_1 + \cdots + \beta_k \quad\Longleftrightarrow\quad \{\alpha_1,\ldots,\alpha_k\} = \{\beta_1,\ldots,\beta_k\}.
\]
Let $X[k] = \{ \alpha_1 + \cdots + \alpha_k : \alpha_1,\ldots,\alpha_k \in X, \alpha_i\ne\alpha_j \text{ when } i \ne j\}$ for each $k \ge 1$.
Let $\Cal H_k$ be the collection of $k$-hypergraphs on $X$ for each $k \ge 1$.
For each $E \in \Cal H_k$ on $X$ fix $\beta_E \in A$ such that $(X[k] + \beta_E : k \ge 1, E \in \Cal H_k)$ is a collection of pairwise disjoint sets.
Let $P$ be a unary predicate on $A$ such that
\[
E(\alpha_1,\ldots,\alpha_k) \quad \Longleftrightarrow \quad P(\alpha_1 + \cdots + \alpha_k + \beta_E)
\] for all $k \ge 1$, $E \in \Cal H_k$, and distinct $\alpha_1,\ldots,\alpha_k \in X$.
Then $(\Sa A,P)$ extends to a $T^*_P$-model, so we may suppose that $(\Sa V,P) \models T^*_P$.
Lemma~\ref{lem:max-1} shows that $(\Sa V,P)$ is trace maximal.
\end{proof}

In the next section we show that another important example of a simple structure, psuedofinite fields, are trace maximal.
As a corollary any completion of $\mathrm{ACFA}$ is trace maximal.

\subsubsection{PAC and PRC fields}
\label{section:pac}
I do not know of an $\mathrm{IP}$ field which is not trace maximal.
Chernikov and Hempel conjectured that a field which is $k$-independent for some $k \ge 2$ is $k$-independent for all $k$~\cite[Conjecture 1]{hempel-chernkov}.
In this section we show that many of the main examples of model-theoretically tame $\mathrm{IP}$ fields are trace maximal.

\medskip
Let $K$ be a field.
Then $K$ is $\mathrm{PAC}$ if $K$ is existentially closed in any purely transendental field extension of $K$ and $K$ is $\prc$ if $K$ is existentially closed in any formally real purely transendental field extension of $K$.

\begin{proposition}
\label{prop:pseudofinite}
Let $K$ be a field and suppose that one of the following holds:
\begin{enumerate}
\item $K$ is $\mathrm{PAC}$ and not separably closed, or
\item $K$ is pseudo real closed and not real closed.
\end{enumerate}
Then $K$ is trace maximal.
\end{proposition}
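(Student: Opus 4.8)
The plan is to verify condition~(2) of Lemma~\ref{lem:max}: we produce a (highly) saturated $K^{*}\equiv K$ together with an infinite $A\subseteq K^{*}$ such that for every $k$ and every $X\subseteq A^{k}$ there is a $K^{*}$-definable $Y\subseteq (K^{*})^{k}$ with $Y\cap A^{k}=X$. Note first that each of ``$\pac$'', ``$\prc$'', ``not separably closed'', ``not real closed'' is preserved under elementary equivalence: the first two are elementary classes, and failure of separable (resp.\ real) closedness is witnessed by a single first-order sentence (existence of a separable irreducible polynomial of some fixed degree $\ge 2$ without a root; failure of a fixed instance of the intermediate value property), so any $K^{*}\equiv K$ has the same properties. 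I first reduce the $\prc$ case to the $\pac$ case: if $K$ is $\prc$, not real closed, not separably closed, and not formally real, then $K$ is already $\pac$; otherwise $-1\notin (K^{\times})^{2}$ and $K':=K(\sqrt{-1})$ is $\pac$ (a standard fact about $\prc$ fields) and not separably closed (else $K$ would be real closed or separably closed by Artin--Schreier); since $K'$ is interpretable in $K$, Propositions~\ref{prop:trace-interpret} and \ref{prop:trace-basic} reduce trace maximality of $K$ to that of $K'$. So it suffices to treat the $\pac$ case.

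Assume now $K$ is $\pac$ and not separably closed; then $\mathrm{Gal}(K)$ is a nontrivial projective profinite group (Ax), hence surjects onto $\Z/\ell\Z$ for some prime $\ell$, so $K$ has a cyclic degree-$\ell$ extension. If $\ell\neq\chara(K)$ we may replace $K$ by the finite extension $K(\mu_{\ell})$ ($\mu_{\ell}$ the $\ell$-th roots of unity): it is $\pac$, not separably closed, interpretable in $K$, and still has a cyclic degree-$\ell$ extension, so by the interpretation argument above this replacement is harmless. Thus we may assume either (a) $\ell\neq\chara(K)$ and $\mu_{\ell}\subseteq K$, so that $S:=\{x^{\ell}:x\in K^{\times}\}$ is a proper $\emptyset$-definable subgroup of $K^{\times}$ (Kummer theory), or (b) $\ell=\chara(K)=:p$, so that $S:=\{x^{p}-x:x\in K\}$ is a proper $\emptyset$-definable subgroup of $(K,+)$ (Artin--Schreier theory). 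Fix $d\in K\setminus S$, and fix a sufficiently saturated $K^{*}\equiv K$ of infinite transcendence degree over the prime field; everything above survives in $K^{*}$.

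The geometric heart is: given finitely many distinct $b_{1},\dots,b_{r}\in K^{*}$ and any $\epsilon\colon\{1,\dots,r\}\to\{0,1\}$, there is $\gamma\in K^{*}$ with $\gamma\neq b_{i}$ for all $i$, with $(\gamma-b_{i})^{-1}\in S$ when $\epsilon(i)=1$, and with $(\gamma-b_{i})^{-1}\in dS$ (case (a)) resp.\ $\in d+S$ (case (b)) when $\epsilon(i)=0$. Indeed, the affine curve $V_{\epsilon}$ over $K^{*}$ cut out by $z_{i}^{\ell}(\gamma-b_{i})=1$ (for $\epsilon(i)=1$) and $z_{i}^{\ell}(\gamma-b_{i})=d^{-1}$ (for $\epsilon(i)=0$), resp.\ the Artin--Schreier analogue $z_{i}^{p}-z_{i}=(\gamma-b_{i})^{-1}$ and $z_{i}^{p}-z_{i}=(\gamma-b_{i})^{-1}-d$, is geometrically irreducible: over $\overline{K^{*}}$ the functions $(\gamma-b_{i})^{-1}$ have distinct simple poles, hence are $\F_{\ell}$-independent in $\overline{K^{*}}(\gamma)^{\times}/(\overline{K^{*}}(\gamma)^{\times})^{\ell}$ (resp.\ modulo Artin--Schreier), and the constant $d^{\pm1}\in\overline{K^{*}}{}^{\times}$ does not alter these classes, so the iterated cover has full degree $\ell^{r}$ over $\overline{K^{*}}(\gamma)$. (In case (b) one really must invert, since the elements $\gamma-b_{i}$ pairwise differ by constants, all of which are Artin--Schreier values over $\overline{K^{*}}$.) A geometrically irreducible curve over the infinite $\pac$ field $K^{*}$ has infinitely many $K^{*}$-points, so discarding the finitely many where some $z_{i}$ vanishes yields the desired $\gamma$.

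Finally I assemble the coding. Fix a countably infinite $A=\{a_{i}:i<\omega\}\subseteq K^{*}$ algebraically independent over the prime field, and for each $k$ a $\emptyset$-definable injection $\iota_{k}\colon A^{k}\to K^{*}$, e.g.\ $\iota_{k}(x_{1},\dots,x_{k})=x_{1}+x_{2}^{N}+x_{3}^{N^{2}}+\cdots+x_{k}^{N^{k-1}}$ with $N>k$, which is injective on $A^{k}$ because, viewing the value as a polynomial in the algebraically independent $a_{i}$'s, the monomials of each degree recover the tuple. Put $B_{k}=\iota_{k}(A^{k})$, whose elements are pairwise distinct. Given $k$ and $X\subseteq A^{k}$, the partial type in one variable $c$ consisting of the formulas ``$(\iota_{k}(\bar a)-c)^{-1}\in S$'' for $\bar a\in X$ and ``$(\iota_{k}(\bar a)-c)^{-1}\in dS$'' (resp.\ ``$\in d+S$'') for $\bar a\in A^{k}\setminus X$ is finitely satisfiable by the previous paragraph applied to finitely many distinct elements of $B_{k}$, hence realized by some $c_{X}\in K^{*}$; set $Y=\{\bar x\in(K^{*})^{k}:(\iota_{k}(\bar x)-c_{X})^{-1}\in S\}$. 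For $\bar a\in A^{k}$: if $\bar a\in X$ then $(\iota_{k}(\bar a)-c_{X})^{-1}\in S$, so $\bar a\in Y$; if $\bar a\notin X$ then $(\iota_{k}(\bar a)-c_{X})^{-1}\in dS$ (resp.\ $d+S$), which is disjoint from $S$ since $d\notin S$, so $\bar a\notin Y$. Hence $Y\cap A^{k}=X$, verifying condition~(2) of Lemma~\ref{lem:max}, so $K$ is trace maximal. The main obstacle is the geometric irreducibility of the iterated covers $V_{\epsilon}$, and in particular seeing in characteristic $p$ (with $\ell=p$) that the naive family $\gamma-b_{i}$ fails and that passing to $(\gamma-b_{i})^{-1}$, with its distinct simple poles, restores the required independence; a secondary point is the reduction of the $\prc$ case and the harmless finite-extension step arranging $\mu_{\ell}\subseteq K$.
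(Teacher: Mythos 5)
Your proof follows essentially the same strategy as the paper's: reduce $\prc$ to $\pac$ via $K(\sqrt{-1})$, pass to a finite (interpretable) extension where Kummer or Artin--Schreier theory provides a proper definable subgroup $S$, use the $\pac$ property to realize types coding arbitrary subsets of an independent set, and finish via Lemma~\ref{lem:max}. The genuine difference is that you re-prove Duret's lemmas from scratch: instead of citing Facts~\ref{fact:duret} and~\ref{fact:duret-1}, you establish geometric irreducibility of the iterated Kummer/Artin--Schreier covers of the affine line by the pole-order computation on $(\gamma-b_i)^{-1}$, which handles $\ell\ne p$ and $\ell=p$ in one uniform way (the paper uses an additive coding $c+a$ for Kummer and a multiplicative one $ac$ for Artin--Schreier, the latter needing $\F_p$-linear independence of the $a$'s). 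That computation is correct, and your coordinatization $\iota_k$ is a fine replacement for the paper's $t^k+\alpha_{k-1}t^{k-1}+\cdots$ and $a_1\cdots a_k$ maps. Minor point: $\iota_k(\bar a)-c=-(\gamma-b_i)$ introduces a sign; this is harmless since $-S=S$ in the Artin--Schreier case and in the Kummer case one may just use $b_i-\gamma$ or note that $-S$ is equally usable as a definable target.

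There is, however, a real gap at the opening of the $\pac$ case: a nontrivial projective profinite group need not surject onto any $\Z/\ell\Z$. The universal Frattini cover $\widetilde{A_5}$ of $A_5$ is projective, and it is perfect: the kernel of $\widetilde{A_5}\twoheadrightarrow A_5$ lies in the Frattini subgroup, and since $A_5$ is perfect the closed commutator subgroup together with the kernel generate $\widetilde{A_5}$, so the closed commutator subgroup is all of $\widetilde{A_5}$. By Lubotzky--van den Dries every projective profinite group is $\mathrm{Gal}(K)$ for some $\pac$ field $K$, so for such $K$ your very first step ``hence surjects onto $\Z/\ell\Z$'' fails. The repair is exactly what the paper's citation of Macintyre (Fact~\ref{fact:macintyre}) supplies: take any nontrivial finite quotient $G$ of $\mathrm{Gal}(K)$, a prime $\ell\mid |G|$, and a Sylow $\ell$-subgroup $P\le G$; the preimage of $P$ is an open subgroup surjecting onto $\Z/\ell\Z$ (a nontrivial finite $\ell$-group has a $\Z/\ell\Z$ quotient via its Frattini quotient), so the corresponding finite, hence interpretable, extension $K'$ of $K$ has a cyclic degree-$\ell$ extension. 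After that replacement (and a further finite extension adjoining $\mu_\ell$ if $\ell\ne\chara K$, for which your argument that the $\Z/\ell$ quotient survives is correct), the rest of your proof goes through.
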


\noindent
Psuedofinite fields and infinite algebraic extensions of finite fields are $\mathrm{PAC}$.
It follows that pseudofinite fields and  infinite proper subfields of $\mathbb{F}^\mathrm{alg}_p$ are trace maximal.
These fields are bounded $\mathrm{PAC}$ and hence simple, see \cite{Chatzidakis}.

\medskip\noindent
We make use of Duret's proof~\cite{duret} that a non-separably closed $\mathrm{PAC}$ field is $\mathrm{IP}$.
Given a field $K$ of characteristic $p \ne 0$ we let $\wp \colon K \to K$ be the Artin-Schreier map $\wp(x) = x^p - x$.
Fact~\ref{fact:macintyre} was essentially proven by Macintryre~\cite{Macintyre-omegastable}.

\begin{fact}
\label{fact:macintyre}
Suppose that $K$ is not separably closed.
Then there is a finite extension $F$ of $K$ such that one of the following holds:
\begin{enumerate}
\item the Artin-Schreier map $\wp \colon F \to F$ is not surjective, or
\item there is a prime $p \ne \chara(K)$ such that $F$ contains a primitive $p$th root of unity and the $p$th power map $F^\times \to F^\times$ is not surjective.
\end{enumerate}
\end{fact}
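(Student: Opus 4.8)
The final statement to prove is Fact~\ref{fact:macintyre}: if $K$ is not separably closed, then some finite extension $F$ of $K$ has either $\wp\colon F\to F$ non-surjective, or a prime $p\ne\chara(K)$ with $F\ni\zeta_p$ and $(F^\times)^p\ne F^\times$.

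Since this is attributed to Macintyre, the plan is to give the standard Artin--Schreier/Kummer theory argument. First I would dispose of the trivial reduction: it suffices to find \emph{some} finite separable extension $L$ of $K$ that is not separably closed but admits a finite Galois extension of prime degree, and then pass to a suitable $F\supseteq L$; in fact one can just take $L=K^{\mathrm{sep}}\cap(\text{something})$, but more cleanly: since $K$ is not separably closed, the absolute Galois group $G=\mathrm{Gal}(K^{\mathrm{sep}}/K)$ is nontrivial, so it has a nontrivial finite quotient, hence (taking a Sylow subgroup and then a subgroup of prime index, or just using that any nontrivial profinite group surjects onto $\Z/\ell\Z$ for some prime $\ell$) there is a finite separable extension $E/K$ which is cyclic of prime degree $\ell$ over some intermediate field $F$ with $K\subseteq F\subseteq E$. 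The key point is then to analyze this degree-$\ell$ cyclic extension $E/F$.

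The core case analysis runs as follows. If $\ell=\chara(K)=p$, then by Artin--Schreier theory a cyclic extension of degree $p$ of $F$ has the form $F(\alpha)$ with $\alpha^p-\alpha\in F\setminus\wp(F)$; hence $\wp\colon F\to F$ is not surjective, giving conclusion (1). If $\ell\ne\chara(K)$, I would first enlarge $F$ to $F'=F(\zeta_\ell)$: the extension $E'=E(\zeta_\ell)=F'(\alpha)$ still has degree divisible by... here one must be slightly careful --- adjoining $\zeta_\ell$ has degree dividing $\ell-1$, which is coprime to $\ell$, so $[E':F']$ is still divisible by $\ell$, in fact equal to $\ell$, and $E'/F'$ is cyclic of degree $\ell$. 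Now $F'$ contains a primitive $\ell$th root of unity, so by Kummer theory $E'=F'(\sqrt[\ell]{a})$ for some $a\in F'^\times$ which is not an $\ell$th power in $F'^\times$; hence the $\ell$th power map $F'^\times\to F'^\times$ is not surjective, giving conclusion (2) with $p=\ell$ and $F$ replaced by $F'$. In both cases the ``$F$'' we output is a finite (separable) extension of $K$, as required.

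The main obstacle --- really the only nonroutine point --- is the group-theoretic reduction producing a prime-degree cyclic extension: one needs that a nontrivial profinite group $G$ admits an open subgroup $H$ and an open normal subgroup $N\triangleleft H$ with $H/N$ cyclic of prime order. This follows because $G$ has a proper open normal subgroup $N_0$ (so $G/N_0$ is a nontrivial finite group), then $G/N_0$ has a subgroup of prime index $\ell$ by taking a maximal subgroup containing... no, more simply: a nontrivial finite group has a quotient (of some subgroup) that is cyclic of prime order --- take any element of prime order $\ell$, it generates a $\Z/\ell\Z$; pulling back through $G\to G/N_0$ gives the desired $H\supseteq N$. Everything else is the textbook Artin--Schreier and Kummer correspondences plus the elementary observation that $\gcd(\ell,\ell-1)=1$ so adjoining $\zeta_\ell$ does not kill the degree-$\ell$ cyclic piece. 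I would present these as two short displayed case arguments after stating the reduction as a preliminary claim.
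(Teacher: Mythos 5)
Your proof is correct. The paper gives no proof of this fact, simply attributing it to Macintyre, so there is no internal argument to compare against; your route is the standard one, passing to a cyclic prime-degree extension $E/F$ via the Galois correspondence and then splitting into an Artin--Schreier case ($\ell=\chara K$) and a Kummer case ($\ell\ne\chara K$, adjoining $\zeta_\ell$ and using $\gcd(\ell,\ell-1)=1$ to see that $[E(\zeta_\ell):F(\zeta_\ell)]=\ell$ and $E(\zeta_\ell)/F(\zeta_\ell)$ is still cyclic). One minor slip in the exposition: the parenthetical suggestion that any nontrivial profinite group surjects onto $\Z/\ell\Z$ for some prime $\ell$ is false ($A_5$ already has no such quotient), but your actual reduction does not use it --- you correctly extract a cyclic \emph{subquotient} of prime order inside the finite quotient $G/N_0$ via Cauchy's theorem, taking $H$ to be the preimage of the cyclic subgroup and $N=N_0$, and $N\triangleleft H$ because $N_0\triangleleft G$; that is exactly what is needed.
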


\noindent
Fact~\ref{fact:duret} is~\cite[Lemma~6.2]{duret}.

\begin{fact}
\label{fact:duret}
Suppose that $K$ is a $\mathrm{PAC}$ field, $p \ne \mathrm{Char}(K)$ is a prime, $K$ contains a primitive $p$th root of unity, and the $p$th power map $K^\times \to K^\times$ is not surjective.
Let $A,B$ be disjoint finite subsets of $K$.
Then there is $c \in K$ such that $c + a$ is a $p$th power for every $a \in A$ and $c + b$ is not a $p$th power for every $b \in B$.
\end{fact}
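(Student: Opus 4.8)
The plan is to realize $c$ as a rational point on an explicit curve over $K$ whose defining equations encode the required conditions. Using the hypothesis that the $p$-th power map is not surjective, fix once and for all an element $d \in K^\times$ that is not a $p$-th power. Let $C$ be the affine variety over $K$ with coordinate $c$ together with coordinates $(u_a)_{a \in A}$ and $(v_b)_{b \in B}$, cut out by
\[
u_a^p = c + a \quad (a \in A), \qquad\qquad d\, v_b^p = c + b \quad (b \in B).
\]
Any $K$-point of $C$ at which $v_b \neq 0$ for all $b$ produces a $c \in K$ with $c + a = u_a^p \in (K^\times)^p$ for every $a \in A$, and with $c + b = d v_b^p \in d(K^\times)^p$ for every $b \in B$; since $p$ is prime and $d \notin (K^\times)^p$, the coset $d(K^\times)^p$ is disjoint from $(K^\times)^p$ and does not contain $0$, so no $c + b$ is a $p$-th power. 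Thus it suffices to find such a point. Let $C^\circ \subseteq C$ be the open subvariety where every $v_b \neq 0$; it is the complement in $C$ of the fibres of the coordinate $c$ over the finite set $\{-b : b \in B\}$, hence is nonempty.

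The crux is to prove that $C$ (equivalently $C^\circ$) is geometrically irreducible, after which the PAC hypothesis on $K$ — in its geometric form, that every nonempty absolutely irreducible variety over $K$ has a $K$-point — immediately supplies the desired point of $C^\circ(K)$. For this I would use Kummer theory, which applies because $\mathrm{char}\,K \neq p$ and $K$, hence $K(c)$, contains a primitive $p$-th root of unity. The function field of $C$ is
\[
L = K(c)\big(\sqrt[p]{c+a}\ (a \in A),\ \sqrt[p]{(c+b)/d}\ (b \in B)\big),
\]
and by Kummer theory $[L : K(c)] = p^{|A|+|B|}$ exactly when the classes of the elements $c+a$ $(a \in A)$ and $(c+b)/d$ $(b \in B)$ are $\mathbb{F}_p$-linearly independent in $K(c)^\times/(K(c)^\times)^p$. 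This independence is immediate from unique factorization in $K[c]$: since $A$ and $B$ are disjoint finite sets, the linear polynomials $c+e$ for $e \in A \cup B$ are pairwise non-associate prime elements of the UFD $K[c]$, while $d$ is a unit there, so a product of these raised to exponents in $\{0,\dots,p-1\}$ can lie in $(K(c)^\times)^p$ only if all the exponents vanish. Running the same computation over $\overline{K}(c)$ — where $d$ has become a $p$-th power, so the twist disappears and one is looking at $\overline{K}(c)(\sqrt[p]{c+a},\sqrt[p]{c+b})$ — gives $[L\,\overline{K}(c) : \overline{K}(c)] = p^{|A|+|B|}$ as well. Since this equals $[L : K(c)]$, the extension $L/K(c)$ is regular (its coordinate ring is free over $K[c]$, so the base change is legitimate), i.e. $C$ is geometrically irreducible — which is exactly what was needed.

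What remains is routine: checking that $C$ is an integral scheme with function field $L$ (each equation is monic in a single new variable over the ring generated so far, so $C$ is finite over $\mathbb{A}^1_c$, and the linear disjointness established above shows the coordinate ring is a domain); dealing with the degenerate cases $A = \emptyset$ or $B = \emptyset$ by simply omitting the corresponding family of equations (the argument is otherwise unchanged); and observing $C^\circ \neq \emptyset$. The one genuinely substantive step is the geometric irreducibility of $C$, i.e. the Kummer linear-independence computation together with the comparison of degrees over $K(c)$ and over $\overline{K}(c)$; everything else is bookkeeping around the defining property of PAC fields. It is worth noting that the non-surjectivity of the $p$-th power map enters twice — to furnish the twisting element $d$ and to guarantee that $c + b$ is not a $p$-th power — and that encoding the \emph{non}-$p$-th-power condition on $c+b$ as membership in a fixed nontrivial coset $d(K^\times)^p$, which is a Kummer-cover (geometric) condition, is precisely the device that makes the whole set-up amenable to the PAC property.
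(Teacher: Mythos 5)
The paper does not reproduce a proof of this fact but cites it to Duret \cite[Lemma 6.2]{duret}, and your argument is essentially the standard one (and, to my knowledge, Duret's): encode ``$c+a$ is a $p$-th power'' via the Kummer equation $u_a^p=c+a$, encode ``$c+b$ is not a $p$-th power'' by twisting by a fixed non-power $d$ via $d\,v_b^p=c+b$ together with $v_b\neq0$, and then apply the geometric form of PAC to the resulting absolutely irreducible affine curve. The details check out: the $\mathbb{F}_p$-independence of $c+a$ $(a\in A)$ and $(c+b)/d$ $(b\in B)$ in $K(c)^\times/(K(c)^\times)^p$ follows from unique factorization in $K[c]$ exactly as you say, the same computation over $\overline{K}(c)$ (where $d$ becomes a $p$-th power) gives the same degree $p^{|A|+|B|}$, and this degree equality gives linear disjointness of the function field from $\overline{K}(c)$ over $K(c)$, hence geometric integrality of $C$; restricting to the principal open subset $C^\circ=D\bigl(\prod_b v_b\bigr)$ stays affine and absolutely irreducible, so PAC supplies the $K$-point. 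One small imprecision: you write ``the extension $L/K(c)$ is regular,'' but regularity should be over the base field $K$ (the correct intermediate statement is linear disjointness of $L$ and $\overline{K}(c)$ over $K(c)$, which combined with the obvious regularity of $K(c)/K$ yields regularity of $L/K$, i.e.\ that $L\otimes_K\overline{K}$ is a domain). That said, your parenthetical about freeness of the coordinate ring over $K[c]$ shows you have the right mechanism in mind, and the argument as a whole is sound.
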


\noindent
Fact~\ref{fact:duret-1} is~\cite[Lemma~6.2]{duret}.

\begin{fact}
\label{fact:duret-1}
Suppose that $K$ is a $\mathrm{PAC}$ field of characteristic $p \ne 0$.
Suppose that the Artin-Schreier map $\wp \colon K \to K$ is not surjective and $A,B$ are disjoint finite subsets of $K$ such that $A \cup B$ is linearly independent over $\F_p$.
Then there is $c \in K$ such that $ac$ is in the image of $\wp \colon K \to K$ for any $a \in A$ and $bc$ is not in the image of $\wp \colon K \to K$ for any $b \in B$.
\end{fact}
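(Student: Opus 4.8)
The plan is to exhibit $c$ as a coordinate of a $K$-rational point on a carefully chosen smooth geometrically connected affine curve over $K$, invoking the defining property of a PAC field (every geometrically integral $K$-variety has a $K$-rational point). The one genuinely clever point is that the requirement ``$bc\notin\wp(K)$'', which is not existential, can be \emph{strengthened} to an existential one: since $\wp$ is not surjective, fix once and for all some $d_0\in K\setminus\wp(K)$; then $bc\in d_0+\wp(K)$ forces $bc\notin\wp(K)$, while ``$bc\in d_0+\wp(K)$'' is the existential condition $\exists z\,(\wp(z)=bc-d_0)$.

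Concretely, write $A=\{a_1,\dots,a_k\}$ and $B=\{b_1,\dots,b_\ell\}$ (the cases $A=\emptyset$, handled by replacing $\wp(y_1)/a_1$ below by a free coordinate $c$, and $B=\emptyset$, handled by $c=0$, being easy variants; so assume $k,\ell\ge1$). I would introduce the affine $K$-variety $\tilde X$ with coordinates $y_1,\dots,y_k,z_1,\dots,z_\ell$ cut out by
\[
\wp(y_i)=\tfrac{a_i}{a_1}\,\wp(y_1)\ \ (2\le i\le k),\qquad \wp(z_j)=\tfrac{b_j}{a_1}\,\wp(y_1)-d_0\ \ (1\le j\le\ell).
\]
Projection to the $y_1$-line makes $\tilde X$ finite flat of degree $p^{k-1+\ell}$, hence a curve, and the Jacobian criterion (each relation contributes a $-1$ on the diagonal, since $\wp'(y)=py^{p-1}-1=-1$ in characteristic $p$) makes it smooth. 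Granting that $\tilde X$ is geometrically connected, it is geometrically integral, so PAC-ness of $K$ yields a point $(y_i^\ast,z_j^\ast)\in\tilde X(K)$; setting $c:=\wp(y_1^\ast)/a_1\in K$ we get $a_ic=\wp(y_i^\ast)\in\wp(K)$ for every $i$ (including $i=1$), and $b_jc-d_0=\wp(z_j^\ast)\in\wp(K)$, so $b_jc\in d_0+\wp(K)$ lies outside $\wp(K)$, as required.

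For the geometric connectedness I would pass to $\bar K$ and set $u=\wp(y_1)$, so that $\tilde X_{\bar K}$ is the fibre product over $\mathbb{A}^1_u$ of the Artin--Schreier covers with classes $[(a_i/a_1)u]$ and $[(b_j/a_1)u-d_0]$ in $\bar K(u)/\wp(\bar K(u))$ (the contribution $[u]=[(a_1/a_1)u]$ coming from $y_1\mapsto u$ is already among these, hence redundant). Since $\bar K$ is algebraically closed, $d_0=\wp(e_0)$ for some $e_0\in\bar K$, so $[(b_j/a_1)u-d_0]=[(b_j/a_1)u]$, and $\tilde X_{\bar K}$ is the $(\mathbb{Z}/p)^{k+\ell}$-cover attached to the classes $[(c'/a_1)u]$ with $c'$ ranging over $A\cup B$; it is connected exactly when these are $\F_p$-linearly independent. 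This I would deduce from the elementary fact that $\mu u\notin\wp(\bar K(u))$ for every $\mu\in\bar K^\times$ (compare the degree of $h^p-h$, equivalently the pole at infinity), which gives $\sum_{c'} m_{c'}[(c'/a_1)u]=\bigl[(\sum_{c'}m_{c'}c')\,u/a_1\bigr]$, vanishing iff $\sum_{c'}m_{c'}c'=0$; the hypothesized $\F_p$-linear independence of $A\cup B$ then forces all $m_{c'}=0$.

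I expect the main obstacle to be precisely this connectedness computation: one must first check that strengthening the negative conditions by $d_0$ does not destroy geometric irreducibility (it does not, because $d_0$ becomes a $\wp$-value over $\bar K$), and then reduce irreducibility to the linear-algebra statement about Artin--Schreier classes, which is the unique place where $\F_p$-linear independence of $A\cup B$ is used. The descent from the $\bar K$ picture back to a genuine $K$-point, and the verification that the extracted $c$ works, are then immediate.
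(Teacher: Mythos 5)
The paper does not give a proof of this fact at all; it simply cites it to Duret~\cite{duret}, so there is no in-paper argument to compare against. Your proof, however, is correct, and is exactly the standard PAC-field argument (and, I believe, essentially Duret's own): the reduction of the non-existential condition $bc\notin\wp(K)$ to the existential one $bc\in d_0+\wp(K)$ via a fixed non-value $d_0$ is the one real idea, and it works precisely because $\wp(K)$ is an additive subgroup so $d_0+\wp(K)$ is a coset disjoint from $\wp(K)$. The smoothness check (Jacobian is $-I$ in the $y_{\ge 2},z$ variables, since $\wp'\equiv-1$), the pole-at-infinity argument that $\mu u\notin\wp(\bar K(u))$ for $\mu\neq 0$, and the observation that over $\bar K$ the shift by $d_0$ is invisible in $\bar K(u)/\wp(\bar K(u))$ (so irreducibility reduces to $\F_p$-linear independence of $A\cup B$, which is exactly the hypothesis) are all sound; the resulting $\tilde X$ is smooth of dimension one and geometrically connected, hence geometrically integral, and PAC-ness gives the needed $K$-point. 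The edge cases $A=\emptyset$, $B=\emptyset$ are handled correctly. No gaps.
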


\noindent
We now prove Proposition~\ref{prop:pseudofinite}.
Let $\imag^2 = -1$.

\begin{proof}
Suppose that $K$ is $\mathrm{PRC}$ and not real closed.
It is immediate from the definitions that $K(\imag)$ is $\mathrm{PAC}$.
By the Artin-Schreier theorem $K(\imag)$ is not algebraically closed, hence $K(\imag)$ is not separably closed as $\chara(K) = 0$.
Hence the $\mathrm{PRC}$ follows from the $\mathrm{PAC}$ case.

\medskip
We suppose that $K$ is $\mathrm{PAC}$ and not separably closed.
We may also suppose that $K$ is $\aleph_1$-saturated.
By Fact~\ref{fact:macintyre} there is a finite field extension $F$ of $K$ such that:
\begin{enumerate}
\item $\chara(K) \ne 0$ and the Artin-Schreier map $\wp \colon F \to F$ is not surjective, or
\item there is a prime $q \ne \chara(K)$ such that $F$ contains a primitive $q$th root of unity and the $q$th power map $F^\times \to F^\times$ is not surjective.
\end{enumerate}
A finite extension of a $\mathrm{PAC}$ field is $\mathrm{PAC}$, so $F$ is $\mathrm{PAC}$.
If $F$ is separably closed then $K$ is either real closed or separably closed.
A real closed field is not $\mathrm{PAC}$, so $F$ is not separably closed.
Recall that $F$ is interpretable in $K$ so $F$ is $\aleph_1$-saturated.
It is enough to show that $F$ is trace maximal.
% Suppose $(1)$.
% By saturation there is an infinite subset $A$ of $L$ and $t \in L$ such that $A$ is linearly independent over $\F_p$ and $t$ is not in the algebraic closure of $\F_p(A)$.
% Let $X$ be a subset of $A^k$.
% Combining Fact~\ref{fact:duret-1}, Lemma~\ref{lem:fp-independent}, and saturation we obtain $c \in L$ such that for any $a \in A^k$ we have $a \in X \Longleftrightarrow L \models \exists x (x^p - x = f(a)c)$.
% By Lemma~\ref{lem:max} $L$ is trace maximal.
Suppose $(2)$ and fix a relevant prime $q$.
Let $A$ be an infinite subset of $K$ and $t$ be an element of $K$ which is not in the algebraic closure of $\Q(A)$.
Let $f \colon A^k \to K$ be given by $f(\alpha_0,\ldots,\alpha_{k - 1}) = t^k + \alpha_{k - 1} t^{k - 1} + \cdots + \alpha_1 t + \alpha_0$.
Let $X$ be a subset of $A^k$.
By injectivity of $f$, Fact~\ref{fact:duret}, and saturation there is $\gamma \in K$ such that $\alpha \in X \Longleftrightarrow F \models \exists x(x^q = f(\alpha) + \gamma)$ for all $\alpha \in A^k$.
By Lemma~\ref{lem:max} $F$ is trace maximal.

\medskip\noindent
We now suppose $(1)$.
Let $A$ be an infinite subset of $F$ which is algebraically independent over $\F_p$.
By Lemma~\ref{lem:max-1} it is enough to suppose that $E$ is a $k$-hypergraph on $A$ and produce definable $Y \subseteq F^k$ such that $E(a_1,\ldots,a_k) \Longleftrightarrow (a_1,\ldots,a_k) \in Y$ for all $a_1,\ldots,a_k \in A$.
Let $D$ be the set of $(a_1,\ldots,a_k) \in A^k$ such that $|\{ a_1,\ldots,a_k \}| = k$ and $f \colon D \to A$ be given by $f(a_1,\ldots,a_k) = a_1 a_2 \ldots a_k$.
An application of algebraic independence shows that $f(a) = f(b) \Longleftrightarrow \{a_1,\ldots,a_k\} = \{b_1,\ldots,b_k\}$ for all $a = (a_1,\ldots,a_k), b = (b_1,\ldots,b_k) \in D$, and that $\{ f(a) : a \in D \}$ is linearly independent over $\F_p$.
Applying Fact~\ref{fact:duret-1} and saturation we obtain $c \in F$ such that $E(a) \Longleftrightarrow F \models \exists x( x^p - x = f(a)c )$ for all $a \in D$.
\end{proof}

\begin{corollary}
\label{cor:acfa}
Suppose $(K,\updelta)\models\mathrm{ACFA}$.
Then $(K,\updelta)$ is trace maximal.
\end{corollary}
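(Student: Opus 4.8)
The plan is to deduce Corollary~\ref{cor:acfa} from Proposition~\ref{prop:pseudofinite} together with the standard fact that the fixed field of a model of $\mathrm{ACFA}$ is pseudofinite. First I would recall that if $(K,\updelta)\models\mathrm{ACFA}$ then the fixed field $F = \fix(\updelta) = \{a \in K : \updelta(a) = a\}$ is a pseudofinite field; this is a theorem of Chatzidakis and Hrushovski, and $F$ is visibly $(K,\updelta)$-definable. Since pseudofinite fields are $\mathrm{PAC}$ and not separably closed, Proposition~\ref{prop:pseudofinite} applies to $F$, so $F$ is trace maximal.

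The remaining point is that $(K,\updelta)$ trace defines $F$. This is immediate: $F$ is a definable subfield of $(K,\updelta)$, hence the structure induced on $F$ by $(K,\updelta)$ is a structure on $F$ whose definable sets include all the field-definable sets of $F$; in particular $F$ as a pure field is a reduct of the induced structure, which is trace definable in $(K,\updelta)$ by Lemma~\ref{lem:she--1} (or more simply, $F$ with its ring structure is a substructure of $(K,\updelta)$ restricted to the ring language, so apply Proposition~\ref{prop:qe-trace} after Morleyizing, or note directly that a definable subset of $F^n$ pulls back to a definable subset of $K^n$ via the identity inclusion $F \hookrightarrow K$). Either way $(K,\updelta)$ trace defines $F$ via the inclusion $F \to K$.

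By Proposition~\ref{prop:trace-basic}, trace definibility is transitive: since $(K,\updelta)$ trace defines $F$ and $F$ trace defines every theory (being trace maximal), $(K,\updelta)$ trace defines every theory, i.e. $(K,\updelta)$ is trace maximal. To phrase this at the level of theories rather than a single model, one invokes Proposition~\ref{prop:trace-theories}: trace maximality of $\Th(F)$ means $\Th(F)$ trace defines every theory, and since $\Th(K,\updelta)$ trace defines $\Th(F)$ (as any model of $\mathrm{ACFA}$ defines its fixed field, which is a model of the theory of pseudofinite fields), $\Th(K,\updelta)$ trace defines every theory.

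There is essentially no obstacle here — the only thing to be careful about is citing the correct fact that $\fix(\updelta)$ is pseudofinite, which is part of the basic theory of $\mathrm{ACFA}$ developed by Chatzidakis and Hrushovski; everything else is a direct application of transitivity and the already-established Proposition~\ref{prop:pseudofinite}. If one wanted to avoid the fixed-field route entirely, an alternative is to observe that $(K,\updelta)$ interprets a pseudofinite field directly and again apply Proposition~\ref{prop:trace-interpret} and transitivity, but the fixed-field argument is cleanest.
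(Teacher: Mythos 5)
Your proof is correct and follows the same route as the paper: the fixed field of a model of $\mathrm{ACFA}$ is pseudofinite (the paper cites Macintyre's theorem on generic difference fields rather than Chatzidakis--Hrushovski, but both attributions are reasonable), and then Proposition~\ref{prop:pseudofinite} together with transitivity of trace definibility finishes. The paper states this in two lines; your write-up just makes the transitivity step and the passage from a single model to its theory explicit.
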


\begin{proof}
By \cite[Thm~7]{macintyre-generic} $\{\alpha \in K : \updelta(\alpha
) = \alpha\}$ is a pseudofinite subfield of $K$.
Apply Prop~\ref{prop:pseudofinite}.
\end{proof}

A field extension $L/K$ is \textbf{regular} if every element of $L$ is transcendental over $K$.
Duret showed that if a field $L$ has a $\pac$ subfield such that $L/K$ is regular then $L$ is $\mathrm{IP}$~\cite{duret}.

\begin{corollary}
\label{cor:duret ext}
Suppose that $K$ is a non-separably closed $\pac$ field and $L$ is a regular field extension of $K$.
Then $L$ is trace maximal.
If $L$ is positive characteristic and the algebraic closure of the prime subfield is infinite but not algebraically closed then $L$ is trace maximal.
\end{corollary}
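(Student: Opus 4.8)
The plan is to adapt the proof of Proposition~\ref{prop:pseudofinite}. The key observation is that the elements $c$ furnished by Duret's Facts~\ref{fact:duret} and~\ref{fact:duret-1} lie inside the $\pac$ field itself, so it suffices that $K$ be existentially closed in $L$ — which is automatic, since $K$ is $\pac$ and $L/K$ is regular — in order to transport the resulting coding of subsets out to $L$.

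First I would reduce to the case in which the relevant power map is non-surjective on $K$ itself. By Fact~\ref{fact:macintyre} there is a finite extension $F/K$ satisfying clause (1) or (2) of that fact; as in the proof of Proposition~\ref{prop:pseudofinite}, $F$ is $\pac$ and not separably closed (if it were, $K$ would be real closed or separably closed, but $\pac$ fields are not real closed and $K$ is not separably closed by hypothesis). Since $L/K$ is regular and $F/K$ is algebraic, standard facts about regular extensions give that $F$ and $L$ are linearly disjoint over $K$, that $FL/F$ is regular, and that $[FL:L]<\infty$; hence $L$ interprets $FL$, so by Proposition~\ref{prop:trace-interpret} and transitivity (Proposition~\ref{prop:trace-basic}) it is enough to prove $FL$ is trace maximal. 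Replacing $(K,L)$ by $(F,FL)$, I may therefore assume $K$ is $\pac$, not separably closed, existentially closed in $L$ (using $\pac$-ness together with regularity), and that $K$ satisfies (1) or (2); I must show $L$ is trace maximal.

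Next I would pass to an $\aleph_1$-saturated elementary extension $(L^*,K^*)$ of the pair $(L,K)$, in the language of rings expanded by a unary predicate naming $K$. Then $L^*$ is an $\aleph_1$-saturated field, $K^*$ is a subfield with $K^*\equiv K$ as a field (so $\pac$, not separably closed, still satisfying (1)/(2)), and $K^*$ is existentially closed in $L^*$, existential closedness of $K$ in $L$ being expressed by a scheme of axioms in the pair language. Using saturation I would fix a countable set $A$ (and, in case (2), an extra element $t$) inside $K^*$, algebraically independent over the prime field, constructed recursively. Now I mimic Proposition~\ref{prop:pseudofinite}. In case (2): with $f(\alpha_0,\dots,\alpha_{k-1})=t^k+\alpha_{k-1}t^{k-1}+\cdots+\alpha_0\in K^*$, which is injective on $A^k$, Fact~\ref{fact:duret} inside $K^*$ plus $\aleph_1$-saturation over the countable parameter set $A\cup\{t\}$ yields, for each $X\subseteq A^k$, a $c_X\in K^*$ with $\alpha\in X\iff K^*\models\exists z(z^q=f(\alpha)+c_X)$ for all $\alpha\in A^k$; since $f(\alpha)+c_X\in K^*$ and $K^*$ is existentially closed in $L^*$, the same equivalence holds with $L^*$ in place of $K^*$, so $Y=\{\beta\in(L^*)^k:L^*\models\exists z(z^q=f(\beta)+c_X)\}$ witnesses the criterion of Lemma~\ref{lem:max}. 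Case (1) (characteristic $p>0$, Artin--Schreier map not surjective on $K^*$) is identical with $f(a_1,\dots,a_k)=a_1\cdots a_k$, Fact~\ref{fact:duret-1}, the map $x\mapsto x^p-x$, and Lemma~\ref{lem:max-1} — the point being that the values $a_1\cdots a_k$ over distinct-coordinate tuples from the algebraically independent set $A$ are $\F_p$-linearly independent. In either case $L^*$, hence $\Th(L)$, hence $L$, is trace maximal.

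The second assertion follows by applying the first part with $K$ the relative algebraic closure of $\F_p$ in $L$: by hypothesis $K$ is infinite, hence $\pac$ (infinite algebraic extensions of finite fields are $\pac$, as noted after Proposition~\ref{prop:pseudofinite}) and not separably closed (it is not algebraically closed, and algebraic extensions of $\F_p$ are perfect, so separably closed would mean algebraically closed); moreover $L/K$ is regular, since $K$ is relatively algebraically closed in $L$ by construction and the extension is separable because $K$ is perfect. The one step I expect to require real care is the transfer: one must arrange the entire coding to take place \emph{inside} $K^*$, i.e.\ choose $A$ and $t$ in $K^*$ so that $f(\alpha)+c_X$, respectively $f(a)c_E$, stays in $K^*$, so that existential closedness of $K^*$ in $L^*$ genuinely carries the equivalences over to $L^*$; the remaining ingredients — linear disjointness giving regularity of $FL/F$, the facts that $\pac$-ness and existential closedness are elementary in the pair, and the extraction of the independent set by saturation — are routine.
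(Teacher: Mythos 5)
Your proof is correct and takes the same approach as the paper: the key points — that $K$ is existentially closed in $L$ because $K$ is $\pac$ and $L/K$ is regular, and that the existential formulas witnessing trace maximality from Proposition~\ref{prop:pseudofinite} therefore transfer to $L$ — are exactly the paper's one-line argument. You flesh out details the paper leaves implicit (passing to $FL$ so that the coding data lives inside a $\pac$ field existentially closed in the ambient field, and passing to a saturated pair), which is a sensible precaution since in Proposition~\ref{prop:pseudofinite} the witnessing parameters actually lie in the finite extension $F$ of $K$ rather than in $K$ itself.
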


This would be much more interesting if one could show that a non-trace maximal field is Artin-Schreier closed, as it would then follow that a non-trace maximal field of positive characteristic contains the algebraic closure of its prime subfield.
It would follow that any infinite field of characteristic $p$ trace defines the algebraic closure of the field with $p$ elements.

\begin{proof}
The second claim follows from the first claim and the fact that an infinite algebraic extension of a finite field is $\pac$.
As $K$ is $\pac$ $K$ is existentially closed in $L$.
The formulas witnessing trace maximality in $K$ produced in the proof of Proposition~\ref{prop:pseudofinite} are existential, so they also witness trace maximality in $L$.
\end{proof}

As mentioned above, it is tempting to conjecture that every $\mathrm{IP}$ field is trace maximal.
This would imply that if $\mathbf{P}$ is a non-trivial property of structures and $\mathbf{P}$ is preserved under trace definibility then any field with $\mathbf{P}$ is $\nip$.
There are expansions of fields which are $\mathrm{IP}$ and not trace maximal.
For example one can let $K$ be an uncountable algebraically closed field, let $A$ be an infinite algebraically independent subset of $K$, and let $E$ be a copy of the Erd\H{o}s-Rado graph with vertex set $A$.
It should be easy to show that $(K;A,E)$ is $\mathrm{IP}$ and $2$-dependent\footnote{I remember seeing this example somewhere, but I forgot where and I couldn't find it again.}.
I think that the conjecture on pure fields is probably false, but I think that it might hold (and might even be provable) for large fields.
(We can prove the stable fields conjecture for large fields \cite{1stpaper}, so this may not be a pipe dream.)

\section{Dichotomies, proof of Theorems~\ref{thm:tri} and \ref{thm:rigid}}
\label{section:dicho}

We first discuss a combinatorial property that rules out trace definibility of infinite fields.
We use it to show that ordered vector spaces do not trace define infinite fields.

\subsection{Near linear Zarankiewicz bounds}
\label{section:almost-lin}
Let $(V,W;E)$ be a bipartite graph.
%An induced subgraph of $(V,W;E)$ is a bipartite graph of the form $(V',E';E \cap [V' \times W'])$ where $V' \subseteq V$ and $W' \subseteq W$.
We say $(V,W;E)$ is \textit{not} $K_{kk}$-free if there are $V^* \subseteq V$, $W^* \subseteq W$ with $|V^*| = k = |W^*|$ and $V^* \times W^* \subseteq E$.
We say that $(V,W;E)$ has \textbf{near linear Zarankiewicz bounds} if for any $k$ and $\varepsilon \in \R_{>0}$ there is $\lambda \in \R_{>0}$ such that if $V^* \subseteq V$, $W^* \subseteq W$ are finite and $(V^*,W^*;E \cap [V^*\times W^*])$ is $K_{kk}$-free then $|E \cap (V^* \times W^*)| \le \lambda |V^*\cup W^*|^{1 + \varepsilon}$.
Then $\Sa M$ has near linear Zarankiewicz bounds if every $\Sa M$-definable bipartite graph has near linear Zarankiewicz bounds and $T$ has near linear Zarankiewicz bounds if every (equivalently: some) $\Sa M \models T$ has near linear Zarankiewicz bounds.
The name\footnote{Please come up with a better one.} comes from Zarankiewicz's problem, see \cite{zaran}.

\medskip\noindent
Lemma~\ref{lem:blank-2} is clear from the definition.

\begin{lemma}
\label{lem:blank-2}
Suppose that $T$ has near linear Zarankiewicz bounds and $T^*$ is trace definable in $T$.
Then $T^*$ has near linear Zarankiewicz bounds.
\end{lemma}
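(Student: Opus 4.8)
The plan is to unwind the definitions and push a witnessing configuration through a trace definition. Suppose $T$ has near linear Zarankiewicz bounds and $T^*$ is trace definable in $T$. By Proposition~\ref{prop:trace-theories} it suffices to take $\Sa M \models T$ sufficiently saturated and $\Sa O \models T^*$ with $\Sa M$ trace defining $\Sa O$ via an injection $\uptau \colon O \to M^m$, and show that $\Sa O$ has near linear Zarankiewicz bounds. So fix an $\Sa O$-definable bipartite graph $(O^a, O^b; E)$ — more precisely $E \subseteq O^a \times O^b$ is $\Sa O$-definable — together with $k$ and $\varepsilon \in \R_{>0}$; we must produce $\lambda$.

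First I would use the trace definition to replace $E$ by an $\Sa M$-definable set: applying the defining property of trace definibility to the $\Sa O$-definable set $E \subseteq O^{a+b}$, there is an $\Sa M$-definable $Y \subseteq M^{m(a+b)}$ with $(\alpha,\beta) \in E \Longleftrightarrow (\uptau(\alpha),\uptau(\beta)) \in Y$ for all $\alpha \in O^a$, $\beta \in O^b$. View $Y$ as an $\Sa M$-definable bipartite graph $(M^{ma}, M^{mb}; Y)$. Now let $\uptau_a \colon O^a \to M^{ma}$ and $\uptau_b \colon O^b \to M^{mb}$ be the coordinatewise maps induced by $\uptau$; these are injections. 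Given finite $V^* \subseteq O^a$, $W^* \subseteq O^b$ such that $(V^*, W^*; E \cap [V^*\times W^*])$ is $K_{kk}$-free, set $V^{**} = \uptau_a(V^*)$, $W^{**} = \uptau_b(W^*)$. Then $|V^{**}| = |V^*|$, $|W^{**}| = |W^*|$ by injectivity, the bijections $\uptau_a, \uptau_b$ carry $E$-edges to $Y$-edges and back, so $|E \cap (V^* \times W^*)| = |Y \cap (V^{**}\times W^{**})|$ and $(V^{**}, W^{**}; Y \cap [V^{**}\times W^{**}])$ is $K_{kk}$-free. Since $T$ has near linear Zarankiewicz bounds, there is $\lambda$ (depending only on $k, \varepsilon$, and the formula defining $Y$) with $|Y \cap (V^{**}\times W^{**})| \le \lambda |V^{**}\cup W^{**}|^{1+\varepsilon}$. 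Finally $|V^{**} \cup W^{**}| \le |V^{**}| + |W^{**}| = |V^*| + |W^*| \le 2|V^*\cup W^*|$ (the two sides of a bipartite graph are disjoint by convention, so in fact $|V^{**}\cup W^{**}| = |V^*\cup W^*|$ if we are careful, but the crude bound suffices), hence $|E \cap (V^*\times W^*)| \le \lambda 2^{1+\varepsilon} |V^*\cup W^*|^{1+\varepsilon}$, giving the desired bound with constant $2^{1+\varepsilon}\lambda$.

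There is essentially no obstacle here; the only point requiring a word of care is the passage from a single $\Sa M \models T$ and $\Sa O$ to the theory-level statement, which is exactly what Proposition~\ref{prop:trace-theories} provides (together with the observation that "near linear Zarankiewicz bounds" is a property of the theory, holding in some model iff in every model), and the bookkeeping that the constant $\lambda$ produced on the $\Sa M$ side depends only on $k$, $\varepsilon$, and the $\Sa M$-formula defining $Y$ — which in turn depends only on $E$ — so it is a legitimate witness on the $\Sa O$ side. This is precisely why the author can simply say "clear from the definition", and the proof above is really just the expansion of that remark.
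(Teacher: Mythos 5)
Your proof is correct and is exactly the unwinding that the paper gestures at with ``clear from the definition'': push the finite configuration through the injection induced by $\uptau$, invoke the bound for the $\Sa M$-definable graph $Y$, and pull back, noting that the two directions of the trace-defining equivalence make $\uptau_a \times \uptau_b$ a bijection between edge sets. Your parenthetical is also right that $|V^{**}\cup W^{**}| = |V^*\cup W^*|$ exactly (since $\uptau_a$ and $\uptau_b$ are restrictions of the same injection when $a=b$, and the images are automatically disjoint when $a\ne b$), so the factor $2^{1+\varepsilon}$ is unnecessary, though harmless; and since the property is a theory-invariant it suffices to verify it for a single $T^*$-model trace definable in a $T$-model, so the appeal to Proposition~\ref{prop:trace-theories} is also dispensable, though again harmless.
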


\noindent
The first claim of Fact~\ref{fact:blank-0} shows that this notion is non-trivial.

\begin{fact}
\label{fact:blank-0}
The generic countable bipartite graph does not have near linear Zarankiewicz bounds.
A theory with near linear Zarankiewicz bounds is $\nip$.
\end{fact}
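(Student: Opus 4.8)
\textbf{Proof proposal for Fact~\ref{fact:blank-0}.}

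The plan is to prove the two assertions separately, both more or less by exhibiting/extracting VC-density witnesses. For the first claim, I would work directly with the countable model $(V,W;E)$ of the generic countable bipartite graph. The point is that genericity gives, for every finite $W^* \subseteq W$, a vertex in $V$ adjacent to all of $W^*$ and to no other prescribed finite set; iterating this I would build, for each $n$, finite sets $V_n \subseteq V$, $W_n \subseteq W$ with $|W_n| = n$, $|V_n| = 2^n$, and such that the bipartite graph restricted to $V_n \times W_n$ realizes \emph{all} $2^n$ subsets of $W_n$ as neighborhoods. This configuration is $K_{22}$-free after a mild thinning is \emph{not} automatic — in fact the full power-set configuration does contain $K_{kk}$'s — so the cleaner route is: fix $k=2$ and instead use the incidence configuration of points and half-planes, or more simply observe that if near linear Zarankiewicz bounds held then $\mathrm{vc}(T,1)$ would be forced to be $\le 1$, whereas the generic bipartite graph has $\mathrm{vc}(T,1) = \infty$ because every subset of an infinite independent set of points is cut out. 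So the sharper formulation I would actually carry out: show that near linear Zarankiewicz bounds for $T$ imply $\mathrm{vc}(T,1) \le 1$ (a counting argument — if a definable bipartite graph has a definable family cutting out $\ge \lambda |X|^{1+\varepsilon}$-many subsets while being $K_{kk}$-free for the relevant $k$, compare with the Zarankiewicz bound on the number of edges), then note the generic bipartite graph violates this.

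For the ``near linear $\Rightarrow$ $\nip$'' claim I would argue by contraposition. Suppose $T$ is $\mathrm{IP}$. Then by Proposition~\ref{prop:trace-0}, $T$ trace defines the generic countable bipartite graph. By Lemma~\ref{lem:blank-2}, if $T$ had near linear Zarankiewicz bounds then so would the generic countable bipartite graph, contradicting the first claim. Hence $T$ does not have near linear Zarankiewicz bounds, i.e. near linear Zarankiewicz bounds implies $\nip$. This reduces everything to the first claim and makes the whole Fact a corollary of Proposition~\ref{prop:trace-0} plus Lemma~\ref{lem:blank-2} plus the single combinatorial computation about the generic bipartite graph.

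So concretely the steps, in order: (1) recall the Kővári–Sós–Turán bound, that a $K_{kk}$-free bipartite graph on $V^* \cup W^*$ has $O(|V^*\cup W^*|^{2 - 1/k})$ edges, hence in particular $\le \lambda |V^*\cup W^*|^{1+\varepsilon}$ for suitable $\lambda$ once $2 - 1/k \le 1 + \varepsilon$ — wait, that inequality fails for large $k$, which is exactly why ``near linear'' is a genuine restriction and why the generic bipartite graph fails it; (2) for the generic bipartite graph, for each $n$ produce the half-graph-style configuration $V_n, W_n$ with $|V_n| = |W_n| = n$ where $E$ restricted to $V_n \times W_n$ is a ``staircase'' plus enough extra edges to force roughly $n^{3/2}$ or more edges while staying $K_{nn}$-free-for-small-$k$... the cleanest is the classical projective-plane incidence graph, which is $K_{22}$-free with $\sim q^3 \sim N^{3/2}$ edges on $N = 2q^2$ vertices; since every finite bipartite graph embeds into the generic one, the generic bipartite graph contains $K_{22}$-free induced subgraphs with $\gg N^{1+\varepsilon}$ edges for $\varepsilon < 1/2$, so it fails near linear Zarankiewicz bounds with $k = 2$; (3) assemble the contrapositive as above. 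The main obstacle is purely bookkeeping: making sure the induced-subgraph configurations one extracts from genericity are genuinely $K_{kk}$-free as \emph{induced} subgraphs (the definition uses $E \cap [V^*\times W^*]$), which the projective-plane incidence graph handles automatically since it is $K_{22}$-free as an abstract graph, so induced $K_{22}$-freeness is immediate. Everything else is Lemma~\ref{lem:blank-2} and Proposition~\ref{prop:trace-0}, already available.
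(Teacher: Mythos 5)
Your argument is correct and the overall architecture matches the paper's: establish the first claim by exhibiting finite $K_{kk}$-free bipartite graphs with $\gg N^{1+\varepsilon}$ edges and noting that every finite bipartite graph embeds into the generic one, then derive the second claim by contraposition from Proposition~\ref{prop:trace-0} and Lemma~\ref{lem:blank-2}. Your second step is word-for-word the paper's. For the first step you settle (after some false starts) on the deterministic incidence graph of the projective plane over $\F_q$, which is $K_{22}$-free with $\approx q^3 \approx (N/2)^{3/2}$ edges on $N \approx 2q^2$ vertices; taking $k=2$, any $\varepsilon < 1/2$, and letting $q \to \infty$ defeats every $\lambda$. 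The paper instead cites the probabilistic Erd\H{o}s construction of dense $K_{kk}$-free graphs from \cite{erdos-zaran}. Both are classical and both work; the projective-plane witness is arguably cleaner here since $k=2$ suffices and one avoids any appeal to a probabilistic existence argument. Two small points worth tidying: the opening ``all subsets'' configuration you correctly note is a dead end because it is saturated with $K_{kk}$'s, so it is best simply deleted rather than walked through; and the aside that near linear Zarankiewicz bounds ``would force $\mathrm{vc}(T,1)\le 1$'' is not an implication you establish (nor do you need it), so either prove it or drop it — as written it reads as a claim rather than a heuristic, and it is not obviously true.
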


\noindent
The first claim follows from a probabilistic construction given in \cite{erdos-zaran} as every finite bipartite graph is a substructure of the generic countable bipartite graph.
The second claim follows from the first claim and Proposition~\ref{prop:trace-0}.

\begin{proposition}
\label{prop:field-blank}
An infinite field does not have near linear Zarankiewicz bounds.
\end{proposition}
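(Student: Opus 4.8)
The plan is to exhibit, inside any infinite field $K$, a definable bipartite graph that fails to have near linear Zarankiewicz bounds, and the natural candidate is the incidence graph of points and lines in the affine plane $K^2$. Concretely, let $V = K^2$ be the set of ``points'' $(a,b)$ and let $W = K^2$ parametrize ``lines'' $(c,d)$, with the incidence relation $E((a,b),(c,d)) \iff b = ca + d$. This is clearly $K$-definable in the ring language. The incidence graph is $K_{2,2}$-free: two distinct points lie on at most one common line, and two distinct lines meet in at most one common point, so there is no $V^*\times W^*$ with $|V^*|=|W^*|=2$ contained in $E$.

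The key step is then to show that this $K_{2,2}$-free definable graph has too many edges on suitable finite subsets, i.e.\ it violates the bound $|E\cap(V^*\times W^*)| \le \lambda|V^*\cup W^*|^{1+\varepsilon}$ for $\varepsilon$ small. First I would reduce to the prime field or, more simply, just work directly in $K$: choose a large finite ``grid'' $G\subseteq K$ of size $N$ (for instance, in characteristic $0$ take $\{1,\dots,N\}$; in characteristic $p$, take an $\F_p$-linearly independent-ish set or simply $N$ distinct elements and be a bit more careful), and let $V^* = \{(a,b): a,b\in G\}$ and $W^* = \{(c,d): c\in G, d\in D\}$ for an appropriate set $D$ of size $\Theta(N)$ arranged so that whenever $a,c\in G$ the value $ca+d$ can be forced to lie in $G$ for about $|G|$ choices of $d$. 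With such a setup one gets $|V^*|,|W^*| = \Theta(N^2)$ but $|E\cap(V^*\times W^*)| = \Theta(N^3)$, i.e.\ on the order of $|V^*\cup W^*|^{3/2}$. Taking $\varepsilon < 1/2$ this contradicts near linear Zarankiewicz bounds. By Lemma~\ref{lem:blank-2} (or directly) it suffices to produce this failure in a single model, and by compactness/saturation we may assume $K$ is as saturated as we like, which makes choosing $G$ and $D$ inside $K$ unproblematic.

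The main obstacle is handling positive characteristic cleanly: in $\F_p$ the ``grid'' $\{1,\dots,N\}$ wraps around once $N\ge p$, and the addition $ca+d$ does not behave like integer addition, so the counting $\Theta(N^3)$ is not immediate. The standard fix is to replace the multiplicative/additive combinatorics of an interval with that of a geometric progression or a generic algebraically (resp.\ $\F_p$-linearly) independent set: choose $t_1,\dots,t_N\in K$ that are algebraically independent over the prime field (possible after passing to a large elementary extension), let $G=\{t_1,\dots,t_N\}$, and arrange $V^*,W^*$ so that incidences correspond to solvable linear equations among the $t_i$; independence guarantees these incidences are ``as many as possible'' and the $K_{2,2}$-freeness is automatic. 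Alternatively, and perhaps most robustly, one can cite the classical Szemerédi–Trotter-type lower bound for point-line incidences, which is known to be tight over any field for the appropriate range of parameters, to supply the $\Theta(N^3)$ lower bound directly. I would carry out the characteristic $0$ case with the explicit integer grid, then note the general case follows by the same scheme using an algebraically independent grid in a saturated model, invoking Proposition~\ref{prop:trace-theories}-style transfer to reduce to a convenient $K\models\Th(K)$.

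Finally, to conclude: the definable bipartite graph $(V,W;E)$ above is $K_{2,2}$-free yet, on the finite sets $V^*,W^*$ just constructed, has $|E\cap(V^*\times W^*)| \gtrsim |V^*\cup W^*|^{3/2}$; fixing any $\varepsilon\in(0,1/2)$, no constant $\lambda$ can satisfy the near linear Zarankiewicz inequality for all such finite configurations (let $N\to\infty$). Hence $K$ does not have near linear Zarankiewicz bounds, proving Proposition~\ref{prop:field-blank}. As an immediate consequence, combined with Lemma~\ref{lem:blank-2} and Fact~\ref{fact:blank-0}, a theory with near linear Zarankiewicz bounds cannot trace define an infinite field, which is exactly the use to which this proposition is put in Theorem~\ref{thm:tri}.
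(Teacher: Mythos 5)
Your characteristic-zero outline is essentially the paper's argument (the standard sharp example for lower bounds in Szemer\'edi--Trotter), though the specific ``$D$ of size $\Theta(N)$'' setup as you describe it does not give $\Theta(N^3)$ incidences; for $a,b,c\in\{1,\dots,N\}$ the intercept $d=b-ca$ ranges over $\Theta(N^2)$ values, so a size-$\Theta(N)$ set $D$ would only catch $O(N^2)$ incidences. The standard fix, which the paper uses, is an anisotropic grid: $V^*=\{1,\dots,n\}\times\{1,\dots,2n^2\}$ and lines with slopes in $\{1,\dots,n\}$ and intercepts in $\{1,\dots,n^2\}$, giving $n^4$ incidences on $3n^3$ vertices. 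That's a fixable slip, not a conceptual gap.

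The positive-characteristic case, however, has a real gap. Your proposed device --- take $t_1,\dots,t_N$ algebraically independent over the prime field --- produces the \emph{opposite} of what you want: algebraic independence kills incidences, since $t_j = t_k t_i + t_\ell$ is a nontrivial polynomial relation and so essentially never holds. And the fallback you invoke, that point-line incidence lower bounds of order $N^{1+c}$ are ``known to be tight over any field,'' is precisely what the paper flags as open (see the remark immediately after the proof: whether the point-line graph fails near linear Zarankiewicz bounds over an \emph{arbitrary} infinite field appears to be open). Moreover, passing to a saturated elementary extension does not help here: for instance no model of $\Th(\F_p(t))$ contains $\F_p^{\mathrm{alg}}$, so you cannot simply realize a convenient grid by saturation. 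What the paper actually does is (i) dispose of the IP case via Fact~\ref{fact:blank-0}, (ii) in the NIP case, invoke Lemma~\ref{lem:fp} (built on the Kaplan--Scanlon--Wagner theorem that infinite NIP fields of characteristic $p$ contain $\F_p^{\mathrm{alg}}$) together with Lemma~\ref{lem:blank-2} to reduce to $K=\F_p^{\mathrm{alg}}$, and then (iii) take $V^*=L^2$, $W^*$ the non-vertical lines over the finite subfield $L$ with $q$ elements, giving $q^3$ incidences on $2q^2$ vertices. This reduction to $\F_p^{\mathrm{alg}}$ via NIP field theory and preservation of near linear Zarankiewicz bounds under trace definitions is the missing ingredient in your proposal, and it is not optional with current knowledge.
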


\noindent
Proposition~\ref{prop:field-blank} follows from Lemma~\ref{lem:fp} and some standard incidence geometry.

\begin{lemma}
\label{lem:fp}
Suppose that $K$ is an infinite $\nip$ field of characteristic $p > 0$.
Then $K$ trace defines the algebraic closure of the field with $p$ elements.
\end{lemma}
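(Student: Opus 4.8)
\textbf{Proof proposal for Lemma~\ref{lem:fp}.}

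The plan is to exhibit the algebraic closure $\F^\alg_p$ as an $\aleph_0$-categorical structure that we can trace define inside $K$, using quantifier elimination and the $\nip$ hypothesis. First I would pass to an $\aleph_1$-saturated elementary extension, which is harmless by Proposition~\ref{prop:trace-theories} (working with $\Th(K)$). The key structural fact I would invoke is that $\F^\alg_p$, as a pure field, is $\aleph_0$-categorical and, crucially, \emph{linear} in the sense that every definable set is very simple — concretely, $\F^\alg_p$ eliminates quantifiers after naming the (definable) relations $\{x : x^{p^n}=x\}$ picking out each finite subfield $\F_{p^n}$, since the theory of $\F^\alg_p$ is the model companion of fields of characteristic $p$ with all these as substructures and its definable sets are Boolean combinations of algebraic ones in one variable. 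So the target structure $\Sa O$ to trace define is $\F^\alg_p$ equipped with the unary predicates $Q_n(x) \Leftrightarrow x \in \F_{p^n}$, which has quantifier elimination; by Proposition~\ref{prop:qe} it then suffices to find an injection $\uptau\colon \F^\alg_p \to K^m$ (for some $m$) and, for each relation symbol $R$ of the QE-language, a $K$-definable set $Y_R$ pulling back correctly along $\uptau$.

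The main work is therefore to locate a copy of $\F^\alg_p$ \emph{as a set} sitting inside $K$ in such a way that each finite subfield $\F_{p^n}$, and the field operations restricted appropriately, become traces of $K$-definable sets. Here is where I would use that $K$ has characteristic $p$: the prime field $\F_p \subseteq K$ is definable (it is $\{0,1,1+1,\ldots\}$, a finite set, hence definable with parameters, indeed $0$-definable as $\{x : x^p = x\}$). More generally, for each $n$ the set $\{x \in K : x^{p^n} = x\}$ is a $0$-definable finite subfield of $K$ which \emph{is} a copy of $\F_{p^n}$, and these form an increasing chain whose union is a copy of $\F^\alg_p$ living inside $K$. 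Let $A = \bigcup_n \{x \in K : x^{p^n}=x\}$; this is the relative algebraic closure of $\F_p$ in $K$, an isomorphic copy of $\F^\alg_p$, and each $Q_n$-trace is $\{x\in K : x^{p^n}=x\}$, which is $K$-definable. Addition and multiplication on $A$ are the restrictions of $+,\times$ on $K$, so those relations trace back to $K$-definable (indeed $0$-definable) sets. Thus with $\uptau$ the inclusion $A \hookrightarrow K$ (so $m=1$) and $Y_R$ the evident $K$-definable sets, Proposition~\ref{prop:qe} applies directly.

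The subtle point — and the step I expect to be the real obstacle — is the use of the $\nip$ hypothesis, since the construction above seems at first not to need it at all. The catch is that $A = \bigcup_n \{x : x^{p^n}=x\}$ need not be \emph{all} of $\F^\alg_p$: it is only the algebraic closure of $\F_p$ \emph{inside $K$}, which could be a proper (even finite) subfield if $K$ has few roots of unity, e.g. $K = \F_p(t)$ where $A = \F_p$. So the honest argument must instead produce, inside a saturated model, an infinite such chain — and this is exactly where $\nip$ (plus non-separable-closedness, which in characteristic $p$ is automatic once $K$ is infinite and not algebraically closed... but an infinite $\nip$ field \emph{is} conjecturally algebraically closed, and this is the surrounding context) enters. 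The correct route, I believe, is: an infinite $\nip$ field of characteristic $p$ is known to be Artin–Schreier closed (Kaplan–Scanlon–Wagner), and more is true — such a field contains $\F^\alg_p$. So the plan's final form is to cite that $K$ being infinite $\nip$ of characteristic $p$ forces $\F^\alg_p \subseteq K$ (a theorem in the literature on $\nip$ fields), whence $A = \F^\alg_p$ genuinely, and the QE-plus-Proposition~\ref{prop:qe} argument above goes through verbatim. The writing challenge is to invoke this containment cleanly, since it is the one external input doing the work; everything else is a routine quantifier-elimination pullback.
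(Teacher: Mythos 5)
Your proof is correct and coincides with the paper's: cite Kaplan--Scanlon--Wagner for a field embedding $\F^\alg_p \to K$, then apply quantifier elimination for algebraically closed fields together with Proposition~\ref{prop:qe-trace}. (One small correction: $\F^\alg_p$ is not $\aleph_0$-categorical, as $\mathrm{ACF}_p$ has models of every transcendence degree, but you never actually use this---quantifier elimination for $\mathrm{ACF}$ alone, without the auxiliary predicates $Q_n$, already suffices.)
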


Let $\F^\alg_p$ be the algebraic closure of the field with $p$ elements.

\begin{proof}
As $K$ is $\nip$ it follows from Kaplan, Scanlon, and Wagner~\cite[Corollary~4.5]{Kaplan2011} that there is a field embedding $\uptau \colon \F^\alg_p \to K$.
Proposition~\ref{prop:qe-trace} and quantifier elimination for algebraically closed fields shows that $K$ trace defines $\F^\alg_p$ via $\uptau$.
\end{proof}

\noindent
We now prove Proposition~\ref{prop:field-blank}.

\begin{proof}
We let $K$ be an infinite field.
Let $V = K^2$, $W$ be the set of lines in $K^2$, and $E$ be the set of $(p,\ell) \in V \times W$ such that $p$ is on $\ell$.
Note that $(V,W;E)$ is $K_{22}$-free.

\medskip\noindent
We first treat the case when $K$ is of characteristic zero.
We just need to recall the usual witness for sharpness in the lower bounds of Szemeredi-Trotter.
We fix $n \ge 1$, declare $V^* = \{1,\ldots,n\} \times \{1,\ldots,2n^2\}$ and let $W^*$ be the set of lines with slope in $\{1,\ldots,n\}$ and $y$-intercept in $\{1,\ldots,n^2\}$.
Then $|V^* + W^*| = 3n^3$ and $|E \cap (V^* \times W^*)| = n^4$.
Hence $(V,W;E)$ does not have near linear Zarankiewicz bounds.

\medskip\noindent
Now suppose that $K$ has characteristic $p > 0$.
By Fact~\ref{fact:blank-0} we may suppose that $K$ is $\nip$.
By Lemmas~\ref{lem:blank-2} and \ref{lem:fp} we may suppose that $K$ is the algebraic closure of the field with $p$ elements.
Fix $n \ge 1$ and let $L$ be the subfield of $K$ with $q = p^n$ elements.
Let $V^* = L^2$ and $W^*$ be the set of non-vertical lines between elements of $L^2$.
Let $|V^*| = q^2 = |W^*|$ and as every $\ell \in W^*$ contains $q$ points in $L^2$ we have $|E \cap (V^* \times W^*)| = q^3$.
Hence $(V,W;E)$ does not have near linear Zarankiewicz bounds.
\end{proof}

\noindent
You might wonder if we really need Lemma~\ref{lem:fp} to prove Proposition~\ref{prop:field-blank}.
One might suspect that the point-line bipartite graph $(V,W;E)$ discussed above does not have near linear Zarankiewicz bounds over an arbitrary infinite field $K$.
This appears to be open\footnote{More precisely I asked this on math overflow and did not get an answer.}.

\medskip
An \textbf{ordered vector space} is an ordered vector space over an ordered division ring.
The real ordered vector space $\rvec$ is shown to have near linear Zarankiewicz bounds in \cite[Theorem~C]{zaran}.
Fact~\ref{fact:mct} is not explicitly stated in \cite{zaran}, but follows directly from the proof of Theorem C in that paper and quantifier elimination for ordered vector spaces.

\begin{fact}
\label{fact:mct}
An ordered vector space has near linear Zarankiewicz bounds.
\end{fact}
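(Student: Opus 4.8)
Fact~\ref{fact:mct} asserts that an arbitrary ordered vector space $(V;+,<,(\lambda\cdot)_{\lambda\in D})$ over an ordered division ring $D$ has near linear Zarankiewicz bounds. The plan is to reduce the general case to the real case $\rvec$ of \cite[Theorem~C]{zaran} by combining two ingredients: (1) quantifier elimination for ordered vector spaces, which says every definable bipartite graph is built from finitely many linear inequalities, so that the combinatorial content of ``$K_{kk}$-free implies near linear edge count'' depends only on the number and shape of the defining inequalities and not on $D$; and (2) the preservation principle Lemma~\ref{lem:blank-2}, that near linear Zarankiewicz bounds pass down along trace definitions. Since near linear Zarankiewicz bounds are a property of a theory (every model has them iff some model does), it suffices to show that for \emph{each} fixed signature of an ordered vector space, the theory of that signature is trace definable in $\Th(\rvec)$ — or more precisely, that every definable bipartite graph in an ordered vector space is, after a trace definition, realized inside a definable family coming from $\rvec$.

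Concretely, first I would reduce to a single definable bipartite graph $(V^{|x|},V^{|y|};E)$ with $E$ defined by a formula $\varphi(x,y)$. By quantifier elimination for ordered vector spaces (which the paper uses repeatedly, cf. the discussion of $\rvec$ right before Fact~\ref{fact:ps} and Theorem~\ref{thm:tri}), $\varphi(x,y)$ is a Boolean combination of conditions of the form $L(x,y) \,\square\, 0$ where $L$ is a $D$-linear form and $\square \in \{=,<\}$; here only finitely many such forms $L_1,\dots,L_N$ appear. Next, using Proposition~\ref{prop:trace-theories}, pass to an $\aleph_1$-saturated model and observe that the purely combinatorial Zarankiewicz estimate for $E$ restricted to finite $V^*\times W^*$ is controlled by the arrangement of the hyperplanes $L_j = 0$; the point is that the $K_{kk}$-free hypothesis forces a bound on the ``VC-type'' complexity that is uniform in the division ring. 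The cleanest route is: show $\Th(V)$ is trace definable in $\Th(\rvec)$ — one embeds the finite-dimensional $D$-span of any finite parameter set and any finite data set into a real vector space respecting the relevant linear forms and order, which is possible because any finitely many order-compatible $D$-linear conditions can be simultaneously realized over $\R$ (the ordered field generated is archimedean-embeddable, or one just uses compactness and quantifier elimination to match types). Then apply Lemma~\ref{lem:blank-2} and \cite[Theorem~C]{zaran}.

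The key steps in order: (i) reduce to bounding a single definable bipartite graph in a fixed ordered-vector-space signature, invoking that near linear Zarankiewicz bounds is a theory-level property; (ii) apply quantifier elimination to write the graph via finitely many $D$-linear inequalities; (iii) show that the structure induced on any finite configuration embeds, compatibly with these finitely many linear forms and with $<$, into $\rvec$ — equivalently, that $\Th(\text{ordered vector space})$ is trace definable in $\Th(\rvec)$ via Proposition~\ref{prop:qe-trace} after suitably embedding (using that $\rvec$ is $\aleph_0$-saturated enough, or passing to a saturated elementary extension); (iv) conclude by Lemma~\ref{lem:blank-2} from the real case in \cite{zaran}.

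The main obstacle is step (iii): making precise and correct the claim that finitely many order-compatible $D$-linear constraints on finitely many vectors can be transported into $\R$. Over a general ordered division ring $D$ this is essentially the statement that the finitely generated ordered sub-structure is embeddable, for the purposes of the finitely many relevant linear forms, into an archimedean — hence real-embeddable — situation; the subtlety is that $D$ itself need not embed into $\R$, so one must embed only the relevant finite-dimensional slice and only preserve the finitely many linear forms that actually occur in $\varphi$, which is exactly what trace definibility (as opposed to interpretation) allows. I expect this to go through by a routine compactness-plus-quantifier-elimination argument once set up carefully, mirroring how Theorem~\ref{thm:tri} and the surrounding results handle ordered vector spaces, but it is the place where care is needed; everything else is bookkeeping on top of \cite[Theorem~C]{zaran} and Lemma~\ref{lem:blank-2}.
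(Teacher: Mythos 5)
Your approach is genuinely different from the paper's, and it contains a gap at the step you yourself flag as the main obstacle. The paper gives no reduction to $\rvec$ at all: it simply observes that the argument proving Theorem~C of \cite{zaran} never uses anything special to $\R$-scalars beyond what quantifier elimination for ordered vector spaces provides, so the same proof runs verbatim over an arbitrary ordered division ring. That is a one-line remark; your route instead proposes to prove a new, substantive lemma --- that $\Th(\Sa V)$ is trace definable in $\Th(\rvec)$ for every ordered $D$-vector space $\Sa V$ --- and then invoke Lemma~\ref{lem:blank-2}.

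That lemma is not established in the paper, and the paper appears to deliberately avoid claiming it: Lemma~\ref{lem:ovs} only produces trace equivalences among reducts of a \emph{fixed} $\Sa V$ (yielding sub-division-rings of the given $\D$), and Proposition~\ref{prop:o-min-rigid}.4 says \emph{trace equivalent to an ordered vector space over an ordered division ring}, reserving ``subfield of $\R$'' for the case $(R;+,<)=(\R;+,<)$. Your sketch of step (iii) also does not go through as written. The parenthetical ``the ordered field generated is archimedean-embeddable'' is false: for $D=\R((t))$ with $t$ infinitesimal, the ordered subring generated by a single scalar $t$ is already non-archimedean. More to the point, the finite-configuration statement you invoke is the easy part; the hard part, which Lemma~\ref{lem:substructure} makes visible, is that the targets $Y_R$ in the $\rvec$-model must be \emph{fixed once and for all}, uniformly over all finite $\Sa P\in\age(\Sa V)$, while only the injection $\uptau_P$ may vary with $\Sa P$. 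For a non-archimedean $D$ with several $\Q$-algebraically-independent infinitesimal scalars $\lambda$, assigning a fixed real ``slope'' $\mu_\lambda$ to each $R_\lambda=\{(v,w): w=\lambda v\}$ and then matching every finite configuration simultaneously is precisely what needs to be checked and is not routine; the compatibility constraints between the $\mu_\lambda$ coming from the multiplicative structure of $D$ are the crux. You should either prove this trace definability claim carefully (if true, it would be worth recording separately) or abandon the reduction and do what the paper does: inspect the proof of Theorem~C of \cite{zaran} directly and note that, via quantifier elimination, every step makes sense over an arbitrary ordered vector space.
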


\noindent
Theorem~\ref{thm:vector-field} follows from Fact~\ref{fact:mct}, Proposition~\ref{prop:field-blank}, and Lemma~\ref{lem:blank-2}.

\begin{theorem}
\label{thm:vector-field}
An ordered vector space cannot trace define an infinite field.
%So the Shelah completion of an ordered vector space does not interpret an infinite field.
\end{theorem}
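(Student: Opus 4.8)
The plan is to chain together three results that have already been assembled. First I would recall Fact~\ref{fact:mct}, which says that an ordered vector space has near linear Zarankiewicz bounds. Next I would invoke Lemma~\ref{lem:blank-2}: near linear Zarankiewicz bounds pass downward along trace definibility, so if an ordered vector space $V$ (with theory $T$) trace defined an infinite field $K$ (with theory $T^*$), then $T^*$ would inherit near linear Zarankiewicz bounds. Finally I would apply Proposition~\ref{prop:field-blank}, which says that an infinite field does \emph{not} have near linear Zarankiewicz bounds. These three facts are directly contradictory, so no ordered vector space can trace define an infinite field.

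Concretely, here is the short argument. Suppose toward a contradiction that an ordered vector space $\Sa V$ trace defines an infinite field $K$. Let $T = \Th(\Sa V)$ and $T^* = \Th(K)$. By Fact~\ref{fact:mct}, $\Sa V$ — and hence $T$ — has near linear Zarankiewicz bounds. Since trace definibility between structures passes to trace definibility between their theories, $T$ trace defines $T^*$. By Lemma~\ref{lem:blank-2}, $T^*$ has near linear Zarankiewicz bounds, so in particular $K$ does. This contradicts Proposition~\ref{prop:field-blank}. Therefore no ordered vector space trace defines an infinite field.

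There is essentially no obstacle here: all the work has been done in establishing Fact~\ref{fact:mct} (the combinatorial input, imported from \cite{zaran}), Proposition~\ref{prop:field-blank} (the incidence-geometry lower bound, using Lemma~\ref{lem:fp} in positive characteristic), and Lemma~\ref{lem:blank-2} (the trivial preservation statement). The only point worth a sentence of care is the passage from ``$\Sa V$ trace defines $K$'' to ``$T$ trace defines $T^*$'' and then to the theory-level preservation lemma; this is immediate from the definitions, since a structure trace defining another structure gives its theory trace defining the other's theory, and Lemma~\ref{lem:blank-2} is phrased at the level of theories. So the proof is just the one-line deduction ``Fact~\ref{fact:mct} $+$ Lemma~\ref{lem:blank-2} $+$ Proposition~\ref{prop:field-blank} $\Rightarrow$ contradiction.''
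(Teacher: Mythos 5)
Your proof is correct and follows exactly the same route as the paper, which deduces Theorem~\ref{thm:vector-field} from Fact~\ref{fact:mct}, Lemma~\ref{lem:blank-2}, and Proposition~\ref{prop:field-blank}. Nothing to add.
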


We prove Proposition~\ref{prop:press}.

% Any theory trace definable in an ordered vector space has near linear Zarankiewicz bounds.
\begin{proposition}
\label{prop:press}
$(\Z;+,<)$ has near linear Zarankiewicz bounds.
\end{proposition}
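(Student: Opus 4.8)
The plan is to reduce the statement to near linear Zarankiewicz bounds for the real ordered vector space $\rvec$, which is available by Fact~\ref{fact:mct}. Fix a bipartite graph $(V,W;E)$ definable in $(\Z;+,<)$, where $V,W$ are definable subsets of $\Z^m,\Z^n$ and $E$ is defined by a formula $\varphi(x,y)$ with parameters, and fix $k\in\N$ and $\ve\in\R_{>0}$. Since $(\Z;+,<)$ is a model of Presburger arithmetic it is regular, so by Fact~\ref{fact:qe for oag} we may take $\varphi$ to be a Boolean combination of atoms of the form $t(x,y)\,\square\,0$ with $\square\in\{<,=\}$ together with divisibility atoms $N_i\mid t_i(x,y)$, where each $t,t_i$ is an integer-linear combination of the coordinates of $x,y$ plus a parameter. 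Let $N$ be the least common multiple of the finitely many $N_i$ that occur.

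The key point is that for fixed residues $s\in(\Z/N)^m$ and $r\in(\Z/N)^n$ each divisibility atom of $\varphi$ has a constant truth value on $\{(v,w):v\equiv s,\ w\equiv r\ (\mathrm{mod}\ N)\}$, because $N_i\mid N$ forces the value of $t_i(v,w)\bmod N_i$ to be determined by $(s,r)$. Hence on this set $E$ agrees (on integer points) with a set $Y_{s,r}\subseteq\R^m\times\R^n$ which is a Boolean combination of the real affine half-spaces $\{t(x,y)<0\}$ and hyperplanes $\{t(x,y)=0\}$; as the $t$ have integer coefficients, $Y_{s,r}$ is $\rvec$-definable. I would then split the given finite sets $V^*\subseteq V$, $W^*\subseteq W$ as $V^*=\bigsqcup_s V^*_s$ and $W^*=\bigsqcup_r W^*_r$ by residue mod $N$, so that, viewing $V^*_s\subseteq\R^m$ and $W^*_r\subseteq\R^n$, one has $E\cap(V^*\times W^*)=\bigsqcup_{s,r}\bigl(Y_{s,r}\cap(V^*_s\times W^*_r)\bigr)$. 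If $(V^*,W^*;E)$ is $K_{kk}$-free then so is each induced sub-bipartite graph $(V^*_s,W^*_r;Y_{s,r})$, so Fact~\ref{fact:mct} applied to the $\rvec$-definable bipartite graph $Y_{s,r}$ gives $\lambda_{s,r}\in\R_{>0}$, depending only on $Y_{s,r},k,\ve$, with $|Y_{s,r}\cap(V^*_s\times W^*_r)|\le\lambda_{s,r}\,|V^*_s\cup W^*_r|^{1+\ve}\le\lambda_{s,r}\,|V^*\cup W^*|^{1+\ve}$. Summing over the at most $N^{m+n}$ pairs $(s,r)$ and taking $\lambda=N^{m+n}\max_{s,r}\lambda_{s,r}$ (a finite maximum, since $N,m,n$ depend only on $\varphi$, not on $V^*,W^*$) yields $|E\cap(V^*\times W^*)|\le\lambda\,|V^*\cup W^*|^{1+\ve}$, as needed. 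One could instead route this through the trace equivalence of $(\Z;+,<)$ with $(\Z;+)\sqcup(\R;+,<)$ from Proposition~\ref{prop:final Z} and Lemma~\ref{lem:blank-2}, but that still requires near linear Zarankiewicz bounds for $(\Z;+)$, which is not among the quoted facts, so I prefer the direct residue-class reduction.

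The only genuine obstacle is the passage from a Presburger-definable relation to a semilinear one: one must verify that after fixing residues modulo $N$ exactly the order-and-equality atoms survive, and that it is harmless for $Y_{s,r}$ to disagree with $E$ outside its residue block, since near linear Zarankiewicz bounds concern finite subsets of the vertex sets and we only ever apply Fact~\ref{fact:mct} to $Y_{s,r}$ with finite sets of integer points lying inside that block. The remaining bookkeeping — that $K_{kk}$-freeness is inherited by induced sub-bipartite graphs, and that finitely many near-linear bounds add up to a near-linear bound — is routine.
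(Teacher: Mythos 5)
Your proof is correct. Both you and the paper rest on the same observation: after Presburger QE, each definable relation decomposes along residue classes modulo a fixed $N$, and on each residue block the relation agrees with a set defined purely by linear inequalities and equalities over $\R$, hence is $\rvec$-definable; one then invokes Fact~\ref{fact:mct}. The difference is purely one of packaging. You keep the residue blocks separate, apply Fact~\ref{fact:mct} once per pair of residue patterns $(s,r)$, and sum the finitely many near-linear bounds. The paper instead first proves Proposition~\ref{prop:pres}: it glues the residue blocks into a \emph{single} $(\R;+,<)$-definable bipartite graph $(V_0,V_1;F)$, with vertex sorts $\R^m\times A_0$ and $\R^n\times A_1$ where $A_0,A_1$ are the finite sets of residue patterns, together with an explicit embedding $\mathbf e(b_1,\dots,b_m)=(b_1,\dots,b_m,r_1,\dots,r_m)$ recording the residues. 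Proposition~\ref{prop:press} then follows in one line from Fact~\ref{fact:mct} applied to that single graph, since near linear Zarankiewicz bounds concern only finite induced sub-bipartite graphs and these pass through embeddings. The paper's route is slightly longer but yields the reusable Proposition~\ref{prop:pres}, which the author exploits immediately afterwards for the remark that definable-bipartite-graph combinatorics alone cannot separate $(\R;+,<)$ from $(\Z;+)$; your route is more economical if the only goal is Proposition~\ref{prop:press}. Your aside at the end is also right that routing through trace equivalence with $(\Z;+)\sqcup(\R;+,<)$ would leave a gap because the needed bound for $(\Z;+)$ is not among the available facts.
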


\noindent
Fact~\ref{fact:mct} shows $(\R;+,<)$ has near linear Zarankiewicz bounds.
Prop~\ref{prop:press} follows by Prop~\ref{prop:pres}.

\begin{proposition}
\label{prop:pres}
Any bipartite graph which is definable in $(\Z;+,<)$ embeds into a bipartite graph which is definable in $(\R;+,<)$.
\end{proposition}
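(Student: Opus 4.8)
The plan is to show that every definable bipartite graph in $(\Z;+,<)$ is a ``restriction'' of one definable in $(\R;+,<)$, by pushing the integer parameters into the reals and using the cell-decomposition/quantifier-elimination structure of Presburger arithmetic. First I would recall that $(\Z;+,<)$ eliminates quantifiers in the Presburger language, i.e. $(\Z;+,<,0,1,(\equiv_n)_{n\ge 2})$, so any definable set is a finite boolean combination of sets defined by linear inequalities $\sum a_i x_i \le b$ and congruence conditions $\sum a_i x_i \equiv c \pmod n$. So if $E \subseteq \Z^m \times \Z^n$ is definable, then after partitioning $\Z^m$ and $\Z^n$ into finitely many congruence classes (a partition definable in $(\Z;+,<)$, but more to the point realizable inside $(\R;+,<)$ after an affine change of coordinates sending each class to a full copy of $\Z^k$), we may assume $E$ is cut out purely by linear inequalities with integer coefficients in the two blocks of variables, with integer parameters.

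Next I would handle the parameters and the congruence conditions together. The key point is that a congruence condition $\sum a_i x_i \equiv c \pmod n$, viewed on the sublattice where it can be nonvacuously split, becomes, after the substitution $x_i = n y_i + r_i$, either identically true or identically false; so after finitely refining both sides of the bipartition into affine images of $\Z^{m'}$ and $\Z^{n'}$ we are reduced to a graph defined by a finite boolean combination of inequalities $\sum_{i} a_i x_i + \sum_j b_j w_j \le c$ with $a_i, b_j, c \in \Z$. Such a set is the restriction to $\Z^{m'} \times \Z^{n'}$ of the $(\R;+,<)$-definable set defined by exactly the same formula over $\R$. Finally I would observe that ``embeds into a bipartite graph definable in $(\R;+,<)$'' is preserved under the finite operations used: a finite disjoint union of restrictions, and passing from $\Z^k$ to an affine copy of it inside $\Z^{k}$, are both absorbed, since a finite disjoint union of $(\R;+,<)$-definable bipartite graphs is again one, and we can place the finitely many pieces on disjoint affine subspaces of a common $\R^N$ (enlarging $N$). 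Concretely: if $(V_\ell, W_\ell; E_\ell)$, $\ell \le L$, are the pieces with $V_\ell \subseteq \R^{N_\ell}$, $W_\ell \subseteq \R^{N_\ell}$ definable in $(\R;+,<)$, embed all of them into $\R^{\max N_\ell} \times \{1,\dots,L\}$ using the last coordinate as a tag; the union is still definable in $(\R;+,<)$ since $\{1,\dots,L\}$ is finite.

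The main obstacle I anticipate is bookkeeping around the congruence conditions: one has to be careful that after substituting $x_i = n y_i + r_i$ the resulting inequalities still have integer coefficients and that the finitely many residue tuples $(r_i)$ really do partition $\Z^m$ into pieces each affinely isomorphic to $\Z^m$ — this is where the combinatorics of common multiples of the moduli $n$ appearing in $E$ enters. None of this is deep, but it needs to be done uniformly so that the number of pieces, and hence the ambient dimension $N$, stays finite. Once that is in hand, the statement follows; and then Proposition~\ref{prop:press} is immediate, since near linear Zarankiewicz bounds for $(\R;+,<)$ (Fact~\ref{fact:mct}) pass to any definable sub-bipartite-graph of a definable bipartite graph, in particular to induced subgraphs on subsets, and an embedding of bipartite graphs carries a $K_{kk}$-free finite subgraph to a $K_{kk}$-free finite subgraph with the same edge and vertex counts.
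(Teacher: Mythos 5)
Your proof is correct and takes essentially the same approach as the paper's: use Presburger quantifier elimination to separate the congruence data from the order-theoretic data, reduce to finitely many residue classes on each side, observe that on each class the edge relation is the restriction of an $(\R;+,<)$-definable relation, and reassemble the pieces in $(\R;+,<)$ using a finite tag coordinate. The only cosmetic difference is that you perform an explicit affine substitution $x_i = ny_i + r_i$ to map each residue class onto a fresh copy of $\Z^k$, whereas the paper keeps the original coordinates and simply appends the residue tuple as the tag; both realize the same decomposition.
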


Proposition~\ref{prop:pres} shows that we need something more than the combinatorics of definable bipartite graphs to show $(\R;+,<)$ cannot trace define $(\Z;+)$ (unless it does).

\begin{proof}
It is enough to consider bipartite graphs of the form $(\Z^m,\Z^n;E)$ where $E$ is a $(\Z;+,<)$-definable subset of $\Z^{m + n}$.
We construct an $(\R;+,<)$-definable bipartite graph $(V_0,V_1;F)$ and an embedding $\mathbf{e} \colon (\Z^m,\Z^n;E) \to (V_0,V_1;F)$.
The usual quantifier elimination for Presburger arithmetic shows that there is $\ell \in \N$ such that $E \subseteq \Z^{m + n}$ is a finite union of finite intersections of sets of the following two forms:
\begin{enumerate}[leftmargin=*]
\item $X \cap \Z^{m + n}$ for $(\R;+,<)$-definable $X \subseteq \R^{m + n}$, and
\item $\{ (a_1,\ldots,a_{m + n}) \in \Z^{m + n} : a_i \equiv r \pmod{\ell} \}$ for $r \in \{0,\ldots,\ell - 1\}$, and $i \in \{1,\ldots, m + n\}$.
\end{enumerate}
Let $A$ be $\{0,\ldots,\ell - 1\}^{m + n}$.
For each $a = (a_1,\ldots,a_{m + n}) \in A$ we let $C_a$ be the set of $(b_1,\ldots,b_{m + n}) \in \Z^{m + n}$ such that $b_i \equiv a_i \pmod{\ell}$ for all $i$.
It easily follows that for each $a \in A$ there is an $(\R;+,<)$-definable $E_a \subseteq \R^{m + n}$ such that $E \cap C_a = E_a \cap C_a$.

\medskip\noindent
We let $A_0$ be $\{0,\ldots,\ell - 1 \}^{m}$, $A_1$ be $\{0,\ldots,\ell - 1\}^n$, and $a^{\frown}_0a_1 \in A$ be the concatenation of $a_0 \in A_0$ and $a_1 \in A_1$.
Let $V_0 = \R^m \times A_0$, $V_1 = \R^n \times A_1$, and declare 
\[
F((b,a), (b^*,a^*)) \quad\Longleftrightarrow\quad (b,b^*) \in E_{a^{\frown}a^*} \quad \text{for all} \quad (b,a) \in V_0, (b^*,a^*) \in V_1.
\]
Then $(V_0,V_1;F)$ is $(\R;+,<)$-definable.
We let $\mathbf{e} \colon \Z^m \to V_0$ be given by $e(b_1,\ldots,b_m) = (b_1,\ldots,b_m,r_1,\ldots,r_m)$ where each $r_i$ is the remainder mod $\ell$ of $b_i$.
We define $\mathbf{e} \colon \Z^n \to V_2$ in the same way.
It is easy to see that $\mathbf{e}$ is an embedding.
\end{proof}

Theorem~\ref{thm:press 2} follows from Proposition~\ref{prop:press} and Proposition~\ref{prop:field-blank}. 

\begin{theorem}
\label{thm:press 2}
$\Th(\Z;+,<)$ does not trace define an infinite field.
\end{theorem}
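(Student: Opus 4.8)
The plan is to deduce this immediately from the two preceding results plus the preservation lemma, so the proof will be very short. First I would recall that ``near linear Zarankiewicz bounds'' is a property of theories: by definition $T$ has near linear Zarankiewicz bounds precisely when some (equivalently every) model of $T$ does, so Proposition~\ref{prop:press} tells us that $\Th(\Z;+,<)$ has near linear Zarankiewicz bounds.

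Next I would argue by contradiction. Suppose $\Th(\Z;+,<)$ trace defines an infinite field $K$, i.e.\ it trace defines $\Th(K)$. By Lemma~\ref{lem:blank-2}, near linear Zarankiewicz bounds pass from $\Th(\Z;+,<)$ to any theory it trace defines, so $\Th(K)$ has near linear Zarankiewicz bounds, and hence $K$ itself does. But Proposition~\ref{prop:field-blank} states that no infinite field has near linear Zarankiewicz bounds. This contradiction completes the proof.

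There is essentially no obstacle here: every ingredient (Proposition~\ref{prop:press}, Lemma~\ref{lem:blank-2}, Proposition~\ref{prop:field-blank}) is already established in the excerpt, and the only thing to be careful about is the bookkeeping passage between ``a structure has near linear Zarankiewicz bounds'' and ``its theory does,'' which is immediate from the definition. So the write-up is just:

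\begin{proof}
By Proposition~\ref{prop:press} and the definition of near linear Zarankiewicz bounds for theories, $\Th(\Z;+,<)$ has near linear Zarankiewicz bounds. Suppose toward a contradiction that $\Th(\Z;+,<)$ trace defines an infinite field $K$. By Lemma~\ref{lem:blank-2}, $\Th(K)$ has near linear Zarankiewicz bounds, hence so does $K$. This contradicts Proposition~\ref{prop:field-blank}.
\end{proof}
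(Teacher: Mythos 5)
Your proof is correct and matches the paper's own argument, which states that Theorem~\ref{thm:press 2} follows from Proposition~\ref{prop:press} and Proposition~\ref{prop:field-blank}; you simply make explicit the intermediate step via Lemma~\ref{lem:blank-2} on preservation under trace definability.
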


Corollary~\ref{cor:presbur} now follows by Section~\ref{section:presburger} and Proposition~\ref{prop:final Z}.

\begin{corollary}
\label{cor:presbur}
$(\R;+,<,\Z,\Q)$, finite rank oag's, and finite rank cyclically ordered abelian groups, all have near linear Zarankiewicz bounds and hence cannot trace define infinite fields. 
\end{corollary}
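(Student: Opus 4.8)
The plan is to assemble this from preservation of near linear Zarankiewicz bounds under trace definibility (Lemma~\ref{lem:blank-2}) together with the fact that each of the three listed families of structures is trace definable in Presburger arithmetic $\Th(\Z;+,<)$, which has near linear Zarankiewicz bounds by Proposition~\ref{prop:press}. Nothing new is needed; the work is entirely bookkeeping.

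First I would dispatch the three cases. By Proposition~\ref{prop:final Z}, $(\R;+,<,\Z,\Q)$ is trace equivalent to $(\Z;+,<)$, so in particular $\Th(\R;+,<,\Z,\Q)$ is trace definable in $\Th(\Z;+,<)$. If $(H;+,\prec)$ is a finite rank ordered abelian group, meaning $(H;+)$ has finite rank, then $(H;+,\prec)$ is trace definable in Presburger arithmetic by Proposition~\ref{prop:presbur1.5}. If $(G;+,C)$ is a finite rank cyclically ordered abelian group, then $(G;+,C)$ is trace definable in Presburger arithmetic by Proposition~\ref{prop:cyclic oag}. In each case, since $\Th(\Z;+,<)$ has near linear Zarankiewicz bounds by Proposition~\ref{prop:press}, Lemma~\ref{lem:blank-2} shows that the theory of the structure in question has near linear Zarankiewicz bounds. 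This gives the first assertion.

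For the second assertion, suppose toward a contradiction that one of these structures trace defines an infinite field $K$. Then its theory trace defines $\Th(K)$, so by Lemma~\ref{lem:blank-2} again $\Th(K)$ has near linear Zarankiewicz bounds, contradicting Proposition~\ref{prop:field-blank}. (Equivalently, by transitivity of trace definibility, Proposition~\ref{prop:trace-basic}, together with the trace definitions in Section~\ref{section:presburger} and Proposition~\ref{prop:final Z}, such a structure being trace definable in $\Th(\Z;+,<)$ would force $\Th(\Z;+,<)$ to trace define an infinite field, contradicting Theorem~\ref{thm:press 2}.) The only point requiring any care is to check that the ``finite rank'' hypotheses in the statement match the hypotheses of Propositions~\ref{prop:presbur1.5} and~\ref{prop:cyclic oag}, namely finiteness of the rank of the underlying abelian group, which they do by definition; there is no real obstacle.
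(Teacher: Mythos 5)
Your proof is correct and follows exactly the route the paper intends: the paper's own justification is simply the terse remark that the corollary ``follows by Section~\ref{section:presburger} and Proposition~\ref{prop:final Z},'' and you have correctly unpacked this into the chain of trace definitions in Presburger arithmetic (Propositions~\ref{prop:final Z}, \ref{prop:presbur1.5}, \ref{prop:cyclic oag}), preservation via Lemma~\ref{lem:blank-2}, the bounds for Presburger from Proposition~\ref{prop:press}, and the contradiction with Proposition~\ref{prop:field-blank}.
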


I think we need a different property for the general case.

\begin{conj}
\label{conj:oag}
An ordered abelian group cannot trace define an infinite field.
\end{conj}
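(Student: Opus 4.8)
The plan is to isolate a combinatorial invariant $\mathbf P$ which is preserved under trace definitions, fails in every infinite field, and holds in every ordered abelian group; together with a preservation lemma of the type of Lemma~\ref{lem:blank-2} this yields the conjecture at once. The obvious first candidate for $\mathbf P$ is near linear Zarankiewicz bounds, so the primary line of attack is to prove that \emph{every} ordered abelian group has near linear Zarankiewicz bounds. By Fact~\ref{fact:blank-0}, Proposition~\ref{prop:field-blank}, and Lemma~\ref{lem:blank-2} this finishes the proof, and it simultaneously upgrades Corollary~\ref{cor:presbur} by removing the finite rank hypothesis.

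To carry this out I would follow the strategy of Proposition~\ref{prop:pres}: it suffices to show that every bipartite graph definable in a (saturated, via Proposition~\ref{prop:trace-theories}) ordered abelian group $(H;+,\prec)$ embeds into a bipartite graph definable in a structure already known to have near linear Zarankiewicz bounds --- ideally $(\R;+,<)$, or a multisorted disjoint union of regular ordered abelian groups and pure abelian groups, using Fact~\ref{fact:mct}, Proposition~\ref{prop:press}, and Lemma~\ref{lem:disjoint union}. Using the quantifier elimination for arbitrary ordered abelian groups (\cite{coag,expanded,Sch.habil}), a definable relation on the main sort is a Boolean combination of order conditions between terms, congruence conditions, and conditions pulled back from the auxiliary ``spine'' sorts. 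The congruence conditions are absorbed exactly as in Proposition~\ref{prop:pres}, by passing to the divisible hull and partitioning the ambient space into finitely many congruence classes along a fixed modulus; the remaining pure order-and-group conditions land in the ordered $\Q$-vector space, which has near linear Zarankiewicz bounds by Fact~\ref{fact:mct}. What is left is the finitely many spine parameters a given formula can mention, and for these I would try to show that the induced structure on each spine sort is itself ``linearly ordered with congruences'' in a sufficiently controlled way that an induction on the depth of the quantifier-elimination analysis reduces spine bipartite graphs once more to $(\R;+,<)$- or $(\Z;+,<)$-definable ones; an iterated application of Lemma~\ref{lem:disjoint union} then gives near linear Zarankiewicz bounds for $H$.

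The main obstacle is precisely this spine contribution. For Presburger arithmetic the spine is a single point and is irrelevant; for regular ordered abelian groups (Fact~\ref{fact:qe for oag}) there is no spine; but a general ordered abelian group carries an infinite linearly ordered spine with its own congruence structure, and it is exactly there that one must rule out the encoding of an infinite field. Controlling the induced structure on the spine --- showing that nothing ``field-like'' can live there --- is the crux of the whole argument.

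Finally, the caveat voiced just before the conjecture suggests a real possibility: some ordered abelian group may fail to have near linear Zarankiewicz bounds, the order on an infinite spine conceivably supporting a $K_{kk}$-free definable bipartite graph with $|V|^{1+\delta}$ edges for $\delta$ bounded away from $0$. If so, the fallback is to weaken $\mathbf P$ to a polynomial Zarankiewicz bound with a universal exponent $c<4/3$: by the Szemer\'edi--Trotter-type witness used in the proof of Proposition~\ref{prop:field-blank} (characteristic $0$) together with Lemma~\ref{lem:fp} and the finite-subfield incidence construction (characteristic $p$), every infinite field violates such a bound, and one would then have to verify, by the same quantifier-elimination bookkeeping but now tracking VC density rather than the exponent $1+\varepsilon$, that every ordered abelian group satisfies it. Establishing that uniform sub-$4/3$ exponent --- or showing it fails and thereby pinpointing yet another property --- is where the genuine difficulty lies.
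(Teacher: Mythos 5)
This statement is presented in the paper as an open \emph{conjecture} --- the paper contains no proof of it, and the sentences immediately surrounding it make clear that the author does not expect the near linear Zarankiewicz route to close it in full generality: ``I think we would need something different to handle the lexicographically ordered group of infinite integer sequences with pointwise addition.'' Your proposal is therefore not a proof but an attack plan, which you yourself acknowledge (``where the genuine difficulty lies'').

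Your primary route --- prove that every ordered abelian group has near linear Zarankiewicz bounds, then finish via Lemma~\ref{lem:blank-2}, Fact~\ref{fact:blank-0}, and Proposition~\ref{prop:field-blank} --- is exactly the route the paper already carries out in the finite rank case (Proposition~\ref{prop:press}, Corollary~\ref{cor:presbur}) and explicitly flags as probably insufficient beyond it, with the lexicographically ordered group of infinite integer sequences as the suspected counterexample. The obstruction you name, the induced structure on the ``spine'' sorts appearing in the general quantifier elimination for ordered abelian groups, is the right place to look, but your sketch does not actually bound its contribution: the reduction modulo a fixed modulus works for a single regular quotient, whereas the spine can carry an infinite definable linear order with its own unbounded congruence tower, and nothing in your outline prevents a $K_{kk}$-free definable incidence graph there with superlinear but sub-quadratic growth. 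You correctly call this ``the crux'' and then leave it unresolved, so this is precisely a missing step, not a proof of the conjecture. The fallback of replacing $1+\varepsilon$ by a uniform exponent $c<4/3$ is a sensible weakening of the invariant, but it is equally unverified for arbitrary ordered abelian groups, and the paper gives no evidence for it either. In short: nothing you wrote is false, and the plan is consonant with the paper's own heuristics, but the conjecture remains open and your proposal does not close it.
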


It should certainly be possible to show that many more examples of oag's have near linear Zarankiewicz bounds.
I think we would need something different to handle the lexicographically ordered group of infinite integer sequences with pointwise addition.

\subsection{Proof of Theorem~\ref{thm:rigid}}
\label{section:rigid}
Let $\mathrm{Add}$ be a ternay relation symbol.
Given an ordered group $(R;+,<)$ and an interval $I\subseteq R$ we let $\mathrm{Add}_I$ be the ternary relation on $I$ where $\mathrm{Add}_I(a,a',b) \Longleftrightarrow b = a + a'$ for all $a,a',b \in I$.
A \textbf{DOAG-interval} is a structure $(I;\mathrm{Add},\prec)$ such that there is $(R;+,<) \models \doag$, an interval $J \subseteq R$ containing $0$, and an isomorphism $(I;\mathrm{Add}, \prec) \to (J;\mathrm{Add}_J,<)$.
Fact~\ref{fact:lovey} a special case of o-minimal trichotomy~\cite{PS-Tri, loveys}.

\begin{fact}
\label{fact:lovey}
Suppose that $\Sa R$ is o-minimal.
Then exactly one of the following holds:
\begin{enumerate}[leftmargin=*]
\item There is an interval $I \subseteq R$ and definable $X \subseteq I^3$ such that $(I;X,<)$ is a $\doag$-interval.
\item $\Sa R$ is trivial.
\end{enumerate}
Suppose in addition that $\Sa R$ expands an ordered group $(R;+,<)$.
Then exactly one of the following holds:
\begin{enumerate}[leftmargin=*]
\item There is an interval $I \subseteq R$ and definable $\oplus,\otimes \colon I^2 \to I$ such that $(I;\oplus,\otimes) \models \rcf$.
\item There is an ordered division ring $\mathbb{D}$ and an ordered $\D$-vector space $\V$ expanding $(R;+,<)$ such that $\Sa R$ is a reduct of $\V$.
\end{enumerate}
If $(R;+,<) = (\R;+,<)$ then we may take $\V$ to be $\rvec$.
\end{fact}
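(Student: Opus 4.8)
The plan is to deduce Fact~\ref{fact:lovey} from the Peterzil--Starchenko trichotomy~\cite{PS-Tri} together with Loveys' analysis of the linear case~\cite{loveys}, essentially by repackaging their (mostly local) statements into the global form asserted here. First I would recall the relevant content of the trichotomy: for an o-minimal $\Sa R$ and a point $a\in R$, the structure $\Sa R$ is, near $a$, exactly one of (i) trivial at $a$; (ii) non-trivial and linear at $a$, in which case there is an interval $I\ni a$ carrying a definable ordered-vector-space structure over an ordered division ring $\mathbb D$; or (iii) there is an interval $I\ni a$ carrying a definable real closed field. Moreover $\Sa R$ is trivial (in the sense used in the statement) if and only if it is trivial at every point, and a trivial structure defines neither a real-closed-field interval nor a non-trivial definable binary operation.

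For Part~1: if $\Sa R$ is not trivial, choose $a$ at which $\Sa R$ is non-trivial; then case (ii) or (iii) holds near $a$. In case (iii) the real closed field on $I$ carries an underlying ordered divisible abelian group, and restricting its addition to a subinterval $I'\subseteq I$ containing the additive identity yields definable $X\subseteq (I')^3$ with $(I';X,<)$ a $\doag$-interval; in case (ii) the ordered-vector-space structure on $I$ already provides, after restricting to a subinterval about $0$, such a $\doag$-interval. Conversely, if $\Sa R$ defines a $\doag$-interval then its addition is a definable binary operation depending genuinely on both arguments, so $\Sa R$ is non-trivial. Since triviality and non-triviality are mutually exclusive by definition, this establishes the dichotomy in Part~1.

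For Part~2, assume in addition that $\Sa R$ expands an ordered group $(R;+,<)$; the presence of the ambient group is precisely what allows globalization of the local data. If the field case (iii) occurs on some interval, we are in alternative~(1). Otherwise $\Sa R$ is linear, and Loveys' theorem (the linear case of the trichotomy for structures expanding an ordered group) applies: the set $\mathbb D$ of $\Sa R$-definable endomorphisms of $(R;+,<)$, with pointwise addition and composition, forms an ordered division ring, and $\Sa R$ is a reduct of the ordered $\mathbb D$-vector space $\V=(R;+,<,(d)_{d\in\mathbb D})$ expanding $(R;+,<)$; this is alternative~(2). When $(R;+,<)=(\R;+,<)$, every $\Sa R$-definable endomorphism of $(\R;+,<)$ is a definable additive self-map of $\R$, hence (being piecewise monotone by o-minimality, so bounded on an interval) continuous, hence multiplication by a real scalar; thus $\mathbb D\subseteq\R$ and the associated $\V$ is a reduct of $\rvec$, so by transitivity of ``reduct'' we may take $\V=\rvec$. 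Mutual exclusivity of the two alternatives in Part~2 is part of the cited trichotomy; concretely, a reduct of an ordered vector space is locally modular (every definable function is piecewise $\mathbb D$-affine), whereas a structure defining a real closed field on an interval is not.

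The main obstacle is the globalization step in Part~2: upgrading the local linear structure supplied by the trichotomy to the assertion that $\Sa R$ is a reduct of a \emph{single} ordered $\mathbb D$-vector space on all of $(R;+,<)$, together with the verification that $\mathbb D$ is genuinely an ordered division ring. This is exactly the substance of~\cite{loveys} and is imported wholesale here. A secondary point demanding care is the archimedean identification $\mathbb D=\R$, $\V=\rvec$, which rests on the classical fact that an additive function $\R\to\R$ bounded on some interval is linear.
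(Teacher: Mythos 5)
Your proposal is correct and takes the same route the paper intends: the paper merely cites Peterzil--Starchenko and Loveys without supplying details, and you unpack those citations in the expected way (local trichotomy, globalization via Loveys in the presence of an ambient ordered group, and the Cauchy-functional-equation argument to identify $\V$ with $\rvec$ in the archimedean case). No gaps.
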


Recall that $\rvec$ is $(\R;+,<,(t \mapsto \lambda t)_{\lambda \in \R})$.

\medskip
We let $\D$ be an ordered division ring and $\Sa V$ be an ordered $\D$-vector space.
Given a division subring $\D^*$ of $\D$ we let $\Sa V^*$ be the ordered $\D^*$-vector space reduct of $\Sa V$.

\begin{lemma}
\label{lem:ovs}
Let $\D$ and $\Sa V$ be as above.
Suppose that $\Sa M$ is a reduct of $\Sa V$ expanding $(V;+,<)$.
Then there is a division subring $\D^*$ of $\D$ such that $\Sa M$ is trace equivalent to $\Sa V^*$.
So any reduct of $\rvec$ expanding $(\R;+,<)$ is trace equivalent to $(\R;+,<,(t\mapsto\lambda t)_{\lambda\in F})$ for a subfield $F\subseteq\R$.
\end{lemma}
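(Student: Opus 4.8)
The plan is to set $\D^\ast := \{\lambda \in \D : t\mapsto \lambda t \text{ is definable in } \Sa M \text{ (with parameters)}\}$ and let $\Sa V^\ast$ be the reduct of $\Sa V$ to $\{+,-,0,<\}\cup\{(t\mapsto\lambda t)_{\lambda\in\D^\ast}\}$, that is, the ordered $\D^\ast$-vector space reduct. First I would check that $\D^\ast$ is a division subring of $\D$: the $\Sa M$-definable additive endomorphisms of $(V;+)$ form a subring of $\mathrm{End}(V;+)$ closed under inverting its bijective members, $\D^\ast$ identifies with its intersection with the subring $\{x\mapsto\lambda x:\lambda\in\D\}$, and closure is inherited because $x\mapsto(\lambda+\mu)x$, $x\mapsto\lambda\mu x$ and (for $\lambda\neq0$) $x\mapsto\lambda^{-1}x$ are the sum, composite and compositional inverse of $x\mapsto\lambda x$ and $x\mapsto\mu x$. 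One inclusion is then immediate: any subset of $V^n$ definable in $\Sa V^\ast$ with parameters is definable in $\Sa M$ with parameters (replace each scalar-multiplication symbol by its $\Sa M$-definition; only finitely many occur in a given formula), so $\Sa M$ trace defines $\Sa V^\ast$ via the identity $V\to V$, taking $Y:=X$.

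It remains to show $\Sa V^\ast$ trace defines $\Sa M$, and via the identity this reduces to the claim that every $\Sa M$-definable subset of $V^n$ is $\Sa V^\ast$-definable with parameters. To prove this I would argue: $\Sa V$ is o-minimal, so $\Sa M$, being a reduct of $\Sa V$ expanding the dense order $(V;<)$, is an o-minimal expansion of the ordered group $(V;+,<)$; and $\Sa M$ defines no field on an interval, for otherwise $\Sa V$ would interpret an infinite field, contradicting Theorem~\ref{thm:vector-field}. By the o-minimal trichotomy for expansions of ordered groups (Fact~\ref{fact:lovey}), $\Sa M$ is therefore a reduct of an ordered vector space expanding $(V;+,<)$, so the linear cell decomposition for such structures~\cite{PS-Tri} (see also~\cite{loveys}) applies: every $\Sa M$-definable set is a finite union of cells whose bounding maps are affine, $x\mapsto\lambda_1 x_1+\dots+\lambda_k x_k+c$ with $c\in V$ and all slopes $\lambda_i$ in the division ring $\Lambda$ of slopes of one-variable $\Sa M$-definable affine functions. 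Finally $\Lambda=\D^\ast$: since $\Sa M$ is a reduct of $\Sa V$ and $\Sa V$ admits quantifier elimination, every $\Sa M$-definable function is piecewise affine with slopes in $\D$, whence $\Lambda\subseteq\D$; a one-variable $\Sa M$-definable affine function of slope $\lambda$ shows $t\mapsto\lambda t$ is $\Sa M$-definable, so $\Lambda\subseteq\D^\ast$; and $\D^\ast\subseteq\Lambda$ is clear. Hence every $\Sa M$-definable set is a finite union of $\Sa V^\ast$-definable cells, proving the claim; so $\Sa M$ and $\Sa V^\ast$ are trace equivalent (indeed they have the same parameter-definable sets).

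For the final sentence, take $\Sa V=\rvec$ and $\D=\R$: a division subring of the field $\R$ is a subfield $F$, and $\Sa V^\ast$ is then $(\R;+,<,(t\mapsto\lambda t)_{\lambda\in F})$. The step I expect to be the main obstacle is identifying the division ring of slopes $\Lambda$ appearing in the cell decomposition of $\Sa M$ and matching it with $\D^\ast$; this is precisely where one must combine the two facts that $\Sa M$ is a reduct of $\Sa V$ (confining slopes to $\D$) and of an ordered vector space (furnishing the cell decomposition). A minor technical wrinkle — whether a parameter-definable additive endomorphism of $(V;+)$ is already $\emptyset$-definable — I avoid by permitting parameters in the definition of $\D^\ast$ throughout, which is harmless since naming constants does not change a structure's trace-equivalence class.
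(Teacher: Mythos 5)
Your proposal has a genuine gap. The key error is the definition of $\D^*$: you require the \emph{global} function $t \mapsto \lambda t$ on $V$ to be $\Sa M$-definable, whereas the paper requires only that some restriction $[0,\gamma) \to V$, $\alpha \mapsto \lambda\alpha$ be definable for a positive $\gamma$. The distinction is not cosmetic. In the remark following the lemma, the paper exhibits $\Sa O$, the expansion of $(\R;+,<)$ by all restricted scalar multiplications $[0,1]\to\R$, $t\mapsto\lambda t$ for $\lambda\in\R$; this is a reduct of $\rvec$, yet by the cited result of Marker--Peterzil--Pillay the unbounded map $\R\to\R$, $t\mapsto\lambda t$ is $\Sa O$-definable only when $\lambda\in\Q$. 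With your definition, $\D^*=\Q$ and $\Sa V^*=(\R;+,<)$, but $\Sa O$ is manifestly \emph{not} a reduct of $(\R;+,<)$, so your claim that every $\Sa M$-definable set is $\Sa V^*$-definable is false. The specific misstep is the inference ``a one-variable $\Sa M$-definable affine function of slope $\lambda$ shows $t\mapsto\lambda t$ is $\Sa M$-definable, so $\Lambda\subseteq\D^*$'': the bounding functions appearing in the semilinear cell decomposition are \emph{partial} affine maps, and definability of a restriction of $t\mapsto\lambda t$ to a bounded cell does not yield definability of the global map.

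Beyond the definition of $\D^*$, you have also flipped the difficulty of the two directions, and the hard one is not addressed. Even with the paper's corrected $\D^*$, the conclusion you try to prove --- that $\Sa M$ and $\Sa V^*$ have the same parameter-definable sets --- is false: in the $\Sa O$ example the paper's $\D^*$ is $\R$, so $\Sa V^*=\rvec$, and $\Sa O$ is a \emph{proper} reduct of $\rvec$. The easy direction is that $\Sa M$ (after replacing it by the interdefinable expansion $\Sa M'$ of $(V;+,<)$ by partial affine maps) is a reduct of $\Sa V^*$, giving one trace definition for free. The hard direction is showing that $\Th(\Sa M)$ trace defines $\Sa V^*$: the paper does this by passing to a saturated elementary extension $\Sa N$ of $\Sa M$, isolating the convex subgroup $I$ of $V$-infinitesimals, observing that $I$ is short enough that each interval-definable scalar multiplication by $\lambda\in\D^*$ extends to a definable map on $I$, so $I$ carries a definable ordered $\D^*$-vector space structure in the Shelah completion $\Sh N$, and then invoking Proposition~\ref{prop:she-0} and completeness of the theory of ordered $\D^*$-vector spaces. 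No version of this step appears in your proposal; the cell-decomposition argument cannot replace it because it only establishes a reduct relation, and the reduct relation here is genuinely one-sided.
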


Let $\Sa O$ be the expansion of $(\R;+,<)$ by all functions $[0,1]\to\R$, $t\mapsto\lambda t$ for $\lambda\in\R$.
The proof of Lemma~\ref{lem:ovs} shows that $\Sa O$ is trace equivalent to $\rvec$.
However $\Sa O$ is a proper reduct of $\rvec$ as $\R\to\R$, $t\mapsto\lambda t$ is only $\Sa O$-definable when $\lambda\in\Q$~\cite{MPP}.
It should be possible to show that $\Sa O$ does not interpret $\rvec$, but I don't have a proof of this.

\begin{proof}
Let $\Sa M'$ be the expansion of $(V;+,<)$ by all  $f\colon [0,\gamma)\to V$ such that:
\begin{enumerate}
\item $\gamma\in V\cup\{\infty\}$,
\item there is $\lambda\in\D$ such that $f(\alpha)=\lambda\alpha$ for all $0\le\alpha<\gamma$,
\item and $f$ is definable in $\Sa M$.
\end{enumerate}
Then $\Sa M'$ is a reduct of $\Sa M$.
We show that $\Sa M$ and $\Sa M'$ are interdefinable.
Following the semilinear cell decomposition \cite[1.7.8]{lou-book} one can show that any $\Sa M$-definable subset of $V^n$ is a finite union of $\Sa M$-definable semilinear cells.
An easy induction on $n$ shows that any $\Sa M$-definable semilinear cell $X \subseteq V^n$ is definable in $\Sa M'$.
So we suppose $\Sa M=\Sa M'$.

\medskip
Let $\D^*$ be the set of $\lambda\in\D$ such that $[0,\gamma)\to V$, $\alpha\mapsto\lambda\alpha$ is $\Sa M$-definable for some positive $\gamma\in V$.
Note that $\D^*$ is a division subring of $\D$.
We show that $\Sa M$ and $\Sa V^*$ are trace equivalent.
By construction $\Sa M$ is a reduct of $\Sa V^*$.
We show that $\Th(\Sa M)$ trace defines $\Sa V^*$.
Suppose that $\Sa V\prec\Sa W$ is $|\D|^+$-saturated and let $\Sa N$ be the reduct of $\Sa W$ such that $\Sa M\prec\Sa N$.
Then $\Sa N$ is a $|\D|^+$-saturated elementary extension of $\Sa M$.
It is enough to show that $\Sa N$ trace defines some ordered $\D^*$-vector space as the theory of ordered $\D^*$-vector spaces is complete.
Let $I$ be the set of $\alpha\in W$ such that $|\alpha|<|\beta|$ for all non-zero $\beta\in V$.
Note that $I$ is a convex subgroup of $(W;+,<)$ and $I$ is non-trivial by saturation.
If $\lambda\in\D^*\setminus\{0\}$, $\alpha\in I$, and $\beta\in V\setminus\{0\}$, then $|\alpha|<|\beta/\lambda|$ as $\beta/\lambda\in V\setminus\{0\}$, so multiplying through by $|\lambda|$ yields $|\lambda\alpha|<|\beta|$.
Hence $I$ is closed under multiplication by any $\lambda\in\D^*$, so $(I;+,<)$ has a natural $\D^*$-vector space expansion $\Sa I$.
By definition of $\D^*$ for every $\lambda\in\D^*$ there is an $\Sa N$-definable $f\colon W\to W$ such that $f(\alpha)=\lambda\alpha$ for all $\alpha\in I$.
By convexity $I$ is $\Sh N$ definable, hence $\Sa I$ is definable in $\Sh N$.
An application of Proposition~\ref{prop:she-0} shows that $\Th(\Sa N)$ trace defines $\Sa I$.
\end{proof}

Now everything is in place to prove Theorem~\ref{thm:rigid}.3.

\begin{proposition}
\label{prop:o-min-rigid}
Suppose that $\Sa R$ is an o-minimal expansion of an oag $(R;+,<)$.
Then the following are equivalent:
\begin{enumerate}[leftmargin=*]
\item $\Sa R$ does not define a real closed field.
\item $\Th(\Sa R)$ does not trace define an infinite field.
\item $\Sa R$ has near linear Zarankiewicz bounds.
\item $\Sa R$ is trace equivalent to  an ordered vector space over an ordered division ring.\\
(If $(R;+,<)$ is $(\R;+,<)$ then the ordered division ring is an ordered subfield of $\R$.)
\item There is an ordered division ring $\D$ and an ordered $\D$-vector space $\V$ expanding $(R;+,<)$ so that $\Sa R$ is a reduct of $\V$.
(If $(R;+,<)=(\R;+,<)$  we  have $\V=\rvec$.)
\end{enumerate}
\end{proposition}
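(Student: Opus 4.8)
The plan is to prove the chain of implications $(5)\Rightarrow(4)\Rightarrow(3)\Rightarrow(2)\Rightarrow(1)\Rightarrow(5)$, drawing on the trichotomy (Fact~\ref{fact:lovey}), the structural Lemma~\ref{lem:ovs}, and the Zarankiewicz machinery of Section~\ref{section:almost-lin}. The implication $(5)\Rightarrow(4)$ is exactly Lemma~\ref{lem:ovs}: if $\Sa R$ is a reduct of an ordered $\D$-vector space $\V$ expanding $(R;+,<)$, then Lemma~\ref{lem:ovs} produces a division subring $\D^*\subseteq\D$ with $\Sa R$ trace equivalent to $\Sa V^*$, and in the case $(R;+,<)=(\R;+,<)$ the division ring is a subfield of $\R$. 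Then $(4)\Rightarrow(3)$ is immediate from Fact~\ref{fact:mct} (an ordered vector space has near linear Zarankiewicz bounds) together with Lemma~\ref{lem:blank-2} (near linear Zarankiewicz bounds pass down under trace definibility, and trace equivalence in particular). And $(3)\Rightarrow(2)$ is immediate from Proposition~\ref{prop:field-blank}: an infinite field does not have near linear Zarankiewicz bounds, so if $\Th(\Sa R)$ trace defined an infinite field then $\Sa R$ would fail to have near linear Zarankiewicz bounds by Lemma~\ref{lem:blank-2}.

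Next, $(2)\Rightarrow(1)$ is a triviality: if $\Sa R$ defines a real closed field then in particular $\Th(\Sa R)$ interprets, hence (by Proposition~\ref{prop:trace-interpret}) trace defines, an infinite field, contradicting $(2)$. The remaining and only substantive implication is $(1)\Rightarrow(5)$. Here I would invoke the second half of Fact~\ref{fact:lovey}: since $\Sa R$ expands an ordered group $(R;+,<)$, exactly one of two things happens — either there is an interval $I\subseteq R$ and definable $\oplus,\otimes\colon I^2\to I$ with $(I;\oplus,\otimes)\models\rcf$, or there is an ordered division ring $\D$ and an ordered $\D$-vector space $\V$ expanding $(R;+,<)$ with $\Sa R$ a reduct of $\V$. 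If the first case held, then $\Sa R$ would define a real closed field, contradicting $(1)$; so we are in the second case, which is precisely $(5)$. When $(R;+,<)=(\R;+,<)$, the last clause of Fact~\ref{fact:lovey} lets us take $\V=\rvec$, giving the parenthetical refinement.

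The main obstacle is essentially bookkeeping rather than genuine difficulty: the substance of $(1)\Rightarrow(5)$ is entirely carried by Fact~\ref{fact:lovey}, and the substance of $(5)\Rightarrow(4)$ is carried by Lemma~\ref{lem:ovs}. The one point requiring a little care is the statement ``$\Sa R$ does not define a real closed field'' in $(1)$ versus ``$(I;\oplus,\otimes)\models\rcf$ for some definable interval structure'' in Fact~\ref{fact:lovey}: these are the same because a definable real closed field structure on an interval is literally a definition of a real closed field, and conversely any definable infinite field in an o-minimal structure is definably a real closed field — but for the present proof we only need the easy direction, namely that the trichotomy's first alternative implies $\Sa R$ defines a real closed field. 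A second minor point is the parenthetical in $(4)$ and $(5)$ about the base field being a subfield of $\R$ when $(R;+,<)=(\R;+,<)$; this is exactly the content of the final sentences of Fact~\ref{fact:lovey} and Lemma~\ref{lem:ovs}, so no extra argument is needed. I would close by remarking that combining $(1)\Leftrightarrow(2)$ here with Theorem~\ref{thm:vector-field} recovers, and in the ordered-group case strengthens, the classical statement that an o-minimal expansion of an ordered group is linear iff it does not define a field.
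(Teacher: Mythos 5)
Your proof is correct and is exactly the intended argument, just spelled out: the paper's proof is the one-liner ``Apply Fact~\ref{fact:lovey}, Theorem~\ref{thm:vector-field}, and Lemma~\ref{lem:ovs}'', and your chain $(5)\Rightarrow(4)\Rightarrow(3)\Rightarrow(2)\Rightarrow(1)\Rightarrow(5)$ unpacks precisely how those ingredients fit together (noting that Theorem~\ref{thm:vector-field} is itself assembled from Fact~\ref{fact:mct}, Proposition~\ref{prop:field-blank}, and Lemma~\ref{lem:blank-2}, which you invoke directly to also handle item $(3)$).
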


\begin{proof}
Apply Fact~\ref{fact:lovey}, Theorem~\ref{thm:vector-field}, and Lemma~\ref{lem:ovs}.
\end{proof}

We now discuss Theorem~\ref{thm:rigid}.1.
A geometric (in particular o-minimal or strongly minimal) theory $T$ is \textbf{trivial} if whenever $\Sa M \models T$ and $\beta \in M$ is in the algebraic closure of $A \subseteq M$ then $\beta$ is in the algebraic closure of some $\alpha \in A$.
A structure is trivial when its theory is.

\begin{theorem}
\label{thm:trivial}
Suppose that $\Sa M$ is o-minimal.
Then the following are equivalent:
\begin{enumerate}
\item $\Sa M$ defines an infinite group.
\item $\Th(\Sa M)$ trace defines an infinite group.
\item $\Th(\Sa M)$ trace defines $(\R;+,<)$.
\item $\Sa M$ is non-trivial.
\end{enumerate}
In particular $\dlo$ does not trace define an infinite group.
\end{theorem}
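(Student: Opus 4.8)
\textbf{Proof proposal for Theorem~\ref{thm:trivial}.}

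The plan is to prove the cycle of implications $(1)\Rightarrow(3)\Rightarrow(2)\Rightarrow(4)\Rightarrow(1)$, leaning on the o-minimal trichotomy (Fact~\ref{fact:lovey}) for the hard direction $(4)\Rightarrow(1)$ and on the dichotomy results already assembled in this section for the rest. First I would observe that $(1)\Rightarrow(2)$ is trivial and that $(3)\Rightarrow(2)$ is immediate since $(\R;+,<)$ is an infinite group (as a structure it trace defines the pure group, but even the ordered group counts). So the substantive implications are $(1)\Rightarrow(3)$, $(2)\Rightarrow(4)$, and $(4)\Rightarrow(1)$.

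For $(1)\Rightarrow(3)$: suppose $\Sa M$ defines an infinite group $G$. By the o-minimal group configuration / Pillay's theorem, an infinite group definable in an o-minimal structure has a definable infinite abelian subgroup, and after passing to a one-dimensional definable subgroup we get a definable one-dimensional abelian group. By the Peterzil-Starchenko analysis (or directly by o-minimal cell decomposition plus the trichotomy) such a one-dimensional definable group locally carries a definable ordered group structure, i.e. $\Sa M$ defines a $\doag$-interval in the sense of Section~\ref{section:rigid}; concretely there is an interval $I$ and definable $\oplus$ on $I$ (near the identity) making a piece of $G$ into an ordered abelian group interval. Equivalently, by the first part of Fact~\ref{fact:lovey}, $\Sa M$ is non-trivial and defines an interval $(I;X,<)$ which is a $\doag$-interval; its theory then defines an honest model of $\doag$ by a compactness/stretching argument (pass to an $\aleph_1$-saturated elementary extension and take the group generated by the interval, as in the proof of Proposition~\ref{prop:final Z} or Proposition~\ref{prop:dense pair}), and $\doag$ trace defines $(\R;+,<)$ trivially since $(\R;+,<)\models\doag$. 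Hence $\Th(\Sa M)$ trace defines $(\R;+,<)$.

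For $(4)\Rightarrow(1)$, argue contrapositively: if $\Sa M$ is trivial then by Fact~\ref{fact:lovey} there is no interval $I$ and definable $X\subseteq I^3$ with $(I;X,<)$ a $\doag$-interval; by the structure theory of trivial o-minimal structures (Peterzil, or \cite{loveys}), every definable set is — up to the coordinatewise action of finitely many definable partial unary functions — a boolean combination of "rectangular" sets, and in particular $\Sa M$ does not define any binary operation with infinitely many values on an infinite set, so $\Sa M$ does not define an infinite group. The step $(2)\Rightarrow(4)$ is the one I expect to be the main obstacle. Here I would argue the contrapositive: if $\Sa M$ is non-trivial and o-minimal, then again by Fact~\ref{fact:lovey} it defines a $\doag$-interval, hence (stretching as above) $\Th(\Sa M)$ trace defines $\doag$, hence trace defines an infinite group, so $(4)$ fails implies $(2)$ holds — wait, that gives $\neg(4)\Rightarrow(2)$, i.e. $(2)$ is automatic when $\Sa M$ is non-trivial, and when $\Sa M$ is trivial we must show $\Th(\Sa M)$ does \emph{not} trace define an infinite group. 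That last assertion is the real content: a trivial o-minimal theory is interdefinable with (a reduct of) a pure dense linear order with finitely many "shift-like" unary functions, and we must rule out trace-defining any infinite group. I would prove this by showing such $\Sa M$ has a strong combinatorial tameness property preserved under trace definibility that fails in every infinite group — the natural candidate being near linear Zarankiewicz bounds, but a trivial o-minimal structure has an even stronger property (its definable bipartite graphs are, after the unary reparametrizations, "rectangular", so $K_{2,2}$-free ones have linearly many edges), which by Lemma~\ref{lem:blank-2}-style reasoning is inherited by any trace-definable theory, while by Proposition~\ref{prop:field-blank} and the fact that an infinite group interprets/defines structure forcing non-linear Zarankiewicz bounds (an infinite group is not "rectangular"), no infinite group can have it. The technical heart is isolating the right rigidity property of trivial o-minimal structures and checking it is trace-definibility-invariant; the final clause about $\dlo$ follows since $\dlo$ is o-minimal and trivial.
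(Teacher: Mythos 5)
Your cycle $(1)\Rightarrow(3)\Rightarrow(2)\Rightarrow(4)\Rightarrow(1)$ has one minor mislabelling and one genuine gap.

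The minor one: what you present as $(4)\Rightarrow(1)$ --- ``if $\Sa M$ is trivial then $\Sa M$ does not define an infinite group'' --- is the contrapositive of $(1)\Rightarrow(4)$, not of $(4)\Rightarrow(1)$. The direction $(4)\Rightarrow(1)$ itself is the easy one and you already have the tools for it: by the first part of Fact~\ref{fact:lovey} a non-trivial o-minimal structure defines a $\doag$-interval, and a $\doag$-interval defines an infinite group by the universal-cover construction (Lemma~\ref{lem:doag interval}). You just applied the ingredients in the reverse direction.

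The genuine gap is in $(2)\Rightarrow(4)$, which you correctly identify as the hard step, namely: a trivial o-minimal theory cannot trace define an infinite group. Your proposed route --- isolate a combinatorial tameness property of trivial o-minimal structures, preserved by trace definibility, failing in every infinite group --- cannot be completed with your leading candidate. Near linear Zarankiewicz bounds do \emph{not} fail for infinite groups: Fact~\ref{fact:mct} says ordered vector spaces have near linear Zarankiewicz bounds, and they are infinite groups. Proposition~\ref{prop:field-blank} and Theorem~\ref{thm:vector-field} only rule out infinite \emph{fields}. You gesture at a ``stronger rectangularity'' property, but you neither define it, check that it passes to trace-definable theories, nor check that it fails in all infinite groups --- and that is precisely the missing content. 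The paper's actual proof of this direction is entirely different: it establishes a simple cell decomposition for trivial o-minimal structures (Proposition~\ref{prop:cell decomp}, every cell has essentially-one-variable defining functions), applies Ramsey to place a sequence of $k$-fold products in a single simple cell, and then a pigeonhole on the $\le 2m$ input variables that the $2m$ fiber-defining functions can collectively depend on (taking $k>2m$) forces two distinct $k$-tuples of group elements to yield the same fiber $Y_\alpha=Y_{\alpha'}$, contradicting that $\alpha_1\cdots\alpha_k$ is the unique element of $O$ in $Y_\alpha$. The paper in fact remarks, just after Proposition~\ref{prop:lo-group}, that a proof along the lines you sketch (a trace-definibility-invariant notion holding in trivial o-minimal structures and failing in every infinite group) would be the ``good'' proof but has not been found; your gap is exactly the open problem the author flags.
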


We give some examples of trivial o-minimal structures.
Given an arbitrary o-minimal structure $\Sa M$ let $\Sa M_{\mathrm{mon}}$ be the expansion of $(M;<)$ by all increasing $\Sa M$-definable functions $I \to M$, for $I$ an interval.
By \cite[Proposition~4.1]{Simon-dp} $\Sa M_\mathrm{mon}$ is binary and by \cite{mekler-rubin-steinhorn} a binary o-minimal structure is trivial.
Hence $\Sa M_{\mathrm{mon}}$ is trivial o-minimal.
For example the expansion of $(\R;<)$ by all increasing semialgebraic functions $I \to \R$, $I \subseteq \R$ an interval, is trivial. 

\begin{lemma}
\label{lem:doag interval}
A $\doag$-interval is trace equivalent to $\doag$ and defines an infinite group.
\end{lemma}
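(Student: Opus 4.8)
\textbf{Proof plan for Lemma~\ref{lem:doag interval}.} The statement has two parts: a $\doag$-interval $(I;\mathrm{Add},\prec)$ is trace equivalent to $\doag$, and it defines an infinite group. The second part is the easier one: by definition there is $(R;+,<)\models\doag$ and an interval $J\subseteq R$ containing $0$ with an isomorphism $(I;\mathrm{Add},\prec)\to(J;\mathrm{Add}_J,<)$, so it suffices to recover an infinite group structure inside $(J;\mathrm{Add}_J,<)$. The natural candidate is a small symmetric subinterval $(-\delta,\delta)$ with the truncated addition $a\oplus b = a+b$ clipped appropriately; but actually the cleanest route is to pass to an $\aleph_1$-saturated elementary extension, where we may take $J$ to contain an interval around $0$ that is closed under addition of its own elements on some infinitesimal scale, or simply note that any $\doag$-interval interprets (indeed defines, via an obvious quotient-free construction on a half-open subinterval modeled on the universal-cover trick of Section~\ref{section:cyclic order}) an infinite divisible ordered abelian group. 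I would spell this out by choosing $0<\gamma\in J$ (possible after passing to a saturated model, since $J$ is an interval containing $0$ in a dense order), letting $K=\{a\in J: a+a\in J, \text{ iterated sums up to } \gamma \text{ stay in } J\}$ — more carefully, imitate Fact~\ref{fact:cover}: the subinterval $[0,\gamma)$ with $a+^* b$ defined as $a+b$ when $a+b<\gamma$ and undefined otherwise does not directly give a group, so instead I would take the convex subgroup of infinitesimals relative to $\gamma$, exactly as in the proofs of Propositions~\ref{prop:dense pair} and \ref{prop:re-oag}, and observe it is a nontrivial divisible ordered abelian group definable (with parameter $\gamma$) in the $\doag$-interval.

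\textbf{For trace equivalence:} One direction is immediate — $\doag$ trace defines the $\doag$-interval because the $\doag$-interval is (isomorphic to) a definable, indeed quantifier-free definable, piece of a model of $\doag$: given $(R;+,<)\models\doag$ and the interval $J$, the structure $(J;\mathrm{Add}_J,<)$ is the structure induced on $J$ by $(R;+,<)$ in the relevant reduct, and since $\doag$ has quantifier elimination the induced structure on the interval has quantifier elimination (this is a standard o-minimal cell-decomposition fact, cf. the reasoning behind Fact~\ref{fact:wom-induced}), so Lemma~\ref{lem:she--1} applies. Alternatively, more directly, apply Proposition~\ref{prop:qe-trace}: the $\doag$-interval embeds into a model of $\doag$ as a substructure in a language where it has QE.

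\textbf{For the reverse direction}, that the $\doag$-interval trace defines $\doag$: I would take an $\aleph_1$-saturated model $(R;+,<)\models\doag$ with the interval $J$ around $0$, and inside $J$ locate a nontrivial convex subgroup $G$ — for instance the subgroup of elements infinitesimal relative to some fixed positive $\gamma\in J$ — as in the proof of Proposition~\ref{prop:dense pair}. On $G$ the relation $\mathrm{Add}_J$ restricts to honest addition and $\prec$ restricts to the order, and $(G;+,<)$ is a divisible ordered abelian group, hence $(G;+,<)\models\doag$. Since $G$ is convex it is externally definable, and more to the point $(G;\mathrm{Add}_J\restriction G,\prec\restriction G)$ is literally definable (with parameters) in the $\doag$-interval. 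Then apply Proposition~\ref{prop:qe0} (or Proposition~\ref{prop:eq neg 1}) together with quantifier elimination for $\doag$ to conclude that the $\doag$-interval trace defines $\doag$. Combining both directions gives trace equivalence.

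\textbf{Main obstacle.} The one point requiring genuine care is the claim that the structure induced on an interval $J$ (around $0$) by a model of $\doag$, in the reduct language $(\mathrm{Add}_J,<)$, has quantifier elimination — equivalently that every definable subset of $J^n$ in that reduct is already quantifier-free definable. This is where o-minimal cell decomposition for $\doag$ enters; I expect it goes through smoothly because $\doag$ is a particularly simple o-minimal theory (ordered vector space over $\Q$), and the semilinear cell decomposition cited as \cite[1.7.8]{lou-book} handles it, but it is the step that is not purely formal. Everything else — locating the infinitesimal convex subgroup in a saturated model, checking it models $\doag$, and invoking Propositions~\ref{prop:qe-trace}/\ref{prop:qe0}/\ref{prop:she-0} and quantifier elimination for $\doag$ — is routine and parallels arguments already carried out for Propositions~\ref{prop:dense pair}, \ref{prop:re-oag}, and \ref{prop:final Z}.
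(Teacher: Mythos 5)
The trace-equivalence half of your plan is close to the paper's. For the direction ``$\doag$-interval trace defines $\doag$'' both you and the paper pass to a saturated model, locate the convex subgroup of elements infinitesimal relative to a fixed positive $\gamma$, and recognize it as a $\doag$-model; the paper does this by citing the proof of Lemma~\ref{lem:ovs}, which routes the \emph{external} definability of this subgroup through Proposition~\ref{prop:she-0}. Note, however, that you describe that subgroup as ``literally definable (with parameters)'' in the $\doag$-interval, which is false: it is a nontrivial proper convex subgroup and therefore has no supremum, so it cannot be definable in an o-minimal (hence definably complete) structure. It is only externally definable, so your invocation of Proposition~\ref{prop:qe0} (whose hypotheses also don't match, since $\{\mathrm{Add},\prec\}$ is not an expansion of the language of abelian groups) must be replaced by the Shelah-completion route. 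For ``$\doag$ trace defines the $\doag$-interval'' the paper simply observes that the interval structure is definable inside a model of $\doag$ and applies Proposition~\ref{prop:trace-interpret}, sidestepping entirely the QE-for-induced-structures step you flag as your main obstacle.

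The genuine gap is in the ``defines an infinite group'' half. You flirt with the right idea and then abandon it: you describe the candidate operation on $[0,\gamma)$ as ``$a+^*b=a+b$ when $a+b<\gamma$ and undefined otherwise,'' correctly observe that this \emph{partial} operation is not a group, and fall back to the infinitesimal convex subgroup, which you again claim is ``definable (with parameter $\gamma$)''. It is not, for the reason above; the fallback would show only that the \emph{Shelah completion} of the $\doag$-interval defines an infinite group, strictly weaker than the lemma. The construction the paper actually invokes (``the construction given after Fact~\ref{fact:cover}'') is the \emph{wrap-around} addition, not the truncated one: on $[0,u)$ put $a+^*b=a+b$ when $a+b<u$ and $a+^*b=a+b-u$ otherwise. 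This total operation makes $[0,u)$ a group — a cyclically ordered abelian group isomorphic to $R/u\Z$, not a ``divisible ordered abelian group'' as your earlier aside says — and it \emph{is} definable from $\mathrm{Add}_J$, $\prec$ and the parameter $u$: express $a+b-u=c$ as $\exists d\,\bigl(\mathrm{Add}(b,d,u)\wedge\mathrm{Add}(c,d,a)\bigr)$, noting that all arguments lie in $[0,u]\subseteq J$, so $\mathrm{Add}_J$ applies. That modular clause is exactly what your proposal is missing.
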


Proposition~\ref{prop:semilinear group} shows that if $(R;+,<) \models \doag$ and $I \subseteq R$ is a bounded interval then the structure induced on $I$ does not interpret an ordered abelian group.

\begin{proof}
Suppose that $\Sa D$ is a $\doag$-interval.
Then $\Sa D$ is definable in a divisible ordered abelian group.
The proof of Lemma~\ref{lem:ovs} shows that $\Th(\Sa D)$ trace defines $\doag$.
The construction given after Fact~\ref{fact:cover} shows that $\Sa D$ defines an infinite group.
\end{proof}

We now gather some tools for Theorem~\ref{thm:trivial}.
 
\medskip 
Suppose that $X \subseteq M^m$ and $f \colon X \to Y$.
Then $f$ is \textbf{essentially one variable} if there is $i \in \{1,\ldots,m\}$ and a function $g \colon M \to Y$ such that
\[
f(a_1,\ldots,a_m) = g(a_i) \quad \text{for all} \quad (a_1,\ldots,a_m) \in X.
\]
% If $f$ is as above then we say that $f$ depends on the $i$th coordinate and does not depend on the $j$th coordinate for $j \ne i$.
Fact~\ref{fact:trivial} follows from the definition of triviality, the fact that algebraic and definable closure agree in an o-minimal structure, and a routine compactness argument.

\begin{fact}
\label{fact:trivial}
Suppose that $\Sa M$ is a trivial o-minimal structure and $X \subseteq M^m$, $f \colon X \to M^n$ are definable.
Then there is a partition $X_1,\ldots,X_k$ of $X$ into definable sets such that the restriction of $f$ to each $X_i$ is essentially one-variable.
\end{fact}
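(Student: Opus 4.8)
Looking at Fact~\ref{fact:trivial}, this is a statement about trivial o-minimal structures: a definable function $f\colon X\to M^n$ with $X\subseteq M^m$ decomposes, after a finite partition of $X$, into essentially one-variable pieces. The excerpt tells me to use the definition of triviality, the coincidence of algebraic and definable closure in o-minimal structures, and a compactness argument. Let me plan this.

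The plan is to reduce to the case $n=1$ (work coordinatewise on $f=(f_1,\dots,f_n)$ and take a common refinement of the resulting partitions) and then argue type-by-type. First I would fix a point $a=(a_1,\dots,a_m)\in X$ and consider $b=f(a)$. Since $b\in\dcl_{\Sa M}(a_1,\dots,a_m)$ and $\Sa M$ is o-minimal, $\dcl=\acl$, so triviality applies: $b\in\acl_{\Sa M}(a_i)=\dcl_{\Sa M}(a_i)$ for some $i\in\{1,\dots,m\}$ (here using the convention that $b$ trivially lies in the algebraic closure of $\emptyset\cup\{a_i\}$ for the appropriate $i$, or is algebraic over $\emptyset$, in which case any $i$ works). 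Thus there is an $\Sa M$-formula $\psi_{a}(x,y_i)$ (possibly with parameters from $\emptyset$; if we allow parameters we can absorb them) such that $\psi_a(b,a_i)$ holds and $\psi_a$ defines the graph of a partial function in the variable $y_i$. So there is an $\Sa M$-definable partial function $g_a\colon M\to M$, definable from $\emptyset$, with $g_a(a_i)=b=f(a)$ and hence (this is the key local-to-global passage) $f(a')=g_a(a'_i)$ for all $a'$ in some $\Sa M$-definable neighborhood $U_a$ of $a$ within $X$ — indeed set $U_a=\{a'\in X : f(a')=g_a(a'_i)\}$, which is $\Sa M$-definable and contains $a$.

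Next I would run the compactness argument. The sets $U_a$ for $a$ ranging over $X$ cover $X$, and each $U_a$ is $\Sa M$-definable; moreover on $U_a$ the restriction of $f$ is essentially one-variable (witnessed by $g_a$ and the index $i=i(a)$). Passing to a sufficiently saturated elementary extension, the failure of a finite subcover would be witnessed by a type, i.e. a point $a^*$ lying in no $U_a$; but the argument of the previous paragraph applied to $a^*$ produces some $U_{a^*}$ containing $a^*$, a contradiction — more carefully, one phrases this so that definable-functoriality gives that "$X$ is covered by finitely many $\Sa M$-definable sets on each of which $f$ is essentially one variable" is a consequence of "for every $a\in X$ there is an $\Sa M$-definable $U\ni a$ on which $f$ is essentially one-variable," which is exactly a compactness/logical-compactness statement. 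Then partition $X$ into $X_1,\dots,X_k$ by letting $X_j = U_{a_j}\setminus(U_{a_1}\cup\cdots\cup U_{a_{j-1}})$ for a chosen finite subcover $U_{a_1},\dots,U_{a_k}$; each $X_j$ is $\Sa M$-definable and $f\!\upharpoonright\!X_j$ is essentially one variable.

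The main obstacle, and the place needing the most care, is the local step: getting from "$f(a)\in\dcl(a_i)$ for each fixed $a$" to a genuinely $\Sa M$-definable partial function $g_a$ with $g_a(a_i)=f(a)$, uniformly enough that $U_a$ is $\Sa M$-definable. This is where o-minimality (definable Skolem functions, or just the fact that definable closure is witnessed by $\emptyset$-definable functions once one allows finitely many parameters and then absorbs them, together with the finiteness of the number of $\emptyset$-definable partial unary functions up to the relevant equivalence being false — so one really does need the compactness-over-the-type-space formulation) does the real work. One should also double-check the $n>1$ reduction commutes with taking a common refinement of partitions, which is routine. Everything else — the coordinatewise reduction and the extraction of a finite subcover — is bookkeeping.
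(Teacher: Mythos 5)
The paper does not spell out a proof of this fact; it names only the three ingredients---triviality, $\acl=\dcl$ in o-minimal structures, and compactness---and your proposal correctly identifies and deploys all three, in the same order. The coordinatewise reduction to $n=1$ and the local step (for each $a$, use $\dcl=\acl$ and triviality to find an index $i$ and a unary partial function $g$ with $g(a_i)=f(a)$) are right.

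The compactness step, however, has a genuine gap, and the ``more carefully'' reformulation does not repair it. You assert that ``$X$ is covered by finitely many $\Sa M$-definable sets on which $f$ is essentially one variable'' is a compactness consequence of ``every $a\in X$ lies in some $\Sa M$-definable $U$ on which $f$ is essentially one variable.'' That implication is false for an arbitrary family of definable sets: in $(\R;<)$ every point lies in a bounded definable interval, yet no finitely many bounded intervals cover $\R$. The correct argument must realize the partial type $p(x)=\{x\in X\}\cup\{x\notin U : U\in\Phi\}$ in an elementary extension $\Sa N\succ\Sa M$ and apply the local step to the realization $a^*$ \emph{inside $\Sa N$}. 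The trouble with your first version is then visible: the set $U_{a^*}$ that the local step produces is a priori definable over $\Sa N$ (its defining formula has parameters from $N$, including $a^*$ itself), so it need not be the $\Sa N$-interpretation of any member of $\Phi$, and there is no contradiction. What closes the argument is precisely the observation you gesture at with ``absorb parameters'' but do not carry through: once the parameters of $f$ and $X$ are named by constants, the witness $g$ to $f(a^*)\in\dcl_{\Sa N}(a^*_i)$ can be taken $\emptyset$-definable, so $U_{a^*}=\{x\in X : f(x)=g(x_i)\}$ is an $\emptyset$-definable set, hence the $\Sa N$-interpretation of a member of $\Phi$ (provided one takes $\Phi$ to be the family of \emph{$\emptyset$-definable} such sets), and this does contradict $a^*\models p$. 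This parameter-free bookkeeping is the one point in the ``routine compactness argument'' that actually carries content, and it should be made explicit rather than waved away.
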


We will need a certain cell decomposition for trivial o-minimal structures.
I don't claim originality, but I do claim that it is routine.
Suppose that $\Sa M$ is o-minimal.
We first give some conventions that will be useful when working with cells.
We let $M_{\pm \infty}$ be $M \cup \{\pm\infty\}$.
We consider definable functions $f \colon X \to M_{\pm\infty}$.
We assume that if $f(a) = \infty$ for some $a \in X$ then $f(a) = \infty$ for all $a \in X$, likewise for $-\infty$.
We let $\square$ range over $\{<,=\}$.
We consider terms of the form $f \hspace{.1cm}\square \hspace{.1cm} g$ for definable continuous functions $f,g\colon X \to M_{\pm\infty}$.
We suppose that if $\square$ is $=$ then $f = g$ and $f$ is not constant $\pm\infty$.
We also consider $M^0$ to be a singleton.

\medskip
We recall the definition of a cell~\cite[Chapter 3]{lou-book}, which we give in a non-inductive way because we will want to keep track of all definable functions involved.
A subset $X$ of $M^m$ is a cell if there are definable sets $X_0 = M^0, X_1 \subseteq M, \ldots, X_{i} \subseteq M^{i},\ldots, X_m = X$, definable continuous functions $f_i,g_i \colon X_i \to M_{\pm\infty}$ for $i \in \{1,\ldots,m \}$, and $\square_1,\ldots,\square_m\in \{<,=\}$ such that if $i \ge 1$ then $X_i$ is the set of $(\beta_1,\ldots,\beta_i) \in M^i$ such that
\[
(\beta_1,\ldots,\beta_{i - 1}) \in X_{i - 1} \quad \text{and} \quad f_i(\beta_1,\ldots,\beta_{i - 1}) \simplesquare \beta_i \simplesquare g_i(\beta_1,\ldots,\beta_{i - 1}).
\]
% $(\beta_1,\ldots,\beta_m) \in M^m$ is in $X$ if and only if
% \begin{align*}
% f_1 \square_1 &\beta_1 \square_1 g_1 \\
% f_2(\beta_1) \square_2 &\beta_2 \square_2 g_2(\beta_1) \\
% f_3(\beta_1,\beta_2) \square_3 &\beta_3 \square_3 g_3(\beta_1,\beta_2)\\ &\vdots \\ f_{m}(\beta_1,\ldots,\beta_{m - 1}) \square_m &\beta_m \square_m g_{m}(\beta_1,\ldots,\beta_{m - 1})
% \end{align*}
We refer to $f_1,g_1,\ldots,f_m,g_m$ as the \textbf{defining functions} of $X$.
A \textbf{simple cell} is a cell whose defining functions are essentially one variable.

\begin{proposition}
\label{prop:cell decomp}
Suppose that $\Sa M$ is a trivial o-minimal structure and $X$ is a definable subset of $M^m$.
Then $X = X_1 \cup\cdots\cup X_n$ for pairwise disjoint simple cells $X_1,\ldots,X_n$.
\end{proposition}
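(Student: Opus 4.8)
The plan is to prove Proposition~\ref{prop:cell decomp} by a straightforward induction on $m$, refining the usual o-minimal cell decomposition so that all defining functions become essentially one variable. The base case $m=0$ is trivial and $m=1$ follows since cells in $M$ are points and open intervals, whose defining functions are constants (hence essentially one-variable, in a degenerate sense) or identity maps. For the inductive step, suppose the result holds for subsets of $M^{m-1}$. Given a definable $X\subseteq M^m$, apply ordinary cell decomposition (\cite[Chapter 3]{lou-book}) to write $X$ as a finite union of pairwise disjoint cells; it suffices to handle a single cell $C$. Writing $\uppi\colon M^m\to M^{m-1}$ for the projection onto the first $m-1$ coordinates, $C$ is determined over its base $\uppi(C)\subseteq M^{m-1}$ by two continuous definable functions $f,g\colon \uppi(C)\to M_{\pm\infty}$ (with $f=g$ in the graph case), together with the cell structure of $\uppi(C)$.

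The key step is to make $f$ and $g$ essentially one-variable after a further partition. By Fact~\ref{fact:trivial}, applied to the definable function $(f,g)\colon \uppi(C)\to M_{\pm\infty}^2$ (handling the values $\pm\infty$ as constants, which are already essentially one-variable), there is a partition of $\uppi(C)$ into definable sets on each of which both $f$ and $g$ are essentially one-variable. By the inductive hypothesis each such piece is itself a finite union of pairwise disjoint simple cells in $M^{m-1}$. Intersecting $C$ with the preimages of these pieces partitions $C$ into finitely many disjoint cells, each of whose base is a simple cell in $M^{m-1}$ and whose top pair of defining functions $f,g$ is essentially one-variable. Since ``simple cell'' requires precisely that every defining function $f_1,g_1,\ldots,f_m,g_m$ be essentially one-variable, and the defining functions $f_1,g_1,\ldots,f_{m-1},g_{m-1}$ come from the (now simple) base while $f_m=f$, $g_m=g$ are essentially one-variable by construction, each of these pieces is a simple cell. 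This completes the induction.

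I would also check one bookkeeping point: when Fact~\ref{fact:trivial} is invoked we should ensure the resulting pieces of $\uppi(C)$ can be taken to be definable subsets to which the inductive hypothesis applies, which is immediate since the partition is into definable sets. One should also note the harmless convention (already built into the definitions of cells above) that a continuous essentially one-variable function on a cell remains essentially one-variable after restriction to a subcell, so no additional argument is needed when we pull back the base decomposition. Finally, disjointness is preserved throughout: we only ever refine an existing partition into disjoint definable pieces, and intersecting a cell with preimages of disjoint sets yields disjoint cells.

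The main obstacle, such as it is, is purely organizational: matching the non-inductive definition of ``cell'' given in the excerpt (which lists \emph{all} the defining functions $f_1,g_1,\ldots,f_m,g_m$ explicitly) with the inductive cell decomposition theorem of \cite{lou-book}, and verifying that the partition produced by Fact~\ref{fact:trivial} interacts correctly with the base cells so that the combined data indeed satisfies the definition of a simple cell. There is no real analytic or combinatorial difficulty; the content is entirely in triviality, which is exactly what Fact~\ref{fact:trivial} packages for us.
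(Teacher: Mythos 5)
Your proof is correct and follows essentially the same route as the paper's: induction on $m$, reduction to a single cell via ordinary cell decomposition, application of Fact~\ref{fact:trivial} to make the top pair of defining functions essentially one-variable after partitioning the base, then the inductive hypothesis on the base. The extra bookkeeping you spell out (that restrictions of essentially one-variable functions remain essentially one-variable, that disjointness is preserved under refinement) is correct and harmless; the paper simply leaves these observations implicit.
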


\begin{proof}
We apply induction on $m$.
The case $m = 1$ is easy.
Suppose $m \ge 2$.
By cell decomposition we may suppose that $X$ is a cell.
Let $(f_i,g_i \colon X_{i - 1} \to M_{\pm\infty} : i \in \{1,\ldots,m\})$ be the defining functions of $X$.
By Fact~\ref{fact:trivial} there is a partition $Y_1,\ldots,Y_n$ of $X_{m - 1}$ into definable sets such that the restriction of $f_m,g_m$ to each $Y_i$ is essentially one-variable.
By induction we can partition each $Y_i$ into finitely many simple cells, so we may suppose that each $Y_i$ is a simple cell.
Finally $[Y_i \times M] \cap X$ is a simple cell for each $i \in \{1,\ldots,n\}$.
\end{proof}

\begin{proof}[Proof of Theorem~\ref{thm:trivial}]
Suppose (4).
By Fact~\ref{fact:lovey} $\Sa M$ defines a $\doag$-interval.
By Lemma~\ref{lem:doag interval} $\Th(\Sa M)$ trace defines $\doag$ and $\Sa M$ defines an infinite group.
Hence (4) implies (1) and (3).
It is clear that (1) implies (2) and (3) implies (2).

\medskip
We finish by showing that (2) implies (4).
Towards a contradiction suppose that $\Sa M$ is a trivial o-minimal structure and $\Sa M$ trace define an infinite group $\Sa O$.
We may suppose that $O \subseteq M^m$ and that $\Sa O$ is trace definable via the identity $O \to M^m$.
Fix $k > 2m$.
Let $X \subseteq M^{mk} \times M^m$ be an $\Sa M$-definable set such that \[
(\alpha_1,\ldots,\alpha_k,\beta) \in X \quad \Longleftrightarrow \quad \beta = \alpha_1 \alpha_2 \cdots \alpha_k \quad \text{for all} \quad \alpha_1,\ldots,\alpha_k,\beta \in O.
\]
Given $\alpha = (\alpha_1,\ldots,\alpha_k) \in M^{mk}$ we let $X_\alpha$ be the set of $\beta \in M^m$ with $(\alpha,\beta) \in X$, so $\alpha_1\alpha_2\cdots\alpha_k$ is the unique element of $O$ contained in $X_\alpha$.
By Proposition~\ref{prop:cell decomp} we have $X = X_1 \cup \cdots \cup X_n$ for simple cells $X_1,\ldots,X_n$.
Applying Ramsey's theorem\footnote{It'd be more conventional to use an indiscernible sequence here, but we only need to apply Ramsey once.} we fix a sequence $(\upzeta_i : i \in \{2,3,\ldots,2k+1\})$ of distinct elements of $O$ and $j \in \{1,\ldots,n\}$ such that $(\upzeta_{i_1}, \upzeta_{i_2}, \ldots ,\upzeta_{i_k},\upzeta_{i_1} \upzeta_{i_2}\cdots \upzeta_{i_k}) \in X_j$ for all $2\le i_1 < i_2 < \ldots < i_k \le 2k + 1$.
Let $Y := X_j$.
Let $f_1,g_1,\ldots,f_{km + m},g_{km+ m}$ be the defining functions of $Y$, so each $f_i,g_i$ is essentially one-variable.
If $a \in M^{km}$ then $Y_a$ is the set of $(\beta_1,\ldots,\beta_m) \in M^m$ such that
\begin{align*}
f_{km + 1}(a) \hspace{.1cm}\square_1\hspace{.1cm} &\beta_1 \hspace{.1cm} \square_1\hspace{.1cm} g_{km + 1}(a)\\
f_{km + 2}(a,\beta_1) \hspace{.1cm}\square_2\hspace{.1cm} &\beta_2 \hspace{.1cm} \square_2\hspace{.1cm} g_{km + 2}(a,\beta_1) \\
f_{km + 3}(a,\beta_1,\beta_2) \hspace{.1cm}\square_3\hspace{.1cm} &\beta_3 \hspace{.1cm} \square_3\hspace{.1cm} g_{km + 3}(a,\beta_1,\beta_2) \\ &\vdots \\f_{km + m}(a,\beta_1,\ldots,\beta_{m - 1}) \hspace{.1cm}\square_m\hspace{.1cm} &\beta_m \hspace{.1cm} \square_m\hspace{.1cm} g_{km + m}(a,\beta_1,\ldots,\beta_{m - 1}). 
\end{align*}
Here the satisfaction of the first $i$th terms ensures that $f_{km + i + 1}$ is defined on $(a,\beta_1,\ldots,\beta_i)$.
We consider each one of the defining functions to be a function in the variables 
\[
x_1,\ldots,x_m,\ldots,x_{2m},\ldots,x_{km},x_{km + 1},\ldots,x_{km + m - 1}.
\]
Note that each defining function depends on only one of these variables.
As $k > 2m$ there is $j \in \{0,\ldots,k - 1\}$ such that $f_{km + 1},g_{km + 1},f_{km + 2},g_{km + 2},\ldots,f_{km + m},g_{km + m}$ do not depend on any of the variables $x_{jm + 1},x_{jm + 2},\ldots,x_{jm + m}$.
Hence if $\alpha,\beta \in M^{mk}$ differ only in these coordinates then $Y_\alpha = Y_{\beta}$.
Let 
\[\upalpha = (\upzeta_2,\upzeta_4,\ldots,\upzeta_{2j - 2}, \upzeta_{2j}, \upzeta_{2j+2} \ldots, \upzeta_{2k})\quad \text{and} \quad\upalpha' = (\upzeta_2,\upzeta_4,\ldots,\upzeta_{2j-2},\upzeta_{2j  + 1}, \upzeta_{2j + 2},\ldots, \upzeta_{2k}).
\]
The product of the entries of $\upalpha$ does not equal the product of the entries of $\upalpha'$ as $\upzeta_{2j} \ne \upzeta_{2j + 1}$.
However $Y_\upalpha = Y_{\upalpha'}$, hence the product of the entries of $\upalpha'$ is in $Y_{\upalpha}$.
This is a contradiction as the product of the entries of $\upalpha$ is the unique element of $O$ contained in $Y_{\upalpha}$.
\end{proof}

We say that an $\aleph_0$-categorical, $\aleph_0$-stable structure is trivial if the induced structure each strongly minimal definable set of imaginaries is trivial.
We now prove Theorem~\ref{thm:rigid}.2.

\begin{proposition}
\label{prop:o-stable-rigid}
Suppose $\Sa M$ is $\aleph_0$-categorical and $\aleph_0$-stable.
The following are equivalent:
\begin{enumerate}
\item $\Sa M$ interprets an infinite group.
\item $\Sa M$ trace defines an infinite group.
\item $\Sa M$ is non-trivial.
\end{enumerate}
\end{proposition}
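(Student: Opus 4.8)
The plan is to deduce Proposition~\ref{prop:o-stable-rigid} from Theorem~\ref{thm:trivial} together with Lachlan's theorem, exactly as the introduction advertises. The implications $(1)\Rightarrow(2)$ is Proposition~\ref{prop:trace-interpret}, so the real content is $(2)\Rightarrow(3)$ and $(3)\Rightarrow(1)$. For $(3)\Rightarrow(1)$: if $\Sa M$ is non-trivial then by definition the structure induced on some strongly minimal definable set $D$ of imaginaries is non-trivial. A non-trivial strongly minimal set interprets an infinite group by the group configuration theorem (Hrushovski's theorem, valid in the $\aleph_0$-categorical $\aleph_0$-stable setting), so $\Sa M^{\mathrm{eq}}$ interprets an infinite group, hence $\Sa M$ interprets an infinite group.

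For $(2)\Rightarrow(3)$, I would argue contrapositively: assume $\Sa M$ is trivial and show $\Th(\Sa M)$ does not trace define an infinite group. Here is where Lachlan's theorem enters: a trivial $\aleph_0$-categorical $\aleph_0$-stable structure is interpretable in a dense linear order, in fact (since it is $\aleph_0$-stable it is stable, so the relevant linear order is used only as a set with equality up to the construction) — more precisely, Lachlan's theorem says $\Sa M$ is interpretable in $(\Q;<)$, or even in a pure set; but the cleanest route is: $\Sa M$ is interpretable in an o-minimal structure $\Sa R$, and moreover one can arrange $\Sa R$ to be trivial o-minimal. Indeed, Lachlan's theorem produces an interpretation of $\Sa M$ in a dense linear order, and a dense linear order (being binary o-minimal) is trivial o-minimal by the Mekler--Rubin--Steinhorn result cited after Theorem~\ref{thm:trivial}. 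Now suppose toward a contradiction that $\Th(\Sa M)$ trace defines an infinite group $\Sa O$. Since $\Sa M$ is interpretable in the trivial o-minimal structure $\Sa R$, Proposition~\ref{prop:trace-interpret} gives that $\Sa R$ trace defines $\Sa M$, hence by transitivity (Proposition~\ref{prop:trace-basic}) $\Th(\Sa R)$ trace defines $\Sa O$. But Theorem~\ref{thm:trivial} applied to $\Sa R$ says a trivial o-minimal structure does not trace define an infinite group — contradiction. Therefore $\Sa M$ is non-trivial.

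The main obstacle is making precise, and correctly citing, the passage from "$\Sa M$ is $\aleph_0$-categorical, $\aleph_0$-stable, and trivial" to "$\Sa M$ is interpretable in a trivial o-minimal structure." Lachlan's theorem is usually stated as: a trivial (sometimes "degenerate") $\aleph_0$-categorical $\aleph_0$-stable theory is interpretable in the theory of a dense linear order, or equivalently in a structure with no extra structure beyond a linear order on a set of imaginaries. One must be a little careful that the notion of triviality in the hypothesis (triviality of the induced structure on each strongly minimal imaginary sort) is the one used in Lachlan's theorem; this is standard but should be stated. Once that citation is in place, everything else is a two-line chase through Propositions~\ref{prop:trace-basic} and \ref{prop:trace-interpret} and Theorem~\ref{thm:trivial}, plus the group configuration theorem for the easy direction.

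A minor secondary point to handle: in $(3)\Rightarrow(1)$ one should confirm that "non-trivial" for an $\aleph_0$-categorical $\aleph_0$-stable structure means precisely that some strongly minimal imaginary sort has non-trivial induced geometry, so that the group configuration theorem applies on the nose; this matches the definition given just before the proposition. I would also remark, as the paper does, that this gives the amusing corollary that a stability-theoretic statement — no infinite group is trace definable in a trivial $\aleph_0$-categorical $\aleph_0$-stable structure — is obtained via o-minimality. With these pieces assembled the proof is short, so in the write-up I would simply state the reduction and invoke Theorem~\ref{thm:trivial}, Lachlan's theorem, and the group configuration theorem, without reproving any of them.
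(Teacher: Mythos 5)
Your proposal is correct and follows essentially the same route as the paper: $(1)\Rightarrow(2)$ via Proposition~\ref{prop:trace-interpret}, $(2)\Rightarrow(3)$ via Lachlan's theorem (trivial $\aleph_0$-categorical $\aleph_0$-stable structures are interpretable in $\dlo$) combined with Theorem~\ref{thm:trivial}, and $(3)\Rightarrow(1)$ by classical results on $\aleph_0$-categorical $\aleph_0$-stable structures. The only variation is cosmetic: for $(3)\Rightarrow(1)$ you cite the group configuration theorem, whereas the paper goes through the Cherlin--Harrington--Lachlan coordinatization, which yields the interpretable group directly from the affine/projective geometry over a finite field without invoking group configurations; note that your route implicitly needs local modularity (which in this setting also comes from CHL), so the paper's citation is slightly more direct.
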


\begin{proof}
By Proposition~\ref{prop:trace-interpret} (1) implies (2).
If $\Sa M$ is trivial then $\Sa M$ is interpretable in $\dlo$ by a theorem of Lachlan~\cite{lachlan-order} and we apply Theorem~\ref{thm:trivial}.
Thus (2) implies (3).
Well-known results show that (3) implies (1).
Recall that $\Sa M$ is coordinatized by finitely many strongly minimal sets interpretable
in $\Sa M$ and  the combinatorial geometry associated to each strongly minimal set is either
trivial or an affine or projective geometry over a finite field~\cite{CHL}.
If the geometry on some
strongly minimal interpretable set is affine or projective over a finite field then $\Sa M$ interprets
an infinite group, see e.g. \cite[Chapter 5]{pillay-book}.
\end{proof}

By Theorem~\ref{thm:trivial} $\dlo$ does not trace define an infinite group.
We give a shorter proof.
A \textbf{colored linear order} is an expansion of a linear order  by a family of unary relations.

\begin{proposition}
\label{prop:lo-group}
A colored linear order cannot trace define an infinite group.
\end{proposition}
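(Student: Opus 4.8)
The plan is to adapt, almost verbatim, the combinatorial argument behind Theorem~\ref{thm:trivial}. Suppose for contradiction that a colored linear order $\Sa M$ trace defines an infinite group $\Sa O$. By Proposition~\ref{prop:trace-theories} we may assume $\Sa M$ is $\aleph_1$-saturated, that $O\subseteq M^m$, and that the trace definition is via the identity $O\to M^m$. Fix $k>2m$. The $k$-fold multiplication $\{(\alpha_1,\dots,\alpha_k,\beta)\in O^{k+1}:\beta=\alpha_1\cdots\alpha_k\}$ is $\Sa O$-definable, so there is an $\Sa M$-definable $X\subseteq M^{mk}\times M^m$ with $(\alpha_1,\dots,\alpha_k,\beta)\in X\Leftrightarrow\beta=\alpha_1\cdots\alpha_k$ for all $\alpha_1,\dots,\alpha_k,\beta\in O$; in particular, for $\bar\alpha\in O^k$ the slice $\{\beta\in O:(\bar\alpha,\beta)\in X\}$ is the singleton $\{\alpha_1\cdots\alpha_k\}$. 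Since $X$ is cut out by a single formula, it is already definable in the reduct $\Sa M_0$ of $\Sa M$ to the linear order together with the finitely many unary predicates occurring in that formula, so it suffices to reach a contradiction from the existence of such an $X$ in a linear order expanded by finitely many unary predicates.

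The one structural input I would need is a cell decomposition for such structures into \emph{simple} cells, exactly parallel to Proposition~\ref{prop:cell decomp}: every definable $Y\subseteq M^n$ is a finite disjoint union of cells, where a cell is carved out by conditions $f_i(x_{<i})\simplesquare x_i\simplesquare g_i(x_{<i})$ \emph{together with} conditions $x_i\in R_i$ for definable $R_i\subseteq M$, and each defining function $f_i,g_i$ depends on at most one of its variables. This should be routine and standard: a colored linear order with finitely many predicates is dp-minimal, so Simon's cell decomposition for dp-minimal ordered structures applies, and because such a structure carries no arithmetic on its order, all of its definable functions are, on each part of a finite definable partition, a coordinate projection, a constant, or (in the discrete case) a fixed compositional iterate of the successor or predecessor function --- in all cases a function of a single variable. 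Alternatively, one develops the cell decomposition directly as in the o-minimal case, replacing ``essentially one variable'' by ``depends on $\le 1$ variable'' and carrying the unary predicates along in the conditions $x_i\in R_i$.

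Granting this, decompose $X=X_1\cup\dots\cup X_N$ into disjoint simple cells. Starting from infinitely many distinct elements of $O$ and coloring each increasing $k$-tuple of indices $i_1<\dots<i_k$ by the $j$ with $(\zeta_{i_1},\dots,\zeta_{i_k},\zeta_{i_1}\cdots\zeta_{i_k})\in X_j$, infinite Ramsey yields an infinite monochromatic set whose first $2k$ elements I label $\zeta_2,\dots,\zeta_{2k+1}$, together with a single index $j$ that works for every increasing $k$-subtuple. Let $Y=X_j$ and consider the $2m$ defining functions $f_{mk+1},g_{mk+1},\dots,f_{mk+m},g_{mk+m}$ governing the last $m$ (the ``$\beta$-'') coordinates. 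Together they depend on at most $2m$ of the first $mk$ variables, hence on variables from at most $2m$ of the $k$ blocks of length $m$; as $k>2m$ there is a block $t_0$ that none of them touch, and since the conditions $x_i\in R_i$ constrain coordinates one at a time, the slice $Y_{\bar\alpha}\subseteq M^m$ is independent of the $t_0$-th block of $\bar\alpha$. Take $\bar\alpha$ built from $(\zeta_2,\zeta_4,\dots,\zeta_{2k})$ in slots $1,\dots,k$, and let $\bar\alpha'$ be the same except $\zeta_{2t_0}$ is replaced by $\zeta_{2t_0+1}$ in slot $t_0$; both index tuples are increasing $k$-subtuples, so $\bar\alpha$ and $\bar\alpha'$ lie in $Y$ together with their respective products. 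Then $Y_{\bar\alpha}=Y_{\bar\alpha'}$, so the product along $\bar\alpha'$ lies in $Y_{\bar\alpha}\subseteq X_{\bar\alpha}$; it is an element of $O$, and by cancellation it differs from the product along $\bar\alpha$ because $\zeta_{2t_0}\ne\zeta_{2t_0+1}$. This contradicts the fact that the product along $\bar\alpha$ is the unique element of $O$ lying in $X_{\bar\alpha}$.

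The main obstacle is the structural input of the second paragraph --- pinning down a simple cell decomposition for (finitely) colored linear orders. Everything else is a direct transcription of the proof of Theorem~\ref{thm:trivial}, and in fact this route is shorter than routing through Theorem~\ref{thm:trivial} itself, since one never needs triviality of an o-minimal structure, only the much cruder fact that a linear order admits no two-variable definable function.
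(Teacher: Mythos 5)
Your overall plan --- run the Ramsey argument of Theorem~\ref{thm:trivial} after decomposing the $k$-fold multiplication graph into cells whose defining data touch only a few of the $\alpha$-blocks --- is structurally the right shape, and the final combinatorial step is sound. But the one structural input you flag as a black box, a simple cell decomposition for colored linear orders with essentially one-variable boundary functions $f_i,g_i$ taking values in $M_{\pm\infty}$, is in fact false, and this is not a routine gap. Take $M=\sum_{n\in\Z}M_n$ with $M_n=\Q$ for $n$ even and $M_n=\Z$ for $n$ odd; this is a pure linear order, so a colored linear order with zero colors (and dp-minimal, so an appeal to dp-minimality cannot rescue you). Let $\phi(x,y)$ say $x<y$ and the open interval $(x,y)$ is dense. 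For $a$ in a $\Q$-block $B$ the fiber $\{y:\phi(a,y)\}$ equals $\{y\in B:y>a\}$: as soon as $y$ passes into the adjacent $\Z$-block (a two-sided copy of $\Z$ with no least element) the interval $(a,y)$ acquires a pair of consecutive points. The right boundary of this fiber is the cut at the top of $B$, which is not a point of $M$; the block $B$ is not a fixed definable unary set (shifting moves it onto the other $\Q$-blocks); and $\dcl(a)=\{a\}$ for $a$ away from parameters. So no finite union of cells $\{y\in R_i: f_i(a)\,\square\,y\,\square\,g_i(a)\}$ with fixed definable $R_i\subseteq M$ and one-variable $f_i,g_i$ can realize this fiber uniformly in $a$, and your structural input fails already for $\phi$ in two variables.

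The paper never attempts such a decomposition. It instead uses Fact~\ref{fact:lo}, a type-level ``finite control'' statement for colored linear orders resting on Rubin's model theory: $\tp(ab)$ is determined by $\tp(a)$ together with the type of $b$ over a subtuple $a_I$ with $|I|\le 2n$. This delivers exactly the ``the product cannot see all of its arguments'' phenomenon you want, while allowing the boundaries of definable families to be undefinable cuts rather than definable points. If you replace your unproven cell decomposition with Fact~\ref{fact:lo} and use an $\Sa O$-indiscernible sequence of group elements (which absorbs your explicit Ramsey step), you recover the paper's argument; as written, the proposal rests on a false lemma, and the lemma cannot be repaired without essentially re-deriving Rubin's control of types.
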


\noindent
Poizat showed that a colored linear order cannot interpret an infinite group~\cite{poizat-propos}.
We follow the proof of this result as presented by Hodges~\cite[A.6]{Hodges}.
Fact~\ref{fact:lo} is \cite[Lemma~A.6.8]{Hodges}.

\begin{fact}
\label{fact:lo}
Suppose that $\Sa O$ is a colored linear order.
Suppose that $a = (a_1,\ldots,a_m) \in O^m$ and $b = (b_1,\ldots, b_n) \in O^n$.
Then there is $I \subseteq \{1,\ldots,m\}$ such that $|I| \le 2n$ and if $c = (c_1,\ldots,c_m)$ is in $O^m$, $\tp_{\Sa O}(c) = \tp_{\Sa O}(a)$, and $\tp_{\Sa O}(b,(a_i)_{i \in I}) = \tp_{\Sa O}(b,(c_i)_{i \in I})$, then $\tp_{\Sa O}(b,a) = \tp_{\Sa O}(b,c)$.
\end{fact}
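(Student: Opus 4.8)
The plan is to reconstruct the classical argument behind Hodges~\cite[A.6.8]{Hodges}: in a colored linear order the complete type of a finite tuple is controlled by a bounded amount of ``local'' data, so that $\tp_{\Sa O}(b,a)$ can be reassembled from $\tp_{\Sa O}(a)$, $\tp_{\Sa O}(b)$, and the way $b$ interleaves with a bounded number of the entries of $a$. The key structural input I would isolate first is the following well-known fact about a colored linear order $\Sa O$: for any tuple $d$, the type $\tp_{\Sa O}(d)$ is determined by its \emph{refined type}, namely $\qftp_{\Sa O}(d)$ together with, for each pair of consecutive distinct entries of $d$ (and for the two ``end'' pieces, using $\pm\infty$ as formal endpoints), the elementary-equivalence type, as a colored linear order, of the corresponding open interval of $\Sa O$. (Note the theory of such an interval is itself recoverable from $\tp_{\Sa O}(d)$, since satisfaction of a sentence in the interval $(x,y)$ is expressed by the relativized formula in $\Sa O$.) This lemma is proved by a routine Ehrenfeucht--Fra\"iss\'e/Feferman--Vaught argument for ordered sums: two tuples with the same refined type are $\equiv_k$ for every $k$, Duplicator's winning strategy being glued together piece by piece. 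I expect this lemma to carry essentially all the genuine content; everything after it is finite bookkeeping.

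Next I would choose the index set. Let $v_1<\dots<v_r$, with $r\le n$, be the distinct values occurring among $b_1,\dots,b_n$. For each $s$: if some $a_i=v_s$, put one such index $i$ into $I$; otherwise put into $I$ the index of the immediate $a$-predecessor of $v_s$, i.e.\ of $\max\{a_i:a_i<v_s\}$ (if this set is nonempty), and the index of the immediate $a$-successor of $v_s$ (if nonempty). Each distinct $b$-value contributes at most two indices, so $|I|\le 2r\le 2n$.

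The heart of the argument is then to check that $\qftp_{\Sa O}(b,a)$ and every interval-equivalence-type appearing in the refined type of $(b,a)$ is a function of the triple $\tp_{\Sa O}(a)$, $\tp_{\Sa O}(b)$, $\tp_{\Sa O}(b,(a_i)_{i\in I})$. For the order and colour relations: colours and all $a_i$-versus-$a_j$ relations come from $\tp_{\Sa O}(a)$, all $b_j$-versus-$b_{j'}$ relations from $\tp_{\Sa O}(b)$, all $a_i$-versus-$b_j$ relations with $i\in I$ from $\tp_{\Sa O}(b,(a_i)_{i\in I})$; and for $i\notin I$ the relation of $a_i$ to a given $b_j$ is forced --- if $a_i$ equals the value of $b_j$ this is seen via the $I$-index representing that value, and otherwise $\tp_{\Sa O}(a)$ places $a_i$ weakly below the immediate $a$-predecessor or weakly above the immediate $a$-successor of that value (there is nothing strictly between), both of which lie in $I$ and are compared with $b_j$ by $\tp_{\Sa O}(b,(a_i)_{i\in I})$. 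For the interval-equivalence-types: an interval between consecutive distinct entries of $(b,a)$ is bounded either by two $a$-values with no $b$-value between them (hence is an interval read off from $\tp_{\Sa O}(a)$), or by two $b$-values with no $a$-value between them (from $\tp_{\Sa O}(b)$), or it has a $b_j$ as an endpoint whose other endpoint is either another $b$-value or the immediate $a$-neighbour of the value of $b_j$ --- which lies in $I$ --- so the interval is read off from $\tp_{\Sa O}(b,(a_i)_{i\in I})$; the end pieces are handled identically. Thus the refined type of $(b,a)$ is determined by that triple. Finally the same set $I$ serves for $c$: from $\tp_{\Sa O}(c)=\tp_{\Sa O}(a)$ the $c_i$ have the same colours and mutual order as the $a_i$, and from $\tp_{\Sa O}(b,(c_i)_{i\in I})=\tp_{\Sa O}(b,(a_i)_{i\in I})$ the chosen $c_i$ are still the immediate $c$-neighbours of (or the realizations among $c$ of) the distinct values of $b$; so the previous paragraph, run verbatim with $c$ in place of $a$, computes the refined type of $(b,c)$ by the same function of $\tp_{\Sa O}(c)=\tp_{\Sa O}(a)$, $\tp_{\Sa O}(b)$, $\tp_{\Sa O}(b,(c_i)_{i\in I})=\tp_{\Sa O}(b,(a_i)_{i\in I})$. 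Hence $(b,a)$ and $(b,c)$ have the same refined type, and the structural lemma gives $\tp_{\Sa O}(b,a)=\tp_{\Sa O}(b,c)$.

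The main obstacle is the structural lemma that the refined type determines the type; once one grants that (or cites it from the theory of linear orders), the rest is the $2n$-bookkeeping above. An alternative that avoids quoting the lemma is to argue directly with Ehrenfeucht--Fra\"iss\'e games --- for each $k$, combine Duplicator's winning strategies on $(\Sa O,a)$ versus $(\Sa O,c)$ and on $(\Sa O,b,(a_i)_{i\in I})$ versus $(\Sa O,b,(c_i)_{i\in I})$ zone by zone into a winning strategy on $(\Sa O,b,a)$ versus $(\Sa O,b,c)$ --- but this needs exactly the same zone-decomposition care (responding within the correct zone, cross-zone order being automatic), so it is not obviously shorter.
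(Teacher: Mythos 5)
Your proposal is correct and follows essentially the same strategy the paper sketches in the remark after Fact~\ref{fact:lo} (and in Hodges~A.6.8, which the paper cites rather than proves): choose $I$ to hold the immediate $a$-neighbours of each $b_j$, giving $|I|\le 2n$, and reduce to the composition theorem for ordered sums of colored linear orders — the ``refined type'' lemma, which the paper attributes to Rubin and which you reconstruct via Ehrenfeucht--Fra\"iss\'e games. The zone-by-zone bookkeeping you carry out is exactly the content the paper leaves implicit.
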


The main idea in the proof of Fact~\ref{fact:lo} is to select for each $b_i$ the minimal element of $\{a_1,\ldots,a_m\} \cap (\infty,b_i]$ and the maximal element of $\{a_1,\ldots,a_m\} \cap [b_i,\infty)$.
This gives a set $A$ of $\le 2n$ elements of $\{a_1,\ldots,a_m\}$ and one can show that $\tp_{\Sa O}(ab)$ is determined by $\tp_{\Sa O}(a)$ and $\tp_{\Sa O}(Ab)$.
For $\dlo$ this follows immediately by considering automorphisms of $(\Q;<)$.
In general one needs the model theory of linear orders developed by Rubin~\cite{rubin}.

\medskip
We now prove Proposition~\ref{prop:lo-group}.

\begin{proof}
Suppose that $\Sa O$ is a colored linear order and let $L$ be the language of $\Sa O$.
Suppose that $\Sa O$ trace defines an infinite group $G$.
We suppose that $G$ is a subset $O^m$ and that $G$ is trace definable via the identity $G \to O^m$.
By Proposition~\ref{prop:trace-theories} we may suppose that $\Sa O$ and $G$ are both highly saturated.
For each $k$ there is an $L(O)$-formula $\varphi(x_1,\ldots,x_k,y)$ such that for any $\alpha_1,\ldots,\alpha_k,\beta \in G$ we have $\Sa O \models \varphi(\alpha_1,\ldots,\alpha_k,\beta) \Longleftrightarrow \alpha_1 \alpha_2 \cdots \alpha_k = \beta$.
After adding new constant symbols to $L$ we may suppose that each $\varphi_k$ is parameter-free.

\medskip\noindent
Fix a sequence $(\upzeta_q : q \in \Q)$ of distinct elements of $G$ which is indiscernible in $\Sa O$.
Let $\upzeta_1 \upzeta_2  \cdots \upzeta_k = \upbeta_k$ for all $k \ge 1$.
By Fact~\ref{fact:lo} there is $k$ and $i \le k$ such that $\tp_{\Sa O}(\upzeta_1,\ldots,\upzeta_k,\upbeta_k)$ is determined by $\tp_{\Sa O}(\upzeta_1,\ldots,\upzeta_{i - 1},\upzeta_{i + 1},\ldots,\upzeta_k,\upbeta_k)$ and $\tp_{\Sa O}(\upzeta_1,\ldots,\upzeta_k)$.
By indiscernibility:
$$ \tp_{\Sa O}(\upzeta_1,\ldots,\upzeta_{i - 1},\upzeta_{i + \frac{1}{2}},\upzeta_{i + 1},\ldots,\upzeta_k,\upbeta_k) =  \tp_{\Sa O}(\upzeta_1,\ldots,\upzeta_{i - 1},\upzeta_i,\upzeta_{i + 1},\ldots,\upzeta_k,\upbeta_k).$$
Then $\Sa O \models \varphi_k(\upzeta_1,\ldots,\upzeta_{i - 1},\upzeta_{i + \frac{1}{2}},\upzeta_{i + 1},\ldots,\upzeta_k,\upbeta_k)$, so 
$$\upzeta_1 \cdots \upzeta_{i - 1} \upzeta_{i + \frac{1}{2}} \upzeta_{i + 1} \cdots \upzeta_k = \upbeta_k.$$
But this implies $\upzeta_i = \upzeta_{i + \frac{1}{2}}$, contradiction.
\end{proof}

It must be noted that the proofs of Theorem~\ref{thm:trivial} and Proposition~\ref{prop:lo-group} are \textit{bad}, or at least not yet good.
A good proof would be to define a reasonable $\nip$-theoretic notion which is preserved under trace definibility, not satisfied by any infinite group, and is satisfied by a trivial o-minimal structure or colored linear order.
Maybe a notion of nippy triviality.
(Of course this raises the question of exactly which $\nip$ structures should be ``trivial".
Should trees be ``trivial"??)
Maybe the right notion is a notion of ``almost a linear order".

\medskip
Linear orders are $\nip$, and they remain $\nip$ after expansion by unary relations.
Maybe this is actually the property that we should look at here.
A structure $\Sa M$ is \textbf{monadically $\nip$} if any expansion of $\Sa M$ by unary relations is $\nip$.
Trees (considered as semilinear orders) are monadically $\nip$ by \cite[Proposition~4.6]{Simon-dp}.
Monadic $\nip$ and monadic stability seem to be important model-theoretic notions, see \cite{brlas, nese}.
It is easy to see that if $\Sa M$ is monadically $\nip$ then $\Sa M$ cannot define an infinite group via an injection taking values in $M$.
I conjecture that a monadically $\nip$ structure cannot trace define an infinite group.
By \cite{hempel-chernkov} a monadically $\nip$ geometric structure has trivial algebraic closure, so there is a connection to triviality.

\subsection{O-minimal induced structures}
\label{section:induced structure}
We consider structures $\Sa A$ of the following form: $\Sa R$ is o-minimal, $A \subseteq R$ is dense, $\Sa A$ is the structure induced on $A$ by $\Sa R$, and $\Sa A$ admits quantifier elimination.
Note that by Fact~\ref{fact:wom-induced} $\Th(\Sa A)$ is weakly o-minimal hence $\nip$.
% Such structures $\Sa A$ are obviously closely related to o-minimal structures, however the examples in Section~\ref{section:mann} show that they can be surprisingly different.
% This is a more interesting class of structures than one would think.

% \begin{lemma}
% \label{lem:}
% Suppose that $\Sa R$ is an o-minimal expansion of $(\R;+,<)$, $A$ is a dense subset of $\R$, $\Sa A$ is the structure induced on $A$ by $\Sa R$, and $\Sa A$ is $\nip$.
% Then $\Th(\Sa A)$ trace defines $\Sa R$.
%\end{lemma}

\begin{lemma}
\label{lem:}
Suppose that $\Sa R$ is an o-minimal expansion of an ordered group $(R;+,<)$, $A$ is a dense subset of $R$, $\Sa A$ is the structure induced on $A$ by $\Sa R$, and $\Sa A$ admits quantifier elimination.
Then $\Sa A$ and $\Sa R$ are trace equivalent.
\end{lemma}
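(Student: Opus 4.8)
The claim is that $\Sa A$ and $\Sa R$ are trace equivalent, where $\Sa R$ is an o-minimal expansion of an ordered group $(R;+,<)$, $A\subseteq R$ is dense, $\Sa A$ is the structure induced on $A$ by $\Sa R$, and $\Sa A$ has quantifier elimination. One direction is free: since $\Sa A$ is the induced structure and has quantifier elimination, Lemma~\ref{lem:she--1} (or directly Proposition~\ref{prop:qe-trace}) gives that $\Sa R$ trace defines $\Sa A$ via the identity $A\to R$. So the whole content is to show that $\Th(\Sa A)$ trace defines $\Sa R$.

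The plan for the hard direction mirrors the argument for Proposition~\ref{prop:dense pair} and Lemma~\ref{lem:ovs}. By Fact~\ref{fact:wom-induced}, $\Th(\Sa A)$ is weakly o-minimal, hence $\nip$, so by Proposition~\ref{prop:she-0} it suffices to find an elementary extension $\Sa A\prec\Sa B$ such that $\Sh B$ interprets (a copy of) $\Sa R$. Here I would take $\Sa A\prec\Sa B$ where $\Sa B$ is the structure induced on $A'$ by an $|A|^+$-saturated elementary extension $\Sa R'\succ\Sa R$, with $A'$ the realization of the type of $A$ in $\Sa R'$ — so $\Sa B$ really is the induced structure of $\Sa R'$ on a dense subset $A'\subseteq R'$. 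Now inside $\Sa R'$ consider the convex subgroup $V=\{\alpha\in R' : |\alpha|<\beta \text{ for some positive }\beta\in R\}$ together with its maximal ideal $\mathfrak{m}=\{\alpha : |\alpha|<\beta\text{ for all positive }\beta\in R\}$ (the valuation ring of the archimedean valuation relative to $R$, exactly as in the proofs of Propositions~\ref{prop:ordered-fields} and \ref{prop:rcvf}). By saturation the residue map $\st\colon V\to R$ is surjective, so $V/\mathfrak m\cong R$. The key point is that $V$ is the intersection of an externally definable convex set with $A'$, and $\mathfrak m$ likewise; more precisely, $V\cap A'$ and $\mathfrak m\cap A'$ are $\Sh B$-definable since they are unions/intersections of $\Sa B$-definable convex sets (apply Lemma~\ref{lem:chain} / Fact~\ref{fact:convex}). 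One then recovers the o-minimal structure on the quotient: because $\Sa R$ expands an ordered group, $+$ descends to $V/\mathfrak m$ and is $\Sh B$-definable, and each further primitive $\Sa R$-definable relation is captured as follows — express it via quantifier elimination in $\Sa A$-formulas on $A'$, use density of $A'$ in $R'$ to see that the $R'$-definable set it cuts out is determined by its trace on $A'$, then push through the residue map. The net effect is an interpretation of $\Sa R$ (realized on the imaginary sort $(V\cap A')/\!\!\approx$) in $\Sh B$.

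The main obstacle is the last step: showing that \emph{every} $\Sa R$-definable relation, not just $+$ and $<$, is recovered on the residue. The cleanest route is to reduce to the group operation plus the order plus the collection of $\Sa R$-definable subsets of $R^n$ that are already determined by their intersection with $A^n$ — which is exactly what quantifier elimination for $\Sa A$ and density give us — and then to argue that a closed $\Sa R$-definable set $X\subseteq R^n$ can be read off from the set of balls (relative to $\mathfrak m$) meeting its realization $X'\subseteq (R')^n$, analogously to the claim in the proof of Proposition~\ref{prop:padic}. Since $\Sa R$ is o-minimal, every definable set is a Boolean combination of closed definable sets, so it suffices to handle the closed case. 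Here one uses: $\st(X'\cap V^n)=X$ when $X$ is closed (a saturation exercise as in the $p$-adic proof), and the set of $a\in (V\cap A')^n$ with $\st(a)\in X$ is $\Sh B$-definable because membership "$B(a,\gamma)$ meets $X'$ for all sufficiently small radii" is a chain condition. Assembling these gives that $\Sh B$ interprets $\Sa R$, completing the proof via Proposition~\ref{prop:she-0}. I would present this by first stating and proving the easy direction in one line, then doing the hard direction as above, isolating the residue-recovery as an internal claim proved by the closed-sets reduction.
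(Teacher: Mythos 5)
Your easy direction (that $\Sa R$ trace defines $\Sa A$ via Lemma~\ref{lem:she--1}) is fine, and your overall strategy for the hard direction — produce $\Sa A\prec\Sa B$ with $\Sh B$ interpreting $\Sa R$, locate the "finite/infinitesimal" convex sets $V,\mathfrak m$ via Lemma~\ref{lem:chain}, then recover closed $\Sa R$-definable sets from their traces — matches the paper in outline. But there is a genuine gap in your choice of $\Sa B$, at the step where you assert ``by saturation the residue map $\st\colon V\to R$ is surjective, so $V/\mathfrak m\cong R$.''

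The map $\st\colon V\to R$ is not even well defined in general. For $a\in V$ (the convex hull of $R$ in $R'$), the set $\{r\in R : r<a\}$ is a cut in $R$, but there need not be any $r\in R$ with $a-r\in\mathfrak m$; that would require $R$ to be Dedekind complete. Since you take $\Sa R'$ to be $|A|^+$-saturated, $A'$ will contain elements of $V$ realizing ``irrational'' cuts in $R$ (cuts not of the form $(-\infty,r]$ for $r\in R$), so $(V\cap A')/\mathfrak m$ is strictly larger than $R$ whenever $R$ has a gap — e.g.\ $R=\Q$, or any non-archimedean $R$. In the non-archimedean case this completion need not even carry an ordered group structure, so you cannot easily repair this by interpreting ``the Dedekind completion'' and then cutting down. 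The analogies you cite (Propositions~\ref{prop:ordered-fields} and~\ref{prop:rcvf}) do not support the step: there the residue target is $\R$, which \emph{is} Dedekind complete, so the surjectivity-by-saturation argument is sound; here the target is an arbitrary $R$.

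The paper's proof circumvents precisely this issue. It does not take $\Sa B$ to be the induced structure on $A'$ inside a saturated pair; instead it invokes Theorem~\ref{thm:artem} (Chernikov's property (D), via Fact~\ref{fact:artem} and weak o-minimality of $\Sa A$) to produce a ``thin'' elementary extension $\Sa A\prec\Sa B$ in which (1) every type over $A$ realized in $\Sa B$ is definable, and (2) every definable type over $A$ is realized. Point (1) forces every cut $C_\beta=\{\alpha\in A:\alpha<\beta\}$ for $\beta\in V$ to be $\Sa A$-definable, hence (by QE for $\Sa A$, o-minimality of $\Sa R$, and density) to be of the form $\{\alpha\in A:\alpha\le r\}$ for a unique $r\in R$; point (2) makes the map $\beta\mapsto C_\beta$ surjective onto these. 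This is exactly what identifies $V/E$ with $R$ and keeps the imaginary sort from overshooting. Your construction does work in the special case $R=\R$, because then every cut in $A$ is automatically $\Sa A$-definable; the paper records precisely this simplification as Lemma~\ref{lem::}. But for the general statement of the lemma you need the property-(D) extension, and that is the missing ingredient in your argument.
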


If $A$ is co-dense then $\Sa A$ is not interpretable in an o-minimal expansion of an ordered group by Corollary~\ref{cor:rama-1}.
The examples in Section~\ref{section:mann} show that $\Th(\Sa A)$ may not interpret $\Sa R$.
% Lemma~\ref{lem:} probably  holds without the assumption that $\Sa A$ is $\nip$, but this assumption is satisfied by our examples and is convenient as it allows us to use Shelah completions.
Let
\[
\|a\| = \max\{|a_1|,\ldots,|a_n|\} \quad \text{for all} \quad a = (a_1,\ldots,a_n) \in R^n
\]
and let $\cl(X)$ be the closure of $X \subseteq R^m$.
We use the trivial identity $\|(a,b)\| = \max\{ \|a\|, \|b\| \}$.
We also use some basic facts on one-types in weakly o-minimal structures, see Fact~\ref{fact:wom types}.

\begin{proof}
By Lemma~\ref{lem:she--1} $\Sa R$ trace defines $\Sa A$.
We show that $\Th(\Sa A)$ trace defines $\Sa R$.
% Let $\Sa A \prec \Sa B$ be $\aleph_1$-saturated.
Let $\Sa B$ be an elementary extension of $\Sa A$ such that:
\begin{enumerate}
\item $\tp_{\Sa B}(\alpha|A)$ is definable for all $\alpha \in B^m$, and
\item every definable type over $A$ is realized in $\Sa B$.
\end{enumerate}
Such $\Sa B$ exists by Fact~\ref{fact:artem} and weak o-minimality of $\Sa A$.
By Proposition~\ref{prop:she-0} it is enough to show that $\Sh B$ interprets $\Sa R$.
% We first realize $\R$ as a $\Sh B$-definable set of imaginaries.
We first realize $R$ as a $\Sh B$-definable set of imaginaries.
Let $D$ be the set of $(c,a,b) \in A^3$ such that $c > 0$ and $| a - b | < c$.
Note that $D$ is $\Sa A$-definable.
Let $D^*$ be the subset of $B^3$ defined by the same formula as $D$.
Note that $(D^*_c)_{c > 0}$ is a chain under inclusion.
Let $E = \bigcap_{c \in A, c > 0} D^*_c$ and $V = \bigcup_{c \in A, c > 0} \{ a \in B : (0,a,c) \in D^* \}$.
By Lemma~\ref{lem:chain} $V$ and $E$ are both externally definable in $\Sa B$.
Note that $V$ is the set of $b \in B$ such that $|b| < r$ for some positive $r \in R$ and $E$ is the set of $(a,b) \in B^2$ such that $| a - b | < r$ for all positive $r \in R$.
So $E$ is an equivalence relation on $V$.
We construct a canonical bijection $V/E \to R$.
% Let $\st \colon V \to \R$ be the usual standard part map.

\medskip
For our purposes a \textbf{cut} in $A$ is a nonempty downwards closed bounded above set $C \subseteq A$ such that either $C$ does not have a supremum or $C$ contains its supremum.
(So if $\alpha \in A$ then $(-\infty,\alpha]$ is a cut but $(-\infty,\alpha)$ is not.)
Quantifier elimination for $\Sa A$, o-minimality of $\Sa R$, and density of $A$ in $R$ together imply that the $\Sa A$-definable cuts in $A$ are exactly those sets of the form $\{ \alpha \in A : \alpha \le r \}$ for unique $r \in R$.
So we identify $R$ with the set of $\Sa A$-definable cuts in $A$.
Given $\beta \in V$ we let $C_\beta := \{ \alpha \in A : \alpha < \beta \}$ if this set does not have a supremum and otherwise let $C_\beta = \{ \alpha \in A : \alpha < \beta \} \cup \{\gamma\}$ if $\gamma$ is the supremum.
Each $C_\beta$ is a cut.
By (1) each $C_\beta$ is $\Sa A$-definable and by (2) every definable cut in $A$ is of the form $C_\beta$ for some $\beta \in V$.
Hence we identify $R$ with $(C_\beta : \beta \in V)$.
Note that for all $\beta,\beta^* \in V$ we have $(\beta,\beta^*) \in E$ if and only if $C_\beta = C_{\beta^*}$, so we may identify $V/E$ with $R$ and consider $R$ to be a $\Sh B$-definable set of imaginaries.
Let $\st \colon V \to R$ be the quotient map.
Then $\st$ is monotone, so $\Sh B$ defines the order on $R$.
Hence the closure of a $\Sh B$-definable subset of $R^n$ is $\Sh B$-definable.

\medskip
% By saturation $\st$ is surjective and for any $a,a^* \in V$ we have $\st(a) = \st(a^*) \Longleftrightarrow (a,a^*) \in E$.
% We identify $V/E$ with $\R$ and show that $\Sa R$ is a reduct of the structure induced on $\R$ by $\Sh B$.
We show that $\Sa R$ is a reduct of the structure induced on $R$ by $\Sh B$.
% We first realize $I$ as a $\Sh B$-definable set of imaginaries.
% We may suppose that $I$ is a closed interval containing $(0,1)$.
% Let $D$ be the set of $(a,b,c) \in A^3$ such that $c > 0$ and $| a - b | < c$.
% Note that $D$ is $\Sa A$-definable.
% Let $D^*$ be the subset of $B^3$ defined by the same formula as $D$.
% Note that $(D^*_c)_{c > 0}$ is a chain under inclusion.
% So by Lemma~\ref{lem:chain} $E = \bigcap_{c \in A, c > 0} D^*_c$ and $O = \bigcup_{c \in A, c > 0} \{ a \in B : (0,a,c) \in D^* \}$ are both externally definable in $\Sa B$.
% As $I$ is a closed interval and $\Sa B$ is $\aleph_1$-saturated we may identify $O/E$ with $I$ so that the quotient map $O \to I$ is the usual standard part map which we denote by $\st$.
% Note that the usual order on $I$ is $\Sh B$-definable.
We first suppose that $X \subseteq A^m$ is $\Sa A$-definable and show that $\cl(X)$ is $\Sh B$-definable.
Let $X^*$ be the subset of $B^m$ defined by the same formula as $X$.
It is easy to see that $X \subseteq \st(X^* \cap V^m) \subseteq \cl(X)$, hence $\cl(X) = \cl(\st(X^* \cap V^m))$ is $\Sh B$-definable.
% A saturation exercise shows that $\st(X^* \cap V^m) = \cl(X)$.
% Hence $\cl(X)$ is $\Sh B$-definable.

\medskip\noindent
Suppose $Y$ is a nonempty $\Sa R$-definable subset of $R^m$.
We show that $Y$ is $\Sh B$-definable.
By o-minimal cell decomposition $Y$ is a boolean combination of closed $\Sa R$-definable subsets of $R^m$, so we may suppose that $Y$ is closed.
% Let $W$ be the set of $(\varepsilon,c) \in \R_{>0} \times \R^m$ for which there is $c^* \in Y$ satisfying $\| c - c^* \| < \varepsilon$.
Let $W$ be the set of $(\varepsilon,c) \in R_{>0} \times R^m$ for which there is $c^* \in Y$ satisfying $\| c - c^* \| < \varepsilon$.
Then $W \cap (A \times A^m)$ is $\Sa A$-definable and $Z := \cl(W \cap (A \times A^m))$ is $\Sh B$-definable.
Let $Y^*$ be $\bigcap_{t \in R,t > 0} Z_t$.
Then $Y^*$ is $\Sh B$-definable.
We show that $Y = Y^*$.
Each $Z_t$ is closed, so $Y^*$ is closed.
Suppose $\varepsilon \in A_{> 0}$.
Then $W_\varepsilon$ is open, so $(W \cap (A \times A^n))_\varepsilon$ is dense in $W_\varepsilon$, so $Z_\varepsilon$ contains $W_\varepsilon$, hence $Z_\varepsilon$ contains $Y$.
Thus $Y \subseteq Y^*$.
We now prove the other inclusion.
Suppose that $p^* \in Y^*$.
We show that $p^* \in Y$.
As $Y$ is closed it suffices to fix $\varepsilon \in R_{>0}$ and find $p \in Y$ such that $\| p - p^* \| < \varepsilon$.
We may suppose that $\varepsilon \in A$.
We have $(\varepsilon,p^*) \in Z$, so there is $(\delta,q) \in W \cap (A \times A^n)$ such that $\| (\varepsilon,p^*) - (\delta,q)\| < \varepsilon$.
By definition of $W$ we obtain $p \in Y$ such that $\| p - q \| < \delta$.
We have $| \varepsilon - \delta | < \varepsilon$, so $\delta < 2\varepsilon$, hence $\|p - q\| < 2 \varepsilon$.
We also have $\| p^* - q \| < \varepsilon$, so $\| p^* - p \| < 3\varepsilon$.
\end{proof}

% \begin{proposition}
% \label{prop:induced-2}
% Let $\Sa R$ and $\Sa A$ be as in Lemma~\ref{lem:}.
% Suppose that the induced structure $\Sa A$ eliminates quantifiers.
% Then $\Sa A$ is trace equivalent to $\Sa R$.
% \end{proposition}
% Proposition~\ref{prop:induced-2} follows from Lemma~\ref{lem:}.
% Proposition~\ref{prop:induced-3} generalizes Proposition~\ref{prop:induced-2}, but we need a different proof.

% \begin{proposition}
% \label{prop:induced-3}
% Suppose that $\Sa R$ is an o-minimal expansion of an ordered abelian group $(R;+,<)$ and $A$ is a dense subset of $R$ such that the structure $\Sa A$ induced on $A$ by $\Sa R$ eliminates quantifiers.
% Then $\Sa A$ is trace equivalent to $\Sa R$.
% \end{proposition}

% Corollary~\ref{cor:rama-1.5} shows that if $\Sa A$ is additionally co-dense then $\Sa A$ is not interpretable in an o-minimal expansion of an ordered group.

Corollary~\ref{cor:import} now follows from Proposition~\ref{prop:o-min-rigid} and Lemma~\ref{lem:}.

\begin{corollary}
\label{cor:import}
Suppose that $\Sa R$ is an o-minimal expansion of an ordered group $(R;+,<)$, $A$ is a dense subset of $R$, and the structure $\Sa A$ induced on $A$ by $\Sa R$ eliminates quantifiers.
The following are equivalent:
\begin{enumerate}[leftmargin=*]
\item $\Th(\Sa A)$ does not trace define $\rcf$.
\item $\Th(\Sa A)$ does not trace define an infinite field.
\item $\Sa A$ has near linear Zarankiewicz bounds.
\item $\Sa A$ is trace equivalent to  an ordered vector space over an ordered division ring $\D$.\\
(If $(R;+,<)$ is $(\R;+,<)$ then $\D$ is a subfield of $\R$.)
\item there is an ordered division ring $\D$ and an ordered $\D$-vector space $\V$ expanding $(R;+,<)$ such that every $\Sa A$-definable $X \subseteq A^n$ is of the form $Y \cap A^n$ for $\V$-definable $Y \subseteq R^n$.
\\(If $(R;+,<)$ is $(\R;+,<)$ then we  take $\V=\rvec$.)
\end{enumerate}
\end{corollary}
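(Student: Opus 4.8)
The plan is to obtain Corollary~\ref{cor:import} by combining Lemma~\ref{lem:} with Proposition~\ref{prop:o-min-rigid}, essentially just transporting each of the five equivalent conditions of the latter across the trace equivalence $\Sa A \equiv_{\mathrm{tr}} \Sa R$. First I would invoke Lemma~\ref{lem:}, whose hypotheses are exactly those of the corollary, to conclude that $\Sa A$ and $\Sa R$ are trace equivalent; in particular $\Th(\Sa A)$ trace defines $\rcf$ if and only if $\Th(\Sa R)$ does (using transitivity of trace definability, Proposition~\ref{prop:trace-basic}), and likewise $\Th(\Sa A)$ trace defines an infinite field iff $\Th(\Sa R)$ does. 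By Lemma~\ref{lem:blank-2} near linear Zarankiewicz bounds transfer in both directions across trace equivalence, so condition (3) for $\Sa A$ is equivalent to condition (3) for $\Sa R$. Finally, trace equivalence is transitive, so $\Sa A$ is trace equivalent to an ordered vector space iff $\Sa R$ is, giving the equivalence of the respective condition (4)'s.

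With those four translations in hand, I would apply Proposition~\ref{prop:o-min-rigid} to $\Sa R$ (which is an o-minimal expansion of the oag $(R;+,<)$): its conditions (1)--(4) are all equivalent, and pulling each one back along the dictionary above yields the equivalence of conditions (1)--(4) of the present corollary. For the parenthetical remark in (4), note that when $(R;+,<)=(\R;+,<)$, Proposition~\ref{prop:o-min-rigid}(4) already gives that $\Sa R$ is trace equivalent to an ordered subfield of $\R$, and trace equivalence of $\Sa A$ with $\Sa R$ then gives the same for $\Sa A$.

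It remains to bring in condition (5), which is the one genuinely new ingredient. The implication (5)$\Rightarrow$(4) is immediate: if every $\Sa A$-definable $X \subseteq A^n$ has the form $Y \cap A^n$ for $\V$-definable $Y \subseteq R^n$, then the structure $\Sa A$ is (a reduct of) the structure induced on $A$ by $\V$, so $\V$ trace defines $\Sa A$ by Lemma~\ref{lem:she--1}, while $\Sa A$ trace defines $\V$ by Lemma~\ref{lem:} applied with $\Sa R$ replaced by the o-minimal (indeed semilinear) structure $\V$ — or more directly because $\Sa A$ is trace equivalent to $\Sa R$, which is a reduct of $\V$, and $\V$ in turn is trace equivalent to its own ordered-vector-space self. (One must check $A$ is still dense in $R$ with respect to $\V$, which is automatic since $\V$ expands $(R;+,<)$.) For (4)$\Rightarrow$(5): assuming (4), equivalently assuming $\Sa R$ satisfies Proposition~\ref{prop:o-min-rigid}(5), we get an ordered $\D$-vector space $\V$ expanding $(R;+,<)$ with $\Sa R$ a reduct of $\V$; then every $\Sa A$-definable subset of $A^n$ is externally definable in $\Sa A$ hence of the form $Y\cap A^n$ for $\Sa R$-definable $Y$ (this is the content of the hypothesis that the induced structure has quantifier elimination, together with Fact~\ref{fact:wom-induced}/weak o-minimality — really just that $\Sa A$ is the induced structure and has QE), and since $\Sa R$ is a reduct of $\V$ such $Y$ is $\V$-definable. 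The main obstacle I anticipate is precisely this last bookkeeping step: making sure that "every $\Sa A$-definable subset of $A^n$ is of the form $Y\cap A^n$ for $\Sa R$-definable $Y$" is correctly justified from the quantifier-elimination hypothesis on the induced structure (this is exactly the remark in Section~\ref{section:conventions} that the induced structure has QE iff every definable set of $\Sa A$ comes from $\Sa R$), and that replacing $\Sa R$ by $\V$ preserves all the "dense subset, induced structure, QE" hypotheses so that the chain of trace equivalences is legitimate — everything else is a routine transport across the equivalences already established.
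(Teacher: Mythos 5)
Your high-level plan is exactly the paper's: combine Lemma~\ref{lem:} (which gives $\Sa A$ trace equivalent to $\Sa R$) with Proposition~\ref{prop:o-min-rigid}, transporting each condition across the equivalence. The transport of conditions (1)--(4) is sound; the only small wrinkle is that condition (1) of the corollary (``does not trace define $\rcf$'') is not literally condition (1) of Proposition~\ref{prop:o-min-rigid} (``does not define a real closed field''), but the two are equivalent via (1)$\Leftrightarrow$(2) of the proposition together with the trivial observation that defining a real closed field yields a trace definition of $\rcf$.

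The genuine gap is in (5)$\Rightarrow$(4), and it is exactly where you anticipated trouble. Your first route invokes Lemma~\ref{lem:} with $\V$ in place of $\Sa R$; this requires the structure induced on $A$ by $\V$ to admit quantifier elimination, and nothing in condition (5) delivers that---$\V$ may have more definable sets than $\Sa R$, and QE of $\Sa A$ (with respect to $\Sa R$) does not transfer. Your ``more direct'' alternative asserts that $\Sa R$ is a reduct of $\V$, which is the content of Proposition~\ref{prop:o-min-rigid}(5), not of the corollary's condition (5): the latter only says that $\Sa A$-definable subsets of $A^n$ extend to $\V$-definable sets, which does not by itself place $\Sa R$ inside $\V$. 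The clean patch avoids this entirely: condition (5) gives that $\V$ trace defines $\Sa A$ (via the identity $A\to R$, using Proposition~\ref{prop:qe} after Morleyizing $\Sa A$), and $\V$ has near linear Zarankiewicz bounds by Fact~\ref{fact:mct}; then Lemma~\ref{lem:blank-2} gives that $\Sa A$ has near linear Zarankiewicz bounds, i.e.\ condition (3), from which (4) follows by the equivalences already in place. Your proof of (4)$\Rightarrow$(5)---pull back Proposition~\ref{prop:o-min-rigid}(5) to get $\Sa R$ a reduct of $\V$, then use QE of $\Sa A$ to route every $\Sa A$-definable set through $\Sa R$ into $\V$---is correct as written.
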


Corollary~\ref{cor:import3} follows from Theorem~\ref{thm:trivial} and Lemma~\ref{lem:}.

\begin{corollary}
\label{cor:import3}
Suppose that $\Sa R$ is o-minimal, $A \subseteq R$ is dense, and the structure $\Sa A$ induced on $A$ by $\Sa R$ eliminates quantifiers.
Then the following are equivalent:
\begin{enumerate}
\item $\Th(\Sa A)$ trace defines an infinite group.
\item $\Th(\Sa A)$ trace defines $\doag$.
\item $\Sa R$ is non-trivial.
\end{enumerate}
\end{corollary}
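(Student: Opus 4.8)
The plan is to establish the cycle of implications $(2)\Rightarrow(1)\Rightarrow(3)\Rightarrow(2)$. Implication $(2)\Rightarrow(1)$ is immediate, since $\doag$ is the complete theory of the infinite group $(\R;+,<)$, so a trace definition of $\doag$ is a fortiori a trace definition of an infinite group. The substantive content is concentrated in $(1)\Rightarrow(3)$, which will come from Theorem~\ref{thm:trivial}, and in $(3)\Rightarrow(2)$, which will come from Lemma~\ref{lem:} together with Fact~\ref{fact:lovey} and Lemma~\ref{lem:doag interval}.

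For $(1)\Rightarrow(3)$ I would argue by contraposition. By Lemma~\ref{lem:she--1} the o-minimal structure $\Sa R$ trace defines the induced structure $\Sa A$; since $\Sa A\models\Th(\Sa A)$ is trace definable in $\Sa R\models\Th(\Sa R)$, the ``some model'' reformulation of trace definability for theories (Proposition~\ref{prop:trace-theories}) gives that $\Th(\Sa R)$ trace defines $\Th(\Sa A)$. Hence, by transitivity of trace definability of theories (Proposition~\ref{prop:trace-basic}), if $\Th(\Sa A)$ trace defined an infinite group then so would $\Th(\Sa R)$. But when $\Sa R$ is trivial, Theorem~\ref{thm:trivial} prohibits $\Th(\Sa R)$ from trace defining an infinite group. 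So triviality of $\Sa R$ forces $\Th(\Sa A)$ to trace define no infinite group, which is exactly the contrapositive of $(1)\Rightarrow(3)$.

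For $(3)\Rightarrow(2)$, suppose $\Sa R$ is nontrivial. If $\Sa R$ happens to expand an ordered group, Lemma~\ref{lem:} gives that $\Sa A$ and $\Sa R$ are trace equivalent; and a nontrivial o-minimal expansion of an ordered group defines a $\doag$-interval by Fact~\ref{fact:lovey}, which trace defines $\doag$ by Lemma~\ref{lem:doag interval}, so $\Th(\Sa A)$ trace defines $\doag$ by composition. For general nontrivial o-minimal $\Sa R$ I would reduce to this case: by Fact~\ref{fact:lovey} there is an $\Sa R$-definable $\doag$-interval $\Sa D=(I;X,<)$; set $A_D=A\cap I$, which is dense in $I$, and let $\Sa A_D$ be the structure induced on $A_D$ by $\Sa R$. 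Since $A_D$ is $\Sa A$-definable and $\Sa A$ has quantifier elimination, $\Sa A_D$ again has quantifier elimination and is defined in $\Sa A$, so $\Th(\Sa A)$ trace defines $\Th(\Sa A_D)$; it therefore suffices to show $\Th(\Sa A_D)$ trace defines $\doag$. This last step I would carry out by rerunning the Shelah-completion argument of Lemma~\ref{lem:} (equivalently Lemma~\ref{lem:ovs}) relative to the $\doag$-interval $\Sa D$: passing to an elementary extension $\Sa B\succ\Sa A_D$ realizing all definable types over $A_D$ and having all one-types over $A_D$ definable, one recovers inside $\Sh B$ the interval $I$ as the set of $\Sa A_D$-definable cuts in $A_D$, equipped with the partial operation $X$, thereby realizing a $\doag$-interval definable in $\Sh B$; this trace defines $\doag$ by Lemma~\ref{lem:doag interval}, and Proposition~\ref{prop:she-0} then transfers the trace definition from $\Th(\Sa B)=\Th(\Sa A_D)$.

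The main obstacle is precisely this adaptation of Lemma~\ref{lem:} to a setting with only a partial group operation. In Lemma~\ref{lem:} the ambient group structure of $R$ is used twice: to express ``closeness'' of elements via the norm $\|\cdot\|$ and the infinitesimal equivalence relation, and to approximate a closed definable set by dilating it metrically. On a $\doag$-interval one has addition only on pairs whose sum stays in $I$, so ``$\alpha$ is within $\varepsilon$ of $\beta$'' must be rephrased as ``$\alpha$ lies in a small $A$-box around $\beta$'', and the approximation of closed $\Sa R$-definable sets must instead be extracted from o-minimal cell decomposition. I expect this to be routine but tedious, and the cleanest write-up would isolate it as a stand-alone lemma — that the structure induced on a dense subset of a $\doag$-interval is trace equivalent to $\doag$ — after which Corollary~\ref{cor:import3} drops out of that lemma together with Theorem~\ref{thm:trivial} exactly as indicated.
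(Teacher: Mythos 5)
Your argument is exactly what the paper's one-line citation (``follows from Theorem~\ref{thm:trivial} and Lemma~\ref{lem:}'') intends in the case where $\Sa R$ expands an ordered group: $(2)\Rightarrow(1)$ is immediate, $(1)\Rightarrow(3)$ goes by contraposition through Lemma~\ref{lem:she--1}, Proposition~\ref{prop:trace-theories}, and Theorem~\ref{thm:trivial}, and $(3)\Rightarrow(2)$ combines Fact~\ref{fact:lovey}, Lemma~\ref{lem:doag interval}, and Lemma~\ref{lem:}. You have in addition caught a genuine hypothesis mismatch that the paper passes over: Corollary~\ref{cor:import3} as written assumes only that $\Sa R$ is o-minimal, while Lemma~\ref{lem:} requires $\Sa R$ to expand an ordered group, so the cited ingredients give $(3)\Rightarrow(2)$ only under the stronger hypothesis. (Your contrapositive $(1)\Rightarrow(3)$ and the trivial $(2)\Rightarrow(1)$ are unconditional.) Your sketch of a repair for the general nontrivial case --- localize to a $\doag$-interval $I$ provided by Fact~\ref{fact:lovey}, pass to the induced structure on $A\cap I$ (which retains quantifier elimination and is trace definable in $\Sa A$), and rerun the Shelah-completion argument of Lemma~\ref{lem:} there --- is the right shape and parallels the proof of Lemma~\ref{lem:ovs}, which Lemma~\ref{lem:doag interval} itself invokes. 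But as you flag, you have not carried out the details, and they are not entirely cosmetic: the infinitesimal germ in $\Sh B$ must be anchored at a chosen base point of $A\cap I$, the group operation has to be rebuilt from the partial relation $X$ relative to that point, and one has to check that the resulting convex divisible ordered group is externally definable so that Proposition~\ref{prop:she-0} applies. Since the paper provides nothing beyond the one-line citation for this case, you have not missed anything the source actually supplies; rather, you have exposed a small lacuna in the source's stated hypotheses or justification.
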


In Lemma~\ref{lem:} we assume that $\Sa A$ eliminates imaginaries.
What did we use this for?
First to show that $\Sa R$ trace defines $\Sa A$.
Secondly, to construct $\Sa B$, and thirdly to show that $\Sa A$ is $\nip$ which allows an application of $\Sh B$.
Suppose that $\Sa R$ expands $(\R;+,<)$.
Then every cut in $A$ is $\Sa A$-definable, so we can take $\Sa B$ to be an $\aleph_1$-saturated elementary extension of $\Sa A$ and show that $\Sh B$ interprets $\Sa R$ in the same way.
Lemma~\ref{lem::} follows.

\begin{lemma}
\label{lem::}
Suppose that $\Sa R$ is an o-minimal expansion of $(\R;+,<)$, $A$ is a dense subset of $\R$, $\Sa A$ is the structure induced on $A$ by $\Sa R$, and $\Sa A$ is $\nip$.
Then $\Th(\Sa A)$ trace defines $\Sa R$.
\end{lemma}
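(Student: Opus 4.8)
The plan is to follow the same strategy used in the proof of Lemma~\ref{lem:}, but to take advantage of the extra hypothesis that $\Sa R$ expands $(\R;+,<)$ in order to dispense with the cumbersome auxiliary model $\Sa B$ that had to be built via Fact~\ref{fact:artem}. So the core of the argument is to exhibit an $\aleph_1$-saturated elementary extension $\Sa B \succ \Sa A$ whose Shelah completion $\Sh B$ interprets $\Sa R$; then Proposition~\ref{prop:she-0} (every $\nip$ structure is trace equivalent to its Shelah completion) and transitivity of trace definibility (Proposition~\ref{prop:trace-basic}) finish the job. Note that $\Sa A$ is $\nip$ by hypothesis, so $\Sh B$ is available, and $\Sa A$ is weakly o-minimal by Fact~\ref{fact:wom-induced} (its induced structure eliminates quantifiers), so the description of one-types in Fact~\ref{fact:wom types} applies.

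The key point is this: in the proof of Lemma~\ref{lem:} the purpose of conditions (1) and (2) on $\Sa B$ was to guarantee that (1) the cut $C_\beta$ cut out by each $\beta \in V$ is $\Sa A$-definable and (2) every $\Sa A$-definable cut in $A$ is realized by some $\beta \in V$. When $\Sa R$ expands $(\R;+,<)$ and $A$ is dense in $\R$, \emph{every} cut in $A$ — in the sense used there, a nonempty bounded-above downward-closed subset — is of the form $\{\alpha \in A : \alpha \le r\}$ or $\{\alpha \in A : \alpha < r\}$ for a unique real $r$, hence is automatically $\Sa A$-definable (it is the trace on $A$ of an $\Sa R$-definable half-line). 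So requirement (1) is vacuous, and requirement (2) follows purely from $\aleph_1$-saturation: for each real $r$, the partial type over $A$ asserting $\alpha < q$ for $q \in A$, $q > r$ and $\alpha > q$ for $q \in A$, $q < r$ is finitely satisfiable in $\Sa A$, hence realized in $\Sa B$. So I would take $\Sa B$ to be any $\aleph_1$-saturated elementary extension of $\Sa A$ and re-run the construction: define $D$, $D^*$, $V$, $E$ exactly as before, use Lemma~\ref{lem:chain} to see $V$ and $E$ are externally definable, identify $V/E$ with $R$ via $\beta \mapsto C_\beta$, get the standard-part map $\st : V \to R$, observe $\st$ is monotone so $\Sh B$ defines the order on $R$ (hence closures of $\Sh B$-definable sets are $\Sh B$-definable), and then verify that $\Sa R$ is a reduct of the structure induced on $R$ by $\Sh B$ by the same two paragraphs: the closure of an $\Sa A$-definable $X \subseteq A^m$ is $\Sh B$-definable because $X \subseteq \st(X^* \cap V^m) \subseteq \cl(X)$, and an arbitrary closed $\Sa R$-definable $Y \subseteq R^m$ is recovered as $\bigcap_{t > 0} Z_t$ where $Z$ is the closure of the trace on $A$ of the $\varepsilon$-neighborhood relation of $Y$, using o-minimal cell decomposition to reduce to the closed case.

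The main obstacle — really the only place that needs care — is confirming that the countability-based realization argument for cuts genuinely goes through with only $\aleph_1$-saturation rather than the stronger "every definable type is realized" hypothesis; that is, one must check that in the weakly o-minimal $\Sa A$ the relevant cuts correspond to \emph{consistent} partial types over $A$ (which they do, since $A$ is dense in $\R$ and any finite fragment picks out a nonempty interval in $A$), and that saturation then supplies a realizing element lying in $V$ — which holds because the realizing element is squeezed between two reals and hence infinitesimally close to a real, so it lands in $V$. Everything else is a transcription of the Lemma~\ref{lem:} argument with the $(\R;+,<)$-specific simplification, so I expect no new difficulty beyond bookkeeping. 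I would close by remarking, as the text does, that the roles of the hypothesis "$\Sa A$ eliminates quantifiers" in Lemma~\ref{lem:} were to get $\Sa R$ trace defining $\Sa A$ (not needed here, since we only claim the one direction), to build $\Sa B$ (now trivial), and to get $\nip$ (now assumed outright), which is exactly why the weaker hypotheses of Lemma~\ref{lem::} suffice for the one-directional conclusion.
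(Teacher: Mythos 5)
Your proposal is correct and matches the paper's own (very brief) argument, which simply observes that when $\Sa R$ expands $(\R;+,<)$ every cut in $A$ is the trace of an $\Sa R$-definable half-line and hence automatically $\Sa A$-definable, so the carefully constructed $\Sa B$ of Lemma~\ref{lem:} may be replaced by any $\aleph_1$-saturated elementary extension and the rest of that proof rerun verbatim. Two small points worth fixing in your write-up: the aside that $\Sa A$ is weakly o-minimal via Fact~\ref{fact:wom-induced} is unjustified, since that fact requires quantifier elimination for $\Sa A$, which is not among the hypotheses of this lemma (happily you never actually use it); and the type you realize to get surjectivity of $\beta\mapsto C_\beta$ is stated over all of $A$, which may be uncountable, so one should instead realize the countable subtype given by sequences in $A$ converging monotonically to $r$ from both sides --- this subtype is finitely satisfiable by density of $A$, is realizable by $\aleph_1$-saturation, and any realization already lies in $V$ and cuts out exactly $\{\alpha\in A:\alpha\le r\}$.
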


In Prop~\ref{prop:mordell lang} we give an example of a dense subset $A$ of $\R$ such that the structure induced on $A$ by $\rfield$ is trace equivalent to $(\Z;+)\sqcup\rfield$.
Hence in this case $\rcf$ trace defines $\Sa A$ if and only if $\rcf$ trace defines $(\Z;+)$.
I don't know if $\rcf$ trace defines $(\Z;+)$.

\subsection{Two examples}
\label{section:two examples}

These examples show that Corollaries~\ref{cor:import} and \ref{cor:import3} are sharp.

\subsubsection{A Mann pair}
\label{section:mann}
This example shows that Corollary~\ref{cor:import} is sharp.
Fix a real number $\lambda > 0$, let $\lambda^\Q = \{\lambda^q : q \in \Q\}$, and $\Sa Q_\lambda$ be the expansion of $(\Q;+,<)$ by all sets of the form $\{ (q_1,\ldots,q_m) \in \Q^m : (\lambda^{q_1},\ldots,\lambda^{q_m}) \in X\}$ for semialgebraic $X \subseteq \R^m$.
Note that $q \mapsto \lambda^q$ gives an isomorphism between $\Sa Q_\lambda$ and the structure induced on $\lambda^\Q$ by $\rfield$.

\begin{fact}
\label{fact:mann-vdd}
Every $(\R;+,\times,\lambda^\Q)$-definable subset of $(\lambda^\Q)^n$ is of the form $(\lambda^\Q)^n \cap X$ for semialgebraic $X \subseteq \R^n$.
So the structure induced on $\lambda^\Q$ by $\rfield$ admits quantifier elimination.
\end{fact}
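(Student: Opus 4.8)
The statement to prove is Fact~\ref{fact:mann-vdd}: every $(\R;+,\times,\lambda^\Q)$-definable subset of $(\lambda^\Q)^n$ is of the form $(\lambda^\Q)^n\cap X$ for semialgebraic $X$, equivalently the induced structure on $\lambda^\Q$ eliminates quantifiers. This is exactly the statement that the pair $(\R;+,\times,\lambda^\Q)$ has a ``Mann property''-type quantifier simplification, and it is due to van den Dries and Günaydin on Mann pairs. Since it is cited as a fact, the plan is to indicate the source and sketch why it holds rather than reprove it from scratch.

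\textbf{Approach.} The plan is to invoke the van den Dries--Günaydin analysis of the structure $(\R;+,\times)$ expanded by a multiplicative subgroup $\Gamma$ satisfying the Mann property. First I would recall that $\lambda^\Q$, being a finite-rank (in fact rank one) multiplicative subgroup of $\R^{>0}$, has the Mann property: for any $a_1,\dots,a_k\in\Q$ there are only finitely many $(\gamma_1,\dots,\gamma_k)\in(\lambda^\Q)^k$ with $a_1\gamma_1+\dots+a_k\gamma_k=1$ and no proper subsum vanishing. This is the key number-theoretic input (it follows from the classical Mann theorem on vanishing sums of roots of unity in the torsion case, and from the subspace theorem / its elementary analogues in the finite-rank case; for $\lambda^\Q$ it is elementary since the group is archimedean). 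Then I would cite van den Dries--Günaydin~\cite{vddG}, whose main structure theorem shows that in $(\R;+,\times,\Gamma)$ with $\Gamma$ a dense finite-rank subgroup having the Mann property, every definable subset of $\Gamma^n$ is a finite Boolean combination of sets of the form $\Gamma^n\cap Y$ with $Y$ semialgebraic together with cosets coming from the group structure on $\Gamma$ --- but since $\Gamma=\lambda^\Q$ is divisible, the group-theoretic part is already semialgebraically controlled and collapses, leaving only semialgebraic traces.

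\textbf{Key steps, in order.} (1) Observe $q\mapsto\lambda^q$ is an ordered-group isomorphism of $(\Q;+,<)$ onto $(\lambda^\Q;\times,<)$, so it suffices to prove the statement for the structure induced on $A:=\lambda^\Q$ by $\rfield$. (2) Note $A$ is a dense finite-rank (rank one) subgroup of $(\R^{>0};\times)$, so Fact~\ref{fact:gh} applies: every $\Sa A$-definable $X\subseteq A^k$ is a finite union of sets $X_0\cap Y$ where $X_0\subseteq A^k$ is $(A;\times)$-definable and $Y\subseteq\R^k$ is semialgebraic. (3) Reduce to showing that every $(A;\times)$-definable subset of $A^k$ is already the trace of a semialgebraic set. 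Since $(A;\times)\cong(\Q;+)$ is a divisible ordered abelian group, and $A$ is linearly ordered by the order inherited from $\R$, the $(A;\times)$-definable sets are, by quantifier elimination for divisible ordered abelian groups (applied to $(A;\times,<)$, which is definable in $\rfield$), finite Boolean combinations of sets defined by conditions $a_1x_1\cdots a_k x_k\, \square\, c$ where $\square\in\{=,<\}$ and the "coefficients" are integer exponents --- and each such condition is the restriction to $A^k$ of a semialgebraic (indeed, monomial inequality) set in $\R^k$, because divisibility predicates $k\mid m$ translate to $\exists y(y^k=\text{monomial})$ which on $A$ agrees with a semialgebraic condition on $\R^{>0}$ (every positive real has a $k$-th root). (4) Combine (2) and (3): the trace of a semialgebraic set intersected with the trace of a semialgebraic set is the trace of a semialgebraic set, so every $\Sa A$-definable subset of $A^k$ is $A^k\cap X$ for semialgebraic $X$, which is precisely quantifier elimination for $\Sa A$.

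\textbf{Main obstacle.} The delicate point is step (3): one must be sure that the $(A;\times)$-definable sets genuinely coincide, as traces on $A^k$, with semialgebraic sets, and in particular that the divisibility predicates on $(A;\times)$ do not encode anything beyond semialgebraic data. The cleanest way around this is to avoid reproving anything and simply cite van den Dries--Günaydin~\cite[Theorem~7.2]{vddG} directly, since that theorem (stated for Mann pairs, of which $(\R;+,\times,\lambda^\Q)$ is the prototypical example) already packages exactly the conclusion that definable subsets of $(\lambda^\Q)^n$ are semialgebraic traces; the present Fact is a restatement. I would therefore write the proof as: the isomorphism reduction in step~(1), then "now apply~\cite[Theorem~7.2]{vddG}" (noting $\lambda^\Q$ has the Mann property because it is a finitely generated--rank, torsion-free, archimedean multiplicative group), with the structure-on-$A$ reformulation following because, as noted in Section~\ref{section:conventions}, the induced structure eliminates quantifiers iff every definable subset of each $A^n$ is a trace of an $\rfield$-definable set.
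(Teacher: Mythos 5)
Your bottom line --- cite van den Dries--G\"unaydin~\cite[Theorem~7.2]{vddG} directly --- matches the paper, whose entire proof of this Fact is exactly that one-sentence attribution. But there is a genuine slip in the surrounding reasoning that is worth fixing. You open by claiming that ``every $(\R;+,\times,\lambda^\Q)$-definable subset of $(\lambda^\Q)^n$ is a semialgebraic trace'' and ``the induced structure $\Sa A$ on $\lambda^\Q$ eliminates quantifiers'' are \emph{equivalent}. They are not, and the paper's ``So'' is a one-way implication. The $\Sa A$-definable sets are obtained by boolean-combining and quantifying \emph{over $A$} from the predicates $P_X$ (traces of semialgebraic $X$). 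A $(\rfield,\lambda^\Q)$-definable subset of $(\lambda^\Q)^n$ may be cut out by a formula whose quantifiers range over \emph{all of $\R$} and which uses the unary predicate $\lambda^\Q$ freely. The first class is contained in the second, so the first sentence of the Fact implies the second; the converse is exactly the assertion that $\lambda^\Q$ is stably embedded in the pair, which is a non-trivial part of van den Dries--G\"unaydin's theorem, not a formality.

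This propagates into your key steps: steps (2)--(4) deduce quantifier elimination for $\Sa A$ from Fact~\ref{fact:gh} plus divisibility of $\lambda^\Q$. That deduction is correct (and the divisibility observation --- that the $(A;\times)$-definable sets are then boolean combinations of monomial equalities, hence semialgebraic traces --- is the right reason the group-theoretic part collapses). But Fact~\ref{fact:gh} as stated in the paper speaks only of $\Sa A$-definable sets, so this only gives the \emph{second} sentence of Fact~\ref{fact:mann-vdd}. To get the first sentence you genuinely need the stable-embeddedness content of van den Dries--G\"unaydin's Theorem~7.2. Your citation does cover it, so the proof-by-citation is fine; but the asserted equivalence, and the suggestion that Fact~\ref{fact:gh} plus divisibility already yields the full Fact, should be deleted.
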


\noindent Fact~\ref{fact:mann-vdd} is a special case of a result of van den Dries and G\"{u}naydin~\cite[Theorem~7.2]{vddG}.

\begin{corollary}
\label{cor:example}
$\Sa Q_\lambda$ is trace equivalent to $\rfield$.
\end{corollary}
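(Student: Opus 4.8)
The plan is to deduce Corollary~\ref{cor:example} directly from Lemma~\ref{lem:} together with Fact~\ref{fact:mann-vdd}. Recall that $\Sa Q_\lambda$ was defined so that the map $q \mapsto \lambda^q$ is an isomorphism between $\Sa Q_\lambda$ and the structure induced on $\lambda^\Q$ by $\rfield$; so it suffices to prove that the structure $\Sa A$ induced on $\lambda^\Q$ by $\Sa R := \rfield$ is trace equivalent to $\rfield$.

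First I would check that the hypotheses of Lemma~\ref{lem:} are met. The structure $\Sa R = (\R;+,\times,<)$ is an o-minimal expansion of the ordered group $(\R;+,<)$. The set $A := \lambda^\Q$ is a dense subset of $\R$: if $\lambda \ne 1$ this is immediate since $\{q\log\lambda : q\in\Q\}$ is dense in $\R$ and $\exp$ is a homeomorphism, and the degenerate case $\lambda = 1$ (where $A = \{1\}$) can either be excluded by the standing hypothesis "$\lambda > 0$" being implicitly "$\lambda \ne 1$" or handled trivially. Finally, Fact~\ref{fact:mann-vdd} is exactly the statement that the structure $\Sa A$ induced on $\lambda^\Q$ by $\rfield$ admits quantifier elimination. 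Thus all four hypotheses of Lemma~\ref{lem:} hold.

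Applying Lemma~\ref{lem:} we conclude that $\Sa A$ and $\Sa R = \rfield$ are trace equivalent. Transporting along the isomorphism $q \mapsto \lambda^q$, which preserves trace equivalence since trace equivalence is an isomorphism invariant (indeed it depends only on the theory, by Proposition~\ref{prop:trace-theories}), we obtain that $\Sa Q_\lambda$ is trace equivalent to $\rfield$.

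I do not expect any genuine obstacle here: the corollary is a straightforward specialization of Lemma~\ref{lem:}, with Fact~\ref{fact:mann-vdd} supplying the one nontrivial input (quantifier elimination for the induced structure). The only point requiring a word of care is the verification that $\lambda^\Q$ is dense in $\R$, which I would state in one line. In short, the proof reads: apply Fact~\ref{fact:mann-vdd} to see that the structure induced on $\lambda^\Q$ by $\rfield$ eliminates quantifiers, apply Lemma~\ref{lem:} with $\Sa R = \rfield$ and $A = \lambda^\Q$ to get that this induced structure is trace equivalent to $\rfield$, and note that $\Sa Q_\lambda$ is isomorphic to it via $q \mapsto \lambda^q$.
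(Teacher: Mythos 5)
Your argument is exactly the paper's: the authors note that Corollary~\ref{cor:example} follows from Fact~\ref{fact:mann-vdd} (quantifier elimination for the structure induced on $\lambda^\Q$) together with Lemma~\ref{lem:}. Your added checks of density of $\lambda^\Q$ and invariance of trace equivalence under isomorphism are fine points to make explicit but do not constitute a different route.
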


\noindent
Corollary~\ref{cor:example} follows from Fact~\ref{fact:mann-vdd} and Lemma~\ref{lem:}.

\begin{proposition}
\label{prop:mann}
$\Sa Q_\lambda$ does not interpret an infinite field.
\end{proposition}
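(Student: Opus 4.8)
The plan is to show that $\Sa Q_\lambda$ does not interpret an infinite field by exploiting the fact that $\Sa Q_\lambda$ is the structure induced on $\lambda^\Q$ by $\rfield$, together with the weak o-minimality of $\Sa Q_\lambda$ (which follows from Fact~\ref{fact:wom-induced}, since $\Sa Q_\lambda$ eliminates quantifiers by Fact~\ref{fact:mann-vdd} and $\rfield$ is o-minimal). The key point is that $\lambda^\Q$ is countable and codense in $\R$, so any $\Sa Q_\lambda$-definable set is "approximated" by a semialgebraic set but never contains an interval's worth of new points.

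First I would suppose toward a contradiction that $\Sa Q_\lambda$ interprets an infinite field $\Sa F$. Since $\Sa Q_\lambda$ is weakly o-minimal, hence $\nip$ of dp-rank one (by Fact~\ref{fact:pw-weak}), and interpretation preserves $\nip$ and cannot increase dp-rank beyond a finite bound, $\Sa F$ is an infinite $\nip$ field of finite dp-rank; but I should not lean on hard classification results for $\nip$ fields — instead I would argue directly. Identify $\lambda^\Q$ with $\Q$ via $q\mapsto\lambda^q$ and work inside $\rfield$: an interpreted field $\Sa F$ has underlying set a quotient $X/{\sim}$ where $X\subseteq (\lambda^\Q)^k$ and $\sim$ are $\Sa Q_\lambda$-definable, and by Fact~\ref{fact:mann-vdd} both $X$ and $\sim$ are traces on $\lambda^\Q$ of semialgebraic sets. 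The field operations are traces of semialgebraic relations as well. Now pass to $\rfield$: the ambient semialgebraic data $\bar X\subseteq\R^k$, $\bar\sim$, and the semialgebraic graphs of the operations need not define a field on $\bar X/\bar\sim$, but they do on a definable (in $\rfield$) subset, or at least after shrinking we obtain a semialgebraic "partial field" structure whose trace on $\lambda^\Q$ is an honest infinite field.

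The heart of the argument is then a dimension/cardinality clash. The interpreted field $\Sa F$ is infinite, so $X/{\sim}$ is infinite; since $\lambda^\Q$ is countable, $X$ is a countable dense subset of its semialgebraic closure $\bar X$, and $\bar X$ must have semialgebraic dimension $\ge 1$ (a finite set would give a finite quotient). One then argues that a semialgebraic family of operations making a positive-dimensional semialgebraic set into (generically) a field forces, via the Peterzil–Starchenko trichotomy / the fact that a semialgebraic group chunk of dimension $\ge 1$ carries a real closed field or is a vector-space chunk, a semialgebraic real closed field $\Sa K$ on an interval $I\subseteq\R$ such that $\Sa F$ is interpreted (over $\rfield$) inside $\Sa K$; in particular $\Sa F$ embeds, as a field, into a finite-dimensional algebra over $\Sa K\cong(\R;+,\times)$, whence $\Sa F$ is a real closed or algebraically closed field of the form $I^n/{\approx}$ realized on the codense set $\lambda^\Q$. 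But a field structure on a subset of $\R^n$ whose domain is $\lambda^\Q\subseteq\Q$, being countable and $\Q$-linearly "thin", cannot be the trace of the full semialgebraic field: concretely, multiplication in $\rfield$ restricted to $\lambda^\Q$ lands densely but not inside $\lambda^\Q$, and the required bijectivity of $x\mapsto ax$ on the interpreted domain fails — one produces, as in the proof of Proposition~\ref{prop:mann} sketched for $\Sa B$ in Lemma~\ref{lem:map}, a definable function whose image over $\lambda^\Q$ has empty interior in $\R$, contradicting that an infinite semialgebraic field on $I^n$ has image with interior.

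The main obstacle I expect is making the middle step rigorous: passing from "$\Sa Q_\lambda$ interprets an infinite field" to "$\rfield$ interprets (on a genuine semialgebraic set) a field whose restriction to $\lambda^\Q$ is that field", because the interpreting data need only behave well on the trace, not on the full semialgebraic hull. I would handle this by a saturation argument paralleling Lemma~\ref{lem:} and Lemma~\ref{lem::}: take a sufficiently saturated $\Sa B\succ\Sa Q_\lambda$, use Fact~\ref{fact:shelah} and the cut description of $\R$ from the proof of Lemma~\ref{lem:} to realize $\R$ (hence $\rfield$) inside $\Sh B$, observe that the interpreted field $\Sa F$ is definable in $\Sh B$ and its domain is a bounded $\Sh B$-definable set of imaginaries sitting over $\lambda^\Q$, and then invoke the known non-interpretation of an infinite field in the codense dense pair — precisely, apply Corollary~\ref{cor:rama-1}-type results or argue as in Lemma~\ref{lem:map} that a definable map from a "small" (countable-trace) set to $\R^n$ cannot have image with interior, while an infinite field definable in an o-minimal structure is in definable bijection with $\R^n$ and hence its domain has interior — a contradiction. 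Once this reduction is in place the finish is routine.
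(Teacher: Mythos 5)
Your sketch and the paper's proof take completely different routes, and yours has a gap that your own discussion flags but does not resolve. The paper proves this in two sentences by citing external results: by Eleftheriou, $\Sa Q_\lambda$ eliminates imaginaries, reducing ``interpret'' to ``define''; and by Berenstein--Vassiliev, $\Sa Q_\lambda$ is weakly one-based, and weakly one-based theories do not define infinite fields. Both facts are sharp and apply directly with no transfer argument needed.

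Your plan has two genuine problems. First, the ``middle step'' you worry about is indeed fatal as stated: an interpretation in $\Sa Q_\lambda$ produces $\Sa Q_\lambda$-definable $X\subseteq(\lambda^\Q)^k$, an equivalence relation $\sim$, and operation graphs, all of which are \emph{traces} of semialgebraic data by Fact~\ref{fact:mann-vdd}; but the semialgebraic sets whose traces these are need not bear any coherent relation to each other off $\lambda^\Q$ --- the lifted $\bar\sim$ need not be an equivalence relation on $\bar X$, and the lifted operations need not be well-defined modulo $\bar\sim$, let alone satisfy field axioms ``generically''. There is no reason a semialgebraic ``partial field'' survives this passage, and ``shrinking'' has no obvious definable meaning that preserves the trace being an infinite field. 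Your proposed repair via saturation, $\Sh B$, and the cut construction of Lemma~\ref{lem:} does not close this: $\Sh B$ realizes $\rfield$ as a definable set of imaginaries, but the interpreted field $\Sa F$ lives over the small set $\lambda^\Q$ (or its saturated avatar), and you would need to show $\Sa F$ is definably isomorphic to a field actually definable in the recovered $\rfield$; this is exactly the transfer you are missing, not a consequence of it.

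Second, the ``cardinality/dimension clash'' conflates the category in which the field lives. The claim ``an infinite field definable in an o-minimal structure is in definable bijection with $\R^n$, hence its domain has interior'' applies to fields definable \emph{in $\rfield$}, whereas your $\Sa F$ is interpretable in $\Sa Q_\lambda$. Arguing as in Lemma~\ref{lem:map} requires a definable surjection from a provably ``small'' definable set onto a provably ``large'' semialgebraic set, both living in the \emph{same} structure; your $X/{\sim}$ and the hoped-for semialgebraic field do not live in the same structure, so there is no single definable map to which the image-has-no-interior argument applies. (Also, the countability of $\lambda^\Q$ does not survive passing to a saturated elementary extension, so the cardinality half of the argument evaporates exactly when you try to bring in saturation.) If you want an elementary route bypassing Berenstein--Vassiliev, you should first invoke Eleftheriou's elimination of imaginaries as the paper does, so you only need to rule out a \emph{definable} field $F\subseteq(\lambda^\Q)^n$, and then argue using Fact~\ref{fact:mann-vdd} that algebraic closure in $\Sa Q_\lambda$ coincides with $\Q$-linear span in $(\Q;+)$ (using $q\mapsto\lambda^q$), which is locally modular and so incompatible with a definable infinite field; that is essentially the content the paper delegates to weak one-basedness.
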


\noindent
It follows from Fact~\ref{fact:mann-vdd} that algebraic closure in $\Sa Q_\lambda$ agrees with algebraic closure in $(\Q;+)$, from this one can deduce that $\Sa Q_\lambda$ does not define an infinite field.
However, Berenstein and Vassiliev have essentially already proven Proposition~\ref{prop:mann}, so we just apply their results.

\begin{proof}
By Eleftheriou~\cite{E-small} $\Sa Q_\lambda
$ eliminates imaginaries.
Hence it suffices to show that $\Sa Q_\lambda$ does not define an infinite field.
It is a special case of a theorem of Berenstein and Vassiliev~\cite[Proposition~3.16]{BV-one-based} that $\Sa Q_\lambda$ is weakly one-based and a weakly one-based theory cannot define an infinite field by~\cite[Proposition~2.11]{BV-one-based}.
\end{proof}

\subsubsection{The induced structure on an independent set}
\label{section:o-minimal}
\noindent
We describe a weakly o-minimal expansion of $(\Q;<)$ that is trace equivalent to $(\R;+,\times)$ but does not interpret an infinite group.
This example shows that Corollary~\ref{cor:import} is sharp.
Given an o-minimal structure $\Sa M$ we say that $A \subseteq M$ is \textbf{independent} if it is independent with respect to algebraic closure in $\Sa M$.

\medskip
Fact~\ref{fact:independent} is due to Dolich, Miller, and Steinhorn~\cite{DMS-Indepedent}.

\begin{fact}
\label{fact:independent}
Suppose that $\Sa M$ is o-minimal and $H$ is a dense independent subset of $M$.
Any subset of $H^n$ definable in $(\Sa M, H)$ is of the form $X \cap H^n$ for some $\Sa M$-definable $X \subseteq M^n$.
\end{fact}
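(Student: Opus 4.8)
The plan is to run the standard ``genericity'' argument for an expansion of an o-minimal structure by a dense independent predicate: every $(\Sa M,H)$-formula is reduced to an $\Sa M$-formula by classifying the witnessing tuples according to which of their coordinates fall generically inside $H$. Since $(\Sa M,H)$ is $\Sa M$ expanded by a unary predicate, a routine induction on formulas shows that every subset of some $M^n$ definable in $(\Sa M,H)$ is a Boolean combination of $\Sa M$-definable sets together with sets of the form $\psi(H^n):=\{\bar h\in H^n:(\Sa M,H)\models\psi(\bar h)\}$, where
\[
\psi(\bar x)\;:=\;\exists y_1\cdots\exists y_k\Bigl(\textstyle\bigwedge_{i\le k}H(y_i)\;\wedge\;\bigwedge_{j}\neg H(y_j)\;\wedge\;\phi(\bar x,\bar y)\Bigr)
\]
with $\phi$ an $L_{\Sa M}$-formula. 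Since $H$ is codense (an independent set has empty interior, so its complement is dense), any required non-$H$ witness can always be placed generically, so it costs nothing to treat only the case with no $\neg H$-conjuncts. I would first handle parameter-free $\psi$ and then name the parameters $\bar b$ as constants, replacing $H$ by $H\setminus\operatorname{acl}(\bar b)$; this differs from $H$ by finitely many points — finiteness of $\operatorname{acl}(\bar b)\cap H$ is exactly the independence of $H$ — and remains a dense independent subset, so the parameter-free case applies.

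The core is the following genericity principle. For $\bar h\in H^n$ and a map $\sigma\colon\{1,\dots,k\}\to\{1,\dots,n\}\sqcup\{1',\dots,d'\}$ (recording which $y_i$ are forced equal to a coordinate of $\bar x$ and labelling the $d$ genuinely new ones), let $\phi_\sigma(\bar x;z_1,\dots,z_d)$ be obtained from $\phi$ by substituting $x_{\sigma(i)}$ for $y_i$ when $\sigma(i)\le n$ and $z_\ell$ when $\sigma(i)=\ell'$, and set
\[
S_\sigma(\bar h)\;=\;\{\bar z\in M^d:\phi_\sigma(\bar h,\bar z)\text{ holds, the }z_\ell\text{ are distinct, and no }z_\ell\text{ is a coordinate of }\bar h\}.
\]
Then $(\Sa M,H)\models\psi(\bar h)$ if and only if $\dim S_\sigma(\bar h)=d$ for some $\sigma$ (note $\dim S_\sigma(\bar h)\le d$ always). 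For ``$\Rightarrow$'', a witness $\bar h'\in H^k$ determines a coincidence pattern $\sigma$; its $d$ new coordinates are distinct elements of $H$, distinct from those of $\bar h$, so $(\bar h,\bar h'_{\mathrm{new}})$ is a tuple of distinct elements of the independent set $H$, whence $\dim(\bar h'_{\mathrm{new}}/\bar h)=d$ and $\dim S_\sigma(\bar h)\ge d$. For ``$\Leftarrow$'', if $\dim S_\sigma(\bar h)=d$ one builds $\bar z\in S_\sigma(\bar h)\cap H^d$ one coordinate at a time: at each step o-minimal dimension theory supplies an interval of values keeping the remaining fibre of full dimension, density of $H$ yields a point of $H$ in that interval avoiding the finitely many previous choices and the coordinates of $\bar h$, and such a point is automatically not algebraic over $\bar h$ together with the earlier choices (all lying in $H$), since otherwise an element of $H$ would be algebraic over other elements of $H$; hence $(\bar h,\bar z)$ is independent and $\bar z$ witnesses $\psi(\bar h)$ via $\sigma$. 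Finally, by o-minimal dimension theory the fibrewise-dimension condition is $\Sa M$-definable, so $X:=\{\bar a\in M^n:\dim S_\sigma(\bar a)=d\text{ for some }\sigma\}$ is $\Sa M$-definable and $\psi(H^n)=X\cap H^n$.

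The main obstacle is the careful execution of the two reduction steps rather than the genericity principle itself: one must verify the normal form and confirm that the treatment of $\neg H$-conjuncts (via codensity) and of parameters (via passing to $H\setminus\operatorname{acl}(\bar b)$) leaves intact the density and independence hypotheses that drive the coordinate-by-coordinate construction — this is cleanest after passing to a sufficiently saturated model, in which ``$\operatorname{acl}$ of a small set'' and ``$H$'' are both small. Everything else — existence of the full-dimensional fibre intervals, $\Sa M$-definability of the dimension conditions, finiteness of $\operatorname{acl}(\text{finite})\cap H$ — is immediate from o-minimal cell decomposition together with the independence of $H$, which is used in exactly two places: to keep the constructed points generic in ``$\Leftarrow$'', and to force full fibre dimension in ``$\Rightarrow$''.
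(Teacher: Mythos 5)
The paper does not prove this fact --- it cites Dolich, Miller, and Steinhorn --- so your argument must be judged on its own terms. The overall strategy (normal form plus a genericity/dimension test for existential formulas) is the right one and matches the standard approach, but two of the steps you describe as routine do not work as stated. First, a ``routine induction on formulas'' cannot produce the claimed normal form. After passing to DNF, the existential step must push $\exists y$ into a conjunction that contains \emph{negations} of your $\psi$'s, and those negations are universal; the resulting $\exists\forall$ alternation does not re-enter the class of Boolean combinations of $\Sa M$-formulas and special existentials. The normal form is true, but establishing it requires a back-and-forth (near-model-completeness) argument, whose extension step is precisely the genericity lemma you prove --- so you have the key ingredient, but ``routine induction'' papers over the wrapper that is actually missing.

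Second, and more seriously, your dimension criterion is false in the presence of parameters, and the proposed fix does not repair it. Take $\Sa M=(\R;+,\times,<)$, a parameter $b=e-\pi$, and $H$ containing $\pi$ and $e$. Neither $\pi$ nor $e$ lies in $\operatorname{acl}(b)$, so both survive pruning $H$ to $H\setminus\operatorname{acl}(b)$, yet $e=\pi+b\in\operatorname{dcl}(\pi,b)$, so the pruned set is \emph{not} independent over $b$; this shows the reduction to the parameter-free case is unsound. Correspondingly, with $\psi(x):=\exists y\,(H(y)\wedge y=x+b)$ we have $(\Sa M,H)\models\psi(\pi)$ witnessed by $e$, but the only $\sigma$ with a new coordinate gives $S_\sigma(\pi)=\{\pi+b\}$, so $\dim S_\sigma(\pi)=0\neq 1=d$ and your test wrongly declares $\psi(\pi)$ false. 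The correct analysis must classify witnesses by algebraicity over $\bar h$ together with the parameters, not merely by coincidence with coordinates of $\bar h$, and must then handle the fact that the finite ``algebraic'' $H$-part of the witness fiber varies with $\bar h$; this refinement is where the real work of the Dolich--Miller--Steinhorn proof lies.
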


\noindent
Corollary~\ref{cor:independent-1} follows from Fact~\ref{fact:independent} and Lemma~\ref{lem:}.

\begin{corollary}
\label{cor:independent-1}
Suppose $\Sa R$ is an o-minimal expansion of an oag, $H$ is a dense independent subset of $R$, and $\Sa H$ is the structure induced on $H$ by $\Sa R$.
Then $\Sa H$ is trace equivalent to $\Sa R$.
\end{corollary}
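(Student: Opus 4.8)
The plan is to reduce the statement to Lemma~\ref{lem:}, whose only non-formal hypothesis beyond what is given here is that the induced structure $\Sa H$ admit quantifier elimination: an oag is in particular an ordered group, and $H$ is dense by assumption, so once quantifier elimination for $\Sa H$ is established, an application of Lemma~\ref{lem:} with $A = H$ yields trace equivalence of $\Sa H$ and $\Sa R$ at once.

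First I would note that every $\Sa H$-definable subset of $H^n$ is definable in the expansion $(\Sa R, H)$ with parameters from $H$; this is just unwinding the definition of the induced structure, replacing each atomic formula $P_X(\bar t)$ by an $\Sa R$-formula defining $X$ and relativizing quantifiers to the unary predicate $H$. Then I would invoke Fact~\ref{fact:independent}: since $H$ is a dense independent subset of $R$, any subset of $H^n$ definable in $(\Sa R, H)$ is of the form $X \cap H^n$ for some $\Sa R$-definable $X \subseteq R^n$. Combining the two observations, every $\Sa H$-definable subset of $H^n$ is of the form $X \cap H^n$ with $X$ an $\Sa R$-definable subset of $R^n$, which by the characterization of quantifier elimination for induced structures recorded in Section~\ref{section:conventions} is precisely the assertion that $\Sa H$ admits quantifier elimination.

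There is essentially no obstacle: the mathematical content is carried entirely by Fact~\ref{fact:independent} and Lemma~\ref{lem:}. The only point requiring a little care is the parameter bookkeeping in passing from ``$\Sa H$-definable'' to ``$(\Sa R,H)$-definable'' and then applying Fact~\ref{fact:independent}, where one uses that Fact~\ref{fact:independent} is understood (as usual) for subsets of $H^n$ definable in $(\Sa R,H)$ with parameters from the ambient model, so that parameters from $H \subseteq R$ cause no difficulty. I would then conclude by citing Lemma~\ref{lem:} applied to $A = H$.
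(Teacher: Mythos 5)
Your proposal is correct and follows the same route as the paper, which simply cites Fact~\ref{fact:independent} and Lemma~\ref{lem:}; you have merely spelled out the intermediate step that Fact~\ref{fact:independent} yields quantifier elimination for the induced structure $\Sa H$, which is the hypothesis needed to invoke Lemma~\ref{lem:}.
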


\noindent
We prove Proposition~\ref{prop:no-interpret}. 

\begin{proposition}
\label{prop:no-interpret}
Suppose that $\Sa R$ is an o-minimal expansion of an oag and $H\subseteq R$ is dense independent.
The structure $\Sa H$ induced on $H$ by $\Sa R$ cannot interpret an infinite group.
\end{proposition}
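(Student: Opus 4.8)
The plan is to establish that $\Sa H$ has degenerate algebraic closure and that this, together with weak o-minimality, rules out interpreting an infinite group. By Fact~\ref{fact:independent} every $\Sa H$-definable subset of each $H^n$ is of the form $X\cap H^n$ for an $\Sa R$-definable $X\subseteq R^n$, so $\Sa H$ admits quantifier elimination and, by Fact~\ref{fact:wom-induced}, $\Th(\Sa H)$ is weakly o-minimal; in particular $\Sa H$ is geometric — its algebraic closure satisfies exchange and $\exists^\infty$ is eliminated — so the dimension function $\dim$ is available.

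The key point is that $\operatorname{acl}$ in $\Sa H$ is degenerate: if $b,a_1,\dots,a_n\in H$ and $b\in\operatorname{acl}_{\Sa H}(a_1,\dots,a_n)$, then $b\in\{a_1,\dots,a_n\}$. Indeed, a witnessing formula corresponds to an $\Sa R$-definable $X\subseteq R^{n+1}$ with $b\in X_{\bar a}$ and $X_{\bar a}\cap H$ finite; by o-minimality of $\Sa R$ the set $X_{\bar a}\subseteq R$ is a finite union of points and open intervals, and density of $H$ in $R$ precludes a nonempty open interval (it would meet $H$ infinitely), so $X_{\bar a}$ is finite and $b\in\operatorname{acl}_{\Sa R}(\bar a)$; since $H$ is independent and $b,a_1,\dots,a_n\in H$ this forces $b\in\{a_1,\dots,a_n\}$ (otherwise $\{a_1,\dots,a_n,b\}$ would be a dependent subset of $H$), the case $n=0$ being $\operatorname{acl}_{\Sa R}(\emptyset)\cap H=\emptyset$. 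Passing to $\Sa H^{\mathrm{eq}}$ does not enlarge the algebraic closure of a home-sort tuple inside the home sort, so $\operatorname{acl}_{\Sa H^{\mathrm{eq}}}(\bar c)\cap H=\{\text{entries of }\bar c\}$ for any tuple $\bar c$ from $H$.

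Suppose toward a contradiction that $\Sa H$ interprets an infinite group $G$. Granting a (weak) elimination of imaginaries for $\Sa H$ — known for the closely related structures $\Sa Q_\lambda$ by Eleftheriou's work, and to be established here by the analysis of induced structures on dense independent sets — we may assume $G$ is a definable subgroup of some $H^m$ with definable group operation, and $\dim G\ge 1$ since $G$ is infinite. Then, following the proof of Theorem~\ref{thm:trivial}, one picks in a large model mutually $\operatorname{acl}$-independent generic $\zeta_1,\dots,\zeta_k\in G$ with $k$ large relative to $m$ and sets $\beta=\zeta_1\cdots\zeta_k\in G$; now $\beta\in\operatorname{acl}_{\Sa H}(\zeta_1,\dots,\zeta_k)$, so by degeneracy of $\operatorname{acl}$ each coordinate of $\beta$ is a coordinate of one of the $\zeta_i$, and a coordinate count shows that for some $j$ the element $\zeta_j$ contributes nothing, giving $\beta\in\operatorname{acl}_{\Sa H}\bigl((\zeta_i)_{i\ne j}\bigr)$. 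Since $x\mapsto\zeta_1\cdots\zeta_{j-1}\,x\,\zeta_{j+1}\cdots\zeta_k$ is a bijection of $G$ definable over $(\zeta_i)_{i\ne j}$ carrying $\zeta_j$ to $\beta$, we get $\dim\!\bigl(\beta\mid(\zeta_i)_{i\ne j}\bigr)=\dim\!\bigl(\zeta_j\mid(\zeta_i)_{i\ne j}\bigr)=\dim G\ge 1$, contradicting $\beta\in\operatorname{acl}_{\Sa H}\bigl((\zeta_i)_{i\ne j}\bigr)$. The hard part is the first reduction — an elimination-of-imaginaries statement for $\Sa H$; if unavailable, it can be bypassed by running the coordinate count directly over the quotient with an indiscernible sequence, exactly as in the proof of Theorem~\ref{thm:trivial}, at the cost of more bookkeeping (together with a routine check that the $\dim$-calculus is legitimate for the weakly o-minimal structure $\Sa H$, which holds since $\Th(\Sa H)$ is geometric).
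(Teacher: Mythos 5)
Your overall structure matches the paper's proof: reduce interpretation to definition via elimination of imaginaries, then rule out definable infinite groups. The paper obtains both steps by citation, and what you flag as ``the hard part \ldots to be established here'' is in fact a theorem of Eleftheriou, \cite[Theorem~C]{Elef-small-sets} (a different paper from \cite{E-small}, the $\Sa Q_\lambda$ reference you mention), so there is no gap to fill. Your re-derivation of the second step --- degenerate $\operatorname{acl}$ in $\Sa H$ from independence of $H$ plus o-minimality of $\Sa R$ plus density of $H$, then exchange for free, then the generic product $\zeta_1\cdots\zeta_k$ with $k>m$ and a coordinate/pigeonhole count --- is correct and essentially reconstructs the content of \cite[Corollary~6.3]{BV-independent}; the paper simply cites that result where you re-prove it, which is a legitimate and instructive alternative. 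Do put the logical order right: degeneracy is what gives you exchange, so ``$\Sa H$ is geometric'' should come after the degeneracy argument, not be asserted up front. One further caution: your fallback of bypassing EI by running the coordinate count ``over the quotient'' as in Theorem~\ref{thm:trivial} does not go through as stated. The cell-decomposition argument in that proof concerns a group sitting inside $M^m$ via a trace definition, with no quotient in sight; and the degenerate-$\operatorname{acl}$ coordinate bookkeeping you use tracks home-sort elements of $H$ and does not transfer to imaginary elements without having something like EI already in hand.
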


\begin{proof}
A theorem of Eleftheriou~\cite[Theorem~C]{Elef-small-sets} shows that $\Sa H$ eliminates imaginaries, so it is enough to show that $\Sa H$ does not define an infinite group.
This follows by another result of Berenstein and Vassiliev~\cite[Corollary~6.3]{BV-independent}
\end{proof}

Thus if $H\subseteq \R$ is  dense algebraically independent and $\Sa H$ is the structure induced on $H$ by $\rfield$ then $\Sa H$ is trace equivalent to $\rfield$ but doesn't interpret an infinite group.

\subsection{Proof of Theorem~\ref{thm:tri}}
\label{section:dense-pairs}
Suppose that $Q$ is a divisible subgroup of $(\R;+)$ and $\Sa Q$ is a dp-rank one expansion of $(Q;+,<)$.
We first describe the o-minimal completion $\Sq Q$ of $\Sa Q$.
This completion is closely associated to the o-minimal completion of a non-valuational weakly o-minimal structure constructed by Wencel~\cite{Wencel-1}, see also Keren~\cite{Keren-thesis}.
Indeed $\Sq Q$ is interdefinable with the Wencel completion of $\Sh Q$.
Let $\Sa N$ be an $\aleph_1$-saturated elementary extension of $\Sa Q$.
Let 
\begin{align*}
V &= \{ a \in N : |a| < t, \text{ for some } t \in Q, t > 0\}\\
\mfrak &= \{ a \in N : |a| < t, \text{ for all } t \in Q, t > 0 \}.
\end{align*}
Then $V$ and $\mfrak$ are convex subgroups of $(N,+,<)$ and we may identify $V/\mfrak$ with $\R$ and the quotient map $V \to \R$ with the usual standard part.
Note that $V$ and $\mfrak$ are externally definable, so we consider $\R$ to be an $\Sh N$-definable set of imaginaries.
We define $\Sq Q$ to be the structure induced on $\R$ by $\Sh N$.

\begin{lemma}
\label{lem:sq-o-min}
$\Sq Q$ is o-minimal.
\end{lemma}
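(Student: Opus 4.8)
The plan is to show that $\Sq Q$ is o-minimal by showing that every $\Sq Q$-definable subset of $\R$ is a finite union of points and open intervals. Recall that $\Sq Q$ is the structure induced on $\R \cong V/\mfrak$ by $\Sh N$, where $\Sa N$ is an $\aleph_1$-saturated elementary extension of $\Sa Q$ and $V, \mfrak$ are the convex subgroups defined above. Since $\Sa Q$ is dp-rank one, by Fact~\ref{fact:pw-weak} it is weakly o-minimal, and hence so is $\Sa N$; thus $\Sh N$ is weakly o-minimal as well (externally definable sets in a weakly o-minimal structure remain finite unions of convex sets, using Fact~\ref{fact:wom types} and saturation). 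In particular every $\Sh N$-definable subset of $N$ is a finite union of convex sets.

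First I would take an $\Sq Q$-definable $X \subseteq \R$ and pull it back along the standard part map $\st \colon V \to \R$. By definition of the induced structure, $\st^{-1}(X) = Y \cap V$ for some $\Sh N$-definable $Y \subseteq N$. Since $\Sh N$ is weakly o-minimal, $Y$ is a finite union of convex subsets of $N$, so $Y \cap V$ is a finite union of convex subsets of $V$. The image of a convex subset of $V$ under the monotone surjection $\st$ is a convex subset of $\R$, and hence $X = \st(Y \cap V)$ is a finite union of convex subsets of $\R$. So it remains to rule out the possibility that $X$ contains a convex piece that is a half-open or closed interval of positive length, or more precisely to show that the boundary of each such convex piece is handled correctly — but actually the key point is different: a finite union of convex subsets of $\R$ is automatically a finite union of points and intervals, so $X$ is already of the right form \emph{as a subset of $\R$}. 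The real content of o-minimality is that this holds uniformly in elementary extensions, i.e. for every $\Sa Q' \equiv \Sa Q$ (equivalently every model of $\Th(\Sq Q)$), not just for $\Sq Q$ itself.

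So the main step is the uniform bound: I would argue that there is, for each formula $\varphi(x,\bar y)$ in the language of $\Sq Q$, a natural number $k$ such that $\varphi(\R, \bar b)$ is a union of at most $k$ convex sets for every parameter tuple $\bar b$. This follows by translating $\varphi$ back to a formula over $\Sh N$ and invoking the uniform finiteness statement for weakly o-minimal \emph{theories} (the characterization recalled just before Fact~\ref{fact:wom-induced}): weak o-minimality of $\Th(\Sa Q)$ gives, for each formula $\psi(x, \bar y)$ with $|x| = 1$, a uniform bound on the number of convex components of $\psi(N, \bar b)$, and this bound survives the passage to $\Sh N$ and to the induced structure on $\R$ by the argument above. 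Hence $\Th(\Sq Q)$ is weakly o-minimal; combined with the fact that $\Sq Q$ expands the \emph{dense} linear order $(\R;<)$ with no endpoints and, crucially, has no ``jumps'' — every definable convex set is an interval or point because $\st$ realizes every definable cut in $Q$ by saturation and density (this is where $\aleph_1$-saturation of $\Sa N$ and the non-valuational character coming from dp-rank one enter) — we conclude that every definable subset of $\R$, in every model, is a finite union of points and open intervals, i.e. $\Sq Q$ is o-minimal.

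The hard part will be the last point: showing that the convex components are genuinely \emph{open} intervals (or points), i.e. that $\Sq Q$ has no definable convex set with an endpoint ``infinitesimally close'' to a non-endpoint — equivalently that the completion is non-valuational in Wencel's sense. This is precisely where dp-rank one is needed rather than mere weak o-minimality: a valuational weakly o-minimal structure (e.g. one with a definable convex subgroup) has dp-rank one as a pure structure only in degenerate cases, and Fact~\ref{fact:pw-weak} packages exactly the equivalence ensuring we are in the non-valuational regime, so that the standard part map identifies definable cuts in $Q$ with elements of $\R$ without collapsing. I would cite Wencel~\cite{Wencel-1} (and Keren~\cite{Keren-thesis}) for the fact that the non-valuational completion of a non-valuational weakly o-minimal structure is o-minimal, having first checked that $\Sh Q$ is non-valuational because $\Sa Q$ has dp-rank one, and that $\Sq Q$ is interdefinable with Wencel's completion of $\Sh Q$ as asserted in the text. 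That reduces the lemma to a citation plus the verification of non-valuationality, which is the genuine obstacle.
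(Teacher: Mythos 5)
Your first two paragraphs reproduce the paper's proof exactly: $\Th(\Sa Q)$ is weakly o-minimal by Fact~\ref{fact:pw-weak}, hence so are $\Sa N$ and (via Fact~\ref{fact:shelah}) $\Sh N$; a monotone surjection sends convex sets to convex sets, so $\Sq Q$ is weakly o-minimal; and a finite union of convex subsets of $\R$ is automatically a finite union of points and open intervals. At that point the proof is \emph{finished}, and you even say so in passing, but then you talk yourself out of it. The two further ``obstacles'' you raise are both red herrings. First, the ``uniformity in elementary extensions'' worry: o-minimality of a single structure implies o-minimality of its theory by the Knight–Pillay–Steinhorn theorem, so once you know that every $\Sq Q$-definable subset of $\R$ decomposes into finitely many points and open intervals, there is nothing left to verify in other models. (This is a genuine asymmetry between o-minimality and weak o-minimality, where the latter does not transfer automatically — but the conclusion here is o-minimality.) Second, the ``jumps''/non-valuationality worry: a convex subset of $\R$ is always an interval, because $\R$ is order-complete; there is no possibility of a convex set whose endpoint sits in a gap. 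The role of $\aleph_1$-saturation was already discharged in the paragraph setting up $\Sq Q$, in showing that $\st \colon V \to V/\mfrak$ can be identified with the standard part map onto $\R$; it plays no further role once you are working with genuine reals.

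So the gap is not a missing step but a misdiagnosis: you correctly proved the lemma and then decided the proof was incomplete, winding up with a citation to Wencel's construction that the paper deliberately avoids here. The non-valuationality discussion is the right intuition for \emph{why} $\Sq Q$ lands on an o-minimal structure rather than a weakly o-minimal one, but once the construction has been carried out over $\R$ it is no longer a hypothesis that needs checking — it is built into the completeness of the target ordered set.
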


\begin{proof}
By Fact~\ref{fact:pw-weak} $\Th(\Sa Q)$ is weakly o-minimal, so $\Sa N$ is weakly o-minimal.
By Fact~\ref{fact:shelah} $\Sh N$ is weakly o-minimal.
The quotient map $O \to \R$ is monotone, and an image of a convex set under a monotone map is convex, so $\Sq Q$ is weakly o-minimal.
Note that every convex subset of $\R$ is an interval, so $\Sq Q$ is o-minimal.
\end{proof}

\noindent
Fact~\ref{fact:completion-induced} is a special case of the results of \cite{big-nip}.

\begin{fact}
\label{fact:completion-induced}
The structure induced on $Q$ by $\Sq Q$ admits quantifier elimination and is interdefinable with $\Sh Q$.
\end{fact}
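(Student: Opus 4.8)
The statement to prove is Fact~\ref{fact:completion-induced}: that the structure induced on $Q$ by $\Sq Q$ admits quantifier elimination and is interdefinable with $\Sh Q$. The plan is to unwind the definitions and identify $\Sq Q$ concretely. Recall $\Sa N$ is an $\aleph_1$-saturated elementary extension of $\Sa Q$, with convex subgroups $V \supseteq \mfrak$ of $(N;+,<)$, where $V/\mfrak$ is identified with $\R$ via the standard part map $\bst \colon V \to \R$. The key initial observation is that $Q$ sits inside $V$ (every element of $Q$ has bounded absolute value relative to $Q$) and the composite $Q \hookrightarrow V \xrightarrow{\bst} \R$ is just the inclusion $Q \hookrightarrow \R$, since elements of $Q$ are already standard. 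So $Q$ is literally a subset of the imaginary sort $\R$ on which $\Sq Q$ is defined. Thus "the structure induced on $Q$ by $\Sq Q$" makes sense directly.

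First I would show that the structure induced on $Q$ by $\Sq Q$ coincides with the structure induced on $Q$ by $\Sh N$. This is because $\Sq Q$ is by definition the structure induced on $\R$ by $\Sh N$, and inducing twice (first onto $\R$, then onto $Q \subseteq \R$) is the same as inducing once onto $Q$: for any $\Sh N$-definable $X \subseteq N^{mn}$, the trace $X \cap \R^{\text{(appropriate power)}}$ is a basic relation of $\Sq Q$, and its further trace on $Q$ is $X$ intersected with the relevant power of $Q$. Conversely every $\Sq Q$-definable set is a trace of an $\Sh N$-definable set (since $\R$ carries the full induced structure). Next, the structure induced on $Q$ by $\Sh N$ equals the structure induced on $Q$ by $\Sa N$ enlarged by traces of externally-$\Sa N$-definable sets; but since $Q \subseteq Q \subseteq N$ with $|Q|$ small relative to the saturation of $\Sa N$, Lemma~\ref{lem:lambda} shows every externally definable subset of a power of $Q$ already agrees on $Q$ with an $\Sa N$-definable set. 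Hence the structure induced on $Q$ by $\Sh N$ equals the structure induced on $Q$ by $\Sa N$, which in turn equals the structure induced on $Q$ by $\Sa Q$ (as $\Sa Q \prec \Sa N$ and definable sets pull back). That induced structure is, by definition of the Shelah completion and another application of Lemma~\ref{lem:lambda}, interdefinable with $\Sh Q$: indeed $\Sh Q$ is the structure induced on $Q$ by a sufficiently saturated elementary extension of $\Sa Q$, and $\Sa N$ is such an extension.

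For quantifier elimination: by Fact~\ref{fact:pw-weak}, $\Th(\Sa Q)$ is weakly o-minimal, hence $\Sa N$ is weakly o-minimal, hence by Fact~\ref{fact:shelah} the structure $\Sh N$ admits quantifier elimination, and $\Sq Q$ (as the structure induced on the imaginary copy of $\R$) is o-minimal by Lemma~\ref{lem:sq-o-min}. Now $Q$ is a dense subset of $\R = $ the underlying set of $\Sq Q$, and $\Sq Q$ is an o-minimal structure. To conclude that the structure induced on $Q$ by $\Sq Q$ eliminates quantifiers, I would invoke the characterization recorded in the paragraph before Fact~\ref{fact:wom-induced}, together with the explicit identification above: since the induced structure on $Q$ is interdefinable with $\Sh Q$, and $\Sh Q$ admits quantifier elimination by Fact~\ref{fact:shelah} (using that $\Sa Q$ is $\nip$, being dp-rank one), quantifier elimination for the induced structure follows. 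Alternatively, one can argue directly: the induced structure on $Q$ from the o-minimal $\Sq Q$ eliminates quantifiers precisely when every $\Sa Q^{\mathrm{ind}}$-definable subset of $Q^n$ is a trace of a $\Sq Q$-definable set, and this is exactly what the chain of equalities above delivers, because $\Sh Q$-definable sets are by Fact~\ref{fact:shelah} traces of $\Sa N$-definable sets, which are in particular traces of $\Sq Q$-definable sets.

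The main obstacle is bookkeeping the three successive "induced structure" operations and making sure the cardinality/saturation hypotheses line up at each application of Lemma~\ref{lem:lambda} — specifically, that $\Sa N$ is saturated enough relative to $|Q|$ to realize and reflect all the relevant externally definable sets, and that one does not accidentally lose the imaginary sort $\R$ when passing down to $Q$. A secondary subtlety is confirming that the identification of $Q$ as a subset of $V/\mfrak = \R$ really is the naive inclusion (that no rescaling or translation sneaks in), which follows from $\bst$ restricting to the identity on $Q$; this should be spelled out carefully but is not deep. Once those identifications are pinned down, the conclusion is a formal consequence of Fact~\ref{fact:shelah}, Fact~\ref{fact:pw-weak}, and Lemma~\ref{lem:lambda}.
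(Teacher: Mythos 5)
Your overall strategy — unwind the definitions, observe that $\st\upharpoonright_Q$ is the identity so $Q$ sits inside $\R=V/\mfrak$, and then identify the successive induced structures — is the right way to start, but there are several genuine gaps, and the paper itself does not prove this fact: it cites \cite{big-nip} (the statement is essentially a property of the Wencel completion, and the paper explicitly notes $\Sq Q$ is interdefinable with the Wencel completion of $\Sh Q$). So you are attempting a proof the paper does not attempt, and the missing steps are exactly the ones carried by the cited reference.

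The first and most serious gap is the step ``the structure induced on $Q$ by $\Sq Q$ coincides with the structure induced on $Q$ by $\Sh N$.'' Since $\R$ is an imaginary sort ($V/\mfrak$), not a subset of $N$, the expression ``$X\cap\R^n$'' in your argument is not a set-theoretic intersection, and the inclusion in the direction you need is false at the naive level: a trace $X\cap Q^n$ of an $\Sh N$-definable $X\subseteq N^n$ need not equal $Y\cap Q^n$ for any $\Sh N^{\mathrm{eq}}$-definable $Y\subseteq\R^n$, because the obvious candidates $\st(X\cap V^n)$ and $\R^n\setminus\st(V^n\setminus X)$ can disagree with $X$ on $Q^n$ at the boundaries of $\mfrak$-classes. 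Showing the two structures on $Q$ are nevertheless interdefinable requires some form of weakly o-minimal cell decomposition to control these discrepancies (I believe it works — for $n=1$ the disagreement is a finite set — but it is the actual mathematical content of the claim and cannot be waved away as ``inducing twice equals inducing once.'')

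The second gap is the line ``the structure induced on $Q$ by $\Sa N$ ... equals the structure induced on $Q$ by $\Sa Q$ (as $\Sa Q\prec\Sa N$ and definable sets pull back).'' This is false: $\Sa N$-definable sets with parameters outside $Q$ produce new traces on $Q$, which is precisely what the Shelah completion adds. Your very next sentence contradicts this by asserting the induced structure is $\Sh Q$; you cannot have it equal both $\Sa Q$ and $\Sh Q$. The correct statement is simply that the structure induced on $Q$ by a sufficiently saturated elementary extension of $\Sa Q$ is $\Sh Q$ — the detour through ``equals induced by $\Sa Q$'' should be deleted.

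Third, your appeal to Lemma~\ref{lem:lambda} with ``$|Q|$ small relative to the saturation of $\Sa N$'' requires $|Q|<\aleph_1$, i.e.\ $Q$ countable, but $\Sa N$ is fixed in the paper to be only $\aleph_1$-saturated and $Q$ is an arbitrary divisible subgroup of $(\R;+)$, which may have cardinality up to $2^{\aleph_0}$. You would either need to replace $\Sa N$ by a $|Q|^+$-saturated extension (and verify $\Sq Q$ is unchanged), or avoid the lemma. Finally, the first version of your quantifier-elimination argument (``interdefinable with $\Sh Q$, and $\Sh Q$ has QE'') does not work — interdefinability does not transfer QE — and the alternative version you give reduces back to the unproven gap above (traces of $\Sa N$-definable sets being traces of $\Sq Q$-definable sets).
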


\noindent
Corollary~\ref{cor:completion-induced} follows from Fact~\ref{fact:completion-induced}, Proposition~\ref{prop:she-0}, and Lemma~\ref{lem:}.

\begin{corollary}
\label{cor:completion-induced}
$\Sa Q$ and $\Sq Q$ are trace equivalent.
\end{corollary}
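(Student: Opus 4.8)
The plan is to chain together the three cited results. Writing $\Sa A$ for the structure induced on $Q$ by $\Sq Q$, I will show that $\Sa Q$ is trace equivalent to $\Sh Q$, that $\Sh Q$ is trace equivalent to $\Sa A$, and that $\Sa A$ is trace equivalent to $\Sq Q$; since trace equivalence is transitive (Proposition~\ref{prop:trace-basic}), this gives the corollary.

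First I would collect the ingredients. The structure $\Sa Q$ has dp-rank one, hence is $\nip$ by Fact~\ref{fact:dp-rank}, so Proposition~\ref{prop:she-0} gives that $\Sa Q$ is trace equivalent to $\Sh Q$. For the last equivalence, Lemma~\ref{lem:sq-o-min} tells us that $\Sq Q$ is o-minimal, and in fact it is an o-minimal expansion of the ordered group $(\R;+,<)$: the sets $V$ and $\mfrak$ are convex subgroups of $(N;+,<)$, so the standard-part map realizes $(\R;+,<)$ as an $\Sh N$-definable, and hence $\Sq Q$-definable, ordered group. Moreover $Q$, being a nontrivial divisible subgroup of $(\R;+)$, is dense in $\R$, and $\Sa A$ admits quantifier elimination by Fact~\ref{fact:completion-induced}. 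Therefore Lemma~\ref{lem:} applies with $\Sa R = \Sq Q$ and $A = Q$, and yields that $\Sa A$ and $\Sq Q$ are trace equivalent.

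Finally, for the middle equivalence, Fact~\ref{fact:completion-induced} also states that $\Sa A$ is interdefinable with $\Sh Q$; interdefinable structures are trace equivalent, since each is a reduct, hence trivially an interpretation, of the other, so each trace defines the other by Proposition~\ref{prop:trace-interpret}. Chaining the three equivalences finishes the proof. There is no genuinely hard step here; the only points deserving a word of justification are the verification that $\Sq Q$ is an o-minimal expansion of an ordered group — so that the group version of the induced-structure lemma, Lemma~\ref{lem:}, and not merely Lemma~\ref{lem::}, is available without restricting to $Q = \Q$ — and the density of $Q$ in $\R$, both of which are immediate from the construction of $\Sq Q$ and the divisibility of $Q$.
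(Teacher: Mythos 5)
Your proof is correct and follows exactly the same route as the paper, which cites Fact~\ref{fact:completion-induced}, Proposition~\ref{prop:she-0}, and Lemma~\ref{lem:} without spelling out the chain; you have simply made explicit the intermediate step through $\Sh Q$ and verified the hypotheses of Lemma~\ref{lem:} (o-minimality of $\Sq Q$ as an expansion of $(\R;+,<)$, density of $Q$ in $\R$, and quantifier elimination for the induced structure).
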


\noindent
Laskowski and Steinhorn~\cite{LasStein} show that if $\Sa Q$ is o-minimal then $\Sa Q$ is an elementary submodel of a unique o-minimal expansion of $(\R;+,<)$.
In fact, if $\Sa Q$ is o-minimal then $\Sq Q$ is interdefinable with this elementary extension, see \cite{big-nip}.
If $\Sa Q$ is not o-minimal then Corollary~\ref{cor:rama-2} shows that $\Sa Q$ is not interpretable in an o-minimal expansion of an ordered group, so in particular $\Th(\Sa Q)$ is not interpretable in $\Th(\Sq Q)$.

\medskip
We now prove Theorem~\ref{thm:tri}.
We first restate the theorem in a slightly different form.
A \textbf{semilinear} set is an $\rvec$-definable set.

\begin{theorem}
\label{thm:1.3}
Suppose that $Q$ is a divisible subgroup of $(\R;+)$ and $\Sa Q$ is a dp-rank one expansion of $(Q;+,<)$.
Then the following are equivalent:
\begin{enumerate}
\item $\Th(\Sa Q)$ does not trace define $\rcf$.
\item $\Th(\Sa Q)$ does not trace define an infinite field.
\item $\Sa Q$ has near linear Zarankiewicz bounds.
\item $\Sa Q$ is trace equivalent to $(\R;+,<,(t\mapsto\lambda t)_{\lambda\in F})$ for a subfield $F\subseteq\R$.
\item any $\Sa Q$-definable $X \subseteq Q^n$ is of the form $Y \cap Q^n$ for semilinear $Y \subseteq \R^n$.
\item $\Sq Q$ is a reduct of $\rvec$.
\end{enumerate}
\end{theorem}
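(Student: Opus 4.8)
The plan is to prove the chain of equivalences by combining the o-minimal completion machinery already developed (Lemma~\ref{lem:sq-o-min}, Fact~\ref{fact:completion-induced}, Corollary~\ref{cor:completion-induced}) with the rigidity results for o-minimal expansions of ordered groups (Proposition~\ref{prop:o-min-rigid}) and the transfer principle linking $\Sa Q$-definable sets to $\Sq Q$-definable sets. The overall strategy is to route everything through $\Sq Q$: since $\Sa Q$ and $\Sq Q$ are trace equivalent (Corollary~\ref{cor:completion-induced}), the trace-theoretic statements (1), (2), (3), (4) are equivalent for $\Sa Q$ iff they are for $\Sq Q$, and for the o-minimal structure $\Sq Q$ these are exactly the equivalences already proven in Proposition~\ref{prop:o-min-rigid} (noting that $\Sq Q$ expands the ordered group $(\R;+,<)$, so the parenthetical refinements about subfields of $\R$ apply). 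Statement (5) is the intrinsic bridge: it says $\Sq Q$-definable sets restricted to $Q$ are precisely governed by $\rvec$, which by Fact~\ref{fact:completion-induced} amounts to saying $\Sq Q$ is semilinear, i.e. statement (6). So the real content is to show (6) $\Leftrightarrow$ (1)--(4), and (5) $\Leftrightarrow$ (6).

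First I would establish (5) $\Leftrightarrow$ (6). For the easy direction, if $\Sq Q$ is a reduct of $\rvec$ then any $\Sq Q$-definable $Y \subseteq \R^n$ is semilinear, and by Fact~\ref{fact:completion-induced} every $\Sa Q$-definable $X \subseteq Q^n$ (equivalently $\Sh Q$-definable) is of the form $Y \cap Q^n$ for such a $Y$, giving (5). Conversely, if (5) holds, then the structure induced on $Q$ by $\Sq Q$ is a reduct of the structure induced on $Q$ by $\rvec$; since $Q$ is dense in $\R$ and $\rvec$, $\Sq Q$ are both o-minimal expansions of $(\R;+,<)$, a definability-transfer argument (the structure induced on a dense subset determines the o-minimal structure up to interdefinability, using the Laskowski--Steinhorn-type uniqueness alluded to after Fact~\ref{fact:completion-induced}) forces $\Sq Q$ to be a reduct of $\rvec$, which is (6).

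Next I would close the loop between (6) and (4). By Proposition~\ref{prop:o-min-rigid} applied to $\Sq Q$ (an o-minimal expansion of the oag $(\R;+,<)$), the conditions ``$\Sq Q$ does not trace define $\rcf$'', ``$\Sq Q$ does not trace define an infinite field'', ``$\Sq Q$ has near linear Zarankiewicz bounds'', ``$\Sq Q$ is trace equivalent to an ordered vector space'' (over a subfield of $\R$, in the $(\R;+,<)$ case), and ``$\Sq Q$ is a reduct of some ordered $\D$-vector space $\V$ expanding $(\R;+,<)$, with $\V = \rvec$ in the $(\R;+,<)$ case'' are all equivalent. The last of these is exactly (6), and the others translate to (1), (2), (3), (4) for $\Sa Q$ via trace equivalence of $\Sa Q$ and $\Sq Q$ (using Lemma~\ref{lem:blank-2} for the Zarankiewicz transfer, Proposition~\ref{prop:trace-basic} for the field-trace-definition transfers, and Lemma~\ref{lem:disjoint union 1}-style transitivity for (4)). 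This completes the cycle.

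The main obstacle I expect is the $\Leftrightarrow$ between (5) and (6) — specifically the direction (5) $\Rightarrow$ (6), which requires knowing that an o-minimal expansion of $(\R;+,<)$ is determined by the structure it induces on a fixed dense subset. This is morally the content of the Laskowski--Steinhorn uniqueness statement plus the results of \cite{big-nip} cited around Fact~\ref{fact:completion-induced}, but one must be careful that (5) is a statement about the induced structure being a \emph{reduct} of the $\rvec$-induced structure rather than equal to it, and that this reduct relation genuinely lifts to the ambient o-minimal structures. I would handle this by noting that semilinearity of all $\Sq Q$-definable subsets of $Q^n$ plus o-minimal cell decomposition over $\Sq Q$ forces every $\Sq Q$-definable function germ to be piecewise linear with slopes in the relevant subfield, and then invoking that such functions are already $\rvec$-definable; the density of $Q$ ensures no $\Sq Q$-definable set can ``escape'' being semilinear. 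Everything else is an assembly of results already in the paper.
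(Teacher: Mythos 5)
Your overall route is the same as the paper's: pass to the o-minimal completion $\Sq Q$ and invoke the rigidity result for o-minimal expansions of ordered groups. The paper packages this into Corollary~\ref{cor:import}, whose proof is exactly ``Proposition~\ref{prop:o-min-rigid} plus Lemma~\ref{lem:}''; the paper's proof of the theorem is then the one-liner ``replace $\Sa Q$ by $\Sh Q$, apply Corollary~\ref{cor:import} and Fact~\ref{fact:completion-induced}.'' You instead reassemble this from Corollary~\ref{cor:completion-induced} and Proposition~\ref{prop:o-min-rigid} directly, which is a legitimate unpacking of the same argument.

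However, your treatment of (5)~$\Leftrightarrow$~(6) has a genuine gap. You write that if (5) holds then ``the structure induced on $Q$ by $\Sq Q$ is a reduct of the structure induced on $Q$ by $\rvec$.'' But the structure induced on $Q$ by $\Sq Q$ is $\Sh Q$ (Fact~\ref{fact:completion-induced}), not $\Sa Q$, and (5) only concerns $\Sa Q$-definable sets. In general $\Sh Q$ is strictly richer than $\Sa Q$, so (5) does \emph{not} say anything a priori about $\Sh Q$-definable sets being semilinear traces. The same confusion appears earlier when you parenthetically write ``every $\Sa Q$-definable $X\subseteq Q^n$ (equivalently $\Sh Q$-definable)'' --- that equivalence is false. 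Your plan to argue (5)~$\Rightarrow$~(6) via piecewise-linearity of $\Sq Q$-definable function germs and a density argument therefore starts from a premise you have not actually established.

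The gap is fixable, and the fix is to avoid proving (5)~$\Rightarrow$~(6) directly at all. Instead close the cycle: (5) says $\Sa Q$ embeds (via $Q\hookrightarrow\R$) into $\rvec$ with all $\Sa Q$-definable sets cut out by semilinear sets, so Proposition~\ref{prop:qe-trace} gives that $\rvec$ trace defines $\Sa Q$; by Fact~\ref{fact:mct} and Theorem~\ref{thm:vector-field}, $\Sa Q$ has near linear Zarankiewicz bounds and cannot trace define an infinite field, i.e.\ (5)~$\Rightarrow$~(3) and (5)~$\Rightarrow$~(2). Then (2)/(3) feed through Corollary~\ref{cor:completion-induced} and Proposition~\ref{prop:o-min-rigid} applied to $\Sq Q$ to give (6), and (6)~$\Rightarrow$~(5) is the easy direction you already give correctly (any $\Sa Q$-definable set is $\Sh Q$-definable, hence by Fact~\ref{fact:completion-induced} a trace of a $\Sq Q$-definable set, hence by (6) a semilinear trace). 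With that re-routing, your proposal is a correct reconstruction of the paper's argument.
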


\begin{proof}
Note that we may replace $\Sa Q$ with $\Sh Q$.
Apply Corollary~\ref{cor:import} and Fact~\ref{fact:completion-induced}.
% $(6)$ implies $(5)$ by Corollary~\ref{cor:completion-induced}.
% $(6)$ implies $(4)$ by Fact~\ref{fact:completion-induced}.
% Fact~\ref{fact:mct} and Lemma~\ref{lem:blank-2} show that $(5)$ implies $(3)$.
% Fact~\ref{fact:mct} shows that $(4)$ implies $(3)$.
% Proposition~\ref{prop:field-blank} shows that $(3)$ implies $(2)$ and $(2)$ clearly implies $(1)$.
% Suppose $(1)$.
% By Corollary~\ref{cor:completion-induced} $\Sq Q$ cannot trace define $\rfield$, so Lemma~\ref{lem:sq-o-min} and Fact~\ref{fact:ps} together give $(6)$.
\end{proof}

Let $\qvec$ be the structure induced on $\Q$ by $\rvec$. 

\begin{corollary}
\label{cor:on-Q}
Suppose that $\Sa Q$ is a dp-rank one expansion of $(\Q;+,<)$.
Then $\mathrm{Th}(\Sa Q)$ does not trace define $\rcf$ if and only if $\Sa Q$ is a reduct of $\qvec$.
\end{corollary}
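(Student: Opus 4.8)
The plan is to deduce Corollary~\ref{cor:on-Q} from Theorem~\ref{thm:1.3} (equivalently Theorem~\ref{thm:tri}) together with the special features of $Q = \Q$. The only content beyond the theorem is that, when $Q = \Q$, the abstract conclusions ``$\Sa Q$ is trace equivalent to an ordered vector space'' and ``$\Sq Q$ is a reduct of $\rvec$'' can be upgraded to the concrete statement ``$\Sa Q$ is a reduct of $\qvec$'', the structure induced on $\Q$ by $\rvec$. So the real work is a bookkeeping argument connecting reducts of $\rvec$ with the induced structure on $\Q$.

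First I would invoke Theorem~\ref{thm:1.3}: $\Th(\Sa Q)$ does not trace define $\rcf$ iff condition (6) holds, i.e. $\Sq Q$ is a reduct of $\rvec$. So it suffices to show that, for $Q=\Q$, $\Sq Q$ is a reduct of $\rvec$ if and only if $\Sa Q$ is a reduct of $\qvec$. For the ``if'' direction: if $\Sa Q$ is a reduct of $\qvec$, then by Fact~\ref{fact:completion-induced} the induced structure on $\Q$ by $\Sq Q$ is interdefinable with $\Sh Q$; but $\Sa Q$ being a reduct of $\qvec$ forces $\Sh Q$ to be a reduct of $(\qvec)^{\mathrm{Sh}}$, and since the externally definable sets of $\qvec$ come (via Fact~\ref{fact:convex} and Shelah's theorem, Fact~\ref{fact:shelah}) from semilinear sets over $\R$, one checks $\Sq Q$ is a reduct of $\rvec$ — actually the cleaner route is that $\qvec$ is itself a reduct of the structure induced on $\Q$ by $\rvec$ by definition, so if $\Sa Q$ is a reduct of $\qvec$ it trivially does not trace define $\rcf$ since $\rvec$ does not (Theorem~\ref{thm:vector-field}, an ordered vector space cannot trace define an infinite field). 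That gives the easy direction immediately without going through (6) at all.

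For the ``only if'' direction, assume $\Th(\Sa Q)$ does not trace define $\rcf$. By Theorem~\ref{thm:1.3}(5), every $\Sa Q$-definable $X \subseteq \Q^n$ is of the form $Y \cap \Q^n$ for semilinear $Y \subseteq \R^n$. The goal is to promote ``$Y$ semilinear'' to ``$X$ is $\qvec$-definable'', i.e. $X = Z \cap \Q^n$ for $Z$ definable in the ordered $\Q$-vector space structure on $\R$ — equivalently $X$ is definable using only $\Q$-scalar multiplications. The point is that a semilinear subset of $\R^n$ is a Boolean combination of half-spaces $\{x : \sum \lambda_i x_i < c\}$ with $\lambda_i \in \R$; intersecting with $\Q^n$, one can use density of $\Q$ and the fact that $\Sa Q$ expands $(\Q;+,<)$ (so all $\Q$-linear inequalities with rational coefficients are already available) to replace real coefficients by rational ones up to the trace on $\Q^n$. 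Concretely: a half-space with real coefficients, restricted to $\Q^n$, has the same trace as one with rational coefficients only in degenerate situations; in general one gets genuinely new sets (this is exactly why $\qvec$ is a proper reduct of the induced structure on $\Q$ by $\rvec$ only in a weak sense). Here I need to be careful — the claim is $\Sa Q$ is a reduct of $\qvec$, and $\qvec$ already is the induced structure on $\Q$ by $\rvec$, which includes traces of arbitrary semilinear (real-coefficient) sets. So condition (5) literally says every $\Sa Q$-definable set is $\qvec$-definable, hence $\Sa Q$ is a reduct of $\qvec$. That is the whole argument.

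So the structure of the proof is: (1) from (5) of Theorem~\ref{thm:1.3} one reads off that $\Sa Q$ is a reduct of $\qvec$, since $\qvec$ is by definition the structure induced on $\Q$ by $\rvec$ and (5) says every $\Sa Q$-definable set is a trace of a semilinear set; (2) conversely, if $\Sa Q$ is a reduct of $\qvec$, then since $\qvec$ is a reduct of the structure induced on $\Q$ by $\rvec$ and $\rvec$ (an ordered vector space) cannot trace define an infinite field by Theorem~\ref{thm:vector-field}, neither can $\Sa Q$, so $\Th(\Sa Q)$ does not trace define $\rcf$. The main (really only) obstacle is making sure the identification in step (1) is exactly Theorem~\ref{thm:1.3}(5) and not something slightly weaker — i.e. confirming that ``semilinear'' in the theorem means $\rvec$-definable (it does, per the definition given just before Theorem~\ref{thm:1.3}) and that $\qvec$ as defined immediately before Corollary~\ref{cor:on-Q} is precisely the structure induced on $\Q$ by $\rvec$ (it is). Given that, the corollary is an immediate restatement. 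I would write it as: ``Immediate from Theorem~\ref{thm:1.3}: by the equivalence of (1) and (5), $\Th(\Sa Q)$ does not trace define $\rcf$ iff every $\Sa Q$-definable $X \subseteq \Q^n$ is of the form $Y \cap \Q^n$ for semilinear $Y$, i.e. iff $\Sa Q$ is a reduct of $\qvec$.''
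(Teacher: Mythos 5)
Your forward direction is sound and is essentially the paper's argument: Theorem~\ref{thm:1.3} gives $(1)\Rightarrow(5)$, so every $\Sa Q$-definable subset of $\Q^n$ is a trace of a semilinear set, and such traces are by definition atomic relations of $\qvec$, so $\Sa Q$ is a reduct of $\qvec$. No extra input is needed there.

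The backward direction has a real gap, and it is the only place the corollary requires a new input beyond Theorem~\ref{thm:1.3}. Your ``cleaner route'' asserts that since $\qvec$ is the structure induced on $\Q$ by $\rvec$, and since $\rvec$ cannot trace define an infinite field, a reduct of $\qvec$ cannot trace define $\rcf$. This tacitly uses the claim that $\rvec$ trace defines $\qvec$. That claim is \emph{not} automatic from the definition of induced structure: the structure induced on a subset has predicates for traces, but its full definable sets are generated from those predicates using quantifiers, and these need not all be traces. An induced structure is trace definable in the ambient structure precisely when it admits quantifier elimination (Lemma~\ref{lem:she--1} and the discussion in Section~\ref{section:conventions}); otherwise it can be far worse than the ambient structure (for instance, the structure induced on $\N$ by $(\R;+,\times)$ has the graphs of $+$ and $\times$ as atomic relations, so it defines $(\N;+,\times)$ and is $\mathrm{IP}$ despite $(\R;+,\times)$ being $\nip$). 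So your argument is missing the step ``$\qvec$ admits quantifier elimination.'' Your first, more convoluted attempt at this direction has the same problem: to invoke Shelah's theorem and $\Sh{}$ of $\qvec$ you would first need to know $\qvec$ is $\nip$, and that also does not come for free from being an induced structure.

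The paper supplies exactly this missing ingredient: it cites the dense pair result from \cite{GoHi-Pairs}, which says $(\rvec,\Q)$ is $\nip$ and that every $(\rvec,\Q)$-definable subset of $\Q^n$ is the trace of a semilinear set. Since every $\qvec$-definable set is $(\rvec,\Q)$-definable, it follows that $\qvec$ admits quantifier elimination; Lemma~\ref{lem:she--1} (or Lemma~\ref{lem:}) then gives that $\rvec$ trace defines $\qvec$, and now the cascade through Theorem~\ref{thm:vector-field} and Theorem~\ref{thm:1.3} goes through. Without that citation or an equivalent quantifier elimination result for $\qvec$, the backward implication does not close.
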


\begin{proof}
By \cite{GoHi-Pairs} $(\rvec,\Q)$ is $\nip$ and all $(\rvec,\Q)$-definable sets are of the form $X \cap \Q^n$ for semilinear $X \subseteq \R^n$.
So $\qvec$ admits quantifier elimination.
Apply Theorem~\ref{thm:1.3}.
\end{proof}

% \begin{theorem}
% \label{thm:all-together}
% The following are equivalent:
% \begin{enumerate}
% \item $\Sa N$ trace defines an infinite field,
% \item $\Sa N$ trace defines $(\R,+,\times)$,
% \item $\Sa Q$ is not trace definable in $\rvec$,
% \item $\Sq Q$ is not a reduct of $\rvec$.
% \item $\Sq Q$ defines $(\R,+,\times)$.
% \end{enumerate}
% So $\mathrm{Th}(\Sa Q)$ trace defines $\mathrm{Th}(\R,+,\times)$ if and only if $\mathrm{Th}(\Sa Q)$ is not trace definable in the theory of ordered $\R$-vector spaces.
% \end{theorem}

% \begin{proof}
% The second claim follows from the first so we only prove the first.
% The equivalence of $(4)$ and $(5)$ follows from Peterzil-Starchenko (and maybe more).
% By Corollary~\ref{cor:completion-induced} $\Sa N$ trace defines $\Sq Q$, so $(5)$ implies $(2)$.
% It is clear that $(2)$ implies $(1)$.
% Theorem~\ref{thm:vector-field} shows that $(1)$ implies $(3)$.
% We show that $(3)$ implies $(4)$.
% Suppose that $\Sq Q$ is a reduct of $\rvec$.
% Then $\Sq Q$ is trace definable in $\rvec$, so by Corollary~\ref{cor:completion-induced} $\Sa Q$ is trace definable in $\rvec$.
% \end{proof}

\subsection{An extension of Theorem~\ref{thm:tri}}
We prove an extension of Theorem~\ref{thm:tri} using somewhat different tools.
In this section we suppose that $\Sa R$ expands a divisible ordered abelian group $(R;+,<)$.
As above, a \textbf{cut} in $R$ is a nonempty downwards closed bounded above $C \subseteq R$ such that either $C$ does not have a supremum or $C$ contains its supremum.
A cut $C$ in $R$ is \textbf{valuational} if there is positive $\alpha \in R$ such that $C + \alpha = C$.
We say that $\Sa R$ is \textbf{non-valuational} if $\Sa R$ satisfies the following equivalent conditions:
\begin{enumerate}
\item There are no non-trivial definable convex subgroups of $(R;+,<)$.
\item Every non-trivial definable cut in $R$ is non-valuational.
\end{enumerate}
It is easy to see that (1) and (2) are equivalent.
Note that $(R;+,<)$ is regular if and only if $(R;+,<)$ is non-valuational and $(R;+,<)$ is archimedean if and only if any expansion of $(R;+,<)$ is non-valuational.
Fact~\ref{fact:dp-rank-one} is proven in \cite{SW-dp}.

\begin{fact}
\label{fact:dp-rank-one}
If $\Sa R$ is non-valuational then $\Sa R$ is dp-rank one iff $\Sa R$ is weakly o-minimal.
\end{fact}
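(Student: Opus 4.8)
The statement to prove is Fact~\ref{fact:dp-rank-one}: for $\Sa R$ a non-valuational expansion of a divisible ordered abelian group, $\Sa R$ has dp-rank one if and only if $\Sa R$ is weakly o-minimal. Since the paper attributes this to \cite{SW-dp}, the plan is not to reprove it from scratch but to indicate the structure of the argument as it would appear in that reference.

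\textbf{The easy direction.} First I would handle the implication that weak o-minimality implies dp-rank one. This does not even need the non-valuational hypothesis: it is a general fact (already recorded as part of Fact~\ref{fact:pw-weak}) that a weakly o-minimal theory has dp-rank one. The point is that in a weakly o-minimal structure, every one-type over a model is determined by a cut (Fact~\ref{fact:wom types}), and given a putative $(\monster, R, 2)$-array $(\varphi_0(x_0,y), \varphi_1(x_1,y))$ with mutually indiscernible rows, one shows that the cut carved out by $b$ relative to the first row already pins down enough of $\tp(b)$ that the second row cannot be traversed independently; more carefully, one uses that a formula $\varphi(x,y)$ with $|y|=1$ has uniformly bounded convexity-count to contradict the existence of the array. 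So this direction reduces to the standard ``weakly o-minimal $\Rightarrow$ dp-minimal'' computation.

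\textbf{The hard direction.} The substantive content is: dp-rank one plus non-valuational implies weak o-minimality. Suppose $\Sa R$ has dp-rank one but is not weakly o-minimal. Then there is a formula $\varphi(x,y)$, $|x|=1$, and parameters $b$ in some elementary extension such that $\varphi(R,b)$ is not a finite union of convex sets; passing to a sufficiently saturated model, one extracts an infinite ``alternating'' configuration — points $a_1 < a_2 < a_3 < \cdots$ with $\varphi(a_i,b)$ holding iff $i$ is even, say, and moreover this can be arranged along an indiscernible sequence. The plan is to build a second, independent ``pattern'' using the group structure: because $(R;+,<)$ is divisible and, crucially, non-valuational (no definable convex subgroups), one can find a definable family of ``small'' translations that let one encode a second independent coordinate's worth of data inside the same sort, producing a $(\monster, R, 2)$-array and hence $\dprk \Sa R \ge 2$, a contradiction. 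The role of non-valuationality is exactly to rule out the phenomenon where the failure of weak o-minimality is ``absorbed'' into a definable convex subgroup (as happens, e.g., in a valued field or an ordered abelian group of regular rank $\ge 2$), in which case dp-rank one need not force weak o-minimality.

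\textbf{Main obstacle.} The delicate step is the construction of the second independent row of the array from a single non-convex definable set. One must convert a one-dimensional ``oscillation'' witnessing non-weak-o-minimality into genuinely two-dimensional independent data, and the only tool available is the divisible ordered group structure together with non-valuationality; getting the mutual indiscernibility and the ``for every $f\colon 2 \to \upomega$ there is a realizing $b$'' condition simultaneously is where the care lies. I would organize this via: (i) a normal-form/cell-type analysis of the non-convex set using Fact~\ref{fact:wom types}-style cut descriptions in the relevant reduct; (ii) an explicit encoding map $\uptau$ exploiting divisibility to place two interleaved oscillating sequences at incomparable scales, which is possible precisely because non-valuationality means there is no coarsest nonzero scale; (iii) verifying the array axioms by a compactness-and-saturation argument. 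As this is cited from \cite{SW-dp}, in the paper itself it suffices to record the statement and point to that reference, but the above is the shape a self-contained proof would take.
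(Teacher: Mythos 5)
You correctly observe that the paper does not prove this Fact but cites it from \cite{SW-dp}; that much is exactly right, and so there is no ``paper's own proof'' to match your sketch against. The question is whether your sketch is a sound account of how the argument would go, and I think there is a genuine gap in the substantive direction.

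For the easy direction, the statement you actually justify is ``weakly o-minimal \emph{theory} implies dp-rank one,'' but Fact~\ref{fact:dp-rank-one} asserts the equivalence with ``$\Sa R$ is weakly o-minimal,'' a condition on the structure. Fact~\ref{fact:pw-weak} lists these as separate equivalent conditions (items (2) and (3)) precisely because the passage from the structure being weakly o-minimal to its theory being weakly o-minimal is not automatic; it is part of what has to be proved, and it is not clear that it holds without the standing hypotheses of this section. Your claim that this direction ``does not even need the non-valuational hypothesis'' therefore overshoots.

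The hard direction is where the real problem sits. Recall that the section's standing assumption is that $\Sa R$ expands a \emph{divisible} ordered abelian group, and the paper explicitly remarks that Fact~\ref{fact:dp-rank-one} fails without divisibility: $(\Z;+,<)$ is dp-rank one, non-valuational, and not weakly o-minimal (witness $2\Z$). Your sketch---extract an infinite alternating pattern from a non-convex definable set, then use ``small translations'' to manufacture a second independent row and hence an ict pattern---has to be engineered so that it does \emph{not} apply to Presburger arithmetic, where a genuinely infinite alternating pattern exists but no $2$-array can be produced. You mention divisibility in passing, but the sketch never isolates where it is actually load-bearing, and as written the recipe ``oscillation $+$ translations $\Rightarrow$ array'' would, if it worked as stated, refute dp-minimality of $(\Z;+,<)$. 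I would also flag that non-valuationality is arguably mis-located in your outline: to my understanding the decomposition-theoretic argument in the cited source establishes, from dp-rank one and divisibility alone, that a definable subset of $R$ is a union of an open set and a set whose ``discrete'' pathology is confined to finitely many cosets of a definable convex subgroup; non-valuationality then enters at the very end, to conclude that subgroup is trivial and the pathology finite. It is not a tool for building the array, but for killing the residual discrete part after the array argument has done its work. Step (ii) of your plan, ``an explicit encoding map $\uptau$ exploiting divisibility to place two interleaved oscillating sequences at incomparable scales,'' is exactly the step that needs to be supplied, and it is asserted rather than proved; until it is written out in a way that visibly breaks for $(\Z;+,<)$, the sketch does not constitute a proof.
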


Fact~\ref{fact:dp-rank-one} fails without divisibility.
Suppose that $\Sa R$ is weakly o-minimal non-valuational.
We define the Wencel completion $\overline{\Sa R}$ of $\Sa R$.
We let $\overline{R}$ be the set of  definable cuts in $R$.
The natural addition on cuts and the inclusion ordering makes $\overline{R}$ an ordered abelian group.
We identify each $\alpha \in R$ with $(-\infty,\alpha]$ and hence consider $R$ to be an ordered subgroup of $\overline{R}$.

\medskip\noindent
The Wencel completion of $\overline{\Sa R}$ is the expansion of $(\overline{R};+,<)$ by all sets of the form $\cl(X)$ for $X$ an $\Sa R$-definable subset of $R^n$.
Fact~\ref{fact:wencel} is proven in \cite{BHP-pairs}.

\begin{fact}
\label{fact:wencel}
Suppose that $\Sa R$ is weakly o-minimal non-valuational.
Then $\overline{\Sa R}$ is o-minimal, the structure $\Sa R^*$ induced on $R$ by $\overline{\Sa R}$ eliminates quantifiers, and $\Sa R$ is interdefinable with $\Sa R^*$.
\end{fact}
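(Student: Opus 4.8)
The plan is to reduce everything to o-minimal cell decomposition in the ambient structure $\overline{\Sa R}$ after establishing that $\overline{\Sa R}$ is o-minimal. First I would show that $\overline{\Sa R}$ is o-minimal. Since $\Sa R$ is weakly o-minimal, every definable subset of $R$ is a finite union of convex sets, hence determined (up to finitely many endpoint ambiguities) by finitely many definable cuts in $R$, i.e.\ by finitely many elements of $\overline R$. A definable subset of $\overline R$ is generated by closures of $\Sa R$-definable subsets of $R^n$, so I would argue that a definable $Y\subseteq\overline R$ is, after intersecting with the relevant data, a boolean combination of sets of the form $\cl(X)$ with $X\subseteq R$ definable; each such $\cl(X)$ is a finite union of closed intervals and points in $\overline R$ because $X$ is a finite union of convex subsets of $R$ and the closure operation in $\overline R$ sends a convex set to an interval with endpoints in $\overline R$. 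Taking boolean combinations keeps us inside the class of finite unions of intervals and points, giving o-minimality. Non-valuationality is used here to guarantee that the completion $\overline R$ is dense-enough and that cuts behave like genuine points (no ``fat'' cuts collapsing intervals).

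Next I would prove quantifier elimination for the induced structure $\Sa R^*$ on $R$. By the general criterion recalled in Section~\ref{section:conventions} (the induced structure admits quantifier elimination iff every $\Sa R^*$-definable subset of each $R^n$ is of the form $Z\cap R^n$ for $\overline{\Sa R}$-definable $Z$), it suffices to show that $\Sa R^*$-definable sets are closed under projection without leaving this class. This is where I would invoke o-minimal cell decomposition for $\overline{\Sa R}$: given $\overline{\Sa R}$-definable $Z\subseteq\overline R^{\,n+1}$, decompose into cells; a cell's projection is again a cell, so $(\pi Z)\cap R^n$ is of the desired form, using that $R$ is dense in $\overline R$ (density of $R$ in its completion of cuts) so that the intersection with $R^n$ of a cell and of its projection behave compatibly. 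The key point is that $R^n$ meets each cell in a ``large'' (dense) set, so no information is lost on restriction; this is essentially the argument of \cite{big-nip} and \cite{BHP-pairs} which I would cite for the technical cell-wise estimates.

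Finally, interdefinability of $\Sa R$ with $\Sa R^*$: one direction is immediate since every $\Sa R$-definable set $X\subseteq R^n$ equals $\cl(X)\cap R^n$ up to boundary, and $\cl(X)$ is in $\overline{\Sa R}$ by definition, so $X$ is $\Sa R^*$-definable (using $\Sa R^*$-definability of $R$ itself and of the order to recover $X$ from $\cl(X)$). Conversely, an $\Sa R^*$-definable set is $Z\cap R^n$ for $\overline{\Sa R}$-definable $Z$, and $Z$ is built from closures of $\Sa R$-definable sets, so by weak o-minimality and Fact~\ref{fact:convex}-type reasoning one recovers $Z\cap R^n$ inside $\Sa R$; here non-valuationality is essential, as it ensures the cuts used to build $Z$ are themselves $\Sa R$-definable (condition (2) in the definition of non-valuational) and hence recoverable. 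The main obstacle I anticipate is the bookkeeping in the quantifier-elimination step: making precise the claim that a cell of $\overline{\Sa R}$ intersects $R^n$ densely and that projections commute with the operation $Z\mapsto Z\cap R^n$ up to closure, which requires the non-valuational hypothesis in an essential way and is exactly the content imported from \cite{big-nip, BHP-pairs}. I would therefore lean on those references for the delicate part rather than reprove the cell-intersection lemmas from scratch.
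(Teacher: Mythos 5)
The paper does not give a proof of this statement: it is labelled as a Fact and the paper simply asserts ``Fact~\ref{fact:wencel} is proven in \cite{BHP-pairs}.'' So there is no in-paper argument to compare against; what can be assessed is whether your sketch, taken on its own terms, would establish the result.

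The sketch has a genuine gap in the first step. To show $\overline{\Sa R}$ is o-minimal you need to control \emph{every} $\overline{\Sa R}$-definable subset of $\overline{R}$, including those obtained by quantifying over $\overline{R}^n$ and projecting. Your key reduction is the sentence asserting that a definable $Y\subseteq\overline{R}$ is ``a boolean combination of sets of the form $\cl(X)$ with $X\subseteq R$ definable.'' That is not something you get to assume: in the natural language of $\overline{\Sa R}$ (predicates for closures of $\Sa R$-definable sets) this claim \emph{is} quantifier elimination for $\overline{\Sa R}$, which is precisely the hard content of the theorem. Knowing the atomic one-variable relations are well-behaved does not by itself tell you what a projection of an $(n+1)$-ary relation looks like. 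In particular, you cannot first ``show o-minimality'' and then afterward invoke o-minimal cell decomposition to obtain quantifier elimination for the induced structure $\Sa R^*$: the two statements are established together in \cite{Wencel-1,BHP-pairs} by a single quantifier-elimination induction, not sequentially. Your step-two argument inherits this circularity, and it also glosses over a real issue you half-flag yourself: $\exists x\in\overline{R}$ and $\exists x\in R$ genuinely differ on a cell (e.g.\ the graph of a definable function $\overline{R}^n\to\overline{R}$ taking values outside $R$), so ``$R^n$ meets each cell densely'' is not by itself enough to commute projection with intersection by $R^n$; the non-valuational hypothesis enters here in a way that requires the detailed cell-wise analysis from the references.

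You do concede at the end that you would lean on \cite{big-nip,BHP-pairs} for the ``delicate part,'' which is honest and, given that the paper itself only cites the result, appropriate. But as written the proposal reads as if the delicate part is a technical lemma feeding into an otherwise self-contained argument, when in fact the cited input \emph{is} the whole theorem: the boolean-combination description of definable sets in $\overline{\Sa R}$ and the compatibility of projection with restriction to $R$ are the statements being proved, not tools available at the outset. If you want a self-contained proof you would need to run the quantifier-elimination induction directly (roughly: fix a formula, work one variable at a time, use weak o-minimality of $\Sa R$ to express the fibers as finite unions of cuts, and use non-valuationality to show each such cut is a point of $\overline{R}$ whose dependence on parameters is again definable), rather than trying to bootstrap from o-minimality.
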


Theorem~\ref{thm:wencel} follows from Fact~\ref{fact:wencel} and Lemma~\ref{lem:}.

\begin{theorem}
\label{thm:wencel}
If $\Sa R$ is weakly o-minimal non-valuational then $\Sa R$ and $\overline{\Sa R}$ are trace equivalent.
\end{theorem}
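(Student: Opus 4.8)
The plan is to deduce Theorem~\ref{thm:wencel} directly from Fact~\ref{fact:wencel} and Lemma~\ref{lem:}, exactly as the parenthetical remark announces. Suppose $\Sa R$ is weakly o-minimal and non-valuational. Then Fact~\ref{fact:wencel} gives us that $\overline{\Sa R}$ is o-minimal, that the structure $\Sa R^*$ induced on $R$ by $\overline{\Sa R}$ eliminates quantifiers, and that $\Sa R$ and $\Sa R^*$ are interdefinable. So trace equivalence of $\Sa R$ and $\overline{\Sa R}$ is the same as trace equivalence of $\Sa R^*$ and $\overline{\Sa R}$, and we are reduced to a statement about an induced structure.

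The key point is then to apply Lemma~\ref{lem:} with the o-minimal expansion of an ordered group being $\overline{\Sa R}$ and the dense subset being $R$. For this I need to check two hypotheses. First, $\overline{\Sa R}$ expands an ordered group: indeed $\overline{R}$ carries the natural addition of cuts and the inclusion order, making $(\overline R;+,<)$ an ordered abelian group, and $\overline{\Sa R}$ is by definition an expansion of $(\overline R;+,<)$. Second, $R$ is dense in $\overline R$: this is essentially immediate from the construction, since between any two distinct cuts $C_1 \subsetneq C_2$ one can find $\alpha \in C_2 \setminus C_1$, and then the cut $(-\infty,\alpha]$ lies strictly between $C_1$ and $C_2$ (using divisibility of $R$ to separate further if $\alpha$ happens to be an endpoint). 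The induced structure on $R$ by $\overline{\Sa R}$ is $\Sa R^*$, which eliminates quantifiers by Fact~\ref{fact:wencel}. Thus all hypotheses of Lemma~\ref{lem:} are met, and the lemma yields that $\Sa R^*$ and $\overline{\Sa R}$ are trace equivalent. Since $\Sa R$ is interdefinable with $\Sa R^*$, we conclude $\Sa R$ and $\overline{\Sa R}$ are trace equivalent.

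I do not expect any serious obstacle here; the content is all in Fact~\ref{fact:wencel} (which packages Wencel's completion machinery) and Lemma~\ref{lem:} (the general statement that an o-minimal expansion of an ordered group is trace equivalent to the induced structure on any dense subset admitting quantifier elimination). The only mild subtlety is verifying that $R$ is genuinely a \emph{dense} subset of $\overline R$ in the order-theoretic sense demanded by Lemma~\ref{lem:}; this is where divisibility of $(R;+,<)$ — which we have assumed throughout this section — is used, to ensure there are no ``jumps'' forcing $R$ to be co-sparse in its completion. Once that is in hand the proof is a two-line citation, so the write-up will simply be: ``Apply Fact~\ref{fact:wencel} to replace $\Sa R$ by the interdefinable induced structure $\Sa R^*$, then apply Lemma~\ref{lem:} with $A = R$ inside the o-minimal expansion $\overline{\Sa R}$ of the ordered group $(\overline R;+,<)$.''
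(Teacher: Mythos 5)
Your proposal is correct and takes exactly the route the paper takes: the paper's own proof of Theorem~\ref{thm:wencel} is literally the one-line citation ``follows from Fact~\ref{fact:wencel} and Lemma~\ref{lem:}.'' Your verification that $R$ is dense in $\overline R$ is fine (though note that once you know $(R;<)$ is densely ordered — which divisibility gives — density of $R$ in $\overline R$ already falls out of the cut definition, since a cut $C_1$ bounded above by a realized cut $(-\infty,\beta]$ with $C_1 \subsetneq (-\infty,\beta]$ must omit some $\gamma<\beta$, else $C_1$ would be $(-\infty,\beta)$, which is not a cut).
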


Theorem~\ref{thm:wencel tri} is now a special case of Corollary~\ref{cor:import}.

\begin{theorem}
\label{thm:wencel tri}
Suppose $\Sa R$ is dp-rank $1$ and non-valuational.
The following are equivalent:
\begin{enumerate}
\item $\Th(\Sa R)$ does not trace define $\rcf$,
\item $\Th(\Sa R)$ does not trace define an infinite field,
\item $\Sa R$ has near linear Zarankiewicz bounds,
\item $\Sa R$ is trace equivalent to an ordered vector space over an ordered division ring,
\item there is an ordered division ring $\D$ and an ordered $\D$-vector space $\V$ so that $(R;+,<)$ is an ordered subgroup of $\V$ and every $\Sa R$-definable $X \subseteq R^n$ is of the form $Y \cap R^n$ for $\V$-definable $Y \subseteq V^n$.
\end{enumerate}
\end{theorem}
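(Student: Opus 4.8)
The plan is to derive Theorem~\ref{thm:wencel tri} as essentially a direct application of the machinery already assembled, exactly as the preceding one-line ``proof'' suggests, but spelled out a little. First I would recall the setup: $\Sa R$ is dp-rank one and non-valuational, so by Fact~\ref{fact:dp-rank-one} $\Sa R$ is weakly o-minimal, and by Theorem~\ref{thm:wencel} (itself a consequence of Fact~\ref{fact:wencel} and Lemma~\ref{lem:}) $\Sa R$ is trace equivalent to its Wencel completion $\overline{\Sa R}$. The key structural point is that $\overline{\Sa R}$ is o-minimal and expands the divisible ordered abelian group $(\overline R;+,<)$, and that by Fact~\ref{fact:wencel} the structure induced on $R$ by $\overline{\Sa R}$ eliminates quantifiers and is interdefinable with $\Sa R$; moreover $R$ is dense in $\overline R$ (since $R$ contains, for each cut, elements arbitrarily close to it — this is immediate from divisibility and the identification of each $\alpha\in R$ with $(-\infty,\alpha]$). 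So the triple $(\overline{\Sa R}, R, \Sa R)$ is exactly of the shape to which Corollary~\ref{cor:import} applies, with ``$\Sa R$'' of that corollary being $\overline{\Sa R}$, ``$A$'' being $R$, and ``$\Sa A$'' being the induced structure $\Sa R^*\cong\Sa R$.

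Next I would simply transcribe the five equivalences. Corollary~\ref{cor:import} gives the equivalence of: (i) $\Th(\Sa R^*)$ does not trace define $\rcf$; (ii) $\Th(\Sa R^*)$ does not trace define an infinite field; (iii) $\Sa R^*$ has near linear Zarankiewicz bounds; (iv) $\Sa R^*$ is trace equivalent to an ordered vector space over an ordered division ring; (v) there is an ordered division ring $\D$ and an ordered $\D$-vector space $\V$ expanding $(\overline R;+,<)$ such that every $\Sa R^*$-definable $X\subseteq R^n$ is of the form $Y\cap R^n$ for $\V$-definable $Y\subseteq \overline R^{\,n}$. Since $\Sa R$ and $\Sa R^*$ are interdefinable, and since trace definibility and near linear Zarankiewicz bounds depend only on the interdefinability class, (i)--(iv) transfer verbatim to $\Sa R$. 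For (v), the only cosmetic mismatch with the stated Theorem~\ref{thm:wencel tri}(5) is that Corollary~\ref{cor:import}(5) phrases the vector space as expanding $(\overline R;+,<)$ while Theorem~\ref{thm:wencel tri}(5) phrases $(R;+,<)$ as an ordered subgroup of $\V$ with definable sets traced back from $\V$-definable ones; I would note these are the same assertion, since $(R;+,<)$ is an ordered subgroup of $(\overline R;+,<)$ and hence of $\V$, and ``$X = Y\cap R^n$ for $\V$-definable $Y\subseteq V^n$'' in Theorem~\ref{thm:wencel tri}(5) is literally Corollary~\ref{cor:import}(5) restricted along this inclusion (here $V = \overline R$). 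Thus (5) of the two statements coincide.

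The remaining point is to make sure the hypotheses of Corollary~\ref{cor:import} are genuinely met, in particular that $\overline{\Sa R}$ is \emph{o-minimal} (not merely weakly o-minimal) and expands an \emph{ordered group}: the first is part of Fact~\ref{fact:wencel}, the second holds because $(\overline R;+,<)$ is an ordered abelian group by construction of the Wencel completion. I would also remark that $\Sa R$ being dp-rank one is used only to invoke Fact~\ref{fact:dp-rank-one} to get weak o-minimality, after which everything is purely about weakly o-minimal non-valuational structures; so one could equivalently phrase the hypothesis as ``$\Sa R$ weakly o-minimal and non-valuational''. I do not expect a real obstacle here: the theorem is a packaging result, and the only thing requiring care is checking that the cosmetic difference in the phrasing of clause (5) is indeed cosmetic and that the density of $R$ in $\overline R$ — needed to apply Corollary~\ref{cor:import} — follows from divisibility of $(R;+,<)$ together with the concrete description of $\overline R$ as the set of definable cuts. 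If one wanted to be fully careful, the single genuine verification is that every nonvaluational definable cut is approximated from both sides by elements of $R$, which is where divisibility (or at least density of $R$ in itself, hence in its completion) enters; this is routine given the standing assumption that $(R;+,<)$ is divisible.
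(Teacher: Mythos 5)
Your argument is correct and takes essentially the same route as the paper: reduce to weak o-minimality via Fact~\ref{fact:dp-rank-one}, pass to the Wencel completion $\overline{\Sa R}$ via Fact~\ref{fact:wencel}/Theorem~\ref{thm:wencel}, and then invoke Corollary~\ref{cor:import} with $\overline{\Sa R}$ playing the role of the ambient o-minimal structure and $R$ the dense subset. The paper's own proof is literally the one-liner that the theorem is a special case of Corollary~\ref{cor:import}, and your elaboration (including the density check and the reconciliation of the two phrasings of clause (5)) fills in exactly the routine verifications one would want.
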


The expansion $(\overline{\Sa R}, R)$ of $\overline{\Sa R}$ by a predicate defining $R$ is studied in \cite{BHP-pairs}.
This structure is $\nip$ and the structure induced on $R$ by $(\overline{\Sa R}, R)$ is interdefinable with $\Sa R$.
However $(\overline{\Sa R}, R)$ is in general not trace definable in $\Th(\Sa R)$.
Suppose that $\Sa R$ expands an ordered field.
It is easy to see that $\overline{\Sa R}$ also expands an ordered field.
Fix $t \in \overline{R} \setminus R$.
A weakly o-minimal field is real closed~\cite{MMS-weak}, so $\Sa R$ is real closed.
Hence $\overline{\Sa R}$ is a purely transendental field extension of $\Sa R$, so $t$ is transendental over $R$.
Then the map $R^k \to \overline{R}$, $(\alpha_0,\ldots,\alpha_{k - 1}) \mapsto t^{k} +  \alpha_{k - 1}t^{k - 1} + \cdots + \alpha_1 t + \alpha_0$ is injective for each $k \ge 2$.
Hence $(\overline{\Sa R}, R)$ has infinite dp-rank, and is therefore not trace definable in a dp-rank one theory such as $\Sa R$, see Proposition~\ref{thm:dp-rank}.

\medskip
It is natural to ask what happens when we drop the divisibility assumption.

\begin{conj}
\label{conj:2}
Suppose that $\Sa R$ is a non-valuational expansion of a dp-minimal ordered abelian group.
Then there is an o-minimal expansion $\Sq R$ of an ordered abelian group such that $\Sa R$ is trace equivalent to $(R;+)\sqcup\Sq R$.
If $\Sa R$ is a dp-minimal expansion of an archimedean ordered abelian group then we may take $\Sq R$ to be an expansion of $(\R;+,<)$.
\end{conj}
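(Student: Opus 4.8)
The plan is to follow the proof of Theorem~\ref{thm:tri} (equivalently Theorem~\ref{thm:wencel tri}), the one genuinely new ingredient being a replacement for the divisibility-dependent input Fact~\ref{fact:dp-rank-one}. We read the hypotheses as: $\Sa R$ is dp-minimal, and ``non-valuational'' is taken in the sense of condition~(1) defining that notion above (no non-trivial $\Sa R$-definable convex subgroup of $(R;+,<)$), which makes sense for arbitrary, not just divisible, $R$. Since $\Sa R$ is dp-minimal its reduct $(R;+,<)$ is dp-minimal, and non-valuationality makes $(R;+,<)$ regular, hence elementarily equivalent to an archimedean ordered abelian group by Robinson--Zakon \cite{robinson-zakon} (Section~\ref{section:l order}). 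By Proposition~\ref{prop:she-0} we may replace $\Sa R$ by $\Sh R$ and so assume every externally definable set is definable. If $(R;+,<)$ is discrete it is a model of Presburger arithmetic, and a dp-minimal --- so strongly dependent --- expansion of a model of Presburger arithmetic is interdefinable with it by the Dolich--Goodrick theorem \cite{DG} (and its routine generalization from $(\Z;+,<)$ to arbitrary Presburger models); then $\Sa R\sim(R;+)\sqcup(\R;+,<)$ by Theorem~\ref{thm:regular rank}, and we take $\Sq R=(\R;+,<)$. So assume $(R;+,<)$ is dense; being regular, $nR$ is dense in $R$ for every $n$, and $R$ embeds densely into its ordered-group completion $R^\sharp$, which is then divisible (and is $\R$ when $(R;+,<)$ is archimedean).

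Construct $\Sq R$ exactly as the o-minimal completion in the proof of Theorem~\ref{thm:tri}, or as the Wencel completion $\overline{\Sa R}$ of Fact~\ref{fact:wencel}: pass to a saturated $\Sa N\succ\Sa R$, let $V$ be the convex hull of $R$ in $N$ and $\mfrak$ the set of infinitesimals, identify $V/\mfrak$ with $R^\sharp$ (legitimate since non-valuationality pins down the order type of $V/\mfrak$), and let $\Sq R$ be the structure induced on $R^\sharp$ by $\Sh N$. The two claims to establish are: \textbf{(i)} $\Sq R$ is o-minimal; and \textbf{(ii)} $\Sa R$ admits quantifier elimination in the union of a relational language for the structure induced on $R$ (via the dense embedding $R\hookrightarrow R^\sharp$) by $\Sq R$ and the language $L_{\mathrm{div}}$ --- equivalently, every $\Sa R$-definable $X\subseteq R^n$ is a finite union of sets $(Y\cap R^n)\cap Z$ with $Y\subseteq(R^\sharp)^n$ definable in $\Sq R$ and $Z$ a conjunction of $mR$-coset conditions.

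Granting (i) and (ii), the trace equivalence follows by the bookkeeping of Section~\ref{section:dicho}. The group $(R;+)$ is a reduct of $\Sa R$, and $\Th(\Sa R)$ trace defines $\Sq R$ by the Shelah-completion argument of Lemma~\ref{lem:} (here $\Sh N$ defines $\Sq R$ on the imaginary sort $V/\mfrak$, and $\Sa R\sim\Sh R$ with $\Sa N\equiv\Sa R$); hence $\Sa R$ trace defines $(R;+)\sqcup\Sq R$ by Lemma~\ref{lem:disjoint union}. Conversely, after Morleyizing so the two languages are disjoint and relational, (ii) is precisely the quantifier-elimination hypothesis of Proposition~\ref{prop:fusion}, applied with the o-minimal piece of $\Sa R$ embedding into the structure induced on $R^\sharp$ by $\Sq R$ (this recovers the order on $R$, so only the pure group $(R;+)$, not $(R;+,<)$, is needed in the disjoint union) and the $L_{\mathrm{div}}$-reduct of $\Sa R$ embedding purely into $(R;+)$ by Lemma~\ref{lem:pure}; since $(R;+)\sqcup\Sq R$ trace defines each of $(R;+)$ and $\Sq R$, it trace defines $\Sa R$. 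The general clause of the conjecture either holds as stated ($\Sq R$ is allowed to expand an arbitrary ordered abelian group) or reduces to the regular-archimedean case by passing to an $\aleph_1$-saturated model and applying Fact~\ref{fact:regular rank}, Theorem~\ref{thm:regular rank}, and Lemma~\ref{lem:disjoint union 1} as in the proof of Theorem~\ref{thm:regular rank}.

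The main obstacle is (i) together with (ii): a relative quantifier elimination asserting that a dp-minimal non-valuational $\Sa R$ is ``weakly o-minimal modulo its $L_{\mathrm{div}}$-reduct'' --- the missing non-divisible analogue of Fact~\ref{fact:dp-rank-one}, hence of Fact~\ref{fact:wencel}. The expected route is to combine Simon's analysis of dp-minimal ordered structures \cite{Simon-dp,SW-dp} with Weispfenning's quantifier elimination for regular ordered abelian groups (Fact~\ref{fact:qe for oag}): dp-minimality should force the ``valuational'' data to be trivial (this is the non-valuational hypothesis) and the ``residue'' data to be weakly o-minimal, leaving only the divisibility cosets, whose traces on $R^\sharp$ are dense --- indeed $\st(mN\cap V)=R^\sharp$ since $R^\sharp$ is divisible --- and hence are wiped out by the closure operations defining $\Sq R$, so that $\Sq R$ is weakly o-minimal and, all its convex subsets being intervals, o-minimal (cf.\ Lemma~\ref{lem:sq-o-min}). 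Pushing this structure theory past the comfortable divisible case is where the real work lies.
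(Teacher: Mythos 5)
This statement is stated in the paper as an open conjecture (Conjecture~\ref{conj:2}); the paper does not prove it, and only notes a few special cases (discrete $(R;+,<)$, $\Sa R = (R;+,<)$ itself, and the Mann-pair example of Proposition~\ref{prop:mordell lang}). Your write-up correctly recognizes this: you lay out a plausible strategy modelled on Theorem~\ref{thm:tri} / Theorem~\ref{thm:wencel tri} and then flag the key missing ingredient yourself.

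That missing ingredient is exactly the gap: without divisibility you have no analogue of Fact~\ref{fact:dp-rank-one} (dp-rank one $\Leftrightarrow$ weak o-minimality) or of Fact~\ref{fact:wencel} (existence and good properties of the Wencel completion), so steps (i) and (ii) of your outline are at present unsupported. Your heuristic --- that the $mR$-coset data should ``wash out'' in the standard-part construction because $R^\sharp$ is divisible, leaving an o-minimal $\Sq R$, with the residual arithmetic carried by the $(R;+)$ factor --- is a reasonable intuition, and it mirrors the mechanism of Proposition~\ref{prop:final Z} and Theorem~\ref{thm:regular rank}. But it is not a proof: one would need to show that a dp-minimal non-valuational $\Sa R$ admits the relative quantifier elimination you describe (every definable $X\subseteq R^n$ a finite union of $(Y\cap R^n)\cap Z$ with $Y$ definable in $\Sq R$ and $Z$ an $L_{\mathrm{div}}$-coset condition), and this is precisely the structural theorem that is lacking beyond the divisible case. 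The paper itself treats this as open and offers no such relative quantifier elimination. Two smaller points: your Presburger subcase is handled in the paper via \cite{SW-dp} rather than Dolich--Goodrick (DG is about expansions of $(\Z;+,<)$ specifically, and the relevant fact here is the SW-dp characterization of dp-minimal non-valuational discrete expansions); and the general clause of the conjecture (non-archimedean, finitely many convex subgroups) is not quite ``free'' --- you would need to know that the regular-rank decomposition of Fact~\ref{fact:regular rank} is compatible with the definable structure of $\Sa R$ and not merely with $(R;+,<)$, which is another place where the required structure theory is absent. So: the strategy is the right one to pursue, but as you acknowledge, it does not close the conjecture.
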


If $\Sa R$ is a dp-minimal non-valuational expansion of a discrete ordered abelian group then $\Sa R$ is interdefinable with $(R;+,<)$ and $(R;+,<)$ is a model of Presburger arithmetic, see \cite{SW-dp}.
Therefore this case of the conjecture follows by Proposition~\ref{prop:final Z}.
The case when $\Sa R=(R;+,<)$ follows by Theorem~\ref{thm:regular rank}.
If $A$ is a dense finite rank subgroup of $(\R_{>0};\times)$ then the structure induced on $A$ by $\rfield$ is dp-minimal, so Proposition~\ref{prop:mordell lang} is an instance of the conjecture.
Finally, this conjecture would give us another reason to want to know which torsion free abelian groups are trace definable in an o-minimal theory.

\section{A little more about fields}
\label{section:fields}
In Section~\ref{section:two examples} we gave some examples of weakly o-minimal structures above which trace define infinite fields.
In each case the field in question is real closed.
It is natural to conjecture that this should always be the case, e.g. any infinite field trace definable in a weakly o-minimal theory should be real or algebraically closed.
I don't even know if this holds for $\rcf$.
There are other similar conjectures that could be made.

\medskip
Many interesting $\nip$ structures have finite dp-rank, finiteness of dp-rank is preserved under trace definibility and there is now a reasonable classification of finite dp-rank fields~\cite{Johnson-dp-finite}.
It should be possible to classify finite dp-rank fields up to trace definibility.
This should yield information about what fields can be trace defined in theories of interest.
For example, any infinite field trace definable in $\rcf$ should be real or algebraically closed, and the right proof of this should show that any infinite field trace definable in an o-minimal structure is real or algebraically closed.
These questions are tied to the question as to whether or not $(\Z;+)$ is trace definable in an o-minimal structure.
A theory that cannot trace define $(\Z;+)$ cannot trace define $\Q_p$ as $(\Z;+)$ is the value group of $\Q_p$.
If one could show that $\rcf$ cannot trace definable $(\Z;+)$, then the proof should show that $\rcf$ can't define lots of abelian groups.
Unstable dp finite fields admit definable valuations (and hence admit many externally definable valuations), so one could apply this to the value group.

\medskip
In this section we take some small steps towards a classification of dp-finite fields up to trace definibility.
We first recall what we already know.
Of course $\rcf$ trace defines the theory of algebraically closed fields of characteristic zero, and by Proposition~\ref{prop:p adic fields} all finite extensions of $\Q_p$ are trace equivalent.
By Lemma~\ref{lem:fp} a dp finite field of characteristic $p>0$ trace defines the algebraic closure of the field with $p$ elements.

\subsection{Strong Erd\H{o}s-Hajnal Property}
\label{section:eh}
We show that $\C$, $\R$, and $\Q_p$ cannot trace define infinite positive characteristic fields.
This involves an interesting combinatorial property.

\medskip
A bipartite graph $(V,W;E)$ satisfies the \textbf{Strong Erd\H{o}s-Hajnal property} if there is a real number $\delta > 0$ such that for every $A \subseteq V, B \subseteq W$ there are $A^* \subseteq A$, $B^* \subseteq B$ such that $|A^*| \geq \delta |A|$, $|B^*| \geq \delta B$, and $A^* \times B^*$ is either contained in or disjoint from $E$.
Then $\Sa M$ has the \textbf{strong Erd\H{o}s-Hajnal property} if all definable bipartite graphs have the strong Erd\H{o}s-Hajnal property and $T$ has the strong Erd\H{o}s-Hajnal property if its models do.

\medskip\noindent
There are countable bipartite graphs that do not have the strong Erd\H{o}s-Hajnal property, so the generic countable bipartite graph does not have the strong Erd\H{o}s-Hajnal property.
Thus by Proposition~\ref{prop:trace-0} a theory with the strong Erd\H{o}s-Hajnal property is $\nip$.

\medskip
Proposition~\ref{prop:eh-trace} is clear from the definitions.

\begin{proposition}
\label{prop:eh-trace}
If $T$ has the strong Erd\H{o}s-Hajnal property than any theory trace definable in $T$ has the strong Erd\H{o}s-Hajnal property.
\end{proposition}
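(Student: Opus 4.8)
The claim is that if $T$ has the strong Erd\H{o}s-Hajnal property then any theory trace definable in $T$ inherits it. The plan is to verify directly from the definitions that a definable bipartite graph in a trace-definable reduct pulls back to a definable bipartite graph in the ambient theory, and that the Erd\H{o}s-Hajnal conclusion transfers along the trace map.

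First I would reduce, via Proposition~\ref{prop:trace-theories}, to the situation of structures: it suffices to show that if $\Sa M$ has the strong Erd\H{o}s-Hajnal property, $O \subseteq M^m$, and $\Sa M$ trace defines $\Sa O$ via the identity $O \to M^m$, then $\Sa O$ has the strong Erd\H{o}s-Hajnal property. So fix an $\Sa O$-definable bipartite graph $(O^p, O^q; E)$ with $E \subseteq O^{p+q}$ definable in $\Sa O$. By trace definibility there is an $\Sa M$-definable $Y \subseteq M^{m(p+q)}$ with $E = Y \cap O^{p+q}$. Regard $Y$ as an $\Sa M$-definable bipartite graph $(M^{mp}, M^{mq}; Y)$ by splitting the $m(p+q)$ coordinates into the first $mp$ and the last $mq$. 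Let $\delta>0$ be a witness for the strong Erd\H{o}s-Hajnal property for $Y$.

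Now given $A \subseteq O^p$ and $B \subseteq O^q$, identify $A$ with a subset $A' \subseteq M^{mp}$ and $B$ with $B' \subseteq M^{mq}$ via the trace map (coordinatewise application of the identity inclusion $O \to M^m$); since that map is injective, $|A'| = |A|$ and $|B'| = |B|$. Apply the strong Erd\H{o}s-Hajnal property of $Y$ to obtain $A^* \subseteq A'$, $B^* \subseteq B'$ with $|A^*| \ge \delta|A'|$, $|B^*| \ge \delta|B'|$, and $A^* \times B^*$ either contained in $Y$ or disjoint from $Y$. Pulling $A^*, B^*$ back to subsets of $O^p$, $O^q$, the relation $E = Y \cap O^{p+q}$ agrees with $Y$ on $A^* \times B^* \subseteq O^{p+q}$, so $A^* \times B^*$ is either contained in $E$ or disjoint from $E$, with the same density bounds. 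Hence $\delta$ witnesses the strong Erd\H{o}s-Hajnal property for $(O^p,O^q;E)$, and since $(O^p,O^q;E)$ was arbitrary, $\Sa O$ has the property. The statement for theories then follows: every $T^*$-model embeds elementarily into one trace definable in a $T$-model by Proposition~\ref{prop:trace-theories}, and the strong Erd\H{o}s-Hajnal property passes to elementary substructures trivially (any counterexample bipartite graph would persist).

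There is essentially no obstacle here — the only thing to be a little careful about is that the Erd\H{o}s-Hajnal conclusion is a statement about \emph{all} finite (indeed arbitrary) pairs $A,B$, so the transfer is purely set-theoretic once the definable graph is pulled back along the injection; the density constant $\delta$ is preserved exactly because the trace map is injective. This is why the paper says the proposition "is clear from the definitions."
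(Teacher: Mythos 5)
Your proof is correct and spells out exactly what the paper leaves implicit (the paper simply remarks that this proposition is ``clear from the definitions''): reduce to structures via Proposition~\ref{prop:trace-theories}, pull the $\Sa O$-definable edge relation back to an $\Sa M$-definable set $Y$ via the trace injection, apply the strong Erd\H{o}s-Hajnal constant $\delta$ for $Y$ in $\Sa M$, and push the extracted $A^*\times B^*$ back down, noting that injectivity of the trace map preserves cardinalities and that $E = Y \cap O^{p+q}$ makes $Y$ and $E$ agree on the relevant box. This matches the intended argument; no gaps.
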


Fact~\ref{fact:distal} is due to Chernikov and Starchenko~\cite[Theorem~1.9]{CS}.

\begin{fact}
\label{fact:distal}
A distal structure has the strong Erd\H{o}s-Hajnal property.
\end{fact}

Fact~\ref{fact:eh} is also due to Chernikov and Starchenko~\cite[Section 6]{CS}.

\begin{fact}
\label{fact:eh}
Infinite positive characteristic fields don't have the strong Erd\H{o}s-Hajnal property.
\end{fact}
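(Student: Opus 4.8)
The plan is to locate the failure in the positive-characteristic incidence geometry and then spread it by trace-definibility. The key step is to show that $\F^\alg_p$ does not have the strong Erd\H{o}s-Hajnal property, using the point-line incidence bipartite graph, which is definable in the bare field. Take $V=K^2$, let $W=K^2$ parametrize the non-vertical lines $y=mx+k$, and let $E((x,y),(m,k))$ hold iff $y=mx+k$; this graph is $K_{22}$-free, since two distinct points lie on at most one non-vertical line. Fix $\delta>0$. Since $\F^\alg_p=\bigcup_n\F_{p^n}$, pick $q=p^n$ with $q>\delta^{-3}$ and set $A=\F_q^2\subseteq V$ and $B=\{(m,k):m,k\in\F_q\}\subseteq W$, so that $|A|=|B|=q^2$ and the incidence structure induced on $A\times B$ is precisely the affine non-vertical point-line incidence structure over $\F_q$, with each point of $A$ lying on exactly $q$ lines of $B$.

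I would then rule out a $\delta$-homogeneous subgrid $A^*\times B^*$ (so $|A^*|,|B^*|\ge\delta q^2\ge 2$). The ``all edges'' alternative is impossible by $K_{22}$-freeness, so $A^*\times B^*$ would have to be edge-free: every line of $B^*$ misses every point of $A^*$. A second-moment count prevents this. Writing $x_\ell=|\ell\cap A^*|$ for $\ell\in B$, we have $\sum_{\ell\in B}x_\ell=q|A^*|$, so the mean over the $q^2$ lines is $\mu=|A^*|/q\ge\delta q$; and since any two distinct points lie on at most one common line, $\sum_{\ell\in B}x_\ell^2\le q|A^*|+|A^*|^2$, whence $\sum_{\ell\in B}(x_\ell-\mu)^2\le q|A^*|\le q^3$. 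Each line missing $A^*$ contributes $\mu^2\ge\delta^2q^2$ to this sum, so at most $q^3/(\delta^2q^2)=q/\delta^2$ lines miss $A^*$; since $q>\delta^{-3}$ this is smaller than $\delta q^2\le|B^*|$, a contradiction. Hence the point-line graph, and therefore $\F^\alg_p$, fails the strong Erd\H{o}s-Hajnal property. (One can equivalently package the edge-free step as a Zarankiewicz-type bound.)

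The general case follows by the preservation machinery. Let $K$ be an infinite field of characteristic $p>0$. If $K$ is $\nip$, then by Lemma~\ref{lem:fp} $K$ trace defines $\F^\alg_p$, and since $\F^\alg_p$ does not have the strong Erd\H{o}s-Hajnal property, neither does $K$ by the contrapositive of Proposition~\ref{prop:eh-trace}. If $K$ is $\mathrm{IP}$, then by Proposition~\ref{prop:trace-0} $K$ trace defines the generic countable bipartite graph, which also does not have the strong Erd\H{o}s-Hajnal property, so Proposition~\ref{prop:eh-trace} again finishes the argument.

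The main obstacle is the special case of $\F^\alg_p$: one cannot reduce it to a smaller structure by trace-definibility (that would be circular), so one must exhibit an explicit definable bipartite graph in the pure field with no homogeneous subgrid of bounded relative density, and the real content is the positive-characteristic incidence estimate above --- the same phenomenon (Szemeredi-Trotter failing over $\F_q$) that makes infinite fields of positive characteristic genuinely different from $\C$, $\R$, and $\Q_p$. The $\mathrm{IP}$/$\nip$ dichotomy is needed only because an $\mathrm{IP}$ field need not trace define $\F^\alg_p$, whereas a $\nip$ one does.
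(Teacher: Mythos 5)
Your proof is correct, but note that the paper does not prove Fact~\ref{fact:eh}: it is stated as a fact and attributed to Chernikov and Starchenko~\cite{CS}, so there is no ``paper's own proof'' to compare against. What the paper \emph{does} give is a proof of the analogous statement for near linear Zarankiewicz bounds, Proposition~\ref{prop:field-blank}, and your argument parallels that proof very closely: you use the same definable point--line incidence bipartite graph $(V,W;E)$, note it is $K_{22}$-free, and reduce the general positive-characteristic case to $\F^\alg_p$ via the same $\mathrm{NIP}$/$\mathrm{IP}$ split, invoking Lemma~\ref{lem:fp}, Proposition~\ref{prop:trace-0}, and the preservation Proposition~\ref{prop:eh-trace} in exactly the roles that Lemma~\ref{lem:blank-2}, Lemma~\ref{lem:fp}, and Fact~\ref{fact:blank-0} play there. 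The genuinely new ingredient in your argument is the second-moment computation: for the Zarankiewicz version it suffices to observe that a $q^2\times q^2$ $K_{22}$-free grid with $q^3$ incidences violates a near-linear bound, whereas for strong Erd\H{o}s--Hajnal one must rule out every $\delta$-dense homogeneous subgrid, and your variance estimate $\sum_\ell(x_\ell-\mu)^2\le q|A^*|$ together with $\mu\ge\delta q$ does that cleanly. This is also essentially the Chernikov--Starchenko argument, so while it is a correct and appropriately self-contained proof, it is not a different route so much as a filled-in one.

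Two minor points worth flagging for polish. First, you implicitly assume $\delta\le 1/2$ (or at least that $\delta q^2\ge 2$); this is harmless since failure of the property for small $\delta$ trivially implies it for larger $\delta$, but it should be said. Second, the definition in the paper quantifies over arbitrary $A\subseteq V$, $B\subseteq W$; the property is only meaningful (and your counterexample only makes sense) for finite $A,B$, which is the standard convention and should be stated explicitly since the ambient sets here are infinite.
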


% \begin{proof}
% Chernikov and Starchenko~\cite{CS} show that any distal structure has the strong Erd\H{o}s-Hajnal property.
% So it is enough to suppose that $K$ is an infinite field of characteristic $p > 0$ and show that $K$ does not have the strong Erd\H{o}s-Hajnal property.
% This is observed in \cite{CS}, we recall the argument for the sake of completeness.
% By Fact~\ref{fact:nip} we may suppose that $K$ is $\nip$.
% By Lemma~\ref{lem:fp} we can suppose that $K$ is the algebraic closure of the field with $p$ elements.
% Let $V = K^2$, $W$ be the set of lines in $K^2$, and $E$ be the set of $(p,\ell)$ such that $p$ is on $\ell$.
% By \cite[Proposition 6.2]{CS} $(V,W;E)$ does not have the strong Erd\H{o}s-Hajnal property.
% \end{proof}

% \noindent
% Chernikov and Starchenko~\cite{CS} show that a distal structure cannot interpret an infinite field of positive characteristic.
% Proposition~\ref{prop:trace-distal} follows by essentially the same proof.
% We recall the details for the sake of completeness, and because we will use some of it below.

\noindent
Proposition~\ref{prop:trace-distal} follows from the previous three results.

\begin{proposition}
\label{prop:trace-distal}
Distal structures can't trace define infinite positive characteristic fields.
\end{proposition}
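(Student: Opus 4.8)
The statement to prove is Proposition~\ref{prop:trace-distal}: distal structures cannot trace define infinite positive characteristic fields. The plan is to chain together the three facts just stated, exactly as the sentence ``Proposition~\ref{prop:trace-distal} follows from the previous three results'' suggests. First I would set up the argument by contradiction: suppose $T$ is the theory of a distal structure and $T$ trace defines an infinite field $K$ of characteristic $p > 0$. By Fact~\ref{fact:distal}, $T$ has the strong Erd\H{o}s-Hajnal property. By Proposition~\ref{prop:eh-trace}, any theory trace definable in $T$ has the strong Erd\H{o}s-Hajnal property, so in particular $\Th(K)$ has it. But by Fact~\ref{fact:eh}, an infinite positive characteristic field does \emph{not} have the strong Erd\H{o}s-Hajnal property --- that is, $\Th(K)$ does not have it. This is the desired contradiction.

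The only point requiring a word of care is the interface between ``distal structure'' (an object) and the statements of the facts. Fact~\ref{fact:distal} is phrased for distal structures and asserts the strong Erd\H{o}s-Hajnal property; since the property for a structure is defined via its models (indeed the paper defines ``$T$ has the strong Erd\H{o}s-Hajnal property if its models do''), passing freely between $\Sa M$ and $\Th(\Sa M)$ is harmless here, and likewise for $K$ and $\Th(K)$. So I would simply phrase everything at the level of theories, invoking $T$ trace defines $\Th(K)$ (which is what ``$T$ trace defines an infinite field of characteristic $p$'' means under the paper's conventions in Theorem~\ref{thm:characterize} and Lemma~\ref{lem:fp}).

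There is really no obstacle: the three ingredients are all cited as known, and the combinatorial preservation (Proposition~\ref{prop:eh-trace}) is exactly the ``$\nip$-axis'' type preservation result this paper is built around. If I wanted to be slightly more self-contained I could remark that the strong Erd\H{o}s-Hajnal property transfers because trace definibility turns a definable bipartite graph in $K$ into (the restriction to a definable set of) a definable bipartite graph in a model of $T$, and the $\delta$-bound is inherited; but that is precisely the content of Proposition~\ref{prop:eh-trace}, which we are allowed to assume.

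\begin{proof}
Suppose toward a contradiction that $T$ is the theory of a distal structure and that $T$ trace defines an infinite field $K$ with $\chara(K) = p > 0$, i.e. $T$ trace defines $\Th(K)$. By Fact~\ref{fact:distal}, $T$ has the strong Erd\H{o}s-Hajnal property. By Proposition~\ref{prop:eh-trace}, every theory trace definable in $T$ has the strong Erd\H{o}s-Hajnal property, so $\Th(K)$ has the strong Erd\H{o}s-Hajnal property. This contradicts Fact~\ref{fact:eh}, according to which an infinite field of positive characteristic does not have the strong Erd\H{o}s-Hajnal property. Hence no distal structure trace defines an infinite positive characteristic field.
\end{proof}
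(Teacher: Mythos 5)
Your proof is correct and is essentially the same argument the paper intends: chain Fact~\ref{fact:distal} (distal implies strong Erd\H{o}s-Hajnal), Proposition~\ref{prop:eh-trace} (strong Erd\H{o}s-Hajnal is preserved under trace definitions), and Fact~\ref{fact:eh} (infinite positive characteristic fields lack it). Nothing is missing; the paper itself gives only the one-line remark that the proposition follows from the previous three results.
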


It follows that real closed fields, $p$-adically closed fields, and algebraically closed fields of characteristic zero cannot trace define infinite positive characteristic  fields (the first two are distal, the last admits a distal expansion).
Any dp-rank one expansion of a linear order is distal~\cite[Example 9.20]{Simon-Book} and so cannot trace define an infinite positive characteristic field.
Hence a weakly o-minimal theory cannot trace define an infinite positive characteristic field.

\medskip
Of course distality is not preserved under interpretations as it is not preserved under reducts.
We say that a structure is \textbf{pre-distal} if it has a distal expansion, or equivalently is interpretable in a distal structure.
Is pre-distality preserved under trace definibility?
Equivalently: does trace definibility in a distal structure imply interpretability in a distal structure?
(I don't see any reason why it should or should not.)

\subsection{Algebraically and real closed fields}
\label{section:acf}

\begin{theorem}
\label{thm:acf}
Suppose that $K$ is an algebraically closed field.
Then any infinite field trace definable in $K$ is algebraically closed.
If $K$ is in addition characteristic zero then any field trace definable in $K$ is characteristic zero.
\end{theorem}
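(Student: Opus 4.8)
The plan is to argue by contradiction using the combinatorial and rank-theoretic invariants already shown to be preserved under trace definibility, together with the known structure theory of fields definable in algebraically closed fields. Suppose $K$ is algebraically closed and $K$ trace defines an infinite field $F$. First I would observe that $\Th(K)$ has finite dp-rank (indeed $\dprk K = 1$), so by Proposition~\ref{thm:dp-rank} (or rather Proposition~\ref{prop:dp-rank}) $\Th(F)$ has finite dp-rank; in particular $F$ is $\nip$. Moreover $K$ is stable, so by Proposition~\ref{prop:stable-0} $F$ is stable, hence $F$ is a stable $\nip$ field of finite dp-rank. If $F$ has positive characteristic $p$, then by Lemma~\ref{lem:fp} $F$ trace defines $\F^\alg_p$; but this is fine when $K$ also has characteristic $p$, so the characteristic obstruction must be handled separately (see below).

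For the main claim --- that $F$ is algebraically closed --- I would split on characteristic. If $\chara K = 0$: suppose toward a contradiction that $F$ is \emph{not} algebraically closed. A non-separably-closed (here, non-algebraically-closed, since in char $0$ these coincide) stable field is conjecturally finite, but unconditionally we can say more using what is in the paper: a non-algebraically closed field admits, after a finite extension, either a non-surjective Artin--Schreier map (impossible in char $0$) or, by Fact~\ref{fact:macintyre}, a prime $q$ with a primitive $q$th root of unity and non-surjective $q$th power map. I would then want to run a variant of Duret's construction (as in the proof of Proposition~\ref{prop:pseudofinite}) \emph{if} $F$ were in addition $\pac$ --- but $F$ need not be $\pac$. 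So instead the cleaner route is: by Proposition~\ref{thm:dp-rank} $F$ has finite dp-rank, and by Johnson's classification of finite dp-rank fields (cited in Section~\ref{section:fields} as \cite{Johnson-dp-finite}), $F$ is either algebraically closed, real closed, admits a definable Henselian valuation, or is such a field's relative. A stable field of finite dp-rank admitting a definable Henselian valuation would have an infinite interpretable ordered abelian group (the value group), contradicting stability of $F$; a real closed $F$ is unstable, again a contradiction. This forces $F$ algebraically closed.

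For the characteristic zero preservation when $\chara K = 0$: suppose $F$ has characteristic $p > 0$. Then $F$ trace defines $\F^\alg_p$ by Lemma~\ref{lem:fp}, hence $K$ trace defines $\F^\alg_p$ by transitivity (Proposition~\ref{prop:trace-basic}). Now I would invoke Proposition~\ref{prop:trace-distal}: an algebraically closed field of characteristic zero admits a distal expansion (e.g. by an ordering of a real closed subfield, or more simply $\C$ is interpretable in a real closed field and hence pre-distal), so by Fact~\ref{fact:eh} and Proposition~\ref{prop:eh-trace} it cannot trace define an infinite positive characteristic field, contradiction. The main obstacle I expect is the first half of the previous paragraph: getting from ``stable $\nip$ finite-dp-rank field'' to ``algebraically closed'' cleanly. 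The honest approach may be to cite Johnson's dp-finite field classification as a black box (the paper already signals in Section~\ref{section:fields} that such a classification should be applied), and to note that among the fields on Johnson's list, only algebraically closed fields are stable; alternatively, if one wants to avoid that machinery, one shows directly that a non-algebraically-closed stable field of finite dp-rank would, via Fact~\ref{fact:macintyre} and a Duret-style argument adapted to the (merely trace-definable, not necessarily $\pac$) setting, trace define the generic bipartite graph, contradicting $\nip$ via Proposition~\ref{prop:trace-0} --- but making that adaptation work without the $\pac$ hypothesis is precisely the delicate point.
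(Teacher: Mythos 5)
Your proposal is correct, but it takes a considerably heavier route than the paper for the first claim. The paper's argument is just: $K$ is $\aleph_0$-stable, so by Corollary~\ref{cor:superstable} (since the language of fields is countable) any field $F$ trace definable in $K$ is $\aleph_0$-stable, and by Macintyre's classical theorem \cite{Macintyre-omegastable} an infinite $\aleph_0$-stable field is algebraically closed. Your proposal instead transfers finite dp-rank and stability separately, then invokes Johnson's full classification of dp-finite fields and eliminates the real closed and definably-valued cases by instability of the order or value group --- which works, but you are using a 2020s-vintage deep theorem in place of a one-line appeal to a 1971 theorem. You also flag, correctly, that the Duret-style route is blocked without a $\pac$ hypothesis, and retreat to Johnson; the point you are missing is that $\aleph_0$-stability plus Macintyre short-circuits all of this. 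For the second claim your argument is essentially the paper's (distality of $(K;R)$ for a real closed subfield $R$ with $K=R(\sqrt{-1})$, then Proposition~\ref{prop:trace-distal}), except that you detour through $\F^{\mathrm{alg}}_p$ via Lemma~\ref{lem:fp} --- this is unnecessary, since Proposition~\ref{prop:trace-distal} already says a distal structure cannot trace define any infinite positive characteristic field, so one can apply it to $F$ directly without first establishing that $F$ is $\nip$ and trace defines $\F^{\mathrm{alg}}_p$.
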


\noindent
It easily follows that if $K$ is algebraically closed of characteristic zero, the transendence degree of $K/\Q$ is infinite, and $F$ is an infinite field trace definable in $K$, then there is an elementary embedding $F \to K$.
This is sharp by Proposition~\ref{prop:trace-basic}.3.
% We don't know if an algebraically closed field of characteristic $p > 0$ can trace define an algebraically closed field of characteristic $\ne p$.

\begin{proof}
The first claim follows from Corollary~\ref{cor:superstable} and Macintyre's theorem~\cite{Macintyre-omegastable} that an infinite $\aleph_0$-stable field is algebraically closed.
Suppose $K$ is characteristic zero.
Fix a real closed subfield $R$ of $K$ such that $K = R(\sqrt{-1})$.
Then $(K;R)$ is distal.
Apply Proposition~\ref{prop:trace-distal}.
\end{proof}

\begin{proposition}
\label{prop:weak-o-min}
Suppose $\Sa M$ is weakly o-minimal and
% Suppose that one of the following holds:
% \begin{enumerate}
% \item $\Sa M$ expands a linear order and $\Th(\Sa M)$ is weakly o-minimal, or
% \item $\Sa M$ expands a $C$-relation and is $C$-minimal.
% \end{enumerate}
$K$ is an infinite field trace definable in $\Sa M$ via an injection $K \to M$.
Then $K$ is real or algebraically closed of characteristic zero.
\end{proposition}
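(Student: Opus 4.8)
The statement asserts that if $\Sa M$ is weakly o-minimal and $K$ is an infinite field with an injection $\uptau \colon K \to M$ via which $\Sa M$ trace defines $K$, then $K$ is real or algebraically closed of characteristic zero. The plan is to split into the characteristic zero and positive characteristic cases and dispose of the latter first, then leverage the structure of one-variable trace definitions in weakly o-minimal theories.

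\textbf{Positive characteristic.} First I would rule out $\chara K = p > 0$. By Proposition~\ref{prop:trace-theories} we may pass to a saturated $\Sa M$ and assume the trace definition is via an injection into $M$ (not $M^m$); as noted in the excerpt, a weakly o-minimal theory is dp-rank one hence distal (any dp-rank one expansion of a linear order is distal, \cite[Example 9.20]{Simon-Book}), so $\Sa M$ has the strong Erd\H{o}s-Hajnal property by Fact~\ref{fact:distal}. By Proposition~\ref{prop:eh-trace} every theory trace definable in $\Sa M$, in particular $\Th(K)$, has the strong Erd\H{o}s-Hajnal property, contradicting Fact~\ref{fact:eh}. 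Hence $\chara K = 0$.

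\textbf{Characteristic zero: real or algebraically closed.} Now I need to show a characteristic zero field trace definable in a weakly o-minimal structure via an injection into $M$ is real or algebraically closed. The key structural input is that $\Th(K)$ is $\nip$ (Proposition~\ref{prop:trace}) and, being trace definable via $K\to M^1$ with $\Sa M$ weakly o-minimal, $K$ carries a lot of "one-dimensional" tameness. The cleanest route is to show $K$ is weakly o-minimal after a suitable interpretation, or more directly: the induced trace structure lets us pull back, for each $K$-definable $X\subseteq K$, an $\Sa M$-definable $Y\subseteq M$ with $X = \uptau^{-1}(Y)$; since $Y$ is a finite union of convex subsets of $(M;<)$, the set $X$ is a finite union of traces of convex sets. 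One then argues that $K$ cannot be "wild" — I would invoke the classification of $\nip$ fields of finite dp-rank, but more efficiently use that a field trace definable into a single sort of a weakly o-minimal structure has dp-rank one (Proposition~\ref{thm:dp-rank} gives $\dprk K \le 1\cdot\dprk\Sa M = 1$), and a dp-rank one field of characteristic zero that is not algebraically closed admits a definable (hence, by $\nip$, externally definable) field order or a definable henselian valuation. In the ordered case, the order makes $K$ a dp-rank one ordered field, and Johnson's classification of dp-finite fields \cite{Johnson-dp-finite} — or in the ordered/weakly-o-minimal case the fact that a weakly o-minimal field is real closed \cite{MMS-weak} combined with the observation that $\Sh K$ (which $\Sa M$ still trace defines by Proposition~\ref{prop:she-0}) is weakly o-minimal — forces $K$ real closed. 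In the remaining case $K$ carries a definable valuation with dp-rank one, and such a field is algebraically closed by Johnson's theorem. So the argument reduces to: (i) $\dprk K = 1$; (ii) a dp-rank one field of characteristic zero is real closed or algebraically closed; (iii) in the real closed case consistency with the weakly o-minimal trace structure.

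\textbf{Main obstacle.} The hard part will be step (ii)–(iii): extracting from "trace definable via $K\to M$ into a weakly o-minimal $\Sa M$" a genuine definable linear order or valuation on $K$, rather than merely the external/combinatorial consequences. Dp-rank one alone gives (via Johnson) that $K$ is algebraically closed, real closed, admits a dp-minimal valuation, or is one of a short list; ruling out the valuational non-algebraically-closed possibilities (e.g. a $p$-adically closed field of characteristic zero is dp-minimal) requires using that the ambient structure is \emph{weakly o-minimal} and not merely dp-minimal — presumably by transporting a definable convex set / cut from $\Sa M$ through $\uptau$ to produce a definable order on $K$, incompatible with the $p$-adic topology. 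Concretely I expect to argue: if $K$ is not algebraically closed, then since $\chara K = 0$, Macintyre's characterization rules out $\aleph_0$-stability, so $K$ is unstable; an unstable dp-rank one field admits a definable linear order whose trace comes from convex sets in $\Sa M$, and one checks this order is a field order (it refines multiplicatively by a compactness/convexity argument), whence $\Sa M$ trace defines the \emph{ordered} field $K$; then $\Sh K$ is an ordered field that is weakly o-minimal as a trace structure, hence real closed. Making the "definable field order from a convex trace" step precise is where the real work lies.
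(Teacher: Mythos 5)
Your positive characteristic step is exactly the paper's: distality of a weakly o-minimal theory gives strong Erd\H{o}s-Hajnal (Fact~\ref{fact:distal}), which is preserved under trace definitions (Proposition~\ref{prop:eh-trace}) and fails for infinite positive characteristic fields (Fact~\ref{fact:eh}); this is Proposition~\ref{prop:trace-distal}.

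The characteristic zero step is where your proposal diverges from the paper and where it has a genuine gap. You correctly observe that $\dprk K \le 1$, but dp-rank one does not imply real or algebraically closed: $\Q_p$ (and every $p$-adically closed field) is dp-minimal of characteristic zero and is neither. You acknowledge this obstruction, but your proposed repair does not go through. You assert that ``an unstable dp-rank one field admits a definable linear order whose trace comes from convex sets in $\Sa M$,'' but the preimage under $\uptau$ of a convex subset of $M$ is just some subset of $K$ with no algebraic compatibility, and the stronger claim that one can massage this into a \emph{field} order on $K$ is false as stated: $\Q_p$ is unstable and dp-minimal yet admits no definable field order. So the argument has not actually ruled out the $p$-adic possibility; it has only named it.

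The paper's route uses a sharper invariant than dp-rank: \emph{convex orderability} (Flenner--Guingona). Every weakly o-minimal structure is convexly orderable, convex orderability transfers through a trace definition via an injection $O \to M$ into a single sort (Lemma~\ref{lem:convex-order} -- this is immediate: pull the convex order back along $\uptau$), and Johnson~\cite{Johnson} proved that an infinite convexly orderable field is real or algebraically closed. This is precisely the extra leverage you were reaching for: convex orderability, unlike dp-minimality, already excludes $p$-adically closed fields, so no ad hoc construction of a definable order on $K$ is needed. Combined with the positive-characteristic exclusion, this finishes the proof. If you want to salvage your outline, replace the appeal to Johnson's dp-finite classification with convex orderability and Johnson's theorem on convexly orderable fields; the dp-rank inequality then becomes unnecessary.
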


Proposition~\ref{prop:weak-o-min} follows easily from some known results.
We recall a notion of Flenner and Guingona~\cite{flenner-guingona}.
We say that $\Sa M$ is \textbf{convexly orderable} if there is a linear order $\triangleleft$ on $M$ such that if $(X_\alpha)_{\alpha \in M^n}$ is a definable family of subsets of $M$ then there is $n$ such that each $X_\alpha$ is a union of at most $n$ $\triangleleft$-convex sets for all $b \in M^{|y|}$.
A convexly orderable structure $\Sa M$ need not define a linear order on $M$, for example a strongly minimal structure is convexly orderable.
It is easy to see that convex orderability is preserved under elementary equivalence, so we say that $T$ is convexly orderable if some (equivalently: every) $T$-model is convexly orderable.
Structures with weakly o-minimal theory are clearly convexly orderable and $C$-minimal structures are convexly orderable~\cite{flenner-guingona}.

\medskip\noindent
Lemma~\ref{lem:convex-order} is clear from the definitions.

\begin{lemma}
\label{lem:convex-order}
If $\Sa M$ is convexly orderable and $\Sa O$ is trace definable in $\Sa M$ via an injection $O \to M$ then $\Sa O$ is convexly orderable.
\end{lemma}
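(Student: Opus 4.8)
The statement to prove is Lemma~\ref{lem:convex-order}: if $\Sa M$ is convexly orderable and $\Sa O$ is trace definable in $\Sa M$ via an injection $\uptau \colon O \to M$, then $\Sa O$ is convexly orderable.

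The plan is to transport the convex ordering on $M$ directly through $\uptau$. First I would fix a linear order $\triangleleft$ on $M$ witnessing convex orderability of $\Sa M$, and define a linear order $\triangleleft^*$ on $O$ by declaring $a \triangleleft^* b \Longleftrightarrow \uptau(a) \triangleleft \uptau(b)$; since $\uptau$ is an injection $O \to M$ (i.e.\ $m = 1$), this is a genuine linear order on $O$. The claim is that $\triangleleft^*$ witnesses convex orderability of $\Sa O$. So let $(X_\alpha)_{\alpha \in O^n}$ be an $\Sa O$-definable family of subsets of $O$; realize it as a single $\Sa O$-definable set $X \subseteq O^{n+1}$ with $X_\alpha = \{ \beta \in O : (\alpha,\beta) \in X \}$. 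By trace definibility there is an $\Sa M$-definable $Y \subseteq M^{n+1}$ with $(\alpha,\beta) \in X \Longleftrightarrow (\uptau(\alpha),\uptau(\beta)) \in Y$ for all $\alpha \in O^n$, $\beta \in O$. Viewing $Y$ as the $\Sa M$-definable family $(Y_c)_{c \in M^n}$ of subsets of $M$, convex orderability of $\Sa M$ gives $N$ such that each $Y_c$ is a union of at most $N$ $\triangleleft$-convex subsets of $M$.

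Now fix $\alpha \in O^n$ and set $c = \uptau(\alpha) \in M^n$. Then $\beta \in X_\alpha \Longleftrightarrow \uptau(\beta) \in Y_c$, so $\uptau(X_\alpha) = Y_c \cap \uptau(O) \subseteq Y_c$. Writing $Y_c = C_1 \cup \cdots \cup C_N$ with each $C_i$ being $\triangleleft$-convex in $M$, we get $\uptau(X_\alpha) = \bigcup_{i=1}^N (C_i \cap \uptau(O))$, and the preimage $X_\alpha = \uptau^{-1}(\uptau(X_\alpha)) = \bigcup_{i=1}^N \uptau^{-1}(C_i)$. Since $\uptau$ is an order isomorphism from $(O;\triangleleft^*)$ onto $(\uptau(O); \triangleleft)$, the preimage of a $\triangleleft$-convex subset of $\uptau(O)$ is $\triangleleft^*$-convex in $O$; and $C_i \cap \uptau(O)$ is $\triangleleft$-convex in $\uptau(O)$ because the intersection of a convex set with a suborder is convex in that suborder. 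Hence each $\uptau^{-1}(C_i)$ is $\triangleleft^*$-convex in $O$, so $X_\alpha$ is a union of at most $N$ $\triangleleft^*$-convex subsets of $O$. As $N$ does not depend on $\alpha$, this establishes convex orderability of $\Sa O$ with witness $\triangleleft^*$.

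This is genuinely routine — the only points requiring a moment of care are that trace definibility is used with $n+1$ free variables (repackaging a definable family as a definable set, exactly as in the handling of $Y_p$ in Proposition~\ref{prop:picture}), and that one must restrict to $\uptau(O)$ before pulling back so that ``convex'' is interpreted in the suborder. There is no real obstacle; I would just want to state cleanly the elementary fact that order-isomorphisms and restrictions preserve convexity, which needs no proof.
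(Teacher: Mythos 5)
Your proof is correct and is exactly the argument the paper has in mind; the paper simply states that the lemma ``is clear from the definitions'' and leaves it to the reader. Your write-up correctly identifies the one point where the hypothesis $\uptau\colon O\to M$ (rather than $O\to M^m$) is essential, namely that $\triangleleft$ restricts to a linear order on $\uptau(O)$ which transports back to $O$, and the rest is the routine observation that intersecting with a subset and pulling back along an order isomorphism both preserve convexity.
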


\noindent
By Proposition~\ref{prop:trace-distal} a weakly o-minimal structure cannot trace definable an infinite positive characteristic field.
Proposition~\ref{prop:weak-o-min} follows from this observation, Lemma~\ref{lem:convex-order}, and Johnson's theorem~\cite{Johnson} that an infinite convexly orderable field is real or algebraically closed.
By \cite[Theorem~5.14]{flenner-guingona} $(\Z;+)$ is convexly orderable, so we cannot apply this kind of argument to $(\Z;+)$.
By \cite[Corollary~3.4]{flenner-guingona} a convexly orderable oag is divisible.
Hence any oag $(H;+,\prec)$ trace definable in a weakly o-minimal structure $\Sa M$ via an injection $H\to M$ is divisible.

\medskip
C-minimal structures are also convexly orderable, so  if $(F,v)\models\mathrm{ACVF}$ then any infinite field trace definable in $(F,v)$ via an injection $\to F$ is real or algebraically closed.
Of course we expect that any infinite field trace definable in  $\mathrm{ACVF}$ is algebraically closed.

\section{Expansions of $(\R;<,+)$}
\label{section:open core}
We make some comments concerning open cores of expansions of $(\R;+,<)$ trace definable in natural theories.
Let $\Sa R$ be an expansion of $(\R;+,<)$.
The \textbf{open core} $\Sa R^\circ$ of $\Sa R$ is the reduct of $\Sa R$ generated by all open (hence all closed) definable sets.
Note $\Sa R$ is interdefinable with $\Sa R^\circ$ iff $\Sa R$ is interdefinable with an expansion of $(\R;+,<)$ by open and closed subsets of $\R^m$.

\medskip
By Corollary~\ref{cor:rama-2} $\Sa R$ is interpretable in an o-minimal expansion of an ordered group if and only if $\Sa R$ is o-minimal.
When is $\Sa R$ trace definable in an o-minimal structure?
By Proposition~\ref{prop:dense pair} $(\R;+,<,\Q)$ is trace definable in $(\R;+,<)$, so $\Sa R$ need not be o-minimal.
By Theorem~\ref{thm:dp-rank} $\Sa R$ is strongly dependent if $\Sa R$ is trace definable in an o-minimal structure.
By \cite{big-nip} we know that if $\Sa R$ is strongly dependent then one of the following holds:
\begin{enumerate}
\item $\Sa R^\circ$ is o-minimal, or
\item there is a collection $\Cal B$ of bounded subsets of Euclidean space such that $\Sa R^\circ$ is interdefinable with $(\R;+,<,\Cal B,\az)$ for some positive $\alpha\in\R$.
\end{enumerate}
If (2) holds then $\Sa R^\circ$ is trace equivalent to $(\Z;+)\sqcup(\R;+,<,\Cal B)$ by Proposition~\ref{prop:expansion by Z}.
So we see that the following are equivalent:
\begin{enumerate}[leftmargin=*, label=(\alph*)]
\item An o-minimal structure cannot trace define $(\Z;+)$.
\item An expansion of $(\R;+,<)$ which is trace definable in an o-minimal theory necessarily has o-minimal open core.
\end{enumerate}
This gives another reason to want to know if an o-minimal structure can trace define $(\Z;+)$.

\medskip
When is $\Sa R$ trace definable in Presburger arithmetic?
By Proposition~\ref{prop:final Z} Presburger arithmetic trace defines $(\R;+,<,\Z)$.
By Proposition~\ref{prop:press} Presburger arithmetic cannot trace define an infinite field.
Applying Proposition~\ref{prop:o-min-rigid} we see that if (1) holds then $\Sa R^\circ$ is trace equivalent to an ordered vector space and if (2) holds then $\Sa R^\circ$ is trace equivalent to the disjoint union of $(\Z;+)$ and an ordered vector space.
So which ordered vector spaces are trace definable in Presburger arithmetic? 
I suspect that the presence of irrational scalars in $\rvec$ produce ``aperiodic" structure that obstructs trace definibility in Presburger arthimetic.

\section{Finitely homogeneous structures}
\label{section:conj}
We discuss trace definibility between finitely homogeneous structures.
We consider the generic countable $k$-hypergraph, $\dlo$, and the generic countable binary branching $C$-relation.

\medskip
We make some conjectures and then gather some evidence for these conjectures.
There is a long line of research on classification of various kinds of finitely homogeneous structures.
These classifications seem to become rather complex.
When we pass to trace equivalence there is a considerable reduction in complexity of the classifications.

\begin{wk}
There are $\aleph_0$ finitely homogeneous structures up to trace equivalence.
\end{wk}

Note that by Proposition~\ref{prop:omega cat} two finitely homogeneous structures are trace equivalent if and only if each trace defines the other.
It should be noted that Simon has conjectured that there are only countably many finitely homogeneous $\nip$ structures up to isointerdefinibility~\cite{Pierre2}.

\medskip
For each $k \ge 2$ the generic countable $k$-hypergraph is $(k-1)$-independent and $k$-dependent and a finitely $k$-ary structure is $k$-dependent \cite{cpt}.
So by Proposition~\ref{prop:k-dep} a finitely $k$-ary structure cannot trace define the generic countable $(k + 1)$-hypergraph.
In particular the generic countable $k$-hypergraph does not trace define the generic countable $\ell$-hypergraph when $k < \ell$.
(We could also apply Proposition~\ref{prop:airity}.)
So there are infinitely many finitely homogeneous structures up to trace equivalence.

\begin{sk}
For every $k$ there are only finitely many $k$-ary finitely homogeneous structures up to trace equivalence.
\end{sk}

The vague underlying conjecture is that trace equivalence classes of finitely homogeneous structures should correspond to natural model-theoretic properties.
We prove the binary case of the weak conjecture below, see Proposition~\ref{prop:binary}.
We also show that the trace equivalence classes of $\dlo$ and the generic countable $k$-hypergraph are large.

\medskip\noindent
Let $\Upomega_k$ be the cardinality set of trace equivalence classes of finitely homogeneous structures of airity exactly $k$.
We have $\Upomega_2 \ge 3$ as the trivial structure, $(\Q;<)$, and Erd\H{o}s-Rado graph give distinct classes.
For all I know we could have $\Upomega_2=3$.

\subsection{Trace definibility in the generic countable $k$-hypergraph}
We let $\hyp_k$, $\relk_k$ be the theory of the generic countable $k$-hypergraph, $k$-ary relation, respectively.

\begin{lemma}
\label{lem:airity}
The generic countable $k$-ary relation and the generic countable $k$-ary hypergraph both trace define any countable finitely $k$-ary structure.
\end{lemma}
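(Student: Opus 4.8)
The plan is to show that the generic countable $k$-ary relation $\relk_k$ trace defines every countable finitely $k$-ary structure, and then to deduce the statement for the generic countable $k$-hypergraph from Proposition~\ref{prop:random}, which says the two are trace equivalent. So let $\Sa O$ be a countable finitely $k$-ary structure. By Morleyization we may assume $\Sa O$ is relational and admits quantifier elimination; since $\Sa O$ is finitely $k$-ary there are finitely many formulas $\varphi_1,\ldots,\varphi_n$ of airity $\le k$ such that every formula of $\Sa O$ is a boolean combination of instances $\varphi_j(x_{\imag_1},\ldots,x_{\imag_\ell})$, and after replacing $L$ by a finite sublanguage we may in fact assume the language of $\Sa O$ is finite and relational of airity $\le k$. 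Thus $\Sa O = (O; R_1,\ldots,R_n)$ with each $R_i$ of airity $\ell_i \le k$.

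The first step is to absorb each relation $R_i$ into a single $k$-ary relation on a disjoint-union-of-copies trick analogous to the Claim in the proof of Proposition~\ref{prop:random}: take $O' = O \times \{1,\ldots,k\}$ (or, to handle several relations at once, $O' = O \times (\{1,\ldots,k\} \times \{1,\ldots,n\})$, padding each $R_i$ with dummy coordinates to make it exactly $k$-ary by conjoining with the diagonal), and define a single $k$-ary relation $S$ on $O'$ which, when restricted to tuples whose coordinate-labels form a prescribed pattern, decodes to membership in the appropriate $R_i$. One checks that $(O; R_1,\ldots,R_n)$ embeds into $(O'^k; S_S)$ for the natural ``diagonal'' map $\alpha \mapsto ((\alpha,1),\ldots,(\alpha,k))$ exactly as in the cited proof, where $S_S$ is the $(O')^k$-level relation built from $S$. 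Since $\Sa O$ has quantifier elimination, Corollary~\ref{cor:qe} (equivalently Proposition~\ref{prop:qe}) reduces the problem to: the structure $(O'; S)$ — an arbitrary countable $k$-ary relation — is trace definable in $\relk_k$. But $(O'; S)$ is a countable $k$-ary relation, hence embeds into the generic countable $k$-ary relation, so by Proposition~\ref{prop:qe-trace} and quantifier elimination for $\relk_k$ it is even \emph{definable} (via the identity embedding) in a model of $\relk_k$. Composing the embedding $\Sa O \to (O'^k; S_S)$ with this, and using transitivity of trace definibility (Proposition~\ref{prop:trace-basic}) together with $\aleph_0$-categoricity of $\relk_k$ (Proposition~\ref{prop:omega cat}, so that we may pass freely between $\relk_k$ and the generic countable $k$-ary relation), yields that $\relk_k$ trace defines $\Sa O$.

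Finally, by Proposition~\ref{prop:random} the generic countable $k$-hypergraph is trace equivalent to the generic countable $k$-ary relation, so $\hyp_k$ trace defines $\Sa O$ as well; this is exactly Lemma~\ref{lem:airity}. I expect the main obstacle to be bookkeeping in the encoding step: packaging \emph{several} relations $R_1,\ldots,R_n$ of varying airities $\le k$ into one genuinely $k$-ary relation $S$ on a labelled expansion of $O$, and verifying that the decoding relation $S_S$ on $(O')^k$ restricts correctly to the diagonal image so that the map is an embedding of $L$-structures. Everything else — quantifier elimination after Morleyization, the reduction via Corollary~\ref{cor:qe}, and the appeal to Proposition~\ref{prop:random} — is routine given the machinery already set up in the excerpt, in particular the Claim in the proof of Proposition~\ref{prop:random}, whose argument I would essentially reuse verbatim for the single-relation case.
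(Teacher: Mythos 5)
Your plan is sound and can be made to work, but it follows a genuinely different route from the paper's. After Morleyizing and padding lower-airity relations to exactly $k$-ary (which both you and the paper do), the paper simply observes that each single-relation reduct $(O;R_i)$ embeds into the generic countable $k$-ary relation and then invokes Proposition~\ref{prop:fusion}, which was designed precisely to combine finitely many such embeddings into one trace definition via a diagonal map into a product. You instead re-derive the substance of Proposition~\ref{prop:fusion} by hand, merging $R_1,\ldots,R_n$ into one $k$-ary relation $S$ on a labelled set $O'$, sending $\alpha$ diagonally to a tuple of labelled copies, and decoding each $R_i$ from the appropriate block, in the spirit of the Claim inside the proof of Proposition~\ref{prop:random}. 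This does close, modulo the bookkeeping you flag; the fusion proposition buys a much shorter proof, having absorbed the same combinatorics into Lemma~\ref{lem:disjoint union}.

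One inaccuracy should be repaired: you invoke Proposition~\ref{prop:qe-trace} to conclude that $(O';S)$ is ``even definable'' in a model of $\relk_k$. But $(O';S)$ is an arbitrary countable $k$-ary relation, hence need not admit quantifier elimination, and Proposition~\ref{prop:qe-trace} requires quantifier elimination on the side of the structure being trace defined, so it does not apply here. You do not actually need it: once $(O';S)$ is embedded as a substructure of the generic countable $k$-ary relation $(V;R)$, the quantifier-free $\{S\}$-formulas cutting out your decoding sets $Y_i$ define sets in $(V;R)$ whose traces on the image of $\uptau$ are correct, and Corollary~\ref{cor:qe} applied to $\Sa O$ (which does have quantifier elimination, courtesy of Morleyization) with the composite injection $O \to (O')^m \hookrightarrow V^m$ finishes the proof without any intermediate claim about trace definability of $(O';S)$.
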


\noindent
We remind the reader that the definition of ``finitely $k$-ary" is back in the conventions.

\begin{proof}
The second claim follows from the first by Proposition~\ref{prop:random} and the third follows by Fact~\ref{fact:airity}.
Suppose that $\Sa O$ is a countable finitely $k$-ary $L$-structure.
We first reduce to the case when every $R \in L$ has airity $k$.
Morleyizing reduces to the case when $L$ is a finite relational language, every $R \in L$ has airity $\le k$, and $\Sa O$ admits quantifier elimination.
Let $L^*$ be the language which contains each $k$-ary $R \in L$ and contains a $k$-ary relation $R^*$ for each $R \in L$ of airity $< k$.
We let $\Sa O^*$ be the $L^*$-structure with domain $O$ where each $k$-ary $R \in L$ is given the same interpretation as in $L$ and if $R \in L$ has airity $\ell < k$ then for any $\alpha_1,\ldots,\alpha_k \in O$ we declare 
$$\Sa O^* \models R^*(\alpha_1,\ldots,\alpha_k) \quad\Longleftrightarrow\quad \Sa O \models R(\alpha_1,\ldots,\alpha_\ell) \land (\alpha_{\ell} = \alpha_{\ell + 1} = \cdots = \alpha_k).$$
Note that $\Sa O^*$ admits quantifier elimination and is interdefinable with $\Sa O$.
After possibly replacing $\Sa O$ with $\Sa O^*$ we may suppose that every $R \in L$ is $k$-ary.

\medskip\noindent
Let $(V;R)$ be the generic countable $k$-ary relation.
We may suppose that $L = \{ R_1,\ldots,R_m\}$ where each $R_i$ is $k$-ary.
Note that each $(O;R_i)$ is isomorphic to a substructure of $(V;R)$ and apply Proposition~\ref{prop:fusion}.
\end{proof}

\begin{proposition}
\label{prop:k-dependent}
Fix $k \ge 2$.
The following are equivalent.
\begin{enumerate}
\item $T$ is $(k-1)$-independent,
\item $T$ trace defines $\hyp_k$,
\item $T$ trace defines $\relk_k$,
\item $T$ trace defines any finitely $k$-ary theory.
\end{enumerate}
\end{proposition}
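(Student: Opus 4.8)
The plan is to prove the cycle of implications (1) $\Rightarrow$ (4) $\Rightarrow$ (3) $\Rightarrow$ (2) $\Rightarrow$ (1), using the machinery already assembled in the excerpt. The implications (4) $\Rightarrow$ (3) and (3) $\Rightarrow$ (2) are immediate: (4) specializes to the finitely $k$-ary theories $\relk_k$ and $\hyp_k$ (both are finitely homogeneous, hence finitely $\ell$-ary for some $\ell$, and in fact $k$-ary), and $\relk_k$ trace defines $\hyp_k$ by Proposition~\ref{prop:random} together with transitivity (Proposition~\ref{prop:trace-basic}). The implication (2) $\Rightarrow$ (1) is Proposition~\ref{prop:k-dep-0} verbatim: $T$ is $(k-1)$-dependent iff $T$ does not trace define $\hyp_k$, so trace defining $\hyp_k$ forces $T$ to be $(k-1)$-independent.

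The substantive implication is (1) $\Rightarrow$ (4). First I would reduce to trace defining $\relk_k$: by Lemma~\ref{lem:airity} the generic countable $k$-ary relation trace defines any countable finitely $k$-ary structure, and by Proposition~\ref{prop:trace-theories} (applied with $\relk_k$ as the intermediate theory, passing to a sufficiently saturated model) it then suffices to show that $T$ trace defines $\relk_k$. By Proposition~\ref{prop:trace-theories} again we may replace $T$ by an $\aleph_1$-saturated $\Sa M \models T$ and show $\Sa M$ trace defines $\relk_k$. Since $\Sa M$ is $(k-1)$-independent, unwinding the definition of $(k-1)$-dependence there is a formula $\varphi(x; y_1,\ldots,y_{k-1})$, partitioning the variables into $k$ blocks, and sequences $(a^1_i), \ldots, (a^{k-1}_i)$ together with parameters $(b_{s})$ indexed by subsets $s \subseteq \upomega^{k-1}$, witnessing that $\varphi$ has the $(k-1)$-independence property: for every $s$ there is a parameter realizing exactly the tuples in $s$. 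From this one extracts, exactly as in the $\mathrm{IP}$ case of Proposition~\ref{prop:trace-0}, a definable $k$-ary relation on a (product of) definable set(s) into which the generic countable $k$-ary relation embeds; the point is that the $(k-1)$-IP data lets one code an arbitrary $k$-ary relation $S$ on a countable set $A$ by sending $a \mapsto (a^1_{g_1(a)},\ldots,a^{k-1}_{g_{k-1}(a)})$ for suitable injections $g_j$ and reading off $S$ via $\varphi$ with an appropriate parameter. Then Proposition~\ref{prop:qe} and quantifier elimination for $\relk_k$ finish the argument.

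The step I expect to be the main obstacle is making the embedding of $\relk_k$ into a definable $k$-ary relation precise from the definition of $(k-1)$-dependence, in the same spirit as the ``Claim'' inside the proof of Proposition~\ref{prop:random}. There one needed a combinatorial lemma turning an arbitrary $k$-ary relation into a $k$-hypergraph on a $k$-th power; here the analogous move is to arrange the coding so that the $k$ argument-slots of $\varphi$ are genuinely independent, which requires choosing the index functions $g_1,\ldots,g_{k-1}$ and the parameter family carefully and invoking $\aleph_1$-saturation to realize the required type. Alternatively — and this may be cleaner — one can route through indiscernible collapse: by Proposition~\ref{prop:k-dep-0} (or Proposition~\ref{prop:picture} applied to the Ramsey structure $(\hyp_k,\triangleleft)$ via Proposition~\ref{prop:ramse1}, using that $\hyp_k$ is free homogeneous) $(k-1)$-independence of $T$ already gives that $\monster$ trace defines $\hyp_k$, hence trace defines $\relk_k$ by Proposition~\ref{prop:random}, hence trace defines every finitely $k$-ary theory by Lemma~\ref{lem:airity} and transitivity. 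I would present the proof this second way, since it reuses results already proved rather than redoing the combinatorial extraction, and reserve the direct coding argument as a remark.
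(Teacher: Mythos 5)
Your preferred route (the second one, via indiscernible collapse / existing machinery) is correct and is essentially the paper's proof: the paper also establishes $(1)\Rightarrow(2)$ via Proposition~\ref{prop:k-dep-0}, $(2)\Leftrightarrow(3)$ via Proposition~\ref{prop:random}, $(3)\Rightarrow(4)$ via Lemma~\ref{lem:airity}, and $(4)\Rightarrow(3)$ trivially; the only cosmetic difference is that the paper closes the loop with $(3)\Rightarrow(1)$ using preservation of $k$-dependence (Proposition~\ref{prop:k-dep}, since $\relk_k$ is itself $(k-1)$-independent), whereas you close it with $(2)\Rightarrow(1)$ using Proposition~\ref{prop:k-dep-0} a second time, and both are immediate. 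You were also right to abandon the direct coding sketch: it would amount to reproving the indiscernible-collapse characterization of $k$-dependence from \cite{cpt}, which Proposition~\ref{prop:k-dep-0} already packages.
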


% \noindent
% An \textbf{ordered $k$-hypergraph} is a structure $(V;E,<)$ where $(V;E)$ is a $k$-hypergraph and $(V;<)$ is a linear order.
% Finite ordered $k$-hypergraphs form a \Fraisse class with the Ramsey property, we refer to the \Fraisse limit of this class as the generic countable ordered $k$-hypergraph.
% If $(V;E,<)$ is the generic countable ordered $k$-hypergraph then $(V;E) \models \hyp_k$.
% We now prove Proposition~\ref{prop:k-dependent}.

\begin{proof}
% It is enough to show that $(1) - (4)$ are equivalent.
Proposition~\ref{prop:random} shows that $(2)$ and $(3)$ are equivalent.
The random $k$-ary relation is $(k-1)$-independent, so $(3)$ implies $(1)$ by Proposition~\ref{prop:k-dep}.
It is clear that $(4)$ implies $(3)$.
By Lemma~\ref{lem:airity} $(3)$ implies $(4)$.
Proposition~\ref{prop:k-dep-0} shows that $(1)$ implies $(2)$.
\end{proof}

Corollary~\ref{cor:henson} follows from Proposition~\ref{prop:k-dependent}.

\begin{corollary}
\label{cor:henson}
Any finitely binary $\mathrm{IP}$ structure is trace equivalent to the Erd\H{o}s-Rado graph.
\end{corollary}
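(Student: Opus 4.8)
The plan is to apply Proposition~\ref{prop:k-dependent} with $k=2$. Recall that a finitely binary structure is by definition finitely $2$-ary, and that an $\mathrm{IP}$ theory is precisely a $1$-independent theory (``$\mathrm{IP}$'' and ``$1$-independent'' being synonymous). So let $\Sa O$ be a finitely binary $\mathrm{IP}$ structure; I want to show $\Sa O$ and the Erd\H{o}s-Rado graph are trace equivalent, which by Proposition~\ref{prop:omega cat} amounts to showing each trace defines the other.

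For the first direction: since $\Sa O$ is $\mathrm{IP}$, i.e. $1$-independent, Proposition~\ref{prop:k-dependent} (with $k=2$) tells us that $\Th(\Sa O)$ trace defines $\hyp_2$, which is the theory of the Erd\H{o}s-Rado graph; hence $\Sa O$ trace defines the Erd\H{o}s-Rado graph (using Proposition~\ref{prop:trace-theories}, or directly Proposition~\ref{prop:omega cat} since $\Sa O$ is $\aleph_0$-categorical). For the converse: the Erd\H{o}s-Rado graph is itself a finitely binary $\mathrm{IP}$ structure, so by the same instance of Proposition~\ref{prop:k-dependent}, clause (4), its theory trace defines any finitely $2$-ary theory; in particular $\hyp_2$ trace defines $\Th(\Sa O)$, so the Erd\H{o}s-Rado graph trace defines $\Sa O$. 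Combining the two directions gives trace equivalence.

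There is essentially no obstacle here once Proposition~\ref{prop:k-dependent} is in hand; the only point requiring a moment's care is the bookkeeping between theories and structures, which is handled uniformly by $\aleph_0$-categoricity of all the structures involved via Proposition~\ref{prop:omega cat}, and the observation that ``finitely binary'' is exactly ``finitely $2$-ary'' so that clause (4) of Proposition~\ref{prop:k-dependent} applies to $\Th(\Sa O)$.

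\begin{proof}
By Proposition~\ref{prop:omega cat} it suffices to show that the Erd\H{o}s-Rado graph and $\Sa O$ each trace define the other. Since $\Sa O$ is $\mathrm{IP}$, it is $1$-independent, so by Proposition~\ref{prop:k-dependent} (with $k = 2$) $\Th(\Sa O)$ trace defines $\hyp_2$, the theory of the Erd\H{o}s-Rado graph. Conversely, the Erd\H{o}s-Rado graph is a finitely $2$-ary $\mathrm{IP}$ structure, so by Proposition~\ref{prop:k-dependent} applied to $\hyp_2$, clause (4), $\hyp_2$ trace defines the finitely $2$-ary theory $\Th(\Sa O)$. Hence $\Sa O$ and the Erd\H{o}s-Rado graph are trace equivalent.
\end{proof}
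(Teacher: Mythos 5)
Your proof is correct and is essentially the proof the paper intends: the paper's entire justification is "Corollary~\ref{cor:henson} follows from Proposition~\ref{prop:k-dependent}," and you have simply unpacked that, applying $(1)\Rightarrow(2)$ to $\Th(\Sa O)$ and $(1)\Rightarrow(4)$ to $\hyp_2$, with the bookkeeping between theories and structures handled by $\aleph_0$-categoricity and Proposition~\ref{prop:omega cat}.
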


We can now prove the binary case of the Weak Conjecture.
Proposition~\ref{prop:binary} follows from Corollary~\ref{cor:henson} and the theorem of Onshuus and Simon~\cite{onshuus-simon} that there are only countably many binary finitely homogeneous $\nip$ structures up to isointerdefinibility.

\begin{proposition}
\label{prop:binary}
$\Upomega_2\le\aleph_0$.
\end{proposition}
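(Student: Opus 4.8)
The plan is to combine the two tools the excerpt has already set up. First I would invoke Corollary~\ref{cor:henson}: any finitely binary $\mathrm{IP}$ structure is trace equivalent to the Erd\H{o}s-Rado graph. Since a finitely homogeneous structure is finitely $k$-ary for some $k$ (this is noted right after Fact~\ref{fact:airity}), a binary finitely homogeneous structure is either $\nip$ or $\mathrm{IP}$, and in the $\mathrm{IP}$ case it is finitely binary and hence, by Corollary~\ref{cor:henson}, lies in the single trace equivalence class of the Erd\H{o}s-Rado graph. So all the $\mathrm{IP}$ binary finitely homogeneous structures contribute exactly one trace equivalence class.

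Next I would handle the $\nip$ binary finitely homogeneous structures. Here the input is the Onshuus--Simon theorem~\cite{onshuus-simon}: there are only countably many binary finitely homogeneous $\nip$ structures up to isointerdefinibility. Isointerdefinable structures are in particular trace equivalent (isointerdefinibility implies mutual interpretability, hence mutual trace definibility by Proposition~\ref{prop:trace-interpret}; or directly, a reduct is trace definable by Proposition~\ref{prop:qe-trace}-type reasoning, and an isomorphic copy changes nothing). Therefore the $\nip$ binary finitely homogeneous structures fall into at most $\aleph_0$ trace equivalence classes.

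Putting the two pieces together, every binary finitely homogeneous structure is trace equivalent either to the Erd\H{o}s-Rado graph or to one of countably many $\nip$ representatives, so there are at most $\aleph_0$ trace equivalence classes of binary finitely homogeneous structures; in particular $\Upomega_2 \le \aleph_0$. I do not anticipate a genuine obstacle: the real content is entirely in Corollary~\ref{cor:henson} (whose hard part, in turn, is Proposition~\ref{prop:k-dependent}) and in the cited Onshuus--Simon classification; the remaining step is just the bookkeeping observation that isointerdefinibility refines trace equivalence. The one point to state carefully is that "binary" for a finitely homogeneous structure already forces "finitely binary" via $\aleph_0$-categoricity (Fact~\ref{fact:airity}), so that Corollary~\ref{cor:henson} genuinely applies to the $\mathrm{IP}$ case.

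\begin{proof}
Let $\Sa O$ be a binary finitely homogeneous structure. By Fact~\ref{fact:homo} $\Sa O$ is $\aleph_0$-categorical, so by Fact~\ref{fact:airity} $\Sa O$ is finitely binary. If $\Sa O$ is $\mathrm{IP}$ then by Corollary~\ref{cor:henson} $\Sa O$ is trace equivalent to the Erd\H{o}s-Rado graph. If $\Sa O$ is $\nip$ then, by the theorem of Onshuus and Simon~\cite{onshuus-simon}, $\Sa O$ is isointerdefinable with one of countably many binary finitely homogeneous $\nip$ structures; since isointerdefinable structures are trace equivalent (a reduct is trace definable by Proposition~\ref{prop:qe-trace}, mutual interpretability gives mutual trace definibility by Proposition~\ref{prop:trace-interpret}, and passing to an isomorphic copy does not change the trace equivalence class), $\Sa O$ lies in one of at most $\aleph_0$ trace equivalence classes. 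Hence the binary finitely homogeneous structures realize at most $\aleph_0 + 1 = \aleph_0$ trace equivalence classes, so $\Upomega_2 \le \aleph_0$.
\end{proof}
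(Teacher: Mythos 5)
Your proof is correct and follows exactly the route the paper takes: the paper's stated argument for $\Upomega_2\le\aleph_0$ is precisely to combine Corollary~\ref{cor:henson} (every finitely binary $\mathrm{IP}$ structure is trace equivalent to the Erd\H{o}s--Rado graph) with the Onshuus--Simon classification of binary finitely homogeneous $\nip$ structures up to isointerdefinibility. The only extra detail you spell out---that isointerdefinibility refines trace equivalence and that ``binary'' together with $\aleph_0$-categoricity gives ``finitely binary'' via Fact~\ref{fact:airity}---is correct and is implicitly used by the paper as well.
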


Corollary~\ref{cor:henson} also reduces the binary case of the Strong Conjecture to the binary $\nip$ case.

\medskip
The Ramsey case of the conjectures is of particular interest owing to the connection to indiscernible collapse.

\begin{corollary}
\label{cor:ramsey fh}
Suppose that $\Sa O$ is finitely homogeneous and one of the following holds:
\begin{itemize}
\item $\age(\Sa O)$ has the Ramsey property, or 
\item $\age(\Sa O)$ is an unstable free amalgamation class.
\end{itemize}
Fix $k \ge 2$.
Then
\begin{enumerate}
\item $\Sa O$ is trace definable in the generic countable $k$-hypergraph if and only if $\Sa O$ is $k$-ary.
\item $\Sa O$ is trace equivalent to the generic countable $k$-hypergraph if and only if $\Sa O$ is $k$-ary and $
(k - 1)$-independent.
\end{enumerate}
\end{corollary}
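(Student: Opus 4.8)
The plan is to assemble the statement from Proposition~\ref{prop:airity}, Lemma~\ref{lem:airity}, Proposition~\ref{prop:k-dependent} and Proposition~\ref{prop:omega cat}; the hypothesis on $\age(\Sa O)$ enters only to license an application of Proposition~\ref{prop:airity} to $\Sa O$ itself. Indeed, under either clause $\Sa O$ is of the form required by Proposition~\ref{prop:airity}: if $\age(\Sa O)$ has the Ramsey property then $\Sa O$ is a finitely homogeneous Ramsey structure, and if $\age(\Sa O)$ is an unstable free amalgamation class then $\Sa O$ is an unstable free homogeneous structure. I also note once and for all that $\Sa O$ and the generic countable $k$-hypergraph are both finitely homogeneous, hence $\aleph_0$-categorical with quantifier elimination by Fact~\ref{fact:homo}, so by Proposition~\ref{prop:omega cat} I may pass freely between ``$\Sa O$ is trace definable in the generic countable $k$-hypergraph'' and ``$\hyp_k$ trace defines $\Sa O$'', and two such structures are trace equivalent if and only if each trace defines the other.

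For part (1): if $\Sa O$ is $k$-ary, then since $\Sa O$ is $\aleph_0$-categorical it is finitely $k$-ary by Fact~\ref{fact:airity}, and so the generic countable $k$-hypergraph trace defines $\Sa O$ by Lemma~\ref{lem:airity}. Conversely, suppose the generic countable $k$-hypergraph trace defines $\Sa O$; then $\hyp_k$ trace defines $\Sa O$, so $\air(\Sa O) \le \air(\hyp_k)$ by Proposition~\ref{prop:airity} (here is where the hypothesis on $\age(\Sa O)$ is used). Since $\hyp_k$ has quantifier elimination in a language with a single $k$-ary relation symbol it is $k$-ary, so $\air(\hyp_k) \le k$ and hence $\Sa O$ is $k$-ary.

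For part (2): if $\Sa O$ is $k$-ary and $(k-1)$-independent, then by part (1) the generic countable $k$-hypergraph trace defines $\Sa O$, while Proposition~\ref{prop:k-dependent} applied to $\Th(\Sa O)$ shows $\Th(\Sa O)$ trace defines $\hyp_k$, hence (Proposition~\ref{prop:omega cat}) $\Sa O$ trace defines the generic countable $k$-hypergraph; so the two are trace equivalent. Conversely, if $\Sa O$ is trace equivalent to the generic countable $k$-hypergraph then in particular the latter trace defines $\Sa O$, so $\Sa O$ is $k$-ary by part (1); and $\Th(\Sa O)$ trace defines $\hyp_k$, so $\Th(\Sa O)$ is $(k-1)$-independent by Proposition~\ref{prop:k-dependent}.

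The proof is essentially bookkeeping on top of earlier results: the only substantive inputs are Proposition~\ref{prop:airity} — whose proof uses the Ramsey and free-amalgamation hypotheses through indiscernible collapse — and the combinatorial trace definition underlying Lemma~\ref{lem:airity}. The one point requiring a little care is keeping the structure-level and theory-level notions of trace definibility in sync, which is precisely what Proposition~\ref{prop:omega cat} supplies for $\aleph_0$-categorical structures in finite languages; and one should record that $\hyp_k$ is $k$-ary, so that Proposition~\ref{prop:airity} yields the sharp bound $\air(\Sa O)\le k$ rather than something weaker. I do not anticipate any genuine obstacle.
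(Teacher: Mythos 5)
Your proof is correct and follows essentially the same route as the paper: the paper proves the Ramsey case from Lemma~\ref{lem:airity}, Proposition~\ref{prop:airity}, and Proposition~\ref{prop:k-dependent}, then reduces the unstable free amalgamation case to the Ramsey case by passing to the ordered expansion $(\Sa O,\triangleleft)$. You streamline the bookkeeping by observing that Proposition~\ref{prop:airity} is already stated for both hypothesis classes (its own proof internalizes exactly that reduction via $(\Sa I,\triangleleft)$), so the two clauses can be treated uniformly rather than by an explicit case split at the top level.
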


The example discussed after Proposition~\ref{prop:airity} shows that Corollary~\ref{cor:ramsey fh} does not hold for arbitrary finitely homogeneous structures.

\begin{proof}
Suppose that $\age(\Sa O)$ has the Ramsey property.
Then (1) follows from Lemma~\ref{lem:airity}  and Proposition~\ref{prop:airity} and (2) follows from (1) and Proposition~\ref{prop:k-dependent}.
Suppose $\age(\Sa O)$ is an unstable free amalgamation class.
% If $\Sa O$ is stable then $\Sa O$ is interpretable in $(\Q;<)$ by Fact~\ref{fact:lachlan-big} , hence $\Sa O$ is trace definable [??] so (1) holds.
% Furthermore $\Sa O$ is $(k - 1)$-dependent, so (2) holds.
We now let $(\Sa O,\triangleleft)$ be as defined before Fact~\ref{fact:lo-exp}.
By Lemma~\ref{lem:free homo} and Proposition~\ref{prop:omega cat} the generic $k$-hypergraph trace defines $\Sa O$ if and only if it trace defines $(\Sa O,\triangleleft)$.
As $(\Sa O,\triangleleft)$ admits quantifier elimination and $k \ge 2$ we see that $(\Sa O,\triangleleft)$ is $k$-ary if and only if $\Sa O$ is $k$-ary and by Lemma~\ref{lem:free homo} $(\Sa O,\triangleleft)$ is $(k - 1)$-dependent iff $\Sa O$ is $(k - 1)$-dependent.
So we replace $\Sa O$ with $(\Sa O,\triangleleft)$.
Apply Fact~\ref{fact:lo-exp} and the Ramsey case. 
\end{proof} 

\begin{corollary}
\label{cor:fusions}
Fix $k \ge 2$.
Suppose that $L$ is relational, $L^*$ is an expansion of $L$ by relations of airity $\le k$, $\Sa M$ is a $(k- 1)$-independent $L$-structure, $\Sa O$ is an $L^*$-structure expanding $\Sa M$, and $\Sa O$ admits quantifier elimination.
Then $\Sa M$ and $\Sa O$ are trace equivalent.
\end{corollary}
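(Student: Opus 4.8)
The plan is to establish the two trace definitions separately. The direction $\Sa O$ trace defines $\Sa M$ is immediate: since $\Sa M=\Sa O\!\upharpoonright\!L$ is a reduct of $\Sa O$, every $\Sa M$-definable set is $\Sa O$-definable, so $\Sa O$ trace defines $\Sa M$ via the identity $O\to O$ (equivalently, $\Sa O$ interprets $\Sa M$ trivially, so apply Proposition~\ref{prop:trace-interpret}). The content is the reverse direction, $\Sa M$ trace defines $\Sa O$, where the $(k-1)$-independence of $\Sa M$ must be used to absorb the new relations, which need not be $\Sa M$-definable.

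For that direction I would first reduce, exactly as in the proof of Lemma~\ref{lem:airity}, to the case where every relation of $L^*\setminus L$ has airity exactly $k$: replace a $j$-ary $R\in L^*\setminus L$ with $j<k$ by a $k$-ary $R^*$ satisfying $R^*(x_1,\dots,x_k)\leftrightarrow R(x_1,\dots,x_j)\wedge x_j=\dots=x_k$; this preserves quantifier elimination, is interdefinable with $\Sa O$, and leaves $\Sa O\!\upharpoonright\!L=\Sa M$ untouched. Recall from the introduction that to show $\Th(\Sa M)$ trace defines $\Th(\Sa O)$ it is enough to exhibit a single model of $\Th(\Sa O)$ trace definable in a model of $\Th(\Sa M)$, so I would fix $\Sa M^*\succeq\Sa M$ sufficiently saturated and aim to trace-define $\Sa O$ itself in $\Sa M^*$. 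Now apply Proposition~\ref{prop:fusion} to $\Sa O$ with the splitting $L^*=L\sqcup(L^*\setminus L)$. The $L$-reduct of $\Sa O$ is $\Sa M\prec\Sa M^*$, hence trace definable in $\Sa M^*$ by Proposition~\ref{prop:trace-basic}.3. For $\Sa O_2:=\Sa O\!\upharpoonright\!(L^*\setminus L)$, whose relations are all $k$-ary: for each $R\in L^*\setminus L$ pick a $|O|^+$-saturated model $\Sa W_R$ of $\relk_k$, so every $k$-ary relational structure of size $\le|O|$ embeds into $\Sa W_R$; let $\Sa P_2$ be the $(L^*\setminus L)$-structure on $\prod_R W_R$ in which each $R$ is read off the $R$-th coordinate. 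Choosing an embedding $e_R$ of $\Sa O_2\!\upharpoonright\!\{R\}$ into $\Sa W_R$ for each $R$, the map $\alpha\mapsto(e_R(\alpha))_R$ embeds $\Sa O_2$ into $\Sa P_2$. Since $\Sa M$ is $(k-1)$-independent, $\Th(\Sa M)$, hence $\Sa M^*$, trace defines $\relk_k$ by Proposition~\ref{prop:k-dependent}, so $\Sa M^*$ trace defines each $\Sa W_R$ and, by Lemma~\ref{lem:disjoint union}, the disjoint union $\bigsqcup_R\Sa W_R$; as $\Sa P_2$ is definable in that disjoint union (the product of the domains is a definable sort and each relation is a coordinate projection), $\Sa M^*$ trace defines $\Sa P_2$. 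Proposition~\ref{prop:fusion} then yields that $\Sa M^*$ trace defines $\Sa O$, so $\Th(\Sa M)$ trace defines $\Th(\Sa O)$, and with the first paragraph this gives trace equivalence.

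The main obstacle is the case in which $L^*\setminus L$ is infinite: then $\prod_R W_R$ and $\bigsqcup_R\Sa W_R$ involve infinitely many sorts, so the final use of Lemma~\ref{lem:disjoint union} and the claim that $\Sa P_2$ is definable in the disjoint union have to be read in the multisorted setting described at the end of Section~\ref{section:few general}. One cannot sidestep this by passing to a finite sublanguage of $L^*$ for each definable set, because the trace-defining injection must be chosen uniformly; so the infinite disjoint union (hence the multisorted version of Lemma~\ref{lem:disjoint union}) is genuinely needed. Everything else is routine: the padding reduction, the embedding $\Sa O_2\hookrightarrow\Sa P_2$, and the saturation bookkeeping are direct, and Proposition~\ref{prop:k-dependent} supplies precisely the trace definibility of $\relk_k$ required, with no further appeal to the structure of the $k$-independence hierarchy.
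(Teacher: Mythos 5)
Your proof is correct and follows essentially the same route as the paper: reduce (as in Lemma~\ref{lem:airity}) to all new relations being $k$-ary, invoke Proposition~\ref{prop:k-dependent} to get that $\Th(\Sa M)$ trace defines $\relk_k$, and then apply Proposition~\ref{prop:fusion}. The one stylistic difference is your choice of the two-piece split $L^* = L \sqcup (L^*\setminus L)$, which forces you to build an auxiliary product structure $\Sa P_2$ on $\prod_R W_R$ and pass through $\bigsqcup_R \Sa W_R$; the paper instead splits $L^*\setminus L$ into singletons $L_i = \{R_{i-1}\}$ and takes each $\Sa P_i$ to simply be a large model of $\relk_k$, which is a cleaner use of Proposition~\ref{prop:fusion} and avoids the product. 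Your concern about infinite $L^*\setminus L$ is fair — the paper's proof writes $L^*\setminus L = \{R_1,\ldots,R_{n-1}\}$ and thus silently restricts to the finite case (which covers every intended application) — but note that your claim that the multisorted version of Lemma~\ref{lem:disjoint union} suffices for infinite $L^*$ is asserted, not shown: an infinite disjoint union of sorts does not obviously embed into a single $M^m$ for fixed $m$, so the one-sorted trace definition would not follow directly and an additional argument would be needed.
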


Hence any expansion of an $\mathrm{IP}$ structure $\Sa M$ by binary relations which admits quantifier elimination is trace equivalent to $\Sa M$.

\begin{proof}
It suffices to show that $\Th(\Sa M)$ trace defines $\Sa O$.
Let $L^* \setminus L = \{R_1,\ldots,R_{n - 1}\}$.
As in the proof of Lemma~\ref{lem:airity} we  suppose that each $R_i$ is $k$-ary.
Let $(V;R)$ be the generic $k$-ary relation, so $\Th(\Sa M)$ trace defines $(V;R)$.
Apply Prop~\ref{prop:fusion} with $L_1 = L$ and $L_i = \{R_{i - 1}\}$ for $i \ge 2$, $\Sa O_1 = \Sa O$ and $\Sa O_i = (M;R_{i - 1})$ for $i \ge 2$, $\Sa P_1 = \Sa O_1$, and $\Sa P_i = (V;R)$ when $i \ge 2$.
\end{proof}

\subsection{Trace definibility in $(\Q;<)$}

%Let $\dlo$ be $\Th(\Q;<)$.
By Proposition~\ref{prop:stable-0} $T$ is unstable if and only if $T$ trace defines $\dlo$, so the minimal unstable trace equivalence class is that of $\dlo$.
By Proposition~\ref{prop:airity} every Ramsey finitely homogeneous structure trace definable in $(\Q;<)$ is binary, so it is natural to ask if the converse holds.
More generally, it is natural to ask if every binary finitely homogeneous structure $\nip$ structure is trace definable in $(\Q;<)$.
By Corollary~\ref{cor:henson} this would go a long way towards showing that $\Upomega_2<\aleph_0$.
We show that some binary finitely homogeneous structures are trace equivalent to $(\Q;<)$.

\medskip
Suppose that $R$ is a $k$-ary relation on $M$.
We say that $R$ is \textit{supported} on $X\subseteq M$ if we have $\neg R(\alpha_1,\ldots,\alpha_k)$ when $\{\alpha_1,\ldots,\alpha_k\}\nsubseteq X$.
Given $X \subseteq M$ we let $R|_X$ be the relation on $M$ given by declaring $R|_X(\alpha_1,\ldots,\alpha_k)$ if and only if $\alpha_1,\ldots,\alpha_k \in X$ and $R(\alpha_1,\ldots,\alpha_k)$ and let $R\!\upharpoonright_X$ be the relation on $X$ given by restricting $R$ to $X$.

\begin{proposition}
\label{prop:in dlo}
Suppose that $L$ consists of finitely many unary relations and $L^*$ is an expansion of $L$ by two finite families $\Cal R$ and $\Cal E$ of binary relations.
Suppose that $\Sa M$ is an $L^*$-structure satisfying the following:
\begin{enumerate}[leftmargin=*]
\item Each $P \in \Cal P$ defines an infinite set.
\item For each $R \in \Cal R$ there is $P \in \Cal P$ such that $R$ is supported on $P$ and $R|_P$ is a linear order.
\item Every $E \in \Cal E$ is an equivalence relation on $M$.
\end{enumerate}
Finally, suppose  $\Sa M$ admits quantifier elimination.
Then $\Sa M$ is trace equivalent to $(\Q;<)$.
\end{proposition}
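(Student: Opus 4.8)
The plan is to prove trace equivalence in both directions, using the characterization of unstable-trace-minimality (Lemma~\ref{lem:stable-0}: $T$ trace defines $\dlo$ iff $T$ is unstable) together with the disjoint-union and fusion machinery of Section~\ref{section:few general}. For the easy direction, $\Sa M$ trace defines $(\Q;<)$: by condition (2) there is at least one $R\in\Cal R$ together with $P\in\Cal P$ such that $R|_P$ is a linear order on the infinite set $P$, so $\Sa M$ interprets (indeed defines on a definable subset) an infinite linear order; by Lemma~\ref{lem:stable-0} any infinite linear order — equivalently, any unstable theory — trace defines $\dlo=(\Q;<)$. (If $\Cal R=\emptyset$ then $\Sa M$ is stable and the statement as phrased is vacuous or requires $\Cal R$ nonempty; I would note this and assume $\Cal R\ne\emptyset$, which is the intended reading since otherwise $\Sa M$ is not unstable.)

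For the hard direction, I would show $\Th(\Q;<)$ trace defines $\Sa M$ by exhibiting $\Sa M$ as trace definable in a disjoint union of copies of $(\Q;<)$ and then invoking Lemma~\ref{lem:stable-0} again (any unstable theory, in particular $\dlo$ itself after Lemma~\ref{lem:disjoint union} gives finite joins) to absorb the disjoint union back into $(\Q;<)$. Concretely: after Morleyizing, I would take $\Sa M$ to be an $\aleph_1$-saturated model with quantifier elimination. The unary relations $\Cal P$ partition (a boolean algebra of) $M$ into finitely many pieces; each linear order $R|_P$, $R\in\Cal R$, lives on one such piece $P$ and embeds into a copy of $(\Q;<)$ by saturation and the fact that a linear order on a countable set embeds into $\Q$; each equivalence relation $E\in\Cal E$ has classes and a quotient, both of which are (colored) sets with equality, hence pure-stably-coded — but more to the point, an equivalence relation with arbitrarily many classes each of arbitrary size is interpretable in the disjoint union of two trivial sets, which is interdefinable with a reduct of $(\Q;<)$ via Lemma~\ref{lem:stable-0}. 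The plan is to write $L^*=L\cup L_1\cup\cdots\cup L_r\cup L_1'\cup\cdots\cup L_s'$ with $L_i=\{R_i\}$ for the $R_i\in\Cal R$ and $L_j'=\{E_j\}$ for the $E_j\in\Cal E$, apply Proposition~\ref{prop:fusion} with $\Sa O=\Sa M$, $\Sa O_0=\Sa M\!\upharpoonright\!L$ (a structure in finitely many unary relations, which has quantifier elimination by Fact~\ref{fact:unary} and is trivially trace definable in any infinite structure), $\Sa O_i=\Sa M\!\upharpoonright\!(L\cup L_i)$ reducing to a linear order on a definable piece, and $\Sa O_j'=\Sa M\!\upharpoonright\!(L\cup L_j')$ reducing to an equivalence relation. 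For each factor I would produce a structure $\Sa P_i$ (resp.\ $\Sa P_j'$) into which $\Sa O_i$ embeds and which is trace definable in $(\Q;<)$: for the linear orders take $\Sa P_i$ to be $(\Q;<)$ expanded by a unary predicate marking the image (convex or arbitrary — handled by Fact~\ref{fact:convex} / Proposition~\ref{prop:she-0}, or directly since $(\Q;<,U)$ for $U$ a unary predicate is unstable hence trace equivalent to $(\Q;<)$ by Corollary~\ref{cor:poizat-b} or Proposition~\ref{prop:new order}); for the equivalence relations take $\Sa P_j'$ to be an appropriate generic equivalence structure, which is stable but interpretable in the disjoint union of two trivial infinite sets and hence trace definable in $(\Q;<)$ by Lemma~\ref{lem:disjoint union}.

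The main obstacle is the equivalence-relation factors: a single equivalence relation $E$ on an infinite set with, say, infinitely many infinite classes is \emph{stable}, so it is \emph{not} unstable and cannot directly inherit trace definability from $(\Q;<)$ via Lemma~\ref{lem:stable-0}. The resolution I would pursue is that such an $E$ is interpretable in the disjoint union $S_1\sqcup S_2$ of two pure infinite sets (one indexing classes, one indexing "slots"), each pure set is a reduct of $(\Q;<)$ (forget the order), hence by Lemma~\ref{lem:disjoint union} $(\Q;<)\sqcup(\Q;<)$ trace defines $E$, and by Lemma~\ref{lem:disjoint union} applied to $\Sa M$ (take joins) together with the fact that $(\Q;<)\sqcup(\Q;<)$ is trace equivalent to $(\Q;<)$ (it is unstable, binary, and an infinite linear order defines a copy of $(\Q;<)\sqcup(\Q;<)$ on a definable set — or cite Proposition~\ref{prop:new order}), we get $(\Q;<)$ trace defines $E$. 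One must be careful that the \emph{combined} structure on $M$ — where the same elements carry unary colors, lie in linear-order pieces, \emph{and} are partitioned by each $E_j$ — is correctly assembled; this is exactly what Proposition~\ref{prop:fusion} is designed for, since $\Sa M$ has quantifier elimination and every atomic formula of $L^*$ is either unary (in $L$) or binary in a single $R_i$ or $E_j$, so $\Sa M$ is the "free fusion over $\Sa M\!\upharpoonright\!L$" of its $\{R_i\}$- and $\{E_j\}$-reducts. Putting the pieces together: Proposition~\ref{prop:fusion} yields that $\Sa M$ is trace definable in $\bigsqcup_i\Sa P_i\sqcup\bigsqcup_j\Sa P_j'$, each summand is trace definable in $(\Q;<)$, so by Lemma~\ref{lem:disjoint union} the disjoint union is too (after noting the finite join of trace equivalence classes of copies of $(\Q;<)$ is again the class of $(\Q;<)$, since $(\Q;<)$ is unstable so trace defines $\dlo^{\sqcup n}$, using Lemma~\ref{lem:stable-0} and Lemma~\ref{lem:disjoint union}), giving $\Th(\Q;<)$ trace defines $\Sa M$. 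Combined with the easy direction, $\Sa M$ and $(\Q;<)$ are trace equivalent.
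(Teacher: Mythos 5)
Your overall strategy matches the paper's: both directions via unstability and Proposition~\ref{prop:fusion}, decomposing $\Sa M$ into a unary part, one factor per $R\in\Cal R$, and one factor per $E\in\Cal E$. You also correctly flag that the statement tacitly requires $\Cal R\neq\emptyset$. For the equivalence-relation factors you take a slightly different route: you embed $(M;E)$ into a structure definable in the disjoint union $S_1\sqcup S_2$ of two pure sets, whereas the paper's Lemma~\ref{lem:eq relation} embeds $(M;E)$ into $(M^2;E^*)$ with $E^*((a,a'),(b,b'))\Leftrightarrow a=b$ via $a\mapsto(\upsigma(a),a)$, which is definable in the \emph{single} trivial structure $M$. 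The two are essentially the same idea; the paper's version is a bit more economical (no disjoint union needed at that stage), but yours works too.

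The real gap is in your handling of the linear-order factors. You write that $(\Q;<,U)$ for arbitrary $U$ is ``unstable hence trace equivalent to $(\Q;<)$'' — but unstability of a theory $T$ shows only that $T$ trace defines $(\Q;<)$ (Lemma~\ref{lem:stable-0}), not the converse, so this gives the wrong direction. Your other cited alternatives are conditional: Fact~\ref{fact:convex} / Proposition~\ref{prop:she-0} / Corollary~\ref{cor:poizat-b} apply only when $U$ is (a family of) convex set(s), and Proposition~\ref{prop:new order} requires a quantifier-elimination hypothesis you do not verify. For an arbitrary embedding $\upiota:P\to\Q$ the image need not be convex and $(\Q;<,\upiota(P))$ need not admit QE, so the chain breaks. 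The fix — and this is what the paper does — is to \emph{extend} $(\Sa M_L,R)$ to a countable $(L\cup\{R\})$-structure $\Sa P$ in which $R$ defines a \emph{dense} linear order on (an enlargement of) the relevant $P_i$ and all color classes remain infinite; one can then arrange for the color classes to sit in disjoint intervals of $\Q$, so that $\Sa P$ is outright definable in $(\Q;<)$ with parameters, and no Shelah-expansion or convexity lemma is needed at all. Equivalently, if you insist on going through Corollary~\ref{cor:poizat-b}, you must first densify and then choose $\upiota$ so that each color class (and in particular $\upiota(P)$) is an interval — ``arbitrary $U$'' does not work.
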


\begin{lemma}
\label{lem:eq relation}
Suppose that $E$ is an equivalence relation on $M$.
Let $E^*$ be the equivalence relation on $M^2$ given by $E^*((a,a'), (b,b')) \Longleftrightarrow a = b$.
Then $(M;E)$ embedds into $(M^2;E^*)$.
\end{lemma}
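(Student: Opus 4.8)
\textbf{Proof proposal for Lemma~\ref{lem:eq relation}.}

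The plan is to exhibit an explicit embedding of $(M;E)$ into $(M^2;E^*)$ and then simply check that it is injective and that it preserves and reflects the relation $E$. First I would pick an arbitrary section of the quotient map $M \to M/E$; that is, for each $E$-class $c$ fix a representative $\rho(c) \in c$, and write $\rho(a)$ for the chosen representative of the class of $a$. Then I would define the map $\mathbf{e} \colon M \to M^2$ by
\[
\mathbf{e}(a) = (\rho(a), a) \qquad \text{for all } a \in M.
\]
The second coordinate makes $\mathbf{e}$ manifestly injective: if $\mathbf{e}(a) = \mathbf{e}(b)$ then in particular $a = b$. The first coordinate is designed so that $\mathbf{e}$ reads off the $E$-class of $a$ in a way that is visible to $E^*$.

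Next I would verify the relation-preservation condition. By definition $E^*(\mathbf{e}(a), \mathbf{e}(b))$ holds if and only if the first coordinates of $\mathbf{e}(a)$ and $\mathbf{e}(b)$ agree, i.e. $\rho(a) = \rho(b)$. Since $\rho$ sends $a$ and $b$ to the \emph{same} element exactly when $a$ and $b$ lie in the same $E$-class, we get $\rho(a) = \rho(b) \Longleftrightarrow E(a,b)$, using that $E$ is an equivalence relation (so "same class" is literally $E(a,b)$). Hence
\[
E^*(\mathbf{e}(a), \mathbf{e}(b)) \quad\Longleftrightarrow\quad \rho(a) = \rho(b) \quad\Longleftrightarrow\quad E(a,b),
\]
which is precisely what is needed for $\mathbf{e}$ to be an embedding of the one-relation structure $(M;E)$ into $(M^2;E^*)$.

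There is essentially no obstacle here: the only mild subtlety is the choice of the section $\rho$, which uses the axiom of choice when $M/E$ is infinite, but this is harmless. One could also phrase the argument without choice by noting that any map $M \to M$ that is constant on $E$-classes and whose image meets every class in exactly one point will do for the first coordinate; such a map exists in ZF once one is willing to index classes by themselves, but for our purposes the choice-based version is perfectly adequate. The lemma will then be applied in the proof of Proposition~\ref{prop:in dlo} to reduce each equivalence relation $E \in \Cal E$ to one of the standard form $E^*$, which is in turn handled by the linear orders already available.
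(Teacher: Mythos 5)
Your proof is correct and matches the paper's approach essentially verbatim: both define the embedding $a \mapsto (\upsigma(a),a)$ where $\upsigma\colon M\to M$ is any function with $E(a,b)\Longleftrightarrow\upsigma(a)=\upsigma(b)$ (you take $\upsigma$ to be a section of $M\to M/E$, the paper simply notes $|M/E|\le|M|$ to get such a $\upsigma$), and then check injectivity via the second coordinate and relation-preservation via the first.
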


\begin{proof}
As $|M/E| \le |M|$ we fix a function $\upsigma \colon M \to M$ such that $E(a,b) \Longleftrightarrow \upsigma(a) = \upsigma(b)$.
Then the map $M \to M^2$, $a \mapsto (\upsigma(a),a)$ gives an embedding $(M;E) \to (M^2;E^*)$.
\end{proof}

We now prove Proposition~\ref{prop:in dlo}.

\begin{proof}
Note that $\Sa M$ is unstable, hence $\Th(\Sa M)$ trace defines $(\Q;<)$.
After passing to a countable elementary substructure we suppose that $\Sa M$ is countable.
We show that $(\Q;<)$ trace defines $\Sa M$.
We may suppose that $L = \{P_1,\ldots,P_n\}$.
For each $I \subseteq \{1,\ldots,n\}$ let $P_I = \left(\bigwedge_{i \in I} P_i\right) \land \left(\bigwedge_{i \notin I} \neg P_i\right)$.
After replacing $L$ with the set of $P_I$ that define an infinite set and replacing $\Cal R$ with the set of $R|_{P_I}$ such that $R\!\upharpoonright_{P_I}$ defines a linear order we suppose that the $P_i$ define pairwise disjoint sets.
Let $\Sa M_L$ be the $L$-reduct of $\Sa M$.
We consider $\Sa M$ as the fusion of the following structures: $(\Sa M_L,R)$ for each $R \in \Cal R$ and $(\Sa M_L,E)$ for each $E \in \Cal E$.
By Proposition~\ref{prop:fusion} it is enough to show that each $(\Sa M_L,R)$ and $(\Sa M_L,E)$ is a substructure of a structure trace definable in $(\Q;<)$.
Fix $R \in \Cal R$.
It is easy to see that $(\Sa M_L,R)$ is a substructure of a countable $L \cup \{R\}$-structure $\Sa P$ such that $R$ defines a dense linear order on some $P_i$.
It is easy to see that $\Sa P$ is definable in $(\Q;<)$.
Now fix $E \in \Cal E$.
Then we consider $(\Sa M_L,E)$ to be the fusion of $(M;E)$ with the structures $(M;P)$ for $P \in \Cal P$.
Each $(M;P)$ is interpretable in the trivial structure $M$ and by Lemma~\ref{lem:eq relation} $(M;E)$ is a substructure of a structure interpretable in the trivial structure $M$.
\end{proof}

We now give some special cases of Proposition~\ref{prop:in dlo}.

\medskip
Given $2\le \uplambda \le \upomega$ we let $\dloeq^\uplambda$ be the theory of $(L;<,E)$ where $(L;<) \models \dlo$ and $E$ is an equivalence relation on $L$ with $\uplambda$ classes each of which is dense and codense in $L$.
It is easy to see that each $\dloeq^\uplambda$ is $\aleph_0$-categorical and admits quantifier elimination.
Note that $\dloeq^2$ is just the theory of $(L;<,X)$ when $(L;<) \models \dlo$ and $X\subseteq L$ is dense and co-dense.

\begin{proposition}
\label{prop:dcd}
$\dlo$ is trace equivalent to, but does not interpret, $\dloeq^\uplambda$.
\end{proposition}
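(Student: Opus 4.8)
The plan has two halves. For \textbf{trace equivalence}, one direction is immediate: a model of $\dloeq^\uplambda$ is unstable (it expands a dense linear order), so by Lemma~\ref{lem:stable-0} its theory trace defines $\dlo$; equivalently $(\Q;<)$ has quantifier elimination and embeds into the $\{<\}$-reduct of any $\dloeq^\uplambda$-model, so Proposition~\ref{prop:qe-trace} applies. For the converse I would invoke Proposition~\ref{prop:in dlo}. Fix a model $(L;<,E)$ of $\dloeq^\uplambda$ and expand it by a unary predicate $P$ naming all of $L$; this expansion is interdefinable with $(L;<,E)$ and, since $\dloeq^\uplambda$ has quantifier elimination and $P$ is $\emptyset$-definable, it still has quantifier elimination. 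Taking $\Cal P=\{P\}$, $\Cal R=\{<\}$, $\Cal E=\{E\}$, all the hypotheses of Proposition~\ref{prop:in dlo} hold: $P$ defines an infinite set, $<$ is supported on $P$ (vacuously) and $<|_P=\;<$ is a linear order, and $E$ is an equivalence relation. Hence $(L;<,E)$ is trace equivalent to $(\Q;<)$, and since both $\dlo$ and $\dloeq^\uplambda$ are $\aleph_0$-categorical, Proposition~\ref{prop:omega cat} upgrades this to trace equivalence of the two theories. Note this argument is uniform in $\uplambda$, finite or infinite, because $\Cal E=\{E\}$ handles any number of classes at once.

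For \textbf{non-interpretability}, suppose toward a contradiction that $(\Q;<)$ interprets a model $(L;<_L,E)$ of $\dloeq^\uplambda$. Then there is a definable $D\subseteq\Q^n$, a definable equivalence relation $E_0$ on $D$, and a definable identification of $D/E_0$ with $L$; pulling $<_L$ and a fixed $E$-class $X\subseteq L$ (which is dense and codense in $L$, and definable over one parameter) back along $D\to D/E_0$ yields a definable quasi-order $\preceq$ on $D$ whose associated equivalence is exactly $E_0$, together with a definable $E_0$-invariant $\widehat X\subseteq D$ mapping onto $X$. The goal is to produce a nonempty $\preceq$-interval $I=\{z\in D:a\preceq z\preceq b\}$ (with $a\prec b$) on which $\widehat X$ is a union of finitely many $\preceq$-convex sets; passing back to $L$, this says $X$ is a finite union of convex subsets of a nonempty open subinterval of $(L;<_L)$, which is absurd since $X$ and $L\setminus X$ are both dense in $L$. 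To find $I$ I would first partition $D$ into the finitely many $\emptyset$-definable pieces determined by the order type in $(\Q;<)$ of an $n$-tuple, pick a piece $D'$ whose image in $L$ is infinite, and analyse $\preceq$ and $\widehat X$ on $D'$ using the triviality of $(\Q;<)$: by Fact~\ref{fact:trivial} every definable function on $(\Q;<)$ is piecewise a coordinate projection or constant, and by the simple-cell decomposition of trivial o-minimal structures (Proposition~\ref{prop:cell decomp}) $D'$ breaks into simple cells compatible with $\preceq$ and $\widehat X$. On such a cell $\preceq$ should be forced to be a lexicographic combination of the coordinate orders and their reverses, so that $\preceq$-intervals and the trace of $\widehat X$ on a sufficiently short $\preceq$-interval inherit the desired finiteness.

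The \textbf{main obstacle} is exactly this last step: ruling out that $\preceq$ could ``mix'' coordinates through the defining functions of a simple cell in a way that manufactures a dense--codense trace of $\widehat X$, and carrying out the bookkeeping of $\preceq$-intervals under the quotient by $E_0$. This is where the triviality of $\dlo$ (in the spirit of Theorem~\ref{thm:trivial}) is doing the real work, and the argument resembles the non-interpretation lemmas the paper collects in Appendix~A; if the cell-level computation turns out unpleasant, the cleaner route is simply to quote such a lemma, applied to the observation that $\dloeq^\uplambda$ is an ordered structure which is patently not a finite union of convex sets (so not weakly o-minimal) yet would have to be interpretable in the o-minimal, trivial structure $(\Q;<)$.
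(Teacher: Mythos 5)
Your first half, trace equivalence, is correct and matches the paper's route: after expanding by the redundant predicate $P=L$ you apply Proposition~\ref{prop:in dlo} with $\Cal P=\{P\}$, $\Cal R=\{<\}$, $\Cal E=\{E\}$, and the easy direction follows from Lemma~\ref{lem:stable-0} or Proposition~\ref{prop:qe-trace}. The extra step through Proposition~\ref{prop:omega cat} is unnecessary (trace equivalence of structures is by definition trace equivalence of their theories), but it does no harm.

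The second half has a genuine gap. The direct argument you sketch -- pull $<_L$ and an $E$-class back to a quasi-order on $D\subseteq\Q^n$, decompose $D$ into simple cells via Proposition~\ref{prop:cell decomp}, and then try to show that on a cell the quasi-order is lexicographic so that the pullback of the class has a finite trace on a short interval -- stops precisely at the step that carries all the weight, and you say so yourself. The machinery that would make such a step go, namely Ramakrishnan's theorem (Fact~\ref{fact:rama}) that a definable linear order embeds into a lexicographic power, is only available for o-minimal expansions of ordered \emph{abelian groups}, and $(\Q;<)$ is not one; so the cell-level computation does not reduce to a cited result. Your proposed ``cleaner route'' is also off target: there is no lemma in the paper (or elsewhere) to the effect that a structure interpretable in a trivial o-minimal structure must be weakly o-minimal, and in fact that statement is false (think of lexicographic products definable in $(\Q;<)$). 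What is true, and what the paper actually uses, is Corollary~\ref{cor:rama-1}: an expansion of a dense linear order in which some nonempty open interval has a definable dense and codense subset is not interpretable in an o-minimal expansion of an ordered group. One applies it by noting that $\doag$ (say via $\Th(\R;+,<)$) interprets $\dlo$ as a reduct, so if $\dlo$ interpreted $\dloeq^\uplambda$ then so would $\doag$; but in any $\dloeq^\uplambda$-model the set $\{b:E(a,b)\}$ for fixed $a$ is definable, dense, and codense, contradicting Corollary~\ref{cor:rama-1}.
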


Hence $(\R;<)$ is trace equivalent to, but does not interpret, $(\R;<,\Q)$.

\begin{proof}
The first claim follows from Proposition~\ref{prop:in dlo}, the second claim from Corollary~\ref{cor:rama-1}.
% We suppose $(L;<,E) \models \dloeq^\uplambda$ and show that $(L;<)$ trace defines $(L;<,E)$.
% Let $[\alpha]$ be the $E$-class of $\alpha \in L$, $\upiota \colon L/E \to L$ be an injection, $\uptau \colon L \to L^2$ be given by $\uptau(\alpha) = (\alpha,\upiota([\alpha]))$, and $O = \uptau(L)$.
% Let $\triangleleft$ be the order on $L^2$ where $(\alpha,\beta) \triangleleft (\alpha^*,\beta^*)$ when $\alpha < \alpha^*$ and $F$ be the equivalence relation on $L^2$ where $F((\alpha,\beta),(\alpha^*,\beta^*)) \Longleftrightarrow \beta = \beta^*$.
% Note $\triangleleft$, $F$ are $(L;<)$-definable and $\uptau$ gives an isomorphism $(L;<,E) \to (O;\triangleleft,F)$.
% Apply Corollary~\ref{cor:qe} and quantifier elimination for~$\dloeq^\uplambda$.
\end{proof}

% \noindent
% It is not hard to adapt the arguments above to show that any expansion of a dense linear order by unary predicates and equivalence relations which admits quantifier elimination is trace definable in $\dlo$.

\noindent
A $k$-order is a set equipped with $k$ linear orders.
When $k \ge 2$ these are sometimes referred to as ``permutation structures".
When $X$ is a finite $k$-order $(X;\prec_0,\ldots,\prec_{k - 1})$ may be identified with the unique sequence $\upsigma_1,\ldots,\upsigma_{k - 1}$ of permutations of $X$ such that each $\upsigma_i$ gives an isomorphism $(X;\prec_0) \to (X;\prec_i)$.
Consider the case when $k = 2$, $X = \{1,\ldots,n\}$, and $\prec_0$ is $<$, and list the elements in the order given by $\prec_1$.
This is quite close to how one learns to think about permutations of $\{1,\ldots,n\}$ in a basic algebra course.

\medskip
Proposition~\ref{prop:k-order} follows from Proposition~\ref{prop:in dlo}.

\begin{proposition}
\label{prop:k-order}
All countable homogeneous $k$-orders are trace equivalent to $(\Q;<)$.
\end{proposition}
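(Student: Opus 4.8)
The plan is to read this off directly from Proposition~\ref{prop:in dlo}. Let $\Sa M = (M;\prec_0,\ldots,\prec_{k-1})$ be a countable homogeneous $k$-order; we may assume $M$ is infinite. Since $\Sa M$ is homogeneous in the finite relational language $\{\prec_0,\ldots,\prec_{k-1}\}$, Fact~\ref{fact:homo} gives that $\Sa M$ is $\aleph_0$-categorical and admits quantifier elimination. This quantifier elimination is the only structural input needed; in particular we genuinely use that the $k$-order is \emph{homogeneous}, not merely that it carries $k$ linear orders.

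Proposition~\ref{prop:in dlo} is phrased for a structure whose language expands a set of unary predicates, so first I would adjoin a single unary relation $P$ interpreted as all of $M$. The resulting structure $\Sa M' = (M;P,\prec_0,\ldots,\prec_{k-1})$ is interdefinable with $\Sa M$ (indeed $P(x)$ is the quantifier-free formula $x = x$), hence still admits quantifier elimination, and since interdefinable structures are trace equivalent it suffices to prove the claim for $\Sa M'$. Now apply Proposition~\ref{prop:in dlo} with $L = \{P\}$, $\Cal R = \{\prec_0,\ldots,\prec_{k-1}\}$, and $\Cal E = \emptyset$: condition (1) holds because $P$ defines the infinite set $M$; condition (2) holds because each $\prec_i$ is trivially supported on $P$ and $\prec_i|_P = \prec_i$ is a linear order; condition (3) is vacuous. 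The proposition then yields that $\Sa M'$, and therefore $\Sa M$, is trace equivalent to $(\Q;<)$.

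There is no real obstacle here: all the work — showing $(\Q;<)$ trace defines such a structure by fusing dense-linear-order pieces, together with the fact that any unstable structure trace defines $(\Q;<)$ — is already contained in Proposition~\ref{prop:in dlo}. The only points to watch are the citation of Fact~\ref{fact:homo} for quantifier elimination and the cosmetic device of the trivial predicate $P$ needed to make the hypotheses of Proposition~\ref{prop:in dlo} literally applicable. (Alternatively one could observe that the proof of Proposition~\ref{prop:in dlo} goes through verbatim with $L$ empty and each relation in $\Cal R$ a linear order on all of $M$, but introducing $P$ avoids reproving anything.)
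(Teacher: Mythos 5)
Your proof is correct and takes exactly the same route as the paper, which simply cites Proposition~\ref{prop:in dlo}; your write-up usefully supplies the two details the paper leaves implicit (quantifier elimination via Fact~\ref{fact:homo} and the harmless trivial unary predicate $P$ needed to make the hypotheses of Proposition~\ref{prop:in dlo} literally apply).
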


We let $\mathrm{DLO}_k$ be the theory of the generic countable $k$-order, so $\mathrm{DLO}_1 = \mathrm{DLO}$.
Suppose $k \ge 2$ and $(O;\prec_1,\ldots,\prec_k) \models \dlo_k$.
It is easy to see that any nonempty open $\prec_1$-interval is dense and codense in the $\prec_2$-topology.
By Corollary~\ref{cor:rama-1} $\dlo$ does not interpret $\dlo_k$.
Simon and Braunfeld have given a complex classification of homogeneous $k$-orders~\cite{Simon-Braunfeld}.

\medskip
We now prove a general result.
Theorem~\ref{thm:in dlo} is a corollary to Simons classification of primitive rank one $\aleph_0$-categorical unstable $\nip$ structures~\cite{Pierre}.
We assume some familiarity with this work.
The structure $\Sa M$ is \textit{not} \textbf{rank $\mathbf{1}$} if there is a definable family $\mathcal{X}$ of infinite subsets of $M$ and $n$ such that any intersection of $n$ distinct elements of $\mathcal{X}$ is empty and $\Sa M$ is \textit{not} \textbf{primitive} if there is a non-trivial $\emptyset$-definable equivalence relation on $M$.

\begin{theorem}
\label{thm:in dlo}
Suppose that $\Sa M$ is countable, $\aleph_0$-categorical, primitive, rank $1$, $\nip$, and unstable.
Then $\Sa M$ is trace equivalent to $(\Q;<)$.
\end{theorem}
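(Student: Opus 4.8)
\textbf{Proof plan for Theorem~\ref{thm:in dlo}.}
The plan is to apply Simon's classification of primitive, rank one, $\aleph_0$-categorical, unstable $\nip$ structures and check that every structure on his list satisfies the hypotheses of Proposition~\ref{prop:in dlo}, or is otherwise directly seen to be trace equivalent to $(\Q;<)$. First I would recall that $\Sa M$ unstable already gives, via Lemma~\ref{lem:stable-0}, that $\Th(\Sa M)$ trace defines $(\Q;<)$, so by Proposition~\ref{prop:omega cat} the only thing to prove is that $(\Q;<)$ trace defines $\Sa M$; passing to a countable model we may assume $\Sa M$ is countable. Then I would invoke Simon's structure theorem: a countable primitive rank one $\aleph_0$-categorical unstable $\nip$ structure is, up to interdefinibility, built from a dense linear order together with finitely many ``colours'' (unary predicates) and finitely many further relations which, on the relevant colour class, are themselves linear orders or equivalence relations (and any quantifier-free-definable relation is a boolean combination of these atoms). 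The precise statement to quote is Simon's classification in \cite{Pierre}; the main work of the proof is extracting from it exactly the normal form needed, namely that after Morleyization $\Sa M$ is interdefinable with an $L^*$-structure where $L$ is a finite set of unary predicates and $L^*$ adds a finite family $\Cal R$ of binary relations each of which is a linear order supported on one colour, and a finite family $\Cal E$ of binary equivalence relations, with quantifier elimination.

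Once that normal form is in hand the theorem is immediate: such an $\Sa M$ is exactly the hypothesis of Proposition~\ref{prop:in dlo}, which delivers trace equivalence with $(\Q;<)$. So the body of the argument is the reduction to the normal form. Concretely I would: (1) Morleyize and reduce to a finite relational language with quantifier elimination; (2) use primitivity and rank one to see that, after this reduction, the only relations of airity $\ge 2$ that can occur are (boolean combinations of) unary predicates, ``local'' linear orders, and equivalence relations — this is the content one reads off from Simon's trichotomy, where the primitive rank one unstable $\nip$ structures are coordinatized by a linear order, possibly refined by equivalence relations and enriched by colours; (3) observe that ternary and higher relations do not genuinely appear because the age is, in Simon's terminology, a free superposition of the order part, the equivalence parts, and the unary part, so by Fact~\ref{fact:airity} (or directly from quantifier elimination) every formula is a boolean combination of the listed binary and unary atoms; (4) if some relation $R$ is a linear order only on a proper definable subset $P$, replace $R$ by $R|_P$ and add $P$ to the unary language, which is harmless for interdefinibility; (5) similarly split the unary predicates into their atoms $P_I$ and discard finite ones. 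After these cosmetic steps $\Sa M$ is interdefinable with a structure of the exact shape demanded by Proposition~\ref{prop:in dlo}.

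The main obstacle is step (2)–(3): one must be sure that Simon's classification really forbids any ``essentially ternary'' primitive rank one $\nip$ unstable structure — i.e. that nothing like the betweenness or separation relations, or a genuinely $3$-ary amalgamation pattern, can occur at rank one and primitivity. (Betweenness and cyclic/separation relations are rank one and primitive but are unstable with airity $3$; the point is that Simon shows the \emph{$\nip$} primitive rank one unstable ones all reduce to orders-plus-colours-plus-equivalence-relations, so the genuinely $\ge 3$-ary ones like the separation relation of the dense local order are excluded, or are interdefinable with a suitable $2$-ary presentation.) I would handle this by quoting the relevant case of \cite{Pierre} verbatim and then doing the bookkeeping of the previous paragraph; I do not expect to need to reprove any part of Simon's theorem, only to translate its output into the hypotheses of Proposition~\ref{prop:in dlo}. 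A secondary, purely technical point is to make sure the translation respects ``interdefinable'' versus ``isointerdefinable'' correctly, but since Proposition~\ref{prop:omega cat} and Proposition~\ref{prop:fusion} are stated in a way that is insensitive to this (trace equivalence only depends on the theory), no difficulty arises there.
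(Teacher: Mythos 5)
Your plan correctly identifies the two ingredients the paper uses — Simon's classification of primitive rank one $\aleph_0$-categorical unstable $\nip$ structures, and Proposition~\ref{prop:in dlo} — and you are right that the first half (unstable implies $\Th(\Sa M)$ trace defines $(\Q;<)$, and by $\aleph_0$-categoricity it suffices to show $(\Q;<)$ trace defines the countable model) is routine. So the strategy is the same as the paper's.

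However, there is a genuine gap in the ``translation'' step that you describe as bookkeeping. You assert that after Morleyizing, $\Sa M$ is interdefinable with a structure on $M$ itself consisting of finitely many unary predicates, finitely many linear orders each supported on a colour class, and finitely many equivalence relations — i.e.\ that $\Sa M$ lands directly in the hypotheses of Proposition~\ref{prop:in dlo}. That is not what Simon's theorem gives. As recalled in the paper's proof, Simon's classification produces a \emph{finite cover}: there is a set $W$, a finite family $\Cal C$ of unary ``order classes'' partitioning $W$, another finite family $\Cal P$ of unary relations, binary relations $R_C$ on $W$ that really are linear orders on each $C\in\Cal C$, and a $k$-to-one surjection $\uppi\colon W\to M$. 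The structure on $M$ that you get by pushing forward is $\Sa M^*=(M;\Cal C^*,\Cal P^*,\Cal R^*)$ where the binary relations are $R_C^{ij}(a,b)\Longleftrightarrow R_C(a_i,b_j)$ with $a_1,\ldots,a_k$ an enumeration of $\uppi^{-1}(a)$. These $R_C^{ij}$ are in general neither linear orders nor equivalence relations on $M$ (already for $i\ne j$ they need not even be reflexive or antisymmetric), so $\Sa M^*$ does \emph{not} satisfy the hypotheses of Proposition~\ref{prop:in dlo}, and your steps (2)--(5) as written would fail. The dense cyclic order is a concrete instance: it is primitive, rank one, $\aleph_0$-categorical, unstable, $\nip$, and admits no interdefinable presentation consisting only of linear orders on colour classes plus equivalence relations on the set itself; Simon's normal form for it genuinely uses a cover with $k>1$.

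The missing idea is to apply Proposition~\ref{prop:in dlo} to the cover $\Sa W$ (which \emph{does} satisfy its hypotheses, since each $R_C$ is a genuine linear order supported on $C$), obtaining an injection $\uptau\colon W\to\Q^n$ and $(\Q;<)$-definable sets realizing the relations of $\Sa W$, and then transfer this to $M$ via the cover: define $\upsigma\colon M\to(\Q^n)^k$ by $\upsigma(a)=(\uptau(a_1),\ldots,\uptau(a_k))$ and pull each relation $R_C^{ij}$ back to the corresponding coordinates of the $k$-tuple. This is exactly the second half of the paper's proof, and it is the step your outline omits. Quantifier elimination for $\Sa M^*$ together with Corollary~\ref{cor:qe} then closes the argument. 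So you have the right two lemmas, but the reduction to Proposition~\ref{prop:in dlo} must go through the cover rather than directly on $M$.
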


A primitive rank $1$ finitely homogeneous unstable $\nip$ structure is trace equivalent to $(\Q;<)$.

\begin{proof}
Lemma~\ref{lem:stable-0} shows that $\Th(\Sa M)$ trace defines $(\Q;<)$.
(In fact by Simon~\cite{Pierre2} $\Sa M$ interprets $(\Q;<)$.)
So it is enough to prove the first claim.
We first recapitulate Simon's description from \cite[Section 6.6]{Pierre}.
We only describe the parts that we use and omit other details\footnote{If you look at Pierre's paper it's worth keeping in mind that, unlike him, we are free to add constants.}.
There is a set $W$, a finite collection $\Cal C$ of unary relations on $W$, another finite collection $\Cal P$ of unary relations on $W$, binary relations $(R_C : C \in \Cal C)$ on $W$, $k$, a $k$-to-one surjection $\uppi \colon  W \to M$, an enumeration $a_1,\ldots,a_k$ of $\uppi^{-1}(a)$ for each $a \in M$, two collections $\Cal P^* = (P_i : P \in \Cal P, i \in \{1,\ldots,k\})$ and $\Cal C^* = (C_i : C \in \Cal C, i \in \{1,\ldots,k\})$ of unary relations on $M$, and a collection of binary relations $\Cal R^* = (R^{ij}_C : C \in \Cal C, i,j \in \{1,\ldots,k\})$ on $M$ such that:
\begin{enumerate}
\item The $C \in \Cal C$ partition $W$.
\item Each $R_C$ defines a linear order on $C \in \Cal C$,
\item $C_i(a) \Longleftrightarrow C(a_i)$ for every $C \in \Cal C$, $a \in M$, and $i \in \{1,\ldots,k\}$.
\item $P_i(a) \Longleftrightarrow P(a_i)$ for every $P \in \Cal P$, $a \in M$, and $i \in \{1,\ldots,k\}$.
\item $R_C^{ij}(a,b) \Longleftrightarrow R_C(a_i,b_j)$ for every $C \in \Cal C$, $a,b \in M$, and $i,j \in \{1,\ldots,k\}$.
\item $\Sa W = (W;\Cal C,\Cal P, \Cal R)$ and $\Sa M^* = (M; \Cal C^*, \Cal P^*, \Cal R^*)$ both admit quantifier elimination.
\item $\Sa M^*$ is interdefinable with $\Sa M$.
\end{enumerate}

Proposition~\ref{prop:in dlo} shows that $\Sa W$ is trace definable in $(\Q; <)$.
We let $\uptau
\colon W \to \Q^n$ be an injection such that $(\Q;<)$ trace defines $\Sa W$ via $\uptau$.
Let $\Cal J = (J_C : C \in \Cal C)$ be a collection of $(\Q;<)$-definable subsets of $\Q^n$, $\Cal Q = \{Q_1,\ldots,Q_n\}$ be another collection of $(\Q;<)$-definable subset of $\Q^n$, and $\Cal S = (S_C : C \in \Cal C)$. be a collection of $(\Q;<)$-definable binary relations on $\Q^n$ such that $\uptau$  gives a isomorphism $\Sa W \to (\uptau(W); \Cal J, \Cal Q, \Cal S)$.

\medskip\noindent
We now let $\upsigma \colon M \to (\Q^n)^k$ be given by $\upsigma(a) = (\uptau(a_1),\ldots,\uptau(a_k))$.
We let $c = (c_1,\ldots,c_k)$ range over elements of $(\Q^n)^k$, likewise for $c'$.
For each $C_i \in \Cal C^*$ we let $J^*_{C_i}$ be the set of $c$ such that $c_i \in J_C$ and let $\Cal J^*$ be the collection of such $J^*_{C_i}$.
For each $P_i \in \Cal P$ we let $Q^*_i$ be the set of $c$ such that $c_i \in Q_i$ and let $\Cal Q^*$ be the collection of such $Q^*_i$.
For each $R^{ij}_C \in \Cal R$ let $S^{ij}_C$ be the collection of $(c,c')$ such that $S_C(c_i,c'_j)$ and let $\Cal S^*$ be the collection of such $S^{ij}_C$.
Let $P = \upsigma(M)$.
Note that $\upsigma$ gives an isomorphism $\Sa M^* \to (P; \Cal J^*,\Cal Q^*,\Cal S^*)$.
Quantifer elimination for $\Sa M^*$ and Corollary~\ref{cor:qe} together show that $\Sa M^*$ is trace definable in $(\Q;<)$ via $\upsigma$.
By (7) above $\Sa M$ is trace definable in $(\Q;<)$ via $\upsigma$.
\end{proof}

% Note that if $\Sa M$ is $\aleph_0$-categorical then there is a finest $\emptyset$-definable equivalence relation on $M$.
% I am sure that Lemma~\ref{lem:fh-component} is known, but I didn't feel like looking for a reference.

% \begin{lemma}
% \label{lem:fh-component}
% Suppose that $\Sa M$ is finitely homogeneous.
% Let $E$ be the finest $\emptyset$-definable equivalence relation on $M$, $A$ be an $E$-class, and $\Sa A$ be the substructure of $\Sa M$ with domain $A$.
% Then $\Sa A$ is finitely homogeneous.
% \end{lemma}

% \begin{proof}
% Suppose that $\Sa B, \Sa B^*$ are finite substructures of $\Sa A$ and $\upsigma \colon \Sa B \to \Sa B^*$ is an isomorphism.
% Let $\uptheta$ be an automorphism of $\Sa M$ extending $\upsigma$.
% Then $\uptheta$ preserves $E$.
% As $\uptheta(B) = B^*$ and $B,B^* \subseteq A$, we have $\uptheta(A) = A$.
% Hence $\uptheta$ restricts to an automorphism of $\Sa A$.
% \end{proof}

Fact~\ref{fact:lachlan-big} is a theorem of Lachlan~\cite{lachlan-order}.

\begin{fact}
\label{fact:lachlan-big}
If $\Sa M$ is finitely homogeneous and stable then $\Sa M$ is interpretable in $(\Q;<)$.
\end{fact}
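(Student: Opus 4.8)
The plan is to treat this as an unpacking of Lachlan's structure theory for stable finitely homogeneous structures rather than anything proved from scratch. First I would record the standard reductions: a finitely homogeneous $\Sa M$ is $\aleph_0$-categorical and eliminates quantifiers in its finite relational language (Fact~\ref{fact:homo}), and an $\aleph_0$-categorical stable theory is $\aleph_0$-stable and superstable of finite rank (see e.g.~\cite{pillay-book}). So $\Sa M$ falls under the Cherlin--Harrington--Lachlan analysis~\cite{CHL}: it is coordinatized, in finitely many layers, by strongly minimal sets interpretable in $\Sa M$, each layer obtained from the previous ones by definable subsets, definable quotients, and finite fibered products. The point of passing from ``$\aleph_0$-categorical $\aleph_0$-stable'' to ``finitely homogeneous'' is that, by the theorem of Lachlan cited as~\cite{lachlan-order}, this coordinatization can be organized into a single \emph{definable tree decomposition} of $\Sa M$ which is itself order-like, i.e. the tree of sorts, together with the finitely much relational data attached at each node, can be reconstructed inside a pure dense linear order.

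Concretely I would proceed in two stages. Stage one: set up the interpretation. Using the tree decomposition, I would represent the (definable, finite-height) tree of coordinate sorts by a definable partial order inside $(\Q;<)$, use pairwise disjoint dense subsets and auxiliary generic linear orders to separate the finitely many coordinates at each node, and transport the finitely many induced relations along this coding; here one uses that $\dlo$ interprets finite disjoint unions of intervals and finite products, and that $\dlo$ eliminates imaginaries, so that the definable quotients appearing in the coordinatization survive. Stage two: verify that the coded structure is isomorphic to $\Sa M$; this is where quantifier elimination for $\Sa M$ (Fact~\ref{fact:homo}) does the work, reducing the verification to matching atomic data, which by homogeneity is finite. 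Assembling, $\Sa M$ is interpretable in $(\Q;<)$.

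The main obstacle is everything hidden inside the phrase ``Lachlan's tree decomposition'': producing, for an arbitrary stable finitely homogeneous $\Sa M$, a coordinatization that is simultaneously definable and compatible with a linear ordering of the coordinates is the substance of~\cite{lachlan-order}, and reproving it would be the bulk of any self-contained argument. It is also exactly the place where finiteness of the language (not merely $\aleph_0$-categoricity) is essential: without that input the interpretation into $(\Q;<)$ would be impossible in general, since by Theorem~\ref{thm:trivial} $\dlo$ does not even trace define an infinite group, and more broadly $(\Q;<)$ cannot absorb non-trivial modular geometries — the trivial case of this phenomenon being precisely the Lachlan result already invoked in the proof of Proposition~\ref{prop:o-stable-rigid}. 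Accordingly, in a write-up I would cite~\cite{lachlan-order} for the decomposition and supply only the coding-and-bookkeeping of the two stages above.
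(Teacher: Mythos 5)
The paper does not argue for this statement at all: it is quoted as a theorem of Lachlan with a bare citation to~\cite{lachlan-order}, and your proposal ultimately does the same thing — you say explicitly that you would cite~\cite{lachlan-order} for the definable tree decomposition and supply only coding-and-bookkeeping. So the two ``proofs'' coincide in substance.

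One step in your preamble is wrong as stated and should be repaired before it misleads anyone. You write that ``an $\aleph_0$-categorical stable theory is $\aleph_0$-stable and superstable of finite rank (see e.g.~\cite{pillay-book}).'' That is not a theorem: whether every $\aleph_0$-categorical stable theory is superstable (let alone totally transcendental of finite rank) is a well-known open problem, conjectured by Lachlan, and you will not find it in Pillay's book as a general fact. What \emph{is} true, and what you actually use, is that a \emph{finitely homogeneous} stable structure is $\aleph_0$-stable of finite Morley rank — but that is itself a nontrivial theorem about finite relational languages (Lachlan; Cherlin--Lachlan), part of the very structure theory you invoke via~\cite{lachlan-order} and~\cite{CHL}, not a black-box consequence of $\aleph_0$-categoricity plus stability. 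In other words, the finiteness of the relational language is doing load-bearing work one sentence earlier than you acknowledge; the reduction to the Cherlin--Harrington--Lachlan setting is fine, but the cited justification is not.
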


Corollary~\ref{thm:in dlo 2} follows by Theorem~\ref{thm:in dlo} and Fact~\ref{fact:lachlan-big}.

\begin{corollary}
\label{thm:in dlo 2}
Suppose that $\Sa M$ is finitely homogeneous, primitive, rank $1$, and $\nip$.
Then $\Sa M$ is trace definable in $(\Q;<)$.
\end{corollary}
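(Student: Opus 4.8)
The plan is to split into the stable and unstable cases and invoke the two quoted results, after first recording that a finitely homogeneous structure is automatically countable and $\aleph_0$-categorical by Fact~\ref{fact:homo}. So let $\Sa M$ be finitely homogeneous, primitive, rank $1$, and $\nip$; by definition $\Sa M$ is countable and relational, and Fact~\ref{fact:homo}.2 gives that $\Sa M$ is $\aleph_0$-categorical.

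First I would handle the case where $\Sa M$ is unstable. Then $\Sa M$ satisfies every hypothesis of Theorem~\ref{thm:in dlo}: it is countable, $\aleph_0$-categorical, primitive, rank $1$, $\nip$, and unstable. Hence Theorem~\ref{thm:in dlo} applies and $\Sa M$ is trace equivalent to $(\Q;<)$; in particular $(\Q;<)$ trace defines $\Sa M$, which is what we want.

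Next I would handle the case where $\Sa M$ is stable. Here Fact~\ref{fact:lachlan-big} (Lachlan's theorem) immediately gives that $\Sa M$ is interpretable in $(\Q;<)$, and then Proposition~\ref{prop:trace-interpret} upgrades interpretability to trace definibility, so $(\Q;<)$ trace defines $\Sa M$. Combining the two cases completes the proof.

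There is essentially no obstacle beyond checking that the hypotheses of Theorem~\ref{thm:in dlo} transfer verbatim in the unstable case, which they do since ``finitely homogeneous'' supplies countability, $\aleph_0$-categoricity, and quantifier elimination, while primitivity, rank $1$, and $\nip$ are assumed outright; the only genuine content is imported from Theorem~\ref{thm:in dlo} and Fact~\ref{fact:lachlan-big}. (One could remark that the dichotomy is clean because stability for $\aleph_0$-categorical structures is decided at the level of the theory, so no uniformity issues arise.)
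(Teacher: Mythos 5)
Your proof is correct and matches the paper's intended argument: the paper states that Corollary~\ref{thm:in dlo 2} follows from Theorem~\ref{thm:in dlo} and Fact~\ref{fact:lachlan-big}, and your stable/unstable case split with Proposition~\ref{prop:trace-interpret} to convert interpretability into trace definability in the stable case is exactly the intended route.
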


% \begin{proof}
% We first reduce to the primitive case.

% \end{proof}

% Fact~\ref{fact:lachlan-big} follows from the work on Lachlan on finitely homogeneous stable structures.

% \begin{fact}
% \label{fact:lachlan-big}
% If $\Sa M$ is finitely homogeneous and stable then $\Sa M$ is interpretable in $(\Q;<)$.
% \end{fact}

% Fact~\ref{fact:lachlan-big} follows from Fact~\ref{fact:lachlan}, the fact that stable finitely homogeneous structures are $\aleph_0$-stable  and the fact that any stronly minimal set interpretable in a stable finitely homogeneous structure has trivial geometry.

% \begin{fact}
% \label{fact:lachlan}
% If $\Sa M$ is countable, $\aleph_0$-categorical, $\aleph_0$-stable, and the combinatorial geometry on each strongly minimal set interpretable in $\Sa M$ is trivial, then $\Sa M$ is interpretable in $(\Q;<)$.
% \end{fact}

% \noindent
% Lachlan~\cite{lachlan-order} proved Fact~\ref{fact:lachlan} under the additional assumption that the language of $\Sa M$ is finite.
% Hrushovksi~\cite[Theorem~2.1(a)]{hrush-tot-cat} showed that if $\Sa M$ is $\aleph_0$-categorical and $\aleph_0$-stable then $\Sa M$ is interdefinable with a structure in a finite language.

\medskip
We showed in Theorem~\ref{thm:trivial} that $\dlo$ does not trace define an infinite group.
Equivalently: there is an unstable theory that does not trace define an infinite group.
It seems reasonable to conjecture that a finitely homogeneous structure cannot trace define an infinite group.
Macpherson~\cite{macpherson-interpreting-groups} showed that a finitely homogeneous structure cannot interpret an infinite group, but his proof does not seem to go through in our setting.
% Instead we adapt Poizat's proof that a colored linear order cannot interpret an infinite group.

\subsection{Trees}
\label{section:trees}
We have seen some examples of finitely homogeneous $\nip$ structures that are trace definable in $(\Q;<)$.
We now describe an interesting finitely homogeneous structure $\Sa C$ which is $\nip$ and not trace equivalent to $(\Q;<)$.
This structure is not rosy.

\medskip
We make some definitions following Bodirsky, Jonsson, and Pham~\cite[3.2]{binary-C}, but our terminology is a bit different.
A \textbf{tree} is an acyclic connected graph with a distinguished vertex called the \textbf{root}.
Let $\Sa T$ be a tree.
A \textbf{leaf} is a non-root vertex with degree $1$.
We say that $\Sa T$ is \textbf{binary} if the root has degree $0$ or $2$ and all other vertices that are not leaves have degree $3$.
Given vertices $u,u^* \in T$ we declare $u \prec u^*$ when $u$ lies on the path from the root to $u^*$, so $\prec$ is a partial order.
A maximal subset of $\Sa T$ which is linearly ordered under $\prec$ is a \textbf{branch}.
Let $\B$ be the set of branches through $\Sa T$.
If $\Sa T$ is finite then each branch contains a unique leaf so we canonically identify $\B$ with the set of leaves.
Given $b, b^* \in \B$ we let $b \land b^*$ be the $\prec$-maximal $u \in T$ such that $u$ lies on both $b$ and $b^*$.
We define the canonical ternary $C$-relation on $\B$ by declaring
$C(a,b,b^*)  \Longleftrightarrow  a \land b \prec b \land b^*$.
\begin{center}
\includegraphics[scale=.3]{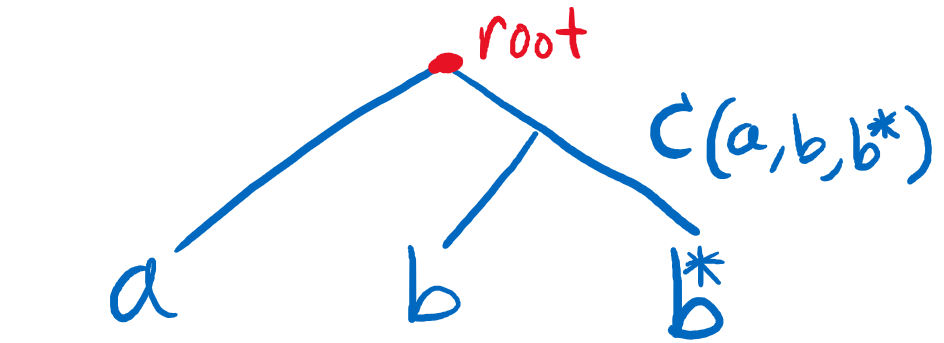}
\end{center}
We call $(\B;C)$ the \textbf{branch structure} of $\Sa T$.
This is an example of a $C$-set.
See \cite{adeleke-neumann,delon-no-density,C-minimal} for background and an axiomatic definition of a $C$-set.
If $\Sa T$ is binary then we say that $(\B;C)$ is a \textbf{binary branch structure}.
Note that if $\Sa T$ is binary and $\B$ is finite then $\Sa T$ is finite, so any finite binary branch structure is the branch structure of a finite binary tree.

\medskip
We describe a family of examples.
Suppose $1\le\uplambda \le \upomega$.
Let $\Upsigma$ be a (finite or infinite) alphabet, $\Upsigma^{<\uplambda}$ be the collection of finite words on $\Upsigma$ of length $< \uplambda$, and $\Upsigma^{\uplambda}$ be the collection of words of length $\uplambda$ on $\Upsigma$.
(If $\uplambda = \upomega$ then $\Upsigma^\uplambda$ is the just the usual collection of infinite words on $\Upsigma$.)
We consider $\Upsigma^{<\uplambda}$ to be a tree by taking the root to be the empty word and connecting $u$ to $v$ when either $v = u^\frown \sigma$ or $u = v^\frown \sigma$ for some $\sigma \in \Upsigma$.
(Here $u^\frown v$ is the usual concatenation.)
Then the set of branches through $\Upsigma^{<\uplambda}$ is canonically identified with $\Upsigma^\uplambda$.
Given $u,v \in \Upsigma^\uplambda$ we let $\mathrm{Ar}(u,v)$ be the maximal $n$ such that the length $n$ initial segments of $u$ and $v$ agree.
Then for $u,v,v^* \in \Upsigma^\uplambda$ we have $C(u,v,v^*) \Longleftrightarrow \mathrm{Ar}(u,v) < \mathrm{Ar}(v,v^*)$.

\medskip
The collection of finite binary branch structures is a \Fraisse class \cite[Proposition 7]{binary-C}.
We let $\Sa C$ be the \Fraisse limit of this class.
Then $\Sa C$ admits quantifier elimination and is therefore $C$-minimal, hence $\nip$.
(See \cite[A.1.4]{Simon-Book} for a definition of $C$-minimality and a proof that $C$-minimal structures are $\nip$.)
There is also a nice axiomatic definition of $\Sa C$, see \cite[3.3]{binary-C}.

\medskip
The structure $\Sa C$ is not so familiar, so we give some other definitions of non-trace definibility of $\Sa C$.
Proposition~\ref{prop:C} follows from Lemma~\ref{lem:substructure} and definition of $\Sa C$.

\begin{proposition}
\label{prop:C}
The following are equivalent:
\begin{enumerate}
\item $\Sa M$ trace defines $\Sa C$,
\item there is $m$ and $\Sa M$-definable $X \subseteq M^{m} \times M^{m} \times M^{m}$ such that for every finite binary branch structure $(\B;C)$ there is an injection $\uptau \colon \B \to M^m$ such that
\[
C(a,b,b^*) \quad \Longleftrightarrow \quad (\uptau(a),\uptau(b), \uptau(b^*)) \in X.
\]
\end{enumerate}
\end{proposition}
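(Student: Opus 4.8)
The plan is to view this as an immediate instance of Lemma~\ref{lem:substructure}, after recording two facts about $\Sa C$. Since $\Sa C$ is the \Fraisse limit of the class of finite binary branch structures, its language is $\{C\}$ with $C$ a single ternary relation symbol, $\Sa C$ admits quantifier elimination, and $\age(\Sa C)$ is exactly the class of finite binary branch structures. Consequently, taking $L=\{C\}$ and $\Sa O=\Sa C$ in Lemma~\ref{lem:substructure}, the collection $(Y_R : R\in L)$ appearing there collapses to a single $\Sa M$-definable set $Y_C\subseteq M^{3m}$, and condition (2) of that lemma becomes verbatim condition (2) of the present proposition (with $X=Y_C$). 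So the whole statement should follow once the two conditions (1) are matched up as well.

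For $(1)\Rightarrow(2)$ I would argue directly. Suppose $\Sa M$ trace defines $\Sa C$ via an injection $\uptau\colon C\to M^m$. Applying the definition of trace definibility to the $\Sa C$-definable (indeed $\emptyset$-definable) set $\{(a,b,b^*)\in C^3 : C(a,b,b^*)\}$ yields an $\Sa M$-definable $X\subseteq M^{3m}$ with $C(a,b,b^*)\Longleftrightarrow(\uptau(a),\uptau(b),\uptau(b^*))\in X$ for all $a,b,b^*\in C$. Now let $(\B;C)$ be any finite binary branch structure; since $\age(\Sa C)$ is the class of such structures, there is an embedding $e\colon(\B;C)\to\Sa C$, and $e$ preserves and reflects $C$, so $\uptau\circ e\colon\B\to M^m$ is an injection witnessing (2) with the very same $X$. (This is just the easy implication $(1)\Rightarrow(2)$ of Lemma~\ref{lem:substructure} read off through the identification above.)

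For $(2)\Rightarrow(1)$ I would simply invoke Lemma~\ref{lem:substructure}: under the identification above, (2) is precisely the hypothesis of part (2) of that lemma, so $\Th(\Sa M)$ trace defines $\Sa C$, and then by Proposition~\ref{prop:trace-theories} any $\aleph_1$-saturated model of $\Th(\Sa M)$ — in particular $\Sa M$ itself whenever it is $\aleph_1$-saturated — trace defines $\Sa C$. I expect no serious obstacle: all the real content (the compactness step turning the finite injections supplied by (2) into a single injection of the countable set $C$, followed by an appeal to quantifier elimination via Proposition~\ref{prop:qe}) is already carried out inside Lemma~\ref{lem:substructure}, so the only thing needing a careful word here is the mild point that (1) as stated speaks of $\Sa M$ rather than $\Th(\Sa M)$, which is handled by passing to a saturated elementary extension.
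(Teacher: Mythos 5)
Your proof is correct and takes essentially the same route as the paper's, which is simply to read Proposition~\ref{prop:C} off Lemma~\ref{lem:substructure} after observing that $\Sa C$ has quantifier elimination in the single ternary relation $C$ and that $\age(\Sa C)$ is the class of finite binary branch structures (cf.\ Fact~\ref{fact:finite tree}). Your remark that (1) literally says ``$\Sa M$ trace defines $\Sa C$'' whereas Lemma~\ref{lem:substructure} returns ``$\Th(\Sa M)$ trace defines $\Sa C$'' is a fair catch of a small imprecision in the paper's wording; it is harmless here (and disappears by $\aleph_0$-categoricity of $\Sa C$ together with Proposition~\ref{prop:omega cat} whenever $\Sa M$ is $\aleph_0$-categorical, or by reading (1) as a statement about $\Th(\Sa M)$, which is clearly the intent given Lemma~\ref{lem:tree 0} in the same passage).
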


\medskip
Let $C_\bin$ be the canonical $C$-relation on $\{0,1\}^{\upomega}$ and let $\bin = (\{0,1\}^\upomega; C_\bin)$.
% Let $2^\upomega$ be the set of finite and infinite binary sequences with the segment order.
% This is a Delon tree, with leaves the infinite sequences.s

\begin{fact}
\label{fact:finite tree}
$\age(\Sa C) = \age(\bin)$, i.e. $\age(\bin)$ is the class of finite binary branch structures.
\end{fact}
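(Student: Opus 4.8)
\textbf{Proof proposal for Fact~\ref{fact:finite tree}.}

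The plan is to show the two inclusions $\age(\bin)\subseteq\age(\Sa C)$ and $\age(\Sa C)\subseteq\age(\bin)$, where we identify both with statements about which finite binary branch structures embed into $\bin$. Recall that $\Sa C$ was defined as the \Fraiss\'e limit of the class of \emph{all} finite binary branch structures, so $\age(\Sa C)$ is exactly that class; hence the content of the fact is that a finite $C$-structure is a binary branch structure if and only if it embeds into $\bin=(\{0,1\}^\upomega;C_\bin)$.

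First I would prove $\age(\bin)\subseteq\age(\Sa C)$, i.e.\ every finite substructure of $\bin$ is a binary branch structure. Given a finite $F\subseteq\{0,1\}^\upomega$, consider the finite subtree $\Sa T_F$ of $\{0,1\}^{<\upomega}$ generated by the pairwise meets of elements of $F$ together with the initial segments needed to connect them; after suppressing vertices of degree $2$ (``contracting'' edges through non-branching internal nodes) one obtains a finite binary tree whose branch structure is isomorphic to $(F;C_\bin\!\upharpoonright\!F)$, using the identity $C_\bin(u,v,v^*)\Longleftrightarrow \mathrm{Ar}(u,v)<\mathrm{Ar}(v,v^*)$ recorded in the excerpt. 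This is the routine direction.

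The main work is the reverse inclusion $\age(\Sa C)\subseteq\age(\bin)$: every finite binary branch structure embeds into $\bin$. Let $(\B;C)$ be the branch structure of a finite binary tree $\Sa T$ with $n$ leaves; I would argue by induction on $n$, or more slickly: label each edge out of an internal node by $0$ or $1$ (a binary tree has exactly two children at each internal node), read off for each leaf the finite word of edge-labels on its root-to-leaf path, and then pad each such word to an element of $\{0,1\}^\upomega$ by appending distinct ``tails'' so that the padding does not create spurious coincidences of $\mathrm{Ar}$-values — e.g.\ append $0^\infty$ after a separating bit, or more carefully choose tails realizing the same meet-pattern. One checks that for the resulting map $\upiota\colon\B\to\{0,1\}^\upomega$ the quantity $\mathrm{Ar}(\upiota(a),\upiota(b))$ equals the depth of $a\land b$ in $\Sa T$ whenever $a\ne b$, and hence $C(a,b,b^*)\Longleftrightarrow C_\bin(\upiota(a),\upiota(b),\upiota(b^*))$, so $\upiota$ is an embedding. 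The subtlety to watch — and the one place a naive argument can go wrong — is ensuring the tails are chosen so that no two branches agree \emph{beyond} their true meet in $\Sa T$; this is handled by making the first bit of the tail at a branching node disagree across the two subtrees and then recursing. Since every finite binary branch structure is (as noted in the excerpt) the branch structure of some finite binary tree, this completes the inclusion, and combining the two directions with the definition of $\Sa C$ as the \Fraiss\'e limit of finite binary branch structures gives $\age(\Sa C)=\age(\bin)$.
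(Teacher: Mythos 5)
Your proof is correct, and it is the natural verification the paper has in mind when it says the fact is left as an easy exercise (the paper gives no proof of its own). Both inclusions are handled as one would expect: for $\age(\bin)\subseteq\age(\Sa C)$ you take the meet-closure of a finite $F\subseteq\{0,1\}^\upomega$ inside $\{0,1\}^{<\upomega}$, observe that each internal vertex of $\{0,1\}^{<\upomega}$ has only two children so the induced tree is binary after contracting degree-$2$ vertices, and check that the resulting branch $C$-relation matches $C_\bin\!\upharpoonright\!F$; for the converse you label edges of a finite binary tree, read off leaf words, and pad to $\{0,1\}^\upomega$.

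One small remark: the ``subtlety'' you flag about choosing the tails is in fact a non-issue. Because the tree is binary, the finite words associated to two distinct leaves $a,b$ already \emph{disagree} at position $\bigl(\operatorname{depth}(a\land b)\bigr)+1$ (they pass to different children of $a\land b$), so appending \emph{any} infinite tails — in particular $0^\infty$ uniformly — leaves $\mathrm{Ar}\bigl(\upiota(a),\upiota(b)\bigr)=\operatorname{depth}(a\land b)$ unchanged; no spurious coincidence can arise. The worry would be real if the tree were allowed to have unary internal nodes (so that one leaf word could be a proper prefix of another), but that is ruled out by binarity. So the proof goes through exactly as written, with the padding step being simpler than you anticipated.
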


We leave the easy verification of Fact~\ref{fact:finite tree} to the reader.
% \begin{proof}
% Suppose that $\Sa T$ is a finite binary tree.
% Any finite binary tree is embeds into $\{0,1\}^{<\upomega}$, so we suppose that $\Sa T$ is a subtree of $\{0,1\}^{<\upomega}$.
% Let $a_1,\ldots,a_k$ be the leaves of $\Sa T$.
% For each $i \in \{1,\ldots,k\}$ let $b_i$ be a branch such that $a_i$ lies on $b_i$ and let $B = \{b_1,\ldots,b_k\}$.
% Then $b_i \mapsto a_i$ gives an isomorphism between $(B;C_\bin)$ and the branch structure of $\Sa T$.
% Let $B = \{b_1,\ldots,b_k\}$ be a finite subset of $\{0,1\}^{\upomega}$.
% For each $i \in \{1,\ldots,k\}$ let $n_i$ be the maximum of $\{ d(b_i,b_j) : j \ne i \}$ and let $a_i \in \{0,1\}^{<\upomega}$ be the initial segment of $b_i$ of length $n + 1$.
% Let $\Sa T$ be the subtree of $\{0,1\}^{<\upomega}$ with leaves $a_1,\ldots,a_k$.
% Then the $\Sa T$ is a finite binary tree and $(A;C_\bin)$ is isomorphic to the branch structure of $\Sa T$.
% \end{proof}
Lemma~\ref{lem:tree 0} follows from Lemma~\ref{lem:substructure} and Fact~\ref{fact:finite tree}.

\begin{lemma}
\label{lem:tree 0}
The following are equivalent:
\begin{enumerate}[leftmargin=*]
\item $T$ trace defines $\Sa C$.
\item there is $\Sa M \models T$, an injection $\uptau : \{0,1\}^{\upomega} \to M^m$, and $\Sa M$-definable $Y \subseteq M^{3m}$ so that
\[ C_\bin(b,b',b'') \quad \Longleftrightarrow \quad (\uptau(b),\uptau(b'),\uptau(b'')) \in Y \quad \text{for all} \quad b,b',b'' \in \{0,1\}^{\upomega}.\]
\end{enumerate}
\end{lemma}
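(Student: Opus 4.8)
The statement to prove is Lemma~\ref{lem:tree 0}, which characterizes when a theory $T$ trace defines $\Sa C$ in terms of a single instance of a trace-defining configuration for $\bin = (\{0,1\}^\upomega; C_\bin)$ rather than for all finite binary branch structures. The plan is to deduce it from Lemma~\ref{lem:substructure} applied to $\Sa O = \Sa C$, using Fact~\ref{fact:finite tree} to identify $\age(\Sa C)$ with $\age(\bin)$, together with a compactness/saturation argument to pass between a configuration for all finite substructures and a single configuration for the countable structure $\bin$.

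First I would recall that $\Sa C$ is finitely homogeneous, hence admits quantifier elimination in a finite relational language (here just the ternary $C$), so Lemma~\ref{lem:substructure} applies: $T$ trace defines $\Sa C$ if and only if there is $m$ and an $\Sa M$-definable $Y \subseteq M^{3m}$ (for some $\Sa M \models T$) such that every $\Sa P \in \age(\Sa C)$ admits an injection $\uptau \colon P \to M^m$ with $\Sa P \models C(a,b,b^*) \Leftrightarrow (\uptau(a),\uptau(b),\uptau(b^*)) \in Y$. By Fact~\ref{fact:finite tree}, $\age(\Sa C) = \age(\bin)$, so this is the same as saying the condition holds for every finite substructure of $\bin$. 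For the implication (2)$\Rightarrow$(1): given an injection $\uptau \colon \{0,1\}^\upomega \to M^m$ and $Y$ as in (2), restricting $\uptau$ to any finite subset of $\{0,1\}^\upomega$ gives the required configuration for each finite binary branch structure, since every such structure embeds into $\bin$; then Lemma~\ref{lem:substructure} yields (1). For the implication (1)$\Rightarrow$(2): by Lemma~\ref{lem:substructure} we have $\Sa M \models T$, $m$, and $Y \subseteq M^{3m}$ working for all finite substructures of $\bin$; we want to upgrade this to a single injection defined on all of $\{0,1\}^\upomega$. This is exactly the content of Proposition~\ref{prop:qe} combined with a compactness argument — or more directly, one passes to an $|\{0,1\}^\upomega|^+$-saturated elementary extension $\Sa N \succ \Sa M$ (which still has the same $Y$, interpreted in $\Sa N$) and realizes, by saturation, the type expressing that there is an injection $\uptau \colon \{0,1\}^\upomega \to N^m$ with $C_\bin(b,b',b'') \Leftrightarrow (\uptau(b),\uptau(b'),\uptau(b'')) \in Y$; this type is finitely satisfiable precisely because $Y$ works for every finite binary branch structure and every finite subset of $\{0,1\}^\upomega$ with the induced $C$-relation is such a structure (using Fact~\ref{fact:finite tree} again). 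Since (2) only asserts the existence of \emph{some} $\Sa M \models T$ with such data, replacing $\Sa M$ by $\Sa N$ is harmless.

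I would organize the write-up so that (2)$\Rightarrow$(1) is the one-line direction (restrict and cite Lemma~\ref{lem:substructure}) and (1)$\Rightarrow$(2) is the saturation step. The main obstacle — though it is a mild one — is making the finite-satisfiability bookkeeping in the (1)$\Rightarrow$(2) direction precise: one must check that the partial types asserting injectivity and the biconditional on triples, restricted to any finite set of branches, are realized in $\Sa N$, and this uses that the induced substructure of $\bin$ on any finite subset of $\{0,1\}^\upomega$ lies in $\age(\bin) = \age(\Sa C)$ so that Lemma~\ref{lem:substructure}'s configuration applies. This is entirely parallel to the proof of Lemma~\ref{lem:substructure1} (apply Lemma~\ref{lem:substructure} and saturation), so in fact the cleanest presentation is simply: ``Apply Lemma~\ref{lem:substructure} and Fact~\ref{fact:finite tree}, together with saturation exactly as in Lemma~\ref{lem:substructure1}." Given the paper's stated convention that such arguments can be left to the reader, a short proof along these lines is appropriate; I would spell out just enough of the saturation argument to make the equivalence transparent.
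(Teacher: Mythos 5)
Your proof is correct and follows exactly the same route the paper takes: the paper's entire justification is ``Lemma~\ref{lem:tree 0} follows from Lemma~\ref{lem:substructure} and Fact~\ref{fact:finite tree},'' leaving the saturation step implicit (it is the same argument as in Lemma~\ref{lem:substructure1}). Your elaboration of the (1)$\Rightarrow$(2) direction via finite satisfiability and passing to a sufficiently saturated elementary extension is a faithful unpacking of what the paper leaves to the reader.
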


\medskip
Our next goal is to show that $\dlo$ cannot trace define $\Sa C$.
This will go through indiscernible collapse and Proposition~\ref{prop:airity}.
As $\Sa C$ is not Ramsey we will need to pass to another structure.

\medskip
Suppose that $(\B;C)$ is a branch structure.
Let $C_{\alpha\beta} = \{ \beta^* \in \B : C(\alpha,\beta,\beta^*) \}$ for all $\alpha,\beta \in B$.
A \textbf{convex order} on $\B$ is a linear order such that each $C_{\alpha\beta}$ is convex and a \textbf{convexly ordered branch structure} is an expansion of a branch structure by a convex ordering.

\medskip
If $\Upsigma$ is equipped with a linear order then the resulting lexicographic order on $\Upsigma^\uplambda$ is convex.
Let $\Sa T$ be a finite tree and let $\Cal E$ be the collection of topological embeddings of $\Sa T$ into the upper half plane $\{(a,b) \in \R^2 : b \ge 0\}$ which take all leaves to the boundary.
Then any $f \in \Cal E$ induces an ordering $<_f$ on $\B$ in the natural way, and $<_f$ is convex.
Any convex order on $\B$ is of the form $<_f$ for a unique-up-to-boundary-preserving isotopy $f \in \Cal E$.
(I am sure this is known, but in any event it is easy to prove all of this via induction on finite trees.)
\begin{center}
\includegraphics[scale=.46]{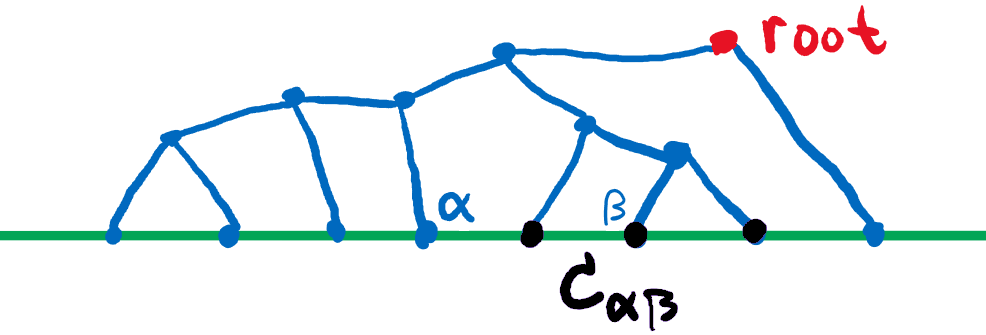}
\end{center}
The collection of finite convexly ordered binary branch structures is a \Fraisse class with the Ramsey property and there is an ordering $\triangleleft$ on $\Sa C$ such that $(\Sa C,\triangleleft)$ is the \Fraisse limit of the class of finite convexly ordered binary branch structures \cite[Prop~17, Thm~31]{binary-C}.
(The Ramsey property follows by Milliken's Ramsey theorem for trees~\cite{Milliken}.)

\medskip
Corollary~\ref{cor:C-tree} follows from Corollary~\ref{cor:add order}.

\begin{corollary}
\label{cor:C-tree}
$\Sa C$ and $(\Sa C, \triangleleft)$ are trace equivalent.
Hence $T$ trace defines $\Sa C$ if and only if some $T$ admits an uncollapsed indiscernible picture of $(\Sa C,\triangleleft)$.
\end{corollary}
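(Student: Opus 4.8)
The statement to prove is Corollary~\ref{cor:C-tree}: that $\Sa C$ and $(\Sa C,\triangleleft)$ are trace equivalent, and hence $T$ trace defines $\Sa C$ if and only if some monster model of $T$ admits an uncollapsed indiscernible picture of $(\Sa C,\triangleleft)$.

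The plan is to derive the first sentence as a direct instance of Corollary~\ref{cor:add order}. First I would check that the hypotheses of Corollary~\ref{cor:add order} are met with $\Sa O = \Sa C$ and the linear order $\triangleleft$ on $C$. By construction $\Sa C$ is the \Fraisse limit of the class of finite binary branch structures, hence finitely homogeneous. It is unstable: for instance one sees a definable order-type configuration inside $\Sa C$ (e.g. using the $C_{\alpha\beta}$ sets and the convex order, any nonempty open convex set behaves like a dense linear order), or one can simply note that $\Sa C$ is not stable because $C$-minimal non-trivial structures interpret a dense linear order; in any case unstability of $\Sa C$ is recorded implicitly when we invoke the fact that $(\Sa C,\triangleleft)$ is Ramsey (a stable structure expanded by a generic convex order could not be Ramsey in the relevant sense, but more directly $\Sa C$ visibly codes an order). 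Next, $(\Sa C,\triangleleft)$ is finitely homogeneous and Ramsey: this is exactly the cited result of Bodirsky--Jonsson--Pham together with Milliken's theorem, which the excerpt states just before the corollary. So Corollary~\ref{cor:add order} applies verbatim and gives both that $\Cal C_{(\Sa C,\triangleleft)}$ is the class of theories that do not trace define $\Sa C$, and (reading off the proof of that corollary, which goes through Propositions~\ref{prop:new order} and \ref{prop:ramsey}) that $\Sa C$ and $(\Sa C,\triangleleft)$ are trace equivalent.

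More explicitly, for the trace equivalence itself I would invoke Proposition~\ref{prop:new order}: $\Sa C$ is relational, $\triangleleft$ is a linear order on $C$, and $(\Sa C,\triangleleft)$ admits quantifier elimination (being a \Fraisse limit of a class in a finite relational language, by Fact~\ref{fact:homo}). Proposition~\ref{prop:new order} then says $(\Sa C,\triangleleft)$ is trace equivalent to $\Sa C \sqcup (\Q;<)$, and since $\Sa C$ is unstable, the ``if $\Sa M$ is unstable'' clause of that proposition gives that $(\Sa C,\triangleleft)$ is trace equivalent to $\Sa C$ directly. This settles the first sentence.

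For the second sentence I would chain together: since $(\Sa C,\triangleleft)$ is finitely homogeneous and Ramsey, Proposition~\ref{prop:picture} (equivalence of (4)--(6)) says that for any $T$, some monster model of $T$ admits an uncollapsed indiscernible picture of $(\Sa C,\triangleleft)$ if and only if $T$ trace defines $(\Sa C,\triangleleft)$; by the trace equivalence just established together with Proposition~\ref{prop:trace-basic} (transitivity) this holds if and only if $T$ trace defines $\Sa C$. The main thing to be careful about — the only real obstacle — is verifying unstability of $\Sa C$ cleanly enough to cite the unstable case of Corollary~\ref{cor:add order} and Proposition~\ref{prop:new order}; this is routine ($\Sa C$ is $C$-minimal and manifestly not trivial, so it codes an infinite order, e.g. restrict attention to branches all passing through a fixed node and use the induced order on the two sides of a splitting), but it must be stated rather than left implicit. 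Everything else is bookkeeping with results already proved in the excerpt.
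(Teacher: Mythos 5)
Your proposal is correct and follows exactly the route taken in the paper: the paper disposes of Corollary~\ref{cor:C-tree} in one line by citing Corollary~\ref{cor:add order}, whose proof in turn rests on Propositions~\ref{prop:new order} and \ref{prop:ramsey} — precisely the two results you unpack explicitly. Your extra care in verifying instability of $\Sa C$ and quantifier elimination for $(\Sa C,\triangleleft)$ is appropriate and matches the hypotheses the paper is implicitly checking.
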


Now we can show that $\dlo$ does not trace define $\Sa C$.

\begin{proposition}
\label{prop:airity tree}
A binary theory cannot trace define $\Sa C$.
\end{proposition}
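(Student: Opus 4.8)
The plan is to show that $\air(\Sa C) = 3$ and then apply Proposition~\ref{prop:airity}. Since $\Sa C$ is not Ramsey we cannot apply the finitely homogeneous Ramsey case of Proposition~\ref{prop:airity} directly to $\Sa C$; instead I will use Corollary~\ref{cor:C-tree}, which tells us $\Sa C$ and $(\Sa C,\triangleleft)$ are trace equivalent and that $(\Sa C,\triangleleft)$ is finitely homogeneous and Ramsey. So it suffices to show $\air(\Sa C,\triangleleft) \le 3$ and apply Proposition~\ref{prop:airity} to $(\Sa C,\triangleleft)$: if a binary theory $T$ trace defined $\Sa C$, then $T$ trace defines $(\Sa C,\triangleleft)$, and then $\air(\Sa C,\triangleleft) \le \air(T) = 2$, contradicting $\air(\Sa C,\triangleleft) = 3$.

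First I would observe that $\air(\Sa C,\triangleleft) = \air(\Sa C)$. This follows by an easy homogeneity argument exactly as in the free homogeneous case inside the proof of Proposition~\ref{prop:airity}: both structures admit quantifier elimination, the language of $(\Sa C,\triangleleft)$ has relations of arity $\le 3$, and the convex order $\triangleleft$ is a binary relation, so adding it cannot raise airity above $3$ and (being unstable) cannot lower it to $2$; and $\air(\Sa C) \le 3$ since $\Sa C$ has quantifier elimination in a ternary relational language. The real content is the lower bound $\air(\Sa C) \ge 3$, equivalently $\air(\Sa C) > 2$.

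To show $\air(\Sa C) > 2$ I would use Fact~\ref{fact:airity}(2): it is enough to exhibit, inside some model (we can just use $\bin = (\{0,1\}^\upomega; C_\bin)$, whose age is that of $\Sa C$ by Fact~\ref{fact:finite tree}, or $\Sa C$ itself), tuples $\alpha = (\alpha_1,\alpha_2,\alpha_3)$ and $\beta = (\beta_1,\beta_2,\beta_3)$ such that $\tp(\alpha_i,\alpha_j) = \tp(\beta_i,\beta_j)$ for all $i < j$ but $\tp(\alpha) \ne \tp(\beta)$. The key point is that in a $C$-set the $2$-type of a pair of distinct elements carries no information beyond "distinct" (there is a single non-diagonal $2$-type, since $C$ is only non-trivially ternary and the binary "shadow" of $C$ is trivial on distinct pairs), so any two triples of pairwise-distinct elements have the same collection of $2$-types. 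But the ternary relation $C$ genuinely distinguishes $3$-types: pick $\alpha_1,\alpha_2,\alpha_3$ to be three branches with a common meet (so $C(\alpha_i,\alpha_j,\alpha_k)$ fails for every ordering — a "star" configuration, realized in a binary tree by three leaves hanging off a single internal vertex via three edges, which is exactly the picture after Fact~\ref{fact:finite tree} with $\beta_1 = \{a,b\}$-type branches) and pick $\beta_1,\beta_2,\beta_3$ to be a "comb" where $\beta_1 \land \beta_2 \prec \beta_2 \land \beta_3$, i.e. $C(\beta_1,\beta_2,\beta_3)$ holds. Then $\tp(\alpha) \ne \tp(\beta)$ while all $2$-subtypes agree, establishing $\air(\Sa C) \ge 3$. (I should double-check that the star configuration is actually realized in a binary branch structure — in a strictly binary tree three branches cannot literally meet at one degree-$3$ vertex, but the relevant configuration for defeating $2$-airity is just a tuple on which $C$ never holds, and one can get $\neg C(\alpha_i,\alpha_j,\alpha_k)$ for all orderings by a suitable choice; alternatively, since the age of $\Sa C$ includes all finite binary branch structures, it suffices that such a $3$-type is consistent, which is clear from the axioms of a $C$-set.)

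The step I expect to be the main obstacle is pinning down precisely that the non-diagonal $2$-type in $\Sa C$ is unique — i.e. that $\tp_{\Sa C}(\alpha_i,\alpha_j)$ depends only on whether $\alpha_i = \alpha_j$. This is where one must actually use quantifier elimination for $\Sa C$ together with the fact that $C$, restricted to any two variables (by collapsing the third onto one of them, or by the $C$-set axioms), imposes no constraint on distinct pairs; one should verify this against the axiomatic definition of a $C$-set in \cite{adeleke-neumann} or \cite{binary-C} rather than hand-wave it. Once that is in hand, the rest is bookkeeping: assemble the two triples, check the $2$-types match and the $3$-types differ, invoke Fact~\ref{fact:airity}, transfer to $(\Sa C,\triangleleft)$, and conclude via Proposition~\ref{prop:airity} and Corollary~\ref{cor:C-tree} that no binary (equivalently, $2$-ary) theory can trace define $\Sa C$.
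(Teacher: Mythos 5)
The overall strategy — pass to $(\Sa C,\triangleleft)$ via Corollary~\ref{cor:C-tree}, invoke Proposition~\ref{prop:airity}, and show that $(\Sa C,\triangleleft)$ is ternary by exhibiting two triples with matching $2$-types but different $3$-types via Fact~\ref{fact:airity} — is exactly the paper's. But your execution has two genuine gaps.

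First, the detour through $\air(\Sa C)$. You assert $\air(\Sa C,\triangleleft) = \air(\Sa C)$ by appealing to the argument ``exactly as in the free homogeneous case inside the proof of Proposition~\ref{prop:airity}.'' That step in the paper rests on $\Sa I$ being free homogeneous and $(\Sa I,\triangleleft)$ being the expansion by a \emph{generic} linear order, which is what makes the homogeneity transfer ``easy.'' Neither hypothesis holds here: $\age(\Sa C)$ does not admit free amalgamation (freely amalgamating two branch structures over a common substructure would create triples on which $C$ holds in no ordering, which is impossible in a binary branch structure), and $\triangleleft$ is a \emph{convex} order, not a generic one. So the cited argument does not transfer, and the direction you need ($\air(\Sa C,\triangleleft)\ge\air(\Sa C)$) requires its own justification. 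The paper avoids this entirely by producing the two triples directly in $(\Sa C,\triangleleft)$, i.e.\ as convexly ordered branch configurations; you should do the same.

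Second, and more concretely, your witness for $\air(\Sa C) \ge 3$ does not exist. A ``star'' — three branches with $C(\alpha_i,\alpha_j,\alpha_k)$ false for every ordering — is not realizable in any binary branch structure: for any three distinct branches in a binary tree, exactly one of the three meets $\alpha_i \land \alpha_j$ is strictly $\prec$-below the other two, so exactly one grouping of $C$ holds (up to the $C(a,b,c)\Leftrightarrow C(a,c,b)$ symmetry). Your proposed fallback (``such a $3$-type is consistent, which is clear from the axioms of a $C$-set'') is also wrong, because $\age(\Sa C)$ is the class of finite \emph{binary branch structures}, not all finite $C$-sets, and the star configuration is outside this class. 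The fix is simple: take two ``comb'' configurations that differ in which pair meets last, e.g.\ one triple $a_1,a_2,a_3$ with $C(a_1,a_2,a_3)$ and another $b_1,b_2,b_3$ with $C(b_3,b_1,b_2)$. Both can be realized in a convexly ordered binary branch structure with $a_1 \triangleleft a_2 \triangleleft a_3$ and $b_1 \triangleleft b_2 \triangleleft b_3$, so all six unordered $2$-types agree while the $3$-types differ; this is the content of the paper's figure. The uniqueness of the non-diagonal $2$-type of $\Sa C$ (which you correctly flag as the technical lemma to nail down, via the $C$-set axiom forcing $C(a,b,b)$ for $a\ne b$) is then only needed implicitly, since you are matching $2$-types inside $(\Sa C,\triangleleft)$ directly.
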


% \begin{lemma}
% \label{lem:tree 1}
% Any dense $C$-relation trace defines the binary branching $C$-relation.
% \end{lemma}

% \begin{proof}
% The $C$-relation comes from a tree, the full binary tree embeds into that tree, so some substructure of the $C$-relation is a binary branching $C$-relation.
% Apply Proposition~\ref{prop:qe-trace}.
% \end{proof}

\begin{proof}
Suppose that $T$ trace defines $\Sa C$.
By Corollary~\ref{cor:C-tree} $T$ trace defines $(\Sa C,\triangleleft)$.
By Proposition~\ref{prop:airity} it is enough to show that $(\Sa C,\triangleleft)$ is ternary.
By Fact~\ref{fact:airity} it is enough to produce $a_1,a_2,a_3,b_1,b_2,b_3$ from $(\Sa C,\triangleleft)$ such that $\tp(a_1a_2 a_3) \ne \tp(b_1b_2 b_3)$ and $\tp(a_{i}a_{j}) = \tp(b_{i}b_{j})$ for all distinct $i,j \in \{1,2,3\}$.
By quantifier elimination and the definition of $(\Sa C,\triangleleft)$ it is enough to produce convexly ordered binary branching trees $\Sa T, \Sa T^*$, branches $a_1,a_2,a_3$ in $\Sa T$, and branches $b_1,b_2,b_3$ in $\Sa T^*$ such that the induced ordered branch substructures on $\{a_i,a_j\}$ and $\{b_i,b_j\}$ are isomorphic for all distinct $i,j \in \{1,2,3\}$ and the induced ordered branch substructures on $\{a_1,a_2,a_3\}$ and $\{b_1,b_2,b_3\}$ are not isomorphic.
Consider the figure below.
\begin{center}
\includegraphics[scale=.33]{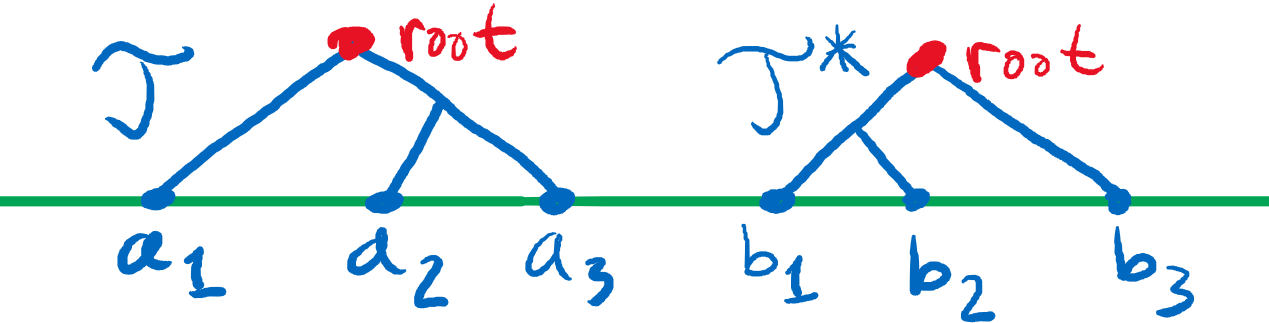}
\end{center}
\end{proof}

By \cite[Corollary~A.6.6]{Hodges} any colored linear order is binary, Proposition~\ref{prop:lo tree} follows.

\begin{proposition}
\label{prop:lo tree}
A colored linear order cannot trace define $\Sa C$.
\end{proposition}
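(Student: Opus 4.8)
The statement to prove is Proposition~\ref{prop:lo tree}: a colored linear order cannot trace define $\Sa C$. As the paper has already noted, by \cite[Corollary~A.6.6]{Hodges} every colored linear order is binary, i.e.\ $\air(T)\le 2$ when $T$ is the theory of a colored linear order. The plan is therefore to simply combine this with the work already done. We have shown in Proposition~\ref{prop:airity tree} that a binary theory cannot trace define $\Sa C$ — that proof ran through Corollary~\ref{cor:C-tree} (so that $T$ trace defines $\Sa C$ iff $T$ trace defines the Ramsey expansion $(\Sa C,\triangleleft)$) together with Proposition~\ref{prop:airity} (which forces $\air(\Sa I)\le\air(T)$ whenever $T$ trace defines a Ramsey finitely homogeneous $\Sa I$), applied to the fact that $(\Sa C,\triangleleft)$ is ternary. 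So the entire content of the present proof is: a colored linear order $\Sa O$ has binary theory, hence $\air(\Th(\Sa O))\le 2 < 3 = \air(\Sa C,\triangleleft)$, hence by Proposition~\ref{prop:airity tree} $\Th(\Sa O)$ does not trace define $\Sa C$, and therefore (by definition of trace definibility between theories, together with Proposition~\ref{prop:trace-theories}) no colored linear order trace defines $\Sa C$.

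Concretely, the steps I would carry out, in order, are: (i) recall that by \cite[Corollary~A.6.6]{Hodges} any colored linear order $\Sa O$ is binary, so $\air(\Th(\Sa O))\le 2$; (ii) cite Proposition~\ref{prop:airity tree}, whose statement is exactly that a binary theory cannot trace define $\Sa C$; (iii) conclude. There is essentially nothing to check beyond stringing these together — the real work was already done in establishing that $(\Sa C,\triangleleft)$ is ternary (the picture with the two trees in the proof of Proposition~\ref{prop:airity tree}) and that airity cannot go up under trace definition (Proposition~\ref{prop:airity}). If one wanted to be slightly more self-contained one could remark that since $\Th(\Sa O)$ trace defining $\Sa C$ is equivalent to $\Th(\Sa O)$ trace defining $(\Sa C,\triangleleft)$ (Corollary~\ref{cor:C-tree}) and $(\Sa C,\triangleleft)$ is Ramsey, Proposition~\ref{prop:airity} applies directly to give $2\ge\air(\Th(\Sa O))\ge\air(\Sa C,\triangleleft)=\air(\Sa C)=3$, a contradiction.

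The main (and only) conceptual obstacle is entirely upstream of this proposition: it is the verification that $(\Sa C,\triangleleft)$ really is ternary and not binary, which requires exhibiting a configuration of six branches in two finite convexly ordered binary trees that agree on all pairs but disagree on a triple — but that has already been handled in Proposition~\ref{prop:airity tree}. Given that, the present statement is a one-line corollary, and I would write it as such. I would present it as: the proof follows immediately from Proposition~\ref{prop:airity tree} and the fact, recorded in \cite[Corollary~A.6.6]{Hodges}, that every colored linear order is binary.

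\begin{proof}
By \cite[Corollary~A.6.6]{Hodges} a colored linear order is binary, so if $\Sa O$ is a colored linear order then $\air(\Th(\Sa O)) \le 2$. By Proposition~\ref{prop:airity tree} a binary theory cannot trace define $\Sa C$, so $\Th(\Sa O)$ does not trace define $\Sa C$, and hence $\Sa O$ does not trace define $\Sa C$.
\end{proof}
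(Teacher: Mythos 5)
Your proof is correct and is exactly the paper's argument: the paper likewise cites \cite[Corollary~A.6.6]{Hodges} for the binarity of colored linear orders and then concludes by Proposition~\ref{prop:airity tree}. Nothing further to add.
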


We now give some examples of $\nip$ structures which trace define $\Sa C$.
A partial order $(O;\prec)$ is \textbf{semilinear} if it is upwards directed and $\{\alpha\in O:\beta\prec\alpha\}$ is a linear order for all $\beta\in O$.
A semilinear order $(O;\prec)$ is \textbf{branching} if for every $\alpha \in O$ there are incomparable $\alpha\prec\beta,\beta^*$.

\begin{proposition}
\label{prop:semi order}
The theory of any branching semilinear order trace defines $\Sa C$.
\end{proposition}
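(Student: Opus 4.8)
The plan is to invoke Lemma~\ref{lem:tree 0}. Since $\age(\Sa C)=\age(\bin)$ by Fact~\ref{fact:finite tree}, it suffices, given a branching semilinear order $(O;\prec)$, to pass to a monster model $\monster\models\Th(O;\prec)$ (still a branching semilinear order, as all the relevant axioms are first order) and produce an injection $\uptau\colon\{0,1\}^\upomega\to\monsterset$ together with an $\monster$-definable $Y\subseteq\monsterset^3$ such that $C_\bin(b,b',b'')\iff(\uptau(b),\uptau(b'),\uptau(b''))\in Y$ for all $b,b',b''\in\{0,1\}^\upomega$.

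First I would embed the binary tree $\{0,1\}^{<\upomega}$ into $\monster$: fix $t_\emptyset$ arbitrarily and, recursively, use the branching hypothesis to choose $\prec$-incomparable $t_{\sigma 0},t_{\sigma 1}\prec t_\sigma$ for each $\sigma$. Using that every set $\{z:a\prec z\}$ is $\prec$-linear, a short induction shows $t_\rho\prec t_\sigma$ iff $\sigma$ is a proper initial segment of $\rho$; in particular $t_\sigma$ and $t_\rho$ are $\prec$-incomparable whenever neither word extends the other. Next, for $\alpha\in\{0,1\}^\upomega$ the partial type $\{x\prec t_{\alpha\restriction n}:n<\upomega\}$ is finitely satisfiable, since the $t_{\alpha\restriction n}$ form a strictly $\prec$-decreasing chain; by saturation it is realized by some $u_\alpha$, and the $u_\alpha$ are pairwise distinct and distinct from every $t_\sigma$. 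Put $\uptau(\alpha)=u_\alpha$.

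I claim that on $\uptau(\{0,1\}^\upomega)$ the relation $C_\bin$ is defined by the $\{\prec\}$-formula
\[
\varphi(x,y,z)\ :=\ \exists w\,\bigl(y\prec w\ \wedge\ z\prec w\ \wedge\ \neg(x\prec w)\bigr),
\]
i.e.\ $y$ and $z$ have a common $\prec$-upper bound which is not a $\prec$-upper bound of $x$; accordingly I would take $Y=\{(x,y,z):x\ne y\ \wedge\ x\ne z\ \wedge\ (y=z\ \vee\ \varphi(x,y,z))\}$, which is $\monster$-definable and which encodes $C_\bin$ correctly also on triples that are not pairwise distinct. The content is the verification, for distinct $\alpha,\beta,\gamma$, that $\alpha$ is the ``odd one out'' (equivalently $C_\bin(\alpha,\beta,\gamma)$) iff $\varphi(u_\alpha,u_\beta,u_\gamma)$. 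For the forward direction: if $\alpha$ is the odd one out and $k<m$ are the first places where $\alpha,\beta$ respectively $\beta,\gamma$ differ, then $w:=t_{\beta\restriction m}$ is a common $\prec$-upper bound of $u_\beta$ and $u_\gamma$, while $u_\alpha\not\prec w$ because $w$ is $\prec$-incomparable with $t_{\alpha\restriction(k+1)}$ (by the description of $\prec$ on the $t_\sigma$) and $\{z:u_\alpha\prec z\}$ is a chain. For the converse: if $\alpha$ is not the odd one out, then every common $\prec$-upper bound $w$ of $u_\beta$ and $u_\gamma$ satisfies $u_\alpha\prec w$; the elementary fact powering this is that in a semilinear order an element lying below two elements forces those two to be $\prec$-comparable, which excludes the only troublesome configuration, a $w$ sitting strictly below two $\prec$-incomparable ``children'' of the branch point of $\beta$ and $\gamma$. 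With $Y$ as above this is exactly condition~(2) of Lemma~\ref{lem:tree 0}, so $\Th(O;\prec)$ trace defines $\Sa C$. The hard part is precisely this last case analysis — keeping straight which of the three branches is the odd one out and chasing comparabilities among the $t_\sigma$ and the $u_\alpha$ — though it is entirely elementary; everything else is routine.
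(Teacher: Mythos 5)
Your proof is correct and follows the same strategy as the paper's: identify Lemma~\ref{lem:tree 0} as the right tool, build a copy of the complete binary tree inside a saturated model, realize each infinite branch by saturation, and capture $C_\bin$ with a single $\{\prec\}$-formula expressing ``odd one out.'' The one substantive difference is orientation. Your tree grows \emph{downward} (children $t_{\sigma 0},t_{\sigma 1}\prec t_\sigma$, branch points $u_\alpha$ below all the $t_{\alpha\restriction n}$), while the paper's grows \emph{upward} ($\beta_u\prec\beta_{u^\frown 0},\beta_{u^\frown 1}$, branch points above). This is not an accident: as literally printed, the paper's two definitions conflict --- semilinearity makes every upward cone $\{\alpha:\beta\prec\alpha\}$ a chain, yet branching asks for incomparable $\beta,\beta^*\succ\alpha$. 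You resolved the clash by keeping semilinearity as stated and silently flipping the branching direction to downward; the paper's proof keeps branching as stated and implicitly flips semilinearity so that \emph{downward} cones are chains. The two readings are order-duals and both yield a correct argument, so your proof is best viewed as the dual of the paper's. One small thing you do better: your $Y=\{(x,y,z):x\ne y\wedge x\ne z\wedge(y=z\vee\varphi(x,y,z))\}$ explicitly handles triples with repeated entries, which Lemma~\ref{lem:tree 0} does require; the paper's $Y$ (``$b,b^*$ incomparable and $\exists c\prec b,b^*$ with $c$ incomparable to $a$'') returns the wrong answer when $b=b^*\neq a$, a gap covered there only by ``it is easy to see.'' Your converse case analysis --- a common upper bound of $u_\beta,u_\gamma$ must lie above $u_\alpha$ because any $w$ with two incomparable elements above it would violate the chain condition on $\{z:w\prec z\}$ --- is exactly the dual of the paper's ``$c$ incomparable to $\beta_\alpha$'' computation and is sound.
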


\begin{proof}
Suppose that $(O;\prec)$ is an $\aleph_1$-saturated branching semilinear order.
Applying induction construct a family $(\beta_u : u \in \{0,1\}^{<\upomega})$ such that for all $u \in \{0,1\}^{<\upomega}$ we have
\begin{enumerate}
\item $\beta_u \prec \beta_{u^\frown 0}, \beta_{u^\frown 1}$, and
\item $\beta_{u^\frown 0}$ and $\beta_{u^\frown 1}$ are incomparable.
\end{enumerate}
Semilinearity implies that if $u,v \in \{0,1\}^\upomega$ are not comparable in the tree order on $\{0,1\}^\upomega$ then $\beta_u,\beta_v$ are not comparable under $\prec$.
For each $u \in \{0,1\}^\upomega$ fix $\beta_u \in O$ such that $\beta_u \succ \beta_v$ for every initial segment $v \in \{0,1\}^{<\upomega}$ of $u$.
Let $Y$ be the set of $(a,b,b^*) \in O^3$ such that there $b,b^*$ are incomparable and there is $c \in O$ such that $b,b^* \succ c$ and $c$ is incomparable to $a$.
Then $Y$ is $(O;\prec)$-definable.
Let $\uptau \colon \{0,1\}^\upomega \to O$ be given by $\uptau(u) = \beta_u$.
It is easy to see that $\uptau$ and $Y$ satisfy the conditions of Lemma~\ref{lem:tree 0}.
\end{proof}

The Adeleke-Macpherson construction shows that any dense $C$-set interpret a branching semilinear order~\cite[12.4]{adeleke-neumann}.
Hence Proposition~\ref{prop:dense C relation} follows from Proposition~\ref{prop:semi order}.

\begin{proposition}
\label{prop:dense C relation}
The theory of any dense $C$-set trace defines $\Sa C$.
\end{proposition}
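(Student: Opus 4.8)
The plan is to reduce the claim to Proposition~\ref{prop:semi order} via the Adeleke--Macpherson theorem. The statement to prove is: the theory of any dense $C$-set trace defines $\Sa C$. Recall from the preceding paragraph that the Adeleke--Macpherson construction~\cite[12.4]{adeleke-neumann} shows that any dense $C$-set interprets a branching semilinear order. So the entire argument is a two-line composition: interpretation is trace definibility (Proposition~\ref{prop:trace-interpret}), and trace definibility composes (Proposition~\ref{prop:trace-basic}).

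Concretely, first I would fix a dense $C$-set $\Sa D$ and let $T = \Th(\Sa D)$. By the Adeleke--Macpherson construction there is a branching semilinear order $(O;\prec)$ interpretable in $\Sa D$; passing to a sufficiently saturated model if needed (harmless by Proposition~\ref{prop:trace-theories}), we may take $(O;\prec)$ to be interpreted in some $\Sa D \models T$. Then $\Sa D$ interprets $(O;\prec)$, hence $\Sa D$ trace defines $(O;\prec)$ by Proposition~\ref{prop:trace-interpret}. By Proposition~\ref{prop:semi order}, $\Th(O;\prec)$ trace defines $\Sa C$, so by Proposition~\ref{prop:trace-theories} some model of $\Th(O;\prec)$ trace defines $\Sa C$; after enlarging the model of $T$ we may assume $(O;\prec)$ itself trace defines $\Sa C$. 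Now transitivity of trace definibility (Proposition~\ref{prop:trace-basic}.1, together with .2 at the level of theories) gives that $\Sa D$ trace defines $\Sa C$, i.e.\ $T$ trace defines $\Sa C$.

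Since essentially everything is being quoted, there is no real obstacle: the only point requiring a modicum of care is the bookkeeping of ``some model versus every model'' when chaining the interpretation-level and the theory-level statements, which is exactly what Proposition~\ref{prop:trace-theories} is designed to handle. I would therefore write the proof simply as: ``The Adeleke--Macpherson construction shows that any dense $C$-set interprets a branching semilinear order~\cite[12.4]{adeleke-neumann}. Hence Proposition~\ref{prop:dense C relation} follows from Proposition~\ref{prop:semi order}, Proposition~\ref{prop:trace-interpret}, and Proposition~\ref{prop:trace-basic}.'' In fact the excerpt already states precisely this sentence immediately before the proposition, so the intended proof is a one-liner and no further detail is needed.
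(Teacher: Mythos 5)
Your proposal is correct and matches the paper's argument exactly: the paper also derives the proposition in one line from the Adeleke--Macpherson construction~\cite[12.4]{adeleke-neumann} and Proposition~\ref{prop:semi order}, with transitivity of trace definibility doing the rest. The extra bookkeeping you spell out (via Propositions~\ref{prop:trace-interpret}, \ref{prop:trace-basic}, and \ref{prop:trace-theories}) is the correct underlying justification that the paper leaves implicit.
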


It is well-known that if $v$ is a non-trivial valuation on $K$ then we can define a dense $C$-relation on $K$ by declaring $C(\alpha,\beta,\beta')\Longleftrightarrow v(\alpha-\beta)<v(\beta-\beta')$.
Corollary~\ref{cor:valued field}.

\begin{corollary}
\label{cor:valued field}
The theory of any non-trivially valued field trace defines $\Sa C$
\end{corollary}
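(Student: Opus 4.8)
The statement to prove is Corollary~\ref{cor:valued field}: the theory of any non-trivially valued field trace defines $\Sa C$.

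\medskip
The plan is to deduce this from Proposition~\ref{prop:dense C relation}, which says that the theory of any dense $C$-set trace defines $\Sa C$. So the entire task reduces to exhibiting, inside any non-trivially valued field $(K,v)$, a definable dense $C$-relation, and then invoking Proposition~\ref{prop:trace-theories} (or just working in a monster model) so that the trace definition passes between theories. First I would recall the standard construction: given a valuation $v$ on $K$ with value group $\Gamma$, define the ternary relation $C$ on $K$ by
\[
C(\alpha,\beta,\beta') \quad \Longleftrightarrow \quad v(\alpha - \beta) < v(\beta - \beta').
\]
This is $(K,v)$-definable by construction. One checks the four axioms of a $C$-relation (cyclic-type symmetry conditions and the transitivity-like axiom) using the ultrametric inequality $v(x+y)\ge\min\{v(x),v(y)\}$ with equality when $v(x)\ne v(y)$; this is a routine verification of the kind that the excerpt explicitly calls ``well-known'', so I would not grind through it. The only substantive point is \emph{density} of the $C$-set, which amounts to: for all $\alpha$ and all $\beta$ there is $\beta'$ with $C(\alpha,\beta,\beta')$ and also some $\beta''$ strictly ``between'', i.e. the relevant branches are not isolated. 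Because $v$ is non-trivial, $\Gamma$ has no least positive element is not automatic, but one can always find elements $c$ with $v(c)$ strictly between any two prescribed values \emph{after} possibly passing to an $\aleph_1$-saturated elementary extension $(K^*,v^*)$, which is harmless since we only need $\Th(K,v)$ to trace define $\Sa C$ and we may replace $(K,v)$ by any model of its theory by Proposition~\ref{prop:trace-theories}.

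\medskip
So the key steps, in order, are: (1) pass to a sufficiently saturated model $(K^*,v^*)\models\Th(K,v)$; (2) using non-triviality of $v$ together with saturation, observe that the value group $\Gamma^*$ contains, between any two of its elements, a third, and that for any ball there are strictly smaller nonempty sub-balls and distinct balls of the same radius --- i.e., that the $C$-set $(K^*;C)$ defined by the displayed formula is dense in the sense of the excerpt (no isolated points in the induced tree structure, and the $C_{\alpha\beta}$ are genuinely refined); (3) conclude by Proposition~\ref{prop:dense C relation} that $\Th(K^*,v^*) = \Th(K,v)$ trace defines $\Sa C$. Step (2) is the only place anything must be said, and even there the content is just the familiar fact that a non-trivially valued field has a densely branching ball structure once the value group is dense, which saturation guarantees. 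The main (mild) obstacle is simply being careful about what ``dense $C$-set'' means in the cited references \cite{adeleke-neumann,delon-no-density,C-minimal} and confirming that a non-trivial valuation with a possibly discrete value group still yields a dense $C$-set after passing to a saturated model --- for discrete value groups the finite model at a single scale is avoided precisely because each ball still contains infinitely many maximal proper sub-balls (the residue field being infinite, or again after saturation). In short, there is no real difficulty: the corollary is a one-line consequence of Proposition~\ref{prop:dense C relation} once the classical definable dense $C$-relation on a valued field is written down.
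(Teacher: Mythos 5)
Your overall route is exactly the paper's: write down the standard $C$-relation $C(\alpha,\beta,\beta')\Leftrightarrow v(\alpha-\beta)<v(\beta-\beta')$ on $K$, note that it is a dense $C$-set, and invoke Proposition~\ref{prop:dense C relation}. That is the intended one-line argument and it is correct.

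However, the middle of your proposal contains two claims that are false, and you should not rely on them even as safety nets. First, passing to an $\aleph_1$-saturated elementary extension does \emph{not} make the value group dense: if $\Gamma=\Z$ (e.g. $K=\Q_p$), then the value group of any elementary extension still satisfies $\Th(\Z;+,<)$ and in particular retains a least positive element, so there is never a value strictly between $0$ and $1$. Second, the residue field need not be infinite (again $\Q_p$ has residue field $\F_p$), and saturation cannot change this either, since the elementary theory of the residue field is preserved under elementary extensions of the valued field. Fortunately both worries are misplaced: density of the $C$-set in the Adeleke--Neumann sense (for all $a\ne b$ there is $c$ with $C(c,a,b)$) holds for \emph{every} non-trivially valued field with no further hypotheses, simply because the non-trivial ordered value group is unbounded below, so one may pick $d$ with $v(d)<v(a-b)$ and set $c=a+d$. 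So the proof goes through, but you should delete the saturation/residue-field discussion and replace it with this one-line verification of density.
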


We show that a structure which defines a dense metric space trace defines $\Sa C$.
We work with a rather general class of metric spaces.
Let $X$ be a set and $\mathbb{L}$ be a linear order with a minimal element $0$.
We say that an $\mathbb{L}$-valued metric on $X$ is a function $d \colon X^2 \to \mathbb{L}$ such that:
\begin{enumerate}
\item for all $a,a' \in X$ we have $d(a,a') = d(a',a)$ and $d(a,a') = 0$ if and only if $a = a'$.
\item for every $r \in \mathbb{L}$ and open interval $r\in I \subseteq \mathbb{L}$ there is positive $s \in \mathbb{L}$ such that if $d(a,b) = r$ and $d(a,a'),d(b,b') < s$ then $d(a,b) \in I$.
\end{enumerate}
(3) is a weak form of the triangle inequality.
Note that by taking $r = 0$ in (3) we see that for any positive $r' \in \mathbb{L}$ there is positive $s \in \mathbb{L}$ such that $d(a,b),d(b,c) < s$ implies $d(a,c) < r'$.
Note that if $(H;+,\prec)$ is an ordered abelian group and $(X,d)$ is a metric space (in the usual sense) taking values in $H$ then $(X,d)$ is an $\{b\in H: b\ge 0\}$-valued metric space in our sense.

\medskip
Given $a \in X$ and $r \in \mathbb{L}$ we let $B(a,r)$ be the set of $a' \in X$ such that $d(a,a') < r$ and note that the collection of such sets forms a basis for a Hausdorff topology on $X$.
We say that an $\mathbb{L}$-valued metric space $(X,d)$ is $\Sa M$-definable if $\mathbb{L}$ is an $\Sa M$-definable linear order, $X$ is a definable set, and $d$ is a definable function.

\begin{proposition}
\label{prop:uniform 0}
Suppose that $(X,d)$ is an $\Sa M$-definable $\mathbb{L}$-valued metric space with no isolated points.
The $\Th(\Sa M)$ trace defines $\Sa C$.
\end{proposition}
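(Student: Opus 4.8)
The strategy mirrors the proof of Proposition~\ref{prop:semi order}: build inside an $\aleph_1$-saturated elementary extension a copy of the full binary tree of balls, produce a picture $\uptau\colon\{0,1\}^\upomega\to X$ of $\bin$, exhibit an $\Sa M$-definable set $Y\subseteq X^3$ coding $C_\bin$ along $\uptau$, and conclude by Lemma~\ref{lem:tree 0}. By Proposition~\ref{prop:trace-theories} it suffices to work in an $\aleph_1$-saturated $\Sa N\succ\Sa M$; passing to $\Sa N$ also produces new points of $X$, and it is exactly the absence of isolated points that will let us keep splitting balls.

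\textbf{Key steps.} First I would record the basic geometry of the $\mathbb L$-valued metric. Condition (3) with $r=0$ gives, for every positive $r'\in\mathbb L$, a positive $s\in\mathbb L$ with $d(a,b),d(b,c)<s\Rightarrow d(a,c)<r'$; and condition (3) in general shows that balls behave ultrametric-like up to controllable error, so that for suitable radii the relation ``$B(a,r)\subseteq B(b,r')$ or they are disjoint'' is forced. I would then construct, by induction on $u\in\{0,1\}^{<\upomega}$, points $\beta_u\in X$ and positive radii $r_u\in\mathbb L$ with $r_{u^\frown i}$ much smaller than $r_u$ (small enough to invoke the weak triangle inequality at the relevant scale), such that $B(\beta_{u^\frown0},r_{u^\frown0})$ and $B(\beta_{u^\frown1},r_{u^\frown1})$ are disjoint subsets of $B(\beta_u,r_u)$, and such that $d(\beta_u,\beta_{u^\frown0})$ and $d(\beta_u,\beta_{u^\frown1})$ are both of order $r_u$. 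The absence of isolated points is what guarantees that inside any ball $B(\beta_u,r_u)$ we can find two points at a definite positive distance apart, hence two disjoint smaller balls; a saturation argument (over the countably many parameters used so far) lets us additionally arrange that the radii $r_u$ decrease fast enough that the eventual branch intersection $\bigcap_u B(\beta_{u|n},r_{u|n})$ is controlled. For each $u\in\{0,1\}^\upomega$ use $\aleph_1$-saturation to pick $\uptau(u)=\beta_u\in X$ lying in every $B(\beta_{u|n},r_{u|n})$; distinct branches land in disjoint balls at the first level where they differ, so $\uptau$ is injective. Finally I would take $Y$ to be the set of $(a,b,b^\ast)\in X^3$ for which there is a common ``ancestor scale'' separating $a$ from the pair $\{b,b^\ast\}$ — concretely, something like: there exist $r\in\mathbb L$ and $c\in X$ with $d(c,b),d(c,b^\ast)$ comparable to $r$, $d(c,b)<d(b,b^\ast)$, while $d(c,a)\ge d(b,b^\ast)$ (the precise inequalities to be tuned so that $Y$ is $\Sa M$-definable and agrees with $C_\bin$ on $\uptau(\{0,1\}^\upomega)^3$). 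Then $\uptau$ and $Y$ satisfy the hypotheses of Lemma~\ref{lem:tree 0}, so $\Th(\Sa M)$ trace defines $\Sa C$.

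\textbf{Main obstacle.} The delicate point is the bookkeeping of radii in the inductive construction: because $\mathbb L$ is only a linear order (with $0$) and (3) is a weak, $I$-dependent form of the triangle inequality, one cannot simply say ``balls of the same radius are nested or disjoint'' as in an ultrametric. One must choose, at each node $u$, the child radius $r_{u^\frown i}$ small enough relative to $r_u$ and to the finitely many distances already in play that (i) the two child balls are genuinely disjoint, (ii) they sit inside $B(\beta_u,r_u)$, and (iii) the distances $d(\beta_u,\beta_{u^\frown i})$ and the eventual branch points are pinned down well enough that the single definable formula $Y$ correctly decodes $C_\bin$ simultaneously for all triples of branches. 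Getting a uniform definable $Y$ — rather than a family depending on the level — is where the weak triangle inequality (3), applied at the scale $r=d(b,b^\ast)$, does the real work, and verifying that the resulting $Y$ is both $\Sa M$-definable and correct on the picture is the technical heart of the argument. Everything else is routine saturation and an induction on finite binary trees of the kind already carried out in Proposition~\ref{prop:semi order} and the discussion of $C$-sets above.
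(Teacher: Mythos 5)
Your overall strategy — pass to an $\aleph_1$-saturated model, grow a binary tree of nested separated balls, get the picture $\uptau\colon\{0,1\}^\upomega\to X$ by saturation, exhibit a single $\Sa M$-definable $Y\subseteq X^3$, and invoke Lemma~\ref{lem:tree 0} — is exactly the paper's. But you stopped short of the proof: the definable set $Y$ that makes the argument work is left as "something like\dots to be tuned," with an existential over an auxiliary centre $c$ and a radius $r$, and you flag its definition as the unresolved "technical heart." That is a real gap, because everything else is routine.

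The missing observation is that no auxiliary witness is needed at all: once the inductive construction is set up correctly, one can take
\[
Y=\{(a,b,b^*)\in X^3: d(b,b^*)<d(a,b)\},
\]
i.e.\ just compare two distances. The reason this works, and the place where the weak triangle inequality earns its keep, is in how the radii are organized. Rather than choosing a per-node radius $r_u$, one chooses two descending \emph{per-level} sequences $(r_n)$, $(d_n)$ in $\mathbb L$ with $d(x,y),d(y,z)<r_n\Rightarrow d(x,z)<d_n$ (this is exactly condition (3) of the definition applied at scale $r_n$), and then picks centers $\alpha_u$ so that the two child balls of radius $r_{n+1}$ sit inside the parent ball of radius $r_n$ and any two points taken from distinct sibling balls have distance strictly above a later term of the $d$-sequence and strictly below the current one. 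The upshot is a two-sided estimate: if two branches $u,v$ first diverge at depth $n$ then $d(\uptau(u),\uptau(v))$ is pinned between consecutive terms of the $d$-sequence at level $n$. Comparing $d(b,b^*)$ with $d(a,b)$ then directly recovers which branching depth is greater, which is precisely $C_\bin$. The absence of isolated points is used exactly where you say — to find two points at a definite positive distance inside any ball — and $\aleph_1$-saturation to realize the branches; you have those parts right. So the approach is the same, but the formula for $Y$ you propose is overengineered and unverified, whereas the actual proof needs only the naked comparison of distances together with the level-indexed radius bookkeeping that your proposal explicitly avoids.
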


\begin{proof}
We suppose that $\Sa M$ is $\aleph_1$-saturated.
Working inductively we produce  descending sequences $(r_i : i < \upomega)$ and $(d_i : i < \upomega)$ of elements of $\mathbb{L}$ and a family $(\alpha_u : u \in \{0,1\}^{<\upomega})$ of non-zero elements of $X$ such that for all $n$ and $u \in \{0,1\}^{< \upomega}$ of length $n$ we have
\begin{enumerate}
\item $d(x,y),d(y,z) < r_n$ implies $d(x,z) < d_n$.
(If the metric takes values in divisible ordered abelian group we set $r_n = d_n/2$.)
\item $B(\alpha_{u^\frown 0}, r_{n + 1})$ and $B(\alpha_{u^\frown 1}, r_{n + 1})$ are contained in  $B(\alpha_u, r_n)$, hence if $\beta_i \in B(\alpha_{u^\frown i},r_{n + 1})$ for $i \in \{0,1\}$ then $d(\beta_0,\beta_1) < d_n$.
\item if $\beta_i \in B(\alpha_{u^\frown i}, r_{n + 1})$ for $i \in \{0,1\}$ then $d(\beta_0,\beta_1) >  d_{n + 2}$.
\end{enumerate}
Let $B_u = B(a_u, r_{n})$ for $u \in \{0,1\}^{<\upomega}$ of length $n$.
Applying saturation, we let $\uptau \colon \{0,1\}^\upomega \to X$ be such that $\uptau(b) \in B_u$ when $u$ is an initial segment of $b$.
Let $Y$ be the set of $(a,b,b^*) \in X^3$ with $d(b,b^*) < d(a,b)$.
Then $\uptau$ and $Y$ satisfy the conditions of Lemma~\ref{lem:tree 0}.
\end{proof}

\begin{proposition}
\label{prop:oag tree}
The theory of any ordered abelian group trace defines $\Sa C$.
\end{proposition}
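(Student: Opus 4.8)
The plan is to derive this from Proposition~\ref{prop:uniform 0} by showing that an ordered abelian group $(H;+,\prec)$ naturally carries a definable metric space with no isolated points. First I would set $\mathbb{L} = \{b \in H : b \geq 0\}$, which is a definable linear order with minimal element $0$, and take $X = H$ with $d(a,a') = |a - a'|$, where $|c| = \max\{c,-c\}$ is the usual absolute value. The triangle inequality $|a-c| \leq |a-b| + |b-c|$ holds in any ordered abelian group, which immediately gives condition~(3) in the definition of an $\mathbb{L}$-valued metric space (take $s$ so that $2s$ lands inside the desired interval, or use that condition~(3) for a genuine triangle inequality is routine: if $d(a,b) = r$ and $d(a,a'), d(b,b') < s$ then $d(a',b') \in (r - 2s, r + 2s)$, so choose $s$ with $(r-2s,r+2s) \subseteq I$). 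Conditions~(1) and the symmetry and positivity requirements are immediate.

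The one remaining point is that $(H;d)$ must have no isolated points. Here I would observe that an ordered abelian group, being infinite and a group, is never discrete in the order topology in the sense required — more precisely, $H$ has no isolated points in the metric topology exactly when for every $\alpha \in H$ and every positive $r \in H$ there is $\beta \neq \alpha$ with $|\alpha - \beta| < r$, i.e.\ when there is a nonzero element of absolute value less than $r$. If $H$ is dense this is clear. If $H$ is discrete, say with least positive element $1$, then the metric topology on $H$ is discrete and $(H;d)$ does have isolated points, so the argument via Proposition~\ref{prop:uniform 0} does not apply directly. In that case, though, $(H;+,\prec)$ is a model of Presburger arithmetic or more generally has a discrete least positive element, and we can instead note that $(H;+,\prec)$ trace defines $\doag$ by Proposition~\ref{prop:re-oag}, and $\doag$ is the theory of a dense ordered abelian group, hence $\doag$ trace defines $\Sa C$ by the dense case just handled; then apply transitivity of trace definibility (Proposition~\ref{prop:trace-basic}). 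Actually the cleanest route is to handle everything uniformly this way: by Proposition~\ref{prop:re-oag} the theory of any ordered abelian group trace defines $\doag$, and a divisible ordered abelian group is dense with no isolated points in the associated metric, so Proposition~\ref{prop:uniform 0} applied to a model of $\doag$ shows $\doag$ trace defines $\Sa C$; then $\Th(H;+,\prec)$ trace defines $\Sa C$ by Proposition~\ref{prop:trace-basic}.

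So the key steps in order are: (i) verify that a divisible ordered abelian group $(R;+,<) \models \doag$, with $d(a,a') = |a-a'|$ taking values in $\{b \in R : b \geq 0\}$, is a definable $\mathbb{L}$-valued metric space in the sense of the definition preceding Proposition~\ref{prop:uniform 0} — the triangle inequality gives condition~(3), and the others are immediate; (ii) check it has no isolated points, which follows from divisibility (given $\alpha$ and positive $r$, the element $\alpha + r/2 \neq \alpha$ lies in $B(\alpha,r)$); (iii) conclude by Proposition~\ref{prop:uniform 0} that $\doag$ trace defines $\Sa C$; (iv) invoke Proposition~\ref{prop:re-oag}, which says the theory of any ordered abelian group trace defines $\doag$, and compose with transitivity (Proposition~\ref{prop:trace-basic}) to get the result for an arbitrary ordered abelian group. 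I do not expect any serious obstacle here — the proof is essentially an application of already-established machinery. The only thing requiring a moment's care is confirming that the weak triangle-inequality axiom~(3) in the metric-space definition is genuinely satisfied by the absolute value on an ordered abelian group, but this is a one-line estimate.
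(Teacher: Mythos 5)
Your proposal is correct and follows exactly the paper's own argument: reduce to $\doag$ via Proposition~\ref{prop:re-oag}, then apply Proposition~\ref{prop:uniform 0} to the absolute-value metric on a divisible ordered abelian group. You have merely spelled out the routine verifications (the weak triangle inequality and the absence of isolated points via divisibility) that the paper leaves implicit.
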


\begin{proof}
By Prop~\ref{prop:re-oag} it suffices to show that $\doag$ trace defines $\Sa C$.
Apply Prop~\ref{prop:uniform 0}.
% Let $X \subseteq \R$ be the classical middle thirds Cantor set and let $C$ be the ternary relation on $X$ where $C(a,b,c) \Longleftrightarrow |b -c| < |a - c|$.
% Then $(X;C)$ is a dense binary branching $C$-set.
\end{proof}

$\nip$ theories that don't trace define $\Sa C$ should be very close to stable theories or linear orders.
Of course trivial o-minimal structures are the o-minimal structures closest to linear orders.

\begin{proposition}
\label{prop:o min triv}
If $T$ is o-minimal then $T$ trace defines $\Sa C$ if and only if $T$ is non-trivial.
\end{proposition}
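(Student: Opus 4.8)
The plan is to prove both directions using the results already assembled. For the non-trivial direction, suppose $T$ is o-minimal and non-trivial. By Fact~\ref{fact:lovey} there is an interval $I\subseteq R$ and a definable $X\subseteq I^3$ such that $(I;X,<)$ is a $\doag$-interval. By Lemma~\ref{lem:doag interval} this $\doag$-interval trace defines $\doag$, so $T$ trace defines $\doag$; by Proposition~\ref{prop:oag tree} (really its proof, which reduces to showing $\doag$ trace defines $\Sa C$ via Proposition~\ref{prop:uniform 0}) $\doag$ trace defines $\Sa C$. Hence $T$ trace defines $\Sa C$ by transitivity (Proposition~\ref{prop:trace-basic}).

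For the converse, suppose $T$ is o-minimal and trivial, and suppose toward a contradiction that $T$ trace defines $\Sa C$. The strategy is to mimic the proof of Theorem~\ref{thm:trivial} (equivalently Proposition~\ref{prop:o-min-rigid} part (2)$\Rightarrow$(4)): use the simple cell decomposition for trivial o-minimal structures (Proposition~\ref{prop:cell decomp}) together with a pigeonhole/Ramsey argument to show that a definable subset of $M^{mk}\times M^m$ whose fibers encode the ternary relation $C$ on $\{0,1\}^\upomega$ cannot exist once $k$ is large compared to $m$. Concretely: by Proposition~\ref{prop:picture} (or directly by Lemma~\ref{lem:tree 0}) we get an injection $\uptau\colon\{0,1\}^\upomega\to M^m$ and a definable $Y\subseteq M^{3m}$ with $C_\bin(b,b',b'')\Longleftrightarrow(\uptau(b),\uptau(b'),\uptau(b''))\in Y$. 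Replacing $Y$ by a simple cell containing the relevant triples (after applying Ramsey to an indiscernible-style sequence of branches to land in one cell of the decomposition), the defining functions of the cell are each essentially one-variable, so the truth value of "$(\uptau(b),\uptau(b'),\uptau(b''))\in Y$" depends on only $O(m)$ of the $3m$ coordinates in a way controlled by at most one coordinate per defining inequality. Since $\Sa C$ has arbitrarily large finite binary branch structures — and in particular configurations requiring genuinely ternary information that cannot be recovered from the pairwise data plus a bounded amount of one-variable information — we reach a contradiction by choosing a binary branch substructure large enough that some coordinate block must be "ignored", and then perturbing one branch within that block to flip a $C$-relation while keeping membership in $Y$ unchanged. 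This is exactly the mechanism of the $\upalpha,\upalpha'$ argument at the end of the proof of Theorem~\ref{thm:trivial}.

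Alternatively, and more cleanly, one can invoke the airity obstruction: by Corollary~\ref{cor:C-tree} $T$ trace defines $\Sa C$ iff $T$ trace defines the Ramsey structure $(\Sa C,\triangleleft)$, and Proposition~\ref{prop:airity tree} shows $(\Sa C,\triangleleft)$ (hence $\Sa C$) is not trace definable in any binary theory. So it suffices to show that a trivial o-minimal structure has a \emph{binary} reduct that already computes all its definable sets in the relevant sense — but this is not literally true as stated, so instead one shows directly that a trivial o-minimal $\Sa M$ cannot trace define $\Sa C$ by the cell-decomposition argument above. I would present the cell-decomposition argument as the main line, since it is self-contained given Proposition~\ref{prop:cell decomp} and Fact~\ref{fact:trivial}.

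\textbf{Main obstacle.} The hard part will be the combinatorial heart of the converse: showing that the simple-cell structure of $Y$ genuinely fails to encode the ternary $C$-relation on a large finite binary branch structure. In the group case (Theorem~\ref{thm:trivial}) the contradiction came from uniqueness of the product element in the fiber; here the analogous "rigidity" of $\Sa C$ needs to be extracted — one must identify a finite configuration in $\age(\Sa C)$ (a small binary tree with a carefully chosen convex order, as in the figures) on which flipping one branch inside a coordinate block that the defining functions do not see changes some $C(a,b,b^*)$ while leaving all pairwise data and all cell-membership data fixed. Making the counting "$k > $ (bound depending on $m$)" precise, and verifying that the perturbed branch still realizes the same one-variable data feeding each defining function, is the step requiring genuine care.
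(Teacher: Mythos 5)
Your forward direction (non-trivial $\Rightarrow$ trace defines $\Sa C$) matches the paper exactly: Fact~\ref{fact:lovey} plus Lemma~\ref{lem:doag interval} yields $\doag$, and Proposition~\ref{prop:oag tree} gets you to $\Sa C$.

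For the converse, however, there is a genuine gap, and it is a costly one. You correctly identify the airity obstruction — Proposition~\ref{prop:airity tree} shows that no binary theory trace defines $\Sa C$ — and you correctly observe that it would suffice to know that a trivial o-minimal theory is binary. You then dismiss this as ``not literally true as stated'' and retreat to a much harder, unverified cell-decomposition argument. But the fact you dismissed \emph{is} literally true: the theorem of Mekler, Rubin, and Steinhorn~\cite{mekler-rubin-steinhorn} says precisely that a trivial o-minimal theory is binary (the paper already invokes this equivalence in both directions, once in the discussion before Theorem~\ref{thm:trivial} and once in the proof you are trying to reproduce). With that one citation the converse is a two-line argument: trivial $\Rightarrow$ binary $\Rightarrow$ does not trace define $\Sa C$ by Proposition~\ref{prop:airity tree}. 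You do not even need the detour through $(\Sa C,\triangleleft)$ at this point, since Proposition~\ref{prop:airity tree} already handles that internally.

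As for your main line: the cell-decomposition argument adapting Theorem~\ref{thm:trivial} is a plausible sketch but it is not a proof, and you yourself flag the combinatorial heart as the unresolved ``main obstacle.'' The group-case contradiction in Theorem~\ref{thm:trivial} leans on a rigidity that is not obviously available for $\Sa C$: there the fiber $Y_\alpha$ contains a \emph{unique} element of $O$, whereas for a ternary relation like $C$ you would need to manufacture a finite configuration and a perturbation that is invisible to every essentially-one-variable defining function yet flips some $C(a,b,b^*)$, and you have not identified such a configuration or carried out the counting. The step where you ``land in one cell'' via Ramsey applied to branches also needs care, since branches of $\{0,1\}^\upomega$ do not come with a natural indiscernibility in $\Sa M$ until you pass to the Ramsey expansion $(\Sa C,\triangleleft)$ and invoke indiscernible pictures — at which point you are close to reinventing Proposition~\ref{prop:airity} and the airity bound, which is exactly what Proposition~\ref{prop:airity tree} packages for you. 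In short: the right move was the one you almost took; the missing ingredient is Mekler--Rubin--Steinhorn.
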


\begin{proof}
Suppose $T$ is trivial.
By a theorem of Mekler, Rubin, and Steinhorn~\cite{mekler-rubin-steinhorn} $T$ is binary, hence $T$ cannot trace define $\Sa C$ by Proposition~\ref{prop:airity tree}.
Suppose $\Sa M$ is non-trivial.
By Theorem~\ref{thm:trivial} $T$ trace defines $\doag$.
By Proposition~\ref{prop:oag tree} $\doag$ trace defines $\Sa C$.
\end{proof}

It is a famous conjecture that any unstable $\nip$ field admits a definable field order or a definable non-trivial valuation.
By Corollary~\ref{cor:valued field} and Prop~\ref{prop:uniform 0}, this conjecture implies that the theory of an unstable $\nip$ field trace defines $\Sa C$.
The dp finite case of the conjecture is a theorem of Johnson~\cite{Johnson-dp-finite}, so the theory of an unstable dp finite field trace defines $\Sa C$.

\subsection{Stable?}
We have shown that $3\le\Upomega_2\le\aleph_0$.
Note that $\Upomega_2=3$ iff the following holds:
\begin{enumerate}[leftmargin=*]
\item Any stable binary finitely homogeneous structure is trace equivalent to the trivial theory.
\item Any unstable $\nip$ binary finitely homogeneous structure is trace equivalent to $(\Q;<)$.
\item An $\mathrm{IP}$ binary finitely homogeneous structure is trace equivalent to the Erd\H{o}s-Rado graph.
\end{enumerate}

We proved (3), see Corollary~\ref{cor:henson}.
A binary finitely homogeneous structure is rosy of finite thorn rank by \cite[Lemma~7.1]{Pierre}.
So Theorem~\ref{thm:in dlo} gives the primitive rank $1$ version of (2).
We make some comments concerning (1).
% We finally make some comments about stable finitely homogeneous structures.
% By Fact~\ref{fact:lachlan-big} a stable finitely homogeneous structure is trace definable in $(\Q;<)$.
% It is natural to ask if we can do better.
% A trivial structure is an infinite set with no additional structure, the trivial theory is the theory of such a structure.
% It would be nice to show that an infinite stable finitely homogeneous structure is trace definable in the trivial theory (and hence trace equivalent to a countable set with no additional structure).
% This would give more evidence for the strong conjecture.
Let $\mathbb{D}$ be the class of $\aleph_0$-stable, $\aleph_0$-categorical structures $\Sa M$ such that every interpretable strongly minimal set is trivial and let $\mathbb{D}_\mathrm{tot}$ be the class of totally categorical structures in $\mathbb{D}$.
Lachlan showed that every stable finitely homogeneous structure is in $\mathbb{D}$, he also showed that any $\Sa M \in \mathbb{D}$ expands to some $\Sa M^*\in\mathbb{D}_\mathrm{tot}$ \cite[Theorem~2.6]{lachlan-order}.
It is tempting to try to show that any structure in $\mathbb{D}_\mathrm{tot}$ is trace definable in a trivial structure.
I haven't been able to get this to work, but it might be easy for someone who actually knows something about totally categorical structures.

\medskip
A structure in a countable language interpretable in a trivial structure is $\aleph_0$-categorical, $\aleph_0$-stable, and trivial.
According to Hrushovski there are $\aleph_0$-categorical, $\aleph_0$-stable, trivial structures not interpretable in a trivial theory~\cite{szymon}.
It seems that any abstract characterization of theories interpretable in the trivial theory must be subtle.
Is there a nice abstract characterization of \textit{trace minimal} theories, i.e. theories trace definable in the trivial theory?

\newpage
\appendix

\section{Non-interpretation results}
\label{section:o-min}
We give above many examples of interesting structures $\Sa M$, $\Sa O$ such that $\Sa M$ trace defines but does not interpret $\Sa O$.
The results of this section are used to get the non-interpretations.

\medskip
We let $<_{\mathrm{Lex}}$ be the lexicographic order.

\subsection{Interpretations in o-minimal structures}

Fact~\ref{fact:rigid} is the rigidity result for interpretations between o-minimal expansions of fields.
It is due to Otero, Peterzil, and Pillay~\cite{OPP-groups-rings}.

\begin{fact}
\label{fact:rigid}
Let $\Sa R$ be an o-minimal expansion of an ordered field $\mathbf{R}$, $\mathbf{C} = \mathbf{R}[\sqrt{-1}]$, and $\mathbf{F}$ be an infinite field interpretable in $\Sa R$.
There is a definable isomorphism $\mathbf{F} \to \mathbf{R}$ or $\mathbf{F} \to \mathbf{C}$.
\end{fact}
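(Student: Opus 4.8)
\textbf{Proof proposal for Fact~\ref{fact:rigid}.}

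The statement is a rigidity theorem for interpretable fields in o-minimal expansions of ordered fields, and the plan is to reduce it to the Peterzil--Starchenko trichotomy together with elimination of imaginaries and the theory of definable groups in o-minimal structures. First I would recall that $\Sa R$, being an o-minimal expansion of an ordered field, eliminates imaginaries (by Pillay's theorem), so the interpreted field $\mathbf F$ may be taken to have underlying set a definable $X \subseteq R^n$ with definable addition and multiplication. Thus $\mathbf F$ is an infinite \emph{definable} field in $\Sa R$, and we are reduced to the definable case.

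Next I would invoke the Peterzil--Starchenko dichotomy in the form of Fact~\ref{fact:lovey}: since $\Sa R$ expands an ordered group (indeed an ordered field), there is an ordered division ring $\mathbb D$ and an ordered $\mathbb D$-vector space $\V$ expanding $(R;+,<)$ of which $\Sa R$ is a reduct, \emph{or} there is an interval $I \subseteq R$ carrying a definable real closed field structure. In the first (``linear'') case, $\Sa R$ would be a reduct of an ordered vector space and hence linear/modular, so it could not define an infinite field at all --- contradicting that $\mathbf F$ is definable and infinite. (One can phrase this cleanly by noting that a reduct of an ordered vector space has linear Zarankiewicz-type behaviour, or more directly by the trichotomy: a locally modular o-minimal structure does not define a field.) Hence we are in the ``field'' case: $\Sa R$ defines a real closed field on some interval, and by o-minimality this real closed field is definably isomorphic (as a field, after adjusting the domain) to the base field $\mathbf R$ itself, since $\mathbf R$ is already a definable real closed field and in an o-minimal structure all definable real closed fields are definably isomorphic.

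Now the heart of the argument: I would use the structure theory of definable groups and fields in o-minimal structures over a real closed field. The additive group $(\mathbf F,+)$ is a definable abelian group of finite o-minimal dimension, say $d$, so by Pillay's theorem on definable groups it is definably isomorphic to a definable subgroup of $\mathbf R^d$ with the o-minimal manifold topology; the field multiplication then makes $\mathbf F$ a definable topological field of dimension $d$ over $\mathbf R$. By the classification of definable fields in o-minimal structures (this is exactly the Otero--Peterzil--Pillay analysis, and over a real closed field the model-theoretic arguments of Peterzil--Pillay--Starchenko apply), a definable field of finite dimension over a real closed field $\mathbf R$ is definably isomorphic either to $\mathbf R$ (when $d=1$) or to $\mathbf C = \mathbf R[\sqrt{-1}]$ (when $d=2$), and no other dimension occurs --- the point being that $\mathbf F$ is a finite field extension of (a definable copy of) $\mathbf R$, and by Artin--Schreier the only finite proper extension of the real closed field $\mathbf R$ is $\mathbf C$. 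Assembling these gives a definable field isomorphism $\mathbf F \to \mathbf R$ or $\mathbf F \to \mathbf C$.

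The main obstacle --- and the part I would expect to require genuine work rather than citation --- is the finite-dimensional classification step: showing that a definable infinite field $\mathbf F$ in $\Sa R$ is, up to definable isomorphism, a finite extension of the canonical real closed field $\mathbf R$ defined by the trichotomy, and then pinning down that the only options are $\mathbf R$ and $\mathbf C$. This needs the o-minimal theory of definable manifolds and groups (local topological structure, dimension theory, the fact that a definable field action gives a definable embedding into $\mathrm{GL}$), plus the Artin--Schreier input; in the write-up I would present the easy reduction (elimination of imaginaries $\Rightarrow$ definable case; trichotomy $\Rightarrow$ non-linear case) and then cite Otero--Peterzil--Pillay for the classification, since reproving it would be a paper in itself. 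Everything else --- uniqueness of the definable real closed field, the passage from interpretable to definable --- is routine given the tools already recalled in the excerpt.
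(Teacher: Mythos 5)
This statement is labeled as a \emph{Fact} in the paper precisely because the paper does not prove it: it is attributed directly to Otero, Peterzil, and Pillay~\cite{OPP-groups-rings}, with no argument given. Your proposal ultimately does the same thing --- you defer the classification step to OPP --- so in the end the two ``proofs'' agree: cite OPP. The preliminary reductions you sketch (elimination of imaginaries to pass from interpretable to definable, then pin down dimension and apply Artin--Schreier) are indeed the standard opening moves, though your detour through the Peterzil--Starchenko trichotomy is redundant here: since $\Sa R$ already expands an ordered \emph{field} $\mathbf R$, that field is automatically real closed by o-minimality, so the nonlinear case of the trichotomy holds for free and no dichotomy needs to be invoked.
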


Suppose $\Sa F$ is an expansion of an ordered field  interpretable in $\Sa R$.
Fact~\ref{fact:rigid} shows that $\Sa F$ is isointerdefinable with a structure intermediate between $\mathbf{R}$ and $\Sa R$.
In particular $\Sa F$ is o-minimal.
The analogue for ordered abelian groups fails.
For example $(\R;+,<)$ interprets the non-o-minimal structure $(\R^2;+,<_{\mathrm{Lex}}, \{0\} \times \R)$.
(Note that $\{0\}\times \R$ is a proper convex subgroup.)
We prove weaker results for expansions of ordered abelian groups.

\begin{fact}
\label{fact:discrete}
If $\Sa O$ is an oag interpretable in an o-minimal expansion of an oag then $\Sa O$ is dense.
Hence $(\Z;+,<)$ is not interpretable in an o-minimal expansion of an ordered abelian group.
\end{fact}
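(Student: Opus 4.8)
The plan is to reduce the statement to a single structural feature of o-minimal expansions of ordered abelian groups: every interpretable linear order is ``locally dense'' in the following sense. Concretely, suppose $\Sa R$ is an o-minimal expansion of an oag $(R;+,<)$ and $\Sa O = (O;+,\prec)$ is an oag interpreted in $\Sa R$, say on a quotient $D/E$ with $D\subseteq R^n$ definable and $E$ a definable equivalence relation. First I would invoke the fact that $\Sa R$ eliminates $\exists^\infty$ and has definable Skolem functions, so after choosing a definable set of representatives we may assume $O$ itself is a definable subset of $R^n$ and $\prec$ a definable linear order on $O$, with the group operation definable as well. The key point is then that an infinite definable linearly ordered set in an o-minimal structure cannot be discretely ordered around every point: by o-minimal cell decomposition applied to the definable order $\prec$ (more precisely, to the family of initial segments $\{x : x \prec a\}$ as $a$ ranges over $O$), the order type of $(O;\prec)$ is built from finitely many pieces each of which is either a point or carries a dense (indeed definably order-isomorphic to an $\Sa R$-interval) suborder. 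So if $O$ is infinite, some $\prec$-interval $(a,b)$ of $O$ is infinite and densely ordered by $\prec$.

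The second step is to upgrade this to genuine density of the \emph{group} $(O;+,\prec)$. Since $(O;+,\prec)$ is an ordered abelian group, it is homogeneous: translation by any $c\in O$ is an order automorphism. Hence if any nonempty open interval of $O$ is infinite and $\prec$-dense, then \emph{every} nonempty open interval is, by translating; and a nontrivial ordered abelian group in which every interval is dense is itself densely ordered (there is no minimal positive element, because if $\varepsilon>0$ were minimal positive then $(0,2\varepsilon)$ would be the singleton $\{\varepsilon\}$, contradicting that some interval, hence by translation this one, is infinite once $O$ is infinite). This gives the first assertion: $\Sa O$ is dense. The second assertion, that $(\Z;+,<)$ is not so interpretable, is then immediate — $(\Z;+,<)$ is a discretely ordered nontrivial abelian group, so it violates the conclusion just proved.

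I would organize the write-up as: (1) reduce to the case where $O$ is a definable subset of some $R^n$ with $\prec$, $+$ definable, citing elimination of $\exists^\infty$ / definable Skolem functions for o-minimal structures; (2) apply o-minimal cell decomposition to the definable linear order $\prec$ on $O$ to conclude that $O$ either is finite or contains an infinite $\prec$-dense subinterval; (3) use translation-homogeneity of the oag to propagate density everywhere and rule out a minimal positive element; (4) conclude density of $\Sa O$, and deduce the $(\Z;+,<)$ statement as a corollary.

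The main obstacle I anticipate is step (2): making precise, via cell decomposition, the claim that a definable linear order on a definable subset of $R^n$ decomposes into finitely many ``dense or singleton'' pieces. The cleanest route is probably to work in one variable first — pick any definable injection is not available, but one can look at the definable function $o\mapsto |\{x\in O : x\prec o\}\cap F|$ for $F$ a definable finite-to-one image of $O$ in $R$, or more robustly: choose $a\prec b$ in $O$ with $(a,b)_O$ infinite (possible if $O$ is infinite, else $O$ itself would be finite by o-minimality of the parameter family of cuts), and then argue by o-minimality on a single coordinate that $(a,b)_O$ cannot be discrete by considering its cut spectrum; any discrete definable suborder would yield a definable infinite discrete subset of a line in $\Sa R$, contradicting o-minimality. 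I expect this to be routine but it is the one place where genuine care with the dimension of $O$ and with cell decomposition in several variables is needed, so I would state it as a short lemma and prove it separately.
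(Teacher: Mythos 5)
The paper's proof is a two-liner that bypasses everything in your plan: an o-minimal expansion of an oag eliminates imaginaries and eliminates $\exists^\infty$, both of which are inherited by interpretable structures, whereas a discrete ordered abelian group fails $\exists^\infty$-elimination (the formula $0 \prec x \prec y$ has arbitrarily large finite instances $\{ \varepsilon, 2\varepsilon,\ldots,(n-1)\varepsilon\}$ for $y = n\varepsilon$ with $\varepsilon$ the least positive element). Your proposal instead routes through cell decomposition and translation-homogeneity, and the detour is where the gaps are.

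The central gap is your step (2). You acknowledge it is the crux, but the sketch you give is not correct as stated. The inference ``$(O;\prec)$ decomposes into finitely many pieces each dense or a point, hence some $\prec$-interval is infinite and dense'' fails: the split interval $[0,1]\times\{0,1\}$ with the lexicographic order is definable in $(\R;+,<)$ and is built from two dense pieces, yet no open interval in it is dense (every point of the form $(c,0)$ has $(c,1)$ as an immediate successor). So ``finitely many dense pieces'' does not by itself yield a dense subinterval. The claim you actually need is that a definable linear order in an o-minimal structure has no infinite \emph{discrete} subinterval, and the clean way to prove \emph{that} is precisely elimination of $\exists^\infty$ — at which point the whole cell-decomposition scaffolding is unnecessary and the argument collapses into the paper's. (Your fallback sketch via the ``cut spectrum'' and ``a definable infinite discrete subset of a line'' is gesturing at a real fact but is not a proof; making it precise would again be doing $\exists^\infty$-elimination in disguise.) There is also a small error in step (3): you claim $(0,2\varepsilon)$ is infinite ``by translation'' of the infinite interval $(a,b)$, but the translate of $(a,b)$ is $(0,b-a)$, not $(0,2\varepsilon)$, and $b-a$ need not equal $2\varepsilon$. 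The fix is to use \emph{density} of $(a,b)$, not mere infinitude: if $(a,b)$ is dense then $(0,b-a)$ is dense, hence has no least element, hence $O$ has no least positive element. Note also that ``$O$ infinite $\Rightarrow$ some interval infinite'' is false in general for oags — in $(\Z;+,<)$ every bounded interval is finite — so the density hypothesis really is what does the work in step (3).
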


\begin{proof}
A discrete ordered abelian group does not eliminate $\exists^\infty$ and an o-minimal expansion of an ordered abelian group eliminates imaginaries and eliminates $\exists^\infty$.
\end{proof}

\begin{fact}
\label{fact:rama}
Suppose that $\Sa M$ is an o-minimal expansion of an ordered abelian group and $(L;\triangleleft)$ is a definable linear order.
Then there is a definable embedding $(L;\triangleleft) \to (M^k;\lex)$.
\end{fact}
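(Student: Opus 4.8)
\textbf{Proof plan for Fact~\ref{fact:rama}.} The plan is to reduce the problem to the one-dimensional case by induction on the dp-rank (equivalently, dimension) of the definable linear order $(L;\triangleleft)$, using o-minimal cell decomposition to slice $L$ into definable fibered pieces and then splicing together embeddings of the pieces lexicographically. First I would pass to a definable subset $L \subseteq M^n$ carrying the order $\triangleleft$, and apply o-minimal cell decomposition to $L$ so that $L = C_1 \cup \cdots \cup C_r$ with each $C_j$ a cell; since the cells are definable one may, after refining, assume the cells are $\triangleleft$-convex, so it suffices to embed each convex piece and then concatenate (placing the image of $C_j$ below the image of $C_{j+1}$ by a translation in a fresh top coordinate, which is available because $(M;+,<)$ is an ordered group and we have room in $M^{k}$ for $k$ large). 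Thus we reduce to the case where $L$ is a single cell, say $L \subseteq M^{n}$ with projection $\pi \colon L \to M^{n-1}$ onto the first $n-1$ coordinates having one-dimensional fibers $L_a$ for $a$ in a cell $B \subseteq M^{n-1}$.

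The inductive step is then: understand how $\triangleleft$ restricted to a fiber $L_a$ relates to $\triangleleft$ across fibers. Each fiber $L_a$ is a one-dimensional definable linearly ordered set; by o-minimality (cell decomposition in one variable, applied uniformly in the parameter $a$) the order $\triangleleft$ on $L_a$ agrees, up to finitely many pieces and possibly reversing orientation, with the ambient order $<$ on the last coordinate — more precisely there is a uniform $N$ and a definable partition of each $L_a$ into $\le N$ $<$-intervals on each of which $\triangleleft$ is monotone in the last coordinate. This gives a definable order-embedding of each fiber into $M^{1+1} = M^2$ with the lexicographic order (one coordinate to index the finitely many monotone pieces and record orientation, one coordinate for the value). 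Meanwhile the ``base'' order — the order induced on $B$ by declaring $a \triangleleft' a'$ when the $\triangleleft$-position of $L_a$ precedes that of $L_{a'}$ in some definable sense — may not be literally well-defined, so the correct formulation is: there is a definable function $L \to B' \times M^2$ where $(B';\triangleleft_{B'})$ is a definable linear order on a subset of $M^{n-1}$ capturing the ``horizontal'' comparison data, and this map is an order embedding when $B' \times M^2$ carries $\lex$. Applying the induction hypothesis to $(B';\triangleleft_{B'})$ gives an embedding $B' \hookrightarrow (M^{k'};\lex)$, and composing yields $L \hookrightarrow (M^{k'+2};\lex)$.

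The main obstacle I anticipate is making precise and proving the ``fiberwise + base'' decomposition of the order $\triangleleft$: that is, showing that comparing two elements $(a,s), (a',s') \in L$ under $\triangleleft$ factors through first a definable linear order on the base coordinates and then (when $a = a'$) the fiber order, all uniformly definable. This is essentially a Ramsey/cell-decomposition argument: by o-minimal cell decomposition applied to the formula $x \triangleleft y$ as a subset of $L \times L \subseteq M^{2n}$, together with the fact that a definable family of subsets of $M$ is uniformly a finite union of points and intervals (uniform finiteness), one extracts the finitely-many-monotone-pieces structure on fibers and the residual base comparison. One must be careful that ``$a = a'$'' is exactly the locus where the base comparison is trivial and the fiber comparison takes over, and that the orientation data (whether $\triangleleft$ on a monotone piece of $L_a$ goes up or down relative to $<$) is itself definable and can be folded into an extra discrete coordinate, harmlessly handled lexicographically since $M$ has at least two elements below and above any given element. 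Once this structural statement is in hand, the induction and the concatenation of convex cells are routine, and I would present the argument as: (i) reduce to a single convex cell; (ii) prove the fiber/base factorization lemma; (iii) run the induction; (iv) reassemble the cells. Throughout one uses only that $\Sa M$ expands an ordered abelian group (for translations, giving room to concatenate) and o-minimality (for cell decomposition and uniform finiteness).
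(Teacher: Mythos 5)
The paper does not prove Fact~\ref{fact:rama}; it cites it to Ramakrishnan~\cite{ramakrishnan}. So your sketch can only be judged on its own terms, and while the overall shape you propose (cell decomposition, dimension induction, base/fiber factorization, group structure for assembly) is a reasonable outline, the two steps you treat as routine are exactly where the theorem lives, and one of them is false as stated.

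Your reduction ``after refining, assume the cells are $\triangleleft$-convex'' is not available. Cell decomposition gives cells convex for the ambient coordinate geometry, not for an arbitrary definable order, and in general no refinement into $\triangleleft$-convex cells exists. Take $L=(0,1)\cup(1,2)\subseteq M$ with $\triangleleft$ agreeing with $<$ on each piece and, for $x\in(0,1)$, $y\in(1,2)$, declaring $x\triangleleft y$ iff $x+1\le y$. Then $(L;\triangleleft)$ is definably isomorphic to $\bigl((0,1)\times\{0,1\};\lex\bigr)$, and any $\triangleleft$-convex set containing a subinterval of $(0,1)$ must also contain the shifted subinterval of $(1,2)$; so $L$ cannot be cut into $\triangleleft$-convex pieces each lying in a single cell. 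Your concatenation step therefore has nothing to concatenate. Assembling embeddings of pieces that interleave in $\triangleleft$ is a real part of the work, not an afterthought. The second issue is the one you flag yourself: the ``fiber/base factorization'' for the coordinate projection $\pi\colon L\to M^{n-1}$ fails. With $L=(0,1)^2$ and $(a,s)\triangleleft(a',s')$ iff $a+s<a'+s'$ or ($a+s=a'+s'$ and $a<a'$), the order embeds in $(M^2;\lex)$ via $(a,s)\mapsto(a+s,a)$, but comparison does not factor through $a$: same-fiber elements are separated by other fibers. You retreat to ``there is a definable function $L\to B'\times M^2$ \dots'' — but producing that map, i.e.\ a definable first coordinate for a lexicographic presentation, \emph{is} the theorem, and the gesture at cell decomposition of the graph of $\triangleleft$ plus uniform finiteness does not tell one how to find it. That is the genuine gap: the sketch reduces Fact~\ref{fact:rama} to a lemma that is essentially as hard as Fact~\ref{fact:rama} itself.
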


Fact~\ref{fact:rama} is due to Ramakrishnan~\cite{ramakrishnan}.

\begin{corollary}
\label{cor:rama}
Suppose that $\Sa M$ is an o-minimal expansion of an ordered abelian group and $(L;\triangleleft)$ is a dense linear order interpretable in $\Sa M$.
Then there are non-empty open intervals $I \subseteq M$ and $J \subseteq L$ and a definable isomorphism $\upiota : (J;\triangleleft) \to (I;<)$.
If $\dim L = 1$ then we may suppose that $J$ is cofinal in $(L;\triangleleft)$.
\end{corollary}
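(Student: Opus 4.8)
\textbf{Proof proposal for Corollary~\ref{cor:rama}.}
The plan is to bootstrap from Fact~\ref{fact:rama}, which already gives a definable embedding $\upiota_0 \colon (L;\triangleleft) \to (M^k;\lex)$. First I would push forward the order $\triangleleft$ along $\upiota_0$ so that, after replacing $(L;\triangleleft)$ by its isomorphic image, we may assume $L \subseteq M^k$ is definable and $\triangleleft$ is the restriction of $\lex$ to $L$. Now the obstacle is that a $\lex$-dense definable subset of $M^k$ need not be $\lex$-order-isomorphic to an interval of $M$ on the nose; one has to locate a ``one-dimensional piece'' of $L$ on which the lexicographic order is actually an interval-order. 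This is where o-minimal cell decomposition enters: decompose $L$ into finitely many cells $C_1,\ldots,C_r$. Since $L$ is infinite it has a cell of dimension $\ge 1$; and since $(L;\triangleleft)$ is dense, no single point and no ``discrete'' piece can be $\lex$-cofinal near a cluster point, so some cell $C_j$ of dimension $\ge 1$ is $\lex$-dense in a sub-interval of itself. I would then argue by induction on $k$: for $k=1$ a $1$-dimensional cell is an open interval of $M$ and there is nothing to do; for $k>1$, project $C_j$ to the first coordinate. If the projection is finite, one fibre already lives in $M^{k-1}$ with the induced $\lex$ order, and we recurse. If the projection is infinite it contains an open interval $I_1 \subseteq M$, and by shrinking $C_j$ (using cell decomposition on the fibres, and definable choice / o-minimal monotonicity to get a definable section) we may assume $C_j$ is the graph of a definable function over $I_1$ or a ``banana'' $\{(t,y): f(t) < y < g(t)\}$; in the graph case $\lex$ restricted to $C_j$ is order-isomorphic via the first coordinate to $(I_1;<)$, and in the banana case we pick a definable curve $t \mapsto (t, h(t))$ inside it with $h$ continuous, again getting an isomorphism onto $(I_1;<)$.

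Chaining these reductions produces non-empty open intervals $I \subseteq M$ and (pulling back through $\upiota_0$) $J \subseteq L$ together with a definable isomorphism $\upiota \colon (J;\triangleleft) \to (I;<)$, which is the first assertion. For the refinement when $\dim L = 1$: here $L$ itself is, up to finitely many points, a finite union of open intervals of $M$ (a $1$-dimensional cell decomposition), and $\triangleleft$ restricted to each such interval is either $<$ or its reverse. Density of $(L;\triangleleft)$ forces the top cell in the $\triangleleft$-ordering to be a half-line-type interval that is $\triangleleft$-cofinal in $L$; taking $J$ to be (an open subinterval cofinal in) that cell and $I$ the corresponding interval of $M$ with $\upiota$ the identity or the reversal map gives the cofinal $J$.

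The main obstacle I anticipate is the careful use of cell decomposition together with definable choice to turn a $\lex$-dense definable set into an honest interval: one must make sure the curve/section extracted inside a higher-dimensional cell is \emph{definable} and that its induced $\lex$-order genuinely matches $<$ on the first coordinate, which requires invoking o-minimal monotonicity (to get continuous, piecewise monotone defining functions) and then shrinking to a subinterval on which everything is clean. The density hypothesis is essential and must be fed in at exactly the point where one rules out the top $\triangleleft$-cell being a singleton or a ``jump'' — that is what guarantees the extracted interval $J$ is non-degenerate and, in the $1$-dimensional case, cofinal. None of the individual steps is deep; the work is in organizing the induction on $k$ and handling the graph-vs-banana dichotomy uniformly.
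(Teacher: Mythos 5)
Your reduction via Fact~\ref{fact:rama} is the right starting move, and your handling of the case where the first-coordinate projection is \emph{finite} (recurse into a fibre) is sound, because fibres of a coordinate projection are $\lex$-convex in $L$. The gap is in the other branch. When the first-coordinate projection is infinite, you shrink $C_j$ to a graph or extract a definable section $t\mapsto(t,h(t))$ and declare this curve to be the sought interval $J$. But a graph-cell or a curve inside a banana is in general \emph{not} a $\triangleleft$-convex subset of $L$, so it is not an open interval of $(L;\triangleleft)$ as the statement requires. Concretely, take $k=2$ and $L=\{(t,0):t\in(0,1)\}\cup\{(1/2,s):s\in(0,1)\}$ with $\triangleleft=\lex$; this $L$ is dense, the cell $\{(t,0):t\in(0,1)\}$ is a graph whose first-coordinate map is an order-isomorphism onto $(0,1)$, yet $(1/4,0)\lex(1/2,1/2)\lex(3/4,0)$ shows it is not $\lex$-convex in $L$. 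Convexity of $J$ is exactly what the downstream applications (Corollaries~\ref{cor:rama-1}--\ref{cor:rama-2}) use, so it cannot be dropped. Your treatment of the $\dim L=1$ refinement has the same flavour of error but compounded: you treat $L$ as a finite union of intervals \emph{of $M$} with $\triangleleft$ equal to $<$ or its reverse on each, but for $k\ge 2$ a one-dimensional cell is a curve in $M^k$ and $\lex$ on it need not be monotone in the parameter.

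The paper avoids both problems by projecting to the first $k-1$ coordinates (call it $\uppi$) and running the dichotomy on \emph{fibres} rather than on the size of the image. Fibres $\uppi^{-1}(\beta)$ are $\lex$-convex subsets of $L$ sitting inside $\{\beta\}\times M$; if some fibre has $\ge 2$ points then density of $(L;\triangleleft)$ forces it to be infinite, hence it contains $\{\beta\}\times I$ for a genuine open interval $I\subseteq M$, and $\{\beta\}\times I$ \emph{is} $\lex$-convex in $L$. Otherwise $\uppi$ is injective, hence a $\lex$-monotone bijection onto $\uppi(L)\subseteq M^{k-1}$, and one recurses on $k$. This fiberwise dichotomy is where density is actually consumed, it never leaves the world of convex sets, and it is what the cofinality clause is then built on (the set of $\beta$ with a fat fibre is finite by one-dimensionality, so either trim above it and recurse, or land in the top fibre and reduce to $k=1$). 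Your plan could be repaired by replacing the curve-extraction step with the same fibre dichotomy applied to the first-coordinate projection, but as written the proposal does not establish the statement.
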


Corollary~\ref{cor:rama} fails without the assumption that $(L;\triangleleft)$ is dense, consider $([0,1] \times \{0,1\}; \lex)$.
(This is known as the ``double arrow space" or ``split interval".)

% \noindent
% Corollary~\ref{cor:rama} fails without the assumption of density.
% For example let $L = [0,1] \times \{0,1\}$ and $\triangleleft$ be the lexicographic order.
% The

\begin{proof}
We may suppose that $(L;\triangleleft)$ is definable in $\Sa M$ as $\Sa M$ eliminates imaginaries.
By Fact~\ref{fact:rama} we may suppose that $L \subseteq M^k$ and that $\triangleleft$ is the restriction of the lexicographic order to $L$.
We now prove the first claim.
We apply induction on $k$.
Suppose that $k = 1$.
As $L$ is infinite $L$ contains a nonempty open interval $I 
\subseteq M$, so we let $\upiota$ be the identity $I \to I$.
Suppose $k \ge 2$.
Let $\uppi \colon L \to M^{k - 1}$ be given by $\uppi(x_1,\ldots,x_k) = \uppi(x_1,\ldots,x_{k - 1})$.
Note that each $\uppi^{-1}(b)$ is a $\lex$-convex subset of $L$ and that the restriction of $\lex$ to any $\uppi^{-1}(\beta)$ agrees with the usual ordering on the $k$th coordinate.
Suppose that $\beta \in M^{k - 1}$ and $|\uppi^{-1}(\beta)| \ge 2$.
Then $\uppi^{-1}(\beta)$ is infinite as $(L;\triangleleft)$ is dense, so there is a nonempty open interval $I \subseteq M$ such that $\{\beta\} \times I \subseteq \uppi^{-1}(\beta)$.
In this case we let $\upiota \colon I \to \{\beta \} \times I$ be $\upiota(x) = (\beta,x)$.
We may suppose that $\uppi$ is injective.
Note that $\uppi$ is a monotone map $(L;\lex) \to (M^{k-1};\lex)$, so $\uppi$ induces an isomorphism $(L;\lex) \to (\uppi(L);\lex)$.
Apply induction on $k$.

\medskip
We now suppose that $L$ is one-dimensional.
We again apply induction on $k$, the case when $k = 1$ is easy.
Suppose $k \ge 2$ and let $\uppi$ be as above.
Let $X$ be the set of $\beta \in M^{k - 1}$ such that $|\uppi^{-1}(\beta)| \ge 2$.
By one-dimensionality $X$ is finite.
Suppose $X$ is not cofinal in $(\uppi(L); \lex)$.
Fix $\alpha \in L$ with $\uppi(\alpha) > X$.
Replace $L$ with $(\alpha,\infty)$ and apply induction on $k$.
Suppose that $X$ is cofinal in $(\uppi(L);\lex)$ and let $\beta$ be the maximal element of $X$.
Replacing $L$ with $\uppi^{-1}(\beta)$ reduces to the case when $k = 1$.
\end{proof}

\noindent
Corollary~\ref{cor:rama-1} follows easily from Corollary~\ref{cor:rama}.

\begin{corollary}
\label{cor:rama-1}
Suppose that $\Sa O$ is an expansion of a dense linear order $(O;<)$, $I$ is a nonempty open interval, and $X \subseteq I$ is definable and dense and co-dense in $I$.
Then $\Sa O$ is not interpretable in an o-minimal expansion of an ordered group.
\end{corollary}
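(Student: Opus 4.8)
The plan is to deduce Corollary~\ref{cor:rama-1} from Corollary~\ref{cor:rama} by contradiction. So suppose $\Sa O$ is interpretable in an o-minimal expansion $\Sa M$ of an ordered group, and let $I\subseteq O$, $X\subseteq I$ be as in the statement: $X$ is $\Sa O$-definable and both $X$ and $I\setminus X$ are dense in the open interval $I$. The linear order $(I;<)$ is a dense linear order interpretable in $\Sa M$, so Corollary~\ref{cor:rama} applies: there are nonempty open intervals $I'\subseteq M$ and $J\subseteq I$ and a definable isomorphism $\upiota\colon(J;<)\to(I';<)$. Here ``definable'' means $\Sa M$-definable after we identify the interpreted structure $\Sa O$ with an $\Sa M$-definable structure on a definable set of imaginaries (legitimate since $\Sa M$ eliminates imaginaries).

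First I would shrink $J$ so that it sits inside $I$ and is itself an open interval of $(I;<)$; since $X$ and $I\setminus X$ are dense in $I$, both $X\cap J$ and $J\setminus X$ are dense in $J$ (density in $I$ is inherited by nonempty open subintervals). Next, push $X$ across the isomorphism: let $Y=\upiota(X\cap J)\subseteq I'$. Because $X$ is $\Sa O$-definable and hence $\Sa M$-definable, and $\upiota$ is $\Sa M$-definable, the set $Y$ is an $\Sa M$-definable subset of $I'\subseteq M$. Monotonicity of $\upiota$ (it is an order isomorphism) guarantees that $Y$ is dense in $I'$ and $I'\setminus Y$ is dense in $I'$: the image of a dense subset under an order isomorphism onto an interval is dense. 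So $Y$ is an $\Sa M$-definable subset of an interval in the o-minimal structure $\Sa M$ which is neither nowhere dense nor co-nowhere-dense in that interval.

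The final step is to invoke o-minimality of $\Sa M$ to get a contradiction: any $\Sa M$-definable subset $Y$ of $M$ is a finite union of points and open intervals, so within the open interval $I'$ the set $Y$ is a finite union of points and intervals. Such a set cannot be simultaneously dense and co-dense in $I'$ — if $Y$ contained a subinterval of $I'$ then $I'\setminus Y$ fails to be dense there, and if $Y$ contained no subinterval then $Y$ is finite inside $I'$ and fails to be dense. Either way we contradict the density/co-density of $Y$, and therefore no such $\Sa O$ can be interpretable in $\Sa M$.

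I do not expect any genuine obstacle here: the content is entirely carried by Corollary~\ref{cor:rama}, and the only things to be careful about are the bookkeeping of the elimination-of-imaginaries identification (so that ``$\Sa O$-definable'' transfers to ``$\Sa M$-definable'') and the elementary observation that order isomorphisms preserve topological density between intervals. The mildly delicate point, if any, is making sure that after restricting to $J$ we still have both $X\cap J$ and its complement dense in $J$; this is immediate because density is a local property of intervals and $J$ is a nonempty open subinterval of $I$.
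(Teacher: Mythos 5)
Your proof is correct and is the argument the paper intends when it says the corollary ``follows easily from Corollary~\ref{cor:rama}.'' You apply Corollary~\ref{cor:rama} to the interpretable dense linear order $(I;<)$, transport $X$ across the resulting order isomorphism $\upiota\colon(J;<)\to(I';<)$, and note that a dense-codense subset of an open interval of an o-minimal structure cannot be definable, since definable subsets of $M$ are finite unions of points and intervals. The bookkeeping points you flag (elimination of imaginaries to convert $\Sa O$-definability to $\Sa M$-definability; density being inherited by nonempty open subintervals; order isomorphisms being homeomorphisms so they preserve density) are exactly what needs to be checked, and you check them.
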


\begin{corollary}
\label{cor:rama-1.5}
Suppose that $\Sa M$ is o-minimal and $A$ is a dense and co-dense subset of $M$.
Then the structure $\Sa A$ induced on $A$ by $\Sa M$ is not interpretable in an o-minimal expansion of an ordered abelian group.
\end{corollary}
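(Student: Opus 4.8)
The plan is to reduce Corollary~\ref{cor:rama-1.5} to Corollary~\ref{cor:rama-1} by producing, inside the induced structure $\Sa A$, a definable dense-and-codense subset of a definable open interval of a definable dense linear order. First I would observe that $\Sa A$ expands the linear order $(A;<)$ induced from $\Sa M$ (here $<$ is one of the predicates in the induced language), and that since $A$ is dense in $M$ and $M$ is densely ordered, $(A;<)$ is itself a dense linear order. So if $\Sa A$ were interpretable in some o-minimal expansion $\Sa R$ of an ordered abelian group, $(A;<)$ would be a dense linear order interpretable in $\Sa R$, and Corollary~\ref{cor:rama} would apply.

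Next I would set up the witness for non-interpretation. Fix any nonempty open interval $(a,b)\subseteq M$ with $a,b\in A$; then $I_A := (a,b)\cap A$ is a nonempty open interval of $(A;<)$ and is $\Sa A$-definable (it is the trace on $A$ of the $\Sa M$-definable interval $(a,b)$). I now need a subset $X\subseteq I_A$ that is $\Sa A$-definable and is both dense and codense in $I_A$ with respect to the order $<$. The natural candidate is to use the co-density of $A$ in $M$: pick some $c\in M\setminus A$ lying strictly between $a$ and $b$ (possible since $M\setminus A$ is dense) and consider $X := \{x\in I_A : x < c\}$. This set is $\Sa A$-definable because $\{x\in M : x<c\}$ is $\Sa M$-definable and $X$ is its trace on $I_A$. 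Since $A$ is dense in $M$, $X$ is dense in $\{x\in I_A : x<c\}$ and hence "dense below $c$," while $I_A\setminus X = \{x\in I_A : x>c\}$ is dense "above $c$"; the point $c$ itself is an accumulation point of both from inside $A$, so $X$ is dense and codense in the interval $I_A$. (If one prefers a set that is literally dense and codense in all of $I_A$ rather than just near a cut, one can instead take $X$ to be the trace on $I_A$ of $\{x\in M : x < c_n \text{ for some } n\}$ for a sequence $c_n\notin A$ accumulating densely, or simply repeat the argument on every subinterval; but the single-cut version already contradicts Corollary~\ref{cor:rama-1}, since that corollary only requires the existence of one definable dense-and-codense subset of one open interval.)

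Finally I would invoke Corollary~\ref{cor:rama-1} with $\Sa O = \Sa A$, the interval $I_A$, and the set $X$ just constructed: it asserts directly that $\Sa A$ is not interpretable in an o-minimal expansion of an ordered group, and in particular not in an o-minimal expansion of an ordered abelian group, which is the claim. The main obstacle I anticipate is the bookkeeping needed to make the two possibly-different uses of the symbol $<$ consistent — the abstract order in Corollary~\ref{cor:rama-1} versus the induced predicate — and, if one wants the cleaner "dense and codense in $I_A$" formulation rather than the one-sided cut version, checking that co-density of $A$ in $M$ really gives a definable subset meeting every $\Sa A$-definable open subinterval in both a point of $X$ and a point of its complement. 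Both of these are routine once the candidate $X$ is pinned down, so I expect the proof to be short.
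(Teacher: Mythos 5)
Your candidate $X = \{x \in I_A : x < c\}$ is not dense and codense in $I_A$; it is a \emph{cut}, i.e.\ a convex set. It fails codensity badly: $I_A \setminus X = \{x \in I_A : x > c\}$ is disjoint from the nonempty open subinterval $(a,c)\cap A$ of $I_A$, and symmetrically $X$ is disjoint from $(c,b)\cap A$. Having $c$ be a two-sided accumulation point is a single-cut phenomenon and has nothing to do with density of $X$ throughout $I_A$, so your claim that ``the single-cut version already contradicts Corollary~\ref{cor:rama-1}'' is simply false --- that corollary requires a set meeting and missing \emph{every} subinterval. The proposed repair (union over a sequence $c_n\notin A$ of the sets $\{x<c_n\}$) again produces a single cut at $\sup c_n$, so it does not help either.

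Worse, the route through Corollary~\ref{cor:rama-1} cannot be repaired in general. In the most natural cases $\Sa A$ admits quantifier elimination, and then by Fact~\ref{fact:wom-induced} $\Th(\Sa A)$ is weakly o-minimal, i.e.\ every $\Sa A$-definable subset of $A$ is a finite union of convex sets. A finite union of convex sets that is dense in an infinite interval $I_A$ must contain a whole subinterval of $I_A$, hence cannot be codense. So in the QE case there is \emph{no} $\Sa A$-definable dense-and-codense subset of any interval, and the hypothesis of Corollary~\ref{cor:rama-1} can never be met. What the paper does instead is exploit the cut you constructed as a \emph{convex} set: $\{x \in A : \alpha < x < \beta\}$ with $\alpha,\beta \in M\setminus A$ is $\Sa A$-definable and convex but is not an interval of $A$ (density of $A$ in $M$ guarantees it has neither a supremum nor an infimum in $A$). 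Then one applies Corollary~\ref{cor:rama}: an interpretation in an o-minimal $\Sa R$ would give a definable order isomorphism from an interval of $A$ onto an interval of $R$, carrying this convex non-interval to a definable convex subset of $R$ that is not an interval --- impossible in an o-minimal structure. So the correct obstruction is a definable convex non-interval, not a definable dense-codense set.
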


\begin{proof}
Suppose that $I$ is a nonempty open interval in $A$.
By co-density there are $\alpha,\beta \in M \setminus A$ which lie in the convex hull of $I$ in $M$.
By density $I \cap A$ is a nonempty open convex subset of $A$ and $I \cap A$ is $\Sa A$-definable.
Apply Corollary~\ref{cor:rama} and the fact that every definable convex set in an o-minimal structure is an interval.
\end{proof}

We say that an expansion $\Sa M$ of a linear order is \textbf{locally o-minimal} if for every $p \in M$ there is an interval $I \subseteq M$ containing $p$ such that the structure induced on $I$ by $\Sa M$ is o-minimal\footnote{Some authors use ``locally o-minimal" to mean something else.}.

\begin{corollary}
\label{cor:rama1/2}
Suppose that $\Sa O$ is an expansion of an ordered abelian group and $\Sa O$ is interpretable in an o-minimal expansion of an ordered group.
Then $\Sa O$ is locally o-minimal.
\end{corollary}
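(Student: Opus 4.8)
The plan is to localise Corollary~\ref{cor:rama} by a translation and then transport o-minimality back along the order isomorphism it produces.

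First a reduction. An ordered group definable in an o-minimal structure is abelian (and divisible), so we may assume $\Sa R$ is an o-minimal expansion of an ordered abelian group, and thus Corollary~\ref{cor:rama} is available. Write $(O;+,\prec)$ for the ordered abelian group reduct of $\Sa O$; as an ordered abelian group, $(O;\prec)$ is either discrete or a dense linear order without endpoints. If it is discrete, then the order topology on $O$ is discrete and for each $p\in O$ the interval $(p^-,p^+)$ between the immediate neighbours of $p$ (which exist, since $O$ has no endpoints) is an open neighbourhood of $p$ on which the induced structure is finite, hence o-minimal; so this case is trivial. (Alternatively, this case does not occur: by Fact~\ref{fact:discrete} an ordered abelian group interpretable in $\Sa R$ is dense, and $(O;+,\prec)$ is a reduct of $\Sa O$, hence interpretable in $\Sa R$.) So assume $(O;\prec)$ is dense without endpoints.

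Fix $p\in O$. Since $(O;\prec)$ is a dense linear order interpretable in $\Sa R$, Corollary~\ref{cor:rama} provides nonempty open intervals $I\subseteq R$ and $J\subseteq O$ and an isomorphism $\upiota\colon (J;\prec)\to (I;<)$ definable in $\Sa R$ (viewing $J\subseteq O$ through the interpretation). Choose $j_0\in J$, set $t=p-j_0\in O$, and let $J'=J+t$; this is an open interval of $(O;\prec)$ containing $p=j_0+t$. Translation by $-t$ is definable in $\Sa O$, hence definable in $\Sa R$ on the interpreting sort, so $\upiota'\colon J'\to I$ with $\upiota'(x)=\upiota(x-t)$ is an $\Sa R$-definable order isomorphism.

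It remains to see that the structure $\Sa J'$ induced on $J'$ by $\Sa O$ is o-minimal, after which we are done since $p$ was arbitrary. By a routine induction on formulas, every $\Sa J'$-definable (with parameters) subset $Y$ of $J'$ is pushed forward by $\upiota'$ to an $\Sa R$-definable subset $\upiota'(Y)$ of $I$: the generating predicates of $\Sa J'$ have the form $X\cap (J')^n$ for $\Sa O$-definable $X$, and these become $\Sa R$-definable subsets of $I^n$ under $\upiota'$ because $\Sa O$ is interpretable in $\Sa R$ and $J'$ is $\Sa R$-definable on the interpreting sort; Boolean combinations and quantifiers over the $\Sa R$-definable set $J'$ transport in the obvious way. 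By o-minimality of $\Sa R$, $\upiota'(Y)$ is a finite union of points and open subintervals of $I$, and since $\upiota'$ is an order isomorphism, $Y$ is a finite union of points and open subintervals of $J'$; thus $\Sa J'$ is o-minimal. The only delicate point is the bookkeeping in this last step — checking that definability really is carried through the interpretation and that the bounded quantifiers over $J'$ cause no trouble — but once Corollary~\ref{cor:rama} and the translation are in hand this is entirely mechanical.
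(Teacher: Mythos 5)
Your proof is correct and follows essentially the same route as the paper: invoke Corollary~\ref{cor:rama} to produce a definable order-isomorphism between an open interval $J\subseteq O$ and an interval $I\subseteq R$, transport o-minimality of the induced structure along that isomorphism, and then translate (using the group operation on $O$) to cover an arbitrary point $p$. You merely make explicit two things the paper leaves tacit: the reduction to the abelian case (o-minimal ordered groups are divisible abelian, so ``expansion of an ordered group'' can be upgraded to ``expansion of an ordered abelian group'' before applying Corollary~\ref{cor:rama}), and the transport-of-o-minimality step, which the paper compresses into the phrase ``there is a non-empty open interval $I\subseteq O$ on which the induced structure is o-minimal.'' Your side remark about the discrete case is harmless but unnecessary, as you yourself note: Fact~\ref{fact:discrete} rules it out directly.
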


\begin{proof}
By Fact~\ref{fact:discrete} $\Sa O$ is dense.
By Corollary~\ref{cor:rama} there is a non-empty open interval $I \subseteq O$ on which the induced structure is o-minimal.
Fix $\alpha \in I$.
Then every $\beta \in M$ lies in $(\beta - \alpha) + I$ and the induced structure on $(\beta - \alpha) + I$ is is o-minimal by translating. 
\end{proof}

An expansion of a linear order $(M;<)$ is \textbf{definably complete} if every nonempty definable bounded above subset of $M$ has a supremum in $M$.
Fact~\ref{fact:on R} is well-known and easy to see.

\begin{fact}
\label{fact:on R}
Suppose that $\Sa M$ expands an oag $(M;+,<)$.
\begin{enumerate}
\item If $\Sa M$ is locally o-minimal then every definable subset of $M$ is the union of a definable open set and a definable closed and discrete set.
\item $\Sa M$ is o-minimal if and only if $\Sa M$ is definably complete and every definable subset of $M$ is the union of a definable open set and a finite set.
\end{enumerate}
\end{fact}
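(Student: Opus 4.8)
The plan is to prove the two parts separately. The common tool is that, since $\Sa M$ expands an ordered group, the operators $\inte(\,\cdot\,)$, $\cl(\,\cdot\,)$ and the boundary $\bd(\,\cdot\,)$ are definable: $x\in\inte(X)$ iff $\exists\varepsilon>0\;\forall z\,(|z-x|<\varepsilon\to z\in X)$, and similarly for closure. So every set I build below is automatically definable.

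For (1), let $X\subseteq M$ be definable, put $U=\inte(X)$ and $D=X\setminus U$; then $U$ is definable and open, and it remains to check that $D$ is closed and discrete. Fix $p\in M$ and use local o-minimality to get an open interval $I$ containing $p$ whose induced structure is o-minimal. Since $I$ is open in $M$, the interior of $X\cap I$ relative to $I$ coincides with $\inte(X)\cap I=U\cap I$, so $D\cap I$ is exactly the set of non-interior points of $X\cap I$. In the o-minimal structure on $I$, $X\cap I$ is a finite union of points and open subintervals of $I$, and the non-interior points of such a set form a finite set; hence $D\cap I$ is finite for every such $I$. This at once gives that $D$ is discrete, and also that $D$ is closed: if $y\in\cl(D)$, then taking an interval $I_y$ around $y$ as above, every small neighbourhood of $y$ meets the finite set $D\cap I_y$, which (a finite set having no limit points other than its own elements) forces $y\in D$.

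For (2), the direction ``$\Rightarrow$'' is routine: in an o-minimal $\Sa M$ a nonempty bounded-above definable set is a finite union of points and intervals, so its supremum is realized at the right endpoint of its last piece, giving definable completeness; and any definable $X\subseteq M$, being a finite union of points and open intervals, is (the union of those intervals) $\cup$ (a finite set of points and endpoints), i.e.\ open-plus-finite. For ``$\Leftarrow$'', assume $\Sa M$ is definably complete and every definable subset of $M$ is open-plus-finite. Writing an arbitrary definable $X$ as $U\cup F$ with $U$ definable open and $F$ finite, it suffices to show $U$ is a finite union of open intervals, and for this it suffices that $U$ has only finitely many connected components. Each component is an open interval. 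Let $E$ be the definable set of $x$ with $x\notin U$ and $(x,x+\varepsilon)\subseteq U$ for some $\varepsilon>0$. Using definable completeness (apply it to $\{t>x:(x,t)\subseteq U\}$), every such $x$ is the left endpoint of a unique component of $U$, and distinct components have distinct left endpoints; hence the number of components of $U$ bounded below equals $|E|$, and there is at most one component unbounded below. Finally $E$ has empty interior: if an open interval lay inside $E$, two sufficiently close points of it would lie in a common component of $U$, contradicting that both are outside $U$ (here the density of the ordering is used). So the open-plus-finite hypothesis applied to $E$ forces $E$ to be finite. Therefore $U$ has $\le|E|+1$ components, every definable subset of $M$ is a finite union of points and open intervals, and $\Sa M$ is o-minimal.

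The only genuinely non-bookkeeping step is the ``$\Leftarrow$'' direction of (2) — specifically, the claim that a definable open subset of a definably complete $\Sa M$ has finitely many connected components; this is where definable completeness (to produce the components as suprema) and density of the ordering (to see that the endpoint set $E$ has empty interior) are both essential, and one should read the hypothesis in the densely ordered case, which by Fact~\ref{fact:discrete} covers all the intended applications. Everything else is the routine manipulation sketched above.
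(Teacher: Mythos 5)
Your proof is correct and is the natural argument the author had in mind (the paper states this as a folklore fact with no proof): for (1) reduce to finiteness of the non-interior points inside an o-minimal chart, and for (2) use definable completeness to realize the endpoints of the convex components of the open part and the open-plus-finite hypothesis to show the set of left endpoints is finite. You are also right to flag that part (2), read literally, fails for discrete oags — $(\Z;+,<)$ is definably complete and every subset is open, yet it is not o-minimal — so a density hypothesis is tacitly needed; this is harmless for the paper, since the one place Fact~\ref{fact:on R}.2 is invoked (Corollary~\ref{cor:rama-2}) density has already been established via Fact~\ref{fact:discrete}. One small expository nit: in the $\Leftarrow$ direction of (2) you should say ``convex component'' rather than ``connected component'' (the order topology on a general dense oag is totally disconnected), and the assertion that each such component is an open interval should be stated as a consequence of definable completeness before it is used — but the argument you give does supply exactly that justification, so this is purely a matter of ordering the sentences.
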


\begin{corollary}
\label{cor:rama-2}
Suppose that $\Sa Q$ is an expansion of an archimedean ordered abelian group $(Q;+,<)$ and $\Sa Q$ is interpretable in an o-minimal expansion of an oag.
Then $\Sa Q$ is o-minimal.
\end{corollary}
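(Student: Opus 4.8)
The plan is to combine Corollary~\ref{cor:rama1/2} with Fact~\ref{fact:on R} and the extra information coming from the archimedean hypothesis. So suppose $\Sa Q$ is an expansion of an archimedean ordered abelian group $(Q;+,<)$ which is interpretable in an o-minimal expansion of an oag. By Corollary~\ref{cor:rama1/2}, $\Sa Q$ is locally o-minimal. By Fact~\ref{fact:on R}(1), every definable subset of $Q$ is the union of a definable open set and a definable closed and discrete set. By Fact~\ref{fact:on R}(2) it then suffices to show two things: (i) $\Sa Q$ is definably complete, and (ii) every definable closed and discrete subset of $Q$ is finite.

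For (ii) I would argue as follows. Since $(Q;+,<)$ is archimedean, by condition~(3) in the definition of archimedean oag's (Section~\ref{section:l order}) there is an embedding $(Q;+,<)\to(\R;+,<)$; fix a nonzero element and rescale so that $1\in Q$ corresponds to $1\in\R$. Suppose $D\subseteq Q$ is definable, closed, discrete, and infinite. By translating and, if necessary, replacing $D$ by $-D$, we may assume $D$ has arbitrarily large positive elements. Because $D$ is closed and discrete, in any bounded interval $D$ has only finitely many points, and between consecutive points of $D$ there is a definable "gap"; the archimedean property forces these gaps to be bounded below away from $0$ only on bounded intervals, so $D$ meets $[0,n]$ in a nonempty finite set for infinitely many $n$, and the successor function on $D$ is definable. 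Then $D$ (with the induced order and successor) is a definable infinite discrete linear order, hence $\Sa Q$ interprets an infinite discrete linear order, contradicting Fact~\ref{fact:discrete} (which says no discrete oag, and by the same argument no infinite discrete linear order witnessing failure of $\exists^\infty$-elimination, is interpretable in an o-minimal expansion of an oag). More directly: an o-minimal expansion of an oag eliminates $\exists^\infty$ and eliminates imaginaries, so it cannot interpret a structure possessing a definable infinite closed discrete set, since that would violate elimination of $\exists^\infty$ in the interpreting structure. This is the cleanest route and I would use it for (ii).

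For (i), definable completeness: let $X\subseteq Q$ be definable, nonempty, and bounded above by some $b\in Q$. By local o-minimality, take for each point an interval on which the induced structure is o-minimal; by archimedeanity finitely many translates of a single such interval $I$ cover the convex hull of $X\cup\{b\}$ (here I use that the convex hull is bounded and $(Q;+,<)$ is archimedean, so it is covered by boundedly many copies of $I$). On each translate the induced o-minimal structure has suprema of definable sets, and patching these finitely many local suprema (using that the structure is an ordered group so translation is definable) yields $\sup X\in Q$. Hence $\Sa Q$ is definably complete, and combined with (ii) and Fact~\ref{fact:on R}(2) we conclude $\Sa Q$ is o-minimal.

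\textbf{Main obstacle.} The delicate point is step (ii): making rigorous the claim that an infinite definable closed discrete subset of an archimedean $\Sa Q$ leads to a contradiction. The quickest formulation is via failure of elimination of $\exists^\infty$ in the interpreting o-minimal structure, which is immediate once one observes that an infinite definable closed discrete set $D$ has the property that the formula "$y\in D$ and $y>x$" has infinitely many solutions for every $x$ yet no interval is contained in $D$ — one must phrase this carefully so that it transfers along the interpretation to a genuine violation of $\exists^\infty$-elimination (equivalently, use that an o-minimal structure, and anything interpretable in it, has the property that every definable family of sets is uniformly finite-or-infinite). I would spend the bulk of the write-up pinning down this transfer, and treat (i) as a routine compactness/covering argument using archimedeanity exactly as in the proof of Corollary~\ref{cor:rama1/2}.
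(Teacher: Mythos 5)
Your high-level plan matches the paper's: invoke Corollary~\ref{cor:rama1/2} to get local o-minimality, then use Fact~\ref{fact:on R}(2) and check (i) definable completeness and (ii) finiteness of definable closed discrete sets. But both of your steps have genuine gaps.

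For (ii), the central problem is the assertion ``Because $D$ is closed and discrete, in any bounded interval $D$ has only finitely many points.'' This is false for a set that is closed and discrete in $Q$: since $Q$ is a proper dense subgroup of $\R$ (density follows from Fact~\ref{fact:discrete}, which you don't explicitly invoke but need), $D$ can accumulate at a point of $\R\setminus Q$ and so be infinite in a bounded interval while still being closed and discrete \emph{in} $Q$ --- think of a rational sequence converging to $\sqrt{2}$, closed and discrete in $\Q$. So local finiteness is not free; the paper earns it in a separate step, using $\exists^\infty$-elimination to rule out accumulation points of $X$ in $\R$, and only then concludes that $[-\alpha,\alpha]\cap X$ is finite for all $\alpha$ before applying $\exists^\infty$-elimination a second time. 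Your ``cleanest route'' (an o-minimal expansion of an oag cannot interpret a structure with a definable infinite closed discrete set, because that violates $\exists^\infty$-elimination) is not correct as stated for exactly this reason: having such a set does not by itself produce a definable family with unbounded finite fibers unless you have already secured local finiteness. The alternative route via Fact~\ref{fact:discrete} also doesn't apply directly, since that fact concerns discrete \emph{oags}, and a definable closed discrete set with its successor function need not be an oag.

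For (i), your covering argument assumes the convex hull of $X\cup\{b\}$ is bounded, but $X$ is only assumed bounded above. This is repairable (replace $X$ by $X\cap[c,b]$ for a fixed $c\in X$), but even once fixed the argument is a genuinely different route from the paper's, which instead argues by contradiction: if a definable cut $C$ has no supremum then translates of $C$ put a definable convex non-interval inside every open interval, whereas Corollary~\ref{cor:rama} supplies an interval of $Q$ definably isomorphic to an interval in the ambient o-minimal structure, where every definable convex set is an interval. The paper's route avoids the covering-and-patching bookkeeping entirely; yours, if completed carefully, would work as an alternative but is not what is written here, and the subtleties around where the local suprema live (endpoints of the translated intervals) would need to be addressed.
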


\noindent
This does not extend to expansions of non-archimedean ordered abelian groups.
Consider for example $(\R^2;+,<_{\mathrm{Lex}}, \{0\} \times \R)$.
By \cite{LasStein} any o-minimal expansion of an archimedean oag is an elementary substructure of an o-minimal expansion of $(\R;+,<)$.
Hence if $\Sa Q$ is as above then $\Sa Q$ is elementarily equivalent to an o-minimal expansion of $(\R;+,<)$.

\begin{proof}
By Fact~\ref{fact:discrete} $(Q;+,<)$ is dense.
By Corollary~\ref{cor:rama1/2} $\Sa Q$ is locally o-minimal.
By Fact~\ref{fact:on R}.1 any definable $X\subseteq Q$ is a union of a definable open set and a definable closed discrete set.

\medskip
Note that $\Sa Q$ eliminates $\exists^\infty$ as o-minimal expansions of ordered abelian groups eliminate both imaginaries and $\exists^\infty$.
We first show any closed and discrete definable subset of $Q$ is finite.
Suppose that $X$ is a closed and discrete subset of $Q$.
By Hahn embedding we suppose that $(Q;+,<)$ is a substructure of $(\R;+,<)$.
If $X$ has an accumulation point in $\R$ then $[\alpha,\alpha^*]\cap X$ can be made finite and arbitrarily large, contradiction.
Then $X$ is discrete and closed in $\R$, hence $[-\alpha,\alpha] \cap X$ is finite for any $\alpha \in Q$.
It follows by elimination of $\exists^\infty$ that there is $n$ such that $|[-\alpha,\alpha] \cap X| \le n$ for all $\alpha \in Q$, hence $|X| \le n$.

\medskip
By Fact~\ref{fact:on R} it is enough to show that $\Sa Q$ is definably complete.
Suppose that $C \subseteq Q$ is a nonempty bounded above downwards closed subset of $Q$ whose supremum is not in $Q$.
Note that for any non-empty open interval $I \subseteq Q$ there is $\alpha \in Q$ such that $I \cap (\alpha+C)$ and $I \setminus (\alpha+ C)$ are both nonempty, hence $I \cap (\alpha+ C)$ is convex set that is not an interval.
Hence for any nonempty open interval $I$ contains a definable convex set which is not an interval.
Apply Corollary~\ref{cor:rama}.
\end{proof}

\begin{proposition}
\label{prop:cyclic order 1}
$(\R/\Z;+,C)$ does not interpret an ordered abelian group.
\end{proposition}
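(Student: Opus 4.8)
The plan is to pull the problem back to $(\R;+,<)$ through the universal cover and then exploit that everything definable in $(\R;+,<)$ is semilinear — in particular piecewise affine, hence sends bounded sets to bounded sets — so that an interpreted ordered abelian group would have to live on a bounded definable set, which an infinite (necessarily torsion-free) ordered abelian group cannot do.

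Suppose toward a contradiction that $(\R/\Z;+,C)$ interprets an ordered abelian group $\Sa O=(O;\oplus,\sqsubset)$; recall that every oag is infinite and torsion-free, and that by Fact~\ref{fact:discrete} an oag interpretable in an o-minimal expansion of an oag is dense. By Fact~\ref{fact:cover}, $(\R/\Z;+,C)$ is interpretable in $(\R;+,<)$, realized on the set $[0,1)$ with definable operations; composing interpretations, $\Sa O$ is interpretable in $(\R;+,<)$ on a quotient $Y/E$ with $Y\subseteq[0,1)^m$ for some $m$, so the interpreting domain $Y$ is a bounded subset of $\R^m$. Since $(\R;+,<)$ eliminates imaginaries, $\Sa O$ is isomorphic, via an $(\R;+,<)$-definable isomorphism, to an $(\R;+,<)$-definable ordered abelian group $(G;\boxplus,\sqsubset)$ with $G\subseteq\R^N$, and there is an $(\R;+,<)$-definable surjection $q\colon Y\to G$. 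As every $(\R;+,<)$-definable function is semilinear, hence piecewise affine, hence maps bounded sets to bounded sets, $G=q(Y)$ is a bounded subset of $\R^N$; and $(G;\sqsubset)$ is a dense linear order.

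It remains to show that an infinite ordered abelian group cannot be definable in $(\R;+,<)$ on a bounded set, and this is the main obstacle. Fix $g\in G$ with $g\sqsupset 0_G$ and put $\tau\colon G\to G$, $\tau(x)=x\boxplus g$: this is an $(\R;+,<)$-definable order-automorphism of $(G;\sqsubset)$ with $\tau(x)\sqsupset x$ for all $x$, and by torsion-freeness each $\tau^{(n)}$ ($n\ge1$) is fixed-point free, so $(G;\sqsubset)$ has no $\sqsubset$-maximum and indeed no $\sqsubset$-upper bound inside $G$. Since $(G;\sqsubset)$ is a dense linear order definable in the o-minimal structure $(\R;+,<)$, Corollary~\ref{cor:rama} yields a nonempty open interval $I\subseteq\R$, a $\sqsubset$-interval $J\subseteq G$, and a definable order-isomorphism $\iota\colon(J;\sqsubset)\to(I;<)$; translating by an element of $G$ (a definable order-automorphism) we may assume $J=\{x:0_G\sqsubset x\sqsubset c\}$ with $c\sqsupset 0_G$, so that the translates $\tau^{(n)}(J)=\{x:nc\sqsubset x\sqsubset (n+1)c\}$ are pairwise disjoint $\sqsubset$-intervals stacked in order inside the bounded set $G$, each definably order-isomorphic to $I$. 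The inverse $\iota^{-1}\colon I\to G\subseteq\R^N$ is $(\R;+,<)$-definable, hence semilinear, hence affine on a right-neighbourhood of $\sup I$ with nonzero linear part (by injectivity); the point is to combine this with the stacked translates to see that $\{x\in G:x\sqsupseteq 0_G\}$ cannot be a bounded subset of $\R^N$ — this is the definable-completeness-type contradiction exploited in the proof of Corollary~\ref{cor:rama-2}. Equivalently one may invoke the classification of groups definable in a semilinear structure: a torsion-free such group is definably isomorphic to some $(\R^k;+)$, which has no bounded definable realization. The genuinely delicate part is precisely this last step: one must rule out a semilinear ``wrap-around'' permitting a bounded realization, which is what actually happens for the non-torsion-free group $(\R/\Z;+)$ itself, so torsion-freeness must be used in an essential way. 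Once this is done we obtain a contradiction, proving Proposition~\ref{prop:cyclic order 1}; since interpretation implies trace definibility, it also recovers that $(\R/\Z;+,C)$ does not interpret $(\R;+,<)$ or $\doag$.
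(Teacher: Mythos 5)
Your reduction from $(\R/\Z;+,C)$ to the statement that no $(\R;+,<)$-definable ordered abelian group can be bounded (Proposition~\ref{prop:semilinear group}) is correct, and is in fact a slightly cleaner route than the paper's: you use elimination of imaginaries for $(\R;+,<)$ together with the fact that semilinear maps send bounded sets to bounded sets, whereas the paper invokes elimination of imaginaries for the induced structure on $[0,1)$. Up to that point there is nothing to object to.

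The gap is that you have not actually proved Proposition~\ref{prop:semilinear group}, and you say as much (``this is the main obstacle''). The stacked-translates sketch does not lead to a contradiction on its own: the intervals $\tau^{(n)}(J)$ are pairwise disjoint and sit inside the bounded set $G$, but nothing stops them from shrinking in Euclidean diameter and accumulating, so this does not by itself violate boundedness --- indeed this is exactly what happens if you try the argument for $(\R/\Z;+)$ realized on $[0,1)$ and translate by $1/2$, so the torsion-freeness has not yet been used. Your fallback, appealing to a classification of groups definable in ordered vector spaces (Eleftheriou--Starchenko, giving $V^n/\Lambda$), would close the gap if cited and carried through, but you give no reference and treat it as a remark rather than as the load-bearing step. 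The paper's proof of Proposition~\ref{prop:semilinear group} goes differently and is essential to record: assume $G$ bounded, use \cite{edmundo-otero} (torsion-free $\Rightarrow$ not definably compact) and \cite{one-dim-subgroup} to pass to a one-dimensional subgroup, deduce divisibility from \cite{razenj}, then take a \emph{cofinal} interval $J$ order-isomorphic to $I=(0,1)$ by Corollary~\ref{cor:rama} and show the definable family of germs at $1$ of the translation maps $f_\alpha(\iota(s))=\iota(\alpha+s)$ has infinitely many members yet all tend to $1$, contradicting the finiteness statement in Fact~\ref{fact:doag}. That germ-counting step is the actual use of torsion-freeness and of semilinearity, and it is what your proposal is missing.
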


We identify $\R/\Z$ with $[0,1)$ by identifying $\alpha + \Z$ with $\alpha \in [0,1)$.
The quantifier elimination for $(\R;+,<)$ shows that $(\R/\Z;+,C)$ is interdefinable with the structure induced on $[0,1)$ by $(\R;+,<)$.
Hence Proposition~\ref{prop:cyclic order 1} follows from Proposition~\ref{prop:semilinear group} and the fact that the induced structure eliminates imaginaries.

\begin{proposition}
\label{prop:semilinear group}
Suppose $(G;+,\triangleleft)$ is an $(\R;+,<)$-definable ordered abelian group.
Then $G$ is unbounded (as a definable subset of Euclidean space).
\end{proposition}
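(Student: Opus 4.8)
The plan is to argue by contradiction: suppose $(G;+,\triangleleft)$ is an $(\R;+,<)$-definable ordered abelian group whose underlying set $G \subseteq \R^n$ is bounded. First I would invoke the structure theory of groups definable in o-minimal expansions of $(\R;+,<)$; in the semilinear setting the relevant fact is that a definable abelian group is definably isomorphic to a quotient $\R^a \times (\R/\mathbb{Z})^b \times F$ for some finite $F$, or at the very least that a bounded definable abelian group is definably compact. Since $G$ is bounded it is definably compact, so it has a definable compact quotient; in any case the key point is that $G$, being bounded, cannot be definably isomorphic to a group with an unbounded definable set as underlying domain after coordinate change, so I can work directly with the order $\triangleleft$ on the bounded set $G$.

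The core of the argument is topological. A definably compact (equivalently, closed and bounded) definable subset $G$ of $\R^n$ equipped with an $(\R;+,<)$-definable linear order $\triangleleft$ must, by o-minimality, decompose into finitely many definable pieces on each of which $\triangleleft$ is "coordinate-wise monotone"; using Ramakrishnan's theorem (Fact~\ref{fact:rama}), there is a definable embedding $(G;\triangleleft) \to (\R^k;\lex)$. So I may assume $G \subseteq \R^k$ is bounded and $\triangleleft$ is the restriction of $\lex$. Now the order topology on $(G;\triangleleft)$ is definable, and I would show that $(G;\triangleleft)$ has both a least and a greatest element: a bounded definable subset of $\R^k$ has an infimum and supremum under $\lex$ in $\R^k$, and because $G$ is the underlying set of a group it is "self-similar" under translation by any element — in particular, if $g_0$ is the $\lex$-infimum of $G$ (which lies in the closure of $G$), a density/translation argument forces the extremes to be attained or forces a contradiction with the group axioms.

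The decisive step is then: an ordered abelian group has no least or greatest element (for any $g$, both $g + g'$ with $g' \triangleright 0$ and $g + g'$ with $g' \triangleleft 0$ exist and lie strictly above, resp. below, $g$, and the order is unbounded in both directions since $ng$ for $g \triangleright 0$ is strictly increasing). So it suffices to derive that $(G;\triangleleft)$ with $\triangleleft = \lex\!\upharpoonright G$ and $G$ bounded does have a least element, or more precisely that the supremum $s = \sup_{\lex} G \in \R^k$ is approached by a definable increasing sequence without being attained, and that this contradicts the existence of a definable bijection $x \mapsto g_0 + x$ (an order isomorphism $(G;\triangleleft) \to (G; \triangleleft)$ for suitable $g_0$) — an o-minimal monotone definable self-map of a bounded interval-like set extends to the endpoints, forcing $s$ to be a fixed point realized in $G$, which then is a maximum of the group, contradiction.

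The main obstacle will be handling the possibility that $G$ is not closed, i.e. that $\sup_{\lex} G \notin G$, cleanly via the group structure. The honest way to do this is to pass to the Shelah/external-definability machinery or, more elementarily, to use that $G$ bounded and definable in $(\R;+,<)$ means $G$ is semilinear, hence a finite union of semilinear cells, and that a semilinear group structure on a bounded set is impossible because the "top cell" in the $\lex$-order would have to be an open interval whose translate under a group element of sufficiently small positive $\triangleleft$-value would stick out of $G$. I would spell this out: take $g \triangleright 0$ in $G$ of minimal possible "$\lex$-height," look at the coset $g + C$ where $C$ is the $\triangleleft$-maximal cell, and observe $g + C \not\subseteq G$ because $G$ is bounded above in $\lex$ while $g + C$ is a translate that pushes past $\sup G$ — contradicting closure under $+$. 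This last incidence-of-cells estimate, together with invoking elimination of imaginaries for $(\R;+,<)$ to reduce "interpretable" to "definable" in the statements that use Proposition~\ref{prop:semilinear group}, completes the argument.
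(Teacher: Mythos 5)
Your proposal has the right intuition—boundedness should clash with the group-translation dynamics near the $\lex$-supremum—but the decisive step does not go through, and the key technical tools the paper uses are missing.

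The main flaw is in the "translate sticks out of $G$" argument. Write $+$ for the group operation of $(G;+,\triangleleft)$. This is an arbitrary definable map $G \times G \to G$, not the restriction of Euclidean addition on $\R^n$. In particular, for any $g \in G$ and any $C \subseteq G$, the set $g + C$ is a subset of $G$ by closure under the group operation; it cannot "push past $\sup_{\lex} G$." (If $+$ were literally Euclidean addition, $G$ could not be both a bounded subset of $\R^n$ and closed under it anyway, so that would be a one-line contradiction and the proposition would be trivial. The content of the proposition is precisely that the group operation is not Euclidean.) There is also no positive element "of minimal $\lex$-height": once you reduce to a one-dimensional torsion-free definable group it is divisible, hence densely ordered.

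The closer version of your argument—extend the $\triangleleft$-increasing translation map to the endpoint $s=\sup_{\lex}G$ and get a fixed point—also does not close. The extension does have $s$ as a fixed point, but $s \notin G$, so this produces no element of $G$, let alone a $\triangleleft$-maximum, and there is no contradiction yet. What actually finishes the argument is a quantitative finiteness result: in $(\R;+,<)$, a definable family of germs at $1$ of functions $(0,1)\to(0,1)$ tending to $1$ contains only finitely many distinct germs (Fact~\ref{fact:doag}). The paper reduces to $\dim G = 1$ (using Edmundo--Otero that torsion-free definable groups are not definably compact, and the one-dimensional-subgroup theorem for non-definably-compact groups), identifies a cofinal interval of $(G;\triangleleft)$ with $(0,1)$ via Corollary~\ref{cor:rama}, and then observes that the translation germs $f_\alpha$ at $1$ form a definable family, all tending to $1$, in which distinct $\alpha$ give distinct germs—contradicting that finiteness. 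Without something like this germ-finiteness fact, your sketch does not produce a contradiction. Finally, your appeal to a structure theorem "$G \cong \R^a \times (\R/\Z)^b \times F$" is not available off the shelf in the semilinear setting and is not needed; the Edmundo--Otero route is what the paper actually invokes.
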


We use two facts about $(\R;+,<)$.
Both follow easily from the fact that if $f \colon \R^m \to \R^n$ is definable then there is a partition of $\R^m$ into definable $X_1,\ldots,X_k$ so that $f$ is affine on each $X_i$.
Both are well known, at least to me.

\begin{fact}
\label{fact:doag}
Work in $(\R;+,<)$.
Any image of a definable bounded set under a definable function is bounded.
If $\Cal F$ is a definable family of germs of functions $(0,1) \to (0,1)$ at $1$ such that $\lim_{t \to 1} f(t) = 1$ for all $f \in \Cal F$ then $\Cal F$ has only finitely many elements.
\end{fact}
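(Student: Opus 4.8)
The plan is to deduce both claims from the piecewise‑affineness of definable functions in $(\R;+,<)$ quoted just above the statement: for any definable $f$ there is a finite definable partition of its domain on each piece of which $f$ is given by an affine formula $x\mapsto Ax+b$ (applied to a definable function on a definable domain, which is harmless — one may extend $f$ by $0$ off its domain, or simply note that the cell‑decomposition argument works over any definable set).

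For the first claim, let $f$ be a definable function and $B$ a bounded definable subset of its domain. Fix a finite definable partition $X_1,\dots,X_k$ of the domain with $f$ affine on each $X_i$. Each $f|_{X_i}$ is affine, hence Lipschitz for the sup‑norm, so it carries the bounded set $B\cap X_i$ to a bounded set. Then $f(B)=\bigcup_{i=1}^{k}f(B\cap X_i)$ is a finite union of bounded sets, hence bounded.

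For the second claim, realize the definable family $\Cal F$ as a definable set $A\subseteq\R^{p}$ together with a definable $F\colon A\times(0,1)\to(0,1)$ with $\Cal F=\{\,\text{germ at }1\text{ of }t\mapsto F(a,t):a\in A\,\}$. Apply the piecewise‑affine fact to $F$: there is a finite definable partition $X_1,\dots,X_k$ of $A\times(0,1)$ and, for each $i$, a linear functional $L_i$, a scalar $\mu_i\in\R$ and a constant $c_i\in\R$ with $F(x,t)=L_i(x)+\mu_i t+c_i$ on $X_i$. Fix $a\in A$. By o‑minimality of $(\R;+,<)$ the sets $\{t\in(0,1):(a,t)\in X_i\}$ partition $(0,1)$ into finitely many intervals, and the interval whose right endpoint is $1$ lies in a single piece; so there are $s(a)<1$ and an index $i(a)\in\{1,\dots,k\}$ with $(a,t)\in X_{i(a)}$ for all $t\in(s(a),1)$. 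Hence the germ at $1$ of $t\mapsto F(a,t)$ equals the germ of the affine map $t\mapsto L_{i(a)}(a)+\mu_{i(a)}t+c_{i(a)}$, whose value at $t=1$ is $L_{i(a)}(a)+\mu_{i(a)}+c_{i(a)}$. By hypothesis this limit is $1$, so $L_{i(a)}(a)+c_{i(a)}=1-\mu_{i(a)}$ and the germ is
\[
t\longmapsto (1-\mu_{i(a)})+\mu_{i(a)}t = 1-\mu_{i(a)}(1-t).
\]
Thus the germ depends only on $\mu_{i(a)}\in\{\mu_1,\dots,\mu_k\}$, so $\Cal F$ has at most $k$ elements.

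There is no serious obstacle here; the single point requiring care is the existence, for each parameter $a$, of one ``final'' piece $X_{i(a)}$ as $t\to 1$, which is exactly o‑minimality (definable subsets of $\R$ are finite unions of intervals) applied to the trace of the partition on the slice $\{a\}\times(0,1)$, together with the observation that the hypothesis $\lim_{t\to1}F(a,t)=1$ is what collapses the affine germ on that piece to a function of its $t$‑coefficient alone.
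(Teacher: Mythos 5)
Your proof is correct and takes exactly the route the paper intends: the paper deduces both claims from piecewise $\Q$-affineness of $(\R;+,<)$-definable functions, and your argument spells out precisely how — affine pieces are Lipschitz for the first claim, and for the second the constraint $\lim_{t\to 1}f(t)=1$ collapses the affine germ on the terminal piece to $t\mapsto 1-\mu(1-t)$, so the germ is determined by the finitely many slopes $\mu_i$.
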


We now prove Proposition~\ref{prop:semilinear group}.
There is probably a more straightforward proof.

\begin{proof}
Suppose towards a contradiction that $G$ is bounded.
By \cite{edmundo-otero} a torsion free definable group cannot be definably compact and by \cite[Theorem~1.3]{one-dim-subgroup} a definable group which is not definably compact has a  one-dimensional definable subgroup.
So we suppose $\dim G = 1$.
By \cite{razenj} a one-dimensional definable torsion free group is divisible, hence $(G;+)$ is divisible.
By Corollary~\ref{cor:rama} there is a cofinal interval $J$ in $(G;\triangleleft)$, an open interval $I \subseteq \R$, and a definable isomorphism $\upiota \colon (J;\triangleleft) \to (I;<)$.
By Fact~\ref{fact:doag} $I$ is bounded.
After translating, rescaling by a rational, and shrinking $J$, we suppose that $I = (0,1)$.
For each $\alpha \in G$ let $f_\alpha$ be the function germ $(0,1) \to (0,1)$ at $1$ where we have $f_\alpha(\upiota(s)) = \upiota(t)$ when $\alpha + s = t$.
Observe that $\lim_{t \to \infty} f_\alpha(t) = 1$ for all $\alpha \in G$.
By Fact~\ref{fact:doag} $(f_\alpha : \alpha \in G)$ contains only finitely many germs.
This is a contradiction as each $\alpha \in G$ is determined by the germ of $f_\alpha$ at $1$.
\end{proof} 

We have given a number of examples of structures that are trace equivalent to $(\Z;+,<)$.
We gather here some non-interpretation results between these structures.

\medskip
Fact~\ref{fact:Zapryagaev and Pakhomov} is due to Zapryagaev and Pakhomov~\cite{zap-pak}.

\begin{fact}
\label{fact:Zapryagaev and Pakhomov}
Any linear order interpretable in $(\Z;+,<)$ is scattered of finite Hausdorff rank.
In particular $(\Z;+,<)$ does not interpret $\dlo$.
\end{fact}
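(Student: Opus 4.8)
The statement to prove is Fact~\ref{fact:Zapryagaev and Pakhomov}: any linear order interpretable in $(\Z;+,<)$ is scattered of finite Hausdorff rank, and in particular $(\Z;+,<)$ does not interpret $\dlo$. Of course, this is attributed to Zapryagaev and Pakhomov, so strictly speaking the proof is a citation to \cite{zap-pak}. But if one wanted to sketch the argument, here is the approach I would take. The plan is to exploit the tame combinatorial structure of definable sets in Presburger arithmetic. First I would recall that $(\Z;+,<)$ admits quantifier elimination in the language with $<$, $+$, constants, and the congruence predicates $\equiv_n$ (divisibility by $n$), and that it eliminates imaginaries down to finite tuples after passing to a suitable definable expansion — more precisely, any interpreted structure can be coded on a definable subset $D \subseteq \Z^m$ modulo a definable equivalence relation $E$, and the key point is that one can choose a definable set of representatives, so we may assume the interpreted linear order $(L;\triangleleft)$ has universe a definable $D \subseteq \Z^m$ with $\triangleleft$ a definable linear order on $D$.

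The heart of the matter is then an induction on $m$, the dimension of the ambient space. For $m=1$: a definable subset of $\Z$ is a finite union of arithmetic progressions (possibly infinite, possibly with gaps), and a definable linear order on such a set is built from finitely many pieces each order-isomorphic to a sub-order of $(\Z;<)$ or to a finite order; finite sums and "lexicographic-type" combinations of scattered orders of finite Hausdorff rank remain scattered of finite Hausdorff rank, and $(\Z;<)$ itself has Hausdorff rank $1$. For the inductive step with $D \subseteq \Z^{m}$, I would use the Presburger cell decomposition: $D$ decomposes into finitely many "cells", each of which fibers over a definable subset of $\Z^{m-1}$ with fibers that are (uniformly) arithmetic progressions in the last coordinate. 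One then analyzes how $\triangleleft$ interacts with this fibration. The subtle point is that $\triangleleft$ need not respect the coordinate projection, but by a further decomposition (again using QE, cutting $D$ into finitely many pieces on which the "comparison behaviour" of $\triangleleft$ relative to the fibration is controlled), one reduces to the case where $\triangleleft$ is, on each piece, a lexicographic-like combination of the fibre order and an order on the base. Since a lexicographic sum indexed by a scattered order of finite Hausdorff rank, of scattered orders of uniformly bounded finite Hausdorff rank, is again scattered of finite Hausdorff rank, the induction closes.

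The main obstacle is precisely the control of the interaction between the definable order $\triangleleft$ and the Presburger cell fibration: a priori $\triangleleft$ could be a wild-looking definable order that mixes coordinates badly, and one must show via quantifier elimination that after finitely cutting $D$ there are only finitely many "comparison patterns", each reducible to a lexicographic sum. This is where the real work of \cite{zap-pak} lies (and where a careful bookkeeping of Hausdorff ranks is needed to keep the rank finite and bounded through the induction). The final clause — that $\dlo$ is not interpretable — is then immediate: $(\Q;<)$ is dense, hence not scattered, hence of undefined (or infinite) Hausdorff rank, contradicting what we just proved. For the purposes of this paper we simply cite \cite{zap-pak} and use the scatteredness conclusion as a black box.
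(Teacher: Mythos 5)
The paper gives no proof of this fact: it is stated as a black-box result and attributed to Zapryagaev and Pakhomov \cite{zap-pak}, with no argument supplied. You correctly identify this ("strictly speaking the proof is a citation") and treat the citation as the operative proof, which matches the paper's approach exactly; the sketch you append is extra material that the paper does not contain. So there is nothing in the paper to compare the sketch against, and as a matter of what the paper itself does, you have the right answer.

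Two small remarks on the sketch, in case you intend it as more than colour. First, the reduction from "interpretable" to "definable on a definable $D\subseteq\Z^m$" does need elimination of imaginaries for Presburger arithmetic, which you use implicitly; it holds, but it is worth naming, since it is not automatic for arbitrary theories. Second, the claim that after finitely cutting $D$ the order $\triangleleft$ becomes a lexicographic-type combination of a fibre order and a base order is the genuinely nontrivial step, and as stated it is not obviously true: a Presburger-definable linear order on $\Z^m$ can compare tuples by Boolean combinations of affine and congruence conditions in ways that do not straightforwardly present as a lexicographic sum after a finite partition. You flag this yourself as "where the real work lies," which is honest, but it means the sketch should not be read as a proof; the actual argument in \cite{zap-pak} has to do real work precisely there (and has to track Hausdorff rank bounds carefully through the induction). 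As it stands the sketch is a reasonable heuristic, not a substitute for the citation, and the paper makes the right call in citing rather than reproving.
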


Fact~\ref{fact:zp} is proven in \cite{big-nip}.

\begin{fact}
\label{fact:zp}
Suppose that $\Sa Z$  is an $\mathrm{NTP}_2$ expansion of $(\Z;<)$ and $\Sa G$ is an expansion of a group which defines a non-discrete Hausdorff group topology.
Then $\Sa Z$ does not interpret $\Sa G$.
\end{fact}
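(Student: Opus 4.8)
The plan is to argue by contradiction. Suppose $\Sa Z$ interprets $\Sa G$, fix such an interpretation, and let $\tau$ be the interpreted non-discrete Hausdorff group topology on $\Sa G$. Since ``$\tau$ is a group topology'', ``$\tau$ is Hausdorff'', and ``$\tau$ is non-discrete'' are each expressed by a first-order scheme over the parameters naming the topology, after passing to a monstrous elementary extension $\Sa Z^{*}\succ\Sa Z$ we still have an interpreted group $\Sa G^{*}$ carrying a definable group topology that is Hausdorff and non-discrete. First I would repackage $\tau$ combinatorially: fix a uniformly definable neighbourhood basis $(U_b)_{b\in B}$ of the identity $1\in G^{*}$, closed under passing from $b$ to some $b'$ with $U_{b'}U_{b'}^{-1}\subseteq U_b$ (possible by continuity of multiplication and inversion). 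Non-discreteness gives $U_b\neq\{1\}$ for all $b$, and since $\tau$ is translation-invariant and $G^{*}$ is infinite, each $U_b$ is infinite; Hausdorffness gives $\bigcap_b U_b=\{1\}$.

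From this data I would run the usual diagonal construction: pick $1\neq g_0$, choose $b_0$ with $g_0\notin U_{b_0}$, then $1\neq g_1\in U_{b_0}$, then $b_1$ with $U_{b_1}\subseteq U_{b_0}$ and $g_1\notin U_{b_1}$, and so on. Compactifying (using saturation of $\Sa Z^{*}$), this yields, inside $\Sa Z^{*}$, an $\omega$-indexed uniformly definable strictly decreasing chain of infinite definable sets together with points separating successive levels. The heart of the argument is to turn this chain into an array witnessing $\mathrm{TP}_2$ in $\Sa Z$, contradicting the $\mathrm{NTP}_2$ hypothesis. Note that a non-discrete Hausdorff group topology by itself cannot force $\mathrm{TP}_2$ (many $\mathrm{NIP}$, even dp-minimal, groups carry one), so the discreteness of the underlying order $(\Z;<)$ must be used essentially: in $\Sa Z$ one can define, for a point $g$ approaching $1$ and a neighbourhood parameter $b$, selection functions such as ``the least $n$ with $g_n\notin U_b$'', because well-orderedness of the relevant cofinal parts of $(\Z;<)$ lets one pick canonical representatives. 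These definable selections are what supply the ``horizontal'' spread needed to lay out an array $(\varphi(x;a_{i,j}))$ of the right shape: the rows are built from cosets $g^{(i)}U_{b}$ at independently chosen depths of the chain, the chain makes the entries within each row pairwise inconsistent, and the genericity of the converging sequences makes one entry per row simultaneously consistent, while the discrete-order selection functions guarantee the whole configuration is definable in $\Sa Z$ rather than merely in $\Sa G$.

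The step I expect to be the main obstacle is exactly this array construction: one must choose the depths and the cosets so that the inconsistency pattern (across each row) and the consistency pattern (one hit per row) hold simultaneously and uniformly, and one must verify that the resulting array genuinely descends to $\Sa Z$ via the given interpretation. The remaining ingredients --- the monster-model reduction, the extraction of the uniformly definable neighbourhood basis, and the standard Ramsey-type/compactness manipulations needed to produce an array of infinite depth --- are routine once the combinatorial core is in place.
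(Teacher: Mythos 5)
Your high-level plan (assume interpretability, extract a definable neighbourhood basis, produce a $\mathrm{TP}_2$ array) is a plausible strategy, and you are right that the hypothesis that the structure expands $(\Z;<)$ must enter essentially, since a definable non-discrete Hausdorff group topology is perfectly compatible with $\mathrm{NIP}$. However the combinatorial core --- actually exhibiting a $\mathrm{TP}_2$ array --- is conceded to be ``the main obstacle'' and is not carried out, and the hints you give do not supply it. Each row of a $\mathrm{TP}_2$ array must contain infinitely many pairwise ($k$-)inconsistent instances; the coset picture you sketch, translates $g^{(i)}U_b$ at various depths, does not obviously yield infinitely many pairwise disjoint translates of a fixed smaller neighbourhood inside a fixed larger one. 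Indeed this fails in $(\R;+,<)$, which is exactly why the topological data alone cannot produce $\mathrm{TP}_2$. Definable selection functions pick canonical representatives; they do not manufacture extra disjoint cosets, so the assertion that they ``supply the horizontal spread'' is unsubstantiated and, as far as I can see, not true. Your description of the row pattern is also internally inconsistent: if the row entries $g^{(i)}U_{b_j}$ come from a decreasing chain of $b_j$'s, they are nested and hence pairwise \emph{consistent}, not inconsistent, so the intended shape of the array is unclear even informally.

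There is a second unaddressed tension between your two main tools. The diagonal/compactness step is run in a saturated extension $\Sa Z^{*}\succ\Sa Z$, while the well-ordering used for definable selections (``the least $n$ with $g_n\notin U_b$'', lexicographically least representatives, etc.) is a feature of the standard model $(\Z;<)$ and fails in nonstandard elementary extensions, where nonempty definable bounded-below sets need not have a minimum. Without specifying which choices are made in which model --- and without the explicit array --- the proposal has a genuine gap at its central step. (For reference, the paper itself gives no proof of this Fact; it cites it from an external source, so there is no in-paper argument to compare against.)
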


A group is non-trivial if it contains more than one element.

\begin{fact}
\label{fact:pres group}
$\Th(\Z;+,<)$ does not interpret a non-trivial divisible abelian group.
\end{fact}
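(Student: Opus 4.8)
The plan is to reduce the statement to the structure theory of groups definable in Presburger arithmetic, and then to invoke a one-line fact about divisible groups. Suppose toward a contradiction that some $\Sa Z\models\Th(\Z;+,<)$ interprets a non-trivial divisible abelian group $G$. Passing to an elementary extension we may assume $\Sa Z$ is highly saturated, so $G$ is infinite (and itself highly saturated).

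First I would pass from an interpretable group to a definable one. Presburger arithmetic eliminates imaginaries once one works in the language $(0,1,+,-,<,(\equiv_n)_{n\ge 2})$ (if necessary, after adjoining the finite auxiliary sorts $\Z/n\Z$), so $G$ is definably isomorphic to a group whose underlying set is a definable subset of $Z^n$ and whose operation is definable. The key input is then the classification of such groups: every abelian group definable in $\Th(\Z;+,<)$ is, after naming finitely many parameters, definably isomorphic to a subquotient $A/B$ of $(\Z^n;+)$ for some $n$ and definable subgroups $B\le A\le\Z^n$. Since subgroups of $\Z^n$ are finitely generated, $A\cong\Z^a$ and, by the theory of finitely generated abelian groups, $A/B\cong\Z^b\times F$ with $F$ a finite abelian group. (This subquotient description is the $\Z$-module analogue of the classical fact that a group interpretable in a module is a subquotient of a power of it; the content here is that the order $<$ does not enlarge the class of interpretable abelian groups. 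I would either cite a suitable reference for definable groups in Presburger arithmetic, or prove it directly using elimination of imaginaries together with the fact that a definable group in the dp-minimal structure $(\Z;+,<)$ has finite dp-rank.)

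The contradiction is then immediate from the elementary observation that a divisible group has no proper subgroup of finite index: if $H'\le H$ with $[H:H']=k>1$ then $kH\subseteq H'$, while divisibility gives $kH=H$, forcing $H'=H$. Hence $G\cong\Z^b\times F$ can be non-trivial and divisible only if $b=0$ and $F$ is trivial; more robustly, if the classification only yields a \emph{finite-index} definable subgroup $H\le G$ that is a subquotient of $\Z^n$, then $H=G$ by the no-proper-finite-index observation, so $G$ itself is of the form $\Z^b\times F$, and again $G$ is trivial. This contradicts the hypothesis, proving the fact.

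The main obstacle is the middle step — pinning down precisely which abelian groups are interpretable in $\Th(\Z;+,<)$. One must take care that the linear order gives no extra leverage: unlike the case of a linear or cyclic order on a group, where the order topology is a non-discrete Hausdorff group topology and Fact~\ref{fact:zp} applies, a pure divisible abelian group carries no definable non-discrete Hausdorff group topology (its only definable subgroups are, up to finite data, the obvious ones), so Fact~\ref{fact:zp} cannot be invoked here and a genuinely separate argument through the $\Z$-module structure of $(\Z;+)$ and elimination of imaginaries is required.
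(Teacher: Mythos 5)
Your proof hinges on the classification claim that every definable abelian group in a model $Z$ of Presburger arithmetic is, with parameters, a definable subquotient $A/B$ of some $Z^n$. You flag this as the step that needs justification, but the problem is worse than a missing citation: the claim is false, and the groups it misses are exactly the ones that carry the difficulty. In a saturated $Z\models\Th(\Z;+,<)$, fix a nonstandard $a>0$ and let $G_a$ have domain $[0,a)$ with $x\oplus y=x+y$ if $x+y<a$ and $x\oplus y=x+y-a$ otherwise. This is a \emph{bounded} definable group, and it is not a subquotient of any $Z^n$ by definable subgroups: for nonstandard $a$, $aZ$ is not a definable subgroup of $Z$ (divisibility by a parameter is not Presburger-definable, else the set of primes would be), and more generally definable subgroups of $Z^n$ are cut out by standard linear and congruence conditions, so their quotients do not account for $G_a$. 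Your parenthetical heuristic that "the order does not enlarge the class of interpretable abelian groups" is exactly backwards --- the order is what makes bounded intervals, and hence the $G_a$, definable, with no analogue in the pure group $(\Z;+)$, which is why the Evans--Pillay--Poizat-style subquotient picture does not apply. The step "subgroups of $\Z^n$ are finitely generated, so $A\cong\Z^a$" also silently replaces the nonstandard model $Z$ by $\Z$: definable subgroups of $Z^n$ are isomorphic to powers of $Z$, not to $\Z^a$. The correct replacement, which the paper takes from L\'opez, is that $G$ sits in a definable exact sequence $0\to Z^m\to G\to H\to 0$ with $H$ bounded, so the residual work is to rule out a nontrivial divisible bounded $H$.

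You should also be aware that this residual work is where the paper's own proof, and indeed the stated fact, appear to run into trouble, so the comparison is instructive. Choose the nonstandard $a$ above with $a\not\equiv 0\pmod p$ for every standard prime $p$ (such $a$ exists by saturation). Then $G_a\cong Z/aZ$ is nontrivial, torsion-free and divisible: since $a$ has nonzero residue mod $p$, a Bezout computation inside $Z$ gives $1=\alpha p+\beta a$ with $\alpha,\beta\in Z$, whence $x\equiv p(\alpha x)\pmod a$ for every $x$, so multiplication by $p$ is onto $Z/aZ$; the same identity shows torsion-freeness. This also shows that the paper's assertion "a non-trivial pseudofinite group cannot be divisible" fails for abelian groups --- independently of Presburger, the ultraproduct $\prod_{p}\Z/p\Z/\mathcal{U}$ over a nonprincipal ultrafilter on the primes is a nontrivial divisible pseudofinite abelian group, since for each fixed prime $q$ almost every factor is $q$-divisible, and divisibility is an infinite conjunction of first-order sentences that need not transfer uniformly. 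So both your route (via a false classification) and the paper's route (via a false transfer step) break down, and the example $G_a$ is exactly what a correct use of L\'opez's structure theorem brings into view.
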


Fact~\ref{fact:pres group} follows from the work of Onshuus-V\'{i}caria~\cite{Onshuus-pres} and L\'{o}pez~\cite{Lopez} on definable groups in Presburger arthimetic.

\begin{proof}
Let $(Z;+,<)$ be a model of Presburger arithmetic.
Then $(Z;+,<)$ eliminates imaginaries so it is enough to show that $(Z;+,<)$ does not define a non-trivial divisible abelian group.
A subset of $Z^n$ is bounded if it is contained in $[-a,a]^n$ for some $a \in Z$.
Note that if $X \subseteq Z^n$ is a bounded definable set then the induced structure on $X$ is pseudofinite.
An obvious transfer argument shows that a non-trivial pseudofinite group cannot be divisible, so a divisible $(Z;+,<)$-definable group with bounded domain is trivial.
Suppose that $G$ is a $(Z;+,<)$-definable abelian group.
By \cite{Lopez} there is a definable exact sequence $0 \to (Z^m;+) \to G \to H \to 0$ for some $m$ and bounded definable group $H$.
If $G$ is divisible then $H$ is divisible, hence $H$ is trivial, hence $G$ is isomorphic to $(Z^m;+)$, contradiction.
\end{proof}

\begin{proposition}
\label{prop:general cyclic}
Suppose that $(H;+,\prec)$ is a dense regular ordered abelian group such that $|H/pH| < \infty$ for all primes $p$.
Then $(H;+,\prec)$ eliminates imaginaries and eliminates $\exists^\infty$.
Hence $(H;+,\prec)$ does not interpret Presburger arithmetic.
\end{proposition}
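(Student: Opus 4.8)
The plan is to prove the two elimination statements using Weispfenning's quantifier elimination for regular ordered abelian groups in $L_\mathrm{ordiv}$ (Fact~\ref{fact:qe for oag}), and then deduce non-interpretability by the standard argument in the style of the proof of Fact~\ref{fact:discrete}. For elimination of $\exists^\infty$ it is enough to treat a definable family $(X_a)_a$ of subsets of $H$ (with $a$ ranging over a power of $H$). By quantifier elimination each fibre $X_a$ is a finite Boolean combination of sets defined by linear (in)equalities $ny \lessgtr \sigma(a)$ and $ny = \sigma(a)$ for terms $\sigma$, and of ``arithmetic progressions'' $\{y : k \mid (ny + \sigma(a))\}$. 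Since $H$ is torsion-free and $|H/pH| < \infty$ for every prime $p$, every $H/kH$ is finite, so each progression is either empty or a coset of a finite-index subgroup of $H$; and since a dense regular ordered abelian group has $kH$ dense in $H$ for every $k$ (part of the definition of ``regular''), that subgroup, hence each such coset, is dense. Writing $X_a$ in disjunctive normal form over these atoms, each disjunct is a generalized interval intersected with a union of cosets of a fixed dense subgroup; such a piece is infinite precisely when its interval part is nondegenerate and its coset part is nonempty, and both are $L_\mathrm{ordiv}$-definable conditions on $a$ (the endpoints are terms in $a$, and nonemptiness of the coset part is a divisibility statement about $a$). Moreover, when $X_a$ is finite its cardinality is at most the number of ``point-type'' disjuncts, a constant depending only on the formula. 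This gives elimination of $\exists^\infty$, even with uniformly bounded finite fibres; carrying out the bookkeeping in the Boolean combination is routine but is where essentially all the computation sits.

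For elimination of imaginaries, the key point is that, again by Weispfenning's quantifier elimination and as in the treatment of imaginaries in divisible ordered abelian groups and in Presburger arithmetic, every imaginary of $(H;+,\prec)$ is interdefinable with a tuple from $H$ together with a tuple from the quotient sorts $H/kH$: a definable equivalence class is pinned down by finitely much ``interval data'', coded by endpoints (which are real elements), and finitely much ``coset-mod-$kH$ data'', coded by images in $H/kH$. Under the hypothesis $|H/pH| < \infty$ each $H/kH$ is finite, hence a \emph{finite} imaginary sort, and finite imaginaries are eliminated because $\prec$ is a definable linear order on $H$ and therefore induces (lexicographically) a definable linear order on each $H^n$, so that any finite set of tuples is coded by the increasing enumeration of its elements. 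Combining these two facts yields full elimination of imaginaries. I expect the reduction of arbitrary imaginaries to the quotient sorts to be the main obstacle; alternatively one can invoke the known elimination-of-imaginaries results for ordered abelian groups in the regular case (building on Weispfenning's theorem). Note that the hypothesis $|H/pH| < \infty$ is genuinely needed here: a dense regular $H$ with $H/pH$ infinite interprets an infinite $\F_p$-vector space admitting no definable section, so fails elimination of imaginaries.

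Finally, suppose $(H;+,\prec)$ interpreted Presburger arithmetic. By elimination of imaginaries we may assume there is a model $\mathcal{Z} = (D;\oplus,\sqsubset)$ of $\Th(\Z;+,<)$ that is definable, in the home sort, inside some $\aleph_1$-saturated $\mathcal{H} \models \Th(H;+,\prec)$, with $D \subseteq \mathcal{H}^k$ and $\oplus,\sqsubset$ definable; in particular $\mathcal{Z}$ is nonstandard. Let $1_{\mathcal{Z}}$ be its least positive element. Applying elimination of $\exists^\infty$ in $\mathcal{H}$ to the definable family $a \mapsto \{b \in D : 0_{\mathcal{Z}} \sqsubseteq b \sqsubseteq a\}$, the set $S$ of those $a \sqsupseteq 0_{\mathcal{Z}}$ for which this interval is finite is $\mathcal{H}$-definable; but $S = \{\, n\cdot_{\mathcal{Z}} 1_{\mathcal{Z}} : n \in \N \,\}$, so $(S; \sqsubset\!\upharpoonright\!S)$ is an $\mathcal{H}$-definable copy of $(\N;<)$. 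Then the $\mathcal{H}$-definable family $b \mapsto \{c \in S : c \sqsubset b\}$ has all fibres finite but of unbounded size, so no formula can be equivalent to $\exists^\infty c\,(c \in S \wedge c \sqsubset b)$, since in an $\aleph_1$-saturated elementary extension of $\mathcal{H}$ some element of the copy of $\N$ has infinitely many $\sqsubset$-predecessors; this contradicts elimination of $\exists^\infty$ in $\mathcal{H}$. Equivalently, and more briefly: Presburger arithmetic, being a discrete ordered abelian group, does not eliminate $\exists^\infty$, and a structure that eliminates both imaginaries and $\exists^\infty$ cannot interpret one that does not — exactly as in the proof of Fact~\ref{fact:discrete}.
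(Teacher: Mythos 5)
Your treatment of $\exists^\infty$ elimination and the final non-interpretation deduction are both correct. For $\exists^\infty$ you carry out the normal-form analysis explicitly (each disjunct is an interval intersected with cosets of a finite-index, hence dense, subgroup; infinitude is read off from nondegeneracy of the interval and nonemptiness of the coset part, while the finite pieces are singletons coming from equality atoms, so the finite cardinalities are uniformly bounded). The paper phrases the same fact differently --- it observes that no definable subset of $H$ is infinite and discrete and then gets $\exists^\infty$-elimination by a saturation/pseudofiniteness argument --- but your version is, if anything, more direct. The closing ``eliminates imaginaries $+$ eliminates $\exists^\infty$ $\Rightarrow$ cannot interpret a structure failing $\exists^\infty$-elimination'' is exactly the mechanism in Fact~\ref{fact:discrete}; your longer construction of a copy of $(\N;<)$ inside the interpreted model is unnecessary, and you say so yourself.

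The genuine issue is the elimination of imaginaries step. Your plan rests on the claim that every imaginary of $(H;+,\prec)$ is coded by a tuple from $H$ together with tuples from the quotient sorts $H/kH$, which are finite by hypothesis and hence harmless in a linearly ordered structure. That reduction is a real theorem --- it is the content of the known EI results for ordered abelian groups of bounded regular rank (Vicaria-style), and you acknowledge that you do not prove it and that it is ``the main obstacle''. As written, the EI argument therefore either cites a substantial external result or leaves a gap. The paper takes a different and self-contained route. It uses the standard criterion that, in a structure carrying a definable linear order, elimination of imaginaries follows once one has a definable \emph{invariant choice function} on definable families of nonempty subsets of $H$: a definable $f \colon Y \to H$ with $f(\alpha) \in X_\alpha$ and $f(\alpha) = f(\beta)$ whenever $X_\alpha = X_\beta$. (This bootstraps to subsets of $H^n$ by projecting to the first coordinate and inducting, and then codes any definable equivalence relation by choosing from its classes.) The choice function is then built explicitly using Fact~\ref{fact:qe for oag}: for the given formula there is a single $m$ with $H/mH$ finite such that each $X_\alpha$ meets each coset of $mH$ in a translated-and-scaled finite union of intervals; one picks the least coset representative $\gamma_j$ whose coset meets $X_\alpha$, pulls $X_\alpha \cap (\gamma_j + mH)$ back to a finite union of intervals via $\beta \mapsto \gamma_j + m\beta$, and selects the midpoint of the minimal convex component there. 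That explicit construction is what your sketch is missing; if you want to avoid citing a general EI theorem for ordered abelian groups, you should replace the appeal to the structure theorem for imaginaries with this choice-function argument.
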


By Fact~\ref{fact:finite rank} this applies to  regular ordered abelian groups  of finite rank.
A dense oag can interpret Presburger arthimetic, consider the lexicographic product $(\Q;+,<)\times(\Z;+,<)$.

\begin{proof}
The last claim follows from the previous.
We apply Fact~\ref{fact:qe for oag}.
Note first that there is no infinite discrete definable subset of $(H;+,\prec)$, pass to an $\aleph_1$-saturated elementary extension, and show that this implies elimination of $\exists^\infty$.
To obtain elimination of imaginaries it is enough to let $(X_\alpha : \alpha \in Y)$ be a definable family of non-empty subsets of $H$ and show that there is a definable function $f \colon Y \to H$ such that $f(\alpha) \in X_\alpha$  and $f(\alpha) = f(\beta)$ when $X_\alpha = X_\beta$.
We first treat the case when each $X_\alpha$ is a finite union of intervals.
In this case we select  the midpoint of the minimal convex component of $X_\alpha$.
We now treat the general case.
By Fact~\ref{fact:qe for oag} there is $m$ such that every intersection of every $X_\alpha$ with a coset of $mH$ is of the form $\gamma + mX$ where $X \subseteq H$ is a finite union of intervals.
Let $\gamma_1,\ldots,\gamma_k$ be representatives of the cosets of $mH$.
For each $i \in \{1,\ldots,k\}$ let $f_i \colon H \to H$ be given by $f_i(\beta) = \gamma_i  + m\beta$.
Then $f^{-1}_i(X_\alpha)$ is a finite union of intervals for each $i \in \{1,\ldots,k\}$ and $\alpha \in H^n$.
For each $\alpha\in Y$ we fix the minimal $j \in \{1,\ldots,k\}$ such that $X_\alpha \cap (\gamma_j + mH) \ne \emptyset$ and select an element $\eta$ from $f^{-1}_j(X_\alpha)$ as above.
Then the selected element from $X_\alpha$ is $f_j(\eta)$.
\end{proof}

\begin{corollary}
\label{cor:cyclic order}
Suppose that $(G;+,C)$ is an archimedean cyclically ordered abelian group and $|G/pG| < \infty$ for all primes $p$.
Then $(G;+,C)$ does not interpret Presburger arithmetic.
\end{corollary}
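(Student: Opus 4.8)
The plan is to deduce Corollary~\ref{cor:cyclic order} from Proposition~\ref{prop:general cyclic} exactly as the surrounding development suggests. First I would pass to the universal cover: by Fact~\ref{fact:cover} an archimedean cyclically ordered abelian group $(G;+,C)$ has a universal cover $(H;u,+,<)$ with $(G;+,C)$ interpretable in $(H;+,<)$, and archimedeanness of $(G;+,C)$ means (by the definition given in Section~\ref{section:cyclic order}) that $(H;+,<)$ is archimedean. So it suffices to show $(H;+,<)$ does not interpret Presburger arithmetic, since interpretability is transitive.

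Next I would check the hypotheses of Proposition~\ref{prop:general cyclic} for $(H;+,<)$. An archimedean ordered abelian group is in particular regular (condition (1) in the list of equivalent conditions defining regularity: it is trivially elementarily equivalent to an archimedean oag, namely itself). It is also dense, unless it is a model of Presburger arithmetic; but $(H;+,<)$ cannot be discrete here, because $u\N$ is cofinal in $H$ while $u = \uppi^{-1}$ of $0$ generates the kernel $\Z$, so $H$ sits in an exact sequence $0 \to \Z \to H \to G \to 0$ with $G$ infinite — alternatively one simply notes that a discrete archimedean group is $\cong(\Z;+,<)$, and then $G$ would be finite cyclic, contradicting that $(G;+,C)$ is an \emph{infinite} cyclically ordered group (or, if one allows finite $G$, the statement is vacuous/trivial). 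The remaining hypothesis is $|H/pH|<\infty$ for all primes $p$. From the exact sequence $0\to\Z\to H\to G\to 0$ and right-exactness of $-\otimes\F_p$ (Lemma~\ref{lem:finite rank}) we get $\corank_p(H)\le \corank_p(\Z)+\corank_p(G)=1+\corank_p(G)$, so $|H/pH|<\infty$ follows from the assumption $|G/pG|<\infty$ for all $p$. Hence Proposition~\ref{prop:general cyclic} applies to $(H;+,<)$ and it does not interpret Presburger arithmetic, so neither does $(G;+,C)$.

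I do not anticipate a genuine obstacle; the one point requiring a little care is making sure the density/non-discreteness hypothesis of Proposition~\ref{prop:general cyclic} is legitimately available. The cleanest route is to observe that if $H$ were discrete archimedean then $H\cong(\Z;+,<)$, forcing $G\cong H/u\Z$ to be finite, which contradicts the standing assumption that $(G;+,C)$ is infinite (this is how the analogous archimedean cyclic statements are used elsewhere, e.g. in the discussion after Proposition~\ref{prop:final Z}); in the degenerate finite case the conclusion holds trivially since a finite structure interprets nothing infinite. With that settled the proof is just the chain: archimedean cover $\Rightarrow$ regular and dense $\Rightarrow$ finite $p$-coranks via the exact sequence $\Rightarrow$ apply Proposition~\ref{prop:general cyclic} $\Rightarrow$ transfer the non-interpretation down the interpretation $(H;+,<)\rightsquigarrow(G;+,C)$.
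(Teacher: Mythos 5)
Your proposal is correct and follows essentially the same route as the paper: pass to the universal cover $(H;u,+,<)$, observe it is archimedean hence regular, bound $\corank_p(H)$ via the exact sequence $0\to\Z\to H\to G\to 0$ and Lemma~\ref{lem:finite rank}, apply Proposition~\ref{prop:general cyclic}, and transfer the non-interpretation along the interpretation of $(G;+,C)$ in $(H;+,<)$. In fact you are a bit more careful than the paper: you explicitly verify the density hypothesis of Proposition~\ref{prop:general cyclic} by noting that a discrete archimedean $H$ is isomorphic to $(\Z;+,<)$, forcing $G$ finite, in which case the conclusion is trivial — a step the paper's proof passes over in silence.
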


By Fact~\ref{fact:finite rank} this applies to finite rank archimedean cyclically ordered abelian groups.

\begin{proof}
Let $(H;u,+,\prec)$ be the universal cover of $(G;+,C)$ as defined in Section~\ref{section:cyclic order}.
Then $(H;+,\prec)$ interprets $(G;+,C)$ so it is enough to show that $(H;+,\prec)$ does not interpret $(\Z;+,<)$.
Note that  $(H;+,\prec)$ is archimedean as the universal cover of an archimedean cyclic order is an archimedean oag.
Hence $(H;+,\prec)$ is regular.
By Proposition~\ref{prop:general cyclic} it is enough to show that $|H/pH| < \infty$ for all primes $p$.
This follows by applying Fact~\ref{fact:finite rank} to the exact sequence $0\to(\Z;+) \to H \to G \to 0$.
\end{proof}

Fact~\ref{fact:group in a group} is due to Evans, Pillay, and Poizat~\cite{group-in-a-group}.

\begin{fact}
\label{fact:group in a group}
Suppose that $A$ is an abelian group and $G$ is a group interpretable in $A$.
Then $G$ has a finite index subgroup which is definably isomorphic to $B/B'$ where $B'\subseteq B$ are $A$-definable subgroups of $A^m$ for some $m$.
\end{fact}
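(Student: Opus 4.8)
The plan is to deduce this from the structure theory of groups interpretable in one-based stable theories together with the coordinatization of imaginaries in modules, which is the route taken by Evans, Pillay and Poizat.

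First I would set up the usual model-theoretic framework: pass to a monster model $\monster \succ A$, name finitely many parameters so that $G$ is $\emptyset$-interpretable, and fix an $A$-definable $D \subseteq A^k$, an $A$-definable equivalence relation $E$ on $D$, and an $A$-definable group operation exhibiting $G = D/E$ as a group. The point of departure is that the theory of any abelian group is the theory of a module (over $\Z$, or over its endomorphism ring), hence is stable, and that the theory of any module is one-based (modular); naming parameters preserves both properties. So $G$ is a group interpretable in a one-based stable theory.

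Second, I would invoke the Hrushovski--Pillay theory of weakly normal (one-based) stable groups: such a group is abelian-by-finite, and every interpretable subset of a power of the group is a finite Boolean combination of cosets of $\operatorname{acl}(\emptyset)$-definable subgroups. In particular $G$ has a finite-index subgroup $G_1$, definable in $A^{\mathrm{eq}}$, which is abelian. It then suffices to show that $G_1$ --- an abelian group interpretable in the module $A$ --- is definably isomorphic to a quotient $B/B'$ of $A$-definable subgroups $B' \le B$ of some $A^m$, replacing $G_1$ by a further finite-index subgroup if needed.

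Third, and here the module structure is essential, I would use the coordinatization of $A^{\mathrm{eq}}$ by pp-quotient sorts: every imaginary sort of a module is, up to definable bijection, assembled from sorts of the form $\varphi(A)/\psi(A)$ for pp-formulas $\psi \le \varphi$ (quotients of pp-definable subgroups of powers of $A$), while by Baur--Monk every definable subset of a power of $A$ is a finite Boolean combination of cosets of pp-definable subgroups. Using this one analyses the graph of the group operation on the connected component $G_1^0$, showing that $G_1^0$ is commensurable with --- hence, after passing to a finite-index subgroup, equal to --- the image of a pp-definable subgroup $B \le A^m$ under the quotient by a pp-definable subgroup $B'$, and that the induced operation is the linear one. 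The cleanest way to organize this is around the canonical base of the generic type of $G_1^0$: one-basedness forces this canonical base to lie in the definable closure of a single real tuple, which is exactly what is needed to embed a finite-index subgroup of $G_1$ into $A^{\mathrm{eq}}$ as a pp-quotient group.

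The main obstacle I expect is precisely this third step --- passing from the soft conclusion \emph{abelian group interpretable in a module} to the sharp conclusion \emph{a quotient of definable subgroups of $A^m$, up to finite index}. The abelian-by-finite reduction is cheap stable-group theory, but pinning $G_1$ down as a pp-pair requires care: the connected component is a priori only \emph{commensurable} with a pp-pair rather than literally equal to one, and one must check that the multiplication transported along the coordinatization really is addition of vectors. Handling this commensurability bookkeeping, and the interplay between the equivalence relation $E$ and coset equivalence relations of definable subgroups, is where the real content of the Evans--Pillay--Poizat argument lies.
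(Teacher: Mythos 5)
The paper offers no proof of this statement: it is cited as an external result of Evans, Pillay, and Poizat, so there is no in-paper argument to compare against. Your reconstruction follows the route of that original source --- one-basedness of modules, the Hrushovski--Pillay theorem that one-based stable groups are abelian-by-finite, and then pp-coordinatization together with a canonical base analysis to exhibit a finite-index subgroup as a pp-pair $B/B'$. You also correctly locate where the work is: upgrading ``commensurable with a pp-pair'' to ``equal to one on a finite-index subgroup'' and checking that the transported operation is vector addition.

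One point to watch in your third step: you route the argument through the connected component $G_1^0$, but a module need not be superstable, so $G_1^0$ is a priori only type-definable and may fail to have finite index; ``passing to a finite-index subgroup commensurable with $G_1^0$'' is then not directly available as stated. The argument should instead be run with the stabilizer of a generic type (or, equivalently, via the one-based rigidity of definable subgroups up to commensurability), which produces a genuinely \emph{definable} finite-index subgroup without appealing to $G^0$. With that adjustment, your outline is a faithful account of the Evans--Pillay--Poizat proof.
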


\begin{corollary}
\label{cor:group in a group}
Suppose that $A$ is a finite rank free abelian group.
If $B$ is an infinite group interpretable in $A$ then $B$ has a finite rank subgroup $B^*$ such that $B^*$ is free abelian and $\rank(A)$ divides $\rank(B^*)$.
\end{corollary}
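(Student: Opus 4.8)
The plan is to apply Fact~\ref{fact:group in a group} and then pin down the shape of $A$-definable subgroups of $A^m$. By Fact~\ref{fact:group in a group}, $B$ has a finite index subgroup $B_0$ with $B_0\cong C/C'$ for some $A$-definable subgroups $C'\subseteq C$ of $A^m$, some $m$. Since $A$ is free abelian of finite rank, write $\rank(A)=r$; then $A^m$ is free abelian of finite rank $rm$, so $C$ and $C'$ are free abelian of finite rank, and $B_0\cong C/C'$ is finitely generated (a quotient of the finitely generated group $C$) and infinite (it has finite index in the infinite group $B$). Tensoring $0\to C'\to C\to C/C'\to 0$ with $\Q$ gives $\rank(C/C')=\rank(C)-\rank(C')$, and since $B_0$ is infinite and finitely generated this number, call it $t$, is $\ge 1$; writing $B_0\cong\Z^{t}\oplus F$ with $F$ finite exhibits a free abelian subgroup $B^{*}\cong\Z^{t}$ of $B_0$, hence of $B$. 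So the whole corollary reduces to showing $r\mid t$, and for that it suffices to prove that every $A$-definable subgroup $D$ of $A^m$ has $\rank(D)$ divisible by $r$.

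This last statement is the technical heart of the argument, and I would prove it as follows. Identify $A$ with $\Z^{r}$. By Fact~\ref{fact:abelian qe} and a routine reduction (discarding negated atomic conjuncts and parameters, which is legitimate since $D$ contains $0$ and is a subgroup), a definable subgroup $D$ of $A^m$ has the form
\[
D \;=\; \bigcap_{i=1}^{p}\ker\ell_i \;\cap\; \bigcap_{j=1}^{q}\{\,x\in A^m : n_j\mid \ell'_j(x)\,\}
\]
for suitable $\Z$-linear forms $\ell_i,\ell'_j\colon A^m\to A$ (each of the shape $x\mapsto\sum_k c_k x_k$ with $c_k\in\Z$) and integers $n_j\ge 1$. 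Put $D_0=\bigcap_{i=1}^{p}\ker\ell_i$, and let $L=(\ell_1,\dots,\ell_p)\colon A^m\to A^p$; then $L=\Lambda\otimes\mathrm{id}_A$ for the integer matrix $\Lambda=(c_{ik})$. Because $A$ is flat over $\Z$, $D_0=\ker(\Lambda)\otimes_{\Z}A$, where $\ker(\Lambda)\subseteq\Z^m$ is a pure, hence direct-summand, subgroup and so $\ker(\Lambda)\cong\Z^{m-\mathrm{rk}\,\Lambda}$; therefore $D_0\cong A^{m-\mathrm{rk}\,\Lambda}$ has rank $r(m-\mathrm{rk}\,\Lambda)$. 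Finally each $\{x:n_j\mid\ell'_j(x)\}$ has finite index in $A^m$ (as $A/n_jA$ is finite), so $D$ has finite index in $D_0$ and hence $\rank(D)=\rank(D_0)=r(m-\mathrm{rk}\,\Lambda)$, a multiple of $r$. Applying this to $C$ and to $C'$ yields $r\mid\rank(C)-\rank(C')=t$, finishing the proof.

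The step I expect to be the main obstacle is the passage from Fact~\ref{fact:abelian qe} to the displayed normal form: one must verify that a definable subgroup of $A^m$ really is a finite intersection of kernels of scalar forms together with congruence conditions (so that no exotic, ``non-scalar'' subgroups appear even after adding parameters), and that the congruence conjuncts only shrink $D$ by finite index. Everything after that is bookkeeping with ranks, purity, and flatness. It would also be worth remarking, as a sanity check, that this recovers the claimed equivalence ``$(\Z^{a};+)$ interprets $(\Z^{b};+)$ iff $a\mid b$'': taking $B=\Z^{b}$ above forces $a=\rank(A)$ to divide $\rank(B^{*})\le b$, and since the finite index subgroup $B_0$ of $\Z^{b}$ has rank $b$, in fact $\rank(B^{*})=b$, so $a\mid b$.
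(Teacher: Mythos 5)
Your proposal follows the paper's overall strategy — apply Fact~\ref{fact:group in a group} to reduce to definable subgroups $C'\subseteq C$ of $A^m$, reduce further to the assertion that $\rank(A)$ divides the rank of every $(A;+)$-definable subgroup $D\le A^m$, then run the rank arithmetic — but you prove that key rank lemma by a genuinely different argument. The paper proves it by induction on $m$: the base case $m=1$ uses Fact~\ref{fact:abelian qe} to show an infinite definable subgroup of $A$ contains $kA$ for some $k\ge 1$ (a Boolean combination of singletons and cosets of finite-index subgroups, plus the subgroup difference trick), so has rank exactly $\rank(A)$; the inductive step projects onto the first coordinate and uses additivity of rank along the short exact sequence $0\to B'\to B\to\uppi(B)\to 0$. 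Your route instead tries to pin down the global shape of $D$ at once — $D=\bigcap_i\ker\ell_i\cap\bigcap_j\{x:n_j\mid\ell'_j(x)\}$ — and then read off the rank by a clean flatness/Smith-normal-form computation.

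The step you flag as "the main obstacle" is indeed a genuine gap as written. Fact~\ref{fact:abelian qe} gives you that $D$ is a finite \emph{Boolean} combination of \emph{cosets} of subgroups of the two displayed kinds; getting from there to a plain intersection of parameter-free kernels and congruence subgroups is not a matter of "discarding negated atomic conjuncts and parameters." Because $0\in D$, the cell of the Boolean combination containing $0$ does have that shape (the parameter shifts vanish at $0$), but $D$ may strictly contain that cell, and you would need a B.\,H.~Neumann–type covering argument or a careful analysis of the lattice of cosets to show that the extra cells do not change the commensurability class, nor introduce a "non-scalar" subgroup. None of this is supplied. The paper's induction is precisely designed to avoid this difficulty: it only invokes QE when $m=1$, where the only nontrivial subgroups $\{x:n\mid cx\}$ are finite-index and the combinatorics collapses to "an infinite definable subgroup of $A$ contains $kA$." What your approach would buy, if the normal form were established, is a more structural picture of definable subgroups (and a proof that does not need the projection/exact-sequence step); but as it stands, the burden of proof has simply been shifted to an unproved normal-form claim, which is at least as hard as the original rank lemma. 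The rest of your argument — finite generation and infinitude of $B_0$, $\rank(C/C')=\rank C-\rank C'$ via $\otimes\Q$, extracting the free part $B^*\cong\Z^t$, and the sanity check recovering Corollary~\ref{cor:group in a group 2} — is correct and matches the paper.
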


\begin{proof}
Let $\rank(A)=n$.
We first prove the claim.
\begin{Claim*}
Suppose that $B$ is a definable subgroup of $A^m$.
Then $n$ divides $\rank(B)$.
\end{Claim*}
\begin{claimproof}
If $B$ is trivial then $n$ divides $\rank(B)=0$, so we suppose $B$  is non-trivial.
We apply induction on $m$.
Suppose $m=1$.
As $A$ is torsion free $B$ is infinite.
By Fact~\ref{fact:abelian qe} $B$ contains $\beta+kA$ for some $\beta\in A$ and $k\ge 1$, so $B$ contains $kA$.
Then $n=\rank(A)\ge \rank(B) \ge  \rank(kA) = n$.
Hence $\rank(B) = n$.
Suppose that $m \ge 1$, let $\uppi\colon A^m\to A$ be the projection onto the first coordinate, and let $B'$ be the set of $b\in A^{m-1}$ such that $(0,b)\in B$.
Then $B'$ is a definable subgroup of $A^{m - 1}$ and $\uppi(B)$ is a definable subgroup of $A$.
By induction $n$ divides both $\rank(B')$ and $\rank(\uppi(B))$.
We have an exact sequence $0\to B'\to B\to\uppi(B)\to 0$, hence $\rank(B) = \rank(B')+\rank(\uppi(B))$, so $n$ divides $\rank(B)$.
\end{claimproof}
By Fact~\ref{fact:group in a group} we suppose $G$ is of the form $B/B'$ for $A$-definable subgroups $B'\subseteq B$ of $A^m$.
By the claim $n$ divides $\rank(B)$ and $\rank(B')$, so $n$ divides $\rank(B/B')=\rank(B)-\rank(B')$.
As $B/B'$ is a finitely generated abelian group $B/B'$ has a free abelian subgroup of the same rank.
\end{proof}

Corollary~\ref{cor:group in a group 2} is immediate from Corollary~\ref{cor:group in a group}.

\begin{corollary}
\label{cor:group in a group 2}
$(\Z^n;+)$ interprets $(\Z^m;+)$ if and only if $n$ divides $m$.
\end{corollary}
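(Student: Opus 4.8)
The plan is to derive Corollary~\ref{cor:group in a group 2} from Corollary~\ref{cor:group in a group}, handling the ``if'' direction by an explicit interpretation and the ``only if'' direction through the rank divisibility.

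First I would dispose of the easy implication. Suppose $n$ divides $m$ and write $m = kn$. As abelian groups $\Z^m \cong (\Z^n)^k$, so it suffices to observe that $(\Z^n;+)$ interprets its $k$-th direct power: take the interpreting set to be the $k$-th Cartesian power of the universe of $(\Z^n;+)$, the equivalence relation to be equality, and the group operation to be componentwise addition, which is quantifier-free definable from $+$. Hence $(\Z^n;+)$ interprets a copy of $(\Z^m;+)$.

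For the converse, suppose $(\Z^n;+)$ interprets $(\Z^m;+)$. Apply Corollary~\ref{cor:group in a group} with $A = \Z^n$, which is free abelian of rank $n$, and $B = \Z^m$, an infinite group interpretable in $A$. This produces a free abelian subgroup $B^* \le \Z^m$ of finite rank with $n \mid \rank(B^*)$. It remains to pin down $\rank(B^*)$: inspecting the proof of Corollary~\ref{cor:group in a group}, by Fact~\ref{fact:group in a group} the group $\Z^m$ has a finite-index subgroup $G_0$ definably isomorphic to $B_1/B_1'$ for $A$-definable subgroups $B_1' \subseteq B_1$ of some $A^{m'}$, and the Claim in that proof gives $n \mid \rank(B_1)$ and $n \mid \rank(B_1')$, hence $n \mid \rank(G_0) = \rank(B_1) - \rank(B_1')$; moreover $B^*$ is a full-rank free abelian subgroup of $G_0$, so $\rank(B^*) = \rank(G_0)$. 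Since $G_0$ has finite index in $\Z^m$, and finite-index subgroups of finitely generated abelian groups have the same rank, $\rank(B^*) = \rank(G_0) = \rank(\Z^m) = m$. Therefore $n \mid m$, which finishes the proof.

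There is no serious obstacle. The one point needing attention is that Corollary~\ref{cor:group in a group}, taken as a black box, only yields a free abelian subgroup $B^* \le \Z^m$ with $n \mid \rank(B^*)$, and the a priori bound $\rank(B^*) \le m$ does \emph{not} by itself give $n \mid m$; one must extract from the proof of that corollary that $B^*$ realizes the rank of a finite-index subgroup of $\Z^m$, i.e. $\rank(B^*) = m$. The remaining ingredients — that finite-index subgroups of finitely generated abelian groups preserve rank, and that a finite direct power of a structure is interpretable in it — are routine.
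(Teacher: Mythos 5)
Your proof is correct, and it takes the route the paper intends: Corollary~\ref{cor:group in a group 2} is derived from Corollary~\ref{cor:group in a group}, with the forward direction via the standard interpretation of $(\Z^n;+)^k$ in $(\Z^n;+)$. The one thing worth flagging is that you are right that the stated Corollary~\ref{cor:group in a group} as a black box does not immediately give $n\mid m$ — one needs the stronger conclusion, implicit in its proof, that the free abelian subgroup $B^*$ can be taken of finite index in $B$ (so of full rank $m$ when $B=\Z^m$); the paper calls this ``immediate'' and suppresses that step, so your extraction of $\rank(B^*)=m$ from the proof is exactly the missing detail.
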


\begin{proposition}
\label{prop:two sorts}
Any ordered abelian group interpretable in $(\Z;+)\sqcup\Sa M$ is definably isomorphic to an ordered abelian group interpretable in $\Sa M$.
If $\Sa M$ is an o-minimal expansion of an ordered abelian group then any oag interpretable in $(\Z;+)\sqcup\Sa M$ is divisible.
\end{proposition}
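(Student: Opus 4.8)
The plan is to reduce, by Feferman--Vaught, to an interpretation living purely in the $\Sa M$-sort, using stability of $\Th(\Z;+)$ to discard the $(\Z;+)$-contribution, and then to handle the o-minimal case by elimination of imaginaries together with standard o-minimal group theory.

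So suppose $(G;+,\triangleleft)$ is an ordered abelian group interpretable in $(\Z;+)\sqcup\Sa M$; write $G=X/E$ with $X$ definable, $E$ a definable equivalence relation on $X$, and $+,\triangleleft$ definable on $G$. First I would invoke the Feferman--Vaught description of definable sets in a disjoint union (as used before Lemma~\ref{lem:disjoint union}): after sorting coordinates into the two pieces, every definable subset of a Cartesian power of the universe of $(\Z;+)\sqcup\Sa M$ is a finite union --- equivalently, since a finite union of rectangles is a finite \emph{disjoint} union of rectangles, a finite disjoint union --- of rectangles $P\times Q$ with $P$ definable in $(\Z;+)$ and $Q$ definable in $\Sa M$. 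Thus $(\Z;+)$ and $\Sa M$ sit orthogonally inside the disjoint union. Granting the structural input that an interpretable group in such a situation carries a definable normal subgroup $N$ that is internal to $(\Z;+)$ and with $G/N$ internal to $\Sa M$ --- this follows from the interpretation theory of disjoint (orthogonal) sorts \cite{two-disjoint-sorts}, and can also be extracted directly from the rectangle decompositions of $X$, of $E$, and of the graph of $+$ --- I would argue as follows. The subgroup $N\le G$, with the restriction of $\triangleleft$, is a linearly ordered abelian group; it is interpretable in $\Th(\Z;+)$, which is stable, hence $N$ is stable; but a nontrivial linearly ordered abelian group is unstable, so $N$ is trivial and the quotient map $G\to G/N$ is a definable isomorphism. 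Since $G/N$ is internal to $\Sa M$, there is an $\Sa M$-definable set $Z$ and a bijection $\upiota\colon G/N\to Z$ definable in $(\Z;+)\sqcup\Sa M$; transporting $+$ and $\triangleleft$ along $\upiota$ gives relations on powers of $Z$ that are definable in $(\Z;+)\sqcup\Sa M$ but involve only $\Sa M$-coordinates, hence are $\Sa M$-definable by Feferman--Vaught. So $(Z;+,\triangleleft)$ is an ordered abelian group interpretable in $\Sa M$ and definably isomorphic to $(G;+,\triangleleft)$, which is the first assertion.

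For the second assertion, let $\Sa M$ be an o-minimal expansion of an ordered abelian group. By the first part $G$ is definably isomorphic to an ordered abelian group interpretable in $\Sa M$, and since o-minimal structures eliminate imaginaries this is in turn definably isomorphic to a definable ordered abelian group $G'$ in $\Sa M$. Being linearly ordered, $G'$ is torsion-free. By the standard theory of definable groups in o-minimal structures the connected component $(G')^{0}$ is a divisible definable subgroup of $G'$ of finite index, and torsion-freeness forces $G'=(G')^{0}$; hence $G'$, and so $G$, is divisible.

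The hard part will be the structural input used in the second paragraph: producing from the Feferman--Vaught data a genuine definable normal subgroup of $G$ internal to $(\Z;+)$ with quotient internal to $\Sa M$. The naive hope that $G$ splits as a direct product of a $(\Z;+)$-interpretable group and an $\Sa M$-interpretable one is false --- already $\unisltwo$ is interpretable in $(\Z;+)\sqcup\rfield$ without being such a product --- so one must genuinely exploit orthogonality, either by citing the interpretation theory of disjoint sorts or by a careful (if routine) bookkeeping with the rectangle decompositions of $X$, $E$ and $+$. Everything else is either an appeal to Feferman--Vaught, to stability of $\Th(\Z;+)$, or to standard o-minimal group theory.
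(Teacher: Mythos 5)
Your proposal is correct and follows the paper's proof essentially step for step: both rely on Berarducci--Mamino \cite{two-disjoint-sorts} for the definable short exact sequence (your ``normal subgroup $N$ internal to $(\Z;+)$ with $G/N$ internal to $\Sa M$''), both kill the $(\Z;+)$-part by observing that an interpretable linearly ordered abelian group in a stable theory must be trivial, and both finish the o-minimal case via elimination of imaginaries plus the fact that torsion-free definable abelian groups in o-minimal structures are divisible. The only cosmetic difference is that you route the last step through divisibility of the connected component $(G')^0$ while the paper cites Strzebonski's Euler-characteristic results directly; these are equivalent.
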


\begin{proof}
The second claim follows from the first, elimination of imaginaries for o-minimal expansions of ordered abelian groups, and the fact that a torsion free abelian group definable in an o-minimal structure is divisible.
Divisibility of torsion free definable abelian groups follows from work of Strzebonski on Euler characteristic and definable groups~\cite[Lemma~2.5, Prop~4.4, Lemma~4.3]{strzebonski}.
We prove the first claim.
Suppose $\Sa M$ is an arbitrary structure and $(H;+,\prec)$ is an ordered abelian group interpretable in $(\Z;+)\sqcup\Sa M$.
By work of Berarducci and  Mamino~\cite{two-disjoint-sorts} there is a definable short exact sequence $0\to A\to (H;+)\to B\to 0$ for a $(\Z;+)$-definable group $A$, and $\Sa M$-interpretable group $B$.
The image of $A$ is ordered by $\prec$, so $A$ is finite by stability of $(\Z;+)$.
As $(H;+)$ is torsion-free the image of $A$ is trivial, hence $(H;+)\to B$ is an isomorphism.
The structure induced on $\Sa M$ by $(\Z;+)\sqcup\Sa M$ is interdefinable with $\Sa M$, so $\Sa M$ interprets $(H;+,\prec)$.
\end{proof}

\begin{lemma}
\label{lem:dislo}
Suppose that $(D;\prec)$ is an infinite discrete linear order and $D_1,\ldots,D_n$ are subsets of $D$ such that $D=D_1\cup\cdots\cup D_n$.
Then $\Th(D_i;\prec)$ defines an infinite discrete linear order for some $i\in\{1,\ldots,n\}$.
\end{lemma}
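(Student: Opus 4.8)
The plan is to pass to the "finite--distance" equivalence relation. On $(D;\prec)$ write $a\sim b$ when the $\prec$-interval with endpoints $a,b$ is finite; this is an equivalence relation whose classes are convex. First I would observe that, since $D$ is infinite and discrete, \emph{some} $\sim$-class $C$ is infinite: a finite $\sim$-class that is not cofinal (resp.\ coinitial) in $D$ would have a maximum $m$ with $m$ not the maximum of $D$, so $m$ would have an immediate successor $s\notin C$ at finite distance from $m$, forcing $s\sim m$ --- a contradiction; hence a finite $\sim$-class must be the only class, contradicting infinitude. Such a $C$ is convex in $D$ and, as a linear order, is isomorphic to $\omega$, $\omega^*$, or $\zeta$ (a convex set in which consecutive elements exist). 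Since $C=\bigcup_i (C\cap D_i)$ and $C$ is infinite, some $C\cap D_i$ is infinite; fix such an $i$ and set $S:=C\cap D_i$. The key point is that $S$ is convex in $(D_i;\prec)$ --- if $x\prec y\prec z$ with $x,z\in S$ and $y\in D_i$, then $y\in C$ by convexity of $C$, so $y\in S$ --- and that $(S;\prec)$, being an infinite subset of $\omega/\omega^*/\zeta$, is again isomorphic to one of $\omega,\omega^*,\zeta$, i.e.\ an infinite discrete linear order.

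Second, I would dispose of the case where $S$ is a \emph{definable} (with parameters) subset of $(D_i;\prec)$. Writing $D_i=P\sqcup S\sqcup Q$ as a union of convex pieces, this happens exactly when each of $P,Q$ is empty or has, respectively, a maximum or a minimum; then $S$ is an interval, a ray, or all of $D_i$, hence $\prec$-definable over $D_i$, and $\Th(D_i;\prec)$ defines the infinite discrete linear order $(S;\prec)$. Here one also needs the routine remark that "$X$ is an infinite discrete linear order'' is a property of $\Th(D_i;\prec)$: having an immediate successor or predecessor is first-order, and infinitude of $X$ transfers by compactness, so if it holds in $(D_i;\prec)$ it holds in all models.

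The remaining case is the delicate one, and I expect it to be the main obstacle: $S$ is a \emph{non-definable cut} in $D_i$, say the piece $Q$ of $D_i$ above $S$ is non-empty but has no least element (the lower piece with no greatest element being symmetric). The plan is to show this cannot be the end of the story. Either $(D_i;\prec)$ is itself an infinite discrete linear order --- which one checks by verifying that the only possible "bad'' elements (those lacking an immediate predecessor although not minimal) would be a least element of $Q$, which does not exist, while elements of $S$ inherit their successors/predecessors inside $S$ --- and then $\Th(D_i;\prec)$ defines itself; or else, using that $Q$ has no minimum, one extracts an infinite $\prec$-descending sequence in $Q$ lying in a single $\sim$-class $C'$ of $D$ (a class of type $\omega$ has no infinite descending subsequence, so $C'$ has type $\omega^*$ or $\zeta$), and reruns the whole argument with $C'$ and the same $D_i$ in place of $C$. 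To make the replacement process terminate, I would choose $C$ at the outset --- among all infinite $\sim$-classes meeting $D_i$ in an infinite set --- so as to minimise a suitable well-founded invariant, or else extract from an infinite ascending tower $C\prec C'\prec C''\prec\cdots$ of such classes a definable infinite discrete convex subset of $D_i$ directly. Assembling the three cases yields the lemma; the crux, where the careful bookkeeping lives, is precisely this passage from "$S$ is a non-definable cut'' to an honest definable infinite discrete linear order inside some $D_i$.
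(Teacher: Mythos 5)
Your first two steps are fine and essentially match the paper's setup: pass to the finite-distance equivalence $\sim$, find an infinite $\sim$-class $C$, pick $i$ with $S:=C\cap D_i$ infinite, note that $S$ is convex in $D_i$ and an infinite discrete linear order, and dispose of the case where $S$ is $\prec$-definable over $D_i$. The third case, which you yourself flag as the crux, is a genuine gap, and the sketch you offer for it does not work. The claim that one can extract an infinite descending sequence in $Q$ lying in a \emph{single} $\sim$-class is simply false: take $D=\Q\times\Z$ ordered lexicographically, $D_1=\{0\}\times\Z\cup\{(q,0):q>0\}$, $D_2=D\setminus D_1$, and $C=\{0\}\times\Z$. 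Then $Q=\{(q,0):q>0\}$ has no minimum, yet every $\sim$-class meets $Q$ in at most a singleton, so there is no such sequence. The same example refutes the parenthetical claim that the only "bad" elements of $D_i$ would be a least element of $Q$: here \emph{every} element of $Q$ lacks an immediate successor in $D_1$. And the proposed well-founded invariant to make the recursion terminate is never exhibited; there is no obvious bound on how many times one would have to replace $C$ by a new class.

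The paper avoids all of this by working in an $\aleph_1$-saturated elementary extension of $(D;\prec,D_1,\ldots,D_n)$ from the outset (the $D_i$ kept as unary predicates). Fix $\beta\in D$ with, say, $[\beta,\infty)$ infinite, let $\N$ denote $\{\alpha\ge\beta:[\beta,\alpha]\text{ finite}\}$, and pick $j$ with $D_j\cap\N$ infinite. Let $\psi(x,y)$ be the first-order formula (in the language of $(D;\prec,D_j)$) asserting that $D_j\cap[y,x]$ is a discrete linear order. The partial type $\{x\in D_j\}\cup\{\psi(x,\beta)\}\cup\{x>n:n\in\N\}$ is finitely satisfiable: for any $n$, any $\alpha\in D_j\cap\N$ with $\alpha>n$ makes $D_j\cap[\beta,\alpha]$ finite, hence trivially discrete. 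By saturation some $\alpha\in D_j$ with $\alpha>\N$ realizes it, and then $D_j\cap[\beta,\alpha]$ is discrete, contains the infinite set $D_j\cap\N$, and is $(D_j;\prec)$-definable with parameters in $D_j$ (e.g.\ $\alpha$ together with $\min(D_j\cap[\beta,\alpha])$). Since this happens in an elementary extension of the original $(D_j;\prec)$, one concludes that $\Th(D_j;\prec)$ defines an infinite discrete linear order. The saturation is what lets you "cap off" the copy of $\N$ inside $D_j$ by an actual element of $D_j$; this is exactly the move your internal argument cannot make, and it is why the lemma is phrased in terms of $\Th(D_i;\prec)$ rather than $(D_i;\prec)$ itself.
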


\begin{proof}
After possibly passing to an elementary extension we suppose that $(D;\prec,D_1,\ldots,D_n)$ is $\aleph_1$-saturated.
Fix $\beta\in D$.
Then either $(-\infty,\beta]$ or $[\beta,\infty)$ is infinite.
We identify $\beta$ with $0$ and identify $\N$ with the set of $\alpha\in D$ such that $\alpha\ge\beta$ and $[\beta,\alpha]$ is finite.
Fix $j\in\{1,\ldots,n\}$ such that $D_j\cap\N$ is infinite.
Then $D_j\cap[0,n]$ is a discrete linear order for all $n$.
By saturation there is $\alpha\in D_j,\alpha>\N$ such that $D_j\cap[0,\alpha]$ is an infinite discrete linear order.
Hence $(D_j;\prec)$ defines an infinite discrete linear order.
\end{proof}

As above $\mathbf{s}(x)=x+1$.

\begin{proposition}
\label{prop:two sorts again}
$(\Z;\mathbf{s})\sqcup(\R;<)$ does not interpret an infinite discrete linear order.
\end{proposition}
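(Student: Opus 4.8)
The plan is to combine the known structure of interpretable groups and linear orders over the two constituent sorts. Suppose toward a contradiction that $(\Z;\mathbf{s})\sqcup(\R;<)$ interprets an infinite discrete linear order $(D;\prec)$. Since $\Th(\Z;\mathbf{s})$ is the theory of a cycle-free bijection, it is stable (indeed it is $\aleph_0$-categorical with quantifier elimination in the language with $\mathbf{s}$ and its iterates), so it does not interpret an infinite linear order; and $(\R;<)$ is o-minimal, so any linear order interpretable in $(\R;<)$ alone is a finite union of intervals and points, hence does not define an infinite discrete linear order either. The obstacle is that $D$ need not be interpreted in either sort separately: its underlying set is a definable subset of some product $\Z^a\times\R^b$ modulo a definable equivalence relation, and the order $\prec$ can genuinely mix the two sorts. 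So the first step is to invoke a Feferman–Vaught style decomposition for the disjoint union: every definable subset of a product $\Z^a\times\R^b$ is a finite boolean combination of ``rectangles'' $X\times Y$ with $X\subseteq\Z^a$ definable in $(\Z;\mathbf{s})$ and $Y\subseteq\R^b$ definable in $(\R;<)$. This is exactly the quantifier-elimination statement for $\mathscr M_1\sqcup\cdots\sqcup\mathscr M_n$ established in Section~\ref{section:few general} together with the Feferman–Vaught corollary cited there.

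Next I would reduce to the one-sort-dominates case. Write the interpreting set $\widetilde D\subseteq\Z^a\times\R^b$ as a finite disjoint union of rectangles $X_i\times Y_i$ (refining the boolean combination above); partition $D$ accordingly into finitely many definable pieces $D_1,\ldots,D_k$, where $D_i$ is the image of $X_i\times Y_i$. By Lemma~\ref{lem:dislo} (applied to the linear order $(D;\prec)$ and this finite cover) some $(D_i;\prec)$ defines an infinite discrete linear order; so we may as well assume $\widetilde D$ is a single rectangle $X\times Y$ with $X\subseteq\Z^a$, $Y\subseteq\R^b$. Now I would run the argument of Fact~\ref{fact:lo} / the proof of Proposition~\ref{prop:lo-group}: pick an indiscernible sequence $(\upzeta_q:q\in\Q)$ of elements of $D$ (living inside $X\times Y$) with $\upzeta_q\prec\upzeta_{q'}$ iff $q<q'$. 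Projecting to the $\Z^a$ coordinate and to the $\R^b$ coordinate, indiscernibility forces the $\Z^a$-projections of $\upzeta_q,\upzeta_{q'}$ to have the same quantifier-free $(\Z;\mathbf{s})$-type, hence (by stability / QE for $(\Z;\mathbf{s})$) they are related by a fixed power of $\mathbf{s}$, so the map $q\mapsto$ ($\Z^a$-projection of $\upzeta_q$) is either constant or an order-embedding of $\Q$ into a definable subset of $\Z^a$ — but $(\Z;\mathbf{s})$ has no infinite definable dense suborder, so it must be (eventually) constant on a large subinterval. On that subinterval the order $\prec$ between the $\upzeta_q$ is therefore computed entirely from the $\R^b$-coordinates by an $(\R;<)$-definable ternary (in fact binary after freezing parameters) relation — i.e. $(\Q;<)$ embeds order-preservingly into a definable subset of $\R^b$ whose order is $(\R;<)$-definable, which is fine, so this alone is not yet a contradiction. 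The discreteness of $D$ is what I must exploit: a discrete linear order has no such densely-embedded $\Q$, whereas I have just shown the relevant restriction of $\prec$ is an $(\R;<)$-definable order on a definable subset of $\R^b$, and any $(\R;<)$-definable linear order on an infinite definable set is dense on a subinterval by Corollary~\ref{cor:rama} (it is interpretable in the o-minimal $(\R;<)$). Combining: the $\R^b$-projection of the relevant piece of $D$ carries a dense definable suborder, contradicting that every infinite subset of $D$ inherits a discrete order.

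To make the last step clean I would instead argue directly: having reduced to $\widetilde D=X\times Y$ with the $\Z$-coordinate constant along the indiscernible sequence, $\prec$ restricted to that subinterval is $(\Z;\mathbf{s})\sqcup(\R;<)$-definable but depends only on the $\R$-coordinates, hence is the restriction of an $(\R;<)$-definable order (with parameters, which we absorb); then $(\R;<)$ interprets an infinite linear order isomorphic to a suborder of $D$, and by Corollary~\ref{cor:rama} this suborder, being infinite, contains a subinterval order-isomorphic to a genuine real interval $(I;<)$, which is dense — contradicting that $D$ is discrete (every suborder of a discrete linear order is discrete). I expect the main obstacle to be the bookkeeping in the reduction to a single rectangle and verifying that after freezing the $\Z$-coordinate the induced order really is $(\R;<)$-definable rather than secretly using the $\mathbf{s}$-structure; this is where the Feferman–Vaught rectangle decomposition and a careful application of Lemma~\ref{lem:dislo} and indiscernibility do the work, and it mirrors the Poizat-style argument already used for Proposition~\ref{prop:lo-group}. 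Once that is in hand the o-minimal rigidity input (Corollary~\ref{cor:rama}) finishes it, exactly as Fact~\ref{fact:Zapryagaev and Pakhomov} and Fact~\ref{fact:discrete} are used elsewhere to rule out discrete orders in tame structures.
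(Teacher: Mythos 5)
Your proposal correctly identifies the key ingredients --- the Feferman--Vaught rectangle decomposition for the disjoint union and Lemma~\ref{lem:dislo} --- but the middle and the end of the argument have genuine problems.

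The paper first notes that $(\Z;\mathbf{s})\sqcup(\R;<)$ eliminates imaginaries (a disjoint union of two structures that eliminate imaginaries does so), which reduces interpretation to definition; you gesture at "$\widetilde D\subseteq\Z^a\times\R^b$ modulo a definable equivalence relation" but then silently drop the quotient. After the rectangle decomposition, the paper's step is much more direct than yours: fix $a\in Y'_i$ and observe that $\beta\triangleleft\beta^*\Leftrightarrow(\beta,a)\prec(\beta^*,a)$ is a $(\Z;\mathbf{s})$-definable linear order on $Y_i$, hence $Y_i$ is finite because $(\Z;\mathbf{s})$ is stable. Your indiscernible-sequence detour is heavier, only controls the $\Z$-coordinate along a sequence rather than showing the whole $\Z$-side is finite, and the case analysis of $(\Z;\mathbf{s})$-types in it is a bit loose (two projections in different $\mathbf{s}$-orbits are perfectly consistent with indiscernibility, so "eventually constant" does not follow from the type analysis alone).

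The real gap is at the end. You assert "every suborder of a discrete linear order is discrete" and "a discrete linear order has no densely-embedded $\Q$"; both are false. The lexicographic product $\Q\times_{\mathrm{Lex}}\Z$ is discrete (every element $(q,n)$ has immediate successor $(q,n+1)$ and predecessor $(q,n-1)$) yet contains a copy of $(\Q;<)$ as the suborder $\Q\times\{0\}$. So producing a dense suborder of $D$ yields no contradiction. The appeal to Corollary~\ref{cor:rama} is also misplaced: that corollary requires $\Sa M$ to be an o-minimal expansion of an ordered \emph{abelian group}, and requires the interpreted order $(L;\triangleleft)$ to be \emph{dense}; here $(\R;<)$ is a pure dense order with no group, and you would be assuming density of the very order you need to show cannot be discrete. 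The paper closes the argument differently: once each $Y_i$ is finite, $D$ is covered by finitely many horizontal slices $\{\beta\}\times Y'_i$, Lemma~\ref{lem:dislo} picks out one slice whose \emph{theory} defines an infinite discrete linear order, that slice order is $(\R;<)$-definable, and then one gets a contradiction because $\dlo$ eliminates $\exists^\infty$ while an infinite discrete linear order has intervals $[a,b]$ of arbitrarily large finite cardinality with no uniform bound. You should replace your final step with that $\exists^\infty$ argument (or some other valid rigidity input for $\dlo$), and replace the indiscernible detour with the stability-of-$(\Z;\mathbf{s})$ observation.
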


\begin{proof}
First note that $(\Z;\mathbf{s})\sqcup(\R;<)$ eliminates imaginaries.
(A disjoint union of two structures which eliminate imaginaries eliminates imaginaries.)
Hence it is enough to show that $(\Z;\mathbf{s})\sqcup(\R;<)$ does not define an infinite discrete linear order.
Suppose otherwise.
Let $(X;\prec)$ be a $(\Z;\mathbf{s})\sqcup(\R;<)$-definable infinite discrete linear order.
Then $X$ is a finite union of products of $(\Z;\mathbf{s})$-definable sets and $(\R;<)$-definable sets.
Let $Y_1,\ldots,Y_n$, $Y'_1,\ldots,Y'_n$ be $(\Z;\mathbf{s})$, $(\R;<)$-definable sets, respectively, with $X=(Y_1\times Y'_1)\cup\cdots\cup(Y_n\times Y'_n)$.
Fix $a \in Y'_i$ and let $\triangleleft$ be the linear order on $Y_i$ where $\beta\triangleleft\beta^* \Longleftrightarrow (a,\beta)\prec(a,\beta^*)$.
Then $(Y_i;\triangleleft)$ is a $(\Z;\mathbf{s})$-definable linear order, hence $Y_i$ is finite.
Hence each $Y_1,\ldots,Y_n$ is finite.
Thus $(\{\beta\}\times Y'_i : i \in \{1,\ldots,n\},\beta\in Y_i)$ is a finite cover of $X$.
By Lemma~\ref{lem:dislo} there is $\beta\in Y_i$ such that $\Th(\{\beta\}\times Y'_i;\prec)$ defines an infinite discrete linear order.
Let $\triangleleft'$ be the linear order on $Y'_i$ given by $a\triangleleft' a^*\Longleftrightarrow (\beta,a)\prec(\beta,a^*)$.
Then $(Y'_i;\triangleleft')$ is definable in $(\R;<)$, so $\dlo$ interprets an infinite discrete linear order.
This is a contradiction by elimination of $\exists^\infty$.
\end{proof}

\begin{proposition}
\label{prop:euler}
$(\Z;\mathbf{s})$ does not interpret $(\N;\mathbf{s})$.
\end{proposition}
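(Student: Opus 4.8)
The name attached to the proposition points to the intended route, and the plan is to run an Euler\nobreakdash-characteristic style argument via pseudofiniteness: I will show that $(\Z;\mathbf{s})$ is pseudofinite, that a structure interpretable in a pseudofinite structure is pseudofinite, and that $(\N;\mathbf{s})$ is not pseudofinite.

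First I would verify that $(\Z;\mathbf{s})$ is pseudofinite. For $k\geq 1$ let $C_k$ be the $\{\mathbf{s}\}$-structure with domain $\Z/k\Z$ and $\mathbf{s}(a)=a+1$, i.e.\ a single $k$-cycle; each $C_k$ is finite and $\mathbf{s}$ is a bijection of it. Fix a nonprincipal ultrafilter $\mathcal{U}$ on the set of primes and put $C=\prod_{\mathcal{U}}C_p$. Then $\mathbf{s}$ is a bijection of $C$, and for each $n\geq 1$ the set $\{p : C_p\models\forall x\,\mathbf{s}^{(n)}(x)\neq x\}$ contains every prime $p>n$ and hence lies in $\mathcal{U}$, so $C$ is a cycle-free bijection. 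Since $\Th(\Z;\mathbf{s})$ is the complete theory of a cycle-free bijection (a routine back-and-forth argument; cf.\ the remark after Lemma~\ref{lem:shift}), we get $C\equiv(\Z;\mathbf{s})$, so $(\Z;\mathbf{s})$ is pseudofinite.

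Next I would recall the standard transfer principle for interpretations: if $\Sa M$ is pseudofinite, say $\Sa M\equiv\prod_{\mathcal{U}}\mathcal{F}_i$ with each $\mathcal{F}_i$ finite, and $\Sa O$ is interpreted in $\Sa M$ with parameters $\bar a=[\bar a_i]_{\mathcal{U}}$ by formulas $\varphi_{\mathrm{dom}},\varphi_E,\ldots$, then for $\mathcal{U}$-almost all $i$ the same formulas with parameters $\bar a_i$ interpret a finite structure $\mathcal{O}_i$ in $\mathcal{F}_i$, and $\prod_{\mathcal{U}}\mathcal{O}_i$ is isomorphic to the structure interpreted in $\prod_{\mathcal{U}}\mathcal{F}_i$, which is elementarily equivalent to $\Sa O$; hence $\Sa O$ is pseudofinite. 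Now suppose towards a contradiction that $(\Z;\mathbf{s})$ interprets $(\N;\mathbf{s})$. By the previous paragraph together with this transfer principle, $(\N;\mathbf{s})$ is pseudofinite. But the sentence $\theta$ given by $\big(\forall x\,\forall y\,(\mathbf{s}(x)=\mathbf{s}(y)\to x=y)\big)\wedge\exists x\,\forall y\,\mathbf{s}(y)\neq x$ — asserting that $\mathbf{s}$ is injective but not surjective — holds in $(\N;\mathbf{s})$, witnessed by $0$, yet fails in every finite structure by the pigeonhole principle; so $\theta\in\Th(\N;\mathbf{s})$ is satisfied in no finite structure, contradicting pseudofiniteness of $(\N;\mathbf{s})$.

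I do not expect a genuine obstacle here. The only points that require care are the completeness of $\Th(\Z;\mathbf{s})$ (classical) and the bookkeeping with parameters in the step "interpretations preserve pseudofiniteness", and both are routine; if anything, the mildly delicate step is checking that the interpretation transfers on the nose to $\mathcal{U}$-almost every finite coordinate $\mathcal{F}_i$, which is the usual Łoś-type computation.
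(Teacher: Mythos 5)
Your proof is correct, but it is not the argument the paper gives; you have found a genuinely different and somewhat more elementary route.

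The paper's proof proceeds through o-minimality: it observes that $(\Z;\mathbf{s})\equiv(\R;\mathbf{s})$ (by completeness of the theory of cycle-free bijections), that $(\R;\mathbf{s})$ is a reduct of the o-minimal, imaginary-eliminating structure $(\R;+,<)$, and then invokes the o-minimal Euler characteristic to rule out any definable injection $f\colon X\to X$ with $|X\setminus f(X)|=1$, which is exactly what a model of $\Th(\N;\mathbf{s})$ would provide. Your proof replaces the entire o-minimal machinery with pseudofiniteness: $(\Z;\mathbf{s})$ is pseudofinite as an ultraproduct of cyclic permutations (by completeness of the theory of cycle-free bijections, the same fact the paper uses), pseudofiniteness is preserved under interpretation, and $(\N;\mathbf{s})$ satisfies a sentence — ``$\mathbf{s}$ is injective but not surjective'' — that fails in every finite structure by pigeonhole. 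What the paper's argument buys is that it simultaneously rules out interpreting $(\N;\mathbf{s})$ in \emph{any} o-minimal structure, which is a strictly stronger statement the author explicitly notes; your pseudofiniteness argument would not yield that. What your argument buys is self-containedness and generality of a different kind: it shows at once that $\Th(\Z;\mathbf{s})$ cannot interpret \emph{any} non-pseudofinite theory, without passing through $(\R;+,<)$ or cell decomposition. Both proofs rest on the same elementary fact ($\Th(\Z;\mathbf{s})$ is the theory of cycle-free bijections, hence has unexpected models), and both can be viewed as implementations of the slogan ``injective plus finite implies surjective,'' so the label \textit{euler} is morally apt for your argument too. The only step you flag as ``mildly delicate'' — transferring the interpretation coordinatewise to $\mathcal{U}$-almost all finite factors, with parameters — is handled correctly: since the language is countable and the ultraproduct is $\aleph_1$-saturated, the type of the parameters is realized there, so the expansion by those constants remains pseudofinite and the \L o\'s computation goes through.
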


\begin{proof}
Any easy back-and-forth argument shows that $(\R;\mathbf{s})\equiv(\Z;\mathbf{s})$.
Hence it is enough to show that $(\R;+,<)$ does not interpret $\Th(\N;\mathbf{s})$.
As $(\R;+,<)$ eliminates imaginaries it's enough to show that an o-minimal structure $\Sa M$ does not define a model of $\Th(\N;\mathbf{s})$.
An application of o-minimal Euler characteristic shows that $\Sa M$ cannot define an injection $f\colon X \to X$ with $|X\setminus f(X)|=1$, so $\Sa M$ cannot define a model of $\Th(\N;\mathbf{s})$, see~\cite[4.2.4]{lou-book}.
\end{proof}

See \cite{ez-fields} for the definition of an \'ez field.
The unpublished Fact~\ref{fact:ez} follows by combining \cite{ez-fields} and the methods of \cite{pillay-p-adic}.
It should also hold in positive characteristic, but this is more complicated as definable functions are not generically Nash in positive characteristic \'ez fields.

\begin{fact}
\label{fact:ez}
Suppose that $K$ is an \'ez field of characteristic zero.
Then any infinite field definable in $K$ is definably isomorphic to a finite extension of $K$.
\end{fact}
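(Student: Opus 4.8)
The plan is to transport Pillay's analysis of fields definable in $\mathbb{Q}_p$~\cite{pillay-p-adic} to an arbitrary \'ez field of characteristic zero, using the structural input on such fields from~\cite{ez-fields}. Let $F=(F;\oplus,\otimes)$ be an infinite field with $F$ a $K$-definable subset of some $K^m$ -- note that ``definable'' rather than ``interpretable'' means no elimination of imaginaries is needed -- and put $n=\dim F$, the dimension of $F$ in the \'ez dimension theory, so $n\ge 1$. The crucial fact from~\cite{ez-fields} is that in characteristic zero every $K$-definable function is generically Nash; consequently $K$-definable sets are, off a $K$-definable subset of strictly smaller dimension, Nash submanifolds of affine space, and $K$-definable maps are generically Nash. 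After deleting such a lower-dimensional piece, $F$ is a pure $n$-dimensional Nash submanifold of $K^m$.

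First I would use the additive group to pass from ``generic'' to ``everywhere'': the translations $\tau_a\colon x\mapsto x\oplus a$ form a $K$-definable family of definable bijections of $F$, and since $(F;\oplus)$ acts transitively on itself, translating a generic Nash point shows that $F$ is a Nash manifold of dimension $n$ on which $\oplus$ is a Nash group operation near every point; the same applies to $(F^\times;\otimes)$. A Weil--Hrushovski group-chunk theorem in the Nash category -- part of ``the methods of~\cite{pillay-p-adic}'' -- then lets me replace $(F;\oplus)$, up to $K$-definable Nash isomorphism, by a connected commutative Nash group, so that near its identity $0$ a $K$-analytic logarithm gives a $K$-definable local isomorphism onto its Lie algebra $T\cong K^n$ with trivial bracket, equipping $F$ near $0$ with a canonical $K$-vector-space structure that is invariant under additive endomorphisms.

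Next I would extract a finite $K$-algebra. For $a\in F$ the map $\mu_a\colon x\mapsto a\otimes x$ is, by distributivity, a Nash endomorphism of $(F;\oplus)$, so its differential $d_0\mu_a$ lies in $\operatorname{End}_K(T)\cong M_n(K)$; moreover $\mu_{a\oplus b}=\mu_a+\mu_b$ and $\mu_{a\otimes b}=\mu_a\circ\mu_b$, and $\mu$ commutes with the $K$-scaling on $(F;\oplus)$ coming from the Lie-algebra identification, so $a\mapsto d_0\mu_a$ is a $K$-linear ring homomorphism $F\to M_n(K)$; it is nonzero since $\mu_1=\mathrm{id}$, hence injective because $F$ is a field. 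Its image $L$ is therefore a commutative $K$-subalgebra of $M_n(K)$ which is a field, so $L$ is a finite extension of $K$, and $a\mapsto d_0\mu_a$ is a $K$-definable field isomorphism $F\to L$. Thus $F$ is definably isomorphic over $K$ to a finite extension of $K$, as required.

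The main obstacle is the middle paragraph: one must show that the generic-Nash property of characteristic-zero \'ez fields supplied by~\cite{ez-fields} is strong enough to support a genuine Nash-category group-chunk theorem, a $K$-analytic logarithm, and the $K$-linearity of the regular representation -- i.e.\ that definable Nash groups over a characteristic-zero \'ez field behave like the $p$-adic analytic groups handled in~\cite{pillay-p-adic}. One also needs the \'ez dimension theory to validate the generic-point and additivity arguments used above. This is precisely where characteristic zero enters, in accordance with the remark that the positive-characteristic version is harder because definable functions need not be generically Nash there.
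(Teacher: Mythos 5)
The paper gives no proof of this fact; it merely states that the result ``follows by combining \cite{ez-fields} and the methods of \cite{pillay-p-adic}'' and flags it as unpublished. Your proposal is precisely that combination: generic Nash structure from \cite{ez-fields}, homogenization via translations, a group-chunk argument, passage to the Lie algebra, and then Pillay's regular-representation trick sending $a$ to $d_0\mu_a$ and embedding $F$ as a commutative subring of $M_n(K)$. So you are on the route the paper is pointing at, and your final paragraph correctly identifies where the real work lies.

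One step I would flag for extra care is the claim that the image $L=\{d_0\mu_a : a\in F\}$ is a \emph{$K$-subalgebra} of $M_n(K)$. The map $a\mapsto d_0\mu_a$ is additive and multiplicative, so $L$ is a subring; but $F$ itself is not a $K$-vector space, so ``$K$-linearity'' of $\rho$ does not literally parse, and $L$ being closed under scalar multiplication by $K$ is not free. In Pillay's argument this is handled by first observing that $\rho$, being an additive Nash homomorphism $(F;\oplus)\to(M_n(K);+)$, has image contained in the $K$-subspace $W:=d_0\rho(T)$ (because near $0$ the exponential identification makes $\rho$ agree with the linear map $d_0\rho$, and every element of $F$ is an integer multiple of an element near $0$), and then that $W$ is a $K$-subalgebra without zero divisors (the zero-divisor-freeness is transported from $\rho(F)$ by scaling into a neighbourhood of $0$), hence a finite field extension of $K$; one then argues $\rho(F)=W$ using openness/closedness of full-dimensional definable subgroups. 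This is a small but real gap in the write-up as stated, and it is exactly the kind of thing that must be re-verified using the dimension theory and definable topology supplied by \cite{ez-fields} -- consistent with your own closing caveat.
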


\subsection{The generic $k$-hypergraph}
Fix $k \ge 2$.
We show below that the generic countable $k$-ary hypergraph is trace equivalent to the generic countable $k$-ary relation.
It is easy to see that the generic countable $k$-ary relation interprets the generic countable $k$-hypergraph, see the proof of Proposition~\ref{prop:random}.
In this section we show that the generic countable $k$-hypergraph does not interpret the generic countable $k$-ary relation.
I asked on mathoverflow if the Erd\H{o}s-Rado graph interprets the generic countable binary relation.
This question was answered by Harry West~\cite{harry-west}.
My proof is a generalization of his argument that covers hypergraphs.

\begin{theorem}
\label{thm:west}
Fix $k \ge 2$.
The generic countable $k$-hypergraph does not interpret the generic countable $k$-ary relation.
In particular the Erd\H{os}-Rado graph does not interpret the generic countable binary relation.
\end{theorem}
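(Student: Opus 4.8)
The plan is to establish a Ramsey-type rigidity property for the generic countable $k$-hypergraph $\Sa H_k$ and derive a contradiction from an assumed interpretation of the generic countable $k$-ary relation $\Sa R_k$. Suppose $\Sa H_k$ interprets $\Sa R_k$ via a definable set $X \subseteq H^\ell$, a definable equivalence relation $E$, and a surjection $\uppi \colon X \to R$. After passing to a sufficiently saturated elementary extension of $\Sa H_k$ (which is still a generic $k$-hypergraph in the sense of having the same first-order theory and being highly homogeneous/set-homogeneous for its age), I would take a large indiscernible array of elements of $R$ and pull them back to tuples in $H^\ell$. The key structural input is the extreme homogeneity of $\Sa H_k$: by quantifier elimination, the type of a finite tuple from $H$ is determined by the equality pattern together with which $k$-subsets satisfy $E$, and crucially $E$ is a \emph{symmetric} $k$-ary relation on \emph{distinct} elements. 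So once we fix, among a finite set of ``base'' vertices, the induced hypergraph structure, the type of any further vertex over that base is governed only by $k$-subsets — there is no way to linearly order or otherwise asymmetrically organize the base vertices using the $\Sa H_k$-structure alone.

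First I would set up the combinatorial core following West's argument for graphs. Fix $k$ and work with $\Sa H_k$. The essential lemma should say: for any $\ell$ and any $n$, if $a_1,\ldots,a_n \in H^\ell$ are tuples enumerating disjoint $\ell$-subsets of $H$ (or more carefully, realizing a fixed ``base pattern''), and the entries used across all the $a_i$ together with some additional parameters $\bar{c}$ realize a $k$-hypergraph with enough genericity, then for $n$ large the $n$-tuple $(a_1,\ldots,a_n)$ contains two entries $a_i, a_j$ that can be \emph{transposed} by an automorphism of $\Sa H_k$ fixing $\bar c$ and permuting the remaining $a_m$ among themselves — i.e. there is an automorphism $\sigma$ fixing $\bar c$ with $\sigma(a_i) = a_j$, $\sigma(a_j) = a_i$, and $\{\sigma(a_m) : m \neq i,j\} = \{a_m : m \neq i,j\}$. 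This is exactly where the symmetry of $E$ enters: because all structure on $H$ is recorded by symmetric $k$-ary relations, once $n$ is large enough (a Ramsey / pigeonhole bound depending on $k$, $\ell$, $|\bar c|$) we can find $a_i, a_j$ such that the ``$E$-link'' of $a_i$ to everything relevant equals that of $a_j$, and then the transposition extends to an automorphism by homogeneity. In contrast $\Sa R_k$ is genuinely asymmetric: taking a generic ordered-type indiscernible sequence $(\upzeta_1,\ldots,\upzeta_N)$ in $R$ and an appropriate tuple $\beta$, the relation $R_k$ distinguishes $(\upzeta_{i_1},\ldots,\upzeta_{i_k})$ from permutations of it, so no nontrivial transposition of the $\upzeta$'s fixes the relevant $R_k$-structure.

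Then I would combine these: lift the indiscernible sequence $(\upzeta_1,\ldots,\upzeta_N)$ from $R$ to tuples $a_1,\ldots,a_N \in X \subseteq H^\ell$ with $\uppi(a_i) = \upzeta_i$, using Ramsey's theorem (or an indiscernible-sequence extraction) to arrange that the $a_i$ themselves form an $\Sa H_k$-indiscernible sequence over the finitely many parameters $\bar c$ used to define $X$, $E$, $\uppi$, and over a finite parameter set witnessing the asymmetry of $R_k$ on the $\upzeta_i$. For $N$ large the transposition lemma gives an automorphism $\sigma$ of $\Sa H_k$ over $\bar c$ swapping $a_i \leftrightarrow a_j$ for some $i < j$ and permuting the rest. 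Since $\sigma$ fixes all the defining parameters, it induces an automorphism of the interpreted structure $\Sa R_k$; but the induced map swaps $\upzeta_i \leftrightarrow \upzeta_j$ while permuting the others, contradicting the fact that $R_k$ was chosen to detect an asymmetry among exactly those coordinates. This yields the contradiction, hence no interpretation exists. The main obstacle I anticipate is the bookkeeping in the transposition lemma: one must carefully track not only the $E$-relations among entries of distinct $a_i$'s but also the internal structure of each $a_i$ (repeated entries, $E$-relations within a single tuple) and the parameters $\bar c$, and show that the relevant finite invariant that must be matched between $a_i$ and $a_j$ takes only boundedly many values, so that a pigeonhole / Ramsey argument on $N$ produces the required pair. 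Making the genericity hypotheses on the lifted tuples precise — ensuring that ``generic enough'' $k$-hypergraph configurations exist inside $\Sa H_k$ to realize whatever $E$-pattern is forced — is the delicate point, but it should follow from the extension axioms for the generic $k$-hypergraph together with saturation.
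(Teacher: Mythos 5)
Your proposal takes a genuinely different route from the paper, and unfortunately the central lemma does not hold. The paper's proof is a quantitative double-counting argument: using Kruckman's description of definable equivalence relations in the generic $k$-hypergraph (Fact~\ref{fact:wei}) to normalize the interpretation, it fixes a ``maximal'' type $r$, builds an $m$-fold array $\beta$ of realizations, and then shows that the number of complete types over $A\beta$ is at most $|S_n(A,X)|\bigl(dm + 2^{\binom{md}{k-1}}\bigr)^d$, while the extension axioms for the generic $k$-ary relation force at least $2^{km^{d(k-1)}}$ such types; for $m$ large these bounds are incompatible. There is no automorphism or transposition argument anywhere in it.

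Your transposition lemma is the gap. You assert that ``there is no way to linearly order or otherwise asymmetrically organize the base vertices using the $\Sa H_k$-structure alone,'' and you infer that for $N$ large an $\Sa H_k$-indiscernible sequence of tuples $(a_1,\ldots,a_N)$ must contain a pair $a_i,a_j$ transposable by an automorphism over the parameters. But the generic $k$-hypergraph has the order property, and indeed has it already in tuples of length two: with $a_i=(b_i,c_i)$ one can take $E(b_i,c_j)\Leftrightarrow i<j$, producing a half-graph in the Erd\H{o}s--Rado graph (and the obvious hypergraph analogue for $k\geq 3$). Such a sequence is indiscernible but is not an indiscernible set; it admits no non-identity permutation preserving its type over $\emptyset$, let alone one fixing extra parameters. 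The symmetry of the relation $E$ on \emph{singletons} does not propagate to tuples — this is precisely what instability means — so the ``no intrinsic asymmetry'' intuition fails. Worse, since your $a_i$ are lifts of the indiscernible sequence $\upzeta_i$ in the (asymmetric) generic $k$-ary relation, the asymmetry of $R_k$ on the $\upzeta_i$ will in general be reflected in the hypergraph type of $(a_1,\ldots,a_N)$ over $\bar c$, so the lifted sequence is exactly the kind of non-set indiscernible sequence on which no transposition exists.

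Put differently: the existence of a single definable asymmetric relation on tuples is not an obstruction, because $\Sa H_k$ has plenty of them. The actual obstruction, which the paper isolates, is a rate-of-growth statement — over a finite parameter set of size $O(m)$ the hypergraph can only distinguish exponentially-in-$m^{k-1}$ many types, whereas an interpreted generic $k$-ary relation would be forced to realize exponentially-in-$m^{d(k-1)}$ (or, when $d=1$, $2^{km^{k-1}}$ vs.\ roughly $2^{m^{k-1}/(k-1)!}$) of them. A correct proof along your lines would need to recover some comparable quantitative input, and the symmetry-to-automorphism step as stated cannot supply it.
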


\noindent
We first describe definable equivalence relations in the generic countable $k$-hypergraph.
This is a special case of \cite[Proposition 5.5.3]{Kruckman-thesis}.

\begin{fact}
\label{fact:wei}
Suppose $(V;E)$ is the generic countable $k$-hypergraph, $A$ is a finite subset of $V$, $X$ is an $A$-definable subset of $V^n$, $\approx$ is an $A$-definable equivalence relation on $X$, and $p$ is a complete $n$-type over $A$ concentrated on $X$.
Then there is $I_p \subseteq \{1,\ldots,n\}$ and a group $\Sigma$ of permutations of $I_p$ such that if $\alpha = (\alpha_1,\ldots,\alpha_n)$ and $\beta = (\beta_1,\ldots,\beta_n)$ in $V^n$ realize $p$ then 
\[
\alpha \approx \beta \quad \Longleftrightarrow \quad \bigvee_{\upsigma \in \Sigma} \bigwedge_{i \in I_p} \alpha_{\upsigma(i)} = \beta_i.
\]
\end{fact}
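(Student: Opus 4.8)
\textbf{Proof proposal for Fact~\ref{fact:wei}.}

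The plan is to reduce the problem to understanding how automorphisms of $(V;E)$ over the finite set $A$ act on definable equivalence classes. Since the generic countable $k$-hypergraph is $\aleph_0$-categorical and has quantifier elimination (Fact~\ref{fact:homo}), every $A$-definable set and every $A$-definable equivalence relation is $\Aut(V/A)$-invariant, and two tuples realize the same type over $A$ iff they lie in the same $\Aut(V/A)$-orbit. So fix a complete $n$-type $p$ over $A$ concentrated on $X$, and fix a realization $\alpha=(\alpha_1,\ldots,\alpha_n)$ of $p$. Let $S = \{\alpha_1,\ldots,\alpha_n\} \setminus A$ be the set of distinct non-parameter coordinates appearing in $\alpha$. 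The key observation is that, because $p$ is a single type, the pattern of equalities and of membership in $A$ among the coordinates of any realization of $p$ is the same as for $\alpha$; so a realization $\beta$ of $p$ is determined by a bijection from $S$ onto some set $S'$ of the same size with $S' \cap A = \emptyset$, subject to the constraint that the induced structure (the $E$-edges among $S' \cup A$, equivalently $\qftp(S'/A)$) matches that of $S$ over $A$. By genericity and quantifier elimination, the relevant constraint is exactly that $\qftp_{(V;E)}(S'/A) = \qftp_{(V;E)}(S/A)$, and every such $S'$ is realized.

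Next I would analyze the $\approx$-class of $\alpha$ among realizations of $p$. Let $I_p \subseteq \{1,\ldots,n\}$ be the set of indices $i$ with $\alpha_i \notin A$; this depends only on $p$. For a realization $\beta$ of $p$, write $\beta_i = f(\alpha_i)$ for $i \in I_p$ where $f$ is the induced bijection $S \to S'$ described above (and $\beta_i = \alpha_i$ for $i \notin I_p$). I claim the set of $f$ for which $f$-images $\beta$ satisfy $\alpha \approx \beta$ is a union of cosets of a subgroup. The mechanism: using $\aleph_0$-categoricity, the equivalence relation $\approx$ has finitely many classes meeting $p$, so $\Aut(V/A)$ acts on them; I would use a compactness/genericity argument to show that whether $\alpha \approx \beta$ depends only on the ``shape'' of $f$ relative to $\qftp(S/A)$, and in fact only on the permutation of $I_p$ that $f$ induces when $S'$ is taken to be a generic copy of $S$ far from $A$ and from the original. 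More precisely: one shows that if $\beta$ and $\beta'$ are two realizations of $p$ with $S' = S''$ as sets (so they differ by a permutation $\upsigma$ of the positions in $I_p$ that fixes the underlying set), then $\alpha \approx \beta \Leftrightarrow \alpha \approx \beta'$ is governed by a group $\Sigma$ of permutations of $I_p$; here one uses that moving $S'$ to a fresh generic copy and back is realized by an automorphism fixing $\alpha$, so $\approx$-relations transport, and that composition of such moves corresponds to composition of permutations, giving the group structure. The stabilizer of the $\approx$-class of $\alpha$ inside the relevant permutation group is $\Sigma$, and $\alpha \approx \beta$ iff the permutation induced by $\beta$ lies in $\Sigma$, i.e. iff $\bigvee_{\upsigma \in \Sigma}\bigwedge_{i \in I_p}\alpha_{\upsigma(i)}=\beta_i$.

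The main obstacle I anticipate is making the ``generic copy far away'' argument fully rigorous, i.e. showing cleanly that for any two realizations $\beta,\beta'$ of $p$ there is an automorphism fixing $\alpha$ and carrying $\beta$ to something comparable, so that the equivalence $\alpha \approx \beta$ really does factor through a permutation group on $I_p$ and not through finer data of how $S'$ sits relative to $A$. This requires a careful back-and-forth exploiting free amalgamation of the age of $(V;E)$: given $S'$ with $\qftp(S'/A) = \qftp(S/A)$, one can find a disjoint ``third copy'' $S''$ of $S$ that is $E$-generic over $A \cup S \cup S'$, push $\beta$ to the $S''$-based realization, then push forward to $\beta'$, each step by an automorphism fixing $A$; tracking which of these steps can be taken to fix $\alpha$ itself is where the bookkeeping lives. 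Once that is in place, the group $\Sigma$ is defined as the set of permutations realizable by such $\alpha$-fixing moves that also preserve the $\approx$-class, closure under composition and inverse is immediate, and the displayed formula follows. I would also note that the $I_p$ and $\Sigma$ obtained depend only on $p$ (not on the choice of $\alpha$) since any two realizations of $p$ are $\Aut(V/A)$-conjugate and $\approx$, $X$ are $A$-invariant.
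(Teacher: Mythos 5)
The paper does not give its own proof of Fact~\ref{fact:wei}; it simply cites \cite[Proposition 5.5.3]{Kruckman-thesis}. Your proposal, however, has a genuine gap, and it is precisely the point that makes the fact nontrivial: the choice of $I_p$. You take $I_p$ to be the set of \emph{all} indices $i$ with $\alpha_i \notin A$; that is in general too large. Take $k=2$, $A=\emptyset$, $n=2$, $X = \{(a,b)\in V^2: a\ne b,\ \neg E(a,b)\}$, $p$ the unique complete $2$-type of a distinct non-edge pair, and $\approx$ defined by $(a,b)\approx(a',b')$ iff $a=a'$. This $\approx$ ignores the second coordinate entirely; the choice that makes the displayed formula true is $I_p=\{1\}$ with $\Sigma$ trivial. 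Your $I_p$ would be $\{1,2\}$, and with that choice the formula is false for \emph{every} group $\Sigma$, since $\bigwedge_{i\in I_p}\alpha_{\upsigma(i)}=\beta_i$ forces $\{\beta_1,\beta_2\}=\{\alpha_1,\alpha_2\}$ as sets, whereas here $\alpha\approx\beta$ allows $\beta_2$ to range over an infinite set. Identifying the \emph{minimal} set of coordinates on which the $\approx$-class depends is the actual content of the fact, not bookkeeping to be done afterwards, and your argument never addresses it.

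Two further points are connected to this. First, your claim that ``using $\aleph_0$-categoricity, the equivalence relation $\approx$ has finitely many classes meeting $p$'' is false: in the example above there are infinitely many classes meeting $p$ (one per value of the first coordinate), so the finite action of $\Aut(V/A)$ you want to set up does not exist. Second, the phrase ``the permutation of $I_p$ that $f$ induces when $S'$ is taken to be a generic copy of $S$ far from $A$'' does not parse: if $S'\cap S=\emptyset$ then the bijection $f\colon S\to S'$ induces no permutation of $I_p$, and such a $\beta$ cannot satisfy $\bigwedge_{i\in I_p}\alpha_{\upsigma(i)}=\beta_i$ for any $\upsigma$ whatsoever, yet it may well be $\approx$-equivalent to $\alpha$ (again, see the example). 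What is actually needed is the weak-elimination-of-imaginaries phenomenon for free amalgamation classes, which is what Kruckman proves: the class $[\alpha]_\approx$ has a canonical base $C\subseteq V$, and by triviality of algebraic closure in $(V;E)$ one gets $C\subseteq A\cup\{\alpha_1,\dots,\alpha_n\}$ for every realization $\alpha$; $I_p$ then indexes $C\setminus A$, and $\Sigma$ is the group of permutations of $I_p$ realized by automorphisms over $A$ that fix $[\alpha]_\approx$. Your automorphism-transport sketch is a reasonable tool for verifying the group law on $\Sigma$ and the easy direction of the equivalence once $C$ is in hand, but it does not by itself produce $C$ or rule out elements of the class whose coordinates leave $C$.
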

Given $\alpha = (\alpha_1,\ldots,\alpha_n) \in V^n$ and $I \subseteq \{1,\ldots,n\}$ we let $\alpha|I$ be the tuple $(\alpha_i : i \in I)$.
Given an $n$-type $p$ over $A$ and a realization $\alpha$ of $p$ we let $p|I$ be the type of $\alpha|I$ over $A$.

\begin{proof} \textit{(Of Theorem~\ref{thm:west})}
Let $(V;E)$ be the generic countable $k$-hypergraph.
Suppose that $A \subseteq V$ is finite, $X \subseteq V^n$ is $A$-definable, $\approx$ is an $A$-definable equivalence relation on $X$, and $R^*$ is an $A$-definable binary relation on $X/\!\approx$ such that $(X/\!\approx; R^*)$ is the generic countable binary relation.
Let $R \subseteq X^2$ be the pre-image of $R^*$ under the quotient map $X^2 \to (X/\!\approx)^2$.
% Let $[[\alpha]]$ be the $\approx$-class of $\alpha \in X$.
Let $S_n(A,X)$ be the set of complete $n$-types in $(V;E)$ over $A$ concentrated on $X$.
% Fix a realization $\alpha^p = (\alpha^p_1,\ldots,\alpha^p_n) \in V^n$ of each $p \in S_n(A,X)$.

\medskip\noindent
For each $p \in S_n(A,x)$ fix $I_p \subseteq \{1,\ldots,n\}$ as in Fact~\ref{fact:wei} and let $p^* = p|I_p$.
We may suppose that $I_p$ is minimal.
Let $p$ be a type in the variables $x_1,\ldots,x_n$.
Minimality of $I_p$ ensures that $p\models x_i \ne x_j$ for distinct $i,j \in I_p$ and $x_i \ne a$ for all $i \in I_p$ and $a \in A$.
(If $p$ satisfies $x_i = a$ then we can remove $i$ from $I_p$ and if $p$ satisfies $x_i = x_j$ then we can remove $j$ from $I_p$.)

\begin{Claim}
Fix a finite subset $B$ of $V$, elements $b_1,\ldots,b_{k - 1}$ of $X$ with coordinates in $B$, an index $i \in \{1,\ldots,n\}$, a realization $a \in X$ of $p \in S_n(A,X)$, and let $a^* = a|I_p$.
Then the truth value of $R(b_1,\ldots,b_{i - 1}, a, b_{i + 1},\ldots,b_{k - 1})$ is determined by $\tp(a^*|AB)$.
\end{Claim}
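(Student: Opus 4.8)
The strategy is to compute, for a tuple $b_1,\dots,b_{k-1}$ with coordinates in a finite set $B$ and a generic realization $a$ of a type $p \in S_n(A,X)$, exactly which hyperedges of $(V;E)$ can be ``seen'' by the formula defining $R$ when evaluated at $(b_1,\dots,b_{i-1},a,b_{i+1},\dots,b_{k-1})$. Since $(V;E)$ eliminates quantifiers and $R$ is $A$-definable, membership in $R$ is a Boolean combination of instances of $E$ and of equalities among the coordinates of the relevant tuple together with parameters from $A$. The coordinates of $a$ that lie outside $a^* = a|I_p$ are, by minimality of $I_p$, algebraic (in fact definable) over $a^*$ and $A$: each such coordinate either equals some parameter in $A$ or equals some coordinate in $a^*$. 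So after substituting these dependencies, the truth value of $R(b_1,\dots,a,\dots,b_{k-1})$ becomes a Boolean combination of $E$-instances and equalities in the coordinates of $a^*$, the coordinates of $b_1,\dots,b_{k-1}$ (all in $B$), and parameters in $A$.

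The key point is then that an $E$-instance $E(\gamma_1,\dots,\gamma_k)$ appearing in this Boolean combination involves $k$ of these coordinates; since $E$ forces its arguments to be pairwise distinct, any such instance that is not trivially false must use coordinates of $a^*$ together with coordinates from $B \cup A$. Because $a$ realizes $p$, hence $a^*$ realizes $p^* = p|I_p$, and because $a$ (equivalently $a^*$) may be taken generic over $A \cup B$, each $E$-instance mixing a coordinate of $a^*$ with coordinates of $B$ or $A$ is entirely determined by $\tp(a^*|AB)$: two such $E$-tuples with the same quantifier-free type over $AB$ of the $a^*$-part agree on all these $E$-instances (this is just quantifier elimination and the fact that the $b_j$'s and parameters are fixed). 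Instances of $E$ involving only coordinates of $a^*$ are determined by $p^*$, a fortiori by $\tp(a^*|AB)$, and instances involving only $B \cup A$ coordinates are constant. Likewise equalities are determined by $\tp(a^*|AB)$ and the fixed data. Assembling these, the Boolean combination — hence the truth value of $R(b_1,\dots,b_{i-1},a,b_{i+1},\dots,b_{k-1})$ — depends only on $\tp(a^*|AB)$.

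The main obstacle I anticipate is the bookkeeping around the case where the formula defining $R$ is stated in terms of the $\approx$-classes rather than in terms of $X$ directly: one must first pull everything back along the quotient map (which is legitimate since $R$ was defined as the pre-image of $R^*$) and then apply Fact~\ref{fact:wei} to rewrite $\approx$ in terms of equalities of $I_p$-coordinates. One should be careful that the permutation group $\Sigma$ from Fact~\ref{fact:wei} enters only through the definition of $R$ (via $\approx$) and does not break the dependence on $\tp(a^*|AB)$, since any $\upsigma \in \Sigma$ permutes coordinates within $I_p$ and so the relevant equalities and $E$-instances are still controlled by the type of $a^*$. A secondary subtlety is verifying that ``generic'' can be arranged: we are free to choose $a$ realizing $p$ with $a^*$ as independent as possible from $A \cup B$ over $A$, but the claim is about an arbitrary realization $a$ of $p$, so one actually argues uniformly: fix $\tp(a^*|AB)$ and note that \emph{any} $a$ whose $I_p$-part realizes that type, and whose remaining coordinates satisfy the definable dependencies forced by $p$, gives the same truth value — this is what the Boolean-combination analysis delivers. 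Once the claim is proved, the intended application (ruling out interpretability of the generic $k$-ary relation) follows by a counting/Ramsey argument on the number of types $\tp(a^*|AB)$ versus the number of ways a generic $k$-ary relation must distinguish tuples, which is carried out after this lemma.
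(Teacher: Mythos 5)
The load-bearing step in your plan is the assertion that minimality of $I_p$ makes the coordinates of $a$ outside $I_p$ definable over $a^*$ and $A$ (each equal to a parameter in $A$ or to a coordinate of $a^*$). That is not what minimality of $I_p$ gives you. Fact~\ref{fact:wei} characterizes $\approx$ on realizations of $p$ by agreement (up to $\Sigma$) on the $I_p$-coordinates; minimality, as the paper uses it, only forces the $I_p$-coordinates to be pairwise distinct and disjoint from $A$, and says nothing whatsoever about the complementary coordinates. Consider $\approx$ given by $\alpha \approx \beta \Leftrightarrow \alpha_1 = \beta_1$ with $n \geq 2$: then $I_p = \{1\}$ while $\alpha_2,\ldots,\alpha_n$ are unconstrained. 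So your quantifier-elimination computation, which writes $R(b_1,\ldots,a,\ldots,b_{k-1})$ as a Boolean combination of $E$-instances and equalities over the full tuple $(a,b,A)$, cannot by itself collapse to one over $(a^*,b,A)$: the $E$-instances and equalities touching coordinates of $a$ outside $I_p$ do not disappear, and QE alone only shows dependence on $\tp(a\mid AB)$, which is strictly more information than $\tp(a^*\mid AB)$.

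The mechanism that actually cuts the dependence from $a$ down to $a^*$ is $\approx$-invariance of $R$ combined with Fact~\ref{fact:wei}: if $a,a'$ both realize $p$ and $a|I_p = a'|I_p$, then $a \approx a'$, hence $R$ evaluates the same at $(b_1,\ldots,a,\ldots,b_{k-1})$ and at $(b_1,\ldots,a',\ldots,b_{k-1})$, since $R$ is the preimage of $R^*$ along the quotient. This step is not a formula-rewriting convenience but the crux; you mention $\approx$-invariance only in your ``obstacles'' paragraph as bookkeeping, which is where the gap sits. One then upgrades ``depends on $a^*$'' to ``depends on $\tp(a^*\mid AB)$'' by homogeneity of $(V;E)$: given $c^*$ with $\tp(c^*\mid AB) = \tp(a^*\mid AB)$, take an automorphism $\sigma$ fixing $AB$ with $\sigma(a^*) = c^*$; then $\sigma(a)$ realizes $p$ with $\sigma(a)|I_p = c^*$, $\sigma$ fixes each $b_j$ and preserves $R$, and $\sigma(a)\approx c$ for any realization $c$ of $p$ extending $c^*$, so the truth values match. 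This is (a cleaned-up form of) what the paper does; your ``genericity'' remarks gesture at the right ingredient for the second step but the first step is simply missing.
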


\begin{claimproof}
Suppose that $\tp(c^*|AB) = \tp(a^*|AB)$.
As $c^*$ and $a^*$ have the same type over $A$ there is a realization $c$ of $p$ such that $c^* = I_p$.
As $a|I_p = c|I_p$ the definition of $I_p$ above shows that $a \approx c$.
By $\approx$-invariance of $R$ we have
\[
R(b_1,\ldots,b_{i - 1}, a, b_{i + 1},\ldots,b_{k - 1}) \quad\Longleftrightarrow\quad R(b_1,\ldots,b_{i - 1}, c, b_{i + 1},\ldots,b_{k - 1}).\]

\end{claimproof}

% Given $q \in S_n(A,X)$ we let $R_q$ be the binary relation where we have $R_q(a,b)$ if and only if $a \in X$, $b$ is a realization of $q^*$, and there is $b' \in X$ such that $b = b'|I_q$ and $R(a,b')$.
% Claim $1$ implies that $R_q(a,b)$ if and only if we have $R(a,b')$ for \textit{any} $b' \in X$ with $b = b'|I_q$.
% Note that $R_q$ is $A$-definable by $\aleph_0$-categoricity of $(V;E)$.

% \medskip\noindent
% For each $p,q \in S_n(A)$ let $R_{pq} \subseteq V^{|I_p|} \times V^{|I_q|}$ where $R_{pq}(c,d)$ holds if there are $a,b \in X$ realizing $p,q$, respectively, such that $a|I_p = c$, $b|I_q = d$, and $R(a,b)$.
% Note each $R_{pq}$ is $A$-definable by $\aleph_0$-categoricity.
% Suppose that $a,b \in X$ realize $p,q \in S_n(A,X)$, respectively and suppose $R_{pq}(a|I_p, b|I_q)$.
% Then there are $a',b' \in X$ realizing $p,q$, respectively, with $a'|I_p = a|I_p$, $b'|I_p = a|I_p$, and $R(a',b')$.
% Then we have $a \approx a'$ and $b \approx b'$, hence $R(a,b)$.
% So we see that if $a,b \in X$ realize $p,q \in S_n(A,X)$, respectively, then $R_{pq}(a|I_p, b|I_q) \Longleftrightarrow R(a,b)$.

\medskip\noindent
Let $d = \max \{ |I_p| : p \in S_n(A,X)\}$.
If $d = 0$ then each $\approx$-class is $A$-definable, hence there are only finitely many classes by $\aleph_0$-categoricity.
Hence we may suppose $d \ge 1$.
Fix $r \in S_n(A,X)$ with $|I_r| = d$.
We also fix $m \ge 2$, which we will eventually take to be sufficiently large.

% Fix $m$ such that
% \[
% 4^{m^d} > |S_n(A,X)|(md + 2^{md})^d.
% \]
\medskip\noindent
Fix a realization $(\alpha_{i} : i \in I_r)$ of $r^*$.
By the remarks above we have $\alpha_{i} \ne \alpha_{j}$ when $i \ne j$ and no $\alpha_{i}$ is in $A$.
By the extension axioms we fix an array $\beta = (\beta_{ij} : i \in I_r, j \in \{1,\ldots,m\})$ of distinct elements of $V \setminus A$ so that for any distinct $i_1,\ldots,i_k \in I_r$  we have
\[
E(\beta_{i_1 j_1},\ldots,\beta_{i_k j_k}) \quad \Longleftrightarrow \quad E(\alpha_{i_1},\ldots,\alpha_{i_k}) \quad \text{for any  } j_1,\ldots,j_k \in \{1,\ldots,m\}.
\]
% $E(\beta_{ik}, \beta_{j\ell}) \Longleftrightarrow E(\alpha_{i}, \alpha_{j})$.
Hence the induced hypergraph on $\{ \beta_{i,\upsigma(i)} : i \in I_r \}$ is isomorphic to the induced hypergraph on $\{ \alpha_i : i \in I_r \}$ for all $\upsigma \colon I_r \to \{1,\ldots,m\}$.
Quantifier elimination for the generic countable $k$-hypergraph shows that $\beta_\upsigma := (\beta_{i \upsigma(i)} : i \in I_p)$ realizes $r^*$ for any $\upsigma \colon I_r \to \{1,\ldots,m\}$.

\medskip\noindent
We prove three more claims, the conjunction of these claims is a contradiction.

\medskip\noindent
For each $q \in S_n(A,X)$ we let $S_q$ be the set of types $\tp(b|A\beta)$ where $b \in V^{|I_q|}$ realizes $q^*$.
We let $S$ be the union of the $S_q$.
\begin{Claim}
$|S| \le |S_n(A,X)| \left(dm + 2^{\binom{md}{k - 1}}\right)^d$.
\end{Claim}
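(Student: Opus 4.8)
The plan is to estimate the number of types in $S$ by bounding the amount of information a type $\tp(b \mid A\beta)$ can carry, where $b \in V^{|I_q|}$ realizes $q^*$ and $|I_q| \le d$. Since the generic countable $k$-hypergraph has quantifier elimination in a relational language of airity $k$, a complete type $\tp(b \mid A\beta)$ is determined by: (i) the type $\tp(b \mid A)$, which ranges over a set of size at most $|S_n(A,X)|$ (more precisely the number of choices of $q$ together with the choice of $q^*$, but this is crudely bounded by $|S_n(A,X)|$ for the purposes of the claim as stated); (ii) for each coordinate $b_\ell$ of $b$, which element of $A\beta$ it equals, if any; and (iii) the $k$-ary edge relations among the coordinates of $b$ and elements of $\beta$. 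The point is that once we know which coordinates of $b$ are ``new'' (not in $A\beta$), the relations involving only old elements are already determined by $\tp(\beta\mid A)$, so the only genuinely new data is the interaction of the new coordinates of $b$ with $\beta$.

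First I would fix $q \in S_n(A,X)$ and a realization $b$ of $q^*$, so $b$ has $|I_q| \le d$ coordinates. For each coordinate there are two cases: either it is one of the $dm$ elements of the array $\beta$ (there are $dm$ of them since $\beta = (\beta_{ij} : i \in I_r, j \le m)$ with $|I_r| = d$), accounting for the $dm$ term; or it is a new element not occurring in $A\beta$. In the latter case, by quantifier elimination, the type of that new coordinate over $A\beta$ is determined by recording, for every size-$(k-1)$ subset of $A\beta$, whether that new element together with the subset satisfies $E$ — but edges meeting $A$ are controlled by $\tp(\beta\mid A\beta)$ which is already fixed, so the only free choices are the edges joining the new coordinate to $(k-1)$-subsets of $\beta$. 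Since $|\beta| = md$, there are $\binom{md}{k-1}$ such subsets, giving $2^{\binom{md}{k-1}}$ possible ``edge patterns'' for a single new coordinate. (One must also account for coincidences among the new coordinates themselves and edges among them, but these contribute only a bounded extra factor that I would absorb into the constant; here I follow the bound exactly as stated, treating each of the $\le d$ coordinates independently as contributing a factor of $dm + 2^{\binom{md}{k-1}}$.) Multiplying over the at most $d$ coordinates and summing over the at most $|S_n(A,X)|$ choices of $q$ (and $q^*$) yields $|S| \le |S_n(A,X)|\left(dm + 2^{\binom{md}{k-1}}\right)^d$.

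The key steps in order are: (1) invoke quantifier elimination to reduce a type over $A\beta$ to a boolean combination of atomic facts, namely equalities of coordinates with elements of $A\beta$ and $k$-ary edge relations; (2) observe that all atomic facts not involving a coordinate of $b$ are already determined by the fixed data $\tp(\beta \mid A)$, so only the facts linking coordinates of $b$ to $A\beta$ are free; (3) for a coordinate lying in $\beta$, count the $dm = |\beta|$ possibilities; (4) for a new coordinate, count the edge patterns against $(k-1)$-subsets of $\beta$, giving $2^{\binom{md}{k-1}}$, noting that patterns against subsets meeting $A$ are pinned down; (5) take the product over the $\le d$ coordinates and the sum over $\le |S_n(A,X)|$ base types.

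The main obstacle I anticipate is bookkeeping around coincidences and edges \emph{among} the coordinates of $b$ itself, and edges joining a new coordinate of $b$ to subsets of $A\beta$ that straddle $A$ and $\beta$: one has to be careful that these are either already determined by $q^*$ and $\tp(\beta\mid A)$ or subsumed in the claimed factor. Since $|A|$ is a fixed finite quantity and the statement only claims an inequality with the stated shape, the cleanest route is to absorb all such bounded contributions into the per-coordinate factor $dm + 2^{\binom{md}{k-1}}$ (enlarging it only by a fixed amount, which for $m$ large is harmless), or simply to note the bound as stated already overcounts enough to cover them. The real content — that the dependence on $m$ is only doubly exponential in a polynomial of degree $k-1$ in $m$, hence of the form $2^{O(m^{k-1})}$ rather than anything larger — comes entirely from Fact~\ref{fact:wei} forcing $|I_q| \le d$ and from quantifier elimination bounding the atomic data, and that is exactly what the next claims will play off against a competing lower bound.
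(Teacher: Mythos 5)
Your strategy matches the paper's exactly: fix $q$, reduce to bounding $|S_q|$, and charge each coordinate of a realization $b$ of $q^*$ a factor of $dm$ (if it lands in $\beta$) or $2^{\binom{md}{k-1}}$ (edge pattern against $(k-1)$-subsets of $\beta$). But the step you use to justify the latter factor is false: the assertion that ``edges meeting $A$ are controlled by $\tp(\beta\mid A\beta)$ which is already fixed'' does not hold — $\tp(\beta\mid A\beta)$ is trivial since $\beta$ is among the parameters and carries no information, and an edge $E(a,\beta_{i_1j_1},\ldots,\beta_{i_{k-2}j_{k-2}},b_\ell)$ with $a\in A$ and $b_\ell$ a new coordinate lies in neither $A\beta$ (fixed by the choice of $\beta$) nor $Ab$ (fixed by $q^*$), so by the extension axioms it is genuinely free. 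For $k\ge 3$ and $|I_q|\ge 2$ the same objection applies to edges involving two or more coordinates of $b$ together with elements of $\beta$ (and possibly $A$). You flag this worry in your ``main obstacle'' paragraph, but the repair you suggest — absorbing into the per-coordinate factor — does not rescue the literal inequality, whose constants are specific.

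You should know the paper's own claim-proof has the identical gap: the sentence ``the $A$-type of $(\beta,b)$ depends only on the induced hypergraph on $(\beta,b)$'' is not true when $A\ne\emptyset$, and the reduction to counting hypergraphs on each $(\beta,b_i)$ separately drops edges involving several $b_i$'s at once. So you have faithfully reproduced the argument, gap and all, rather than introduced a new error. The gap is benign for the theorem: the genuinely undetermined data is the set of $k$-element subsets of $A\beta b$ meeting both $b$ and $\beta$, of which there are $O_{|A|,d,k}(m^{k-1})$, so the corrected upper bound is $|S_n(A,X)|\cdot 2^{O(m^{k-1})}$, and Claim~4 still closes — trivially when $d\ge 2$ since $d(k-1)>k-1$, and via $k>1/(k-1)!$ when $d=1$, because $|A|$ is fixed and so the leading coefficient of the exponent is unchanged as $m\to\infty$. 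The honest fix is to restate Claim~2 with the adjusted exponent rather than try to preserve the stated constants.
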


\begin{claimproof}
It is enough to fix $q \in S_n(A,X)$ and show that $|S_q| \le \left(dm + 2^{\binom{md}{k - 1}}\right)^d$.
Let $|I_q|  = e$ and let $b = (b_1,\ldots,b_e)$ realize $q^*$.
Again $\{b_1,\ldots,b_e\}$ and $\beta$ are both disjoint from $A$, so the $A$-type of $(\beta,b)$ depends only on the induced hypergraph on $(\beta,b)$.
The induced hypergraphs on $\beta$ and $b$ are fixed, so we only need to consider the relations between $\beta$ and $b$.
It is enough to show that for each $b_i$ there are at most $dm + 2^{\binom{md}{k - 1}}$ induced hypergraphs on $(\beta,b_i)$.
There are two cases for each $b_i$.
Either $b_i$ is equal to some $\beta_{jj^*}$ or it is not.
There are $md$ possibilities in the first case.
In the second case we run through all subsets of $\beta$ with cardinality $k - 1$ and decide if $b$ is connected to each subset.
There are $2^{\binom{md}{k - 1}}$ possibilities in the second case.
\end{claimproof}

\begin{Claim}
$2^{km^{d(k-1)}}\le |S|$.
\end{Claim}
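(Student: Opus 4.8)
The goal of this claim is to bound $|S|$ from below by counting the many distinct types over $A\beta$ that arise from realizations of $r^*$, exploiting the freedom provided by the array $\beta$. The plan is as follows. First I would recall that $|I_r| = d$ is maximal and that a realization $b = (b_1,\ldots,b_d)$ of $r^*$ consists of distinct elements not in $A$, so that $\tp(b|A\beta)$ is determined by the induced $k$-hypergraph on $(\beta, b)$ together with which coordinates of $b$ coincide with entries of $\beta$. To produce many such types I would construct, for each function $\upsigma \colon I_r \to \{1,\ldots,m\}$, the tuple $\beta_\upsigma = (\beta_{i\upsigma(i)} : i \in I_r)$, which realizes $r^*$ as noted just before the claim. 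The key point is that the $k$-hypergraph relations \emph{between} $\beta_\upsigma$ and the rest of the array $\beta$ can be varied freely: by the extension axioms for the generic countable $k$-hypergraph, for each choice of a $k$-hypergraph structure on the relevant $(k-1)$-subsets one can find a realization exhibiting it.

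More concretely, the plan is to count the number of possible ``connection patterns''. Fix the array $\beta$, which has $md$ entries. A realization $b$ of $r^*$ with $d$ entries (disjoint from $A$ and from each other) can be connected or not to each $(k-1)$-element subset of $\beta \cup (b \setminus \{b_j\})$; restricting attention just to the edges using exactly one entry of $b$ and $k-1$ entries of $\beta$, for each of the $d$ coordinates of $b$ there are $\binom{md}{k-1}$ subsets of $\beta$ of size $k-1$, and for the hyperedge containing $b_j$ and such a subset we get a free binary choice. That gives at least $2^{d\binom{md}{k-1}}$ distinct induced patterns, hence at least that many types in $S_r \subseteq S$. Since $\binom{md}{k-1} \geq \tfrac{(md)^{k-1}}{(k-1)!} \geq \tfrac{(m)^{k-1} d^{k-1}}{(k-1)!}$, one has $d\binom{md}{k-1} \geq k m^{d(k-1)}$ once $d \geq 1$ and $m$ (equivalently, the earlier-fixed parameter) is taken large enough relative to $k$ and $d$ — this is where the freedom to choose $m$ large is used. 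Thus $2^{km^{d(k-1)}} \leq 2^{d\binom{md}{k-1}} \leq |S_r| \leq |S|$.

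The realization-existence step — verifying that all these connection patterns are actually realized by some $b$ in $(V;E)$ — is the technical heart, but it is routine given quantifier elimination and the extension axioms: one simply writes down the desired quantifier-free diagram on $\beta \cup \{b\}$, checks it is a consistent finite $k$-hypergraph (which it is, since we only prescribe edges meeting $b$ and the induced structure on $\beta$ is held fixed and consistent), and invokes the fact that every finite $k$-hypergraph extending a given one embeds into $(V;E)$ over the smaller one. The main obstacle, such as it is, is bookkeeping: making sure the counting lower bound $2^{km^{d(k-1)}}$ genuinely follows from the crude estimate on $\binom{md}{k-1}$ for the chosen $m$, and that the patterns counted are pairwise inequivalent as types over $A\beta$ (which holds because distinct induced $k$-hypergraphs on $(\beta,b)$ give distinct quantifier-free types, and quantifier elimination upgrades this to distinct complete types). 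Combining this claim with the previous one, $2^{km^{d(k-1)}} \leq |S| \leq |S_n(A,X)|(dm + 2^{\binom{md}{k-1}})^d$, and letting $m$ grow yields the desired contradiction, since the left side grows doubly exponentially in a power of $m$ while the right side grows only singly exponentially in $\binom{md}{k-1} = O(m^{k-1})$ with $d(k-1) > k-1$ for $d \geq 2$; the case $d = 1$ must be handled on its own, but there $|I_r| = 1$ forces each $\approx$-class to be controlled by a single coordinate and a separate (easier) counting contradiction applies.
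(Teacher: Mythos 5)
Your approach is fundamentally different from the paper's, and it does not work. The core problem is that you count the wrong thing: you try to produce many types by varying the $E$-edges (hypergraph edges) connecting a realization $b$ of $r^*$ to the array $\beta$, which gives at most $2^{d\binom{md}{k-1}}$ patterns. But this is of the \emph{same} order of magnitude as the \emph{upper} bound in the first claim, $|S_n(A,X)|\bigl(dm + 2^{\binom{md}{k-1}}\bigr)^d$, so it cannot yield a contradiction. Indeed, your stated inequality $d\binom{md}{k-1} \geq km^{d(k-1)}$ for large $m$ is simply false: for $d\ge 2$ the left side is $O(m^{k-1})$ while the right side is $\Theta(m^{d(k-1)})$ with $d(k-1) > k-1$, and even for $d=1$ it fails because $\binom{m}{k-1}\approx m^{k-1}/(k-1)!$ is smaller than $km^{k-1}$. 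The whole contradiction (Claim~3) hinges on the lower bound in this claim being of a \emph{strictly larger} order than the upper bound in Claim~1, and your count does not achieve this.

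The paper's argument gets the larger count from a different source: not the hypergraph edges $E$, but the interpreted generic $k$-ary relation $R$ on $X/\!\approx$. Let $B = \{\beta_\upsigma : \upsigma \colon I_r \to \{1,\ldots,m\}\}$, so $|B| = m^d$. For each $b \in X$ define $\upeta_b \colon B^{k-1}\times\{1,\ldots,k\} \to \{0,1\}$ by $\upeta_b(c_1,\ldots,c_{k-1},i)=1 \Leftrightarrow R(c_1,\ldots,c_{i-1},b,c_{i+1},\ldots,c_{k-1})$. The domain has $k(m^d)^{k-1} = km^{d(k-1)}$ elements, and the extension axioms for the generic $k$-ary relation (on the quotient) realize every possible function $\upeta$. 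This is where the exponent $d(k-1)$ comes from, and why it beats the $\binom{md}{k-1}\sim m^{k-1}$ coming from the hypergraph. The first claim is then used not to count $E$-configurations but to show that $\upeta_b$ is determined by $\tp(b^*|A\beta)$ where $b^* = b|I_q$, so each of the $2^{km^{d(k-1)}}$ functions $\upeta_b$ corresponds to a distinct type in $S$. You missed both the shift from $E$ to $R$ and the role of the tuples $\beta_\upsigma$ (giving $|B| = m^d$, not $md$) in generating the count.
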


\begin{claimproof}
Let $B$ be the set of $\beta_\upsigma$ for $\upsigma \colon I_r \to \{1,\ldots,m \}$.
Note that $|B| = m^d$.
Given $b \in X$ we define $\upeta_b \colon B^{k - 1} \times \{1,\ldots,k\} \to \{0,1\}$ by declaring 
\[
\upeta_b(c_1,\ldots,c_{k - 1}, i) = 1 \quad \Longleftrightarrow \quad R(c_1,\ldots,c_{i - 1}, b, c_{i + 1},\ldots,c_{k - 1}).
\]
The extension axioms for the generic countable $k$-ary relation show that for any function $\upeta \colon B^{k - 1} \times \{1,\ldots,k\} \to \{0,1\}$ there is $b \in X$ such that $\upeta = \upeta_b$.
Hence $\{ \upeta_b : b \in X \}$ contains $2^{k(m^d)^{k-1}} = 2^{km^{d(k-1)}}$ distinct functions.
Finally, Claim $1$ shows that if $q = \tp(b|A)$ and $b^* = b|I_q$ then $\upeta_b$ is determined by $\tp(b^*|AB)$, equivalently by  $\tp(b^*|A\beta)$.
% For each $q \in S_n(A,X)$ and $b$ realizing $q^*$ we let $\upeta_b \colon \{ 1,\ldots,m\}^{I_r} \to \{F,T\}^2$ be given by declaring the first coordinate of $\upeta_b(\upsigma)$ to be the truth value of $R_{rq}(\beta_\upsigma,b)$ and the second coordinate to be the truth value of $R_{qr}(b,\beta_\upsigma)$.
% The extension axioms for the generic countable binary relation show that for any $\upeta \colon \{ 1,\ldots,m\}^{I_r} \to \{F,T\}^2$ there is a $b \in X$ realizing $q^*$ such that $\upeta_b = \upeta$.
% There are $4^{m^d}$ functions $\{ 1,\ldots,m\}^{I_r} \to \{F,T\}^2$.
% However, $\upeta_b$ is determined by the type of $(\beta,b)$ over $A$, so we have $|S| > 4^{m^d}$.
\end{claimproof}

\begin{Claim}
We have $2^{k(m^d)^{k - 1}} > |S_n(A,X)| \left(dm + 2^{\binom{md}{k - 1}}\right)^d$ when $m$ is large enough.
\end{Claim}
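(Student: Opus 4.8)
The plan is to establish this last claim as a routine asymptotic estimate. Throughout, $k$, $d$, $n$, and the finite set $A$ together with the $A$-definable set $X$ are fixed, while $m$ is a parameter that we are free to take as large as needed; in particular $|S_n(A,X)|$ is a constant independent of $m$, since by $\aleph_0$-categoricity of the generic countable $k$-hypergraph the set $S_n(A,X)$ of complete $n$-types over $A$ is finite, and recall that we are in the case $d\ge 1$.

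First I would apply the crude bound $\binom{md}{k-1}\le (md)^{k-1}=d^{k-1}m^{k-1}$. Since $\binom{md}{k-1}$ is a polynomial of degree $k-1\ge 1$ in $md$, for all $m$ large enough (depending only on $d$ and $k$) we have $dm\le 2^{\binom{md}{k-1}}$, hence $dm+2^{\binom{md}{k-1}}\le 2^{\,1+\binom{md}{k-1}}$, and therefore
\[
|S_n(A,X)|\left(dm+2^{\binom{md}{k-1}}\right)^{d}\;\le\;|S_n(A,X)|\cdot 2^{\,d+d\binom{md}{k-1}}\;\le\;|S_n(A,X)|\cdot 2^{\,d+d^{k}m^{k-1}}.
\]
Taking logarithms base $2$, it is enough to show that
\[
k\,m^{d(k-1)}\;>\;\log_2|S_n(A,X)|\;+\;d\;+\;d^{k}m^{k-1}
\]
holds for all sufficiently large $m$.

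Now compare the leading powers of $m$: the left side is $k$ times $m^{d(k-1)}$ and the right side is governed by $d^{k}m^{k-1}$. Since $d\ge 1$ and $k\ge 2$ we have $d(k-1)\ge k-1$, with equality precisely when $d=1$. If $d\ge 2$ then $d(k-1)>k-1$, so $k\,m^{d(k-1)}-d^{k}m^{k-1}\to\infty$ and the inequality holds for large $m$. If $d=1$ the inequality becomes $(k-1)m^{k-1}>\log_2|S_n(A,X)|+1$, which again holds for large $m$ because $k-1\ge 1$. This proves the claim, and combining it with Claims $2$ and $3$ yields, for $m$ sufficiently large,
\[
2^{k m^{d(k-1)}}\;\le\;|S|\;\le\;|S_n(A,X)|\left(dm+2^{\binom{md}{k-1}}\right)^{d}\;<\;2^{k(m^{d})^{k-1}}\;=\;2^{k m^{d(k-1)}},
\]
a contradiction; hence the generic countable $k$-hypergraph does not interpret the generic countable $k$-ary relation, completing the proof of Theorem~\ref{thm:west}.

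The only step requiring any care is the borderline case $d=1$, where both sides of the reduced inequality are polynomials in $m$ of the same degree $k-1$, so one must invoke the comparison of leading coefficients $k>1$ (i.e. $k\ge 2$) rather than degree alone; no genuine obstacle arises.
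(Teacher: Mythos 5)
Your proof is correct and follows essentially the same approach as the paper: bound $dm+2^{\binom{md}{k-1}}\le 2^{1+\binom{md}{k-1}}$ for large $m$, bound the binomial coefficient by a polynomial of degree $k-1$ in $m$, take logarithms, and then compare the exponents $d(k-1)$ and $k-1$, treating $d=1$ separately by comparing coefficients. The only inessential differences are that you use the cruder bound $\binom{md}{k-1}\le(md)^{k-1}$ where the paper keeps the $1/(k-1)!$, and that you explicitly carry the constant $\log_2|S_n(A,X)|$ which the paper silently absorbs; neither affects the argument.
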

When $m$ is large enough we have
\[
\left(dm + 2^{\binom{md}{k - 1}}\right)^d \le \left(2 \cdot 2^{\binom{md}{k - 1}}\right)^d = \left( 2^{\binom{md}{k-1} + 1} \right)^d = 2^{d \binom{md}{k - 1} + d}
\]
and
\[
d\binom{md}{k - 1} + d \le d\left(\frac{(md)^{k - 1}}{(k - 1)!} \right) + d =  \frac{d^{k}}{(k - 1)!} m^{k - 1} + d.
\]
% We also have
% \[
% k(m^d)^{k - 1} = k m^{d(k - 1)}.
% \]
So it is enough to show that if $m$ is sufficiently large then
\[
km^{d(k - 1)} > \frac{d^{k}}{(k - 1)!} m^{k - 1} + d.
\]
If $d \ge 2$ then this holds as $d(k - 1) > k - 1$.
Suppose $d = 1$.
Then we want to show that
\[
km^{(k - 1)} > \frac{1}{(k - 1)!} m^{k - 1} + 1
\]
when $m$ is sufficiently large.
This holds as $k > 1/(k - 1)!$.
\end{proof}

% \section{Very trivial o-minimal structures}
% \label{section:very trivial}
% Throughout this section $\Sa M$ is o-minimal.
% We let $\tshift$ be the theory of $(M;<,f)$ where $(M;<)\models\dlo$ and $f$ is a function $M\to M$ such that:
% \begin{enumerate}
% \item $\lim_{t\to\infty}f(t)=-\infty$, $\lim_{t\to\infty}f(t)=\infty$, and
% \item $f(t)>t$ for all $t\in M$.
% \end{enumerate}

% A back-and-forth argument [??] shows that all models of $\tshift$ expanding $(\R;<)$ are isomorphic.
% However, $\tshift$ is not $\aleph_0$-categorical as algebraic closure is not locally finite.
% It is an exercise to show that $\tshift$ admits quantifier elimination (after adding some constants), and one can show that $(\Q;\mathbf{s})$ embeds into any model of $\tshift$, hence $\tshift$ is complete.

% \medskip
% We show that $\tshift$ is 

\section{Expansions realizing definable types}
\label{section:artem}
All results and proofs in this section are due to Artem Chernikov.
Our conventions in this section are a bit different.
We let $\monster \models T$ and suppose that $\Sa M$ is a small submodel of $\monster$.
In this section, and this section only, we consider types of infinite tuples.
Give an ordinal $\uplambda$, tuple $(\alpha_i : i < \uplambda)$, and $j < \uplambda$ we let $\alpha_{\le j}$ be the tuple $(\alpha_i : i \le j)$, likewise $\alpha_{< j}$.

\medskip\noindent
We say that $T$ has \textbf{property (D)} if for any small subset $A$ of parameters from $\monster$ and nonempty $A$-definable $X \subseteq \monsterset$ there is a definable type over $A$ concentrated on $X$.
We say that $\Sa M$ has property (D) if $\Th(\Sa M)$ does.
It is easy to see that a theory with definable Skolem functions has property (D).
Fact~\ref{fact:artem} is proven in \cite{on-forking}.

\begin{fact}
\label{fact:artem}
If $\Sa M$ expands a linear order and has dp-rank one then $\Sa M$ has property (D).
\end{fact}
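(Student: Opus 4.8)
The plan is to verify property (D) directly for $T=\Th(\Sa M)$. Fix a small parameter set $A$ and a nonempty $A$-definable $X\subseteq\monsterset$; recall that dp-rank one implies $\nip$ by Fact~\ref{fact:dp-rank}. If $X$ is finite, then $\min X$ (with respect to the linear order underlying $\Sa M$) lies in $\dcl(A)$, so $\tp(\min X\mid A)$ is a realised — hence definable — type over $A$ concentrated on $X$, and we are done. So I may assume $X$ is infinite.

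The next step is to produce a candidate type by a limit/averaging construction. By Ramsey's theorem and compactness choose an $A$-indiscernible sequence $I=(a_i:i<\kappa)$ of pairwise distinct elements of $X$, with $\kappa$ a regular cardinal $>|A|+|T|$; by indiscernibility and linearity of $<$, after possibly reversing the index order we may assume $a_i<a_j\Longleftrightarrow i<j$. Since $T$ is $\nip$, for every formula $\varphi(x,y)$ and every parameter $b$ the truth value of $\models\varphi(a_i,b)$ is eventually constant as $i\to\kappa$, so
\[
p:=\{\varphi(x,b):\models\varphi(a_i,b)\text{ for all large }i<\kappa\}
\]
is a complete global type, concentrated on $X$, invariant over $A\cup I$, and in fact definable over $A\cup I$.

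The heart of the argument is then to show that $p$ is actually definable over $A$ alone, so that $p\restriction A$ is the required definable type over $A$ concentrated on $X$. In a general $\nip$ ordered theory this is false — Fact~\ref{fact:wom types} exhibits weakly o-minimal structures with non-definable $1$-types over models — and it is precisely dp-rank one that rules out the obstruction. Here I would invoke the structure theory of dp-rank one (equivalently dp-minimal) expansions of linear orders, in the form used in \cite{on-forking}: the limit type $p$ of the increasing sequence $I$ is controlled by the single cut $C=(\{x:x<I\},\{x:x>I\})$ that $I$ fills, and dp-minimality prevents this cut-datum from interacting with the parameter $b$ in $d_p\varphi(y)$ in an undefinable (``valuational'') way. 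Concretely: (i) the lower part $\{b\in A:b<a_i\text{ for all }i\}$ of $C$ is $A$-definable; (ii) by the dp-minimal decomposition of unary definable sets, for each fixed $\varphi$ the family $\{\varphi(\monsterset;b):b\}$ has complexity with respect to $<$ bounded independently of $b$, so whether $\varphi(x,b)$ holds for all $x\in X$ lying just below the cut $C$ is a condition on the finitely many, $\dcl(Ab)$-controlled, endpoints of the relevant pieces compared against $C$; and (iii) this last condition is therefore equivalent to an $A$-formula in $b$. Hence $d_p\varphi$ is over $A$.

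The main obstacle is exactly this third step: descending from definability of $p$ over $A\cup I$ to definability over $A$. Everything else is routine, but the real content is the dp-minimal ordered structure theory, and a self-contained proof would need to reprove the slice of it asserting that, for each $\varphi(x,y)$ with $|x|=1$, the sets $\varphi(\monsterset;b)$ have order-complexity bounded independently of $b$ in a way that makes the generic type at an $A$-definable cut $A$-definable. With that input (or cited from \cite{on-forking}), property (D) follows as above, and Fact~\ref{fact:artem} is immediate since any dp-rank one expansion of a linear order is in particular dp-minimal on unary sets in the relevant sense.
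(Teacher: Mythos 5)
The proposal fails at exactly the step you flag as the heart of the argument: passing from definability of the limit type $p$ over $A\cup I$ to definability over $A$. For an \emph{arbitrary} $A$-indiscernible sequence $I\subseteq X$ this is simply false, even when $T$ is dp-minimal and $A$ is a small model. Ramsey plus compactness produces \emph{some} $A$-indiscernible sequence, and the limit type $p$ is governed by the cut in $\monsterset$ that the increasing sequence $I$ fills, which nothing in your construction forces to be $A$-definable. Your step (i) only records that the trace of this cut on $A$ is determined by $\tp(I/A)$; it does not make the cut, as a subset of $\monsterset$, $A$-definable, and steps (ii)--(iii) need precisely that. Concretely, take $A$ a small model $\Sa N$ of any dp-minimal densely ordered theory, $X=M$, $a\in\monsterset$ realizing a non-definable cut over $N$, and $I$ a coheir sequence generated by $\tp(a/N)$; then $I$ is $N$-indiscernible, increasing or decreasing, and $p\restriction N=\tp(a/N)$ is not $N$-definable. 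This does not contradict property (D), because (D) only asks for \emph{some} $A$-definable type in $X$, not that the limit type of every (or a Ramsey-extracted) indiscernible sequence be $A$-definable; your argument loses contact with the statement exactly at this quantifier. Relatedly, the aside that Fact~\ref{fact:wom types} ``exhibits weakly o-minimal structures with non-definable $1$-types over models, and it is precisely dp-rank one that rules out the obstruction'' is internally inconsistent, since weakly o-minimal theories are themselves dp-rank one; dp-rank one does not and cannot rule out non-definable $1$-types.

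The correct route, which is what the paper actually gives for the only case it uses, is to choose the type canonically rather than by Ramsey. In the weakly o-minimal case: $X$ is a finite union of $A$-definable convex sets, so pick a convex component $C$ of $X$; if $C$ has a maximum $\beta$ then $\beta\in\dcl(A)$ and $\tp(\beta/A)$ works; otherwise let $p$ be the cofinal type at the top of $C$, i.e.\ declare $\varphi(x,b)\in p$ iff $\varphi(\monsterset,b)$ is cofinal in $C$. Weak o-minimality makes $p$ complete (for each $b$, exactly one of $\varphi(\cdot,b)$ or its negation is cofinal in $C$), and ``cofinal in the fixed $A$-definable set $C$'' is an $A$-definable condition on $b$, so $p$ is $A$-definable by inspection. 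For the full dp-rank-one statement the paper simply cites \cite{on-forking}; whatever input you would import from there gives property (D) directly and supersedes, rather than repairs, the limit-type scaffolding you set up.
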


We only use the weakly o-minimal case of Fact~\ref{fact:artem} so we explain why it is true.
Suppose that $T$ is weakly o-minimal and $X \subseteq \monsterset$ is $A$-definable and nonempty.
Then $X$ decomposes as a union of its convex components, and each convex component is $A$-definable. 
Let $C$ be some convex component of $X$.
If $C$ has a maximal element $\beta$ then $\beta$ is in the definable closure of $A$, hence $p = \tp(\beta|A)$ is definable.
If $C$ does not have a maximum then we let $p$ be given by declaring $Y \in p$ if and only if $Y$ is cofinal in $C$ for all $A$-definable $Y \subseteq \monsterset$.
By weak o-minimality this determines a complete type over $A$ which is $A$-definable by definition.

\medskip\noindent
We prove Theorem~\ref{thm:artem}.

\begin{theorem}
\label{thm:artem}
Suppose that $\Sa M$ has property (D).
Then there is $\Sa M \prec \Sa N$ such that
\begin{enumerate}
\item Every definable type over $\Sa M$ in finitely many variables is realized in $\Sa N$, and
\item if $a \in N^n$ then $\tp(a|M)$ is definable.
\end{enumerate}
\end{theorem}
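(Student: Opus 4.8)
The plan is to build $\Sa N$ by a transfinite iteration of length $|M|^+$ (or any sufficiently long regular cardinal), at each stage realizing one more definable type over the model constructed so far, and then to verify that the resulting union model $\Sa N$ has property (2). Property (1) will be immediate from the bookkeeping. The subtle point is property (2): realizing definable types one at a time does not obviously keep all \emph{finite} tuples over $M$ definable, because a tuple $a \in N^n$ may have coordinates realized at different stages of the construction, and the type of $a$ over $M$ is assembled from the types of its coordinates over intermediate models, not over $M$. So the real content is a transitivity/amalgamation statement: if $\tp(b|M)$ is definable and $\tp(c|Mb)$ is definable, then $\tp(bc|M)$ is definable. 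This, together with an induction on the length of a tuple, is what makes the iteration work.

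First I would set up the iteration. Fix an enumeration scheme so that along a chain $(\Sa M_i : i < \kappa)$ with $\Sa M_0 = \Sa M$, $\Sa M_{i+1}$ is obtained from $\Sa M_i$ by adjoining a realization of a single definable (over $M_i$) type in finitely many variables, with the enumeration arranged (using a pairing function and the fact that each $\Sa M_i$ has size $\le |M| + |i|$, so there are boundedly many such types) so that \emph{every} definable type over \emph{every} $\Sa M_i$ in finitely many variables gets realized at some later stage; at limits take unions, and set $\Sa N = \bigcup_{i<\kappa} \Sa M_i$. Here one uses property (D) only to guarantee the construction is nonvacuous, but actually (D) is not even needed for the iteration itself --- it is only invoked to know the hypothesis of the theorem is substantive; what (D) really buys (for the \emph{application} in Fact~\ref{fact:artem}) is that there are enough definable types to separate convex components, but for the bare statement of Theorem~\ref{thm:artem} the iteration argument goes through regardless. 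Property (1) of the conclusion holds because any definable type $p$ over $M = M_0$ is in particular a definable type over $M_0$, hence realized at some stage $\Sa M_{i+1}$, hence realized in $\Sa N$.

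The heart of the proof is property (2), which I would prove by induction on $n$. The key lemma is: for any $\Sa M_i$ in the chain, if $b \in M_{i}^k$ and $c \in M_j^{\,\ell}$ with $i \le j$, and if $\tp(b|M)$ is definable while $\tp(c|M_i)$ is definable, then $\tp(bc|M)$ is definable. To see this, one uses the standard fact that definability of types is preserved under composition: given a formula $\varphi(x,y,z)$ (with $|x|=k$, $|y|=\ell$, parameters suppressed), the set $\{m \in M^{|z|} : \Sa N \models \varphi(b,c,m)\}$ is computed by first applying the $\varphi(x,\cdot,\cdot)$-definition of $\tp(c|M_i)$ --- which is a formula $d_c\psi(x,z)$ over $M_i$ with parameters among $b$ and $M$ --- to get $\{m : \Sa N \models d_c\varphi(b,m)\}$, and then applying the definability of $\tp(b|M)$ to pull the $b$-parameters down to an $M$-definition. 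A routine compactness/counting check shows the resulting family of definable sets is uniformly definable over $M$. Now, to prove (2): given $a = (a_1,\dots,a_n) \in N^n$, all coordinates lie in some $\Sa M_j$; one builds up $\tp(a|M)$ by peeling off coordinates in the order of the stages at which they (or rather the types witnessing the construction) appeared, repeatedly applying the key lemma --- at each step $\tp(a_{\le r}|M_s)$ is definable because either $a_{r}$ was a construction-witness over $M_s$ (definable by fiat) or it came earlier and we reorganize --- concluding by induction that $\tp(a|M)$ is definable over $M$.

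\textbf{Main obstacle.} The genuinely delicate step is the bookkeeping that guarantees every finite tuple of $N$ can be presented, after reordering, as a concatenation $a = b_0 b_1 \cdots b_t$ where each $b_{s}$ is (an initial segment of) a realization adjoined at some stage $\Sa M_{i_s}$ and $\tp(b_s | M_{i_s})$ is the prescribed definable type --- so that the inductive application of the composition lemma is legitimate. One must be careful that coordinates of $a$ from an early stage do not depend on parameters from a later stage, which is automatic from the chain structure ($M_{i} \subseteq M_{j}$ and the witness at stage $j+1$ has its definition scheme over $M_j$), but writing this cleanly requires either indexing realizations so their variable-blocks are disjoint, or invoking that definability of the type of a tuple is insensitive to permuting coordinates. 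I expect this reorganization, plus the uniform-definability verification in the composition lemma, to be where all the actual work lies; everything else is routine transfinite induction.
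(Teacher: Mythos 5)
Your overall strategy --- iterate along a chain, realize definable types, and verify (2) by a composition lemma for definable types --- is essentially the approach the paper takes. The composition lemma you isolate is the paper's Lemma~\ref{lem:artem}, and your reorganization argument for finite tuples corresponds to the paper's observation that the whole construction produces one long tuple $\alpha^{<\upomega}$ with $\tp(\alpha^{<\upomega}|M)$ definable (the paper's bookkeeping is tighter than yours: a single stage $\alpha_0$ realizes every definable type over $M$ at once, and $\upomega$ further stages suffice, but this is a cosmetic difference).

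However, there is a genuine error, and it is not a small one: you assert that property (D) ``is not even needed for the iteration itself'' and is ``only invoked to know the hypothesis of the theorem is substantive.'' This is backwards. Property (D) is exactly what makes $\Sa N$ an elementary submodel of the monster. Your chain realizes \emph{definable types} over the sets $M_i$; for $\Sa M \prec \Sa N$ you need Tarski--Vaught, i.e.\ every nonempty $N$-definable subset of $\monsterset$ must meet $N$. If $T$ lacks property (D), there is some small parameter set $A$ and a nonempty $A$-definable $X$ such that \emph{no} definable type over $A$ concentrates on $X$; then $X$ is never hit by any realization you adjoin, and $\bigcup_i M_i$ fails Tarski--Vaught. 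The paper uses (D) precisely here, via Lemma~\ref{lem:artem 2}: at each stage it must put a point into \emph{every} nonempty $M\alpha_{<i}$-definable set while keeping the accumulated type over $M$ definable, and (D) is what supplies a definable type over $M\alpha_{<i}$ concentrated on the given set. Your remark about ``separating convex components'' conflates the role of (D) inside the proof of Theorem~\ref{thm:artem} with the separate question (Fact~\ref{fact:artem}) of which theories have (D). If you simply delete the claim that (D) is dispensable and instead invoke it at each successor stage to guarantee both Tarski--Vaught and definability of the extended type, the rest of your argument goes through.
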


We require Lemma~\ref{lem:artem}, which is easy and left to the reader.

\begin{lemma}
\label{lem:artem}
Suppose that $C \subseteq B$ are small subsets of $\monsterset$ and $a,a^*$ are tuples from $\monster$.
If $\tp(a|B)$ is $C$-definable and $\tp(a^*|Ba)$ is $Ca$-definable then $\tp(aa^*|B)$ is $C$-definable.
\end{lemma}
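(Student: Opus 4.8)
The plan is to unwind the definitions and reduce everything to finitely many variables, since a type of a possibly infinite tuple is $C$-definable precisely when each of its formulas has a $\varphi$-definition over $C$, and each formula mentions only finitely many coordinates. So I would fix a formula $\chi(x,x^*,y)$, where $x$ is a finite subtuple of the variables for $a$ and $x^*$ one for $a^*$, and $y$ matches parameters; the goal is to produce an $L(C)$-formula $f_\chi(y)$ such that $\monster\models\chi(a,a^*,b)\iff\monster\models f_\chi(b)$ for all $b\in B^{|y|}$. Ranging $\chi$ over all formulas then gives a defining scheme for $\tp(aa^*|B)$ over $C$.

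First I would use the hypothesis that $\tp(a^*|Ba)$ is $Ca$-definable. Viewing $\chi(a,x^*,b)$ as an instance $\psi(x^*;b,a_0)$ of the formula $\psi(x^*;y,z):=\chi(z,x^*,y)$ with parameters $b\in B$ and $a_0$ a finite subtuple of $a$, the $\psi$-definition of $\tp(a^*|Ba)$ is an $L(Ca)$-formula $\delta(y,z)$, hence of the form $\gamma(y,z,a_1)$ for an $L(C)$-formula $\gamma(y,z,w)$ and a finite subtuple $a_1$ of $a$. Thus $\monster\models\chi(a,a^*,b)\iff\monster\models\gamma(b,a_0,a_1)$ for all $b$. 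Letting $\tilde a$ be a finite subtuple of $a$ whose coordinates include those of $a_0$ and $a_1$, and substituting $\tilde a$ into the $z$- and $w$-slots of $\gamma$ via the appropriate coordinate projections, I get an $L(C)$-formula $\gamma'(y,w)$, with $|w|=|\tilde a|$, satisfying $\monster\models\chi(a,a^*,b)\iff\monster\models\gamma'(b,\tilde a)$ for all $b\in B^{|y|}$.

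Now I would bring in definability of $\tp(a|B)$. Set $\vartheta(w,y):=\gamma'(y,w)$, an $L(C)$-formula in object variable $w$ and parameter variable $y$. Then $\monster\models\gamma'(b,\tilde a)$ says exactly that $\vartheta(w,b)\in\tp(\tilde a|B)$, an $L(B)$-condition on $\tilde a$. Since $\tp(a|B)$ is $C$-definable, restricting its defining scheme to the formulas in the coordinates of $\tilde a$ shows $\tp(\tilde a|B)$ is $C$-definable; let $f_\chi(y):=d_\vartheta(y)$ be the corresponding $L(C)$-formula with $\vartheta(w,b)\in\tp(\tilde a|B)\iff\monster\models d_\vartheta(b)$. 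Chaining the equivalences gives $\monster\models\chi(a,a^*,b)\iff\monster\models f_\chi(b)$ for all $b\in B^{|y|}$, completing the argument.

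The only delicate point — and the thing I would be most careful about — is the parameter bookkeeping: the $Ca$-definition is itself an $L(C)$-formula with certain coordinates of $a$ substituted, and after the substitutions the single tuple $a$ ends up playing both the ``parameter'' role (from $\tp(a^*|Ba)$'s definition) and the ``object'' role (for $\tp(a|B)$). Keeping the variable slots and coordinate projections straight is the whole content; there is no substantive model-theoretic difficulty beyond this, and the infinite-tuple case is handled automatically by the reduction to finitely many variables in the first step. (This is exactly the shape of the standard transitivity-of-definability argument, cf.\ the treatment of base monotonicity/transitivity for definable types.)
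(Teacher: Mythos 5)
Your proof is correct. The paper does not actually supply a proof of this lemma (it says it ``is easy and left to the reader''), and your argument is precisely the standard transitivity-of-definability bookkeeping the author has in mind: reduce to finite subtuples, absorb the $a$-parameters of the $Ca$-definition into a single finite tuple $\tilde a$, and then apply the $C$-definability of $\tp(a|B)$ to the resulting $L(C)$-formula (folding $C$-constants into the defining scheme, using $C\subseteq B$).
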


\begin{lemma}
\label{lem:artem 2}
Suppose that $T$ has property (D), $a$ is a tuple from $\monster$, $\tp(a|M)$ is definable, and $X \subseteq \monsterset$ is $Ma$-definable.
Then there is $a^* \in X$ such that $\tp(aa^*|M)$ is $M$-definable.
\end{lemma}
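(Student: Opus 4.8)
The statement to prove is Lemma~\ref{lem:artem 2}: if $T$ has property (D), $a$ is a tuple from $\monster$ with $\tp(a|M)$ definable, and $X \subseteq \monsterset$ is $Ma$-definable and nonempty, then there is $a^* \in X$ with $\tp(aa^*|M)$ definable.

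The plan is to produce $a^*$ using property (D) applied over the parameter set $Ma$, and then glue the two definability statements together via Lemma~\ref{lem:artem}. First I would note that $X$ is definable over a small set of the form $Ma$ (strictly speaking over $M$ together with finitely many coordinates of $a$, but we may as well write $Ma$). Since $X$ is nonempty and $T$ has property (D), there is a type $q$ over $Ma$ which is $Ma$-definable and concentrated on $X$. Pass to a sufficiently saturated extension (or just work inside $\monster$, which is a monster model) to realize $q$ by some $a^* \in X$. By construction $\tp(a^*|Ma)$ is $Ma$-definable; since $a$ itself is the relevant parameter here we may even say $\tp(a^*|Ma)$ is $Ma$-definable, which is what we need.

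Next I would invoke Lemma~\ref{lem:artem} with $C = M$, $B = Ma$, the first tuple being $a$ and the second being $a^*$: we are given that $\tp(a|Ma) = \tp(a|M)$ is $M$-definable (it is even realized, so trivially definable over $M$ — alternatively $\tp(a|M)$ is definable by hypothesis and adding the parameters $a$ changes nothing since $a$ realizes it), and we have just arranged that $\tp(a^*|Ma)$ is $Ma$-definable. Lemma~\ref{lem:artem} then yields that $\tp(aa^*|M)$ is $M$-definable, which is the conclusion. One small point to check carefully is the hypothesis of Lemma~\ref{lem:artem} in the form ``$\tp(a|B)$ is $C$-definable'' with $B = Ma \supseteq C = M$: here $a \in B$, so $\tp(a|B)$ is the type of an element of $B$ over $B$, which is definable over $\emptyset \subseteq M$; so that hypothesis holds trivially, and the real content is the second hypothesis, which is exactly property (D).

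I do not expect a serious obstacle here — the lemma is essentially a bookkeeping consequence of property (D) and the transitivity lemma for definable types (Lemma~\ref{lem:artem}). The only thing that requires a moment's care is making sure the parameter sets line up: $X$ being ``$Ma$-definable'' should be read as definable over $M$ together with the (finitely many) coordinates of $a$ that actually occur, and one must confirm that property (D), stated for definable $X \subseteq \monsterset$ over a small parameter set, applies to this set with the small parameter set taken to be $M$ together with those coordinates of $a$. Since $a$ is a tuple from $\monster$ and $M$ is small, $Ma$ (or the relevant finite enlargement of $M$) is small, so property (D) applies directly. Hence the proof is just: apply property (D) over $Ma$ to get $a^* \in X$ with $\tp(a^*|Ma)$ definable, then apply Lemma~\ref{lem:artem}.
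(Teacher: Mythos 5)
Your overall strategy is exactly the paper's: apply property~(D) over the small set $Ma$ to obtain $a^* \in X$ with $\tp(a^*|Ma)$ definable over $Ma$, then feed this into Lemma~\ref{lem:artem}. However, your specific instantiation of Lemma~\ref{lem:artem} with $C=M$, $B=Ma$ is wrong, and the justification you give for the first hypothesis is incorrect. You claim that $\tp(a|Ma)=\tp(a|M)$ and that $\tp(a|Ma)$, being realized, is ``trivially definable over $M$'' (or even over $\emptyset$). Neither claim holds. The two types live over different parameter sets, so they are not equal, and a realized type $\tp(a|B)$ is $B$-definable via the scheme $d\phi(y):=\phi(a;y)$, which uses the realizing tuple $a$ itself as a parameter; it is not $M$-definable, let alone $\emptyset$-definable. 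Concretely, in $\dlo$ let $M=(\Q;<)$ and $a=(a_1,a_2)$ with $a_1<a_2$ both above $M$ and no element of $M$ between them. Then $a_1$ and $a_2$ realize the same type over $M$, so no $M$-formula $d\phi(y)$ can separate $a_1$ from $a_2$; but $\tp(a|Ma)$ contains $x_1<a_2$ and omits $x_1<a_1$, so $\tp(a|Ma)$ is not $M$-definable even though $\tp(a|M)$ is.

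The correct instantiation is $C=B=M$. Then $\tp(a|B)=\tp(a|M)$ is $C$-definable by hypothesis, $\tp(a^*|Ba)=\tp(a^*|Ma)$ is $Ca$-definable by property~(D), and the conclusion of Lemma~\ref{lem:artem} is that $\tp(aa^*|B)=\tp(aa^*|M)$ is $C$-definable, i.e.\ $M$-definable, which is exactly what is wanted. With this one change your argument coincides with the paper's.
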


\begin{proof}
There is definable type $p$ over $Ma$ concentrated on $X$.
Let $a^*$ be a realization of $p$ and apply Lemma~\ref{lem:artem}.
\end{proof}

We now prove Theorem~\ref{thm:artem}.

\begin{proof}
Let $(p_\uplambda : \uplambda < |T| + |M|)$ be an enumeration of all definable types in finitely many variables over $M$.
As $\Sa M$ is a model each $p_\uplambda$ extends to an $M$-definable global type $p^*_\uplambda$.
Let $\alpha_{0}^{\uplambda}$ be a realization of the restriction of $p^*_\uplambda$ to $M(\alpha_{0}^{j} : j < \uplambda)$ for all $\uplambda<|T|+|M|$ and let $\alpha_0$ be the tuple $(\alpha_0^{\uplambda} : \uplambda < |T| + |M|)$.
Lemma~\ref{lem:artem} and induction show that $\tp(\alpha_0|M)$ is definable.

\medskip\noindent
We now construct a sequence $(\alpha_i : i < \upomega)$ of tuples in $\monster$ by induction.
Suppose we have $\alpha_0,\ldots,\alpha_{i - 1}$.
Let $(X_\upeta : \upeta < |\alpha_{< i}| + |T|)$ be an enumeration of all nonempty $M\alpha_{< i}$-definable sets in $\monster$.
Applying Lemma~\ref{lem:artem 2} and induction we choose elements $\alpha_{i}^\upeta$ of $\monsterset$ such that $\alpha_i^\upeta$ is in $X_\upeta$ and $\tp(\alpha_{< i}\alpha_{i}^{\le \upeta}|M)$ is definable.

\medskip\noindent
Let $N = M \cup \{ \alpha^i : i < \upomega\}$.
By Tarski-Vaught $N$ is the domain of a submodel $\Sa N$ of $\monster$.
(1) holds by choice of $\alpha_0$ and (2) holds as $\tp(\alpha^{< \upomega}|M)$ is definable.
\end{proof}

\section{Powers in $p$-adic fields}
In this section we fix a prime $p$, a finite extension $\K$ of $\Q_p$, and $n$.
We prove Fact~\ref{fact:p adic field}.

\begin{fact}
\label{fact:p adic field}
There is $m$ so that if $\alpha \in \Q_p$~is~an~$m$th~power~in~$\K$~then~$\alpha$~is~an~$n$th~power~in~$\Q_p$.
\end{fact}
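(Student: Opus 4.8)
The plan is to work inside the multiplicative groups $\Q_p^\times$ and $\K^\times$, separating the valuation and the unit parts and choosing the exponent $m$ divisible enough that both parts descend simultaneously. Write $e$ for the ramification index of $\K/\Q_p$, let $\mathcal O$ be the ring of integers of $\K$ with maximal ideal $\mathfrak m$, and let $v$ and $w$ be the normalized valuations of $\Q_p$ and $\K$, so that $w|_{\Q_p} = e\cdot v$.

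First I would record two standard facts. (i) $(\Z_p^\times)^n$ is an open subgroup of $\Z_p^\times$: for $M$ sufficiently large $1 + p^M\Z_p \subseteq (\Z_p^\times)^n$ (via $\exp/\log$, or since $\Z_p^\times/(1+p^M\Z_p)$ is finite). Fix such an $M$. (ii) For every $N$ the subgroup $1+\mathfrak m^N$ has finite index in $\mathcal O^\times$, because the successive quotients of $\mathcal O^\times \supseteq 1+\mathfrak m \supseteq \cdots \supseteq 1+\mathfrak m^N$ are finite; hence the finite abelian group $\mathcal O^\times/(1+\mathfrak m^{eM})$ has an exponent $m'$, and then $(\mathcal O^\times)^{m'} \subseteq 1+\mathfrak m^{eM}$.

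I would then set $m_1 = \mathrm{lcm}(m',n)$ and $m = e\,m_1$ and claim this $m$ works. Given $\alpha\in\Q_p$ that is an $m$th power in $\K$: if $\alpha=0$ it is trivially an $n$th power in $\Q_p$, so assume $\alpha\in\Q_p^\times$ and write $\alpha=\beta^m$ with $\beta\in\K^\times$. Put $k=v(\alpha)$ and $u=\alpha p^{-k}\in\Z_p^\times$. Comparing valuations, $ek = w(\alpha) = m\,w(\beta) = e\,m_1\,w(\beta)$, so $m_1\mid k$; in particular $m'\mid k$ and $n\mid k$. Now
\[
u \;=\; \beta^{m}p^{-k} \;=\; \bigl(\beta^{m/m'}p^{-k/m'}\bigr)^{m'},
\]
and the base has $\K$-valuation $\tfrac1{m'}\bigl(m\,w(\beta)-ek\bigr)=0$, hence lies in $\mathcal O^\times$; therefore $u\in(\mathcal O^\times)^{m'}\subseteq 1+\mathfrak m^{eM}$. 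Since $u\in\Q_p$ and $(1+\mathfrak m^{eM})\cap\Q_p = 1+p^M\Z_p$ (an $x\in\Q_p$ with $w(x-1)\ge eM$ has $v(x-1)\ge M$), we get $u\in 1+p^M\Z_p\subseteq(\Z_p^\times)^n$, say $u=\omega^n$ with $\omega\in\Z_p^\times$. Writing $k=nk'$, one concludes $\alpha = p^k u = (p^{k'}\omega)^n$, an $n$th power in $\Q_p$.

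I do not expect a serious obstacle here; the only care needed is in marshalling the standard structural facts (openness of $(\Z_p^\times)^n$, finiteness of $\mathcal O^\times/(1+\mathfrak m^N)$, the identity $w|_{\Q_p}=e\,v$) and in choosing the divisibility of $m$ so that the valuation condition ($n\mid k$) and the unit condition ($u$ an $n$th power in $\Z_p^\times$) are forced at once — which is exactly what $m = e\cdot\mathrm{lcm}(m',n)$ arranges.
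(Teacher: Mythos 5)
Your proof is correct and follows essentially the same approach as the paper's: separate $\alpha = p^k u$ into valuation and unit parts, derive both $n\mid k$ and $m'\mid k$ from the valuation identity $w|_{\Q_p}=e\,v$ once $m$ is taken divisible by $e\cdot\mathrm{lcm}(m',n)$, and then use compactness of $\mathcal O^\times$ to see that a sufficiently high power of a unit lands in $1+\mathfrak m^{eM}$, whose intersection with $\Q_p$ is $1+p^M\Z_p\subseteq(\Z_p^\times)^n$. The paper packages the unit step as an abstract profinite-group fact (open subgroup of finite index, take $k!$) and first reduces to $\alpha\in\Z_p$ via $\alpha\mapsto 1/\alpha$, whereas you pick the concrete congruence subgroups $1+\mathfrak m^{eM}$ and $1+p^M\Z_p$ directly and allow $k<0$; these are cosmetic differences, not a different argument.
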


I originally worked out the unramified case of Fact~\ref{fact:p adic field} and then asked about the ramified case on mathoverflow.
Will Sawin answered almost immediately and gave a proof of Fact~\ref{fact:p adic field} which involved a clever computation and some facts on binomial coefficients.
It turned out that all the clever parts were completely unnecessary and could be replaced by a much more prosaic argument.
I am sure that Fact~\ref{fact:p adic field} is far from original in any event.

\medskip
We first prove a general fact about profinite groups which must be well known.

\begin{fact}
\label{fact:profinite}
Suppose that $\mathbb{A}$ is a profinite abelian group, written additively.
For any open neighbourhood $U$ of $0$ there is $m$ such that $m\mathbb{A}\subseteq U$.
\end{fact}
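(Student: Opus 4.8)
The plan is to prove Fact~\ref{fact:profinite} directly from the definition of the profinite topology together with compactness. Recall that a profinite abelian group $\mathbb{A}$ is, by definition (or up to topological isomorphism), the inverse limit of its finite quotients, and a base of neighbourhoods of $0$ is given by the open subgroups of $\mathbb{A}$, each of which has finite index. So the first step is to reduce to the case where $U$ is an open subgroup: given an arbitrary open neighbourhood $U$ of $0$, pick an open subgroup $H \subseteq U$. It then suffices to find $m$ with $m\mathbb{A} \subseteq H$.

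Next, since $H$ is an open subgroup of the compact group $\mathbb{A}$, the quotient $\mathbb{A}/H$ is finite; let $N = |\mathbb{A}/H|$. By Lagrange's theorem applied to the finite group $\mathbb{A}/H$, every element of $\mathbb{A}/H$ is killed by $N$, i.e. $N(\mathbb{A}/H) = 0$, which is exactly the statement $N\mathbb{A} \subseteq H$. Taking $m = N$ finishes the argument: $m\mathbb{A} \subseteq H \subseteq U$.

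The only real point to be careful about is the justification that $\mathbb{A}/H$ is finite for $H$ open, and that open subgroups form a neighbourhood base at $0$. Both are standard facts about profinite groups: $\mathbb{A}$ is compact, the cosets of $H$ form an open cover, hence finitely many cosets cover $\mathbb{A}$, so $[\mathbb{A}:H] < \infty$; and any open neighbourhood of $0$ contains a basic open set, which in the inverse-limit topology is (the preimage of) a finite set and hence contains an open subgroup (the kernel of projection to a suitable finite quotient). I would state these as known and cite a standard reference on profinite groups if one is available in the bibliography, or simply remark that they are immediate from the definition of the profinite topology as an inverse limit. There is essentially no obstacle here; the fact is elementary once the correct definitions are in hand, and I would keep the proof to a few lines.
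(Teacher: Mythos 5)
Your proof is correct. The reduction is identical to the paper's: pass to an open subgroup $H \subseteq U$, which has finite index by compactness. Where you differ is in the final step. The paper fixes $\beta \in \mathbb{A}$, applies a pigeonhole argument to $\beta, 2\beta, \ldots, (k+1)\beta$ (where $k$ is the index) to find $i < i^*$ with $(i^*-i)\beta \in H$, and then takes $m = k!$ to obtain a bound uniform in $\beta$. You instead apply Lagrange's theorem in the finite quotient $\mathbb{A}/H$, which directly yields $N\mathbb{A} \subseteq H$ with $N = [\mathbb{A} : H]$. Your version is marginally cleaner and gives the sharper constant $m = N$ rather than $m = N!$, though for the purposes of the fact (mere existence of some $m$) the two are equivalent. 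Both are completely elementary; your route simply outsources the pigeonhole to the standard proof of Lagrange.
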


\begin{proof}
As $\mathbb{A}$ is profinite the collection of open subgroups of $\mathbb{A}$ forms a neighbourhood basis at $0$.
It is enough to suppose that $U$ is an open subgroup of $\mathbb{A}$ and show that $m\mathbb{A} \subseteq U$ for some $m$.
As $\mathbb{A}$ is compact $U$ is of index $k \in \N$.
Fix $\beta \in \mathbb{A}$.
Then $\beta,2\beta,\ldots, (k + 1)\beta$ cannot lie in distinct cosets of $U$, so there are $1 \le i < i^* \le k + 1$ such that $i\beta, i^*\beta$ lie in the same coset of $U$.
Then $(i^* - i)\beta \in U$, hence $k! \beta \in U$.
Let $m = k!$.
\end{proof}

We now prove Fact~\ref{fact:p adic field}.
We assume some familiarity with the $p$-adics.

\begin{proof}
We let $\valp \colon \K^\times \to \R$ be the valuation on $\K$, so the restriction of $\valp$ to $\Q^\times_p$ is the usual $p$-adic valuation.
Recall that $\valp(\Q^\times_p) = \Z$ and $\valp(\K^\times) = (1/r)\Z$ where $r \in \N, r \ge 1$ is the ramification index of $\K/\Q_p$. 
Let $\V$ be the valuation ring of $\K$.

\medskip
We may suppose that $\alpha \ne 0$.
If $\alpha \notin \Z_p$ then $1/\alpha \in \Z_p$, $1/\alpha$ is an $n$th power in $\Q_p$ iff $\alpha$ is an $n$th power in $\Q_p$, and for any $m$, $1/\alpha$ is an $m$th power in $\K$ iff $\alpha$ is an $m$th power in $\K$.
Thus we suppose that $\alpha \in \Z_p$.
We have $\alpha = p^k \beta$ for $k = \valp(\alpha)$ and $\beta \in \Z^\times_p$.
Note that $\alpha$ is an $n$th power in $\Q_p$ when $n|k$ and $\beta$ is an $n$th power in $\Q_p$.

\medskip
We first produce $m^*$ such that if $\beta$ is an $m^*$th power in $\K$ then $\beta$ is an $n$th power in $\Q_p$, i.e. we treat the case $k = 1$.
For each $m$ we let $Q_m = (\gamma^m : \gamma \in \Z^\times_p)$  and $K_m = (\gamma^m : \gamma \in \V^\times)$.
Recall that $Q_m$, $K_m$ is an open subgroup of $\Z^\times_p$, $\V^\times$, respectively.
As $\Z^\times_p,\V^\times$ is a profinite abelian group Fact~\ref{fact:profinite} shows that $(Q_m : m \in \N)$, $(K_m : m \in \N)$ is a neighbourhood basis at $1$ for $\Z^\times_p$,$\V^\times$, respectively.
The topology on $\Q_p$ agrees with that induced on $\Q_p$ by the topology on $\K$, so there is $m^*$ such that $K_{m^*} \cap \Q_p \subseteq Q_n$.
Hence if $\beta \in \Z^\times_p$ is an $m^*$th power in $\K$ then $\beta$ is an $n$th power in $\Q_p$.

\medskip
We now let $m = rnm^*$.
Suppose that $\alpha = \gamma^{m}$ for $\gamma \in \K$.
Then
\[
k = \valp(\alpha) = m\valp(\gamma) = rnm^*\valp(\gamma).
\]
As $\valp(\gamma) \in (1/r)\Z$ we see that $nm^*|k$, hence $m^*|k$ and $n|k$.
As $\alpha$ is an $m^*$th power in $\K$ and $\beta = \alpha/p^k = \alpha/(p^{k/m^*})^{m^*}$ we see that $\beta$ is an $m^*$th power in $\K$.
By choice of $m^*$, $\beta$ is an $n$th power in $\Q_p$.
Hence $\alpha=p^k\beta=(p^{k/n})^n\beta$ is an $n$th power in $\Q_p$.
\end{proof}

\bibliographystyle{abbrv}
\bibliography{NIP}

\begin{thebibliography}{100}

\bibitem{adeleke-neumann}
S.~A. Adeleke and P.~M. Neumann.
\newblock Relations related to betweenness: their structure and automorphisms.
\newblock {\em Mem. Amer. Math. Soc.}, 131(623):viii+125, 1998.

\bibitem{AldE}
E.~Alouf and C.~d'Elb\'{e}e.
\newblock A new dp-minimal expansion of the integers.
\newblock {\em J. Symb. Log.}, 84(2):632--663, 2019.

\bibitem{toomanyII}
M.~Aschenbrenner, A.~Dolich, D.~Haskell, D.~Macpherson, and S.~Starchenko.
\newblock Vapnik-{C}hervonenkis density in some theories without the
  independence property, {II}.
\newblock {\em Notre Dame J. Form. Log.}, 54(3-4):311--363, 2013.

\bibitem{toomanyI}
M.~Aschenbrenner, A.~Dolich, D.~Haskell, D.~Macpherson, and S.~Starchenko.
\newblock Vapnik-{C}hervonenkis density in some theories without the
  independence property, {I}.
\newblock {\em Trans. Amer. Math. Soc.}, 368(8):5889--5949, 2016.

\bibitem{trans}
M.~Aschenbrenner, L.~van~den Dries, and J.~van~der Hoeven.
\newblock {\em Asymptotic differential algebra and model theory of
  transseries}, volume 195 of {\em Annals of Mathematics Studies}.
\newblock Princeton University Press, Princeton, NJ, 2017.

\bibitem{stewart-baldwin}
S.~Baldwin.
\newblock A complete classification of the piecewise monotone functions on the
  interval.
\newblock {\em Transactions of the American Mathematical Society},
  319(1):155--178, 1990.

\bibitem{BHP-pairs}
E.~Bar-Yehuda, A.~Hasson, and Y.~Peterzil.
\newblock A theory of pairs for non-valuational structures.
\newblock {\em J. Symb. Log.}, 84(2):664--683, 2019.

\bibitem{zaran}
A.~Basit, A.~Chernikov, S.~Starchenko, T.~Tao, and C.-M. Tran.
\newblock Zarankiewicz's problem for semilinear hypergraphs,
  2020,arXiv:2009.02922.

\bibitem{poly-regular}
O.~Belegradek.
\newblock Poly-regular ordered abelian groups.
\newblock In {\em Logic and algebra}, volume 302 of {\em Contemp. Math.}, pages
  101--111. Amer. Math. Soc., Providence, RI, 2002.

\bibitem{two-disjoint-sorts}
A.~Berarducci and M.~Mamino.
\newblock Groups definable in two orthogonal sorts.
\newblock {\em Israel J. Math.}, 208(1):413--441, 2015.

\bibitem{BV-one-based}
A.~Berenstein and E.~Vassiliev.
\newblock Weakly one-based geometric theories.
\newblock {\em J. Symbolic Logic}, 77(2):392--422.

\bibitem{BV-independent}
A.~Berenstein and E.~Vassiliev.
\newblock Geometric structures with a dense independent subset.
\newblock {\em Selecta Math. (N.S.)}, 22(1):191--225, 2016.

\bibitem{binary-C}
M.~Bodirsky, P.~Jonsson, and T.~V. Pham.
\newblock The reducts of the homogeneous binary branching {$C$}-relation.
\newblock {\em J. Symb. Log.}, 81(4):1255--1297, 2016.

\bibitem{borovik-nesin}
A.~Borovik and A.~Nesin.
\newblock {\em Groups of finite {M}orley rank}, volume~26 of {\em Oxford Logic
  Guides}.
\newblock The Clarendon Press, Oxford University Press, New York, 1994.
\newblock Oxford Science Publications.

\bibitem{brlas}
S.~Braunfeld and M.~Laskowski.
\newblock Characterizations of monadic nip.
\newblock {\em arXiv:2104.12989}, 2021.

\bibitem{Simon-Braunfeld}
S.~Braunfeld and P.~Simon.
\newblock The classification of homogeneous finite-dimensional permutation
  structures, 2020, arxiv:1807.07110.

\bibitem{Chatzidakis}
Z.~Chatzidakis.
\newblock Simplicity and independence for pseudo-algebraically closed fields.
\newblock In {\em Models and computability ({L}eeds, 1997)}, volume 259 of {\em
  London Math. Soc. Lecture Note Ser.}, pages 41--61. Cambridge Univ. Press,
  Cambridge, 1999.

\bibitem{Cha-Pi}
Z.~Chatzidakis and A.~Pillay.
\newblock Generic structures and simple theories.
\newblock {\em Ann. Pure Appl. Logic}, 95(1-3):71--92, 1998.

\bibitem{cd}
G.~Cherlin and M.~A. Dickmann.
\newblock Real closed rings. {II}. {M}odel theory.
\newblock {\em Ann. Pure Appl. Logic}, 25(3):213--231, 1983.

\bibitem{CHL}
G.~Cherlin, L.~Harrington, and A.~H. Lachlan.
\newblock {$\aleph_0$}-categorical, {$\aleph_0$}-stable structures.
\newblock {\em Ann. Pure Appl. Logic}, 28(2):103--135, 1985.

\bibitem{cpt}
A.~Chernikov, D.~Palacin, and K.~Takeuchi.
\newblock On {$n$}-dependence.
\newblock {\em Notre Dame J. Form. Log.}, 60(2):195--214, 2019.

\bibitem{CS}
A.~{Chernikov} and S.~{Starchenko}.
\newblock {Regularity lemma for distal structures}.
\newblock {\em Journal of the European Mathematical Society}, July 2015.

\bibitem{coag}
R.~CLUCKERS and I.~HALUPCZOK.
\newblock {QUANTIFIER} {ELIMINATION} {IN} {ORDERED} {ABELIAN} {GROUPS}.
\newblock {\em Confluentes Mathematici}, 03(04):587--615, dec 2011.

\bibitem{delon-no-density}
F.~Delon.
\newblock {$C$}-minimal structures without density assumption.
\newblock In {\em Motivic integration and its interactions with model theory
  and non-{A}rchimedean geometry. {V}olume {II}}, volume 384 of {\em London
  Math. Soc. Lecture Note Ser.}, pages 51--86. Cambridge Univ. Press,
  Cambridge, 2011.

\bibitem{DG}
A.~Dolich and J.~Goodrick.
\newblock Strong theories of ordered {A}belian groups.
\newblock {\em Fund. Math.}, 236(3):269--296.

\bibitem{DMS1}
A.~Dolich, C.~Miller, and C.~Steinhorn.
\newblock Structures having o-minimal open core.
\newblock {\em Trans. Amer. Math. Soc.}, 362(3):1371--1411, 2010.

\bibitem{DMS-generic}
A.~Dolich, C.~Miller, and C.~Steinhorn.
\newblock Extensions of ordered theories by generic predicates.
\newblock {\em J. Symbolic Logic}, 78(2):369--387, 2013.

\bibitem{DMS-Indepedent}
A.~Dolich, C.~Miller, and C.~Steinhorn.
\newblock Expansions of o-minimal structures by dense independent sets.
\newblock {\em Ann. Pure Appl. Logic}, 167(8):684--706, 2016.

\bibitem{duret}
J.-L. Duret.
\newblock Les corps faiblement alg\'{e}briquement clos non s\'{e}parablement
  clos ont la propri\'{e}t\'{e} d'ind\'{e}pendence.
\newblock In {\em Model theory of algebra and arithmetic ({P}roc. {C}onf.,
  {K}arpacz, 1979)}, volume 834 of {\em Lecture Notes in Math.}, pages
  136--162. Springer, Berlin-New York, 1980.

\bibitem{edmundo-otero}
M.~J. Edmundo and M.~Otero.
\newblock Definably compact abelian groups.
\newblock {\em J. Math. Log.}, 4(2):163--180, 2004.

\bibitem{E-small}
P.~Eleftheriou.
\newblock Small sets in mann pairs.
\newblock {\em arXiv:1812.07970}, 2018.

\bibitem{Elef-small-sets}
P.~E. Eleftheriou.
\newblock Small sets in dense pairs.
\newblock {\em Israel J. Math.}, 233(1):1--27, 2019.

\bibitem{erdos-zaran}
P.~Erd\H{o}s.
\newblock On extremal problems of graphs and generalized graphs.
\newblock {\em Israel J. Math.}, 2:183--190, 1964.

\bibitem{group-in-a-group}
D.~Evans, A.~Pillay, and B.~Poizat.
\newblock Le groupe dans le groupe.
\newblock {\em Algebra i Logika}, 29(3):368--378, 382, 1990.

\bibitem{flenner-guingona}
J.~Flenner and V.~Guingona.
\newblock Convexly orderable groups and valued fields.
\newblock {\em J. Symb. Log.}, 79(1):154--170.

\bibitem{fuchs}
L.~Fuchs.
\newblock {\em Abelian groups}.
\newblock Springer Monographs in Mathematics. Springer, Cham, 2015.

\bibitem{GoHi-Pairs}
A.~B. Gorman, P.~Hieronymi, and E.~Kaplan.
\newblock Pairs of theories satisfying a mordell-lang condition, 2018.

\bibitem{gh-op}
V.~Guingona and C.~D. Hill.
\newblock On a common generalization of {S}helah's 2-rank, dp-rank, and
  o-minimal dimension.
\newblock {\em Ann. Pure Appl. Logic}, 166(4):502--525, 2015.

\bibitem{gcs}
V.~Guingona, C.~D. Hill, and L.~Scow.
\newblock Characterizing model-theoretic dividing lines via collapse of
  generalized indiscernibles.
\newblock {\em Ann. Pure Appl. Logic}, 168(5):1091--1111, 2017.

\bibitem{Guingona-Parnes}
V.~Guingona and M.~Parnes.
\newblock Ranks based on algebraically trivial fraisse classes, 2020,
  arXiv:2007.02922.

\bibitem{GH-Dependent}
A.~G{\"u}naydin and P.~Hieronymi.
\newblock Dependent pairs.
\newblock {\em J. Symbolic Logic}, 76(2):377--390, 2011.

\bibitem{expanded}
Y.~Gurevich.
\newblock Expanded theory of ordered abelian groups.
\newblock {\em Ann. Math. Logic}, 12(2):193--228, 1977.

\bibitem{halevi-hasson-peterzil}
Y.~Halevi, A.~Hasson, and Y.~Peterzil.
\newblock Fields interpretable in p-minimal fields.
\newblock {\em arXiv:2103.15198}, 2021.

\bibitem{halevi-palacin}
Y.~Halevi and D.~Palac\'{\i}n.
\newblock The dp-rank of abelian groups.
\newblock {\em J. Symb. Log.}, 84(3):957--986, 2019.

\bibitem{james-hanson}
J.~Hanson.
\newblock Can superstability of a countable theory be characterized in terms of
  not weakly trace interpreting; a particular structure?
\newblock MathOverflow.
\newblock URL:https://mathoverflow.net/q/394205 (version: 2021-05-31).

\bibitem{C-minimal}
D.~Haskell and D.~Macpherson.
\newblock Cell decompositions of c-minimal structures.
\newblock {\em Ann. Pure Appl. Logic}, 66(2):113--162, 1994.

\bibitem{hasson-peterzil}
A.~Hasson and Y.~Peterzil.
\newblock Interpretable fields in real closed valued fields and some
  expansions.
\newblock {\em arXiv:2102.00814}, 2021.

\bibitem{hempel-chernkov}
N.~Hempel and A.~Chernikov.
\newblock On n-dependent groups and fields ii, 2020, arxiv:1912.02385.

\bibitem{Hodges}
W.~Hodges.
\newblock {\em Model theory}, volume~42 of {\em Encyclopedia of mathematics and
  its applications}.
\newblock Cambridge University Press, 1993.

\bibitem{HP-invariant}
E.~Hrushovski and A.~Pillay.
\newblock On {NIP} and invariant measures.
\newblock {\em J. Eur. Math. Soc. (JEMS)}, 13(4):1005--1061, 2011.

\bibitem{metastable}
E.~Hrushovski and S.~Rideau-Kikuchi.
\newblock Valued fields, metastable groups.
\newblock {\em Selecta Mathematica}, 25(3), July 2019.

\bibitem{szymon}
S.~T. (https://mathoverflow.net/users/87983/szymon toru%c5%84czyk).
\newblock omega-categorical, omega-stable structure with trivial geometry not
  definable in the pure set.
\newblock MathOverflow.
\newblock URL:https://mathoverflow.net/q/326895 (version: 2019-05-18).

\bibitem{jahnke-when}
F.~Jahnke.
\newblock When does nip transfer from fields to henselian expansions?
\newblock {\em arXiv:1607.02953:}, 2019.

\bibitem{Johnson}
W.~Johnson.
\newblock On dp-minimal fields.
\newblock {\em arXiv:1507.02745}, 2015.

\bibitem{Johnson-dp-finite}
W.~Johnson.
\newblock Dp-finite fields vi: the dp-finite shelah conjecture.
\newblock {\em arXiv:2005.13989}, 2020.

\bibitem{1stpaper}
W.~Johnson, M.~C. Tran, E.~Walsberg, and J.~Ye.
\newblock The \'etale-open topology and the stable fields conjecture.
\newblock {\em accepted in J. Eur. Math. Soc. (JEMS), arXiv:2009.02319}.

\bibitem{dp-rank-additive}
I.~Kaplan, A.~Onshuus, and A.~Usvyatsov.
\newblock Additivity of the dp-rank.
\newblock {\em Trans. Amer. Math. Soc.}, 365(11):5783--5804, 2013.

\bibitem{Kaplan2011}
I.~Kaplan, T.~Scanlon, and F.~O. Wagner.
\newblock Artin-schreier extensions in {NIP} and simple fields.
\newblock {\em Israel Journal of Mathematics}, 185(1):141--153, Sept. 2011.

\bibitem{Keren-thesis}
G.~Keren.
\newblock Definable compactness in weakly o-minimal structures.
\newblock Master's thesis, Ben Gurion University of the Negev, 2014.

\bibitem{Kruckman-thesis}
A.~Kruckman.
\newblock {\em Infinitary limits of finitary structures}.
\newblock PhD thesis, 2016.

\bibitem{KRExp}
A.~Kruckman and N.~Ramsey.
\newblock Generic expansion and {S}kolemization in {${\rm NSOP}_1$} theories.
\newblock {\em Ann. Pure Appl. Logic}, 169(8):755--774, 2018.

\bibitem{lachlan-order}
A.~H. Lachlan.
\newblock Structures coordinatized by indiscernible sets.
\newblock volume~34, pages 245--273. 1987.
\newblock Stability in model theory (Trento, 1984).

\bibitem{Laskowski-shelah-karp}
M.~Laskowski and S.~Shelah.
\newblock Karp complexity and classes with the independence property.
\newblock {\em Annals of Pure and Applied Logic}, 120(1-3):263--283, Apr. 2003.

\bibitem{LasStein}
M.~C. Laskowski and C.~Steinhorn.
\newblock On o-minimal expansions of {A}rchimedean ordered groups.
\newblock {\em J. Symbolic Logic}, 60(3):817--831, 1995.

\bibitem{Lopez}
J.~P.~A. L\'{o}pez.
\newblock Groups definable in presburger arithmetic.
\newblock {\em arXiv:1904.00321}, 2019.

\bibitem{loveys}
J.~Loveys and Y.~Peterzil.
\newblock Linear o-minimal structures.
\newblock {\em Israel J. Math.}, 81(1-2):1--30, 1993.

\bibitem{Macintyre-omegastable}
A.~Macintyre.
\newblock On {$\omega _{1}$}-categorical theories of fields.
\newblock {\em Fund. Math.}, 71(1):1--25. (errata insert), 1971.

\bibitem{macintyre-p-adic}
A.~MacIntyre.
\newblock On definable subsets of p-adic fields.
\newblock {\em The Journal of Symbolic Logic}, 41(3):605, Sept. 1976.

\bibitem{macintyre-generic}
A.~Macintyre.
\newblock Generic automorphisms of fields.
\newblock volume~88, pages 165--180. 1997.
\newblock Joint AILA-KGS Model Theory Meeting (Florence, 1995).

\bibitem{macpherson-interpreting-groups}
D.~Macpherson.
\newblock Interpreting groups in {$\omega$}-categorical structures.
\newblock {\em J. Symbolic Logic}, 56(4):1317--1324, 1991.

\bibitem{macpherson-survey}
D.~Macpherson.
\newblock A survey of homogeneous structures.
\newblock {\em Discrete Math.}, 311(15):1599--1634, 2011.

\bibitem{MMS-weak}
D.~Macpherson, D.~Marker, and C.~Steinhorn.
\newblock Weakly o-minimal structures and real closed fields.
\newblock {\em Trans. Amer. Math. Soc.}, 352(12):5435--5483, 2000.

\bibitem{Ma-adic}
N.~Mariaule.
\newblock Model theory of the field of $p$-adic numbers expanded by a
  multiplicative subgroup.
\newblock {\em arXiv:1803.10564}, 2018.

\bibitem{MPP}
D.~Marker, Y.~Peterzil, and A.~Pillay.
\newblock Additive reducts of real closed fields.
\newblock {\em J. Symbolic Logic}, 57(1):109--117, 1992.

\bibitem{mekler-rubin-steinhorn}
A.~Mekler, M.~Rubin, and C.~Steinhorn.
\newblock Dedekind completeness and the algebraic complexity of {$o$}-minimal
  structures.
\newblock {\em Canad. J. Math.}, 44(4):843--855, 1992.

\bibitem{Milliken}
K.~R. Milliken.
\newblock A {R}amsey theorem for trees.
\newblock {\em J. Combin. Theory Ser. A}, 26(3):215--237, 1979.

\bibitem{samaria-imaginary}
S.~Montenegro.
\newblock Imaginaries in bounded pseudo real closed fields.
\newblock {\em Ann. Pure Appl. Logic}, 168(10), 2017.

\bibitem{M-prc}
S.~Montenegro.
\newblock Pseudo real closed fields, pseudo {$p$}-adically closed fields and
  {${\rm NTP}_2$}.
\newblock {\em Ann. Pure Appl. Logic}, 168(1):191--232, 2017.

\bibitem{nese}
J.~Nesetril, P.~O. de~Mendez, M.~Pilipczuk, R.~Rabinovich, and S.~Siebertz.
\newblock Rankwidth meets stability.
\newblock {\em CoRR}, abs/2007.07857, 2020.

\bibitem{onshuus-simon}
A.~Onshuus and P.~Simon.
\newblock Dependent finitely homogneneous rosy structures, 2021,
  arxiv:2107.02727.

\bibitem{Onshuus-pres}
A.~Onshuus and M.~Vicar\'{\i}a.
\newblock Definable groups in models of {P}resburger arithmetic.
\newblock {\em Ann. Pure Appl. Logic}, 171(6):102795, 27, 2020.

\bibitem{OPP-groups-rings}
M.~Otero, Y.~Peterzil, and A.~Pillay.
\newblock On groups and rings definable in o-minimal expansions of real closed
  fields.
\newblock {\em Bull. London Math. Soc.}, 28(1):7--14, 1996.

\bibitem{palacin-sklinos}
D.~Palac\'{\i}n and R.~Sklinos.
\newblock On superstable expansions of free {A}belian groups.
\newblock {\em Notre Dame J. Form. Log.}, 59(2):157--169, 2018.

\bibitem{o-minimal-simple}
Y.~Peterzil, A.~Pillay, and S.~Starchenko.
\newblock Simple algebraic and semialgebraic groups over real closed fields.
\newblock {\em Trans. Amer. Math. Soc.}, 352(10):4421--4450, 2000.

\bibitem{PS-Tri}
Y.~Peterzil and S.~Starchenko.
\newblock A trichotomy theorem for o-minimal structures.
\newblock {\em Proc. London Math. Soc. (3)}, 77(3):481--523, 1998.

\bibitem{one-dim-subgroup}
Y.~Peterzil and C.~Steinhorn.
\newblock Definable compactness and definable subgroups of o-minimal groups.
\newblock {\em J. London Math. Soc. (2)}, 59(3):769--786, 1999.

\bibitem{pillay-p-adic}
A.~Pillay.
\newblock On fields definable in {${\bf Q}_p$}.
\newblock {\em Arch. Math. Logic}, 29(1):1--7, 1989.

\bibitem{pillay-book}
A.~Pillay.
\newblock {\em Geometric stability theory}, volume~32 of {\em Oxford Logic
  Guides}.
\newblock The Clarendon Press, Oxford University Press, New York, 1996.
\newblock Oxford Science Publications.

\bibitem{poizat-propos}
B.~Poizat.
\newblock \`a propos de groupes stables.
\newblock In {\em Logic colloquium '85 ({O}rsay, 1985)}, volume 122 of {\em
  Stud. Logic Found. Math.}, pages 245--265. North-Holland, Amsterdam, 1987.

\bibitem{Poizat}
B.~Poizat.
\newblock {\em A course in model theory}.
\newblock Universitext. Springer-Verlag, New York, 2000.
\newblock An introduction to contemporary mathematical logic, Translated from
  the French by Moses Klein and revised by the author.

\bibitem{ramakrishnan}
J.~Ramakrishnan.
\newblock Definable linear orders definably embed into lexicographic orders in
  o-minimal structures.
\newblock {\em Proc. Amer. Math. Soc.}, 141(5):1809--1819, 2013.

\bibitem{razenj}
V.~Razenj.
\newblock One-dimensional groups over an {$o$}-minimal structure.
\newblock {\em Ann. Pure Appl. Logic}, 53(3):269--277, 1991.

\bibitem{robinson-zakon}
A.~Robinson and E.~Zakon.
\newblock Elementary properties of ordered abelian groups.
\newblock {\em Trans. Amer. Math. Soc.}, 96:222--236, 1960.

\bibitem{rubin}
M.~Rubin.
\newblock Theories of linear order.
\newblock {\em Israel J. Math.}, 17:392--443, 1974.

\bibitem{schmerl}
J.~H. Schmerl.
\newblock Countable homogeneous partially ordered sets.
\newblock {\em Algebra Universalis}, 9(3):317--321, 1979.

\bibitem{Sch.habil}
P.~H. Schmitt.
\newblock {\em Model theory of ordered abelian groups}.
\newblock Habilitationsschrift, 1982.

\bibitem{scow}
L.~Scow.
\newblock Characterization of {NIP} theories by ordered graph-indiscernibles.
\newblock {\em Ann. Pure Appl. Logic}, 163(11):1624--1641, 2012.

\bibitem{Shelah-external}
S.~Shelah.
\newblock Dependent first order theories, continued.
\newblock {\em Israel J. Math.}, 173:1--60, 2009.

\bibitem{Simon-Book}
P.~Simon.
\newblock {\em A guide to NIP theories}, volume~44 of {\em Lecture Notes in
  Logic}.
\newblock Cambridge University Press.

\bibitem{Simon-dp}
P.~Simon.
\newblock On dp-minimal ordered structures.
\newblock {\em J. Symbolic Logic}, 76(2):448--460, 2011.

\bibitem{Pierre2}
P.~Simon.
\newblock Linear orders in {NIP} theories.
\newblock {\em arXiv:1807.07949}, 2018.

\bibitem{Pierre}
P.~Simon.
\newblock {NIP} omega-categorical structures: the rank 1 case.
\newblock {\em arXiv:1807.07102}, 2018.

\bibitem{on-forking}
P.~Simon and S.~Starchenko.
\newblock On forking and definability of types in some {DP}-minimal theories.
\newblock {\em J. Symb. Log.}, 79(4):1020--1024, 2014.

\bibitem{SW-dp}
P.~Simon and E.~Walsberg.
\newblock Dp and other minimalities.
\newblock {\em Preprint}, arXiv:1909.05399, 2019.

\bibitem{strzebonski}
A.~W. Strzebonski.
\newblock Euler characteristic in semialgebraic and other {${\rm o}$}-minimal
  groups.
\newblock {\em J. Pure Appl. Algebra}, 96(2):173--201, 1994.

\bibitem{Sw}
S.~\'Swierczkowski.
\newblock On cyclically ordered groups.
\newblock {\em Fund. Math.}, 47:161--166, 1959.

\bibitem{thomas-hyper}
S.~Thomas.
\newblock Reducts of random hypergraphs.
\newblock {\em Ann. Pure Appl. Logic}, 80(2):165--193, 1996.

\bibitem{sousa}
S.~Torrez\~{a}o~de Sousa and J.~K. Truss.
\newblock Countable homogeneous coloured partial orders.
\newblock {\em Dissertationes Math.}, 455:48, 2008.

\bibitem{TW-cyclic}
M.~C. Tran and E.~Walsberg.
\newblock A family of dp-minimal expansions of $(\mathbb{Z};+)$, 2017.

\bibitem{lou-book}
L.~van~den Dries.
\newblock {\em Tame topology and o-minimal structures}, volume 248 of {\em
  London Mathematical Society Lecture Note Series}.
\newblock Cambridge University Press, Cambridge, 1998.

\bibitem{vddG}
L.~van~den Dries and A.~G\"{u}nayd\i~n.
\newblock The fields of real and complex numbers with a small multiplicative
  group.
\newblock {\em Proc. London Math. Soc. (3)}, 93(1):43--81, 2006.

\bibitem{big-nip}
E.~Walsberg.
\newblock Externally definable quotients and nip expansions of the real ordered
  additive group, 2019, arXiv:1910.10572.

\bibitem{ez-fields}
E.~Walsberg and J.~Ye.
\newblock \'{E}z fields.
\newblock {\em arXiv preprint arXiv:2103.06919}, 2021.

\bibitem{Weispfenning}
V.~Weispfenning.
\newblock Elimination of quantifiers for certain ordered and lattice-ordered
  abelian groups.
\newblock {\em Bull. Soc. Math. Belg. S\'{e}r. B}, 33(1):131--155, 1981.

\bibitem{Wencel-1}
R.~Wencel.
\newblock Weakly o-minimal nonvaluational structures.
\newblock {\em Ann. Pure Appl. Logic}, 154(3):139--162, 2008.

\bibitem{harry-west}
H.~West.
\newblock Does the random graph interpret the random directed graph?
\newblock MathOverflow.
\newblock URL:https://mathoverflow.net/q/398026 (version: 2021-07-21).

\bibitem{Winkler}
P.~M. Winkler.
\newblock Model-completeness and {S}kolem expansions.
\newblock pages 408--463. Lecture Notes in Math., Vol. 498, 1975.

\bibitem{zakon}
E.~Zakon.
\newblock Model-completeness and elementary properties of torsion free abelian
  groups.
\newblock {\em Canadian J. Math.}, 26:829--840, 1974.

\bibitem{zap-pak}
A.~Zapryagaev and F.~Pakhomov.
\newblock Interpretations of {P}resburger arithmetic in itself.
\newblock In {\em Logical foundations of computer science}, volume 10703 of
  {\em Lecture Notes in Comput. Sci.}, pages 354--367. Springer, Cham, 2018.

\end{thebibliography}
\end{document}